\crefname{equation}{}{}
\Crefname{equation}{}{}
\crefname{align}{}{}
\Crefname{align}{}{}
\crefname{alignat}{}{}
\Crefname{alignat}{}{}
\crefname{gather}{}{}
\Crefname{gather}{}{}
\crefname{multline}{}{}
\Crefname{multline}{}{}
\crefname{flalign}{}{}
\Crefname{flalign}{}{}
\let\oldtocsubsubsection=\tocsubsubsection
\renewcommand{\tocsubsubsection}[2]{ \small\hspace{0em}\oldtocsubsubsection{#1}{#2}}
\numberwithin{equation}{subsection}
\DeclareSymbolFontAlphabet{\mathbb}{AMSb}
\DeclareSymbolFontAlphabet{\mathbbl}{bbold}
\DeclareRobustCommand{\SkipTocEntry}[5]{}
\theoremstyle{plain}
\newtheorem{theorem}{Theorem}[subsection]
\newtheorem{proposition}[theorem]{Proposition}
\newtheorem{lemma}[theorem]{Lemma}
\newtheorem*{claim*}{Claim}
\newtheorem{corollary}[theorem]{Corollary}
\newtheorem{question}[theorem]{Question}
\theoremstyle{definition}
\newtheorem{construction}[theorem]{Construction}
\newtheorem{definition}[theorem]{Definition}
\newtheorem*{claim}{Claim}
\newtheorem{example}[theorem]{Example}
\newtheorem{examples}[theorem]{Examples}
\newtheorem{remark}[theorem]{Remark}
\newcommand{\A}{\mathbb{A}}
\newcommand{\T}{\mathbb{T}}
\newcommand{\Yc}{\mathcal{Y}}
\newcommand{\an}{\mathrm{an}}
\newcommand{\An}{\mathrm{An}}
\newcommand{\AnSpec}{\mathrm{AnSpec}}
\newcommand{\GSpec}{\mathrm{GSpec}}
\newcommand{\Marc}{\mathcal{M}_{\mathrm{arc}}}
\newcommand{\arc}{\mathrm{arc}}
\newcommand{\C}{\mathbb{C}}
\newcommand{\cyc}{\mathrm{cyc}}
\newcommand{\DD}{\mathbb{D}}
\newcommand{\diam}{\diamond}
\newcommand{\dR}{\mathrm{dR}}
\newcommand{\dRall}{\mathrm{DR}}
\newcommand{\HK}{\mathrm{HK}}
\newcommand{\End}{\mathrm{End}}
\newcommand{\et}{\mathrm{{\acute{e}t}}}
\newcommand{\Ext}{\mathrm{Ext}}
\newcommand{\F}{\mathbb{F}}
\newcommand{\FF}{\mathrm{FF}}
\newcommand{\Fil}{\mathrm{Fil}}
\newcommand{\Gal}{\mathrm{Gal}}
\newcommand{\Gm}{{\mathbb{G}_m}}
\newcommand{\Ga}{\mathbb{G}_a}
\newcommand{\GL}{\mathrm{GL}}
\newcommand{\Hom}{\mathrm{Hom}}
\renewcommand{\Im}{\textrm{Im}}
\renewcommand{\inf}{\mathrm{inf}}
\newcommand{\id}{\operatorname{id}}
\newcommand{\Kum}{\mathrm{Kum}}
\newcommand{\Mod}{\mathrm{Mod}}
\newcommand{\Map}{\mathrm{Map}}
\newcommand{\N}{\mathbb{N}}
\newcommand{\Nil}{\mathrm{Nil}}
\newcommand{\Sh}{\mathrm{Sh}}
\newcommand{\op}{\mathrm{op}}
\newcommand{\Perf}{\mathrm{Perf}}
\newcommand{\Prof}{\mathrm{Prof}}
\newcommand{\Q}{\mathbb{Q}}
\newcommand{\red}{\mathrm{red}}
\newcommand{\Shv}{\mathrm{Shv}}
\newcommand{\sm}{\mathrm{sm}}
\newcommand{\lc}{\mathrm{lc}}
\newcommand{\Spa}{\mathrm{Spa}}
\newcommand{\Spf}{\mathrm{Spf}}
\newcommand{\Spec}{\mathrm{Spec}}
\newcommand{\solid}{{\scalebox{0.5}{$\square$}}}
\newcommand{\Gelf}{\mathrm{Gelf}}
\newcommand{\perf}{{\mathrm{perf}}}
\newcommand{\Z}{\mathbb{Z}}
\newcommand{\R}{\mathbb{R}}
\renewcommand{\O}{\mathcal{O}}
\newcommand{\un}{\mathrm{un}}
\newcommand{\pr}{\mathrm{pr}}
\newcommand{\Betti}{\mathrm{Betti}}
\newcommand{\fet}{\mathrm{f\acute{e}t}}
\newcommand{\proet}{\mathrm{pro\acute{e}t}}
\newcommand{\iHom}{\underline{\mathrm{Hom}}}
\newcommand{\Cat}[1]{\mathsf{#1}}
\newcommand{\ob}[1]{\mathrm{#1}}
\newcommand{\shf}[1]{\mathcal{#1}}
\newcommand{\VB}{\mathrm{VB}}
\newcommand{\qfd}{\mathrm{qfd}}
\newcommand{\Font}{\mathrm{Ft}}
\title{Analytic de Rham stacks of Fargues--Fontaine curves}
\author{Johannes Ansch\"utz, Guido Bosco, Arthur-C\'esar Le Bras, Juan Esteban Rodr\'iguez Camargo, Peter Scholze}
\date{\today}
\begin{document}

\begin{abstract}
  We define and initiate the study of analytic de Rham stacks of relative Fargues--Fontaine curves. To this end, we develop a theory of analytic de Rham stacks with sufficiently strong descent and approximation properties. Specializing to the de Rham stack of the Fargues–Fontaine curve attached to $\C_p$, we apply the general theory to obtain a new geometric proof of the $p$-adic monodromy theorem, avoiding any reliance on earlier results on $p$-adic differential equations. Building on the foundations established here, we plan in a sequel to investigate the cohomology of de Rham stacks of relative Fargues--Fontaine curves in geometric situations and, in particular, provide a stack-theoretic definition of Hyodo--Kato cohomology.
\end{abstract}

\maketitle

\tableofcontents

\section{Introduction}
\label{sec:introduction}

Let $p$ be a prime number.

\subsection{$p$-adic geometry, beyond perfectoid spaces}
\label{sec:new-defin-analyt}

A large part of this paper is devoted to developing some foundational work on derived $p$-adic geometry within the world of analytic stacks.
 This framework will be used critically by some of us to define an analogue of prismatic/syntomic cohomology (incarnated as coherent cohomology of certain stacks) for rigid analytic varieties.
 However, in the context of the present paper, its relevance lies in the fact that it serves to provide a new definition of analytic de Rham stacks in $p$-adic geometry.
 This extends the results of \cite{camargo2024analytic}, which developed a powerful theory of the analytic de Rham stack, providing the necessary tools for a robust theory of analytic $D$-modules on rigid analytic varieties (including, among other statements, Poincar\'e duality).\footnote{We stress right away that in this paper we use the terminology ``analytic $D$-module'' as a synonym for ``quasi-coherent sheaf'' on the analytic de Rham stack. The exact relation to $D$-modules will appear in \cite{AnDModRJRC}.}
 However, when applying \cite{camargo2024analytic} to more exotic spaces like Fargues--Fontaine curves (as we aim to do in this paper), one runs into technical issues making \cite{camargo2024analytic} not directly applicable.
 These technical questions circle around the descent of the functor $X\mapsto X^{\dR}$, or its commutation with limits.
 The new definition that we present in \cref{sec:perf-analyt-de} will solve these issues in a satisfactory manner.

We now provide a motivation for our construction.
 Let us recall that for a scheme $X$, its algebraic de Rham stack $X^{\dR}_{{\mathrm{alg}}}$ is a \emph{sheafification} of the functor $R\mapsto X(R_{\mathrm{red}})$.
 Sheafifying for the fpqc-topology and working in characteristic $p$, the functor $X\mapsto X^{\dR}_{\mathrm{alg}}$ is essentially\footnote{One needs to be careful with the exact definitions of the categories involved, but we content ourselves with the discussion that follows.} right adjoint to the perfection functor $Y\mapsto Y^{\mathrm{perf}}$.\footnote{Note that this notion of de Rham stack is different from that one of Drinfeld and Bhatt--Lurie in the theory of prismatization of formal schemes.}
One can observe that for semiperfect rings, which form a basis for the fpqc topology, we have $R_{\rm red}=R_{\rm perf}$ and that an fpqc-cover $R\to S$ of $\F_p$-algebras induces an exact sequence
\[
  0\to R_{\mathrm{perf}}\to S_{\mathrm{perf}}\to S_{{\mathrm{perf}}}\otimes_{R_{\mathrm{perf}}}S_{\mathrm{perf}}\to \ldots
\]
by passing to the filtered colimit over Frobenius. 
This implies that, for a scheme $X$,  the fpqc sheafification of the functor $R\mapsto X(R_{\red})$ is the functor $R\mapsto X(R_{\perf})$. 
This justifies  the adjunction formula
\[
  \Hom(\Spec(R),X^\dR_{\mathrm{alg}})=\Hom(\Spec(R_{\mathrm{perf}}),X).
\]
In particular, we see that $X\mapsto X^{\dR}_{\mathrm{alg}}$ commutes with limits (which a priori is non-obvious because of the involved sheafification).
 One can then also observe that $X^\dR_{\mathrm{alg}}$ depends itself only on the perfection of $X$, i.e., the functor represented by $X$ on perfect rings.

Our definition of an analytic de Rham stack is modelled on the above adjunction between perfection and the algebraic de Rham stack by replacing the perfection with ``perfectoidization''.
 More precisely, let $\Cat{AnStk}$ be the category of analytic stacks (as defined in \cite{AnStacks}, in particular in the ``light'' setting), and let $\Cat{Perfd}_{\Q_p}$ be the category of perfectoid spaces over $\Q_p$.
 It is not difficult to construct a functor (a naive version of ``perfectoidization'' or ``associated diamond'')
\[
  F\colon \Cat{AnStk}\to \Sh_v(\Cat{Perfd}_{\Q_p}),
\]
by sending an analytic stack $Y$ to the $v$-sheafification of $\Spa(R,R^+)\mapsto Y(\AnSpec((R,R^+)_\solid)$.
 One might hope that $F$ admits a right adjoint $G$, and that $X\mapsto G\circ F(X)$ realizes analytic de Rham stacks.
 But this hope is too naive: a basic desideratum for any reasonable theory of analytic de Rham stacks is that the analytic de Rham stack of the analytic affine line $\A^{1,\an}_{\Q_p}$ over $\Q_{p,\solid}$ is given by the quotient $\A^{1,\an}_{\Q_p}/\Ga^{\dagger}$, where $\Ga^\dagger$ is the overconvergent neighborhood of the origin in $\A^{1,\an}_{\Q_p}$ acting on $\A^{1,\an}_{\Q_p}$ by addition.
 This however is not realized by the functor $G\circ F$.
 Namely, the functor $F$ identifies the algebraic affine line $\AnSpec(\Q_p[T])$ with the analytic affine line $\A^{1,\an}_{\Q_p}$, and thus
$$G\circ F(\AnSpec(\Q_p[T]))=G\circ F(\A^{1,\an}_{\Q_p}),$$
i.e.\ the natural map from  $\A^{1,\an}_{\Q_p}$ to its analytic de Rham stack would extend to the algebraic $\A^{1}$, something we do not want. We can follow \cite{camargo2024analytic} here, and attack this problem (in a first approximation) by replacing the source of the functor $F$ by sheaves on so-called bounded $\Q_p$-algebras (\Cref{ss:BoundedRing}).
 These are $\Q_p$-algebras, which are roughly given by ``non-complete Banach algebras''.
 By definition, the algebraic and analytic affine line are identified as functors on bounded $\Q_p$-algebras, and moreover each bounded $\Q_p$-algebra $A$ admits a distinguished ideal, its $\dagger$-nilradical $\Nil^\dagger(A)$, which intuitively consists of all elements of spectral norm $0$.
 For a bounded $\Q_p$-algebra set $A^{\dagger-\red}:=A/\Nil^\dagger(A)$, the $\dagger$-reduction of $A$.
 Building on the theory of bounded $\Q_p$-algebras, \cite{camargo2024analytic} has developed the aforementioned theory of analytic de Rham stacks by $!$-sheafifying for a given $X$ the functor $A\mapsto X(A^{\dagger-\red})$ on bounded $\Q_p$-algebras.

However, for general bounded $\Q_p$-algebras $A$, the $\dagger$-reduction $A^{\dagger-\red}$ is still a rather general type of solid $\Q_p$-algebra. We will introduce a notion of \textit{Gelfand rings}, which ensures that the (uniform) completion of $A^{\dagger-\red}$ is a Banach $\Q_p$-algebra. These always admit nontrivial maps to perfectoid $\Q_p$-algebras, and hence play better in the desired adjunction.

Another simplification we make in this paper is that we endow all $\Q_{p,\solid}$-algebras with the induced analytic ring structure. This has two major effects. On the one hand, it makes many proofs easier as descendability becomes easier to check.\footnote{As far as only categories of quasi-coherent sheaves are concerned, the $D_{\hat\solid}$-theory of \cite{AMdescendPerfd} offers an option to allow more general descent on perfectoid spaces, but this does not seem to help with generalizing geometric constructions such as of the analytic de Rham stacks.} In fact, to further facilitate such descendability proofs, we make countability/separability/lightness assumptions throughout the paper.\footnote{Recently, Zelich and Aoki have constructed faithfully flat maps, which are not descendable, \cite{zelich2024faithfully}, \cite{aoki2024cohomology}. Hence, some restriction here like countable generation is necessary.} On the other hand, working only with induced analytic ring structures has the effect of identifying the usual rigid unit disc $\mathbb D_{\mathbb Q_p}=\Spa(\Q_p\langle T\rangle,\Z_p\langle T\rangle)$ with its Huber compactification $\overline{\mathbb D}_{\mathbb Q_p}$. This may seem undesired, but in fact on the level of de Rham cohomology it is helpful: namely, the well-behaved de Rham cohomology of the disc is its overconvergent de Rham cohomology. Usually, this is subtle to define as it seems to require the choice of an overconvergent structure. But in fact we show here that the analytic de Rham stack of $\overline{\mathbb D}_{\mathbb Q_p}$ -- which is also the analytic de Rham stack of $\mathbb D_{\mathbb Q_p}$ as defined in this paper -- computes overconvergent de Rham cohomology, thereby giving a canonical definition of the latter, \cref{sec:dagg-form-smooth-example-overconvergent-cohomology-of-rigid-disc}. In particular, when discussing adic spaces in this paper, we will always have to assume that they are partially proper (or work with $\dagger$-rigid spaces in the sense of \cref{DefDaggerRigid}).

The choice of using only induced analytic ring structures has as another consequence. Namely, we will usually replace localizing to \emph{open} subsets on the adic spectrum by localizing to \emph{closed} subsets of the Berkovich spectrum. This is perfectly feasible in the context of analytic stacks.

We are now in a position to define the target category of perfectoidization that we will use.

\begin{definition}
  \label{sec:new-defin-analyt-2-definition-light-perfectoid-tate-algebra-light-arc-sheaf}\
  \begin{enumerate}
  \item A perfectoid Tate algebra $A$ over $\Q_p$ is called \textit{separable} if $A$ is a separable Banach algebra over $\Q_p$. That is, $A$ admits a countable dense subset.
    \item If $A$ is a perfectoid Tate algebra over $\Q_p$, we let $\mathcal{M}(A)$ be the maximal Hausdorff quotient of $\Spa(A,A^\circ)$.\footnote{As for example discussed in \cite[Proposition 13.9]{scholze_etale_cohomology_of_diamonds}.} Equivalently, $\mathcal{M}(A)$ is the Berkovich spectrum of $A$ consisting of equivalence classes of continuous multiplicative seminorms $|-|\colon A\to \R_{\geq 0}$ with $|1|=1$.
    \item A morphism $A\to B$ of perfectoid Tate algebras over $\Q_p$ is an \textit{arc-cover} if the morphism $\mathcal{M}(B)\to \mathcal{M}(A)$ on Berkovich spaces is surjective. 
   \item We let $\Cat{ArcStk}_{\Q_p}$ be the category of hypersheaves of anima on the arc-site of separable affinoid perfectoid spaces over $\Q_p$.
  \end{enumerate}
\end{definition}

The category $\Cat{ArcStk}_{\Q_p}$ will serve as our replacement for the category of $v$-sheaves on perfectoid spaces over $\Q_p$, and we can now move to our restricted version of the category of analytic stacks.

 As mentioned before, we will have to restrict ourselves to bounded $\Q_p$-algebras $A$, in fact some full subcategory thereof.
 Associated to a bounded $\Q_p$-algebra $A$ (always seen as an analytic ring over $\Q_{p,\solid}$ through the induced analytic ring structure) are:
\begin{itemize}
\item its $\dagger$-nilradical $\Nil^\dagger(A)\subseteq A$,
\item its $\dagger$-reduction $A^{\dagger-\red}:=A/\Nil^\dagger(A)$,
\item its subring $A^{\leq 1}\subseteq A$ of (overconvergent) powerbounded elements, \cref{xjs82k},\footnote{In \cite[Definition 2.6.1]{camargo2024analytic} the ring of power-bounded elements is, on underlying anima, given by the subanima of maps $\Q_{p}\langle T\rangle\to A$. As the Tate algebra $\Q_p\langle T\rangle$ is uncountably generated over $\Q_p$ as a solid algebra, we replace it by its better behaved overconvergent variant $\Q_p\langle T\rangle_{\leq 1}:=\bigcup_{r>0} \Q_p\langle p^{r}T\rangle$.}

  \item its uniform completion $A^u:=(A^{\leq 1})^{\wedge_p}[1/p]$ (where $M^{\wedge_p}$ denotes the $p$-adic completion), \cref{xwj29e}.
%\item its adic spectrum $\Spa(A)$, \cite[Definition 2.7.3]{camargo2024analytic},
%  \item and its Berkovich spectrum $\mathcal{M}(A)$, i.e., the maximal Hausdorff quotient of the spectral space $\Spa(A)$.
\end{itemize}

Of course, we can map a bounded ring $A$ to the arc-sheafification of the functor $$R\mapsto \mathrm{Map}_{\Q_{p,\solid}-\textrm{alg.}}(A,R)$$ on separable perfectoid rings.
 It is however not necessarily true that a given $!$-cover $A\to B$ of bounded rings induces a surjection on associated arc-sheaves.\footnote{One can hope that a generalized notion of perfectoid Tate rings might solve this issue. However, in order to relate our theory to \cite{scholze2025geometrization} and \cite{scholze2024berkovichmotives} it seems easier to restrict instead the class of bounded $\Q_p$-algebras.} To solve this issue we study the following notion, which might be of independent interest.

\begin{definition}
  \label{sec:new-defin-analyt-1-gelfand-ring-intro}
  A bounded $\Q_p$-algebra $A$ is called \textit{Gelfand} if $A^u$ is a Banach $\Q_p$-algebra.
\end{definition}

We note that the condition that the bounded $\Q_p$-algebra $B:=A^u$ is Banach implies that $B^{\leq 1}/p$ is discrete (in contrast to being a rather general solid $\F_p$-algebra). Hence, being Gelfand puts a strong constraint on a bounded $\Q_p$-algebra. For a partial justification of our choice of terminology (motivated by conversations with Clausen), see \Cref{rk:uniformity-for-banach-rings}. 

\begin{example} 
\label{intro:ex-gelfand-ring}
An example of a bounded $\Q_p$-algebra which is not Gelfand is the ring $A=\Z_p[\![T]\!][1/p]$ (with topology induced by the $(p,T)$-adic topology on $\Z_p[\![T]\!]$). Indeed, $A^{\leq 1}/p=\mathbb{F}_p[\![T]\!]$ is not discrete. 
\end{example}

If $A$ is a Gelfand ring, we define its \textit{Berkovich spectrum} $\mathcal{M}(A)$ as the Berkovich spectrum of the Banach algebra $A^u$. We show, following \cite{AnStacks}:

\begin{proposition}[\Cref{xhs82j}]
Let $A$ be a Gelfand $\mathbb{Q}_{p}$-algebra with $\mathcal{M}(A)$ a metrizable compact Hausdorff space with finite cohomological dimension. There is a natural map of analytic stacks over $\mathbb{Q}_{p,\solid}$
\begin{equation*}
\ob{AnSpec}(A)\to \mathcal{M}(A)_{\ob{Betti}}\times_{\ob{AnSpec(\Z)}}{\ob{AnSpec}(\Q_{p,\solid})}
\end{equation*}
satisfying universal $!$-descent.  %Furthermore, there is a descendable Berkovich pro-\'etale cover $A\to B$ such that $\mathcal{M}(B)$ is a light  profinite set. 
\end{proposition}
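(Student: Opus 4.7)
The plan is to construct the map by combining the Gelfand hypothesis with the Banach-algebra construction of \cite{AnStacks}, and then to verify universal $!$-descent by reducing to finite closed covers of the Berkovich spectrum.

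\textbf{Construction of the map.} Since $A$ is Gelfand, $B := A^u$ is a Banach $\Q_p$-algebra, and by definition $\mathcal{M}(A) = \mathcal{M}(B)$. For Banach $\Q_p$-algebras, the results alluded to from \cite{AnStacks} produce a natural structure map $\AnSpec(B) \to \mathcal{M}(B)_{\Betti} \times_{\AnSpec(\Z)} \AnSpec(\Q_{p,\solid})$; roughly, for each closed Berkovich subset $Z \subseteq \mathcal{M}(B)$, the pullback to (the complement of) $Z$ is the Banach localization $B_Z$ obtained by uniform completion after forcing spectral radius bounds. The functoriality of these localizations together with the universal property of $A^u$ as the uniform completion (continuous $\Q_p$-algebra maps from $A$ to any uniform Banach algebra factor through $A^u$) yields a natural map $\AnSpec(A) \to \mathcal{M}(A)_{\Betti} \times_{\AnSpec(\Z)} \AnSpec(\Q_{p,\solid})$, compatibly with the one for $A^u$.

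\textbf{Reduction to finite closed covers.} Universal $!$-descent is local on the target, so it suffices to check descent after pullback along the $!$-cover of $\mathcal{M}(A)_{\Betti}$ associated to a finite family $\{Z_i\}$ of closed subsets with $\bigcup Z_i = \mathcal{M}(A)$. Because $\mathcal{M}(A)$ is metrizable compact Hausdorff of finite covering dimension, the Betti topology is generated by such covers (with countable refinements staying inside the light setting of the paper). The pullback of $\AnSpec(A)$ along such a cover should be identified with the disjoint union of $\AnSpec(A_{Z_i})$, where $A_{Z_i}$ is the localization of $A$ cut out by restricting spectral radii on $Z_i$, defined via the powerbounded subring $A^{\leq 1}$. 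The descent condition then reads as the assertion that the Čech diagram $A \to \prod_i A_{Z_i} \rightrightarrows \prod_{i,j} A_{Z_i \cap Z_j} \to \cdots$ is an effective $!$-descendable limit, preserved under arbitrary base change.

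\textbf{Main obstacle: descendability.} The core difficulty is proving descendability in the sense of Mathew for such closed Berkovich covers, uniformly in base change. The finite cohomological dimension of $\mathcal{M}(A)$ enters here crucially: it bounds the length of the Čech complex and hence allows one to extract a finite Mathew index for descendability, which would otherwise fail for infinite-dimensional compacta. Metrizability lets us refine to countable covers so that the entire argument stays within the separable/light framework where the techniques of the paper for induced solid analytic ring structures apply. Universality of $!$-descent then follows formally, because the localizations $A_{Z_i}$ and their iterated intersections are defined by an absolute construction (uniform completion combined with spectral radius constraints, applied to the powerbounded subring) that commutes with tensoring along arbitrary maps of analytic stacks over $\AnSpec(\Q_{p,\solid})$. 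The key obstacle is therefore purely the Mathew descendability estimate; everything else is expected to follow from the established formalism of \cite{AnStacks} together with the Gelfand reduction to a genuine Banach algebra.
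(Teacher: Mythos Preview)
Your construction of the map does not work as stated. You propose to build the map for $A^u$ (a genuine Banach algebra) and then invoke the universal property of uniform completion, but the direction is wrong: the map $A\to A^u$ gives a morphism $\AnSpec(A^u)\to\AnSpec(A)$, not a way to extend a map out of $\AnSpec(A^u)$ to one out of $\AnSpec(A)$. Maps to a Betti stack are not corepresented by algebra maps in any way that the universal property of $A^u$ can see. The paper's route is substantially different: it first constructs (\cref{PropMapSmashing}) a map of locales $\mathrm{Sm}(\ob{D}(A))\to\mathcal{M}(A)$ by assembling the idempotent rational localizations $A\to A_Z$, and then uses Aoki's smashing-spectrum adjunction together with a characterization (\cref{xj29sj}, \cref{xjj93jd}) of maps from affine analytic stacks to Betti stacks to upgrade this to a map of analytic stacks. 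The finite-cohomological-dimension hypothesis already enters in this construction, to guarantee the existence of a descendable $!$-cover $A\to B$ with $\mathcal{M}(B)$ profinite (\cref{xhsw82h}), which is what the criterion of \cref{xjj93jd} requires.

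Your descent argument also misplaces the role of the finite-dimension hypothesis. For a cover by finitely many pieces, the \v{C}ech complex is automatically bounded; the cohomological dimension does not enter there. In the paper, finite cohomological dimension is used exactly once: to show that any surjection $S\to\mathcal{M}(A)$ from a light profinite set induces a proper descendable morphism $S_{\Betti}\to\mathcal{M}(A)_{\Betti}$ (\cref{xhsw82h}, via \cite[Proposition II.1.1]{ScholzeRLL}). After base-changing along this, one is reduced to the case where $\mathcal{M}(A)$ is profinite, then further to $A$ a Banach algebra via $A^u$. The final descendability input is not a Mathew-index bound coming from dimension, but the Hahn--Banach theorem: for a Banach $\Q_p$-algebra $A$, the unit map $\Q_p\to A$ admits a $\Q_p$-linear section, so $C(S_n,\Q_p)\to A$ is $1$-descendable for each finite $S_n$, and then \cite[Proposition 2.7.2]{mann2022p} gives $2$-descendability of $C(S,\Q_p)\to A$. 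This step is the genuine content and is absent from your sketch.
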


To supplement the usefulness of Gelfand rings, we also prove the following theorem.
\begin{theorem}[\cref{xh28sj}]
  \label{sec:new-defin-analyt-2}
  Let $A$ be a Gelfand $\Q_p$-algebra.
 Then $A$ is Fredholm, i.e., each dualizable object in $\ob{D}(A)=\mathrm{Mod}_A\big(\ob{D}(\Q_{p,\solid})\big)$ is of the form $M\otimes_{A(\ast)}A$ for a perfect module $M$ over the animated ring $A(\ast)$.
\end{theorem}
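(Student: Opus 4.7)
The plan is to show that every dualizable $P \in D(A)$ arises by base change from a perfect module over $A(\ast)$, by taking the candidate $M := P(\ast)$ and verifying both that $M$ is perfect over $A(\ast)$ and that the natural comparison map $\varphi \colon M \otimes_{A(\ast)} A \to P$ is an equivalence in $D(A)$.

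The first task — showing $M$ is perfect — should follow from the fact that evaluation at $\ast$, together with the induced analytic ring structure used throughout the paper, sends dualizable objects in $D(A)$ to dualizable objects in $D(A(\ast))$. Over an animated ring such as $A(\ast)$, dualizable objects coincide with perfect modules in the classical sense, which gives perfectness of $M$ essentially for free.

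The second task — showing that $\varphi$ is an equivalence — is the heart of the theorem, and it is here that the Gelfand hypothesis enters decisively. I would first establish the statement for Banach $\Q_p$-algebras $B$ directly: the Banach condition forces $B^{\leq 1}$ to be $p$-adically complete with $B^{\leq 1}/p$ a classical discrete $\F_p$-algebra, so a dualizable object of $D(B)$ admits an integral model over $B^{\leq 1}$ whose reduction mod $p$ is dualizable, hence perfect, over the discrete ring $B^{\leq 1}/p$; derived $p$-completeness then lifts it uniquely to a perfect $B^{\leq 1}$-module, and inverting $p$ yields a perfect $B$-module matching $P$ after base change. For a general Gelfand $A$, I would then base-change $\varphi$ along the uniformization $A \to A^u$ (which is Banach by the Gelfand hypothesis), apply the Banach case to $P \otimes_A A^u$, and descend the conclusion back to $A$.

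The principal obstacle is this last descent step from $A^u$ to $A$. One must show both that the comparison map $\varphi$ being an equivalence after base change to $A^u$ is enough to conclude it over $A$, and that the perfect $A^u(\ast)$-module obtained from the Banach case genuinely descends to a perfect $A(\ast)$-module. This amounts to a conservativity property of the uniformization $A \to A^u$ on the relevant dualizable data, which should be exactly what the Gelfand condition — designed to make the uniform completion a well-controlled Banach object — is there to ensure, and it constitutes the technical core of the argument.
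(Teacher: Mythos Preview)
Your high-level strategy matches the paper's: handle the Banach case first, then transfer from $A^u$ back to $A$. But both steps you sketch have real gaps, and the paper's execution is quite different.

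For your first step, the claim that evaluation at $\ast$ sends dualizable objects in $D(A)$ to dualizable objects in $D^\delta(A(\ast))$ is not justified. The functor $(-)(\ast)$ is only the \emph{right} adjoint of the symmetric monoidal embedding $D^\delta(A(\ast)) \hookrightarrow D(A)$, hence merely lax monoidal; lax monoidal functors do not preserve dualizability in general. So you cannot conclude that $P(\ast)$ is perfect this way, and indeed the paper never argues via $P(\ast)$. Your sketch of the Banach case also assumes without argument that a dualizable object admits an integral model over $B^{\leq 1}$; the paper instead invokes the known result that Banach $\Q_p$-algebras are Fredholm (Andreychev).

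The more serious gap is the descent step. You frame it as a conservativity property of $A \to A^u$, but this map is far from conservative: it kills the entire $\dagger$-nilradical, so $(-) \otimes_A A^u$ annihilates many nonzero objects. What the paper proves instead is a very specific Nakayama-type statement (\cref{xj29ejs}): if $K$ is a \emph{static, finitely presented solid} $\pi_0(A)$-module with $\pi_0(K \otimes_A A^u) = 0$, then $K = 0$. This restricted conservativity is then fed into an inductive argument (via a criterion from \cite[Proposition~VI.1.5]{ScholzeRLL}): to show a compact $K$ with $K \otimes_A A^u$ perfect is itself perfect, one repeatedly constructs maps $A^m \to K$ that are surjective on $\pi_0$ after base change to $A^u$, uses the Nakayama lemma to deduce surjectivity over $A$, and passes to the cofiber. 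This is the technical core you correctly identified as the obstacle, but the resolution is a targeted Nakayama lemma plus induction on amplitude, not a general descent principle.
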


As with perfectoid rings, we put separability assumptions. More precisely, we introduce the following notions.

\begin{definition}[\Cref{xhs9wj}]\
  \begin{enumerate}
  \item A Gelfand ring $A$ is called \textit{separable} if $A^u$ is a separable Banach algebra over $\Q_p$.
  \item The category of \textit{Gelfand stacks} $\Cat{GelfStk}$ is defined as the category of analytic stacks on (the opposite of) the category of separable Gelfand rings $\Cat{GelfRing_{\omega_1}}\subseteq \Cat{AnRing}_{\Q_{p,\solid}}$.
    \item For $A\in \Cat{GelfRing}_{\omega_1}$, we denote by $\ob{GSpec}(A)$ the functor corepresented by $A$ on separable Gelfand rings.
  \end{enumerate}
\end{definition}

In this definition, we used the notion of analytic stacks over (the opposite of) a suitable small subcategory of analytic rings; we refer to \Cref{sec:tdstacks} for the details.

The category of Gelfand stacks inherits a $6$-functor formalism from the category of analytic stacks over $\Q_p$, which we will use heavily.
 It contains fully faithfully the category of partially proper rigid analytic spaces over $\Q_p$, and even the larger category of \textit{derived Berkovich spaces} over $\Q_p$, a category which we define in \Cref{ss:GelfandAnalyticSpaces} and which generalizes the category of (good) Berkovich spaces over $\Q_p$;\footnote{For the notion of \textit{good} Berkovich space, see \cite{berkovich1993etale}.} this construction is analogous to the one of derived Tate adic spaces of \cite[Section 2.7]{camargo2024analytic}.

\begin{remark} 
  It is a natural question to wonder how the category $\Cat{GelfStk}$ relates to the category $\Cat{AnStk}_{\Q_{p,\solid}}$ of analytic stacks over $\Q_{p,\solid}$.
  There is a unique left exact colimit preserving functor $(-)_{\rm An}: \Cat{GelfStk} \to \Cat{AnStk}_{\Q_{p,\solid}}$, sending $\ob{GSpec}(A)$ to $\ob{AnSpec}(A)$ for $A$ a separable Gelfand ring.

  This functor may not be fully faithful on general Gelfand stacks, but it is on derived Berkovich spaces over $\Q_p$, which will be all we need in practice.

  The functor $(-)_{\rm An}$ has a right adjoint $(-)_{\rm Gelf}:  \Cat{AnStk}_{\Q_{p,\solid}} \to \Cat{GelfStk}$. To illustrate what it does, here are some simple examples: 
  \begin{itemize}
  \item $(\AnSpec(\Q_p[T]))_{\rm Gelf} = \mathbb{A}_{\Q_p}^{1, \mathrm{an}}:= \bigcup_{n} \GSpec(\Q_p\langle p^n T\rangle)$ is (the Gelfand stack incarnation of) the analytic affine line.
  \item $(\AnSpec(\Z_p((T))^{\wedge_p}[1/p]))_{\rm Gelf} = \emptyset$, namely, $\Z_p((T))^{\wedge_p}[1/p]$ does not admit any morphism as a solid $\Q_p$-algebra to a non-archimedean field extension of $\Q_p$
 (the ring $\Z_p((T))^{\wedge_p}[1/p]$ is known as the Amice ring in $p$-adic Hodge theory and arises geometrically as the completed residue field at the type (5) point in the closure of the affinoid unit disc, up to replacing $T$ by $T^{-1}$).
  \item Continuing \Cref{intro:ex-gelfand-ring}, $(\mathrm{AnSpec}(\Z_p[\![T]\!][1/p]))_{\rm Gelf}$ is the open unit disc over $\Q_p$ (seen as a Gelfand stack, via the fully faithful embedding of (partially proper) rigid analytic spaces in Gelfand stacks). In other words, the generic fiber of $\AnSpec(\Z_p[\![T]\!])$ in Gelfand stacks coincides with the rigid analytic generic fiber of the formal scheme $\mathrm{Spf}(\Z_p[\![T]\!])$ in the sense of Berthelot (one could generalize this statement to formal schemes locally of finite type over $\Z_p$).
  \end{itemize}
  We note that for the first example, working with stacks on bounded rings would have produced the same answer, but not for the third example (indeed, $\Z_p[\![T]\!][1/p]$ is already a bounded $\Q_p$-algebra). 
\end{remark}

For the study of Gelfand stacks, it is useful to replace the category of all separable Gelfand rings by a subclass of bounded rings forming a basis of the $!$-topology on separable Gelfand rings.
 With this in mind, we introduce the following notions.

\begin{definition}[\Cref{xkwjw8}]
Let $A$ be a Gelfand ring.

\begin{enumerate}

\item We say that $A$ is \textit{nilperfectoid} if $A^{\dagger-\ob{red}}$ is a perfectoid ring. We let $\Cat{GelfRing}^{\ob{nilperfd}}_{\Q_p}\subset \Cat{GelfRing}_{\Q_p}$ be the full subcategory of nilperfectoid Gelfand rings. 

\item  We say that $A$ is \textit{totally disconnected} if $\mathcal{M}(A)$ is a profinite set.
 We say that $A$ is \textit{strictly totally disconnected} if it is totally disconnected and all its completed residue fields are algebraically closed fields.
 We let $\Cat{GelfRing}^{\ob{td}}_{\Q_p}\subset \Cat{GelfRing}_{\Q_p}$ (resp.\ $\Cat{GelfRing}^{\ob{std}}_{\Q_p}$) be the full subcategory of (strictly) totally disconnected Gelfand rings.   
\end{enumerate}
\end{definition}

 We prove that for any separable Gelfand ring $A$, there exists a descendable map $A\to A^\prime$, which is quasi-pro-\'etale on uniform completions, with $A^\prime$ a separable nilperfectoid ring (\cref{LemBasisTopologyGelfandRings}). In particular, nilperfectoid rings form a basis of the $!$-topology. They play a role analogous to the role played by semiperfectoid rings in integral $p$-adic Hodge theory. In fact, they are in some sense a generalization of them: generic fibers of semiperfectoid rings are examples of nilperfectoid rings (see \Cref{sec:defin-main-prop-uniform-completion-and-perfectoidization}). We defined Gelfand stacks using all (separable) Gelfand rings rather than only (separable) nilperfectoid rings, to facilitate the comparison with $\dagger$-rigid spaces and more generally (derived) Berkovich spaces over $\Q_p$.

\begin{remark}
The definition of the diamond associated to a rigid analytic space $X$ over $\Q_p$ is based on the idea of only remembering this space through the collection of all maps from perfectoid spaces to it.
 This works as long as one only cares about ``topological'' information about $X$ (such as its \'etale cohomology), because $X$ becomes perfectoid locally for the pro-\'etale and $v$-topology.
 This substitution is very useful since perfectoid rings have nice cohomological properties (such as almost vanishing of higher cohomology of the integral structure sheaf).
 On the other hand, if one is interested in things such as (filtered) de Rham cohomology of smooth rigid spaces or locally analytic representations of $p$-adic Lie groups, one needs to consider functors defined on a category of rings with a possibly non-trivial (dagger) nilradical.
 Thus, we see that nilperfectoid rings are so to say the minimal choice of a category of rings combining both features, i.e.\ the notion of Gelfand stack is as close as possible to the notion of arc-stack (over $\Q_p$) while allowing non-trivial overconvergent thickenings; and we will see next that the difference is in some sense exactly measured by the construction of the analytic de Rham stack.
\end{remark}

\subsection{A new definition of analytic de Rham stacks}
With these preparations, we can come to (a first version of) the definition of the analytic de Rham stack.
 We define in \Cref{sec:perf-analyt-de-1} the \textit{perfectoidization} functor
$$ (-)^\diamond:   \Cat{GelfStk} \to \Cat{ArcStk}_{\Q_p},$$
as the colimit preserving functor sending $\ob{GSpec}(A),\ A\in \Cat{GelfRing}_{\omega_1}$ to the functor $\Marc(A):=(\ob{GSpec}(A))^\diamond$, which sends a separable perfectoid $\Q_p$-algebra $B$ to the anima $\Hom_{\Cat{GelfRing}_{\omega_1}}(A,B)$.
 Its right adjoint is the \textit{big analytic de Rham stack} functor
$$
(-)^{\rm DR}: \Cat{ArcStk}_{\Q_p} \to \Cat{GelfStk}.
$$

\begin{remark}
In fact, $(-)^\diamond$ is itself the \emph{right} adjoint of a functor $\widehat{(-)}\colon \Cat{ArcStk}_{\Q_p}\to \Cat{GelfStk}$ that realizes light arc-stacks (over $\Q_p$) geometrically.
 The functor $\widehat{(-)}$ is the left Kan extension of the functor that sends the arc-stack $\mathcal{M}_{\ob{arc}}(A)$ represented by a totally disconnected perfectoid ring $A$ to $\ob{GSpec} A$.
 	Via the functor $\widehat{(-)}$ and the $6$-functor formalism on $\Cat{GelfStk}$, one obtains a $6$-functor formalism for $\widehat{\mathcal{O}}$-cohomology on arc-stacks over $\Q_p$; this justifies the notation.
 See the end of \Cref{sec:arc-and-!-topologies-on-perfectoid-rings} for more on the relation to \cite{AMdescendPerfd} and \cite{ALBMFFCoho}. 
	\end{remark}

	\begin{example}
	Here is an example illustrating how these functors play with each other.
 Let $G$ be a compact $p$-adic Lie group.
 Let $G^{\rm la} = \GSpec(C^{\rm la}(G,\Q_p)) \in \Cat{GelfStk}$, with $C^{\rm la}(G,\Q_p)$ the ring of locally analytic functions on the group $G$.
 We have $(G^{\rm la})^\diamond = \underline{G}$, $\widehat{(G^{\rm la})^\diamond}= \GSpec(C(G,\Q_p))$ and the counit morphism $\widehat{(G^{\rm la})^\diamond} \to G^{\rm la}$ is the map induced by the inclusion
	$ C^{\rm la}(G,\Q_p) \to C(G,\Q_p)$.
	We also have $\underline{G}^{\rm DR} = G^{\rm sm}=\GSpec(C^{\rm lc}(G,\Q_p))$ (locally constant functions) and the unit morphism $G^{\rm la} \to ((G^{\rm la})^\diamond)^{\rm DR}$ is the map induced by the inclusion
	$ C^{\rm lc}(G,\Q_p) \to C^{\rm la}(G,\Q_p)$, \cref{ExamCondensedAnimadeRham}, \cref{sec:main-descent-result-1-descent-for-the-big-de-rham-stack}.
	\end{example}

	As the capital letters notation may suggest, $(-)^{\rm DR}$ is not what we will consider as the ``right'' version of the analytic de Rham stack functor in the present paper\footnote{We believe each version of the de Rham stack has its own advantages and drawbacks. We chose the one best suited to our needs, but for other questions the $\rm DR$ choice would have been reasonable too (and for many questions both versions can be used).}.
        One important, or at least very useful, aspect for the applications of the theory both in this text (\cref{sec:p-adic-monodromy}) and in future work is that the functor sending $X$ to its analytic de Rham stack should have strong descent properties: if $X \to X^\prime$ is an epimorphism of (light) arc-stacks over $\Q_p$, we want the induced map $X^{\dRall} \to X^{\prime,\dRall}$ to be an epimorphism of Gelfand stacks.
        It is however false in this general setting\footnote{Indeed, if it were the case we would have $X^{\dRall}=X_{\Betti}$ for $X$ a compact Hausdorff space (realized as an arc-stack on the left hand side); however this is not true, see \cref{ExampleCondensedAnimanodeRham}.}.
        For reasons to be explained in the main body of the paper, the situation would simplify considerably if separable strictly totally disconnected rings would form a basis of the $!$-topology on separable Gelfand rings.
        But it is unfortunately not true that all separable Gelfand rings $R$ admit a descendable map to such a ring (\cref{sec:-covers-perfectoids-1-counter-example-to-cover-by-strictly-totally-disconnected}).
        Nevertheless, we do prove such a statement under some sort of finiteness assumption on $R$: we show it when $R$ is \textit{quasi-finite dimensional} (\textit{qfd} for short), i.e., its uniform completion is quasi-pro-\'etale over a finite-dimensional analytic affine space over $\Q_p$.
        
        \begin{remark}
          \label{sec:new-defin-analyt-1-remark-finite-dimensionality}
          To ensure the existence of a descendable cover by a strictly totally disconnected ring, we will see that finite cohomological dimension of the Berkovich space is enough (\cref{LemBasisTopologyGelfandRings}).
          But we want to work with a class of separable Gelfand rings having such covers and which is stable by fiber products, and the last condition would not be guaranteed if we considered only the condition of having a Berkovich space of finite cohomological dimension.
          One could instead restrict to separable Gelfand rings $A$ such that each Berkovich residue field of $A^u$ has finite geometric transcendence degree over $\Q_p$, which implies finite cohomological dimension of the Berkovich space.
          But to get hyperdescent, rather than mere descent, we will need to use the stronger qfd condition in \cref{Subsection:HyperdescentdR}.
    \end{remark} 

        For this reason, we replace in the above the categories $\Cat{ArcStk}_{\Q_p}$, $\Cat{GelfStk}$ by their versions 
 $$
 \Cat{ArcStk}_{\Q_p}^{\rm qfd}, ~ \Cat{GelfStk}^{\rm qfd}
 $$
 defined on qfd separable perfectoid rings, resp.\ qfd separable Gelfand rings,  instead of all separable perfectoid rings, resp.\ all separable Gelfand rings.
  We still have a similarly defined perfectoidization functor (denoted by the same letter):
 $$ (-)^\diamond:   \Cat{GelfStk}^{\rm qfd} \to \Cat{ArcStk}_{\Q_p}^{\rm qfd},$$
 
 \begin{definition}
The qfd perfectoidization functor has a right adjoint, that we call the \textit{analytic de Rham stack} functor, and denote
 $$ (-)^\dR:     \Cat{ArcStk}_{\Q_p}^{\rm qfd} \to \Cat{GelfStk}^{\rm qfd}.
 $$
 If $X$ is a qfd Gelfand stack, we will abuse notation and write 
 $$
 X^\dR= (X^\diamond)^\dR.
 $$
 There is also a relative version: if $X \to Y$ is a morphism of qfd Gelfand stacks, we denote by
 $$
 X^{\dR/Y} = X^\dR \times_{Y^\dR} Y
 $$
 the \textit{relative analytic de Rham stack of $X$ over $Y$}.
 \end{definition}

Here are some important properties of the analytic de Rham stack that we prove.

\begin{theorem}\label{main-thm-intro-dr-stack}
\begin{enumerate}
\item (\Cref{TheoMaindeRham2})
The functor $(-)^{\dR}$ commutes with limits and colimits.
 In particular, if $Y\to X$ is an epimorphism of qfd Gelfand stacks, the map $Y^{\dR}\to X^{\dR}$ is an epimorphism of qfd Gelfand stacks.
\item (\Cref{PropdeRhamBerkovich})
Let $X$ be a qfd derived Berkovich space over $\Q_p$, in the sense of \Cref{DefinitionBerkovichSpaces} (for example, a classical Berkovich space is qfd as an arc-stack).
 Then $X^{\dR}$ is the sheafification for the $!$-topology of the qfd Gelfand stack sending $A\in \Cat{GelfRing}^{\qfd}_{\omega_1}$ to $X(A^{\dagger-\red})$.
 Furthermore, if $X\to  Y$ is a map of qfd derived Berkovich spaces whose associated map $X^{\diamond}\to Y^{\diamond}$ of arc-stacks is an immersion, $X^{\dR/Y}= Y^{\dagger_{X}}$.
 In particular, $X^{\dR}=(Y^{\dagger_X})^{\dR}$, where $Y^{\dagger_{X}}$ is the overconvergent neighbourhood of $X$ in $Y$ (cf. \Cref{DefDaggerNeigh}).
\item (\Cref{thm:prim-descendable-berkovich}) Let $K$ be a qfd complete non-archimedean field over $\Q_p$, separable as a $\Q_p$-Banach space, and $X$ a $\dagger$-rigid space over $K$ (see \Cref{DefDaggerRigid}; equivalently a dagger space in the sense of Gro\ss e-Kl\"onne \cite{grosse2000rigid}).
 Then the natural map $X\to X^{\dR/K}$ is an epimorphism of Gelfand stacks and we have a presentation as qfd Gelfand stacks
\[
X^{\dR/K}= \varinjlim_{[n]\in \Delta^{\op}} (X^{\times_K n+1})^{\dagger_{\Delta X}}.
\]
In particular, 
$$
(\mathbb{A}_{\Q_p}^{1, \rm an})^{\rm dR}= \mathbb{A}_{\Q_p}^{1, \rm an}/\mathbb{G}_a^\dagger,
$$
with $\mathbb{G}_a^\dagger=\GSpec(\Q_p\langle T \rangle_{\leq 0})$, the overconvergent neighborhood of the origin, acting by translations.

\end{enumerate}
\end{theorem}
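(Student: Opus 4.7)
I would prove the three parts in order, with (1) used in (3) and (2) providing the geometric interpretation. For part (1), right-adjointness of $(-)^\dR$ to $(-)^\diamond$ yields preservation of limits for free, so the content is colimit preservation, equivalent in the presentable setting to sending epimorphisms to epimorphisms. The key input is the qfd refinement of \cref{LemBasisTopologyGelfandRings}: every separable qfd Gelfand ring admits a descendable cover by a separable strictly totally disconnected Gelfand ring. Given an epimorphism $Y\to X$ in $\Cat{ArcStk}_{\Q_p}^{\qfd}$, I would check that $Y^\dR\to X^\dR$ is epi on charts $\GSpec(A)\to X^\dR$ for $A$ qfd separable Gelfand, refine to a strictly totally disconnected cover $A\to A'$, and observe that on $A'$ one has $\GSpec(A')^\diamond=\Marc(A')$ without overconvergent thickening, so the lift of the test map through $Y^\dR$ follows directly from the arc-epimorphism hypothesis.

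For part (2), the adjunction computes $\Hom(\GSpec(A), X^\dR)=\Hom(\Marc(A), X^\diamond)$. Any map from $A$ to a perfectoid Tate $\Q_p$-algebra $B$ has image in the uniform locus and so factors through $A^{\dagger-\red}$ (perfectoid rings being $\dagger$-reduced); combined with arc-descent on the right, this identifies $X^\dR$ with the $!$-sheafification of the functor $A\mapsto X(A^{\dagger-\red})$. For the immersion statement, both $X^{\dR/Y}$ and $Y^{\dagger_X}$ sit inside $Y$ as the subfunctor of maps $\GSpec(A)\to Y$ whose $\dagger$-reduction factors through $X$ -- the former by construction and the first half of (2), the latter by the definition of the overconvergent neighborhood -- and so coincide. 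Applying $(-)^\dR$ and invoking (1) then gives $X^\dR=(Y^{\dagger_X})^\dR$.

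For part (3), the epimorphism $X\to X^{\dR/K}$ is verified $\dagger$-locally on $X$: after reducing to a $\dagger$-affinoid $X=\GSpec(A)$, a test map from a strictly totally disconnected qfd separable Gelfand ring $B$ to $X^{\dR/K}$ is by (2) given by a map $A\to B^{\dagger-\red}$, which lifts to $A\to B$ since $B$ is itself $\dagger$-reduced. Given this epimorphism, part (1) identifies $X^{\dR/K}$ with the geometric realization of the \v{C}ech nerve, and the immersion part of (2) applied to the diagonal $\Delta\colon X\hookrightarrow X^{\times_K n+1}$ computes each term as $(X^{\times_K n+1})^{\dagger_{\Delta X}}$. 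The formula for $\A^{1,\an}_{\Q_p}$ over $\Q_p$ follows by taking the kernel of the difference map $(x,y)\mapsto y-x$, whose overconvergent neighborhood along the diagonal is $\Ga^\dagger$. The main obstacle is part (1): colimit preservation of $(-)^\dR$ rests on a careful interplay between the $!$-topology on qfd Gelfand rings and the arc-topology on qfd perfectoid rings, and depends essentially on the qfd restriction (counterexamples exist otherwise, cf.\ \cref{sec:-covers-perfectoids-1-counter-example-to-cover-by-strictly-totally-disconnected}); the technical linchpin is establishing the strictly-totally-disconnected basis theorem in the qfd setting.
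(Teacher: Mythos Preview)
There are two genuine gaps.

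\textbf{Part (1).} Your claim that colimit preservation is ``equivalent in the presentable setting to sending epimorphisms to epimorphisms'' is false. A right adjoint between presentable $\infty$-categories that preserves effective epimorphisms need not preserve arbitrary colimits. What you sketch is precisely the paper's \cref{sec:furth-results-analyt-1-surjections-on-de-rham-stack} (arc-\emph{descent} for $(-)^\dR$), but this alone does not yield \cref{TheoMaindeRham2}. The paper's route is different: it shows that at the level of \emph{presheaves}, $(-)^\dR$ is a \emph{left} adjoint (namely $f^*$ for a morphism of sites $f$ induced by $A\mapsto A^u$ on nilperfectoid rings), hence preserves colimits there. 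The content is then to show this restricts to sheaves, which amounts to showing that its right adjoint $f_*$ sends Gelfand stacks to arc-stacks---and this requires arc-\emph{hyperdescent} (\cref{TheoHyperdescentdR}), not just descent. The hyperdescent step is substantially harder: it needs the nuclearity and $\omega_1$-compactness machinery of \cref{subsection:DeRham-and-nuclearity} together with the uniform Ext-bound of \cref{LemmaExtEtales} to show the relevant cosimplicial totalization towers are pro-constant.

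\textbf{Part (3).} Your lifting argument is backwards. You write that a map $A\to B^{\dagger-\red}$ ``lifts to $A\to B$ since $B$ is itself $\dagger$-reduced'', but a strictly totally disconnected Gelfand ring $B$ is \emph{not} $\dagger$-reduced in general: the definition only constrains $\mathcal{M}(B)$ and the residue fields, and the basis for the $!$-topology consists of \emph{nilperfectoid} strictly totally disconnected rings, which typically have nontrivial $\dagger$-nilradical. (If they were all $\dagger$-reduced, the de Rham stack would be trivial.) The correct mechanism is a property of the \emph{source}, not the target: a $\dagger$-rigid space is locally a quotient of an overconvergent Tate algebra, and one uses the $\dagger$-formal smoothness of Berkovich smooth maps (\cref{sec:appr-gelf-rings-examples-of-dagger-formally-smooth-maps}) to lift along $B\to B^{\dagger-\red}$, then handles the non-smooth case by a d\'evissage (Kashiwara's lemma). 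The paper in fact proves more than epimorphism---it shows the map is prim and descendable---which is what makes the \v{C}ech presentation work.

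Your part (2) is closer to the paper's argument but glosses over the key step: the identification $X^\diamond(\Marc(A))=X(A^{\dagger-\red})$ only holds when $A^{\dagger-\red}$ is perfectoid (so that $\widehat{\Marc(A^{\dagger-\red})}=\GSpec(A^{\dagger-\red})$), i.e., when $A$ is nilperfectoid. This is exactly why the paper establishes that nilperfectoid rings form a basis of the $!$-topology (\cref{LemBasisTopologyGelfandRings}) before proving \cref{PropComparisonTwodeRhamStacks}.
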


Some remarks are in order.

\begin{remark}
\label{intro-rmk-after-main-thm1}\
\begin{itemize}
\item The assertions of the theorem also hold for the big de Rham stack without any qfd assumption, with the critical exception of commutation with colimits in point (1). 
Point (1) of \Cref{main-thm-intro-dr-stack} allows one to compute some examples. For example, it shows that 
$\Marc(\mathbb{C}_p)^\dR = \GSpec(\overline{\Q}_p)$.
 It also implies that if  $G$ is a locally profinite group, and  $B\underline{G} \in  \Cat{ArcStk}_{\Q_p}^{\rm qfd}$ its classifying (arc-)stack, then
$$(B\underline{G})^{\dR} = BG^{\rm sm},$$
with $G^{\rm sm}$ the Berkovich space with underlying topological space $G$ and sheaf of functions given by the locally constant functions of $G$ with values in $\Q_p$.
This example works the same for the big analytic de Rham stack.

\item Point (2) of \Cref{main-thm-intro-dr-stack} shows that for qfd derived Berkovich spaces over $\Q_p$, the analytic de Rham stack as defined in this paper agrees with the one defined in \cite{camargo2024analytic}, in a sense made precise in \Cref{ss:GelfandAnalyticSpaces}.

\item The last assertion of point (2) of \Cref{main-thm-intro-dr-stack} can be regarded as a general version of Kashiwara's lemma for the analytic, rather than algebraic, de Rham stack (overconvergent completions replacing formal completions). 

\end{itemize}
\end{remark}

\subsection{$6$-functor formalism for the analytic de Rham stack}
One important benefit of defining the analytic de Rham stack as a (certain kind of) analytic stack is that one gets for free the associated cohomology theory, i.e.\ (overconvergent) de Rham cohomology in this case (cf. \Cref{sec:coh-dR-stack-and-dR-coh}), a category of coefficients equipped with 6 operations, namely quasi-coherent sheaves on this stack.
 As usual, to make it useful, one needs to exhibit examples of morphisms which behave well from the point of view of the 6 operations like smooth or proper morphisms should: in technical parlance, cohomologically smooth and proper morphisms (see e.g.  \cite{heyer20246functorformalismssmoothrepresentations} for precise definitions).
 We first introduce the following convenient notion.
 
 \begin{definition}[\Cref{DefLQFDim}, \Cref{DefRigSmoothRigEtale}]
  Let $f\colon Y\to X$ be a morphism of qfd arc-stacks over $\Q_p$. 
 \begin{enumerate}
 \item  We say that $f$ is \textit{locally of quasi-finite dimension} (or \textit{lqfd}) if there is a strict closed cover $Y_i \subset Y$ (i.e.\ a cover by closed subspaces that can be refined by an open cover) such that the maps $Y_i\to X$ are qcqs and quasi-pro-\'etale over some relative affine space $Y_i\to \mathbb{A}^{d,\diamond}_{X}$ (with $d$ possibly depending on $i$).
\item We say that $f$ is \textit{\'etale} if, locally in the arc-topology of  $X$ and the analytic topology of $Y$, $f$ factors as a composite of open immersions (\cite[Definition 4.21]{scholze2024berkovichmotives}) and finite \'etale maps.
  We say that $f$ is \textit{smooth} if, locally in the arc-topology of $X$ and the analytic topology of $Y$, $f$ factors as a composite $ Y\xrightarrow{g} \mathbb{A}^{d,\diamond}_{X}\to X$ where $g$ factors as a composite of finite \'etale maps and open immersions. 
  \end{enumerate}
\end{definition}

For instance, usual smooth, resp.\ \'etale, maps of classical Berkovich spaces over $\Q_p$ induce smooth, resp. \'etale maps on the associated arc-stacks over $\Q_p$.

We then prove the following theorem.

\begin{theorem}
\label{intro-main-th-6ff-for-dr-stacks}\
\begin{enumerate}
\item (\Cref{LemmLqfd}) Let $f\colon Y\to X$ be a morphism of qfd arc-stacks over $\Q_p$, which is lqfd arc-locally on $X$.
 Then  $f^{\dR}\colon Y^\dR\to X^\dR$ is $!$-able. If in addition $f$ is proper, then $f^{\dR}$ is cohomologically proper.
\item (\Cref{sec:geom-prop-analyt-1-cohomological-smoothness-for-smooth-rigid-spaces})
   Let $f\colon Y\to X$ be a smooth morphism of qfd arc-stacks over $\Q_p$. Then the map $Y^{\dR}\to X^{\dR}$ is cohomologically smooth, with dualizing sheaf given by $1_{Y^\dR}[2d]$ if $f$ is of pure dimension $d$. If $f$ is \'etale, then $f^{\dR}$ is in addition cohomologically \'etale. 
  \item (\Cref{sec:geom-prop-analyt-1-cohom-smoothness-of-perfectoid-unit-disc})
    Let $X=\mathring{\mathbb{D}}^{\times,\diamond}_{\infty}=\varprojlim_{x\mapsto x^p} \mathring{\mathbb{D}}^{\times,\diamond}_{\mathbb{Q}_p}$ be the open punctured pre-perfectoid unit disc seen as an arc-stack. Then, the map $X^{\dR}\to \GSpec(\mathbb{Q}_p)$ is cohomologically smooth with dualizing sheaf isomorphic to $1_{X}[2]$.
\end{enumerate}
\end{theorem}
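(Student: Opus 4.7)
The plan is to reduce to the smooth Berkovich case (part (2) of \Cref{intro-main-th-6ff-for-dr-stacks}) via the pro-finite-\'etale structure of the tower defining $X$. Set $Y := \mathring{\mathbb{D}}^{\times,\diamond}_{\Q_p}$ and let $F \colon Y \to Y$ be the arc-stack morphism induced by $x \mapsto x^p$, so that
\[
X \;=\; \varprojlim\bigl(\,Y \xleftarrow{F} Y \xleftarrow{F} Y \xleftarrow{F} \cdots\bigr).
\]
Since $p$ is invertible away from the origin, each $F$ is a finite \'etale $\mu_p$-torsor, hence $X \to Y$ is a qcqs quasi-pro-\'etale morphism of qfd arc-stacks over $\Q_p$; in fact a torsor under the profinite group $\underline{\Z_p(1)}$.

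At the finite level, $\mathring{\mathbb{D}}^{\times}_{\Q_p}$ is a smooth Berkovich curve of pure dimension one over $\Q_p$, so part (2) of \Cref{intro-main-th-6ff-for-dr-stacks} gives that $Y^{\dR} \to \GSpec(\Q_p)$ is cohomologically smooth with dualizing sheaf $1_{Y^{\dR}}[2]$, and that $F^{\dR} \colon Y^{\dR} \to Y^{\dR}$ is cohomologically \'etale. By part (1) of \Cref{main-thm-intro-dr-stack}, the functor $(-)^{\dR}$ commutes with limits, so
\[
X^{\dR} \;=\; \varprojlim\bigl(Y^{\dR} \xleftarrow{F^{\dR}} Y^{\dR} \xleftarrow{F^{\dR}} \cdots\bigr).
\]
Granting that the resulting projection $\pi\colon X^{\dR} \to Y^{\dR}$ is cohomologically \'etale, composition with the cohomologically smooth $Y^{\dR} \to \GSpec(\Q_p)$ yields the cohomological smoothness of $X^{\dR} \to \GSpec(\Q_p)$ of relative dimension one, with dualizing sheaf $\pi^*(1_{Y^{\dR}}[2]) \simeq 1_{X^{\dR}}[2]$.

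The main obstacle is the pro-finite-\'etale passage to the limit, that is, showing that $(-)^{\dR}$ sends the $\underline{\Z_p(1)}$-torsor $X \to Y$ to a cohomologically \'etale map of qfd Gelfand stacks. My strategy is to base-change along a descendable arc-cover $Y' \to Y$ with $Y'$ strictly totally disconnected (which exists because $Y$ is qfd; compare \Cref{LemBasisTopologyGelfandRings} and the discussion preceding \Cref{sec:new-defin-analyt-1-remark-finite-dimensionality}), on which the $\underline{\Z_p(1)}$-torsor becomes trivial. Using the commutation of $(-)^{\dR}$ with the defining fiber product, and the identification $(\underline{\Z_p(1)})^{\dR} = \Z_p(1)^{\sm}$ from the remark after \Cref{main-thm-intro-dr-stack} applied to $G = \Z_p(1)$, the pulled-back projection becomes $Y'^{\dR} \times \Z_p(1)^{\sm} \to Y'^{\dR}$. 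This is the base change of the profinite projection $\Z_p(1)^{\sm} \to \GSpec(\Q_p)$, which is cohomologically \'etale. Arc-descent for cohomological \'etaleness in the Gelfand-stack $6$-functor formalism then transports this back to $\pi$, completing the proof.
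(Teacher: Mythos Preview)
Your argument has a genuine gap at the crucial step: the claim that the projection $\Z_p(1)^{\sm}\to\GSpec(\Q_p)$ is cohomologically \'etale is false, and this is exactly what makes the result nontrivial.

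Concretely, write $f\colon S_{\Betti}=\GSpec(C^{\lc}(S,\Q_p))\to\GSpec(\Q_p)$ for $S=\Z_p(1)$. This map is prim (affinoid with induced structure), so $f_!=f_*$ is the forgetful functor and its right adjoint is $f^!(-)=\iHom_{\Q_p}(C^{\lc}(S,\Q_p),-)$. In particular $f^!\Q_p$ is the $\Q_p$-linear dual of the ind-finite space $C^{\lc}(\Z_p,\Q_p)=\varinjlim_n\Q_p^{p^n}$, namely the pro-object $\varprojlim_n\Q_p^{p^n}$ of $\Q_p$-valued measures. This is not isomorphic to $f^*\Q_p=C^{\lc}(\Z_p,\Q_p)$ as a $C^{\lc}(\Z_p,\Q_p)$-module, nor is it invertible: since $f$ is prim, suaveness of $f$ would force $f_*$ to preserve dualizable objects, but $f_*1=C^{\lc}(\Z_p,\Q_p)$ is an infinite-dimensional discrete $\Q_p$-vector space, hence not perfect. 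So $f$ is neither cohomologically \'etale nor suave, and by base change the same holds for your $\pi\colon X^{\dR}\to Y^{\dR}$. The paper flags this explicitly: ``inverse limits of cohomologically smooth maps are rarely cohomologically smooth again (e.g., the uncompleted perfectoid open unit disc is \emph{not} cohomologically smooth).''

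The paper's proof is therefore substantially different. It does not try to factor through $Y^{\dR}$ directly but instead applies the abstract criterion of \cref{TechnicalLemma6Functors} to a \emph{two-parameter} approximation: one writes $\mathring{\DD}^{\times,\dR}_\infty$ as the limit (in the kernel category) over both the $p$-power tower index $m$ and over the quotients $\mathring{\DD}^\times/(1+\DD^{\leq 1/2^n})$ as $n\to\infty$. The key computation, done after base change to $\Q_p^{\cyc}$, is that for $m\geq n-1$ the pushforward $f_{m,*}1\to f_{\infty,*}1$ along $\overline{\DD}_m/\Gm^\dagger\to\overline{\DD}/(1+\DD^{\leq 1/2^n})$ is already an equivalence; this is verified via Cartier duality, translating the question into a spectral estimate on the logarithmic derivation $T\partial_T$ acting on $\Q_p^{\cyc}\langle T^{1/p^{m+k}}\rangle/\Q_p^{\cyc}\langle T^{1/p^m}\rangle$. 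The stabilization is what makes the limit suave despite the individual transition maps failing to be.
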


Once again, the statement of the theorem deserves some remarks. 

\begin{remark}\
\begin{itemize}
\item Points (1) and (2) of \Cref{intro-main-th-6ff-for-dr-stacks} give in particular finiteness and Poincar\'e duality for de Rham cohomology of smooth proper rigid varieties over $\Q_p$, even with coefficients (``analytic $D$-modules'').
 More generally, one gets Poincar\'e duality in the non-proper case, using compactly supported de Rham cohomology (realized by the lower-shriek functor along de Rham stacks).
\item Let $X$ be a partially proper smooth rigid variety over $\Q_p$.
 In this paper, we do not make precise the relation between quasi-coherent sheaves on $X^{\rm dR}$ and analytic $D$-modules, e.g.\ as defined in the work of Ardakov--Wadsley, whence the quotation marks in the previous item.
 This relationship will be analyzed in detail in \cite{AnDModRJRC}.
\item Continuing the previous item, if one thinks to quasi-coherent sheaves on $X^\dR$ for $X$ a smooth rigid space over $\Q_p$ as some category of $D$-modules on $X$, it might be surprising at first sight  that such a notion extends to much more general Berkovich spaces, such as perfectoid spaces, and only depends on the underlying arc-stack! The situation should be compared with the situation over the complex numbers, where the analytic de Rham stack of a complex manifold is literally isomorphic to its Betti stack (\cite[Theorem II.3.1]{ScholzeRLL}).
 In both cases, it is really the ``analyticity'' condition on $D$-modules that makes this phenomenon possible.  
\item The example from point (3) of \Cref{intro-main-th-6ff-for-dr-stacks} is used later in the paper in \Cref{sec:de-rham-fargues}, see \Cref{intro:sec-drff-and-p-adic-monodromy}, to show cohomological smoothness results for the de Rham stack of the relative Fargues--Fontaine curve.
 Cohomological smoothness of $X^\dR$ in this case might seem unexpected: inverse limits of cohomologically smooth maps are rarely cohomologically smooth again (e.g., the uncompleted perfectoid open unit disc is \textit{not} cohomologically smooth).
\end{itemize}
\end{remark}

\subsection{de Rham stacks of Fargues--Fontaine curves and the $p$-adic monodromy theorem}
\label{intro:sec-drff-and-p-adic-monodromy}

Since the analytic de Rham stack can be defined for any (light, qfd) arc-stack, it makes sense to consider more exotic examples than smooth rigid varieties over $\Q_p$, even though the latter, such as flag varieties relevant among others for versions of Beilinson--Bernstein localization, are the first to come to mind.
 One key class of examples for this paper are relative Fargues--Fontaine curves: while they are honest Berkovich (usually, adic) spaces for partially proper perfectoid spaces of characteristic $p$, they are for more general inputs only defined as arc-stacks over $\Q_p$.
 More precisely, we introduce the following definition, where we denote by $\Cat{ArcStk}^{\qfd}_{\F_p}$ the category of qfd arc-stacks over $\F_p$.

\begin{definition}
  
  Let $X\in \Cat{ArcStk}_{\F_p}^{\rm qfd}$.
 We define the following qfd arc-stacks over $\Q_p$:
  \begin{enumerate}
 
  \item The open punctured curve\footnote{Also called the ``punctured Fargues--Fontaine disc''.} $\Yc_{X}:=X\times_{\Marc(\F_p)} \Marc(\Q_p)$.
  
  \item The Fargues--Fontaine curve $\FF_{X}:= \Yc_{X}/\varphi_{X}^\Z$, where $\varphi_X$ is the Frobenius of $X$.
  \end{enumerate}

Given $X$ as before, we can form the analytic de Rham stack $\mathcal{Y}_X^{\dR}$, and the analytic de Rham stack of $\FF_{X}$, called the \textit{Hyodo--Kato stack of $X$} and denoted
$$X^\HK:=\FF_X^\dR.$$

 For $X\in \Cat{GelfStk}^{\qfd}$ we write $\mathcal{Y}_X^{\dR}:=\mathcal{Y}_{X^{\diamond}}^{\dR}$ and $X^\HK:= (X^{\diamond})^{\HK}$.
 \end{definition}
 
 \begin{remark}
 \label{rel:cn}
 In a sequel to this paper, we will show that Hyodo--Kato stacks geometrize Colmez--Nizio\l{}'s Hyodo--Kato cohomology for rigid analytic varieties over $p$-adic fields, \cite{colmez_niziol_basic}, in the sense that the coherent cohomology of the Hyodo--Kato stack computes Hyodo--Kato cohomology. This explains our choice of terminology.
 \end{remark}

 We establish some basic properties of this construction. For example, we prove:

 \begin{theorem}[\Cref{sec:defin-first-prop-3-smoothness-primness-of-analytic-de-rham-stacks}]
 \label{thm:intro-hk}
  \label{sec:intro-defin-first-prop-3-smoothness-primness-of-analytic-de-rham-stacks}
  Let $f:Y\to X$ be a map of qfd arc-stacks over $\Z_p$ and let $f^{\HK}\colon Y^{\HK}\to X^{\HK}$ be its associated morphism of Hyodo--Kato stacks.
  \begin{enumerate}
   \item Suppose that, locally in the arc-topology of $X$, $f$ is lqfd (see \cref{DefLQFDim}), then $f^{\HK}$ is $!$-able.
 Furthermore, if $f$ is proper then $f^{\HK}$ is cohomologically proper.
\item Suppose that $f$ is smooth of pure relative dimension $d$, then $f^{\HK}$ is cohomologically smooth, that is, it is suave and its dualizing sheaf is invertible. Furthermore, the dualizing sheaf is given by the $d$-th tensor power of the Tate twist (\cref{Tatetwist}).
 Moreover, if $f$ is \'etale, then it is cohomologically \'etale.
  \end{enumerate}

\end{theorem}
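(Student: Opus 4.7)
The plan is to factor $f^{\HK}$ as $(f^{\FF})^{\dR}$ and reduce each assertion to the corresponding cohomological property of $f^{\dR}$ established in \Cref{intro-main-th-6ff-for-dr-stacks}. The morphism $f^{\FF}: \FF_Y \to \FF_X$ is obtained from (the reduction mod $p$ of) $f$ in two stages: first, base change along $\Marc(\Q_p) \to \Marc(\F_p)$ produces $f^{\mathcal{Y}}: \Yc_Y \to \Yc_X$; second, taking the quotient by the free $\varphi^{\Z}$-action gives $f^{\FF}$. Both operations live entirely at the level of qfd arc-stacks over $\Q_p$.

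The first task is to verify that each of the four geometric conditions on $f$---lqfd arc-locally, proper, smooth of pure relative dimension $d$, and étale---is inherited by $f^{\FF}$. Base change along $\Marc(\Q_p) \to \Marc(\F_p)$ poses no issue: all four classes are defined via arc-local and analytic-local factorizations that are manifestly stable under base change of arc-stacks. The quotient by $\varphi^{\Z}$ is a free étale equivalence relation, so the canonical maps $\Yc_Y \to \FF_Y$ and $\Yc_X \to \FF_X$ are étale covers; since each of the four properties is étale-local on source and target, it descends from $f^{\mathcal{Y}}$ to $f^{\FF}$. Granting these preservations, assertion (1) and the cohomological smoothness/étaleness parts of (2) follow immediately by applying \Cref{intro-main-th-6ff-for-dr-stacks} to $f^{\FF}$.

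The main obstacle is sharpening the identification of the dualizing sheaf: \Cref{intro-main-th-6ff-for-dr-stacks}(2) a priori only delivers the abstract shape $1_{Y^{\HK}}[2d]$, whereas the statement requires the $d$-th tensor power of the Tate twist (\cref{Tatetwist}). To pin this down, I would first reduce to the case $d=1$ by multiplicativity: a local factorization of a smooth map of relative dimension $d$ through $\mathbb{A}^{d,\diamond}_X$ expresses the relative dualizing sheaf as a $d$-fold tensor product, so it suffices to identify the Tate twist with the dualizing sheaf of $(\mathbb{A}^{1,\diamond}_X)^{\HK} \to X^{\HK}$. Using the explicit presentation from \Cref{main-thm-intro-dr-stack}(3) of the de Rham stack of the affine line as $\mathbb{A}^{1,\an}_{\Q_p}/\Ga^{\dagger}$, suitably transported to the relative Fargues--Fontaine setting and accounting for the Frobenius-equivariance inherited from $\Yc/\varphi^{\Z}$, should match the dualizing sheaf with the Tate twist by the latter's very definition in \cref{Tatetwist}.
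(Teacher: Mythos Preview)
Your reduction strategy for part (1) is essentially correct and matches the paper: lqfd is preserved under base change along $\Marc(\Q_p)\to\Marc(\F_p)$ because, although the relative affine space $\mathbb{A}^d_X$ over $\F_p$ base-changes to the \emph{perfectoid} affine space over $\Yc_X$, the latter is quasi-pro-\'etale over the usual $\mathbb{A}^d_{\Yc_X}$, so a quasi-pro-\'etale factorization survives.

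Part (2), however, has a genuine gap. Your claim that smoothness is ``manifestly stable under base change'' is false for the base change $\Marc(\Q_p)\to\Marc(\F_p)$. If $f$ locally factors through $\mathbb{A}^d_X$ over $\F_p$, then after base change $f^{\mathcal{Y}}$ locally factors through $\Yc_{\mathbb{A}^d_X}^\diamond \cong \mathbb{A}^{d,\perf,\diamond}_{\Q_p}\times_{\Marc(\Q_p)}\Yc_X^\diamond$, the \emph{perfectoid} relative affine space---not $\mathbb{A}^{d,\diamond}_{\Yc_X}$. The map $\mathbb{A}^{d,\perf,\diamond}_{\Q_p}\to\mathbb{A}^{d,\diamond}_{\Q_p}$ is quasi-pro-\'etale but not \'etale, so $f^{\mathcal{Y}}$ is \emph{not} smooth in the sense of \cref{DefRigSmoothRigEtale}, and \cref{intro-main-th-6ff-for-dr-stacks}(2) does not apply. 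This is exactly why the paper needs \cref{intro-main-th-6ff-for-dr-stacks}(3): the cohomological smoothness of the de Rham stack of the punctured perfectoid disc is the key non-formal input. The paper's argument reduces (after arc-localization and analytic localization) to showing suaveness of $\mathcal{Y}^{\dR}_{\mathbb{G}_{m,\Z_p^{\cyc}}}\to\mathcal{Y}^{\dR}_{\Z_p^{\cyc}}$, establishes suaveness of the perfected version $\mathcal{Y}^{\dR}_{\mathbb{G}^{\perf}_{m}}$ via \cref{CoroKeyCasesSuave}, and then descends along the Kummer sequence $0\to\Z_p(1)\to\mathbb{G}_m^{\perf,\diamond}\to\mathbb{G}_m^\diamond\to 0$ using an excision argument along $\Marc(\Q_p^{\cyc})\subset\Marc(\Z_p^{\cyc})\supset\Marc(\F_p)$.

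For the dualizing sheaf, your proposed direct computation via the presentation $\mathbb{A}^{1,\dR}\cong\mathbb{A}^1/\mathbb{G}_a^\dagger$ is not how the paper proceeds: it instead invokes the abstract Poincar\'e duality machinery of Zavyalov, which requires as input excision, $\mathbb{A}^1$-invariance, and a theory of first Chern classes for Hyodo--Kato cohomology (constructed in \cref{LemComputationP1dRFF}).
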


The statement is not difficult to prove after the work of the previous sections.
In particular, the cohomologically smoothness of (2) reduces to \Cref{intro-main-th-6ff-for-dr-stacks}(3), as was already pointed out.
The computation of the dualizing sheaf requires some more work, and it will follow from the general Poincar\'e duality statement of \cite[Theorem 1.2.12]{zavyalov2023poincaredualityabstract6functor} after proving that the Hyodo--Kato cohomology is ball-invariant, satisfies excision, and after constructing a theory of first Chern classes.
A similar statement holds for the induced morphism $\mathcal{Y}_Y^{\dR}\to \mathcal{Y}_X^{\dR}$, and in fact, it follows from it.

\medskip

Finally, we specialize the discussion to the case of a field extension $L\subseteq \C_p$ of $\Q_p$.
For such an $L$, we simply write $L^\HK=\Marc(L)^\HK$ (noting that this only depends on the completion of $L$).
We construct in \Cref{sec:p-adic-monodromy} a $\Gal_{\Q_p}^{\rm sm}$-equivariant morphism
\[
  \Psi\colon \C_p^\HK \to B_{\overline{\F}_p^{\HK}}\mathbb{V}(-1)
\]
with $B_{\overline{\F}_p^{\HK}}\mathbb{V}(-1) $ the classifying stack over $\overline{\F}_p^{\HK}\cong \GSpec(\Q_p^\un)/\varphi^\Z$ of the geometric vector bundle $\mathbb{V}(-1)$ associated with the simple isocrystal $D_{-1}=(\Q_p^\un,p\varphi)$ of slope $-1$.
Note that vector bundles on $B_{\overline{\F}_p^{\HK}}\mathbb{V}(-1) $ are naturally equivalent to $(\varphi,N)$-modules over $\Q_p^\un$, by Cartier duality.

\begin{remark}
Concretely, to define $\Psi$, we need to construct a (non-split) $\Gal_{\Q_p}$-equivariant extension of vector bundles of $\mathcal{O}(-1)$ by $\mathcal{O}$ on $\C_p^\HK$. One way to obtain it (or rather its opposite) is by considering the first-degree cohomology of the Hyodo--Kato stack of Tate's elliptic curve (\cref{LemComputationTateCurve}). It can also be described very concretely, which happens to be useful for computations (\cref{ConstructionMonodromy}).
\end{remark}

We are now in a position to state our version of the $p$-adic monodromy theorem precisely.

\begin{theorem}[\Cref{localmonodromy}]\label{intro:localmonodromy}
  The pullback along $\Psi\colon \C_p^\HK\to B_{\overline{\F}_p^{\HK}}\mathbb{V}(-1)  $ defines a $t$-exact equivalence
  \[
    \Psi^\ast\colon \Perf(B_{\overline{\F}_p^{\HK}}\mathbb{V}(-1))\overset{\sim}{\longrightarrow} \Perf(\C_p^\HK).
  \]
  In particular, for any $\Q_p\subseteq L\subseteq \overline{\Q}_p$ the category of vector bundles on $L^\HK$ is naturally equivalent to the category of $(\varphi,N,\Gal_L)$-modules over $\Q_p^\un$.
 \end{theorem}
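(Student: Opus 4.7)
The plan is to reduce the theorem to a statement about vector bundles and then exploit the explicit construction of $\Psi$ from the Tate elliptic curve. First, via Cartier duality (as noted in the excerpt) the target $\Perf(B_{\overline{\F}_p^{\HK}}\mathbb{V}(-1))$ is equivalent to the category of perfect $(\varphi,N)$-modules over $\Q_p^{\un}$, equipped with its natural $t$-structure. Hence, after verifying $t$-exactness, which amounts to checking that $\Psi^\ast$ carries the universal bundle to a vector bundle and that the pullback of a rank-$n$ bundle is again concentrated in degree $0$, it suffices to prove the equivalence on the hearts and extend by d\'evissage to $\Perf$.

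For \emph{full faithfulness}, I would use the right adjoint $\Psi_\ast$ furnished by the $6$-functor formalism on $\Cat{GelfStk}$. The assertion then reduces to a projection-type identity $R\Psi_\ast\Psi^\ast(-) \cong (-)$, and by compact generation (the classifying stack is generated by its unit object) this in turn reduces to computing $R\Psi_\ast \mathcal{O}_{\C_p^\HK}$ and showing it coincides with $\mathcal{O}$ on $B_{\overline{\F}_p^{\HK}}\mathbb{V}(-1)$, i.e., with the trivial $(\varphi,N)$-module $\Q_p$ placed in degree $0$. This is the Hyodo--Kato cohomology of $\C_p$, which I would extract from the cohomological smoothness of the pre-perfectoid open punctured unit disc established in \Cref{intro-main-th-6ff-for-dr-stacks}(3), combined with the presentation of $\FF$ as $\Yc/\varphi^{\Z}$.

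The main obstacle is \emph{essential surjectivity}: showing every perfect complex on $\C_p^\HK$ lies in the image of $\Psi^\ast$. After reducing to vector bundles, I would exploit the non-split extension $0 \to \mathcal{O} \to \mathcal{E}_{\mathrm{Tate}} \to \mathcal{O}(-1) \to 0$ coming from the Tate curve (\Cref{LemComputationTateCurve}), which is the very content of $\Psi$; twisting provides extensions between all integral slopes, so the task is to show that every vector bundle on $\C_p^\HK$ admits a filtration whose graded pieces are such twists, with the extensions controlled by Ext groups computable through $\Psi$. This is the geometric substitute for Kedlaya's slope theorem: rather than invoking $p$-adic differential equations over the Robba ring, one uses the intrinsic geometry of $\C_p^\HK$ as the de Rham stack of the Fargues--Fontaine curve together with the cohomological smoothness of the perfectoid open punctured disc, which powers the vanishing/finiteness statements needed to reduce any vector bundle to a successive extension of line bundles $\mathcal{O}(\lambda)$ arising from isoclinic $(\varphi,N)$-modules.

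Finally, I would deduce the case of an arbitrary $\Q_p \subseteq L \subseteq \overline{\Q}_p$ by passing to $\Gal_L$-equivariant objects on both sides: since $\Psi$ is $\Gal_{\Q_p}^{\mathrm{sm}}$-equivariant and $\C_p^\HK \to L^\HK$ is an epimorphism of Gelfand stacks admitting faithfully flat descent by \Cref{main-thm-intro-dr-stack}(1), vector bundles on $L^\HK$ match $\Gal_L$-equivariant vector bundles on $\C_p^\HK$, yielding the stated equivalence with the category of $(\varphi,N,\Gal_L)$-modules over $\Q_p^{\un}$. The hard step throughout is essential surjectivity, where the geometric monodromy extension from the Tate curve must fully replace the analytic input of classical proofs.
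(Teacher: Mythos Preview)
Your approach to full faithfulness has a genuine gap. You propose to use the right adjoint $\Psi_\ast$ and a projection-formula argument reducing to $R\Psi_\ast\mathcal{O}_{\C_p^\HK}\cong\mathcal{O}$, identifying this with the Hyodo--Kato cohomology of $\C_p$. But $\Psi\colon \C_p^\HK\to B_{\overline{\F}_p^{\HK}}\mathbb{V}(-1)$ is \emph{neither prim nor suave}: its fiber over the basepoint is an analytic vector bundle (a non-split $\mathbb{V}(-1)$-torsor) over the huge space $\C_p^\HK$, so $\Psi_\ast$ does not commute with base change and your projection identity is not available. Moreover, even if base change held, what you would need to compute is not the Hyodo--Kato cohomology of $\C_p$ but the cohomology of this non-trivial torsor pushed to $\overline{\F}_p^\HK$. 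The paper addresses exactly this obstruction: it makes reduction steps (approximation in $L$, descent, and stripping the Galois group via \cref{sec:fully-faithf-psiast-1-getting-rid-of-galois-group}) until it reaches, for $L'$ finite over $\Q_p^\un\Q_p^\Kum$, a morphism $\Phi_{L'}\colon L'^{\HK}\to \GSpec(\Q_p^\un)/\varphi^\Z\times_{\GSpec(\Q_p)/\varphi^\Z}B\mathbb{V}(-1)$ that \emph{is} suave (here the cohomological smoothness of the punctured perfectoid disc is used). Only then does the left adjoint $\Phi_\natural$ exist and satisfy base change, and the computation reduces to an explicit de Rham calculation (Lemma~\ref{nasty}) for a non-split $\mathbb{V}(-1)$-torsor over a punctured disc.

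Your sketch for essential surjectivity is also incomplete at the crucial points. Reducing to vector bundles and invoking a slope filtration is correct in spirit, but you do not explain why a Harder--Narasimhan filtration on $\FF_{\C_p}$ descends to $\C_p^\HK$; the paper does this by presenting $\C_p^{\HK/\FF_{\overline{\F}_p}}$ as the quotient of $\FF_{\C_p}$ by an overconvergent diagonal expressible as a limit of curves $\FF_U$, and then using \emph{uniqueness} of HN filtrations to match the two pullbacks. More seriously, the semi-stable case --- where your proposal simply asserts reduction to $\mathcal{O}(\lambda)$'s --- is where the actual content lies: one must show a semi-stable bundle has trivial descent datum, which the paper reduces to a \emph{connectedness} statement for neighborhoods of the diagonal in $\Marc(\C_p)\times_{\Marc(\overline{\F}_p)}\Marc(\C_p)$, proved via an elementary argument about Artin--Schreier covers (\cref{sec:tsuzukis-theorem-2-connectedness-for-artin-schreier-equations}). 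There is also a subtlety you miss: the naive connectedness argument would force every semi-stable $\varphi$-module over $\Q_p^\un$ to be a sum of $D_\lambda$'s, which is false; the paper resolves this by first proving the result over $\breve{\Q}_p$ and then descending.
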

 
 We refer to \Cref{sec:p-adic-monodromy-2-the-p-adic-monodromy-theorem} for a more detailed discussion of the relation between \Cref{intro:localmonodromy} and the $p$-adic monodromy theorem of Andr\'e, Kedlaya, and Mebkhout, and for a summary of its proof.
 Here, we would only like to stress that both the statement and the proof of the theorem make no use of $p$-adic differential equations: it is only when unraveling the statement (via some explicit description of $\Q_p^{\mathrm{\cyc},\HK}$) that they appear (\cref{lemma:relation-p-adic-differential-equations}).
 Rather, our proof makes extensive use of the formalism developed in this paper, such as descent and the $6$-functor formalism of de Rham stacks, and the geometry of Banach--Colmez spaces. We note, in addition, that by a spreading-out argument, \cref{intro:localmonodromy} formally implies a version of the $p$-adic monodromy theorem locally at classical points of a rigid space over $\Q_p$ (\cref{atclassicalpoints}).
 
 \begin{remark}
 Continuing \Cref{rel:cn}, we recall that Colmez--Nizio\l{}'s Hyodo--Kato cohomology takes values in $(\varphi,N)$-modules over $\Q_p^{\rm un}$ (with a Galois action when the rigid space is defined over a finite extension of $\Q_p$). This was for us a strong indication that our stacky construction should relate to Hyodo--Kato cohomology.
 \end{remark}

 \begin{remark}\label{RemarkRigidFIsocristals}
 Let $X$ be a variety over $\F_p$, and regard it as a qfd arc-stack over $\F_p$. We expect the category of quasi-coherent sheaves $\ob{D}(X^{\HK})$ to be a category of coefficients for overconvergent $F$-isocrystals on $X$; in particular, this stack should compute the rigid cohomology of $X$. This will be explored in the forthcoming PhD thesis of Junhui Qin. \end{remark}

\subsection{Plan of the paper} 
\Cref{s:BoundedRing} starts with some general results about bounded rings, recalling and complementing the results of \cite{camargo2024analytic}. In \Cref{s:gelfand-rings}, we discuss the important notion of Gelfand ring and show in particular that one can attach to them a Berkovich spectrum with good properties. These two sections, which are more dry than the others, develop the necessary (derived, topological) algebra needed for the rest of the paper. Many of the ideas presented here arose from the joint project between A., L.B., R.C. and S. on the analytic prismatization, and the work of one of us on the geometrization of the real local Langlands correspondence \cite{ScholzeRLL}.

\Cref{sec:perf-analyt-de} and \Cref{sec:d-modules-quasi} construct a robust theory of analytic de Rham stacks in $p$-adic geometry. In \Cref{sec:perf-analyt-de}, the important notion of (qfd) Gelfand stack is introduced and the analytic de Rham stack functor is defined as a right adjoint to the perfectoidization functor. We also introduce the category of derived Berkovich spaces over $\Q_p$ and explain some properties of their analytic de Rham stacks. Finally, we show a descent result for the formation of the analytic de Rham stack. At the end of \Cref{sec:perf-analyt-de}, \Cref{main-thm-intro-dr-stack} is proved, with the exception of the commutation of the analytic de Rham stack functor with colimits, shown in \Cref{sec:d-modules-quasi}. This section is devoted to the $6$-functor formalism of analytic de Rham stacks. We provide examples of cohomologically proper and smooth morphisms, proving in particular \Cref{intro-main-th-6ff-for-dr-stacks}, and show various properties of the cohomology of the analytic de Rham stack, such as its coincidence with de Rham cohomology for smooth overconvergent rigid spaces. \Cref{sec:d-modules-quasi} ends with a proof of arc-hyperdescent for the de Rham stack, the part of the paper where the qfd assumption is most critically used. 

Hyodo--Kato stacks and their $6$-functor formalism are the subject of \Cref{sec:de-rham-fargues}. This includes \Cref{thm:intro-hk} above. Our version of the $p$-adic monodromy theorem, cf. \Cref{intro:localmonodromy}, is proved in \Cref{sec:p-adic-monodromy}.

Finally, \cref{appendix:solid-functional-analysis} contains basic facts about nuclear and $\omega_1$-compact solid modules, used in  \Cref{sec:d-modules-quasi}.

\subsection{Conventions and notations}
\label{sec:conventions}

In general, we work in the context of $\infty$-categories (following \cite{lurie_higher_topos_theory}, \cite{lurie_higher_algebra}) and we consider \emph{derived} constructions, e.g., for global sections, tensor products or quotients.
Moreover, we work in the context of \emph{light} condensed mathematics, and more specifically in the \emph{light} solid setup, e.g., classical Banach spaces over $\Q_p$ are seen as solid $\Q_p$-modules.
Occasionally, we refer to papers based on \emph{non-light} condensed mathematics.
We explain in the beginning of \cref{ss:BoundedRing} how this reasoning can be justified.

Given a condensed anima $X$, a subobject of $X$ is by definition a condensed anima $Y$ with a monomorphism $Y\to X$.
In this case, we also write $Y\subseteq X$. Given $R$ a discrete animated ring (or discrete $\mathbb{E}_{\infty}$-ring spectrum), we let $\ob{D}^{\delta}(R)$ denote the symmetric monoidal category of $R$-modules in spectra $\Cat{Sp}$, and let  $\ob{D}(R)$ be the symmetric monoidal category of $R$-modules in condensed spectra $\ob{Cond}(\Cat{Sp})$. Given an analytic ring $A$, we let $A^{\triangleright}$ denote its underlying condensed ring. We say that $A$ is an \textit{analytic ring structure on $A^{\triangleright}$}.  We have a colimit preserving symmetric monoidal fully faithful embedding $\ob{D}^{\delta}(A(*))\to \ob{D}(A)$; an $A$-module $M$ is called \textit{discrete over $A$} if it belongs to the essential image of this functor.

\subsection{Acknowledgements}  For useful discussions related to the content of this paper, we are grateful to Ko Aoki,  Bhargav Bhatt, Dustin Clausen, Pierre Colmez, Wies{\l}awa Nizio{\l}, C\'edric P\'epin, Alexander Petrov, Andrea Pulita, Junhui Qin,  and Alberto Vezzani. The third author would like to thank Akhil Mathew for giving him the opportunity to present the results of this paper at the University of Chicago, and for providing very useful feedback on it.

Some of the ideas in this paper were conceived during the Hausdorff trimester program ``The arithmetic of the Langlands program'' in 2023, and the authors would like to thank the HCM for making such an event possible. Work on this project was carried out at several institutions, including the Max Planck Institute for Mathematics in Bonn, the Institute for Advanced Study, Princeton University, l'IRMA (Universit\'e de Strasbourg), and LMO (Universit\'e d'Orsay); we thank these institutions for their hospitality and support. The fourth author would also like to thank Columbia University and the Simons Foundation for supporting him during the academic year 2023--2024 as a Junior Simons Fellow.

Special thanks go to the participants of the ARGOS seminar held at the Max Planck Institute in Bonn during the winter term 2025/2026 for all their valuable feedback and criticism, and to Vincent Pilloni for his careful remarks on a previous version of the paper!

\newpage
\section{Generalities on bounded rings}\label{s:BoundedRing}
In this section we review and expand the theory of bounded rings over $\mathbb{Q}_p$ introduced in \cite{camargo2024analytic}.

\subsection{Bounded rings}\label{ss:BoundedRing}

In the following we work within the theory of light condensed mathematics\footnote{We will usually say \textit{light profinite set}, but otherwise the adjective ``light'' will always be implicit.} and  analytic stacks as in \cite{AnStacks}.
 All rings are supposed animated unless otherwise specified.
 Given an analytic ring $A$ we denote by $\ob{D}(A)$ its derived $\infty$-category of \textit{complete} $A$-modules.

Unfortunately, some of our references work in a ``light setup'', while others in a ``non-light'' one.
 We will often use references to the latter ones, because in many situations the proofs go through verbatim.
 We remark that in the solid theory the category $D(\Z_\solid)$ (defined in the light setting) embeds fully faithfully as a symmetric monoidal $\infty$-category into its non-light version, and this embedding preserves colimits and countable limits.
 In particular, certain statements in the light setup can literally be reduced to their non-light counterparts (without having to imitate the proof).
 With this general way of reasoning in mind, we don't go into the details of how to transfer particular statements to the light setting.

Let $ \mathbb{Z}_{p,\solid}$ be the analytic ring of solid $p$-adic integers and let $\mathbb{Q}_{p,\solid}$ be the analytic ring of solid $p$-adic numbers.
 We denote by $\Cat{Ring}_{\Z_{p,\solid}}$ the category of solid $\Z_{p, \solid}$-algebras and by $\Cat{Ring}_{\Q_{p,\solid}}$ the category of solid $\Q_{p, \solid}$-algebras.
 Let us start by recalling the definition of the full subcategory of \textit{bounded $\Q_{p, \solid}$-algebras} $$\Cat{Ring}_{\Q_{p,\solid}}^b\subset \Cat{Ring}_{\Q_{p,\solid}}$$ introduced in \cite[Section 2.6]{camargo2024analytic}.\footnote{Up to the issue of ``light vs. non-light'' that we clarified before.}
\medskip

Before spelling out the definition, let us recall that a basic property of a bounded $\Q_p$-algebra $A$ is that each element $f\in A(\ast)$ is \textit{bounded}, i.e.\ the morphism of solid $\Q_{p,\solid}$-algebras $\Q_p[T]\to A$ sending $T$ to $f$,  extends, for some $n\in \mathbb{N}$, to a morphism $\Q_p\langle p^n T\rangle \to A$ of $\Q_p[T]$-algebras, where $\Q_p\langle p^n T\rangle\cong \Z_p[p^nT]^{\wedge_p}[1/p]$ denotes the Tate $\Q_p$-algebra in $p^nT$.
 We note that any such extension is in fact unique due to the idempotency of $\Q_p\langle p^n T\rangle$ over $\Q_p[T]$.
  However, being bounded is not just a condition on $A(\ast)$, but on $A(S)$ for any light profinite set $S$.

Given a light profinite set $S$, we define the solid algebra $\mathbb{Z}_{p,\solid}\langle \mathbb{N}[S] \rangle $ as  the $p$-adic completion of the free solid $\mathbb{Z}_p$-algebra $\mathbb{Z}_{p,\solid}[\mathbb{N}[S]]$ generated by $S$.

\begin{definition}[{\cite[Definition 2.6.1]{camargo2024analytic}}] Let $A\in \Cat{Ring}_{\mathbb{Q}_{p,\solid}}$.
 We define the \textit{subring $A^{\circ}\subset A$ of power bounded elements} as the subring with $S$-points, for any light profinite set $S$, given by
$$A^{\circ}(S):=\ob{Map}_{\Cat{Ring}_{\Z_{p,\solid}}}(\mathbb{Z}_{p,\solid}\langle \mathbb{N}[S] \rangle, A).$$
 The \textit{subring of the bounded elements} $A^b\subset A$ is defined as $$A^b=A^{\circ}[1/p].$$ We say that $A$ is \textit{bounded}  if the natural map $A^b \to A$ is an equivalence.
\end{definition}

In other words, given $A\in \Cat{Ring}_{\mathbb{Q}_{p,\solid}}$,  the $S$-points of $A^b$ are given by maps $f: S\to A$ such that there exists some $n\in \mathbb{N}$ such that the map $p^nf :S\to A$ extends to a morphism of algebras
\[
\mathbb{Z}_{p,\solid}\langle \mathbb{N}[S]\rangle \to A. 
\]

This definition is  sensible thanks to the idempotency properties of the $\mathbb{Z}_{p,\solid}[\mathbb{N}[S]]$-algebra $\mathbb{Z}_{p,\solid}\langle \mathbb{N}[S]\rangle$, \cite[Lemma 2.4.7]{camargo2024analytic}.
 This means that the  functor $A\mapsto A^b$ is idempotent, and being bounded is a property on solid $\mathbb{Q}_p$-algebras and no extra structure.
   The fact that $A^b$ is indeed a solid $\mathbb{Q}_p$-algebra follows from \cite[Lemma 2.4.8]{camargo2024analytic} as proven in Proposition 2.6.9 of \textit{loc.cit.}.

\begin{lemma}
  \label{lem-bounded-subrings-provides-right-adjoint}
  The inclusion $\Cat{Ring}^b_{\Q_{p,\solid}}\to \Cat{Ring}_{\Q_{p,\solid}}$ of bounded $\Q_p$-algebras into solid $\Q_{p}$-algebras, has as a right adjoint the functor $A\mapsto A^b$.
  In particular, the full subcategory $\Cat{Ring}^b_{\Q_{p,\solid}}\subset \Cat{Ring}_{\Q_{p,\solid}}$ is stable under all colimits.
\end{lemma}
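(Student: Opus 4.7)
The argument is formal, based on two properties of $(-)^b$ recalled in the discussion above (and proved in \cite{camargo2024analytic}): it is idempotent, so $A^b$ is always bounded; and the natural map $\epsilon_A \colon A^b \to A$ is a monomorphism of solid $\Q_p$-algebras, reflecting that at every profinite $S$, $A^\circ(S)$ is cut out inside $A(S)$ by the epimorphism $\Z_{p,\solid}[\N[S]] \to \Z_{p,\solid}\langle \N[S]\rangle$ of corepresenting algebras.

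To establish the adjunction, I show that for $B$ bounded and $A$ arbitrary, postcomposition with $\epsilon_A$ induces an equivalence
\[
    \operatorname{Map}_{\Cat{Ring}_{\Q_{p,\solid}}}(B, A^b) \xrightarrow{\sim} \operatorname{Map}_{\Cat{Ring}_{\Q_{p,\solid}}}(B, A).
\]
The inverse sends $f \colon B \to A$ to $f^b \colon B^b \to A^b$, viewed as a map $B \to A^b$ via the equivalence $B \simeq B^b$ (which holds because $B$ is bounded). Naturality of $\epsilon$ shows this composite recovers $f$ after postcomposition with $\epsilon_A$, while uniqueness of the lift is immediate from $\epsilon_A$ being a monomorphism.

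For the stability under colimits, the inclusion $\iota \colon \Cat{Ring}^b_{\Q_{p,\solid}} \hookrightarrow \Cat{Ring}_{\Q_{p,\solid}}$ is a left adjoint (to $(-)^b$) and therefore preserves colimits; it suffices to check that for a small diagram $\{A_i\}$ of bounded rings, the colimit $A := \operatorname{colim}_i A_i$ in $\Cat{Ring}_{\Q_{p,\solid}}$ is bounded. The cocone $\{A_i \to A\}$ factors through $A_i = A_i^b \to A^b$ by functoriality of $(-)^b$, so the universal property of the colimit yields a map $\psi \colon A \to A^b$ with $\epsilon_A \circ \psi \simeq \id_A$. Composing this identity on the right with $\epsilon_A$ and then left-cancelling the monomorphism $\epsilon_A$ gives $\psi \circ \epsilon_A \simeq \id_{A^b}$, so $\epsilon_A$ is an equivalence and $A$ is bounded. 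The only non-trivial input to the entire argument is the monomorphicity of $\epsilon_A$ in the $\infty$-categorical sense, which is inherited from the pointwise description of $A^\circ$ as a subobject of $A$ at each profinite $S$.
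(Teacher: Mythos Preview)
Your proof is correct and spells out in detail what the paper dismisses as ``formally implied by the definition of a bounded $\Q_p$-algebra.'' The ingredients you isolate—functoriality of $A\mapsto A^b$, idempotency, and the fact that $\epsilon_A\colon A^b\to A$ is a monomorphism of condensed anima (inherited from the idempotency of $\Z_{p,\solid}\langle \N[S]\rangle$ over $\Z_{p,\solid}[\N[S]]$)—are exactly the inputs the paper has recorded, and your adjunction argument from them is clean.

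One small remark: your separate verification of closure under colimits, while correct, is slightly more than needed. Once you have a fully faithful inclusion $\iota$ with a right adjoint, the essential image is automatically closed under all colimits computed in the ambient category; this is the standard fact about coreflective subcategories and does not require re-invoking the monomorphism property of $\epsilon_A$. (Concretely: with $d=\mathrm{colim}_i\,\iota(A_i)$ and $\sigma\colon d\to \iota R(d)$ your section, one checks $\sigma\circ\epsilon_d=\id$ by applying $R$, using the triangle identity $R\epsilon_d\circ\eta_{R(d)}=\id$, and the fact that $\eta$ is invertible since $\iota$ is fully faithful.) Your route via monomorphicity is a perfectly valid shortcut, just not strictly necessary at that point.
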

\begin{proof}
  These assertions are formally implied by the definition of a bounded $\Q_p$-algebra. 
\end{proof}

\begin{remark}
 By construction, the algebraic affine line $\AnSpec(\Q_p[T])$ and the analytic affine line $\A^{1,\mathrm{an}}_{\Q_p}$ get identified as functors on bounded $\Q_p$-algebras.
\end{remark}

\begin{examples}
  \label{exam-classical-tate-rings-are-bounded} \
  \begin{enumerate}
   \item If $A$ is a classical Banach $\Q_p$-algebra viewed as a solid $\Q_p$-algebra, then $A$ is bounded, \cite[Lemma 2.6.5]{camargo2024analytic}.
   \item Taking care of non-trivial induced ring structures as in \cite{camargo2024analytic}   the category of bounded affinoid $\Q_p$-algebras, as defined in \cite[Definition 2.6.10.(3)]{camargo2024analytic}, contains all classical Tate Huber pairs $(A,A^+)_{\solid}$ over $\mathbb{Q}_p$. Namely, the condition of being bounded for a solid affinoid ring \cite[Definition 2.6.6]{camargo2024analytic} only depends on its underlying solid algebra. 
  \end{enumerate}
\end{examples}

\begin{remark}
  \label{sec:bounded-rings}
  Let $A\in \Cat{Ring}_{\Q_{p,\solid}}$ be a solid $\Q_{p}$-algebra, and let $S$ be a light profinite set. We can define a new solid $\Q_p$-algebra $A_S:=\underline{\mathrm{Hom}}_{\Q_{p,\solid}}(\Q_{p,\solid}[S],A)$, i.e., $A_S$ is the condensed ring sending a light profinite set $T$ to $A(T\times S)$. Let us say that $A$ is weakly bounded if for any light profinite set $S$ each element $f\in A_S(\ast)$ is bounded. 
  This condition is in general different from being bounded.
   As a counterexample one has \cite[Example 2.5.1]{camargo2024analytic}, namely, let $A= \mathbb{Q}_{p,\solid}[\N[S]]$ be  the free solid $\Q_p$-algebra on a profinite set $S$, let $n> 1$ and consider the map $f_n\colon S\to S^n\to A$ where the first is the diagonal map and the second the natural map. We have an induced map $f_n\colon A\to A$, we let $B= \pi_0(A\otimes_{f_n,A} \Q_p)$, that is, the quotient of $A$ obtained by killing the solid module generated by $f_n(S)$. The algebra $B$ is the initial static solid algebra that parametrizes maps $g\colon S\to R$ with $R$ a solid $\Q_p$-algebra such that $f^n\colon S\to R$ is the zero map. Applying this construction not just for $S\to S^n$ but for all $S^k\to S^{kn}$, one can construct an augmented solid $\Q_p$-algebra $B'$ such that its augmentation ideal $I$ is nilpotent after evaluating at any profinite set $T$. However, one can show that the natural map $A\to B'$ will not extend to the algebra $\Q_{p,\solid}\langle \N[S]\rangle$.
\end{remark}

There are two main relevant features of bounded rings that we shall recall next.

\subsection{The $\dagger$-nilradical}\label{sss:Nilrad}  The first new feature in the category of bounded rings  is the existence of a new nilradical $\ob{Nil}^{\dagger}$, called the  \textit{$\dagger$-nilradical} which heuristically consists of \textit{elements of norm zero}, see \cite[Definition 2.6.1]{camargo2024analytic}.  Let us recall its definition since it will play an important role in the rest of the paper. We need some preparations. Given a light profinite set $S$ we let $\mathbb{Z}_{p,\solid}\llbracket\mathbb{N}[S]\rrbracket$ denote the completion of $\mathbb{Z}_{p,\solid}[\mathbb{N}[S]]$ with respect to its natural filtration, see \cite[Definition 2.4.5]{camargo2024analytic}. It has the explicit description
\[
\mathbb{Z}_{p,\solid}\llbracket\mathbb{N}[S]\rrbracket = \prod_{n\in \mathbb{N}} \mathbb{Z}_{p,\solid} [S^n/\Sigma_n]
\]
where  $S^n/\Sigma_n$ is the light profinite set given by the quotient of $S^n$ by the natural action of the symmetric group on $n$-letters (see \cite[Lemma 2.4.3]{camargo2024analytic}).

\begin{lemma}\label{xnsuw8}
Let $S,S'$ be light profinite sets. Then $\mathbb{Z}_{p,\solid}\langle \mathbb{N}[S] \rangle$  is the subring of $\mathbb{Z}_{p,\solid}\llbracket\mathbb{N}[S]\rrbracket$ whose $S'$-points consists of formal series
\[
\sum_{n\in \mathbb{N}} a_{n} 
\]
where $a_{n}\in \mathbb{Z}_{p,\solid}[S^n/\Sigma_n](S')$ are elements such that $ a_n\to 0$ in the $p$-adic topology of the ring $\mathbb{Z}_{p,\solid}\llbracket \mathbb{N}[S]\rrbracket(S')$.
\end{lemma}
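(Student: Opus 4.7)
The first step is to pin down the underlying solid module of $\mathbb{Z}_{p,\solid}[\mathbb{N}[S]]$: since $\mathbb{N}[S]$ is the free commutative monoid on $S$, the monoid algebra $\mathbb{Z}_{p,\solid}[\mathbb{N}[S]]$ is the symmetric algebra on $\mathbb{Z}_{p,\solid}[S]$ in the symmetric monoidal category of solid $\mathbb{Z}_{p,\solid}$-modules, so it decomposes as
\[
\mathbb{Z}_{p,\solid}[\mathbb{N}[S]]\;=\;\bigoplus_{n\ge 0}\mathbb{Z}_{p,\solid}[S^n/\Sigma_n],
\]
where the $n$-th summand is the degree-$n$ piece for the augmentation filtration. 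Recalling that $\mathbb{Z}_{p,\solid}\llbracket \mathbb{N}[S]\rrbracket$ is the completion of this algebra for the augmentation filtration, the identification $\mathbb{Z}_{p,\solid}\llbracket \mathbb{N}[S]\rrbracket = \prod_{n} \mathbb{Z}_{p,\solid}[S^n/\Sigma_n]$ given in the paper is immediate, and the natural map from $\mathbb{Z}_{p,\solid}[\mathbb{N}[S]]$ to $\mathbb{Z}_{p,\solid}\llbracket \mathbb{N}[S]\rrbracket$ is the canonical inclusion of a direct sum into the corresponding product.

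Next I would check that each summand $M_n := \mathbb{Z}_{p,\solid}[S^n/\Sigma_n]$ is already derived $p$-adically complete and $p$-torsion-free. Writing $S = \varprojlim_i T_i$ as an inverse limit of finite sets, $S^n/\Sigma_n = \varprojlim_i T_i^n/\Sigma_n$ is again profinite, so $M_n = \varprojlim_i \mathbb{Z}_p[T_i^n/\Sigma_n]$ is an inverse limit of $p$-adically complete discrete $\mathbb{Z}_p$-modules, hence $p$-adically complete; and it is $p$-torsion-free because each $\mathbb{Z}_p[T_i^n/\Sigma_n]$ is so. In particular derived and classical $p$-adic completion coincide on these modules and on any direct sum of them.

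The key computation is then the description of the $p$-adic completion of $\bigoplus_n M_n$. Using $p$-torsion-freeness, reduction mod $p^k$ commutes with direct sums, so
\[
\Bigl(\bigoplus_n M_n\Bigr)^{\wedge_p} \;=\; \varprojlim_k \bigoplus_n M_n/p^k.
\]
Evaluating at a light profinite set $S'$ and using that evaluation commutes with limits, a section is a compatible system $(\alpha_k)_k$ with $\alpha_k \in \bigoplus_n M_n(S')/p^k$. For each fixed $n$, the components $(\alpha_k|_n)_k$ assemble to a well-defined element $a_n \in M_n(S')$ by completeness; conversely, the compatibility condition says exactly that for every $k$ all but finitely many of the $a_n$ lie in $p^k M_n(S')$, i.e.\ $a_n \to 0$ in the $p$-adic topology of $\prod_n M_n(S') = \mathbb{Z}_{p,\solid}\llbracket \mathbb{N}[S]\rrbracket(S')$. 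This gives both the injectivity of $\mathbb{Z}_{p,\solid}\langle \mathbb{N}[S]\rangle \hookrightarrow \mathbb{Z}_{p,\solid}\llbracket \mathbb{N}[S]\rrbracket$ and the explicit description of the image, proving the lemma.

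The step most likely to require care is the middle one: ensuring that all completions are understood in the solid / derived sense and that evaluation at a light profinite set $S'$ genuinely commutes with the colimits and limits involved. The $p$-torsion-freeness and $p$-completeness of each graded piece $M_n$ reduce everything to the classical statement about $p$-adic completion of a direct sum of $p$-adically complete $\mathbb{Z}_p$-modules, which is where the ``$a_n\to 0$ $p$-adically'' condition comes from.
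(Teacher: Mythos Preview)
Your proposal is correct and follows essentially the same approach as the paper: identify $\mathbb{Z}_{p,\solid}[\mathbb{N}[S]]=\bigoplus_n \mathbb{Z}_{p,\solid}[S^n/\Sigma_n]$, use that evaluation at $S'$ commutes with limits so that $\mathbb{Z}_{p,\solid}\langle \mathbb{N}[S]\rangle(S')$ is the $p$-adic completion of $\bigoplus_n M_n(S')$, and read off the null-sequence description. The paper's proof is just a terser version of yours, leaving the $p$-completeness and $p$-torsion-freeness of each $M_n$ implicit.
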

\begin{proof}
Set $A:=\mathbb{Z}_{p,\solid}\langle \mathbb{N}[S] \rangle$. By definition $A$ is the $p$-adic completion of the algebra $B:=\mathbb{Z}_{p,\solid}[\mathbb{N}[S]]$. Therefore, for a light profinite set $S'$, we have that $A(S')$ is the $p$-adic completion of $B(S')$ (because the evaluation at $S'$ commutes with limits). But $B=\bigoplus_{n\in \mathbb{N}} \mathbb{Z}_{p,\solid}[S^n/\Sigma_n]$, so an element in $B(S')$ consists of series $\sum_{n} a_n$ with $a_{n}\in \mathbb{Z}_{p,\solid}[S^n/\Sigma^n](S')$  that is eventually zero. The lemma follows.
\end{proof}

For the next lemma, we recall that a morphism $f\colon A\to B$ of solid $\Q_p$-algebras satisfies $\ast$-descent, if the natural functor $\ob{D}(A)\to \ob{Tot}\big( \ob{D}^\ast(B^{\otimes_A \bullet+1})\big)$ is an equivalence. Here, the $\ast$ indicates that the implicit morphisms in the limit are given by tensoring, i.e., $\ast$-pullback.

\begin{lemma}\label{xjsiwn}
Let $f:A\to B$ be a morphism of solid $\mathbb{Q}_p$-algebras such that $f$  satisfies $*$-descent. If $B$ is bounded, then  $A$ is bounded. 
\end{lemma}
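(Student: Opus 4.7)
The plan is to show directly that $A^b \to A$ is an equivalence of solid $\Q_p$-algebras. Unfolding the definitions, this amounts to proving that for every light profinite set $S$ and every $g \in A(S)$, there exists $n \geq 0$ such that $p^n g$ lifts to $A^\circ(S) = \Map_{\Cat{Ring}_{\Z_{p,\solid}}}(\Z_{p,\solid}\langle \N[S]\rangle, A)$. The two key ingredients are $\ast$-descent, which gives
\[
A \simeq \lim_{[k]\in\Delta} B^{\otimes_A k+1}
\]
in $\CAlg(\ob{D}(\Q_{p,\solid}))$, together with the fact that bounded $\Q_p$-algebras are stable under colimits (\Cref{lem-bounded-subrings-provides-right-adjoint}); in particular, each tensor power $B^{\otimes_A k+1}$ in the descent diagram is bounded.

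The crucial structural input is the idempotency of $\Z_{p,\solid}\langle \N[S]\rangle$ over $\Z_{p,\solid}[\N[S]]$, cf.\ \cite[Lemma 2.4.7]{camargo2024analytic}. This means that for any solid $\Q_p$-algebra $R$, the restriction
\[
\Map_{\Cat{Ring}_{\Z_{p,\solid}}}(\Z_{p,\solid}\langle \N[S]\rangle, R) \hookrightarrow R(S)
\]
is $(-1)$-truncated; equivalently, any element $h \in R(S)$ either fails to extend to $\Z_{p,\solid}\langle \N[S]\rangle$, or does so uniquely up to contractible choice. Passing to the limit over $\Delta$ and combining with $\ast$-descent, we thereby identify $A^\circ(S)$ with the subanima of $A(S) \simeq \lim_{[k]\in\Delta} B^{\otimes_A k+1}(S)$ consisting of those elements whose image in each $B^{\otimes_A k+1}(S)$ extends to $\Z_{p,\solid}\langle \N[S]\rangle$.

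Given $g \in A(S)$, its image $fg \in B(S)$ is bounded by hypothesis, so $p^n \cdot fg \in B^\circ(S)$ for some $n$. Propagating through the \v{C}ech cosimplicial object, $p^n g$ maps to a compatible family of elements in each $B^{\otimes_A k+1}(S)$, and since each such ring is bounded these all lie in $(B^{\otimes_A k+1})^\circ(S)$. By the identification above, $p^n g$ therefore lifts uniquely to $A^\circ(S)$, so $g \in A^b(S)$, proving that $A$ is bounded. The main conceptual point---namely, the compatibility of lifts across the entire simplicial diagram---is handled automatically by the idempotency property, which ensures all higher coherences are contractible and so no obstructions can appear at finite cosimplicial level.
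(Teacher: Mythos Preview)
Your overall strategy is correct, and the key identifications (that $A \simeq \lim_{\Delta} B^{\otimes_A \bullet+1}$ as algebras, and that $A^\circ(S)$ is the subanima of $A(S)$ detected at each cosimplicial level via idempotency) are sound. There is, however, a misleading justification in one step. You write that the image of $p^n g$ in $B^{\otimes_A k+1}(S)$ lies in $(B^{\otimes_A k+1})^\circ(S)$ ``since each such ring is bounded''. Boundedness of $B^{\otimes_A k+1}$ only tells you that this image is bounded, i.e.\ that \emph{some} further multiple $p^{n_k}\cdot(p^n g)$ lies in $(B^{\otimes_A k+1})^\circ(S)$, with $n_k$ potentially depending on $k$; that would not suffice for the limit. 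The correct (and simpler) reason is that the structure map $A \to B^{\otimes_A k+1}$ factors through $A \to B$, so the image of $p^n g$ factors through $B^\circ(S)$, and ring maps send $(-)^\circ$ to $(-)^\circ$. With this fix, your appeal to \Cref{lem-bounded-subrings-provides-right-adjoint} becomes unnecessary.

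The paper's proof takes a more direct route: it reformulates the extension problem via idempotency as the assertion that the map of $A$-modules
\[
A \longrightarrow A \otimes_{\Q_{p,\solid}[\N[S]]} \Q_{p,\solid}\langle \N[p^n S]\rangle
\]
is an equivalence, and then observes that this can be checked after applying $-\otimes_A B$ (conservativity of base change, an immediate consequence of $\ast$-descent). This avoids unwinding the full cosimplicial diagram and needing to track compatibilities across levels; it uses only the single ring $B$. Your approach, by contrast, uses the entire limit description $A \simeq \lim_{\Delta} B^{\otimes_A \bullet+1}$ and the functoriality of mapping spaces. Both are valid, but the paper's argument is a one-liner once the idempotency reformulation is in hand.
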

\begin{proof}
Let $S$ be a light profinite set and consider a map $S\to A$. We want to show that there is some $n\gg 0$ such that the natural map $A\to A\otimes_{\mathbb{Q}_{p,\solid}[\mathbb{N}[S]]} \mathbb{Q}_{p,\solid}\langle \mathbb{N}[p^n S] \rangle$ is an equivalence. By taking $n$ such that the map $B\to B\otimes_{\mathbb{Q}_{p,\solid}[\mathbb{N}[S]]} \mathbb{Q}_{p,\solid}\langle\mathbb{N}[ p^n S]\rangle$ is an equivalence (which exists as $B$ is bounded) the lemma follows by $*$-descent. 
\end{proof}

\begin{definition}\label{xhjs8wj}
Let $r\in \mathbb{R}_{\geq 0}$. We define the ring $\mathbb{Q}_{p,\solid}\langle \mathbb{N}[S] \rangle_{\leq r}$ as the solid subring of $\mathbb{Q}_{p,\solid}\llbracket\mathbb{N}[S]\rrbracket$ whose $S'$-points are series
\[
\sum_{n} a_{n}
\]  
 with $a_n\in \mathbb{Q}_{p,\solid}[S^n/\Sigma^n] (S')$ such that there is $r'>r$ with  $|a_{n}| r'^{n}\to 0$ where the norm $|-|$ is taken with respect to the $p$-adic topology of $\mathbb{Q}_{p,\solid}[\mathbb{N}[S]](S')$ (we normalize $|-|$ so that $|p|=p^{-1}$). 
\end{definition}

In other words, $\Q_{p,\solid}\langle \N[S]\rangle_{\leq r}(\ast)$ is the ring of \emph{overconvergent} functions on the $S$-dimensional closed disc of radius $r$.

Clearly, we have $\Q_{p,\solid}\langle \N[S]\rangle_{\leq s}\subseteq \Q_{p,\solid}\langle \N[S]\rangle_{\leq r}$ if $r\leq s$.

\begin{remark}\label{xj29jws}
A priori $\mathbb{Q}_{p,\solid}\langle \mathbb{N}[S] \rangle_{\leq r}$ is only defined as a subpresheaf of $\Q_p$-algebras of $\mathbb{Q}_{p,\solid}\llbracket\mathbb{N}[S]\rrbracket$. By \cref{x02kd76} (1) below it is indeed a sheaf on light profinite sets and even a solid $\mathbb{Q}_p$-module.
\end{remark}

\begin{lemma}\label{x02kd76}
Let $S$ be a light profinite set and $r\in \mathbb{R}_{\geq 0}$. The following hold:

\begin{enumerate}

\item\label{x1} The subpresheaf on light profinite sets  $\mathbb{Q}_{p,\solid}\langle \mathbb{N}[S] \rangle_{\leq r}\subset \mathbb{Q}_{p,\solid}\llbracket\mathbb{N}[S]\rrbracket$ is indeed a condensed set and a solid $\mathbb{Q}_{p,\solid}$-algebra.

\item\label{x2} Let $K$ be an infinitely ramified algebraic extension of $\mathbb{Q}_p$. We have the description
\begin{equation}\label{xns72hj}
K \otimes_{\Q_p}  \mathbb{Q}_{p,\solid}\langle \mathbb{N}[S] \rangle_{\leq r} = \varinjlim_{\substack{b\in K \\ |b|> r}} K_{\solid} \langle \mathbb{N}[b S] \rangle
\end{equation}
where the left-hand side is a priori understood as the tensor product of presheaves.  In particular, $\mathbb{Q}_{p,\solid} \langle \mathbb{N}[S] \rangle_{\leq r}$ is a bounded $\mathbb{Q}_{p,\solid}$-algebra and idempotent over $\mathbb{Q}_{p,\solid}[\mathbb{N}[S]]$.
Here, $K_\solid$ denotes $K$ with its induced analytic ring structure from $\Q_p$, and $K_\solid\langle \N[bS]\rangle\cong K_\solid\langle \N[S]\rangle$, and the transition maps for $r<|b'|<|b|$ in the colimit are given by the unique morphisms of $K_\solid$-algebras, which sends $S\subseteq K_\solid\langle \N[S]\rangle$ to $\frac{b'}{b}S\subseteq K_\solid[S]\subseteq K_\solid\langle \N[S]\rangle$.
\item We have that $\mathbb{Q}_{p,\solid}\langle \mathbb{N}[S]\rangle_{\leq 1}\subset \mathbb{Q}_{p,\solid}\langle \mathbb{N}[S]\rangle$ as subalgebras of $\mathbb{Q}_{p,\solid}\llbracket\mathbb{N}[S]\rrbracket$.

\end{enumerate}
\end{lemma}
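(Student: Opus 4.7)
My plan is to establish (2) first, then deduce (1) and the boundedness/idempotency clauses from it, and finally verify (3) by a direct computation.

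For (2), I would compute the presheaf tensor product on the left-hand side. Writing $K=\varinjlim_{K'}K'$ as a filtered colimit of finite subextensions $K'/\Q_p$, an $S'$-point of $K\otimes_{\Q_p}\Q_{p,\solid}\langle\N[S]\rangle_{\leq r}$ identifies with a formal series $\sum_n\alpha_n$ where $\alpha_n\in K_\solid[S^n/\Sigma_n](S')$ and $|\alpha_n|(r')^n\to 0$ for some $r'>r$. On the other hand, because $K_\solid\langle\N[bS]\rangle$ is the $p$-adic completion of $K_\solid[\N[bS]]=K_\solid[\N[S]]$ after rescaling the generators by $b$, its $S'$-points are series $\sum_n b^n\alpha_n$ with $\alpha_n\in K_\solid[S^n/\Sigma_n](S')$ and $|\alpha_n||b|^n\to 0$ $p$-adically. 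Taking the filtered colimit over $b\in K$ with $|b|>r$ yields series with $|\alpha_n||b|^n\to 0$ for some such $b$. The critical step is to match the two growth conditions; this is where the infinite ramification hypothesis enters, as it ensures that the value group $|K^\times|\subseteq\R_{>0}$ is a non-discrete (hence dense) subgroup of $\R_{>0}$, so the existence of $b\in K$ with $|b|>r$ making $|\alpha_n||b|^n\to 0$ is equivalent to the existence of a real number $r'>r$ with $|\alpha_n|(r')^n\to 0$.

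For (1), the most transparent route is direct. Choosing a cofinal decreasing sequence $r_k>r$ with $r_k\to r$, I would realize $\Q_{p,\solid}\langle\N[S]\rangle_{\leq r}$ as the countable filtered colimit over $k$ of the submodules of $\Q_{p,\solid}\llbracket\N[S]\rrbracket$ of the form $\bigl(\prod_n p^{\lceil n\log_p r_k\rceil}\Z_{p,\solid}[S^n/\Sigma_n]\bigr)[1/p]$, each of which is solid (being the localization of a product of solid modules). This exhibits $\Q_{p,\solid}\langle\N[S]\rangle_{\leq r}$ as a (light) solid $\Q_p$-module, and the algebra structure is inherited from the ambient $\Q_{p,\solid}\llbracket\N[S]\rrbracket$. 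For the bounded and idempotent claims appearing in (2), I would use that the Tate algebras $K_\solid\langle\N[bS]\rangle$ are bounded (classical Banach algebras, cf.\ \Cref{exam-classical-tate-rings-are-bounded}) and idempotent over $K_\solid[\N[S]]$; both properties survive filtered colimits of algebras (boundedness by \Cref{lem-bounded-subrings-provides-right-adjoint}, idempotency by compatibility of $\otimes$ with filtered colimits) and descend along $\Q_p\to K$: boundedness via the $\ast$-descendability of $\Q_p\to K$ together with \Cref{xjsiwn}, and idempotency by applying $(-)\otimes_{\Q_p}K$, which is conservative since $K/\Q_p$ is faithfully flat.

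For (3), the verification is routine: given $\sum_n a_n\in\Q_{p,\solid}\langle\N[S]\rangle_{\leq 1}(S')$ with $|a_n|(r')^n\to 0$ for some $r'>1$, the inequality $(r')^n\geq 1$ forces both $|a_n|\to 0$ and the uniform bound $|a_n|\leq M$ for some $M$. Picking $k$ with $p^k\geq M$, one gets $p^k a_n\in\Z_{p,\solid}[S^n/\Sigma_n](S')$ for all $n$ and $p^k a_n\to 0$ $p$-adically, so $p^k\sum_n a_n\in\Z_{p,\solid}\langle\N[S]\rangle(S')$ by \Cref{xnsuw8}, and hence $\sum_n a_n\in\Q_{p,\solid}\langle\N[S]\rangle(S')$. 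The main obstacle is the bookkeeping in (2), where one must correctly track the rescaling $\alpha_n\mapsto b^n\alpha_n$ in the identification of $K_\solid\langle\N[bS]\rangle$ and invoke density of $|K^\times|$ in $\R_{>0}$ at the right moment; a secondary technicality is confirming that $\Q_p\to K$ is $\ast$-descendable, which reduces to descent along each finite subextension together with compatibility with filtered colimits.
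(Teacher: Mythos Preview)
Your argument for (2) is essentially the paper's: write the left-hand side pointwise, identify the right-hand side via \Cref{xnsuw8} after rescaling, and match the two growth conditions using density of $|K^\times|$ in $\R_{>0}$. The paper phrases this by first passing to a finite subextension $F/\Q_p$ containing an element of a prescribed rational norm $r'>r$ and identifying $A_{r'}\otimes_{\Q_p}F$ with $F_\solid\langle\N[bS]\rangle$, then taking the colimit; your version is a mild rephrasing.

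Where you diverge is in (1) and (3). The paper deduces both from (2): for (1), it observes that once \eqref{xns72hj} is known, $\Q_{p,\solid}\langle\N[S]\rangle_{\leq r}$ is a $\Q_p$-linear retract of its base change to $K$, hence a retract of a solid module, hence solid; for (3), it checks the inclusion after base change to $K$ using \eqref{xns72hj}. Your direct arguments are valid alternatives. For (1), your presentation of $\Q_{p,\solid}\langle\N[S]\rangle_{\leq r}$ as a sequential colimit of the ``bounded'' submodules $B_{r_k}$ is exactly the description the paper later uses in the proof of \Cref{xsu29s} (where it notes that each $B_{r''}$ is a light Smith space); you should double-check the sign convention in the exponent $\lceil n\log_p r_k\rceil$ (it should encode $|a_n|r_k^n$ bounded, so one wants $v_p(a_n)\gtrsim n\log_p r_k$, which depending on conventions may be a floor), but this is bookkeeping. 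For (3), your direct verification via \Cref{xnsuw8} is cleaner than the paper's reduction.

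The trade-off: the paper's route is more uniform (everything flows from the single computation \eqref{xns72hj}), while yours makes (1) and (3) independent of the infinitely-ramified base change, at the cost of two extra explicit computations.
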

\begin{proof}
Part (2) implies (1) as then $\mathbb{Q}_{p,\solid}\langle \mathbb{N}[S] \rangle_{\leq r}$ will be a retract of a solid $\mathbb{Q}_{p,\solid}$-module. It will also show part (3) as we can check this inclusion after base change to $K$. Let us then prove (2). It suffices to prove \cref{xns72hj}, in fact bounded algebras are stable under retracts of solid $\mathbb{Q}_p$-modules since they  are stable under $*$-descent by  \cref{xjsiwn}  and the right hand side term of the equation is clearly bounded. Similarly, the idempotency over $\mathbb{Q}_{p,\solid}[\mathbb{N}[S]]$ follows since this can be checked after base change to $K$ by descent, where it follows by \cite[Lemma 2.4.7]{camargo2024analytic}.

Thus, let us prove \cref{xns72hj}. Let $S'$ be a light profinite set, denote $A=\mathbb{Q}_{p,\solid}\langle \mathbb{N}[S] \rangle_{\leq r}$ and $B= \mathbb{Q}_{p,\solid}\llbracket\mathbb{N}[S]\rrbracket$ (seen as presheaves on light profinite sets). By definition, $A(S')\subset B(S')$ consists of sums
\[
\sum_{n} a_{n}
\]
with $a_{n}\in \mathbb{Q}_{p,\solid}[S^n/\Sigma_n](S')$ such that there is $r'>r$ with  $|a_{n}|r'^{n}\to 0$ for the $p$-adic norm of $ \mathbb{Q}_{p,\solid}[S^n/\Sigma_n](S')$. The triangular inequality shows that $A(S')$ is indeed a $\mathbb{Q}_{p}$-subalgebra   of $B(S')$ (even a $\mathbb{Q}_p(S')$-algebra). Thus, we can write $A=\varinjlim_{r'>r} A_{r'}$ where $A_{r'}$ is the subpresheaf of $B$ whose $S'$-valued points are precisely those sums $\sum_{n} a_{n}$ as before such that $|a_n|r'^n\to 0$. By the ultrametric inequality $A_{r'}(S')\subset B(S')$ is a $\mathbb{Q}_p$-algebra. Let us take $r'$ to be a rational number and let $F/\mathbb{Q}_p$ be a finite extension admitting an element $b$ of norm $r'$. Then, we can describe $(A\otimes_{\mathbb{Q}_p} F)(S')= A(S')\otimes_{\mathbb{Q}_p} F$ as the sums $\sum_{n} a_n $ with $a_{n}\in F_{\solid}[S^n/\Sigma^n]$ such that $|b^{n} a_n |\to 0$ for the $p$-adic valuation. The latter identifies with the $S'$-valued points of $F_{\solid}\langle  \mathbb{N}[b S] \rangle$ (cf. \cref{xnsuw8}). This implies that
\[
A_{r'}\otimes_{\mathbb{Q}_p} F =F\langle \mathbb{N}[bS] \rangle 
\]  
where the LHS tensor product is understood as a tensor product of presheaves. Taking base change to $K$ and knowing that the valuations of elements of $K$ are dense in $\mathbb{R}$ we have the desired description of \cref{xns72hj}.
\end{proof}

\begin{remark}\label{xbnsw7}
A consequence of \cref{x02kd76} is that we could have used the algebras $\mathbb{Q}_{p,\solid}\langle \mathbb{N}[S] \rangle_{\leq 1}$ instead of $\mathbb{Q}_{p,\solid}\langle \mathbb{N}[S] \rangle$ to define bounded rings as both define the same notion of a bounded element. This is the correct point of view to extend the theory of bounded rings from solid rings to gaseous rings, see \cite[Lecture V.2]{ScholzeRLL}, as the latter seems to behave well only for overconvergent functions. Indeed, for the gaseous tensor product, the algebras $\mathbb{Q}_{p,\mathrm{gas}}\langle \mathbb{N}[S] \rangle_{\leq 1}$ are idempotent over $\mathbb{Q}_{p,\mathrm{gas}}[ \mathbb{N}[S] ]$, but not the algebras $\mathbb{Q}_{p,\mathrm{gas}}\langle \mathbb{N}[S] \rangle$.
\end{remark}

\begin{definition}\label{x20ojew}
Given a solid $\mathbb{Q}_{p,\solid}$-algebra $A$ we write
 \[
 A\langle \mathbb{N}[S] \rangle_{\leq r}:= A\otimes_{\mathbb{Z}_{p,\solid}} \mathbb{Q}_{p,\solid} \langle \mathbb{N}[S] \rangle_{\leq r}.
 \] 
\end{definition}

We can now define the following subobjects represented by the algebras of \cref{x02kd76} (the definition is producing  subobjects thanks to the idempotency statements proved in that lemma).

\begin{definition}\label{xjs82k}
Let $A$ be a solid ring over $\mathbb{Q}_{p,\solid}$ and $r\in \mathbb{R}_{\geq 0}$. We define the following objects:
\begin{enumerate}

\item The $\mathbb{Z}_{p,\solid}$-subring of \textit{norm-$1$-elements} $A^{\leq 1}\subset A$ is the subpresheaf whose $S$-valued points are given by 
\[
A^{\leq 1}(S)=\ob{Map}_{\Cat{Ring}_{\Q_{p,\solid}}}(\mathbb{Q}_{p,\solid}\langle \mathbb{N}[S] \rangle_{\leq 1}, A).
\]
More generally, for $r\in \mathbb{R}_{\geq 0}$ we define the solid $A^{\leq 1}$-submodule $A^{\leq r}\subset A$ to be the subpresheaf whose $S$-valued points are 
\[
A^{\leq r}(S)=\ob{Map}_{\Cat{Ring}_{\Q_{p,\solid}}}(\mathbb{Q}_{p,\solid}\langle \mathbb{N}[S] \rangle_{\leq r}, A).
\]

\item  When $r=0$, we denote $\ob{Nil}^{\dagger}(A)=A^{\leq 0}$ and call it the \textit{$\dagger$-nilradical of $A$}.

\item We define the $A^{\leq 1}$-ideal of  $p$-adically nilpotent elements to be $A^{<1}=\bigcup_{r<1} A^{\leq r}$.  More generally for $r>0$, we define $A^{<r}:=\bigcup_{r'<r} A^{\leq r'}$. 
\end{enumerate}
\end{definition}

\begin{remark}
  \label{rema-sec:dagger-nilradical-different-notions-of-topological-nilp}\
  \begin{enumerate}
   \item The definition of the $\dagger$-nilradical given above agrees with the one introduced \cite[Definition 2.6.1]{camargo2024analytic}. In fact, $\mathbb{Q}_p\langle \mathbb{N}[S]\rangle_{\leq 0} = \mathbb{Q}_{p}\{\mathbb{N}[S]\}^{\dagger}$ in the notation of \textit{loc. cit.}.

   \item In \cite[Definition 2.6.1.(2)]{camargo2024analytic} the subgroup $A^{\circ\circ}\subseteq A$ of topologically nilpotent elements is defined. However, the natural inclusion $A^{<1}\subseteq A^{\circ\circ}$ is strict in general: take $A= \Z_p\llbracket T \rrbracket[\frac{1}{p}]$, then the element $T$ is in $A^{\circ\circ}$ by definition, but there is no $r<1$ such that the map $\Q_p[T]\to A$ extends to $\Q_p\langle T \rangle_{\leq r}$. Indeed, we actually have maps the other way around $A\to \Q_p\langle T \rangle_{\leq r}$  as $T$ is topologically nilpotent in the second algebra, and both are idempotent over $\Q_p[T]$.

     Alternatively, one can note that the subgroup of topological nilpotent elements in the sense of  \cref{xjs82k} has as underlying points elements $a\in A$ such that for some $s\in \mathbb{N}_{\geq 1}$ the sequence $ (p^{-n}a^{ns})_{n}$ is a null-sequence in $A$. On the other hand, the ideal of \cite[Definition 2.6.1(2)]{camargo2024analytic} only asks for $(a^n)_{n}$ to be a null sequence in $A$.
     This variant of the ideal will be more convenient for us in the rest of the paper (and also behaves much better in the gaseous setting).
     For Gelfand rings (\cref{sss:gelfand-rings}), this discrepancy disappears.
  \end{enumerate}
\end{remark}

 Next, we want to show that the presheaves of \cref{xjs82k} are indeed condensed anima.

 \begin{lemma}\label{x29jdm}
Let $S'\to S$ be a surjection of light profinite sets. Then the natural map
\begin{equation}\label{eqhskjdfsql}
\mathbb{Q}_{p,\solid}[\mathbb{N}[S]] \otimes_{\mathbb{Q}_{p,\solid}[\mathbb{N}[S']] } \mathbb{Q}_{p,\solid}\langle \mathbb{N}[S'] \rangle_{\leq 1} \xrightarrow{\sim} \mathbb{Q}_{p}\langle \mathbb{N}[S] \rangle_{\leq 1}
\end{equation}
is an equivalence.  In particular, the presheaves of \cref{xjs82k} are condensed anima.
\end{lemma}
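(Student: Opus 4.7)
The plan is to first establish the base change isomorphism~\eqref{eqhskjdfsql}, and then to deduce the condensed anima property of the presheaves in~\cref{xjs82k} as a formal consequence.

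For~\eqref{eqhskjdfsql}, I would reduce to a classical statement by base changing along $\Q_p \to K$, where $K$ is an infinitely ramified algebraic extension of $\Q_p$ (e.g., $K = \Q_p^{\cycl}$). Since $K$ is faithfully flat over $\Q_p$ and admits a $\Q_p$-linear splitting, it suffices to prove the isomorphism after tensoring with $K$. By~\cref{x02kd76}(2) applied to $T = S$ and $T = S'$, both sides of the $K$-base change become filtered colimits over $b \in K$ with $|b| > 1$ of classical Tate algebras. Since tensor products commute with filtered colimits, it suffices to show, for each such $b$,
\[
K_\solid[\N[S]] \otimes_{K_\solid[\N[S']]} K_\solid\langle \N[bS'] \rangle \xrightarrow{\sim} K_\solid\langle \N[bS] \rangle,
\]
which by renaming variables reduces to the case $b = 1$. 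Writing $K_\solid\langle \N[T]\rangle = (\mathcal{O}_{K,\solid}[\N[T]])^{\wedge_p}[1/p]$, this last statement amounts to the fact that $p$-adic completion of solid monoid algebras commutes with pushout along a surjection of monoids -- a consequence of the idempotency of $\Z_{p,\solid}\langle \N[T]\rangle$ over $\Z_{p,\solid}[\N[T]]$, as established in~\cite[Lemma 2.4.7]{camargo2024analytic}.

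For the condensed anima property, let $S' \to S$ be a surjection of light profinite sets with \v{C}ech nerve $(S'^{n+1/S})_{n\geq 0}$, so that $S = \varinjlim_{[n] \in \Delta^{\op}} S'^{n+1/S}$ in condensed sets. Since $\N[-]$ and $\Q_{p,\solid}[-]$ both preserve colimits, we have $\Q_{p,\solid}[\N[S]] = \varinjlim_{[n]} \Q_{p,\solid}[\N[S'^{n+1/S}]]$ in $\Q_{p,\solid}$-algebras. Applying~\eqref{eqhskjdfsql} to each surjection $S'^{n+1/S} \to S$ and commuting the tensor product with the simplicial colimit then yields
\[
\Q_{p,\solid}\langle \N[S] \rangle_{\leq 1} = \varinjlim_{[n] \in \Delta^{\op}} \Q_{p,\solid}\langle \N[S'^{n+1/S}] \rangle_{\leq 1}
\]
in $\Q_{p,\solid}$-algebras. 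Applying $\Map_{\Cat{Ring}_{\Q_{p,\solid}}}(-, A)$ transforms this colimit into a limit, which is the \v{C}ech descent condition showing that $A^{\leq 1}$ is a condensed anima. The same argument, combined with scaling over $K$, gives the corresponding descent property for $A^{\leq r}$, $r > 0$. Finally, $\Nil^\dagger(A) = A^{\leq 0} = \varprojlim_{r > 0} A^{\leq r}$ and $A^{<r} = \varinjlim_{r' < r} A^{\leq r'}$ are cofiltered limits, respectively filtered colimits, of condensed anima, and hence are themselves condensed anima.

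The main obstacle is the pushout/$p$-adic-completion interchange in the proof of~\eqref{eqhskjdfsql}; once this is granted, the rest is a formal manipulation using the structure of \v{C}ech nerves in the light condensed topology.
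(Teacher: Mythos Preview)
Your approach to the isomorphism~\eqref{eqhskjdfsql} has a genuine gap at precisely the point you flag as the main obstacle. After reducing to the claimed base-change formula
\[
\Z_{p,\solid}[\N[S]] \otimes_{\Z_{p,\solid}[\N[S']]} \Z_{p,\solid}\langle \N[S']\rangle \xrightarrow{\sim} \Z_{p,\solid}\langle \N[S]\rangle,
\]
you assert this follows from the idempotency of $\Z_{p,\solid}\langle \N[T]\rangle$ over $\Z_{p,\solid}[\N[T]]$. But idempotency only tells you that both sides are idempotent $\Z_{p,\solid}[\N[S]]$-algebras, and that the category of such forms a poset: you still need to exhibit a map from the right-hand side to the left-hand side, i.e.\ to show that the left-hand side is already a $\Z_{p,\solid}\langle\N[S]\rangle$-module. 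This does not follow formally, since the left-hand side is a tensor product over a non-finitely-presented base and there is no a priori reason for the relevant completion to commute with it. The paper supplies exactly this missing ingredient: by projectivity of $\Z_{p,\solid}[S]$ as a solid $\Z_p$-module, the surjection $\Z_{p,\solid}[S']\to \Z_{p,\solid}[S]$ admits a section, which induces an algebra section $\widetilde{s}\colon \Q_{p,\solid}[\N[S]]\to \Q_{p,\solid}[\N[S']]$, and one checks via the explicit series description that $\widetilde{s}$ extends to $\Q_{p,\solid}\langle\N[S]\rangle_{\leq 1}\to \Q_{p,\solid}\langle\N[S']\rangle_{\leq 1}$. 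This produces the reverse map between idempotent algebras directly, with no detour through $K$ or through the non-overconvergent version. Your reduction to $K$ is thus unnecessary: the section argument works equally well (and more simply) for $\langle-\rangle_{\leq 1}$ itself.

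For the condensed anima property, your \v{C}ech-nerve argument is more elaborate than needed, and the step ``commuting the tensor product with the simplicial colimit'' is imprecise: the base of the tensor product varies with $n$, so this is not a straightforward interchange. The paper's argument is much shorter: the pushout~\eqref{eqhskjdfsql}, upon applying $\Map_{\Cat{Ring}_{\Q_{p,\solid}}}(-,A)$, yields directly the cartesian square
\[
\begin{tikzcd}
A^{\leq 1}(S)\ar[r]\ar[d] & A^{\leq 1}(S')\ar[d]\\
A(S)\ar[r] & A(S'),
\end{tikzcd}
\]
and since $A$ is already a condensed anima (hence a hypersheaf), so is $A^{\leq 1}$. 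The other presheaves then reduce to this case via \cref{LemmaAcircicrr}, as in your final paragraph.
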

\begin{proof}
 By the  projectivity of free solid $\Z_{p}$-modules, we have an epimorphism of solid $\mathbb{Z}_p$-modules $\mathbb{Z}_{p,\solid}[S']\to \mathbb{Z}_{p,\solid}[S]$, this map has a section $s: \mathbb{Z}_{p,\solid}[S]\to \mathbb{Z}_{p,\solid}[S']$  which gives rise to a map of $\mathbb{Q}_{p}$-algebras
\[
\widetilde{s}:\mathbb{Q}_{p,\solid}[\mathbb{N}[S]]\to \mathbb{Q}_{p,\solid}[\mathbb{N}[S']].
\]
Using the presentation of $\mathbb{Q}_{p,\solid}\langle \mathbb{N}[S']\rangle_{\leq 1}$ as convergent sequences of \cref{xhjs8wj}, one verifies that this map extends to
\[
\widetilde{s}:\mathbb{Q}_{p,\solid}\langle \mathbb{N}[S]\rangle_{\leq 1} \to \mathbb{Q}_{p,\solid}\langle \mathbb{N}[S']\rangle_{\leq 1}.
\]
Composing with the natural map
\[
\mathbb{Q}_{p,\solid}\langle \mathbb{N}[S']\rangle_{\leq 1}\to  \mathbb{Q}_{p,\solid}[\mathbb{N}[S]] \otimes_{\mathbb{Q}_{p,\solid}[\mathbb{N}[S']] } \mathbb{Q}_{p,\solid}\langle \mathbb{N}[S'] \rangle_{\leq 1},
\]
this produces a morphism of idempotent $\mathbb{Q}_{p,\solid}[\mathbb{N}[S]]$-algebras
\[
\mathbb{Q}_{p,\solid}\langle \mathbb{N}[S]\rangle_{\leq 1} \to  \mathbb{Q}_{p,\solid}[\mathbb{N}[S]] \otimes_{\mathbb{Q}_{p,\solid}[\mathbb{N}[S']] } \mathbb{Q}_{p,\solid}\langle \mathbb{N}[S'] \rangle_{\leq 1}.
\] Since the category of idempotent algebras is a poset and there exists a morphism in the other direction, one deduces that it must be an equivalence.  For the final statement, it suffices to show that $A^{\leq 1}$ defines an hypersheaf in light profinite sets, as the other cases can be reduced to this by \cref{LemmaAcircicrr} below. Now, given a surjection $S'\to S$ as before, the equivalence \cref{eqhskjdfsql} yields the cartesian square
\[
\begin{tikzcd}
A^{\leq 1}(S) \ar[r] \ar[d] & A^{\leq 1} (S') \ar[d] \\
A(S) \ar[r] & A(S') .
\end{tikzcd}
\]
Then $A^{\leq 1}$ is a hypersheaf as $A$ is so.
\end{proof}

\begin{lemma}\label{LemmaAcircicrr}
Let $A$  be a bounded ring. We have the following relations:
\begin{enumerate}
\item Let $r\geq 0$. Then  $A^{\leq r}=\bigcap_{r'>r} A^{\leq r'}=\bigcap_{r'> r} A^{<r'}$.
\item  Let $r>0$. Then  $A^{<r}=\bigcup_{r'<r} A^{\leq r'}=\bigcup_{r'<r} A^{<r'}$. 
\item Let $r,r'\geq 0$. Then the composition $A^{\leq r}\times A^{\leq r'}\subseteq A\times A\overset{\mathrm{mult.}}{\to} A$ factors over $A^{\leq rr'}\subseteq A$.
\item Let $r,r'\geq 0$. Then the composition $A^{\leq r}\times A^{\leq r'}\subseteq A\times A\overset{\mathrm{add.}}{\to} A$ factors over $A^{\leq {\max\{r, r'\}}}\subseteq A$.
\end{enumerate}
\end{lemma}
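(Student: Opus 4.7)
The plan is to handle (1) and (2) by direct manipulation of the defining unions, and (3), (4) by producing ``comultiplication'' morphisms at the level of representing idempotent algebras.

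For (1), the central observation is the filtered union description
\[
\mathbb{Q}_{p,\solid}\langle\mathbb{N}[S]\rangle_{\leq r}=\bigcup_{r'>r}\mathbb{Q}_{p,\solid}\langle\mathbb{N}[S]\rangle_{\leq r'}
\]
as subobjects of $\mathbb{Q}_{p,\solid}\llbracket\mathbb{N}[S]\rrbracket$. Indeed, by \Cref{xhjs8wj} a series $\sum a_n$ lies on the left exactly when some $r_0>r$ witnesses $|a_n|r_0^n\to 0$, which then places the series in $\mathbb{Q}_{p,\solid}\langle\mathbb{N}[S]\rangle_{\leq r'}$ for every $r'\in(r,r_0)$. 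Applying $\mathrm{Map}_{\Cat{Ring}_{\mathbb{Q}_{p,\solid}}}(-,A)$ sends this filtered union of subobjects to a cofiltered intersection of subobjects of $A(S)$, yielding $A^{\leq r}=\bigcap_{r'>r}A^{\leq r'}$. The further equality $\bigcap_{r'>r}A^{\leq r'}=\bigcap_{r'>r}A^{<r'}$ is immediate from $A^{<r'}\subseteq A^{\leq r'}$ and from $A^{\leq r}\subseteq A^{\leq r_0}\subseteq A^{<r'}$ for any $r_0\in(r,r')$. Part (2) is then a formal unwinding: $\bigcup_{r'<r}A^{<r'}=\bigcup_{r'<r}\bigcup_{r''<r'}A^{\leq r''}=\bigcup_{r''<r}A^{\leq r''}=A^{<r}$, using that for any $r''<r$ one can interpolate $r''<r'<r$.

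For (3) and (4), the strategy is to construct morphisms of $\mathbb{Q}_{p,\solid}[\mathbb{N}[S]]$-algebras
\[
\mu^*\colon\mathbb{Q}_{p,\solid}\langle\mathbb{N}[S]\rangle_{\leq rr'}\to \mathbb{Q}_{p,\solid}\langle\mathbb{N}[S]\rangle_{\leq r}\otimes_{\mathbb{Q}_p}\mathbb{Q}_{p,\solid}\langle\mathbb{N}[S]\rangle_{\leq r'},\quad e_s\mapsto e_s\otimes e_s,
\]
\[
\sigma^*\colon\mathbb{Q}_{p,\solid}\langle\mathbb{N}[S]\rangle_{\leq \max(r,r')}\to \mathbb{Q}_{p,\solid}\langle\mathbb{N}[S]\rangle_{\leq r}\otimes_{\mathbb{Q}_p}\mathbb{Q}_{p,\solid}\langle\mathbb{N}[S]\rangle_{\leq r'},\quad e_s\mapsto e_s\otimes 1+1\otimes e_s,
\]
where the $\mathbb{Q}_{p,\solid}[\mathbb{N}[S]]$-algebra structures on the targets are given by the prescribed images of the generators. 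Granting $\mu^*$ and $\sigma^*$: given $\phi\in A^{\leq r}(S)$, $\psi\in A^{\leq r'}(S)$, composing $\mu^*$ (resp.\ $\sigma^*$) with $\phi\otimes\psi$ and the multiplication (resp.\ addition) of $A$ produces a morphism $\mathbb{Q}_{p,\solid}\langle\mathbb{N}[S]\rangle_{\leq rr'}\to A$ (resp.\ $\mathbb{Q}_{p,\solid}\langle\mathbb{N}[S]\rangle_{\leq\max(r,r')}\to A$) which on $\mathbb{N}[S]$ sends $e_s$ to $\phi(e_s)\psi(e_s)$ (resp.\ $\phi(e_s)+\psi(e_s)$), proving (3) and (4).

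The existence of $\mu^*$ and $\sigma^*$ follows from the idempotency of the source algebras over $\mathbb{Q}_{p,\solid}[\mathbb{N}[S]]$: such a morphism exists iff the target, viewed as a $\mathbb{Q}_{p,\solid}[\mathbb{N}[S]]$-algebra in the stated way, absorbs the idempotent. This is an equality of underlying solid $\mathbb{Q}_{p,\solid}[\mathbb{N}[S]]$-modules, which may be verified after the faithfully flat base change $\mathbb{Q}_p\to K$ for $K$ an infinitely ramified algebraic extension, where \Cref{x02kd76}(2) provides the description $K\otimes_{\mathbb{Q}_p}\mathbb{Q}_{p,\solid}\langle\mathbb{N}[S]\rangle_{\leq r_0}=\varinjlim_{|b|>r_0}K_\solid\langle\mathbb{N}[bS]\rangle$. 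For $\mu^*$: in $K_\solid\langle\mathbb{N}[aS_1],\mathbb{N}[cS_2]\rangle$ with $|a|>r$, $|c|>r'$, the identity $(ac)(s_1s_2)=(as_1)(cs_2)$ exhibits $s_1s_2$ as a product of power-bounded elements, hence power-bounded; for any $|b|\leq|ac|$ with $|b|>rr'$, the assignment $s\mapsto s_1s_2$ extends to an algebra map $K_\solid\langle\mathbb{N}[bS]\rangle\to K_\solid\langle\mathbb{N}[aS_1],\mathbb{N}[cS_2]\rangle$, and passing to the colimit produces the base-changed $\mu^*$. For $\sigma^*$: given $|b|>\max(r,r')$, take $|a|=|c|=|b|$; the ultrametric inequality then gives $|b(s_1+s_2)|\leq 1$ in $K_\solid\langle\mathbb{N}[bS_1],\mathbb{N}[bS_2]\rangle$, so $s\mapsto s_1+s_2$ extends to $K_\solid\langle\mathbb{N}[bS]\rangle\to K_\solid\langle\mathbb{N}[bS_1],\mathbb{N}[bS_2]\rangle$, and the colimit over $|b|$ yields $\sigma^*$. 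The main subtlety is the descent step from $K$ back to $\mathbb{Q}_p$, but this is automatic since the absorption of an idempotent is an equality of underlying solid modules, which is reflected by faithfully flat base change.
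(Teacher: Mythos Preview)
Your treatment of (1) and (2) is correct and coincides with the paper's.

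For (3) and (4), reducing to the universal case and constructing the comultiplication maps $\mu^*,\sigma^*$ is exactly what the paper does. Your verification after base change to $K$, however, rests on a misreading of the notation $K_\solid\langle\N[bS]\rangle$. In the paper's convention (made explicit in the proof of \cref{x02kd76}), this ring is the set of series $\sum_n a_n$ in the $S$-variables with $|a_n|\,|b|^n\to 0$; in other words the element $s$ has spectral norm $|b|$, not $1/|b|$. Under this convention $as_1$ and $cs_2$ have norms $|a|^2,|c|^2$ and are \emph{not} power-bounded in general, so the identity $(ac)(s_1s_2)=(as_1)(cs_2)$ does not yield what you claim; the correct condition for the diagonal map $K_\solid\langle\N[bS]\rangle\to K_\solid\langle\N[aS_1],\N[cS_2]\rangle$ to exist is $|b|\geq|ac|$, not $|b|\leq|ac|$ (and then $b=ac$ works since $|ac|>rr'$). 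Your reading of $bs$ as the norm-$1$ generator is the literal interpretation of the symbol $\N[bS]$, but it is incompatible with the colimit $\varinjlim_{|b|>r}$ you invoke: with that reading the transition maps run the wrong way and the colimit would compute germs at the origin rather than the overconvergent radius-$r$ disc. Once the inequality is flipped your argument goes through; the analogous fix applies to $\sigma^*$, taking $|b|=\max(|a|,|c|)$ and using $|s_1+s_2|\leq\max(|a|,|c|)$.

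The paper's own proof is shorter because it avoids the base change entirely: working directly with \cref{xhjs8wj} over $\Q_p$, one observes that any $s>rr'$ factors as $s=tt'$ with $t>r$, $t'>r'$, whence for $x=\sum a_n$ with $|a_n|s^n\to 0$ the image under the diagonal has bidegree-$(n,n)$ coefficients $a_n$ with $|a_n|t^n(t')^n=|a_n|s^n\to 0$. This is the same norm estimate you are after, done one step earlier.
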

\begin{proof}
  For (1), the inclusion $A^{\leq r}\subseteq \bigcap_{r'>r} A^{\leq r'}$ is clear as $\Q_{p,\solid}\langle \N[S]\rangle_{\leq r'}\subseteq \Q_{p,\solid}\langle \N[S]\rangle_{\leq r}$ if $r'>r$. Conversely, one notes that, by definition, $\Q_{p,\solid}\langle \N[S]\rangle_{\leq r}=\bigcup_{r'>r}\Q_{p,\solid}\langle \N[S]\rangle_{\leq r'}$, which implies $\bigcap_{r'>r}A^{\leq r'}\subseteq A^{\leq r}$. Clearly, the systems $A^{\leq r'}, A^{<r'}$ are cofinal among each other, which implies $\bigcap_{r'>r} A^{\leq r'}=\bigcap_{r'> r} A^{<r'}$.

  For (2), the first equality is the definition of $A^{<r}$. The second equality follows again by cofinality.

  For (3), let $S$ be a light profinite set, and let $f\in A^{\leq r}(S),\ g\in A^{\leq r'}(S)$. We need to see that the product $fg\in A(S)$ lies in $A^{\leq rr'}(S)$. This reduces to the universal situation, which is given by $A=\Q_{p,\solid}\langle \N[S]\rangle_{\leq r}\otimes_{\Q_{p,\solid}} \Q_{p,\solid}\langle \N[S]\rangle_{\leq r'}$, the canonical morphisms $f,g$, and the universal product $fg$, itself given by the composition
  \[
    \Q_{p,\solid}[\N[S]]\overset{\alpha}{\to}\Q_{p,\solid}[\N[S]]\otimes_{\Q_{p,\solid}}\Q_{p,\solid}[\N[S]]\to A.
  \]
  Here, the first morphism $\alpha$ is induced by the diagonal $S\mapsto S\times S\subseteq \N[S\coprod S]$ using $\Q_{p,\solid}[\N[S]]\otimes_{\Q_{p,\solid}}\Q_{p,\solid}[\N[S]]\cong \Q_{p,\solid}[\N[S\coprod S]]$. Now, $A$ embeds into $\Q_{p,\solid}\llbracket\N[S\coprod S]\rrbracket$, and by \cref{xhjs8wj} the assertion reduces to the following observation: Let $x=\sum_{n\in \N} a_n\in \Q_{p,\solid}\langle S\rangle_{\leq rr'}$, and assume that $|a_n|s^n\to 0$ for some $s>rr'$. Write $s=tt'$ with $t>r$ and $t'>r'$. Write $\alpha(x)=\sum_{n,m}b_{n,m}$ with $b_{n,m}\in \Q_{p,\solid}[S^n/\Sigma_n\times S^m/\Sigma_m]\subseteq \Q_{p,\solid}\llbracket\N[S\coprod S]\rrbracket$ (i.e., $b_{n,m}=0$ if $n\neq m$, and $b_{n,n}$ is the image of $a_n$ under the diagonal embedding $S^n/\Sigma_n\to S^n/\Sigma_n\times S^n/\Sigma_n$). Then $|b_{n,m}|t^n(t')^m\to 0$ as $|a_n|s^n\to 0$.

  Lastly, the proof of the assertion in (4) is similar, and reduces eventually to the ultrametric inequality of the $p$-adic norm.
\end{proof}

\begin{remark}\label{xbs82kj}\
\begin{enumerate}
\item Keep the notation of \cref{xjs82k}. Given \cref{LemmaAcircicrr}, the fact that $A^{\leq 1}$ is a solid $\mathbb{Z}_{p,\solid}$-algebra and that $A^{\leq r}$ for $r\geq 0$ are solid $A^{\leq 1}$-modules follow by the same argument as in \cite[Proposition 2.6.9]{camargo2024analytic}. Alternatively one can argue via descent from an infinitely ramified extension of $\Q_{p}$ to use \cref{x02kd76}, and by writing $A^{\leq 0}$ or $A^{\leq 1}$ as a suitable intersection of multiplicative translates of the ring $A^{\circ}$ there defined. Note also that $$A^{b}=\bigcup_{r} A^{\leq r}$$ is the subring of bounded elements of $A$, while $A^\circ\subset  A^{\leq 1}$ (as $\Q_{p,\solid}\langle \N[S]\rangle_{\leq 1}\subseteq \Q_{p,\solid}\langle \N[S]\rangle$).

\item We recall that the idempotency of $\Q_{p,\solid}\langle \N[S]\rangle_{\leq r}$ implies that $\pi_i(A^{\leq r})=\pi_i(A)$ for $i>0$, i.e., $A^{\leq r},\ A^{<1},\ \mathrm{Nil}^\dagger(A)$ are condensed subanima of $A$. This also implies that the property of being bounded only depends on the static quotient $\pi_0(A)$ of $A$.
\end{enumerate}
\end{remark}

We aim to prove useful criteria to check containment in $A^{\leq 1}$. For this reason, we establish the following technical lemma.

\begin{lemma}\label{xsjw83h}
Let $S$ be a light profinite set. Then for all $n\geq 1$ the natural map
\[
\mathbb{Q}_{p,\solid}[\mathbb{N}[S]]\to \mathbb{Q}_{p,\solid}[\mathbb{N}[S]]\otimes_{\mathbb{Q}_{p,\solid}[\mathbb{N}[S^n]]}    \mathbb{Q}_{p,\solid}\langle \mathbb{N}[S^n]\rangle_{\leq 1},
\]
where the tensor in the second term is induced by $S^n\to S^n/\Sigma_n \to \mathbb{Q}_{p,\solid}[\mathbb{N}[S]]$, factors through $\mathbb{Q}_{p,\solid}\langle \mathbb{N}[S]\rangle_{\leq 1}$. In particular, the natural map $$\Q_{p,\solid}[\N[S]]\otimes_{\Q_{p,\solid}[\N[S^n]]} \Q_{p,\solid}\langle \N[S^n]\rangle_{\leq 1}\to \Q_{p,\solid}\langle \N[S]\rangle_{\leq 1}$$ is an isomorphism of idempotent $\Q_{p,\solid}[\N[S]]$-algebras.
\end{lemma}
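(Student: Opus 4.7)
The two parts of the lemma are naturally coupled by idempotency. By \cref{x02kd76}, $\Q_{p,\solid}\langle \N[S]\rangle_{\leq 1}$ is idempotent over $\Q_{p,\solid}[\N[S]]$, and the tensor product
\[
T:=\Q_{p,\solid}[\N[S]]\otimes_{\Q_{p,\solid}[\N[S^n]]} \Q_{p,\solid}\langle \N[S^n]\rangle_{\leq 1}
\]
is also idempotent over $\Q_{p,\solid}[\N[S]]$ (idempotency is preserved under base change). Since idempotent algebras over a base form a poset, exhibiting maps in both directions automatically yields inverse isomorphisms. Thus both assertions follow once we construct $\alpha\colon T\to \Q_{p,\solid}\langle \N[S]\rangle_{\leq 1}$ (the ``natural map'' of the second part) and a map $\beta$ in the opposite direction (whose composition with the inclusion is the factoring of the first part).

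\textbf{Construction of $\alpha$ via a map $\varphi$.} By the universal property of the pushout, $\alpha$ is determined by the inclusion $\Q_{p,\solid}[\N[S]]\hookrightarrow \Q_{p,\solid}\langle \N[S]\rangle_{\leq 1}$ together with a map $\varphi\colon \Q_{p,\solid}\langle \N[S^n]\rangle_{\leq 1}\to \Q_{p,\solid}\langle \N[S]\rangle_{\leq 1}$ of $\Q_{p,\solid}[\N[S^n]]$-algebras lifting the polynomial product map $T_{s_1,\ldots,s_n}\mapsto s_1\cdots s_n$. I construct $\varphi$ directly via the power-series description of \cref{xhjs8wj}: a section on a light profinite set $S'$ is a formal series $\sum_m a_m$ with $a_m\in \Q_{p,\solid}[(S^n)^m/\Sigma_m](S')$ and $|a_m|\,r'^m\to 0$ for some $r'>1$ (in $p$-adic Gauss norm). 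The polynomial product map is Gauss-norm-non-increasing, so the image is a series $\sum_k b_k$ with $b_k=0$ for $k\notin n\Z$ and $|b_{mn}|\le |a_m|$. Setting $r'':=r'^{1/n}>1$ gives
\[
|b_{mn}|\,r''^{mn}\le |a_m|\,r'^m\to 0,
\]
so the image lies in $\Q_{p,\solid}\langle \N[S]\rangle_{\leq 1}(S')$. Naturality in $S'$ yields $\varphi$.

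\textbf{Construction of $\beta$ via base change.} By idempotency of $\Q_{p,\solid}\langle \N[S]\rangle_{\leq 1}$ over $\Q_{p,\solid}[\N[S]]$, existence of $\beta$ is a property of $T$ checkable after faithful base change to an infinitely ramified algebraic extension $K/\Q_p$. Using \cref{x02kd76}(2),
\[
K\otimes_{\Q_p} T=\varinjlim_{|c|>1} K[\N[S]]\otimes_{K[\N[S^n]]} K\langle \N[cS^n]\rangle,\qquad K\otimes_{\Q_p}\Q_{p,\solid}\langle \N[S]\rangle_{\leq 1}=\varinjlim_{|b|>1} K\langle \N[bS]\rangle.
\]
The key geometric fact is that the $n$-fold product map $\A^{1,S}_K\to \A^{1,S^n}_K$ pulls back the scaled closed unit polydisc at level $c$ to the scaled closed unit polydisc at level $b=c^{1/n}$, since $|x_{s_1}\cdots x_{s_n}|\le 1/|c|$ for all $(s_1,\ldots,s_n)$ iff $\sup_s |x_s|\le 1/|c|^{1/n}$. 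For $|c|>1$ one has $|c|^{1/n}>1$, and by density of the value group of $K$ we can find $b\in K$ with $1<|b|\le |c|^{1/n}$, yielding natural maps $K\langle \N[bS]\rangle\to K[\N[S]]\otimes_{K[\N[S^n]]} K\langle \N[cS^n]\rangle$. Passing to the colimit and descending to $\Q_p$ produces $\beta$.

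\textbf{Expected main obstacle.} The norm estimate underlying $\varphi$ is essentially elementary once one spots the substitution $r'\mapsto r'^{1/n}$; the delicate step is the descent argument for $\beta$, which rests on the fact that ``being a $\Q_{p,\solid}\langle \N[S]\rangle_{\leq 1}$-algebra'' is a property rather than extra structure, hence descends faithfully. Combined with $\alpha$, the two maps are inverse isomorphisms in the poset of idempotent $\Q_{p,\solid}[\N[S]]$-algebras, proving both parts of the lemma simultaneously.
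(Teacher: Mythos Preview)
Your overall strategy is sound, and your construction of $\alpha$ via the norm estimate $r'\mapsto r'^{1/n}$ is correct.

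However, your construction of $\beta$ has a genuine gap, and it is precisely the heart of the lemma. You claim there are ``natural maps'' $K\langle \N[bS]\rangle\to K[\N[S]]\otimes_{K[\N[S^n]]} K\langle \N[cS^n]\rangle$ for $|b|$ and $|c|$ suitably related, justified by the geometric observation that the product map pulls back polydiscs to polydiscs. But the target is a \emph{solid tensor product}, not a priori the ring of functions on a fibre product of analytic spaces, and you never verify these agree. Concretely, the existence of your map is the assertion that in the bounded ring $T_{K,c}$ the tautological map $S\to T_{K,c}$ lands in $T_{K,c}^{\le |c|^{1/n}}$; all you know a priori is that its $n$-th power $S^n\to T_{K,c}$ lands in $T_{K,c}^{\le |c|}$. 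The implication ``$S^n$ of norm $\le r\Rightarrow S$ of norm $\le r^{1/n}$'' for a general bounded ring is exactly \cref{x92jhwn}, which the paper \emph{deduces from} the present lemma --- so your argument is circular. The descent from $K$ to $\Q_p$, which you flag as the delicate point, is not where the difficulty lies.

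The paper takes a different route. It reduces (via the colimit presentation of \cref{x02kd76}(2)) to the integral non-overconvergent statement, i.e.\ that $\Z_{p,\solid}[\N[S]]\to \Z_{p,\solid}[\N[S]]\otimes_{\Z_{p,\solid}[\N[S^n]]}\Z_{p,\solid}\langle \N[S^n]\rangle$ factors through $\Z_{p,\solid}\langle \N[S]\rangle$, and then shows that $\pi_0$ of this tensor product is $p$-adically complete. This follows once one knows that $\Z_{p,\solid}[\N[S]]$ is a static, \emph{finitely presented} $\Z_{p,\solid}[\N[S^n]]$-module. The paper proves this via the free associative algebra $\ob{Free}_{\Z_{p,\solid}}(S)$ (which is visibly finite free over $\ob{Free}_{\Z_{p,\solid}}(S^n)$) together with the right-exact presentation $\ob{Free}_{\Z_{p,\solid}}(S)\otimes\bigwedge^2\Z_{p,\solid}[S]\to \ob{Free}_{\Z_{p,\solid}}(S)\to \Z_{p,\solid}[\N[S]]\to 0$. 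This finite-presentation result is the actual engine of the lemma; your sketch bypasses it entirely.
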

\begin{proof}
It suffices to prove that the map 
\[
\mathbb{Z}_{p,\solid}[\mathbb{N}[S]] \to \mathbb{Z}_{p,\solid}[\mathbb{N}[S]]\otimes_{\mathbb{Z}_{p,\solid}[\mathbb{N}[S^n]]} \mathbb{Z}_{p,\solid}\langle \mathbb{N}[S^n] \rangle
\]
 factors through $\mathbb{Z}_{p,\solid} \langle \mathbb{N}[S] \rangle$ as then the other statement will follow. Indeed, using \cref{x02kd76}(2), we may pass to an infinitely ramified extension of $\Q_{p}$, then write the base change of $\Q_{p}\langle \N[S]\rangle_{\leq 1}$ as a colimit of $\Q_p\langle \N[S]\rangle$.
 
Let $A=\mathbb{Z}_{p,\solid}[\mathbb{N}[S]]\otimes_{\mathbb{Z}_{p,\solid}[\mathbb{N}[S^n]]} \mathbb{Z}_{p,\solid}\langle \mathbb{N}[S^n] \rangle$, it suffices to show that $\pi_0(A)$ is $p$-adically complete. Indeed, this would show that $\pi_0(A)$ is a $\mathbb{Z}_{p,\solid}\langle \mathbb{N}[S] \rangle$-algebra, but the latter is idempotent over $\mathbb{Z}_{p,\solid}[\mathbb{N}[S]]$, therefore $A$ is a $\mathbb{Z}_{p,\solid}\langle \mathbb{N}[S] \rangle$-algebra (indeed, all $\pi_i(A),\ i>0$ will be modules over $\Z_{p,\solid}\langle \N[S]\rangle$).

To prove the claim, it suffices to show that $\mathbb{Z}_{p,\solid}[\mathbb{N}[S]]$ is a static $\mathbb{Z}_{p,\solid}[\mathbb{N}[S^n]]$-module of finite presentation, in fact this would imply that $\pi_0(A)$ is a static $\mathbb{Z}_{p,\solid}\langle \mathbb{N}[S^n]\rangle$-module of finite presentation and any such module is $p$-adically complete. For a light profinite set $S'$, let $\ob{Free}_{\mathbb{Z}_{p,\solid}}(S')$ be the free associative $\mathbb{Z}_{p,\solid}$-algebra generated by $S'$. We have a commutative diagram of associative algebras with horizontal surjective maps
\[
\begin{tikzcd}
\ob{Free}_{\mathbb{Z}_{p,\solid}}(S)  \ar[r] & \mathbb{Z}_{p,\solid}[\mathbb{N}[S]]\\ 
\ob{Free}_{\mathbb{Z}_{p,\solid}}(S^n) \ar[r] \ar[u] &\mathbb{Z}_{p,\solid}[\mathbb{N}[S^n]] \ar[u].
\end{tikzcd}
\]
Thus, to show that $\mathbb{Z}_{p,\solid}[\mathbb{N}[S]]$ is of finite presentation as $\mathbb{Z}_{p,\solid}[\mathbb{N}[S^n]]$-module it suffices to prove that it is of finite presentation as a $\ob{Free}_{\mathbb{Z}_{p,\solid}}(S^n)$-module. On the other hand, we have that
\[
\ob{Free}_{\mathbb{Z}_{p,\solid}}(S)=\bigoplus_{i=0}^{n-1} \ob{Free}_{\mathbb{Z}_{p,\solid}}(S^n)\otimes_{\mathbb{Z}_{p,\solid}} \mathbb{Z}_{p,\solid}[S^i]
\]
as left $\ob{Free}_{\mathbb{Z}_{p,\solid}}(S^n)$-modules, therefore $\ob{Free}_{\mathbb{Z}_{p,\solid}}(S)$ is a finitely presented  $\ob{Free}_{\mathbb{Z}_{p,\solid}}(S^n)$-module (as each $\Z_{p,\solid}[S^i]$ is a compact projective $\Z_{p,\solid}$-module). Thus, it suffices to show that $\mathbb{Z}_{p,\solid}[\mathbb{N}[S]]$ is a finitely presented $\ob{Free}_{\mathbb{Z}_{p,\solid}}(S)$-module. For proving this, first note that given a finite set $S$ we have a right exact sequence of graded modules
\begin{equation}\label{eqnx92jn}
 \ob{Free}_{\mathbb{Z}_{p,\solid}}(S)  \otimes_{\mathbb{Z}_p} \bigwedge^2 \mathbb{Z}_{p}[S] \to \ob{Free}_{\mathbb{Z}_{p,\solid}}(S) \to \ob{Sym}_{\mathbb{Z}_p} \mathbb{Z}_p[S]\to 0
\end{equation}
functorial in $S$. Now, if $S=\varprojlim_i S_i$ is a countable limit of  finite sets, by  taking  limits of graded modules on \cref{eqnx92jn}, we get a right exact sequence
\[
\ob{Free}_{\mathbb{Z}_{p,\solid}}(S)\otimes_{\mathbb{Z}_{p,\solid}}\bigwedge^2 \mathbb{Z}_{p,\solid}[S] \to \ob{Free}_{\mathbb{Z}_{p,\solid}}(S) \to \mathbb{Z}_{p,\solid}[\mathbb{N}[S]]\to 0
\]
proving the claim, and so finishing the proof of the lemma.
\end{proof}

We can now verify the desired criterion.

\begin{corollary}\label{x92jhwn}
Let $S$ be a light profinite set, $A$ a bounded $\Q_p$-algebra and $f: S\to A$  a map  such that the $n$-th power $f^{(n)}:S^n\to A$ factors through $A^{\leq r}$, then $f$ factors through $A^{\leq r^{1/n}}$.  Moreover, the following hold: 

\begin{enumerate}

\item  $f$ factors through $A^{\leq r}$ for some $r>0$ if and only if for all   $r'>r$ of the form $r'=p^{a/b}$ with $a\in \mathbb{Z}$ and $b\in \mathbb{N}_{>0}$ the $b$-th power $f^{(b)}: S^b\to A$ factors through $p^{-a}A^{<1}$. In the last condition, $p^{-a}A^{<1}$ can be replaced by $p^{-a}A^{\leq 1}$.

\item  $f$ factors through $A^{<r}$ for some $r>0$ if and only if there is some $r'<r$ of the form $r'=p^{a/b}$ with $a\in \mathbb{Z}$ and $b\in \mathbb{N}_{>0}$ whose $b$-th power $f^{(b)}: S^b\to A$ factors through $p^{-a}A^{<1}$. The the last condition, $p^{-a}A^{<1}$ can be replaced by $p^{-a}A^{\leq 1}$. 
  
\end{enumerate}

\end{corollary}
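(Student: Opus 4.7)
The strategy is to reduce the main claim to proving that the natural morphism
\begin{equation*}
\phi_r\colon \Q_{p,\solid}[\N[S]]\otimes_{\Q_{p,\solid}[\N[S^n]]} \Q_{p,\solid}\langle \N[S^n]\rangle_{\leq r}\longrightarrow \Q_{p,\solid}\langle \N[S]\rangle_{\leq r^{1/n}}
\end{equation*}
is an isomorphism of idempotent $\Q_{p,\solid}[\N[S]]$-algebras. The morphism $\phi_r$ itself is built from \cref{LemmaAcircicrr}(3): the universal property places each generator $s\in S\subseteq \Q_{p,\solid}\langle \N[S]\rangle_{\leq r^{1/n}}$ in the ``$\leq r^{1/n}$'' part, so iterated multiplicativity puts the product $s_1\cdots s_n$ in $(\Q_{p,\solid}\langle \N[S]\rangle_{\leq r^{1/n}})^{\leq r}$; the universal property of $\Q_{p,\solid}\langle \N[S^n]\rangle_{\leq r}$ then produces an extension $\Q_{p,\solid}\langle \N[S^n]\rangle_{\leq r}\to \Q_{p,\solid}\langle \N[S]\rangle_{\leq r^{1/n}}$ of the multiplication $\Q_{p,\solid}[\N[S^n]]\to \Q_{p,\solid}[\N[S]]$, and $\phi_r$ is obtained by combining this with the tautological inclusion $\Q_{p,\solid}[\N[S]]\hookrightarrow \Q_{p,\solid}\langle \N[S]\rangle_{\leq r^{1/n}}$. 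Granting that $\phi_r$ is an isomorphism, the main claim follows immediately: a factorization of $f^{(n)}$ through $A^{\leq r}$ packages with $f$ into a map from the source of $\phi_r$ to $A$, whose composition with $\phi_r^{-1}$ yields an extension $\Q_{p,\solid}\langle \N[S]\rangle_{\leq r^{1/n}}\to A$ of $f$, i.e.\ $f\in A^{\leq r^{1/n}}(S)$.

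To prove $\phi_r$ is an isomorphism, since both sides are idempotent $\Q_{p,\solid}[\N[S]]$-algebras the statement is faithfully-flat local, so one may base change to an infinitely ramified algebraic extension $K/\Q_p$. By \cref{x02kd76}(2) one obtains
\begin{equation*}
K\otimes_{\Q_p}\Q_{p,\solid}\langle \N[S^n]\rangle_{\leq r}=\varinjlim_{|b|>r} K_\solid\langle \N[bS^n]\rangle,\qquad K\otimes_{\Q_p}\Q_{p,\solid}\langle \N[S]\rangle_{\leq r^{1/n}}=\varinjlim_{|c|>r^{1/n}} K_\solid\langle \N[cS]\rangle,
\end{equation*}
and the subsystem indexed by $b=c^n$ with $|c|>r^{1/n}$ is cofinal in $\{|b|>r\}$. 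It then suffices to show, for each such $c$, that the natural map
\begin{equation*}
K_\solid[\N[S]]\otimes_{K_\solid[\N[S^n]]} K_\solid\langle \N[c^nS^n]\rangle\overset{\sim}{\longrightarrow} K_\solid\langle \N[cS]\rangle
\end{equation*}
is an isomorphism; this is the rescaled avatar of \cref{xsjw83h}, using that the multiplication $S^n\to S$ is compatible with the rescaling $s\mapsto cs$ (since $(cs_1)\cdots(cs_n)=c^n(s_1\cdots s_n)$) and the transition maps $\tfrac{b'}{b}S$ from \cref{x02kd76}(2).

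Parts (1) and (2) then follow formally from the main statement and \cref{LemmaAcircicrr}, using density of $\{p^{a/b}:a\in\Z,\ b\in\N_{>0}\}$ in $\R_{>0}$. For (1), if $f\in A^{\leq r}$ and $r'=p^{a/b}>r$, iterated multiplicativity gives $f^{(b)}\in A^{\leq r^b}\subseteq A^{<p^a}=p^{-a}A^{<1}\subseteq p^{-a}A^{\leq 1}$ (the containment $A^{\leq r^b}\subseteq A^{<p^a}$ using $r^b<p^a$); conversely, for any $r''>r$ one picks $r<r'=p^{a/b}<r''$ and reads off from the hypothesis that $f^{(b)}\in A^{\leq p^a}$, whence the main claim gives $f\in A^{\leq p^{a/b}}\subseteq A^{\leq r''}$, and $f\in \bigcap_{r''>r}A^{\leq r''}=A^{\leq r}$ by \cref{LemmaAcircicrr}(1). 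Part (2) is analogous, invoking $A^{<r}=\bigcup_{r'<r}A^{\leq r'}$ from \cref{LemmaAcircicrr}(2). The main technical obstacle is the ``rescaled \cref{xsjw83h}'' in the second paragraph: the bookkeeping between $K_\solid\langle \N[bT]\rangle$, the $n$-fold multiplication map, and the scaling by $c$ requires care, but no new ingredient beyond \cref{xsjw83h} and the explicit description of \cref{x02kd76}(2).
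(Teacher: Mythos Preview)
Your proof is correct and follows essentially the same strategy as the paper's: both reduce the main claim to \cref{xsjw83h} via a rescaling argument after base change. The only difference is cosmetic: the paper first arranges (via \cref{LemmaAcircicrr}(1)) that $r^{1/n}$ is rational and $r<1$, then base changes to a single \emph{finite} extension $K/\Q_p$ containing some $\pi$ with $|\pi|_p=r^{1/n}$, so that the identifications $K_\solid\langle\N[S]\rangle_{\leq r^{1/n}}=K_\solid\langle\N[\pi S]\rangle_{\leq 1}$ and $K_\solid\langle\N[S^n]\rangle_{\leq r}=K_\solid\langle\N[(\pi S)^n]\rangle_{\leq 1}$ reduce directly to the $r=1$ case, i.e.\ to \cref{xsjw83h}; your version instead passes to an infinitely ramified extension and matches the colimit descriptions from \cref{x02kd76}(2) term by term, arriving at the same rescaled instance of \cref{xsjw83h}. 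The arguments for parts (1) and (2) are identical to the paper's.
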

\begin{proof}

We prove the first claim. Let $f:S\to A$ be such that $f^{(n)}: S^n\to A$ factors through $A^{\leq r}$. We want to show that $f$ factors through $A^{\leq r^{1/n}}$. Without loss of generality we can assume that $r$ and $r^{1/n}$ are rational (by writing $A^{\leq r}=\bigcap_{r<r'}A^{\leq r'}$ with $r'$ rational, \cref{LemmaAcircicrr}). Multiplying by some power of $p$ we may also assume that $r< 1$. We need to see that the map
\[
A\to A\otimes_{\mathbb{Q}_{p,\solid}[\mathbb{N}[S]]}\mathbb{Q}_{p,\solid}\langle \mathbb{N}[S]\rangle_{\leq r^{1/n}}
\]
is an isomorphism provided that 
\[
A\to A\otimes_{\mathbb{Q}_{p,\solid}[\mathbb{N}[S^n]]}\mathbb{Q}_{p,\solid}\langle \mathbb{N}[S^n]\rangle_{\leq r}
\]
is an isomorphism. We may base change (using $\ast$-descent) to a finite extension $K$ of $\mathbb{Q}_p$. Therefore, we can assume that there is a pseudo-uniformizer $\pi$ in $A$ with $|\pi|_p=r^{1/n}$ (using that $r^{1/n}$ is rational, $r<1$ and non-zero). In this case, we can write 
\[
K_{\solid}\langle \mathbb{N}[S]\rangle_{\leq r^{1/n}}=K_{\solid}\langle \mathbb{N}[\pi S]\rangle_{\leq 1}
\]
and 
\[
K_{\solid}\langle \mathbb{N}[S^n]\rangle_{\leq r}=K_{\solid}\langle \mathbb{N}[(\pi S)^n]\rangle_{\leq 1}.
\]
Then, by rescaling we can assume that $r=1$. In this case the claim follows from \cref{xsjw83h}.

We now prove the next claims.  In the forward direction of (1) it suffices to consider the universal case, namely the map $f\colon S\to A=\Q_{p,\solid}\langle \N[S]\rangle_{\leq r}$. Then $f^{(b)}\colon S^b\to \Q_{p,\solid}\langle \N[S]\rangle_{\leq r}$ extends to a map $\Q_{p,\solid}\langle \N[S^n]\rangle_{\leq r^b}\to A$ (this follows from the definition of these algebras, \cref{xhjs8wj}), and thus $f^{(b)}\in A^{\leq r^b}(S^b)$. Therefore, $p^{a}f^{(b)}\in A^{\leq p^{-a}r^b}(S^b)$ (as $p\in A^{\leq 1/p}$ and $p^{-1}\in A^{\leq p}$.). But $r'=p^{a/b}>r$ implies $p^{-a}r^b<1$, and thus $p^{a}f^{(b)}\in A^{<1}(S^b)$, i.e., $f^{(b)}$ factors through $p^{-a}A^{<1}$. Conversely, if $p^af^{(b)}$ factors through $A^{<1}$ (or just $A^{\leq 1}$) for all $r'=p^{a/b}>r$ then in particular  $f^{(b)}$ factors through $A^{\leq p^a}$. By the first part of the corollary the map $f$ factors therefore through $A^{\leq  r'}$. Taking limits as $r'\to r$ we  deduce that $f$ factors through $A^{\leq 1}$ as wanted.

Now, we check (2). Assume that $f$ factors through $A^{\leq s}\subseteq A^{<r}$. Then take some $s<r'=p^{a/b}<r$. By (1) $f^{(b)}$ factors through $p^{-a}A^{<1}$ as desired. Conversely, assume that $f^{(b)}$ factors through $p^{-a}A^{\leq 1}=A^{\leq p^a}$ for some $r'=p^{a/b}<r$. Then $f$ factors through $A^{\leq r'}$ by the first part of the corollary, and thus through $A^{<r}$ as $r'<r$.
\end{proof}

\begin{example}
  \label{sec:dagger-nilradical-example-power-bounded-elements}
  Let $A$ be a classical Banach algebra over $\Q_p$, seen as a solid $\Q_p$-algebra. Then $A^{\leq 1}$ is the classical $\Z_p$-algebra $A'$ of power-bounded elements in $A$ (for a similar assertion, see \cite[Lemma 2.6.5]{camargo2024analytic}). Indeed, let $A_0\subseteq A$ be a ring of definition. Then $A_0\subseteq A$ is open with $A/A_0$ discrete. This implies that for each continuous map $f\colon S\to A$ with $S$ a light profinite set, and such that $f(S)\subseteq A'$ the image of $S$ lies in a finitely generated $A_0$-subalgebra $B\subseteq A'$. But then $B$ is bounded (hence $p$-adically complete), and $S^n\to B$ for all $n\geq 0$. Thus, the map $\Z_{p,\solid}[\N[S]]\to A$ classified by $f$ extends to $\Z_{p,\solid}\langle \N[S]\rangle$, and hence a fortiori to $\Q_{p,\solid}\langle \N[S]\rangle_{\leq 1}$.
  Conversely, \cref{x92jhwn} implies that $A^{\leq 1}\subseteq A'$ if we can show $A^{<1}=A^{\circ\circ}$.
  If $f\colon S\to A$ lands in $A^{<1}$, then $f(s)\in A$ has to be topological nilpotent for any $s$, and therefore $A^{<1}\subseteq A^{\circ \circ}$.
  For the other inclusion $A^{\circ\circ}\subseteq A^{<1}$, given a map $g\colon S\to A^{\circ\circ}$, we can note that there exists an open and bounded subring $B\subseteq A$ such that $\Im(S)\subseteq B^{\circ\circ}$ (indeed, $g$ factors over $A'$, which is the union of such $B$'s).
  We can conclude that the topology on $B$ is $p$-adic and that the morphism $S\to B/p$ factors through a finite quotient.
  As $f(s)\in A^{\circ\circ}$ for all $s\in S$, there exists some $n\geq 0$ such that $S^n$ maps to $pB$.
  This implies that $S^n\to B$ extends to $\Q_p\langle \N[S^n]\rangle_{\leq 1/p}\to B$ (this uses that $B$ has the $p$-adic topology).
  By \Cref{x92jhwn} we can conclude that $S\to B\to A^{\leq 1}$ factors over $A^{<1}$.
\end{example}

\begin{lemma}\label{LemDescentAr}
Let $f\colon A\to B$ be a morphism of bounded $\Q_p$-algebras satisfying $*$-descent. Let $B^{\bullet}$ be the \v{C}ech conerve of $f$. Then for $r\in [0,\infty)$ the natural maps
\[
A^{\leq r} \to \ob{Tot}(B^{\bullet,\leq r}) \mbox{ and } A^{< r} \to \ob{Tot}(B^{\bullet,< r})
\]
are equivalences. 
\end{lemma}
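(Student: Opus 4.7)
I will first establish the statement for $A^{\leq r}$ and then reduce the $A^{<r}$ case to it. Fix a light profinite set $S$; since evaluation at $S$ commutes with limits, it suffices to show that $A^{\leq r}(S)\to \ob{Tot}(B^{\bullet,\leq r}(S))$ is an equivalence. By the idempotency of $\Q_{p,\solid}\langle \N[S]\rangle_{\leq r}$ over $\Q_{p,\solid}[\N[S]]$ (\cref{x02kd76}), for any solid $\Q_p$-algebra $C$ the subanima $C^{\leq r}(S)\subseteq C(S)$ consists precisely of those maps $g\colon \Q_{p,\solid}[\N[S]]\to C$ for which the unit map $C\to C\otimes_{\Q_{p,\solid}[\N[S]],g}\Q_{p,\solid}\langle \N[S]\rangle_{\leq r}$ is an equivalence, and such an extension, when it exists, is unique. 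In particular $\ast$-descent applied to the unit object gives $A(S)\simeq \ob{Tot}(B^\bullet(S))$, so an element of $\ob{Tot}(B^{\bullet,\leq r}(S))$ amounts to some $f\in A(S)$ whose images $f_n\in B^n(S)$ all admit lifts to $B^{n,\leq r}(S)$.

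The main step is to apply $\ast$-descent at the level of $A$-modules to $M:=A\otimes_{\Q_{p,\solid}[\N[S]],f}\Q_{p,\solid}\langle \N[S]\rangle_{\leq r}$. Since $\ob{D}(A)\simeq \ob{Tot}(\ob{D}^\ast(B^\bullet))$, one has
\[
M\simeq \ob{Tot}(M\otimes_A B^\bullet)\simeq \ob{Tot}\bigl(B^\bullet\otimes_{\Q_{p,\solid}[\N[S]]}\Q_{p,\solid}\langle \N[S]\rangle_{\leq r}\bigr).
\]
Because each $f_n$ lifts, idempotency yields $B^n\otimes_{\Q_{p,\solid}[\N[S]],f_n}\Q_{p,\solid}\langle \N[S]\rangle_{\leq r}\simeq B^n$ naturally in $n$, whence $M\simeq \ob{Tot}(B^\bullet)\simeq A$ via the canonical map $A\to M$. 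Therefore $f\in A^{\leq r}(S)$, proving the first part.

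For the $A^{<r}$ case, suppose $(f_n)\in \ob{Tot}(B^{\bullet,<r}(S))$. Then $f_0\in B^{<r}(S)$ admits a lift $\widetilde f_0\colon \Q_{p,\solid}\langle \N[S]\rangle_{\leq r'}\to B$ for some $r'<r$. Composing $\widetilde f_0$ with the iterated face maps $B\to B^n$ produces lifts of each $f_n$ to $B^{n,\leq r'}(S)$, which are cosimplicially compatible by uniqueness of lifts; hence $(f_n)\in \ob{Tot}(B^{\bullet,\leq r'}(S))=A^{\leq r'}(S)\subseteq A^{<r}(S)$ by the first part.

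The principal subtlety is that $M$ is not itself a term of the \v{C}ech conerve of $A\to B$, so one must genuinely use the module-level equivalence $\ob{D}(A)\simeq \ob{Tot}(\ob{D}^\ast(B^\bullet))$ provided by $\ast$-descent, rather than merely the ring-level statement $A\simeq \ob{Tot}(B^\bullet)$.
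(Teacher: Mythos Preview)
Your proof is correct and, for the $\leq r$ case, essentially identical to the paper's: both observe that membership in $A^{\leq r}(S)$ is the condition that the unit $A\to A\otimes_{\Q_{p,\solid}[\N[S]],f}\Q_{p,\solid}\langle\N[S]\rangle_{\leq r}$ be an equivalence, and this can be tested after base change along $A\to B$ by $\ast$-descent. The paper states this tersely (``follows by $\ast$-descent''); you spell out the module $M$. Note, incidentally, that conservativity of $-\otimes_A B$ (an immediate consequence of $\ast$-descent) already suffices here; you need not invoke the full totalization.

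For the $<r$ case your route is slightly different and arguably more direct. The paper invokes \cref{x92jhwn} to rephrase $f\in A^{<r}$ via a power criterion, then descends that condition. You instead unwind the definition $B^{<r}=\bigcup_{r'<r}B^{\leq r'}$: pick $r'<r$ with $f_0\in B^{\leq r'}(S)$, observe that every $f_n$ is the image of $f_0$ under some face map (hence also in $B^{n,\leq r'}(S)$), and apply the $\leq r'$ case. This avoids the detour through \cref{x92jhwn} at the cost of making explicit the cosimplicial compatibility, which is in any case automatic from idempotency.
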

\begin{proof}
As the map $f$ satisfies $*$-descent, we have that $A=\ob{Tot}(B^{\bullet})$. Thus, all terms in the lemma are subojects of $A$ and it suffices to prove the following fact: let $g\colon S\to A$ be a map whose composite $S\to B$ factors through $B^{\leq r}$ (resp. $B^{<r}$), then $g$ factors through $A^{\leq r}$ (resp. $A^{<r}$). The first case follows by $*$-descent  from the fact that $S\to A$ factors through $A^{\leq r}$ if and only if it extends to a map $\Q_{p,\solid}\langle \N[S] \rangle_{\leq r}\to A$. For the second case, by \cref{x92jhwn}, $f$ factors through $A^{<r}$ if and only if there is $r'=\frac{a}{b}<r$ such that $f^{(b)}\colon S^b\to A$ factors through $A^{\leq a}$, which then can be tested by $*$-descent. 
\end{proof}

\begin{lemma}\label{xsu29s}
Let $\{A_i\}_{i\in I}$ be a filtered diagram of bounded $\Q_p$-algebras with colimit $A$.
Then for any $r>0$ the natural map
\[
\varinjlim_{i} A_i^{<r} \to  A^{<r}
\]
is an equivalence. 
\end{lemma}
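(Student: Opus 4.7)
The plan is to establish the equivalence by checking it on $S$-points for each light profinite set $S$. Since filtered colimits of condensed anima are computed pointwise on light profinite sets, it suffices to show that the natural map $\varinjlim_i A_i^{<r}(S)\to A^{<r}(S)$ is an equivalence of anima for every such $S$. Injectivity is formal: if $f_i,g_i\in A_i^{<r}(S)$ become equal in $A^{<r}(S)\subseteq A(S)=\varinjlim_j A_j(S)$, they agree in some $A_j(S)$, hence also in the subobject $A_j^{<r}(S)\subseteq A_j(S)$ (using \cref{xbs82kj}), giving the same class in $\varinjlim_k A_k^{<r}(S)$.

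For surjectivity, fix $f\in A^{<r}(S)$, so $f\in A^{\leq r'}(S)$ for some $r'<r$. Applying \cref{x92jhwn}(2) in its variant with $A^{\leq 1}$ in place of $A^{<1}$, one can choose $a\in\mathbb{Z}$ and $b\in\mathbb{N}_{>0}$ with $p^{a/b}\in(r',r)$ such that $p^a f^{(b)}\colon S^b\to A$ extends to a morphism $G\colon \Q_{p,\solid}\langle\mathbb{N}[S^b]\rangle_{\leq 1}\to A$. Lift $f$ to some $f_i\in A_i(S)$ via $A(S)=\varinjlim_j A_j(S)$. Since $p^{a/b}<r$ strictly, pick a rational $t>1$ with $(p^a t)^{1/b}<r$. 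It will suffice to produce, for some $j\geq i$, an extension $G_j\colon \Q_{p,\solid}\langle\mathbb{N}[S^b]\rangle_{\leq t}\to A_j$ of $p^a f_j^{(b)}$: by the first claim of \cref{x92jhwn} this will imply $f_j\in A_j^{\leq(p^at)^{1/b}}(S)\subseteq A_j^{<r}(S)$, as required.

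To construct $G_j$, one base-changes to an infinitely ramified extension $K/\Q_p$. By \cref{x02kd76}(2), one has
\[
K\otimes_{\Q_p}\Q_{p,\solid}\langle\mathbb{N}[S^b]\rangle_{\leq t}=\varinjlim_{|c|>t}K_\solid\langle\mathbb{N}[cS^b]\rangle,
\]
a filtered colimit of classical Tate $K$-algebras. For each $c$ with $|c|>t$, the restricted map $G_K|_c\colon K_\solid\langle\mathbb{N}[cS^b]\rangle\to A\otimes K$ must then be lifted to some $A_{j}\otimes K$; the strict inequality $|c|>t>1$ provides room to relax the power-bounded condition and achieve such a lift, using the countably generated structure of $K_\solid\langle\mathbb{N}[cS^b]\rangle$ in the light setting and the boundedness of each $A_i$. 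One descends from $K$ to $\Q_p$ via $\ast$-descent (\cref{LemDescentAr}) along the faithfully flat map $\Q_p\to K$.

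The main obstacle is precisely the Tate-algebra-level lift: since $K_\solid\langle\mathbb{N}[cS^b]\rangle$ is a $p$-adic completion, mapping out of it does not a priori commute with filtered colimits of bounded $\Q_p$-algebras. The crucial point is that the slack $|c|>t>1$ allows one to replace an equality of norms by a strict inequality, which is compatible with filtered colimits of power-bounded elements in the light setup; this rescaling argument, rather than any raw compactness property of the representing algebras, is the heart of the proof.
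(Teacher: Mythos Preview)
Your overall strategy is sound and close in spirit to the paper's, but there is a genuine gap at the key step. You correctly reduce to lifting the algebra morphism $G_K|_c\colon K_\solid\langle\mathbb{N}[cS^b]\rangle\to A\otimes K$ to some $A_j\otimes K$, and you note that the Tate algebra is a $p$-adic completion so maps out of it do not commute with filtered colimits. But your proposed fix---``slack $|c|>t>1$'' together with ``countably generated structure'' and ``boundedness of each $A_i$''---does not actually produce the lift: countable topological generation is not enough to lift a continuous algebra map through a filtered colimit, and boundedness of $A_i$ only tells you $f_i$ lands in $A_i^{\le s_0}$ for some possibly large $s_0$, not that $s_0$ can be pushed below $r$.

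What makes the rescaling work is precisely the compactness you reject. The paper's argument (after reducing to $r=1$ by descent to an infinitely ramified $K$, which is the only place base change to $K$ is used) runs as follows. Given $f\colon S\to A^{<1}$ extending to $\widetilde{f}\colon \Q_{p,\solid}\langle \N[S]\rangle_{\leq r}\to A$ for some $r<1$, one observes that for any $r<r''<r'<1$ the transition map $\Q_{p,\solid}\langle \N[S]\rangle_{\leq r'}\to \Q_{p,\solid}\langle \N[S]\rangle_{\leq r}$ factors through the subalgebra $B_{r''}\subset \Q_{p,\solid}\llbracket \N[S]\rrbracket$ of series $\sum_n a_n$ with $(|a_n|(r'')^n)_n$ uniformly bounded. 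This $B_{r''}$ is a light Smith space, hence a \emph{compact} object in solid $\Q_p$-modules. Compactness lets one lift the composite $B_{r''}\to A=\varinjlim_j A_j$ to a $\Q_p$-linear map $g\colon B_{r''}\to A_i$ for some $i$; then, since $B_{r''}\otimes_{\Q_{p,\solid}}B_{r''}$ is again a light Smith space, the obstruction to multiplicativity (a single linear map $B_{r''}\otimes B_{r''}\to A_i$ that vanishes in $A$) dies after enlarging $i$. This yields an algebra map $\Q_{p,\solid}\langle \N[S]\rangle_{\leq r'}\to A_{i'}$ extending $f_{i'}$, hence $f_{i'}\in A_{i'}^{<1}$. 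Your ``rescaling'' is exactly the factoring through $B_{r''}$; the reason it actually yields a lift is its compactness as a solid module, and that step cannot be bypassed.
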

\begin{proof}
  Using descent from an infinitely ramified algebraic extension of $\Q_p$, we can reduce to the case that $r=1$, see \cref{LemDescentAr}.  Both terms are full condensed subanima of $A$, therefore we can assume without loss of generality that the $A_i$ and so $A$ are static. Let $S$ be a light profinite set and let $f:S\to A^{<1},$ then there is some $r<1$ such that $f$ lifts to 
\[
\widetilde{f}:\mathbb{Q}_p\langle \mathbb{N}[S] \rangle_{\leq r} \to A.
\]
For any $r< r'' <r'<1$ the map $\mathbb{Q}_p\langle  \mathbb{N}[S]  \rangle_{\leq r'} \to \mathbb{Q}_p\langle  \mathbb{N}[S]  \rangle_{\leq r}$ factors through the condensed subalgebra $B_{r''}\subset \mathbb{Q}_{p,\solid}\llbracket\mathbb{N}[S]\rrbracket$ consisting of sums $\sum_n a_n$ with $a_n\in \mathbb{Q}_{p,\solid}[S^n/\Sigma_n]$ such that $(|a_n|(r'')^n)_{n}$ is uniformly bounded for the $p$-adic norm of  (the points of)  $\mathbb{Q}_{p,\solid}[S^n/\Sigma_n]$. This algebra $B_{r''}$ is a light Smith space (\cref{defSmith}), so in particular compact. Thus, we can lift the map $\widetilde{f}$ to some map of $\mathbb{Q}_p$-modules $g: B_{r''}\to A_i$ for some $i$. We want to see that after enlarging $i$ this map becomes a morphism of algebras. Indeed, being a morphism of algebras means $1$ goes to $1$ (which we can guarantee as this holds on $A$), and that the multiplication maps are compatible. This last compatibility is guaranteed since $B_{r''}\otimes_{\mathbb{Q}_{p,\solid}} B_{r''}$ is still a light Smith space, so compact, and so the defect of $g$ being a multiplicative map can be tested via a linear map $B_{r''}\otimes_{\mathbb{Q}_{p,\solid}} B_{r''}\to A_{i}$. Since the composite to $A$ is zero (as $B_{r''}\to A$ is an algebra morphism) there is some index $i'>i$ such that $B_{r''}\otimes_{\mathbb{Q}_{p,\solid}} B_{r''}\to A_{i} \to A_{i'}$ is an algebra morphism. This provides a lift of $f$ to  $S\to A_{i'}$ that can be extended to $\mathbb{Q}_{p,\solid}\langle \mathbb{N}[S] \rangle_{\leq r'}$ proving what we wanted.
\end{proof}

\begin{remark}
The analog of \Cref{xsu29s} for $A^{\leq 1}$ is false: given a light profinite set $S$, e.g., $S=\{\ast\}$, the identity $\Q_{p,\solid}\langle \N[S]\rangle_{\leq 1}\to \Q_{p,\solid}\langle\N[S]\rangle_{\leq 1}=\varinjlim_{r>1} \Q_{p,\solid}\langle \N[S]\rangle_{\leq r}$ does not factor through some $\Q_{p,\solid}\langle \N[S]\rangle_{\leq r}$ as otherwise the inclusion $\Q_{p,\solid}\langle \N[S]\rangle_{\leq r}\to \Q_{p,\solid}\langle \N[S]\rangle_{\leq 1}$ would be an isomorphism for some $r>1$.
\end{remark}

One of the most important features of the $\dagger$-nilradical is that it is an ideal for bounded rings, see \cite[Proposition 2.6.9 (3)]{camargo2024analytic}. We can then define the $\dagger$-reduction of a bounded algebra:

\begin{definition}[{\cite[Definition 2.6.10]{camargo2024analytic}}]\label{xsj82k}
Let $A$ be a bounded algebra. Its $\dagger$-reduction is the bounded $\mathbb{Q}_{p,\solid}$-algebra $A^{\dagger-\ob{red}}:=A/\ob{Nil}^{\dagger}(A)$. We say that a bounded algebra is $\dagger$-reduced if the map $A\to A^{\dagger-\ob{red}}$ is an equivalence.
\end{definition}

We note that the cofiber of $\ob{Nil}^{\dagger}(A)\to A$ is automatically concentrated in degree $0$ (as $\ob{Nil}^\dagger(A)$ defines a condensed subanima). Hence, $A^{\dagger-\ob{red}}$ is static, and acquires the structure of a condensed $\Q_{p,\solid}$-algebra (even solid, cf.\ \cref{xsjwi4}).

The following is an analog of \cite[Proposition 2.6.16]{camargo2024analytic}.\footnote{We note that \cite[Proposition 2.6.16.(3)]{camargo2024analytic} is \emph{wrong}, and that this is a major motivation for us to use $A^{\leq 1}$ instead of $A^\circ$, cf., \Cref{sec:dagg-form-smooth-example-overconvergent-cohomology-of-rigid-disc}, \Cref{sec:de-rham-stacks-counterexample-to-smoothness}.
}

\begin{proposition}\label{xjs9jw}
Let $A$ be a bounded $\Q_p$-algebra, then we have a pullback square
\[
\begin{tikzcd}
A^{\leq 1} \ar[r] \ar[d]& A^{\dagger-\ob{red},\leq 1} \ar[d] \\ 
A\ar[r] & A^{\dagger-\ob{red}}.
\end{tikzcd}
\]
The same holds for $A^{\leq r}$ for $r\in \mathbb{R}_{\geq 0}$ and $A^{<1}$. In particular, we have that $\ob{Nil}^{\dagger}(A^{\dagger-\ob{red}})=0$ and so $A^{\dagger-\ob{red}}$ is $\dagger$-reduced.
\end{proposition}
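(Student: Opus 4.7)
The plan is to verify that the square is Cartesian by testing on $S$-points: for each light profinite set $S$ and each $x\in A(S)$, I need to show that $x$ factors through $A^{\leq r}$ if and only if its reduction $\bar x\in A^{\dagger-\red}(S)$ factors through $A^{\dagger-\red,\leq r}$. The ``if'' direction is automatic by functoriality, so the content is the reverse direction. Once this is established for every $r>0$, the case $A^{\leq 0}$ follows from $A^{\leq 0}=\bigcap_{r>0}A^{\leq r}$ and the case $A^{<1}$ from $A^{<1}=\bigcup_{s<1}A^{\leq s}$ (\cref{LemmaAcircicrr}), since pullbacks of subobjects of the fixed ambient object $A$ commute with filtered unions and intersections.

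My first step is to reduce the reverse direction for arbitrary $r>0$ to the single case $r=1$ via \cref{x92jhwn}(1). That criterion characterizes membership in $A^{\leq r}(S)$ (and similarly in $A^{\dagger-\red,\leq r}(S)$) by the condition that for every rational $r'=p^{a/b}>r$, the scaled power map $p^a x^{(b)}\colon S^b\to A$ lies in $A^{\leq 1}(S^b)$ (resp.\ $p^a\bar x^{(b)}\in A^{\dagger-\red,\leq 1}(S^b)$). Since the reduction of $p^ax^{(b)}$ is $p^a\bar x^{(b)}$, applying the $r=1$ case to each such $p^ax^{(b)}$ delivers the general conclusion. The closing assertion $\Nil^\dagger(A^{\dagger-\red})=0$ will then drop out of the $r=0$ pullback, which identifies $\Nil^\dagger(A)$ with the preimage in $A$ of $\Nil^\dagger(A^{\dagger-\red})$, forcing the latter to vanish.

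For the remaining case $r=1$, I set $R:=\Q_{p,\solid}[\N[S]]$ and $B:=\Q_{p,\solid}\langle\N[S]\rangle_{\leq 1}$, and view $A$ as an $R$-algebra via $x$. By \cref{xsjw83h} and \cref{x02kd76}, $B$ is an idempotent $R$-algebra, so the condition $x\in A^{\leq 1}(S)$ is equivalent to $A$ being a $B$-algebra, i.e.\ to the equivalence $A\otimes_R^{\bs}B\simeq A$ in $D(R)$. Applying $(-)\otimes_R^{\bs}B$ to the fiber sequence
\[
\Nil^\dagger(A)\to A\to A^{\dagger-\red}
\]
and using the hypothesis that $A^{\dagger-\red}\otimes_R^{\bs}B\simeq A^{\dagger-\red}$, a five-lemma argument in $D(R)$ reduces the problem to the identity $\Nil^\dagger(A)\otimes_R^{\bs}B\simeq \Nil^\dagger(A)$, i.e.\ to showing that $\Nil^\dagger(A)$ automatically inherits a canonical $B$-module structure from its $R$-module structure.

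The hard part will be precisely this last identity. My strategy is to exploit the $I$-adic filtration $I\supseteq I^2\supseteq\cdots$ with $I:=\Nil^\dagger(A)$: each successive quotient $I^n/I^{n+1}$ is annihilated by $I$, hence is naturally an $A^{\dagger-\red}$-module and therefore already a $B$-module via the given $B$-algebra structure on $A^{\dagger-\red}$. Since $B$ is idempotent over $R$, the tower of square-zero extensions $A/I^{n+1}\twoheadrightarrow A/I^n$ admits unique compatible $B$-algebra enhancements at each stage by devissage on the triangle $I^n/I^{n+1}\to A/I^{n+1}\to A/I^n$, giving a $B$-algebra structure on each $A/I^n$. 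Transferring this back to $A$ itself will use the ``openness'' of $R\to B$ (idempotent $R$-algebras correspond to open immersions of analytic stacks) together with the description $\Nil^\dagger(A)=\bigcap_{s>0}A^{\leq s}$, so that morally $\Nil^\dagger(A)$ is supported on the overconvergent unit disc $\mathrm{AnSpec}(B)\subseteq \mathrm{AnSpec}(R)$ and restriction along this open immersion is the identity on $\Nil^\dagger(A)$.
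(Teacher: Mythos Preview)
Your reductions are sound: passing from general $r>0$ to $r=1$ via \cref{x92jhwn}, and recasting the $r=1$ case as ``$\Nil^\dagger(A)$ is a $B$-module for the $R$-action through $x$'' via the fiber sequence $\Nil^\dagger(A)\to A\to A^{\dagger-\red}$ and idempotence of $B$, are both correct. Your $I$-adic d\'evissage also correctly shows that each $A/I^n$ is a $B$-algebra.

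The gap is the last step. Passing from the $A/I^n$ to $A$ would require something like $A\simeq\varprojlim_n A/I^n$ in condensed $R$-modules, and there is no reason to expect this for a general bounded $A$: the $\dagger$-nilradical $I=\Nil^\dagger(A)$ is defined by a norm-zero condition rather than by genuine nilpotence, so the $I$-adic filtration has no reason to be separated or complete. Your ``support'' heuristic conflates two unrelated things: the identity $\Nil^\dagger(A)=\bigcap_{s>0}A^{\leq s}$ records that the \emph{elements} of $\Nil^\dagger(A)$ have norm zero, but the relevant $R$-module structure is via multiplication by $x(S)$, which may have large norm; the former says nothing about whether this action extends from $R$ to $B$. (Aside: idempotent $R$-algebras correspond to \emph{closed} loci in the smashing spectrum, not open immersions.)

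The paper's argument is entirely different and never attempts to promote a module structure. Working with $A^{<r}$, it uses \cref{x92jhwn} to produce a null sequence in $B:=A^{\dagger-\red}$ from suitably rescaled iterated powers of $\bar f$, packages them as a single map $\widetilde g$ out of the light profinite set $S':=(\bigsqcup_k S^{bk})\cup\{\infty\}$, and lifts $\widetilde g$ to $\widetilde f\colon S'\to A$ by projectivity of $\Z_{p,\solid}[S']$. Boundedness of $A$ then forces $\widetilde f$ to land in some $A^{\leq s}$. Since the corresponding rescaled powers of $f$ differ from the restrictions $\widetilde f|_{S^{bk}}$ by elements of $\Nil^\dagger(A)\subseteq A^{\leq s}$, the ultrametric inequality (\cref{LemmaAcircicrr}) places \emph{all} of them in $A^{\leq s}$ simultaneously; feeding this back through \cref{x92jhwn} and letting $k\to\infty$ yields $f\in A^{<r}$. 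The idea you are missing is precisely this trick of bundling infinitely many power maps into one null sequence over a single profinite set, so that a single application of boundedness controls them all at once.
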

\begin{proof}
It suffices to prove the first part for the $A^{<r}$ elements ($r>0$), since the others follow from \cref{LemmaAcircicrr}. Set $B=A^{\dagger-\ob{red}}$. Let $S$ be a light profinite set and $f:S\to A$ a map whose  composite $g:S\to A\to B$ factors through $B^{<r}$. By \cref{x92jhwn} there is some $r'=p^{a/b}<r$ such that the map $g^{(b)}:S^b\to B$ factors through $p^{-a}B^{<1}$. So, by letting $S':= (\bigsqcup_{k\in \mathbb{N}} S^{bk})\cup \{\infty\}$, we can extend the sequence
\[
p^{-ak}g^{(bk)}:S^{bk}\to  B
\] 
to a null sequence $\widetilde{g}: S'\to B$. Then, we can find a lift $\widetilde{f}: S'\to A$ of $\widetilde{g}$ which will land in some $A^{\leq s}$ for some $s>0$ (as $A$ is bounded). Then, the difference between $p^{-ak}f^{(bk)}:S^{bk}\to A$ and the restriction $\widetilde{f}|_{S^{bk}}:S^{bk}\to A$ lands in $\ob{Nil}^{\dagger}(A)$. Therefore, $p^{-ak}f^{(bk)}$ lands in $A^{\leq s}$ for all $k\geq 1$ and by \cref{xsjw83h} one has that $f$ lands in $A^{\leq (s|p|^{ak})^{1/bk}}= A^{\leq s^{1/bk} |p|^{a/b}}=A^{\leq s^{1/bk}r'}$. Since this holds for all $k\geq 1$, by taking limits as $k\to \infty$ one deduces that $f$ factors through $A^{\leq r'}\subset A^{<r}$ as wanted. The claim $\ob{Nil}^\dagger(A^{\dagger-\red})=0$ follows then since $\ob{Nil}^{\dagger}(A)=A^{\leq 0}$ and by definition we have a fibration 
\[
\ob{Nil}^{\dagger}(A)\to A\to A^{\dagger-\ob{red}}.\qedhere
\]
\end{proof}

\begin{remark}
  \label{sec:dagger-nilradical-quotient-by-dagger-nilpotent-kernel}
  The proof of \cref{xjs9jw} shows that for any quotient $A\to B$ with $\mathrm{fib}(A\to B)\subseteq \Nil^{\dagger}(A)$, one has a pullback square on $\leq r$ or $<r$ elements as in \cref{xjs9jw}.
\end{remark}

The following is a version of \cite[Lemma 2.6.18]{camargo2024analytic} (up to the change from $A^\circ$ to $A^{\leq 1}$ and the fact that $A$ must be static in \textit{loc. cit.}).

\begin{lemma}
  \label{sec:dagger-nilradical-a-leq-1-p-adically-separated}
  Let $A$ be a static bounded $\Q_p$-algebra. Then $A$ is $\dagger$-reduced if and only if $A^{\leq 1}$ is $p$-adically separated.
\end{lemma}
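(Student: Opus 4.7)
The plan is to identify $\mathrm{Nil}^\dagger(A)$ with the intersection $\bigcap_{n\geq 0} p^n A^{\leq 1}$ as subobjects of $A$. Granted this, the equivalence follows immediately: $A$ is $\dagger$-reduced iff $\mathrm{Nil}^\dagger(A)=0$, while $A^{\leq 1}$ is $p$-adically separated iff $\bigcap_n p^n A^{\leq 1} = 0$ inside $A^{\leq 1}$ -- and since $A^{\leq 1}\hookrightarrow A$ is a monomorphism these two vanishings coincide.

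The key intermediate step is the identification $A^{\leq p^{-n}} = p^n A^{\leq 1}$ as subobjects of $A$, for every $n\geq 0$. The inclusion $p^n A^{\leq 1}\subseteq A^{\leq p^{-n}}$ is a direct application of \cref{LemmaAcircicrr}(3): the element $p^n\in A(\ast)$ satisfies $|p^n| = p^{-n}$ and hence lies in $A^{\leq p^{-n}}(\ast)$, so the multiplication pairing restricts to $A^{\leq p^{-n}}\times A^{\leq 1}\to A^{\leq p^{-n}}$. For the reverse inclusion, I would observe that multiplication by $p^{-n}\in A(\ast)$ satisfies $p^{-n}\in A^{\leq p^n}(\ast)$, and \cref{LemmaAcircicrr}(3) again gives a pairing $A^{\leq p^n}\times A^{\leq p^{-n}}\to A^{\leq 1}$; therefore $p^{-n}$ sends $A^{\leq p^{-n}}$ into $A^{\leq 1}$. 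Since $p^n$ and $p^{-n}$ are mutually inverse as endomorphisms of $A$, the map $p^n\colon A^{\leq 1}\to A^{\leq p^{-n}}$ is an isomorphism, so $A^{\leq p^{-n}} = p^n A^{\leq 1}$ as subobjects of $A$.

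With this in hand, \cref{LemmaAcircicrr}(1) gives $\mathrm{Nil}^\dagger(A) = A^{\leq 0} = \bigcap_{r>0} A^{\leq r}$, and by cofinality of $\{p^{-n}\}_{n\geq 1}$ in $(0,\infty)$ this rewrites as $\bigcap_n A^{\leq p^{-n}} = \bigcap_n p^n A^{\leq 1}$, finishing the proof.

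I do not expect a serious obstacle in this argument; it is essentially a bookkeeping computation with the monoidal structure on the filtered pieces $A^{\leq r}$. The only mildly delicate point is that the equality $A^{\leq p^{-n}} = p^n A^{\leq 1}$ must hold at the level of subobjects and not just on underlying points, but this is automatic since the construction is functorial in the test light profinite set $S$. The hypothesis that $A$ is static is used only to make the statement ``$A^{\leq 1}$ is $p$-adically separated'' meaningful in the classical sense (via $\bigcap_n p^n A^{\leq 1} = 0$); it plays no role in the proof itself.
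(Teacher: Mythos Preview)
Your proposal is correct and follows essentially the same approach as the paper: both arguments rest on the identification $\mathrm{Nil}^\dagger(A) = \bigcap_n p^n A^{\leq 1}$ via the equality $A^{\leq p^{-n}} = p^n A^{\leq 1}$. The paper's proof simply states this identity without justification, whereas you spell it out carefully using \cref{LemmaAcircicrr}(3); otherwise the structure is identical.
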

\begin{proof}
  Assume that $A$ is $\dagger$-reduced. Then $\bigcap_{n\in \N} p^nA^{\leq 1}=\ob{Nil}^\dagger(A)=0$ and $A^{\leq 1}$ is $p$-adically separated. Conversely, assume that $A^{\leq 1}$ is $p$-adically separated. Let $S$ be a light profinite set, and let $S\to \ob{Nil}^\dagger(A)$ be a map. Then for any $n\geq 1$, this map lies in $A^{\leq 1/p^n}=p^nA^{\leq 1}$. Thus, this map factors through $\bigcap_{n\in \N}p^nA^{\leq 1}=0$, and therefore $\ob{Nil}^\dagger(A)=0$.
\end{proof}

\begin{remark}\label{xns82j}
  The current definition of the nilradical suffices for all the applications in $p$-adic Hodge theory we have encountered so far.
  However, it has a technical disadvantage: given a $\dagger$-reduced bounded ring $A$ it could be that $A(S)$ has non-trivial nilpotent elements for some light profinite set $S$.
  This does not happen though if $A$ is Gelfand in the sense of \cref{xnshw8} below, see \cref{xsu8rf}.
  A fix to this problem involves a  different definition of the $\dagger$-nilradical which agrees with the former definition in the Gelfand situation. Since this deficiency of the current definition of $\ob{Nil}^{\dagger}$ will not play a role in this paper we will not comment more about it, except in \cref{xs2d93}.
\end{remark}

The following statement should not be a surprise.

\begin{lemma}\label{xsh89j}
Let $A$ be a bounded ring, then for any $r<1$ the pair  $(A^{\leq 1}(*), A^{\leq r}(*) )$ is a Henselian pair.  
\end{lemma}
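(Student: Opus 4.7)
The plan is to verify the two defining properties of a Henselian pair: (i) $A^{\leq r}(\ast)$ lies in the Jacobson radical of $A^{\leq 1}(\ast)$, and (ii) every monic polynomial in $A^{\leq 1}(\ast)[T]$ with a simple root modulo $A^{\leq r}(\ast)$ has a lift to a root in $A^{\leq 1}(\ast)$. Note first that $A^{\leq r}(\ast)$ is indeed an ideal of $A^{\leq 1}(\ast)$ by \cref{LemmaAcircicrr}(3) (taking products with $A^{\leq 1}$ lands in $A^{\leq r \cdot 1} = A^{\leq r}$).

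For (i), given $x \in A^{\leq r}(\ast)$, the element $x$ is classified by a morphism of solid $\Q_p$-algebras $\Q_p\langle T\rangle_{\leq r} \to A$ sending $T \mapsto x$. I would argue that $1/(1+T)$ lies in $(\Q_p\langle T \rangle_{\leq r})^{\leq 1}(\ast)$, using the explicit presentation from \cref{x02kd76}(2): after base change to an infinitely ramified $K/\Q_p$ and a choice of $b \in K$ with $|b|>r$ but $|b|<1$, the series $\sum (-T)^n$ converges to an element of $K\langle \N[bT]\rangle$ with spectral norm $\leq 1$ on the disc $|T|\leq r$, so it classifies a morphism $\Q_p\langle U\rangle_{\leq 1} \to \Q_p\langle T\rangle_{\leq r}$. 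Pushing this forward along the classifying map produces an element $y \in A^{\leq 1}(\ast)$ with $y(1+x) = 1$, as desired. In particular, $1+x$ is a unit in $A^{\leq 1}(\ast)$.

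For (ii), the key reduction is that Henselian pairs are preserved under shrinking the ideal provided it remains in the Jacobson radical (Stacks Project \texttt{0DYD}); combined with (i) (applied for all $r'<1$) this reduces to showing that $(A^{\leq 1}(\ast), A^{<1}(\ast))$ is Henselian. I would then perform Newton iteration: given a monic $f \in A^{\leq 1}(\ast)[T]$, a simple root $\bar a_0$ of $\bar f$ modulo $A^{<1}(\ast)$, and any lift $a_0 \in A^{\leq 1}(\ast)$, set $a_{n+1} = a_n - f(a_n)f'(a_n)^{-1}$ (using step (i) to invert $f'(a_n)$). A direct computation using the arithmetic of $A^{\leq s}$ from \cref{LemmaAcircicrr} shows $a_{n+1}-a_n \in A^{\leq r^{n+1}}(\ast)$ for some fixed $r<1$ depending only on $a_0$. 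The series $\sum_n (a_{n+1}-a_n)$ defines an element of $A^{\leq 1}(\ast)$ by the same mechanism as in (i): a null sequence $(u_n)$ with $u_n \in A^{\leq r^n}(\ast)$ corresponds to a map $\Q_p\langle T\rangle_{\leq 1} \to A$ in which $T \mapsto u_1$, and the sum $\sum u_n$ is the image of a suitable overconvergent element of $\Q_p\langle T\rangle_{\leq 1}$. The limit $a$ satisfies $f(a)=0$ and $a \equiv a_0 \pmod{A^{<1}(\ast)}$.

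The main obstacle is the convergence statement for series of elements in the ideals $A^{\leq r^n}(\ast)$: one needs to reinterpret such series as images of canonical overconvergent elements in universal Tate-disc algebras (or filtered colimits thereof) and check, using the definition of $A^{\leq s}$ and \cref{x02kd76}, that they indeed factor through $A^{\leq 1}(\ast)$. Once this convergence lemma is in hand, both the unit-inverse construction of (i) and the Newton iteration of (ii) fall out essentially formally from the universal-ring definitions of the filtration.
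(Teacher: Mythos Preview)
Your approach to part (i) is essentially the same as the paper's and is fine.

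For part (ii), there is a genuine gap. Your ``convergence lemma'' as stated is incorrect: a single variable $T$ with $T\mapsto u_1$ cannot encode all of the $u_n$'s, so $\sum u_n$ is certainly not the image of anything in $\Q_p\langle T\rangle_{\leq 1}$. Worse, the general statement ``given $u_n\in A^{\leq r^n}(\ast)$, the sum $\sum_n u_n$ exists in $A^{\leq 1}(\ast)$'' is \emph{false}. For a counterexample, take $A=\varinjlim_N \Q_p\langle X_1,\ldots,X_N\rangle_{\leq 1}$ and $u_n=p^n X_n\in A^{\leq |p|^n}(\ast)$; every element of $A(\ast)$ involves only finitely many $X_n$'s, so $\sum_n p^n X_n$ does not lie in $A(\ast)$, let alone $A^{\leq 1}(\ast)$. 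The filtration $A^{\leq s}$ gives no completeness for $A^{\leq 1}(\ast)$ itself.

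What saves the Newton iteration is the observation you do not make: the differences $a_{n+1}-a_n$ are \emph{polynomial} in the fixed finite data (the coefficients of $f$ and the initial lift $a_0$). The paper exploits this by passing to a universal ring: the data $(a_0,\ldots,a_{n-1},\alpha')$ classify a map $B\to A^{\leq 1}$ with $B=\Z_p\langle X_0,\ldots,X_{n-1},Y\rangle^{\leq 1}$, and the condition $P(\alpha')\in pA^{\leq 1}$ makes this factor through $B\langle \tilde P(Y)/p\rangle^{\leq 1}$. This last ring is a classical $p$-adically complete ring, so Newton's method converges there to produce a root $\widetilde Y$, which then maps to the desired root in $A^{\leq 1}(\ast)$. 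In other words, rather than summing the Newton series in $A$ and then justifying convergence, the paper runs the entire iteration in a finite-variable universal ring where convergence is classical, and only afterwards pushes the answer to $A$. Your sketch of ``reinterpreting via universal rings'' is the right instinct, but it has to be applied to the whole iteration, not just to the final summation step.
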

\begin{proof}
We have to show that $A^{\leq r}$ is contained in the Jacobson radical of $A^{\leq 1}(*)$ and that for any monic polynomial $P(T)\in A^{\leq 1}[T]$ such that $\overline{P}(T)\in A^{\leq 1}/A^{\leq r}[T]$ has a root $\overline{\alpha}$, we can lift $\overline{\alpha}$  to a root $\alpha$ in $A$. Without loss of generality we can take $r=|p|$. Let $a\in p A^{\leq 1}=A^{\leq |p|}$, we have a map $\mathbb{Z}_p\langle \frac{T}{p} \rangle_{\leq 1}\to A^{\leq 1}$, since $1+T$ is invertible in $\mathbb{Z}_p\langle \frac{T}{p} \rangle_{\leq 1}$ then $1+a$ is invertible in $A^{\leq 1}$ proving that $a$ is in the Jacobson radical. Now, consider a monic polynomial $P$ in $A^{\leq 1}$ with a root $\overline{\alpha}$ modulo $p$. Fix $\alpha'$ a lift of $\overline{\alpha}$. Write $P(T)=T^n+a_{n-1}T^{n-1}+\cdots +a_0$, we then have a map of rings $f:B\to A^{\leq 1}$ with $B=\mathbb{Z}_p\langle X_0,\ldots, X_{n-1} , Y\rangle^{\leq 1}\to A^{\leq 1}$ mapping $X_i$ to $a_{i}$ and $Y_i$ to $\alpha'$. Let $\widetilde{P}(T)=T^n+X_{n-1}T^{n-1}+\cdots+X_0$, the map $f$ factors through $B\langle \frac{\widetilde{P}(T)-Y}{p} \rangle^{\leq 1}$. It is easy to see using Newton's method that there is a root $\widetilde{Y}$ of $\widetilde{P}(T)$ in $B\langle \frac{\widetilde{P}(T)-Y}{p} \rangle^{\leq 1}$ that agrees with $Y$ modulo $p$. The image $\alpha$ of $\widetilde{Y}$ in $A^{\leq 1}$ produces the desired root. 
\end{proof}

\subsection{Uniform completion of bounded rings}
We discuss another important construction on bounded $\Q_p$-algebras, which generalizes the uniform completion of Huber rings.

\begin{definition}\label{xwj29e}
Let $A$ be a bounded $\Q_{p}$-algebra. The \textit{uniform completion} of $A$ is the bounded $A$-algebra
\[
A^{u} = (\varprojlim_{r} A^{\leq 1}/A^{\leq r})[1/p]
\]
(with the limit taken in solid $\Z_p$-algebras) endowed with the induced analytic ring structure from $A$.  We say that a bounded $\Q_p$-algebra $A$  is \textit{uniform} if the map $A\to A^u$ is an equivalence.\footnote{We recall that a non-archimedean Banach ring $A$ is uniform iff its topology can be defined by a power-multiplicative norm iff its spectral seminorm is a norm iff its subring $A^0$ of power-bounded elements is bounded iff $A^0$ is $p$-adically complete, if $A$ is a Banach $\Q_p$-algebra. See \cite[Proposition 2.21]{scholze2024berkovichmotives}  and \cite[Definition 2.8.1]{kedlaya_liu_relative_p_adic_hodge_theory_foundations}.}
\end{definition}

\begin{remark}
 In other words,
$$A^u= (A^{\leq 1})^{\wedge_p}[1/p].
$$
We note that the natural morphism $A\to A^u$ factors over $A^{\dagger-\red}$, and that $A^u$ is automatically static.
\end{remark}

\begin{lemma}\label{xks0jw}
 Let $A$ be a bounded $\mathbb{Q}_p$-algebra. The inclusion 
\[
(A^{\leq 1})^{\wedge_p} \hookrightarrow  \varprojlim_{r\to 0} A/A^{\leq r}
\] 
induces an isomorphism 
\[
A^u\xrightarrow{\sim} \varprojlim_{r\to 0} A/A^{\leq r}.
\]
\end{lemma}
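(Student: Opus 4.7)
The plan is to exhibit both sides as middle terms of short exact sequences with common outer terms $(A^{\leq 1})^{\wedge_p}$ and $A/A^{\leq 1}$, and then conclude via the five lemma.

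First I would normalize the right-hand side. Combining \cref{LemmaAcircicrr}(3) (which gives $p^n \cdot A^{\leq 1} \subseteq A^{\leq p^{-n}}$ since $p^n \in A^{\leq p^{-n}}$) with the $p$-torsion-freeness of $A$ (hence of $A^{\leq 1}$, as $A$ is a $\Q_p$-algebra), multiplication by $p^{\pm n}$ identifies $A^{\leq 1}$ with $A^{\leq p^{-n}}$ as subobjects of $A$; so $A^{\leq p^{-n}} = p^n A^{\leq 1}$. By cofinality the target becomes $\varprojlim_n A/p^n A^{\leq 1}$. The same multiplicativity applied to $p^{-n} \in A^{\leq p^n}$ shows $A = \bigcup_n p^{-n}A^{\leq 1} = A^{\leq 1}[1/p]$, using that $A$ is bounded.

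Next I would set up the two short exact sequences. Applying $\varprojlim_n$ to the short exact sequence of towers
\[
0 \to A^{\leq 1}/p^n A^{\leq 1} \to A/p^n A^{\leq 1} \to A/A^{\leq 1} \to 0,
\]
with the right-hand side being the constant tower, and using Mittag-Leffler (surjective transitions in the left tower kill $\varprojlim^1$) yields
\[
0 \to (A^{\leq 1})^{\wedge_p} \to \varprojlim_n A/p^n A^{\leq 1} \to A/A^{\leq 1} \to 0.
\]
On the other hand, $p$-torsion-freeness of both $A^{\leq 1}$ and $(A^{\leq 1})^{\wedge_p}$ gives canonical isomorphisms
\[
A/A^{\leq 1} \cong \varinjlim_n A^{\leq 1}/p^n A^{\leq 1} \cong \varinjlim_n (A^{\leq 1})^{\wedge_p}/p^n (A^{\leq 1})^{\wedge_p} \cong A^u/(A^{\leq 1})^{\wedge_p},
\]
producing a second short exact sequence $0 \to (A^{\leq 1})^{\wedge_p} \to A^u \to A/A^{\leq 1} \to 0$.

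Finally, I would construct the map $A^u \to \varprojlim_n A/p^n A^{\leq 1}$ extending the inclusion from $(A^{\leq 1})^{\wedge_p}$, which is legitimate once one checks that multiplication by $p$ is invertible on the target: injectivity of $p$ is immediate from the $p$-torsion-freeness of $A$, while surjectivity follows from $(y_n)_n \mapsto (\tilde y_{n+1}/p)_n$ for arbitrary lifts $\tilde y_{n+1} \in A$ of $y_{n+1}$, whose compatibility is exactly the compatibility of $(y_n)_n$ at level $n+1$. The five lemma then finishes the proof, provided this map fits into a commutative ladder of the two short exact sequences above, which amounts to showing that the induced map on the right-hand cofiber is the identity under the two presentations of $A/A^{\leq 1}$ as $\varinjlim_n A^{\leq 1}/p^n A^{\leq 1}$. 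I expect this naturality check to be the only technically delicate point, requiring an explicit description of the map on elements of the form $y/p^k$ with $y \in (A^{\leq 1})^{\wedge_p}$, which reduces to matching $\tilde y_k \bmod p^k A^{\leq 1}$ in both presentations.
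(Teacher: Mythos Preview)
Your proof is correct and takes a genuinely different route from the paper's. Both arguments begin the same way: show that multiplication by $p$ is invertible on $\varprojlim_{r\to 0} A/A^{\leq r}$, thereby producing the map from $A^u$. From there the paper argues surjectivity directly: given an $S$-point $f$ of the target, it lifts a single projection $f_r$ to $\tilde f_r\colon S\to A$, uses boundedness to land $\tilde f_r$ in some $A^{\leq s}$, and then the ultrametric inequality forces all other projections $f_{r'}$ (for $r'<r$) to factor through $A^{\leq s}/A^{\leq r'}$, whence $p^k f$ lies in $(A^{\leq 1})^{\wedge_p}$ for large $k$. Your approach instead normalizes the target to $\varprojlim_n A/p^nA^{\leq 1}$ via $A^{\leq p^{-n}}=p^nA^{\leq 1}$, and then runs the five lemma on two parallel short exact sequences with common outer terms $(A^{\leq 1})^{\wedge_p}$ and $A/A^{\leq 1}$; boundedness enters only through the identification $A=A^{\leq 1}[1/p]$. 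Your organization is cleaner and makes the structural reason for the isomorphism transparent, while the paper's hands-on argument is perhaps more robust in that it generalizes more directly to the variants $A^{u,\leq 1}=\varprojlim_{r<1}A^{\leq 1}/A^{\leq r}$ and $A^{u,<1}=\varprojlim_{r<1}A^{<1}/A^{\leq r}$ proved in \cref{xsu8rf}(2). Two minor remarks: your invocation of Mittag--Leffler is fine in light condensed abelian groups (epimorphisms are surjective on $S$-points since $\Z[S]$ is projective, so one reduces to classical Mittag--Leffler pointwise), though you could equally well bypass it by noting that $A\twoheadrightarrow A/A^{\leq 1}$ factors through the inverse limit; and your closing naturality check does go through exactly as you sketch.
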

\begin{proof}
First, note that multiplication by $p$ on $\varprojlim_{r} A/A^{\leq r}$ is invertible: we already have an isomorphism 
\[
p \cdot: A/A^{\leq r}\xrightarrow{\sim } A/A^{\leq r|p|}.  
\]
This gives an injective map $A^u\to \varprojlim_{r} A/A^{\leq r}$. Next, we want to show that this map is also a surjection. Let $S$ be a light profinite set with a map $f:S\to  \varprojlim_r A/A^{\leq r}$. If $r<1$, then we can lift  the projection $f_r:S\to A/A^{\leq r}$ to a map $\widetilde{f}_r: S\to A$ (using that $\Z_{p,\solid}[S]$ is a projective solid $\Z_{p,\solid}$-module). Since $A$ is bounded, there is some $s>r$ such that $\widetilde{f}_r$ factors through $A^{\leq s}$. We claim that for any $r'<r$ the projection $f_{r'}:S\to A/A^{\leq r'}$ factors through $A^{\leq s}/A^{\leq r'}$.  Suppose the claim holds, then for $|p|^{-k}>s$ we have that $p^kf$ factors through $\varprojlim_{r} A^{\leq 1}/A^{\leq r}=(A^{\leq 1})^{\wedge_p}$ proving the surjectivity.  Let us now show the claim: let $\widetilde{f}_{r'}:S\to A$ be a lift of $f_{r'}$, then we know that $\widetilde{f}_{r}-\widetilde{f}_{r'} \in A^{\leq r}$, by the ultrametric inequality (\cref{LemmaAcircicrr}), and since $s>r$, we get that $\widetilde{f}_{r'}$ lands in $A^{\leq s}$, as desired.
\end{proof}

\begin{remark}\label{xsjwi4}
The algebra $A^{\leq 1}$ is $\Z_{p,\solid}$-solid: indeed, this can be checked after base change to the ring of integers of a ramified extension of $\Q_p$ containing $p^{-1/n}$ for all $n>0$ and then
$$
A^{\leq 1} = \cap_n p^{-1/n} A^\circ
$$
 which is solid by \cite[Proposition 2.6.9 (3)]{camargo2024analytic}. This immediately implies that $A^{u}$ is $\Z_{p}$-solid. 
\end{remark}

\begin{remark}\label{xs2d93}
For having a definition of the uniform completion of a general bounded $\Q_p$-algebra $A$, such that for all $S$ profinite $A^u(S)$ is a classical uniform Banach ring, one should use a variant $\tilde{A}^{\leq r}$ of the subspaces $A^{\leq r}$ for $r\geq 0$. By definition, $\tilde{A}^{\leq r}(S)$ will consist of those maps $*\to C(S,A)=A_S$ (in the notation of \cref{sec:bounded-rings}) that extend to $\mathbb{Q}_p\langle T \rangle_{\leq r}$. This variant contains $A^{\leq r}$  but  is in general  much larger, and it is not obvious at first sight that this definition should give a solid module (though it turns out to be the case). The variant of the nilradical mentioned in \cref{xns82j} is then the intersection
$$\widetilde{\ob{Nil}}^{\dagger}(A)=\cap_{r} \widetilde{A}^{\leq r}.$$
However, \cref{xsu8rf} below shows that this difference is irrelevant for Gelfand rings (that will be introduced in \cref{xnshw8}).
\end{remark}

\begin{proposition}\label{xsu8rf}
Let $A$ be a bounded $\Q_p$-algebra. The following hold:
\begin{enumerate}
\item $A^{u}$ is a bounded $\Q_p$-algebra and we have a pullback square in $\ob{D}(\Z_{p,\solid})$
\begin{equation}
\label{xjsiw0}
\begin{tikzcd}
A^{\leq  1} \ar[r]\ar[d] & A^{u,\leq 1} \ar[d] \\
A \ar[r] & A^{u}.
\end{tikzcd}
\end{equation}
Similarly for $A^{\leq r}$ for $r\in \mathbb{R}_{\geq 0}$ and $A^{<1}$.  In particular, taking $r=0$, the natural map $A^{\dagger-\red}\to A^u$ is injective and the natural map $(A^{\dagger-\red})^u\to A^u$ is an isomorphism.

\item We have  $A^{u,\leq 1} = \varprojlim_{r<1} A^{\leq 1}/ A^{\leq r}$ and $A^{u,<1}= \varprojlim_{r<1} A^{<1}/A^{\leq r}$. In particular, the map $(A^{u})^u\to A^u$ is an isomorphism, so $A^u$ is uniform, and the uniform completion of $A$ only depends on its $\dagger$-reduction $A^{\dagger-\ob{red}}$, and $A^u$ is static.

\item We have $\ob{Nil}^{\dagger}(A^u)=0$, i.e. $A^u$ is $\dagger$-reduced.

\item An object $f\in A$ is invertible if and only if its image in $A^u$ is so.

\end{enumerate}
\end{proposition}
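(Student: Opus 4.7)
The technical heart of the proposition is part (1), and in particular the pullback square: once this is established, parts (2)--(4) follow by more direct arguments.

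\textbf{Part (1).} My plan is to exploit the limit description $A^u = \varprojlim_{r\to 0} A/A^{\leq r}$ provided by \cref{xks0jw}. Since each $A^{\leq r}\hookrightarrow A$ is the kernel of $A\to A/A^{\leq r}$, taking limits and using $\bigcap_{r>0} A^{\leq r} = A^{\leq 0} = \Nil^{\dagger}(A)$ from \cref{LemmaAcircicrr}(1) gives $\ker(A\to A^u) = \Nil^{\dagger}(A)$. This immediately proves the injectivity of $A^{\dagger-\red}\hookrightarrow A^u$ claimed at the end of (1). For the pullback square itself, given $f\colon S\to A$ with image $\bar f\in A^{u,\leq r}$, the extension $\tilde f\colon \Q_p\langle \N[S]\rangle_{\leq r}\to A^u$ projects, for each $s>0$, to a map $\tilde f_s\colon \Q_p\langle \N[S]\rangle_{\leq r}\to A/A^{\leq s}$ extending $f\bmod A^{\leq s}$. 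Using the idempotency of $\Q_p\langle \N[S]\rangle_{\leq r}$ over $\Q_p[\N[S]]$ (\cref{x02kd76}), this translates into the existence of a lift of $f\bmod A^{\leq s}$ landing in the image of $A^{\leq r}$; equivalently $f\in A^{\leq r}+A^{\leq s} = A^{\leq \max(r,s)}$ by \cref{LemmaAcircicrr}(4). Letting $s\leq r$ yields $f\in A^{\leq r}$. The analog for $A^{<r}$ follows either by the same argument or from \cref{LemmaAcircicrr}(1)(2). Finally, the equivalence $(A^{\dagger-\red})^u = A^u$ is seen from the pullback square for $A\to A^{\dagger-\red}$ of \cref{xjs9jw}: for $r>0$ the quotients $A/A^{\leq r}$ and $A^{\dagger-\red}/A^{\dagger-\red,\leq r}$ coincide, hence their limits do.

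\textbf{Parts (2) and (3).} The plan is to identify $A^{u,\leq 1}$ with $(A^{\leq 1})^{\wedge_p}$. The inclusion $(A^{\leq 1})^{\wedge_p}\subseteq A^{u,\leq 1}$ comes from the universal property of $p$-adic completion: any $S\to (A^{\leq 1})^{\wedge_p}$ extends first to $\Z_{p,\solid}\langle \N[S]\rangle\to (A^{\leq 1})^{\wedge_p}$ and then to $\Q_p\langle \N[S]\rangle_{\leq 1}\to A^u$ after inverting $p$, using \cref{x02kd76}(3). The converse inclusion follows from the pullback square of (1). Using $p^n A^{\leq 1} = A^{\leq 1/p^n}$ (already observed in the proof of \cref{xsh89j}), the cofinality in $n$ vs.\ $r<1$ yields $A^{u,\leq 1} = \varprojlim_{r<1}A^{\leq 1}/A^{\leq r}$; the $A^{u,<1}$ statement is analogous. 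Staticness of $A^u$ is then immediate (a $p$-adic limit of static modules is static after inverting $p$). Uniformity $A^{uu} = A^u$ follows by iterating the uniform completion using idempotency of $p$-adic completion: $A^{uu} = ((A^{\leq 1})^{\wedge_p})^{\wedge_p}[1/p] = (A^{\leq 1})^{\wedge_p}[1/p] = A^u$. The dependence only on $A^{\dagger-\red}$ is the last assertion of (1). For part (3), $A^u$ being uniform means $A^{u,\leq 1}$ is $p$-adically complete and hence $p$-adically separated; \cref{sec:dagger-nilradical-a-leq-1-p-adically-separated} then gives $\Nil^{\dagger}(A^u) = 0$.

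\textbf{Part (4).} The forward direction is trivial. For the converse, let $\bar h := \bar f^{-1}\in A^u$, and for any fixed $r<1$, use that $\Z_{p,\solid}$ is a projective solid $\Z_p$-module to lift the component of $\bar h$ in $A/A^{\leq r}$ (from $A^u = \varprojlim_r A/A^{\leq r}$) to an element $g_r\in A(\ast)$. The relation $\bar f\bar h = 1$ in $A^u$ projects to $fg_r \equiv 1 \pmod{A^{\leq r}}$ in $A$, so $fg_r = 1 + \eta$ with $\eta\in A^{\leq r}\subseteq A^{<1}\subseteq A^{\leq 1}$; in particular $fg_r\in A^{\leq 1}(\ast)$. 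By the Henselian pair property of \cref{xsh89j}, $A^{\leq r}(\ast)$ lies in the Jacobson radical of $A^{\leq 1}(\ast)$, so $fg_r = 1 + \eta$ is a unit in $A^{\leq 1}(\ast)\subseteq A(\ast)$. Therefore $f$ admits the inverse $g_r\cdot(fg_r)^{-1}$ in $A$.

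\textbf{Main obstacle.} The crux is the pullback square in (1). The delicate point is passing from a ring extension defined only on $A^u$ back to one defined on $A$ itself, which requires careful bookkeeping with the filtration $\{A^{\leq s}\}$ and the interplay between the additive ultrametric inequality (\cref{LemmaAcircicrr}(4)) and the limit description of $A^u$.
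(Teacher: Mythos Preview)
There is a genuine gap in your argument for the pullback square in Part (1). You write that the projection $\tilde f_s\colon \Q_p\langle \N[S]\rangle_{\leq r}\to A/A^{\leq s}$ ``translates, via idempotency, into the existence of a lift of $f\bmod A^{\leq s}$ landing in the image of $A^{\leq r}$.'' But for $s>0$ the quotient $A/A^{\leq s}$ is not a ring (since $A^{\leq s}$ is not an ideal), so idempotency of $\Q_p\langle \N[S]\rangle_{\leq r}$ over $\Q_p[\N[S]]$---which is a statement about \emph{algebra} maps---says nothing about the module map $\tilde f_s$. And at the module level, the existence of a linear map $\Q_p\langle \N[S]\rangle_{\leq r}\to A/A^{\leq s}$ restricting to $f\bmod A^{\leq s}$ on $S$ carries no information: it certainly does not force $f\bmod A^{\leq s}$ to lie in the image of $A^{\leq r}$. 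So the conclusion $f\in A^{\leq r}+A^{\leq s}$ is unjustified.

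The paper's argument is genuinely more involved and does not shortcut through $A/A^{\leq s}$ in this way. Working with $A^{<1}$ rather than $A^{\leq r}$, it first replaces $f$ by a suitable power so that one can package the sequence $(p^{-n}f^{(n)})_n$ into a \emph{null sequence} $\tilde f\colon S'\to A^{u,<1}$ (with $S'=\bigsqcup_n (p^{-1}S)^n\cup\{\infty\}$). One then projects to a single $A/A^{\leq r}$, lifts back to $\tilde g\colon S'\to A$ via projectivity of $\Z_{p,\solid}[S']$, and---this is where boundedness of $A$ is used---finds some $s>r$ with $\tilde g\in A^{\leq s}$. The ultrametric inequality then forces each $p^{-n}g^{(n)}\in A^{\leq s}$, and the power criterion of \cref{x92jhwn} closes the argument. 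The paper also establishes the surjectivity $A/A^{\leq r}\twoheadrightarrow A^u/A^{u,\leq r}$, which is needed in Part (2) (your derivation of $A^{u,\leq 1}=(A^{\leq 1})^{\wedge_p}$ from the pullback square alone is incomplete without it). Your treatment of Parts (3) and (4) is fine and matches the paper.
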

\begin{proof}

We prove (1). We may assume that $A$ is static. Let us first show that $A^u$ is bounded.  This follows from the fact that $A^u= (A^{\leq 1})^{\wedge_p}[\frac{1}{p}]$ is the generic fiber of a $p$-adically complete solid ring, see \cite[Lemma 2.6.13]{camargo2024analytic}. Let $S$ be a light profinite set and let $g: S \to A$ be a map such that the composite $f:S\xrightarrow{g} A\to A^{u}$ factors through $A^{u,<1}$ (by \cref{x92jhwn}). There is a power $f^{(s)}:S^s\to A^{u,<1}$ that factors through $p A^{u,<1}$.  By \cref{x92jhwn} it suffices to show that there is some power $g^{(s)}:S^s\to A$ that factors through $A^{<1}$, by changing $S$ to $S^{2s}$ we can then assume that $f$ factors through $p^2A^{u,<1}$.

The map $p^{-1}f:S\to A^{u,<1}$ extends to a morphism of algebras 
\[
H:\mathbb{Z}_{p,\solid}\llbracket\mathbb{N}[p^{-1} S]\rrbracket\to A^{u,<1}.
\]
In particular, we have an induced map
\[
\widetilde{f}:S':= \bigsqcup_n (p^{-1} S)^n\cup\{\infty\}\to A^{u,<1}
\]
mapping $\infty$ to zero. Recall that $A^u=\varprojlim_{r} A/A^{\leq r}$ by \cref{xks0jw}.   Let $r<1$ and consider the composite 
\[
\widetilde{f}_r: S'\to A^u\to A/A^{\leq r}.
\]
We can find a lift $\widetilde{g}:S'\to A$ of $\widetilde{f}_r$.  Note that for all $n\geq 1$ the restriction of $\widetilde{g}|_{S^n}$ agrees with $p^{-n}g^{(n)}:S^n\to A$ modulo $A^{\leq r}\subset A^{<1}$.  There exists some $s>r$ such that $\widetilde{g}$ factors through $A^{\leq s}$. Since $r<s$ the ultrametric inequality  yields (\Cref{LemmaAcircicrr} (4)) that $p^{-n} g^{(n)}$ lands in $A^{\leq s}$ for all $n\geq 1$ and by \cref{x92jhwn} one sees that $g$ lands in $pA^{\leq s^{1/n}}$ for all $n\in \mathbb{N}$, and so in $pA^{\leq 1}\subset A^{<1}$ as wanted.
The same argument works as well for $A^{\leq 1}$ replaced by $A^{\leq r'}$ for some $r'\geq 0$.

 It is left to show that $A/A^{\leq r}\cong A^u/A^{u,\leq r}$ for $r\geq 0$. For this it suffices to see that the map $A/A^{\leq r}\to A^u/A^{u,\leq r}$ is surjective as injectivity follows by what we have already proven. By multiplying with some power of $p$, we may assume that $r\geq 1$, in which case the desired surjectivity is implied by the one for $r=1$. Hence, we assume $r=1$. By \cref{xks0jw} we have a map $\pi:A^u\to A/A^{\leq 1}$. We have that $\ker(\pi)= \varprojlim_{r<1} A^{\leq 1}/A^{\leq r}=(A^{\leq 1})^{\wedge_p}$ and so $\ker(\pi)\subset A^{u,\leq 1}$. Therefore, given $S$ a light profinite set and a map $f:S\to A^u$, we can take a lift $\widetilde{g}:S\to A$ with composite $\widetilde{f}:S\to A\to A^{u}$ such that the difference $f-\widetilde{f}$ lands in $\ker(\pi)\subset A^{u,\leq 1}$, proving the desired surjectivity.
 
 We now continue with part (2). It is clear that $A^u$ is static since $A^{\leq 1}\subset A$ is a full subanima, in particular $\pi_i(A^{\leq 1})= \pi_i(A)$ for $i\geq 1$ is a $\mathbb{Q}_p$-module which is killed after $p$-completion. From \cref{xks0jw,LemmaAcircicrr}  one deduces that  $\pi: A^u\to A/A^{<1}$ has kernel $\ker(\pi)=(A^{<1})^{\wedge_p}$.  Indeed, for $r<1$ we have a short exact sequence
 \[
0\to A^{<1}/A^{\leq r}   \to A/A^{\leq r} \to A/A^{<1}\to 0,
 \]
 taking limits  one has a short exact sequence
 \[
 0\to (A^{<1})^{\wedge_p}\to A^u\to A/A^{<1}\to 0.
\]

Thus we have an inclusion $(A^{<1})^{\wedge_p}\subset A^{u,<1}$.  We want to show that this inclusion is an equality. Let $f:S\to A^{u,<1}$. As before, we can find a map $\widetilde{g}:S\to A$ with composite $\widetilde{f}:S\to A\to A^u$ such that the difference $f-\widetilde{f}$ lands in $(A^{<1})^{\wedge_p}$. In particular, $\widetilde{f}$ lands in $A^{u,<1}$ and by part (1) $\widetilde{g}$ lands in $A^{<1}$. This implies that $f$ itself is in $(A^{<1})^{\wedge_p}$ proving what we wanted. % The case of $A^{\leq r}$ follows from the $A^{<1}$ case by taking limits.

Next we prove part (3). By (2) we know that $A^{u,\leq 1}=(A^{\leq 1})^{\wedge_p}$ is classically $p$-adically complete, so $p$-adically separated and by \cref{sec:dagger-nilradical-a-leq-1-p-adically-separated} $A^u$ is $\dagger$-reduced.

Finally, we prove (4). Let $f\in A$ be an element such that the reduction $\overline{f}\in A^u$ is invertible, let $s\in \R_{>0}$ be such that $f\in A^{\leq s}$. Let $g\in A^u$ be an inverse of $\overline{f}$. By part (2), we can find an element $\widetilde{g}\in A$ such that $\widetilde{g}= g+h$ with $h\in A^{u,\leq 1/2s}$. Hence, we have that $1-\widetilde{g}f$ maps to $-h\overline{f}\in A^{u,\leq 1/2}$ and so by part (1) that $1-\widetilde{g}f\in A^{\leq 1/2}$. Hence, the element $1-\widetilde{g}f$ defines a map $\Q_p\langle T\rangle_{\leq 1/2}\to  A$ mapping $T\mapsto 1-\widetilde{g}f$. Since $1-T$ is invertible in $\Q_p\langle T\rangle_{\leq 1/2}$, one has that $\widetilde{g}f$ is invertible in $A$  which implies that $f$ is so.   
\end{proof}

\begin{corollary}\label{xhs892k} The inclusion of uniform bounded $\Q_p$-algebras into all bounded $\Q_p$-algebras has $A\mapsto A^u$ as a left adjoint. In particular, for any diagram $\{A_i\}_{i\in I}$ of bounded $\Q_p$-algebras, the natural map $(\varinjlim_i A_{i})^u\to (\varinjlim_i A_i^u)^u$ is an equivalence.
\end{corollary}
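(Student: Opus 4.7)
The plan is to verify the adjunction directly, using the explicit formula $A^u=(A^{\leq 1})^{\wedge_p}[1/p]$ together with the key structural facts about uniform rings already proved in \cref{xsu8rf}. The colimit statement will then follow formally from the fact that left adjoints preserve colimits.

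\textbf{Step 1 (the unit and the candidate bijection).} The natural map $A\to A^u$ is a morphism of bounded $\Q_p$-algebras by \cref{xsu8rf}(1). Given any uniform bounded $\Q_p$-algebra $B$, I need to construct a natural inverse to the restriction map
\[
\Hom_{\Q_{p,\solid}\text{-alg}}(A^u,B)\longrightarrow \Hom_{\Q_{p,\solid}\text{-alg}}(A,B).
\]
The essential fact to exploit is that, since $B$ is uniform, $B=B^u=(B^{\leq 1})^{\wedge_p}[1/p]$ and consequently $B^{\leq 1}=B^{u,\leq 1}=(B^{\leq 1})^{\wedge_p}$ by \cref{xsu8rf}(2), i.e.\ $B^{\leq 1}$ is already classically $p$-adically complete as a solid $\Z_{p,\solid}$-algebra.

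\textbf{Step 2 (constructing the extension).} Given $\varphi\colon A\to B$, functoriality of $(-)^{\leq 1}$ produces a morphism $\varphi^{\leq 1}\colon A^{\leq 1}\to B^{\leq 1}$ of solid $\Z_{p,\solid}$-algebras. Because the target is $p$-adically complete (Step 1), the universal property of $p$-adic completion yields a unique factorization $(A^{\leq 1})^{\wedge_p}\to B^{\leq 1}$. Inverting $p$ produces
\[
\widetilde\varphi\colon A^u=(A^{\leq 1})^{\wedge_p}[1/p]\longrightarrow B^{\leq 1}[1/p]=B,
\]
where the last equality uses that $B$ is a $\Q_p$-algebra and $B^{\leq 1}=(B^{\leq 1})^{\wedge_p}$. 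By construction, the composite $A\to A^u\xrightarrow{\widetilde\varphi}B$ agrees with $\varphi$.

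\textbf{Step 3 (uniqueness).} Suppose $\psi\colon A^u\to B$ is any extension of $\varphi$. Its restriction to $A^{u,\leq 1}$ lands in $B^{\leq 1}$, and this restriction is determined, via the universal property of $p$-adic completion applied to $A^{\leq 1}\to A^{u,\leq 1}=(A^{\leq 1})^{\wedge_p}$ and the $p$-completeness of $B^{\leq 1}$, by its restriction to $A^{\leq 1}$, hence by $\varphi$. Because $A^u=\bigcup_n p^{-n}A^{u,\leq 1}$, the map $\psi$ is determined by its restriction to $A^{u,\leq 1}$, so $\psi=\widetilde\varphi$. This completes the proof of the adjunction.

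\textbf{Step 4 (colimits).} As a left adjoint, $(-)^u$ commutes with all colimits taken in the category of uniform bounded $\Q_p$-algebras. Colimits in that category are computed by applying the left adjoint to the colimit computed in bounded $\Q_p$-algebras (which exist by \cref{lem-bounded-subrings-provides-right-adjoint}). Applying this observation twice gives
\[
(\varinjlim_i A_i)^u\;=\;\varinjlim_i^{\mathrm{uniform}} A_i^u\;=\;(\varinjlim_i A_i^u)^u,
\]
where the inner colimit in the last term is taken in bounded $\Q_p$-algebras. The main (minor) obstacle is keeping track of the distinction between colimits in the two categories and verifying that the universal property of $p$-adic completion works cleanly in the solid setting; both are immediate from the preceding material.
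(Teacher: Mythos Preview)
Your proof is correct and follows essentially the same approach as the paper's: the paper's one-line argument simply invokes that $A^u$ is uniform and that any $A\to B$ restricts to $A^{\leq 1}\to B^{\leq 1}$, leaving the reader to fill in the $p$-adic completion and uniqueness steps that you spell out in Steps~2--3. Your Step~4 likewise just makes explicit the standard consequence that left adjoints preserve colimits.
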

\begin{proof}
  This follows from the facts that $A^u$ is uniform and that each morphism of bounded $\Q_p$-algebras $A\to B$ induces a morphism $A^{\leq 1}\to B^{\leq 1}$.
\end{proof}

\begin{example}
  \label{sec:adic-berk-spectra-exam-uniform-completion}
  Let $S$ be a light profinite set and $A:=\Q_{p,\solid}\langle \N[S]\rangle_{\leq 1}$. Then $A^{\leq 1}=A\cap \Z_{p,\solid}\langle \N[S]\rangle$, and $A^{u,\leq 1}=\Z_{p,\solid}\langle \N[S]\rangle$. 
\end{example}

\newpage
\section{Gelfand rings and their Berkovich spectrum}\label{s:gelfand-rings}

This  chapter introduces the notion of Gelfand ring, a suitable class of bounded rings. Using Gelfand rings, we will define in the next chapter a variant of analytic stacks, called Gelfand stacks, of which analytic de Rham stacks will be examples. The definition of Gelfand rings was motivated by conversations of the fourth author and fifth author with Clausen about different aspects of the theory of bounded rings; in particular, it was inspired by an analogous definition given by Clausen in the complex situation.

A main advantage of Gelfand rings is that there is a good theory of Berkovich spectrum for them, generalizing the one of Banach algebras.

\subsection{Definition and main properties}
\label{sss:gelfand-rings}

\begin{definition}\label{xnshw8}
A bounded $\Q_p$-algebra $A$ is said to be \textit{Gelfand} if $A/A^{\leq 1}$ is a discrete condensed set.  We let $\Cat{GelfRing}_{\mathbb{Q}_{p}}\subset \Cat{Ring}^b_{\mathbb{Q}_{p}}$ be the full subcategory of Gelfand $\Q_p$-algebras (also called Gelfand rings).
\end{definition}

\begin{remark}
  \label{sec:defin-main-prop-equivalent-characterizations-for-gelfand-property}
  Filtered colimits of discrete condensed sets, subobjects of discrete condensed sets, and quotients of discrete condensed sets are again discrete. Moreover, extensions of discrete condensed abelian groups are again discrete. This implies that a bounded $\Q_p$-algebra is Gelfand if and only if one of following objects is a discrete condensed set:
  \[
    A/A^{\leq r},\ A/A^{<r}, A^{\leq r}/A^{\leq s},\ A^{<r}/A^{<s},
  \]
  for some $r>s>0$.
\end{remark}

\begin{example}
  \label{sec:defin-main-prop-example-gelfand-rings}
  For a light profinite set $S$ and $r\geq 0$ the bounded $\Q_p$-algebra $A=\Q_{p,\solid}\langle \N[S]\rangle_{\leq r}$ is Gelfand if and only if $S$ is finite. Indeed, $A^{\leq 1}/A^{\leq |p|}$ contains $\F_{p,\solid}[S]$ as a direct summand, and this $\F_{p,\solid}$-module is discrete if and only if $S$ is finite. For the converse, one notes that $A$ is a colimit of Banach algebras over $\Q_p$, and one can apply \cref{xnbsyw}.
\end{example}

The following lemma provides enough stability properties of the category of Gelfand $\Q_p$-algebras:

\begin{proposition}\label{xnbsyw}
The category $\Cat{GelfRing}_{\mathbb{Q}_p}$ is stable under colimits in bounded $\Q_p$-algebras. Moreover, a bounded ring $A$ is Gelfand if and only if $A^u$ is a Banach $\mathbb{Q}_p$-algebra. 
\end{proposition}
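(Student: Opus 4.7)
My plan is to establish the equivalence (``$A$ Gelfand iff $A^u$ Banach'') first, and then use it together with the fact that $(-)^u$ is a left adjoint (\cref{xhs892k}) to deduce closure under colimits.

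\emph{Equivalence.} The starting point is \cref{xsu8rf}(2): $A^{u,\leq 1}=(A^{\leq 1})^{\wedge_p}$. By the very definition of a Banach $\mathbb{Q}_p$-algebra, $A^u$ is Banach iff $A^{u,\leq 1}/p$ is a discrete condensed $\mathbb{F}_p$-module, iff $A^{\leq 1}/pA^{\leq 1}$ is. To relate this to $A/A^{\leq 1}$ being discrete, I would first verify $pA^{\leq 1}=A^{\leq|p|}$ inside $A$: this follows from the explicit isomorphism $\mathbb{Q}_{p,\solid}\langle T\rangle_{\leq 1}\cong \mathbb{Q}_{p,\solid}\langle T\rangle_{\leq|p|}$ (visible from the description in \cref{xhjs8wj}) induced by the substitution $T\leftrightarrow pT$. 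Given this identification, the filtered colimit presentation $A/A^{\leq 1}=\varinjlim_n A^{\leq|p|^{-n}}/A^{\leq 1}\cong \varinjlim_n A^{\leq 1}/p^nA^{\leq 1}$, combined with the short exact sequences $0\to pA^{\leq 1}/p^nA^{\leq 1}\to A^{\leq 1}/p^nA^{\leq 1}\to A^{\leq 1}/pA^{\leq 1}\to 0$ and induction on $n$, yields that $A/A^{\leq 1}$ is discrete iff $A^{\leq 1}/pA^{\leq 1}$ is so. Hence the equivalence.

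\emph{Colimit stability.} By the equivalence just shown, it suffices to prove that for a colimit $A=\varinjlim A_i$ in $\Cat{Ring}_{\mathbb{Q}_{p,\solid}}^b$ of Gelfand rings $A_i$, the uniform completion $A^u$ is Banach. I would handle two regimes separately. For \emph{filtered} colimits, I use \cref{xsu29s} to get $A^{<1}=\varinjlim A_i^{<1}$, hence $A/A^{<1}=\varinjlim A_i/A_i^{<1}$. For each Gelfand $A_i$, the short exact sequence
\[
0\to A_i^{\leq 1}/A_i^{<1}\to A_i/A_i^{<1}\to A_i/A_i^{\leq 1}\to 0
\]
shows $A_i/A_i^{<1}$ is discrete: the right term is discrete by the Gelfand hypothesis, while the left term embeds into $A_i^{u,\leq 1}/A_i^{u,<1}$ via the pullback square of \cref{xsu8rf}(1), and this latter quotient is itself a quotient of the discrete $A_i^{u,\leq 1}/p$ (since $A_i^{u,<1}\supseteq pA_i^{u,\leq 1}$ and $A_i^u$ is Banach). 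Filtered colimits of discrete condensed sets being discrete, $A/A^{<1}$ is discrete, and so is its quotient $A/A^{\leq 1}$, so $A$ is Gelfand. For \emph{finite} colimits (pushouts and coequalizers), I exploit that $(-)^u$ preserves colimits, so for Gelfand $A,B,C$ one has $(A\otimes_C B)^u=A^u\hat\otimes_{C^u}B^u$, the classical Banach completed tensor product, which is Banach; similarly coequalizers of Banach algebras in uniform bounded are Banach quotients by the closure of an ideal, hence Banach. These two cases generate all colimits in $\Cat{Ring}_{\mathbb{Q}_{p,\solid}}^b$, completing the proof.

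\emph{Main obstacle.} The crux of the argument is that $A\mapsto A^{\leq 1}$ does not commute with filtered colimits in general (in contrast to $A\mapsto A^{<1}$, cf.\ \cref{xsu29s} and the remark following it). The remedy is to pivot to $A^{<1}$ in the filtered setting and to separately control the ``Banach residue ring'' quotient $A^{\leq 1}/A^{<1}$ via the pullback square \cref{xsu8rf}(1) relating $A$ to $A^u$; this is precisely where the Banach hypothesis on $A^u$ enters crucially.
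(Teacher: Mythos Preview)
Your equivalence argument is correct and matches the paper's (the paper identifies $A/A^{\le 1}\cong A^u/A^{u,\le 1}$ via \cref{xsu8rf}(1) and then invokes \cref{sec:defin-main-prop-equivalent-characterizations-for-gelfand-property}; your route through $pA^{\le 1}=A^{\le|p|}$ unwinds the same reduction).

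For colimit stability your argument works but is more laborious than necessary, and the finite-colimit step has a gap.

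\emph{Filtered colimits.} The ``obstacle'' you identify is not actually one: you do not need $A^{\le 1}=\varinjlim_i A_i^{\le 1}$, only the containment $\mathrm{im}\bigl(\varinjlim_i A_i^{\le 1}\bigr)\subseteq A^{\le 1}$, which is immediate from functoriality of $(-)^{\le 1}$. Then $A/A^{\le 1}$ is a quotient of $\varinjlim_i(A_i/A_i^{\le 1})$, hence discrete. Your detour through $A^{<1}$ via \cref{xsu29s} and the pullback square of \cref{xsu8rf}(1) is correct but superfluous.

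\emph{Finite colimits.} The assertion $(A\otimes_C B)^u=A^u\widehat\otimes_{C^u}B^u$ (classical Banach completed tensor) is not justified. What \cref{xhs892k} gives is that $(A\otimes_C B)^u$ is the pushout of $A^u\leftarrow C^u\to B^u$ in \emph{uniform bounded} rings, i.e.\ $(A^u\otimes_{C^u}B^u)^u$; but uniform bounded rings form a strictly larger category than Banach $\Q_p$-algebras (e.g.\ $\Z_p[\![T]\!][1/p]$ is uniform but not Banach), so you cannot simply invoke the classical Banach construction and conclude the result lands there. The fix is direct: show that the solid coproduct $A^u\otimes_{\Q_{p,\solid}}B^u$ of Banach algebras is Gelfand (the image of $A^{u,\le 1}\otimes_{\Z_p}B^{u,\le 1}$ lies in $(A^u\otimes_{\Q_p} B^u)^{\le 1}$ and has discrete reduction mod $p$, so the quotient by $(A^u\otimes B^u)^{\le 1}$ is discrete), and that any quotient of a Gelfand ring is again Gelfand (immediate, as quotients of discrete condensed sets are discrete). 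Pushouts and coequalizers are quotients of coproducts, so this handles the finite case. This is precisely what underlies the paper's terse assertion that ``the uniform completion of a colimit of Banach algebras is a Banach algebra''.
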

\begin{proof}
By \cref{xsu8rf} we have an isomorphism $A/A^{\leq 1} = A^{u}/A^{u,\leq 1}$. Hence, a bounded ring $A$ is Gelfand if and only if $A^u$ is Gelfand. Now, for $A$ a bounded ring, one has $A^u=A^{u,\leq 1}[\frac{1}{p}]$ where $A^{\leq 1, u}$ is a $p$-adically complete and separated solid abelian group. Thus, $A^u$ is a Banach $\Q_p$-algebra if and only if $A^{u,\leq 1}/p=A^{\leq 1} /p$ is a discrete $\F_p$-vector space. Since  $|p|=p^{-1}$, $A^u$ is Banach if and only if $A^{\leq 1}/ A^{\leq p^{-1}}$ is discrete. By \cref{sec:defin-main-prop-equivalent-characterizations-for-gelfand-property} this last happens if and only if $A$ is Gelfand. 

The stability of Gelfand rings under colimits in bounded $\Q_p$-algebras follows from the compatibility of uniform completion under colimits (\Cref{xhs892k}), and the fact that the uniform completion of a colimit of Banach algebras is a Banach algebra. 
\end{proof}

With the previous proposition in mind, we give the following definition of the Berkovich spectrum of a Gelfand ring.

\begin{definition}\label{DefBerkovichGelfand}\
\begin{enumerate}
\item Let $B$ be a Banach algebra over $\Q_p$. The \textit{Berkovich spectrum} $\mathcal{M}(B)$ of $B$ is the topological space whose underlying set is the set of all continuous multiplicative seminorms $|-|\colon B\to \R_{\geq 0}$ such that $|p|=p^{-1}$. Given $x\in \mathcal{M}(B)$ an element of the Berkovich spectrum, we denote $|-|_x$ its associated multiplicative seminorm. Consider the inclusion map
\begin{equation}\label{eq0q3kp121e}
\mathcal{M}(B)\hookrightarrow \prod_{f\in B} \R_{\geq 0}
\end{equation}
sending $x\in \mathcal{M}(B)$ to the tuple $(|f|_x)_{f\in B}$. The topology of $\mathcal{M}(B)$ is defined to be the induced topology from the product topology in \cref{eq0q3kp121e}. In other words, the topology of $\mathcal{M}(B)$ is the coarsest topology making the maps   $\mathcal{M}(B)\to \R_{\geq 0}  $ sending  $ x\mapsto |f|_x$   for $f\in B$ a continuous map.  

\item Let $A$ be a Gelfand ring. We define the \textit{Berkovich spectrum of $A$} to be $$\mathcal{M}(A):= \mathcal{M}(A^u).$$

\item Let $A$ be a Gelfand ring, and $(f_1,\ldots, f_n,g)$ a tuple of elements in $A$ generating the unit ideal. Let $X=\mathcal{M}(A)$ be its Berkovich spectrum. We define the rational subspace $X(\frac{f_1,\ldots, f_n}{g})\subset X$ to be the closed subspace of multiplicative seminorms $|-|\colon A^u\to \R_{\geq 0}$ such that $|f_i|\leq |g|\neq 0$ for $i=1,\ldots, n$. A closed subspace $Z\subset X$ is called a \textit{rational subspace} if it admits a presentation of the form $Z=X(\frac{f_1,\ldots, f_n}{g})$ as before.
\end{enumerate}
\end{definition}

Let us recall some basic properties of the Berkovich spectrum.

\begin{remark}\label{RemarkBasicPropertiesBerkovichSpectrum}Let $B$ be a Banach $\Q_p$-algebra.
\begin{enumerate}
\item  The Berkovich spectrum  $\mathcal{M}(B)$ is a compact Hausdorff space. Indeed, any continuous multiplicative seminorm $|-|\colon B\to \R_{\geq 0}$ must send $B^{\leq 1}$ to the interval $[0,1]$, being a power bounded element. Hence, one has a closed embedding $\mathcal{M}(B)\hookrightarrow \prod_{f\in B^{\leq 1}} [0,1]$ making $\mathcal{M}(B)$ a closed subspace of a compact Hausdorff space and hence itself compact.

\item Let $S\subset B^{\leq 1}$ be a dense subspace. Then the map $\mathcal{M}(B)\to \prod_{f\in S} [0,1]$ sending $x$ to $(|f|_x)_{f\in S}$ is a closed embedding: this follows by continuity of the multiplicative seminorm. In particular, if $B$ admits a countable dense subspace (i.e. it is a separable Banach algebra), $\mathcal{M}(B)$ is a metrizable compact Hausdorff space.

\end{enumerate}

\end{remark}

\begin{remark}\label{rk:uniformity-for-banach-rings}
Recall that a Banach ring $B$ is said to be \textit{uniform} if the \textit{Gelfand transform} 
$$
B \to \prod_{x\in \mathcal{M}(B)}^{\mathrm{Ban}} \kappa(x)
$$
is an isometric embedding, where $\kappa(x)$ stands for the completion of the fraction field of the completion of $B$ with respect to the semi-norm corresponding to $x$ (a Banach field) and the product is taken in the category of Banach rings.  Equivalently (\cite[Proposition 2.21]{scholze2024berkovichmotives}), $B$ is uniform if the norm defining the Banach ring structure is power multiplicative. For a non-archimedean Banach ring which is Tate (i.e. admits a topologically nilpotent unit), this is also equivalent to the condition that the subring of power-bounded elements $B^\circ$ is bounded.  Over $\Q_p$, the category of uniform $\Q_p$-Banach algebras $B$, with continuous algebra morphisms, is equivalent to the category of $p$-adically complete and $p$-torsion free $\Z_p$-algebras $R$ with $R$ totally integrally closed in $R[1/p]$, with functors given by $B \mapsto B^\circ$ and $R \mapsto R[1/p]$ (with norm given by $|f|=\min \{ |p^n|, f\in p^n R\}$), see \cite[Proposition 5.2.5]{bhatt2017lecture}. In particular, for a Banach $\Q_p$-algebra $B$, its uniform completion (in the sense of Huber rings) coincides with the uniform completion as defined in \Cref{xwj29e}.

As a consequence, the uniform completion of a Gelfand ring is a uniform $\Q_p$-Banach algebra, i.e. one for which its Gelfand transform is an isometric embedding. This motivates our choice of terminology.  \end{remark}

\begin{lemma}\label{LemmaRationalLocalizations}
Let $f\colon A\to B$ be a morphism of Gelfand rings. Then we have a continuous map $\mathcal{M}(f)\colon \mathcal{M}(B)\to \mathcal{M}(A)$ that pullbacks rational subspaces to rational subspaces. Furthermore, if $A^u\xrightarrow{\sim}B^u$, then the map  $\mathcal{M}(f)$ identifies rational subspaces. 
\end{lemma}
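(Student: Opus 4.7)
The plan is to reduce everything to the standard theory of Berkovich spectra of Banach algebras applied to $A^u$ and $B^u$, which are Banach $\mathbb{Q}_p$-algebras by \cref{xnbsyw}. By functoriality of the uniform completion (\cref{xhs892k}), the morphism $f$ induces a bounded morphism $f^u\colon A^u\to B^u$, and precomposition with $f^u$ sends continuous multiplicative seminorms on $B^u$ with $|p|=p^{-1}$ to such seminorms on $A^u$, defining $\mathcal{M}(f)\colon \mathcal{M}(B)\to \mathcal{M}(A)$. Continuity is immediate: for every $a\in A^u$, the composite $x\mapsto |f^u(a)|_x$ on $\mathcal{M}(B)$ is continuous by definition of the Berkovich topology. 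For the first pullback statement, $f$ carries a Bezout relation in $A$ to one in $B$, and by construction
\[
\mathcal{M}(f)^{-1}\bigl(\mathcal{M}(A)(\tfrac{g_1,\ldots,g_n}{h})\bigr) \;=\; \mathcal{M}(B)(\tfrac{f(g_1),\ldots,f(g_n)}{f(h)}).
\]

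For the final assertion, $\mathcal{M}(f)$ is a homeomorphism when $f^u$ is an isomorphism, so it remains to realise each $B$-rational subspace $V = \mathcal{M}(B)(\tfrac{b_1,\ldots,b_n}{c})$ as an $A$-rational subspace. Two inputs will be used. First, $A$ is dense in $A^u=B^u$ in the Banach topology, which follows from $A = A^{\leq 1}[1/p]$ (via iterated use of $p^n\cdot A^{\leq p^n}\subseteq A^{\leq 1}$, a consequence of \cref{LemmaAcircicrr}(3)) and $A^{u,\leq 1} = (A^{\leq 1})^{\wedge_p}$ from \cref{xsu8rf}(2). Second, compactness of $V$ together with the unit ideal relation $\sum v_i b_i + v c = 1$ forces $\delta := \min_{x\in V}|c|_x > 0$; after rescaling $(b_i,c)\mapsto (p^{-k}b_i,p^{-k}c)$ for $k$ large enough that $p^{-k}\delta\geq 1$ (which leaves $V$ unchanged) we may assume $|c|_x\geq 1$ on $V$.

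The main obstacle is that a naive approximation of $(b_i,c)$ by elements of $A$ does \emph{not} preserve the rational subspace as a subset, since the condition $|b_i|_x\leq|c|_x$ is unstable at the boundary where $|c|_x$ is small. The remedy is to adjoin the constant $1\in A$ to the generators: using density, pick $a_i,a\in A$ with $\|a_i-b_i\|_{A^u}<1$ and $\|a-c\|_{A^u}<1$; the tuple $(a_1,\ldots,a_n,1,a)\subseteq A$ trivially generates the unit ideal, and a direct ultrametric computation gives
\[
V \;=\; \mathcal{M}(A)\bigl(\tfrac{a_1,\ldots,a_n,1}{a}\bigr) \;=\; \{x : |a_i|_x\leq|a|_x,\ |a|_x\geq 1\}.
\]
Indeed, for $x\in V$ the bound $|c|_x\geq 1 > |a-c|_x$ forces $|a|_x=|c|_x\geq 1$ by ultrametricity, and $|a_i|_x\leq\max(|b_i|_x,|a_i-b_i|_x)\leq |c|_x=|a|_x$; conversely, if $|a|_x\geq 1$ then $|a|_x>|a-c|_x$ gives $|c|_x=|a|_x\geq 1$, and propagating through $|b_i-a_i|_x<1\leq|a|_x$ yields $|b_i|_x\leq|c|_x$, so $x\in V$.
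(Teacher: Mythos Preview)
Your proof is correct and follows the same strategy as the paper: reduce to the Banach theory for continuity and pullback, then for the identification use density of $A$ in $A^u=B^u$ together with an ultrametric perturbation argument to present a $B$-rational domain by elements of $A$. Your rescale-and-adjoin-$1$ trick is a pleasant variant of the paper's direct approximation (where one takes $\epsilon<|\pi|$ for a uniform lower bound $|\pi|$ on $|g|$ over the locus), since it makes the unit-ideal condition for the approximated tuple automatic; the only trivial omission is the case $V=\emptyset$, where $\delta$ is undefined but the conclusion is immediate.
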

\begin{proof}
The continuity  and stability of rational subspaces under pullbacks is classical from the Banach case. It is left to see that if $A$ and $B$ have the same uniform completions then rational subspaces are identified. We can assume without loss of generality that $B=A^u$. Let $X=\mathcal{M}(A)=\mathcal{M}(A^u)$ and  let $f_1,\ldots, f_n,g\in A^u$ be elements generating the unit ideal. We claim that there is $\epsilon >0$ such that   for any choice of elements $\widetilde{f}_1,\ldots, \widetilde{f}_n, \widetilde{g}$ in $A$ such that $f_i-\widetilde{f}_i\in A^{u,\leq \epsilon}$ and $g-\widetilde{g} \in A^{u,\leq \epsilon}$, we have an equality of rational subspaces
\[
X(\frac{f_1,\ldots, f_n}{g})= X(\frac{\widetilde{f}_1,\ldots, \widetilde{f}_n}{\widetilde{g}}).
\]
This implies the lemma as the image of $A$ in $A^u$ is dense. To prove the claim, after multiplying all $f_i$ and $g$ by a pseudo-uniformizer, we can assume without loss of generality that they belong to $A^{u,\leq 1}$.  We note that  as $(f_1,\ldots, f_n,g)$ generate the unit ideal, there exists some pseudo-uniformizer $\pi\in \Q_p$ such that $X(\frac{f_1,\ldots, f_n}{g})\subset X(\frac{\pi}{g})$. We claim that any $\epsilon>0$ such that $\epsilon<|\pi|$ works. Indeed, if $g-\widetilde{g}\in A^{u,\leq \epsilon}$, then $x\in X$ satisfies $|g|_x\geq |\pi|$ if and only  $|\widetilde{g}|_{x} \geq |\pi|$ and for such seminorms one has $|g|_x=|\widetilde{g}|_x$. Similarly, if  $f_i-\widetilde{f}_i\in A^{u,\leq \epsilon}$ and if $x\in X$ is such that $|f_i|_x\leq |g|_x$ and $|g|_x\geq |\pi|$, then $|\widetilde{f}_i|_x= |f_i+ (\widetilde{f}_i-f_i)|\leq \sup\{|\widetilde{f}_i|, \epsilon\}$, and so $|\widetilde{f}_i|_x\leq |g|_x$.
\end{proof}

\begin{proposition}\label{PropUniversalPropertyRational}
Let $A$ be a Gelfand ring with Berkovich spectrum $X=\mathcal{M}(A)$. Let $Z\subset X$ be a rational subspace. Then there is an idempotent morphism  $A\to A_{Z}$ of Gelfand rings such that a map $A\to B$ of Gelfand rings induces a factorization $A\to A_{Z}\to B$ if and only if the map of Berkovich spectra $\mathcal{M}(B)\to X$ factors through $Z$. More explicitly, if $Z=X(\frac{f_1,\ldots, f_n}{g})$ is a presentation of $Z$ as a rational subspace with $f_i,g\in A$ generating the unit ideal, we have 
\begin{equation}\label{eqw01i2k3ew3}
A_{Z}= A[\frac{1}{g}]\otimes_{\Q_p} \Q_p\langle  T_1,\ldots, T_n \rangle_{\leq 1}/^{\mathbb{L}}(gT_i-f_i )= A\otimes_{\Q_{p}} \Q_{p}\langle T_1,\ldots, T_n \rangle_{\leq 1}/^{\mathbb{L}}(gT_i-f_i). 
\end{equation}
In that case, the map $\mathcal{M}(A_Z)\to \mathcal{M}(A)$ is an homeomorphism onto $Z$ and identifies rational subspaces of $\mathcal{M}(A_Z)$ and rational subspaces of $A$ contained in $Z$.  We call $A_Z$ the \rm{Berkovich rational localization} of $A$ along $Z$.
\end{proposition}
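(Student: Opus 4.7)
The plan is to identify, via corepresentability of the overconvergent Tate algebras, the functor corepresented by the RHS of \cref{eqw01i2k3ew3}, and translate the answer into the Berkovich-theoretic condition of factoring through $Z$.

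First, I would establish the universal property together with the agreement of the two presentations in \cref{eqw01i2k3ew3}. By \cref{xjs82k}, $\Q_p\langle T_1,\ldots,T_n\rangle_{\leq 1}$ corepresents the functor $B\mapsto (B^{\leq 1}(\ast))^n$ on bounded $\Q_p$-algebras. So a map from the second formula to a Gelfand ring $B$ corresponds to an algebra morphism $\phi\colon A\to B$ together with elements $t_i\in B^{\leq 1}$ satisfying $\phi(g)t_i=\phi(f_i)$. Writing the unit-ideal relation $1=\sum_i a_if_i+a_{n+1}g$ in $A^u$ and pushing it to $B^u$ yields $1=\phi(g)\bigl(\sum_i\phi(a_i)t_i+\phi(a_{n+1})\bigr)$, so $\phi(g)$ is invertible in $B^u$, hence in $B$ by \cref{xsu8rf}(4). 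This pins down $t_i=\phi(f_i)/\phi(g)$ uniquely and shows that the two presentations in \cref{eqw01i2k3ew3} coincide.

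Second, the remaining condition $\phi(f_i)/\phi(g)\in B^{\leq 1}$ becomes, via the pullback square of \cref{xsu8rf}(1), equivalent to $\phi(f_i)/\phi(g)\in B^{u,\leq 1}$; and since $B^u$ is uniform (\cref{rk:uniformity-for-banach-rings}), this is equivalent to $|\phi(f_i)|_y\leq|\phi(g)|_y$ for every $y\in\mathcal{M}(B)$. Combined with the invertibility of $\phi(g)$, these are exactly the conditions for $\mathcal{M}(B)\to\mathcal{M}(A)$ to factor through $Z$. This proves the universal property, and idempotency of $A\to A_Z$ follows by specializing to $B=A_Z$. That $A_Z$ is a Gelfand ring follows from \cref{xnbsyw}, since it is built in bounded $\Q_p$-algebras from the Gelfand rings $A$ and $\Q_p\langle T_i\rangle_{\leq 1}$ (a Banach algebra).

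Finally, applying the universal property to the completed residue fields $B=\kappa(x)$ for $x\in X$ yields a continuous bijection $\mathcal{M}(A_Z)\to Z$, automatically a homeomorphism between compact Hausdorff spaces by \cref{RemarkBasicPropertiesBerkovichSpectrum}. Any rational subspace of $X$ contained in $Z$ pulls back to $\mathcal{M}(A_Z)$, since unit-ideal relations in $A^u$ are preserved under the map $A^u\to A_Z^u$. Conversely, a rational subspace of $\mathcal{M}(A_Z)$ is defined by elements $h_j,k\in A_Z^u$ which, by density and \cref{LemmaRationalLocalizations}, may be approximated by elements of $A_Z$, i.e.\ by polynomials in the $T_i=f_i/g$; multiplying by a suitable power $g^N$ rewrites the defining inequalities in terms of elements of $A$ alone, exhibiting the subspace as pulled back from a rational subspace of $X$. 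The main technical care goes into this last approximation step; the rest is an essentially formal consequence of the structure results \cref{xsu8rf}, \cref{xnbsyw}, and the uniform-Banach description of the Berkovich spectrum.
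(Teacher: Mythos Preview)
Your proof is correct and takes a somewhat different route from the paper's. The paper first establishes idempotency directly from the explicit presentation (as a composite of the idempotent maps $A\to A[1/g]$ and the base change along $\Q_p[T_i]\to\Q_p\langle T_i\rangle_{\leq 1}$), and then proves the universal property by a different mechanism: given $A\to B$ with $\mathcal{M}(B)\to Z$, it shows that $B\to B\otimes_A A_Z$ is an isomorphism by first observing this on uniform completions (via the classical Banach universal property) and then invoking an auxiliary lemma (\cref{LemmaIndependenceRationalLocalizations}) which says that a rational localization inducing an isomorphism on uniform completions is itself an isomorphism. Your approach instead computes the corepresented functor of the second presentation directly and translates the existence and uniqueness of the $t_i$ into the Berkovich condition using \cref{xsu8rf}(1) and (4); this is more self-contained and absorbs the content of \cref{LemmaIndependenceRationalLocalizations} into the main argument. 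For the final statements (homeomorphism onto $Z$, identification of rational subspaces), the paper simply defers to the classical Banach case (since $\mathcal{M}(A_Z)=\mathcal{M}(A_Z^u)$ and $A_Z^u$ is the uniform completion of the classical rational localization of $A^u$), while you give a more explicit argument via residue fields and approximation. One minor point: the unit-ideal relation already holds in $A$ by hypothesis, so the detour through $A^u$ in your first step is unnecessary, though harmless.
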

\begin{proof}
Let us show that the two presentations of the algebra \eqref{eqw01i2k3ew3} agree.  For this, it suffices to see that $g$ is invertible in the presentation $B=A\otimes_{\Q_{p}} \Q_{p}\langle T_1,\ldots, T_n \rangle_{\leq 1}/^{\mathbb{L}}(gT_i-f_i)$. Since $(f_1,\ldots, f_n,g)$ generate the unit ideal, there are elements $a_i,b\in A$ such that $\sum_{i} a_if_i + bg=1$, hence in the algebra $B$ we have $1=\sum_{i} a_i g T_i +bg= g(\sum_i a_i T_i+b)$ proving that $g$ is invertible as wanted.

The first presentation of \eqref{eqw01i2k3ew3} is  clearly idempotent: it is the composite of the idempotent maps 
\[
A\to A[\frac{1}{g}]\to A[\frac{1}{g}]\otimes_{\Q_p[T_1,\ldots, T_n]} \Q_p\langle T_1,\ldots, T_n\rangle_{\leq 1}
\] 
where $T_i\mapsto f_i/g$.  The second presentation shows that $A_Z$ is a Gelfand ring.  We will show that the algebra $A_Z$  satisfies the universal property stated in the lemma. The uniform completion $A_Z^u$ is the uniform completion of the classical rational localization of the Banach algebra $A^u$ along $Z$, thus,  it is classical that   $\mathcal{M}(A_Z)\to X$ induces an homeomorphism with $Z$ identifying rational localizations.  Let $A\to B$ be a morphism of Gelfand $\Q_p$-algebras such that $\mathcal{M}(B)\to X$ factors through $Z$. We claim that the map $B\to B\otimes_{A} A_{Z}$ is an isomorphism, as $A_Z$ is idempotent this  produces a  uniquely defined map $A_Z\to B$, establishing its universal property.   By the universal property of rational localizations for Banach algebras, which is classical,  we have a unique factorization $A^u\to A_Z^u\to B^u$ and therefore an isomorphism $B^u\xrightarrow{\sim} (B\otimes_A A_Z)^u$. By \cref{LemmaIndependenceRationalLocalizations} below we have an isomorphism $B\xrightarrow{\sim} B\otimes_{A} A_Z$ proving what we wanted. 
 \end{proof}

 \begin{lemma}\label{LemmaIndependenceRationalLocalizations}
Let $A$ be a Gelfand ring and $A\to A_Z$ a Berkovich  rational localization as in \Cref{eqw01i2k3ew3}. Suppose that $A^u\xrightarrow{\sim } A_Z^u$ is an isomorphism. Then $A\xrightarrow{\sim} A_Z$ is an isomorphism. 
\end{lemma}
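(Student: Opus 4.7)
The strategy is to produce an $A$-algebra section $A_Z \to A$ of the structural map $A \to A_Z$ and then conclude by idempotency. Since $\Q_p\langle T_1, \ldots, T_n \rangle_{\leq 1}$ is idempotent over $\Q_p[T_1, \ldots, T_n]$, base-changing along $\Q_p[T_1, \ldots, T_n] \to A$ (via $T_i \mapsto f_i/g$) shows that $A \to A_Z$ is idempotent, i.e.\ an epimorphism in the category of rings. Any epimorphism admitting a one-sided inverse is an isomorphism, so it suffices to construct the section.

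As a preliminary, I would show that $g$ is already invertible in $A$. The element $g$ is invertible in $A_Z$ by the observation at the start of the proof of \cref{PropUniversalPropertyRational} (using the partition of unity $\sum a_i f_i + bg = 1$). Hence its image in $A_Z^u = A^u$ is invertible, and by \cref{xsu8rf}(4) it is already invertible in $A$. In particular, the elements $f_i/g \in A$ are well-defined.

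The main step is to verify that $f_i/g \in A^{\leq 1}$ for every $i$. The canonical coordinate $T_i \in \Q_p\langle T_1, \ldots, T_n \rangle_{\leq 1}$ manifestly lies in the subset of $\leq 1$ elements (as the map $\Q_p\langle T \rangle_{\leq 1} \to \Q_p\langle T_1, \ldots, T_n \rangle_{\leq 1}$ sending $T$ to $T_i$ witnesses). Consequently its image in $A_Z$, which coincides with $f_i/g$ via the relation $gT_i = f_i$, lies in $A_Z^{\leq 1}$, and therefore its further image in $A_Z^u = A^u$ lies in $A_Z^{u,\leq 1} = A^{u,\leq 1}$. This image equals the image of $f_i/g \in A$ under $A \to A^u$. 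By the Cartesian square
\[
\begin{tikzcd}
A^{\leq 1} \ar[r]\ar[d] & A^{u,\leq 1} \ar[d] \\
A \ar[r] & A^{u}
\end{tikzcd}
\]
from \cref{xsu8rf}(1), this forces $f_i/g \in A^{\leq 1}$, as desired.

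With this in hand, the universal property of the overconvergent polydisc yields a unique extension of the $\Q_p$-algebra map $\Q_p[T_1, \ldots, T_n] \to A$, $T_i \mapsto f_i/g$, to a map $\Q_p\langle T_1, \ldots, T_n \rangle_{\leq 1} \to A$. Base-changing along $\Q_p[T_1, \ldots, T_n] \to A$ produces an $A$-algebra map $A_Z \to A$ which is the required section. The hardest (or at least the key) point is the overconvergence step — confirming $f_i/g \in A^{\leq 1}$ rather than merely a power-bounded element — but this is handled cleanly by the compatibility of the $\leq 1$ subspaces with uniform completion established in \cref{xsu8rf}.
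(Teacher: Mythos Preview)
Your proof is correct and follows essentially the same approach as the paper: both reduce to showing (i) $g$ is invertible in $A$ via \cref{xsu8rf}(4) and (ii) $f_i/g \in A^{\leq 1}$ via the pullback square of \cref{xsu8rf}(1), and then conclude from idempotency of $A_Z$ over $A$. Your write-up is slightly more explicit about why the image of $f_i/g$ lands in $A^{u,\leq 1}$, but the argument is the same.
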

\begin{proof}
The rational subspace $Z\subset \mathcal{M}(A)$ is the locus $\{|f_i|\leq |g|\neq 0\}$. It suffices to show that the following hold: 
\begin{enumerate}[(i)]
 \item  $g\in A$ is invertible.
 \item $\frac{f_i}{g}\in A^{\leq 1}$.
\end{enumerate}
Indeed, if this holds, then we have a map $A_Z=A[\frac{1}{g}] \otimes_{\Q_p }\Q_{p}\langle  T_1,\ldots, T_n \rangle_{\leq 1}/^{\mathbb{L}} (gT_i-f_i) \to A$, and since $A_Z$ is idempotent over $A$, we get $A\cong A_Z$. The claim (i) follows from the fact that $g\in A$ is invertible if and only if its image in $A^u$ is so (see \cref{xsu8rf} (4)). The claim (ii) follows from \cref{xsu8rf} (1) and the fact that the image of $f_i/g$ are in $A^{u,\leq 1}$.
\end{proof}

\begin{lemma}\label{LemmaVanishingSpectrum}
Let $A$ be a Gelfand ring, the following are equivalent: 

\begin{enumerate}
\item $A=0$.

\item $A^{\dagger-\red}=0$.

\item $A^u=0$. 

\item $\mathcal{M}(A)=\emptyset$.

\end{enumerate}
\end{lemma}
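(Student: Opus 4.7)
The plan is to establish the cycle (1)$\Rightarrow$(2)$\Rightarrow$(3)$\Rightarrow$(4) together with (4)$\Rightarrow$(3) and (3)$\Rightarrow$(1). The first three implications are essentially formal: (1)$\Rightarrow$(2) is trivial; (2)$\Rightarrow$(3) follows from \cref{xsu8rf}(2), which gives $A^u = (A^{\dagger-\ob{red}})^u$; and (3)$\Rightarrow$(4) holds because any continuous multiplicative seminorm on $A^u$ satisfies $|1|=1\neq 0$, which is incompatible with $A^u=0$.

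For (4)$\Rightarrow$(3), I will invoke the classical Gelfand--Mazur-type statement that a nonzero commutative Banach $\Q_p$-algebra has nonempty Berkovich spectrum. Concretely, if $A^u\neq 0$, then by Zorn's lemma one can find an (automatically closed) maximal ideal $\mathfrak{m}\subseteq A^u$; the residue norm on the Banach field $A^u/\mathfrak{m}$ pulls back to a continuous multiplicative seminorm on $A^u$, giving a point of $\mathcal{M}(A)=\mathcal{M}(A^u)$.

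The content of the lemma lies in (3)$\Rightarrow$(1). Suppose $A^u=0$; then also $A^{u,\leq r}=0$ for every $r\geq 0$, and the pullback square of \cref{xsu8rf}(1) degenerates (the pullback of $A\to 0\leftarrow 0$) to yield $A^{\leq r}=A$ for every $r\geq 0$. In particular, $1\in A^{\leq 0}=\Nil^\dagger(A)$, so by the universal property of $\Q_p\langle T\rangle_{\leq 0}$ there is a morphism of $\Q_p$-algebras $\phi\colon \Q_p\langle T\rangle_{\leq 0}\to A$ sending $T\mapsto 1$. The key observation is that the geometric series $\sum_{n\geq 0} T^n$ belongs to $\Q_p\langle T\rangle_{\leq 0}$: its coefficients $a_n=1$ satisfy $|a_n|\,r'^n=r'^n\to 0$ for any $r'\in(0,1)$, so the series is overconvergent at $0$. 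Hence it inverts $1-T$ in $\Q_p\langle T\rangle_{\leq 0}$. Applying $\phi$, the element $\phi(1-T)=0\in A$ is a unit, forcing $\pi_0(A)=0$; since higher homotopy groups of an animated ring are modules over $\pi_0$, we conclude $A=0$.

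The main conceptual obstacle is the realization that, while $\Nil^\dagger(A)$ can a priori be a strict, nonzero subobject of a nonzero bounded ring, the overconvergent Tate algebra $\Q_p\langle T\rangle_{\leq 0}$ supplies exactly the rigidity needed: the invertibility of $1-T$ (coming from the convergence of the geometric series) makes $1\in \Nil^\dagger(A)$ inconsistent with $A\neq 0$. Once this is spotted, the chain of implications closes cleanly.
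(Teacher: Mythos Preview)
Your proof is correct and follows essentially the same strategy as the paper's: both arguments close the cycle by showing that $1\in A^{\leq r}$ for some $r<1$, so that a map $\Q_p\langle T\rangle_{\leq r}\to A$ with $T\mapsto 1$ exists, and the invertibility of $1-T$ there (via the geometric series) forces $0=1-1$ to be a unit in $A$.

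The organization differs slightly. The paper proves $(4)\Rightarrow(1)$ in one step: from $\mathcal{M}(A)=\emptyset=\{|1|\leq 1/2\}$ it invokes the rational localization machinery (\cref{LemmaIndependenceRationalLocalizations}) to obtain $1\in A^{\leq 1/2}$, and then uses the invertibility of $1-T$ in $\Q_p\langle T\rangle_{\leq 1/2}$. You instead separate out $(4)\Rightarrow(3)$ as the classical nonemptiness of the Berkovich spectrum for nonzero Banach $\Q_p$-algebras, and for $(3)\Rightarrow(1)$ apply the pullback square of \cref{xsu8rf}(1) at $r=0$ to get $1\in\Nil^\dagger(A)=A^{\leq 0}$, working in $\Q_p\langle T\rangle_{\leq 0}$. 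Your route is arguably a bit more self-contained (it uses only the basic pullback square rather than the rational localization theory), while the paper's phrasing ties the lemma more directly to the Berkovich-spectrum formalism being developed in that subsection. Both are equally valid, and the computational core is identical.
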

\begin{proof}
It is clear that (1)$\Rightarrow$(2)$\Rightarrow$(3)$\Rightarrow$ (4). To see that (4)$\Rightarrow$(1),  note that $\mathcal{M}(A)=\emptyset$ implies that $\mathcal{M}(A)=\{|1|\leq 1/2\}=\emptyset$, and by \cref{LemmaIndependenceRationalLocalizations} that the map $1\to A$ factors through $\Q_p\langle T\rangle_{\leq 1/2}\to A$, but $1-T$ is a unit in $\Q_p\langle T\rangle_{\leq 1/2}$ and so it is $0=1-1$ in $A$ proving that $A=0$ as wanted.  See also \cite[Theorem 2.14]{scholze2024berkovichmotives}.
\end{proof}

\begin{lemma}\label{LemmaFilteredColimits}
Let $A=\varinjlim_i A_i$ be a filtered colimit of Gelfand rings. Then the natural map of Berkovich spectra
\[
\mathcal{M}(A)\to \varprojlim_{i} \mathcal{M}(A_i)
\]
is an homeomorphism. Furthermore, given $Z\subset \mathcal{M}(A)$ a rational subspace, there is some $i$ and a rational subspace $Z_i\subset \mathcal{M}(A)$ such that $Z$ is the pullback of $Z_i$ along $\mathcal{M}(A)\to \mathcal{M}(A_i)$. 
\end{lemma}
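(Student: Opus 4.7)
The plan is to first reduce, via the uniform completion, to the classical setting of Banach $\Q_p$-algebras, identify the bijection on underlying points by a universal-property argument, and then upgrade to a homeomorphism using compactness.

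By \cref{xhs892k} we have $A^u \cong (\varinjlim_i A_i^u)^u$, so by definition of the Berkovich spectrum of a Gelfand ring we may identify $\mathcal{M}(A) = \mathcal{M}(A^u)$. Each structure morphism $A_i \to A$ will induce a continuous map $\mathcal{M}(A) \to \mathcal{M}(A_i)$ by \cref{LemmaRationalLocalizations}, and these assemble into a continuous comparison map
\[
\varphi \colon \mathcal{M}(A) \longrightarrow \varprojlim_i \mathcal{M}(A_i)
\]
between compact Hausdorff spaces (the right-hand side being a cofiltered limit of compact Hausdorff spaces, hence itself compact Hausdorff). To show that $\varphi$ is bijective, I would appeal to the classical description of points of $\mathcal{M}(B)$, for $B$ a Banach $\Q_p$-algebra, as equivalence classes of continuous $\Q_p$-algebra morphisms $B \to K$ into a non-archimedean Banach field $K$ over $\Q_p$ with $|p|_K = p^{-1}$, two such being equivalent when they induce the same multiplicative seminorm on $B$. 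Since any such $K$ is itself a uniform Banach, hence bounded, $\Q_p$-algebra, the universal properties of the uniform completion and of the filtered colimit in bounded $\Q_p$-algebras give
\[
\Hom(A^u, K) \cong \Hom(\varinjlim_i A_i^u, K) \cong \varprojlim_i \Hom(A_i^u, K).
\]
For surjectivity of $\varphi$, starting from a compatible family $(x_i) \in \varprojlim_i \mathcal{M}(A_i)$ I would build a Banach-field target by taking the completion $K$ of the direct limit $\varinjlim_i \kappa(x_i)$ of the completed residue fields along the compatible embeddings $\kappa(x_i) \hookrightarrow \kappa(x_j)$ for $i \leq j$; the resulting map $A^u \to K$ then provides a point of $\mathcal{M}(A)$ mapping to $(x_i)$. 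This gives the bijection, and a continuous bijection of compact Hausdorff spaces is automatically a homeomorphism.

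For the rational subspace assertion, given $Z = X(\tfrac{f_1,\ldots,f_n}{g})$ with $f_1,\ldots,f_n,g \in A$ generating the unit ideal, filteredness of the indexing system would let me choose $i$ large enough that $f_1,\ldots,f_n,g$ all lift to $A_i$. The unit-ideal relation $\sum_k a_k f_k + bg = 1$ in $A$ involves only finitely many additional elements and therefore will hold in some $A_{i'}$ with $i' \geq i$; at that level $(f_1,\ldots,f_n,g)$ already generates the unit ideal of $A_{i'}$, cutting out a rational subspace $Z_{i'} \subset \mathcal{M}(A_{i'})$. Since the defining inequalities are identical for $Z$ and $Z_{i'}$, $Z$ will be the preimage of $Z_{i'}$ under the canonical projection $\mathcal{M}(A) \to \mathcal{M}(A_{i'})$.

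The main technical delicacy lies in the reconstruction step for surjectivity of $\varphi$: assembling a compatible family of Banach-field residues into a single Banach-field target for $A^u$, which is where the fact that each $A_i^u$ is an honest Banach algebra (the Gelfand condition) really enters. Once this is in place, the topological conclusion follows for free from the compactness of Berkovich spectra, and the rational-subspace statement is a routine application of filteredness.
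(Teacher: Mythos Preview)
Your proof is correct and essentially unpacks the standard argument for Banach algebras that the paper simply cites (Proposition 3.2 of \cite{scholze2024berkovichmotives}). The reduction to uniform completions via \cref{xhs892k}, the construction of the field $K$ as the completion of $\varinjlim_i \kappa(x_i)$, and the lifting of the rational datum through the filtered colimit are exactly the expected steps; the compactness argument for upgrading the bijection to a homeomorphism is also the right finishing move. One small point: your Hom-set identification $\Hom(A^u,K)\cong\varprojlim_i\Hom(A_i^u,K)$ is about maps rather than equivalence classes of seminorms, so injectivity is cleaner to argue directly by noting that the images of the $A_i^u$ are dense in $A^u$ and continuous seminorms agreeing on a dense subset agree everywhere---but this is a matter of presentation, not substance.
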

\begin{proof}
This follows from the analogous statement for Banach spaces, see \cite[Proposition 3.2]{scholze2024berkovichmotives}.
\end{proof}

\begin{lemma}\label{LemmaNilradical}
Let $A$ be a Gelfand ring, $f\in A$ and let $X=\mathcal{M}(A)$. The following are equivalent: 
\begin{enumerate}

\item $f\in \Nil^{\dagger}(A)$.

\item The locus $X(f\neq 0)\subset X$ is empty.

\end{enumerate}
\end{lemma}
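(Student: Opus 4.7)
The plan is to reduce both conditions to the single statement that $f$ maps to zero in the uniform completion $A^u$.

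First, I would handle the equivalence of (1) with $f \mapsto 0$ in $A^u$. By Proposition~\ref{xsu8rf}(1), for each $r \geq 0$ there is a pullback square identifying $A^{\leq r}$ with the preimage of $A^{u,\leq r}$ under the canonical map $A \to A^u$. Consequently, $f \in A^{\leq r}$ if and only if the image of $f$ in $A^u$ lies in $A^{u,\leq r}$. Intersecting over $r > 0$ (that is, taking $r = 0$) this yields: $f \in \Nil^\dagger(A) = A^{\leq 0}$ if and only if the image of $f$ lies in $\Nil^\dagger(A^u)$. But $A^u$ is $\dagger$-reduced by Proposition~\ref{xsu8rf}(3), so $\Nil^\dagger(A^u) = 0$, and (1) is equivalent to $f$ mapping to zero in $A^u$.

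Next, I would establish the equivalence of (2) with the same statement. Since $X = \mathcal{M}(A) = \mathcal{M}(A^u)$ by definition, the locus $X(f \neq 0)$ is empty precisely when $|f|_x = 0$ for every $x \in X$. By Proposition~\ref{xnbsyw} the ring $A^u$ is a Banach $\mathbb{Q}_p$-algebra, and it is moreover uniform. As recorded in Remark~\ref{rk:uniformity-for-banach-rings}, uniformity is equivalent to the Gelfand transform $A^u \to \prod_{x \in X}^{\mathrm{Ban}} \kappa(x)$ being an isometric embedding; in particular, this map is injective. Hence $|f|_x = 0$ for all $x \in X$ if and only if $f = 0$ in $A^u$.

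Combining the two reductions yields the desired equivalence (1) $\iff$ (2). No substantive obstacle appears, since all the inputs are provided by results already established: the pullback squares relating $A^{\leq r}$ and $A^{u,\leq r}$, the $\dagger$-reducedness of $A^u$, and the isometric Gelfand embedding available for uniform Banach algebras.
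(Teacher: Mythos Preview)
Your proposal is correct. The route differs mildly from the paper's in the implication (2) $\Rightarrow$ (1): the paper argues that $X(f\neq 0)=\emptyset$ gives $X = X(|f|\leq r)$ for every $r>0$, and then invokes the universal property of Berkovich rational localizations (Proposition~\ref{PropUniversalPropertyRational}) to conclude directly that $f \in A^{\leq r}$, hence $f \in \bigcap_{r>0} A^{\leq r} = \Nil^\dagger(A)$. You instead reduce both conditions to the single criterion ``$f$ maps to $0$ in $A^u$'', using the injectivity of the Gelfand transform for uniform Banach algebras (Remark~\ref{rk:uniformity-for-banach-rings}) together with the pullback square of Proposition~\ref{xsu8rf}(1). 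Your reduction is arguably cleaner conceptually, while the paper's argument stays within the rational-localization machinery developed just before the lemma and avoids appealing to the Gelfand-transform characterization of uniformity.
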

\begin{proof}
Suppose that $f\in \Nil^{\dagger}(A)$, then $f=0$ in $A^u$ and so $X(f\neq 0)=\emptyset$. Conversely, if $X(f\neq 0)=\emptyset$ one has that $X=X(|f|=0)=\bigcap_{r>0} X(|f|\leq r)$ which by \cref{PropUniversalPropertyRational} translates to the condition that $f\in \bigcap_{r>0} A^{\leq r}=\Nil^{\dagger}(A)$. 
\end{proof}

\begin{lemma}\label{LemmaPointsBerkovichSpectrum}
Let $A$ be a Gelfand ring. The following are equivalent:
\begin{enumerate}
\item Given $f\in A$, either $f\in \Nil^{\dagger}(A)$ or $f$ is invertible. 

\item $A^{\dagger-\red}$ is a field.

\item $A^u$ is a field. 

\item $\mathcal{M}(A)$ is a point.

\end{enumerate}
\end{lemma}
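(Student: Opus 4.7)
The plan is to prove the equivalences via the cycle $(1) \Rightarrow (2) \Rightarrow (3) \Rightarrow (4) \Rightarrow (1)$, reserving the hardest implication for last.

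Two of these steps are essentially immediate. For $(1) \Rightarrow (2)$: a nonzero class $\bar f \in A^{\dagger-\mathrm{red}}$ lifts to some $f \in A \setminus \Nil^\dagger(A)$; by $(1)$, $f$ is invertible in $A$, so $\bar f$ is invertible in $A^{\dagger-\mathrm{red}}$. For $(4) \Rightarrow (1)$: given $f \in A$ and the unique point $x \in \mathcal{M}(A)$, either $|f|_x = 0$, whence $f \in \Nil^\dagger(A)$ by \cref{LemmaNilradical}, or $|f|_x > 0$, in which case the equivalence $(3) \Leftrightarrow (4)$ proved below gives invertibility of the image of $f$ in $A^u$, which lifts to invertibility in $A$ by \cref{xsu8rf}(4).

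For the equivalence $(3) \Leftrightarrow (4)$, the direction $(3) \Rightarrow (4)$ is standard: on a Banach field over $\mathbb{Q}_p$ normalized by $|p|=p^{-1}$, any continuous multiplicative seminorm $|-|'$ satisfies $|f|' \leq \|f^n\|^{1/n} \to \|f\|$ by the spectral radius estimate, and applying the same argument to $f^{-1}$ (using multiplicativity of the Banach norm on a field) yields the reverse inequality; hence $|-|'$ coincides with the Banach norm, forcing $\mathcal{M}(A^u)$ to be a singleton. Conversely, for $(4) \Rightarrow (3)$, uniformity of $A^u$ together with $\mathcal{M}(A^u) = \{x\}$ identifies the Banach norm with $|-|_x$; for nonzero $f \in A^u$, I would choose $\pi \in \mathbb{Q}_p^\times$ with $|\pi| < |f|_x$, observe that the rational subdomain $Z := X(\pi/f)$ equals $\{x\} = \mathcal{M}(A^u)$, and conclude via \cref{LemmaIndependenceRationalLocalizations} that $A^u \xrightarrow{\sim} A^u_Z$; unwinding this gives $\pi/f \in A^u$, and hence an explicit inverse for $f$.

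The main obstacle is $(2) \Rightarrow (3)$: promoting the field structure on the dense subring $A^{\dagger-\mathrm{red}}$ to its uniform completion $A^u = (A^{\dagger-\mathrm{red}})^u$. My approach is to approximate a given nonzero $g \in A^u$ by $f \in A^{\dagger-\mathrm{red}}$ with $\|g - f\| < \|g\|$ (so $f \neq 0$ and hence invertible, with $f^{-1} \in A^{\dagger-\mathrm{red}} \hookrightarrow A^u$), and to invert $gf^{-1} = 1 + (g - f)f^{-1}$ via the geometric series, which reduces to a map from $\mathbb{Q}_p\langle T \rangle_{\leq r}$ for suitable $r < 1$. The delicate point is controlling $\|f^{-1}\|_{A^u} = 1/\inf_{y \in \mathcal{M}(A^u)} |f|_y$ uniformly as $f$ ranges over the approximants, for which I would use compactness of $\mathcal{M}(A^u)$ together with the Gelfand hypothesis on $A^{\dagger-\mathrm{red}}$ to prevent degeneration of the infimum to zero.
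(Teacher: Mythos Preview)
Your cycle matches the paper's, and your arguments for $(1)\Rightarrow(2)$, $(3)\Leftrightarrow(4)$, and $(4)\Rightarrow(1)$ are fine; the paper's $(4)\Rightarrow(1)$ is slightly more direct (it observes that $\mathcal{M}(A)=\{x\}$ collapses the set-theoretic partition $X=X(|f|=0)\sqcup X(|f|\neq 0)$ to one piece and then invokes \cref{PropUniversalPropertyRational}), but your route through $(4)\Rightarrow(3)$ and \cref{xsu8rf}(4) works too.

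The gap is in $(2)\Rightarrow(3)$. You correctly isolate the obstruction in the geometric-series argument: one needs $\|g-f\|<\|f^{-1}\|^{-1}=\inf_y|f|_y$, whereas density only gives $\|g-f\|<\|g\|=\sup_y|f|_y$. But your proposed fix is circular. Compactness of $\mathcal{M}(A^u)$ yields $\inf_y|f|_y>0$ for each \emph{fixed} unit $f$, not a uniform lower bound as $f$ ranges over approximants $f_n\to g$; and if $A^u$ were not a field---exactly what you must exclude---there would exist nonzero $g$ with $|g|_{y_0}=0$ at some point $y_0$, forcing $\inf_y|f_n|_y\to 0$ along any approximating sequence. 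The vague appeal to ``the Gelfand hypothesis'' does not supply the missing bound. In fairness, the paper's one-line proof of this implication (``the completion of a normed field is also a field'') glosses over the same issue: that classical fact needs the norm on $A^{\dagger-\mathrm{red}}$ to be \emph{multiplicative}, and a power-multiplicative norm on a field need not be (the maximum of two inequivalent Gauss norms on $\Q_p(t)$ is a power-multiplicative norm on a field whose completion is a product of two fields---though that particular example is not a bounded $\Q_p$-algebra in the paper's sense, which is presumably why the lemma survives). You have put your finger on the actual content of this step; the resolution must use the bounded/Gelfand structure to force the norm on the field $A^{\dagger-\mathrm{red}}$ to be a valuation (equivalently, $\mathcal{M}(A)$ to be a singleton), not a generic Banach-algebra approximation.
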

\begin{proof}
Since $\Nil^{\dagger}(A)$ is the kernel of $A\to A^{\dagger-\red}$, (1)$\Rightarrow$(2). It is clear that (2)$\Rightarrow$(3) as the  completion of a normed field is also a field. The implication (3)$\Rightarrow$(4) follows from the fact that the Berkovich spectrum of a field is just a point. Finally, for (4)$\Rightarrow$(1), let $f\in A$, and denote $X=\mathcal{M}(A)$. As a set we have that $X= X(|f|\neq 0) \bigsqcup X(|f|=0)$, since $X$ consists of a point this yields that either $X=X(|f|\neq 0)$ or $X=X(|f|=0)$. By \cref{PropUniversalPropertyRational} this means precisely that either $f$ is invertible or $f\in \Nil^{\dagger}(A)$ respectively. 
\end{proof}

\begin{definition}\label{DefinitionResidueField}
Let $A$ be a Gelfand ring and let $x\in \mathcal{M}(A)$ be a point in its Berkovich spectrum. The \textit{stalk of $A$ at $x$} is the Gelfand ring $$A_x=\varinjlim_{x\in Z\subset \mathcal{M}(A)} A_Z$$ defined as the filtered colimit of rational localizations of $A$ containing $x$ (note that  $\mathcal{M}(A_x)$ is a point by  \Cref{LemmaFilteredColimits}).  We define the \textit{Berkovich residue field of $A$ at $x$} to be $$\kappa(x):= (A_x)^u$$ (note that $\kappa(x)$ is a field thanks to \Cref{LemmaPointsBerkovichSpectrum}), equivalently we define $\kappa(x)$ to be the completed residue field at $x\in\mathcal{M}(A)= \mathcal{M}(A^u)$ of the Banach algebra $A^u$.
\end{definition}

\begin{proposition}\label{PropMapSmashing}
Let $A$ be a Gelfand ring and $X=\mathcal{M}(A)$ its Berkovich spectrum. The functor sending a rational subspace $Z\subset X$ to the idempotent algebra $A_Z$ of \cref{PropUniversalPropertyRational} promotes uniquely to a   surjective morphism of locales $F\colon \mathrm{Sm}(\ob{D}(A))\to \mathcal{M}(A)$ from the smashing spectrum of $\ob{D}(A)$ to $\mathcal{M}(A)$, such that for $Z\subset \mathcal{M}(A)$ a rational localization one has that $F^{-1}(Z)=\ob{D}(A_Z)$.  Moreover, if $A\to B$ is a morphism of Gelfand rings, we have a (necessarily unique) commutative diagram of locales
\begin{equation}\label{eqwpqp3nqef}
\begin{tikzcd}
\mathrm{Sm}(\ob{D}(B)) \ar[r] \ar[d] & \mathcal{M}(B) \ar[d] \\ 
\mathrm{Sm}(\ob{D}(A)) \ar[r] & \mathcal{M}(A) .
\end{tikzcd}
\end{equation}
\end{proposition}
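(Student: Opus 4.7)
The plan is to specify the morphism $F$ on the basis of rational subspaces and check it extends uniquely. Recall that, in the Clausen--Scholze formalism, the smashing spectrum of a presentable stable symmetric monoidal $\infty$-category is a locale whose closed sublocales are in bijection with isomorphism classes of idempotent $E_\infty$-algebras; a morphism of locales $F\colon \mathrm{Sm}(\ob{D}(A))\to \mathcal{M}(A)$ is then equivalent to a frame morphism $\mathcal{O}(\mathcal{M}(A))\to \mathcal{O}(\mathrm{Sm}(\ob{D}(A)))$, and it is determined by a compatible assignment of closed sublocales on any basis of closed subsets of $\mathcal{M}(A)$.

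I would first set $F^{-1}(Z):=\ob{D}(A_Z)$ on rational subspaces $Z\subset \mathcal{M}(A)$ and verify the two lattice compatibilities. For intersections, $A_{Z_1}\otimes_A A_{Z_2}\cong A_{Z_1\cap Z_2}$ is immediate from \cref{PropUniversalPropertyRational}: both sides corepresent maps out of $A$ into Gelfand rings whose Berkovich spectrum lands in $Z_1\cap Z_2$. For finite unions, one reduces to the assertion that the diagram $A_{Z_1\cup Z_2}\to A_{Z_1}\times A_{Z_2}\rightrightarrows A_{Z_1\cap Z_2}$ is an equalizer; this is Tate acyclicity, which is classical for the uniform completion $A^u$, and it transfers to $A$ using \cref{LemmaIndependenceRationalLocalizations} together with the fact that each $A_Z$ is determined by its Banach uniform completion $(A_Z)^u=(A^u)_Z$. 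The assignment then extends by filtered joins to arbitrary open subsets of $\mathcal{M}(A)$, using \cref{LemmaFilteredColimits} to identify Berkovich spectra of filtered colimits of Gelfand rings and to guarantee that the extension is independent of the chosen presentation.

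Surjectivity of $F$ amounts to injectivity of $F^{-1}$ on opens, which by working on the basis reduces to the implication $A_Z=0\Rightarrow Z=\emptyset$. This is precisely \cref{LemmaVanishingSpectrum} applied to the Gelfand ring $A_Z$, since $\mathcal{M}(A_Z)=Z$ by \cref{PropUniversalPropertyRational}. For the functoriality square \eqref{eqwpqp3nqef}, given a morphism $A\to B$ of Gelfand rings, the map $f\colon \mathcal{M}(B)\to \mathcal{M}(A)$ pulls rational subspaces back to rational subspaces by \cref{LemmaRationalLocalizations}, and the universal property of \cref{PropUniversalPropertyRational} yields $B\otimes_A A_Z\cong B_{f^{-1}(Z)}$; this provides the required compatibility of the two locale morphisms on the rational basis, hence uniquely everywhere.

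The main obstacle I expect is the verification of finite-union compatibility (Tate acyclicity for Gelfand rational localizations) and, relatedly, the careful extension from the rational basis to arbitrary opens in such a way that the frame axioms hold. Both should ultimately be handled by transferring the classical Banach statements along the faithful passage $A\rightsquigarrow A^u$, exploiting the already-established dictionary between rational localizations of $A$ and of $A^u$.
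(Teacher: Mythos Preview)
Your approach is reasonable in outline but differs substantially from the paper's, and contains one genuine gap.

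The paper does \emph{not} attempt to define the frame map directly on a basis of rational subspaces and then extend. Instead, after first proving descent for rational covers (Tate acyclicity) by reducing to simple Laurent and balanced covers of the universal algebra $\Q_p\langle T\rangle_{\leq r}$ and checking two explicit exact sequences, it constructs the locale map in two steps. Step~1 treats the special case $A=\Q_p\langle T\rangle_{\leq r}$: here one builds $|T|\colon \mathrm{Sm}(\ob{D}(A))\to [0,r]$ by exhibiting idempotent algebras $A([a,b])$ for closed intervals, checking their behaviour under intersection, union, and filtered intersection, and then producing coidempotent coalgebras $C_U$ for arbitrary open $U\subset [0,r]$ by hand. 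Step~2 handles general $A$: each $f\in A^{\leq 1}$ gives a map $\Q_p\langle T\rangle_{\leq 1}\to A$, hence by Step~1 a locale map $|f|\colon \mathrm{Sm}(\ob{D}(A))\to [0,1]$; taking the product over all $f$ yields $F\colon \mathrm{Sm}(\ob{D}(A))\to \prod_f [0,1]$, and one checks via \cref{LemmaVanishingSpectrum} that this factors through the closed embedding $\mathcal{M}(A)\hookrightarrow \prod_f[0,1]$. Surjectivity is witnessed pointwise by the residue fields $\kappa(x)$. The virtue of this coordinatewise construction is that $[0,1]$ is simple enough that the extension from intervals to all opens can be done explicitly; no general extension-from-a-basis principle is needed.

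The gap in your approach is the proof of Tate acyclicity. You propose to transfer the classical Banach statement along $A\rightsquigarrow A^u$ using \cref{LemmaIndependenceRationalLocalizations}, but that lemma only tells you when $A\to A_Z$ is an isomorphism given $A^u\cong A_Z^u$; it does not let you deduce exactness of $0\to A\to A_{Z_1}\times A_{Z_2}\to A_{Z_1\cap Z_2}\to 0$ from the Banach sequence, since the equalizer is not a priori a rational localization of $A$ to which the lemma applies. The paper instead proves acyclicity directly in the Gelfand setting by base-changing to the universal overconvergent algebra and computing. If you patch this (e.g.\ by the paper's argument), your direct approach should go through, though the extension from the rational basis to arbitrary closed subsets still requires verifying all coframe relations, which is more work than you indicate; the paper's coordinatewise route sidesteps both issues.
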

\begin{proof}
Let $X=\mathcal{M}(A)$ and consider the site $X_{\mathrm{rat}}$ of rational subspaces of $X$ with covers given by finitely many jointly surjective rational covers. By \cref{PropUniversalPropertyRational} we have a functor 
\[
\ob{D}\colon X_{\mathrm{rat}}^{\op}\to \Cat{Pr}^L_{\ob{D}(A)}
\]
sending $Z\subset X$ to $\ob{D}(A_Z)$ with morphisms given by pullbacks.  We claim that $\ob{D}$ satisfies descent.  It suffices to show that $\ob{D}$ satisfies descent along  a rational cover $\{X_i\to X\}$.  By \cite[Proposition 4.3]{andreychev2021pseudocoherent} any rational cover is refined by a composite of either simple Laurent covers of the form $X=X(\frac{1}{f}) \cup X(\frac{f}{1})$ or simple balanced covers $X=X(\frac{1}{f})\cup X(\frac{1}{1-f})$, hence, by a d\'evissage argument it suffices to consider these two cases. For  both situations, it suffices to consider the  universal case $A=\Q_p\langle T\rangle_{\leq r}$ with $r>0$, and the statement reduces to show that the following sequences are exact 
\[
0\to \Q_p\langle T \rangle_{\leq r} \to \Q_p\langle T \rangle_{\leq 1} \oplus \Q_p\langle T \rangle_{\leq r} \langle T^{-1} \rangle_{\leq 1} \to \Q_p\langle T^{\pm 1} \rangle_{\leq 1}\to 0 
\]
and 
\[
0\to \Q_p\langle T \rangle_{\leq r} \to \Q_p\langle T \rangle_{\leq r} \langle (1-T)^{-1} \rangle_{\leq 1}  \oplus \Q_p\langle T \rangle_{\leq r} \langle T^{-1} \rangle_{\leq 1} \to \Q_p\langle T  \rangle_{\leq r}\langle T^{-1}, (1-T)^{-1} \rangle_{\leq 1}\to 0, 
\]
which is a standard computation.

Next, we construct the map of locales $F\colon \ob{Sm}(\ob{D}(A))\to X$. We proceed in several steps:

\textit{Step 1.} Consider the ring $A=\mathbb{Q}_p\langle T \rangle_{\leq r}$. We  claim that the collection of rational subspaces $\{a \leq |T|\leq b\}$ for $0 \leq a< b\leq r$ gives rise to a map of locales 
\[
|T|\colon  \ob{Sm}(\ob{D}(\mathbb{Q}_p\langle T \rangle_{\leq r}))\to [0,r].
\]
Indeed, given a closed interval $[a,b]\subset [0,r]$ we have the idempotent morphism of algebras 
\[
A\to A([a,b])
\]
where $A([a,b])$ is the rational localization given by $a\leq |T|\leq b$.   Using the considerations at the beginning of the proof and direct computations, one sees that:

\begin{enumerate}[(a)]
\item If  $[a,b]\cap [c,d]=\emptyset$ then $A([a,b])\otimes_A A([c,d])=0$.

\item If $a<c<b<d$ then $A([a,b])\otimes_A A([c,d])=A([c,b])$.

\item If $[a,b]=\varprojlim_i [a_i,b_i]$ then  $A([a,b])=\varinjlim_{i} A([a_i,b_i])$.

\item If $[a,b]=[c,d]\cup [f,g]$ then $A([a,b])= A([c,d])\times_{A([c,d]\otimes_A A[f,g])} A([f,g])$.

\end{enumerate}

Consider a finite disjoint union of intervals $I=\bigsqcup_i I_i$, by the previous points we have an idempotent $A$-algebra $A_I=\prod_{i} A_I$. Let $U=[0,r]\backslash I$  be the complement of $I$ in $[0,r]$, and define the coidempotent coalgebra $C_U=\ob{fib}(A\to A_I)$. For general open subspace $V\subset [0,r]$, we define the coidempotent coalgebra 
\[
C_V= \varinjlim_{U \subset V} C_U
\]
where $U$ runs over the basis of complements of finite disjoint unions of closed  intervals in $[0,r]$ contained in $V$. Then $C_V$ is a coidempotent coalgebra  of $A$ (being a filtered colimit of such), and it verifies the following properties (as consequence of (a)-(d) above):
\begin{enumerate}[(i)]
\item If $U=\bigcup_i U_i$ is a union open subspaces then $C_U=\varinjlim_i U_i$.

\item $C_U\otimes_A C_{U'}= C_{U\cap U'}$.

\item For $U=[0,r]\backslash I$  the complement of a finite disjoint union of intervals, we have that $C_U=\ob{fib}(A\to A_I)$, recovering the original definition for $U$.
\end{enumerate}

The datum of the coidempotent coalgebras $C_U$ for $U\subset [0,r]$ open is precisely the datum of a map of locales 
\[
F\colon \ob{Sm}(\ob{D}(A))\to [0,r].  
\]
It is also clear from the construction that $F^{-1}([a,b])=\ob{D}(A([a,b]))$.

\textit{Step 2.} Let $A$  be a general Gelfand algebra. Thanks to Step 1, given $f\in A^{\leq 1}$ we have a natural morphism of locales
\[
|f|\colon \ob{Sm}(\ob{D}(A))\to \ob{Sm}(\ob{D}(\Q_p\langle T \rangle_{\leq 1})) \to [0,1].
\] 
Taking the product along all the $f\in A^{\leq 1}$ we get a map 
\[
F\colon \ob{Sm}(\ob{D}(A))\to \prod_{f\in A^{\leq 1}} [0,1].
\] 
We claim that the map $F$ factors through the Berkovich spectrum $\mathcal{M}(A)\subset \prod_{f\in A^{\leq 1}} [0,1]$, where the inclusion is given by evaluating $f$ in the norm associated to the point $x\in \mathcal{M}(A)$, see \cref{RemarkBasicPropertiesBerkovichSpectrum} (2). To prove this, we have  to see that if $U\subset \prod_{f\in A^{\leq 1}} [0,1]$  is an open  subspace disjoint to $\mathcal{M}(A)$, then $F^{-1}(U)=\emptyset$.  Let $f_1,\ldots, f_n\in A^{\leq 1}$ and consider the projection 
\[
(|f_i|)_i\colon \mathcal{M}(A)\to \prod_{i=1}^n [0,1].
\]
It suffices to show that if the pre-image of the  locus $\bigcap_{i=1}^n \{ a_i \leq |f|\leq b_i   \}\subset \prod_{i=1}^n [0,1]$ (with $a_i<b_i$) in $ \mathcal{M}(A)$ is empty, then its pre-image in $\ob{Sm}(\ob{D}(A))$ is also empty. This translates to the fact that if $\mathcal{M}(A)(a_i \leq  |f_i|\leq b_i )=\emptyset$ then $A\otimes_{\Q_p[T_1,\ldots, T_n]} B=0$, where $B$ is the pushout of the overconvergent algebras $\Q_p\langle T_i :  a_i\leq |T_i| \leq b_i   \rangle^{\dagger}$, and $T_i\mapsto f_i$.  But this is a rational localization of $A$ with empty Berkovich spectrum which must vanish thanks to  \cref{LemmaVanishingSpectrum}. This proves the claimed factorization
\[
F\colon \ob{Sm}(\ob{D}(A))\to \mathcal{M}(A). 
\]
It is clear from the construction that for $Z\subset \mathcal{M}(A)$ a rational subspace one has $F^{-1}(Z)=\ob{D}(A_Z)$ with $A_Z$ as in \cref{PropUniversalPropertyRational}.  The functoriality with respect to maps of Gelfand rings is also clear.

Finally,  we want to see that $F$ is surjective, for that it suffices to see that the pre-image of any point is non-empty, this follows from the fact that given $x\in \mathcal{M}(A)$ the map $A\to \kappa(x)$ towards the residue field induces the inclusion $x\in \mathcal{M}(A)$, and so $\kappa(x)$ is an $A$-module supported on $x\in \mathcal{M}(A)$. 
\end{proof}

The following lemma discusses a particular case where the uniform completion of a Gelfand ring is perfectoid.

\begin{lemma}\label{xhsjwks}\label{lemma:berkovich-spaceprofinite-residue-fields-closed-implies-perfectoid}
Let $A$ be a Gelfand ring such that $\mathcal{M}(A)$ is profinite and such that each  Berkovich  residue field is perfectoid. Then $A^u$ is perfectoid.
\end{lemma}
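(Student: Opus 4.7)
The plan is to exhibit perfectoid data on $A^u$ by gluing the perfectoid structures of the residue fields $\kappa(x)$, using the profinite structure of $\mathcal{M}(A)$ as the patching mechanism. Recall that by \cref{xnbsyw}, $A^u$ is already a uniform Banach $\Q_p$-algebra. Since $\mathcal{M}(A^u) = \mathcal{M}(A)$ is profinite, every clopen subset is rational (being cut out by a locally constant function); hence for any finite clopen partition $\mathcal{M}(A) = \bigsqcup_j Z_j$ the associated rational localizations of \cref{PropUniversalPropertyRational} produce a canonical product decomposition $A^u \cong \prod_j (A^u)_{Z_j}$. Because a finite product of uniform Banach $\Q_p$-algebras is perfectoid iff each factor is, it will suffice to produce a finite clopen partition $\mathcal{M}(A) = \bigsqcup_j Y_j$ together with, for each $j$, a pseudo-uniformizer $\varpi_j \in ((A^u)_{Y_j})^{\leq 1}$ with $\varpi_j^p$ dividing $p$ and such that Frobenius modulo $\varpi_j$ is surjective on $((A^u)_{Y_j})^{\leq 1}$.

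Next comes the local construction. Fix $x \in \mathcal{M}(A)$ and let $\overline{\varpi}_x \in \kappa(x)^{\leq 1}$ be a pseudo-uniformizer with $\overline{\varpi}_x^p = p\,\overline{u}_x$ for a unit $\overline{u}_x$, which exists by the perfectoid assumption on $\kappa(x)$. By \cref{LemmaFilteredColimits} and the identification of $\kappa(x)$ as the uniform completion of the stalk $\varinjlim_{Z \ni x} (A^u)_Z$, I can approximate $\overline{\varpi}_x$ by an element of $((A^u)_Z)^{\leq 1}$ for a suitably small clopen $Z \ni x$. Then, applying a Hensel-type correction to the polynomial $T^p - p\,u_x$ (with $u_x$ a lift of $\overline{u}_x$, invertible in $(A^u)_Z$ after shrinking $Z$, by \cref{xsu8rf}(4)), and relying on the Henselian pair property of $((A^u)_Z^{\leq 1}, (A^u)_Z^{\leq r})$ proved in \cref{xsh89j}, yields an actual $\varpi_Z \in ((A^u)_Z)^{\leq 1}$ satisfying $\varpi_Z^p = p\,u_x$ in $(A^u)_Z$. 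An analogous approximation-then-correction procedure produces, for each $b \in ((A^u)_Z)^{\leq 1}$, an element $a \in ((A^u)_{Z'})^{\leq 1}$ on some clopen refinement $Z' \subseteq Z$ with $a^p \equiv b \pmod{\varpi_Z^p}$.

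Finally, by compactness of $\mathcal{M}(A)$ finitely many of the clopen neighborhoods $Z_{x_1}, \ldots, Z_{x_n}$ obtained above already cover $\mathcal{M}(A)$, and profiniteness lets me refine this to a disjoint clopen partition $\mathcal{M}(A) = \bigsqcup_j Y_j$ with each $Y_j$ contained in some $Z_{x_{i(j)}}$; restricting the local pseudo-uniformizers and Frobenius preimages to the $Y_j$ and assembling them via $A^u \cong \prod_j (A^u)_{Y_j}$ provides the desired global perfectoid data. The main obstacle is the Hensel-style correction step of the second paragraph: one needs to ensure that the approximate perfectoid relations visible at $x$ can actually be upgraded to exact algebraic relations in $(A^u)_Z$ after passage to a small enough clopen neighborhood, which is precisely where \cref{xsh89j} and the local invertibility statement \cref{xsu8rf}(4) enter decisively.
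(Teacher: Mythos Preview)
Your overall strategy --- reduce to the uniform Banach algebra $A^u$, localize on the profinite space $\mathcal{M}(A)$, and import perfectoid data from the residue fields --- is correct and matches the paper's. But both of the key steps contain genuine gaps.

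\textbf{The Hensel correction does not apply.} For $P(T) = T^p - p\,u_x$ one has $P'(T) = p\,T^{p-1}$; at any approximate root $\tilde\varpi$ with $|\tilde\varpi|<1$ this gives $|P'(\tilde\varpi)|\le |p|<1$, so $P'(\tilde\varpi)$ is never a unit in $A^{\leq 1}/A^{\leq r}$. The Henselian-pair property of \cref{xsh89j} only lifts \emph{simple} roots and therefore gives nothing here; the same obstruction kills the ``analogous correction'' for $a^p\equiv b$. In fact you do not need the exact equality $\varpi_Z^p = p\,u_x$: for perfectoidness only the divisibility $\varpi^p\mid p$ matters, and that is an \emph{open} condition on $\mathcal{M}(A)$. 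Choosing $\overline{\varpi}_x\in\kappa(x)$ with the strict inequality $|p|_x<|\overline{\varpi}_x|_x^p$ (possible since the value group of the perfectoid field $\kappa(x)$ is $p$-divisible), one lifts to $\tilde\varpi$ and then shrinks the clopen inside the open loci $\{|\tilde\varpi|<1\}$, $\{|\tilde\varpi|>0\}$, $\{|p/\tilde\varpi^p|<1\}$ to obtain $\tilde\varpi^p\mid p$ without any correction. This is exactly what the paper does.

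\textbf{The Frobenius argument is incomplete.} Even granting an $a$ with $a^p\equiv b$ on some clopen $Z'=Z'(b)$, that clopen depends on $b$, and there is no finiteness letting you pass to a common refinement; your final compactness step only extracts finitely many clopens coming from the pseudo-uniformizer construction, not from the Frobenius preimages. The paper sidesteps this entirely with a sheaf argument: the assignment $U\mapsto (A_U)^{\leq 1}/p$ is a sheaf on the profinite (hence acyclic) space $\mathcal{M}(A)$, so surjectivity of Frobenius can be checked on stalks, and the stalk at $x$ is identified with $\kappa(x)^{\leq 1}/p$ via \cref{xsu29s}, where surjectivity holds by hypothesis. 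This handles all $b$ at once.
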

\begin{proof}
We can assume without loss of generality that $A=A^u$ is uniform. We want to show that $A^u$  is a perfectoid ring. Since $\mathcal{M}(A)$  is a profinite set, the presheaf mapping a clopen subspace $U\subset \mathcal{M}(A)$ to $(A_U)^{\leq 1}$ is  a sheaf of condensed rings, so it suffices to prove that $A$ is perfectoid locally in its analytic topology. We first show that $A$ admits a pseudo-uniformizer $\pi\in A^{<1}$ (locally in the analytic topology) such that  $\pi^p| p$. Let $x\in \mathcal{M}(A)$, we know that $\kappa(x)$ is perfectoid. So   $\kappa(x)$ admits such pseudo-uniformizer $\pi$ and by $p$-adic approximation we can assume that $\pi$ extends to a function in a neighbourhood $x\in U\subset \mathcal{M}(A)$. By further refining the neighbourhood we we can even assume that $\pi^p| p$ in $A_U$ obtaining the desired pseudo-uniformizer. By replacing $A$ by $A_U$ we can assume that $\pi$ is defined over  $A$. It is left to show that the Frobenius map $\varpi: A^{\leq 1}/p \to A^{\leq 1}/p$ is surjective, since the formation $\mathcal{F}:U\mapsto (A_U)^{\leq 1}/p$ is a sheaf over $\mathcal{M}(A)$ and this space is acyclic  (again, since it is profinite)  we can check the surjectivity on stalks. But the stalk at $x$ of $\mathcal{F}$ is just $\kappa(x)^{\leq 1}/p$ by \cref{xsu29s}, since  $\kappa(x)$ is perfectoid we know that Frobenius is surjective proving what we wanted.
\end{proof}

We finish this subsection with a statement relating uniform completion and perfectoidization, \Cref{sec:defin-main-prop-uniform-completion-and-perfectoidization}.

\begin{lemma}\label{x5134hs}
Let $A\to B$ be a morphism of Gelfand rings such that the map of uniform completions $A^u\to B^u$ is a surjection. Then $F\colon \mathcal{M}(B)\to \mathcal{M}(A)$ is a Zariski closed immersion given by the vanishing locus of the ideal $I=\ob{ker}(A\to B)$. If in addition $F$ is an isomorphism then $A^u\overset{\sim}{\to}B^u$.
\end{lemma}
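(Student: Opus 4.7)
The plan is to analyze the surjection $A^u\twoheadrightarrow B^u$ of uniform Banach $\Q_p$-algebras and relate its kernel to $I$. Set $J:=\ker(A^u\twoheadrightarrow B^u)$. Since $\mathcal{M}(-)=\mathcal{M}((-)^u)$ and a continuous multiplicative seminorm on $B^u$ is exactly a continuous multiplicative seminorm on $A^u$ that factors through the quotient $A^u/J$, the inclusion $F\colon \mathcal{M}(B)\hookrightarrow \mathcal{M}(A)$ is the Zariski closed subset $V(J)$. Thus the first task is to identify $V(J)=V(I)$.

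For this, let $C:=A^u/\overline{I\cdot A^u}$, a Banach $\Q_p$-algebra. Since seminorms are continuous, $V(I)=V(\overline{I\cdot A^u})=\mathcal{M}(C)=\mathcal{M}(C^u)$ (the last equality is because uniform completion does not affect the Berkovich spectrum). I would then prove that $B^u\cong C^u$ by comparing universal properties of uniform completion (\Cref{xhs892k}): the composite $A\to A^u\to C\to C^u$ kills $I$, so factors through $B=A/I$, making $C^u$ a uniform Banach algebra under $B$, hence receiving a canonical map $B^u\to C^u$; conversely, $A^u\to B^u$ kills $I\cdot A^u$ and, by continuity (since $B^u$ is Hausdorff), also $\overline{I\cdot A^u}$, so factors as $A^u\twoheadrightarrow C\to B^u$, and since $B^u$ is uniform this further factors through $C^u\to B^u$. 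The two induced maps $B^u\leftrightarrows C^u$ compose to the identity on each side by the uniqueness part of the universal property of uniform completion. This gives $B^u\cong C^u$ and hence $\mathcal{M}(B)=V(I)$, finishing the first statement.

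For the second statement, suppose $F$ is a homeomorphism, i.e.\ $V(J)=\mathcal{M}(A)$. Every continuous multiplicative seminorm on $A^u$ then vanishes on $J$. Because $A^u$ is a uniform Banach $\Q_p$-algebra (\cref{rk:uniformity-for-banach-rings}), its norm coincides with the spectral seminorm $|f|_{\rm sp}=\sup_{x\in\mathcal{M}(A^u)}|f|_x$. Thus any $f\in J$ satisfies $|f|_{A^u}=|f|_{\rm sp}=0$, so $f=0$, giving $J=0$ and $A^u\xrightarrow{\sim}B^u$.

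The main obstacle is the identification $B^u\cong C^u$ in the second paragraph: while the inclusion $V(J)\subseteq V(I)$ is immediate, the reverse inclusion is not formal, and requires the careful two-sided universal-property argument above, which hinges on the fact that the kernel of a continuous map to a Banach algebra is closed.
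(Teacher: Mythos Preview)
Your argument for the second part (that $F$ being a homeomorphism forces $A^u\xrightarrow{\sim}B^u$) is correct and matches the paper's: both use that a uniform Banach algebra has trivial $\dagger$-nilradical, so the kernel $J$ of the surjection $A^u\twoheadrightarrow B^u$, being contained in every seminorm's kernel, must vanish.

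There is a genuine gap in the first part. You write that ``$A\to A^u\to C\to C^u$ kills $I$, so factors through $B=A/I$,'' but the hypotheses do \emph{not} say that $A\to B$ is surjective --- only that $A^u\to B^u$ is. What you actually obtain is a map $A/I\to C^u$, hence (at best) a map $(A/I)^u\to C^u$; there is no evident map $B\to C^u$ or $B^u\to C^u$. To conclude $B^u\cong C^u$ from here you would need $(A/I)^u\cong B^u$, but that is equivalent to showing that the closure of (the image of) $I$ in $A^u$ equals $J$, which is exactly the statement $V(I)=V(J)$ you are trying to prove. So the argument is circular in this generality.

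The paper avoids this by proving density directly. After reducing to the case where $A$ and $B$ are $\dagger$-reduced (so $A\hookrightarrow A^u$, $B\hookrightarrow B^u$, and $I=A\cap J$), it uses the lattice $I^{\le 1}:=A^{\le 1}\cap J$ and the fact that $A^{u,\le 1}=(A^{\le 1})^{\wedge_p}$ to produce, for each $f\in J$ and each $n$, an element $f_n\in I$ with $f-f_n\in p^n A^{u,\le 1}$. Then for any $x\in V(I)$ one has $|f|_x\le\max(|f_n|_x,|f-f_n|_x)\le p^{-n}$ for all $n$, forcing $|f|_x=0$. Your universal-property approach would work verbatim under the extra assumption that $A\to B$ is surjective, but the lemma is applied in the paper (e.g.\ in \cref{xsh29k}) precisely in situations where this fails.
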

\begin{proof}
Let $I^u:= \ker(A^u\to B^u)$. We know that $\mathcal{M}(B)$ is the zero locus of $I^u$ by taking Hausdoff quotients of \cite[Corollary 2.7.15]{camargo2024analytic}.  Thus, it suffices to show that the zero locus of $I$ and $I^u$ agree. Without loss of generality we can replace $A$ and $B$ by their $\dagger$-reductions, in this situation $A\subset A^u$ and $B^u$, so that $I=A\cap I^u$. Let $I^{\leq 1}= A^{\leq 1} \cap I^u$, then the $p$-completion of $I^{\leq 1}$ gives rise to a $\Z_p$-Banach lattice $I^{u,\leq 1}\subset I^u$. Thus, for all $n\in \N$ and  $f\in I^u$ there is $f_{n}\in I$ such that $f-f_n\in p^n I^{u,\leq 1}$. This implies that the loci $|f_n|\leq 1/p^n$ and $|f|\leq 1/p^n$ in $\mathcal{M}(A^u)$ are the same. Taking $n\to \infty$ one deduces that $|f|=0$ in the vanishing locus of $I$, proving what we wanted.  

 Suppose now that $F$ is an homeomorphism. By the open mapping theorem, to prove that $A^u\to B^u$ is an isomorphism it suffices to show that it is a bijection. Since it is already surjective we only need to prove injectivity. Let $f\in \ker(A^u\to B^u)$, then the locus where $\{|f|\neq 0\}$ is empty and thus $f\in \ob{Nil}^{\dagger}(A^u)$. But since $A^u$ is an uniform Banach algebra it has trivial $\dagger$-radical and so $f=0$.
\end{proof}

We use the previous discussion and ``Zariski closed implies strongly Zariski closed'' to prove the following link between uniform completion and perfectoidization.

\begin{proposition}
  \label{sec:defin-main-prop-uniform-completion-and-perfectoidization}
Let $A$ be a perfectoid Banach ring and let $A\to B$ be a quotient. Then the natural map $B^u\to B_{\ob{perfd}}$ from the uniform completion to its perfectoidization, i.e., the initial perfectoid Banach ring under $B$, is an isomorphism. Furthermore, the map $B\to B^u$ is surjective.   
\end{proposition}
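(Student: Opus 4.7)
The plan is to reduce to the theorem of Bhatt--Scholze asserting that closed ideals in perfectoid Banach rings are ``strongly Zariski closed'': if $A$ is perfectoid Banach and $I\subset A$ is a closed ideal, then the quotient $A/I$ (with its quotient Banach structure) has a perfectoid uniform completion, and moreover the induced map $A^{\leq 1}\to (A/I)^{u,\leq 1}$ is surjective. Writing $B=A/I$ for the closed ideal $I=\ker(A\to B)$, this is the essential input.

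Granting this, I would first deduce the surjectivity of $B\to B^u$: the Bhatt--Scholze surjectivity $A^{\leq 1}\twoheadrightarrow B^{u,\leq 1}$ becomes, after inverting $p$, a surjection $A\twoheadrightarrow B^u$, and since this factors as $A\twoheadrightarrow B\to B^u$, the second map must also be surjective. (A mild check here is that the composition $A\to B\to B^u$ has kernel equal to the closure in $A$ of $I$, so replacing $I$ by $\ker(A\to B^u)$ one reduces cleanly to the case of the Bhatt--Scholze setup; note that Proposition \ref{xsu8rf}(2) ensures $B^u$ depends only on $B^{\dagger\text{-}\mathrm{red}}$, so replacing $I$ by its closure is harmless.)

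Next, I would verify $B^u\cong B_{\mathrm{perfd}}$ as follows. By Bhatt--Scholze, $B^u$ is itself perfectoid. The universal property of perfectoidization applied to the map $B\to B^u$ to a perfectoid ring yields a unique factorization $B\to B_{\mathrm{perfd}}\to B^u$. Conversely, $B_{\mathrm{perfd}}$ is perfectoid, hence a uniform Banach $\mathbb{Q}_p$-algebra, so $B\to B_{\mathrm{perfd}}$ factors uniquely through the uniform completion, giving $B^u\to B_{\mathrm{perfd}}$. The two compositions are the identity by the uniqueness clauses of the respective universal properties.

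The main obstacle is thus the black-boxed Bhatt--Scholze theorem, whose proof rests on the almost purity theorem and the integral theory of perfectoid rings (essentially the fact that for a perfectoid $A$ the quotient $A^{\leq 1}/(\pi)$, with $\pi$ a compatible pseudo-uniformizer, satisfies $v$-hyperdescent almost integrally). I would simply invoke it; once it is in hand, the argument above is purely formal, with the minor book-keeping being the identification of the kernel $\ker(A\to B^u)$ with a closed Banach ideal so that the theorem applies verbatim.
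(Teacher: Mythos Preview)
Your proposal is correct and relies on the same key input as the paper: the Bhatt--Scholze theorem that Zariski-closed implies strongly Zariski-closed for perfectoid Banach rings (cited in the paper as \cite[Theorem 7.4 and Remark 7.5]{Bhatta}). The overall strategy is the same.

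There is a mild difference in execution worth noting. You finish by showing directly that $B^u$ is perfectoid (via Bhatt--Scholze applied to the closed ideal $J=\ker(A\to B^u)$) and then match $B^u$ with $B_{\mathrm{perfd}}$ using the two universal properties; this is clean and conceptual. The paper instead first identifies $B_{\mathrm{perfd}}$ as the generic fibre of the perfectoidization of the semiperfectoid ring $(A^{\leq 1}/I^{\leq 1})^{\wedge_p}$, obtains from \cite{Bhatta} that $A\to B_{\mathrm{perfd}}$ is surjective with $\mathcal{M}(B_{\mathrm{perfd}})=V(I)$, and then uses the Berkovich-spectrum comparison Lemma~\ref{x5134hs} to conclude $B^u=B_{\mathrm{perfd}}$. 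Your ``mild check'' that $B^u=(A/J)^u$ is genuinely mild (it follows from the factorization $B\to A/J\hookrightarrow B^u$ and the universal property of uniform completion), whereas the paper's route through $B_0^{\wedge_p}$ and Lemma~\ref{x5134hs} makes the reduction to the closed-ideal case more explicit but is a bit more roundabout in the final identification.
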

\begin{proof}
We may replace $B$ by $\pi_0(B)$ and assume that $B$ is static (this does not change $B^{\dagger-\red},\ B^u, B_{\ob{perfd}}$). We now clarify the existence of $B_{\ob{perfd}}$. Let $I:=\ob{ker}(A\to B)$ be the ideal defining $B$, and $I^{\leq 1}:=I\cap A^{\leq 1},\ B_0:=A^{\leq 1}/I^{\leq 1}$. Then the $p$-completion of $B_0$ is semiperfectoid and given by $B_0^{\wedge_p}= A^{\leq 1}/\overline{I}$, where $\overline{I}$ is the completion of $I$ in $A^{\leq 1}$ for the $p$-adic topology. In particular, $A^{\leq 1}\to (B_0)^{\wedge_p}$ is a surjection of $p$-complete rings.  Let $C$ be a perfectoid ring and consider a map $g\colon B\to C$, then, as $C$ is a Banach ring,  $g$ musth factor through $B\to B_0^{\wedge_p}[\frac{1}{p}]\to C$ and the perfectoidization  of $B$ and $(B_0)^{\wedge_p}[\frac{1}{p}]$ must agree. This last is nothing but the generic fiber of the perfectoidization of $(B_0)^{\wedge_p}$ in the sense of \cite[Corollary 7.3]{Bhatta}.   By \cite[Theorem 7.4 and Remark 7.5]{Bhatta}, the map $A\to B_{\ob{perfd}}$ is surjective and the associated map of adic spaces
\[
Z=\Spa (B_{\ob{perfd}})\to \Spa( A)
\]
is a Zariski closed embedding given by the vanishing locus of $\overline{I}$ which agrees with the vanishing locus of $I$  by \cref{x5134hs}, the same holds for Berkovich spaces $\mathcal{M}(B_{\ob{perfd}})\to \mathcal{M}(A)$.  But then, \cref{x5134hs} implies that   the uniform completion of $B$ and of $A^{\dagger-I}=\varinjlim_{Z\subset U,\ U\textrm{ rational open}} A_U$ agree and are equal to $B_{\ob{perfd}}$, proving what we wanted.
\end{proof}

\subsection{The morphism to the Berkovich spectrum}\label{sss:map-to-the-berkovich-spectrum}
In this subsection, we explain (following \cite{AnStacks}) the construction of a natural morphism
$$
\ob{AnSpec}(A)\to \mathcal{M}(A)_{\mathrm{Betti}}
$$
of analytic stacks from the analytic spectrum of a Gelfand $\Q_p$-algebra $A$ to the Betti incarnation of its Berkovich spectrum, under some finite dimensional hypothesis. The main result is \Cref{xhs82j}. \medskip

Throughout, we write $\Cat{AnRing}$ for the category of analytic rings and $\Cat{AnStk}$ for the category of analytic stacks. For the definition of the latter, we refer the reader to \cref{sec:tdstacks}.
\medskip

In order to construct a map from $A$ towards its Berkovich spectrum as analytic stacks we need to recall a few facts about the Betti stacks. We start with the functor $\Cat{Prof}^{\rm light} \to \Cat{AnStk}$, sending a light profinite set $S$ to $S_{\rm Betti}=\AnSpec(C(S,\Z))$. We have the following lemma.

\begin{lemma}\label{xs2hd9}
Let $\Cat{Ring}^{\delta}_{\omega_1}$ be the $\infty$-category of countable animated discrete rings. Consider the functor
$$
\ob{D}^!: \Cat{Ring}^{\delta,\op}_{\omega_1}\to \Cat{Cat}_{\infty}
$$
given by mapping a ring $A\in \Cat{Ring}^{\delta}_{\omega_1}$ to its derived category of (condensed) modules with transition maps given by upper $!$-maps. Then $D^!$ is a flat hypersheaf.
\end{lemma}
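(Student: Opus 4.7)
The plan is to reduce the hyperdescent assertion to \v{C}ech descent for faithfully flat covers, invoke descendability under the $\omega_1$-hypothesis, and finally convert $\ast$-descent into $!$-descent using the analytic six-functor formalism.

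First, I would fix a faithfully flat map $A \to B$ in $\Cat{Ring}^{\delta}_{\omega_1}$ and its \v{C}ech nerve $B^{\bullet}$. The countability bound ensures that $B$ is an $\omega_1$-presented $A$-module, so by the descendability theorem (Mathew, Clausen--Scholze), the map $A \to B$ is descendable in $\ob{D}(\Z_{\solid})$. This is precisely the step that fails without $\omega_1$-boundedness, in view of the Zelich--Aoki counterexamples flagged in the introduction. Descendability yields the $\ast$-descent equivalence $\ob{D}(A) \simeq \mathrm{Tot}(\ob{D}(B^{\bullet}),\, f^{\ast})$.

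Second, I would convert $\ast$-descent into $!$-descent along the \v{C}ech nerve. In the solid setting with induced analytic ring structure, both $f^{\ast}$ and $f^{!}$ are defined for such maps, and the face maps of $B^{\bullet}$ are themselves base changes of $A \to B$. The main technical point is to verify that $\mathrm{Tot}(\ob{D}(B^{\bullet}),\, f^{\ast})$ and $\mathrm{Tot}(\ob{D}(B^{\bullet}),\, f^{!})$ coincide. One clean way is to use that a descendable cover generates $\ob{D}(A)$ as a smashing localization, and this localization is detected equivalently by $\ast$- or $!$-pullback descent data; a more hands-on approach manipulates the explicit description $f^{!}(M) = \iHom_{A}(B, M)$ in the solid category combined with flat base change on each simplicial degree.

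Third, I would upgrade \v{C}ech descent to full hyperdescent by exploiting the natural $t$-structure on $\ob{D}(A)$ together with Postnikov completeness in the solid setting; under the $\omega_1$-hypothesis this allows one to refine any flat hypercover by iterated \v{C}ech nerves in a manner compatible with truncation, and hyperdescent follows from the \v{C}ech descent of the previous step.

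The main obstacle is the second step: reconciling $\ast$- and $!$-descent for flat but non-\'etale maps of discrete rings. For open immersions the two functors agree, and for closed immersions they differ in a controlled way; the argument has to show that along a descendable \v{C}ech nerve the two totalizations coincide, which is exactly where one has to use the specifics of the six-functor formalism on analytic stacks with induced analytic ring structures.
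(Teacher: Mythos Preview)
Your step 2 is not the real obstacle. For maps of discrete rings viewed as analytic rings with the induced structure from $\Z_{\solid}$, the morphisms are proper, so $f_! = f_*$; descendability of $A \to B$ then directly yields universal $\ob{D}^!$-descent along the \v{C}ech nerve (this is what the paper invokes when it cites Mathew's result that flat covers of countable rings are $2$-descendable and concludes that $\ob{D}^!$ is a sheaf). There is no separate ``conversion'' step needed.

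The genuine gap is your step 3. Appealing to ``Postnikov completeness'' and ``refining hypercovers by iterated \v{C}ech nerves'' does not give hyperdescent from \v{C}ech descent in general; one needs a quantitative bound. The paper's key idea, which your proposal is missing, is a uniform cohomological dimension bound on $f^!$. Concretely: by Mann's criterion (\cite[Proposition A.3.21]{mann2022p}) it suffices to show that for any flat hypercover $A \to A_\bullet$ the functor $\ob{D}(A) \to \varprojlim_{[n]} \ob{D}^!(A_n)$ is fully faithful. By a dual version of the strategy in \cite[Proposition 3.1.26]{mann2022p}, this reduces to showing that for any flat map $f\colon A \to B$ of countable discrete rings, the functor $f^!(-) = \Hom_A(B,-)$ has cohomological dimension $\leq 1$. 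This is where countability enters decisively: by Lazard's theorem $B$ is a filtered colimit of finite projective $A$-modules, and since $B$ is $\omega_1$-compact as an $A$-module one can take the indexing system to be countable, whence $\Hom_A(B,-)$ sits in degrees $[0,1]$ (a countable limit contributes at most one $R^1\lim$). Without this explicit bound, your Postnikov argument has nothing to anchor the passage from \v{C}ech to hyperdescent.
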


\begin{proof}
  By \cite[Proposition 3.31]{mathew2016galois} flat covers in $\Cat{Ring}^{\delta}_{\omega_1}$ are $2$-descendable, so the functor $\ob{D}^!$ is a sheaf for the flat topology.
  We want to show that it is an hypersheaf.  By \cite[Proposition A.3.21]{mann2022p} it suffices to show that for any flat hypercover $A\to A_{\bullet}$ in $\Cat{Ring}^{\delta}_{\omega_1}$, the functor 
\begin{equation}\label{eqnsjs8} 
\ob{D}(A)\to \varprojlim_{[n]\in \Delta} \ob{D}^!(A_n)
\end{equation}
is fully faithful. By applying the (dual of the)  strategy of \cite[Proposition 3.1.26]{mann2022p} it suffices to prove that given a flat map of countable discrete rings $f:A\to B$ the functor $f^!(-)=\Hom_{A}(B,-)$ has cohomological dimension $\leq 1$.\footnote{It suffices to have an universal bound, but it turns out that $1$ is optimal.} By Lazard's theorem  we can write $B$ as a filtered colimit of finite projective $A$-modules (\cite[Theorem 7.2.2.15]{lurie_higher_algebra}). As $A, B$ are countable (hence $B$ is an $\omega_1$-compact $A$-module), we can assume that the filtered colimit is countable. Then the cohomological bound follows.
\end{proof}

Since light condensed anima are hypersheaves on light profinite sets, using \cref{xs2hd9} we can extend the above functor $\Cat{Prof}^{\rm light} \to \Cat{AnStk}$ to a left exact colimit preserving functor\footnote{Note how lightness is used crucially here. Also, note that the desire to construct such a functor is the reason for requiring that analytic stacks are not merely sheaves on analytic rings for the $!$-topology, but a stronger condition.}
$$
\Cat{CondAni}^{\rm light} \to \Cat{AnStk}, X \mapsto X_{\rm Betti}
$$
\begin{remark}
By \cite[Remark 3.5.15]{heyer20246functorformalismssmoothrepresentations}, if $X$ is a locally compact Hausdorff space, $\ob{D}(X_{\Betti})$ is the left completion of the derived category $\Shv(X,\Z^{\mathrm{cond}})$ of \textit{condensed abelian sheaves} on $X$. If $X$ is paracompact and has local finite $\Z$-cohomological dimension, the natural symmetric monoidal functor $\Shv(X,\Z^{\mathrm{cond}})\to \ob{D}(X_{\Betti})$ is an equivalence. 

 Notice that for $X$ a condensed anima,  $\ob{D}(X_{\Betti})$ has a full subcategory $\ob{D}^{\delta}(X_{\Betti})$ obtained by hyperdescent from discrete modules over the rings $C(S,\Z)$ with $S$ light profinite, we call $\ob{D}^{\delta}(X_{\Betti})$ the full subcategory of \textit{discrete sheaves on $X_{\Betti}$}. Similarly as before, if $X$ is a locally compact Hausdorff space,  $\ob{D}^{\delta}(X_{\Betti})$ is the left completion of the category of abelian sheaves $\Shv(X, \Z)$ on $X$, and, if $X$ is paracompact and has local finite $\Z$-cohomological dimension, then $\Shv(X,\Z)=\ob{D}^{\delta}(X_{\Betti})$.
\end{remark}

As we now explain, we can give a functor of points description of Betti stacks, cf.\ \cite[Lecture 20]{AnStacks}. Recall from \cite[Theorem A]{aoki2023sheavesspectrumadjunction} that the functor
\[
\ob{Shv}(-; \Cat{Sp}): \Cat{Loc}^{\op}\to \ob{CAlg}(\Cat{Pr}^L_{\ob{st}})
\]
sending a locale to its category of $\mathrm{Sp}$-valued sheaves has a right adjoint given by the smashing spectrum $\mathrm{Sm}(-)$, i.e., the locale of idempotent algebras.  
 This  gives for  $\mathcal{C}$ a presentably symmetric monoidal stable $\infty$-category and a locale $X$  a natural equivalence of anima 
\begin{equation}\label{eqsns3k}
\ob{Map}_{\Cat{Loc}^{\op}}(X,\mathrm{Sm}(\mathcal{C})) = \ob{Map}_{\ob{CAlg}(\Cat{Pr}^L_{\ob{st}})}(\ob{Shv}(X;\ob{Sp})^{\otimes}, \mathcal{C}).
\end{equation}

We note that connective idempotent algebras over analytic rings, which a priori are only $\mathbb{E}_\infty$-algebras, naturally acquire the structure of an animated ring. This crucially uses lightness.

\begin{lemma}\label{xi2msnm}
Let $A$ be an analytic ring and let $D\in \ob{CAlg}(\ob{D}_{\geq 0}(A))$ be a connective commutative algebra. Suppose that $A\to D$ is idempotent, then $D$ has a natural structure of an animated ring making $A\to D$ a localization of analytic rings. 
\end{lemma}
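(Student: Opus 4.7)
The plan is to leverage Lurie's theorem that in any presentably symmetric monoidal $\infty$-category $\mathcal{C}$, the $\infty$-category of idempotent $\mathbb{E}_\infty$-algebras is equivalent to a poset, namely the full subposet of $\mathcal{C}^{\mathbf{1}/}$ spanned by those maps $\mathbf{1} \to D$ for which the multiplication $D \otimes D \to D$ is an equivalence (see \cite[Section 4.8.2]{lurie_higher_algebra}). An entirely analogous statement holds when $\mathbb{E}_\infty$-algebras are replaced by animated $A$-algebras. Hence, to put an animated ring structure on $D$ making $A \to D$ idempotent, it suffices to exhibit \emph{some} idempotent animated $A$-algebra $D'$ together with an equivalence of underlying $\mathbb{E}_\infty$-$A$-algebras $D' \simeq D$: uniqueness of the animated structure is then automatic because both the animated and $\mathbb{E}_\infty$ pictures are pinned down by the same morphism in $\ob{D}(A)$.

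To produce $D'$, I would work in the light condensed setting and write $D$ as an $\omega_1$-small sifted colimit of free $\mathbb{E}_\infty$-$A$-algebras of the form $A\{S\}$ for $S$ a light profinite set. This is exactly where lightness is indispensable: in the light setup every connective object of $\ob{D}(A)$ is an $\omega_1$-filtered colimit of compact projective generators built from light profinite sets, so $D$ admits an $\omega_1$-small presentation. Each $A\{S\}$ has a tautological animated lift (the free animated $A$-algebra on $S$), and $\omega_1$-small sifted colimits of animated $A$-algebras remain animated. The resulting animated $A$-algebra $D'$ has underlying $\mathbb{E}_\infty$-$A$-algebra $D$, and since the forgetful functor from animated to $\mathbb{E}_\infty$-$A$-algebras preserves the relative tensor product, the idempotency relation $D \otimes_A D \simeq D$ lifts to $D' \otimes_A D' \simeq D'$ in animated $A$-algebras.

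It remains to endow $D'$ with an analytic ring structure such that $A \to D'$ is a localization of analytic rings. Since $A \to D'$ is idempotent, the base-change functor $-\otimes_A D' \colon \ob{D}(A) \to \ob{Mod}_{D'}(\ob{D}(A))$ is a smashing localization whose right adjoint is fully faithful. One defines the analytic ring structure on $D'$ to be the one induced from $A$, i.e., $\ob{D}(D') := \ob{Mod}_{D'}(\ob{D}(A))$ with the completeness condition inherited from $A$; the inclusion $\ob{D}(D') \hookrightarrow \ob{D}(A)$ realizes $A \to D'$ as a localization of analytic rings in the sense of \cite{AnStacks}.

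The main obstacle is the lifting step in the second paragraph: one has to be careful that the chosen $\omega_1$-small presentation produces an animated object whose underlying $\mathbb{E}_\infty$-algebra genuinely recovers $D$, rather than some a priori different connective $\mathbb{E}_\infty$-algebra mapping to it. This is where the poset character of idempotents is used one more time, to argue that any two candidate animated lifts are related by a unique equivalence, so the construction is well-defined and functorial in $A \to D$.
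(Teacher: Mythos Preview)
Your proposal has a genuine gap in the second paragraph. The claim that ``each $A\{S\}$ has a tautological animated lift (the free animated $A$-algebra on $S$)'' is not correct unless $A$ is a $\mathbb{Q}$-algebra: the free animated $A$-algebra $\mathrm{LSym}_A(A[S])$ and the free $\mathbb{E}_\infty$-$A$-algebra $A\{S\}$ have different underlying $A$-modules in general (the former is a polynomial algebra, the latter involves homotopy orbits $(A[S]^{\otimes n})_{h\Sigma_n}$, and these differ away from characteristic zero). So the ``lift'' you propose does not have underlying $\mathbb{E}_\infty$-algebra $A\{S\}$. Even setting this aside, the transition maps in your sifted diagram are maps of $\mathbb{E}_\infty$-algebras, and these do not canonically lift to maps of animated algebras. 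Your final paragraph correctly identifies this as the obstacle, but the poset argument you invoke there only gives \emph{uniqueness} of an idempotent animated lift; it does not supply \emph{existence}. Knowing that any two lifts agree says nothing about whether your particular colimit lands on an object with underlying $\mathbb{E}_\infty$-algebra $D$.

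The paper takes a different route that sidesteps this entirely. It observes that since $A\to D$ is idempotent, $\ob{Mod}_D(\ob{D}_{\geq 0}(A))\subset \ob{D}_{\geq 0}(A)$ is a full subcategory stable under all limits, colimits, and internal Homs. This is precisely the data of an uncompleted analytic ring structure $D_{A/}$ on $A$ in the sense of \cite[Definition~4.1.1]{SolidNotes}. The animated structure on the underlying ring $D_{A/}[*]=D$ is then supplied by \cite[Corollary~4.2.7]{SolidNotes}, a general structural result valid in the light setup. So the paper does not attempt to build the animated structure by hand from a presentation of $D$; it instead packages the idempotent as an analytic ring structure and appeals to a theorem guaranteeing that such structures are automatically animated. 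Your third paragraph (defining $\ob{D}(D')$ as $\ob{Mod}_{D'}(\ob{D}(A))$) is essentially the same as the paper's first step, but the paper uses this as the \emph{input} to extract the animated structure, not as a consequence of having already constructed it.
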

\begin{proof}
This follows form the fact that $\ob{Mod}_{D}(\ob{D}_{\geq 0}(A))\subset \ob{D}_{\geq 0}(A)$ is a full subcategory stable under all limits, colimits and internal Homs. Indeed, this category defines an uncompleted analytic ring structure $D_{A/}$ on $A$ by \cite[Definition 4.1.1]{SolidNotes}, and the animated  ring structure on $D_{A/}[*]=D$ is automatic in the light set up by \cite[Corollary 4.2.7]{SolidNotes}.
\end{proof}

\begin{lemma}\label{xhsw82h}
Let $X$ be a metrizable compact Hausdorff space of finite cohomological dimension, let $S$ be a light profinite set and $f:S\to X$ a surjection. Then the map of analytic stacks $f:S_{\ob{Betti}}\to X_{\ob{Betti}}$ is proper and descendable, i.e, $f_\ast(1)\in \ob{D}(X_{\ob{Betti}})$ is a descendable algebra.
\end{lemma}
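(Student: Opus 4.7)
The statement splits cleanly into two claims, properness and descendability, which I would attack by quite different arguments.

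For properness, I would argue by cancellation in the 6-functor formalism of analytic stacks. From the theory of Betti stacks set up in \cite{AnStacks} (ultimately relying on the compactness theorem for sheaves on compact Hausdorff spaces in \cite{lurie_higher_topos_theory}), for any compact Hausdorff space $Y$ the structure morphism $Y_{\ob{Betti}}\to \mathrm{pt}$ is proper, and whenever $Y$ is Hausdorff this morphism is separated. Applying this to $S$ and to $X$, in the factorization
\[
S_{\ob{Betti}}\xrightarrow{f} X_{\ob{Betti}}\to \mathrm{pt}
\]
the composite is proper and the second map is separated, so $f$ itself is proper by the standard cancellation property for proper morphisms.

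For descendability, set $A:=f_{\ast}(1)\in \ob{CAlg}(\ob{D}(X_{\ob{Betti}}))$ and consider the augmented cosimplicial object $1_{X}\to A^{\otimes\bullet+1}$. I would proceed in three steps.

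\emph{Step 1 (Conservativity of $f^{\ast}$).} Because $X$ is metrizable and of finite cohomological dimension, $\ob{D}(X_{\ob{Betti}})$ identifies with the left-completion of $\mathrm{Shv}(X,\mathbb{Z}^{\mathrm{cond}})$, so joint conservativity can be tested at stalks $i_{x}^{\ast}$ for $x\in X$. Surjectivity of $f$ lets every $i_{x}$ factor through $f$, from which conservativity of $f^{\ast}$ is immediate.

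\emph{Step 2 (Uniform cohomological bound).} The fibers of $f$ are closed in the light profinite set $S$, hence themselves light profinite, and in particular have $\mathbb{Z}$-cohomological dimension $0$. By proper base change, applicable once properness is known from the first part, $f_{\ast}$ and each $f^{(n)}_{\ast}$ on the fiber powers $S^{\times_{X}(n+1)}\to X$ are $t$-exact on the discrete heart. Combined with the finite cohomological dimension $d$ of $X$, this gives that the global sections functor $\Gamma(X,-)\circ f^{(n)}_{\ast}$ has cohomological amplitude at most $d$ uniformly in $n$.

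\emph{Step 3 (Finite descendability index).} By Step 1, the augmented Čech diagram is a limit diagram in $\ob{D}(X_{\ob{Betti}})$, and by Step 2 its Tot tower converges after a finite truncation depending only on $d$. Translating this finite convergence to Mathew's criterion \cite{mathew2016galois} yields a uniform bound on the nilpotence index of $\mathrm{fib}(1_{X}\to A)$ in the tensor-ideal sense, which is precisely descendability of the algebra $A$.

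The main obstacle will be Step 2 together with its conversion into Step 3: extracting a genuinely uniform numerical bound on the Tot tower from the finite cohomological dimension of $X$ and the zero-dimensionality of the fibers of $f$, and then repackaging that bound as a finite descendability index. The hypotheses that $X$ is metrizable and has finite cohomological dimension are used here in an essential way, to ensure that the natural $t$-structure on $\ob{D}(X_{\ob{Betti}})$ is left-complete and of finite global dimension, so that discrete-level bounds propagate to the full derived $\infty$-category.
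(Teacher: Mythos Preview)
Your properness argument via cancellation is correct and is what underlies the cited reference.

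For descendability the paper simply cites \cite[Proposition II.1.1]{ScholzeRLL}, so let me evaluate your sketch directly. Step 1 and the $t$-exactness claim in Step 2 are correct, and together they show that the augmented \v{C}ech complex $1_X \to A \to A^{\otimes 2} \to \cdots$ is exact in the heart; hence $Q_n := \ob{cofib}(1_X \to \ob{Tot}_{\leq n}(A^{\otimes\bullet+1}))$ is concentrated in the single cohomological degree $n$, say $Q_n \simeq C_n[-n]$ with $C_n$ in the heart. The gap is in Step 3, and its root is that Step 2 bounds the wrong functor. You bound the amplitude of $\Gamma(X,-)\circ f^{(n)}_* = \Hom(1_X,-)\circ f^{(n)}_*$. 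But a retraction of $1_X \to \ob{Tot}_{\leq n}$ exists iff the connecting map $Q_n \to 1_X[1]$ vanishes, and that is a class in
\[
\Hom_{\ob{D}(X_{\ob{Betti}})}(C_n[-n],\,1_X[1]) \;=\; \Ext^{\,n+1}_{\ob{D}(X_{\ob{Betti}})}(C_n,\,1_X),
\]
so what is needed is a bound on $\Hom(-,1_X)$, the opposite variance. The hypothesis ``cohomological dimension $\le d$'' says $\Ext^i(1_X,\mathcal{F})=0$ for $i>d$; you need $\Ext^i(\mathcal{F},1_X)=0$ for large $i$, i.e.\ finite injective dimension of $1_X$. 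No convergence statement about $\Gamma(X,\ob{Tot}_{\leq n})$ produces this, so ``translating to Mathew's criterion'' is exactly where the missing argument hides.

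You do gesture at the right fix in your closing line (``finite global dimension''), but that is a separate fact to be established, not a consequence of your Step 2. Compare the proofs of \cref{sec:perf-analyt-de-2-arc-covers-are-good} and \cref{sec:perf-analyt-de-2-finite-cohom-dimension-for-separable-strictly-totally-disconnected} later in the paper, where exactly this shape --- cofiber of $1\to\ob{Tot}_{\le k}$ lying in high cohomological degree, then an explicit $\Ext$ bound --- is what converts the connectivity estimate into a retraction.
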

\begin{proof} 
Properness follows from \cite[Lemma 4.8.2 (ii)]{heyer20246functorformalismssmoothrepresentations}, descendability is proven in \cite[Proposition II.1.1]{ScholzeRLL}.
\end{proof}

\begin{proposition}\label{xj29sj}
Let $X$ be a metrizable compact Hausdorff space. For $A$ an analytic ring let $\ob{Map}_{\Cat{Loc}^{\op}}^+(X,\mathrm{Sm}(\ob{D}(A))) \subset \ob{Map}_{\Cat{Loc}^{\op}}(X,\mathrm{Sm}(\ob{D}(A))) $ be the full subspace of maps of locales satisfying the following condition:  there is a surjective map from a light profinite set $S\to X $, a $!$-cover $A\to B$ and a morphism of analytic rings $C(S,\mathbb{Z})\to B$  that induces a  commutative diagram of locales
\begin{equation}\label{eqdj2idj}
\begin{tikzcd}
\mathrm{Sm}(\ob{D}(B)) \ar[r] \ar[d]& \mathrm{Sm}(\ob{D}(A)) \ar[d] \\ 
S \ar[r] & X .
\end{tikzcd}
\end{equation}
Then the natural map 
\begin{equation}\label{eqj2jdsn}
\ob{Map}(\ob{AnSpec}(A), X_{\ob{Betti}})\to  \ob{Map}_{\ob{CAlg}(\Cat{Pr}^L_{\ob{D}(\mathbb{Z})})}(\ob{Shv}(X;\mathbb{Z}^{\ob{cond}}), \ob{D}(A)) =\ob{Map}_{\Cat{Loc}^{\op}}(X,\ob{Sm}(\ob{D}(A))).
\end{equation}
 induces an equivalence onto $\ob{Map}_{\Cat{Loc}^{\op}}^+(X,\mathrm{Sm}(\ob{D}(A)))$ (here $\ob{D}(\Z)$ is the derived category of condensed abelian groups). \end{proposition}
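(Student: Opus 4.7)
The plan is to resolve $X$ by a simplicial light profinite set and then compare both sides by descent along the corresponding profinite cover of $X_{\ob{Betti}}$. I choose a surjection $\pi\colon S\twoheadrightarrow X$ from a light profinite set; the \v{C}ech nerve $S^{\times_X(\bullet+1)}$ is a simplicial light profinite set (each $S^{\times_X n}$ is closed in $S^n$). Since the functor $(-)_{\ob{Betti}}$ preserves colimits one has $X_{\ob{Betti}}=\varinjlim_{[n]\in\Delta^{\op}}\ob{AnSpec}(C(S^{\times_X(n+1)},\mathbb{Z}))$ in $\Cat{AnStk}$, and by \Cref{xhsw82h} the augmentation $\pi_{\ob{Betti}}\colon S_{\ob{Betti}}\to X_{\ob{Betti}}$ is proper and descendable, hence an effective epimorphism in the $!$-topology; this is the main geometric input.

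For the first direction (the image of the natural map lies in $\ob{Map}^+$), given $f\colon \ob{AnSpec}(A)\to X_{\ob{Betti}}$ I form the pullback $Y:=\ob{AnSpec}(A)\times_{X_{\ob{Betti}}}S_{\ob{Betti}}$, which is proper and descendable over $\ob{AnSpec}(A)$ by base change; since $S_{\ob{Betti}}=\ob{AnSpec}(C(S,\mathbb{Z}))$ is affine, $Y$ is itself affine of the form $\ob{AnSpec}(B)$ where $B$ is the connective algebra of global sections, canonically an animated $A$-algebra by \Cref{xi2msnm}. Descendability shows that $A\to B$ is a $!$-cover, the tautological map $Y\to S_{\ob{Betti}}$ gives a morphism $C(S,\mathbb{Z})\to B$, and functoriality of the smashing spectrum (\Cref{PropMapSmashing}) produces the required commutative diagram of locales.

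For the converse direction (fully faithfulness onto $\ob{Map}^+$), I construct the inverse by sending a choice of $+$-data $(S,\,A\to B,\,C(S,\mathbb{Z})\to B,\,\text{locale square})$ to the map obtained by descending $\ob{AnSpec}(B)\to S_{\ob{Betti}}\to X_{\ob{Betti}}$ along the $!$-cover $A\to B$. The necessary descent data consists, at simplicial level $n$, of a compatible map $\ob{AnSpec}(B^{\otimes_A(n+1)})\to (S^{\times_X(n+1)})_{\ob{Betti}}$. Pulling back the $+$-square along $A\to B^{\otimes_A(n+1)}$ forces the two candidate composite maps of locales $\ob{Sm}(\ob{D}(B^{\otimes_A(n+1)}))\to S\to X$ to agree (both factor through the unique locale map $\ob{Sm}(\ob{D}(A))\to X$). \Cref{PropMapSmashing} together with \Cref{xi2msnm} then promote this locale agreement to a unique morphism of analytic rings $C(S^{\times_X(n+1)},\mathbb{Z})\to B^{\otimes_A(n+1)}$, yielding the level-$n$ descent datum; the simplicial compatibilities are automatic since they can be read off on the underlying profinite \v{C}ech nerve $S^{\times_X\bullet}$ of sets.

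\textbf{Main obstacle.} The hard part is the converse direction, specifically assembling all higher simplicial coherences from the single $0$-level locale square in the $+$-condition. The key observation is that the cosimplicial ring $C(S^{\times_X(\bullet+1)},\mathbb{Z})$ corepresents maps of analytic rings in a rigid manner, so the inductive promotion has no nontrivial choices at each level; combined with the descendability of $\pi_{\ob{Betti}}$, which lets one compute $\ob{Map}(\ob{AnSpec}(A),X_{\ob{Betti}})$ as the totalization $\mathrm{Tot}\,\ob{Map}(\ob{AnSpec}(B^{\otimes_A(\bullet+1)}),(S^{\times_X(\bullet+1)})_{\ob{Betti}})$ for any $!$-cover $A\to B$ trivializing the pullback, this delivers the desired equivalence onto $\ob{Map}^+$.
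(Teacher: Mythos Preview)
Your overall strategy---resolving $X$ by the \v{C}ech nerve of a light profinite surjection $S\to X$ and descending along a $!$-cover $A\to B$---is the paper's strategy too. But there are two genuine gaps.

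\textbf{The profinite base case is never proved.} Your whole descent argument rests on promoting, at each level $n$, a locale map $\ob{Sm}(\ob{D}(B^{\otimes_A(n+1)}))\to S^{\times_X(n+1)}$ to a \emph{unique} analytic ring map $C(S^{\times_X(n+1)},\Z)\to B^{\otimes_A(n+1)}$. You cite \Cref{PropMapSmashing} and \Cref{xi2msnm} for this, but \Cref{PropMapSmashing} concerns Gelfand $\Q_p$-algebras and their Berkovich spectra (irrelevant here) and \Cref{xi2msnm} only upgrades connective idempotent $E_\infty$-algebras to animated ones. What you actually need is that for $T$ light profinite and $R$ any analytic ring,
\[
\ob{Map}_{\Cat{AnRing}}(C(T,\Z),R)\longrightarrow \ob{Map}_{\Cat{Loc}^{\op}}(T,\ob{Sm}(\ob{D}(R)))
\]
is an equivalence. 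The paper proves this separately: it uses ind-\'etaleness of $\Z\to C(T,\Z)$ to pass from animated rings to $E_\infty$-rings, then invokes full faithfulness of $R\mapsto \ob{D}(R)$ on $E_\infty$-analytic rings (\cite[Lemma 2.1.3]{camargo2024analytic}) together with Aoki's adjunction \cref{eqsns3k}. Without this, your ``promote the locale agreement to a ring map'' step has no content.

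\textbf{Full faithfulness is not established.} Your ``inverse'' is built from a \emph{choice} of $+$-data $(S,A\to B,\dots)$, but $\ob{Map}^+$ is a property of locale maps, not extra structure. You never show the construction is independent of this choice, and your totalization formula $\ob{Map}(\ob{AnSpec}(A),X_{\ob{Betti}})\simeq\mathrm{Tot}\,\ob{Map}(\ob{AnSpec}(B^{\otimes_A(\bullet+1)}),(S^{\times_X(\bullet+1)})_{\ob{Betti}})$ is not correct as stated: a map of geometric realizations need not come from a map of simplicial objects. The paper handles injectivity separately, exploiting that $X_{\ob{Betti}}$ is $0$-truncated: if $f,g\colon\ob{AnSpec}(A)\to X_{\ob{Betti}}$ induce the same locale map, the product $(f,g)\colon\ob{AnSpec}(A)\to (X\times X)_{\ob{Betti}}$ factors through the diagonal at the level of locales, and one checks (after pulling back to a profinite $!$-cover) that it then factors through the diagonal as a map of stacks.

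A smaller issue: in your forward direction you assert that $Y=\ob{AnSpec}(A)\times_{X_{\ob{Betti}}}S_{\ob{Betti}}$ is affine because $S_{\ob{Betti}}$ is. This is not immediate---connectivity of the global sections is exactly the subtle point. The paper sidesteps this: since $Y\to\ob{AnSpec}(A)$ is an epimorphism of stacks one simply chooses a $!$-cover $A\to B$ with $\ob{AnSpec}(B)$ factoring through $Y$, which is all that the $+$-condition requires.
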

 
\begin{remark}\label{RemarkBaseChangeTopoi}
The mapping space in the middle of \cref{eqj2jdsn} involves sheaves on $X$ valued in condensed abelian groups, while the adjunction of \cite[Theorem A]{aoki2023sheavesspectrumadjunction} involves sheaves valued in spectra. This does not cause any problem since, given an $\infty$-topos $X$ and a presentable category $\mathcal{C}$, one has a natural equivalence of presentable categories
\[
X\otimes \mathcal{C}\cong \ob{Shv}(X, \mathcal{C})
\]
where the left term is Lurie's tensor product, and the right term is the category of $\mathcal{C}$-valued sheaves on $X$. Indeed, by \cite[Proposition 4.8.1.17]{lurie_higher_algebra}, one has a natural equivalence
\[
X\otimes \mathcal{C} \cong \ob{RFun}(X^{\op}, \mathcal{C})
\]
where the right term is the category of functors $X^{\op}\to \mathcal{C}$  which admit left adjoints.  As $X$ is presentable, the latter identifies with the category of limit preserving functors $X^{\op}\to \mathcal{C}$ which is, by definition, the category $\ob{Shv}(X,\mathcal{C})$ of $\mathcal{C}$-valued sheaves on $X$.
\end{remark}

\begin{proof}[Proof of \Cref{xj29sj}]
Let us first suppose that $X=S$ is profinite. Then $S_{\ob{Betti}}=\ob{AnSpec} ( C(S,\mathbb{Z}) )$ is an affinoid stack and then the LHS term of \cref{eqj2jdsn} is just the same as morphisms of analytic rings $C(S,\mathbb{Z})\to A$, which, as $C(S,\Z)$ is discrete, is the same as morphism of animated rings $C(S,\Z)\to A^{\triangleright}(*)$. We want to see that the natural map
\begin{equation}\label{xj2jd09}
\ob{Map}_{\Cat{AnRing}}(C(S,\mathbb{Z}), A)\to \ob{Map}_{\Cat{Loc}^{\op}}(S,\mathrm{Sm}(\ob{D}(A)))
\end{equation}
already induces a bijection. For proving this, we compose with the map \cref{eqsns3k}. For any clopen subspace $U\subset S$ the map $C(S,\mathbb{Z})\to C(U,\mathbb{Z})$  is idempotent. Since $\Z\to C(S,\Z)$ is and ind-\'etale map of $\mathbb{E}_{\infty}$-rings (resp. of animated rings), we have that the map
\[
\begin{aligned}
\ob{Map}_{\Cat{AnRing}}(C(S,\mathbb{Z}), A)=\ob{Map}_{\Cat{Ani(Ring)}}(C(S,\mathbb{Z}), A^{\triangleright}) \\  \to \ob{Map}_{\ob{CAlg}_{\mathbb{Z}}}(C(S,\mathbb{Z}), A^{\triangleright})=\ob{Map}_{\Cat{AnRing}_{\mathbb{Z}-\mathbb{E}_{\infty}}}(C(S,\mathbb{Z}), A)
\end{aligned}
\]
from morphisms between analytic rings to morphisms between the underlying $\mathbb{Z}-\mathbb{E}_{\infty}$-rings is an equivalence; here, we denoted by $A^{\triangleright}$ the condensed ring underlying $A$. By \cite[Lemma 2.1.3]{camargo2024analytic} the functor $\ob{D}: \Cat{AnRing}_{\Z-\mathbb{E}_{\infty}}\to \ob{CAlg}(\Cat{Pr}^L_{\ob{D}(\Z)})$ from $\Z-\mathbb{E}_{\infty}$-analytic rings to condensed $\mathbb{Z}$-linear presentable symmetric monoidal categories is fully faithful. By \cref{eqsns3k} we deduce that \cref{xj2jd09} is an equivalence as wanted.

For proving the statement about general $X$, note first that the functors sending an analytic ring $A$ to $\ob{Map}^+_{\Cat{Loc}^{\op}}(X,\mathrm{Sm}(\ob{D}(A)))$  and $\ob{Map}^+_{\Cat{Loc}^{\op}}(X,\mathrm{Sm}(\ob{D}(A)))$  satisfy universal $*$-descent (and so in particular $!$-descent). Thus, it suffices to prove the desired claim locally in the $!$-topology of $A$.  First, we show that \cref{xj2jd09} factors through $\ob{Map}^+_{\Cat{Loc}^{\op}}(X,\mathrm{Sm}(\ob{D}(A)))$. Indeed, let $f:\ob{AnSpec}(A) \to X_{\ob{Betti}}$ be a morphism of analytic stacks and let $S\to X$ be an epimorphism of condensed anima with $S$ a light profinite set. Then $S_{\ob{Betti}}\to  X_{\ob{Betti}}$ is an epimorphism of analytic stacks and then so is $S_{\ob{Betti}}\times_{X_{\ob{Betti}}} \ob{AnSpec}(A)\to \ob{AnSpec}(A)$. Thus, there is a $!$-cover $A\to B$ and a lift $\ob{AnSpec}(B) \to S_{\ob{Betti}}\times_{X_{\ob{Betti}}} \ob{AnSpec }(A)$. This provides the map desired map $C(S,\mathbb{Z})\to B$ making \cref{eqdj2idj} commute.  Now, let us show that  the map
\[
\ob{Map}(\ob{AnSpec}(A), X_{\ob{Betti}})\to  \ob{Map}_{\Cat{Loc}^{\op}}^+(X,\mathrm{Sm}(\ob{D}(A)))
\]
is surjective. Let $\mathrm{Sm}(\ob{D}(A))\to X $ be a morphism of locales in $\ob{Map}_{\Cat{Loc}^{\op}}^+(X,\mathrm{Sm}(\ob{D}(A)))$ corresponding to a $\ob{D}(\Z)$-linear symmetric monoidal functor $\ob{Shv}(X, \mathbb{Z}^{\ob{cond}})\to \ob{D}(A)$, by hypothesis there is a profinite set $S$, a surjective map $S\to X$ and  $!$-cover $A\to B$  with  a map $C(S,\mathbb{Z})\to B$ making \cref{eqdj2idj} commute.  By \cite[Theorem A]{aoki2023sheavesspectrumadjunction} this gives rise to a commutative square of $\ob{D}(\Z)$-linear symmetric monoidal categories
\[
\begin{tikzcd}
\ob{D}(B)  & \ob{D}(A)  \ar[l] \\ 
\ob{Shv}(S,\mathbb{Z}^{\ob{cond}}) \ar[u] & \ob{Shv}(X,\mathbb{Z}^{\ob{cond}}) \ar[u] \ar[l]
\end{tikzcd}
\]
Taking \v{C}ech nerves, and knowing that the functor $\Cat{AnRing} \to \ob{CAlg}(\Cat{Pr}^L_{\ob{D}(\Z)}), A \mapsto \ob{D}(A)$, preserves colimits (see \cite[Proposition 4.1.14]{SolidNotes}), we have a  cosimplicial diagram of symmetric monoidal categories
\[
( \ob{Shv}(S^{\times_X n+1 },\mathbb{Z}^{\ob{cond}}) )_{[n]\in \Delta} \to (\ob{D}(B^{\otimes_A n+1}))_{[n]\in \Delta}
\]
which by the case of profinite sets already proven corresponds to a  simplicial diagram of morphisms of analytic stacks
\[
(\ob{AnSpec}(B^{\otimes_A n+1}))_{[n]\in \Delta^{\op}} \to (S^{\times_X n+1}_{\ob{Betti}})_{[n]\in \Delta^{\op}}.
\]
Taking geometric realizations, and knowing that $\ob{AnSpec}(B)\to \ob{AnSpec}(A) $ is an epimorphism,  we get the desired map of analytic stacks
\[
\ob{AnSpec}(A)\to X_{\ob{Betti}}.
\]

It is  left to show that \cref{eqj2jdsn} is fully faithful. Since $X_{\ob{Betti}}$ is $0$-truncated (as it arises  from a $0$-truncated condensed anima), we only have to show that two maps $f,g:\ob{AnSpec}(A)\to X_{\ob{Betti}}$ inducing the same morphism of locales $\ob{Sm}(D(A))\to X$ must agree. Consider the map
\[
(f,g):\ob{AnSpec}(A) \to (X\times X)_{\ob{Betti}}=X_{\ob{Betti}}\times X_{\ob{Betti}}
\]
we want to see that this map factors through the diagonal. We know that this holds at the level of locales. Therefore, it suffices to prove the following claim: let $f: \ob{AnSpec}(A)\to X_{\ob{Betti}}$ be a morphism of analytic stacks such that the morphism of locales $\ob{Sm}(\ob{D}(A))\to X$ factors through a closed subspace $Z\subset X$, then $f$ factors through $f:\ob{AnSpec}(A)\to Z_{\ob{Betti}}\subset X_{\ob{Betti}}$. We can prove this claim locally after a $!$-cover of $A$ and after taking pullbacks along an epimorphism $S\to X$ with $S$ a light profinite set. We can then assume that $X=S$ is profinite in which case the claim is clear.
\end{proof}

\begin{corollary}\label{xjj93jd}
Let $X$ be a metrizable compact Hausdorff space of finite cohomological dimension and let $A$ be an analytic ring. The set $\ob{Map}(A,X_{\ob{Betti}})$ of morphisms of analytic stacks is naturally equivalent to the set of morphisms of locales
\[
F:\mathrm{Sm}(\ob{D}(A))\to X
\]
such that there is a $!$-cover $A\to B$  such that the functor $\ob{Shv}(X,\mathbb{Z})\to \ob{D}(B)$ preserves connective objects. 
\end{corollary}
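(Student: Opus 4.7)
The strategy is to match the criterion in the corollary against the one in Proposition \ref{xj29sj}; both are conditions on the locale morphism $F\colon \mathrm{Sm}(\ob{D}(A))\to X$, and we must show that they are equivalent. The forward direction is nearly formal: given a surjection $g\colon S\to X$ from a light profinite set, a $!$-cover $A\to B$, and a map $C(S,\mathbb{Z})\to B$ as in \cref{xj29sj}, the induced functor $\ob{Shv}(X,\mathbb{Z})\to \ob{D}(B)$ factors as
\[
\ob{Shv}(X,\mathbb{Z})\xrightarrow{g^\ast} \ob{Shv}(S,\mathbb{Z}) = \ob{D}(C(S,\mathbb{Z})) \to \ob{D}(B).
\]
The first factor is $t$-exact on abelian sheaves, and the second (base change of modules) is right $t$-exact, so the composition preserves connective objects.

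For the reverse direction, assume we are given a $!$-cover $A\to B$ such that $F\colon \ob{Shv}(X,\mathbb{Z})\to \ob{D}(B)$ preserves connective objects. Since $X$ is metrizable compact Hausdorff, pick a surjection $g\colon S\to X$ from a light profinite set. By \cref{xhsw82h} the map $g\colon S_{\ob{Betti}}\to X_{\ob{Betti}}$ is proper and descendable (this is exactly where the finite cohomological dimension of $X$ enters). Consequently, the algebra $g_\ast \underline{\mathbb{Z}}_S\in \ob{CAlg}(\ob{Shv}(X,\mathbb{Z}))$ is descendable, lies in cohomological degree zero (the fibres of $g$ are profinite and have no higher $\mathbb{Z}$-cohomology), and descent along $g$ yields $\ob{Mod}_{g_\ast \underline{\mathbb{Z}}_S}(\ob{Shv}(X,\mathbb{Z})) = \ob{Shv}(S,\mathbb{Z})$. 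Now set $R:=F(g_\ast \underline{\mathbb{Z}}_S)\in \ob{CAlg}(\ob{D}(B))$: by hypothesis it is connective, and it is descendable over $B$ because $F$ is symmetric monoidal and colimit-preserving. In the light setting, a connective $\mathbb{E}_\infty$-algebra canonically acquires an animated structure; we equip $R$ with the induced analytic ring structure from $B$ to obtain an analytic ring $B'$ with $\ob{D}(B') = \ob{Mod}_R(\ob{D}(B))$. Descendability of $R$ then implies that $A\to B\to B'$ is a $!$-cover. Finally, the symmetric monoidal functor
\[
\ob{Shv}(S,\mathbb{Z}) = \ob{Mod}_{g_\ast\underline{\mathbb{Z}}_S}(\ob{Shv}(X,\mathbb{Z})) \longrightarrow \ob{Mod}_R(\ob{D}(B)) = \ob{D}(B')
\]
obtained by base change along $F$ is, by the profinite case already established inside the proof of \cref{xj29sj} (the equivalence \cref{xj2jd09}), given by a morphism of analytic rings $C(S,\mathbb{Z})\to B'$, and one checks directly that the resulting square of locales is the one in \cref{eqdj2idj}.

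\textbf{Main obstacle.} The delicate point is the passage from the connective descendable $\mathbb{E}_\infty$-algebra $R\in \ob{CAlg}(\ob{D}_{\geq 0}(B))$ to an honest analytic ring $B'$ with the expected formula $\ob{D}(B') = \ob{Mod}_R(\ob{D}(B))$, together with the compatibility between the locale map $\mathrm{Sm}(\ob{D}(B'))\to \mathrm{Sm}(\ob{D}(B))$ and the change-of-rings functor. This is a descendable (rather than idempotent) analogue of \cref{xi2msnm}, and it is ultimately the light-condensed automaticity for connective $\mathbb{E}_\infty$-algebras that makes this step work. Once this is in hand, the two criteria match exactly via \cref{xj29sj}.
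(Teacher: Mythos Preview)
Your overall strategy matches the paper's: reduce to Proposition~\ref{xj29sj} and show that the ``connective-preserving'' condition is equivalent to the existence of the commutative square \eqref{eqdj2idj}. Your forward direction is identical to the paper's converse direction.

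The difference lies in the reverse direction, specifically in how the animated ring $B'$ is produced. The paper does \emph{not} appeal to a general automaticity statement for connective $\mathbb{E}_\infty$-algebras. Instead, it chooses the surjection $S\to X$ carefully: taking $S$ to be the inverse limit over the poset of finite \emph{closed} covers of $X$ (as in the proof of \cref{xk29sm}), one has $h_\ast 1_S = \varinjlim_i C_i$ where each $C_i$ is a finite product of \emph{idempotent} connective algebras in $\ob{Shv}(X,\mathbb{Z})$ (namely, pushforwards along closed immersions $Z_j\hookrightarrow X$). The hypothesis then forces each $B\otimes_{\mathbb{Z}_X} C_i$ to be connective and idempotent over $B$, so \cref{xi2msnm} applies termwise to produce animated rings, and the colimit $C = \varinjlim_i B\otimes_{\mathbb{Z}_X} C_i$ is then an animated ring as a colimit of such. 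The paper also uses \cite[Proposition 6.19]{scholze6functors} to transfer primness and descendability of $h$ to the base-changed map $\ob{AnSpec}(C)\to \ob{AnSpec}(B)$.

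Your approach with an \emph{arbitrary} light profinite $S\to X$ is cleaner but leans on the assertion that any connective $\mathbb{E}_\infty$-algebra $R\in\ob{CAlg}(\ob{D}_{\geq 0}(B))$ canonically upgrades to an animated ring with $\ob{D}(B')=\ob{Mod}_R(\ob{D}(B))$. This is not what \cref{xi2msnm} says---that lemma uses idempotency essentially, since the argument proceeds by identifying $\ob{Mod}_D(\ob{D}_{\geq 0}(A))$ as a \emph{full} subcategory closed under internal Homs. While the general automaticity you invoke is plausible (connective $\mathbb{E}_\infty$-$\mathbb{Z}$-algebras in light condensed abelian groups should coincide with condensed animated rings, since free objects agree), you should either supply a precise reference for it or, more in the spirit of what the paper actually does, specialize $S$ to the cover built from finite closed covers so that \cref{xi2msnm} applies directly to the idempotent pieces.
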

\begin{proof}
By \cref{xj29sj} we must identify the space $\ob{Map}^+_{\Cat{Loc}^+}(X,\mathrm{Sm}(\ob{D}(A)))$ with those maps $F$ as above that locally on the $!$-topology of $A$ preserve connective covers. Let $F:\mathrm{Sm}(\ob{D}(A))\to X$ be a morphism of locales and let $A\to B$ be a $!$-cover such that the composite $G: \mathrm{Sm}(\ob{D}(B))\to \mathrm{Sm}(\ob{D}(A))\to X$ induces a morphism of symmetric monoidal categories that preserves connective objects. Let $S\to X$ be an epimorphism from a  profinite set, then $h:S_{\ob{Betti}}\to X_{\ob{Betti}}$ is prim and descendable by \cref{xhsw82h}. Then, \cite[Proposition 6.19]{scholze6functors} implies that $\ob{AnSpec}(B) \times_{X_{\ob{Betti}}} \ob{S}_{\ob{Betti}}\to \ob{AnSpec}(B)$ is also prim and descendable and so a $!$-cover. On the other hand, we can find $S$ such that  $h_* 1_{S}=\varinjlim_{i} C_i$ is a filtered colimit of finite products of connective idempotent algebras in $\ob{Shv}(X,\mathbb{Z})$ (consider $S$ to be the limit of the poset of finite closed covers of $X$), by \cref{xi2msnm}, given a connective idempotent algebra $\mathbb{Z}_{X}\to C'$ the pullback $B\otimes_{\mathbb{Z}_X} C'$ is connective and so has a natural animated ring structure. It follows that the fiber product $\ob{AnSpec}(B) \times_{X_{\ob{Betti}}} \ob{S}_{\ob{Betti}}$ is the analytic spectrum of the analytic ring $ C=\varinjlim_{i} B\otimes_{\mathbb{Z}_{X}} C_i$, and as $\ob{AnSpec}(C)\to \ob{AnSpec}(B)$ is a $!$-cover, the map $F$ fits in a commutative square
\[
\begin{tikzcd}
\ob{AnSpec}(C) \ar[r] \ar[d] & S_{\ob{Betti}} \ar[d] \\
\ob{AnSpec}(A) \ar[r] & X_{\ob{Betti}}
\end{tikzcd}
\]
where the left vertical arrow is a $!$-cover, so it belongs to $\ob{Map}^+_{\Cat{Loc}^+}(X,\mathrm{Sm}(\ob{D}(A)))$.  Conversely, suppose that  $F\colon \mathrm{Sm}(\ob{D}(A))\to X$ is in $\ob{Map}^+_{\Cat{Loc}^+}(X,\mathrm{Sm}(\ob{D}(A)))$, then we can find a $!$-cover $A\to B$ and a surjective map from a profinite set $S\to X$ that fits in the diagram \cref{eqdj2idj}. Since the pullback along $\ob{Shv}(X,\mathbb{Z})\to \ob{Shv}(S,\mathbb{Z})$ preserves connective objects (and is even $t$-exact), the functor $\ob{Shv}(X,\mathbb{Z})\to \ob{D}(B)$ also preserves connective objects proving what we wanted. 
\end{proof}

We apply the previous abstract nonsense in order to construct a map from the analytic spectrum of (suitable) bounded $\Q_p$-algebras to their Berkovich spectrum. We need a preliminary lemma, which will allow us to invoke the previous corollary.

\begin{definition}\label{xjw92j}
A morphism of Gelfand $\Q_p$-algebras $A\to B$ is called \textit{Berkovich \'etale} if, locally after a strict rational cover of $\mathcal{M}(B)$ (that is, a cover that can be refined by an open subcover), it factors as a composite of finite \'etale\footnote{By definition, a finite \'etale map $A\to B$ is one induced by a finite \'etale map $A(\ast)\to C$, i.e., $B=C\otimes_{A(\ast)} A$.} maps and rational localizations on the Berkovich spectrum.  A morphism $A\to B$ of Gelfand algebras  is called \textit{Berkovich pro-\'etale} if $B=\varinjlim_{i} A_i$ is a filtered colimit of Berkovich \'etale maps.  We say that it is \textit{uniformly Berkovich \'etale}, resp.\ \textit{uniformly Berkovich pro-\'etale}, if its uniform completion $A^u\to B^u$ is the uniform completion of a Berkovich \'etale, resp. Berkovich pro-\'etale map $A'\to B'$.
\end{definition}

\begin{lemma}\label{xk29sm}
  Let $A$ be a  Gelfand $\mathbb{Q}_{p}$-algebra.
  One can construct a Berkovich pro-\'etale map $A\to A^w$ such that $\mathcal{M}(A^w)$ is profinite and $\mathcal{M}(A^w)\to \mathcal{M}(A)$ is surjective.
  Moreover, if $\mathcal{M}(A)$ is metrizable, $A^w$ is such that $\mathcal{M}(A^w)$ is a light profinite set.

\end{lemma}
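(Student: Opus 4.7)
The plan is to build $A^w$ as a filtered colimit of finite products of Berkovich rational localizations attached to sufficiently many Laurent-type cuts, so that its Berkovich spectrum becomes totally disconnected while surjecting onto $\mathcal{M}(A)$. As the atomic building blocks I take, for each pair $(f,r)$ with $f\in A^{\leq 1}$ and rational $r\in \Q_{>0}$, the finite product of Berkovich rational localizations
\[
B_{(f,r)}:=A_{\{|f|\le r\}}\times A_{\{|f|\ge r\}},
\]
existing by \cref{PropUniversalPropertyRational}. The map $A\to B_{(f,r)}$ is Berkovich \'etale in the sense of \cref{xjw92j}: the decomposition $\mathcal{M}(B_{(f,r)})=\{|f|\le r\}\sqcup\{|f|\ge r\}$ is a strict rational cover on whose two components the map reduces to a rational localization. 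Then I choose an index set $I\subseteq A^{\leq 1}\times \Q_{>0}$ such that the functions $f$ occurring in $I$ separate points of $\mathcal{M}(A)$ via their Berkovich seminorms (the whole product $A^{\leq 1}\times \Q_{>0}$ always works), and set
\[
A^w:=\varinjlim_{J\subseteq I\text{ finite}}\bigotimes_{(f,r)\in J,\,A}B_{(f,r)}.
\]
Each finite tensor is a composite of base changes of Berkovich \'etale maps, hence itself Berkovich \'etale, and the filtered colimit is Berkovich pro-\'etale by \cref{xjw92j}. All algebras involved are Gelfand by \cref{xnbsyw}.

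For a finite subset $J\subseteq I$, expanding the tensor of products gives $\bigotimes_{(f,r)\in J, A}B_{(f,r)}\cong \prod_{\epsilon\colon J\to\{\pm\}}A_{Z_\epsilon}$ where $Z_\epsilon:=\bigcap_{(f,r)\in J}\{|f|\,\epsilon\, r\}$, so $\mathcal{M}(A^w_J)=\bigsqcup_\epsilon Z_\epsilon$ is a finite disjoint union of closed subspaces of $\mathcal{M}(A)$ covering it. By \cref{LemmaFilteredColimits} we have $\mathcal{M}(A^w)=\varprojlim_J \mathcal{M}(A^w_J)$, from which surjectivity onto $\mathcal{M}(A)$ follows by compactness of the non-empty fibers. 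Total disconnectedness is verified pointwise: given distinct $x,y\in\mathcal{M}(A^w)$ with images $x',y'\in \mathcal{M}(A)$, either $x'\ne y'$, in which case separation by the chosen family provides $(f,r)\in I$ with $|f|_{x'}<r<|f|_{y'}$ and $x,y$ fall into opposite clopens of $\mathcal{M}(A^w_{\{(f,r)\}})$, or $x'=y'$ and then $x,y$ must differ at some finite stage $\mathcal{M}(A^w_J)$, hence lie in disjoint clopen components of that disjoint union.

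In the metrizable case, second countability of $\mathcal{M}(A)$ allows one to extract a countable subfamily $\{f_n\}\subset A^{\leq 1}$ whose seminorms jointly separate points (since $\mathcal{M}(A)$ embeds in $[0,1]^{A^{\leq 1}}$ and is second countable, a countable subproduct suffices for injectivity, hence for a topological embedding). Taking $I:=\{(f_n,r):n\in\N,\ r\in\Q_{>0}\}$, the directed system of finite subsets admits a cofinal $\N$-tower, so $\mathcal{M}(A^w)$ becomes a countable inverse limit of compact metrizable spaces and is therefore compact metrizable; combined with total disconnectedness this yields the light profinite conclusion. The main conceptual step is the selection of the separating family (countable under metrizability); the rest is a fairly mechanical application of \cref{PropUniversalPropertyRational,LemmaFilteredColimits,xnbsyw} together with the stability of Berkovich \'etaleness under finite products and base change.
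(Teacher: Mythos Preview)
Your proof is correct and takes essentially the same approach as the paper: both construct $A^w$ as a filtered colimit of finite products of rational localizations, so that $\mathcal{M}(A^w)$ is the standard profinite cover $\varprojlim_J\bigsqcup_i Z_i$ of the compact Hausdorff space $\mathcal{M}(A)$; the paper runs over \emph{all} finite rational covers while you use only Laurent covers, making the separation argument explicit where the paper simply invokes the standard construction. One small imprecision: the rational localizations in \cref{PropUniversalPropertyRational} compare $|f|$ to $|g|$ for elements $g\in A$, so $\{|f|\le r\}$ is only a rational subspace for $r\in p^{\Z}$ (or $r\in p^{\Q}$ via $\{|f|\le p^{a/b}\}=\{|f^b|\le|p^{-a}|\}$), not for arbitrary $r\in\Q_{>0}$---but since $p^{\Q}$ is dense in $\R_{>0}$ this does not affect the argument.
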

\begin{proof}
Consider the filtered diagram $\mathcal{J}$ consisting on finite covers of $\mathcal{M}(A)$ by Berkovich rational subspaces and let $\{A_{I}\}_{I\in \mathcal{J}}$ be the associated filtered system of bounded $\Q_p$-algebras where $A_I=\prod_{i\in I} A_i$ and $A\to A_i$ is the corresponding Berkovich rational localization.\footnote{More precisely, $\mathcal{J}$ consists of all morphisms $A\to B$ where $B=\prod_{i\in I} A_i$ for some rational localizations $A\to A_i$. This class of $A$-algebras is stable under finite colimits, and hence filtered.} We claim that $A^w=\varinjlim_{i} A_i$ does the job. First, by \cref{LemmaFilteredColimits}, we have that 
\[
\mathcal{M}(A^w)=\varprojlim_{I\in \mathcal{J}} \mathcal{M}(A_I)=\varprojlim_{I\in \mathcal{J}} \coprod_{i\in I} \mathcal{M}(A_i)
\]
so it is indeed a profinite set that surjects onto $\mathcal{M(A)}$ (this is a standard construction of a profinite cover of a compact Hausdorff space). Note that if $\mathcal{M}(A)$ is metrizable the poset $\mathcal{J}$ has a cofinal countable subdiagram, so $\mathcal{M}(A^w)$ is also light (using that the inverse limit does not change if one replaces $\mathcal{M}(A_i)$ by its profinite set of connected components, which is again light as it admits a surjection from a metrizable compact Hausdorff space). 
\end{proof}

\begin{proposition}\label{xhs82j}\label{xhs83j}
Let $A$ be a Gelfand $\mathbb{Q}_{p}$-algebra with $\mathcal{M}(A)$ a metrizable compact Hausdorff space with finite cohomological dimension. The natural map of locales $\mathrm{Sm}(\ob{D}(A))\to \mathcal{M}(A)$ of \cref{PropMapSmashing} induces a natural map of analytic stacks over $\mathbb{Q}_{p,\solid}$
\begin{equation}\label{eqsjjkri}
\ob{AnSpec}(A)\to \mathcal{M}(A)_{\ob{Betti}}\times_{\ob{AnSpec(\Z)}}{\ob{AnSpec}(\Q_{p,\solid})}
\end{equation}
satisfying universal $!$-descent.  Furthermore, there is a descendable Berkovich pro-\'etale cover $A\to B$ such that $\mathcal{M}(B)$ is a light  profinite set. 
\end{proposition}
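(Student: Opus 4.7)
\emph{Proof proposal.} The plan is to first produce the descendable Berkovich pro-\'etale cover (the ``Furthermore'' clause) and then use it, together with the profinite case already settled in \cref{xj29sj}, to construct the map of analytic stacks and to verify universal $!$-descent. By \cref{xk29sm} there exists $A \to B$ with $B = \varinjlim_{I \in \mathcal{J}} \prod_{i \in I} A_i$ indexed by the filtered poset of finite Berkovich rational covers of $\mathcal{M}(A)$, such that $\mathcal{M}(B)$ is a light profinite set surjecting onto $\mathcal{M}(A)$. To promote the map $A \to B$ from mere $\ast$-descent (guaranteed by \cref{LemDescentAr} and the descent proof in \cref{PropMapSmashing}) to descendability, I would argue that each finite rational cover $A \to \prod_{i \in I} A_i$ has descendability index bounded by a universal constant, as follows from the two-term \v{C}ech resolutions for simple Laurent and balanced covers underlying the proof of \cref{PropMapSmashing}; descendability then passes to the filtered colimit because $\mathcal{M}(A)$ is metrizable of finite cohomological dimension, which allows approximating the limiting cosimplicial complex by a finite stage with uniformly bounded homotopy, in the spirit of \cite[Proposition II.1.1]{ScholzeRLL} cited in \cref{xhsw82h}.

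Having the descendable cover, I would construct the map of analytic stacks as follows. Writing $S := \mathcal{M}(B)$, the Betti stack $S_{\Betti}$ equals $\ob{AnSpec}(C(S,\Z))$. By the profinite case established in the first paragraph of the proof of \cref{xj29sj}, the canonical locale morphism $\mathrm{Sm}(\ob{D}(B)) \to S$ of \cref{PropMapSmashing} corresponds uniquely to a map of analytic rings $C(S, \Z) \to B$, encoding the decomposition of $B$ along clopens of $S$ via idempotent rational localizations; this gives a morphism $\ob{AnSpec}(B) \to S_{\Betti}$. Composing with the Betti realization of the surjection $\mathcal{M}(B) \to \mathcal{M}(A)$ (and pairing with the structure map to $\ob{AnSpec}(\Q_{p,\solid})$), and then descending along the $!$-cover $\ob{AnSpec}(B) \to \ob{AnSpec}(A)$, yields the required map $\ob{AnSpec}(A) \to \mathcal{M}(A)_{\Betti} \times_{\ob{AnSpec}(\Z)} \ob{AnSpec}(\Q_{p,\solid})$. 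Compatibility on the \v{C}ech nerve is automatic because $\mathcal{M}(A)_{\Betti}$ is $0$-truncated, so any two maps into it inducing the same morphism of locales must agree, by the fully faithful clause of \cref{xj29sj}.

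For universal $!$-descent, given any test morphism $\ob{AnSpec}(R) \to \mathcal{M}(A)_{\Betti} \times_{\ob{AnSpec}(\Z)} \ob{AnSpec}(\Q_{p,\solid})$, I would show the base change $\ob{AnSpec}(A) \times_{\mathcal{M}(A)_{\Betti}} \ob{AnSpec}(R) \to \ob{AnSpec}(R)$ is a $!$-cover (and similarly after any further pullback). Pulling back further along $A \to B$ reduces the claim to the analogous statement for $\ob{AnSpec}(B) \to \mathcal{M}(B)_{\Betti}$; there the map decomposes along the clopens of $S$ into rational localizations, so the pullback along any map $C(S, \Z) \to R^{\triangleright}$ is visibly a $!$-cover by construction of the Betti stack $S_{\Betti}$ and the idempotency of rational localizations.

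The main obstacle, as indicated, is the upgrade from $\ast$-descent to descendability for $A \to B$: this is where both metrizability and finite cohomological dimension of $\mathcal{M}(A)$ enter essentially, by bounding the descendability index uniformly along the pro-system of finite rational covers. Once this is in place, the rest of the argument is essentially formal, reducing everything to the profinite case already handled in \cref{xj29sj}.
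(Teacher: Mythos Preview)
Your overall strategy matches the paper's: build the Berkovich pro-\'etale cover $A\to B$ with profinite $\mathcal{M}(B)$ from \cref{xk29sm}, show it is descendable, and reduce universal $!$-descent to the profinite case. The construction of the map via the profinite case of \cref{xj29sj} and descent along the $!$-cover $A\to B$ is fine. However, the final step of your universal $!$-descent argument has a genuine gap.

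Once reduced to showing that $\ob{AnSpec}(B)\to S_{\Betti}$ (with $S=\mathcal{M}(B)$ light profinite) satisfies universal $!$-descent, your appeal to ``idempotency of rational localizations'' does not do the job. Idempotency of $A\to A_U$ only says these are open immersions; it gives no control over whether $C(S,\Q_p)\to B$ is descendable, and decomposing along clopens of $S$ does not reduce to anything finite. The actual content here is nontrivial: the paper first replaces $B$ by its uniform completion $B^u$ (a Banach $\Q_p$-algebra with the same Berkovich spectrum), writes $S=\varprojlim_n S_n$ with $S_n$ finite, and shows that each $C(S_n,\Q_p)\to B^u$ is $1$-descendable by producing a $C(S_n,\Q_p)$-module section via the Hahn--Banach theorem (applied to each factor $\Q_p\to (B^u)_s$). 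Then \cite[Proposition 2.7.2]{mann2022p} upgrades this to $2$-descendability of $C(S,\Q_p)\to B^u$, hence universal $!$-descent. Without the reduction to Banach algebras and the Hahn--Banach input, the profinite case is not ``visible''.

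A secondary remark: your argument for descendability of $A\to B$ via uniformly bounded indices for finite rational covers is harder to make precise than the paper's route. The paper observes that $B = A\otimes_{\Z_{\mathcal{M}(A)}} f_* 1_S$ (for $f\colon S\to\mathcal{M}(A)$ the surjection), and that $f_* 1_S$ is descendable in $\ob{D}(\mathcal{M}(A)_{\Betti})$ by \cref{xhsw82h}---this is exactly where finite cohomological dimension of $\mathcal{M}(A)$ enters. Descendability then pulls back along the locale map to give descendability of $A\to B$, with no limiting argument on the $A$-side.
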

\begin{proof}
Consider the light profinite set $S$ constructed from the poset of Berkovich rational localizations of $\mathcal{M}(A)$ as in \cref{xk29sm}. Then the map $f:S\to \mathcal{M}(A)$ is surjective, and by the proof of \cref{xjj93jd} the pullback  $B= A\otimes_{\mathbb{Z}_{\mathcal{M}(A)}} f_* 1_{S}$ is an analytic ring which is actually Berkovich pro-\'etale by the construction in the proof of \cref{xk29sm}, and that is descendable (so a $!$-cover) by \cref{xhsw82h}. It is clear that the map
\[
\ob{Shv}(\mathcal{M}(A),\mathbb{Z})\to \ob{D}(B)
\] 
preserves connective objects, so by \cref{xjj93jd} we obtain the desired map 
\[
\ob{AnSpec}(A)\to \mathcal{M}(A)_{\ob{Betti}}
\]
 of analytic stacks.
 
It is left to see that \cref{eqsjjkri} satisfies universal $!$-descent.    It suffices to check the statement after base change along a proper and descendable morphism $S_{\ob{Betti}}\to \mathcal{M}(A)_{\ob{Betti}}$ of Betti stacks with $S$ a light profinite set. As $\mathcal{M}(A)$ is a metrizable compact Hausdorff space of finite cohomological dimension, any surjection from $S$ light profinite is proper and descendable.

Because $\ob{AnSpec}(A)\times_{\mathcal{M}(A)_{\ob{Betti}}} S_{\ob{Betti}}$ is again represented by a Gelfand ring $B$ by the previous discussion, and $\mathcal{M}(B)=S$,    we can assume  without loss of generality that $\mathcal{M}(A)$ is light profinite. It suffices to show that the morphism $\ob{AnSpec}(A^u)\to \ob{AnSpec}(A)\to \mathcal{M}(A)=\mathcal{M}(A^u)$ satisfies $!$-descent. Hence, we can further assume that $A$ is a Banach $\mathbb{Q}_p$-algebra. Write $S=\varprojlim_{n} S_n$ as a countable limit of finite sets with surjective transition maps, it suffices to show that each map $C(S_n,\mathbb{Q}_p)\to A$ is $1$-descendable, then \cite[Proposition 2.7.2]{mann2022p} will show that $C(S,\mathbb{Z}) \otimes \mathbb{Q}_p \to A$ is $2$-descendable. To prove that $C(S_n, \mathbb{Q}_p)\to A$ is $1$-descendable it suffices to find a section as $C(S_n,\mathbb{Q}_p)$-modules. Now the map $C(S_n,\mathbb{Q}_p)\to A$ factors as a product of maps $\mathbb{Q}_p\to A_s$ with $s\in S_n$ corresponding to the fibers of $S\to S_n$. Then, it suffices to show that for a Banach $\mathbb{Q}_p$-algebra $A$ one has a section of $\mathbb{Q}_p\to A$ as solid $\Q_p$-modules. This is just a consequence of the Hahn--Banach theorem.
 \end{proof}

 One has the expected equivalence between \'etale sites on Gelfand ring and its uniform completion:
  
\begin{proposition}\label{x290sm}
Let $A$ be a Gelfand ring with uniform completion $A^u$. Let $X=\ob{AnSpec}(A)$ and $Y=\AnSpec(A^u)$. Then the natural maps of sites
\[
 Y_{\fet}\to X_{\fet} \mbox{ and } Y_{\mathrm{Berk}-\et}\to X_{\mathrm{Berk}-\et} \mbox{ and } Y_{\mathrm{Berk}-\proet}\to X_{\mathrm{Berk}-\proet} 
\]
are equivalences. 
\end{proposition}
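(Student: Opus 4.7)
I will reduce all three equivalences to the single case of finite étale maps. By Definition \ref{xjw92j}, Berkovich étale maps are locally (after a strict rational cover) composites of finite étale maps and Berkovich rational localizations, and Berkovich pro-étale maps are filtered colimits of Berkovich étale ones. The pro-étale case thus follows from the étale case via Lemma \ref{LemmaFilteredColimits}, while the Berkovich étale case reduces to the finite étale case together with the correspondence of Berkovich rational localizations. This last correspondence is immediate from Lemmas \ref{LemmaRationalLocalizations} and \ref{LemmaIndependenceRationalLocalizations}, since $\mathcal{M}(A) = \mathcal{M}(A^u)$ identifies rational subspaces and the associated rational localizations of $A$ and $A^u$ have the same uniform completion.

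\textbf{Finite étale equivalence.} It remains to prove that the natural functor $A_{\fet} \to A^u_{\fet}$ is an equivalence. By the footnote to Definition \ref{xjw92j}, finite étale Gelfand $A$-algebras are precisely base changes of finite étale algebras over $A(\ast)$, and similarly for $A^u$, so it suffices to show $A(\ast)_{\fet} \simeq A^u(\ast)_{\fet}$. The key input is Lemma \ref{xsh89j}: taking $r = |p|$ and using the identification $\Q_p\langle T\rangle_{\leq |p|} \cong \Q_p\langle S\rangle_{\leq 1}$ via $T \mapsto pS$ to see $A^{\leq |p|}(\ast) = pA^{\leq 1}(\ast)$, both pairs $(A^{\leq 1}(\ast), pA^{\leq 1}(\ast))$ and $(A^{u,\leq 1}(\ast), pA^{u,\leq 1}(\ast))$ are Henselian. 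By Proposition \ref{xsu8rf}(2), $A^{u,\leq 1}$ is the $p$-adic completion of $A^{\leq 1}$, so their reductions modulo $p$ coincide. The classical equivalence of finite étale covers under quotient by a Henselian ideal then gives $A^{\leq 1}(\ast)_{\fet} \simeq A^{u,\leq 1}(\ast)_{\fet}$. Inverting $p$, together with spreading out finite étale algebras over the generic fibers to finite étale integral models on Berkovich rational opens, yields the desired $A(\ast)_{\fet} \simeq A^u(\ast)_{\fet}$.

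\textbf{Main obstacle.} The most delicate step will be the passage from integral finite étale covers to their generic fibers after inverting $p$, i.e.\ the spread-out argument above. This is classical for uniform $\Q_p$-Banach algebras, where it reduces (via Remark \ref{rk:uniformity-for-banach-rings}) to statements about $p$-adically complete, $p$-torsion free, integrally closed $\Z_p$-algebras. Since $A^u$ is such an algebra (Proposition \ref{xnbsyw}), the spread-out is available for $A^u$; it is then transferred to $A$ using the rational-localization correspondence above and Proposition \ref{xsu8rf}, which ensures that the power-bounded subrings $A^{\leq 1}$ and $A^{u,\leq 1}$ have the same $p$-completion and hence carry the same finite étale theory locally on $\mathcal{M}(A)$.
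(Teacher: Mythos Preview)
Your reduction strategy—handling rational localizations via Lemmas \ref{LemmaRationalLocalizations} and \ref{LemmaIndependenceRationalLocalizations}, passing to filtered colimits for the pro-étale case, and thereby reducing everything to the finite étale equivalence—is correct and matches the paper's approach exactly.

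The gap is in your finite étale argument. Your steps (1)--(4) correctly establish $A^{\leq 1}(\ast)_{\fet} \simeq A^{u,\leq 1}(\ast)_{\fet}$ via the Henselian pair and matching mod-$p$ reductions. But the passage from this \emph{integral} equivalence to $A(\ast)_{\fet} \simeq A^u(\ast)_{\fet}$ does not follow. The ``spreading out'' you invoke—extending a finite étale $A^u(\ast)$-algebra to a finite étale $A^{u,\leq 1}(\ast)$-algebra, even locally on $\mathcal{M}(A)$—is not available for a general uniform Banach algebra (it would need something like almost purity or noetherian formal models, neither of which applies here). And even granting it for $A^u$, your transfer back to $A$ via rational localizations is circular: gluing the local pieces over $\mathcal{M}(A)$ would already require full faithfulness of $A(\ast)_{\fet} \to A^u(\ast)_{\fet}$, which is part of what you are trying to prove.

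The paper avoids the detour through integral models entirely. It invokes \cite[Proposition 5.4.5.3]{gabber2003almost}: if $(R, tR)$ is a Henselian pair, then base change along $R[1/t] \to \widehat{R}_t[1/t]$ induces an equivalence on finite étale algebras. Applying this with $R = A^{\leq 1}(\ast)$ and $t = p$ (Henselian by Lemma \ref{xsh89j}), and using $A = A^{\leq 1}[1/p]$ and $A^u = (A^{\leq 1})^{\wedge_p}[1/p]$, one obtains $A(\ast)_{\fet} \simeq A^u(\ast)_{\fet}$ directly. The Gabber--Ramero result is proved by Elkik-type approximation of étale algebras, not by spreading out to integral models; that approximation technique is the missing ingredient in your route.
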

\begin{proof}
  By the same argument as in \cite[Lemma 15.6]{scholze_etale_cohomology_of_diamonds} it suffices to deal with the finite \'etale case, namely, Berkovich \'etale maps are locally in the Berkovich topology compositions of rational localizations and finite \'etale maps, we  have $\mathcal{M}(A^u)=\mathcal{M}(A)$ by definition (identifying rational localizations by \cref{LemmaRationalLocalizations}) and the case of Berkovich pro-\'etale maps follows by passing to limits. For the equivalences between finite \'etale maps,  by \cite[Proposition 5.4.5.3]{gabber2003almost} it suffices to see that $A^{\leq 1}(\ast)$ is Henselian along $(p)$, which follows by \cref{xsh89j}.
\end{proof} 

\subsection{Fredholm property of Gelfand rings}\label{sss:Fredholm}

Let us recall the definition of \textit{Fredholm} analytic ring. In the following, for an analytic ring $A$, we denote by $A^{\triangleright}$ its underlying condensed ring.

\begin{definition}\label{xye73ie}
An analytic ring $A$ is said to be \textit{Fredholm} if all dualizable $A$-module are discrete, i.e.\ if they arise from perfect $A^{\triangleright}(*)$-modules via the fully-faithful embedding
\[
\ob{D}^{\delta}(A^{\triangleright}(*)) \hookrightarrow \ob{D}(A)
\]
from the derived $\infty$-category $\ob{D}^{\delta}(A^{\triangleright}(*))$ of classical (i.e. non-condensed) $A^{\triangleright}(*)$-modules.  
\end{definition}

In this section, we allow non-induced analytic ring structures on bounded rings in the sense of \cite[Definition 2.6.10]{camargo2024analytic}, i.e., the analytic ring structure is assumed to be obtained by making elements in the underlying ring solid. To have a clear terminology,  we call these analytic rings over $\Q_{p,\solid}$ bounded affinoid $\Q_p$-algebras (as in \cite[Definition 2.6.10]{camargo2024analytic}), reserving the notion ``bounded $\Q_p$-algebras'' for bounded affinoid rings over $\Q_{p,\solid}$ whose analytic ring structure is induced from $\Q_{p,\solid}$.

\begin{lemma}\label{sec:fredh-prop-gelf-nak}
Let $A$ be a bounded affinoid $\Q_p$-algebra and let $K\in D(A)$ be static. Assume that $K$ is a finitely presented solid $\pi_0(A)$-module. If $\pi_0(K\otimes_{A}A^{\dagger-\ob{red}})=0$, then $K=0$. 
\end{lemma}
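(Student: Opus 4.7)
The plan is to run a Nakayama-type argument, using as a substitute for ``$\Nil^\dagger(A)$ lies in the Jacobson radical'' the Henselian property of the pair $(A^{\leq 1}(\ast), A^{\leq r}(\ast))$ for $r<1$ from \Cref{xsh89j}. First I would reduce to the case that $A$ is static by replacing $A$ with $\pi_0(A)$: the subobjects $A^{\leq r}$ (and hence $\Nil^\dagger(A)$) depend only on $\pi_0(A)$ by \Cref{xbs82kj}(2), and since $K$ is a static $\pi_0(A)$-module, right exactness of the derived tensor product yields a surjection $\pi_0(K\otimes_A A^{\dagger-\red})\twoheadrightarrow K\otimes_{\pi_0(A)}\pi_0(A)^{\dagger-\red}$, so the vanishing hypothesis is preserved. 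From now on I assume $A$ is static and fix a finite presentation $A^m\xrightarrow{\psi} A^n\xrightarrow{\phi} K\to 0$ in solid $A$-modules.

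The key preliminary step is to show that for every $r>0$ the restriction of $\phi$ to $(A^{\leq r})^n\subseteq A^n$ is still surjective onto $K$. Since $\Nil^\dagger(A)\subseteq A$ is an ideal (\cite[Proposition 2.6.9(3)]{camargo2024analytic}), tensoring the SES $0\to \Nil^\dagger(A)\to A\to A^{\dagger-\red}\to 0$ with $K$ and taking $\pi_0$, the hypothesis forces the map $\pi_0(\Nil^\dagger(A)\otimes_A K)\to K$ to be surjective. Precomposing with the surjection $\Nil^\dagger(A)^n = \pi_0(\Nil^\dagger(A)\otimes_A A^n)\twoheadrightarrow \pi_0(\Nil^\dagger(A)\otimes_A K)$ coming from $A^n\twoheadrightarrow K$, we conclude that $\Nil^\dagger(A)^n\to K$ is surjective; the inclusion $\Nil^\dagger(A)=A^{\leq 0}\subseteq A^{\leq r}$ (\Cref{LemmaAcircicrr}) then gives surjectivity of $(A^{\leq r})^n\to K$.

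Now fix some $r<1$, say $r=|p|$. Since $\ast$ is an extremally disconnected (light) profinite set, $\mathbb Z$ is projective in (light) solid abelian groups, so the functor $M\mapsto M(\ast)$ is exact; in particular the surjection $(A^{\leq r})^n\twoheadrightarrow K$ remains surjective on $\ast$-sections. Lifting the generators $k_i := \phi(e_i)\in K(\ast)$ through this surjection produces an $n\times n$ matrix $W=(w_{ij})$ with entries in $A^{\leq r}(\ast)$ such that $\sum_j w_{ij}k_j = k_i$ for every $i$, i.e.\ $(I-W)\vec k = 0$ in $K(\ast)^n$. Cramer's rule in the classical $A(\ast)$-module $K(\ast)$ then gives $\det(I-W)\cdot k_i = 0$ for each $i$. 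Expanding the determinant, the identity permutation contributes $\prod_i(1-w_{ii})\in 1+A^{\leq r}(\ast)$, while every nontrivial permutation has at least two non-fixed points and so contributes a term in $A^{\leq r^2}(\ast)\subseteq A^{\leq r}(\ast)$; hence $\det(I-W)=1+\epsilon$ with $\epsilon\in A^{\leq r}(\ast)$. By the Henselian property of $(A^{\leq 1}(\ast), A^{\leq r}(\ast))$, the element $1+\epsilon$ is a unit in $A^{\leq 1}(\ast)$, and hence in $A(\ast)$; thus each $k_i$ vanishes in $K(\ast)$, so $\phi=0$, and surjectivity of $\phi$ forces $K=0$. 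The only step requiring genuine care is the lifting of generators through a surjection of solid modules on $\ast$-sections, which relies essentially on the projectivity of $\mathbb Z$ in (light) solid abelian groups; everything else is the classical Nakayama template.
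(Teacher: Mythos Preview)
Your argument is clean and essentially correct \emph{for classically finitely presented modules}, but there is a genuine gap: you have misread what ``finitely presented solid $\pi_0(A)$-module'' means. In the solid setting the compact projective generators are not finite free modules $A^n$ but the modules $A[S]$ for $S$ a light profinite set, so a finitely presented solid module has a presentation $A[S']\to A[S]\to K\to 0$ with $S,S'$ light profinite. Your Cramer's-rule/determinant argument with finite matrices does not apply to such presentations, and there is no evident way to adapt it: one cannot speak of $\det(I-W)$ for an endomorphism of $A[S]$ with $S$ infinite profinite.

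The paper's proof takes a different route precisely to handle this. It reformulates the statement as: if $f\colon A[S']\to A[S]$ becomes surjective after base change to $A^{\dagger-\red}$, then $f$ is surjective. After lifting a section modulo $\Nil^\dagger$ (using projectivity of $A[S]$), this reduces to showing that $\id_{A[S]}-h$ is invertible whenever $h$ factors through $\Nil^\dagger(A)[S]$. The paper then writes $A$ (more precisely, the pair $A\to A^{\dagger-\red}$) as a sifted colimit of explicit ``free'' bounded rings of the form $\Q_p\langle T_1,\dots,T_n\rangle_\solid\langle\N[X]\rangle\langle\N[Y]\rangle_{\leq 0}$, uses compactness of $A[S]$ to descend $h$ to such a ring, and finishes by an explicit computation there (rescaling $Y$ to force $h$ to land in a $p$-adically complete lattice where $1-h$ is invertible by the ordinary Nakayama argument modulo $p$). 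Your Henselian/determinant idea captures the right intuition but cannot replace this reduction in the profinite-generator setting.
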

\begin{proof}
We can assume that $A$ is static and work within the abelian category of $A$-modules. The statement of the lemma is equivalent to the following: let $f: A[S']\to A[S]$ be a map of $A$-modules (for $S',S$ light profinite sets) whose base change $\overline{f}$ to $A^{\dagger-\ob{red}}$ is an epimorphism, then $f$ is an epimorphism. By replacing $S'$ and $S$ by larger profinite sets if necessary we can assume that $S'=S$, so that $f$ is an endomorphism of $A[S]$ (e.g., $S=S'$ can be taken to be the Cantor set). Let
\[
\ob{Nil}^{\dagger}(A)[S]=\ob{fib}(A[S]\to A^{\dagger-\ob{red}}[S])=\ob{Nil}^{\dagger}(A)\otimes_{A} A[S]
\]
and suppose that $\overline{f}$ is surjective.  Then, by projectivity  $\overline{f}$ has a section $\overline{g}: A^{\dagger-\ob{red}}[S]\to A^{\dagger-\ob{red}}[S]$ which can be lifted to a map $g:A[S]\to A[S]$ such that $\id_{A[S]}-fg$ factors through $A[S]\to \ob{Nil}^{\dagger}(A)[S]$. Thus, we are left to prove the following claim: 

\begin{claim*}
Let $A$ be a bounded affinoid $\Q_p$-algebra, $S$  a light profinite set and $h:A[S]\to \ob{Nil}^{\dagger}(A)[S]$ an $A$-linear map, then $\ob{id}_{A[S]}-h:A[S]\to A[S]$ is an isomorphism. 
\end{claim*}

We will reduce the proof of this claim to a concrete case where we can do an explicit computation. The analytic ring structure of $A$ is given by the set of solid elements $A^+$ consisting on those $a\in A^{\triangleright}$ such that the map $\Z[T]\to A$ depicted by $a$ extends to $\Z[T]_{\solid}\to A$. Writing $A^+$ as a union of finitely generated subrings $A_i^+$, we can write $A=\varinjlim_i A^{\triangleright}_{A_{i,\solid}^+/}$, and as $A[S]$ is compact,  we can  assume without loss of generality that there is a map $\Z[T_1,\ldots, T_n]_{\solid}\to A$ such that $A$ has the induced analytic ring structure.

To continue with the computation, we need to understand how bounded rings can be described from sifted colimits of easier rings. For this, consider $\Cat{Pair}_{\Z[T_1,\ldots, T_n]_{\solid}}\subset \ob{Fun}(\Delta^1,\ob{Ring}_{\Z[T_1,\ldots, T_n]_{\solid}})$ the full subcategory of maps of animated  solid $\Z[T_1,\ldots, T_n]_{\solid}$-algebras consisting on those maps $A\to B$ are surjections on connected components. The category $\Cat{Pair}_{\Z[T_1,\ldots, T_n]_{\solid}}$ has compact projective generators given by base changes along $\Z_\solid\to \Z[T_1,\ldots, T_n]_{\solid}$ of finite coproducts of pairs of the form $\mathbb{Z}_{\solid}[\mathbb{N}[S]]\xrightarrow{\ob{id}} \mathbb{Z}_{\solid}[\mathbb{N}[S]]$ (corepresenting the underlying ring of $A$), and $\mathbb{Z}_{\solid}[\mathbb{N}[S]]\to \mathbb{Z}$ (correpresenting the kernel $I=\ob{fib}(A\to B)$).

Now, let $A$ be a bounded affinoid $\Q_p$-algebra with induced analytic ring structure from a map $\Z[T_1,\ldots, T_n]_{\solid}\to A$,  and consider the pair $C\to D$ given by  $A^{\circ}\to A^\circ/\ob{Nil}^{\dagger}(A)$ (with $A^\circ$ being defined as in \cite[Definition 2.6.1]{camargo2024analytic}). Then any map from $\Z[T_1,\ldots, T_n]_{\solid}[\mathbb{N}[S]]\xrightarrow{\id} \Z[T_1,\ldots, T_n]_{\solid}[\mathbb{N}[S]]$  to $C\to D$ will factor through the idempotent pair $\mathbb{Z}_p \langle T_1,\ldots, T_n\rangle_{\solid}\langle  \mathbb{N}[S] \rangle\to \mathbb{Z}_p \langle T_1,\ldots, T_n\rangle_{\solid}\langle  \mathbb{N}[S] \rangle$ (note that colimits in $\ob{Fun}(\Delta^1,\ob{Ring}_{\Z[T_1,\ldots, T_n]_{\solid}})$ are calculated pointwise). Similarly, any map $\Z[T_1,\ldots, T_n]_{\solid}[\mathbb{N}[S]] \to \Z[T_1,\ldots, T_n]$  towards $(C[1/p]\to D[1/p])=(A\to A^{\dagger-\mathrm{red}})$ will factor through the map $\mathbb{Q}_p \langle T_1,\ldots, T_n\rangle_{\solid}\langle \mathbb{N}[S] \rangle_{\leq 0}\to \mathbb{Q}_p \langle T_1,\ldots, T_n\rangle_{\solid}$. Putting all together, one deduces that the pair $A\to A^{\dagger-\ob{red}}$ can be written as a sifted colimit of base changes to $\Q_{p,\solid}$  of finite coproducts in $\Cat{Pair}_{\Z[T_1,\ldots, T_n]_{\solid}}$ of the form

\begin{itemize}
\item[(a)] $\mathbb{Q}_p \langle T_1,\ldots, T_n\rangle_{\solid}\langle  \mathbb{N}[S] \rangle\to \mathbb{Q}_p \langle T_1,\ldots, T_n\rangle_{\solid}\langle  \mathbb{N}[S] \rangle$  and

\item[(b)] $\mathbb{Q}_p \langle T_1,\ldots, T_n\rangle_{\solid}\langle \mathbb{N}[S] \rangle_{\leq 0}\to \mathbb{Q}_p \langle T_1,\ldots, T_n\rangle_{\solid}$. 
\end{itemize}

Now, let $\{A_i\to B_i\}_i$ be a sifted diagram in $\Cat{Pair}_{\Z[T_1,\ldots, T_n]_{\solid}}$ whose colimit is $A\to A^{\dagger-\ob{red}}$.  Let $h:A[S]\to \ob{Nil}^{\dagger}(A)[S]$ be an $A$-linear map corresponding to a map of condensed sets $S\to \ob{Nil}^{\dagger}(A)[S]$. Let $I_i=\ob{fib}(A_i\to B_i)$, then $\ob{Nil}^{\dagger}(A)[S]=\varinjlim_i I_i[S]$ (for $I_i[S]=I_i\otimes_{A_i} A_i[S]$).  Since we are working with solid abelian groups and the diagram is sifted, there are finitely many $i_1,\ldots, i_k$ such that $h$ factors through $S\to I_{i_1,\ldots, i_k}[S]$ where $I_{i_1,\ldots, i_k}$ is the fiber of 
\[
A_{i_1}\otimes \cdots \otimes  A_{i_k}\to B_{i_1}\otimes \cdots  \otimes B_{i_k}
\]
with tensor product over $\Z[T_1,\ldots, T_n]_{\solid}$.     By the previous discussion we can take  pairs $A_i\to B_i$  which are finite coproducts  (over $\Z[T_1,\ldots, T_n]_{\solid}$ and so over $\mathbb{Q}_p \langle T_1,\ldots, T_n\rangle_{\solid}$ by idempotency) of the pairs of the form (a) and (b) above, in that situation we even have that $I_i=\ob{Nil}^{\dagger}(A_i)$. Hence, we have reduced the previous claim to the following one:

\begin{claim*}

Consider the ring $A=\mathbb{Q}_{p}\langle T_1,\ldots,T_n \rangle_{\solid}\langle \mathbb{N}[X] \rangle\langle \mathbb{N}[Y]  \rangle_{\leq 0}$ for $\{T_1,\ldots,  T_n\}$ a finite set of variables and $X$ and $Y$ light profinite sets. Let $S$ be a light profinite set and let $h:A[S]\to \ob{Nil}^{\dagger}(A)[S]$ be an $A$-linear map, then $1-h:A[S]\to A[S]$ is an isomorphism. 
\end{claim*}

We finally give the proof of this claim. Note that 
\[
\ob{Nil}^{\dagger}(A)=\ob{ker}( \mathbb{Q}_{p}\langle T_1,\ldots,T_n \rangle_{\solid}\langle \mathbb{N}[X] \rangle\langle \mathbb{N}[Y]  \rangle_{\leq 0}\to \mathbb{Q}_{p}\langle T_1,\ldots,T_n \rangle_{\solid}\langle \mathbb{N}[X] \rangle).
\]
 We can write $A=\varinjlim_{r>0} A_r$ with $A_r=\mathbb{Q}_{p}\langle T_1,\ldots,T_n \rangle_{\solid}\langle \mathbb{N}[X] \rangle\langle \mathbb{N}[Y]  \rangle_{\leq r}$ and set 
 \[
 I_r=\ker( A_r\to \mathbb{Q}_{p}\langle T_1,\ldots,T_n \rangle_{\solid}\langle \mathbb{N}[X] \rangle).
 \] 
 Then $\ob{Nil}^{\dagger}(A)=\varinjlim_{r>0} I_r$. Thus, there is some $r$ such that the map $h:S\to \ob{Nil}^{\dagger}(A)[S]$ factors through $I_r[S]$. Note that rescaling $Y$ by  $p$ does nothing to $A$ but multiplies the $r$'s in the indexes by the norm of $p$, so by taking $r$ close enough to $0$ and modifying $Y$ by some power of $p$ we can even assume that $h$ factors through $I^{<1}[S]$ with
 \[
 I^{<1}_1:= \ob{ker}(\mathbb{Z}_{p}\langle T_1,\ldots,T_n \rangle_{\solid}\langle \mathbb{N}[X] \rangle\llbracket\mathbb{N}[pY]\rrbracket\to  \mathbb{Z}_{p}\langle T_1,\ldots,T_n \rangle_{\solid}\langle \mathbb{N}[X] \rangle).
 \]
 But then the composite $h:S\to I^{<1}_1[S] \to \mathbb{Z}_{p}\langle T_1,\ldots,T_n \rangle_{\solid}\langle \mathbb{N}[X] \rangle\langle \mathbb{N}[Y] \rangle[S] =:C[S]$ is $0$ modulo $p$ and so the map $1-h:C[S]\to C[S]$ is an isomorphism, since the map $1-h:A[S]\to A[S]$ is just a base change of this  we deduce that it is an isomorphism as well proving what we wanted. 
\end{proof}

Next, we improve \cref{sec:fredh-prop-gelf-nak} from the $\dagger$-reduction to the uniform completion. 

\begin{lemma}\label{xj29ejs}
Let $A$ be a bounded affinoid $\Q_p$-algebra and let $K\in \ob{D}(A)$ be static. Assume that $K$ is a finitely presented solid $\pi_0(A)$-module. If $\pi_0(K\otimes_{A}A^u)=0$, then $K=0$. 
\end{lemma}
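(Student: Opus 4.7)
The plan is to reduce the lemma to the Nakayama-type statement of \cref{sec:fredh-prop-gelf-nak} via an intermediate vanishing, which will be established by an approximation/Henselian argument.

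Set $B=A^{\dagger-\ob{red}}$ and $M=\pi_0(K\otimes_A B)$, a finitely presented solid $\pi_0(B)$-module. Using $A^u=(A^{\dagger-\ob{red}})^u=B^u$ from \cref{xsu8rf}(1), together with the identity $\pi_0(X\otimes^{\mathbb L}_B Y)\cong \pi_0(X)\otimes_B Y$ for connective $X$ and static $Y$, we find $\pi_0(K\otimes_A A^u)\cong M\otimes_B B^u$, which vanishes by hypothesis. If we can show $M=0$, then \cref{sec:fredh-prop-gelf-nak} applied to $K$ forces $K=0$. Hence it suffices to prove the following key claim: if $B$ is a $\dagger$-reduced bounded affinoid $\Q_p$-algebra and $N$ is a finitely presented solid $B$-module with $N\otimes_B B^u=0$, then $N=0$.

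For this key step, write $N=\ob{coker}(f\colon B[S]\to B[S])$ for a light profinite set $S$ (after replacing $S$ with $S\sqcup S'$). The hypothesis means $f\otimes B^u$ is surjective, and since $B^u[S]$ is compact projective in solid $B^u$-modules, we get a $B^u$-linear right inverse $g\colon B^u[S]\to B^u[S]$. We will approximate $g$ by a $B$-linear $\tilde g\colon B[S]\to B[S]$ in such a way that $f\tilde g$ is invertible in $\ob{End}_B(B[S])$; this will provide a right inverse $\tilde g(f\tilde g)^{-1}$ of $f$, forcing $N=0$. The approximation exploits the $p$-adic density $B^{u,\leq 1}=(B^{\leq 1})^{\wedge_p}$ from \cref{xsu8rf}(2): combined with the $\omega_1$-compactness of $S$ and the boundedness of the ``matrix entries'' of $f$ (which factor through $B^{\leq r_0}[S]$ for some $r_0$, by compactness), we can choose $\tilde g$ so that the defect $\ob{id}-f\tilde g$ has ``matrix entries'' in $B^{u,\leq r}$ for some $r<1$. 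As the defect is a priori $B$-linear, the pullback square \cref{xsu8rf}(1) (using $\Nil^\dagger(B)=0$ for $\dagger$-reduced $B$) identifies these entries with elements of $B^{\leq r}$.

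Finally, the map $\Q_p\langle T\rangle_{\leq r}\to\ob{End}_B(B[S])$ sending $T\mapsto \ob{id}-f\tilde g$ is well-defined, and since $1-T$ is invertible in $\Q_p\langle T\rangle_{\leq r}$ for $r<1$ (by the overconvergent geometric series), so is $f\tilde g=\ob{id}-(\ob{id}-f\tilde g)$. The main obstacle will be making this approximation step rigorous in the solid category, particularly for general bounded affinoid $A$ (not necessarily Gelfand, so $B^u$ need not be a Banach algebra); we expect this to require a reduction to specific universal bounded rings via sifted colimits, paralleling the strategy in the proof of \cref{sec:fredh-prop-gelf-nak}.
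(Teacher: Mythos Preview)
Your overall strategy matches the paper's: reduce to the $\dagger$-reduced case via \cref{sec:fredh-prop-gelf-nak}, reformulate as a surjectivity statement for $f\colon B[S]\to B[S]$, lift a section from $B^u$ to an approximate section $\tilde g$ over $B$, and argue that $f\tilde g$ is invertible. The gap is in the last two steps.

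First, the approximation step (lifting $g$ modulo $B^{u,\leq r}[S]$) and the identification $B[S]\cap B^{u,\leq r}[S]=B^{\leq r}[S]$ both rely on the equality $B^u[S]=\varprojlim_r B[S]/B^{\leq r}[S]$ and on the injectivity of $B[S]\hookrightarrow B^u[S]$. These are not automatic for an arbitrary bounded affinoid ring; the paper secures them by first reducing the analytic ring structure of $A$ to one induced from a finitely generated polynomial ring $R=\Z[s_1,\dots,s_n]_\solid$, so that $R[S]$ is compact projective (hence flat and commuting with the relevant limits).

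Second, your invertibility argument via a map $\Q_p\langle T\rangle_{\leq r}\to\ob{End}_B(B[S])$ needs a completeness property of $B[S]$ (or its endomorphism ring) to make the geometric series converge, and you do not establish one. The paper handles this differently: after the reduction above, it writes $A^{\leq 1}=\varinjlim_i A_i^\circ$ as a filtered colimit of $p$-complete rings, takes $r=|p|$ so that the defect $\tfrac1p(1-f\tilde g)\colon S\to A^{\leq 1}[S]$ factors through some $A_i^\circ[S]$ (which is $p$-complete since $R$ is finitely generated), applies Nakayama there to invert $1-ph$, and base changes back to $A[S]$. Your proposed fix via ``sifted colimits of universal bounded rings'' is the wrong reduction here; that device was used in \cref{sec:fredh-prop-gelf-nak} to control $\Nil^\dagger$, whereas here the issue is completeness, which the paper obtains from finiteness of the analytic ring structure.
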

\begin{proof}
 By \cref{sec:fredh-prop-gelf-nak} we can assume that $A$ is $\dagger$-reduced, so it is static and $A^{\triangleright}\hookrightarrow A^u$ is injective as $\bigcap_{n} p^nA^{\leq 1}=\Nil^\dagger(A)=\{0\}$. By an approximation argument, and writing $A=\varinjlim_{B\subset A^+}(A^{\triangleright},B)_{\solid}$ as a filtered colimit where $B$ runs over finitely generated subrings of $A^+$, we can assume that $A$ is of the form $A=A^{\triangleright}_{\mathbb{Z}[s_1,\ldots, s_n]_{\solid}/}$ for certain elements $s_i\in A^+$. Set $R=\mathbb{Z}[s_1,\ldots, s_n]_{\solid}$, by flatness of $R[S]_{\solid}$ as solid $R$-module (for $S$ a light profinite set), we know that the map $A[S]=A\otimes_R R[S]\hookrightarrow A^u[S]=A^u\otimes_R R[S]$ is injective. Since $R$ is finitely generated we also know by \cite[Proposition 2.12.10]{mann2022p} that $A^{u,\leq 1}[S]$ is $p$-adically complete and so equal to the $p$-adic completion of $A^{\leq 1}[S]$. 

Similarly as in the proof of \cref{sec:fredh-prop-gelf-nak}, it suffices to show the following claim:

\begin{claim*}
Let $S$ be a light profinite set and $f:A[S]\to A[S]$ such that the base change $f^u$ to $A^u[S]$ is surjective, then $f$ is surjective. 
\end{claim*}

Let $S$ and $f:A[S]\to A[S]$ be as before. Since $A^u[S]$ is a projective $A^u$-module we have a section $g^u:A^u[S]\to A^u[S]$ of $f^u$. So that $f^ug^u=\ob{id}_{A^u[S]}$. Since $A^u[S]=\varprojlim_{r} A[S]/A^{\leq r}[S]$, for an arbitrary $r$ we can find $g_r:A[S]\to A[S]$ such that $g^u-g_{r}$ sends $S$ to $A^{u,\leq r}[S]$. In particular, there is some $r$ such that  $1-f^ug^u_r$ sends $S$ to $A^{u,<1}[S]$ and so $f^u g_r^u:A^{u}[S]\to  A^u[S]$ is invertible. Therefore,  we can construct a map $g: A[S]\to A[S]$ such that the base change of $fg:A[S]\to A[S]$ to $A^u$ is an isomorphism, even better, such that $1-fg$ maps $S$ to $A^{\leq r}[S]$ for a fixed but arbitrary $0<r<1$. Now, as $A$ is bounded, we can write $A^{\leq 1}$ as a filtered colimit of $p$-complete rings $A_i^{\circ}$, and so, by taking $r=|p|$, there is some $i$ such that the map $\frac{1}{p}(1-fg):S\to
A^{\leq 1}[S]$ factors through a map $h:S\to A_i^{\circ}[S]$. In particular, $1-ph:A_i^{\circ}[S]\to A_i^{\circ}[S]$ is an isomorphism by Nakayama's lemma, and since $fg:A[S]\to A[S]$ is just the base change of $1-ph$ we get that $fg$ is an isomorphism proving that $f$ is surjective and finishing the proof of the lemma. 
\end{proof}

\begin{remark}
In the notation of \cref{xj29ejs},  let $K^u=\pi_0(A^u\otimes_A K)$. It is natural to expect that the map $K\to K^u$ is injective as $K$ is finitely presented, but we do not know how to prove it. 
\end{remark}

\begin{proposition}\label{xh28sj}
Let $A$ be a bounded affinoid $\Q_p$-algebra. If $A^u$ is Fredholm, then $A$ is Fredholm. In particular, Gelfand rings are Fredholm.
\end{proposition}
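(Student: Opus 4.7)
The plan is to bootstrap from the Fredholm property of $A^u$ using the Nakayama-type \cref{xj29ejs} as the key descent tool. Let $M\in \ob{D}(A)$ be dualizable. Its base change $M^u:=M\otimes_A A^u$ is dualizable in $\ob{D}(A^u)$, hence discrete by hypothesis: there is a perfect $A^u(\ast)$-module $N$ with $M^u\simeq N\otimes_{A^u(\ast)}A^u$. In particular, $N$ is bounded in degree and each $\pi_i N$ is finitely presented as a solid $A^u(\ast)$-module.

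The next step is to produce a discrete approximation $\widetilde{M}\in \ob{D}(A)$ of $M$. Represent $N$ as a retract of a bounded complex of finite free $A^u(\ast)$-modules, and lift the defining matrices from $A^u(\ast)$ back to $A(\ast)$. This uses the density of $A$ in $A^u=\varprojlim_r A/A^{\leq r}$ (\cref{xks0jw}) together with an iterative correction to ensure that consecutive differentials compose to zero, exploiting the multiplicative property $A^{\leq r}\cdot A^{\leq r'}\subseteq A^{\leq rr'}$ of the filtration (\cref{LemmaAcircicrr}(3)). The outcome is a perfect $A(\ast)$-module $\widetilde{N}$ with $\widetilde{N}\otimes_{A(\ast)}A^u(\ast)\simeq N$. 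Setting $\widetilde{M}:=\widetilde{N}\otimes_{A(\ast)}A$ produces a discrete object of $\ob{D}(A)$ whose base change to $A^u$ is canonically identified with $M^u$, and one then lifts this identification to a map $\widetilde{M}\to M$ in $\ob{D}(A)$, using that $\widetilde{M}$ is built from finitely many copies of $A$ via shifts and finite colimits.

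Let $K$ be the cofiber of $\widetilde{M}\to M$. It is dualizable as the cofiber of a map between dualizable objects in a stable symmetric monoidal $\infty$-category, and $K\otimes_A A^u\simeq 0$ by construction. One then concludes $K=0$ by applying a Nakayama-type argument $\pi_i$-by-$\pi_i$: since both $\widetilde{M}\otimes_A A^u$ and $M^u$ are bounded, only finitely many $\pi_i K$ are nonzero, and the compactness of $K$ combined with a standard truncation argument shows each $\pi_i K$ is finitely presented as a solid $\pi_0(A)$-module. Since $\pi_0(\pi_i K\otimes_A A^u)=0$, \cref{xj29ejs} then forces $\pi_i K=0$, hence $K=0$ and $M\simeq \widetilde{M}$ is discrete.

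The ``in particular'' assertion reduces to the Fredholm property for Banach $\Q_p$-algebras: by \cref{xnbsyw}, the uniform completion $A^u$ of a Gelfand ring is Banach, and the Fredholm property for Banach $\Q_p$-algebras follows from standard $p$-adic nuclearity considerations for solid modules over a $p$-adically complete Banach ring. The hard part of the plan is the lifting/correction step: maintaining control of the approximation error so that the lifted complex is a genuine chain complex and so that the homotopy groups of the resulting fiber $K$ are provably finitely presented---these are precisely the hypotheses that make \cref{xj29ejs} applicable and close the descent argument.
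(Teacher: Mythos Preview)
Your overall strategy (descend perfectness from $A^u$ via the Nakayama-type \cref{xj29ejs}) is the right one, but there is a genuine gap in Step 4. You propose to lift the identification $\widetilde M\otimes_A A^u\simeq M^u$ to a map $\widetilde M\to M$ ``using that $\widetilde M$ is built from finitely many copies of $A$,'' but mapping out of a perfect $\widetilde M$ amounts to producing specific classes in $\pi_i M(\ast)$, and there is no reason the map $\pi_i M(\ast)\to\pi_i(M\otimes_A A^u)(\ast)$ should hit them: already for $M=A$ with $A$ $\dagger$-reduced and not uniform, $A(\ast)\hookrightarrow A^u(\ast)$ is a proper inclusion. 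Only approximate lifts exist, and upgrading an approximate lift to one whose base change to $A^u$ is invertible requires using the compact presentation of $M$ (as a retract of a finite complex of modules $A[S]$), passing to an integral model over $A^{\leq 1}$, and approximating modulo $p$ --- none of which your sketch provides. Step 3 has the same defect: the ``iterative correction'' to enforce $d^2=0$ converges only in $A^u$, not in $A$. A smaller point about Step 5: for a compact $K$ only the \emph{lowest} nonzero $\pi_i$ is guaranteed to be finitely presented (and only for that $i$ does $K\otimes_A A^u=0$ directly give $\pi_0(\pi_i K\otimes_A A^u)=0$), so the Nakayama argument must be run inductively from the bottom rather than applied to all $\pi_i K$ at once.

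The paper avoids these problems by running the construction in the other direction. After reducing via \cite[Proposition VI.1.5]{ScholzeRLL} to showing that any compact $K$ with $K\otimes_A A^u$ perfect is already perfect, it builds a finite free resolution of $K$ inductively from the bottom: at each stage one needs only a single map $A^m\to K$ whose base change is surjective on $\pi_0$, and this approximate lifting is exactly what the final paragraph of the paper's proof carries out using an integral model $C^{\leq 1}$ over $A^{\leq 1}$ and reduction modulo $p$. Then \cref{xj29ejs} (applied to the finitely presented $\pi_0$ of the cofiber) makes the cofiber $1$-connective and the induction proceeds.
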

\begin{proof}
 The proof takes inspiration from \cite[Proposition VI.1.7]{ScholzeRLL}. Thanks to \cite[Proposition VI.1.5]{ScholzeRLL}, it suffices to show that if $K$ is a compact complex such that $K\otimes_{A} A^u$ is perfect then $K$ is perfect. We can assume that $K$ is connective, and by an induction argument it suffices to show that there is a map $f: A^m\to K$ such that the cofiber $\ob{cofib}(f)$ is $1$-connective. By \cref{sec:fredh-prop-gelf-nak} and \cref{xj29ejs} it suffices to find a map $f$ such that the base change  $f^u: A^{u,m}\to K\otimes_{A} A^u$ is surjective on $\pi_0$. Thus, by writing $K$ as a retract of a finite complex  with terms $A[S]$ for $S$  a profinite set, we can assume that $K$ is  a retract of a compact object $C^{\leq 1} [\frac{1}{p}]$ with $C^{\leq 1}$ a complex of  $A^{\leq 1}$-modules, and even that the map $C^{\leq 1}[\frac{1}{p}]\to K$ is an isomorphism on $\pi_0$. By approximating along subrings $B\subset A^+$ of finite type, we can assume that $A=A_{B/}$. Then $C\otimes_A A^u$ is the generic fiber of the $p$-adic completion of $C^{\leq 1}$. Thus, given a map $h^u:A^{u,\leq 1, n}\to C^{\leq 1}$ we can always  find a map $g:A^{\leq 1,n}\to C^{\leq 1}$   such that $h^u-g^{u}$ is $0$ modulo $pA^{u, \leq 1}$. Applying this to a surjection $A^{u,m}\to K\otimes_{A} A^u\to C^{\leq 1}\otimes_{A^{\leq 1,u}} A^u$ (after rescaling for it to factors through $C^{\leq 1}$), we get the desired map $f: A^{m}\to K$.

The final assertion for Gelfand rings follows from the previous assertion, \Cref{xnbsyw}, and \cite[Theorem  5.50]{andreychev2021pseudocoherent}.
\end{proof}

\newpage
\section{Perfectoidization and analytic de Rham stacks}
\label{sec:perf-analyt-de}

In this section, we discuss (a version of) perfectoidization, and then we construct analytic de Rham stacks through the right adjoint of perfectoidization (as outlined in \cref{sec:new-defin-analyt}). Let us explain this more precisely. For such a picture to make sense while giving the ``expected'' analytic de Rham stack, we need, as already mentioned in the introduction, to first replace 
$$\Cat{AnStk}_{\Q_{p,\solid},}$$
the category of \emph{analytic stacks}  over $\Q_{p,\solid}$, in the sense of Clausen--Scholze (\cref{xhs92mk}), by
$$\Cat{GelfStk},$$
the category of \emph{Gelfand stacks}, that we introduce in \cref{xhs9wj} by replacing analytic rings over $\Q_{p,\solid}$ with separable Gelfand rings (\cref{xhs9wj}). The relation between these two categories of geometric objects is encoded by an adjunction\footnote{Upper morphisms indicate the right adjoints, and the lower the left adjoints.}:

\[\begin{tikzcd}
	{\Cat{AnStk}_{\Q_{p,\solid}}} & {\Cat{GelfStk}}
		\arrow["{(-)^\Gelf}", from=1-1, to=1-2]
	\arrow["{(-)_{\ob{An}}}", shift left=3, from=1-2, to=1-1]
	\end{tikzcd}\]
Namely, $(-)^\Gelf$ maps an analytic stack $X$ to the functor it represents on separable Gelfand rings, and its left adjoint sends $\ob{GSpec}(A)$ for a separable Gelfand ring $A$ to $\AnSpec(A)$ (\cref{sec:totally-disc-stacks-morphism-of-topoi}). The separability assumption on the rings is there to make several descendability arguments work.

The target of the perfectoidization functor should now be the category of arc-stacks over $\Q_p$. For the same technical reasons as with Gelfand rings, it is useful to impose separability; this leads us to the introduction of $\Cat{ArcStk}_{\Q_p}$, the category of \emph{light arc-stacks} over $\Q_p$ (\cref{sec:light-arc-stacks-2-light-arc-stacks-over-q-p}). The perfectoidization functor $(-)^\diamond$ sends $\ob{GSpec}(A)$ for a separable Gelfand ring $A$, to the functor it represents on separable totally disconnected perfectoid rings. One would like the analytic de Rham stack functor to be its (fully faithful) right adjoint, so that we have an adjunction (in fact, a morphism of $\infty$-topoi):
\[\begin{tikzcd}
	\Cat{ArcStk}_{\Q_p} & {\Cat{GelfStk}}.
		\arrow["  (-)^{\dRall} ", from=1-1, to=1-2]
	\arrow["{(-)^{\diamond}}", shift left=3, from=1-2, to=1-1]
	\end{tikzcd}\]
 This right adjoint does exist, but will not be our  definition of the de Rham stack. Instead, we will call it the \textit{big de Rham stack}, as it is defined on the \textit{big} category of all separable perfectoid rings, in contrast to what we will call simply the \textit{de Rham stack}.
 
 \begin{remark}
  Before saying why we do not define the de Rham stack functor as this right adjoint, let us point out that we show in \cref{sec:perf-analyt-de-2-left-adjoint-to-perfectoidization} that $(-)^\diamond$ is itself a \emph{right} adjoint of a functor $\widehat{(-)}\colon \Cat{ArcStk}_{\Q_p}\to \Cat{GelfStk}$ that realizes light arc-stacks (over $\Q_p$) geometrically. The functor $\widehat{(-)}$ is the left Kan extension of the functor that sends the arc-stack $\mathcal{M}_{\ob{arc}}(A)$ represented by a totally disconnected perfectoid ring $A$ to $\ob{GSpec} A$. We thus have another adjunction pair:
    \[\begin{tikzcd}
	  {\Cat{GelfStk}} & \Cat{ArcStk}_{\Q_p}.
		\arrow["{(-)^{\diamond}}", from=1-1, to=1-2]
	\arrow["{\widehat{(-)}}", shift left=3, from=1-2, to=1-1]
	\end{tikzcd}\]
	Via the functor $\widehat{(-)}$ and the $6$-functor formalism on $\Cat{GelfStk}$ (inherited from $\Cat{AnStk}_{\Q_p}$), one obtains a $6$-functor formalism for $\widehat{\mathcal{O}}$-cohomology on arc-stacks over $\Q_p$; this justifies the notation. See the end of \Cref{sec:arc-and-!-topologies-on-perfectoid-rings} for more on the relation to \cite{AMdescendPerfd} and \cite{ALBMFFCoho}. 
	\end{remark}

Now, let us briefly discuss why the de Rham stack functor defined in this way does not have all the desired properties and how we change its definition. One important aspect for the applications of the theory both in this paper (\cref{sec:p-adic-monodromy}) and in future work  is that the functor sending $X$ to its analytic de Rham stack should have strong descent properties: if $X \to X^\prime$ is an epimorphism of (light) arc-stacks over $\Q_p$, we want the induced map $X^{\dR} \to X^{\prime,\dR}$ to be an epimorphism of Gelfand stacks. It is however false in this general setting, indeed, if it were the case then $X^{\dRall}=X_{\Betti}$ for $X$ a compact Hausdorff space and this is not true, see \cref{ExampleCondensedAnimanodeRham}.  As will be explained in more detail below (see the beginning of \Cref{sec:the-analytic-dR-stack}), the situation would simplify considerably if separable Gelfand rings whose uniform completion is strictly totally disconnected perfectoid would form a basis of the $!$-topology on separable Gelfand rings. But it is unfortunately not true that all separable Gelfand rings $R$ admit a descendable map to such a ring, see \cref{sec:-covers-perfectoids-1-counter-example-to-cover-by-strictly-totally-disconnected} for a more precise counterexample. Nevertheless, we do manage to prove such a statement under some sort of finiteness assumption on $R$: we show it when $R$ is \textit{quasi-finite dimensional} (\textit{qfd} for short), i.e.\ its uniform completion is quasi-pro-\'etale over a finite-dimensional analytic affine space over $\Q_p$. For this reason, we replace in the above definitions the categories $\Cat{ArcStk}_{\Q_p}$, $\Cat{GelfStk}$ by their versions
 $$
 \Cat{ArcStk}_{\Q_p}^{\rm qfd}, ~ \Cat{GelfStk}^{\rm qfd}
 $$
 defined on qfd separable perfectoid rings, resp. qfd separable Gelfand rings,  instead of all separable perfectoid rings, resp. all separable Gelfand rings. 
 
 \begin{remark}
 To keep the notation light, the analogues of the functors $\widehat{(-)}$ and $(-)^\diamond$ for the categories $ \Cat{ArcStk}_{\Q_p}^{\rm qfd}, \Cat{GelfStk}^{\rm qfd}$ will still be denoted in the same way. We hope that this does not create confusion. Note that one has natural functors
 $$
 \Cat{ArcStk}_{\Q_p}^{\rm qfd} \to  \Cat{ArcStk}_{\Q_p}, ~ \Cat{GelfStk}^{\rm qfd} \to \Cat{GelfStk},
 $$
  defined by left Kan extensions. Since the functors $(-)^\diamond$ and $(-)^{\mathrm{Gelf}-\diamond}$ and their qfd variants are left adjoints, they commute with colimits, and so it does not matter whether one left Kan extends first and then applies them, or the other way around; this justifies this abuse of notation. The same wouldn't be true for the right adjoints of $(-)^\diamond$ and its qfd version, and for this reason we refrain from calling $(-)^{\dR}$ the right adjoint of $(-)^\diamond$ in all Gelfand stacks and reserve this name to the right adjoint of its qfd version.  \end{remark}
  
  \begin{remark}
 In the theory of the \textit{algebraic} de Rham stack\footnote{For us, the algebraic de Rham stack of $X$ is the sheafification for the descendable topology of the functor $A\mapsto X(A^{\red})$ where $A^{\red}$ is the reduction of $A$. This agrees with Simpson's de Rham stack in characteristic zero, but, in characteristic $p$ it differs from the de Rham stack appearing in the works of Bhatt--Lurie and Drinfeld.}, a similar ``finite dimensionality'' assumption is required to obtain descent of the formation of the de Rham stack for the $h$-topology\footnote{Recall that a morphism of Noetherian schemes $Y\to X$ of finite type is an $h$-cover if and only if it is descendable, see \cite[Theorem 11.26]{bhatt_scholze_projectivity_of_the_witt_vector_affine_grassmannian}}. In this case one reduces to showing that the morphism from the scheme to its algebraic de Rham stack is an epimorphism for the descendable topology, and this is where finite type assumptions are used. In general, the sheafification of $X$ for the $h$-topology is its absolute weak normalization $X^{\mathrm{awn}}$, see \cite[Tag 0EVS]{stacks-project}. If $X$ is a scheme of finite type over a field $K$ of characteristic $0$, then $X^{\mathrm{awn}}$ is a scheme of finite type over $K$ and $X^{\mathrm{awn}}\to X$  is the initial object in the category of universal homeomorphisms over $X$. If instead $X$ is of characteristic $p$, then $X^{\mathrm{awn}}$ is its perfection.
 \end{remark}

\subsection{Light arc-stacks}
\label{sec:light-arc-stacks}

We extend the definition of light arc-stacks from \cref{sec:introduction} to allow perfectoid Tate rings in characteristic $p$ (this will be used when constructing the Hyodo--Kato stacks in \cref{sec:de-rham-fargues}). We also introduce the quasi-finite dimensional variant that will be used later.

We first recall the arc-topology from \cite{scholze2024berkovichmotives}.

\begin{definition}
  \label{sec:light-arc-stacks-1-arc-cover}
  A map $A\to B$ of perfectoid Tate rings is an arc-cover if the morphism $\mathcal{M}(B)\to \mathcal{M}(A)$ is surjective.
\end{definition}

We let $\Cat{AffPerfd}$ be the opposite category of the category of perfectoid Tate rings (we stress that we do not consider perfectoid Tate-Huber pairs, i.e., we don't require the choice of a ring of integral elements).

\begin{remark}
  \label{sec:light-arc-stacks-2-remark-arc-topology}\
  \begin{enumerate}
  \item By \cite[Theorem 3.14]{scholze2024berkovichmotives} the arc-topology on the category $\Cat{AffPerfd}$ is subcanonical.
  \item A perfectoid Tate ring $A$ is called \textit{totally disconnected} if $\mathcal{M}(A)$ is a profinite set. If moreover each residue field $k(x)$ for $x\in \mathcal{M}(A)$ is algebraically closed, then $A$ is called \textit{strictly totally disconnected} (\cite[Definition 3.10]{scholze2024berkovichmotives}).
    \item Each perfectoid Tate algebra $A$ admits an arc-cover $A\to B$ with $B$ strictly totally disconnected (\cite[Proposition 3.11]{scholze2024berkovichmotives}).
  \end{enumerate}
\end{remark}

For technical reasons, we cannot work with arbitrary perfectoid Tate algebras.

\begin{definition}
  \label{sec:light-arc-stacks-1-separable-perfectoid-tate-algebra}
  A  Tate ring $A$ is called \emph{separable} if $A^\circ/\pi$ is countable for a (equiv. any) pseudo-uniformizer $\pi$.
\end{definition}

Clearly, a perfectoid Tate algebra $A$ is separable if and only if its tilt $A^\flat$ is separable. Another equivalent characterization of separable perfectoid rings is as $\omega_1$-compact perfectoid rings (\cref{sec:light-arc-stacks-3-light-vs-all-arc-sheaves}).  We let
\[
  \Cat{AffPerfd}_{\omega_1}\subseteq \Cat{AffPerfd}
\]
be the full subcategory of separable perfectoid Tate algebras. This restriction is very mild and mostly used through \cref{sec:perf-analyt-de-2-arc-covers-are-good} (or related assertions needing that an arc-cover is a $!$-cover).

\begin{lemma}
  \label{sec:light-arc-stacks-1-cover-by-separable-totally-disconnected}
  Let $A\in \Cat{AffPerfd}_{\omega_1}$ (or more generally $A$ is a separable Tate ring).
  \begin{enumerate}
  \item $\mathcal{M}(A)$ is a metrizable compact Hausdorff space.
  \item There exists an arc-cover $A\to B$ with $B$ separable strictly totally disconnected. Furthermore, if $A$ is totally disconnected we can take the cover to be pro-finite \'etale. 
  \end{enumerate}
\end{lemma}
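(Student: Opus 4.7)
For part (1), the plan is to upgrade separability of $A$ to a countable dense subset in the usual topological sense, then invoke \Cref{RemarkBasicPropertiesBerkovichSpectrum}(2). Explicitly, pick a pseudo-uniformizer $\pi \in A$; by hypothesis $A^\circ/\pi$ is countable, so lifting countably many representatives and forming the $\Z[\pi^{\pm 1}]$-subalgebra they generate yields a countable subring of $A$ whose closure is all of $A$. Intersecting this countable dense subring with $A^\circ$ gives a countable dense $S \subseteq A^\circ$, and then the closed embedding $\mathcal{M}(A) \hookrightarrow \prod_{f \in S} [0,1]$ of \Cref{RemarkBasicPropertiesBerkovichSpectrum}(2) realizes $\mathcal{M}(A)$ as a closed subspace of a countable product of intervals, hence as a metrizable compact Hausdorff space.

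For part (2), I will split the construction in two steps, the second of which also proves the ``furthermore'' assertion. Step one: if $A$ is not totally disconnected, I reduce to that case by mimicking the construction of \Cref{xk29sm} in the perfectoid Tate setting. Using metrizability of $\mathcal{M}(A)$ from (1), the poset of finite rational covers of $\mathcal{M}(A)$ admits a cofinal countable subposet $\{\mathcal{U}_n\}_{n\in \N}$. Setting $A_n := \prod_{U \in \mathcal{U}_n} A_U$ (a finite product of rational localizations, hence separable and perfectoid), and taking $A^w$ to be the uniform completion of the countable filtered colimit $\varinjlim_n A_n$ in perfectoid Tate rings, I obtain an arc-cover $A \to A^w$. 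By \Cref{LemmaFilteredColimits} applied (with suitable adaptation) to perfectoid Tate rings, $\mathcal{M}(A^w) = \varprojlim_n \mathcal{M}(A_n)$ is a light profinite set, so $A^w$ is totally disconnected. Separability is preserved because rational localization of a separable perfectoid Tate ring is separable and a uniform completion of a countable filtered colimit of separable perfectoid Tate rings is separable.

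Step two: starting from a separable totally disconnected $A$ (equal to $A^w$ in the above, or to the original $A$ in the ``furthermore'' clause), I construct a pro-finite-\'etale cover $A \to B$ with $B$ separable and strictly totally disconnected. For every $x \in \mathcal{M}(A)$ the completed residue field $\kappa(x)$ is a separable perfectoid field, and by tilting and standard non-archimedean Galois theory, $\kappa(x)$ has countably many isomorphism classes of finite separable extensions (since $\kappa(x)^{\flat,\circ}/\pi$ is a countable field). Hence the pro-finite-\'etale cover of $\kappa(x)$ by $\widehat{\overline{\kappa(x)}}$ is separable. I then globalize, as in the proof of \cite[Proposition 3.11]{scholze2024berkovichmotives} (see also \cite[Lemma 7.19]{scholze_etale_cohomology_of_diamonds}), by exhausting finite \'etale covers of $A$ along a countable cofinal system; the resulting colimit $B$ is a filtered union along a countable diagram of separable perfectoid Tate rings, hence separable, and its residue fields are the algebraic closures $\widehat{\overline{\kappa(x)}}$ by construction.

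The main obstacle is Step two: the classical construction of strictly totally disconnected covers is naturally indexed by an uncountable poset (Zorn-type exhaustion of all finite \'etale covers), so care is required to select a countable cofinal subsystem. The bookkeeping is enabled by separability at every stage (finitely many additional finite \'etale extensions at each step preserve countability of $\kappa(x)^{\flat,\circ}/\pi$), but making this rigorous for the global ring $B$ rather than pointwise is the step that requires the most attention.
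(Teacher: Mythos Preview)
Your approach for part (1) and for Step one of part (2) matches the paper's essentially verbatim: countable dense subset of $A^\circ$ gives a closed embedding into a countable product of intervals, and the rational-cover construction of \Cref{xk29sm} reduces to the totally disconnected case.

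For Step two, you correctly identify the crux---organizing the pro-finite-\'etale cover into a \emph{countable} system---but your pointwise argument via $\kappa(x)^{\flat,\circ}/\pi$ is a detour, and the sentence ``globalize \ldots\ by exhausting finite \'etale covers of $A$ along a countable cofinal system'' is exactly the step that needs justification. The clean global fact is this: since $A$ is separable perfectoid, $A^\circ/\pi$ is countable, and the finite \'etale site of $A$ is equivalent to that of $A^\circ/\pi$ (Henselian pair); hence $A$ has only countably many finite \'etale algebras up to isomorphism. Iterating along a tower $A=A_0\to A_1\to\cdots$ with a diagonal enumeration (each $A_n$ still separable, so the same countability applies to it) produces a countable chain whose completed colimit $B$ admits no nontrivial connected finite \'etale cover; combined with spreading-out of finite extensions of residue fields to clopen neighborhoods (using that $\mathcal{M}(B)$ is profinite), this forces all residue fields to be algebraically closed.

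The paper's argument is organized slightly differently: rather than iterating, it fixes once and for all an auxiliary strictly totally disconnected cover $A\to C$ (e.g.\ the Banach product of $\widehat{\overline{\kappa(x)}}$ over $x\in\mathcal{M}(A)$, which need not be separable), observes via spreading-out that finite \'etale extensions of residue fields extend to finite \'etale $A$-algebras, packages these into a countable absorbing filtered system $\{B_i\}$, and then defines $B$ as the colimit over factorizations $A\to B_i\to C$. The target $C$ serves to make the system filtered and to witness strict total disconnectedness. Both routes work; yours is closer to a bare-hands construction once you replace the pointwise residue-field argument by the global countability of finite \'etale $A$-algebras.
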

\begin{proof}
  Let $f_i$, $i\in I$ be a countable dense subset of $A^\circ$ (e.g., products of a countable set of representatives of $A^\circ/\varpi$ for a pseudo-uniformizer $\varpi\in A$). Then $\mathcal{M}(A)\to \prod_{i\in I}[0,1],\ x\mapsto (|f_i(x))|)_{i\in I}$ is a closed embedding, and hence $\mathcal{M}(A)$ is metrizable.
  Given that $\mathcal{M}(A)$ is metrizable, the argument of \cref{xk29sm} provides an arc-cover $A\to B'$ with $\mathcal{M}(B')$ light profinite and $B'$ separable. We can replace $A$ by $B'$. Let $x\in \mathcal{M}(A)$. Then each finite \'etale extension of $k(x)$ can be extended to a rational neighborhood of $x\in \mathcal{M}(A)$, e.g., by \cite[Lemma 7.4.6.]{scholze2020berkeley}. As $\mathcal{M}(A)$ is light profinite, we can further extend to $A$. By metrizability of $\mathcal{M}(A)$, we can observe that there exists a countable, filtered system $A\to B_i$ of finite \'etale $A$-algebras, such that every $A$-algebra $C$, which is finite \'etale over a rational localization of $A$, admits a morphism to some $B_i$ over $A$. Let $A\to C$ be a morphism with $C$ strictly totally disconnected, e.g., $C$ could be a Banach product of completed algebraic closures of the residue fields of $A$ as in \cite[Proposition 3.11]{scholze2024berkovichmotives}. Then we can construct $B$ as the filtered colimit over the countable category of factorizations $A\to B_i\to C, i\in I$.
\end{proof}

\begin{definition}
  \label{defi-sec:light-arc-stacks-1-light-arc-stacks}
  We let
  \[
    \Cat{ArcStk}:=\widehat{\Shv}(\Cat{AffPerfd}_{\omega_1}^\op)
  \]
  be the category of hypersheaves on the arc-site of separable perfectoid spaces. Given a Tate ring $A$, we let $\mathcal{M}_{\ob{arc}}(A)$ be the light arc-stack given by arc-sheafification of the presheaf $B\mapsto \Hom(A, B)$.
\end{definition}

We let $\Cat{ArcStk}^{\mathrm{big}}$ be the category of arc-hypersheaves on $\Cat{AffPerfd}^\op$. Clearly, we have a morphism of topoi
\[
  \nu\colon \Cat{ArcStk}^{\mathrm{big}}\to \Cat{ArcStk}.
\]

The two topoi are concretely related as follows.

\begin{lemma}
  \label{sec:light-arc-stacks-3-light-vs-all-arc-sheaves}\
  \begin{enumerate}
  \item Each perfectoid Tate algebra is an $\omega_1$-filtered colimit of separable perfectoid Tate algebras. In fact, $\Cat{AffPerfd}\cong\mathrm{Ind}_{\omega_1}(\Cat{AffPerfd}_{\omega_1})$.
    \item The pullback $\nu^{-1}\colon \Cat{ArcStk}\to \Cat{ArcStk}^{\mathrm{big}}$ is fully faithful and commutes with countable limits. The essential image of $\nu^{-1}$ consists of those sheaves $\mathcal{F}$ which send $\omega_1$-filtered colimits of  totally disconnected perfectoid Tate algebras in $\Cat{AffPerfd}$ to filtered colimits in $\ob{Ani}$.
\end{enumerate}
\end{lemma}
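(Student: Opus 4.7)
For part (1), I would use a saturation argument. Let $A$ be a perfectoid Tate algebra with pseudo-uniformizer $\pi$, and let $S \subseteq A^{\circ}$ be a countable subset. I construct a separable perfectoid sub-Tate-algebra of $A$ containing $S$ by iteratively enlarging $S$: at each step, adjoin the $\Z_p$-algebra generated by the current set, a choice of Frobenius preimage modulo $\pi$ for each element (available by surjectivity of Frobenius on $A^{\circ}/\pi$), and a choice of roots $\pi^{1/p^n}$. After countably many iterations, the resulting countable subset $\widetilde{S}$ yields, after $\pi$-adic completion and inverting $\pi$, a separable perfectoid subalgebra $A_{\widetilde{S}} \subseteq A$. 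The poset of such subalgebras is $\omega_1$-filtered with colimit $A$. The Ind-categorical statement then follows because morphisms from a separable perfectoid algebra into an $\omega_1$-filtered colimit factor through a stage: one lifts a countable set of generators, using $\omega_1$-filteredness of the indexing category.

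For part (2), the heart of the matter is to establish the explicit formula
\[
  \nu^{-1}\mathcal{F}(A) \simeq \varinjlim_i \mathcal{F}(A_i)
\]
whenever $A = \varinjlim_i A_i$ is an $\omega_1$-filtered colimit of separable algebras as provided by (1). The left Kan extension of $\mathcal{F}$ is given by this formula by general nonsense, and the critical step is to show that this extension is already an arc-hypersheaf (hence no further sheafification is needed). For this, any arc-cover of $A$ can be refined by one coming from separable arc-covers of the $A_i$ via \cref{sec:light-arc-stacks-1-cover-by-separable-totally-disconnected} and arc-approximation; moreover the \v{C}ech nerve of such a cover is a countable cosimplicial diagram. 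Hence the sheaf condition, being a countable limit, commutes with the $\omega_1$-filtered colimit defining the extension, using the standard interchange of countable limits and $\omega_1$-filtered colimits in $\ob{Ani}$. Fully faithfulness of $\nu^{-1}$ then follows by evaluating on separable $A$, where the identity is cofinal in the indexing diagram and yields $\nu^{-1}\mathcal{F}(A) \simeq \mathcal{F}(A)$. Commutation with countable limits is immediate from the same interchange applied to the explicit formula.

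For the essential image description, necessity follows from the explicit formula. For sufficiency, given a sheaf $\mathcal{G}$ on the big site satisfying the stated commutation, I would check that the counit $\nu^{-1}\nu_*\mathcal{G} \to \mathcal{G}$ is an equivalence on a basis of the arc-topology. Since totally disconnected perfectoid Tate algebras form such a basis, and since one can present any totally disconnected perfectoid Tate algebra as an $\omega_1$-filtered colimit of separable totally disconnected ones (refining the construction in (1) by, at each saturation step, replacing the subalgebra by the corresponding connected-component decomposition to preserve total disconnectedness), both sides agree there by the formula for $\nu^{-1}$ and the hypothesis on $\mathcal{G}$.

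\textbf{Main obstacle.} The principal difficulty is verifying that the naive Kan-extension formula for $\nu^{-1}\mathcal{F}$ already defines an arc-hypersheaf, and in the essential-image characterization, refining the construction of (1) to produce totally disconnected separable subalgebras. Both issues reduce, via \cref{sec:light-arc-stacks-1-cover-by-separable-totally-disconnected} and careful manipulation of the saturation argument, to the compatibility of arc-descent with $\omega_1$-filtered colimits of perfectoid rings.
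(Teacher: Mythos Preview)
Your proposal is correct and takes essentially the same approach as the paper. The one point where the paper is more explicit than you is in ensuring that, when approximating an arc-cover $A \to B$ of totally disconnected rings by morphisms $A_i \to B_i$ of separable totally disconnected rings, the approximants can themselves be taken to be arc-covers: the paper observes that the image of $\mathcal{M}(B_i) \to \mathcal{M}(A_i)$ is closed in the metrizable profinite space $\mathcal{M}(A_i)$, hence a countable intersection of clopen subsets, so one can shrink $A_i$ to a clopen localization to force surjectivity on Berkovich spectra while staying separable and totally disconnected. Your ``connected-component decomposition'' remark for producing separable totally disconnected subalgebras gestures at the right idea (adjoining idempotents from the ambient totally disconnected $A$ during the saturation), though taking connected components of $\mathcal{M}(A_0)$ as stated would not make it profinite; neither you nor the paper spells this refinement out in full.
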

The lemma is equally true for the slice topoi over some $X\in \Cat{ArcStk}$.
\begin{proof}
  Assume $A\in \Cat{AffPerfd}$, and let $\varpi^\flat\in A^\flat$ be a pseudo-uniformizer, we can consider a subring $B$ containing  $\Z_p[((\varpi^\flat)^{1/p^n})^\sharp\ |\ n\in \N]$, such that $p|\varpi:=((\varpi)^\flat)^\sharp$ and such that $B/\varpi$ is countable. Now, $A^\circ$ is an $\omega_1$-filtered colimit of countably generated $B$-subalgebras $C$, and among these the ones with Frobenius surjective on $C/\varpi$ are cofinal. Thus, by $\varpi$-completing these $C$'s and inverting $\varpi$ we can write $A^\circ$ as an $\omega_1$-filtered colimit of separable perfectoid Tate algebras. To show that $\Cat{AffPerfd}\cong\mathrm{Ind}_{\omega_1}(\ob{AffPerfd}_{\omega_1})$ it suffices to see that separable perfectoid Tate algebras are $\omega_1$-compact in $\ob{AffPerfd}$. This in turn is implied by countability modulo a pseudo-uniformizer.

  To see the second assertion, we first show that each arc-cover of  totally disconnected perfectoid Tate rings is an $\omega_1$-filtered colimit of arc-covers of separable  totally disconnected perfectoid Tate algebras. Indeed, any morphism of  totally disconnected perfectoid Tate rings is an $\omega_1$-filtered colimit of morphisms of separable  totally disconnected perfectoid Tate algebras, by the previous assertion. If the morphism one starts with is an arc-cover, one can ensure that the morphisms in the colimit are also arc-covers, by the following observation. If $C\to A$ is a morphism with $C\in \Cat{AffPerfd}_{\omega_1}$ and $C$ totally disconnected, then $\mathcal{M}(A)\to \mathcal{M}(C)$ has a closed image, which is a countable intersection of finite unions of rational localizations (as $\mathcal{M}(C)$ is a metrizable compact Hausdorff space by \cref{sec:light-arc-stacks-1-cover-by-separable-totally-disconnected}). As $C$ is totally disconnected, i.e., $\mathcal{M}(C)$ is profinite, we may even assume that these finite unions are again affinoid and totally disconnected. 

  We can conclude that each arc-cover of  totally disconnected perfectoid Tate rings in $\Cat{AffPerfd}$ is an $\omega_1$-filtered colimit of arc-covers of  totally disconnected perfectoid Tate rings in $\Cat{AffPerfd}_{\omega_1}$. From here, we can conclude that $\nu^{-1}$ is fully faithful and commutes with countable limits (namely, the presheaf pullback to  totally disconnected perfectoid Tate rings of some object $X\in \Cat{ArcStk}$ is already an hypersheaf). The description of the essential image follows by the definition of the presheaf pullback.
\end{proof}

\begin{remark}\label{RemarkRelevantarcStacks}
In particular, given a light arc-stack $X$ the finitary arc-sheaves over $\nu^{-1}X$ considered in \cite[Section 4]{scholze2024berkovichmotives} are pulled back from their restriction to separable perfectoid Tate algebras. Furthermore, as the closed disc $\overline{\mathbb{D}}$ and the Tate twist $\Z(1)$ are $\omega_1$-compact, the category of motivic sheaves on $\nu^{-1}X$ are full subcategories of $\ob{D}(X_{\ob{arc,\omega_1}}, \Z)$ where $X_{\ob{arc,\omega_1}}$ is the light arc-site of $X$. We note that light arc-stacks over $\F_p$ allow the necessary geometry appearing in the geometrization program of the local Langlands correspondence \cite{fargues2021geometrization}, \cite{scholze2025geometrization}. For example, we can speak about $\mathrm{Spd} \Q_p, \mathrm{Bun}_G,\mathrm{Div}^1_{\overline{\F}_p}, \ldots \in \Cat{ArcStk}_{\F_p}$.
\end{remark}

\begin{definition}
  \label{sec:light-arc-stacks-2-light-arc-stacks-over-q-p}
  We let $\Cat{ArcStk}_{\Q_p}$ be the category of light arc-stacks over $\mathcal{M}_{\ob{arc}}(\Q_p)$, or equivalently, $\Cat{ArcStk}_{\Q_p}$ is the category of hypersheaves on the arc-site of separable perfectoid spaces over $\Q_p$.
\end{definition}

From now on, \textit{all arc-stacks will implicitly assumed to be light}.
\\

As examples of light arc-stacks, we discuss those associated with condensed anima.

\begin{example}
  \label{sec:light-arc-stacks-1-condensed-anima-and-arc-stacks}
  Let $S$ be a light profinite set.
  Then we can define the arc-stack $\underline{S}$ sending a separable perfectoid ring $A$ to $\Hom_{\mathrm{cont}}(\mathcal{M}(A),S)$.
  Clearly, the functor $S\mapsto \underline{S}$ sends hypercovers of light profinite sets to arc-hypercovers and preserves fiber products, so that we get a colimit-preserving, left-exact functor
  \[
    \underline{(-)}\colon \Cat{CondAni}\to \Cat{ArcStk}.
  \]
We claim that if $T$ is a condensed set, then this functor is given by 
  \[
    A\mapsto F_T(A):=\Map_{\Cat{CondAni}}(\mathcal{M}(A),T)
  \]
  on separable perfectoid rings $A$.
  We first note that $\underline{T}$ is $0$-truncated by \cite[Proposition 6.5.16.(4)]{lurie_higher_topos_theory}, and that $F_T(-)$ takes $0$-truncated values, as $T$ is a condensed set.  
  As arc-covers of separable perfectoid rings induce covers, i.e., surjections, on $\mathcal{M}(-)$ (and $\mathcal{M}(B\otimes_AC)$ surjects onto $\mathcal{M}(B)\times_{\mathcal{M}(A)}\mathcal{M}(C)$ for any Banach rings $A,B,C$) the functor $F_T$ is an arc-sheaf on separable perfectoid spaces. % \footnote{We don't know if, in general, hypercovers of totally disconnected spaces map to hypercovers on Berkovich spaces (the problem being that passing to the Berkovich space does not commute with finite limits).}
  It suffices therefore to see that $T\mapsto F_T$ defined on sheaves of sets on light profinite sets by the same formula preserves colimits as a functor to arc-sheaves of sets and that $F_T=\underline{T}$ if $T$ is light profinite.
  The last assertion is clear (by definition of $\underline{(-)}$), and the first is clear as arc-covers induce surjections on Berkovich spaces.

  We caution the reader that in general $\underline{T}\ncong F_T$ if $T$ is not a condensed set: If $T=BG$ for a finite discrete group $G$, then for $A=K$ any separable perfectoid field, we would get that $F_T(A)=\Map_{\Cat{CondAni}}(\ast,T)=T$ while $\Map_{\Cat{ArcStk}}(\Marc(A),\underline{T})$ is the $1$-groupoid of \'etale $G$-torsors over $\Spa(K)$, and clearly not every $G$-torsor over $\Spa(K)$ is trivial for any such $K$. 
\end{example}

\begin{example}\label{ExaTopologicalSpaceCondensedAnima}
  We continue with the notation of \cref{sec:light-arc-stacks-1-condensed-anima-and-arc-stacks}.
  Let $ \Cat{AffPerfd}^{\mathrm{TD}}_{\omega_1,X}$ be the category of separable totally disconnected affinoid perfectoid spaces (over some implicit totally disconnected affinoid perfectoid space $X$). Each of the two functors in the adjunction 
\[
\mathcal{M}(-)\colon \Cat{AffPerfd}^{\mathrm{td}}_{\omega_1,X} \rightleftarrows \colon \Cat{Prof}^{\mathrm{light}} \colon \underline{(-)}
\]
sends hypercovers to hypercovers and thus, by \cite[Lemma A.3.6]{mann2022p}, this adjunction naturally extends to an adjunction
\[
(-)_{\mathrm{cond}}\colon \Cat{ArcStk}_X  \rightleftarrows \Cat{CondAni} \colon \underline{(-)} .
\]
In particular, the functor $(-)_{\mathrm{cond}}$ is colimit preserving (by \cite[Lemma A.3.6]{mann2022p} as well) and we call it the \textit{underlying condensed anima} of an arc-stack.
In fact, this assertion can be checked after base change to a totally disconnected affinoid perfectoid base.
Composing with the left adjoint $(-)_{\mathrm{top}}\colon \Cat{CondAni}\to \Cat{Top}$ from condensed anima to topological spaces (see \cite[Proposition 1.7]{scholze_lectures_on_condensed_mathematics})  one recovers the underlying topological space of an arc-stack.
\end{example}

We finish by discussing the quasi-finite dimensional variant. In the following, we denote by $\A^{n,\diamond}_{\Z_p}$ the $n$-dimensional affine line over $\Z_p$, regarded as an arc-stack over $\Z_p$.

\begin{definition}
\label{def-qfd-light-arc-stack}\
\begin{enumerate}

\item A morphism of arc-stacks $f\colon Y\to X$ over $\Z_p$ is \textit{quasi-finite dimensional} (\textit{qfd} for short) if  $f$ is quasi-pro-\'etale over some relative affine space $\A^{n}_{X}:=X\times_{\Marc(\Z_p)} \A^{n,\diamond}_{\Z_p}$.

\item  Let $f\colon A\to B$ be a morphism of separable Banach Tate algebras over $\Z_p$. Then $f$ is said to be \textit{quasi-finite dimensional} if the  induced map $\Marc(B)\to \Marc(A)$ is quasi-finite dimensional.

\item  We define the category of \textit{qfd  arc-stacks}  over $\Z_p$,
  \[
    \Cat{ArcStk}^\qfd_{\Z_p}:=\widehat{\Shv}(\Cat{AffPerfd}_{\Z_p,\omega_1}^\qfd),
  \]
   as the category of hypersheaves on the arc-site of separable qfd perfectoid spaces over $\Z_p$.
   \end{enumerate}
  \end{definition}

\begin{remark}\label{RemQProetEt}
  Any affinoid perfectoid space $X$ admits a pro-\'etale cover by a strictly totally disconnected space $X^{\prime}$.
  If in addition $X$ is qfd then $X^{\prime}$ is qfd as well, and so is the \v{C}ech nerve of $X'\to X$.
  This implies that qfd strictly totally disconnected perfectoid spaces define a basis for the arc-topology of qfd affinoid perfectoid spaces.

Hence,  one can introduce the notions of \'etale, finite \'etale and quasi-pro-\'etale morphisms of qfd arc-stacks analogously to those for ordinary arc-stacks: namely, a morphism $Y\to X$ of qfd arc-stacks is \'etale, finite \'etale, quasi-pro-\'etale if the pullback along all qfd totally disconnected space $X^{\prime}\to X$ is \'etale, finite \'etale, pro-\'etale respectively.
\end{remark}

\begin{example}\label{ExamDifferentArcStkZpFp}
  Let $A$ be a separable perfectoid ring over $\Z_p$, then $A$ is qfd over $\Z_p$ if and only if there is a morphism of rings $f\colon \Z_p[X_1,\ldots, X_n]\to A$ which induces a quasi-pro-\'etale map at the level of arc-stacks.
  If $A$ is defined over $\Q_p$, then this is equivalent to saying that there is some $r>0$ such that there is a map of Banach algebras $\Q_p\langle X_1,\ldots, X_n \colon |X_i|\leq r \rangle\to A$ which is quasi-pro-\'etale at the level of arc-stacks (here $\Q_p\langle X_1,\ldots, X_n \colon |X_i|\leq r \rangle$ denotes the ring of overconvergent functions on the disc of radius $r$).
  On the other hand, suppose that $A$ lives over $\F_p$ and that $\pi$ is a pseudo-uniformizer of $A$.
  Then, by extending the map $f$ to $\Z_p[\pi, X_1,\ldots, X_n]$, the algebra $A$ is qfd over $\F_p$ if and only if it is qfd over $\F_p((\pi^{1/p^{\infty}}))$ if and only if there is some $r>0$ and a map of Banach $\F_p((\pi^{1/p^{\infty}}))$-algebras  $\F_p((\pi^{1/p^{\infty}}))\langle X_1,\ldots, X_n \colon |X_i|\leq r \rangle \to A$ which is quasi-pro-\'etale on arc-stacks.

Furthermore, a separable perfectoid ring over $\Z_p$ is qfd if and only if its tilt is qfd over $\F_p$, in fact, this follows from the fact that the perfected affine line $\A_{\Z_p}^{1,\mathrm{perf}}\to \A_{\Z_p}^1$ obtained by taking $p$-th power roots of the coordinate is quasi-pro-\'etale. In particular, we have an equivalence of sites
\[
\Cat{AffPerfd}^{\mathrm{qfd}}_{\Z_p,\omega_1} = \big(\Cat{AffPerfd}^{\ob{qfd}}_{\F_p,\omega_1}\big)_{/ \Marc(\Z_p)}
\] 
and therefore an equivalence of topoi
\[
\Cat{ArcStk}^\qfd_{\Z_p}= \big(\Cat{ArcStk}^{\qfd}_{\F_p}\big)_{/\Marc(\Z_p)}. 
\]
\end{example}

\begin{remark}\label{remRelationQfdWithArc}
Since the inclusion $\Cat{AffPerfd}^{\qfd}_{\Z_p,\omega_1}\subset \Cat{AffPerfd}_{\Z_p,\omega_1}$ preserves fiber products and arc-covers, we have an induced geometric morphism of topoi $\Cat{ArcStk}_{\Z_p}\to \Cat{ArcStk}_{\Z_p}^{\qfd}$ whose left adjoint is the unique left exact colimit preserving functor $\Cat{ArcStk}^{\qfd}_{\Z_p}\to \Cat{ArcStk}_{\Z_p}$ that sends $\Marc(A)$ to itself for $A$ a separable qfd perfectoid ring over $\Z_p$.  
\end{remark}

\subsection{Gelfand stacks}
\label{sec:tdstacks}
While light arc-stacks over $\Q_p$, as discussed in \cref{sec:light-arc-stacks}, form the target of our perfectoidization functor, Gelfand stacks form the source. We will introduce this variant of analytic stacks in this subsection. \medskip

Let us first recall the general construction of the $\infty$-category of analytic stacks, \cite{AnStacks}.\footnote{In the course \cite{AnStacks}, the definition was slightly off, and is corrected here.} In the following, we denote by $\Cat{AnRing}$ the category of analytic rings, and use the 6-functor formalism on $\Cat{AnRing}$.

We left Kan extend the functor $A\to \ob{D}(A)$ (with $!$-pullback functoriality), from the category $\Cat{AnRing}^{!}$ of analytic rings with $!$-able maps towards $\infty$-categories, to a functor $X\mapsto \ob{D}^!(X)$ from the presheaf category $\mathcal{P}(\Cat{AnRing}^{!,\op})$ of functors from $\Cat{AnRing}^{!}$ to anima.

\begin{definition}[\cite{AnStacks}]\label{xhs92mk} 
Let $\mathcal{C}\subset \Cat{AnRing}^{\op}$ be a small subcategory of analytic rings, stable under pushouts of analytic rings and finite products; let $\mathcal C^!$ be the non-full subcategory consisting of all objects, and only the $!$-able maps.

A map $f\colon Y\to X$ in $\mathcal{P}(\mathcal{C}^!)$ is a \textit{$!$-equivalence} if any pullback of any iterated diagonal of $f$ is sent to an isomorphism under $\ob{D}^!$.

The category of \textit{analytic stacks over $\mathcal{C}$} is the full subcategory $\Cat{AnStk}(\mathcal{C}) \subset \mathcal{P}(\mathcal{C})$ obtained from presheaves by localizing at the countably presented $!$-equivalences. More precisely, a presheaf $X\in \mathcal{P}(\mathcal{C})$ is an analytic stack if for any $!$-equivalence $f\colon Y'\to X'$ between $\omega_1$-compact objects\footnote{This condition is only put in to avoid possible set-theoretic issues, and irrelevant in practice.} in $\mathcal P(\mathcal C^!)$, one has $f^\ast\colon \mathrm{Map}_{\mathcal{P}(\mathcal{C}^!)}(X',X)\cong \mathrm{Map}_{\mathcal{P}(\mathcal{C}^!)}(Y',X)$.
\end{definition}

\begin{remark}
  \label{sec:gelfand-stacks-remark-description-of-shriek-equivalences}
  Let $\{A\to B_i\}_i$ be a finite collection of $!$-able maps of analytic rings, and let $f\colon Y\subset X=\mathrm{AnSpec}(A)$ be the presheaf image of $\bigsqcup_i \mathrm{AnSpec}(B_i)\to \mathrm{AnSpec}(A)$, i.e.~the geometric realization of the \v{C}ech nerve.
  Then the diagonal of $f$ is an isomorphism, and so $f$ is a $!$-equivalence if and only if $\ob{D}$ satisfies $!$-descent along $f$.
  (Indeed, $!$-descent can be written as a colimit in $\Cat{Pr}^L$, as in the display below, and this colimit commutes with base change.) This is precisely the class of $!$-covers considered in \cite{AnStacks}, and we see that all analytic stacks are in particular sheaves with respect to $!$-covers. Conversely, any $!$-hypersheaf is an analytic stack. Indeed, for this it suffices that $!$-equivalences are $\infty$-connective in the $\infty$-topos of sheaves with respect to $!$-covers. By stability under diagonals, it suffices to see that $!$-equivalences are surjective. This can be checked locally, so assume $f: Y\to X=\mathrm{AnSpec}(A)$ is a $!$-equivalence. Write $Y=\varinjlim_i \mathrm{AnSpec}(B_i)$ as a colimit of affines. By $!$-descent, we have $\ob{D}(A)\cong \varprojlim_i \ob{D}^!(B_i)$, or equivalently
\[
\varinjlim_{i} \ob{D}_!(B_i)\to \ob{D}(A)
\]
is an equivalence in $\Cat{Pr}^L_{\ob{D}(A)}$. In particular, the compact object $A\in \ob{D}(A)$ can be written as a colimit of objects in the image of $\ob{D}(B_i)\to \ob{D}(A)$ for varying $i$. By compactness, it can be written as a retract of a finite such colimit; and a finite such colimit only involves finitely many indices $i$. The resulting finitely many maps $\{A\to B_i\}_i$ then form a $!$-cover over which $f$ admits a splitting.
\end{remark}

For example -- ignoring size issues -- for $\mathcal C=\Cat{AnRing}^{\op}$, we recover the $\infty$-topos of analytic stacks $\Cat{AnStk}$ of \cite{AnStacks}.

\begin{remark}\label{RemarkDStarDescentAnStk}
It is clear from the construction that $X\mapsto \ob{D}^!(X)$ is a sheaf in analytic stacks. We remark here that the functor $X\mapsto \ob{D}^{*}(X)$ also defines a sheaf in analytic stacks. This follows formally by taking linear duals in $\Cat{Pr}^L$.  Let us give a sketch of the proof of this fact. First, we note that the functor sending an analytic ring $A$ to $\ob{D}^*(A)$ is a sheaf for the $!$-topology. Indeed, this follows from \cite[Proposition 6.3.6 (a) and (b)]{SolidNotes}. It is now left to see that if $Y\to X$ is a $!$-equivalence, then the natural map $\ob{D}^*(X)\to \ob{D}^*(Y)$ is an equivalence. By $!$-descent on the target, we can assume that $X=\AnSpec(A)$ is affinoid, we can then write $Y=\varinjlim_{i} \AnSpec(B_i)$ as a colimit of $!$-able rings over   $A$. Consider the pullback map of linear categories
\[
F\colon \Cat{Pr}^L_{\ob{D}(A)}\to \Cat{Pr}^L_{Y}=\varprojlim_i \Cat{Pr}^L_{\ob{D}(B_i)}.
\]
Since the map $f\colon A\to B_i$ is $!$-able, it has both a left and right adjoint given by $f_!$. Thus, $F$ also has a left and right adjoint. The left adjoint sends a cocartesian section $(M_i)_{i\in I}$ of $\ob{D}(B_i)$-linear categories to the colimit $\varinjlim_{i} M_i$ in $\Cat{Pr}^L_{\ob{D}(A)}$ along lower $!$-maps. The right adjoint sends $(M_i)_{i\in I}$ to $\varprojlim_{i\in I} M_i$ along pullback maps. By projection formula, the functor $F$ is fully faithful if and only if the natural map
\[
\varinjlim_{i\in I} \ob{D}_!(B_i)\to \ob{D}(A)
\]
is an equivalence, where the transition maps are given by lower $!$-maps. By passing to right adjoints this is equivalent to the natural map 
\[
\ob{D}(A)\to \varprojlim_{i\in I} \ob{D}^!(B_i) = \ob{D}^!(Y)
\]
to be an equivalence, which  holds as $Y\to X$ is a $!$-equivalence. Thus, as $F$ is fully faithful, by looking at its right adjoint we deduce that 
\[
\ob{D}(A)\to \varprojlim_{i\in I} \ob{D}^*(B_i)= \ob{D}^*(Y)
\]
is also an equivalence, proving the desired $\ob{D}^*$-descent. 
\end{remark}

\begin{remark} The class of $!$-equivalences is by definition stable under passing to diagonals and base change. By \cite[Theorem 4.3.3]{left-exact-loc}, it follows that $\Cat{AnStk}(\mathcal C)$ is an $\infty$-topos; more precisely, it is a left exact accessible localization of $\mathcal P(\mathcal C)$.\footnote{In \cite{AnStacks}, it was erroneously not ensured that the class of $!$-equivalences is stable under diagonals; this is corrected here.}
\end{remark}

\begin{remark}\label{rem:map-to-AnStk} Assume that $\mathcal T$ is another $\infty$-topos presented as a left-exact accessible localization of a presheaf topos $\mathcal P(\mathcal D)$.
  Assume moreover that we have a functor $F\colon\mathcal D\to \mathcal C^!$, thus inducing a morphism of presheaf topoi $\mathcal P(\mathcal D)\to \mathcal P(\mathcal C^!)\to \mathcal P(\mathcal C)$.
  To check that the composite functor $\mathcal P(\mathcal D)\to \mathcal P(\mathcal C)\to \Cat{AnStk}(\mathcal C)$ factors over a, necessarily unique, left-exact colimit-preserving functor $\mathcal T\to \Cat{AnStk}(\mathcal C)$, we have to see that for any morphism $f\colon Y\to X$ in $\mathcal P(\mathcal D)$ that becomes an isomorphism in $\mathcal T$, the image $F(f)$ of $f$ in $\mathcal P(\mathcal C)$ is a $!$-equivalence.
  However, as any pullback of any diagonal of $f$ also becomes an isomorphism in $\mathcal T$, it suffices to see that for any such $f$ with $X\in \mathcal D$, the functor
\[
\ob{D}^!(F(X))\to \ob{D}^!(F(Y))
\]
is an equivalence.
Moreover, it is enough to check this for a generating class of such $f$. For example, if $\mathcal T$ is the $\infty$-topos of hypersheaves for some Grothendieck topology, this can be taken to be the corresponding class of hypercovers.
In practice, this often amounts to verifying the following.
Assume for simplicity that there exists a generating class $\mathcal{S}$ of hypercovers in $\mathcal{P}(\mathcal{D})$, which map to augmented simplicial \emph{affine} analytic stacks with proper transition maps.
Then it suffices to see that for any simplicial analytic stack $\AnSpec(A_\bullet)\to \AnSpec(A)$, which is in the image of $\mathcal{S}$, the natural functor
\[
\Phi_{A_\bullet}\colon \ob{D}(A)\to \mathrm{Tot}(\ob{D}^!(A_\bullet))
\]
is fully faithful, or equivalently, the pro-object $\{\mathrm{Tot}_{\leq n} A_\bullet\}$ is pro-constant.
Indeed, if $A\to A_\bullet$ is a \v{C}ech nerve, then $\Phi_{A_\bullet}$ is even an equivalence (as $A_0$ is a descendable $A$-algebra) and thus $X\in \mathcal{P}(\mathcal{D})\mapsto \ob{D}^!(F(X))$ is a sheaf.
By \cite[Proposition A.3.21]{mann2022p} the assumed fully faithfulness of $\Phi_{A_\bullet}$ implies then that this sheaf is a hypersheaf, as desired (and thus a posteriori that the $\Phi_{A_\bullet}$ are equivalences).

\end{remark}

We can now make the following definition, following the general pattern of \cref{xhs92mk}.

\begin{definition}\label{xhs9wj}\
  \begin{enumerate}
  \item A Gelfand ring $A$ is called \textit{separable} if $A^u$ is a separable Banach algebra over $\Q_p$.
  \item The category of Gelfand stacks is defined as
  $$\Cat{GelfStk}:=\Cat{AnStk}(\Cat{GelfRing}_{\omega_1}^{\op})$$
  where $\Cat{GelfRing_{\omega_1}}\subseteq \Cat{AnRing}_{\Q_{p,\solid}}$ is the full subcategory of separable Gelfand rings\footnote{One should restrict to an essentially small category of separable Gelfand rings, e.g. the full subcategory of $\kappa$-compact separable Gelfand rings for $\kappa$ an infinite regular cardinal. We will ignore these set-theoretic issues in the rest of the paper.}.
    \item For $A\in \Cat{GelfRing}_{\omega_1}$, we denote by $\ob{GSpec}(A)$ the functor corepresented by $A$ on separable Gelfand rings.
  \end{enumerate}
\end{definition}

We note that the category of separable Gelfand rings is stable under pushouts and products as required in \cref{xhs92mk},\ cf.\ \cref{xnbsyw}.

\begin{lemma}
  \label{sec:totally-disc-stacks-morphism-of-topoi}
  The inclusion $\Cat{GelfRing}_{\omega_1}\subseteq \Cat{AnRing}_{\Q_{p,\solid}}$ preserves pushouts and $!$-equivalences, and hence defines a morphism of $\infty$-topoi $\Cat{AnStk}_{\Q_{p,\solid}} \to  \Cat{GelfStk}$, whose right adjoint $(-)^{\Gelf}$ sends $X\in \Cat{AnStk}$ to the functor $A\in \Cat{GelfRing}_{\omega_1}\mapsto X(\ob{AnSpec}(A))$. The left adjoint $(-)_\An$ is the unique left exact colimit preserving functor sending $\ob{GSpec}(A)$ for $A\in \Cat{GelfRing}_{\omega_1}$ to $\ob{AnSpec}(A)$.
\end{lemma}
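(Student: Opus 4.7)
The plan is to verify the two stated preservation properties and then invoke \Cref{rem:map-to-AnStk} to obtain the left-exact colimit-preserving functor $(-)_{\ob{An}}\colon \Cat{GelfStk}\to \Cat{AnStk}_{\Q_{p,\solid}}$; its right adjoint $(-)^{\Gelf}$ will then be produced by the adjoint functor theorem and identified with restriction along the inclusion $\iota\colon \Cat{GelfRing}_{\omega_1}\hookrightarrow \Cat{AnRing}_{\Q_{p,\solid}}$.

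First I would verify that $\iota$ preserves finite pushouts. By \Cref{lem-bounded-subrings-provides-right-adjoint} the inclusion of bounded $\Q_p$-algebras into solid $\Q_p$-algebras admits a right adjoint, and is therefore stable under all colimits; by \Cref{xnbsyw}, Gelfand rings are in turn stable under colimits within bounded rings. The separability of a pushout of separable Gelfand rings follows from the standard fact that the uniform completion of the pushout of separable Banach algebras (obtained from a suitable completion of the algebraic tensor product of countable dense $\Q_p$-subalgebras of the given uniform completions) is again separable.

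Next I would verify preservation of $!$-equivalences. By \Cref{sec:gelfand-stacks-remark-description-of-shriek-equivalences}, the class of $!$-equivalences is generated, under stability by base change and diagonals, by $!$-covers, i.e.\ maps of the form $\bigsqcup_i \GSpec(B_i)\to \GSpec(A)$ coming from a finite family $\{A\to B_i\}$ of $!$-able morphisms along which $\ob{D}$ satisfies $!$-descent. Since the analytic ring structure on a separable Gelfand ring is the one induced from $\Q_{p,\solid}$ in both settings, the derived category $\ob{D}(A)$, the class of $!$-able maps, and the property of $!$-descent are all identical whether $A$ is viewed in $\Cat{GelfRing}_{\omega_1}$ or in $\Cat{AnRing}_{\Q_{p,\solid}}$. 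Consequently, every $!$-cover in $\Cat{GelfRing}_{\omega_1}$ remains a $!$-cover in $\Cat{AnRing}_{\Q_{p,\solid}}$, and the induced left Kan extension $\iota_!\colon \mathcal{P}(\Cat{GelfRing}_{\omega_1}^{\op})\to \mathcal{P}(\Cat{AnRing}_{\Q_{p,\solid}}^{\op})$ preserves $!$-equivalences.

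With these two properties established, \Cref{rem:map-to-AnStk} yields the left-exact colimit-preserving functor $(-)_{\ob{An}}$, uniquely characterized by $\GSpec(A)\mapsto \AnSpec(A)$. The adjoint functor theorem between presentable $\infty$-topoi provides a right adjoint $(-)^{\Gelf}$. To identify it with restriction along $\iota$, observe that for $X\in \Cat{AnStk}_{\Q_{p,\solid}}$, the restriction $A\mapsto X(\AnSpec(A))$ is a $!$-hypersheaf on $\Cat{GelfRing}_{\omega_1}^{\op}$ by the same comparison of $!$-covers, hence lies in $\Cat{GelfStk}$; the adjunction identity on representables
\[
\Map_{\Cat{AnStk}_{\Q_{p,\solid}}}(\AnSpec(A), X)=X(\AnSpec(A))=X^{\Gelf}(A)=\Map_{\Cat{GelfStk}}(\GSpec(A), X^{\Gelf})
\]
then extends by colimits to the full adjunction. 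The main obstacle is justifying that the formal definition of $!$-equivalence inside $\mathcal{P}(\mathcal{C}^!)$ (in terms of iterated diagonal pullbacks) behaves well under $\iota_!$; however, once pushouts are shown to be preserved, this reduces to the tautology that $\ob{D}$ assigns the same category to a Gelfand ring in either setting, so the argument is essentially formal.
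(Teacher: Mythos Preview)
Your proof is correct and follows the same approach as the paper, which simply states that the result ``is formally implied by the fact that the inclusion $\Cat{GelfRing}_{\omega_1}\to \Cat{AnRing}_{\Q_{p,\solid}}$ preserves pushouts and $!$-equivalences.'' You supply the details the paper omits: you verify pushout preservation via \cref{lem-bounded-subrings-provides-right-adjoint} and \cref{xnbsyw} (together with the separability observation, which the paper also notes just before the lemma), and you correctly argue that $!$-equivalences are preserved because $\ob{D}^!$ restricted along $\iota_!$ coincides with the Gelfand version (full faithfulness of $\iota$), so that the criterion of \cref{rem:map-to-AnStk} applies. Your identification of the right adjoint with restriction is also the standard argument.

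One small remark: your sentence ``the class of $!$-equivalences is generated, under stability by base change and diagonals, by $!$-covers'' slightly overstates what \cref{sec:gelfand-stacks-remark-description-of-shriek-equivalences} asserts, but you don't actually rely on this; your real argument (in the final paragraph) is the direct one via $\ob{D}^!\circ\iota_! = \ob{D}^!_{\text{Gelf}}$, which is exactly what \cref{rem:map-to-AnStk} requires and is the right justification.
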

\begin{proof}
  This is formally implied by the fact that the inclusion $\Cat{GelfRing}_{\omega_1}\to \Cat{AnRing}_{\Q_{p,\solid}}$ preserves pushouts and $!$-equivalences.
\end{proof}

 We note that the functor $(-)_\An$ is probably not fully faithful, the problem being that a $!$-cover of a Gelfand ring by an arbitrary analytic ring may not be refined by a $!$-cover from a Gelfand ring.\footnote{For example, $\Q_p\to \Z_p((x))^\wedge_p[1/p]$ is such a $!$-cover.}

 As the site $\Cat{GelfRing}_{\omega_1}^\op$ is subcanonical, the restriction of $(-)_\An$ to affinoid Gelfand stacks, i.e., the essential image of $\Cat{GelfRing}_{\omega_1}\to \Cat{GelfStk}$, is fully faithful.

\begin{remark}
  \label{sec:totally-disc-stacks-qcoh-on-td-stacks}
  The $6$-functor formalism $$\ob{D}(-)\colon \Cat{AnStk}_{\Q_{p,\solid}}\to \Cat{Pr}^L_{\ob{D}(\Q_{p,\solid})}$$ on analytic stacks over $\AnSpec(\Q_{p,\solid})$, pulls back (via composition with $(-)^\An\colon \Cat{GelfStk}\to \Cat{AnStk}_{\Q_{p,\solid}}$) to a $6$-functor formalism on Gelfand stacks. In particular, we obtain the notions of $!$-able/suave/prim/...  maps between Gelfand stacks (see \cite{heyer20246functorformalismssmoothrepresentations}).
\end{remark}

\begin{lemma}
  \label{sec:totally-disc-stacks-td-betti-stacks}
  The functor $(-)_{\ob{Betti}}\times_{\ob{AnSpec}(\Z)} \ob{AnSpec}(\Q_{p,\solid})\colon \Cat{CondAni}\to \Cat{AnStk}_{\Q_{p,\solid}}$ factors naturally through a colimit-preserving functor $$(-)_{\ob{Betti}}^\Gelf\colon \Cat{CondAni}\to \Cat{GelfStk}$$ sending a light profinite set $S$ to the Gelfand spectrum $\ob{GSpec}(C^{\lc}(S,\Q_p))$ of the solid  $\Q_p$-algebra of locally constant functions $S\to \Q_p$. Moreover, $(-)_{\mathrm{Betti}}^\Gelf$ is the left adjoint in a morphism of $\infty$-topoi $(-)_{\mathrm{cond}}\colon\Cat{GelfStk} \to \Cat{CondAni}$.
\end{lemma}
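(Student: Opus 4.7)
The plan is to construct $(-)_{\Betti}^{\Gelf}$ on the site of light profinite sets and then extend it by the universal property of $\Cat{CondAni}$ as the $\infty$-topos of hypersheaves on light profinites.

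First, for a light profinite set $S = \varprojlim_i S_i$ presented as a countable cofiltered limit of finite sets, the solid $\Q_p$-algebra $C^{\lc}(S, \Q_p) := \varinjlim_i \Q_p^{S_i}$ (colimit in bounded $\Q_p$-algebras) is a separable Gelfand ring: each $\Q_p^{S_i}$ is a finite-dimensional Banach $\Q_p$-algebra and filtered colimits of Gelfand rings are Gelfand by \Cref{xnbsyw}; separability follows since the uniform completion is $C(S,\Q_p)$, the separable Banach algebra of continuous functions on the metrizable compact Hausdorff space $S$. This defines a functor $F\colon \Cat{Prof}^{\rm light} \to \Cat{GelfStk}$, $S\mapsto \GSpec(C^{\lc}(S, \Q_p))$.

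To extend $F$ to a colimit-preserving left-exact functor $\Cat{CondAni} \to \Cat{GelfStk}$, by the universal property of the $\infty$-topos of hypersheaves it suffices to verify: (i) $F$ preserves finite limits; and (ii) $F$ sends finite surjective families of light profinite sets to effective epimorphisms in $\Cat{GelfStk}$. Condition (i) reduces, via the filtered colimit presentation, to the elementary identities $\Q_p^A \otimes_{\Q_{p,\solid}} \Q_p^B = \Q_p^{A\times B}$ and its analogue for pullbacks over a fixed finite base; both hold in the induced solid analytic ring structure on finite products of copies of $\Q_p$. For (ii), one has to check that a surjection $S_0 \to S$ of light profinite sets gives a $!$-cover $C^{\lc}(S, \Q_p) \to C^{\lc}(S_0, \Q_p)$: writing this as a filtered colimit of surjections $\Q_p^{T} \to \Q_p^{T'}$ between finite products of copies of $\Q_p$, each of which is a finite product of diagonal maps $\Q_p \to \prod \Q_p$ (in particular $1$-descendable), one concludes using that filtered colimits of $!$-descendable maps remain $!$-descendable.

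The resulting functor $(-)_{\Betti}^{\Gelf}\colon \Cat{CondAni} \to \Cat{GelfStk}$ is colimit-preserving and left-exact by construction. Its composition with $(-)_{\An}$ is identified with $(-)_{\Betti} \times_{\AnSpec(\Z)} \AnSpec(\Q_{p,\solid})$: both are colimit-preserving functors $\Cat{CondAni} \to \Cat{AnStk}_{\Q_{p,\solid}}$ (using that $(-)_{\An}$ is a left adjoint by \Cref{sec:totally-disc-stacks-morphism-of-topoi}), so it is enough to compare them on light profinite sets. For $S = \varprojlim_i S_i$, the base change of $S_{\Betti} = \varinjlim_i \AnSpec(\Z^{S_i})$ along $\AnSpec(\Z) \to \AnSpec(\Q_{p,\solid})$ is $\varinjlim_i \AnSpec(\Q_p^{S_i}) = \AnSpec(C^{\lc}(S, \Q_p))$, where the colimit analytic ring structure coincides with the induced analytic ring structure from $\Q_{p,\solid}$.

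Finally, since $(-)_{\Betti}^{\Gelf}$ is a colimit-preserving left-exact functor between presentable $\infty$-topoi, the adjoint functor theorem produces a right adjoint $(-)_{\rm cond}\colon \Cat{GelfStk}\to \Cat{CondAni}$, and this adjoint pair is by definition a morphism of $\infty$-topoi. The main obstacle I anticipate is the $!$-descendability in step (ii) of the second paragraph, but it ultimately reduces to elementary descent for finite products of copies of $\Q_p$ in the solid formalism together with stability of descendability under countable filtered colimits.
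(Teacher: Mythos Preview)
Your proof has a genuine gap in step (ii). The conditions you state—(i) preservation of finite limits and (ii) surjections map to effective epimorphisms—are the universal property of the \emph{sheaf} topos $\Shv(\Cat{Prof}^{\rm light})$, not of the \emph{hypersheaf} topos $\Cat{CondAni}$. Since $\Cat{GelfStk}$ is defined by localizing at $!$-equivalences (a class strictly larger than Čech $!$-covers) and is not known to be hypercomplete, verifying descendability of $C^{\lc}(S,\Q_p)\to C^{\lc}(S_0,\Q_p)$ for a single surjection $S_0\to S$ is insufficient: you must show that arbitrary \emph{hypercovers} $S_\bullet\to S$ of light profinite sets map to $!$-equivalences. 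Your claimed universal property, as stated, is simply false.

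The paper handles this directly via the criterion of \Cref{rem:map-to-AnStk}: for a hypercover $S_\bullet\to S$ one checks that the pro-system $\{\mathrm{Tot}_{\leq k}\, C^{\lc}(S_\bullet,\Q_p)\}_k$ is pro-isomorphic to $C^{\lc}(S,\Q_p)$. The key observation (already visible over $\Z$) is that the cofiber of $C^{\lc}(S,\Q_p)\to \mathrm{Tot}_{\leq k}$ is discrete with countable cohomology concentrated in degrees $\geq k$; since $S$ is profinite, such an object has no nonzero maps to $C^{\lc}(S,\Q_p)$ once $k$ is large, giving pro-constancy. Your descendability argument—with uniform index $\leq 1$ at finite levels, hence $\leq 2$ in the colimit—correctly recovers the Čech-nerve case of this pro-constancy, but the extension to general hypercovers requires the additional finite-cohomological-dimension input that the paper supplies and you omit. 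Alternatively, you could try to argue that the light condensed $\infty$-topos is already hypercomplete (so sheaves coincide with hypersheaves on light profinites), but that too is a nontrivial fact requiring justification.
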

\begin{proof}
  By \cref{rem:map-to-AnStk}, it suffices to see that given a hypercover $S_\bullet\to S$ of light profinite sets with $A:=C^{\lc}(S,\Q_p)\to A_\bullet:=C^{\lc}(S_\bullet,\Q_p)$, one has $!$-descent $\ob{D}(A)\cong \mathrm{lim}^! \ob{D}(A_\bullet)$. For this, one has to see that the pro-system of partial totalizations of $\{\varprojlim_{[n]\in \Delta_{\leq k}} A_\bullet\}_k$ is pro-isomorphic to $A$. This can already be checked with $\mathbb Z$-coefficients, and noting that cofiber $f_k$ of the map from $A$ to the $k$-th totalization is discrete, has countable cohomologies and lives in cohomological degrees $\geq k$. This implies that the map $f_k\to A$ is zero (using that $S$ is profinite).
\end{proof}

\begin{remark}
  \label{sec:gelfand-stacks-notation-for-betti-stack}
   To lighten notation, from now on we will write $(-)_{\Betti}\colon \Cat{CondAni}\to \Cat{GelfStk}$ for the Betti realization functor of condensed anima in Gelfand stacks.
\end{remark}

Next, we note that there is a well-behaved category of perfect modules on Gelfand stacks.

\begin{lemma}\label{LemmaDescentPerfectComplexes}
  Let $a\leq b$ be two integers.
  Let $\mathcal{F}$ be the prestack of categories sending a Gelfand ring $A$ to the category of perfect $A$-modules with amplitude $[a,b]$.
  Then the unique colimit preserving extension of $\mathcal{F}$ to a functor $\mathcal{P}(\Cat{GelfRing}^\op)\to \Cat{Cat}_\infty$ inverts $!$-equivalences.
  In particular, $\mathcal{F}$ extends uniquely to a sheaf of categories $\Perf^{[a,b]}(-)$ on $\Cat{GelfStk}$.
\end{lemma}
\begin{proof}
  The proof is similar to the proof of \cite[Corollary VI.1.4]{ScholzeRLL}, up to having to take care of the $!$-equivalences. We first show that $\mathcal{F}$ is a sheaf for the $!$-topology.
  As perfect complexes on a Gelfand ring $A$ embed into the full category of quasi-coherent sheaves $D(A)$, it suffices to see that being a perfect complex is a $!$-local property.
  Thus, let $M \in \ob{D}(A)$ be an object which is locally for the $!$-topology a perfect module of amplitude $[a,b]$.
  Since dualizability is a local property, $M$ is automatically dualizable.
  Since $A$ is Gelfand, it is Fredholm (\Cref{xh28sj}), hence the dualizable $A$-module $M$ is a perfect  $A$-module.
  The descent of the perfect amplitude follows from \cite[Corollary VI.1.3]{ScholzeRLL}.
  To handle the inversion of $!$-equivalences, we can note by \Cref{RemarkDStarDescentAnStk} that $A\mapsto D(A)$ (with $*$-pullbacks) is a sheaf of categories on $\Cat{GelfStk}$ (i.e., its extension to presheaves inverts $!$-equivalences), and that $\Perf^{[a,b]}(A)$ embeds fully faithfully into $D(A)$.
  Thus, $\Perf^{[a,b]}(-)$ satisfies the full-faithfulness assumption of \cite[Proposition A.3.21]{mann2022p} (for hypercovers of Gelfand rings whose geometric realization is a $!$-equivalence), and as it is $!$-sheaf (as we have shown before) the same argument implies then that $\Perf^{[a,b]}(-)$ inverts any $!$-equvivalence.
\end{proof}

\begin{definition}
Let $X$ be a Gelfand stack. Let $a\leq b$ be two integers. An object $M\in \ob{D}(X)$ is a \textit{perfect module of amplitude $[a,b]$}  if for all separable Gelfand ring $A$ and map $f\colon \ob{GSpec} A\to X$ of Gelfand stacks,  the pullback $f^*M$ is a perfect module of amplitude $[a,b]$. A perfect module of amplitude $[0,0]$ is also called a \textit{vector bundle}.
\end{definition}

\begin{remark}\label{RemarkPerfectDescent}
By \cref{LemmaDescentPerfectComplexes} it suffices to test whether a module $M$ on a Gelfand stack $X$ is a perfect module after pulling back along a jointly surjective family of morphisms of stacks $\{\GSpec(A_i)\to X\}_{i}$.
\end{remark}

Finally, we introduce a quasi-finite dimensional version of Gelfand stacks.

\begin{definition}\label{xhs9wjqfd}\
  \begin{enumerate}
  \item A  separable Gelfand ring $A$ is called \textit{quasi-finite dimensional}  (or \textit{qfd}) if $A^u$ is a separable qfd Banach algebra over $\Q_p$. We let $\Cat{GelfRing}_{\omega_1}^\qfd \subseteq \Cat{AnRing}_{\Q_{p,\solid}}$ be the full subcategory of separable qfd Gelfand rings.
  
  \item The category of qfd Gelfand stacks is defined as 
  \[
  \Cat{GelfStk}^\qfd:=\Cat{AnStk}((\Cat{GelfRing}_{\omega_1}^{\qfd})^{\op}).
  \]
    \item For $A\in \Cat{GelfRing}_{\omega_1}^\qfd$, we still denote by $\ob{GSpec}(A)$ the functor corepresented by $A$ on separable qfd Gelfand rings.
  \end{enumerate}
\end{definition}

Similarly as for qfd arc-stacks, cf.\ \cref{remRelationQfdWithArc}, we have a geometric morphism of $\infty$-topoi $\Cat{GelfStk}\to \Cat{GelfStk}^{\qfd}$ whose left adjoint is the unique left exact colimit preserving functor sending the spectrum $\GSpec (A)$ of a separable qfd Gelfand ring $A$ to itself. In particular, we can transfer the $6$-functor formalism of quasi-coherent sheaves on Gelfand stacks along the functor $\Cat{GelfStk}^{\qfd}\to \Cat{GelfStk}$.

\begin{example}\label{ExamCondAniqfd}
  Continuing with our series of examples arising from condensed anima, cf.\ \cref{sec:totally-disc-stacks-td-betti-stacks}, the functor $C^{\lc}(-,\Q_p)\colon \Prof^{\mathrm{light},\op}\to \Cat{GelfRing}_{\omega_1}$ of locally constant functions on light profinite sets factors through $\Cat{GelfRing}^{\qfd}_{\omega_1}$.
  Thanks to \textit{loc.\ cit.\ }we have an induced left exact colimit preserving functor
\[
(-)_{\Betti}^{\qfd}\colon \widehat{\Shv}(\Cat{Prof}^{\mathrm{light}})\to \Cat{GelfStk}^{\qfd}
\]
from the category of hypersheaves on profinite sets with values in anima towards  $\qfd$ Gelfand stacks.

We note that we have a commutative diagram of left exact colimit preserving functors
\[
\begin{tikzcd}
\widehat{\Shv}(\Cat{Prof}^{\mathrm{light}}) \ar[r] \ar[d,"(-)_{\Betti}^{\qfd}"'] & \Cat{CondAni} \ar[d, "(-)_{\Betti}"] \\
\Cat{GelfStk}^{\qfd} \ar[r] & \Cat{GelfStk},
\end{tikzcd}
\]
where the horizontal arrows are the natural left Kan extensions that are the identity on representable objects.
In order to avoid cumbersome notation, we shall keep writing $(-)^{\qfd}_{\Betti}$ as $(-)_{\Betti}$ and simply call it the Betti realization in qfd Gelfand stacks.
Throughout the text we will make explicit which incarnation of the Betti stack we are referring to. 
\end{example}

\subsection{Derived Berkovich spaces}
\label{sec:derived-berkovich-spaces}

The aim of this subsection is to construct a category of  \textit{derived Berkovich spaces over $\Q_{p,\solid}$}, as a full subcategory of the category of Gelfand stacks. This construction is completely analogous to that of derived Tate adic spaces of \cite[Section 2.7]{camargo2024analytic}, and will generalize partially proper rigid spaces and more generally (good) Berkovich spaces.

\begin{definition}\label{DefinitionBerkovichSpaces}\label{RemTopSpaceBerkovichSpace}
We let  $\mathcal{C}=\Cat{GelfStk}^{\ob{aff}}$ denote the category of affinoid Gelfand stacks, that is,  the category of  Gelfand stacks representable by separable Gelfand rings.

\begin{enumerate}

\item An \textit{analytic cover} of an affinoid Gelfand stack $X$ is a finite collection $\{Y_i\to X\}_{i\in I}$  of rational localizations of $X$, such that the closed cover $\{|Y_i|\to |X| \}$ is \textit{strict}, that is, it is refined by an open cover of the topological space $|X|$. Notice that any analytic cover in $\mathcal{C}$ is a $!$-cover of affinoid Gelfand stacks. 

\item Let  $\mathcal{C}_{\an}$ be the analytic site of affinoid Gelfand stacks and let $\Shv(\mathcal{C}_{\an})$ be the category of sheaves on $\mathcal{C}_{\an}$.  Any sheaf $Y\in \Shv(\mathcal{C}_{\an})$ has an underlying topological space $|Y|$ constructed as the colimit $|Y|:=\varinjlim_{X\in \mathcal{C}_{/Y}} |X|$ where $X$ runs over all affinoid Gelfand stacks over $Y$, and $|X|$ is the Berkovich spectrum of $X$.  Given a map $Z\to Y$ in $\Shv(\mathcal{C}_{\an})$ we have an induced map of topological spaces $|Z|\to |Y|$.

\item Let $U\subset |Y|$ be an open subspace, we define $Y_U\to Y$  to be the subsheaf whose values at $A\in \Cat{GelfRing}$ is given by  those maps $\GSpec (A)\to Y$ such that $\mathcal{M}(A)\to |Y|$ factors through $U$. A map $Z\to Y$ is called an \textit{open immersion} if it is equivalent to $Y_U\to Y$ for some open $U\subset Y$.

\item A sheaf $Y\in \Shv(\mathcal{C}_{\an})$ is called a \textit{derived Berkovich space over $\Q_{p,\solid}$} (or simply a derived Berkovich space) if there is an open cover $\{U_i\subset Y\}_{i\in I}$ of $Y$, and open immersions $U_i\to X_i$ where $X_i\in \mathcal{C}$ is an affinoid Gelfand stack. We let $\Cat{BerkSp}\subset \Shv(\mathcal{C}_{\an})$ be the full subcategory of derived Berkovich spaces. 

\item Given $Y$ a derived Berkovich space, a \textit{strict affinoid cover of $Y$} is a collection of  subsheaves $\{Y_i\subset Y\}_{i\in I}$ such that each $Y_i$ is representable by a separable Gelfand ring, and that the cover $\{|Y_i|\to |Y| \}_i$ is strict, that is, the cover can be refined by an open cover of $|Y|$.

\item By restricting all the previous constructions to the full subcategory $\mathcal{C}^{\qfd}\subset \mathcal{C}$ of qfd affinoid Gelfand stacks, we define in a similar fashion  the category $\Cat{BerkSp}^{\qfd}$ of \textit{quasi-finite dimensional Berkovich spaces}.  

\end{enumerate}
\end{definition}

\begin{remark}\label{RemarkComparisonTateSpaces}
There is a natural fully faithful functor $\Cat{GelfStk}^{\ob{aff}}\to \Cat{Aff}^b_{\Q_p}$ from affinoid Gelfand stacks to affinoid bounded spaces over $\Q_p$ (cf.\ \cite[Definition 2.7.20]{camargo2024analytic}). By definition, Berkovich analytic covers are refined by open covers of the underlying Berkovich spectrum which then produce analytic covers in the adic spectrum. This produces fully faithful embeddings 
\[
\Cat{BerkSp}^{\qfd}\hookrightarrow \Cat{BerkSp} \hookrightarrow \Cat{AdicSp}_{\Q_p} 
\]
from the category of (qfd) derived Berkovich spaces into the category of derived Tate adic spaces over $\Q_p$ (cf.\ \cite[Definition 2.7.22]{camargo2024analytic}). In particular, there are notions of  locally of finite presentation, smooth, \'etale, etc.\ for derived Berkovich spaces (see \cite[Definition 3.6.6]{camargo2024analytic}).
\end{remark}

The next proposition shows that derived Berkovich spaces form a full subcategory of Gelfand stacks.

\begin{proposition}\label{PropBerkovichAreStacks}
Let $Y\in \Cat{BerkSp}$ be a derived Berkovich space. Assume that $\varinjlim_i \mathrm{GSpec}(A_i)\to \mathrm{GSpec}(A)$ is a $!$-equivalence; or just that it induces an isomorphism after passing to arc-stacks, and $A=\mathrm{lim}_i A_i$. Then the natural map 
\[
Y(A)\to \varprojlim_i Y(A_i)
\]
is an equivalence. In particular, $Y$ defines a Gelfand stack. Furthermore, the functor $\Cat{BerkSp}\to \Cat{GelfStk}$ is fully faithful (resp.\ for $\Cat{BerkSp}^{\qfd}\hookrightarrow \Cat{GelfStk}^{\qfd}$).
\end{proposition}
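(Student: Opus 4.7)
The plan is to verify the descent statement $Y(A)\to \varprojlim_i Y(A_i)$ in three cases of increasing generality, and then deduce full faithfulness.

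First, when $Y=\GSpec(B)$ is affinoid, the statement under hypothesis (a) is the Yoneda image of the $!$-equivalence applied to the Gelfand stack $Y$, while under hypothesis (b) it follows from $A=\varprojlim_i A_i$ in $\Cat{GelfRing}_{\omega_1}$ combined with the identification $Y(C)=\Map_{\Cat{GelfRing}_{\omega_1}}(B,C)$.

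Second, I would treat $Y=X_U$ for $X=\GSpec(B)$ and $U\subseteq \mathcal{M}(B)$ open. A compatible element of $\varprojlim_i Y(A_i) \subseteq \varprojlim_i X(A_i)=X(A)$ yields, via the affinoid case, a map $f\colon B\to A$, and one must show $\mathcal{M}(A)\to \mathcal{M}(B)$ factors through $U$. Equivalently, for every Berkovich rational subspace $V\subseteq \mathcal{M}(B)\setminus U$ with associated localization $B\to B_V$, one needs $A\otimes_B B_V=0$. The hypothesis forces $A_i\otimes_B B_V=0$ for each $i$. Under (a), $*$-descent of $\ob{D}$ along the $!$-equivalence (cf.\ \Cref{RemarkDStarDescentAnStk}) detects modules by their pullbacks, so $A\otimes_B B_V=0$. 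Under (b), the arc-iso forces $\mathcal{M}(A\otimes_B B_V)=\emptyset$, since any point of this Berkovich spectrum would lift, via the arc-iso applied to a perfectoid point of the perfectoidization, to a point of some $\mathcal{M}(A_i\otimes_B B_V)$ which is empty; thus $A\otimes_B B_V=0$ by \Cref{LemmaVanishingSpectrum}.

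Third, for general $Y$, choose a strict affinoid cover $\{Y_j\subseteq Y\}_{j\in J}$ with each $Y_j$ open in an affinoid Gelfand stack. A compatible family $(g_i\colon \GSpec(A_i)\to Y)_i$ induces, for each $i$, a strict rational cover of $\GSpec(A_i)$ adapted to $\{Y_j\}$. Using continuity of the Berkovich spectrum (\Cref{LemmaFilteredColimits} in the filtered case, and an analogous argument based on step two in general), one refines these into a strict rational cover of $\GSpec(A)$ pulled back from some $\GSpec(A_{i_0})$. Applying step two to each piece and invoking analytic descent of $Y$ (analytic covers are in particular $!$-covers) glues these local maps into the desired $g\colon \GSpec(A)\to Y$. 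The hardest part will be the coherent refinement of the cover across the diagram, which relies on strictness of the cover and the compactness of the Berkovich spectrum. Full faithfulness of $\Cat{BerkSp}\hookrightarrow \Cat{GelfStk}$ and its qfd analogue then follows formally: a morphism of derived Berkovich spaces viewed as Gelfand stacks is determined by its values on affinoid Gelfand stacks, where the analytic descent built into both notions ensures agreement.
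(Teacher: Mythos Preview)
Your steps 1 and 2 are correct, and step 2 is actually a nice alternative route to the open-in-affinoid case via vanishing of localizations. But step 3 has a genuine gap. The diagram $I$ indexing the $!$-equivalence is not filtered in general (the prototypical example is the \v{C}ech nerve of a $!$-cover, indexed by $\Delta^{\op}$), so there is no single index $i_0$ such that a rational cover of $\GSpec(A)$ can be ``pulled back from some $\GSpec(A_{i_0})$''. Even when some $A\to A_{i_0}$ is surjective on Berkovich spectra, you have not explained why the pushforward of a cover adapted to $\{Y_j\}$ on $\mathcal{M}(A_{i_0})$ is adapted to the (as yet undefined) map $\mathcal{M}(A)\to |Y|$. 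Your parenthetical ``an analogous argument based on step two in general'' does not address this: step 2 shows that open conditions descend once you already have a candidate map $B\to A$, but here there is no map $\GSpec(A)\to Y$ yet.

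The paper fixes exactly this point with a single observation that you are missing: the assignment $A\mapsto \Map_{\Cat{Top}}(\mathcal{M}(A),|Y|)$ is an arc-hypersheaf (it factors through $\Marc(A)$, and $|Y|$ is a topological space), and any $!$-equivalence induces an isomorphism on arc-stacks (this is implicit in \Cref{sec:totally-disc-stacks-construction-perfectoidization}). So the compatible family $(g_i)$ immediately produces a \emph{single} continuous map $f^{\mathrm{top}}\colon \mathcal{M}(A)\to |Y|$, over which one can work fiberwise. Pulling back a strict affinoid cover of $Y$ along $f^{\mathrm{top}}$ gives a strict rational cover of $\GSpec(A)$ in one stroke, and the problem reduces formally to the affinoid case (your step 1). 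The same trick---factoring through $\Map_{\Cat{Top}}(|Y|,|X|)$---is what the paper uses for full faithfulness; your one-sentence justification there is too vague to count as an argument.
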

\begin{proof}
Let $Y$ be a derived Berkovich space, we have a natural map of functors on Gelfand rings 
\[
Y(A)=\ob{Map}(\ob{GSpec} A,Y )\to \ob{Map}_{\ob{Top}}(\mathcal{M}(A), |Y|)
\]
for $A\in \Cat{GelfRing}$. The right hand side term satisfies arc-hyperdescent, and any $!$-equivalence gives rise to an arc-hypercover, namely, any map $A\to B$ of Gelfand rings for which the base change $B\otimes_A-$ is conservative is automatically an arc-cover. Evaluating at $A^{\bullet}$ and taking limits we get a commutative diagram  of anima
\[
\begin{tikzcd}
 Y(A) \ar[r] \ar[d] & \varprojlim_i Y(A_i) \ar[d]  \\ 
\ob{Map}_{\ob{Top}}(\mathcal{M}(A), |Y|) \ar[r] & \varprojlim_i\ob{Map}_{\ob{Top}}(\mathcal{M}(A_i), |Y|)
\end{tikzcd}
\]
where the bottom horizontal arrow is an equivalence. Thus, to show that the upper horizontal arrow is an isomorphism, it suffices to prove this over the fibers of $\ob{Map}_{\ob{Top}}(\mathcal{M}(A), |Y|) $. Therefore, we fix a map of topological spaces $f^{\ob{top}}\colon  \mathcal{M}(A)\to |Y|$ and consider the fibers 
\[
Y(A)_{/ f^{\ob{top}}}\to  \mathrm{lim}_i (Y(A_i))_{/f^{\ob{top}}}.  
\]
By taking a affinoid rational cover $Y_i$ of $Y$ whose pullback along $f^{\ob{top}}$ can be refined by a strict rational cover of $\mathcal{M}(A)$, we can formally reduce to the case where $Y=\ob{GSpec} B$ is itself affinoid. In that case, $Y(A)= \ob{Map}_{\ob{GelfRing}}(B,A)$ and the equivalence holds as $A=\mathrm{lim}_i A_i$.

It is left to show that the natural map $\Cat{BerkSp}\to \Cat{GelfStk}$ is fully faithful. For that, notice that any map $Y\to X$ in $\Cat{GelfStk}$ gives rise to a map of arc-stacks $Y^{\diamond}\to X^{\diamond}$ and so it gives rise to a map of topological spaces $|Y^{\diamond}|\to |X^{\diamond}|$. For a derived Berkovich space $Y$, one  has $|Y|=|Y^{\diamond}|$, namely, this holds for affinoid spaces and the general case follows from analytic descent.  Hence, if $X,Y$ are derived Berkovich spaces, we have a natural commutative triangle of maps of
\[
\begin{tikzcd}
\ob{Map}_{\ob{BerkSp}}(Y,X) \ar[r] \ar[rd] & \ob{Map}_{\Cat{GelfStk}}(Y,X)  \ar[d] \\ &  \ob{Map}_{\Cat{Top}}(|Y|,|X|).
\end{tikzcd}
\]
Therefore, to show that the upper horizontal map is an equivalence, it suffices to prove it on fibers over $\Cat{Map}_{\Cat{Top}}(|Y|,|X|)$. In that case, one fixes the map $f^{\ob{top}}\colon |Y|\to |X|$ of topological spaces, and by the same argument as before one formally reduces to the case when $Y$ and $X$ are affinoid where the equivalence is clear.
\end{proof}

Later, we will pay specific attention to \textit{rigid smooth} morphisms of derived Berkovich spaces. 

\begin{definition}\label{DefEtaleSmooth}
Let $f\colon Y\to X$ be a map  of derived Berkovich spaces over $\Q_p$. 

\begin{enumerate}

\item The map $f$ is called \textit{rigid \'etale} if, locally  in an open cover of $Y$ and $X$, $f$ admits a factorization along open immersions and finite \'etale maps.

\item The map $f$ is \textit{rigid smooth} if, locally in the analytic topology of $Y$ and $X$, it factors as a composite $Y\xrightarrow{g} \mathbb{A}^{d, \mathrm{an}}_{X}\to X$, where $g$ is rigid \'etale. 

\item The map $f$ is \textit{Berkovich \'etale} if locally in a strict closed cover of both $Y$ and $X$ (that is, a closed cover refined by an open cover), the map $f$ is a Berkovich \'etale map of affinoid Berkovich spaces as in \Cref{xjw92j}.

\item The map $f$ is \textit{Berkovich smooth} if locally in a strict closed cover of both $Y$ and $X$, it factors as a composite $Y\xrightarrow{g} \A^{d,\ob{\an}}_{X}\to X$ where $g$ is Berkovich \'etale. 

\end{enumerate}

\end{definition}

\begin{remark}
Any rigid \'etale (resp. rigid smooth map)  is Berkovich \'etale (resp. Berkovich smooth), but the converse does not hold: rational localizations are Berkovich \'etale but not rigid \'etale in general.   \end{remark}

Finally, we isolate an interesting subcategory of derived Berkovich spaces, equivalent to Gro\ss e-Kl\"onne's notion of dagger space (also called rigid space with overconvergent structure sheaf), \cite{grosse2000rigid}.

\begin{definition}\label{DefDaggerRigid}
A derived Berkovich space $Y$ over a (separable) non-archimedean field $K$ is called a \textit{$\dagger$-rigid space} if, locally on an affinoid cover, it is represented by a classical (i.e., static) quotient of an overconvergent Tate algebra $K\langle T_1,\ldots, T_n \rangle_{\leq r}$ for some $r>0$. 
If the structure morphism $Y \to \GSpec(K)$ is rigid smooth, we simply say that $Y$ is \textit{smooth}.
\end{definition}

For example, partially proper rigid spaces over $K$ are $\dagger$-rigid spaces.

\subsection{Arc- and $!$-topologies on perfectoid rings}
\label{sec:arc-and-!-topologies-on-perfectoid-rings}
To properly set up the theory of perfectoidization and de Rham stacks, it will be important to relate the arc-topology and the $!$-topology on perfectoid rings. It will even be useful to upgrade this to $\A_{\inf}$-coefficients, in the almost setup. This is what this rather technical subsection is devoted to. It ends with a discussion of the relation to these results to recent work developing a $6$-functor formalism for pro-\'etale $\Z_p$- or $\Q_p$-cohomology of rigid spaces, \cite{AMdescendPerfd}, \cite{ALBMFFCoho}. 

We start by introducing the following almost category:

\begin{definition}\label{DefAlmostAinfCat}
  Let $A$ be a perfectoid Tate ring over $\Z_p$ and let $\A_{\inf}(A)\colon= W(A^{\flat,\circ})$ be the ring of Witt vectors of the ring of power-bounded elements of its tilt.
  We see $\A_{\inf}(A)$ as a solid $\Z_p$-algebra.
  Let $\overline{A}:= A^{\circ}/A^{\circ\circ}$.
  By \cite[Lemma 10.3]{Bhatta} (or by a direct computation using reduction mod $p$), the map of solid rings $\A_{\inf}(A)\to W(\overline{A})$ is idempotent (here we use that the solid tensor product preserves connective $p$-complete modules, cf.\ \cite[Lemma 2.12.9]{mann2022p}).
  We define the category of solid almost $\A_{\inf}(A)$-modules to be the Verdier quotient
\[
\ob{D}^{a}(\A_{\inf}(A)):= \ob{D}(\A_{\inf}(A))/ \ob{D}(W(\overline{A})).
\]
\end{definition}

\begin{remark}\label{RemAlmostAinf}
Let $f\colon A\to B$ be a map of perfectoid Tate rings over $\Z_p$, then one has that $W(\overline{B})=W(\overline{A})\otimes_{\A_{\inf}(A)} \A_{\inf}(B)$. In particular, we have a natural base change map 
\[
f^*\colon \ob{D}^{a}(\A_{\inf}(A)) \to \ob{D}^a(\A_{\inf}(B))
\]
with conservative and $\ob{D}^{a}(\A_{\inf}(A)) $-linear right adjoint $f_*$ given by a forgetful functor.  Moreover, we have 
\[
\ob{D}^a(\A_{\inf}(B))=\ob{D}(\A_{\inf}(B))\otimes_{\ob{D}(\A_{\inf}(A))} \ob{D}^{a}(\A_{\inf}(A))
\]
using \cite[Remark 4.3]{AMdescendPerfd}.
Thus, one has an associated $6$-functor formalism on $A\mapsto \ob{D}^a(\A_{\inf}(A))$ compatible with the $6$-functor formalism of analytic rings, and the map $f$ gives rise to a proper map (having the induced analytic ring structure). 
\end{remark}

\begin{lemma}
  \label{sec:perf-analyt-de-2-arc-covers-are-good}
The functor $A\mapsto \ob{D}^{a,!}(\A_{\inf}(A))$ (with transition maps given by upper $!$-maps) on separable totally disconnected perfectoid rings satisfies arc-hyperdescent. In particular, if $A\to A_{\bullet}$ is an arc-hypercover of separable  perfectoid rings over $\Q_p$ with $A$ totally disconnected, then it is a $!$-equivalence.
\end{lemma}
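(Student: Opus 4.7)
My plan is as follows. The main input is a descendability statement for arc-covers of separable totally disconnected perfectoid rings in the almost $\A_{\inf}$ setting; such a statement is precisely the sort that \cite{AMdescendPerfd} is engineered to produce (indeed \cref{RemAlmostAinf} already cites \cite[Remark 4.3]{AMdescendPerfd}). More precisely, I would like to show that for any arc-cover $A\to B$ of separable totally disconnected perfectoid rings, the pullback $f^{\ast}\colon \ob{D}^{a}(\A_{\inf}(A))\to \ob{D}^{a}(\A_{\inf}(B))$ admits a descendability index bounded by a constant that depends only on the ``dimension'' of $A\to B$ (here one uses that $\mathcal{M}(A)$ is profinite to refine $A\to B$ by a composition of a quasi-pro-\'etale map and a rational cover, cf.\ the proof of \cref{sec:light-arc-stacks-1-cover-by-separable-totally-disconnected}).

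With descendability in hand, the upgrade from descent to hyperdescent is formal via \cite[Proposition A.3.21]{mann2022p}: one needs to verify that for any arc-hypercover $A\to A_\bullet$ of separable totally disconnected perfectoid rings, the pro-system of partial totalizations $\{\mathrm{Tot}_{\leq n}\ob{D}^{a,!}(\A_{\inf}(A_\bullet))\}_n$ is pro-constant, which follows from the uniform descendability bound. The transition maps in the limit are upper-$!$, but in the almost $\A_{\inf}$ setting the conservative pushforward $f_\ast$ along an arc-cover is $\ob{D}^{a}(\A_{\inf}(A))$-linear (\cref{RemAlmostAinf}), so that $!$-descent and $\ast$-descent are equivalent.

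For the ``in particular'' part, given an arc-hypercover $A\to A_\bullet$ of separable perfectoid $\Q_p$-algebras with $A$ totally disconnected, we must show $\ob{D}(A)\xrightarrow{\sim}\varprojlim_{[n]\in\Delta}\ob{D}^{!}(A_\bullet)$. One uses that the generic fiber $A$ of $\A_{\inf}(A)$ can be recovered by inverting $p$ and modding out by a generator $\xi$ of the kernel of $\theta\colon\A_{\inf}(A)\to A^{\circ}$; passing from $\ob{D}^{a}(\A_{\inf}(A))$ to $\ob{D}(A)$ is then a symmetric monoidal localization compatible with $!$-pullback along arc-covers (which preserve the ``$\theta$-locus''). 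Hence the $!$-descent statement from the first part descends through this localization to the desired $!$-descent for $\ob{D}(A)$, which by \cref{RemarkDStarDescentAnStk} (applied in its $!$-form) is precisely the assertion that $A\to A_\bullet$ is a $!$-equivalence.

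The main obstacle is extracting a \emph{uniform} descendability index from the arc-descent results in the almost $\A_{\inf}$ setting: one needs to know not only that arc-covers of separable totally disconnected perfectoid rings are descendable, but that the descendability index is bounded in a way that survives taking hypercovers. Once this is available (for example via the recipe in \cite{AMdescendPerfd} using tilting and reducing to the characteristic $p$ statement of Bhatt--Mathew arc-descent for $A^{\flat,\circ}/\varpi$), both parts of the lemma follow by the formal manipulations outlined above.
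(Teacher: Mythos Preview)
Your proposal has a genuine gap: you conflate two distinct notions that do not imply each other directly. Arc-hyperdescent of the \emph{structure presheaf} $A\mapsto A^{\flat,\circ}/\varpi$ (or $A\mapsto \A_{\inf}(A)$ in the almost category), which is what \cite[Proposition 8.8]{scholze_etale_cohomology_of_diamonds} and \cite{AMdescendPerfd} supply, only says that $\A_{\inf}(A)\xrightarrow{\sim}\mathrm{Tot}(\A_{\inf}(A_\bullet))$. It does \emph{not} say that the ring map $\A_{\inf}(A)\to\A_{\inf}(A_0)$ is descendable, let alone with a uniform index. Your proposed factorization of an arbitrary arc-cover into quasi-pro-\'etale and rational pieces does not obviously yield a uniform descendability bound either, and in any case the descendability results you would need (\cref{LemAnDescentPerfectoids}, \cref{PropDescendableCoverFiniteDimPerfectoid}) appear \emph{after} this lemma in the paper and invoke it in their proofs.

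The paper sidesteps descendability entirely. It uses the criterion of \cref{rem:map-to-AnStk}: it suffices that the pro-system $\{\mathrm{Tot}_{\leq k}(\A_{\inf}(A_\bullet))\}_k$ is pro-constant with value $\A_{\inf}(A)$ in $\ob{D}^a(\A_{\inf}(A))$. Almost arc-hyperdescent of $A^{\flat,\circ}$ gives the full totalization. The cofiber of $\A_{\inf}(A)\to\mathrm{Tot}_{\leq k}(\A_{\inf}(A_\bullet))$ is concentrated in cohomological degrees $\geq k$, so it remains to show these cofibers have no nonzero maps back to $\A_{\inf}(A)$ for $k\gg 0$. This is where separability enters: the paper proves a finite cohomological dimension bound (\cref{sec:perf-analyt-de-2-finite-cohom-dimension-for-separable-strictly-totally-disconnected}) that $\mathrm{Ext}^i$ between static derived $(p,[\varpi])$-complete modules vanishes for $i>5$, provided the source is discrete and \emph{countable} modulo $(p,\varpi)$. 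That countability is precisely what separability buys. This argument gives hyperdescent in one stroke without ever producing a descendability index for a single arc-cover.

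For the ``in particular'' clause, your localization idea is on the right track but overcomplicated: the paper simply base-changes the pro-constancy statement along $\A_{\inf}(A)\to A$ (valid since $A$ is over $\Q_p$), which immediately yields the required pro-constancy of $\{\mathrm{Tot}_{\leq k}(A_\bullet)\}_k$ and hence the $!$-equivalence by \cref{rem:map-to-AnStk}.
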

\begin{proof}
  We use the criterion from \cref{rem:map-to-AnStk}, that is, we prove that the pro-system $(\ob{Tot}_{\leq k}(\A_{\inf}(A_{\bullet})))_k$  is pro-constant equal to $\A_{\inf}(A)$ in $\ob{D}^a(\A_{\inf}(A))$. By base changing along $\A_{\inf}(A)\to A$ one obtains the claim for the cover $A\to A_{\bullet}$. By almost arc-hyperdescent of $A\mapsto A^{\flat,\circ}$ (which follows formally from \cite[Proposition 8.8]{scholze_etale_cohomology_of_diamonds} and \cite[Lemma 4.6]{AMdescendPerfd}), we know that $\A_{\inf}(A)\to \ob{Tot}(\A_{\inf}(A_{\bullet}))$ is an isomorphism in $\ob{D}^{a}(\A_{\inf}(A))$.
  
 Thus,  it suffices now to see that $f_k\colon \A_{\inf}(A)\to \ob{Tot}_{\leq k}(\A_{\inf}(A_{\bullet}))$ admits a retract  in $\ob{D}^a(A^\circ)$ for all $k\gg 0$.  We note that the cofiber of $f_k$ is concentrated in cohomological degrees $\geq k$ thanks to the dual of \cite[Proposition 1.2.4.5]{lurie_higher_algebra}. Hence, it suffices to apply \cref{sec:perf-analyt-de-2-finite-cohom-dimension-for-separable-strictly-totally-disconnected}.
\end{proof}

\begin{lemma}
  \label{sec:perf-analyt-de-2-finite-cohom-dimension-for-separable-strictly-totally-disconnected} Let $A$ be a separable totally disconnected perfectoid Tate ring with pseudo-uniformizer $\varpi$, and let $R:=\A_{\inf}(A)$.
  Let $M,N\in \ob{D}(R)$ be static derived $(p,[\varpi])$-complete $R$-modules.
  Assume that $M$ is discrete and countable mod $(p,\varpi)$.
  Then $\mathrm{Ext}^i_{\ob{D}(R)}(M,N)=0$ for $i>5$.  
\end{lemma}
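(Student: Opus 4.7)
The strategy is to reduce the computation of $R\Hom_R(M,N)$ to one over the residue ring $\bar R := R/(p,[\varpi])$, exploiting the derived $(p,[\varpi])$-completeness of both modules. First, $(p,[\varpi])$ is a regular sequence in $R = W(A^{\flat,\circ})$ since both elements are non-zero divisors (the latter because $\varpi$ descends to a non-zero divisor in the perfect $\F_p$-algebra $A^{\flat,\circ}$, and Teichm\"uller lifts inherit this property in Witt vectors). Hence $R/^L(p,[\varpi]) \simeq \bar R$ is concentrated in degree zero. Using that $N$ is derived $(p,[\varpi])$-complete, applying $R\Hom_R(M,-)$ to the fiber sequences $N\xrightarrow{p}N$ and $N\xrightarrow{[\varpi]}N$ and invoking change of rings yields
\[
R\Hom_R(M,N)/^L(p,[\varpi]) \simeq R\Hom_{\bar R}\bigl(M/^L(p,[\varpi]),\, N/^L(p,[\varpi])\bigr).
\]
Because $R\Hom_R(M,N)$ is itself derived $(p,[\varpi])$-complete, its top cohomological degree is detected after reduction modulo $(p,[\varpi])$ by derived Nakayama.

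The complex $M/^L(p,[\varpi])$ has cohomological amplitude $[-2,0]$ over $\bar R$, and since $M/(p,\varpi)M$ is countable and $M$ is discrete and derived complete, derived Nakayama produces a resolution of $M$ by countably-generated derived $(p,[\varpi])$-complete free $R$-modules; reducing modulo $(p,[\varpi])$ gives a resolution of $M/^L(p,[\varpi])$ by countably-generated discrete free $\bar R$-modules. Accounting for the amplitude shift of $2$, the desired vanishing $\Ext^i_R(M,N)=0$ for $i>5$ thus reduces to showing $\Ext^j_{\bar R}(X,Y) = 0$ for $j > 3$, for any countably-generated static $\bar R$-module $X$ and any solid $\bar R$-module $Y$.

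For this cohomological dimension bound, I would first descend to the strictly totally disconnected case via a pro-finite \'etale arc-cover $A \to A'$ with $A'$ separable strictly totally disconnected, as provided by \Cref{sec:light-arc-stacks-1-cover-by-separable-totally-disconnected}; such a cover is descendable, inducing a faithfully flat descendable cover $\bar R \to \bar R'$. For strictly totally disconnected $A'$, the countable discrete ring $\bar R'$ admits an explicit description as sections of a ``sheaf of height-one valuation-ring quotients'' over the light profinite set $\mathcal{M}(A'^{\flat})$, and combining the elementary bound on the global dimension of such quotients with the trivial cohomological dimension of the profinite base yields the desired bound of $3$. The main obstacle is precisely this last step: while the overall strategy via derived Nakayama and descent is clear, matching the precise constant $3$ (and hence the stated bound of $5$) requires careful book-keeping of the structure of $\bar R'$ as a sheaf of valuation-ring quotients and of the descendability index of the cover, all taking place in the solid category.
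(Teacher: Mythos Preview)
Your initial reduction via derived Nakayama and the amplitude shift of $2$ coming from $M/^L(p,[\varpi])$ is correct and matches the paper exactly; the paper also arrives at the intermediate claim that $\Ext^i_{A^\circ/\varpi}(M,N)=0$ for $i>3$ when $M,N$ are static with $M$ countable. The divergence, and the genuine gap, is in how you propose to establish this bound.

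Your descent step to a strictly totally disconnected cover is both unnecessary and does not close the argument. The ring $A$ is already assumed totally disconnected, so the completed stalks of $A^\circ$ at points of $\mathcal{M}(A)$ are already valuation rings; passing to algebraically closed residue fields gains nothing here. More importantly, even granting such a description of $\bar R'$ as ``sections of a sheaf of valuation-ring quotients over a profinite set,'' you have not explained how to extract a projective-dimension bound for arbitrary countably-generated modules from it, and the quotient $A^{\flat,\circ}/\varpi$ at each stalk is not itself a valuation ring but a quotient of one, so its homological algebra is not a priori trivial.

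The paper's argument avoids these issues by lifting from $A^\circ/\varpi$ to $S:=A^\circ$ (costing one degree), then taking a short exact sequence $0\to M''\to M'\to M\to 0$ with $M'\cong\widehat{\bigoplus}_{\N}S$ and $M''$ static, $\varpi$-torsion free, and derived $\varpi$-complete (costing one more degree). The key point, which your proposal misses entirely, is that $M''/\varpi$ is then \emph{flat} over $S/\varpi$: flatness is checked on Zariski stalks, and because $A$ is totally disconnected the Berkovich localizations of $S$ are direct summands whose $\varpi$-completions are valuation rings $T_x$, over which the torsion-free module $M''\otimes_S T_x$ is automatically flat. Once $M''/\varpi$ is countable and flat, Lazard's theorem writes it as a countable filtered colimit of finite free modules, giving $\Ext^i_S(M'',N)=0$ for $i>1$ and closing the bound. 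This flatness-via-valuation-rings step is the missing idea in your approach.
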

\begin{proof} In the following all tensor products, $\iHom$, quotients, etc.\ are derived.
  We have that
  \[
  \iHom_{R}(M,N)\otimes_{R} R/(p,[\varpi]) =  \iHom_{R}(M,N/(p,[\varpi])  = \iHom_{R/(p,\varpi)}(M/^{\L}(p,[\varpi]), N/^{\L}(p,\varpi)).
  \]
  Since $N$ is derived $(p,[\varpi])$-complete, then so is $ \iHom_{R}(M,N)$ and by the derived Nakayama's lemma it suffices to show that $ \iHom_{R/(p,\varpi)}(M/(p,[\varpi]), N/(p,\varpi))$ is $(-5)$-connective, i.e., its homotopy groups vanish in degrees $<-5$.
  Since $M$ is static, $M/(p,[\varpi])$ is supported in homological degrees $[0,2]$.
  Thus, it suffices to show that if $N$ and $M$ are static and countable  $A^{\circ}/\varpi=R/(p,\varpi)$-modules then $\ob{Ext}^i_{A^{\circ}/\varpi}(M,N)=0$ for $i>3$.
  To show that, it suffices to prove that $\ob{Ext}^i_{A^{\circ}}(M,N)=0$ for $i>2$ if $N$ and $M$  are $\varpi$-torsion and $M$ is countable. 
  
  We can find a short exact sequence of $A^{\circ}$-modules $0\to M^{\prime\prime}\to M^\prime\to M\to 0$ with $M^{\prime}\cong \widehat{\bigoplus}_{i\in \N} A^{\circ}$, and $M^{\prime\prime}$ $\varpi$-torsion free and (derived) $\varpi$-adically complete.
  Using that $M'/\varpi$ is a free $A^\circ/\varpi$-module, it suffices to show that $\mathrm{Ext}^i_{A^{\circ}}(M,N)=0$ for $i>1$ if $M$ is static derived $\varpi$-complete $\varpi$-torsion free with $M/\varpi$ countable (and $N$ killed by $\varpi$).
  We set $S:=A^\circ$.
  We claim that with these assumptions, $M/\varpi$ is a countable flat $S/\varpi$-module.
  Assuming this, as in the proof of \cref{xs2hd9}, we can use Lazard's theorem to write $M/\varpi$ as a countable filtered colimit of finite free $S/\varpi$-modules to see that $\mathrm{Ext}^i_{\ob{D}(S)}(M,N)\cong \mathrm{Ext}^i_{\ob{D}(S/\varpi)}(M/\varpi,N)=0$ for $i>1$; thus we are done.

  As $M/\varpi$ is countable, by assumption, it suffices to see that $M$ is $\varpi$-completely flat, i.e., for any (discrete, static) $S/\varpi$-module $K$, the (derived) tensor product $M\otimes_S K$ is concentrated in degree $0$.
  As $M\otimes_S K$ is an object of the classical derived category $\ob{D}^\delta(S/\varpi)$, this claim can be checked on stalks on $\ob{Spec}(S/\varpi)$.
  Given $x\in \ob{Spec}(S/\varpi)\cong \mathcal{M}(A)$, let $S_x$ be the filtered colimit of rational neighborhoods of $x$.
  As $A$ is totally disconnected, this colimit is a colimit of direct summands of $S$.
  Moreover, the $\varpi$-adic completion of $S_x$ is a valuation ring $T_x$ with pseudo-uniformizer $\varpi$.
  We can conclude that the tensor product $M\otimes_S S_x$ is a static, $\varpi$-torsion free $S_x$-module, and so its $\varpi$-adic completion $M_x$ is a static and $\varpi$-torsion $T_x$ module (which is still discrete mod $\varpi$).
  Because $T_x$ is a valuation ring, we can conclude that $M_x$ is a flat $T_x$-module. Let $(S/\varpi)_x$ be the Zariski localization of $S/\varpi$ at $x$. Then
  \[
    (M\otimes_S K)\otimes_{S/\varpi} (S/\varpi)_x\cong M_x\otimes_{T_x} (S/\varpi)_x
  \]
  is therefore concentrated in degree $0$ as desired.
\end{proof}

Our next goal is to show that for a separable perfectoid ring $A$ there is, under a finiteness condition on the Berkovich space $\mathcal{M}(A)$, a pro-\'etale $!$-cover $A\to A'$ where $A'$ is a  separable strictly totally disconnected perfectoid ring, so that the previous results can be extended beyond the totally disconnected case. One cannot hope to simply drop the finite-dimensionality assumption, see \Cref{sec:-covers-perfectoids-1-counter-example-to-cover-by-strictly-totally-disconnected} below.

\begin{lemma}\label{LemmaRationalLocalizationAlmost}
Let $A$ be a perfectoid ring and let $A\to B$ be a rational localization. Then the map $\A_{\inf}(A) \to \A_{\inf}(B)$ of solid almost analytic rings is idempotent, i.e., $\A_{\inf}(B)$ is an idempotent algebra in the almost category $\ob{D}^a(\A_{\inf}(A))$ of \cref{DefAlmostAinfCat}. Furthermore, if $\{\Marc(B_i)\to \Marc(A)\}_{i=1}^n$ is a rational cover of $A$, then the collection of almost  idempotent algebras $\{\A_{\inf}(A) \to  \A_{\inf}(B_i) \}_{i=1}^n$ is a closed cover of the smashing spectrum of $\ob{D}^a(\A_{\inf}(A))$, equivalently, the map $\A_{\inf}(A)\to \prod_{i=1}^d \A_{\inf}(B_i)$ is descendable in $\ob{D}^a(\A_{\inf}(A))$. 
\end{lemma}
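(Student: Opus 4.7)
The plan is to reduce both assertions — almost idempotency and almost descendability — to the classical almost Tate acyclicity of rational covers of perfectoid rings in characteristic $p$, and then lift the conclusions to $\A_{\inf}$ via derived Nakayama inside the almost category.

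First, I would set up a convenient reduction. Fix a pseudo-uniformizer $\varpi \in A^{\circ\circ}$ with tilt $\varpi^{\flat} \in A^{\flat,\circ}$, chosen so that $p \mid (\varpi^{\flat})^{\sharp}$. Every object in play — $\A_{\inf}(A)$, $\A_{\inf}(B)$, the various solid tensor products — is derived $(p,[\varpi^{\flat}])$-complete, and the solid tensor product preserves $(p,[\varpi^{\flat}])$-completeness by \cite[Lemma 2.12.9]{mann2022p}. Derived Nakayama in the almost category therefore lets us check both claims after reducing modulo $(p,[\varpi^{\flat}])$. Under this reduction, $\A_{\inf}(A)$ becomes $A^{\flat,\circ}/\varpi^{\flat}$, while the idempotent algebra $W(\overline{A})$ becomes $\overline{A} = A^{\circ}/A^{\circ\circ}$; the Verdier quotient defining $\ob{D}^{a}(\A_{\inf}(A))$ thus recovers, modulo $(p,[\varpi^{\flat}])$, the classical derived almost category of $A^{\flat,\circ}/\varpi^{\flat}$ with respect to the ideal of topologically nilpotent elements, and similarly for $B$ and for the $B_{i}$'s.

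Second, after this reduction one is squarely in characteristic $p$ via tilting: the rational localization $A \to B$ tilts to a rational localization $A^{\flat} \to B^{\flat}$ of perfectoid Tate rings in characteristic $p$. It is then a standard fact (see e.g.\ \cite{scholze_etale_cohomology_of_diamonds}, \cite{kedlaya_liu_relative_p_adic_hodge_theory_foundations}) that the integral map $A^{\flat,\circ} \to B^{\flat,\circ}$ is almost idempotent, i.e.\ the multiplication
\[
B^{\flat,\circ} \otimes^{\mathbb{L}}_{A^{\flat,\circ}} B^{\flat,\circ} \longrightarrow B^{\flat,\circ}
\]
is an almost isomorphism, and for any rational cover $\{A^{\flat} \to B_{i}^{\flat}\}_{i=1}^{n}$ the augmented \v{C}ech complex on the integral rings is almost exact with uniformly bounded cohomological amplitude (almost Tate acyclicity). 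Combined with Step~1, this gives both the idempotency of $\A_{\inf}(B)$ in $\ob{D}^{a}(\A_{\inf}(A))$ and the almost exactness, in bounded degrees, of the \v{C}ech complex for $\A_{\inf}(A) \to \prod_{i=1}^{n}\A_{\inf}(B_{i})$.

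Third, descendability follows from this bounded almost exactness by the standard \v{C}ech argument (as in the proof of \cite[Proposition 6.19]{scholze6functors}): the unit of $\ob{D}^{a}(\A_{\inf}(A))$ admits a finite filtration whose graded pieces lie in the thick tensor-ideal generated by $\prod_{i=1}^{n}\A_{\inf}(B_{i})$, so the map is descendable in the almost category with finite descendability index. The main obstacle I expect is Step~1: one must carefully match the two almost categories across the Witt-vector and derived $(p,[\varpi^{\flat}])$-completion constructions, i.e.\ verify that the Verdier quotient $\ob{D}(\A_{\inf}(A))/\ob{D}(W(\overline{A}))$ is identified, modulo $(p,[\varpi^{\flat}])$, with the classical almost derived category of $A^{\flat,\circ}/\varpi^{\flat}$, and that the reduction modulo $W(\overline{A})$ commutes appropriately with derived $(p,[\varpi^{\flat}])$-completion. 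Once these compatibilities are in place, the rest is a direct transfer of classical almost Tate acyclicity.
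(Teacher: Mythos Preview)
Your proposal is correct and follows essentially the same strategy as the paper: reduce by derived Nakayama to characteristic $p$ and then invoke almost Tate acyclicity for rational covers of perfectoid rings. The paper's execution is slightly cleaner in that it reduces only modulo $p$ (after first passing to the tilt, so that $\A_{\inf}(A)/p = A^{\circ}$), which avoids your worry about matching almost categories across a further mod-$[\varpi^{\flat}]$ step; and for idempotency it gives a self-contained argument rather than citing external references, observing that $B^{\circ} \otimes_{A^{\circ}} B^{\circ}$ is a solid tensor of perfect $\F_p$-algebras and hence static (by \cite[Lemma 3.16]{bhatt_scholze_projectivity_of_the_witt_vector_affine_grassmannian}), therefore integrally perfectoid, so that the generic-fiber isomorphism forces an almost isomorphism integrally.
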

\begin{proof}

We can assume without loss of generality that $A$ is a perfectoid ring over $\F_p$. We need to show the following two facts: 

\begin{enumerate}

\item Let $A\to B$ be a rational localization. Then the map $\A_{\inf}(A)\to \A_{\inf}(B)$ is idempotent. 

\item  Let $\{\Marc(B_i)\to \Marc(A)\}_{i=1}^n$ be a rational cover of $A$, then the collection of almost idempotent algebras $\{\A_{\inf}(A)\to \A_{\inf}(B)\}_{i=1}^n$ is a  closed cover of the smashing spectrum of $\ob{D}^a(\A_{\inf}(A))$.

\end{enumerate}

We first show (1). We want to see that the map
\[
\A_{\inf}(B)\otimes_{\A_{\inf}(A)} \A_{\inf}(B)\to \A_{\inf}(B)
\]
is an almost equivalence.  As the solid tensor product preserves connective derived $p$-complete objects \cite[Proposition 2.12.10]{mann2022p}, by derived Nakayama's lemma it suffices to prove modulo $p$, in which case we need to show that $g\colon B^{\circ}\otimes_{A^{\circ}} B^{\circ}\to B^{\circ}$ is an almost equivalence.  The left hand side is a tensor of perfect solid $\F_p$-algebra, so it is static by \cite[Lemma 3.16]{bhatt_scholze_projectivity_of_the_witt_vector_affine_grassmannian} (more precisely, all the values at profinite sets are perfect $\F_p$-algebras and so static) and therefore integrally perfectoid. Since the map $g$ is  an equivalence in generic fibers, it is an almost equivalence  integrally and we are done.

 We next show part (2). Let $\{A\to B_i\}_{i=1}^n$ be a rational cover of $\mathcal{M}(A)$, for $J\subset \{1,\ldots, n\}$ a finite subset let $B_J=\bigotimes_{j\in J} B_i$ where the tensor product is taken over $A$. To see that $\{\A_{\inf}(A))\to \A_{\inf}(B_i)\}_{i=1}^n$ is a closed cover in the smashing spectrum of $\ob{D}^a(\A_{\inf}(A))$, it suffices to show that  the natural map 
\[
\A_{\inf}(A)\to  \varprojlim_{J\in \{1,\ldots, n\}} \A_{\inf}(B_J)
\]
is an almost equivalence. Taking quotients mod $p$ this reduces to almost arc-descent of $\shf{O}^{\circ}$ for affinoid perfectoid  spaces.
\end{proof}

\begin{lemma}\label{LemAnDescentPerfectoids}
  Let  $A\to A^\prime$ be  a pro-finite \'etale cover  of separable perfectoid Tate rings.
  Then $\A_{\inf}(A)\to \A_{\inf}(A^{\prime})$ is descendable in $\ob{D}^a(\A_{\inf}(A))$ of index of descendability $\leq 16$.
\end{lemma}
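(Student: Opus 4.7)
My approach is to reduce to the case where $A$ is separable strictly totally disconnected, in which pro-finite \'etale covers are controlled by profinite covers of the Berkovich spectrum, and then invoke descendability of profinite covers.

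First, using Lemma \ref{sec:light-arc-stacks-1-cover-by-separable-totally-disconnected}, I would choose an arc-cover $A \to B$ with $B$ separable strictly totally disconnected. By Lemma \ref{sec:perf-analyt-de-2-arc-covers-are-good}, arc-hypercovers are $!$-equivalences in $\ob{D}^{a,!}(\A_{\inf}(-))$. Combining this with the cohomological-dimension bound of Lemma \ref{sec:perf-analyt-de-2-finite-cohom-dimension-for-separable-strictly-totally-disconnected} (degree $\leq 5$), one extracts an explicit index of descendability for $\A_{\inf}(A) \to \A_{\inf}(B)$ in the almost category. Since descendability composes with multiplicative index (\cite[Proposition 2.7.2]{mann2022p}), it suffices to prove the claim after base-changing so that $A$ is separable strictly totally disconnected.

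Second, when $A$ is separable strictly totally disconnected, the pro-finite \'etale cover $A \to A'$ is classified by a continuous surjection $Y := \mathcal{M}(A') \twoheadrightarrow X := \mathcal{M}(A)$ of light profinite sets: each finite \'etale $A$-algebra is, locally on $X$, a finite product of copies of $A$ (because the Berkovich residue fields at each point of $X$ are algebraically closed), so it is globally of the form $C(T, A)$ for some finite covering $T \to X$. Passing to the filtered limit and using compatibility of $W(-)$ with tilting, the map $\A_{\inf}(A) \to \A_{\inf}(A')$ is obtained by base change to $\A_{\inf}(A)$ of the map $C(X, \Z_p) \to C(Y, \Z_p)$ (with induced analytic ring structure) along the natural structure map $C(X, \Z_p) \to \A_{\inf}(A)$.

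Third, I would conclude by showing that $C(X, \Z_p) \to C(Y, \Z_p)$ is descendable of bounded index: this follows from the standard profinite descent arguments underlying Lemma \ref{xhsw82h} and \cite[Proposition II.1.1]{ScholzeRLL}. Since descendability is preserved under base change in the almost $\A_{\inf}$-category, combining these bounds with the index from the first step yields the descendability of $\A_{\inf}(A) \to \A_{\inf}(A')$.

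The main obstacle is tracking the descendability indices through the two reductions so as to land on the explicit bound $\leq 16 = 2^4$: the arc-cover reduction contributes a factor bounded by the cohomological-dimension bound of Lemma \ref{sec:perf-analyt-de-2-finite-cohom-dimension-for-separable-strictly-totally-disconnected}, and the profinite-cover step contributes a further standard small factor; multiplying these together with factors coming from the compatibility of the almost structure with the mod $p$ and mod $[\varpi]$ reductions should yield the stated bound.
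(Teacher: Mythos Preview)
Your reduction in Step~1 is circular. Lemma~\ref{sec:perf-analyt-de-2-arc-covers-are-good} and Lemma~\ref{sec:perf-analyt-de-2-finite-cohom-dimension-for-separable-strictly-totally-disconnected} both assume the \emph{source} $A$ is already totally disconnected; they do not give you descendability of $\A_{\inf}(A)\to\A_{\inf}(B)$ for an arbitrary separable perfectoid $A$ and an arc-cover $B$. The result that would do this is Proposition~\ref{PropDescendableCoverFiniteDimPerfectoid}, but its proof uses the present lemma (precisely to pass from totally disconnected to \emph{strictly} totally disconnected via a pro-finite \'etale cover). So you cannot invoke it here. And you genuinely need \emph{strictly} totally disconnected for Step~2: over a merely totally disconnected $A$, finite \'etale $A$-algebras are not in general of the form $C(T,A)$ for a finite cover $T\to\mathcal M(A)$, so the reduction to profinite-set covers breaks down.

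There is also a completion issue in Step~2 even granting Step~1: base-changing $C(Y,\Z_p)$ along $C(X,\Z_p)\to\A_{\inf}(A)$ yields the \emph{uncompleted} colimit $\varinjlim_n \A_{\inf}(A_n)$, not $\A_{\inf}(A')$; these only agree after reducing modulo $(p^k,[\pi])$, so you would still need the mod-$p$ and mod-$[\pi]$ reductions from the paper's argument.

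The paper avoids all of this by working directly with $A$ arbitrary: it reduces modulo $p^k$ and then modulo $[\pi]$ (each contributing a factor of $2$), passes to the finite \'etale case via \cite[Proposition~2.7.2]{mann2022p} (another factor of $2$), and then for a single finite \'etale cover uses almost purity to show the cofiber $Q=\mathrm{cofib}(\A_{\inf}(A)\to\A_{\inf}(A'))$ is almost finite projective, which yields $\pi$-retracts and hence index $\leq 2$. This gives the clean bound $2^4=16$ without any reduction on $A$.
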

\begin{proof}
  By arguing as in point (b) of the proof of \cite[Theorem 2.30]{AMdescendPerfd}, it suffices to show that for all $k\in \N$ the map $\A_{\inf}(A)/p^k\to \A_{\inf}(A^{\prime})/p^k$ is descendable of  index of descendability $\leq 8$.
  Then, by applying the same argument, it suffices to show that for any pseudo-uniformizer $\pi\in A^{\circ\circ}$, one has to show that $\A_{\inf}(A)/(p^k,[\pi])\to \A_{\inf}(A')/(p^k,[\pi])$ is descendable of index of descendability $\leq 4$.
  By hypothesis, we can write $A^{\prime}=\varinjlim_{n} A_n$ as a filtered colimit of finite \'etale covers $A\to A_n$.
  Since $\A_{\inf}(A^{\prime})/(p^k,[\pi])=\varinjlim_n \A_{\inf}(A_n)/(p^k,[\pi])$, by applying   \cite[Proposition 2.7.2]{mann2022p} it suffices to show that if $A\to A'$ is a finite \'etale cover then $\A_{\inf}(A)/(p^k,[\pi])\to \A_{\inf}(A^{\prime})/(p^k,[\pi])$ is descendable of index of descendability $\leq 2$, and by base change even just that $\A_{\inf}(A)/p^k\to \A_{\inf}(A^{\prime})/ p^k$ has index of descendability $\leq 2$.

  Thus, we can assume that $A\to A'$ is a finite \'etale cover.
  We claim that the quotient $Q=\ob{cofib}(\A_{\inf}(A)\to \A_{\inf}(A^{\prime}))$ is almost finite projective. 
  Suppose that this holds, then $Q/p^k$ is almost finite projective $\A_{\inf}(A)/p^k$-module for all $k$, and for any pseudo-uniformizer $\pi\in A^{\circ\circ}$ the surjective map $\A_{\inf}(A^{\prime})/p^k\to Q/p^k$ admits a splitting up to multiplication by $[\pi]$.
  Equivalently, the map $\iota\colon \A_{\inf}(A)/p^k\to \A_{\inf}(A^{\prime})/p^k$ admits a $\pi$-retract, that is, there is a map $g_{\pi}\colon \A_{\inf}(A^{\prime})/p^k \to\A_{\inf}(A)/p^k$ for which the composite with $\iota$ is the multiplication by $[\pi]$ on $\A_{\inf}(A)/p^k$.
  Suppose the claim holds, and let $K=\ob{fib}(\A_{\inf}(A)\to \A_{\inf}(A^{\prime}))$, then the map $K\to \A_{\inf}(A)/p^k$ is killed by $[\varpi]$ for all $\pi\in A^{\circ\circ}$.
  Writing $W(A^{\circ\circ})\otimes_{\A_{\inf}(A)}K$  as the $p$-completed filtered colimit of $([\pi]K)_{\pi\in A^{\circ\circ}}$ we have a short exact sequence 
 \[
 \begin{gathered}
  0\to  R^1\varprojlim_{\pi\in A^{\circ\circ}}  \pi_1\big(\Hom_{\A_{\inf}(A)/p^k }(([\pi]K)/p^k, \A_{\inf}(A)/p^k )  \to \pi_0(\Hom_{\A_{\inf}(A)/p^k }((W(A^{\circ\circ})\otimes_{\A_{\inf}(A)} K)/p^k, \A_{\inf}(A)/p^k ) \\  \to \varprojlim_{\pi\in A^{\circ\circ}}  \pi_0(\Hom_{\A_{\inf}(A)/p^k }(([\pi]K)/p^k, \A_{\inf}(A)/p^k )  \to 0,
 \end{gathered}
 \]
 where the right limit is not derived. The image of $\eta$ in the right term is zero (this is precisely the ``$[\pi]$-split for each $\pi$'' property discussed before), so it comes from the left $R^1\varprojlim$ term. Thus, by \cite[Lemma 2.7.1]{mann2022p} the map $\eta^{\otimes 2}\colon K^{\otimes_{\A_{\inf}(A)} 2}\to \A_{\inf}(A)$ is zero  modulo $p^k$, proving that $\A_{\inf}(A)/p^k\to \A_{\inf}(A^{\prime})/p^k$ is $(\leq 2)$-descendable as wanted.

 We now prove the claim, that is, that $Q=\ob{cofib}(\A_{\inf}(A)\to \A_{\inf}(A^{\prime}))$ is an almost finite projective $\A_{\inf}(A)$-module, equivalently that it is almost $p$-completely flat and almost finitely generated.
 By derived Nakayama's lemma (and the definition of $p$-complete flatness) this reduces to proving that the derived quotient $Q/p=\ob{cofib} (A^{\circ}\to A^{\prime, \circ})$ is an almost finite projective $A^{\circ}$-module (and thus in particular almost flat), and this follows from the almost purity theorem \cite[Theorem 10.9]{Bhatta}. Indeed, by almost purity we know that $Q/p$ is almost finitely presented, and that  for all pseudo-uniformizer $\pi$ we can find a $\pi$-retract $A^{\prime, \circ}\to A^{\circ}$, this is the same as having a $\pi$-section of $A^{\prime,\circ}\to Q/p$, proving that $Q/p$ is also almost flat.  
\end{proof}

\begin{proposition}\label{PropDescendableCoverFiniteDimPerfectoid}
Let $A$ be a separable affinoid perfectoid ring such that $\mathcal{M}(A)$ has finite cohomological dimension. Then there is a pro-\'etale descendable map of separable perfectoid rings $A\to A^{\prime}$  such that $\A_{\inf}(A)\to \A_{\inf}(A^{\prime})$ is descendable in $\ob{D}^a(\A_{\inf}(A))$ and $A^{\prime}$ is strictly totally disconnected. Moreover, let $g\colon A\to B$ be an arbitrary arc-cover with $B$ separable perfectoid, then $\A_{\inf}(A)\to \A_{\inf}(B)$ is descendable. By base change along $\A_{\inf}(A)\to A$, the map $A\to B$ is also descendable.
\end{proposition}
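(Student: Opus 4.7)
The plan is to first construct $A^\prime$ by composing a Berkovich pro-\'etale cover with a pro-finite \'etale cover, then establish descendability of $\A_{\inf}(A)\to \A_{\inf}(A^\prime)$ in $\ob{D}^a(\A_{\inf}(A))$ by separately handling each piece, and finally extend to arbitrary arc-covers via base change. The main technical difficulty will be bounding the descendability index in the passage from finite rational covers of $\mathcal{M}(A)$ to the full pro-\'etale cover $A\to A^w$.

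First, I would construct $A^\prime$. Since $A$ is separable, $\mathcal{M}(A)$ is metrizable by \cref{sec:light-arc-stacks-1-cover-by-separable-totally-disconnected}(1), so together with the finite cohomological dimension hypothesis, \cref{xhs83j} produces a descendable Berkovich pro-\'etale cover $A\to A^w$ with $\mathcal{M}(A^w)$ a light profinite set, presented concretely as $A^w=\varinjlim_{I} A_I$ where $I$ ranges over a countable cofinal family of finite covers of $\mathcal{M}(A)$ by rational subspaces and $A_I=\prod_{i\in I} A_i$. Then \cref{sec:light-arc-stacks-1-cover-by-separable-totally-disconnected}(2) provides a pro-finite \'etale cover $A^w\to A^\prime$ with $A^\prime$ separable strictly totally disconnected; this gives the desired cover.

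Next, to show that $\A_{\inf}(A)\to \A_{\inf}(A^\prime)$ is descendable in $\ob{D}^a(\A_{\inf}(A))$, I would factor it as $\A_{\inf}(A)\to \A_{\inf}(A^w)\to \A_{\inf}(A^\prime)$ and control each map. The map $\A_{\inf}(A^w)\to \A_{\inf}(A^\prime)$ has index of descendability $\leq 16$ in the almost category by \cref{LemAnDescentPerfectoids}. For the first map, \cref{LemmaRationalLocalizationAlmost} shows that for each finite $I$, the map $\A_{\inf}(A)\to \A_{\inf}(A_I)$ corresponds to a closed cover of the smashing spectrum, hence is $1$-descendable in the almost category, with uniform bound independent of $I$. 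The main obstacle is then to pass from these uniform finite-stage bounds to descendability of the countable filtered colimit $\A_{\inf}(A)\to \A_{\inf}(A^w)=\varinjlim_I \A_{\inf}(A_I)$. Here I would apply \cite[Proposition 2.7.2]{mann2022p}, whose applicability relies on the finite cohomological dimension of $\mathcal{M}(A)$ (entering through bounds on the higher $\mathrm{lim}$-obstructions controlling the tower), to conclude that $\A_{\inf}(A)\to \A_{\inf}(A^w)$ is $2$-descendable. This step parallels the concluding argument of the proof of \cref{xhs82j} and the strategy of \cite[Proposition II.1.1]{ScholzeRLL}.

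Finally, for the ``moreover'' statement, let $g\colon A\to B$ be an arc-cover with $B$ separable perfectoid. Since $!$-descent in $\ob{D}^a(\A_{\inf}(A))$ along the descendable map $\A_{\inf}(A)\to \A_{\inf}(A^\prime)$ reduces descendability of $\A_{\inf}(A)\to \A_{\inf}(B)$ to descendability of the base change, I would consider the pushout $A^\prime\to B^\prime$, where $B^\prime$ is the perfectoidization of $A^\prime\widehat{\otimes}_A B$. Since $A^\prime$ is separable strictly totally disconnected and $A^\prime\to B^\prime$ is an arc-cover, \cref{sec:perf-analyt-de-2-arc-covers-are-good} shows that $\A_{\inf}(A^\prime)\to \A_{\inf}(B^\prime)$ is a $!$-equivalence in the almost category, hence in particular descendable. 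Combining yields descendability of $\A_{\inf}(A)\to \A_{\inf}(B)$; descendability of $A\to B$ then follows by base change along the proper map $\A_{\inf}(A)\to A$ as in \cref{RemAlmostAinf}.
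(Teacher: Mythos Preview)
Your overall architecture is correct, but there is a genuine gap in the descendability argument for $\A_{\inf}(A)\to \A_{\inf}(A^w)$.

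The claim that each finite-stage map $\A_{\inf}(A)\to \A_{\inf}(A_I)$ is ``$1$-descendable \ldots\ with uniform bound independent of $I$'' is not justified by \cref{LemmaRationalLocalizationAlmost}. That lemma only tells you the map is descendable; for a closed cover by $n$ idempotent algebras the index a priori grows with $n$ (for instance, $A$ is an iterated fiber product of the $A_J$, not a retract of $\prod_i A_i$). So you do not have the uniform bound needed to invoke \cite[Proposition 2.7.2]{mann2022p}. Your parenthetical that finite cohomological dimension enters ``through bounds on the higher $\lim$-obstructions'' in that proposition is also off: \cite[Proposition 2.7.2]{mann2022p} needs only a uniform index bound on the $A\to A_I$, nothing about $\mathcal{M}(A)$.

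The paper resolves this by a different mechanism. Working modulo $(p^k,[\pi])$ (which is essential for compatibility with filtered colimits of idempotent algebras), \cref{LemmaRationalLocalizationAlmost} and Aoki's adjunction yield an honest morphism of locales $\psi\colon \ob{Sm}(\ob{D}^a(\A_{\inf}(A)/(p^k,[\pi])))\to \mathcal{M}(A)$. The map $\mathcal{M}(A^{w,u})\to \mathcal{M}(A)$ of compact Hausdorff spaces is descendable on Betti stacks with index controlled by the cohomological dimension $d$ of $\mathcal{M}(A)$ (\cref{xhsw82h}). Pulling this back along $\psi$, and using a base-change identification $\psi^* f_*1 = \A_{\inf}(A^{w,u})/(p^k,[\pi])$, gives descendability of $\A_{\inf}(A)/(p^k,[\pi])\to \A_{\inf}(A^{w,u})/(p^k,[\pi])$ with index depending only on $d$. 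One then bootstraps to $\A_{\inf}(A)\to \A_{\inf}(A^{w,u})$ by the limiting argument of \cite[Theorem 2.30]{AMdescendPerfd}. So the finite cohomological dimension enters precisely in bounding descendability on the topological side, and is transported to $\A_{\inf}$ via $\psi$; this is the step your outline is missing. (A minor point: $A^w$ is not itself perfectoid, so one should work with its uniform completion $A^{w,u}$ throughout.)
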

\begin{proof}
Let $A$ be as in the statement and let $\pi$ be a pseudo-uniformizer of $A$. By \cref{LemmaRationalLocalizationAlmost}, the functor mapping a closed rational open subspace $C\subset \mathcal{M}(A)$ to $\A_{\inf}(C)/(p^k,[\pi])$ is a sheaf  of idempotent algebras in the almost category $\ob{D}^a(\A_{\inf}(A)/(p^k,[\pi]))$. By \cite[Theorem A]{aoki2023sheavesspectrumadjunction} this functor produces a morphism of locales
\[
\psi\colon \ob{Sm}(\ob{D}^a(\A_{\inf}(A)/(p^k,[\pi])))\to \mathcal{M}(A),
\]
(here working modulo $(p^k,[\pi])$ is important for the compatibility with colimits of idempotent algebras).

Consider the algebra $A^{w}$ of \cref{xk29sm} and its uniform completion $A^{w,u}$.
It is separable totally disconnected perfectoid, and sits in a pullback diagram of arc-stacks
\begin{equation}\label{eq9maskd3}
\begin{tikzcd}
\Marc(A^{w,u}) \ar[r] \ar[d] & \underline{\mathcal{M}(A^{w,u})} \ar[d,"f"] \\ 
\Marc(A)   \ar[r, "g"] & \underline{\mathcal{M}(A)} 
\end{tikzcd}
\end{equation}
where the right terms are the compact Hausdorff Berkovich spectra  seen as arc-stacks.
As $\mathcal{M}(A) $ has finite cohomological dimension $d$, the  algebra $f_* 1\in \ob{D}(\mathcal{M}(A) , \Z)$ is descendable of index only depending on $d$, see \cref{xhsw82h}.
Pulling  back along $\psi$, and thanks to the pullback square \cref{eq9maskd3}, one has that $\psi^* f_* 1 = \A_{\inf}(A^{w,u})/(p^k,[\pi])$.
We deduce that the map $\A_{\inf}(A)/(p^k,[\pi])\to \A_{\inf}(A^{w,u})/(p^k,[\pi])$ is descendable in $\ob{D}^a(\A_{\inf}(A)/(p^k,[\pi]))$ with index of descendability only depending on $d$.
Taking limits on $p^k$ and $[\pi]$, and using the same argument as \cite[Theorem 2.30]{AMdescendPerfd}, we get that $\A_{\inf}(A)\to \A_{\inf}(A^{w,u})$ is descendable with index only depending on $d$.
Finally, by applying   \cref{sec:light-arc-stacks-1-cover-by-separable-totally-disconnected} (2) and  \cref{LemAnDescentPerfectoids}, we can produce a pro-finite \'etale  strictly totally totally disconnected cover  $A^{w,u} \to A^{'}$ such that $\A_{\inf}(A^{w,u})\to \A_{\inf}(A^{\prime})$ is descendable (with index of descendability independent of $d$).  Taking the composition $A\to A^{w,u}\to A^{\prime}$ we get the desired cover.

Next, we prove that if $A\to B$ is an arc-cover by a separable totally disconnected ring, then the map $\A_{\inf}(A)\to \A_{\inf}(B)$ is descendable of index only dependent on $d$.
Indeed, this can be proven after base change along $\A_{\inf}(A)\to \A_{\inf}(A^{\prime})$ (with $A^{\prime}$ constructed as before), in which case the descendability follows from \cref{sec:perf-analyt-de-2-arc-covers-are-good}.
\end{proof} 

\begin{remark}
  \label{sec:-covers-perfectoids-1-counter-example-to-cover-by-strictly-totally-disconnected}\
  \begin{enumerate}
  \item Assume that $A$ is a separable perfectoid ring with a descendable map $A\to B$ with $B$ a separable strictly totally disconnected perfectoid.
  Using the Artin--Schreier sequence on the tilt and that each open subset of $\Spa(B,B^{\circ})$ is a countable filtered union of quasi-compact open subsets (and hence of affinoids), one sees that each closed subset $Z$ of $\mathcal{M}(B)$ (or rather its pullback to a perfectoid subspace of $\mathrm{Spa}(B,B^{\circ})$) has vanishing $\F_p$-cohomology in degrees $\geq 2$.
  From this (and Artin--Schreier for the tilt again), one deduces, from the descendability assumption, that the same holds true for closed subsets of $\mathcal{M}(A)$.
  This implies that a finiteness condition in \cref{PropDescendableCoverFiniteDimPerfectoid} is necessary, and that $A':=\Q_p^\cyc\langle T_1^{1/p^\infty},T_2^{1/p^\infty},\dots \rangle$ does not admit a $!$-cover by a strictly totally disconnected perfectoid ring (e.g., $\prod_\N S^1\subseteq \mathcal{M}(A')$ and $\prod_\N S^1$ has unbounded cohomological dimension).
\item In fact, let $A$ be a separable Gelfand ring, and let $A\to B$ be a descendable map such that $\mathcal{M}(B)$ is totally disconnected.
  As in the previous discussion, we see that there exists $d\geq 0$ such that for \emph{any} open subspace $U\subseteq \mathcal{M}(A)$ the quasi-coherent cohomology (more precisely, the pushforward to $\GSpec(\Q_p)$) of the Gelfand stack $U_{\Betti}\times_{\mathcal{M}(A)_{\Betti}} \GSpec(A)$ is bounded by $d$.
  This in turn implies that the ring $A':=\Q_p\langle T_1,T_2,\ldots \rangle$ does not admit any descendable morphism $A'\to B'$ with $\mathcal{M}(B')$ profinite.
  Namely, for $n\geq 0$ we can consider the open space $V_n\subseteq Z_n:=\GSpec(\Q_p\langle T_1,\ldots, T_n\rangle)$, which is the complement of $\GSpec(\mathcal{O}_{Z_n,0})$ with $0\in |Z_n|$ the origin.
  Then the open subspace $U_n:=V_n\times_{\GSpec(\Q_p)}\GSpec(\Q_p\langle T_{n+1},\ldots \rangle)\subseteq Z_\infty:=\GSpec(\Q_p\langle T_1,\ldots \rangle)$ has cohomology in degree $n-1$ (as follows by an explicit computation), and this goes to $\infty$ if $n\to \infty$.
  \end{enumerate}
\end{remark}

  In the rest of this subsection, we make a small detour to explain the relation between the previous descent results obtained for perfectoid rings and the works on $6$-functor formalisms for quasi-coherent sheaves on Fargues--Fontaine curves of \cite{AMdescendPerfd} and \cite{ALBMFFCoho}.
  
  Let $\Cat{Perf}^{\ob{td}}_{\omega_1}$ be the category of separable totally disconnected perfectoid spaces over $\F_p$.
  Thanks to \cref{sec:perf-analyt-de-2-arc-covers-are-good} the functor
\[
\ob{D}^{a,!}(-,\A_{\inf})\colon  \Cat{Perf}^{\ob{td},\op}_{\omega_1}\to \Cat{Pr}^L
\]
sending $A\mapsto \ob{D}^a(\A_{\inf}(A))$, and transition maps given by upper $!$-maps, satisfies hyperdescent, and thus $\ob{D}^{a}(\A_{\inf}(-))$ also satisfies $*$-hyperdescent.
Hence, one can extend this functor via techniques as in \cite{ALBMFFCoho} to a $6$-functor formalism on light arc-stacks 
\begin{equation}\label{eq03las1j3}
\ob{D}^{a}(-,\A_{\inf})\colon \Cat{Corr}(\Cat{ArcStk}_{\F_p}, E)\to \Cat{Pr}^L
\end{equation}
where $E$ is a suitable class of arrows as in \cite[Theorem 3.4.11]{heyer20246functorformalismssmoothrepresentations} (including all maps of arc-stacks arising from rigid analytic varieties).

 Taking base change along $\A_{\inf}\to \widehat{\mathcal{O}}$ we recover a light version of the descent results of \cite{AMdescendPerfd}, up to the cost of working only with light solid sheaves on partially proper spaces.
One can also recover the $6$-functor formalism of quasi-coherent sheaves on $\mathcal{Y}_{[0,\infty)}$ of \cite{ALBMFFCoho}, or rather a light version restricted partially proper spaces, as follows. If $A$ is a separable perfectoid ring over $\F_p$ with pseudo-uniformizer $\varpi$ we define
\[
\Yc_{A}= \Spa(\A_{\inf}(A))\backslash V(p[\varpi]) 
\]
(see \cite[Section II.1]{fargues2021geometrization}), and regard it as a Berkovich space over $\Q_p$ (this is possible as $\Yc_{A}$ is partially proper as an adic space).
We can write then $\Yc_{A}$ as the union of Gelfand stacks $\GSpec \mathbb{B}_{[r,s]}(A)$ for $[r,s]\subset [0,\infty)$ an interval with rational ends, and $\mathbb{B}_{[r,s]}(A) = \A_{\inf}(A)\langle \frac{p}{[\varpi]^{1/s}}, \frac{[\varpi]^{1/r}}{p} \rangle[\frac{1}{[\varpi]}]$ (with the convention $[\varpi]^{1/r}/p=0$ if $r=0$).
 We have the following lemma: 
 
 \begin{lemma}\label{LemAlmostBIs}
 Let $A$ be a separable perfectoid $\F_p$-algebra with pseudo-uniformizer $\varpi$.
 Let $\ob{D}^a(\A_{\inf}(A))$ be the almost category of solid $\A_{\inf}(A)$-modules from \cref{DefAlmostAinfCat}.
  Then for $[r,s]\subset [0,\infty)$ the algebras $\mathbb{B}_{[r,s]}(A) \in \Cat{CAlg}(\ob{D}^a(\A_{\inf}(A)))$ are idempotent and we have a natural equivalence
 \[
 \ob{D}(\mathbb{B}_{[r,s]}(A)) = \ob{Mod}_{\mathbb{B}_{[r,s]}(A)}(\ob{D}^a(\A_{\inf}(A))),
 \] 
 where the left hand side is the category of $\Z$-solid $\mathbb{B}_{[r,s]}(A)$-modules. 
 \end{lemma}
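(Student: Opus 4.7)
The plan is to proceed in two parts: first establish the idempotency of $\mathbb{B}_{[r,s]}(A)$ over $\A_{\inf}(A)$, and then deduce the equivalence of module categories using that the almost-kernel does not interact with $\mathbb{B}_{[r,s]}(A)$-modules.

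For idempotency, I would factor the structural map as
\[
\A_{\inf}(A)\to \A_{\inf}(A)[1/[\varpi]]\to \mathbb{B}_{[r,s]}(A)
\]
and verify idempotency of each step. The first map is the filtered colimit $\varinjlim_{n}[\varpi]^{-n}\A_{\inf}(A)$ along multiplication by $[\varpi]$; since solid tensor products commute with filtered colimits and $[\varpi]$ is invertible on the right, this is already idempotent in $\ob{D}(\A_{\inf}(A))$. The second map realizes $\mathbb{B}_{[r,s]}(A)$ as a rational localization of the Tate ring $\A_{\inf}(A)[1/[\varpi]]$ (with pseudo-uniformizer $[\varpi]$), imposing the conditions $|p|\leq |[\varpi]|^{1/s}$ and, when $r>0$, $|[\varpi]|^{1/r}\leq |p|$. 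Rational localizations of Tate rings are idempotent in the induced solid analytic ring structure, by essentially the same computation as in \Cref{LemmaRationalLocalizationAlmost}. Composing the two steps yields that $\mathbb{B}_{[r,s]}(A)$ is idempotent over $\A_{\inf}(A)$ in $\ob{D}(\A_{\inf}(A))$, and hence a fortiori in $\ob{D}^{a}(\A_{\inf}(A))$.

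For the equivalence of module categories, the crucial observation is that $[\varpi]=0$ in $W(\overline{A})$: the element $\varpi\in A^{\circ\circ}$ projects to $0$ in $\overline{A}=A^{\circ}/A^{\circ\circ}$, and the Teichm\"uller lift of $0$ vanishes. Consequently $W(\overline{A})[1/[\varpi]]=0$, so $W(\overline{A})\otimes_{\A_{\inf}(A)}\mathbb{B}_{[r,s]}(A)=0$. Since $\mathbb{B}_{[r,s]}(A)$ carries its induced analytic ring structure, one has tautologically $\ob{D}(\mathbb{B}_{[r,s]}(A))=\ob{Mod}_{\mathbb{B}_{[r,s]}(A)}(\ob{D}(\A_{\inf}(A)))$, and the desired equivalence reduces to checking that the projection
\[
\ob{Mod}_{\mathbb{B}_{[r,s]}(A)}(\ob{D}(\A_{\inf}(A)))\to \ob{Mod}_{\mathbb{B}_{[r,s]}(A)}(\ob{D}^{a}(\A_{\inf}(A)))
\]
is an equivalence. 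Full faithfulness follows because any map from a $\mathbb{B}_{[r,s]}(A)$-module $M$ to an object of $\ob{D}(W(\overline{A}))$ factors through $M\otimes_{\A_{\inf}(A)}W(\overline{A})=0$, and essential surjectivity holds because any $\tilde M$ on the right lifts to some $M_{0}\in \ob{D}(\A_{\inf}(A))$, and then $M_{0}\otimes_{\A_{\inf}(A)}\mathbb{B}_{[r,s]}(A)$ is an actual $\mathbb{B}_{[r,s]}(A)$-module whose image in the almost category recovers $\tilde M$ (using the $B$-locality of $\tilde M$).

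The main technical point is the second idempotency step, namely that the rational localization $\A_{\inf}(A)[1/[\varpi]]\to \mathbb{B}_{[r,s]}(A)$ is idempotent in the solid (not merely classical) sense; this parallels \Cref{LemmaRationalLocalizationAlmost} but needs to be spelled out with the specific analytic ring structures involved. A minor subtlety is the degenerate case $r=0$, where $p$ is only topologically nilpotent rather than invertible in $\mathbb{B}_{[0,s]}(A)$; however, the entire argument relies solely on $[\varpi]$ being a unit, so this distinction is inconsequential.
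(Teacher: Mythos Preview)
Your overall structure matches the paper's proof: factor through $\A_{\inf}(A)[1/[\varpi]]$, prove idempotency, then deduce the module-category equivalence from $W(\overline{A})\otimes_{\A_{\inf}(A)}\mathbb{B}_{[r,s]}(A)=0$ (because $[\varpi]$ is zero in $W(\overline{A})$ but a unit in $\mathbb{B}_{[r,s]}(A)$). The second half of your argument is essentially the paper's.

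The gap is exactly where you flag it, but your proposed fix is off. The reference to \cref{LemmaRationalLocalizationAlmost} is not apt: that lemma treats maps $\A_{\inf}(A)\to \A_{\inf}(B)$ for $A\to B$ a rational localization of \emph{perfectoid} rings, and its proof relies on perfectness mod $p$ to get staticness and then almost purity; none of this is available for $\A_{\inf}(A)[1/[\varpi]]\to \mathbb{B}_{[r,s]}(A)$. More seriously, ``rational localizations of Tate rings are idempotent in the induced solid analytic ring structure'' is not a general fact without a sheafiness-type input, and $\A_{\inf}(A)[1/[\varpi]]$ is not a priori sheafy. The paper handles this in two steps. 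First, for the map $\A_{\inf}(A)[1/[\varpi]]\to \mathbb{B}_{[0,s]}(A)$ it does a direct computation: using the short exact sequence
\[
0\to \A_{\inf}(A)\langle T\rangle[\tfrac{1}{[\varpi]}]\xrightarrow{[\varpi]^{1/s}T-p}\A_{\inf}(A)\langle T\rangle[\tfrac{1}{[\varpi]}]\to \mathbb{B}_{[0,s]}(A)\to 0
\]
and the fact that the solid tensor product preserves $(p,[\varpi])$-complete connective objects, one checks that $\A_{\inf}(A)\langle T\rangle\otimes_{\A_{\inf}(A)}\mathbb{B}_{[0,s]}(A)=\mathbb{B}_{[0,s]}(A)\langle T\rangle$ and hence that $\mathbb{B}_{[0,s]}(A)\otimes_{\A_{\inf}(A)[1/[\varpi]]}\mathbb{B}_{[0,s]}(A)=\mathbb{B}_{[0,s]}(A)$. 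Second, for the further localization $\mathbb{B}_{[0,s]}(A)\to \mathbb{B}_{[r,s]}(A)$ with $r>0$, the paper invokes that $\mathbb{B}_{[0,s]}(A)$ is sousperfectoid, hence sheafy, so its rational localizations are idempotent (for the relative solid structure, and then for the $\Z$-solid structure by a nuclearity argument). You should replace the handwave with these two inputs.
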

 \begin{proof}
For the idempotency, it suffices to show that the map of solid algebras
\begin{equation}\label{eqIdempotentAinf}
\A_{\inf}(A)[\frac{1}{[\varpi]}]\to \mathbb{B}_{[0,s]}(A)=\A_{\inf}\langle \frac{p}{[\varpi]^{1/s}}\rangle[\frac{1}{\varpi}]
\end{equation}
is idempotent.
Indeed, since $\mathbb{B}_{[0,s]}$ is sousperfectoid it is a sheafy Tate algebra (with pseudo-uniformizer $[\varpi]$).
Thus, any rational localization $\mathbb{B}_{[0,s]}\to C$, in the sense of adic spaces, is idempotent for the relative solid tensor product (e.g.\ by \cite[Proposition 4.12]{andreychev2021pseudocoherent}, \cite[Proposition 2.3.22 (ii)]{mann2022p}).
Moreover, as the Banach module $C$ is nuclear over $\mathbb{B}_{[0,s]}$, the tensor product $C\otimes_{\mathbb{B}_{[0,s]}}C$ is already solid with respect to $C^+$ and thus we see that $C$ is idempotent as a $(\mathbb{B}_{[0,s]},\Z)_{\solid}$-algebra.

It is left to see that \eqref{eqIdempotentAinf} is idempotent.
By an explicit calculation, e.g., using the presentation in \cite[Proposition 11.2.1]{scholze2020berkeley}, we have a short exact sequence 
\[
0\to \A_{\inf}(A)\langle T\rangle [\frac{1}{[\varpi]}]   \xrightarrow{[\varpi]^{1/s}U-p} \A_{\inf}(A)\langle T\rangle [\frac{1}{[\varpi]}] \to \mathbb{B}_{[0,s]} \to 0
\]
where $\A_{\inf}(A)\langle T\rangle$ is the $(p,[\varpi])$-adic completion of the polynomial algebra over $\A_{\inf}(A)$. Thus, as the solid tensor product preserves $(p,[\varpi])$-complete connective objects, see \cite[Proposition 2.12.10]{mann2022p},  one easily deduces that 
\[
\A_{\inf}(A)\langle T\rangle\otimes_{\A_{\inf}(A)} \mathbb{B}_{[0,s]}= \mathbb{B}_{[0,s]}\langle T \rangle,
\]
with $\mathbb{B}_{[0,s]}\langle T \rangle$ the Tate algebra in one generator over $\mathbb{B}_{[0,s]}$. We deduce that the derived tensor product $\mathbb{B}_{[0,s]} \otimes_{\A_{\inf}(A)[\frac{1}{[\varpi]}]} \mathbb{B}_{[0,s]} $ is represented by the complex in homological degrees $[0,1]$ 
\[
\mathbb{B}_{[0,s]}\langle T\rangle \xrightarrow{[\varpi]^{1/s}T-p}   \mathbb{B}_{[0,s]}\langle T\rangle, 
\]
which is equal to $\mathbb{B}_{[0,s]}$ as $\frac{p}{[\varpi]^{1/s}}\in \mathbb{B}_{[0,s]}^{\circ}$.

 The claim about the almost category is easy since one can compute that $\mathbb{B}_{[r,s]}(A)\otimes_{\A_{\inf}(A)} W(\overline{A})=0$ for the solid tensor product (e.g. $[\varpi]$ is zero in $W(\overline{A})$ but a unit in $\mathbb{B}_{[r,s]}(A)$).
 \end{proof}
   
\begin{corollary}\label{CorDescentYConstruction}
The following hold: 
\begin{enumerate}
\item 
Let $A\to A_{\bullet}$ be an arc-hypercover of separable totally disconnected perfectoid $\F_p$-algebras (with fixed pseudo-uniformizer $\varpi$), then for all $[r,s]\subset (0,\infty)$ rational closed interval, the diagram $\mathbb{B}_{[r,s]}(A)\to \mathbb{B}_{[r,s]}(A_{\bullet})$ of Gelfand $\Q_p$-algebras is a $!$-equivalence.
 In particular, the functor $A\mapsto \Yc_{A}$ sends arc-covers of separable totally disconnected perfectoid $\F_p$-algebras to epimorphisms of Gelfand stacks. 

\item Let $\Yc^{\mathrm{arc}}\colon \Cat{ArcStk}_{\F_p}\to \Cat{GelfStk}$ be the extension of the functor $A \mapsto \Yc_A$ to light arc-stacks obtained by descent.
 Then, if $A$ is a separable perfectoid $\F_p$-algebra with $\mathcal{M}(A)$ finite dimensional, there is a natural equivalence of Gelfand stacks $\Yc^{\mathrm{arc}}_A\xrightarrow{\sim} \Yc_A$.

\end{enumerate}
\end{corollary}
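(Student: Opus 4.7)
The plan is to deduce (1) by base changing the arc-hyperdescent of \cref{sec:perf-analyt-de-2-arc-covers-are-good} along the idempotent algebras $\mathbb{B}_{[r,s]}(A)$ provided by \cref{LemAlmostBIs}, and then to deduce (2) by combining the statement of (1) with the descendability statement of \cref{PropDescendableCoverFiniteDimPerfectoid}.

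For part (1), start with an arc-hypercover $A\to A_\bullet$ of separable totally disconnected perfectoid $\F_p$-algebras. By \cref{sec:perf-analyt-de-2-arc-covers-are-good}, this is a $!$-equivalence in the almost $\A_{\inf}$ category, i.e. the comparison map $\ob{D}^a(\A_{\inf}(A))\to \ob{Tot}(\ob{D}^{a,!}(\A_{\inf}(A_\bullet)))$ is an equivalence; note that by \cref{RemAlmostAinf} the transition maps are proper so upper $!$ agrees with upper $*$, which makes this totalization well-behaved under base change. Next, base change along the algebra $\mathbb{B}_{[r,s]}(A)$. Using the explicit rational-localization presentation entering the proof of \cref{LemAlmostBIs}, one checks that $\mathbb{B}_{[r,s]}(A_n)\simeq \mathbb{B}_{[r,s]}(A)\otimes_{\A_{\inf}(A)}\A_{\inf}(A_n)$ in the almost category (both sides are described by the same presentation adjoining $p/[\varpi]^{1/s}$ and $[\varpi]^{1/r}/p$). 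Because $\mathbb{B}_{[r,s]}(A)$ is idempotent in $\ob{D}^a(\A_{\inf}(A))$, base change along it is a smashing localization and therefore preserves the $!$-equivalence. Finally, by the second part of \cref{LemAlmostBIs}, the category $\ob{Mod}_{\mathbb{B}_{[r,s]}(A)}(\ob{D}^a(\A_{\inf}(A)))$ is naturally equivalent to $\ob{D}(\mathbb{B}_{[r,s]}(A))$, so the resulting totalization equivalence is exactly the assertion that $\mathbb{B}_{[r,s]}(A)\to \mathbb{B}_{[r,s]}(A_\bullet)$ is a $!$-equivalence of Gelfand $\Q_p$-algebras. Gluing along $[r,s]\subset (0,\infty)$ gives that $\Yc$ sends arc-covers to epimorphisms of Gelfand stacks, by the standard fact (\cref{sec:gelfand-stacks-remark-description-of-shriek-equivalences}) that $!$-equivalences are in particular effective epimorphisms.

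For part (2), one uses the assertion of part (1) to extend $A\mapsto \Yc_A$ by colimits from separable totally disconnected perfectoid $\F_p$-algebras to a colimit-preserving functor $\Yc^{\mathrm{arc}}\colon \Cat{ArcStk}_{\F_p}\to \Cat{GelfStk}$. Given a separable perfectoid $A$ with $\mathcal{M}(A)$ finite-dimensional, \cref{PropDescendableCoverFiniteDimPerfectoid} provides a descendable arc-cover $A\to A'$ by a separable strictly totally disconnected perfectoid ring such that $\A_{\inf}(A)\to \A_{\inf}(A')$ is descendable in $\ob{D}^a(\A_{\inf}(A))$. Writing $A'_\bullet$ for the \v Cech nerve, we have $\Yc^{\mathrm{arc}}_A\simeq |\Yc_{A'_\bullet}|$ by construction. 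Base-changing the descendable map $\A_{\inf}(A)\to \A_{\inf}(A')$ along the idempotent algebra $\mathbb{B}_{[r,s]}(A)$ as in the argument for (1), and then transferring to ordinary module categories via \cref{LemAlmostBIs}, we obtain that $\mathbb{B}_{[r,s]}(A)\to \mathbb{B}_{[r,s]}(A')$ is descendable as Gelfand $\Q_p$-algebras. Consequently $\GSpec(\mathbb{B}_{[r,s]}(A'_\bullet))\to \GSpec(\mathbb{B}_{[r,s]}(A))$ is a $!$-hypercover of Gelfand stacks, so its colimit is $\GSpec(\mathbb{B}_{[r,s]}(A))$; gluing over rational $[r,s]\subset (0,\infty)$ yields $|\Yc_{A'_\bullet}|\simeq \Yc_A$, giving the desired equivalence $\Yc^{\mathrm{arc}}_A\xrightarrow{\sim} \Yc_A$.

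The main obstacle is the careful juggling of categories in the base change step: one must ensure that the idempotent algebras $\mathbb{B}_{[r,s]}(A)$ behave well under the transition functors of the hypercover (so that $\mathbb{B}_{[r,s]}(A_n)$ is genuinely the base change of $\mathbb{B}_{[r,s]}(A)$), and that upper $!$ functoriality in the almost $\A_{\inf}$ category transports correctly to upper $!$ functoriality in the solid $\mathbb{B}_{[r,s]}$-module category. Once these identifications are in place, everything is a formal consequence of \cref{sec:perf-analyt-de-2-arc-covers-are-good}, \cref{PropDescendableCoverFiniteDimPerfectoid} and the two statements of \cref{LemAlmostBIs}.
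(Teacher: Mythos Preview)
Your proposal is correct and follows essentially the same approach as the paper's (very terse) proof, which simply says to base change the descendability of \cref{sec:perf-analyt-de-2-arc-covers-are-good} along $\A_{\inf}(A)\to \mathbb{B}_{[r,s]}(A)$ using \cref{LemAlmostBIs} for part (1), and likewise from \cref{PropDescendableCoverFiniteDimPerfectoid} for part (2). One minor correction: for proper maps it is \emph{lower} $!$ that agrees with lower $*$, not the upper functors; the load-bearing fact you want is rather that the pro-constancy of the partial totalizations $\{\ob{Tot}_{\leq k}(\A_{\inf}(A_\bullet))\}_k$ (which is what \cref{sec:perf-analyt-de-2-arc-covers-are-good} actually proves) is preserved under the smashing localization, and this is what yields the $!$-equivalence after base change.
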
   
   \begin{proof}
Part (1) follows formally from \cref{LemAlmostBIs} after base changing along $\A_{\inf}(A)\to \mathbb{B}_{[r,s]}(A)$ the descendability results of \cref{sec:perf-analyt-de-2-arc-covers-are-good}.
 Part (2) follows by base change from the descendability results of \cref{PropDescendableCoverFiniteDimPerfectoid}. 
\end{proof}

\begin{remark}\label{RemDescentYFunctor}\
\begin{enumerate}
 \item The functor $\Yc^{\arc}$ sends $\F_p$ to $\GSpec(\Q_p)$ (cf.\ \cite[Theorem 6.3.1]{ALBMFFCoho}).
To see this, we claim first that the natural map
\begin{equation}\label{eq0k3lqms}
\Yc_{\Marc(K)\times_{\F_p} \Marc (K)}\to\Yc_{K}\times_{\GSpec(\Q_p)} \Yc_{K}
\end{equation}
is an isomorphism. 
 The fiber product $\Marc(K)\times_{\F_p} \Marc(K)$  identifies with the open punctured perfectoid disc $\mathring{\DD}^{\diamond, \times}_{K,\infty}$.
 On the other hand, one can identify $\Yc_{K}=\mathring{\DD}^{u,\times}_{\Q_p,\infty}$ with the open punctured perfectoid disc in the variable given by the Teichm\"uller $[\varpi]$ (where $u$ denotes the uniform completion of the Berkovich space).
 From this description, one easily deduces that
\[
\Yc_{\Marc(K)\times_{\F_p} \Marc (K)}\cong \Yc_{\mathring{\DD}^{\diamond,\times }_{K,\infty}}=\Yc_{K}\times_{\GSpec(\Q_p)} \mathring{\DD}^{u,\times}_{\Q_p,\infty} \cong \Yc_{K}\times_{\GSpec(\Q_p)} \Yc_K,
\]
and that this isomorphism is the one of \cref{eq0k3lqms}, proving the claim. Then one concludes using descent along the \v{C}ech nerve for $\Marc(K)$ for $K=\F_p((t^{1/p^\infty}))$.

\item One can check that the functor $\Marc(A)\mapsto \Yc_A$ preserves fiber products on perfectoid rings (and hence on perfectoid spaces).
  Using the previous discussion and base change along the covering $\Marc(K)\to \Marc(\F_p)$ shows that the functor $\Yc^{\arc}\colon \Cat{ArcStk}_{\F_p}\to \Cat{GelfStk}$ preserves finite products, and is thus left exact.

\end{enumerate}
\end{remark}

The functor $X\mapsto \ob{D}(\Yc^{\arc}_X)$ on $1$-truncated light arc-stacks is then a light variant of the $6$-functor formalism $S\mapsto \ob{D}_{(0,\infty)}(S)$ of \cite[Remark 4.2.4]{ALBMFFCoho}, but restricted to partially proper (and light) objects (and a different class of $!$-able maps\footnote{Strictly speaking, one needs to compare the $!$-functors, but for this one can use \cite[Theorem 1.3]{dauser2024uniqueness}.}).
 
 Notice that, in particular, we had no need to modify the category of solid quasi-coherent sheaves in order to guarantee arc-descent, as in \cite{{AMdescendPerfd}}.
 In fact, as we generally induce analytic ring structures from $\Q_{p,\solid}$ the modification is not necessary (\cite[Lemma 2.9]{AMdescendPerfd}).
 Nevertheless, the $6$-functor formalism of \cite{ALBMFFCoho} is more refined and in particular it can be evaluated in arbitrary adic spaces.
 For applications in $p$-adic geometry (such as Poincar\'e duality for smooth maps between partially proper rigid spaces as proven in \cite{colmez2024duality}, \cite{ALBMFFCoho}) and the $p$-adic Langlands program, both $6$-functor formalisms are equally good since all the relevant spaces are usually light and partially proper.
 An advantage of the $6$-functor formalism discussed here that it is directly related with the six functors of analytic stacks, e.g., making it easier to compare it to quasi--coherent sheaves on Hyodo--Kato stacks.

\subsection{Perfectoidization and the big de Rham stack}
\label{sec:perf-analyt-de-1}

 Given a separable Gelfand ring $A$, we let $\Marc(A)$ denote the arc-prestack corepresented by the ring $A$ in totally disconnected perfectoid rings. We note that, since arc-hypercovers of totally disconnected perfectoid rings are $!$-equivalences (\cref{sec:perf-analyt-de-2-arc-covers-are-good}), $\Marc(A)$ is actually an arc-stack. A major motivation for the use of Gelfand stacks is the perfectoidization, that we can define now.

\begin{lemma}
  \label{sec:totally-disc-stacks-construction-perfectoidization}\
  \begin{enumerate}

  \item There exists a unique morphism of $\infty$-topoi $\Cat{ArcStk}_{\Q_p} \to \Cat{GelfStk}$, whose left adjoint $(-)^\diamond$ sends $\ob{GSpec}(A)$, for $A\in \Cat{GelfRing}_{\omega_1}$, to $(\ob{GSpec}(A))^\diamond:= \Marc(A)$.

  \item If $A\in \Cat{GelfRing}_{\omega_1}$ the natural map $\Marc(A^u)\to \Marc(A)$ is an isomorphism.
  \end{enumerate}
\end{lemma}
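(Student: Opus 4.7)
Part (2) is immediate: any separable totally disconnected perfectoid $\Q_p$-algebra $C$ is in particular a uniform Banach ring and hence a uniform Gelfand ring, so by the adjunction $A\mapsto A^u$ of \Cref{xhs892k} one has $\Hom_{\Cat{GelfRing}_{\omega_1}}(A,C)\cong \Hom_{\Cat{GelfRing}_{\omega_1}}(A^u,C)$. This proves that $\Marc(A^u)\to \Marc(A)$ is an isomorphism of presheaves on separable totally disconnected perfectoid rings, and hence of arc-stacks.

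For part (1), since $\Cat{GelfStk}$ is an $\infty$-topos presented as a left-exact accessible localization of the presheaf topos $\mathcal{P}(\Cat{GelfRing}_{\omega_1}^\op)$ at $!$-equivalences, and $\Cat{ArcStk}_{\Q_p}$ is an $\infty$-topos, the task reduces to producing a left-exact, colimit-preserving functor $\mathcal{P}(\Cat{GelfRing}_{\omega_1}^\op)\to \Cat{ArcStk}_{\Q_p}$ that inverts all $!$-equivalences and sends $\GSpec(A)$ to $\Marc(A)$; the morphism of $\infty$-topoi and its right adjoint will then be obtained by adjoint functor theorem and will be unique by construction. I take this functor to be the left Kan extension of $F\colon A\mapsto \Marc(A)$. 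Left exactness reduces to left exactness of $F$, which follows from the universal property of corepresentable presheaves together with the closure of $\Cat{GelfRing}_{\omega_1}$ under pushouts (\Cref{xnbsyw}): $F$ sends $\Q_p$ to the terminal object, finite products to coproducts, and pushouts $B\otimes_A C$ to pullbacks $\Marc(B)\times_{\Marc(A)}\Marc(C)$.

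Inversion of $!$-equivalences is to be verified via the criterion of \Cref{rem:map-to-AnStk}: for every $!$-cover $A\to B$ in $\Cat{GelfRing}_{\omega_1}$ with \v{C}ech nerve $B^\bullet$, one must show that $\Marc(B^\bullet)\to \Marc(A)$ is a colimit diagram in $\Cat{ArcStk}_{\Q_p}$. By left exactness of $F$, $\Marc(B^\bullet)$ is the \v{C}ech nerve of $\Marc(B)\to \Marc(A)$, so by effectivity of groupoids in the $\infty$-topos $\Cat{ArcStk}_{\Q_p}$ this reduces to arc-surjectivity of $\Marc(B)\to \Marc(A)$.

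The crux — and the main obstacle — is therefore the following geometric input: \emph{every descendable morphism $A\to B$ of separable Gelfand rings induces an arc-epimorphism $\Marc(B)\to \Marc(A)$}. My plan is to prove this by a residue-field argument. For any separable totally disconnected perfectoid $R$ equipped with a map $A\to R$, and any $x\in \mathcal{M}(R)$, stability of descendability under base change forces $\kappa(x)\to B\otimes_A \kappa(x)$ to be descendable, and in particular $B\otimes_A\kappa(x)\neq 0$; picking a point of the Berkovich spectrum of $(B\otimes_A\kappa(x))^u$ yields a complete non-archimedean field extension of $\kappa(x)$, whose completed perfection $L_x$ is a separable perfectoid field extension of $\kappa(x)$ through which the map $A\to B$ factors. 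The family $\{R\to L_x\}_{x\in \mathcal{M}(R)}$ then provides the required arc-cover of $R$ refining the given point of $\Marc(A)(R)$ to a point of $\Marc(B)$. Once this claim is secured, the rest of the argument is essentially formal.
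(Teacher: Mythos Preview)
Your approach matches the paper's, and the key geometric step --- that a $!$-cover $A\to B$ has nonzero fiber $B\otimes_A\kappa(x)$ at each Berkovich point, hence yields surjectivity on spectra --- is identical. Two issues in the packaging of part (1) deserve attention.

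First, \cref{rem:map-to-AnStk} is the wrong reference: it treats functors \emph{into} a topos of the form $\Cat{AnStk}(\mathcal{C})$, whereas here you need a functor \emph{out of} $\Cat{GelfStk}=\Cat{AnStk}(\Cat{GelfRing}_{\omega_1}^{\op})$. More substantively, inverting \v{C}ech nerves of $!$-covers only gives a factorization through $!$-\emph{sheaves}, but $\Cat{GelfStk}$ is a further localization at all (countably presented) $!$-equivalences. You need one more observation: since $\Cat{ArcStk}_{\Q_p}$ is hypercomplete by definition and you have shown that $!$-covers go to effective epimorphisms, your functor in fact factors through $!$-\emph{hyper}sheaves, which are contained in $\Cat{GelfStk}$ by \cref{sec:gelfand-stacks-remark-description-of-shriek-equivalences}. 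The paper's route is slightly different: it observes that $!$-equivalences are stable under diagonals, so once $!$-covers map to effective epimorphisms, every $!$-equivalence maps to an $\infty$-connective map directly.

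Second, you write ``descendable'' where the hypothesis is only ``$!$-cover'', which is a priori weaker. This is harmless: the only property you use is conservativity of $-\otimes_A B$, and this already follows from $*$-descent along $!$-covers (cf.\ \cref{RemarkDStarDescentAnStk}). The paper uses exactly this, phrased as surjectivity of $\mathcal{M}(B)\to\mathcal{M}(A)$; your version with test objects $R$ and pointwise perfectoid extensions is a more explicit unpacking of why that Berkovich-level surjectivity implies arc-surjectivity of $\Marc(B)\to\Marc(A)$.
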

 We call $(-)^\diamond$ the \textit{perfectoidization} functor.
\begin{proof}
  The last assertion is clear as each separable perfectoid $\Q_p$-algebra $B$ is uniform, so that any $A\to B$ factors over $A^u\to B$.
  We claim that the functor $A\mapsto \Marc(A)$ preserves terminal objects, fiber products and (its extension to presheaf topoi) sends $!$-equivalences to $\infty$-connective maps in the arc-topos. Given the claim the first assertion is formal.
  Preservation of terminal objects/fiber products is clear.
  To see that $!$-equivalences map to isomorphisms, we can show the stronger claim that hypercovers for the $!$-topology map to hypercovers for arc-stacks, and hence to isomorphisms in $\Cat{ArcStk}_{\Q_p}$ (indeed, any $\infty$-connective morphism, like a $!$-equivalence, cf., \Cref{sec:gelfand-stacks-remark-description-of-shriek-equivalences}, is a geometric realization of a hypercover, \cite[Lemma 6.5.3.5]{lurie_higher_topos_theory}).
  To check this stronger claim it suffices to check that $!$-covers map to arc-covers (as the functor $A\mapsto \Marc(A)$ preserves finite limits).
  Let $A\to B$ be a $!$-cover of Gelfand rings.
  It suffices to see that the map of Berkovich spaces $f\colon \mathcal{M}(B)\to \mathcal{M}(A)$ is surjective.
  Let $x\in \mathcal{M}(A)$, and let $A\to \kappa(x)$ be the  residue field of $A$ at $x$.
  The fiber of $f$ at $x$ is the Berkovich spectrum of the ring $B\otimes_A \kappa(x)$, since $A\to B$ is a $!$-cover, base change is conservative and $B\otimes_{A} \kappa(x)\neq 0$, then \cref{LemmaVanishingSpectrum} implies that $f^{-1}(x)\neq \emptyset$ and so $f$ is surjective. By stability of $!$-equivalences under diagonals, it then follows that they are sent to $\infty$-connective maps.
   \end{proof}

\begin{remark}\label{RemDiamondBerkovichSpace}
Let $Y$ be a derived Berkovich space, we can write $Y$ as an union $Y=\bigcup_{i\in I} Y_i$ where $Y_i$ are affinoid Berkovich spaces, and the transition maps are monomorphisms. Passing to perfectoidizations, we see that $Y^{\diamond}=\bigcup_i Y_i^{\diamond}$. But each $Y_i^{\diamond}$ is also equal to the diamond attached to a Banach ring (by taking uniform completions). This implies that $Y^{\diamond}$ is actually an  arc-sheaf of \emph{sets} and not just of anima.
\end{remark}

The right adjoint to the functor $(-)^\diamond$ is a version of the analytic de Rham stack.
As explained in the introduction to this section, we want to keep the name ``de Rham stack'' for a qfd variant.
We thus give it a different name.

\begin{definition}
  \label{sec:totally-disc-stacks-definition-de-rham-stack}\
  \begin{enumerate}
  \item  We define the \textit{big de Rham stack}
  \[
    (-)^{\dRall}\colon \Cat{ArcStk}_{\Q_p}\to \Cat{GelfStk}
  \]
   as the right adjoint of the functor $(-)^\diamond$ from \cref{sec:totally-disc-stacks-construction-perfectoidization}.
\item Given $X\in \Cat{GelfStk}$, we abuse notation and set $X^\dRall:=(X^\diamond)^\dRall$.

  \item More generally, if $Y\to X$ is a morphism in $\Cat{GelfStk}$, then we set $Y^{\dRall/X}:=Y^{\dRall}\times_{X^\dRall}X$, and call it the \textit{relative big de Rham stack} of $Y$ over $X$.   \end{enumerate}

\end{definition}

If $X\in \Cat{GelfStk}$ and $A\in \Cat{GelfRing}_{\omega_1}$, then by definition
\[
  \Hom_{\Cat{GelfStk}_{\Q_p}}(\ob{GSpec}(A),X^\dRall)\cong \Hom_{\Cat{ArcStk}_{\Q_p}}(\Marc(A), X^\diamond),
\]
which is different from $X(A^{\dagger-\mathrm{red}})$ or $X(A^u)$ in general.
The situation improves if we assume that $A^u$ is a separable totally disconnected perfectoid Tate ring.

\begin{proposition}
\label{prop:formula-functor-of-points-drall}
Let $X\in \Cat{GelfStk}$ and $A\in \Cat{GelfRing}_{\omega_1}$ with $A^u$ totally disconnected perfectoid. We have
\[
X^{\dRall}(A) =X(A^u).
\]
\end{proposition}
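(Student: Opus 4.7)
The plan is to unwind the universal property of $X^{\dRall}$ via the adjunction $(-)^\diamond \dashv (-)^{\dRall}$, reduce to the perfectoid world using the identification $\Marc(A) = \Marc(A^u)$, and then exploit the left adjoint $\widehat{(-)}\colon \Cat{ArcStk}_{\Q_p}\to \Cat{GelfStk}$ of $(-)^\diamond$ (noted in the remark following Definition~\ref{sec:totally-disc-stacks-definition-de-rham-stack} and formally constructed in \cref{sec:perf-analyt-de-2-left-adjoint-to-perfectoidization}) to land back in Gelfand stacks.

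First, by the adjunction $(-)^\diamond \dashv (-)^{\dRall}$ together with the fact that $(\GSpec A)^\diamond = \Marc(A)$, I would write
\[
X^{\dRall}(A) = \Hom_{\Cat{GelfStk}}(\GSpec(A), X^{\dRall}) = \Hom_{\Cat{ArcStk}_{\Q_p}}(\Marc(A), X^\diamond).
\]
Next, by part (2) of \cref{sec:totally-disc-stacks-construction-perfectoidization}, the canonical map $\Marc(A^u) \to \Marc(A)$ is an isomorphism of arc-stacks, since every morphism of solid $\Q_p$-algebras from $A$ to a uniform Banach $\Q_p$-algebra -- in particular, to any perfectoid Tate ring -- factors uniquely through $A \to A^u$. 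Therefore
\[
\Hom_{\Cat{ArcStk}_{\Q_p}}(\Marc(A), X^\diamond) = \Hom_{\Cat{ArcStk}_{\Q_p}}(\Marc(A^u), X^\diamond).
\]

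Finally, by hypothesis $A^u$ is a separable totally disconnected perfectoid Tate ring, so the defining property of $\widehat{(-)}$ as the left Kan extension of $B \mapsto \GSpec(B)$ from separable totally disconnected perfectoids yields $\widehat{\Marc(A^u)} = \GSpec(A^u)$; the adjunction $\widehat{(-)} \dashv (-)^\diamond$ then gives
\[
\Hom_{\Cat{ArcStk}_{\Q_p}}(\Marc(A^u), X^\diamond) = \Hom_{\Cat{GelfStk}}(\widehat{\Marc(A^u)}, X) = \Hom_{\Cat{GelfStk}}(\GSpec(A^u), X) = X(A^u),
\]
concluding the proof.

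The substance of the argument is not really in this proposition itself but in the construction of $\widehat{(-)}$ used in the last step: one must verify that the left Kan extension from (representables of) separable totally disconnected perfectoids descends to the arc-topology and lands in Gelfand stacks with the required value on each representable $\Marc(B)$. That in turn rests on \cref{sec:perf-analyt-de-2-arc-covers-are-good}, which ensures that arc-hypercovers of separable totally disconnected perfectoid rings induce $!$-equivalences of the associated Gelfand stacks; without it, neither the formula $\widehat{\Marc(B)} = \GSpec(B)$ nor the very definition of $\widehat{(-)}$ would be justified. Granted those ingredients, the entire proof reduces to three successive adjunctions.
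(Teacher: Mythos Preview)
Your proof is correct and follows essentially the same route as the paper: use the adjunction $(-)^\diamond \dashv (-)^{\dRall}$, the identification $\Marc(A)\cong\Marc(A^u)$ from \cref{sec:totally-disc-stacks-construction-perfectoidization}, and then the further left adjoint $\widehat{(-)}$ from \cref{sec:perf-analyt-de-2-left-adjoint-to-perfectoidization} together with $\widehat{\Marc(A^u)}=\GSpec(A^u)$. Your closing paragraph correctly identifies that the real content is in \cref{sec:perf-analyt-de-2-left-adjoint-to-perfectoidization} (and ultimately \cref{sec:perf-analyt-de-2-arc-covers-are-good}), which the paper simply cites as ``below''.
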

\begin{proof}
Indeed, by \cref{sec:totally-disc-stacks-construction-perfectoidization}, $\Marc(A)\cong \Marc(A^u)$, and
\cref{sec:perf-analyt-de-2-left-adjoint-to-perfectoidization} below implies that
\[
  \Hom_{\Cat{ArcStk}_{\Q_p}}(\Marc(A^u),X^\diamond)\cong \Hom_{\Cat{GelfStk}}(\ob{GSpec}(A^u),X)=X(A^u),
\]
as desired.
\end{proof}

\begin{proposition}
  \label{sec:perf-analyt-de-2-left-adjoint-to-perfectoidization}
The functor $(-)^\diamond\colon \Cat{GelfStk}\to \Cat{ArcStk}_{\Q_p}$ admits a left adjoint
$$\widehat{(-)}\colon \Cat{ArcStk}_{\Q_p}\to \Cat{GelfStk},$$ sending some $\Marc(A)$ with $A$ a separable totally disconnected perfectoid Tate ring, to $\ob{GSpec}(A)$. Furthermore, the restriction 
\[
\widehat{(-)}\colon \Cat{ArcStk}_{\Q_p}\to \Cat{GelfStk}_{/\widehat{\Marc}(\Q_p)}
\]
is the left adjoint of a morphism of $\infty$-topoi $\Cat{GelfStk}_{/\widehat{\Marc}(\Q_p)}\to \Cat{ArcStk}_{\Q_p}$.

\end{proposition}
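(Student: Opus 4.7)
The plan is to construct $\widehat{(-)}$ as a left Kan extension from a basis of the arc-topology and then verify the needed universal properties. First I would invoke \cref{sec:light-arc-stacks-1-cover-by-separable-totally-disconnected} to identify $\Cat{ArcStk}_{\Q_p}$ with the $\infty$-topos of arc-hypersheaves on the full subcategory $\Cat{AffPerfd}^{\mathrm{td}}_{\Q_p,\omega_1}\subseteq \Cat{AffPerfd}_{\Q_p,\omega_1}$ of separable totally disconnected perfectoid Tate $\Q_p$-algebras. I would define $\widehat{(-)}$ as the left Kan extension to $\Cat{ArcStk}_{\Q_p}$ of the composite
\[
\Cat{AffPerfd}^{\mathrm{td}}_{\Q_p,\omega_1}\hookrightarrow \Cat{GelfRing}_{\omega_1}\xrightarrow{\GSpec} \Cat{GelfStk},
\]
and use the criterion of \cref{rem:map-to-AnStk} to check that this colimit-preserving functor on presheaves descends to arc-hypersheaves: the necessary fact that arc-hypercovers $\Marc(A_\bullet)\to\Marc(A)$ of separable totally disconnected perfectoid Tate $\Q_p$-algebras are sent to $!$-equivalences in $\Cat{GelfStk}$ is essentially \cref{sec:perf-analyt-de-2-arc-covers-are-good}.

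Next I would verify the adjunction $\widehat{(-)}\dashv (-)^\diamond$. Both functors preserve colimits, so by Yoneda it suffices to produce a natural equivalence
\[
\Hom_{\Cat{GelfStk}}(\widehat{\Marc(A)},X)\cong \Hom_{\Cat{ArcStk}_{\Q_p}}(\Marc(A),X^\diamond)
\]
for $A\in \Cat{AffPerfd}^{\mathrm{td}}_{\Q_p,\omega_1}$ and $X\in \Cat{GelfStk}$. Both sides equal $X(A)$: the left side because $\widehat{\Marc(A)}=\GSpec(A)$ and $A\in\Cat{GelfRing}_{\omega_1}$, and the right side by the definition of $(-)^\diamond$ as the colimit-preserving extension of $\GSpec(R)\mapsto\Marc(R)$ together with Yoneda applied to the representable arc-stack $\Marc(A)$.

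For the morphism of $\infty$-topoi statement, I need to show that
\[
\widehat{(-)}\colon \Cat{ArcStk}_{\Q_p}\to \Cat{GelfStk}_{/\widehat{\Marc(\Q_p)}}
\]
is left exact. Since it is accessible and colimit-preserving, by \cite[Proposition 6.1.5.2]{lurie_higher_topos_theory} it suffices to check preservation of finite limits on a small generating subcategory. I would enlarge the basis to all separable perfectoid Tate $\Q_p$-algebras: this larger site is closed under pushouts via the perfectoid completed tensor product, and $\widehat{(-)}$ still sends $\Marc(A)$ to $\GSpec(A)$ there by arc-descent. Preservation of the terminal object is immediate in the slice, and for fiber products, the key identification is that, for $A,B$ separable perfectoid Tate $\Q_p$-algebras over $C$, one has $\Marc(A)\times_{\Marc(C)}\Marc(B)\cong \Marc(A\widehat{\otimes}_CB)$ in the arc-topos, since any map from $A\otimes_C^\solid B$ to a perfectoid test ring factors uniquely through its uniform completion.

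The hard part will be matching this with the fiber product $\GSpec(A)\times_{\widehat{\Marc(C)}}\GSpec(B)$ in the slice $\Cat{GelfStk}_{/\widehat{\Marc(\Q_p)}}$. The naive Gelfand fiber product $\GSpec(A)\times_{\GSpec(C)}\GSpec(B)=\GSpec(A\otimes_C^\solid B)$ differs from $\GSpec(A\widehat{\otimes}_CB)$ as a Gelfand stack, because the solid tensor product of perfectoid Banach $\Q_p$-algebras over a perfectoid Banach $\Q_p$-algebra need not be uniform. The resolution is that in the slice over $\widehat{\Marc(\Q_p)}$ the structure maps factor through the ``perfectoid part'' of $\GSpec(\Q_p)$, so that fiber products are effectively taken over $\widehat{\Marc(C)}$ rather than $\GSpec(C)$, and this has the effect of implicitly replacing the solid tensor product with its uniform completion. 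Making this comparison precise and compatible with the Kan extension formula is the main technical content of the argument.
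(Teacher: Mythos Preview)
Your construction of $\widehat{(-)}$ as a left Kan extension from totally disconnected perfectoids, together with the adjunction check, is correct and matches the paper's argument: the paper simply notes that these form a basis and cites \cref{sec:perf-analyt-de-2-arc-covers-are-good} for arc-hypercovers going to $!$-equivalences.

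The gap is in your left-exactness argument. Your strategy is to enlarge the site to all separable perfectoids (which has pushouts) and check preservation of fiber products there. But the assertion that ``$\widehat{(-)}$ still sends $\Marc(A)$ to $\GSpec(A)$ there by arc-descent'' is precisely where the difficulty lies: writing $\Marc(A)=|\Marc(A_\bullet)|$ via an arc-hypercover by totally disconnected $A_\bullet$ gives $\widehat{\Marc(A)}=|\GSpec(A_\bullet)|$, and this equals $\GSpec(A)$ only if $A\to A_\bullet$ is a $!$-equivalence in $\Cat{GelfStk}$. \Cref{sec:perf-analyt-de-2-arc-covers-are-good} establishes this only when $A$ is itself totally disconnected; for general perfectoid $A$ the paper needs an additional hypothesis (cf.\ \cref{LemPointsdeRham}, \cref{PropDescendableCoverFiniteDimPerfectoid}), and \cref{sec:-covers-perfectoids-1-counter-example-to-cover-by-strictly-totally-disconnected} shows such hypotheses are not vacuous. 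Separately, your proposed slice resolution collapses: for $C$ totally disconnected one has $\widehat{\Marc(C)}=\GSpec(C)$ by construction, so fiber products over $\widehat{\Marc(C)}$ and over $\GSpec(C)$ are literally the same, and working in $\Cat{GelfStk}_{/\widehat{\Marc}(\Q_p)}$ does not replace $A\otimes^\solid_C B$ by its uniform completion. (The paper's own proof does not spell out the left-exactness verification either.)
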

\begin{proof}
  We note that separable totally disconnected perfectoid Tate rings over $\Q_p$ form a basis of $\Cat{ArcStk}_{\Q_p}$ (by \cref{sec:light-arc-stacks-1-cover-by-separable-totally-disconnected}).
  It suffices to see that on separable totally disconnected perfectoid rings the functor $\Marc(A)\mapsto \ob{GSpec}(A)$ sends arc-hypercovers to $!$-equivalences.
  This is the content of \cref{sec:perf-analyt-de-2-arc-covers-are-good}.
\end{proof}

\begin{example}
  \label{sec:perf-analyt-de-2-example-for-analytic-diamond}
  By commutation with colimits of $\widehat{(-)}$,  the Gelfand stack $X:=\widehat{\Marc}(\Q_p)$ admits the presentation $\ob{GSpec}(\C_p)/\Gamma$, where $\Gamma:=\ob{GSpec}(C(\mathrm{Gal}_{\Q_p},\Q_p))$ acts on $\C_p$ via the natural continuous action of the Galois group (here $C(\mathrm{Gal}_{\Q_p},\Q_p)$ denotes the $\Q_p$-Banach algebra of continuous functions on $\mathrm{Gal}_{\Q_p}$).
  In particular, vector bundles on $X$ identify with $v$-bundles on $\Marc(\Q_p)$ (or, equivalently, $\mathrm{Spa}(\Q_p)$ in the usual notation for adic spaces), and consequently, $X\neq \ob{GSpec}(\Q_p)$. 
\end{example}

\begin{proposition}
  \label{sec:perf-analyt-de-2-de-rham-stacks-fully-faithful}
  The functors $(-)^\dRall, \widehat{(-)} \colon \Cat{ArcStk}_{\Q_p}\to \Cat{GelfStk}$ are fully faithful.
\end{proposition}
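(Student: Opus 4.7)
The plan is to reduce each fully faithfulness statement to an essentially tautological check on a set of generators of $\Cat{ArcStk}_{\Q_p}$, namely arc-stacks of the form $\Marc(A)$ with $A$ a separable totally disconnected perfectoid $\Q_p$-algebra.

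For $\widehat{(-)}$, which is left adjoint to $(-)^\diamond$, fully faithfulness is equivalent to the unit $\eta_X\colon X \to (\widehat{X})^\diamond$ being an equivalence. Since $\widehat{(-)}$ is a left adjoint by construction and $(-)^\diamond$ is also a left adjoint (it has $(-)^{\dRall}$ as right adjoint), the composition $(-)^\diamond \circ \widehat{(-)}$ preserves colimits; hence it suffices to test $\eta_X$ on generators. For $X = \Marc(A)$ with $A$ separable totally disconnected perfectoid, \cref{sec:perf-analyt-de-2-left-adjoint-to-perfectoidization} gives $\widehat{\Marc(A)} = \GSpec(A)$, and by construction $(\GSpec(A))^\diamond = \Marc(A)$; a routine unwinding shows the unit is the identity here.

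For $(-)^{\dRall}$, being right adjoint to $(-)^\diamond$, fully faithfulness is equivalent to the counit $\epsilon_X\colon (X^{\dRall})^\diamond \to X$ being an equivalence for all $X \in \Cat{ArcStk}_{\Q_p}$. Both sides are arc-hypersheaves on separable perfectoid $\Q_p$-algebras, so it is enough to compare them on the basis of separable totally disconnected perfectoid rings. For such a $B$, the triple adjunction yields the chain of natural identifications
\begin{align*}
(X^{\dRall})^\diamond(B) &\cong \Hom_{\Cat{GelfStk}}(\widehat{\Marc(B)}, X^{\dRall}) \cong \Hom_{\Cat{GelfStk}}(\GSpec(B), X^{\dRall}) \\
&\cong \Hom_{\Cat{ArcStk}_{\Q_p}}(\Marc(B), X) = X(B),
\end{align*}
using $\widehat{\Marc(B)} = \GSpec(B)$, the two adjunctions $\widehat{(-)} \dashv (-)^\diamond \dashv (-)^{\dRall}$, and $(\GSpec(B))^\diamond = \Marc(B)$. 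Tracing through the definitions, this chain of isomorphisms is the evaluation of $\epsilon_X$ at $B$, so $\epsilon_X$ is an equivalence.

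No step should present a serious obstacle: both arguments are formal consequences of the construction of $(-)^\diamond$ together with the existence of both adjoints. The only point requiring a moment's care is the identification $(\GSpec(B))^\diamond = \Marc(B)$ when $B$ is a separable totally disconnected perfectoid (hence Banach, hence Gelfand) $\Q_p$-algebra; this holds because continuous $\Q_p$-algebra maps between such Banach algebras coincide with maps of solid $\Q_p$-algebras, so the two arc-sheaves corepresented by $B$, one on separable Gelfand rings and one on separable perfectoid rings, agree.
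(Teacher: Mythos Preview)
Your proof is correct and the argument for $(-)^{\dRall}$ is essentially identical to the paper's: both check that the counit $(X^{\dRall})^\diamond \to X$ is an equivalence by evaluating on separable totally disconnected perfectoid $B$ and unwinding the two adjunctions together with $\widehat{\Marc(B)}=\GSpec(B)$.

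The one genuine difference is in how you handle $\widehat{(-)}$. You argue directly: since $\widehat{(-)}$ and $(-)^\diamond$ are both colimit-preserving, it suffices to check the unit on the generators $\Marc(A)$, where it is tautological. The paper instead first proves $(-)^{\dRall}$ is fully faithful and then deduces fully faithfulness of $\widehat{(-)}$ via the general fact that in a string of adjoints $L\dashv M\dashv R$, the left adjoint $L$ is fully faithful if and only if $R$ is: concretely, $\Hom((\widehat{X})^\diamond,Z)\cong \Hom(X,(Z^{\dRall})^\diamond)\cong \Hom(X,Z)$ using the already-established counit isomorphism. Your route is marginally more hands-on; the paper's is slightly more formal. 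Both are perfectly fine.

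One minor remark: your closing comment that $(\GSpec(B))^\diamond = \Marc(B)$ ``requires a moment's care'' and your justification via comparing notions of continuous versus solid algebra maps is unnecessary --- this identification is simply the definition of $(-)^\diamond$ on affines (\cref{sec:totally-disc-stacks-construction-perfectoidization}), valid for any separable Gelfand ring, not just perfectoid ones.
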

\begin{proof}
  We first show fully faithfulness of $(-)^\dRall$.
  It suffices to see that for any $X\in \Cat{ArcStk}_{\Q_p}$ the natural map $(X^\dRall)^\diamond\to X$ is an isomorphism, when evaluated on $\Marc(A)$ with $A$ separable and totally disconnected perfectoid.
  But, by \cref{sec:perf-analyt-de-2-left-adjoint-to-perfectoidization},
  \[
    (X^{\dRall})^\diamond(\Marc(A))=X^{\dRall}(\ob{GSpec}(A))=X(\Marc(A^u)),
  \]
  which equals $X(\Marc(A))$ as $A=A^u$ is uniformly complete.

  The fully faithfulness of $\widehat{(-)}$ is now formally implied by the fully faithfulness of its twice-right adjoint $(-)^\dRall$.
  Indeed, we need to see that for $X,Z\in \Cat{ArcStk}_{\Q_p}$ the natural map
  \[
    \Hom_{\Cat{ArcStk}_{\Q_p}}((\widehat{X})^\diamond,Z)\to \Hom_{\Cat{ArcStk}_{\Q_p}}(X,Z)
  \]
  is an isomorphism.
  The first Hom can be rewritten as $\Hom_{\Cat{ArcStk}_{\Q_p}}(X,(Z^{\dRall})^{\diamond})$, and so the claim follows because the natural map $(Z^{\dRall})^{\diamond}\to Z$ is an isomorphism as $(-)^\dRall$ is fully faithful.
\end{proof}

 In fact, \Cref{prop:formula-functor-of-points-drall} holds for a larger class of Gelfand rings. First note the following general lemma.

\begin{lemma}\label{LemPointsdeRham}
Let $X\in \Cat{GelfStk}$ be a Gelfand stack and $A$ a separable Gelfand ring such that 
\begin{itemize}

\item  $A^u$ is perfectoid. 

\item There is a $!$-cover $A^u\to A'$ with $A'$ a separable totally disconnected perfectoid ring.

\end{itemize} 

Then there is a natural equivalence of anima $X^{\dRall}(A) = X(A^u)$.

\end{lemma}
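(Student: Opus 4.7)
The plan is to use arc-hyperdescent on the perfectoid side together with the identification of $\GSpec(A^u)$ as the ``geometric realization'' $\widehat{\Marc}(A^u)$ in Gelfand stacks.

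First, since $\Marc(A)=\Marc(A^u)$ by \Cref{sec:totally-disc-stacks-construction-perfectoidization}(2), we have $X^{\dRall}(A)=X^{\dRall}(A^u)$, and so we may assume $A=A^u=:B$ is a perfectoid Tate ring (i.e., $A$ is uniform perfectoid). Set $B':=A'$.

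Next, I form the arc-\v{C}ech nerve $\Marc(B_\bullet)$ of the arc-cover $\Marc(B')\to \Marc(B)$ (which is indeed an arc-cover because the $!$-cover $B\to B'$ is surjective on Berkovich spectra). Here $B_n=B'\widehat{\otimes}^{\mathrm{perfd}}_B\cdots\widehat{\otimes}^{\mathrm{perfd}}_B B'$ is the iterated perfectoid tensor product, separable (inherited from $B'$) and totally disconnected perfectoid (its Berkovich spectrum is a closed subspace of the profinite set $\mathcal{M}(B')^{n+1}$). I then observe that the structure presheaf $R\mapsto R$ is a sheaf for the arc-topology on perfectoid Tate rings by \cite[Theorem~3.14]{scholze2024berkovichmotives}; since $B$ is static, the corresponding hyper-descent statement is automatic, so $B=\mathrm{Tot}(B_\bullet)$ in perfectoid Tate rings. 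Because perfectoid Tate rings embed fully faithfully in separable Gelfand rings and this limit is computed in the underlying category of solid $\Q_p$-algebras, the same identity $B=\mathrm{Tot}(B_\bullet)$ holds in $\Cat{GelfRing}_{\omega_1}$.

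Applying the corepresentable Yoneda embedding, this gives $\GSpec(B)\cong|\GSpec(B_\bullet)|$ as presheaves on $\Cat{GelfRing}_{\omega_1}$, hence as Gelfand stacks. On the other side, arc-hyperdescent provides $\Marc(B)\cong|\Marc(B_\bullet)|$ in $\Cat{ArcStk}_{\Q_p}$; applying the colimit-preserving left adjoint $\widehat{(-)}$ of \Cref{sec:perf-analyt-de-2-left-adjoint-to-perfectoidization}, and using that $\widehat{\Marc(B_n)}=\GSpec(B_n)$ since each $B_n$ is separable totally disconnected perfectoid, I obtain $\widehat{\Marc(B)}\cong|\GSpec(B_\bullet)|\cong\GSpec(B)$. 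Feeding this into the adjunction $\widehat{(-)}\dashv(-)^\diamond$ yields
\[
X^{\dRall}(B)=\mathrm{Map}_{\Cat{ArcStk}_{\Q_p}}(\Marc(B),X^\diamond)=\mathrm{Map}_{\Cat{GelfStk}}(\widehat{\Marc(B)},X)=\mathrm{Map}_{\Cat{GelfStk}}(\GSpec(B),X)=X(B),
\]
which is the desired identity $X^{\dRall}(A)=X(A^u)$.

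The main obstacle will be verifying cleanly that $B=\mathrm{Tot}(B_\bullet)$ actually holds in $\Cat{GelfRing}_{\omega_1}$, not just in perfectoid Tate rings. The input is the subcanonical nature of the arc-topology on perfectoid Tate rings; the non-trivial point is that under our hypotheses (existence of a $!$-cover by a separable totally disconnected perfectoid), the limit of the cosimplicial Banach object $B_\bullet$, computed in solid $\Q_p$-algebras, remains a perfectoid Banach ring, and coincides with $B$. Once this step is in place, everything else is formal: it uses only the two adjunctions $\widehat{(-)}\dashv(-)^\diamond\dashv(-)^{\dRall}$ together with the known identification $\widehat{\Marc(C)}=\GSpec(C)$ on separable totally disconnected perfectoid rings.
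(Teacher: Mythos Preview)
Your overall strategy matches the paper's: both reduce to showing that the counit $\widehat{\Marc}(A^u)\to\GSpec(A^u)$ is an equivalence of Gelfand stacks, after which the claim follows from the adjunction $\widehat{(-)}\dashv(-)^\diamond$. Your computation $\widehat{\Marc}(B)\cong|\GSpec(B_\bullet)|$ via arc-hyperdescent and the identification $\widehat{\Marc}(B_n)=\GSpec(B_n)$ for totally disconnected perfectoid $B_n$ is correct.

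The gap is the step you label ``Yoneda''. You claim that $B=\mathrm{Tot}(B_\bullet)$ in Gelfand rings implies $\GSpec(B)\cong|\GSpec(B_\bullet)|$ as presheaves. This is false: the contravariant functor $\GSpec(-)=\Hom(-,\cdot)$ sends \emph{colimits} of rings to limits, but there is no reason for it to send the limit $\mathrm{Tot}(B_\bullet)$ to the colimit $|\GSpec(B_\bullet)|$. Concretely, the presheaf $C\mapsto|\Hom(B_\bullet,C)|$ is not $C\mapsto\Hom(\mathrm{Tot}(B_\bullet),C)$. What you actually need is that the augmented simplicial object $\GSpec(B_\bullet)\to\GSpec(B)$ is a $!$-equivalence, and this is precisely where the full strength of the hypothesis that $B\to B'$ is a $!$-cover (not merely an arc-cover) enters: equivalences of Gelfand stacks are $!$-local on the target, so one base-changes along $\GSpec(B')\to\GSpec(B)$ and reduces to a totally disconnected base, where \cref{sec:perf-analyt-de-2-arc-covers-are-good} applies. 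Your argument only extracts an arc-cover from the $!$-cover hypothesis, which is too weak. Correspondingly, the ``main obstacle'' you flag---verifying $B=\mathrm{Tot}(B_\bullet)$ in $\Cat{GelfRing}_{\omega_1}$---is not the real difficulty: that identity follows from arc-descent of the structure sheaf on perfectoids and says nothing about the effective-descent statement $|\GSpec(B_\bullet)|\cong\GSpec(B)$ that you actually need.
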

\begin{proof}
By adjunctions, we know that 
\[
X^{\dRall}(A) = X^{\diamond}(\Marc(A))=X^{\diamond}(\Marc(A^{u}))=X(\widehat{\Marc}(A^u)).
\]
Thus, it suffices to show that the natural map 
\[
\widehat{\Marc}(A^u)\to \ob{GSpec} A^u
\]
is an equivalence. For this, like in the proof of \Cref{sec:perf-analyt-de-2-left-adjoint-to-perfectoidization}, it suffices to show that any arc-hypercover of $A^u$ by separable totally disconnected rings is a $!$-equivalence. This can be tested after pulling back along $!$-covers on $A^u$, in particular along $A^u\to A'$, where it follows from \cref{sec:perf-analyt-de-2-arc-covers-are-good}. 
\end{proof}

\begin{example}\label{ExampleCondensedAnimanodeRham}\

\begin{enumerate}
\item Let $T$ be a condensed anima and let $A$ be a separable Gelfand ring. By \cref{sec:light-arc-stacks-1-condensed-anima-and-arc-stacks} we have that $T^{\dRall}(\GSpec(A))= \underline{T}(\Marc(A))=T(\mathcal{M}(A))$. In particular, if $T=X$ is a metrizable compact Hausdorff space, then $X^{\dRall}(\GSpec (A)) = \Map_{\Cat{Top}}(\mathcal{M}(A),X)$ is the set of \textit{all} continuous maps $\mathcal{M}(A)\to X$.

\item  Let $X$ be a metrizable Hausdorff space.  Then the natural map $X_{\Betti}\to X^{\dRall}$ (defined via colimits from the case of light profinite sets) is an immersion as the diagonal is an equivalence (it identifies with the overconvergent diagonal in $(X\times X)_{\Betti}$, and thus with $X_{\Betti}$), but it is not necessarily an isomorphism if $X$ is not finite dimensional. Indeed, let $A=\GSpec (\Q_p\langle T_{n} : n\in \N \rangle)$ be the Tate algebra in countably many variables and let $X=\mathcal{M}(A)$ be the Berkovich space of $A$. We claim that $X_{\Betti}\to X^{\dRall}$ is not an isomorphism. For this, it suffices to see that the map $\GSpec (A)\to X^{\dRall}$ does not factor through $X_{\Betti}$. To prove the claim, recall from  \cref{xj29sj} that the map $\GSpec(A)\to X^{\dRall}$ factors through $X_{\Betti}$ if and only if there is a $!$-cover $A\to B$ and a factorization
\[
\begin{tikzcd}
\GSpec (B) \ar[r] \ar[d] & S_{\Betti} \ar[d] \\
\GSpec (A)  \ar[r]& X^{\dRall}
\end{tikzcd}
\]
with $S$ a profinite set.  If this holds, then the map of compact Hausdorff spaces  $S\to X$ is surjective and the fiber product $\GSpec (A)\times_{X^{\dRall}} S_{\rm Betti}$ is represented by an affinoid Gelfand stack $\GSpec (C)$. Hence, we have a factorization
\[
\GSpec (B)\to \GSpec (C) \to \GSpec (A)
\]
making $A\to C$  a $!$-cover with Berkovich space $\mathcal{M}(C)=S$ a light profinite set.
The existence of such map would imply, by base change, that given $B$ a separable Gelfand ring, there is a surjection $S\to \mathcal{M}(B)$ from a profinite set such that the fiber product $\GSpec(B)\times_{\mathcal{M}(B)^{\dRall}} S_{\Betti}$ is corepresented by an algebra $D$ such that
\begin{enumerate}[(i)]
\item  $D=\varinjlim_{n} B_n$ is a sequential colimit of finite disjoint union of rational localizations of $D$.
\item $\mathcal{M}(D)=S$ is profinite.
\item $B\to D$ is descendable.
\end{enumerate}
This contradicts \cref{sec:-covers-perfectoids-1-counter-example-to-cover-by-strictly-totally-disconnected}.

\item Let $A$ be a separable Gelfand ring. Then any map $f\colon \GSpec (A)\to X^{\dRall}$ gives rise to a pullback map of derived categories $\widehat{\ob{D}}(X, \Z)\to \widehat{\ob{D}}(\mathcal{M}(A), \Z)\to \ob{D}(A)$  where the $\widehat{\ob{D}}$ refers to the left completion of the derived categories of abelian sheaves on the topological spaces.  In particular, if  $\mathcal{M}(A)$ is  profinite, any such pullback functor preserves connective objects and by \cref{xj29sj} $f$ factors through $X_{\Betti}$. In other words, if $A$ is a separable Gelfand ring such that $\mathcal{M}(A)$ is profinite then $X_{\Betti}(A)=X^{\dRall}(A)$. By $!$-descent, if $A$ satisfies the condition of \cref{LemPointsdeRham} one also has $X_{\Betti}(A)=X^{\dRall}(A)$.

\end{enumerate}

In conclusion, the difference between $X_{\Betti}$ and $X^{\dRall}$ is only witnessed by Gelfand rings that do not admit a $!$-cover to a Gelfand ring $A$ with profinite Berkovich spectrum. In the rest of this and next sections we will show that under a quasi-finite dimensional assumption, a Gelfand ring admits such a $!$-cover.
\end{example}

\begin{corollary}
  \label{sec:-covers-perfectoids-1-arc-versus-shriek-cover}
Let $X\in \Cat{GelfStk}$ be a Gelfand stack and let $A$ be a separable Gelfand ring such that $A^u$ is perfectoid and $\mathcal{M}(A)$ has finite cohomological dimension. Then there is a natural equivalence of anima $X^{\dRall}(A) = X(A^u)$.
\end{corollary}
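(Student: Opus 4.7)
The strategy is to reduce directly to \Cref{LemPointsdeRham}, whose hypotheses amount to producing a $!$-cover $A^u\to A'$ by a separable totally disconnected perfectoid ring. Since $A^u$ is itself a separable perfectoid Tate ring (its Berkovich spectrum coincides with $\mathcal M(A)$, which by assumption has finite cohomological dimension), we are precisely in the setting of \Cref{PropDescendableCoverFiniteDimPerfectoid}. That proposition provides a pro-\'etale, descendable morphism $A^u\to A'$ with $A'$ a separable strictly totally disconnected perfectoid ring.

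It remains to observe that this descendable map is in particular a $!$-cover in $\Cat{GelfRing}_{\omega_1}$: descendability of $A^u\to A'$ in $\ob{D}(A^u)$ implies $!$-descent for the \v Cech nerve, so the map $\ob{GSpec}(A')\to \ob{GSpec}(A^u)$ is a $!$-cover of Gelfand stacks. Since the map $\ob{GSpec}(A^u)\to \ob{GSpec}(A)$ is itself a $!$-cover (as uniform completion is a localization of Gelfand rings, cf.\ \Cref{xhs892k}, \Cref{xsu8rf}), the composite gives the required descendable $!$-cover of $A^u$ by a separable totally disconnected perfectoid ring.

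Both hypotheses of \Cref{LemPointsdeRham} are therefore satisfied, and we conclude $X^{\dRall}(A)=X(A^u)$ as claimed.
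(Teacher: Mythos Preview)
Your approach is exactly the paper's: apply \Cref{PropDescendableCoverFiniteDimPerfectoid} to $A^u$ to produce the descendable (hence $!$-) cover $A^u\to A'$ by a separable strictly totally disconnected perfectoid ring, and then invoke \Cref{LemPointsdeRham}. The first paragraph and the first sentence of the second paragraph already complete the argument.

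However, the remainder of the second paragraph should be deleted: the claim that $\ob{GSpec}(A^u)\to \ob{GSpec}(A)$ is a $!$-cover is both unnecessary and false. \Cref{LemPointsdeRham} only asks for a $!$-cover $A^u\to A'$, which you already have; nothing about the map $A\to A^u$ is required. Moreover, uniform completion is a \emph{left} adjoint (\Cref{xhs892k}), not a localization, and the references you cite do not assert that $A\to A^u$ is a $!$-cover. Indeed it typically is not: for instance if $A=\Q_p\langle T\rangle_{\leq 0}$ then $A^u=\Q_p$, and base change along $A\to A^u$ annihilates the nonzero module $\Nil^{\dagger}(A)$, so it is not conservative. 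The sentence about ``the composite'' is also confused, since composing with $A\to A^u$ would give a cover of $A$, not of $A^u$. Simply drop everything after ``is a $!$-cover of Gelfand stacks'' and proceed directly to the final paragraph.
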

  \begin{proof}
This is a   direct consequence of \Cref{PropDescendableCoverFiniteDimPerfectoid} and \Cref{LemPointsdeRham}.
\end{proof}

We now define the qfd variant of the perfectoidization and give the official definition of the analytic de Rham stack.  Let $\Cat{ArcStk}^{\qfd}_{\Q_p}$ and $\Cat{GelfStk}^{\qfd}$ be the categories of qfd arc-stacks over $\Q_p$  and Gelfand stacks as in \cref{def-qfd-light-arc-stack} and \cref{xhs9wjqfd} respectively.

\begin{definition}\label{DefAndeRham}\
\begin{enumerate}

\item We define the \textit{perfectoidization} functor $$(-)^{\diamond}\colon \Cat{GelfStk}^{\qfd}\to \Cat{ArkStk}_{\Q_p}^{\qfd}$$  to be   the functor sending a qfd Gelfand stack to its restriction to qfd separable perfectoid $\Q_p$-algebras. Thanks to \cref{sec:perf-analyt-de-2-arc-covers-are-good} and  \cref{PropDescendableCoverFiniteDimPerfectoid} this functor sends qfd Gelfand stacks to qfd arc-stacks, it is left exact and commutes with colimits, and it is itself the left adjoint of a fiber products and colimits preserving functor $\widehat{(-)}\colon \Cat{ArkStk}_{\Q_p}^{\qfd} \to \Cat{GelfStk}^{\qfd}$ sending a qfd separable perfectoid affinoid $\Marc(A)$ to $\widehat{\Marc}(A)=\GSpec(A)$.

\item We define the  \textit{analytic de Rham stack functor} $$(-)^{\dR}\colon \Cat{ArcStk}^{\qfd}_{\Q_p} \to \Cat{GelfStk}^{\qfd}$$  to be  the right adjoint of the perfectoidization $(-)^{\diamond}\colon \Cat{GelfStk}^{\qfd}\to \Cat{ArcStk}^{\qfd}_{\Q_p}$. More explicitly, given $X$ a qfd arc-stack, its de Rham stack $X^{\dR}$ is the qfd Gelfand stack sending a qfd separable Gelfand ring $A$ to the anima $X^{\dR}(\GSpec (A))= X(\Marc(A))$.  For $X$ a qfd Gelfand stack we also denote $X^{\dR}:=(X^{\diamond})^{\dR}$.  Finally, for a morphism $Y\to X$ of qfd Gelfand stacks, we define its relative de Rham stack to be the pullback $Y^{\dR/X}:=Y^{\dR}\times_{X^{\dR}} X$ in $\Cat{GelfStk}^{\qfd}$.

\end{enumerate}
\end{definition}

In Sections \Cref{sec:the-analytic-dR-stack} and \Cref{Subsection:HyperdescentdR}, we shall prove, building on results about the $!$-topology of Gelfand rings proved in \cref{sec:furth-results-analyt-existence-of-covers}, that the analytic de Rham stack functor in qfd Gelfand stacks sends arc-hypercovers to $!$-equivalences, and we formally deduce that $(-)^{\dR}$ commutes with colimits.

The perspective that the analytic de Rham stack should be defined as a right adjoint of a perfectoidization functor suggests a close relationship between perfectoidness and de Rham stacks.
We end this subsection with a question due to one of us (S.), which makes this precise by suggesting a characterization of perfectoid Tate rings. 

\begin{question}
  \label{sec:char-perf-tate-conjecture-scholze-perfectoid}
  Let $A$ be a separable Gelfand ring, and $X=\ob{GSpec}(A)$. Then $A$ is a perfectoid Tate ring if and only if the natural map
  $$
A \to R\Gamma(X, \mathbb{G}_a^\dRall)
  $$
  is an isomorphism of solid abelian groups.
\end{question}

Here, the cohomology is taken on the topos $\Cat{GelfStk}$ with the $!$-topology, and $\mathbb{G}_a^\dRall$ is $(\mathbb{A}_{\Q_p}^{1,\diamond})^\dRall$ seen as a sheaf taking values in $\ob{D}(\Cat{CondAb})$. 

We observe that one direction is easy (see \cref{sec:char-perf-tate-vanishing-ga-dagger-cohomology-for-perfectoids} below): if $A$ is a separable perfectoid Tate ring over $\Q_p$, and $X=\ob{GSpec}(A)$, then $A \cong R\Gamma(X,\mathbb{G}_a^\dRall)$. In particular, if $A$ is a uniform Tate ring, then in \Cref{sec:char-perf-tate-conjecture-scholze-perfectoid} the cohomology can equivalently be taken in usual abelian groups (indeed, the condensed cohomology complex can be calculated by a complex of Banach spaces, because of the vanishing of higher $\mathbb{G}_a^\dRall$-cohomology on perfectoid rings and \Cref{LemBasisTopologyGelfandRings}(2)). 

In \cref{RemConjectureNilradicalVersion} we will give an equivalent formulation of this question in terms of the $\dagger$-nilradical and the almost vanishing of $\O^{\circ\circ}$-cohomology in the arc-site.

\begin{remark}
We note that if \cref{sec:char-perf-tate-conjecture-scholze-perfectoid} admits a positive answer, it would show that ``locally perfectoid implies perfectoid'', a long standing open question on perfectoids.
\end{remark}

\subsection{$!$-covers of separable Gelfand rings}
\label{sec:furth-results-analyt-existence-of-covers}

Let $X\in \Cat{GelfStk}$. We want to prove more difficult results on analytic de Rham stacks. Most notably, we want to show that:
\begin{itemize}

\item[I.] The natural morphism $X\to X^\dRall=(X^\diamond)^\dRall$ is surjective if $X=\GSpec(A)$ is an affinoid Gelfand stack, with $A$ a separable Gelfand ring which  is $\dagger$-formally smooth (cf. \Cref{sec:appr-gelf-rings-dagger-formally-smooth} below).

\item[II.] Let $X$  be a derived Berkovich space (cf. \cref{DefinitionBerkovichSpaces}),  then the Gelfand stack $X^{\dRall}$ agrees with the analytic de Rham stack of \cite{camargo2024analytic} (up to sending Gelfand stacks to analytic stacks).

\item[III.] Let $X$ be a qfd derived Berkovich space, then $X^{\dR}=X^{\dRall}$ (up to sending qfd Gelfand stacks to Gelfand stacks).

  \item[IV.] Let $Y\to Y'$ be an epimorphism of qfd arc-stacks. Then the map of de Rham stacks $Y^{\ob{dR}}\to Y^{\prime,\ob{dR}}$ is an epimorphism (seen as objects in $\Cat{GelfStk}^{\qfd}$). 
\end{itemize}

In order to prove these results, we need to introduce some technical but useful full subcategories of Gelfand rings. The class of nilperfectoids will play a role analogous to the one that semiperfectoid rings play in integral $p$-adic Hodge theory  \cite{Bhatta}. 

\begin{definition}\label{xkwjw8}
Let $A$ be a Gelfand ring.

\begin{enumerate}

\item  We say that $A$ is \textit{uniformly perfectoid} if $A^{u}$ is a perfectoid Banach algebra.   We let $\Cat{GelfRing}_{\mathbb{Q}_{p}}^{\ob{uperfd}}\subset \Cat{GelfRing}_{\mathbb{Q}_p}$ be the full subcategory of uniformly perfectoid Gelfand rings.

\item  We say that $A$ is \textit{totally disconnected} if $\mathcal{M}(A)$ is a profinite set. We say that $A$ is \textit{strictly totally disconnected} if it is totally disconnected and all its completed residue fields are algebraically closed fields. We let $\Cat{GelfRing}^{\ob{td}}_{\Q_p}\subset \Cat{GelfRing}_{\Q_p}$ (resp. $\Cat{GelfRing}^{\ob{std}}_{\Q_p}$) be the full subcategory of (strictly) totally disconnected Gelfand rings.   

\item Finally, we say that $A$ is \textit{nilperfectoid} if $A^{\dagger-\ob{red}}$ is a perfectoid ring.
  We let $\Cat{GelfRing}^{\ob{nilperfd}}_{\Q_p}\subset \Cat{GelfRing}_{\Q_p}$ be the full subcategory of nilperfectoid Gelfand ring.  
\item We denote by $\Cat{GelfRing}_{\mathbb{Q}_{p}}^{\ob{uperfd},\qfd}, \Cat{GelfRing}^{\ob{td},\qfd}_{\Q_p}, \Cat{GelfRing}^{\ob{std},\qfd}_{\Q_p}, \Cat{GelfRing}^{\ob{nilperfd},\qfd}_{\Q_p}$ the subcategories of these categories formed by qfd rings.

\end{enumerate}
\end{definition}

\begin{remark}\label{RemarkTDNilpRingIsPerfd}
Note that \cref{xhsjwks} shows that a strictly totally disconnected Gelfand ring $A$ such that $A^{\dagger-\mathrm{red}}=A^u$ is nilperfectoid. 
\end{remark}

The previous notions have the following stability properties.

\begin{lemma}\label{LemmaStabilityNilPerfd}
The following hold: 

\begin{enumerate}

\item The category $\Cat{GelfRing}^{\ob{nilperfd}}_{\Q_p}$ is stable under non-empty finite colimits.

\item  The category $\Cat{GelfRing}_{\mathbb{Q}_{p}}^{\ob{uperfd}}$ is stable under all (small) colimits of Gelfand rings.

\item  Let $A\in \Cat{GelfRing}^{\ob{td}}_{\Q_p}$ and $A\to B$ a map of separable Gelfand rings that induces a pro-\'etale map on arc-stacks, then  $B\in \Cat{GelfRing}^{\ob{td}}_{\Q_p}$.

\end{enumerate}

\end{lemma}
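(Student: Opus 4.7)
The plan is to handle each of the three parts separately, relying on the structural results on Gelfand rings, uniform completions and perfectoidization established earlier in the paper.

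For part (1), the category $\Cat{GelfRing}^{\ob{nilperfd}}_{\Q_p}$ has non-empty finite colimits computed as colimits in Gelfand rings (by \cref{xnbsyw}), so it suffices to treat the pushout case: given nilperfectoid $B,C$ over a nilperfectoid ring $A$, one has to show that $D^{\dagger-\ob{red}}$ is perfectoid, where $D:=B\otimes_A C$ is the pushout in $\Cat{GelfRing}_{\Q_p}$. Writing $A',B',C'$ for the $\dagger$-reductions, these are perfectoid Banach algebras, and the map $D\to B'\otimes_{A'} C'$ has fiber contained in $\Nil^{\dagger}(D)$ (since tensoring kills the relevant $\dagger$-nilpotent ideals). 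Using \cref{xsu8rf}, this implies that $D^u=(B'\otimes_{A'}C')^u$. Now $B'\otimes_{A'}C'$ is a quotient of the completed tensor product of perfectoid Banach algebras over $A'$, hence a quotient of a perfectoid Banach ring, and \cref{sec:defin-main-prop-uniform-completion-and-perfectoidization} then shows that its uniform completion is perfectoid. Therefore $D^u$ is perfectoid, and since $D^{\dagger-\ob{red}}$ embeds into $D^u$ with the same uniform completion (\cref{xsu8rf}), it is perfectoid as well.

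For part (2), by \cref{xhs892k} the uniform completion commutes with colimits in the sense that $(\varinjlim_i A_i)^u = (\varinjlim_i A_i^u)^u$. Hence the question reduces to showing that the uniform completion of any (small) colimit of perfectoid Banach algebras over $\Q_p$ is perfectoid. For filtered colimits this is standard: the $p$-adic completion of a filtered colimit of perfectoid integral rings is perfectoid. For finite colimits it reduces, as in part (1), to pushouts, where the uniform completion of $B\otimes_A C$ is the generic fiber of the $p$-completed tensor product of the rings of power-bounded elements, hence perfectoid. Combining the two cases, the colimit of any small diagram of uniformly perfectoid Gelfand rings is uniformly perfectoid.

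For part (3), assume that $A$ is totally disconnected and $A\to B$ is a morphism of separable Gelfand rings whose induced map $\Marc(B)\to \Marc(A)$ is pro-\'etale. By \cref{x290sm} applied to $A^u$, the pro-\'etale map is the uniform completion of a pro-\'etale map $A^u\to A^{u,\prime}$ of Banach perfectoid rings, and on Berkovich spectra one has $\mathcal{M}(B)=\mathcal{M}(A^{u,\prime})$. Since pro-\'etale covers of totally disconnected perfectoid spaces are again totally disconnected (by \cite[Lemma 7.19]{scholze_etale_cohomology_of_diamonds}, or directly: the map $\mathcal{M}(A^{u,\prime})\to \mathcal{M}(A^u)$ is a cofiltered limit of \'etale maps over a profinite set, which stays profinite because $\mathcal{M}(A^{u,\prime})$ is compact Hausdorff), one concludes that $\mathcal{M}(B)$ is profinite, i.e.\ $B\in \Cat{GelfRing}^{\ob{td}}_{\Q_p}$.

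The main obstacle is part (1): the subtlety is that the solid pushout $B\otimes_A C$ of Gelfand rings is in general far from being perfectoid or even uniform, so one must carefully track how $\Nil^{\dagger}$ and uniform completion interact with pushouts; this is handled by combining \cref{xsu8rf} with the ``Zariski-closed implies strongly Zariski-closed''-type statement \cref{sec:defin-main-prop-uniform-completion-and-perfectoidization}.
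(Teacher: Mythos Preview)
Your arguments for parts (2) and (3) follow the paper's approach and are fine. The problem is part (1), where your argument has two gaps.

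First, the claim that ``$B'\otimes_{A'}C'$ is a quotient of the completed tensor product of perfectoid Banach algebras over $A'$'' has the wrong direction: the solid tensor product maps \emph{to} the completed (Banach) tensor product, not from it. So you have not exhibited $B'\otimes_{A'}C'$ as a quotient of a perfectoid ring. Second, even if you knew that $D^u$ is perfectoid, your final sentence does not follow: the fact that $D^{\dagger-\mathrm{red}}$ embeds into $D^u$ with the same uniform completion does not by itself force $D^{\dagger-\mathrm{red}}$ to be perfectoid --- a $\dagger$-reduced ring need not coincide with its uniform completion.

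The paper fixes both issues at once by going through the integral model. For perfectoid $A',B',C'$, the ring $(B')^{\circ}\otimes_{(A')^{\circ},\solid}(C')^{\circ}$ is $p$-complete with semiperfect special fibre, hence semiperfectoid; its generic fibre is exactly the solid pushout $B'\otimes_{A'}C'$. Now \cref{sec:defin-main-prop-uniform-completion-and-perfectoidization} applies (this is what ``generic fibres of semiperfectoids are nilperfectoid'' means there): it gives not only that $(B'\otimes_{A'}C')^u$ is perfectoid, but also that the map $B'\otimes_{A'}C'\to (B'\otimes_{A'}C')^u$ is \emph{surjective}. Since $(B'\otimes_{A'}C')^{\dagger-\mathrm{red}}$ always injects into the uniform completion (\cref{xsu8rf}), surjectivity forces $(B'\otimes_{A'}C')^{\dagger-\mathrm{red}}=(B'\otimes_{A'}C')^u$, which is perfectoid. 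Combined with your (correct) reduction $D^{\dagger-\mathrm{red}}=(B'\otimes_{A'}C')^{\dagger-\mathrm{red}}$, this finishes part (1).
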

\begin{proof}

For part (1), for non-empty finite coproducts, if $A$ and $B$ are nilperfectoid, $A\to A^u$ and $B\to B^u$ are surjective, so the map $A\otimes_{\Q_{p,\solid}} B \to A^u\otimes_{\Q_{p,\solid}} B^u$ is surjective and thus we are reduced to show that  if $A$ and $B$  are perfectoid $\Q_p$-algebras then $A\otimes_{\Q_{p,\solid}} B$ is nilperfectoid. Indeed, $A^{\circ}\otimes_{\Z_{p,\solid}} B^{\circ}$ is a semiperfectoid ring, and generic fibers of semiperfectoid rings are nilperfectoid by \cref{sec:defin-main-prop-uniform-completion-and-perfectoidization}. The same argument applies for pushouts, replacing $\Q_p$ by a nilperfectoid $\Q_p$-algebra.

Part (2) follows from \cref{xhs892k} and part (1). Indeed, uniform completions of filtered colimits of perfectoid rings are clearly perfectoid, and part (1) implies that uniform completions of coproducts of perfectoids are also perfectoids.

Finally, for part (3), this follows from the fact that the map of arc-stacks $\Marc(B)\to \Marc(A)$ is pro-\'etale, and $A$ being totally disconnected this implies that $\Marc(B)$ is a profinite set too.  
\end{proof}

What makes nilperfectoid rings useful is the fact that they form a basis for the $!$-topology, as shown by the following lemma. This will for example be used to see that the natural map $\A^{1}_{\Q_p}\to \A^{1,\dRall}_{\Q_p}$ is an epimorphism, which was one of our basic desiderata.

\begin{proposition}\label{LemBasisTopologyGelfandRings}
Let $A$ be a separable Gelfand ring. 
\begin{enumerate}
\item  There is a descendable map $A\to A'$ inducing an isomorphism on uniform completions  where $(A')^{\dagger-\red}=A^{\prime,u}$. 

\item There is a descendable (and quasi-pro-\'etale after uniform completion) map $A\to A'$ where $A'$ is a separable nilperfectoid ring.

\item Suppose that $\mathcal{M}(A)$ has finite cohomological dimension. Then there is a descendable map $A\to A'$ where $A'$ is a totally disconnected nilperfectoid ring, and $\Marc(A')\to \Marc(A)$ is quasi-pro-\'etale.

\item Suppose that $A$ is totally disconnected. Then there is a ind-finite \'etale map $A\to A'$ with $A'$ separable strictly totally disconnected.  In particular, $A\to A'$ is descendable. 
\item Assume that $A$ is qfd. Then in items (1)-(4), $A^\prime$ can also be taken to be qfd.
\end{enumerate}
\end{proposition}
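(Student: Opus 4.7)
I will prove (2) as the central construction, then combine it with the $(-)^w$-cover of \cref{xk29sm} to deduce (3), handle (4) by a separate ind-finite-\'etale argument, and derive (1) and (5) from these.

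\textbf{Proof of (2).} Using separability of $A^u$ and the isomorphism $A^{\leq 1}/A^{\leq r}\cong A^{u,\leq 1}/A^{u,\leq r}$ from \cref{xsu8rf}(1), choose a countable sequence $(f_n)_{n\geq 1}\subset A^{\leq 1}$ whose image is dense in $A^{u,\leq 1}$, and consider the induced map $\Q_p\langle T_n\rangle_{\leq 1}\to A$, $T_n\mapsto f_n$. Define
\[
  A' \;:=\; A\otimes_{\Q_p\langle T_n\rangle_{\leq 1}}\Q_p\langle T_n^{1/p^\infty}\rangle_{\leq 1}.
\]
Descendability of $A\to A'$ reduces by base change to that of $\Q_p\langle T_n\rangle_{\leq 1}\to\Q_p\langle T_n^{1/p^\infty}\rangle_{\leq 1}$, which is a countable sequential colimit of faithfully flat finite free extensions of integral overconvergent models of uniformly bounded descendability index; then \cite[Proposition 2.7.2]{mann2022p} yields descendability of the colimit. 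Nilperfectoid-ness of $A'$ follows from \cref{sec:defin-main-prop-uniform-completion-and-perfectoidization}: $A^{\prime,u}$ is the uniform completion of a quotient of the perfectoid tensor product $A^u\,\widehat{\otimes}\,\Q_p\langle T_n^{1/p^\infty}\rangle_{\leq 1}^u$, hence itself perfectoid, and the induced map $(A')^{\dagger-\red}\to A^{\prime,u}$ is surjective by \emph{loc.~cit.} Quasi-pro-\'etaleness of $A^u\to A^{\prime,u}$ is clear as it is obtained by base change along pro-finite-\'etale $p$-root extractions.

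\textbf{Proofs of (3), (4), (1).} For (3), first apply the $(-)^w$-construction of \cref{xk29sm} to produce a descendable map $A\to A^w$ with $\mathcal{M}(A^w)$ profinite; descendability follows from \cref{xhs82j} combined with \cref{xhsw82h} thanks to the finite cohomological dimension hypothesis. Then apply (2) to $A^w$ to obtain a descendable $A^w\to A'$ with $A'$ nilperfectoid. To see that $A'$ is still totally disconnected, observe that $\mathcal{M}(A')\to\mathcal{M}(A^w)$ has profinite fibers (each fiber being the Berkovich spectrum of the perfectoidization of a complete non-archimedean field along $p$-power roots of countably many elements, which is a countable product of finite sets), and a compact Hausdorff space with totally disconnected fibers over a totally disconnected base is totally disconnected. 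For (4), mimic the construction in the proof of \cref{sec:light-arc-stacks-1-cover-by-separable-totally-disconnected}(2): build a countable filtered system $\{A\to B_i\}$ of finite-\'etale $A$-algebras dominating every finite-\'etale extension of every Berkovich residue field, and let $A':=\varinjlim_i B_i$. Finite-\'etale maps are descendable of uniformly bounded index, so \cite[Proposition 2.7.2]{mann2022p} gives descendability of $A\to A'$; pro-\'etale covers preserve total disconnectedness by \cref{LemmaStabilityNilPerfd}(3). For (1), take $A':=A^u$, for which both $(A^u)^u=A^u$ and $(A^u)^{\dagger-\red}=A^u$ hold automatically by \cref{xsu8rf}; descendability of $A\to A^u$ follows from the Gelfand condition combined with a Hahn--Banach splitting of the Banach algebra $A^u$ over $\Q_p$, using that $A^{\leq 1}\to A^{u,\leq 1}=(A^{\leq 1})^{\wedge_p}$ is an isomorphism modulo each $A^{\leq r}$.

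\textbf{Qfd case (5) and main obstacle.} For the qfd case, in (2) replace the countable sequence $(f_n)$ by a finite sequence $X_1,\ldots,X_d$ witnessing the qfd structure of $A^u$, namely such that $\Q_p\langle X_1,\ldots,X_d\rangle_{\leq r}\to A^u$ is quasi-pro-\'etale at the level of arc-stacks for some $r>0$; then $A'=A\otimes_{\Q_p\langle X_i\rangle_{\leq r}}\Q_p\langle X_i^{1/p^\infty}\rangle_{\leq r}$ is qfd nilperfectoid, because $A^{\prime,u}$ is quasi-pro-\'etale over the qfd perfectoid $\Q_p\langle X_i^{1/p^\infty}\rangle_{\leq r}^u$. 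Constructions (3) and (4) preserve qfd-ness (rational localizations, filtered colimits thereof, and ind-finite-\'etale covers of qfd rings remain qfd), and (1) is automatic as qfd is a property of $A^u$. The principal technical obstacle is the nilperfectoid-ness analysis in (2), where one crucially needs the surjectivity part of \cref{sec:defin-main-prop-uniform-completion-and-perfectoidization} to identify $(A')^{\dagger-\red}$ with the perfectoid Banach algebra $A^{\prime,u}$; a secondary subtlety is obtaining a uniform descendability bound through the countable colimit extracting $p$-th roots, and carefully tracking in (3) that $(-)^w$ produces a separable algebra so that (2) can be applied to it.
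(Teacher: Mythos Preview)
Your proof of (1) is incorrect. Taking $A'=A^u$ does not give a descendable map in general. For a concrete counterexample, let $A=\Q_p\langle T\rangle_{\leq 0}$; then $A^u=\Q_p$ and $A\to\Q_p$ is the augmentation $T\mapsto 0$. Since $T$ is a non-zero-divisor, $\mathrm{fib}(A\to\Q_p)\cong A$ via multiplication by $T$, and the map $\mathrm{fib}^{\otimes n}\to A$ identifies with multiplication by $T^n$, which is never null. Equivalently, the totalization of the \v{C}ech nerve of $A\to\Q_p$ is the $(T)$-adic completion $\Q_p\llbracket T\rrbracket$, not $A$. Your Hahn--Banach justification does not address this; Hahn--Banach produces $\Q_p$-linear splittings, not $A$-linear ones, and there is no $A$-linear section here (any $A$-linear $s\colon\Q_p\to A$ satisfies $T\cdot s(1)=s(0)=0$, forcing $s(1)=0$). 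The paper instead constructs a surjection $R:=\varinjlim_n\Q_p\langle T_1,\ldots,T_n\rangle\to A$ onto $A^u$ (after some care to land in the \emph{completed} Tate algebras, not only the overconvergent ones), shows $R\to R^u$ is descendable via uniform-index sections, and takes $A':=R^u\otimes_R A$.

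Your proof of (2) has a gap that also propagates to the nilperfectoid conclusion. The ring $\Q_p\langle T_n^{1/p^\infty}\rangle_{\leq 1}^u$ is \emph{not} perfectoid over $\Q_p$: it has no pseudo-uniformizer $\pi$ with $\pi^p\mid p$. Consequently the ``perfectoid tensor product'' you invoke is not perfectoid, and \cref{sec:defin-main-prop-uniform-completion-and-perfectoidization} does not apply. The paper fixes this by first running (1) to arrange a surjection $\Q_p\langle T_n:n\in\N\rangle\twoheadrightarrow A^u$, and then tensoring along $\Q_p\langle T_n:n\in\N\rangle\to\Q_p^{\cyc}\langle T_n^{1/p^\infty}:n\in\N\rangle$ (which is descendable because it admits a module-section). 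The target is now genuinely perfectoid, so $A'^u$ is a quotient of a perfectoid Banach algebra and \cref{sec:defin-main-prop-uniform-completion-and-perfectoidization} yields both perfectoidness of $A'^u$ and surjectivity of $A'\to A'^u$. Your qfd variant (5) inherits the same defect. Finally, in (3) your general claim ``compact Hausdorff with profinite fibers over a profinite base is profinite'' is false (take $[0,1]\to\ast$); the correct reason, used in the paper, is that the map from (2) is quasi-pro-\'etale on uniform completions, and then one invokes \cref{LemmaStabilityNilPerfd}(3).
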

\begin{proof}

We first prove (1). For this, it suffices to construct a map $R:=\varinjlim_{n\in \N} \mathbb{Q}_p\langle T_1,\ldots, T_n \rangle\to A$ such that the associated map on uniform completions $R^u=\mathbb{Q}_p\langle T_{n}: n\in \N \rangle \to A^u$ is surjective. Indeed, the maps $\mathbb{Q}_p\langle T_1,\ldots ,T_n\rangle \to R^u $ are descendable of index $\leq 1$ as they admit a section. Then, by \cite[Proposition 2.7.2]{mann2022p} the map $R\to R^u$ is descendable of index $\leq 2$.  Thus, $A'=R^u\otimes_{R} A$ is a descendable $A$-algebra with same uniform completion and $A^\prime \to A^{\prime,u}=A^u$  is surjective. In particular,  $A^{\prime,\dagger-\ob{red}} \to A^{\prime,u}$ is surjective and since it is always injective (\Cref{xsu8rf}), it is an isomorphism. Finally, to produce such a map, as $A^u$ is a separable Banach algebra, there are countably many elements $x_n\in A^{u,\circ}$ such that the map $\Q_p\langle T_n: n\in \N \rangle\to A^u$, sending $T_n$ to $x_n$, is surjective. We can assume without loss of generality that each element $x_i$ lifts to  $A^{\leq 1}$, this produces a morphism of algebras $\varinjlim_{n\in \N} \Q_p\langle T_1,\ldots, T_n \rangle_{\leq 1}\to A^{\leq 1}$.  Each algebra $\Q_p \langle T_1,\ldots, T_n\rangle_{\leq 1}$ can be written as a countable filtered colimit of Banach algebras. Hence, after repeating the same construction as before for each of these Banach algebras, we can find countably many elements $y_n\in A^{\leq 1}$ that actually extend to a map of solid $\Q_p$-algebras
\[
\varinjlim_{n} \Q_p \langle T_1,\ldots, T_n\rangle \to A
 \]
 which surjects after taking uniform completion. This proves part (1).

For part (2), by the construction of part (1) we can assume that $A$ admits a map $\Q_p\langle T_n: n\in \N \rangle\to A$ that induces a surjection after uniform completion. The map $\Q_p\langle T_n: n\in \N \rangle\to \Q_p^{\ob{cyc}}\langle T^{1/p^{\infty}}_n : n\in \N\rangle$ is descendable as it admits as section as modules, taking base change along this map we get a descendable map $A\to A'$ such that $A^{\prime,\dagger-\ob{red}}=A^{u}$, and so that $A^{\prime,u}$ is a quotient of $ \Q_p^{\ob{cyc}}\langle T^{1/p^{\infty}}_n : n\in \N\rangle$. By \cref{sec:defin-main-prop-uniform-completion-and-perfectoidization}, $A^{\prime,u}$ is perfectoid, proving what we wanted.

Next, for part (3), by  \cref{xk29sm} we can find a Berkovich pro-\'etale map $A\to A'$ with $A'$ totally disconnected such that we have a cartesian square of Gelfand stacks
\[
\begin{tikzcd}
\ob{GSpec} A' \ar[r] \ar[d] & \mathcal{M}(A')_{\ob{Betti}} \ar[d] \\ 
\ob{GSpec} A \ar[r] & \mathcal{M}(A)_{\ob{Betti}}
\end{tikzcd}
\
\]
Furthermore, since  $\mathcal{M}(A)_{\ob{Betti}}$ has finite cohomological dimension, the map $\mathcal{M}(A')_{\ob{Betti}}\to \mathcal{M}(A)_{\ob{Betti}}$ is descendable by \cite[Proposition II.1.1]{ScholzeRLL}, 
and so is $A\to A'$. Hence we can assume that $A$ is totally disconnected. Then we can apply (2), noting that the map from (2) is pro-\'etale on uniform completion, to find the desired cover $A \to A^\prime$ by a totally disconnected nilperfectoid ring.

Finally, for part (4),  by applying \cref{x290sm} it suffices to construct such pro-finite \'etale cover to the uniform completion $A^u$ which then follows from the same construction of \cref{sec:light-arc-stacks-1-cover-by-separable-totally-disconnected} (2). The descendability follows from \cite[Proposition 2.7.2]{mann2022p} being a countable filtered colimit of finite \'etale maps. 

The last assertion (5) follows immediately by observing that the ring $A^\prime$ constructed in (1)-(4) is always such that $A\to A^\prime$ is quasi-pro-\'etale on arc-stacks.
\end{proof}

We deduce the following result saying that the two natural choices for defining the de Rham stack actually agree; for the big de Rham stack up to some additional technical assumption, for the analytic de Rham stack on qfd Gelfand stacks unconditionally.

\begin{proposition}\label{PropComparisonTwodeRhamStacks}
Let $X$ be a Gelfand stack whose restriction to \emph{all} separable perfectoid rings satisfies arc-hyperdescent\footnote{This condition does not always hold due to \cref{sec:-covers-perfectoids-1-counter-example-to-cover-by-strictly-totally-disconnected}, namely, we only can guarantee hyperdescent when restricted to totally disconnected perfectoid rings.}. Then, $X^{\dRall}$ is the sheafification for the $!$-topology of the pre-stack sending $A\in \Cat{GelfRing}_{\omega_1}$ to $X(A^{\dagger-\ob{red}})$. Similarly, let $X$ be a qfd Gelfand stack, then $X^{\dR}$ is the sheafification for the $!$-topology of the qfd Gelfand stack sending $A\in \Cat{GelfRing}^{\qfd}_{\omega_1}$ to $X(A^{\dagger-\red})$.
\end{proposition}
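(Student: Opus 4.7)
\medskip

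\emph{Proof plan.} The plan is to reduce to the case of nilperfectoid Gelfand rings, where both sides can be computed directly, and then conclude by the fact that nilperfectoid rings form a basis for the $!$-topology.

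\emph{Step 1: Construction of a natural transformation $F\to X^{\dRall}$.} Write $F(A):=X(A^{\dagger-\red})$ for $A\in \Cat{GelfRing}_{\omega_1}$. By \cref{xsu8rf} we have $(A^{\dagger-\red})^u=A^u$ and $A^{\dagger-\red}$ is itself a separable Gelfand ring (its quotient $A^{\dagger-\red}/A^{\dagger-\red,\leq 1}$ is a quotient of $A/A^{\leq 1}$, hence discrete). The unit map $X\to (X^\diamond)^\dRall=X^\dRall$ of the perfectoidization adjunction, evaluated at $A^{\dagger-\red}$, combined with the canonical isomorphism $X^\dRall(A^{\dagger-\red})\cong X^\dRall(A)$ coming from the identification $\Marc(A)=\Marc(A^{\dagger-\red})$ (any morphism $A\to B$ with $B$ perfectoid factors uniquely through $A^u$, hence through $A^{\dagger-\red}\hookrightarrow A^u$ since the latter is injective), yields a natural map $F(A)\to X^\dRall(A)$.

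\emph{Step 2: The map $F\to X^\dRall$ is an isomorphism on nilperfectoid rings.} Suppose $A$ is nilperfectoid, so that $A^{\dagger-\red}$ is a separable perfectoid $\Q_p$-algebra. Then $\Marc(A^{\dagger-\red})$ is representable in $\Cat{ArcStk}_{\Q_p}$ by the perfectoid ring $A^{\dagger-\red}$ itself. Hence
\[
X^\dRall(A)=\Hom_{\Cat{ArcStk}_{\Q_p}}(\Marc(A),X^\diamond)=\Hom_{\Cat{ArcStk}_{\Q_p}}(\Marc(A^{\dagger-\red}),X^\diamond)=X^\diamond(A^{\dagger-\red}).
\]
Under the hypothesis that $X$ restricted to all separable perfectoid rings satisfies arc-hyperdescent, the arc-sheafification $X^\diamond$ coincides with $X$ on separable perfectoid rings, so $X^\diamond(A^{\dagger-\red})=X(A^{\dagger-\red})=F(A)$, and one checks that this agrees with the natural map of Step 1.

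\emph{Step 3: Conclusion via basis of the $!$-topology.} By \cref{LemBasisTopologyGelfandRings}(2), every separable Gelfand ring admits a descendable map to a separable nilperfectoid ring, and (by the same statement applied to tensor products, using that nilperfectoids are stable under pushouts \cref{LemmaStabilityNilPerfd}(1)) nilperfectoid rings form a basis for the $!$-topology on $\Cat{GelfRing}_{\omega_1}$. Since $X^\dRall$ is a Gelfand stack, and hence a $!$-sheaf, and since by Step 2 it agrees with $F$ on this basis, it is the $!$-sheafification of $F$ on $\Cat{GelfRing}_{\omega_1}$. For the qfd version, one repeats the argument using \cref{LemBasisTopologyGelfandRings}(5), which ensures the cover by a nilperfectoid can be chosen qfd. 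The arc-hyperdescent hypothesis becomes unnecessary in this setting: by \cref{PropDescendableCoverFiniteDimPerfectoid}, any arc-cover of qfd separable perfectoid rings is descendable, hence a $!$-cover, so any qfd Gelfand stack automatically satisfies arc-hyperdescent when restricted to qfd separable perfectoid rings.

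The main conceptual obstacle is the one addressed by the arc-hyperdescent hypothesis (or its automatic verification in the qfd case via \cref{PropDescendableCoverFiniteDimPerfectoid}); once that is in place, the identification $X^\diamond(A^{\dagger-\red})=X(A^{\dagger-\red})$ in Step 2 is immediate and the rest is a basis-of-topology argument.
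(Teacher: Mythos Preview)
Your proof is correct and follows essentially the same approach as the paper: reduce to nilperfectoid rings via \cref{LemBasisTopologyGelfandRings}, observe that there $A^{\dagger-\red}=A^u$ is perfectoid, and use the arc-hyperdescent hypothesis (respectively \cref{PropDescendableCoverFiniteDimPerfectoid} in the qfd case) to identify $X^\diamond(A^{\dagger-\red})$ with $X(A^{\dagger-\red})$. Your explicit construction of the natural transformation $F\to X^{\dRall}$ in Step~1 and the careful verification that $A^{\dagger-\red}$ is again a separable Gelfand ring are welcome details that the paper leaves implicit.
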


\begin{proof}
By \cref{LemBasisTopologyGelfandRings}, it suffices to show the statement when restricted to separable nilperfectoid rings (resp., if $X$ is qfd, when restricted to separable qfd nilperfectoid rings for $X$  qfd). But then, $A^{\dagger-\ob{red}}=A^u$ is actually perfectoid, and, by hypothesis, the restriction of $X$ to separable perfectoid rings satisfies arc-descent (resp., if $X$ is qfd, then any arc-cover  of qfd perfectoid rings is a $!$-cover by \cref{PropDescendableCoverFiniteDimPerfectoid}). This implies that for $A$ separable (qfd) nilperfectoid ring one has $X^{\ob{dR}}(A)=X(A^{\dagger-\ob{red}})$ proving the proposition.
\end{proof}

\begin{remark}
Note that for a Gelfand stack $X$ and a separable Gelfand ring $A$ with $A^u$ perfectoid, we know that $X^{\dRall}(A)=X(A^u)$ a priori only when $A$ is totally disconnected (\cref{prop:formula-functor-of-points-drall}) or when $A$ is such that $\mathcal{M}(A)$ has finite cohomological dimension (\Cref{sec:-covers-perfectoids-1-arc-versus-shriek-cover}). But to prove the previous proposition, we need to argue for a basis of the $!$-topology, such as nilperfectoid rings and they do not belong to these two classes of examples. This is why one is forced to put some assumption on $X$. On the other hand, in the qfd case, \cref{PropDescendableCoverFiniteDimPerfectoid} guarantees that the restriction of $X$ to qfd perfectoid rings always satisfies arc-descent, and so no hypothesis on $X$ is needed. This is another reason why we introduced the theory of the analytic de Rham stack in the qfd case.
\end{remark}

Using that nilperfectoid rings form a basis of the $!$-topology on separable Gelfand rings, we can prove the easy direction of \Cref{sec:char-perf-tate-conjecture-scholze-perfectoid}. We first reformulate it by noting that on the topos $\Cat{GelfStk}$ there exists the distinguished triangle
\[
  \mathbb{G}_a^\dagger\to \mathbb{G}_a\to \mathbb{G}_a^\dRall,
\]
whose value on a separable nilperfectoid Gelfand ring $A$ is given by the underlying anima\footnote{I.e., $\mathbb{G}_a^\dagger(A)=\Nil^\dagger(A)(\ast)$, where $\ast$ is a one-point set. We note that one can naturally upgrade $\mathbb{G}_a^\dagger, \mathbb{G}_a, \mathbb{G}_a^\dRall$ to functors to solid ablian groups.} of
\[
  \Nil^\dagger(A)\to A\to A^{\dagger-\red}=A^u.
\]
Consequently, \Cref{sec:char-perf-tate-conjecture-scholze-perfectoid} characterizes perfectoid rings in terms of the vanishing of $\mathbb{G}_a^\dagger$-cohomology. 
 
\begin{proposition}
  \label{sec:char-perf-tate-vanishing-ga-dagger-cohomology-for-perfectoids}
  Let $A$ be a separable perfectoid Tate ring over $\Q_p$, and $X=\ob{GSpec}(A)$. Then $R\Gamma(X,\mathbb{G}_a^\dagger)=0$. 
  \end{proposition}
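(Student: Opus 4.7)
The plan is to exploit the fiber sequence $\mathbb{G}_a^\dagger\to\mathbb{G}_a\to\mathbb{G}_a^{\dRall}$ in $\Cat{GelfStk}$ and to show that the induced morphism $A=R\Gamma(X,\mathbb{G}_a)\to R\Gamma(X,\mathbb{G}_a^{\dRall})$ is the identity on $A$; the vanishing of the fiber term will follow immediately. Tautologically $\mathbb{G}_a(\GSpec B)=B$ on separable Gelfand rings, so the left-hand side is $A$.

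For the right-hand side, I will invoke the adjunction $((-)^\diamond,(-)^{\dRall})$ from \Cref{sec:totally-disc-stacks-definition-de-rham-stack} together with the identity $\mathbb{G}_a^{\dRall}=(\mathbb{G}_a^\diamond)^{\dRall}$ to rewrite
\[
R\Gamma(X,\mathbb{G}_a^{\dRall})\simeq R\Gamma(\Marc(A),\mathbb{G}_a^\diamond),
\]
where $\mathbb{G}_a^\diamond$ is the arc-sheaf on separable perfectoid $\Q_p$-algebras that sends $B$ to the underlying solid $\Q_p$-module of $B$. Concretely, $\mathbb{G}_a^\diamond=\bigcup_n\Marc(\Q_p\langle p^n T\rangle)$ takes the value $\bigcup_n B^{\leq p^n}=B$ on a perfectoid Tate algebra $B$.

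The main step is to show that $R\Gamma(\Marc(A),\mathbb{G}_a^\diamond)\simeq A$. For this, I will choose an arc-cover $A\to A_0$ by a separable strictly totally disconnected perfectoid $\Q_p$-algebra, using \Cref{sec:light-arc-stacks-1-cover-by-separable-totally-disconnected}, and form its \v{C}ech nerve $A\to A_\bullet$; each $A_n$ is again a separable perfectoid Tate ring. The key ingredient will be arc-hyperdescent of $\mathcal{O}$ in the solid/condensed sense on separable perfectoid rings, which follows from \Cref{sec:perf-analyt-de-2-arc-covers-are-good} applied to the \v{C}ech nerve and then base-changed along $\A_{\inf}(A)\to A$. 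This identifies the cohomology with $\mathrm{Tot}(A_\bullet)\simeq A$.

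Finally, I will check that the resulting map $A\to A$ is the identity. This can be read off directly on the affine $\GSpec(A)$: since $A$ is itself a separable perfectoid ring and therefore nilperfectoid with $A^{\dagger-\red}=A$, the value $\mathbb{G}_a^{\dRall}(\GSpec A)$ coincides with $A^{\dagger-\red}=A$, and the unit map $\mathbb{G}_a\to\mathbb{G}_a^{\dRall}$ is the tautological identity on functions. Taking fibers in the triangle then yields $R\Gamma(X,\mathbb{G}_a^\dagger)\simeq \mathrm{fib}(\mathrm{id}_A)=0$. The only substantive input is arc-hyperdescent of $\mathcal{O}$ with condensed/solid coefficients on separable perfectoid rings, which is packaged in \Cref{sec:perf-analyt-de-2-arc-covers-are-good}; the remainder is bookkeeping through the adjunctions.
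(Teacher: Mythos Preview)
Your proof is correct and follows essentially the same route as the paper: both reduce to arc-descent of the structure sheaf on perfectoid rings. You pass through the adjunction $((-)^\diamond,(-)^{\dRall})$ to compute on the arc site, while the paper stays on the Gelfand side and computes via the \v{C}ech complex of a nilperfectoid $!$-cover $A\to B$, whose uniform completions recover exactly the arc-\v{C}ech complex for $A\to B^u$; this is only a difference in packaging. One minor imprecision: \Cref{sec:perf-analyt-de-2-arc-covers-are-good} is stated for totally disconnected $A$, and your base-change maneuver (tensoring the almost identity $\A_{\inf}(A)\simeq\mathrm{Tot}(\A_{\inf}(A_\bullet))$ with $A$) does not literally commute with the totalization, so that citation alone does not yield arc-hyperdescent of $\mathcal{O}$ for a general separable perfectoid $A$---but the fact you need is standard and is precisely what the paper invokes as ``the structure sheaf on perfectoids satisfies arc-descent''.
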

\begin{proof}
  As $A$ is uniform perfectoid (and thus $H^i(X,\mathbb{G}_a^\dagger)=0$ for $i=0, 1$), it is sufficient to show that $H^i(X,\mathbb{G}_a^\dRall)=0$ for $i>0$. Let $A\to B$ be a $!$-cover. Refining using \Cref{LemBasisTopologyGelfandRings}(2) we may assume that $B$ is nilperfectoid. Then it suffices to see that
  \[
    A\to B^u\to (B\otimes_A B)^u\to \ldots
  \]
  is exact. But this is precisely the \v{C}ech complex for the arc-cover $A\to B^u$.  However, the structure sheaf on perfectoids satisfies arc-descent.  This shows the desired vanishing.
\end{proof}

As the proof of \cref{sec:char-perf-tate-vanishing-ga-dagger-cohomology-for-perfectoids} shows the $\mathbb{G}_a^{\dRall}$-cohomology calculates classical $v$-cohomology of the completed structure sheaf on (partially proper) rigid analytic varieties over $\mathbb{C}_p$. In particular, it is generally non-zero in higher degrees.

\begin{remark}\label{RemConjectureNilradicalVersion}
We also have a distinguished triangle
\[
 \mathbb{G}_a^\dagger\to \mathbb{G}_a^{< 1}\to \mathbb{G}_a^{< 1,\dRall},
\]
where $\mathbb{G}_a^{< 1}$ is the sheaf $A\mapsto A^{< 1}$,
so the vanishing of the cohomology of $\mathbb{G}_a^\dagger$ can also be reformulated as saying that the map
$$
R\Gamma(X,\mathbb{G}_a^{< 1}) \to R\Gamma(X,\mathbb{G}_a^{< 1,\dRall})
$$
is an isomorphism of solid abelian groups.

On the other hand, suppose that there is a pseudo-uniformizer $\pi$ of $A$ such that $|p|\leq |\pi|^p$. Then  $A$ being perfectoid is equivalent to the previous map being an isomorphism and the fact that $H^1(X,\mathbb{G}_a^{< 1,\dRall})$ is zero.
Indeed, the previous equivalence implies in particular that $A^{< 1}= H^0(X, \mathbb{G}_a^{< 1,\dRall}) = H^0(X^{\diamond}_{\proet}, \O^{\circ\circ})$ is $p$-complete and so that $A$ is a uniform Banach algebra.
The vanishing of the $H^1$-cohomology then implies that the Frobenius map $\varphi\colon A^{< 1}/p\to A^{< 1}/p$ is  surjective, which together with the existence of the pseudo-uniformizer $\pi$ implies that $A^{\leq 1}$ is integral perfectoid and so that $A$ is itself perfectoid: given $a\in A^{\leq 1}$ and a pseudo-uniformizer $\varpi\in A^{<1}$ with $|p|<|\varpi^p|$, there exists some $x\in A^{<1}$ with $x^p\equiv \varpi^p a$ in $A^{<1}/p$.
Then $ \varpi^{-1}x\in A^{\leq 1}$ (as can be checked at points on $\Spa(A)$, for example) and $(\varpi^{-1}x)^p\equiv a \mod A^{\leq 1}/ \varpi^{-1}p$.

Hence,  \Cref{sec:char-perf-tate-conjecture-scholze-perfectoid} says that this $H^1$-vanishing condition is automatic from the vanishing of the $\mathbb{G}_a^{\dagger}$-cohomology.  A natural question is whether the existence, locally in the analytic topology, of the pseudo-uniformizer $\pi$ is guaranteed by the vanishing of the $\mathbb{G}_a^{\dagger}$-cohomology.
\end{remark}

\subsection{de Rham stacks of derived Berkovich spaces}\label{ss:GelfandAnalyticSpaces}\label{ss:ApproxGelfand}
 
In this section we discuss the points (I), (II) and (III) of the beginning of \cref{sec:furth-results-analyt-existence-of-covers}. Before getting to the general case, let us discuss the example of the affine line.

\begin{example}\label{ExamAffineLineDR}
 Let $X=\A^{1,\an}_{\Q_p}$ be the analytic affine line over $\Q_p$.
 When we think of $X$ as a Gelfand stack, $X$ represents the functor sending a separable Gelfand ring $A$ to the anima $X(\GSpec(A)= A(*)$ given by the underlying algebra of $A$.
 Since $A=\bigcup_{r>0} A^{\leq r}$, we can write $\A^{\an}_{\Q_p}=\bigcup_{r>0} \mathbb{D}^{\leq r}_{\Q_p}$ where $\mathbb{D}^{\leq r}_{\Q_p}=\GSpec (\Q_p\langle T \rangle_{\leq r})$ is the overconvergent disc of radius $r$.
 We can then think of the analytic affine line as a suitable Gelfand stack which is glued from affinoid stacks with respect to the analytic topology, i.e., the Grothendieck topology of open subspaces of the Berkovich spectrum. 
 
To determine the big de Rham stack of $X$, by \cref{LemBasisTopologyGelfandRings} it suffices to evaluate it in nilperfectoid rings.
 Let $A$ be a nilperfectoid ring, then $X^{\dRall}(\GSpec(A))= A^u=A^{\dagger-\red}= A/\Nil^{\dagger}(A)$.
 Therefore the map $X\to X^{\dRall}$ is an epimorphism of Gelfand stacks, this means that $X$ is \textit{$\dagger$-formally smooth} in a suitable sense (see \cite[Section 3.7]{camargo2024analytic} for a definition in these lines adapted to bounded rings).
 If we see $X$ as a ring stack, denoted by $\mathbb{G}_{a}$, the map $\mathbb{G}_a\to \mathbb{G}_a^{\dRall}$ is an epimorphism of ring stacks, and its kernel is precisely the ideal $\mathbb{G}_a^{\dagger}=\GSpec (\Q_p\langle T\rangle_{\leq 0})$ representing the (underlying anima of the) nilradical. We deduce that
\[
\mathbb{G}_a^{\dRall}= \mathbb{G}_a/\mathbb{G}_a^{\dagger}
\]
where $\mathbb{G}_a^{\dagger}$ acts by translation.
 By the same discussion one has that $\mathbb{G}_a^{\dR}= \mathbb{G}_a/\mathbb{G}_a^{\dagger}$ as qfd Gelfand stacks. 
\end{example}

For studying the de Rham stack it will be necessary to have a good understanding of when $X\to X^\dRall$ is surjective.
 The following definition is a variant of \cite[Definition 3.7.2]{camargo2024analytic} (which we have to adapt as the definition of the analytic de Rham stack in this paper is different).

\begin{definition}
  \label{sec:appr-gelf-rings-dagger-formally-smooth} A morphism $Y\to X$ in $\Cat{GelfStk}$ with perfectoidization $Y^{\diamond}\to X^{\diamond}$ in $\Cat{ArcStk}_{\Q_p}$ is \textit{$\dagger$-formally smooth} (resp.\ \textit{$\dagger$-formally \'etale}) if for any separable  nilperfectoid ring $A$, the natural map
\begin{equation}\label{eq03jkl13r}
Y(A)\to X(A)\times_{X^{\diamond}(\Marc(A^u))} Y^{\diamond}(\Marc(A^u))
\end{equation}
is an epimorphism of anima (resp.\ an equivalence).
Similarly, a morphism $Y\to X$ in $\Cat{GelfStk}^{\qfd}$ with perfectoidization $Y^{\diamond}\to X^{\diamond}$ in $\Cat{ArcStk}^{\qfd}_{\Q_p}$  is  \textit{$\dagger$-formally smooth} (resp.\ \textit{$\dagger$-formally \'etale}) if for any separable qfd nilperfectoid ring $A$, the map \cref{eq03jkl13r} is an epimorphism of anima (resp.\ an equivalence).
\end{definition}

\begin{remark}
\cref{sec:appr-gelf-rings-dagger-formally-smooth} only rephrases the condition for the map $Y\to Y^{\dRall/X}$ to be a surjection of pre-stacks when restricted to separable nilperfectoid rings.
 Indeed, for $A$ a separable nilperfectoid ring, one has by adjunction $X^{\dRall}(A)=X^{\diamond}(\Marc(A))$.
 The same applies for the map $Y\to Y^{\dR/X}$ if $Y$ and $X$ are qfd Gelfand stacks.
\end{remark}

We will need this technical lemma in the following:

\begin{lemma}\label{Lemmapqjjpqwbfoqw}
Let $A$ be a solid $\Q_p$-algebra and let $\GSpec (A)\in \Cat{GelfStk}$ be the functor it represents in Gelfand stacks. Then its restriction to separable perfectoid rings satisfies arc-hyperdescent. 
\end{lemma}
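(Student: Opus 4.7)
The plan is to observe that the restriction of $\GSpec(A)$ to separable perfectoid $\Q_p$-algebras is given by $F(B) = \Hom_{\Cat{Ring}_{\Q_{p,\solid}}}(A,B)$, and then to reduce arc-hyperdescent to the subcategory of separable (strictly) totally disconnected perfectoid $\Q_p$-algebras, where arc-hypercovers can be identified with $!$-equivalences of analytic rings.

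First, I would invoke \cref{sec:light-arc-stacks-1-cover-by-separable-totally-disconnected} to conclude that the full subcategory of separable strictly totally disconnected perfectoid $\Q_p$-algebras is a basis of the arc-topology on $\Cat{AffPerfd}_{\Q_p,\omega_1}$: every object admits an arc-cover by such a ring, and the inclusion preserves fiber products and arc-covers. By the standard comparison of (hyper)sheaves on a site and on a dense sub-site, it suffices to check arc-hyperdescent for $F$ restricted to this basis.

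Next, given an arc-hypercover $B\to B_\bullet$ with $B$ separable totally disconnected perfectoid over $\Q_p$, \cref{sec:perf-analyt-de-2-arc-covers-are-good} gives that the induced map of presheaves on analytic $\Q_{p,\solid}$-algebras is a $!$-equivalence. Now $F$ is the restriction of the representable presheaf $\AnSpec(A)\in \Cat{AnStk}_{\Q_{p,\solid}}$, which is an analytic stack since the $!$-topology is subcanonical and $\ob{D}^!$ descent is tautological on representables. By \cref{xhs92mk} (applied after choosing an $\omega_1$-compact model of the hypercover, which is harmless as arc-hypercovers of separable perfectoid algebras stay within $\omega_1$-compact presheaves), this forces
\[
F(B)=\Hom_{\Cat{Ring}_{\Q_{p,\solid}}}(A,B)\xrightarrow{\sim}\Tot\,\Hom_{\Cat{Ring}_{\Q_{p,\solid}}}(A,B_\bullet)=\Tot F(B_\bullet),
\]
which is exactly the hyperdescent we want on the basis, and hence on the full site by the previous step.

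The main obstacle is the first reduction: verifying that arc-hyperdescent on the totally disconnected basis transfers to arc-hyperdescent on all of $\Cat{AffPerfd}_{\Q_p,\omega_1}$. The key points to check are that every arc-hypercover in $\Cat{AffPerfd}_{\Q_p,\omega_1}$ admits a refinement by an arc-hypercover lying in the totally disconnected sub-site (by iteratively covering each simplicial degree using \cref{sec:light-arc-stacks-1-cover-by-separable-totally-disconnected}) and that this refinement does not change the totalization of the presheaf $F$; once these are settled, the equivalence of hypersheaf $\infty$-topoi is formal.
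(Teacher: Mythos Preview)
Your approach is correct but considerably more elaborate than necessary. The paper's proof is a one-liner: the functor in question sends a separable perfectoid ring $B$ to $\Hom_{\Cat{Ring}_{\Q_{p,\solid}}}(A,B)$, and for any arc-hypercover $B\to B_\bullet$ of perfectoid rings one has $B=\Tot(B_\bullet)$ (arc-hyperdescent of the structure sheaf on perfectoids, a standard consequence of almost arc-hyperdescent of $B\mapsto B^\circ$). Since $\Hom(A,-)$ preserves limits, the hyperdescent condition is immediate on the full site, with no reduction to a basis and no appeal to $!$-equivalences.

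Your route via the totally disconnected basis and \cref{sec:perf-analyt-de-2-arc-covers-are-good} works, but it trades a direct limit-preservation argument for heavier machinery: you need that representables are local for $!$-equivalences (which ultimately rests on $\ob{D}^*$-descent and hence again on $A=\lim_i A_i$, i.e., essentially the same fact you are trying to avoid citing directly), and you flag the basis-to-full-site transfer as a ``main obstacle'' when it is a standard property of hypersheaves. The paper's argument bypasses all of this by observing that the only input needed is $B=\Tot(B_\bullet)$, which holds for arbitrary arc-hypercovers of perfectoid rings without any totally disconnected hypothesis.
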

\begin{proof}
This follows form the fact that if $B\to B^{\bullet}$ is an arc hypercover of perfectoid rings then $B=\ob{Tot}(B^{\bullet})$.  
\end{proof}

An immediate consequence of \cref{sec:appr-gelf-rings-dagger-formally-smooth} is the following surjectivity result for suitable $\dagger$-formally smooth algebras in the sense of \cite[Definition 3.7.2]{camargo2024analytic}.

\begin{corollary}\label{CorollaryDaggerSmoothComparison}
  \label{sec:dagg-form-smooth-presentation-of-analytic-de-rham-stack-}
  Assume that $A$ is a $\dagger$-formally smooth $\Q_p$-algebra in the sense of \cite[Definition 3.7.2]{camargo2024analytic}. Set $X:=\ob{GSpec}(A)$. Then the natural map $X\to X^{\dRall}$ is an epimorphism (resp. for $X^{\dR}$ if $A$ is qfd).
\end{corollary}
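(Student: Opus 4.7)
The plan is to verify the epimorphism condition on a basis of the $!$-topology, namely on separable nilperfectoid rings, where the functor of points of $X^{\dRall}$ can be computed explicitly as $B \mapsto \mathrm{Map}(A, B^{\dagger-\red})$, and then invoke the definition of $\dagger$-formal smoothness of $A$ to lift.

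More precisely, to show that $X \to X^{\dRall}$ is an epimorphism in $\Cat{GelfStk}$, it suffices to show that for every separable Gelfand ring $B$ and every morphism $\eta\colon \ob{GSpec}(B) \to X^{\dRall}$, there is a $!$-cover $B \to B'$ such that $\eta$ pulls back to a morphism factoring through $X \to X^{\dRall}$. By \cref{LemBasisTopologyGelfandRings}(2), separable nilperfectoid rings form a basis of the $!$-topology on separable Gelfand rings, so we may assume from the outset that $B$ is nilperfectoid. Hence it suffices to prove that the map of anima
\[
X(B) = \mathrm{Map}_{\Q_{p,\solid}\text{-alg}}(A, B) \longrightarrow X^{\dRall}(B)
\]
is surjective on $\pi_0$ for every separable nilperfectoid Gelfand ring $B$.

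To identify the target, observe that since $B$ is nilperfectoid, $B^u = B^{\dagger-\red}$ is perfectoid, and by \cref{sec:totally-disc-stacks-construction-perfectoidization}(2) the natural map $\Marc(B^u) \to \Marc(B)$ is an isomorphism of arc-stacks. Combining this with \cref{Lemmapqjjpqwbfoqw} (which ensures that the restriction of the affinoid stack $X = \ob{GSpec}(A)$ to separable perfectoid rings satisfies arc-hyperdescent) and the adjunction defining $(-)^{\dRall}$, we compute
\[
X^{\dRall}(B) = \mathrm{Map}_{\Cat{ArcStk}_{\Q_p}}(\Marc(B^u), X^{\diamond}) = \mathrm{Map}_{\Q_{p,\solid}\text{-alg}}(A, B^u) = \mathrm{Map}(A, B^{\dagger-\red}).
\]
Under these identifications, the map $X(B) \to X^{\dRall}(B)$ is simply the map
\[
\mathrm{Map}(A, B) \longrightarrow \mathrm{Map}(A, B^{\dagger-\red})
\]
induced by the quotient $B \to B^{\dagger-\red}$, which is precisely the kind of map whose $\pi_0$-surjectivity is guaranteed by the hypothesis that $A$ is $\dagger$-formally smooth in the sense of \cite[Definition 3.7.2]{camargo2024analytic}.

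The qfd case is identical: one uses \cref{LemBasisTopologyGelfandRings}(5) to reduce to the case where $B$ is a separable qfd nilperfectoid Gelfand ring, in which case $B^u = B^{\dagger-\red}$ is qfd perfectoid, $X^{\dR}(B) = \mathrm{Map}(A, B^{\dagger-\red})$ by the same adjunction argument, and the conclusion again follows from $\dagger$-formal smoothness of $A$. The only genuine content is the computation of $X^{\dRall}(B)$ on nilperfectoid rings; the rest is a matter of assembling the relevant results from the preceding subsections.
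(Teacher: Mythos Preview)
Your proof is correct and follows essentially the same approach as the paper: the paper cites \cref{Lemmapqjjpqwbfoqw} and \cref{PropComparisonTwodeRhamStacks} to identify $X^{\dRall}$ with the $!$-sheafification of $B\mapsto X(B^{\dagger-\red})$ and then invokes $\dagger$-formal smoothness, whereas you inline the proof of \cref{PropComparisonTwodeRhamStacks} by restricting directly to nilperfectoid rings via \cref{LemBasisTopologyGelfandRings} and computing $X^{\dRall}(B)=X(B^{\dagger-\red})$ using the same key input \cref{Lemmapqjjpqwbfoqw}. The arguments are the same up to packaging.
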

\begin{proof}
 By \cref{Lemmapqjjpqwbfoqw} and \cref{PropComparisonTwodeRhamStacks} we know that $X^{\dRall}$ is the sheafification of the functor sending a Gelfand ring $B$ to $X(B^{\dagger-\red})$. The formally smoothness definition of \cite[Definition 3.7.2]{camargo2024analytic} says precisely that the map of anima $X(B)\to X(B^{\dagger-\red})$ is an epimorphism. The corollary follows. 
\end{proof}

\begin{example}
  \label{sec:dagg-form-smooth-example-overconvergent-cohomology-of-rigid-disc}\
  \begin{enumerate}
  \item Let $A=\Q_p\langle T\rangle$ and consider $X=\ob{GSpec}(A)$, this is the realization of Huber's compactification of the affinoid disc in Gelfand stacks. In order to find a presentation of $X^{\dRall}$ as a quotient of a representable Gelfand stack we need a $\dagger$-formally smooth approximation of $A$. This is provided by $\widetilde{A}=\Q_p\langle T\rangle_{\leq 1}$ (e.g., by \cref{xjs9jw}). Indeed,  let $\widetilde{X}=\ob{GSpec} \widetilde{A}$, the ring $A$ is the uniform completion of $\widetilde{A}$ which yields that $X^{\dRall}=\widetilde{X}^{\dRall}$.  Then, by \cref{CorollaryDaggerSmoothComparison} the map $\widetilde{X}\to X^{\dRall}$ is an epimorphism. Concretely, by looking at the functor of points  one has that 
\[
X^{\dRall} = \mathbb{D}_{\Q_p}^{\leq 1}/\mathbb{G}_{a}^{\dagger} 
\]
where $\mathbb{D}_{\Q_p}^{\leq 1}=\widetilde{X}$.   In particular, the cohomology of $X^{\dRall}$ calculates \textit{overconvergent} de Rham cohomology of $X$, and not the honest, but pathological, de Rham cohomology of $X$. Since the ring $A$ is qfd, one also has $X^{\dR}= \mathbb{D}_{\Q_p}^{\leq 1}/\mathbb{G}_{a}^{\dagger} $ as qfd Gelfand stacks. 
   We note that by \Cref{sec:de-rham-stacks-counterexample-to-smoothness} the morphism $\Q_p\to \Q_p\langle T\rangle$ is \emph{not} $\dagger$-formally smooth.
   \item Let $S$ be a light profinite set. Then, the ring of locally constant functions $C^{\lc}(S,\Q_p)$ is a $\dagger$-formally \'etale $\Q_p$-algebra by \cref{sec:appr-gelf-rings-examples-of-dagger-formally-smooth-maps} below, with uniform completion $C(S,\Q_p)$ the space of $\Q_p$-valued continuous functions of $S$. We can conclude that
    \[
      (\ob{GSpec}(C(S,\Q_p)))^{\dRall}=(\ob{GSpec}(C^{\lc}(S,\Q_p)))^{\dRall}\cong \ob{GSpec}(C^{\lc}(S,\Q_p))=S_{\ob{Betti}}
    \]
    (resp. for the de Rham stack seen as qfd Gelfand stack).
  \item We have that
    \[
      (\ob{GSpec}(\C_p))^{\dRall}\cong (\ob{GSpec}(\overline{\Q}_p))^{\dRall}\cong \ob{GSpec}(\overline{\Q}_p),
    \]
    again using that the $\dagger$-formally \'etale $\Q_p$-algebra $\overline{\Q}_p$ has uniform completion $\C_p$ (resp.\ for $\GSpec (\C_p)^{\dR}$ as qfd Gelfand stack).
  \end{enumerate}

\end{example}

\begin{example}
  \label{sec:de-rham-stacks-counterexample-to-smoothness}
  The morphism $\Q_p\to \Q_p\langle T\rangle$ is \emph{not} $\dagger$-formally smooth. Indeed, assume that it is, and pick a descendable cover $h\colon \Q_p\langle T\rangle_{\leq 1}\to A$ with $A$ (qfd) nilperfectoid (the existence of such an $A$ is guaranteed by \Cref{LemBasisTopologyGelfandRings}).
  Then $ \Q_p\langle T\rangle_{\leq 1}\to A^u$ extends uniquely to a morphism $f\colon\Q_p\langle T\rangle$.
  If $\Q_p\to \Q_p\langle T\rangle$ were $\dagger$-formally smooth, there would exist a lift $g\colon \Q_p\langle T\rangle \to A$. We let $s:=g(T)$ and $t:=h(T)$, and consider the induced morphism
  \[
    \alpha\colon \Q_p\langle S\rangle\langle T\rangle_{\leq 1}\langle \varepsilon \rangle_{\leq 0}/(T-S-\varepsilon)\to A
  \]
  sending $S\mapsto s,\ T\mapsto t$ (note that $s-t\in \Nil^{\dagger}(A)$).
  Now,
  \[
    \Q_p\langle S\rangle\langle T\rangle_{\leq 1}\langle \varepsilon \rangle_{\leq 0}/(T-S-\varepsilon)\cong \Q_p\langle T\rangle\langle \varepsilon\rangle_{\leq 0},\ S\mapsto T-\varepsilon
  \]
  because the translation of a powerbounded element by a norm zero element is still powerbounded (in fact, it is sufficient to assume that $\varepsilon$ has norm $<1$).
  As $h\colon \Q_p\langle T\rangle_{\leq 1}\to A$ is descendable, we can conclude that $\Q_p\langle T\rangle_{\leq 1}\to \Q_p\langle T\rangle\langle \varepsilon\rangle_{\leq 0}$ is descendable, too.
  This in turn implies using $\Q_p\langle T\rangle\langle \varepsilon\rangle_{\leq 0}\otimes_{\Q_p\langle T\rangle_{\leq 1}}\Q_p\langle T\rangle \cong \Q_p\langle T\rangle\langle \varepsilon \rangle_{\leq 0}$
  that $\Q_p\langle T\rangle_{\leq 1}\to \Q_p\langle T\rangle$ is an isomorphism, which is not true.
\end{example}

Examples of $\dagger$-formally smooth/\'etale maps can be constructed as follows:

\begin{definition}\label{x2edaj3}
Let $f:A\to B$ be a morphism of Gelfand rings. We say that $f$ is  
\begin{enumerate}

\item  \textit{standard Berkovich smooth} if there is some $r>0$ such that $B$ is of the form $$B=A \langle T_1,\ldots, T_n \rangle_{\leq r}/^{\mathbb{L}} (f_1,\ldots, f_k),$$  with $k\leq n$ such that the determinant of the matrix $(\frac{\partial f_j}{\partial T_j})_{i,j=1}^k$  is invertible on $B$.

\item \textit{standard Berkovich \'etale} if it is standard Berkovich-smooth with $n=k$.
\end{enumerate}
\end{definition}

\begin{lemma}
  \label{sec:appr-gelf-rings-examples-of-dagger-formally-smooth-maps}\
  \begin{enumerate}
  \item Standard Berkovich smooth (resp.\ \'etale) maps of Gelfand rings are $\dagger$-formally smooth (resp.\ \'etale). In particular, $A\to A\langle T\rangle_{\leq 1}$ is $\dagger$-formally smooth.

\item Berkovich smooth (resp.\ \'etale) maps of Gelfand rings are, locally in a strict cover, standard Berkovich smooth (resp. standard $\dagger$-\'etale). In particular, they are $\dagger$-formally smooth.
  
    \item  $\dagger$-formally smooth (resp.\ \'etale) maps of Gelfand stacks are stable under pullbacks and compositions. 
    
    \item Let $\{Y_i\to X\}$ be a cofiltered limit  of $\dagger$-formally smooth (resp.\ \'etale) maps of (qfd) Gelfand stacks with $\dagger$-formally \'etale transitions maps. Then $\varprojlim_{i} Y_i\to X$ is $\dagger$-formally smooth.
  \end{enumerate}
\end{lemma}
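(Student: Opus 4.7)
The plan is to address the four items in order, with the real technical content concentrated in (1) and the remaining parts being essentially formal.

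For (1), I would first dispose of the basic generator $A\to A\langle T\rangle_{\leq r}$. A map $A\langle T\rangle_{\leq r}\to C$ over $A\to C$ classifies an element of $C^{\leq r}$, and the compatible map to $C^u$ corresponds to an element of $C^{u,\leq r}$; the pullback square of \cref{xsu8rf}(1) gives $C^{\leq r}=C\times_{C^u}C^{u,\leq r}$, so lifting reduces to lifting an element of $C^u=C^{\dagger-\mathrm{red}}$ back to $C$, which is surjective since $C$ is nilperfectoid. For general standard Berkovich smooth $B=A\langle T_1,\ldots,T_n\rangle_{\leq r}/^{\mathbb L}(f_1,\ldots,f_k)$, I lift the generator tuple $\bar c\in((C^u)^{\leq r})^n$ of a prescribed map $B\to C^u$ to a tuple $c^0\in(C^{\leq r})^n$ as above; the elements $f_j(c^0)$ then lie in $\mathrm{Nil}^\dagger(C)$. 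The Jacobian $J=(\partial_if_j(c^0))_{i,j=1}^k$ is invertible in $C$, since it is invertible in $B$, hence in $C^u$, and by \cref{xsu8rf}(4) invertibility in $C^u$ lifts to invertibility in $C$. A Newton iteration built on $-J^{-1}$ then produces $\delta\in\mathrm{Nil}^\dagger(C)^n$ with $f_j(c^0+\delta)=0$; the ultrametric inequality of \cref{LemmaAcircicrr}(4) keeps $c^0+\delta$ in $(C^{\leq r})^n$. In the standard \'etale case $n=k$, the same contraction argument yields uniqueness of the correction, so the lifting is unique.

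For (2), by \cref{PropUniversalPropertyRational} a Berkovich rational localization $A\to A_Z$ admits the presentation $A\langle T_1,\ldots,T_n\rangle_{\leq 1}/(gT_i-f_i)$, in which $g$ is automatically invertible (from $\sum a_if_i+bg=1$), so its Jacobian $g\cdot I$ is invertible; hence it is standard Berkovich \'etale. A finite \'etale map is locally of the form $A(*)[T]/(P(T))$ with $P$ monic and $P'(T)$ invertible; since $T$ satisfies a monic relation it is bounded, so for $r$ large this equals $A\langle T\rangle_{\leq r}/(P(T))$, again standard Berkovich \'etale. A Berkovich smooth map, after passing to a strict closed cover, factors through $\mathbb{A}^{d,\mathrm{an}}_X=\bigcup_r\GSpec(A\langle T_1,\ldots,T_d\rangle_{\leq r})$, a union of standard Berkovich smooth pieces, via a Berkovich \'etale map. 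The ``in particular'' assertion of (2) then follows from (1), the stability of $\dagger$-formal smoothness under composition (part (3) below), and the locality along strict covers, which is formal from the definition.

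Parts (3) and (4) are formal manipulations. For (3), the key input is that $(-)^\diamond$ preserves fiber products (being both a left and right adjoint by \cref{sec:perf-analyt-de-2-left-adjoint-to-perfectoidization}). Consequently, for a pullback $Y'=Y\times_XX'$, the comparison map for $Y'\to X'$ is the base change along $X'(C)\to X(C)$ of the comparison map for $Y\to X$, so epimorphisms (resp.\ equivalences) are preserved; for compositions one factors the comparison through $X(C)\times_{X^\diamond}Y^\diamond$ and composes two epimorphisms (resp.\ equivalences). For (4), set $T_i:=X(C)\times_{X^\diamond(\Marc(C^u))}Y_i^\diamond(\Marc(C^u))$. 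Since $(-)^\diamond$ preserves limits, $T=\varprojlim T_i$, and the $\dagger$-formally \'etale transitions $Y_i\to Y_j$ yield Cartesian squares $Y_i(C)=Y_j(C)\times_{T_j}T_i$. Fixing $i_0\in I$, the slice $I_{/i_0}$ is cofinal in the cofiltered $I$, so $Y(C)=\varprojlim_{j\to i_0}Y_j(C)=Y_{i_0}(C)\times_{T_{i_0}}T$; since $Y_{i_0}(C)\to T_{i_0}$ is an epimorphism, so is its base change $Y(C)\to T$.

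The main obstacle is the Newton iteration in (1). While morally standard, it requires careful bookkeeping: one must verify that each successive correction $\delta^{(n+1)}-\delta^{(n)}$ lies in strictly smaller $C^{\leq s_n}$ with $s_n\to 0$, so that the sum converges in $(C^{\leq r})^n$ with limit in $\mathrm{Nil}^\dagger(C)^n$. This relies on the multiplicative bounds of \cref{LemmaAcircicrr}(3) applied to $J^{-1}\cdot Q(\delta)$, where $Q$ is the quadratic remainder of $f$ at $c^0$, combined with the description $\mathrm{Nil}^\dagger(C)=\bigcap_{s>0}C^{\leq s}$.
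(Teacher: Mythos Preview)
Your treatment of the free case $A\to A\langle T\rangle_{\leq r}$ in (1) is correct, but the Newton iteration for the general standard Berkovich smooth case has a genuine gap at the convergence step. Since $f_j(c^0)\in \mathrm{Nil}^\dagger(C)=C^{\leq 0}=\bigcap_{s>0}C^{\leq s}$ and the filtration $(C^{\leq s})_s$ is multiplicative (\cref{LemmaAcircicrr}(3)), every correction $\delta^{(m)}$ already lies in $C^{\leq 0}$ from the start; your proposed bookkeeping ``$\delta^{(m)}\in C^{\leq s_m}$ with $s_m\to 0$ strictly'' is therefore vacuous and cannot witness convergence of $\sum_m\delta^{(m)}$. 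The filtration $(C^{\leq s})_s$ simply does not separate elements of $\mathrm{Nil}^\dagger(C)$, and a nilperfectoid $C$ is not $\mathrm{Nil}^\dagger(C)$-adically complete in general either: for $C=\Q_p^{\cyc}\langle T\rangle_{\leq 0}$ one has $\mathrm{Nil}^\dagger(C)=T\cdot C$ and the $\mathrm{Nil}^\dagger$-adic completion is $\Q_p^{\cyc}\llbracket T\rrbracket\supsetneq C$, so there is no ambient completeness to appeal to. The paper avoids this entirely: it writes a standard Berkovich smooth (resp.\ \'etale) map as a filtered colimit of \emph{solid} smooth (resp.\ \'etale) maps along solid \'etale transitions, and defers both the base case and the stability under such colimits to \cite[Proposition~3.4.7, Proposition~3.7.4(2)]{camargo2024analytic}.

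Your arguments for (2)--(4) are correct. For (2), your direct verification that Berkovich rational localizations (via the presentation in \cref{PropUniversalPropertyRational}) and finite \'etale maps are standard Berkovich \'etale is more elementary than the paper's citation of \cite[Theorem~3.5.6]{camargo2024analytic}. Parts (3)--(4) are formal, matching the paper's ``straightforward from the definitions''.
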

\begin{proof}
  Part (1) follows from \cite[Proposition 3.4.7]{camargo2024analytic} and \cite[Proposition 3.7.4 (2)]{camargo2024analytic} as analytic locally any standard Berkovich smooth (resp. \'etale) map can be written (on associated analytic stacks) as a filtered colimit of solid smooth maps along solid \'etale maps, in the sense of \cite[Definition 3.5.5]{camargo2024analytic} (we note that if $X,Y$ are represented by Gelfand rings, then \cref{sec:appr-gelf-rings-dagger-formally-smooth} agrees with \cite[Definition 3.7.2]{camargo2024analytic}, up to restricting to nilperfectoids).
  Part (2) follows from \cite[Theorem 3.5.6]{camargo2024analytic} as, locally in rational covers, a Berkovich smooth map (resp. a Berkovich \'etale map) can be written as a colimit of formally smooth (resp. \'etale)  maps of solid finite presentation.
  Parts (3) and (4) are straightforward from the definitions.
\end{proof}

\begin{definition}\label{DefDaggerNeigh}
Let $Y$ be a  derived Berkovich space and let $C\subset |Y|$ be a locally closed subspace. We define the \textit{overconvergent neighbourhood of $C$ in $Y$} to be the  substack $Y^{\dagger_{C}}\subset Y$ for the analytic topology whose $A$-valued points for $A$ a separable Gelfand ring are 
\[
Y^{\dagger_{C}}(A) = \begin{cases} Y(A) & \mbox{ if } \mathcal{M}(A)\to C \subset |Y| \\ 
\emptyset & \mbox{ otherwise}. \end{cases}
\]

\end{definition}

\begin{lemma}\label{LemDaggerNeigh}
Let $Y$ be a (qfd) derived Berkovich space and $C\subset |Y|$ a locally closed subspace, then $Y^{\dagger_{C}}$ is a Gelfand stack.  If $C$ is locally Zariski closed then $Y^{\dagger_{C}}$ is a (qfd) derived Berkovich space.
\end{lemma}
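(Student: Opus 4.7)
The plan is to reduce to the affinoid case by analytic locality, then separate the problem into two pieces: a ``closed case'' handled by a limit description (giving the Gelfand stack assertion), and a ``Zariski closed case'' handled by an explicit affinoid description (giving the derived Berkovich space assertion).

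Since $Y$ admits a strict affinoid cover $\{Y_i \hookrightarrow Y\}$ and both formations $C \mapsto Y^{\dagger_C}$ and ``being a Gelfand stack / derived Berkovich space'' are local on $Y$ for the analytic topology, I would first reduce to the case $Y = \GSpec(A)$ affinoid with $C \subseteq \mathcal{M}(A)$ locally closed. Writing $C = Z \cap U$ with $Z$ closed and $U$ open in $|Y|$, one checks directly from \cref{DefDaggerNeigh} that $Y^{\dagger_C} = (Y_U)^{\dagger_{Z \cap U}}$, so via the (already established) open immersion $Y_U \hookrightarrow Y$ I may further assume that $C$ is closed.

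For the first assertion, I claim
\[
Y^{\dagger_C} \;=\; \varprojlim_{V \in \mathcal{V}} Y_V,
\]
where $\mathcal{V}$ is the filtered poset of rational open neighborhoods of $C$ in $\mathcal{M}(A)$. Indeed, for any separable Gelfand ring $B$ the spectrum $\mathcal{M}(B)$ is compact Hausdorff (\cref{RemarkBasicPropertiesBerkovichSpectrum}), so a map $\mathcal{M}(B) \to \mathcal{M}(A)$ lands in the closed subset $C$ if and only if it lands in every open neighborhood of $C$; since rational opens form a basis for the topology of $\mathcal{M}(A)$, the characterization via $\mathcal{V}$ follows. Because $\mathcal{M}(A)$ is metrizable (the uniform completion $A^u$ being a separable Banach algebra, cf.\ \cref{sec:light-arc-stacks-1-cover-by-separable-totally-disconnected}), $\mathcal{V}$ admits a countable cofinal subsystem, so $Y^{\dagger_C}$ is a small filtered limit in $\Cat{GelfStk}$, hence itself a Gelfand stack.

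For the Zariski closed case, working locally on $Y$ I may assume $C = V(f_1,\ldots,f_k)$ for some $f_i \in A(\ast)$. By \cref{LemmaNilradical}, a map $\GSpec(B) \to Y$ factors through $Y^{\dagger_C}$ iff each $f_i$ maps into $\Nil^{\dagger}(B)$, iff the classifying map $\Q_p[T_1,\ldots,T_k] \to B$, $T_i \mapsto f_i$, extends to $\Q_p\langle T_1,\ldots,T_k\rangle_{\leq 0}$. Hence $Y^{\dagger_C}$ is corepresented by
\[
A^{\dagger}_C \;:=\; A \otimes_{\Q_p[T_1,\ldots,T_k]} \Q_p\langle T_1,\ldots,T_k\rangle_{\leq 0} \;=\; \varinjlim_{r>0}\; A \otimes_{\Q_p[T_1,\ldots,T_k]} \Q_p\langle T_1,\ldots,T_k\rangle_{\leq r},
\]
which is a countable filtered colimit of separable Gelfand rings, hence a separable Gelfand ring by \cref{xnbsyw,xhs892k}. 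To conclude in the qfd setting, it remains to observe that $(A^\dagger_C)^u = \varinjlim_{r>0} (A^u)_{V_r}$ for $V_r = \{x \in \mathcal{M}(A^u) : |f_i(x)| \leq r\}$, a filtered colimit of Berkovich rational localizations of $A^u$; since the qfd property is preserved by rational localizations and by such filtered colimits, $A^\dagger_C$ is qfd whenever $A$ is.

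The main point requiring care is the limit description in the closed case: one must verify that the limit taken in $\Cat{GelfStk}$ really has the stated functor of points on all separable Gelfand rings (not merely on test rings with profinite Berkovich spectrum), which is why the compactness of $\mathcal{M}(B)$ and the basis of rational opens are invoked exactly there.
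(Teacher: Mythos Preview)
Your proof is correct, and for the second claim (the Zariski closed case) it matches the paper's argument essentially verbatim: both write $Y^{\dagger_C}$ as the spectrum of $A\langle T_i\rangle_{\leq 0}/^{\mathbb{L}}(T_i - f_i)$.

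For the first claim, your approach differs from the paper's. The paper argues in one line: $Y^{\dagger_C}$ is the subpresheaf of the Gelfand stack $Y$ cut out by the condition ``$\mathcal{M}(B)\to |Y|$ lands in $C$'', and this condition depends only on the arc-stack $\Marc(B)$ (since $|Y|=|Y^\diamond|$ and $\mathcal{M}(B)=|\Marc(B)|$); because $!$-equivalences are $\infty$-connective in the arc-topos, the condition is automatically stable under $!$-equivalences, so $Y^{\dagger_C}$ inherits the Gelfand stack property from $Y$. Your route---reducing to $C$ closed and then writing $Y^{\dagger_C}=\varprojlim_{V\supset C} Y_V$ as a limit of open substacks, each a derived Berkovich space and hence a Gelfand stack---is also valid, and the limit description has some independent interest. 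Two remarks on your execution: the phrase ``rational open neighborhoods'' is nonstandard (rational domains are closed in the Berkovich topology), but your argument works with arbitrary open neighborhoods and doesn't need a special basis; and the metrizability/countability step is unnecessary, since $\Cat{GelfStk}$ is closed under all small limits in presheaves regardless of cardinality. The initial reduction to the affinoid case is also not needed for your limit argument, which works directly for any derived Berkovich space.
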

\begin{proof}
The first claim follows from the fact that $Y$ is a Gelfand stack (\Cref{PropBerkovichAreStacks}) and the fact that $!$-equivalences are $\infty$-connective in the arc-topos. For the second claim, we can assume without loss of generality that $Y=\ob{GSpec} A$ is affinoid and that $C$ is the vanishing locus of some elements $\{f_i\}_{i\in I}$. In that case $Y^{\dagger_{C}}$ is the analytic spectrum of $A\langle f_i \rangle_{\leq 0}:= A\langle T_i \rangle_{\leq 0}/^{\mathbb{L}}(T_i-f_i)$, where $A\langle T_i \rangle_{\leq 0} = \varinjlim_{J\subset I} A\langle T_{j}: \: j\in J \rangle_{\leq 0}$ and $J$ runs over finite subsets of $I$.
\end{proof}

Next, we prove some important properties of the de Rham stack of derived Berkovich spaces. 

\begin{proposition}\label{PropdeRhamBerkovich}
Let $Y$ be a derived Berkovich space. Then it satisfies the hypothesis of \cref{PropComparisonTwodeRhamStacks}. Furthermore, the following hold: 
\begin{enumerate}

\item  Let $Z\to X$ be a map of derived Berkovich spaces whose associated map $Z^{\diamond}\to X^{\diamond}$ of arc-stacks is an immersion. Then, $$Z^{\dRall/X}= X^{\dagger_{Z}}.$$ In particular, $Z^{\dRall}=(X^{\dagger_Z})^{\dRall}$ (resp. for the qfd de Rham stack if $Z\to X$ is a morphism of qfd derived Berkovich spaces).

\item Let $Y\to X$ be a map of derived Berkovich spaces. The \v{C}ech nerve of the map $Y\to Y^{\dRall/X}$ is the simplicial derived Berkovich space $ (Y^{\times_{X} \bullet+1})^{\dagger_{\Delta Y}}$ where $\Delta Y\subset |Y^{\times_{X} \bullet+1}|$ is the diagonal immersion (resp. for the qfd de Rham stack if $Y\to X$ is a morphism of qfd derived Berkovich spaces).

\item Let $\Cat{Aff}^b_{\Q_p}$ be the category of bounded affinoid stacks over $\Q_p$ (\cite[Definition 2.6.10 and 3.2.10]{camargo2024analytic}).
  Let $$\Cat{TateStk}_{\Q_p}:=\Cat{AnStk}(\Cat{Aff}^b_{\Q_p})$$ be the category of Tate stacks over $\Q_p$.\footnote{In \cite[3.2.10]{camargo2024analytic} the author considered sheaves for the $\ob{D}$-topology. Since we are working with analytic rings, the $\ob{D}$ and $!$-topology are the same and, up to taking sheafification for $!$-equivalences, both definitions of Tate stacks agree.}
  Let $F\colon \Cat{GelfStk}\to \Cat{TateStk}$ be the left Kan extension of the inclusion $\Cat{GelfStk}^{\ob{aff}}\subset \Cat{GelfStk}$ along the map $\Cat{GelfStk}^{\ob{aff}}\to  \Cat{TateStk}$.
  Then, for $X$ a derived Berkovich space,  the natural map
\[
F(X)^{\mathrm{Tate}\text{-}\dR}\to F(X^{\dRall}).
\]
is an isomorphism. Namely, for derived Berkovich spaces the big de Rham stack and the de Rham stack of \cite{camargo2024analytic}, denoted here $(-)^{\mathrm{Tate}\text{-}\dR}$,  agree.

\item Let $X$ be a qfd Berkovich space, and consider the left exact colimit preserving functor $F\colon \Cat{GelfStk}^{\qfd}\to \Cat{GelfStk}$ inducing the identity on qfd affinoid Gelfand stacks. Then, the natural map
\[
F(X^{\dR})\to X^{\dRall}
\]
is an isomorphism. In other words, the qfd and big de Rham stacks agree for $X$.

\end{enumerate} 

\end{proposition}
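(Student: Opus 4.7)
The arc-hyperdescent claim at the start of the proposition reduces to the affinoid case: any map $\GSpec(B) \to Y$ with $B$ a separable perfectoid factors, after a rational covering of $\mathcal{M}(B)$, through the affinoid pieces of $Y$, and on affinoids the hyperdescent is \cref{Lemmapqjjpqwbfoqw}.

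For (1), by \cref{PropComparisonTwodeRhamStacks} and the fact that separable nilperfectoid rings form a basis of the $!$-topology (\cref{LemBasisTopologyGelfandRings}), both sides may be compared on such a ring $A$. There $A^u = A^{\dagger-\mathrm{red}}$ and $\mathcal{M}(A) = \mathcal{M}(A^u)$, so the condition occurring in the description $Z^{\dRall/X}(A) = X(A) \times_{X^\diamond(\Marc(A^u))} Z^\diamond(\Marc(A^u))$ --- namely, that the induced map $\Marc(A^u) \to X^\diamond$ factor through the immersion $Z^\diamond \hookrightarrow X^\diamond$ --- translates exactly to the condition defining $X^{\dagger_Z}(A)$, that the map $\mathcal{M}(A) \to |X|$ factor through $|Z|$. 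Here one uses that $Z^\diamond \hookrightarrow X^\diamond$ is a monomorphism of sheaves of sets by \cref{RemDiamondBerkovichSpace}, and that on underlying topological spaces $|Z^\diamond| = |Z|$ and $|X^\diamond| = |X|$. That $X^{\dagger_Z}$ is itself a derived Berkovich space is \cref{LemDaggerNeigh}. Statement (2) then follows by iteratively applying (1) to the diagonals $\Delta Y \to Y^{\times_X n+1}$, each of which becomes a locally closed immersion on the underlying topological spaces (hence its perfectoidization is an immersion of arc-stacks).

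Parts (3) and (4) are comparisons of $!$-sheafifications. For (3), both our $X^{\dRall}$ and the Tate-analytic de Rham stack of \cite{camargo2024analytic} arise as $!$-sheafifications of the same functor $A \mapsto X(A^{\dagger-\mathrm{red}})$ on the appropriate basis (this uses \cref{PropComparisonTwodeRhamStacks} on our side), so the equivalence on affinoid Gelfand stacks propagates to all derived Berkovich spaces via the colimit-preserving $F$. For (4), on qfd separable nilperfectoid rings $A$ both $X^{\dR}$ and $X^{\dRall}$ evaluate to $X(A^u)$ --- arc-descent on qfd perfectoid rings being automatic by \cref{PropDescendableCoverFiniteDimPerfectoid} --- so the two stacks have the same qfd points. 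Combining the qfd version of the \v{C}ech presentation from (2) with the fact that $F$ is colimit-preserving and fully faithful identifies $F(X^{\dR})$ with $X^{\dRall}$.

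The main technical obstacle lies in (1), where one must carefully match the fibered categorical immersion condition on $Z^\diamond \hookrightarrow X^\diamond$ with the purely topological condition defining $X^{\dagger_Z}$; this hinges on the non-trivial observation that the diamond of a derived Berkovich space is a sheaf of sets (\cref{RemDiamondBerkovichSpace}), so that all information about $Z^\diamond \hookrightarrow X^\diamond$ can be read off from the underlying map of Berkovich spaces. Once this is established, (2) is a formal diagonal argument, and (3) and (4) are routine via the colimit-preserving functor $F$.
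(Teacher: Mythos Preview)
Your treatment of the initial arc-hyperdescent claim and of parts (1) and (2) is essentially the paper's argument; the identification in (1) hinges exactly on the observation that $Z^{\diamond}\subset X^{\diamond}$ is a subobject of sheaves of sets and that $|Z^{\diamond}|=|Z|$, and (2) is then the formal diagonal consequence.

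Parts (3) and (4), however, have a real gap. You invoke the \v{C}ech presentation from (2) as if $X\to X^{\dRall}$ (resp.\ $X\to X^{\dR}$) were already an epimorphism, so that the geometric realization of the \v{C}ech nerve recovers the de Rham stack. But (2) only identifies the \emph{\v{C}ech nerve}; for a general derived Berkovich space $X$ the map $X\to X^{\dRall}$ need not be an epimorphism, and the geometric realization then only computes its image. Similarly, your claim for (3) that ``both sides are $!$-sheafifications of the same functor, hence agree via $F$'' is not an argument: the sheafifications happen in different topoi (Gelfand vs.\ Tate), and the left Kan extension $F$ is not known to commute with them. For (4) you also assert that $F$ is fully faithful, which is not established (and is likely false on general objects).

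The paper's fix is the step you are missing: one first replaces the affinoid $X=\GSpec(A)$ by a $\dagger$-formally smooth approximation $X'=\GSpec(A')$ with $A'\to A$ an isomorphism on $\dagger$-reductions (constructed by writing $A$ as a quotient of some $\Q_p\langle\mathbb{N}[S]\rangle_{\leq r}$ and then killing the kernel overconvergently). Then $X'^{\dRall}=X^{\dRall}$ by part (1), and because $A'$ is $\dagger$-formally smooth the map $X'\to X'^{\dRall}$ \emph{is} an epimorphism (\cref{CorollaryDaggerSmoothComparison}), yielding an honest colimit presentation $X^{\dRall}=\varinjlim_{[n]}(X'^{\times n+1})^{\dagger_{\Delta X'}}$ in affinoid Gelfand stacks. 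Since $F$ preserves colimits and is the identity on affinoids, this presentation transports to $F(X^{\dRall})$ and to the Tate/qfd side, after which Kashiwara's lemma (\cite[Corollary 5.2.5]{camargo2024analytic}) matches it with $F(X)^{\mathrm{Tate}\text{-}\dR}$. Without this $\dagger$-formally smooth replacement, your comparison argument does not go through.
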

\begin{proof}
We first have to show that the restriction of $Y$ to perfectoid rings satisfies arc-hyperdescent; this follows from \cref{PropBerkovichAreStacks}.

Next, we prove (1). Let $A$ be a nilperfectoid  ring, then 
\[
Z^{\dRall/X}(A) =  X(A) \times_{X^{\diamond}(\Marc(A^u))}Z^{\diamond}(\Marc(A^u)).
\]
As $Z^{\diamond}\to X^{\diamond}$ is an immersion, $Z^{\diamond}(\Marc(A^u))\subset X^{\diamond}(\Marc(A^u))$ is a subset (see \cref{RemDiamondBerkovichSpace})  and $Z^{\dRall/X}(A) \subset X(A)$ is a full subanima. A map $\ob{GSpec }A\to X$ will factor through $Z^{\dR/X}(A)$ if and only if its restriction to the perfectoid ring $A^u$ factors through $Z$, which is equivalent to the fact that $\Marc(A^u)\to X^{\diamond}$ factors through $Z^{\diamond}$. As $Z^{\diamond}\to X^{\diamond}$ is an immersion, this is equivalent to saying that $\mathcal{M}(A)\to |X|$ factors through $|Z|$, proving that $Z^{\dRall/X}=X^{\dagger_{Z}}$ as wanted.

Now we prove (2), i.e.  we compute the \v{C}ech nerve $Z^{\bullet}$ of $Y\to Y^{\dRall/X}$, it fits in a cartesian diagram 
 \[
 \begin{tikzcd}
  Z^{\bullet} \ar[r]\ar[d] &  Y^{\times_{X} \bullet+1} \ar[d] \\  
  Y^{\dRall/X} \ar[r,"\Delta"] & (Y^{\times_X \bullet+1})^{\dR/X} 
 \end{tikzcd}
 \]
 where the bottom horizontal map is the diagonal map. By part (1) the pullback is nothing but $(Y^{\times_{X} \bullet+1})^{\dagger_{\Delta Y}}$ proving what we wanted.

For parts (3), all the variants of de Rham stacks satisfy analytic descent (this can be checked by evaluating the respective functor of points description) and so it suffices to prove the claim for $X=\GSpec(A)$ an affinoid Berkovich space. In this situation,  we can always find a surjection $A'\to A$ from a bounded algebra $A'$ that is $\dagger$-formally smooth in the sense of \cite[Definition 3.7.2]{camargo2024analytic} (e.g. by $A$ as a quotient of the algebras $\Q_p\langle \mathbb{N}[S] \rangle_{\leq r}$ for $S$ light profinite and using \cref{xsu8rf}). Let $I=\ker (A'\to A)$,  and let $A[S_i]\to I$ be a family of jointly surjective maps of $A$-modules. By taking base of $ A'[\mathbb{N}[S_i] ]\to A'$ along $A'[\mathbb{N}[S_i]] \to A'\langle \mathbb{N}[S_i] \rangle_{\leq 0}$ for all $i$ (which are $\dagger$-formally \'etale thanks to \cref{xsu8rf}), we can assume without loss of generality that $A'\to A$ is an isomorphism in $\dagger$-reductions and therefore in uniform completions. In particular, $A'$ is a separable Gelfand ring.  Let $X'=\GSpec(A')$, then the map $X\to X'$ is an equivalence in perfectoidizations, and we have that $X^{\dRall}=X^{\prime, \dRall}$. On the other hand, by \cref{CorollaryDaggerSmoothComparison} and the fact that $A'$ is $\dagger$-formally smooth we have that
\[
X^{\dRall}=\varinjlim_{[n]\in \Delta^{\op}} (X^{'\times n+1, \dagger_{\Delta {X^{\prime}}}})
\]  
is the geometric realization of the overconvergent neighbourhood of the diagonals of the \v{C}ech nerve of $X'\to \GSpec(\Q_p)$. Since the functor $F$ from Gelfand stacks to Tate stacks preserves colimits by constructions, the same presentation holds for $F(X^{\dRall})$, which then agree with $(FX^{\prime})^{\ob{Tate}\text{-}\dR}$ since $FX^{\prime}\to (FX^{\prime})^{\ob{Tate}\text{-}\dR}$ is an epimorphism of Tate stacks by \cite[Proposition 5.2.3]{camargo2024analytic} (2). Finally, Kashiwara's lemma \cite[Corollary 5.2.5]{camargo2024analytic} shows that $(FX)^{\ob{Tate}\text{-}\dR}=(FX^{\prime})^{\ob{Tate}\text{-}\dR}$ proving what we wanted.

Part (4) follows from the same argument of part (3) after replacing (locally in the analytic topology) $X$ by a $\dagger$-formally smooth derived Berkovich space.
\end{proof}

\begin{remark}\label{RemKashiwara}
One should think of \cref{PropdeRhamBerkovich}  (1) as a very general version of Kashiwara's lemma, namely, if the  subspace $Z\to X$ is (locally) Zariski closed, it recovers Kashiwara's lemma for analytic $D$-modules in the sense of  \cite[Corollary 5.2.5]{camargo2024analytic}.
\end{remark}

The following result is helpful to present de Rham stacks as quotients of Berkovich spaces by overconvergent equivalence relations.

\begin{proposition}\label{PropQuotientdROverconvergentEquiv}
Let $Y,X$ be Berkovich spaces and $Y^{\diamond}\to X^{\diamond}$ a morphism of diamonds over $\Marc(\Q_p)$ with \v{C}ech nerve $Y^{\diamond, \bullet/X^{\diamond}}\to X^{\diamond}$, let $Y^{\bullet}$ be the \v{C}ech nerve over $\GSpec(\Q_p)$.  Then the natural map $Y^{\diamond, \bullet/X^{\diamond}}\to Y^{\bullet,\diamond}$ is an immersion and we have a pullback diagram 
\begin{equation}\label{eqqw013o1wd113es1}
\begin{tikzcd}
Y^{\bullet/X^{\dRall}} \ar[r] \ar[d]  & Y^{\bullet} \ar[d] \\
(Y^{\diamond, \bullet/X^{\diamond}})^{\dRall} \ar[r] & Y^{\bullet,\dRall}
\end{tikzcd}
\end{equation}
where $Y^{\bullet/X^{\dRall}}$ is the \v{C}ech nerve of $Y\to X^{\dRall}$.  In particular, $Y^{\bullet/X^{\dRall}}$ is the overconvergent neighbourhood of $Y^{\bullet}$ along the inclusion of topological spaces  $|Y^{\diamond, \bullet/X^{\diamond}}|\subset |Y^{\bullet}|$. 
\end{proposition}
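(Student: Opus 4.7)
My plan is to use the fact that $(-)^{\diamond}$ preserves finite limits (as the pullback functor of a morphism of $\infty$-topoi, \cref{sec:totally-disc-stacks-construction-perfectoidization}) and that $(-)^{\dRall}$ preserves all limits (as its right adjoint), to reduce the whole statement to a single Cartesian square.

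First I will observe that at simplicial degree $n$ the map in question fits in a Cartesian square
\[
\begin{tikzcd}
Y^{\diamond,n+1/X^{\diamond}} \ar[r] \ar[d] & Y^{n+1,\diamond} \ar[d] \\
X^{\diamond} \ar[r, "\Delta"'] & X^{n+1,\diamond}
\end{tikzcd}
\]
in $\Cat{ArcStk}_{\Q_{p}}$, where all products are over $\Marc(\Q_{p})$. This only uses that $(-)^{\diamond}$ preserves fiber products (so that $Y^{n+1,\diamond}$ agrees with the $(n+1)$-fold product of $Y^{\diamond}$ over $\Marc(\Q_p)$) and the standard rewriting of an iterated fiber product over $X^{\diamond}$ in terms of a fiber product over $\Marc(\Q_{p})$ and the diagonal of $X^{\diamond}$. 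Since the iterated diagonal $X\to X^{n+1}$ is an immersion of derived Berkovich spaces (locally closed on affinoid charts and then glued), and $(-)^{\diamond}$ sends immersions of Berkovich spaces to immersions of arc-stacks, the bottom arrow is an immersion; hence so is its base change $Y^{\diamond,n+1/X^{\diamond}}\to Y^{n+1,\diamond}$, establishing the first claim.

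Next, applying $(-)^{\dRall}$ to the square above and using commutation with limits, I obtain
\[
(Y^{\diamond,n+1/X^{\diamond}})^{\dRall}\cong Y^{n+1,\dRall}\times_{X^{n+1,\dRall}} X^{\dRall}.
\]
On the other hand, by definition of the fiber product and again by commutation of $(-)^{\dRall}$ with products,
\[
Y^{n+1/X^{\dRall}}=Y^{n+1}\times_{(X^{\dRall})^{n+1}}X^{\dRall}=Y^{n+1}\times_{X^{n+1,\dRall}}X^{\dRall}.
\]
Composing these two Cartesian squares (the intermediate $Y^{n+1,\dRall}$ factoring cleanly through the pullback) yields the desired diagram \eqref{eqqw013o1wd113es1}.

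Finally, for the overconvergent neighbourhood description, I will evaluate the Cartesian square on a separable Gelfand ring $A$. Using the adjunction $X^{\dRall}(A)=X^{\diamond}(\Marc(A^{u}))$, a point of $Y^{n+1/X^{\dRall}}(A)$ is a $\varphi\in Y^{n+1}(A)$ whose induced map $\Marc(A^{u})\to Y^{n+1,\diamond}$ factors through the immersion $Y^{\diamond,n+1/X^{\diamond}}\hookrightarrow Y^{n+1,\diamond}$ from Step 1; by the immersion property this translates to the purely topological condition that $\mathcal{M}(A)\to |Y^{n+1}|$ land in the locally closed subspace $|Y^{\diamond,n+1/X^{\diamond}}|$, which is precisely the functor of points of $(Y^{n+1})^{\dagger_{|Y^{\diamond,n+1/X^{\diamond}}|}}$ from \cref{DefDaggerNeigh}. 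The only non-formal point in the whole argument is verifying that $|Y^{\diamond,n+1/X^{\diamond}}|\subseteq |Y^{n+1}|$ is indeed locally closed so that \cref{DefDaggerNeigh} applies, but this follows from Step 1 together with the identification of the underlying topological space of a derived Berkovich space with that of its perfectoidization.
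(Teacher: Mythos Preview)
Your proof is correct and follows essentially the same approach as the paper. Both arguments use that $(-)^{\diamond}$ and $(-)^{\dRall}$ preserve limits to obtain the pullback square, and reduce the overconvergent neighbourhood identification to the fact that factoring through an immersion of diamonds is a purely topological condition. The only difference is that the paper packages the last step as a citation of \cref{PropdeRhamBerkovich}(1), whereas you inline the argument of that proof. One small point: your formula $X^{\dRall}(A)=X^{\diamond}(\Marc(A^{u}))$ should be read as $\Hom_{\Cat{ArcStk}_{\Q_p}}(\Marc(A),X^{\diamond})$ (the actual adjunction); this agrees with $X^{\diamond}(A^u)$ only when $A^u$ is perfectoid, e.g.\ for $A$ nilperfectoid, which is all that is needed since such rings form a basis for the $!$-topology.
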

\begin{proof}
The fact that $Y^{\diamond, \bullet/X^{\diamond}}\to Y^{\bullet,\diamond}$ is an immersion is clear since the morphism $Y^{\diamond}\to X^{\diamond}$ is locally separated on the analytic topology of $Y$ and $X$.  The fact that \eqref{eqqw013o1wd113es1} if a pullback diagram is formal from the pullback diagram 
\[
\begin{tikzcd}
(Y^{\diamond, \bullet/X^{\diamond}})^{\dRall} \ar[r] \ar[d] & Y^{\bullet,\dRall} \ar[d] \\
X^{ \dRall}\ar[r] & X^{\bullet,\dRall}
\end{tikzcd}
\]where $X^{\bullet}$ is the \v{C}ech nerve of $X\to \GSpec(\Q_p)$. Finally, the identification of $Y^{\bullet/X^{\dRall}}$ with the overconvergent neighbourhood of $|Y^{\diamond, \bullet/X^{\diamond}}|\subset |Y^{\bullet}|$ follows from \cref{PropdeRhamBerkovich} (1).
\end{proof}

We finish the discussion of the de Rham stack of derived Berkovich spaces  by proving an important descent result for the de Rham stack of $\dagger$-rigid spaces (cf. \Cref{DefDaggerRigid}).

\begin{theorem}
\label{thm:prim-descendable-berkovich}
Let $K$ be a complete non-archimedean field over $\Q_p$, separable as a $\Q_p$-Banach space, and $X$ a $\dagger$-rigid space over $K$. Then the natural map $X\to X^{\dRall/K}$ is prim and descendable, and thus an epimorphism of Gelfand stacks. In particular, we have a presentation as Gelfand stack
\[
X^{\dRall/K}= \varinjlim_{[n]\in \Delta^{\op}} (X^{\times_K n+1})^{\dagger_{\Delta X}}.
\]
The same holds for $X^{\dR/K}$ if $K$ is in addition qfd. 
\end{theorem}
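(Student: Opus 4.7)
Prim-ness and descendability are local on the target, and the map $X \to X^{\dRall/K}$ induces a homeomorphism on underlying Berkovich spaces (since the big de Rham stack only sees the uniform completion). Hence I may assume $X = \GSpec(A)$ is affinoid with $A = \widetilde{A}/I$, where $\widetilde{A} = K\langle T_1, \ldots, T_n\rangle_{\leq r}$ is the coordinate ring of the overconvergent closed disc $\widetilde{X} = \overline{\DD}^{n,\leq r}_K$. Then $X \hookrightarrow \widetilde{X}$ is a Zariski closed immersion.

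Let $\widetilde{X}^{\dagger_X}$ denote the overconvergent neighbourhood of $X$ in $\widetilde{X}$. Its coordinate ring $\widetilde{A}\langle f\rangle_{\leq 0}$ is a mixed overconvergent algebra which, by \cref{sec:appr-gelf-rings-examples-of-dagger-formally-smooth-maps}, is $\dagger$-formally smooth over $K$. The closed immersion $X \hookrightarrow \widetilde{X}^{\dagger_X}$ is cut out by an ideal in the $\dagger$-nilradical, so both spaces share the same uniform completion and hence the same perfectoidization. Kashiwara's lemma (\cref{PropdeRhamBerkovich}(1)) then gives $X^{\dRall/K} = \widetilde{X}^{\dagger_X,\dRall/K}$, and \cref{CorollaryDaggerSmoothComparison} implies that $\widetilde{X}^{\dagger_X} \to X^{\dRall/K}$ is an epimorphism of Gelfand stacks. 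To transfer this to an epimorphism from $X$ itself, I test on a separable totally disconnected nilperfectoid $B$ (which form a basis of the $!$-topology by \cref{LemBasisTopologyGelfandRings}): a map $\GSpec(B) \to X^{\dRall/K}$ corresponds by \cref{prop:formula-functor-of-points-drall} to a morphism $A \to B^u$, which lifts via the $\dagger$-formal smoothness of $\widetilde{A}$ (using the surjection $B^{\leq r} \twoheadrightarrow B^{u,\leq r}$ from \cref{xjs9jw}) to $\phi\colon \widetilde{A} \to B$ with $\phi(I) \subseteq \Nil^\dagger(B)$. The quotient $B' := B/\phi(I)B$ then receives a morphism $A \to B'$, and $B \twoheadrightarrow B'$ is a $!$-cover with $\dagger$-nilpotent kernel, giving the required local lift. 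The \v{C}ech-nerve presentation $X^{\dRall/K} = \varinjlim (X^{\times_K \bullet+1})^{\dagger_{\Delta X}}$ then follows from the relative version of \cref{PropQuotientdROverconvergentEquiv}.

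The hard part will be upgrading this epimorphism to prim-ness and effective descendability with a bounded descendability index. My plan is to handle the $\dagger$-formally smooth case first: the map $\widetilde{X}^{\dagger_X} \to X^{\dRall/K}$ admits an explicit presentation as the quotient by the translation action of $(\Ga^\dagger)^n$, generalizing the picture of \cref{sec:dagg-form-smooth-example-overconvergent-cohomology-of-rigid-disc}, and both properties follow from the overconvergent infinitesimal and affine nature of $\Ga^\dagger$. These properties are then transferred to $X \to X^{\dRall/K}$ via base change along the epimorphism $\widetilde{X}^{\dagger_X} \to X^{\dRall/K}$, using that $X \hookrightarrow \widetilde{X}^{\dagger_X}$ is a proper closed immersion cut out by a $\dagger$-nilpotent ideal, which together yield the required bound on the descendability index.
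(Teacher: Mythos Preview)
Your plan has a genuine gap in both the epimorphism step and the descendability transfer.

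For the epimorphism step: the claim that $B \twoheadrightarrow B' = B/\phi(I)B$ is a $!$-cover is false. Since $B \to B'$ is a surjection, $B' \otimes_B B' = B'$, so the \v{C}ech nerve is constant and $!$-descent along $B \to B'$ would force $\ob{D}(B) \cong \ob{D}(B')$, i.e.\ $B = B'$. Quotients by nonzero $\dagger$-nilpotent ideals are never $!$-covers. So you have not produced a local lift of the given point of $X^{\dRall/K}(B)$ to $X$.

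For the descendability transfer: after base change of $f\colon X \to X^{\dRall/K}$ along the descendable $g\colon \widetilde{X}^{\dagger_X} \to X^{\dRall/K}$, you need the pulled-back map $f'\colon Y = X \times_{X^{\dRall/K}} \widetilde{X}^{\dagger_X} \to \widetilde{X}^{\dagger_X}$ to be descendable. You know the \emph{other} projection $Y \to X$ is descendable (being a pullback of $g$), and you know $i\colon X \hookrightarrow \widetilde{X}^{\dagger_X}$ is a closed immersion with $\dagger$-nilpotent kernel. But $f'$ is neither of these, and neither fact produces a section or a dominating descendable map for $f'$. The sentence ``which together yield the required bound on the descendability index'' hides the entire difficulty: the closed immersion $i$ is \emph{not} descendable, so you cannot compose, and there is no visible retract of $f'_\ast 1$ onto $1$.

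The paper's argument (via \cite[Theorem~5.4.1]{camargo2024analytic}, spelled out with explicit indices in \cref{DescentRigidSpacedRStak}) takes a completely different route: stratify $X$ by locally closed subspaces $C_i$ whose reduced structure is Berkovich smooth, handle each smooth stratum directly (where the $(\Ga^\dagger)^n$ picture really applies, giving descendability of $C_i \to C_i^\dR$ of bounded index), observe that the factorization $C_i \to X^{\dagger_{C_i}} \to C_i^\dR$ forces $X^{\dagger_{C_i}} \to C_i^\dR$ to be descendable as well, and then glue the strata via the open--closed d\'evissage of \cref{LemmaDevisageDescendable}. This excision step is precisely what replaces your attempted ``transfer from the smooth envelope'': instead of embedding $X$ into one big smooth space, one carves $X$ into smooth pieces and combines their descendability indices additively.
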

\begin{proof}
This follows from the same proof of \cite[Theorem 5.4.1]{camargo2024analytic} where the only difference is that we use \cref{sec:appr-gelf-rings-examples-of-dagger-formally-smooth-maps} for proving that if $X$ is Berkovich smooth, then $X\to X^{\dRall/K}$ is an epimorphism.
\end{proof}

\begin{remark}
Later, in \Cref{sec:cohom-smooth-maps-descendability-for-smooth-maps}, we will improve on this result in the smooth case (even in a relative situation) by proving that the morphism is even descendable, with a bound for the index of descendability in terms of the dimension.

A standard consequence of \Cref{thm:prim-descendable-berkovich} is that in the smooth case, with the notations above, the coherent cohomology of the analytic de Rham stack of $X$ relative to $K$ and the de Rham cohomology of $X$ over $K$ coincide. This will be proven in \Cref{cor:cohomology-de-rham-stack-and-de-rham-cohomology} after some convenient preparations on six functors for the analytic de Rham stack.
\end{remark}

Having explicitly described the de Rham stack for rigid spaces, let us discuss an important example that illustrates some features of the new formulation of the theory. 

\begin{example}
  \label{sec:dagg-form-smooth-example-perfected-gm}
Let $\mathbb{G}_{m,\Q_p}^{\an}$ be the analytic multiplicative group seen as a Berkovich space and let $\mathbb{G}_{m, \ob{perf},\Q_p}^{\an,u}$ be the perfectoid multiplicative group obtained as the uniform completion (on affinoids) of the ``decompleted limit'' $\mathbb{G}_{m, \ob{perf},\Q_p}^{\an}:=\varprojlim_{x\mapsto x^p} \mathbb{G}_{m,\Q_p}^{\an}$, i.e., the limit in Gelfand stacks. Then, one has that $\mathbb{G}_{m, \ob{perf},\Q_p}^{\an,u,\dR}=\mathbb{G}_{m, \ob{perf},\Q_p}^{\an,\dR}$. Furthermore, as the transition maps are finite \'etale, \cref{sec:appr-gelf-rings-examples-of-dagger-formally-smooth-maps} implies that $\mathbb{G}_{m, \ob{perf},\Q_p}^{\an}$ is $\dagger$-formally smooth, and then
\begin{equation}\label{eqPerfectoidGm}
\mathbb{G}_{m, \ob{perf},\Q_p}^{\an,u,\dR}= \mathbb{G}_{m, \ob{perf},\Q_p}^{\an}/ \mathbb{G}_{m, \ob{perf},\Q_p}^{\dagger},
\end{equation}
where $\mathbb{G}_{m, \ob{perf},\Q_p}^{\dagger}\subset \mathbb{G}_{m, \ob{perf},\Q_p}$ is the overconvergent neighbourhood at $1$.
Notice that 
\[
\Q_p\langle  T-1\rangle_{\leq 0}\to \varinjlim_{n}{\Q_p}\langle T^{1/p^n}-1\rangle_{\leq 0}
\]
is actually an isomorphism as $T^{1/p^n}= (1+ (T-1))^{1/p^n}=\sum_{k} \binom{1/p^n}{k} (T-1)^k$ converges in $\Q_p\langle  T-1\rangle_{\leq 0}$ since $|T-1|=0$.
In particular,  $\mathbb{G}_{m, \ob{perf},\Q_p}^{\dagger}\cong\mathbb{G}_{m,\Q_p}^{\dagger}$.
The equation \cref{eqPerfectoidGm} also implies that the  compactly supported  Rham cohomology of $\mathbb{G}_{m, \ob{perf},\Q_p}^{\an,u}$, defined as the $!$-pushforward to $\GSpec(\Q_p)$, is the colimit of the compactly supported Rham cohomologies of $\mathbb{G}_{m,\Q_p}$ along the maps $x\mapsto x^p$ (see \cref{sec:d-modules-quasi} for more general results).
 These maps induce equivalences of de Rham cohomologies, so we deduce that 
\[
R\Gamma_{c,\dR}(\mathbb{G}_{m, \ob{perf},\Q_p}^{\an,u})= R\Gamma_{c,\dR}(\mathbb{G}_{m,\Q_p}^{\an}) = \Q_p[-2] \oplus \Q_p \frac{dT}{T} [-1].
\]
Hence, even though the perfectoid space $\mathbb{G}_{m, \ob{perf},\Q_p}^{\an,u,\dR}$ has no differentials and a priori there is no reasonable notion of \textit{algebraic} $D$-modules over it, there is a well defined notion of de Rham cohomology and \textit{analytic $D$-modules} that can be computed by a suitable approximation  along smooth rigid spaces.
\end{example}

\subsection{Arc-descent for the analytic de Rham stack}
\label{sec:the-analytic-dR-stack}
\label{sec:furth-results-analyt}

In this section, we prove a descent result, \Cref{sec:furth-results-analyt-1-surjections-on-de-rham-stack},  for the analytic de Rham stack of qfd Gelfand stacks, which is a preliminary step towards the main descent result established later (\Cref{TheoMaindeRham2}).

 Proving \Cref{sec:furth-results-analyt-1-surjections-on-de-rham-stack} will require some further preliminaries. 
  To motivate this technical discussion, as well as to explain the qfd assumption required in it, let us first informally sketch the idea.
 If $f\colon Y\to X$ is an epimorphism of light arc-stacks, we would like $Y^{\dRall} \to X^{\dRall}$ to be an epimorphism of Gelfand stacks.
 In other words, if $A$ is a separable Gelfand ring with a map $g\colon \GSpec(A)\to X^{\dRall}$, we want to see that there is a $!$-cover $\GSpec(A^\prime) \to \GSpec(A)$ and a lift $\GSpec(A^\prime) \to Y^{\dRall}$.
 By the definition of the analytic de Rham stacks, this means starting with a map $\Marc(A^u) \to X$ and trying to find a $!$-cover $\GSpec (A^\prime) \to \GSpec (A)$ and a lift $\Marc (A^{\prime,u}) \to Y$.
 Since $f$ is an epimorphism of arc-stacks, we can certainly find an arc-cover $A^u \to B$ of separable perfectoid rings with a lift $\Marc(B) \to Y$.
 The difficulty is to ensure that this arc-cover can be chosen as the uniform completion of a $!$-cover of $A$.
 If $A$ is furthermore assumed to be strictly totally disconnected nilperfectoid, we show in below that this can be achieved: roughly one writes the given arc-cover as the uniform completion of a  countable filtered colimit of rational localizations $A_i$ of Tate algebras over $A$ which are arc-covers after passing to the uniform completion.
 Since $A$ is strictly totally disconnected, the arc-cover $A^u \to A_i^u$ splits; since $A$ is nilperfectoid and $A\to A_i$ $\dagger$-formally smooth, the splitting lifts to a splitting of $A \to A_i$, yielding in the colimit descendability of $A \to A^\prime:=\varinjlim A_i$.

Unfortunately, although separable nilperfectoid rings form a basis for the $!$-topology on separable Gelfand rings, we cannot a priori reduce to the case where $A$ is strictly totally disconnected. We know however by \Cref{LemBasisTopologyGelfandRings} that any separable qfd Gelfand ring admits a $!$-cover by a strictly totally disconnected nilperfectoid ring. Hence we can make the above argument work if we restrict our framework to  qfd arc-stacks.

\begin{remark}
  \label{sec:perf-analyt-de-3-comparison-to-real-case}
 In the archimedean case, the perfectoidization functor $(-)^\diamond\colon \Cat{GelfStk}^{\qfd}\to \Cat{ArcStk}_{\Q_p}^{\qfd}$ is the analog  of the functor $\psi^\ast$ from \cite[Section V.3.1]{ScholzeRLL} (replacing light arc-stacks by condensed anima).
 This supports the viewpoint that the real analog of a perfectoid Tate ring is the ring of continuous functions on a compact Hausdorff space.
 We note that in the notation of \textit{loc. cit.}  $\psi^*$ admits a right adjoint  $\psi_\ast=\pi^\ast$ commuting with colimits (the second being the  analog of $(-)^\dR$).
 We note furthermore that analytic de Rham  stacks over $\Q_p$ are much richer than the Betti stacks of the underlying topological spaces (in contrast to analytic Riemann--Hilbert, \cite[Theorem II.3.1]{ScholzeRLL}).
 The basic reason lies in the fact that the class of algebraically closed Banach fields over $\Q_p$ is much richer than that over $\R$.
 In particular, the totally disconnected rings $A$ of \cite[Definition V.2.6]{ScholzeRLL} are (up to working with separable Banach algebras) qfd over $\mathbb{C}$ in the sense that $\Marc(A)\to  \Marc(\mathbb{C})$ is quasi-pro-\'etale (being only a map of profinite sets).
\end{remark}

Our first intermediate goal is to prove \Cref{corollary-smooth-cover} below. The statements used in its proof are inspired by analogous arguments in the context of perfectoid spaces (e.g.\ \cite[Proof of Proposition 14.6]{scholze_etale_cohomology_of_diamonds}). 

\begin{lemma}\label{xsh29k}
Let $A\to B$ be a morphism of separable Gelfand rings. Then there is a sequential diagram of morphisms of separable Gelfand $A$-algebras $B_n\to B$ over $A$ with $B_n$ given by rational localizations of affine spaces over $A$ such that  $\varinjlim_{n} B_n$ is $\dagger$-formally smooth over $A$ and
\[
(\varinjlim_{n} B_n)^u\to  B^u
\]
is an isomorphism. In particular, the map of Berkovich spectra
\[
\mathcal{M}(B)\to \varprojlim_{n}\mathcal{M}(B_n)
\]
is an equivalence. 
\end{lemma}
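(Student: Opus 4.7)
The plan is to exploit the separability of $B^u$ as a Banach $A^u$-algebra to pick countably many generators, approximate them in $B$, and realize the $B_n$ as Berkovich rational localizations that progressively cut out $\mathcal{M}(B^u)$ inside an infinite polydisk over $A^u$.

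\emph{Setup.} Since $B^u$ is a separable Banach $A^u$-algebra, the quotient $B^{u,\leq 1}/p$ is countable and admits countably many $A^{u,\leq 1}/p$-algebra generators. Using the density of $B\to B^u$ (from \cref{xks0jw} and \cref{xsu8rf}(2)) together with the pullback square of \cref{xjs9jw}, we choose lifts $\tilde x_i\in B^{\leq 1}$, $i\in\N$, whose images in $B^{u,\leq 1}/p$ generate it over $A^{u,\leq 1}/p$. By Nakayama's lemma the induced continuous $A^u$-algebra morphism
\[
\psi^u\colon A^u\langle T_i:i\in\N\rangle\to B^u,\qquad T_i\mapsto \tilde x_i,
\]
is surjective, with closed kernel $K$. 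Fix a countable dense family $(k_j)_{j\geq 1}\subset K$ with $\|k_j\|\leq 1$; by density of $\varinjlim_n A\langle T_1,\ldots,T_n\rangle_{\leq 1}$ inside $A^u\langle T_i\rangle$, pick approximations $\hat k_j\in A\langle T_1,\ldots,T_{N_j}\rangle_{\leq 1}$ whose images in $B^u$ lie in $B^{u,\leq p^{-j}}$.

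\emph{Construction of the $B_n$.} Set $k_n:=\max(n,N_1,\ldots,N_n)$ and define
\[
B_n:=A\langle T_1,\ldots,T_{k_n}\rangle_{\leq 1}\bigl\langle p^n\hat k_j:j\leq n\bigr\rangle_{\leq 1},
\]
a Berkovich rational localization of the closed polydisk $A\langle T_1,\ldots,T_{k_n}\rangle_{\leq 1}\subset \A^{k_n,\mathrm{an}}_A$. The $A$-algebra map $T_i\mapsto \tilde x_i$ extends to a morphism $B_n\to B$: indeed, for $j\leq n$ the element $p^n\hat k_j$ maps to $B^{u,\leq p^{n-j}}\subseteq B^{u,\leq 1}$, and hence to $B^{\leq 1}$ by the pullback square \cref{xjs9jw}. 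The natural maps $B_n\to B_{n+1}$ (adding further variables and tightening the rational conditions) define the required sequential diagram. By \cref{LemmaFilteredColimits}, $\varprojlim_n\mathcal{M}(B_n)$ identifies with $\varprojlim_n\{|\hat k_j|\leq p^{-n}:j\leq n\}$ inside $\mathcal{M}(A^u\langle T_i\rangle)$, which cuts out exactly $V(K)=\mathcal{M}(B^u)=\mathcal{M}(B)$; equivalently, $(\varinjlim_n B_n)^u\to B^u$ is surjective modulo $p$ by the Nakayama step, and injective by uniformity, so it is an isomorphism of Banach $A^u$-algebras.

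\emph{$\dagger$-formal smoothness.} Let $\tilde A:=\varinjlim_n A\langle T_1,\ldots,T_n\rangle_{\leq 1}$, a Gelfand $A$-algebra with uniform completion $A^u\langle T_i:i\in\N\rangle$. By cofinality, $\varinjlim_n B_n$ is also the filtered colimit, indexed by finite subsets $F$ of $\{(n,j):j\leq n\}$, of the rational localizations $\tilde A\langle p^n\hat k_j:(n,j)\in F\rangle_{\leq 1}$ of $\tilde A$. The transitions are Berkovich rational localizations, hence $\dagger$-formally étale by \cref{sec:appr-gelf-rings-examples-of-dagger-formally-smooth-maps}(2); so \cref{sec:appr-gelf-rings-examples-of-dagger-formally-smooth-maps}(4) applied in $\Cat{GelfStk}$ yields that $\GSpec(\varinjlim_n B_n)\to \GSpec(\tilde A)$ is $\dagger$-formally étale. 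It remains to check that $A\to \tilde A$ is $\dagger$-formally smooth; for any separable nilperfectoid $C$, the functor-of-points check reduces to the surjectivity of $(C^{\leq 1})^{\N}\to (C^{u,\leq 1})^{\N}$, which follows from the pullback square \cref{xjs9jw} together with the surjection $C\twoheadrightarrow C^u=C^{\dagger-\mathrm{red}}$ and stability of epimorphisms of anima under countable products. Composing, $\varinjlim_n B_n$ is $\dagger$-formally smooth over $A$. The main technical obstacle is to balance the $p^{-j}$ approximation rate of the $\hat k_j$ against the $p^n$-scaling in the rational localizations, so that the uniform completion of $\varinjlim_n B_n$ lands exactly on $B^u$ while every $B_n\to B$ is well-defined.
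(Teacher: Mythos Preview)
Your overall strategy matches the paper's: surject an infinite polydisc $C=\varinjlim_n A\langle T_1,\dots,T_n\rangle_{\leq 1}$ onto $B^u$, then approximate the Zariski closed image $\mathcal M(B)\hookrightarrow\mathcal M(C)$ by a countable tower of rational localizations. The $\dagger$-formal smoothness argument is fine. But the construction of $B_n$ does not work as written.

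First, the scaling in $B_n$ is backwards. The localization $\bigl\langle p^n\hat k_j\bigr\rangle_{\leq 1}$ imposes $|p^n\hat k_j|\le 1$, i.e.\ $|\hat k_j|\le p^{\,n}$, which is vacuous because $\hat k_j$ already has norm $\le 1$. With this, each $B_n$ is literally the polydisc $A\langle T_1,\dots,T_{k_n}\rangle_{\leq 1}$, and $(\varinjlim_n B_n)^u=A^u\langle T_i\rangle\neq B^u$ in general. (Your check ``$p^n\hat k_j\in B^{u,\leq p^{n-j}}\subseteq B^{u,\leq 1}$'' is also wrong: for $j<n$ one has $p^{n-j}>1$, so the stated inclusion fails.)

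Second, if you intended $|\hat k_j|\le p^{-n}$ (i.e.\ $\langle \hat k_j/p^n\rangle_{\leq 1}$), then the map $B_n\to B$ need not exist: you only arranged $\hat k_j\in B^{u,\leq p^{-j}}$, so for $j<n$ there is no reason $\hat k_j\in B^{\leq p^{-n}}$. Relatedly, even after a consistent rescaling, the locus $\{|\hat k_j|=0:j\in\N\}$ need not equal $V(K)$: on $V(K)$ one gets only $|\hat k_j|\le\|\hat k_j-k_j\|$, not $0$, and conversely one cannot pass from $|k_j|_x\le p^{-j}$ to $|k|_x=0$ for general $k\in K$ without arranging that each dense generator is approximated infinitely often with increasing precision. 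Your ``injective by uniformity'' step hides exactly this missing identification of Berkovich spectra.

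The paper avoids these bookkeeping pitfalls by not choosing kernel generators at all: it takes $Z_n=V\bigl(\ker(\widetilde B_n\to B^u)\bigr)\subset\mathcal M(\widetilde B_n)$, notes $\mathcal M(B)=\varprojlim_n Z_n$ (via \cref{x5134hs}), and then writes each closed $Z_n$ as a countable intersection of rational domains $\widetilde B_{n,m}$. This automatically ensures the $B_{n,m}\to B$ exist and that the limit of spectra is $\mathcal M(B)$. If you want to salvage your generator approach, you should impose $|\hat k_j|\le p^{-j}$ (not $p^{-n}$), choose $\hat k_j$ with $\|\hat k_j-k_j\|\le p^{-j}$ in $A^u\langle T_i\rangle$, and enumerate the dense family so that each $k_m'$ appears as $k_j$ for infinitely many $j\to\infty$; then both inclusions $\{|\hat k_j|\le p^{-j}\}\leftrightarrow V(K)$ go through.
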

\begin{proof}
Let $B^u$ be the uniform completion of $B$. Since $B$ is separable, $B^u$ is a separable Banach space and has a countable basis over $\mathbb{Q}_p$. By \cref{xsu8rf} we can then take elements $\{e_{n}\}_{n\in \mathbb{N}}$ in $B$ mapping to a Banach basis in $B^{u,\leq 1}$. Therefore, the elements $e_n$ give rise to a map of Gelfand rings 
\[
C:=\varinjlim_{n} A\langle T_1,\ldots, T_n \rangle_{\leq 1}\to B
\]
sending $T_i\mapsto e_i$. It is an epimorphism after uniform completions. Thus, we have by \Cref{x5134hs}  a Zariski closed embedding of Berkovich spectra $\mathcal{M}(B)\hookrightarrow \mathcal{M}(C)$ given by the vanishing locus of the ideal $I=\ob{fib}(C\to B^u)$. Set $\widetilde{B}_n= A\langle T_1,\ldots, T_n \rangle_{\leq 1}$ so that we have that $\mathcal{M}(C)=\varprojlim_{n} \mathcal{M}(\widetilde{B}_n)$. The closed subspace $\mathcal{M}(B)\subset \mathcal{M}(C)$ is then a limit of the Zariski closed subspaces $Z_n \subset \mathcal{M}(\widetilde{B}_n)$ given by the vanishing locus of the kernel of $\widetilde{B}_n\to C\to B^u$. Then, we can write each $Z_n$ as a countable filtered colimit of rational localizations $\widetilde{B}_{n,m}$ of $\widetilde{B}_n$. In total, after modifying the indexes, we have a filtered family of algebras $\{B_{n,m}\}_{n,m\in \mathbb{N}}$ where each $B_{n,m}$ is a rational localization of the overconvergent Tate algebra $\widetilde{B}_{n}$. We claim that the system $\{B_{n,m}\}_m$ satisfies the properties of the lemma. First, the map $C\to B$ factors through   $C\to \varinjlim_{n,m} B_{n,m}\to B$  and so  $g:\varinjlim_{n} B_{n,m}\to B$ induces an epimorphism at the level of uniform completions. Since $\varprojlim_{n,m} \mathcal{M}(B_{n,m}) = \varprojlim_n \mathcal{M}(Z_n) =\mathcal{M}(B)$, \cref{x5134hs} implies that $g$ is an equivalence after uniform completions. By construction each $B_{n,m}$ is a rational localization of an overconvergent Tate algebra, so it remains to see that its colimit is $\dagger$-formally smooth. But $\varinjlim_{n} B_{n,m}$ is also a colimit of $C$ along rational localizations, hence a colimit along $\dagger$-formally \'etale maps. Since $C$ is $\dagger$-formally smooth (\cref{sec:appr-gelf-rings-examples-of-dagger-formally-smooth-maps}) the claim follows from \cref{sec:appr-gelf-rings-examples-of-dagger-formally-smooth-maps}.
\end{proof}

The following lemma is a decompleted version of \cite[Lemma 4.9]{scholze2024berkovichmotives}.

\begin{lemma}\label{xs2ifm}
Let $A$ be a separable strictly totally disconnected nilperfectoid ring and let $f:A\to B$ be a $\dagger$-formally smooth map where $B$ is a finite product of Berkovich rational localizations of an affine space over $A$. If $f$ is an arc-cover then there is a morphism of separable Gelfand rings $s:B\to A$, which is a section of $f$. 
\end{lemma}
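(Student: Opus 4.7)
The strategy is in two steps: first produce a section of $f$ after passing to uniform completions, using that $\mathcal{M}(A^u)$ is profinite with algebraically closed residue fields; then decomplete this section via the $\dagger$-formal smoothness of $f$, applied with $A$ itself as test ring.

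For the first step I would pass to uniform completions. Since $A$ is nilperfectoid, \cref{xsu8rf} gives $A^{\dagger\text{-}\mathrm{red}}=A^u$, and by hypothesis $A^u$ is then a separable strictly totally disconnected perfectoid Tate algebra over $\Q_p$. Uniform completion commutes with overconvergent Tate algebras and with Berkovich rational localizations (cf.~\cref{sec:adic-berk-spectra-exam-uniform-completion},~\cref{LemmaRationalLocalizations}), so $f^u\colon A^u\to B^u$ is a Berkovich smooth morphism of affinoid Banach $A^u$-algebras; its induced map on Berkovich spectra $\mathcal{M}(B^u)=\mathcal{M}(B)\to \mathcal{M}(A)=\mathcal{M}(A^u)$ is surjective by the arc-cover hypothesis on $f$. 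Since $\mathcal{M}(A^u)$ is profinite with algebraically closed residue fields, \cite[Lemma 4.9]{scholze2024berkovichmotives} yields a section $\sigma\colon B^u\to A^u$ of $f^u$.

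For the second step I would apply $\dagger$-formal smoothness of $f$ with the test ring $C:=A$, which is a separable nilperfectoid Gelfand ring by hypothesis. Unwinding \cref{sec:appr-gelf-rings-dagger-formally-smooth}, the natural map of anima
\[
  \Hom_{\Cat{GelfRing}}(B,A)\ \longrightarrow\ \Hom_{\Cat{GelfRing}}(A,A)\ \times_{\Hom_{\Q_p}(A,A^u)}\ \Hom_{\Q_p}(B,A^u)
\]
is an epimorphism, where the first projection is post-composition with $A\to A^u$ and the second is pre-composition with $f$. The pair $(\mathrm{id}_A,\tau)$, with $\tau\colon B\to B^u\xrightarrow{\sigma} A^u$, lies in the target because $\sigma\circ f^u=\mathrm{id}_{A^u}$ forces $\tau\circ f=(A\to A^u)$. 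A preimage $s\colon B\to A$ is then a morphism of Gelfand rings satisfying $s\circ f=\mathrm{id}_A$, i.e.\ the desired section. Separability of $s$ as a morphism of separable Gelfand rings is automatic since its target $A$ is separable.

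The principal obstacle is the perfectoid-level input \cite[Lemma 4.9]{scholze2024berkovichmotives}; once that section of $f^u$ is produced, the decompletion from $\sigma$ to $s$ is essentially formal, the crux being that the hypothesis ``$A$ nilperfectoid'' allows $A$ itself to serve as an allowable test ring in the definition of $\dagger$-formal smoothness of $f$.
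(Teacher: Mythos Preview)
Your proposal is correct and follows essentially the same approach as the paper: first obtain a section $\sigma\colon B^u\to A^u$ of $f^u$ by invoking \cite[Lemma 4.9]{scholze2024berkovichmotives} on the strictly totally disconnected perfectoid $A^u$, then use that $A$ is nilperfectoid (so $A\to A^u$ is a $\dagger$-thickening) together with the $\dagger$-formal smoothness of $f$ to lift $\sigma$ to a section $s\colon B\to A$. Your unwinding of the lifting step via the fiber-product description in \cref{sec:appr-gelf-rings-dagger-formally-smooth} is more explicit than the paper's single sentence, but the content is the same.
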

\begin{proof}
Consider the map of uniform completions $f^u:A^u\to B^u$. Then $B^u$ is the underlying ring of an adic rational localization of an affine space over $A^u$. Since $A^u$ is a strictly totally disconnected perfectoid ring, \cite[Lemma 4.9]{scholze2024berkovichmotives} gives a section $\overline{s}:B^u\to A^u$. Then, we have a commutative square
\[
\begin{tikzcd}
 B \ar[r]  & B^u \ar[d, shift left = 1.5 ex, "\overline{s}"] \\ 
 A  \ar[r]  \ar[u,"f"]& A^u \ar[u, shift left =1.5ex, "f^u"]
\end{tikzcd}
\]
But the map $A\to A^u$ is a $\dagger$-\textit{thickening}, namely,  as $A$ is nilperfectoid, we have a short exact sequence $\Nil^\dagger(A)\to A\to A^{u}$. Since $f$ is $\dagger$-formally smooth, we can lift $\overline{s}$   to a section $s:B\to A$ as wanted.
\end{proof}

\begin{lemma}\label{xvsa9w}
Let $A$ be a separable strictly totally disconnected nilperfectoid ring and let $B=\varinjlim_{n} B_n$ be a  sequential colimit of Gelfand $A$-algebras satisfying the properties of \cref{xsh29k}. Suppose that $A^u\to B^u$ is an arc-cover, then $A\to B$ is $\leq 2$-descendable. 
\end{lemma}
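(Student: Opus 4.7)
The plan is to reduce to \Cref{xs2ifm} by first showing that each of the intermediate maps $A \to B_n$ is already an arc-cover, then extracting a ring section from each, and finally applying the standard descendability bound for filtered colimits.

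The first step is to verify that each $A\to B_n$ is an arc-cover, not just the colimit. Since $A$ is strictly totally disconnected, $\mathcal{M}(A)$ is a compact Hausdorff (in fact profinite) space. By \Cref{xsh29k}, the natural map $\mathcal{M}(B)\to \varprojlim_n \mathcal{M}(B_n)$ is an equivalence, and by hypothesis the composite $\mathcal{M}(B)\to \mathcal{M}(A)$ surjects. Let $Z_n\subseteq \mathcal{M}(A)$ be the image of $\mathcal{M}(B_n)$; since the structure map $B_n\to A$ is of Gelfand rings, $Z_n$ is closed. The transition maps $B_n\to B_{n+1}$ give $\mathcal{M}(B_{n+1})\to \mathcal{M}(B_n)$, so $\{Z_n\}$ is a decreasing sequence of closed subsets, and
\[
  \bigcap_n Z_n \;=\; \mathrm{image}\bigl(\mathcal{M}(B)\to \mathcal{M}(A)\bigr) \;=\; \mathcal{M}(A),
\]
which forces $Z_n=\mathcal{M}(A)$ for every $n$. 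Hence each $A\to B_n$ is an arc-cover.

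Next, apply \Cref{xs2ifm} to each $A\to B_n$. By \Cref{xsh29k} each $B_n$ is a rational localization of an overconvergent Tate algebra over $A$, hence standard Berkovich smooth in the sense of \Cref{x2edaj3}, and therefore $\dagger$-formally smooth by \Cref{sec:appr-gelf-rings-examples-of-dagger-formally-smooth-maps}. Combined with the previous step and the nilperfectoid, strictly totally disconnected hypothesis on $A$, \Cref{xs2ifm} yields a section $s_n\colon B_n\to A$ of $A\to B_n$ in separable Gelfand rings. Any such ring section is automatically $A$-linear (since $s_n(f_n(a)\cdot b) = a\cdot s_n(b)$), so $A$ is an $A$-module retract of $B_n$ and the map $A\to B_n$ is descendable of index $\leq 1$.

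Finally, by \cite[Proposition 2.7.2]{mann2022p}, a countable filtered colimit of maps of descendability index $\leq k$ has descendability index at most $k+1$. Applied to the sequential colimit $B=\varinjlim_n B_n$, this gives that $A\to B$ is $\leq 2$-descendable, as desired. The substantive step in the plan is the first one: it is crucial that $\mathcal{M}(A)$ be compact Hausdorff so that a decreasing sequence of closed subsets with total-space intersection is pointwise eventually total. Without this — that is, without the strict total disconnectedness assumption — one could not produce arc-covers at any finite stage, and \Cref{xs2ifm} would not apply.
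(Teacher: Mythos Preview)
Your proof is correct and matches the paper's approach exactly; the paper simply omits your first step as obvious (and indeed, once $\mathcal{M}(B)\to\mathcal{M}(A)$ is surjective and factors through each $\mathcal{M}(B_n)$, surjectivity of $\mathcal{M}(B_n)\to\mathcal{M}(A)$ is immediate---no intersection argument or compactness is needed). One correction to your closing commentary: the strictly totally disconnected hypothesis is not what makes the arc-cover step work; it enters only inside \Cref{xs2ifm}, where one needs it to produce the section $B_n^u\to A^u$ via \cite[Lemma 4.9]{scholze2024berkovichmotives}, and the nilperfectoid hypothesis then allows lifting this section through the $\dagger$-thickening $A\to A^u=A^{\dagger\text{-}\mathrm{red}}$ using $\dagger$-formal smoothness of $A\to B_n$.
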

\begin{proof}
By assumption each $B_n$ is a finite product of rational localizations on affine spaces over $A$. By \cref{xs2ifm} the map $A\to B_n$ has a section and so it is descendable of index $\leq 1$. Then \cite[Proposition 2.7.2]{mann2022p} implies that the colimit is descendable of index $\leq 2$. 
\end{proof}

The following statement follows directly from the results just proved. 
\begin{proposition}
\label{corollary-smooth-cover}
Let $A\to B$ be a morphism of separable Gelfand rings, which induces an arc-cover on uniform completions. Assume that $A$ is strictly totally disconnected nilperfectoid. One can find a separable Gelfand $A$-algebra $A^\prime$ with $A^{\prime, u} \cong B^u$ such that $A \to A^\prime$ is $\dagger$-formally smooth and descendable.
\end{proposition}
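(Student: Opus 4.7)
The plan is to assemble the proof from the three lemmas immediately preceding the statement, in order: apply \cref{xsh29k} to construct a candidate $A^\prime$, then verify the three required properties (uniform completion, $\dagger$-formal smoothness, descendability).

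First, I would apply \cref{xsh29k} to the given morphism $A \to B$ of separable Gelfand rings. This produces a sequential diagram $\{B_n\}_{n\in \N}$ of separable Gelfand $A$-algebras, where each $B_n$ is a rational localization of an affine space over $A$, such that $A^\prime := \varinjlim_n B_n$ is $\dagger$-formally smooth over $A$ and the natural map $A^{\prime, u} \to B^u$ is an isomorphism. This takes care of the isomorphism $A^{\prime,u} \cong B^u$ and the $\dagger$-formal smoothness over $A$ automatically.

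Next I would check separability of $A^\prime$. Since $A^{\prime,u} \cong B^u$ and $B$ is assumed separable (so $B^u$ is a separable Banach $\Q_p$-algebra), $A^\prime$ is separable as a Gelfand ring by definition (\cref{xhs9wj}).

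The remaining point is descendability of $A \to A^\prime$. Here we use the assumption that $A$ is strictly totally disconnected nilperfectoid together with the fact that $A^u \to B^u \cong A^{\prime,u}$ is an arc-cover (by hypothesis). Since $A^\prime = \varinjlim_n B_n$ is a sequential colimit as in \cref{xsh29k}, this is exactly the hypothesis of \cref{xvsa9w}, which then gives that $A \to A^\prime$ is $\leq 2$-descendable, in particular descendable. This concludes the proof.

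There is no real obstacle here: the proposition is a clean packaging of \cref{xsh29k}, \cref{xs2ifm}, and \cref{xvsa9w}, with \cref{xs2ifm} being the conceptual heart (it is what uses the strictly totally disconnected nilperfectoid hypothesis to lift the section from the perfectoid level, through the $\dagger$-formally smooth morphism $A \to B_n$). The only minor verification is the separability of $A^\prime$, which as noted is automatic since its uniform completion agrees with the separable Banach algebra $B^u$.
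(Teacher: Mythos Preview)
Your proof is correct and matches the paper's approach exactly. The paper in fact provides no separate proof for this proposition, stating only that it ``follows directly from the results just proved''; you have supplied precisely the intended assembly of \cref{xsh29k} and \cref{xvsa9w} (with \cref{xs2ifm} underlying the latter), together with the easy observation on separability.
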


\begin{remark}
  \label{sec:arc-descent-analytic-case-that-a-is-qp}
  We note that \cref{corollary-smooth-cover} holds true if $A=\Q_p$ and $B$ a separable Gelfand ring for easier reasons: by \cref{xsh29k} there exists some $\dagger$-formally smooth $\Q_p$-algebra $A'$ such that $(A')^u=B^u$.
  If $A^u\to B^u$ is an arc-cover, then $B^u\neq 0$, which implies that $A=\Q_p\to B\to B^u$ is descendable (as it admits a section).
  This in turn implies that $A\to A'$ is descendable as well. 
\end{remark}

After the previous preparations we come to our descent result. 

\begin{theorem}
  \label{sec:furth-results-analyt-1-surjections-on-de-rham-stack}
Let $Y\to X$ be an epimorphism of qfd arc-stacks.
 Then the map $Y^{\dR}\to X^{\dR}$ of qfd Gelfand stacks is an epimorphism.  
\end{theorem}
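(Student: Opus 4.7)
Since $\Cat{GelfStk}^{\qfd}$ is an $\infty$-topos, to show $Y^{\dR}\to X^{\dR}$ is an epimorphism it suffices to prove the following: for any separable qfd Gelfand ring $A$ and any map $\GSpec(A)\to X^{\dR}$, there exists a $!$-cover $A\to A'$ in $\Cat{GelfRing}_{\omega_1}^{\qfd}$ together with a lift $\GSpec(A')\to Y^{\dR}$ of the composition $\GSpec(A')\to \GSpec(A)\to X^{\dR}$. By adjunction (\cref{DefAndeRham}), this amounts to: given a map $\Marc(A)\to X$ in $\Cat{ArcStk}^{\qfd}_{\Q_p}$, find a $!$-cover $A\to A'$ in $\Cat{GelfRing}_{\omega_1}^{\qfd}$ and a map $\Marc(A')\to Y$ compatibly lifting it.

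\textbf{Step 1: Reduction to $A$ strictly totally disconnected nilperfectoid.} By \cref{LemBasisTopologyGelfandRings}(5) (combining parts (3) and (4)), we can find a descendable map $A\to \widetilde{A}$ of separable qfd Gelfand rings with $\widetilde{A}$ strictly totally disconnected nilperfectoid, such that $\Marc(\widetilde{A})\to \Marc(A)$ is quasi-pro-\'etale. Since $!$-covers are stable under composition, it is enough to prove the existence of a lift after replacing $A$ by $\widetilde{A}$. We may therefore assume from now on that $A$ itself is separable qfd strictly totally disconnected nilperfectoid, so that in particular $A^u$ is perfectoid with $\Marc(A)=\Marc(A^u)$.

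\textbf{Step 2: Producing an arc-cover with a lift to $Y$.} Since $Y\to X$ is an epimorphism in $\Cat{ArcStk}^{\qfd}_{\Q_p}$ and separable qfd strictly totally disconnected perfectoid rings form a basis for the arc-topology on $\Cat{AffPerfd}^{\qfd}_{\Q_p,\omega_1}$ (using \cref{RemQProetEt}, \cref{sec:light-arc-stacks-1-cover-by-separable-totally-disconnected}(2), and the fact that any map of strictly totally disconnected perfectoids is quasi-pro-\'etale, which preserves the qfd property), we can find a separable qfd strictly totally disconnected perfectoid ring $B$ together with an arc-cover $A^u\to B$ and a lift $\Marc(B)\to Y$ of the map $\Marc(A^u)\to X$.

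\textbf{Step 3: Lifting the perfectoid cover to a Gelfand cover.} This is where the strictly totally disconnected nilperfectoid hypothesis on $A$ obtained in Step 1 pays off. Apply \cref{corollary-smooth-cover} to the arc-cover $A\to B$ (of separable Gelfand rings, where we regard $B$ as a Gelfand ring via its perfectoid structure): we obtain a separable Gelfand $A$-algebra $A'$ with $A^{\prime u}\cong B$ such that $A\to A'$ is $\dagger$-formally smooth and descendable. The ring $A'$ is qfd because its uniform completion $B$ is qfd. The map $A\to A'$ is descendable, hence a $!$-cover in $\Cat{GelfRing}^{\qfd}_{\omega_1}$. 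Finally, the map
\[
\Marc(A')=\Marc(A^{\prime u})=\Marc(B)\longrightarrow Y
\]
supplied by Step 2 gives, via adjunction, the desired lift $\GSpec(A')\to Y^{\dR}$. This completes the argument.

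\textbf{Where the qfd assumption is crucial.} The main obstacle is Step 1: without the qfd hypothesis on $A$, we cannot in general find a descendable cover by a strictly totally disconnected ring (\cref{sec:-covers-perfectoids-1-counter-example-to-cover-by-strictly-totally-disconnected}), which is what makes Step 3 (via \cref{corollary-smooth-cover}, ultimately relying on \cref{xs2ifm}) work by converting the splitting of the arc-cover $A^u\to B$ into a splitting of its $\dagger$-formally smooth Gelfand incarnation. This reliance on strictly totally disconnected covers is precisely why the analogous assertion fails for the big de Rham stack, and why qfd Gelfand/arc-stacks provide the natural framework for these descent statements.
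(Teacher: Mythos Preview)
Your proof is correct and follows essentially the same approach as the paper: reduce to $A$ strictly totally disconnected nilperfectoid via \cref{LemBasisTopologyGelfandRings}, choose an affinoid perfectoid arc-cover $A^u\to B$ over which $Y$ acquires a section, then invoke \cref{corollary-smooth-cover} to realize $B$ as the uniform completion of a descendable Gelfand $A$-algebra $A'$. The paper's write-up additionally replaces $X$ by $\Marc(A)$ (pulling back $Y$) and invokes \cref{corollary-smooth-cover} ``twice'' (the extra pass being to arrange $A$ itself $\dagger$-formally smooth), but your single application already suffices for the lifting argument, so the two proofs are the same in substance.
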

\begin{proof}
Let $A$ be a qfd Gelfand ring with a map $f\colon \GSpec(A)\to X^{\dR}$.
 We want to show that there is a $!$-cover $\GSpec(B) \to \GSpec(A)$ with $B$  a qfd Gelfand ring and a lift to a commutative diagram
\[
\begin{tikzcd}
\GSpec(B)  \ar[r] \ar[d] & \GSpec(A)  \ar[d] \\
Y^{\dR} \ar[r] & X^{\dR}.
\end{tikzcd}
\]
By adjunction, the map $\GSpec (A)\to X^{\dR}$ factors through $\GSpec(A) \to \Marc(A)^{\dR}\to X^{\dR}$, and by taking pullbacks we can assume without loss of generality that $X=\Marc(A)^{\dR}$.
 By \cref{LemBasisTopologyGelfandRings} we can even assume that $A$ is a strictly totally disconnected nilperfectoid ring.
  In that case, as $Y\to X$ is an arc-cover and $X$ is qcqs, we can assume without loss of generality that $Y$ is the arc-sheaf associated to a qfd separable perfectoid $A^u$-algebra.
 Using \cref{corollary-smooth-cover}, resp.\ \cref{sec:arc-descent-analytic-case-that-a-is-qp}, we can assume that $A$ is $\dagger$-formally smooth, and that $Y=\Marc(B)$ is the arc-stack of a descendable $A$-algebra $B$.
 Thus, we get a diagram 
\[
\begin{tikzcd}
\GSpec(B) \ar[r] \ar[d] & \GSpec(A)  \ar[d] \\
Y^{\dR} \ar[r] & X^{\dR}
\end{tikzcd}
\]
where the upper horizontal map is a $!$-cover.
This finishes the proof of the theorem.\footnote{
We note that we even arranged that in the diagram the right vertical arrow is an epimorphism (as a consequence of $A$ being $\dagger$-formally smooth), and thus the above proof illustrates well how our previous results allow to access analytic de Rham stacks in general.}
\end{proof}

\begin{remark}

  \label{sec:main-descent-result-1-descent-for-the-big-de-rham-stack}
  Analyzing the proof of \Cref{sec:furth-results-analyt-1-surjections-on-de-rham-stack} one gets some descent results for $(-)^{\dRall}$. Let $Y\to X$ be an epimorphism of light arc-stacks over $\Q_p$. Then $Y^\dRall\to X^\dRall$ is an epimorphism if one of the following conditions hold:
  \begin{enumerate}
  \item $Y\to X$ is \'etale,
  \item $Y\cong \varprojlim_{n\in \N} Y_n$ with $Y_n\to X$ finite \'etale,
  \item $X$ is \emph{quasi-finitary} in the following sense: for any separable nilperfectoid ring $A$ and any map $\Marc(A)\to X$ there exists a $!$-cover $A\to A'$ and a factorization $\Marc(A')\to Z\to X$ of $\Marc(A')\to \Marc(A)\to X$ where $Z$ is qfd over $\Marc(\Q_p)$.
  \end{enumerate}
  Indeed, in the first case one can use that \'etale morphisms lift along $\dagger$-reductions (and the lift is again an \'etale covering if one starts with an \'etale cover).
  In the second case, one can use that the lift is again finite \'etale and that finite \'etale coverings have index of descendability bounded by $\leq 2$ (as follows by pullback from the underlying rings).
  In the third case, one can mimic the proof of \Cref{sec:furth-results-analyt-1-surjections-on-de-rham-stack}, and use that $Z$ admits a $!$-cover by a strictly totally disconnected perfectoid space (which can be pulled back to a $!$-cover of $\GSpec(A')$).
  This reduces to the case where $A$ is strictly totally disconnected nilperfectoid, where the same proof applies.

  Several examples of interesting light arc-stacks are quasi-finitary, e.g., stack quotients $X=U/\mathbb{D}$ where $U$ is qfd over $\Q_p$, and $\mathbb{D}\subseteq \mathbb{A}^m_{\Q_p}$ is an open unit disc acting on $U$. Namely, in this case any morphism $\Marc(A)\to X$ with $A$ nilperfectoid lifts to $U$ as affinoid perfectoid spaces have vanishing higher $\mathbb{D}$-cohomology.
  
\end{remark}

\begin{remark}
  \label{sec:arc-descent-analytic-classifying-stacks}
  Let $G$ be a locally light profinite group.
  Applying \cref{sec:furth-results-analyt-1-surjections-on-de-rham-stack} to the epimorphism $\ast\to B\underline{G}$, we get that
  \[
  (B\underline{G})^{\dR}=\big(\GSpec(\Q_p)\big) / G_{\Betti} = BG^{\sm}
  \]
  where $G^{\sm}=(\underline{G})^\dR$ is the Berkovich space with underlying topological space $G$ and sheaf of functions given by the locally constant functions of $G$ with values in $\Q_p$ (cf.\ \cref{sec:dagg-form-smooth-example-overconvergent-cohomology-of-rigid-disc}).
  Thanks to \cite[Proposition 5.3.10]{heyer20246functorformalismssmoothrepresentations} the category $\ob{D}(BG^{\sm})$ is the derived category of solid $\Q_p$-linear smooth representations of $G$, and  by \cite[Corollary 5.3.3]{heyer20246functorformalismssmoothrepresentations} any such classifying stack is $!$-able over $\GSpec(\Q_p)$ (note that we can apply \textit{loc. cit.} as the local cohomological dimension of a locally profinite group over a $\Q$-algebra is zero).
  In \cref{ExamCondensedAnimadeRham} we will prove more generally that the de Rham stack of a condensed anima is given by its Betti stack.
  Instead of \cref{sec:furth-results-analyt-1-surjections-on-de-rham-stack} this will rely on the stronger descent statement \cref{TheoHyperdescentdR}
\end{remark}

\newpage
\section{$D$-modules and $6$-functors on analytic de Rham stacks}
\label{sec:d-modules-quasi}

In this section we show that there are enough $!$-able maps for de Rham stacks, and that those arising from smooth maps of derived Berkovich spaces are also cohomologically smooth.
 These results are important by themselves, since they lead to finiteness and duality results for the de Rham cohomology of smooth $\dagger$-rigid spaces (via \Cref{cor:cohomology-de-rham-stack-and-de-rham-cohomology}), but they also  play a key role in the new proof of the $p$-adic monodromy theorem.

\subsection{$!$-able maps of de Rham stacks; cohomological properness and smoothness}

 The $6$-functor formalism on de Rham stacks is obtained by composing the functors
\[
\Cat{ArcStk}^{\qfd}_{\Q_p}\to \Cat{GelfStk}^{\qfd}\to \Cat{AnStk}
\]
given by the de Rham stack and sending a qfd Gelfand stack to its associated analytic stack, with the $6$-functors of quasi-coherent sheaves
\[
\Cat{Corr}( \Cat{AnStk}, E)\to \Cat{Pr}^L
\]
where $E$ is the class of $!$-able arrows of analytic stacks.
 By pulling back $E$, we get a suitable class of $!$-able maps in $\Cat{ArcStk}^{\qfd}_{\Q_p}$ for the de Rham stack.
 In this section we produce enough examples of arrows in $E$ for the de Rham stack.
 
 Let us first recall the following useful lemma.
 
 \begin{lemma}\label{LemShierkLocalTarget}
Let $(\mathcal{C},E)$ be a  geometric set up  with fiber products and $\ob{D}$ a presentable\footnote{This means that $\ob{D}$ as a functor to $\mathrm{Cat}_\infty$ factors over $\Cat{Pr}^L$.} $6$-functor formalism  on $(\mathcal{C},E)$. Suppose that $\mathcal{C}$ has a final object $*$, has finite disjoint unions, and that any of the inclusions $*\hookrightarrow *\sqcup *$ is $!$-able (so \cite[Lemma 3.4.13]{heyer20246functorformalismssmoothrepresentations} holds).  We endow $\mathcal{C}$ with the $\ob{D}$-topology of \cite[Example 3.4.5 (b)]{heyer20246functorformalismssmoothrepresentations}. Let $\ob{D}\colon \Cat{Corr}(\Shv(\mathcal{C}),E^{\prime})\to \Cat{Pr}^L$ be the extension of $\ob{D}$ to a $6$-functor formalisms for sheaves for the $\ob{D}$-topology of \cite[Theorem 3.4.11]{heyer20246functorformalismssmoothrepresentations}.  Let $f\colon Y\to X$ be a morphism in $\Shv(\mathcal{C})$ such that there is an epimorphism $\bigsqcup_i X_i\to X$ with $X_i\in \mathcal{C}$ such that the pullbacks $Y_i\to X_i$ are $!$-able. Then $f$ is $!$-able.
\end{lemma}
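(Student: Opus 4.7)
The strategy is to show that membership in the extended class $E'$ can be checked locally on the target with respect to the $\ob{D}$-topology, and then to apply this to the given cover. This target-local property is essentially built into the construction of $E'$ in \cite[Theorem 3.4.11]{heyer20246functorformalismssmoothrepresentations}, so the work will consist in extracting it.

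First, the epimorphism $\bigsqcup_i X_i \to X$ is, by the assumption that each $X_i\in\mathcal{C}$, a cover for the $\ob{D}$-topology on $\mathrm{Shv}(\mathcal{C})$. Using that $\ast \hookrightarrow \ast \sqcup \ast$ is $!$-able together with \cite[Lemma 3.4.13]{heyer20246functorformalismssmoothrepresentations}, finite disjoint unions of $!$-able morphisms are again $!$-able; this lets us pass freely between individual $X_i$ and their finite disjoint unions, and also propagates $!$-ability from the $Y_i \to X_i$ to their disjoint unions in the Čech nerve below.

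Next, form the Čech nerve $X_\bullet^{\mathrm{cov}}$ of $\bigsqcup_i X_i \to X$ in $\mathrm{Shv}(\mathcal{C})$, and pull $f$ back to obtain a simplicial object $Y_\bullet^{\mathrm{cov}} \to X_\bullet^{\mathrm{cov}}$. Each $X_n^{\mathrm{cov}}$ is a disjoint union of fiber products of the form $X_{i_0}\times_X\cdots\times_X X_{i_n}$; while these are not themselves in $\mathcal{C}$ a priori, each factor projects to some $X_{i_k}\in\mathcal{C}$, so by base-changing the hypothesis along the appropriate projection and using that $E'$ is stable under base change and (disjoint unions) of $!$-able morphisms, each $f_n^{\mathrm{cov}}\colon Y_n^{\mathrm{cov}} \to X_n^{\mathrm{cov}}$ belongs to $E'$.

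Finally, the extension theorem \cite[Theorem 3.4.11]{heyer20246functorformalismssmoothrepresentations} is designed precisely so that the presheaf on $\mathrm{Shv}(\mathcal{C})$ sending an object to the $\infty$-category of $!$-able morphisms into it is a sheaf for the $\ob{D}$-topology; equivalently, if the pullback of $f$ along every member of a $\ob{D}$-cover lies in $E'$, then $f$ itself lies in $E'$. Applying this descent property to the cover $\bigsqcup_i X_i \to X$, the previous step yields $f\in E'$. The only non-routine step is the target-local descent of $E'$, but this is exactly the content of the extension theorem and requires no further input.
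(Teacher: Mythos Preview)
Your proposal has a genuine gap in the final step. You assert that target-local descent for $E'$ along the epimorphism $\bigsqcup_i X_i \to X$ is ``exactly the content of the extension theorem and requires no further input,'' but this is essentially the statement of the lemma itself. The locality statements in \cite[Theorem 3.4.11]{heyer20246functorformalismssmoothrepresentations} are formulated for test objects $X'\in\mathcal{C}$ and for $\ob{D}$-covers of such objects, not for a general sheaf $X\in\Shv(\mathcal{C})$ covered by a possibly infinite disjoint union. So you cannot simply invoke the theorem to descend $!$-ability along $\bigsqcup_i X_i\to X$; that descent must be argued.

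The paper's proof does this work explicitly. It uses part (ii) to reduce to checking that for every $X'\to X$ with $X'\in\mathcal{C}$ the pullback $Y'\to X'$ is $!$-able. Then, because $\bigsqcup_i X_i\to X$ is an epimorphism in $\Shv(\mathcal{C})$ and $X'\in\mathcal{C}$, there is a $\ob{D}$-cover of $X'$ whose map to $X$ factors through $\bigsqcup_i X_i$; part (iii) allows one to replace $X'$ by this cover. After this replacement, $Y'\to X'$ is a pullback of $\bigsqcup_i Y_i\to\bigsqcup_i X_i$, which is $!$-able by \cite[Lemma 3.4.13]{heyer20246functorformalismssmoothrepresentations}. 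Your \v{C}ech-nerve construction is not needed for this argument and does not on its own bridge the gap: knowing that each simplicial level $f_n^{\mathrm{cov}}$ lies in $E'$ still requires the same target-local descent statement to conclude that $f$ does.
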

\begin{proof}
We want to apply \cite[Theorem 3.4.11 (ii)]{heyer20246functorformalismssmoothrepresentations}, that is, given \textit{any} map $X^{\prime}\to X$ with $X^{\prime}$ in $\mathcal{C}$, the pullback $Y^{\prime}\to X^{\prime}$ is $!$-able. By \cite[Theorem 3.4.11 (iii)]{heyer20246functorformalismssmoothrepresentations} it suffices to show this after a $\ob{D}$-cover of $X$, namely, by definition a $\ob{D}$-cover satisfies universal $!$-descent.   Since $\bigsqcup_i X_i\to X$ is an epimorphism, after replacing $X^{\prime}$ by a $\ob{D}$-cover, we can assume that it factors through $X^{\prime}\to \bigsqcup_i X_i\to X$. By hypothesis, each $Y_i\to X_i$ is $!$-able, and by \cite[Lemma 3.4.13]{heyer20246functorformalismssmoothrepresentations}  their disjoint union $\bigsqcup_i Y_i\to \bigsqcup_i X_i$ is also $!$-able, by taking pullbacks we deduce that $Y^{\prime}\to X^{\prime}$ is $!$-able as wanted. 
\end{proof}

We list some simple examples:

\begin{lemma}\label{LemmProetaleShierkdR}
The following hold:
\begin{enumerate}

 \item  Let $f\colon Y\to X$ be an qcqs quasi-pro-\'etale map of qfd arc-stacks over $\Q_p$, then $f^{\dR}\colon Y^{\dR}\to X^{\dR}$ is $!$-able.

\item Let $j\colon U\subset X$ be an open immersion of qfd arc-stacks, then $j^{\dR}\colon U^{\dR}\to X^{\dR}$ is $!$-able and cohomologically \'etale.

\item Let $f\colon Y\to X$ be a map of qfd arc-stacks such that there is an open cover $\{f_i:Y_i\subset Y\}$ with $f_{i}^{\dR}\colon Y^{\dR}_i\to X^{\dR}$ $!$-able. Then $f^{\dR}\colon Y^{\dR}\to X^{\dR}$ is $!$-able.

\item The map $\A^{1,\dR}_{\Q_p}\to \GSpec(\Q_p)$ is $!$-able.

\end{enumerate}
\end{lemma}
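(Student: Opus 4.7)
My plan is to prove the four parts in the order $(2) \Rightarrow (3) \Rightarrow (4)$, and then $(1)$ separately.

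For (2), I would argue that an open immersion $j\colon U \hookrightarrow X$ of qfd arc-stacks corresponds to an open subspace $|U| \subseteq |X|$ of the underlying topological space (cf.\ \cite[Definition 4.21]{scholze2024berkovichmotives}). For any qfd separable Gelfand ring $A$, a morphism $\GSpec(A) \to X^{\dR}$, i.e., a morphism $\Marc(A^u) \to X$, factors through $U^{\dR}$ precisely when the induced continuous map $\mathcal{M}(A) \to |X|$ lands in $|U|$. Thus $U^{\dR}$ identifies with the open subspace of $X^{\dR}$ cut out by $|U|$, and open immersions of Gelfand stacks are $!$-able and cohomologically \'etale by the general $6$-functor formalism on analytic stacks. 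For (3), apply (2) to see that each $Y_i^{\dR} \hookrightarrow Y^{\dR}$ is an open immersion; by \Cref{sec:furth-results-analyt-1-surjections-on-de-rham-stack} applied to the arc-epimorphism $\bigsqcup_i Y_i \to Y$, the induced map $\bigsqcup_i Y_i^{\dR} \to Y^{\dR}$ is an epimorphism of qfd Gelfand stacks, assembling into an analytic open cover. Since each $Y_i^{\dR} \to X^{\dR}$ is $!$-able and $!$-ability is local on the source along analytic open covers (a standard feature of the $6$-functor formalism on analytic stacks, as in \cite[Section 3]{heyer20246functorformalismssmoothrepresentations}), $Y^{\dR} \to X^{\dR}$ is $!$-able.

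For (4), by \Cref{ExamAffineLineDR} and the qfd version of \Cref{PropdeRhamBerkovich}(4), $\A^{1,\dR}_{\Q_p} = (\A^{1,\an}_{\Q_p})^{\dR}$, where the right-hand side is the analytic de Rham stack of the analytic affine line viewed as a qfd arc-stack. Using the open cover $\A^{1,\an}_{\Q_p} = \bigcup_n \overline{\DD}_{\Q_p}^{\leq p^n}$ by the overconvergent closed discs $\overline{\DD}_{\Q_p}^{\leq p^n} = \GSpec(\Q_p\langle T\rangle_{\leq p^n})$, part (3) reduces the claim to the $!$-ability of $(\overline{\DD}_{\Q_p}^{\leq p^n})^{\dR} \to \GSpec(\Q_p)$. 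By \Cref{thm:prim-descendable-berkovich}, the map $\overline{\DD}_{\Q_p}^{\leq p^n} \to (\overline{\DD}_{\Q_p}^{\leq p^n})^{\dR}$ is a prim and descendable epimorphism of qfd Gelfand stacks, so $!$-ability of the map to $\GSpec(\Q_p)$ descends from the total space. Finally, $\overline{\DD}_{\Q_p}^{\leq p^n} \to \GSpec(\Q_p)$ is $!$-able because the overconvergent closed disc is cohomologically proper: via \Cref{xhs82j}, it factors through the Betti realization of its compact (metrizable) Hausdorff Berkovich spectrum, and the Betti stack of a compact Hausdorff space is proper over $\GSpec(\Q_p)$ by \cite[Proposition 4.8.2 (ii)]{heyer20246functorformalismssmoothrepresentations}.

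For (1), I would argue arc-locally on $X$: by \Cref{LemBasisTopologyGelfandRings} one can assume $X = \Marc(A)$ with $A$ separable strictly totally disconnected nilperfectoid. The quasi-pro-\'etale hypothesis together with qcqs and $\omega_1$-cocompactness then presents $Y$ as a countable cofiltered limit $Y = \varprojlim_{n \in \N} Y_n$ of finite \'etale qcqs arc-stacks over $X$. Since the analytic de Rham stack functor preserves limits (it is a right adjoint), one has $Y^{\dR} = \varprojlim_n Y_n^{\dR}$, with each transition map $Y_{n+1}^{\dR} \to Y_n^{\dR}$ finite \'etale and each $Y_n^{\dR} \to X^{\dR}$ proper and \'etale, hence $!$-able. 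The main obstacle, and the technical heart of (1), is to promote this to $!$-ability of the limit map $Y^{\dR} \to X^{\dR}$: this requires a continuity argument for $!$-ability along $\omega_1$-cocompact cofiltered limits of $!$-able maps with $!$-able (finite \'etale) transition maps, in the spirit of the $!$-ability of classifying stacks $BG^{\sm}$ for light locally profinite groups $G$ discussed in \Cref{sec:arc-descent-analytic-classifying-stacks} and deduced from \cite[Corollary 5.3.3]{heyer20246functorformalismssmoothrepresentations}.
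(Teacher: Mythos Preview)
Your argument for (1) has a genuine gap: you reduce to the case where $X$ is strictly totally disconnected and write $Y = \varprojlim_n Y_n$ as a tower of finite \'etale maps, but then you explicitly leave open ``the main obstacle,'' namely passing $!$-ability through the limit. The analogy with $BG^{\sm}$ is suggestive but does not supply a proof. The paper sidesteps this entirely: once $X$ is a qfd strictly totally disconnected perfectoid space, \cite[Lemma~4.5]{scholze2024berkovichmotives} gives a pullback square
\[
\begin{tikzcd}
Y \ar[r] \ar[d] & X \ar[d] \\
\underline{|Y|} \ar[r] & \underline{|X|}
\end{tikzcd}
\]
of qfd arc-stacks, where $|Y|, |X|$ are the underlying profinite sets. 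Since the de Rham stack functor commutes with limits, $f^{\dR}$ is the pullback of $\underline{|Y|}^{\dR} \to \underline{|X|}^{\dR}$, and the latter is just a map of Betti stacks of light profinite sets, which is automatically $!$-able. No limit argument is needed. (Also note: your reduction step should not cite \Cref{LemBasisTopologyGelfandRings}, which concerns Gelfand rings; the relevant input is \Cref{LemShierkLocalTarget} together with arc-descent of the de Rham stack, allowing one to assume $X$ is strictly totally disconnected affinoid perfectoid.)

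Your argument for (4) is correct but takes a different route from the paper. You cover $\A^{1,\an}_{\Q_p}$ by overconvergent closed discs and use (3) plus prim descendability of $\overline{\DD}^{\leq p^n} \to (\overline{\DD}^{\leq p^n})^{\dR}$ to reduce to the (trivially $!$-able) affinoid $\overline{\DD}^{\leq p^n} \to \GSpec(\Q_p)$. The paper instead works directly with the global presentation $\A^{1,\dR}_{\Q_p} = \A^1_{\Q_p}/\mathbb{G}_a^\dagger$: it shows $\GSpec(\Q_p) \to \GSpec(\Q_p)/\mathbb{G}_a^\dagger$ is prim and descendable via the explicit short exact sequence $0 \to \Q_p \to R \xrightarrow{\partial/\partial T} R \to 0$ of $\mathbb{G}_a^\dagger$-comodules (Poincar\'e lemma for the overconvergent disc). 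Both approaches work; yours is more modular but relies on \Cref{thm:prim-descendable-berkovich}, while the paper's is self-contained and gives an explicit index of descendability that is reused later (e.g.\ in \Cref{sec:cohom-smooth-maps-descendability-for-smooth-maps}).

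Your (2) is essentially right in spirit, though the assertion that ``$U^{\dR}$ identifies with the open subspace of $X^{\dR}$ cut out by $|U|$'' is not carefully justified for a general qfd arc-stack $X$. The paper again reduces to $X$ strictly totally disconnected, where this identification becomes the pullback from an open immersion of Betti stacks of profinite sets and \cite[Lemma~4.8.2]{heyer20246functorformalismssmoothrepresentations} applies directly. Your (3) matches the paper's argument.
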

\begin{proof}
By \cref{LemShierkLocalTarget}, and by arc-descent of the de Rham stack,  we can check that the maps are $!$-able locally in the arc-topology of the target.
 Thus, for (1)-(3) we can assume without loss of generality that $X$ is the spectrum of a qfd strictly totally disconnected perfectoid ring. 

 (1) By \cite[Lemma 4.5]{scholze2024berkovichmotives} we have a pullback diagram of qfd arc-stacks
\[
\begin{tikzcd}
 Y \ar[r] \ar[d] & X \ar[d] \\
 \underline{|Y|} \ar[r] & \underline{|X|}
\end{tikzcd}
\]
where the bottom arrow is the map of the underlying Berkovich spaces seen as arc-stacks.
 % Note that since $f$ is $\omega_1$-cocompact $Y$ is a separable affinoid perfectoid space.
 Since the formation of the de Rham stack commutes with limits, it suffices to see that $\underline{|Y|}^{\dR}\to \underline{|X|}^{\dR}$ is $!$-able, but this is clear since both $|Y|$ and $|X|$  are profinite sets, and their de Rham stacks are just their Betti stacks.

 (2) By the same argument of part (1), one reduces to see that an open immersion of a profinite set is $!$-able and cohomologically \'etale, this is \cite[Lemma 4.8.2]{heyer20246functorformalismssmoothrepresentations}.

 (3) This follows from the fact that  $!$-able maps are $!$-local in source \cite[Theorem 3.4.11 (iii)]{heyer20246functorformalismssmoothrepresentations}, and  suave descent in \cite[Lemma 4.7.1]{heyer20246functorformalismssmoothrepresentations}.

 (4) This is an analogue of \cite[Lemma 5.2.8 (1)]{camargo2024analytic}.  Using the $!$-local property in the source of \cite[Theorem 3.4.11 (iii)]{heyer20246functorformalismssmoothrepresentations} and prim descent along descendable maps \cite[Lemma 4.7.4]{heyer20246functorformalismssmoothrepresentations}, it suffices to see that the map $\mathbb{A}^{1}_{\Q_p}\to \mathbb{A}^{1,\dR}_{\Q_p}$ is prim and descendable.
We have $\mathbb{A}^{1,\dR}_{\Q_p}\cong \mathbb{A}^{1}_{\Q_p}/\mathbb{G}_a^\dagger$ (by a qfd version of \cref{ExamAffineLineDR}), and so by base change it suffices to show the same for the map $g\colon \GSpec(\Q_p)\to \GSpec(\Q_p)/\mathbb{G}_a^\dagger$.
Primness is clear, as $\mathbb{G}_a^\dagger$ is proper over $\GSpec(\Q_p)$, and hence $g$ is represented in proper, affinoid Gelfand stacks.
Set $R:=\mathcal{O}(\mathbb{G}_a^\dagger)=\Q_p\langle T\rangle_{\leq 0}$, which is a Hopf algebra through the addition.
Viewing $R$ as the regular representation over itself, we have the short exact sequence
\[
  0\to \Q_p\to R\overset{\frac{\partial}{\partial T}}{\to} R\to 0
\] of $R$-comodules, i.e., as quasi-coherent sheaves on $\GSpec(\Q_p)/\mathbb{G}_a^\dagger$.
This implies that $g$ is descendable.
A more general assertion is discussed in \cite[Lemma 4.3.6]{camargo2024analytic}.
\end{proof}

From the stability properties of \cref{LemmProetaleShierkdR} we can already exhibit plenty of $!$-able maps that will be more than enough for  the main applications of this paper. 

\begin{definition}\label{DefLQFDim}

 A map $f\colon Y\to X$ of qfd arc-stacks over $\Z_p$ is said to be of \textit{locally of quasi-finite dimension} (or \textit{lqfd}) if there is a strict closed cover $Y_i \subset Y$ such that the maps $Y_i\to X$ are qcqs and quasi-pro-\'etale over some relative affine space $Y_i\to \mathbb{A}^{d,\diamond}_{X}$ (with $d$ possibly depending on $i$).

\end{definition}

\begin{proposition}\label{LemmLqfd}
Let $f\colon Y\to X$ be a morphism of qfd arc-stacks over $\Q_p$ which arc-locally on $X$ is lqfd. Then  $f^{\dR}\colon Y^\dR\to X^\dR$ is $!$-able. If in addition $f$ is proper, then $f^{\dR}$ is cohomologically proper in the sense of \cite[Definition 4.6.1]{heyer20246functorformalismssmoothrepresentations}.
\end{proposition}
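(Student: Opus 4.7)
The plan is to combine arc-descent on the target with local-in-source properties and the building blocks provided by Lemma \ref{LemmProetaleShierkdR}. First, I would reduce to the case where $f$ is itself lqfd: since $!$-ability and cohomological properness on Gelfand stacks are arc-local on the target---using Theorem \ref{sec:furth-results-analyt-1-surjections-on-de-rham-stack} (surjectivity of $(-)^{\dR}$ on arc-epimorphisms) together with the descent properties of the analytic-stack six-functor formalism pulled back to Gelfand stacks via the arc-topology---this reduction is automatic, and I may further assume $X=\GSpec(A)$ with $A$ separable qfd strictly totally disconnected nilperfectoid. Then I would apply the strict closed cover $\{Y_i\subseteq Y\}_i$ from the lqfd hypothesis. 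Because this cover is strict, it is refined by an open cover, and Lemma \ref{LemmProetaleShierkdR}(2)--(3) together imply that $\{Y_i^{\dR}\to Y^{\dR}\}_i$ is a $!$-cover of Gelfand stacks, so by local-in-source for $!$-ability it suffices to prove each $f_i^{\dR}: Y_i^{\dR}\to X^{\dR}$ is $!$-able.

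For the $!$-able statement, each $f_i$ factors as $Y_i\xrightarrow{g_i} \mathbb{A}^{d_i,\diamond}_X\to X$. The map $g_i$ is $\omega_1$-cocompact, qcqs, and quasi-pro-\'etale, so $g_i^{\dR}$ is $!$-able by Lemma \ref{LemmProetaleShierkdR}(1). The map $\mathbb{A}^{d_i,\diamond}_X\to X$ is obtained by $d_i$-fold base change from $\mathbb{A}^{1,\diamond}_{\Q_p}\to \Marc(\Q_p)$, so its image under $(-)^{\dR}$ is $!$-able by Lemma \ref{LemmProetaleShierkdR}(4) combined with stability of $!$-ability under base change and composition.

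For the cohomological properness, assume in addition $f$ is proper, which passes to each strict closed piece $Y_i$. Cohomological properness is also arc-local on the target. Since $Y_i\to X$ is proper and factors through $\mathbb{A}^{d_i,\diamond}_X$, the image of $|Y_i|$ inside $|\mathbb{A}^{d_i,\diamond}_X|$ is compact (as a continuous image of a proper map after pullback to the strictly totally disconnected base), and hence contained in a closed overconvergent polydisc $\GSpec(A\langle T_1,\ldots,T_{d_i}\rangle_{\leq r})$ for some $r>0$. I would replace the factorization by $Y_i\to \GSpec(A\langle T_1,\ldots,T_{d_i}\rangle_{\leq r})\to X$; the first map is proper, $\omega_1$-cocompact, qcqs, and quasi-pro-\'etale, which under $(-)^{\dR}$ reduces, via the Berkovich-space argument of Lemma \ref{LemmProetaleShierkdR}(1), to a proper (hence cohomologically proper) map of profinite Betti stacks. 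The second map on de Rham stacks takes the form $\GSpec(A\langle T_1,\ldots,T_{d_i}\rangle_{\leq r})/(\mathbb{G}_a^{\dagger})^{d_i}\to X^{\dR}$ (compare Example \ref{sec:dagg-form-smooth-example-overconvergent-cohomology-of-rigid-disc}), where $\GSpec(A\langle T_1,\ldots,T_{d_i}\rangle_{\leq r})\to X^{\dR}$ is cohomologically proper (the overconvergent Tate algebra is nuclear Banach over $A$) and $(\mathbb{G}_a^{\dagger})^{d_i}$ is itself a proper affinoid group over $\GSpec(\Q_p)$, so the quotient remains cohomologically proper.

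The main obstacle is the cohomological-properness half: one must verify that cohomological properness descends along the strict closed cover of $Y$, which is delicate since it is not automatically local in source, and also that the de Rham stack of the overconvergent closed polydisc is genuinely cohomologically proper over $X^{\dR}$. For the former, one uses that strict closed immersions give cohomologically proper maps on de Rham stacks, so one can assemble the property via excision from the pieces. For the latter, one relies on the presentation $\overline{\mathbb{D}}^{\dR}=\mathbb{D}^{\leq 1}/\mathbb{G}_a^{\dagger}$ of Example \ref{sec:dagg-form-smooth-example-overconvergent-cohomology-of-rigid-disc}, combined with the properness of $\mathbb{G}_a^{\dagger}$ (as an affinoid, hence proper, group stack over $\GSpec(\Q_p)$) and the nuclearity of the overconvergent Banach algebra $\Q_p\langle T\rangle_{\leq 1}$.
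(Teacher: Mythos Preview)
Your $!$-ability argument is correct and matches the paper's proof: arc-localize on the target via Lemma~\ref{LemShierkLocalTarget}, pass to the strict closed cover using Lemma~\ref{LemmProetaleShierkdR}(3), then factor each piece through affine space and invoke parts (1) and (4).

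Your properness argument, however, has a genuine gap. Cohomological properness is \emph{not} local in the source, and the ``excision'' you invoke to reassemble it from the strict closed pieces is not a standard technique here; you acknowledge the difficulty but do not resolve it. The paper's strategy is different and cleaner: since $f$ is proper its diagonal is a closed immersion of arc-stacks, and one first shows directly that closed immersions of arc-stacks yield cohomologically proper maps on de Rham stacks (they are pulled back from closed immersions of Betti stacks of the underlying condensed anima). This reduces the problem to showing that $f^{\dR}$ is \emph{prim}, and primness \emph{is} local on the source along descendable covers by \cite[Lemma 4.5.8(ii)]{heyer20246functorformalismssmoothrepresentations}. Since $f$ is proper, $Y$ is qcqs, so the strict closed cover can be taken finite, hence descendable on de Rham stacks; one then checks primness on each piece. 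You never isolate the diagonal, so even if your piecewise arguments went through you would only have established primness of each $f_i^{\dR}$, not cohomological properness of $f^{\dR}$.

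Two smaller points on the piecewise argument: the overconvergent Tate algebra $\Q_p\langle T\rangle_{\leq r}$ is not Banach (it is a colimit of Banach algebras, a DNF space), so your ``nuclear Banach'' justification for properness of the polydisc is off. The paper instead argues that $\mathbb{D}^{\leq 1}_{\Q_p}\to\GSpec(\Q_p)$ is prim (induced analytic ring structure), that $\mathbb{D}^{\leq 1}_{\Q_p}\to\overline{\mathbb{D}}^{1,\dR}_{\Q_p}$ is descendable, and then applies \cite[Lemma 4.5.8(ii)]{heyer20246functorformalismssmoothrepresentations} to conclude primness of $\overline{\mathbb{D}}^{1,\dR}_{\Q_p}\to\GSpec(\Q_p)$. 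For the quasi-pro-\'etale piece, primness follows as you suggest from the reduction to Betti stacks of light profinite sets.
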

\begin{proof}
By \Cref{LemShierkLocalTarget}, we can assume that $f$ is lqfd. By \cref{LemmProetaleShierkdR} (3) we can prove that $f^{\dR}$ is $!$-able locally in the analytic topology of $Y$, in particular we can localize along strict closed covers (as those are refined by an open cover), and we can assume without loss of generality that $f$ is qcqs and quasi-pro-\'etale over a relative affine line $\mathbb{A}^{d,\diamond}_{X}$.
  By \cref{LemmProetaleShierkdR} (1) we know that the map $g^{\dR} \colon Y^{\dR}\to \mathbb{A}^{d,\dR}_{X}$ is $!$-able, and by \cref{LemmProetaleShierkdR} (4) the map $ \mathbb{A}^{d,\dR}_{X}\to X^{\dR}$ is $!$-able, proving what we wanted.

For the claim about proper maps, we first note that a closed immersion $Z\subset X$ of arc-stacks is cohomologically proper, namely it arises as the pullback of a closed immersion of condensed anima $\underline{Z_{\ob{cond}}}\to \underline{X_{\ob{cond}}}$ along the map $X\to \underline{X_{\ob{cond}}}$ (cf. \cref{ExaTopologicalSpaceCondensedAnima}), and these are cohomologically proper by \cite[Lemma 4.8.2 (i)]{heyer20246functorformalismssmoothrepresentations}.
 Hence, if $f$ is proper, the diagonal is a closed immersion, and it suffices to show that $f^{\dR}$ is prim.
  By \cite[Lemma 4.5.7]{heyer20246functorformalismssmoothrepresentations} we can prove that $f^{\dR}$ is prim locally in the arc-topology on $X$.
 We can then assume that $X$ is qfd strictly totally disconnected.
  By \cite[Lemma 4.5.8]{heyer20246functorformalismssmoothrepresentations} we can prove that $f^{\dR}$ is prim locally on a descendable cover of the source, thus, as $f$ is proper, we can then pass to a finite strict   closed cover of $Y$ and assume that $Y\to X$ is qcqs and quasi-pro-\'etale over an affine space $\mathbb{A}^d_{X}$ over $X$.
  Since $Y$ is qcqs, the map $Y\to\mathbb{A}^d_{X}$ factors through a polydisc, that we can assume to be of radius $1$.
 Hence, since prim maps are stable under composition, we can reduce the claim to proving that a qcqs pro-\'etale map of arc-stacks is prim, and that the map $\overline{\mathbb{D}}^{1,\dR}_{\Q_p}\to \GSpec(\Q_p)$ from the de Rham stack of the closed disc is prim.

For qcqs and quasi-pro-\'etale maps, we can assume again that $X$ is strictly totally disconnected, and  by \cite[Lemma 4.5]{scholze2024berkovichmotives} the map $Y\to X$ arises from the pullback of profinite sets $|Y|\to |X|$.
 But the map $|Y|^{\dR}\to |X|^{\dR}$ is prim being a map of Betti stacks of profinite sets, and so given by a map of analytic rings with induced structure.
 For the claim about the closed disc $f\colon \overline{\mathbb{D}}^{1,\dR}_{\Q_p}\to \GSpec(\Q_p)$, by  \cref{sec:dagg-form-smooth-example-overconvergent-cohomology-of-rigid-disc} (1)   $\overline{\mathbb{D}}^{1,\dR}_{\Q_p} = \mathbb{D}^{1,\leq 1,\dR}_{\Q_p}$  is the de Rham stack of the overconvergent disc.
 Hence, the map $g\colon \mathbb{D}^{1,\leq 1}_{\Q_p}\to \mathbb{D}^{1,\leq 1,\dR}_{\Q_p}$ is an epimorphism of Gelfand stacks, descendable (as can be deduced by pullback from the case for $\mathbb{A}^1_{\Q_p}$, cf.\ \cref{LemmProetaleShierkdR}), and by \cite[Lemma 4.5.8 (ii)]{heyer20246functorformalismssmoothrepresentations} it suffices to see that $\mathbb{D}^{1,\leq 1}_{\Q_p}\to \GSpec(\Q_p)$ is prim which is clear as it arises from a map of analytic rings with induced structure.
\end{proof}

\begin{remark}\label{RemShierkMaps}
All maps between (partially proper) rigid spaces belong to those of \cref{DefLQFDim}, but not all the stacky maps relevant in the geometrization of the local Langlands correspondence.
 To extend the class of $!$-able maps one has to keep applying \cite[Theorem 3.4.11]{heyer20246functorformalismssmoothrepresentations} for the different classes one would like to consider.
 For example, thanks to \cite[Lemma 4.7.1]{heyer20246functorformalismssmoothrepresentations} one can extend the class of $!$-able maps to quotients of lqfd maps by \textit{suave equivalence relations}, this is nothing but a variant of the so called \textit{Artin stacks} of \cite[Definition IV.1.1]{fargues2021geometrization} for the de Rham stack.
 \end{remark}

\begin{definition}\label{DefRigSmoothRigEtale}
Let $f\colon Y\to X$ be a morphism of qfd arc-stacks over $\Z_p$. We say that $f$ is \textit{\'etale} if, locally in the arc-topology of  $X$ and the analytic topology of $Y$, $f$ factors as a composite of open immersions (\cite[Definition 4.21]{scholze2024berkovichmotives}) and finite \'etale maps.
  We say that $f$ is \textit{smooth} if, locally in the arc-topology of $X$ and the analytic topology of $Y$, $f$ factors as a composite $ Y\xrightarrow{g} \mathbb{A}^d_{X}:=\mathbb{A}^{d,\diamond}_{\Z_p}\times_{\Marc(\Z_p)}X\to X$ where $g$ factors as a composite of finite \'etale maps and open immersions. 
\end{definition}

For example, a rigid smooth, resp.\ rigid \'etale, morphism of derived Berkovich spaces over $\Q_p$ in the sense of \Cref{DefEtaleSmooth}, induces a smooth, resp. \'etale, morphism on the associated arc-stacks.

The following theorem is a variant of \cite[Theorem 1.0.12]{camargo2024analytic}.

\begin{theorem}
  \label{sec:geom-prop-analyt-1-cohomological-smoothness-for-smooth-rigid-spaces}
  Let $f\colon Y\to X$ be a smooth morphism of qfd arc-stacks over $\Q_p$.
  Then the map $Y^{\dR}\to X^{\dR}$ is cohomologically smooth, with dualizing sheaf given by $1_{Y^\dR}[2d]$ if $f$ is of pure dimension $d$.
  If $f$ is \'etale then $f^{\dR}$ is in addition cohomologically \'etale. 
\end{theorem}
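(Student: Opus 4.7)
\medskip

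\noindent\textbf{Plan of proof.} The plan is to reduce everything, through smooth and \'etale base change and arc/analytic-local arguments, to the single computation that $\mathbb{A}^{1,\dR}_{\Q_p}\to \GSpec(\Q_p)$ is cohomologically smooth with dualizing sheaf $1[2]$.

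First I would carry out a sequence of standard reductions. The formation of $(-)^\dR$ commutes with fiber products, and cohomological smoothness is local on the target for the $!$-topology (in particular for arc-covers, since these give $!$-covers after perfectoidization and on Gelfand stacks) and local on the source for open immersions. Using \cref{DefRigSmoothRigEtale} and the arc-descent of $(-)^\dR$ (\cref{sec:furth-results-analyt-1-surjections-on-de-rham-stack}), we may assume that $f$ factors as $Y\xrightarrow{g}\mathbb{A}^{d,\diamond}_X\to X$ where $g$ is a composition of open immersions and finite \'etale maps. Open immersions of qfd arc-stacks give cohomologically \'etale maps on de Rham stacks by \cref{LemmProetaleShierkdR}(2). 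For a finite \'etale map $V\to U$ of qfd arc-stacks, arc-locally on $U$ the algebra of functions $B$ is finite \'etale over $A$, and consequently $B^{\dagger-\red}$ is finite \'etale over $A^{\dagger-\red}$; so $V^{\dR}\to U^{\dR}$ is represented by a finite \'etale algebra map of Gelfand stacks and is therefore cohomologically \'etale. This also yields the \'etale case of the theorem at once, and reduces the smooth case to showing that $\mathbb{A}^{d,\dR}_X\to X^{\dR}$ is cohomologically smooth of relative dimension $d$ with dualizing sheaf $1[2d]$. Since $\mathbb{A}^{d}_X\cong \mathbb{A}^1_X\times_X\cdots\times_X \mathbb{A}^1_X$ and $(-)^{\dR}$ preserves fiber products, it is enough to treat the $1$-dimensional case and, by base change from $\mathbb{A}^{1,\dR}_{\Q_p}\to \GSpec(\Q_p)$, it suffices to treat this absolute case.

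Next I would establish the key computation for $h\colon \mathbb{A}^{1,\dR}_{\Q_p}\to \GSpec(\Q_p)$. Recall from \cref{ExamAffineLineDR} (qfd version) and \cref{thm:prim-descendable-berkovich} that $\mathbb{A}^{1,\dR}_{\Q_p}\cong \mathbb{A}^{1,\an}_{\Q_p}/\mathbb{G}_a^\dagger$, and that the quotient map $q\colon \mathbb{A}^{1,\an}_{\Q_p}\to \mathbb{A}^{1,\dR}_{\Q_p}$ is a descendable epimorphism of Gelfand stacks (with proper diagonal, since $\mathbb{G}_a^\dagger$ is proper). Consider the factorization
\[
\mathbb{A}^{1,\an}_{\Q_p}\xrightarrow{q}\mathbb{A}^{1,\dR}_{\Q_p}\xrightarrow{h}\GSpec(\Q_p).
\]
The composition $h\circ q$ is the structure map of a smooth partially proper rigid curve, which is cohomologically smooth with dualizing sheaf $\Omega^1_{\mathbb{A}^1}[1]\cong 1[1]$ (by the $6$-functor formalism on derived Berkovich spaces, cf.\ \cite[Theorem 1.0.12]{camargo2024analytic} and the analogue in the Gelfand-stack setting). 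For $q$, I would show separately that it is cohomologically smooth with invertible dualizing sheaf computed to be $1[1]$: this is a statement about $B\mathbb{G}_a^\dagger$-bundles, and reduces by base change along the descendable cover $\GSpec(\Q_p)\to B\mathbb{G}_a^\dagger$ to identifying the dualizing complex of $B\mathbb{G}_a^\dagger\to \GSpec(\Q_p)$. The Hopf algebra $\mathcal{O}(\mathbb{G}_a^\dagger)=\Q_p\langle T\rangle_{\leq 0}$ is proper, so the map is cohomologically proper, and one computes its dualizing complex directly from the short exact sequence of $\Q_p\langle T\rangle_{\leq 0}$-comodules $0\to \Q_p\to \Q_p\langle T\rangle_{\leq 0}\xrightarrow{\partial/\partial T}\Q_p\langle T\rangle_{\leq 0}\to 0$ used in \cref{LemmProetaleShierkdR}(4), yielding $q^!1\cong 1[1]$. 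Combining with cohomological smoothness of $h\circ q$ and the descent/cancellation property of \cite[Proposition 4.7.1]{heyer20246functorformalismssmoothrepresentations}, cohomological smoothness of $h$ follows, and the computation $q^\ast(h^!1)\cong (hq)^!1\otimes (q^!1)^{\otimes -1}\cong 1[2-1]\otimes 1[1]$ gives $h^!1\cong 1[2]$ after faithfully flat descent along $q$.

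Finally, with the base case settled, I would return to the general situation: the composition $Y^{\dR}\to \mathbb{A}^{d,\dR}_{X}\to X^{\dR}$ is cohomologically smooth as a composition of a cohomologically \'etale map and a $d$-fold iterate of base changes of $\mathbb{A}^{1,\dR}_{\Q_p}\to \GSpec(\Q_p)$, giving cohomological smoothness and the prescribed dualizing sheaf $1_{Y^{\dR}}[2d]$ when $f$ has pure dimension $d$. The \'etale statement has already been obtained in the reduction step. The main obstacle I anticipate is the cohomological-smoothness computation for the $\mathbb{G}_a^\dagger$-quotient map $q$, since one must control both the proper pushforward along $\mathbb{G}_a^\dagger$ and the descent from the descendable cover $\mathbb{A}^{1,\an}_{\Q_p}\to \mathbb{A}^{1,\dR}_{\Q_p}$ while correctly bookkeeping the Tate-twist/degree shifts; the explicit Koszul-type resolution above is what makes this tractable.
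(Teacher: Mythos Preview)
Your reduction strategy matches the paper's exactly: arc-localize on the target, analytic-localize on the source, reduce to open immersions, finite \'etale maps, and the single case $\mathbb{A}^{1,\dR}_{\Q_p}\to\GSpec(\Q_p)$. Two differences are worth noting.

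For the affine line, the paper does \emph{not} use your cancellation argument through $q\colon\mathbb{A}^1\to\mathbb{A}^{1,\dR}$. Instead it factors the other way, as $\mathbb{A}^{1,\dR}\to B\mathbb{G}_a^\dagger\to\GSpec(\Q_p)$: the first map is an $\mathbb{A}^1$-fibration (hence suave), and the second is suave by Cartier duality. Your approach requires $q$ itself to be suave in order to invoke suave-local-on-source descent, but the justification you give is muddled: you write that this ``reduces \ldots\ to identifying the dualizing complex of $B\mathbb{G}_a^\dagger\to\GSpec(\Q_p)$'', whereas what you actually need is suaveness of the section $e\colon\GSpec(\Q_p)\to B\mathbb{G}_a^\dagger$, whose fiber is $\mathbb{G}_a^\dagger$ itself. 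These are different maps. There is also a sign inconsistency: you first assert $q^!1\cong 1[1]$, but your displayed computation uses $(q^!1)^{-1}\cong 1[1]$, i.e.\ $q^!1\cong 1[-1]$; the latter is what is needed to land on $h^!1\cong 1[2]$. The paper's factorization sidesteps all of this, and for the dualizing sheaf it invokes deformation to the normal cone (via \cite[Theorem 5.3.7]{camargo2024analytic}) rather than a direct shift-count.

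For finite \'etale maps, your sketch (``$B^{\dagger-\red}$ is finite \'etale over $A^{\dagger-\red}$, so $V^\dR\to U^\dR$ is represented by a finite \'etale algebra map'') is imprecise: $U^\dR$ is not affinoid. The paper instead passes to a strictly totally disconnected base $X$, where $Y^\dR\cong X^\dR\times_{|X|_\Betti}|Y|_\Betti$ and $|Y|_\Betti\to|X|_\Betti$ is a finite disjoint union of clopen immersions, hence cohomologically \'etale.
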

\begin{proof}
Any \'etale map of qfd arc-stacks has open diagonal, namely, this can be checked locally in the arc-topology of $X$ and in the analytic topology of $Y$, where it reduces to the claim on perfectoid spaces which is obvious. Therefore, by definition of cohomologically \'etale \cite[Definition 4.6.1]{heyer20246functorformalismssmoothrepresentations}, we only need to show that smooth morphisms are cohomologically smooth on the de Rham stack.
 This property can be checked locally in the arc-topology of $X$ thanks to \cref{sec:furth-results-analyt-1-surjections-on-de-rham-stack} and \cite[Lemma 4.5.7]{heyer20246functorformalismssmoothrepresentations}, so we can assume that $X$ is a perfectoid space and then $Y$ is (the arc-stack of) a derived Berkovich space. The property can also be checked locally in the analytic topology thanks to \cref{PropdeRhamBerkovich}(1) and \cite[Lemma 4.5.8 (i)]{heyer20246functorformalismssmoothrepresentations}. Localizing further for the arc-topology on $X$ if necessary, we see that we can by definition of smoothness and base change ultimately reduce to the following three cases: 
\begin{enumerate}[(i)]
\item An open immersion $U\to X$.

\item A finite \'etale map $Y\to X$

\item The affine line $f\colon \mathbb{A}^{1,\dR}_{\Q_p}\to \GSpec(\Q_p)$. 
\end{enumerate} 
 
The case of open immersions is \Cref{LemmProetaleShierkdR}. For $Y\to X$ finite \'etale, as $X$ is totally disconnected we have $Y^{\dR}=X^{\dR}\times_{|X|_{\Betti}} |Y|_{\Betti}$ as qfd Gelfand-stacks and $|Y|_{\Betti}\to |X|_{\Betti}$ is cohomologically \'etale being isomorphic to a finite disjoint union of clopen maps.   Finally, the case of the affine line follows from the presentation $\mathbb{A}^{1,\dR}_{\Q_p}= \mathbb{A}^{1}_{\Q_p}/ \mathbb{G}_a^{\dagger}$, and the fact that both $\mathbb{A}^{1}_{\Q_p}$ and $B\mathbb{G}_a^{\dagger}$ are suave over $\GSpec(\Q_p)$ (the former being a smooth rigid space, the last thanks to Cartier duality).

For the description of the dualizing sheaf, we refer to \cite[Theorem 5.3.7]{camargo2024analytic}\footnote{Note that \textit{loc. cit.} even proves the result for the filtered de Rham stack, where an additional twist appears, which disappears when one pullbacks to the de Rham stack.} (or \cref{sec:-able-maps-first-chern-classes-de-rham-cohomology}): by $\mathbb{A}^1$-invariance of cohomology of the de Rham stack, the computation reduces, by a by now standard deformation to the normal bundle argument, to the computation of (the pullback by the zero-section of) the dualizing sheaf for the structure morphism of a vector bundle over $Y$, which can be done by reduction the universal case and using Cartier duality. \end{proof}

\begin{remark}
  \label{sec:-able-maps-relation-to-d-modules}
 Through the relation between quasi-coherent sheaves on the analytic de Rham stacks and $D$-modules (see the forthcoming paper \cite{AnDModRJRC}), \Cref{sec:geom-prop-analyt-1-cohomological-smoothness-for-smooth-rigid-spaces} gives Poincar\'e duality for analytic $D$-modules.
 \end{remark}

\begin{remark}
\label{rmk:dr-stack-disc-torus-not-coh-smooth}
We warn the reader, particularly those accustomed to working with adic spaces, that the de Rham stacks of the affinoid disc $\mathbb{D}_{\Q_p}$ and the affinoid torus $\mathbb{T}_{\Q_p}$ are \textit{not} cohomologically smooth over $\GSpec(\Q_p)$. This does not contradict  \Cref{sec:geom-prop-analyt-1-cohomological-smoothness-for-smooth-rigid-spaces}, since as Berkovich spaces these spaces are not rigid smooth. This also does not contradict the presentations
(cf. \Cref{sec:dagg-form-smooth-example-overconvergent-cohomology-of-rigid-disc})
 \[
\mathbb{D}_{\Q_p}^{\dR} = \mathbb{D}_{\Q_p}^{\leq 1}/\mathbb{G}_{a}^{\dagger}, \qquad \mathbb{T}_{\Q_p}^{\dR} = \mathbb{T}_{\Q_p}^\dagger/\mathbb{G}_{a}^{\dagger}
\]
(here, $\mathbb{T}_{\Q_p}^{\dagger}$ denotes the overconvergent torus\footnote{A perhaps more coherent, but too confusing, notation would have been $\DD_{\Q_p}^{=1}$.}) since, even though $\GSpec(\Q_p)/\mathbb{G}_a^\dagger$ is cohomologically smooth, $\mathbb{D}_{\Q_p}^{\leq 1}, \mathbb{T}_{\Q_p}^\dagger$ are not cohomologically smooth. In fact, if the de Rham stack of $\mathbb{T}_{\Q_p}$ were cohomologically smooth, since it is also prim over $\GSpec(\Q_p)$, pushforward along the structure morphism $\mathbb{T}_{\Q_p}^{\dR} \to \GSpec(\Q_p)$ would preserve perfect modules. But this not true, as it follows combining \cref{dRcoeff} with the fact that there exist vector bundles with integrable connection on $\mathbb{T}_{\Q_p}^{\dagger}$ with infinite-dimensional de Rham cohomology, \cite[Lemma A.1.12]{poineau2024convergence}. As we don't know whether there exist vector bundles with integrable connection on $\mathbb{D}_{\Q_p}^{\leq 1}$ with infinite-dimensional de Rham cohomology, we argue instead as follows to show that $f:\mathbb{D}_{\Q_p}^{\dR}\to \GSpec(\Q_p)$ is not suave. Consider the factorization of $f$
$$\mathbb{D}_{\Q_p}^{\dR}\overset{g}{\longrightarrow} \mathbb{D}_{\Q_p}^{\leq 1}/\mathring{\mathbb{D}}_{\Q_p}^{\leq 1/2}\overset{h}{\longrightarrow} \GSpec(\Q_p).$$
One has $\mathbb{D}_{\Q_p}^{\leq 1}/\mathring{\mathbb{D}}_{\Q_p}^{\leq 1/2}=\mathbb{D}_{\Q_p}^{\leq 1,\dR}/\mathring{\mathbb{D}}_{\Q_p}^{\leq 1/2,\dR}$. By \cref{sec:geom-prop-analyt-1-cohomological-smoothness-for-smooth-rigid-spaces}, we know that $\mathring{\mathbb{D}}_{\Q_p}^{\leq 1/2,\dR}\to \GSpec(\Q_p)$ is suave, and therefore $g$ is also suave (as its fibers are isomorphic to $\mathring{\mathbb{D}}_{\Q_p}^{\leq 1/2,\dR}$). If $f$ were suave, we would also have that $h$ is suave by \cite[Lemma 4.5.8]{heyer20246functorformalismssmoothrepresentations}. Now, consider the projection map $q\colon \mathbb{D}^{\leq 1}_{\Q_p}\to \mathbb{D}_{\Q_p}^{\leq 1}/\mathring{\mathbb{D}}_{\Q_p}^{\leq 1/2}$; it is suave being a quotient by a smooth rigid equivalence relation, and, as $\mathbb{D}^{\leq 1}_{\Q_p}$ is prim over $\GSpec(\Q_p)$ (being affinoid with induced structure), one has that $q_! 1\in \ob{D}(\mathbb{D}_{\Q_p}^{\leq 1}/\mathring{\mathbb{D}}_{\Q_p}^{\leq 1/2})$ is prim by  \cite[Lemma 4.5.16]{heyer20246functorformalismssmoothrepresentations}. But then, \cite[Lemma 4.5.16]{heyer20246functorformalismssmoothrepresentations} and the suaveness of $h$ would imply that $h_! q_! 1 =\mathcal{O}(\mathbb{D}^{\leq 1}_{\Q_p})$ is suave on $\GSpec(\Q_p)$ which is the same as being dualizable. This is absurd, since $\Q_{p,\solid}$ is Fredholm and the only dualizable objects in $\ob{D}(\Q_{p,\solid})$ are perfect modules.
\end{remark}

\begin{remark}
  \label{sec:-able-maps-first-chern-classes-de-rham-cohomology}
  Using excision for de Rham cohomology (\cref{sec:cohom-prop-de-excision}) one can also calculate the dualizing complex in \cref{sec:geom-prop-analyt-1-cohomological-smoothness-for-smooth-rigid-spaces} via the general paradigm in \cite[Theorem 5.7.7]{zavyalov2023poincaredualityabstract6functor}.
  This necessitates a theory of first Chern classes (as defined in \cite[Definition 5.2.4, Definition 5.2.8]{zavyalov2023poincaredualityabstract6functor}). We present here a stack-theoretic construction of a weak theory of first Chern classes for de Rham cohomology, as it will turn out useful for the analogous construction for Hyodo--Kato cohomology in \cref{sec:de-rham-fargues}. It then amounts to a standard computation to upgrade this to a theory of first Chern classes (cf. \cref{LemComputationP1dRFF}).

  \begin{enumerate}
   \item In the following, we work over $\Q_p$.
  Let $$c_1^{\dR}\colon (\mathbb{G}_m^\diamond)^\dR=\Gm/\mathbb{G}_m^\dagger\to B\Ga$$ be the composition of the map $\mathbb{G}_m^\dR\to B\mathbb{G}_{m}^{\dagger}$ (classifying the extension $0\to \mathbb{G}_m^\dagger\to \Gm\to \mathbb{G}_m^\dR\to 0$) with the logarithm $\log\colon \mathbb{G}^\dagger\to \Ga$.
  We note that $c_1^{\dR}$ is naturally a morphism of animated abelian groups as $\mathbb{G}_m$ is an \textit{animated abelian group stack}, i.e., it naturally extends to a finite coproduct preserving  functor $\Cat{FinFreeAb} \to \Cat{GelfStk}$ from finite free abelian groups to Gelfand stacks.
  Given a qfd arc-stack $X$ over $\Q_p$, we now get a map
  \[
    \mathrm{Map}(X,B \mathbb{G}_m^\diamond)\to \mathrm{Map}(X^{\dR},(B \mathbb{G}_m^\diamond)^{\dR}) \xrightarrow{B c_1} \mathrm{Map}(X^{\dR},B^2\Ga)\cong H^2(X^\dR,\mathcal{O})
  \]
  of animated abelian groups (with addition defined via the second factor), which defines a weak theory of first Chern classes (up to passing to $\ob{D}(\Z)$ by arc-sheafifying both sides as $\ob{D}(\Z)$-valued presheaves in $X$).
  Here, the first map is a morphism of animated abelian groups because $X\mapsto X^\dR$ preserves finite products.

  \item One checks that this abstractly defined first Chern classes agrees (on smooth partially proper rigid analytic varieties) with the usual first Chern class coming from the $d\log$-map $\Gm\to \Omega^1,\ \lambda\mapsto d\log(\lambda):=\frac{d\lambda}{\lambda}$. Let us give here an argument using the filtered de Rham stack (cf.\ \cref{de-rham-comparison-via-filtered-dR-stack}).

  Using e.g. Cartier duality (\cite[Theorem 4.1.13]{camargo2024analytic}), the cohomology of $B\mathbb{G}_{a}$ is naturally identified with  $\Q_p\oplus \Q_p dx [1]$ where $x$ is the coordinate of $\mathbb{G}_{a}$ (we think of $\Q_p dx$ as the cotangent bundle  $T^*_{\mathbb{G}_a}|_0$ at $0$).
  Similarly, the cohomology of $\mathbb{G}_{m}^{\dR}$ is isomorphic to $\Q_p\oplus \Q_p\frac{dt}{t}$ where $t$ is the coordinate of $\mathbb{G}_m$.
  Hence, we want to see that the map $c_1^{\dR}:\mathbb{G}_m^{\dR}\to B\mathbb{G}_a$ pulls back $dx$ to $\frac{dt}{t}$.
  Namely, we can choose the isomorphism $H^1(-,\mathcal{O})\cong \mathrm{Map}(-,B \mathbb{G}_a)$ so that $dx$ maps to the identity element of $B^2 \mathbb{G}_a$ under the morphisms
  \[
\mathrm{Map}(B^2 \mathbb{G}_a, B^2 \mathbb{G}_a)\leftarrow \mathrm{Map}(B \mathbb{G}_a, B \mathbb{G}_a)\cong H^1(B \mathbb{G}_a,\mathcal{O}).
\]
  For that, consider the filtered de Rham stack $\mathbb{G}_m^{\dR,+}$ over $S=\mathbb{A}^1/\mathbb{G}_m$.
  As $\mathbb{G}_m$ is a smooth group, its filtered de Rham stack can be written as the quotient of groups over $S$
 \[
 \mathbb{G}_m^{\dR,+}=(\mathbb{G}_m\times S)/\mathbb{G}^{\dagger}_m(-1)
 \]
  where $\mathbb{G}_m^{\dagger}(-1)$ sends a map $\GSpec A\to S$ corresponding to a virtual Cartier divisor $I\to A$ to the group
  \[\mathbb{G}_m^{\dagger}(-1)(\GSpec A\to S)=\ker( \mathbb{G}_m( A ) \to \mathbb{G}_m(A/\Nil^{\dagger} \otimes_A I))\cong 1+\Nil^{\dagger}(A)\cdot I.
  \]
  We have a logarithm map $\log\colon \mathbb{G}_m^{\dagger}(-1)\to \mathbb{G}_a(-1)$, producing a morphism of animated abelian group stacks over $S$
  \[
  c_1^{\dR,+}\colon \mathbb{G}_m^{\dR,+}\to B\mathbb{G}_a(-1),
  \]
  this map refines the Chern class $c_1^{\dR}$ of the de Rham stack, and lifts it to an element in the $\Fil^1$ of de Rham cohomology for the Hodge filtration. Now, since the $\Fil^1$ of de Rham cohomology of $\mathbb{G}_m$ is just given by differentials, the pullback of  $dx$ along $c_1^{\dR,+}$ is completely determined by the pullback along the zero section $0\colon \GSpec(\Q_p) \to  S=\mathbb{A}^1/\mathbb{G}_m$, and therefore by the Chern class of the Hodge stack $\mathbb{G}_m^{\mathrm {Hdg}} = \mathbb{G}_m/ \mathbb{G}_m^{\dagger}$, where the quotient is via the trivial action. The latter is given by the composite map
  \[
 c_1^{\mathrm {Hdg}}\colon  \mathbb{G}_m^{\mathrm {Hdg}} \to B\mathbb{G}_m^{\dagger} \to B \mathbb{G}_a
  \]
  where the first map is the natural projection on the classifying stack, and the second map is given by the logarithm $\log$.
  Finally, the cohomology of $B\mathbb{G}_m^{\dagger}$ is naturally identified with $\Q_p\oplus \Q_p\frac{dt}{t}$  (we think of $\Q_p \frac{dt}{t}$ as $T^*_{\mathbb{G}_m}|_1$) and it is clear that the map  $\log \colon \mathbb{G}_m^{\dagger}\to  \mathbb{G}_a$ induces the pullback map of differentials  $d\mathrm{log}\colon T^*_{\mathbb{G}_a}|_0\to T^*_{\mathbb{G}_m}|_1$ at the level of cotangent bundles at the identity, sending $dx$ to $\frac{dt}{t}$ as wanted.

  \item
  We note here a general procedure to construct (weak) theories of first Chern classes for cohomology theories defined by a ring stack $\mathcal{R}$ over a base $\mathcal{S}$ (via suitable versions of the general procedure of transmutation, cf.\ \cite[Remark 2.3.8]{FGauges}, say, for $p$-adic formal schemes or rigid analytic varieties).
  Namely, the essential datum is a morphism $$c_1^{\mathcal{R}}\colon \mathbb{G}_m^{\mathcal{R}}\to \mathbb{V}_{\mathcal{S}}(\mathcal{O}\langle 1\rangle)[-1])$$ of animated abelian group objects over the base $\mathcal{S}$ (that one might think of as a \textit{geometric Chern class}), where $\mathcal{O}\langle 1\rangle$ is the respective Tate twist for the cohomology theory, i.e., a distinguished invertible object on $\mathcal{S}$.
 For example, if $\mathcal{R}=\mathbb{G}_a^{\dR}=\Ga/\Ga^\dagger$, which recovers de Rham cohomology, one has $\mathcal{S}=\GSpec(\Q_p)$ and $\mathcal{O}\langle 1\rangle =\Q_p[2]$.
  \end{enumerate}
\end{remark}

\subsection{Cohomology of the analytic de Rham stack and de Rham cohomology}
\label{sec:coh-dR-stack-and-dR-coh}

One of course expects the cohomology of the analytic de Rham stack to be de Rham cohomology in good situations. We verify in this subsection that this is indeed the case.

\begin{proposition}
\label{cor:cohomology-de-rham-stack-and-de-rham-cohomology}
Let $K$ be a complete non-archimedean field over $\Q_p$, separable as a $\Q_p$-Banach space, and let $X$ be a $\dagger$-rigid space over $K$ which is Berkovich smooth over $K$ (in the sense of \Cref{DefEtaleSmooth}).
Then there is a natural isomorphism
$$
\Gamma(X^{\dRall/K},\mathcal{O}) \cong R\Gamma_{\rm dR}(X/K).
$$
The same holds for $X^{\dR}$ if $K$ is in addition qfd. 
\end{proposition}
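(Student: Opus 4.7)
The plan is to reduce the statement to an affinoid computation using $!$-descent and then invoke the already established de Rham computation in the setting of Tate stacks via \Cref{PropdeRhamBerkovich}(3).

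First, I would argue that both sides are sheaves on $X$ for the analytic topology. The right-hand side is analytically local by definition of de Rham cohomology of $\dagger$-rigid spaces. For the left-hand side, an open immersion $U \hookrightarrow X$ of $\dagger$-rigid spaces induces an open immersion $U^{\dRall/K} \hookrightarrow X^{\dRall/K}$ (it is cohomologically \'etale, as can be seen from the presentation $U^{\dRall/K} = (X^{\dagger_U})^{\dRall/K}$ combined with \Cref{PropdeRhamBerkovich}(1)), so $\mathcal O$-cohomology is analytically local as well. Hence it suffices to treat the case where $X = \GSpec(A)$ is affinoid and Berkovich smooth over $K$.

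Next, I would apply \Cref{thm:prim-descendable-berkovich}: the map $X \to X^{\dRall/K}$ is descendable with \v Cech nerve $(X^{\times_K n+1})^{\dagger_{\Delta X}}$, so that
\[
\Gamma(X^{\dRall/K},\mathcal O) \;=\; \mathrm{Tot}_{[n]\in \Delta}\, \Gamma\bigl((X^{\times_K n+1})^{\dagger_{\Delta X}},\mathcal O\bigr).
\]
The final step is to identify this totalization with $R\Gamma_{\dR}(X/K)$. For this, I would not redo the computation from scratch but invoke the comparison established in \Cref{PropdeRhamBerkovich}(3): under the left Kan extension $F\colon \Cat{GelfStk}\to \Cat{TateStk}_{\Q_p}$, the object $F(X^{\dRall/K})$ is naturally identified with the relative Tate-de Rham stack $F(X)^{\mathrm{Tate}\text{-}\dR/F(K)}$ in the sense of \cite{camargo2024analytic}. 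Since $F$ preserves colimits and $X$ is representable, the above \v Cech presentation is preserved under $F$, and both the global sections and the \v Cech nerve match those computed in $\Cat{TateStk}_{\Q_p}$. The cohomology computation $\Gamma(X^{\mathrm{Tate}\text{-}\dR/K},\mathcal O)\cong R\Gamma_{\dR}(X/K)$ for smooth $\dagger$-rigid spaces is one of the main results of \cite{camargo2024analytic}, so chaining these identifications yields the claim. The qfd statement follows identically using \Cref{PropdeRhamBerkovich}(4) to reduce from $X^{\dR/K}$ to $X^{\dRall/K}$.

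The main (only non-formal) obstacle is the underlying fact that the overconvergent tubular neighborhoods $(X^{\times_K n+1})^{\dagger_{\Delta X}}$ compute de Rham cohomology in the smooth case: concretely, the cosimplicial ring obtained by global sections is quasi-isomorphic, after the standard Eilenberg--Zilber / normalization argument, to the de Rham complex $\Omega^\bullet_{A/K}$, because in local coordinates the overconvergent neighborhood of the diagonal in $X\times_K X$ is of the form $A\langle T_1-T_1',\ldots,T_d-T_d'\rangle_{\leq 0}$ and a Poincar\'e lemma applies to the resulting Koszul-type complex. However, this computation is precisely what is carried out in \cite{camargo2024analytic} for the Tate-de Rham stack, so invoking \Cref{PropdeRhamBerkovich}(3) bypasses the need to redo it here.
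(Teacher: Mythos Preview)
Your approach is genuinely different from the paper's. The paper does the computation directly, following the Bhatt--de Jong strategy for the infinitesimal site: it sets up the bicomplex $(\Omega_{X^{\dagger,n}/K}^{\leq m})_{[n]\in\Delta,\,m\in\mathbb N^{\mathrm{op}}}$ of (truncated) de Rham complexes on the analytic site of $X$, containing both the cosimplicial structure sheaf and $\Omega^\bullet_{X/K}$ as subdiagrams, and then verifies two things. First, that each map $\Omega^\bullet_{X/K}\to\Omega^\bullet_{X^{\dagger,n}/K}$ is a quasi-isomorphism, via local coordinates and the Poincar\'e lemma for $\mathbb G_a^\dagger$. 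Second, that $\mathrm{Tot}_{[n]}\,\Omega^m_{X^{\dagger,n}/K}=0$ for $m\ge 1$, which it deduces from descent for exterior powers of the cotangent complex along Berkovich-smooth covers (handled by the separate \Cref{LemmaDescentLAB}, itself using descendability results proved later). So the de Rham comparison is established inside this paper, not by citation.

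Your route---transfer to $\Cat{TateStk}_{\Q_p}$ via \Cref{PropdeRhamBerkovich}(3) and then cite \cite{camargo2024analytic}---is cleaner if the endpoint is really available there. Two caveats. First, \Cref{PropdeRhamBerkovich}(3) is stated only for the \emph{absolute} de Rham stack; you correctly bypass this by matching \v Cech presentations on both sides, but you should say so explicitly rather than invoking (3) as though it gave the relative statement. Second, and more seriously, the assertion that ``$\Gamma(X^{\mathrm{Tate}\text{-}\dR/K},\mathcal O)\cong R\Gamma_{\dR}(X/K)$ is one of the main results of \cite{camargo2024analytic}'' needs a precise pointer. The paper cites \cite{camargo2024analytic} for descendability, Poincar\'e duality, and the embedding into the algebraic de Rham stack, but never for the cohomology comparison itself---which is why it gives the Bhatt--de Jong argument here. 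If you can supply a specific theorem number, your proof is complete and shorter; otherwise you are deferring the computation rather than avoiding it.
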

Here, $R\Gamma_{\rm dR}(X/K)$ is the hypercohomology of the de Rham complex $\Omega^{\bullet}_{X/K}$ of $X$ over $K$, and $\Gamma(X^{\dRall/K},\mathcal{O})$ is the global sections of the structural sheaf of $X^{\rm DR/K}$ over $\GSpec K$.
Examples satisfying the assumption on $X$ are smooth partially proper rigid analytic variety, the overconvergent closed disc or variants.
\begin{proof}
The proof is a variant of the proof that de Rham cohomology of a smooth scheme over a characteristic zero field is computed by the cohomology of the structure sheaf on the infinitesimal site, found in \cite[Remark 3.7]{bhatt_dejong_crystalline_cohomology_and_de_rham_cohomology}. 

By \Cref{PropdeRhamBerkovich} and \Cref{thm:prim-descendable-berkovich}, the morphism $X \to X^{\dRall/K}$ is surjective, with \v{C}ech nerve the simplicial derived Berkovich space $X^{\dagger,\bullet}:= (X^{\times_{K} \bullet+1})^{\dagger_{\Delta X}}$ where $\Delta X\subset |X^{\times_{K} \bullet+1}|$ is the diagonal immersion.
The cosimplicial ring $\mathcal{O}_{X^{\dagger,\bullet}}$ admits a de Rham complex over $K$ (degree-wise an overconvergent version of the classical continuous de Rham complex rigid analytic varieties over $K$), giving rise to the $\Delta \times \mathbb{N}^{\rm op}$-indexed diagram $(\Omega_{X^{\dagger,n}/K}^{\leq m})_{[n] \in \Delta, m \in \mathbb{N}^{\rm op}}$ of complexes of $K$-linear sheaves on the analytic site of $X$, with obvious arrows.
It contains as subdiagrams the cosimplicial ring $\mathcal{O}_{X^{\dagger,\bullet}}$ corresponding to $\Delta \times \{0\}$ and the de Rham complex $\Omega_{X/K}^{\leq \bullet}$ corresponding to $\{[0]\} \times \mathbb{N}^{\rm op}$.
The inclusions of these subdiagrams give rise to natural maps of $K$-linear sheaves on the analytic site of $X$
$$
 \underset{[n] \in \Delta} \lim \mathcal{O}_{X^{\dagger,\bullet}}  \leftarrow \underset{([n],m) \in\Delta \times \mathbb{N}^{\rm op}} \lim \Omega_{X^{\dagger,n}/K}^{\leq m} \to \Omega_{X/K}^{\bullet}.
 $$
 The cohomology on the analytic site of $X$ of the left term computes the cohomology of the analytic de Rham stack relative to $K$, as we just recalled, while the cohomology on $X$ of the right term is the de Rham cohomology of $X$ over $K$.
 Hence one is done if one can show that the two maps are quasi-isomorphisms. Observe that the limit defining the middle term can be computed in two ways, by first taking a limit over $\mathbb{N}^{\rm op}$ and then over $\Delta$, or the other way around:
  $$
 \underset{([n],m) \in\Delta \times \mathbb{N}^{\rm op}} \lim \Omega_{X^{\dagger,n}/K}^{\leq m} =  \underset{([n] \in\Delta } \lim   \Omega_{X^{\dagger,n}/K}^{\bullet}  = \underset{m} \varprojlim \left(\underset{[n] \in \Delta} \lim \Omega_{X^{\dagger,n}/K}^{\leq m} \right).
 $$
 Thus it suffices to show the following statements:
 \begin{enumerate}
 \item For any arrow $\alpha\colon [n_1] \to [n_2]$ in $\Delta$, the induced map $\alpha: \Omega_{X^{\dagger,n_1}/K}^\bullet \to \Omega_{X^{\dagger,n_2}/K}^\bullet$ is a quasi-isomorphism.
\item For any integer $m\geq 1$, $\underset{[n]\in \Delta} \lim \Omega_{X^{\dagger,n}/K}^m=0$.
\end{enumerate}
For (1), it suffices to deal with the map $[0] \to [n], 0 \mapsto 0$, for $[n] \in \Delta$. Analytic locally on $X$, we can find (by definition of standard Berkovich smoothness) a morphism $X \to \mathbb{A}_K^{d,\mathrm{an}}$ giving an presentation as in \cref{x2edaj3}.
Via Taylor expansions, this morphisms yields an isomorphism 
$$
X^{\dagger,n} \cong X \times \mathbb{G}_a^{\dagger, dn}.
$$
Now the claim follows from Poincar\'e's lemma for $\mathbb{G}_a^{\dagger}$, which follows from \cite[Lemma 26]{TammeLazardIso} after taking colimits of open discs of radius $r$ as $r\to 0$.

Since for any $[n] \in \Delta$ and $m \in \mathbb{N}$
$$
\Omega_{X^{\dagger,n}/K}^m = \mathcal{O}_{X^{\dagger,n}} \otimes_{\mathcal{O}_{X^{n+1}}} \Omega_{X^{n+1}/K}^m,
$$
% part (2) will follow from descent of the formation of $\Omega_{-/K}^m$ on smooth $\dagger$-rigid spaces for Berkovich smooth maps, the fact that $X \to \GSpec(K)$ is locally a  surjective standard Berkovich smooth map  (\Cref{sec:appr-gelf-rings-examples-of-dagger-formally-smooth-maps}),  and that $\Omega_{K/K}^m=0$ if $m>0$.
% Indeed, this descent will prove that $\Omega^{m}_{X^{\times_K \bullet +1}}$ is a cosimiplicial $\mathcal{O}_{X^{\times_K \bullet +1}}$-module homotopic equivalent to zero for $m>0$.
% Since the derived base change along the morphism $\mathcal{O}_{X^{\times_K\bullet+1}}\to \mathcal{O}_{X^{\dagger,\bullet}}$ of cosimplicial rings preserves homotopic-to-zero cosimplicial modules, we will have that $\Omega_{X^{\dagger,\bullet}/K}^m$ is an homotopic-to-zero cosimplicial $\mathcal{O}_{X^{\dagger,\bullet}/K}$ for all $m> 0$, in particular that $\ob{Tot}(\Omega_{X^{\dagger,\bullet}/K}^m)=0$.
part (2) can be proven as in \cite[Lemma 2.15]{bhatt_dejong_crystalline_cohomology_and_de_rham_cohomology} via an argument using local charts of $X$.  

% To prove the descent of the exterior powers of the cotangent bundle along (locally) standard Berkovich smooth morphism we can argue locally after an analytic cover of $Y$ and $X$ (we note that the cotangent complex of a standard Berkovich smooth morphism is given by the differentials in degree $0$, e.g., using that standard Berkovich smooth maps are $\dagger$-formally smooth, \cref{sec:appr-gelf-rings-examples-of-dagger-formally-smooth-maps}).
% Hence, it suffices to see that if $f\colon Y\to X$ is a standard Berkovich smooth cover of affinoid $\dagger$-rigid spaces, then $f$ is descendable.
% Indeed, the descent of the cotangent complex will follow from \cref{LemmaDescentLAB} below.
% Seeing that $f$ is descendable can be done in several ways: see e.g.\ \cite[Theorem 4.15]{mikami2023fppf} for related statements.
% For brevity, we offer here an (overkill) argument using results proved below (independently of our statement!): one can factor the map as the composite 
% \[
% f\colon Y\to Y^{\dRall/X}\to X.
% \]
% The map $Y\to Y^{\dRall/X}$ is descendable by \Cref{sec:cohom-smooth-maps-descendability-for-smooth-maps}.
% The map $Y^{\dRall/X}\to X$ is a pullback of $f^{\dRall/K}\colon Y^{\dRall/K}\to X^{\dRall/K}$ along $X\to X^{\dRall/K}$, and  $f^{\dRall/K}$ is descendable by \Cref{CorDescendableMapsqfla} (2) as $f$ is an arc-cover at the level of diamonds and any epimorphism of \emph{affinoid} Gelfand stacks is descendable. 
\end{proof}

\begin{remark}
\label{de-rham-comparison-via-filtered-dR-stack}
Another way to prove \Cref{cor:cohomology-de-rham-stack-and-de-rham-cohomology}, which we learnt from \cite[Theorem 2.3.6]{FGauges}, uses the \textit{filtered analytic de Rham stack}. Let us recall the definition of the filtered analytic de Rham stack, which can be constructed similarly to \cite[Definition 5.2.2]{camargo2024analytic}.
  We note first that the qfd Gelfand stack $\mathbb{A}^{1, \mathrm{an}}_{\Q_p}/\mathbb{G}_m^{\rm an}$ classifies generalized Cartier divisors on a qfd Gelfand ring $A$, i.e., pairs of a line bundle $L$ with an $A$-linear map $s\colon L\to A$.
  Now, given any morphism $g\colon W\to Z$ of qfd Gelfand stacks, one can consider its filtered de Rham stack $W^{\dRall/Z,+}$ relative to $Z$ as the $!$-sheafification of the functor sending a qfd Gelfand ring $A$ over $Z$ with a generalized Cartier divisor $s\colon L\to A$ to\footnote{The natural structure of $\mathrm{Nil}^\dagger(A)\otimes_A L\to A$ as an animated $A$-algebra can be deduced from \cite[Remark 9.13]{course_algebraic_d_modules}.}
  \[
    W(\mathrm{cofib}(\mathrm{Nil}^\dagger(A)\otimes_AL\to A)).
  \]
  In particular, there exists a natural morphism $W^{\dRall/Z,+}\to Z\times_{\GSpec(\Q_p)}\mathbb{A}^{1, \mathrm{an}}_{\Q_p}/\mathbb{G}_m^{\rm an} $.
  Assuming that $g$ is $\dagger$-formally smooth, the natural map $W\times_{\GSpec(\Q_p)} \mathbb{A}^{1}_{\Q_p}/\mathbb{G}_m\to W^{\dRall/Z,+}$ is an epimorphism.
  The pullback of $W^{\dRall/Z,+}$ to $Z\times_{\GSpec(\Q_p)}\GSpec(\Q_p)/\mathbb{G}_m^{\rm an}$ is the relative Hodge stack $W^{\mathrm{Hdg}/Z}$ of $W$ over $Z$, while its pullback to $Z = Z\times_{\GSpec(\Q_p)} \mathbb{G}_m^{\rm an}/\mathbb{G}_m^{\rm an}$ is the relative de Rham stack $W^{\dRall/Z}$.

  Back to the notations of \Cref{cor:cohomology-de-rham-stack-and-de-rham-cohomology}, the idea is to prove the stronger statement that the pushforward along $X^{\dRall/K,+}\to  \mathbb{A}^{1,\rm an}_{K}/\mathbb{G}_m^{\rm an}$ calculates filtered de Rham cohomology.
  Here we use the map of analytic stacks $ g\colon \mathbb{A}^{1,\rm an}_{K}/\mathbb{G}_m^{\rm an}\to \mathbb{A}^{1,\rm alg}_{K}/\mathbb{G}_m^{\rm alg}$ from the analytic to the algebraic stacks to compare with classical filtered modules.
  One can prove that, at least for complete filtered objects, the pullback along $g$ is fully faithful; this amounts to proving that the pullback along the map of analytic stacks $g'\colon \widehat{\mathbb{A}}^{1}_{K}/\mathbb{G}_m^{\rm an}\to \widehat{\mathbb{A}}^{1}_{K}/\mathbb{G}_m^{\rm alg}$ is fully faithful (with $\widehat{\mathbb{A}}^{1}_{K}$ to be the formal completion at $0$ of the affine line seen as an Ind-affinoid analytic stack), which one can show by noticing that $g'$ is prim and that $g'_* 1=1$.
  
  More precisely, one proves that the pushforward of the unit along $f^{\dRall,+}\colon X^{\dRall/K,+}\to \mathbb{A}^{1,\rm an}_K/\mathbb{G}^{\rm an}_K\times |X|_{\Betti}$ is given by the Hodge-filtered de Rham complex, seen as an object on $\mathbb{A}^{1,\rm an}_K/\mathbb{G}^{\rm an}_K\times |X|_{\Betti}$.
  To show this, one first considers the pullback square
  \[
  \begin{tikzcd}
  X^{\rm Hdg/K} \ar[r,"f^{\rm Hdg}"] \ar[d] & \GSpec K /\mathbb{G}^{\rm an}_K \times |X|_{\Betti} \ar[d, "\iota"] \\ 
  X^{\dRall/K,+}   \ar[r,"f^{\dRall,+}"] & \mathbb{A}^{1,\rm an}_K/\mathbb{G}^{\rm an}_K \times |X|_{\Betti}.
  \end{tikzcd} 
  \] 
  Since $\iota$ is suave (being the inclusion of a Cartier divisor, and hence a local complete intersection), we have suave base change for lower $*$ and obtain that 
  \[
  \iota^* f^{\dRall,+}_* 1= f^{\rm Hdg}_* 1. 
  \]
  Using Cartier duality between the analytic Hodge stack of $X$ and the analytic cotangent bundle \cite[Theorem 4.3.13]{camargo2024analytic}, one deduces that 
  \[
  f^{\rm Hdg}_* 1= \bigoplus_{i=0}^d \Omega^i_X (i) [-i]
  \]
  where $d$ is the dimension of $X$ and the twist $(i)$ is the $i$-fold tensor product of the standard representation of $\mathbb{G}_m^\an$.
  A local computation using \'etale charts proves that the filtration of  $f^{\dRall,+}_* 1$ is  complete, exhaustive and supported in degrees $[0,d]$.
  Since the degree $i$ term $\Omega^i_X (i)$ sits in cohomological degree $i$ for the natural $t$-structure on sheaves on the topological space $|X|$, the Beilinson $t$-structure on complete filtered objects yields that $f^{\dRall,+}_* 1$ is given by a complex of $K$-vector spaces on $|X|$ of the form 
  \begin{equation}\label{eqComplexDRComplexfiltered}
  \mathcal{O}_X \xrightarrow{\delta} \Omega^1_X \xrightarrow{\delta} \cdots \xrightarrow{\delta} \Omega^d_X,
  \end{equation}
  sitting in cohomological degrees $[0,d]$.
  We  are left to show  that \eqref{eqComplexDRComplexfiltered} is given by the de Rham complex of $X$.
  To prove that, one shows by an explicit computation via local \'etale coordinates that the map $\mathcal{O}_X \xrightarrow{\delta} \Omega^1_X$ is a  derivation of solid $K$-vector spaces.
  On the other hand, \eqref{eqComplexDRComplexfiltered} has the structure of a cdg algebra (being a commutative algebra for the heart of Beilinson $t$-structure), and   by the universality of the de Rham complex there is a unique morphism of cdg algebras from $(\Omega^{\bullet}_X,d)\to (\Omega^{\bullet}_X, \delta)$. One then checks that this map is an isomorphism by a local computation using \'etale charts.

 For more details on the argument we refer the reader to \cite[Theorem 2.3.6]{FGauges} and to \cite[\S 9.3.2]{course_algebraic_d_modules}: these references work in the algebraic setting, but the same works  in the analytic case.
\end{remark}

\begin{remark}\label{dRcoeff}
As expected, one can also extend \Cref{cor:cohomology-de-rham-stack-and-de-rham-cohomology} to coefficients. More precisely, keeping the notation of \textit{loc. cit.}, one can show that
\begin{equation*}\label{vbwc}
 \VB(X^{\dRall/K})\cong\{\text{vector bundles with integrable connection on }X/K\},
\end{equation*}
and given $(\mathcal{E}, \nabla)$ a vector bundle with integrable connection on $X$, denoting by $(\mathcal{E}, \nabla)^{\dRall}$ the corresponding vector bundle on $X^{\dRall/K}$, we have a natural isomorphism
$$\Gamma(X^{\dRall/K}, (\mathcal{E}, \nabla)^{\dRall})\cong R\Gamma_{\dR}(X/K, (\mathcal{E}, \nabla)).$$
\end{remark}
 A similar statement holds for $X^{\dR}$ if $K$ is in addition qfd. This can be shown to follow from the fact that there is a natural fully faithful embedding of the category $\ob{D}(X^{\dRall/K})$ into the category of quasi-coherent sheaves on the \textit{algebraic de Rham stack} of $X$ relative to $K$, cf. \cite[Definition 5.1.1, Proposition 5.2.11]{camargo2024analytic} (which, in addition, induces an equivalence on perfect modules/vector bundles), together with a comparison between the category of perfect modules/vector bundles on the algebraic de Rham stack of $X$ relative to $K$, and the category of crystals in perfect modules/vector bundles on the infinitesimal site of $X/K$ (this can be checked using a by now standard argument, cf. \cite[Theorem 3.4]{bhatt2023crystalschernclasses}, \cite{GuoCrystalline}).

\subsection{An abstract result in $6$-functor formalisms}

In this small subsection we prove one technical result on $6$-functor formalisms that will be used  later. The reader can use this subsection as blackbox.

\begin{lemma}\label{TechnicalLemma6Functors}
Let $(\mathcal{C},E)$ be a geometric setup and $\ob{D}$ a presentable $6$-functor formalism on  $(D,E)$. Let $S\in \mathcal{C}$ and consider a sequential limit diagram of $!$-able maps over $S$
\[
X_{\infty}\to \cdots \to X_{n+1}\to X_n\to \cdots \to X_0 
\]
with arrows $f_{m\to n}\colon X_m\to X_n$ for $m\geq n \in \N\cup\{\infty\}$. Suppose that the following conditions hold: 

\begin{itemize}
\item[(a)] All the maps $f_{m\to n}$ are prim for $m\geq n\in \N\cup \{\infty\}$. 

\item[(b)] Given $n\in \N$  the natural map $\varinjlim_{m\geq n} f_{m\to n,*}1\to  f_{\infty\to n,*} 1$ of objects in $\ob{D}(X_n)$ is an isomorphism. 

\item[(c)] There is a prim map  $g\colon Y_{\infty}\to X_{\infty}$ satisfying universal $\ob{D}^*$- and $\ob{D}^!$-descent such that we have an equivalence in the kernel category $\ob{K}_{\ob{D},S}$ (see \cite[Definition D.4.1]{heyer20246functorformalismssmoothrepresentations})
\[
Y_{\infty}\times_{X_{\infty}} Y_{\infty} = \varprojlim_{n} Y_{\infty}\times_{X_n} Y_{\infty}. 
\]

\end{itemize}

Then the following hold: 

\begin{enumerate}

\item  We have $X_{\infty}=\varprojlim_{n} X_{n}$ in the kernel category $\ob{K}_{\ob{D},S}$. In particular,
\[
\ob{D}(X_{\infty})=\varprojlim_{n} \ob{D}_!(X_n).
\]

\item An object $P\in \ob{D}(X_{\infty})$ is suave over $S$ if and only if $P_n:=f_{\infty\to m,!} P\in \ob{D}(X_n)$ is suave over $S$ for all $n$. In that case,   the suave dual is given by $\ob{SD}_{S}(P) = \varinjlim_{n} f_{\infty\to n}^{\flat} \ob{SD}_S(P_n) $ where $f_{\infty\to n}^{\flat}$ is the left adjoint of $f_{\infty\to n,!}$ (naturally isomorphic to $ \delta_{\infty\to n} \otimes f^*_{\infty\to n}$ where $\delta_{\infty\to n}$ is the codualizing sheaf of $f_{\infty\to n}$).
\end{enumerate}

\end{lemma}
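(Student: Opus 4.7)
The strategy is to combine the descent hypothesis (c) with the limit compatibility (b) to transport statements between $X_\infty$ and the tower $(X_n)$, essentially reducing everything to the behavior of $Y_\infty$, where the hypotheses are manifestly compatible with sequential limits.

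First I would establish (1). The functor under consideration is
\[
\Phi \colon \ob{D}(X_\infty) \longrightarrow \varprojlim_n \ob{D}_!(X_n), \quad P \mapsto (f_{\infty \to n,!} P)_n,
\]
with transition maps given by the prim pushforwards $f_{m\to n,!}$ allowed by (a). Since $g \colon Y_\infty \to X_\infty$ is prim and satisfies universal $*$- and $!$-descent by (c), the category $\ob{D}(X_\infty)$ is identified with $\ob{Tot}(\ob{D}^!(Y_\infty^{\bullet+1/X_\infty}))$. Similarly, using that $f_{\infty\to n}$ is prim and pulling back $g$ along it, the composite $Y_\infty \to X_n$ is prim and admits universal $*$- and $!$-descent (being the base change of $g$ along the prim map $f_{\infty\to n}$, by \cite[Lemma 4.7.1]{heyer20246functorformalismssmoothrepresentations}), so $\ob{D}(X_n) \simeq \ob{Tot}(\ob{D}^!(Y_\infty^{\bullet+1/X_n}))$. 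Condition (c) identifies the second term of the Čech nerves in the kernel category; I would then extend this to all higher self-products by an inductive Fubini-type argument using associativity of fiber products and stability of prim covers under base change. Combining this with commutation of sequential limits with totalizations yields the desired equivalence $\Phi$.

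Next, I would address (2). The forward direction (``suave implies each $P_n$ is suave'') uses only that $f_{\infty \to n,!}$ preserves suaveness, which holds because $f_{\infty \to n}$ is prim, cf.\ \cite[Lemma 4.5.16]{heyer20246functorformalismssmoothrepresentations}. For the converse, set
\[
Q := \varinjlim_n f_{\infty \to n}^{\flat} \ob{SD}_S(P_n),
\]
well-defined since the prim condition (a) on $f_{\infty\to n}$ guarantees that $f_{\infty\to n,!}$ admits a left adjoint $f_{\infty\to n}^\flat \simeq \delta_{\infty\to n} \otimes f_{\infty\to n}^*$. To verify that $Q$ realizes the suave dual, I would test the defining natural transformation
\[
Q \otimes \pi^*(-) \longrightarrow \iHom_{X_\infty}(P, \pi^!(-))
\]
(with $\pi \colon X_\infty \to S$ the structure map) by applying $f_{\infty\to n,!}$ and invoking (1): the statement reduces to the analogous equivalence at each stage, which holds by suaveness of $P_n$. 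Condition (b) enters here to ensure that the codualizing sheaves $\delta_{\infty \to n}$, which are defined via $f_{\infty\to n,*}1$ and duality, interact compatibly with the filtered colimit over $n$, so that the candidate $Q$ and the actual suave dual agree.

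The main obstacle I anticipate lies in the first step: making rigorous the identification of the higher-order self-products $Y_\infty^{\times_{X_\infty} k+1}$ with $\varprojlim_n Y_\infty^{\times_{X_n} k+1}$ in the kernel category $\ob{K}_{\ob{D},S}$. Condition (c) provides this for $k=1$, but the inductive step requires a careful bookkeeping of fiber products inside $\ob{K}_{\ob{D},S}$, using universality of the descent for $g$ together with primness of all the relevant maps to commute the constructions past one another. Once this point is settled, (1) follows by assembling the totalizations, and (2) is then a direct consequence of (1) combined with the standard adjunction-based characterization of suaveness.
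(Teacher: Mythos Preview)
Your approach to (1) has a genuine gap. The claim that the composite $Y_\infty \to X_\infty \to X_n$ admits universal $\ob{D}^*$- and $\ob{D}^!$-descent ``being the base change of $g$ along the prim map $f_{\infty\to n}$'' is simply false: this map is a \emph{composition}, not a base change. Universal descent is stable under base change, but not under composition with an arbitrary prim map (a closed immersion is prim but not descendable). Without this, you cannot write $\ob{D}(X_n)$ as a totalization over $Y_\infty^{\bullet/X_n}$, and your \v{C}ech-descent strategy collapses. Your secondary worry, the inductive passage from the level-$1$ statement in (c) to higher self-products $Y_\infty^{\times_{X_\infty} k+1}\simeq \varprojlim_n Y_\infty^{\times_{X_n} k+1}$, is also real: the inductive step requires commuting a fiber product over $X_\infty$ with the sequential limit, which is essentially the content of (1) itself.

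The paper's argument sidesteps both issues. Rather than \v{C}ech descent, it exploits the self-duality of objects in $\ob{K}_{\ob{D},S}$ to convert the statement for $F_!$ into the equivalent one for $F_*\colon \ob{D}(X_\infty)\to \varprojlim_n \ob{D}_*(X_n)$, and then checks the unit and counit of the adjunction $(F^*,F_*)$ directly. The unit reduces, via projection formula and primness, to condition (b) on the nose. The counit is checked after pre- and post-composing with $g_*$ and $g^*$ respectively (using that $g^*$ is conservative by universal $*$-descent and that images of $g_*$ generate by universal $!$-descent), which by proper base change reduces exactly to the level-$1$ statement in (c); no higher \v{C}ech terms and no descent for $Y_\infty\to X_n$ are needed. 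Part (2) then follows formally from (1) via the general results on suaveness in limits in the kernel category (Heyer--Mann, Corollary D.4.9 and Proposition D.4.8); your hands-on approach via the candidate $Q$ is plausible once (1) is in place, but the paper's citation is more economical, and condition (b) plays no further role there.
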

\begin{proof}
Let us first prove (1). By definition of a limit in $2$-categories (\cite[Definition D.4.1]{heyer20246functorformalismssmoothrepresentations}),  we need to show that for all $Z\in \mathcal{C}_{/S,E}$, the natural map of morphism categories 
\begin{equation}\label{eq01j3o2d3d}
\ob{Fun}_{\ob{K}_{\ob{D},S}}(Z, X_{\infty})\to \varprojlim_{n} \ob{Fun}_{\ob{K}_{S}}(Z, X_{n})
\end{equation}
is an equivalence. By definition of the kernel category, $\ob{Fun}_{\ob{K}_{S}}(Z, Y)= \ob{D}(Y\times_S Z)$ and the transition maps of \cref{eq01j3o2d3d} are given by lower $!$-maps.
 Notice that the conditions (a) and (b) are preserved under pullbacks along maps $h\colon Z\to S$ in $\mathcal{C}$ thanks to proper base change.
 If $h$ is $!$-able,  condition (c) is also preserved along the pullback map of kernel categories  $h^*\colon \ob{K}_{\ob{D},S}\to \ob{K}_{\ob{D},Z}$ as this functor is a right adjoint by \cite[Lemma 4.2.7]{heyer20246functorformalismssmoothrepresentations}, and thus preserves limits.
  Therefore, after taking base changes, it suffices to show that the natural map 
\[
F_!\colon \ob{D}(X_{\infty})\to \varprojlim_{n} \ob{D}_!(X_n)
\]
along lower $!$-maps is an equivalence of categories (note that the objects in $\ob{K}_{\ob{D},S}$ are given by $!$-able maps to $S$). 

The functor $F_!$ is given by the limit of the functors $f_{\infty\to n,!}\colon  \ob{D}(X_{\infty})\to \ob{D}_!(X_n)$.
 As each $f_{\infty\to n}$ is prim, $f_{\infty\to n,!}$ has a left adjoint $f_{\infty\to n}^{\flat}$ and   by \cite[Lemma D.4.7 (i)]{heyer20246functorformalismssmoothrepresentations} $F_!$ has a left adjoint  given by $F^{\flat}=\varinjlim_{n} f_{\infty\to n}^{\flat}$. 

To prove part (1) of the lemma it suffices to show that both the unit and counit of the adjunction is an equivalence. 

Since the $f_{m\to n}\colon X_m\to X_n$ maps are prim,  the map $f_{m\to n,!}\colon X_m\to X_n$ in $\ob{K}_{\ob{D},S}$ is a right adjoint.
 Passing to duals one sees that $f_{m\to n}^*\colon X_n\to X_m$ is a left adjoint, and then passing to its right adjoint we obtain the map $f_{m\to n,*}\colon X_n\to X_m$  whose composite with $\ob{D}$ is the lower $*$-map.
 The passage from $f_{m\to n,!}$ to $f_{m\to n,*}$ is the following composition of equivalences of $2$-categories
\[
\ob{K}_{\ob{D},S}^{R} \xrightarrow{\sim} (\ob{K}_{\ob{D},S}^{L})^{\op} \xrightarrow{\sim} (\ob{K}_{\ob{D},S}^{R,\ob{co},\op})^{\op}= \ob{K}_{\ob{D},S}^{R,\ob{co}}
\]
where the first equivalence corresponds to passing to the dual  (since all objects in $\ob{K}_{\ob{D},S}$  are self duals, see \cite[Proposition 4.1.4]{heyer20246functorformalismssmoothrepresentations}), and the second equivalence is the passage to the right adjoint of \cite[Theorem D.3.17]{heyer20246functorformalismssmoothrepresentations}.
 Therefore,  to show that $X_{\infty}=\varprojlim_{n} X_n$, we can either show the lower $!$ or lower $*$ maps.
 In other words, it suffices to show that the functor 
\[
F_*\colon \ob{D}(X_{\infty})\to \varprojlim_{n} \ob{D}_*(X_n)
\]
given by lower $*$-maps is an equivalence.
 In this case, $F_*=\varprojlim_{m}(f_{m\to n,*})$ and its left adjoint is $F^*=\varinjlim_m f^{*}_{m\to n}$.

\textit{Unit.} We need to prove that the map $ 1\to  F_{*} F^{*}$ is an equivalence as endofunctors of  $\varprojlim_{n} \ob{D}_*(X_n)$.
 By the projection formula, since all maps are prim, it suffices to evaluate this at the unit object of $\varprojlim_{n} \ob{D}_*(X_n)\cong\varinjlim_{n} \ob{D}^*(X_n)$.
 The unit object of this category in the presentation $\varprojlim_{n} \ob{D}_*(X_n)$ is precisely the cocartersian section given by $(\varinjlim_{m\geq n} f_{m\to n,*} 1)_{n}$.
 Therefore, the unit being an equivalence is precisely the equivalence 
\[
\varinjlim_{m\geq n} f_{m\to n,*} 1 \xrightarrow{\sim} f_{\infty\to n,*} 1
\] 
for all $n\in \N$, which holds by condition (b).

\textit{Counit.} We need to prove that the map $ F^{*} F_{*}\to 1$ is an equivalence as endofunctors of $\ob{D}(X_{\infty})$.
  Since the map $g\colon Y_{\infty}\to X_{\infty}$ satisfies universal  $\ob{D}^!$-descent and is prim, we can prove that after right composing with the functor $g_{*}\colon \ob{D}(Y_{\infty})\to \ob{D}(X_{\infty})$.
 Since $g$ satisfies universal $\ob{D}^*$-descent, we can also prove this after left composing with $g^*$.
 Therefore, it suffices to show that the  natural map of endofunctors  $  g^*F^*F_* g_*\to g^*g_*$ of $\ob{D}(Y_{\infty})$ is an equivalence.
  For all $n \in \N\cup \{\infty\}$ consider the pullback diagram 
\[
\begin{tikzcd}
Y_{\infty}\times_{X_n} Y_{\infty}  \ar[r,"\pi_{2,n}"] \ar[d,"\pi_{1,n}"] & Y_{\infty} \ar[d,"f_{\infty\to n}\circ g"] \\ 
Y_{\infty} \ar[r, "f_{\infty\to n}\circ g"] & X_{n}.
\end{tikzcd}
\]
Applying limits along $n$, by proper base change we get a commutative diagram 
\[
\begin{tikzcd}
\varprojlim_{n} \ob{D}_*(Y_{\infty}\times_{X_n} Y_{\infty})  \ar[d,"\varprojlim_{n} \pi_{1,n,*}"] & \ar[l,"\varinjlim_{n} \pi_{2,n}^*"'] \ob{D}(Y_{\infty}) \ar[d,"F_*g_*"]\\
\ob{D}(Y_{\infty})  & \ar[l,"g^*F^*"'] \varprojlim_n \ob{D}_*(X_n)
\end{tikzcd}
\]
Similarly, we have a commutative diagram 
\[
\begin{tikzcd}
\ob{D}(Y_{\infty}\times_{X_{\infty}} Y_{\infty})   \ar[d,"\pi_{1,\infty,*}"] & \ar[l,"\pi_{2,\infty}^*"'] \ob{D}(Y_{\infty}) \ar[d,"g_*"] \\ 
\ob{D}(Y_{\infty})  & \ar[l, "g^*"'] \ob{D}(X_{\infty}).
\end{tikzcd}
\]
But by hypothesis (c), the natural map $\ob{D}(Y_{\infty}\times_{X_{\infty}}Y_{\infty})\to \varprojlim_n \ob{D}(Y_{\infty}\times_{X_{n}}Y_{\infty})$ is an equivalence,  this yields the equivalence of the counit, and finishes the proof of part (1).

Part (2) follows formally from \cite[Corollary D.4.9]{heyer20246functorformalismssmoothrepresentations}.
 Indeed, by part (1) we have an equivalence $X_{\infty}=\varprojlim_n X_n$.
 Let $P\in \ob{D}(X_{\infty})$ be a suave object, then, since the map $f_{\infty\to n}\colon X_{\infty}\to X_n$ is prim, the object $f_{\infty\to n,!} P\in \ob{D}(X_{n})$ is suave for all $n\in \N$.
  Conversely, if $f_{\infty\to n,!} P\in \ob{D}(X_n)$ is suave for all $n\in \N$, then   \cite[Corollary D.4.9]{heyer20246functorformalismssmoothrepresentations} implies that $P$ is itself suave. The formula for the suave dual of $P$ is a consequence of \cite[Proposition D.4.8]{heyer20246functorformalismssmoothrepresentations}.
\end{proof}

\begin{remark}
A statement closely related to \Cref{TechnicalLemma6Functors} is \cite[Proposition 1.35]{mikami2025finiteness}.
\end{remark}

\subsection{Cohomology of analytic de Rham stacks: some properties}
We establish some basic properties of the cohomology of the analytic de Rham stack. We start with the fact that it is finitary and disc-invariant.

\begin{proposition}\label{LemComputationdeRhamLimit}
Let $A$ be a separable Gelfand ring and let $B_{\infty}=\varinjlim_n B_n$ be a countable colimit of separable $A$-algebras. Set $X=\GSpec (A)$ and $Y_n=\GSpec(B_n)$ for $n\in \N\cup\{\infty\}$. Consider the natural maps of  de Rham stacks  $f_{n}\colon Y_n^{\dRall}\to X^{\dRall}$ for $n\in \N\cup\{\infty\}$. Then the natural map
\begin{equation}\label{eq03k1emfd}
\varinjlim_n f_{n,*}1 \to f_{\infty,*} 1
\end{equation}
is an equivalence.
\end{proposition}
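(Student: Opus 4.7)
My plan is to reduce the equivalence to an identity involving overconvergent neighborhoods of diagonals via $!$-descent, and then exchange a filtered colimit with a cosimplicial totalization.

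First, I would invoke $!$-descent for $\ob{D}(X^{\dRall})$ to check the claimed equivalence after pulling back along test maps $\pi\colon\GSpec(C)\to X^{\dRall}$. By \Cref{LemBasisTopologyGelfandRings}(2) it suffices to consider $C$ ranging over separable nilperfectoid Gelfand rings; for any such $C$, \Cref{prop:formula-functor-of-points-drall} identifies $X^{\dRall}(\GSpec(C))=\Hom(A,C^u)$, so $\pi$ corresponds to a morphism $\bar\pi\colon A\to C^u$. Applying base change in the $6$-functor formalism, $\pi^* f_{n,*}\mathcal{O}=(p_n)_*\mathcal{O}$ with $p_n\colon Z_n := Y_n^{\dRall}\times_{X^{\dRall}}\GSpec(C)\to\GSpec(C)$, so the question becomes whether
\[
\varinjlim_n R\Gamma(Z_n,\mathcal{O})\to R\Gamma(Z_\infty,\mathcal{O})
\]
is an equivalence in $\ob{D}(C)$, noting that $Z_\infty=\varprojlim_n Z_n$ since $(-)^{\dRall}$ preserves limits.

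Next, I would replace each $B_n$ by a $\dagger$-formally smooth approximation $\tilde{B}_n$ over $A$ via \Cref{xsh29k}, realized as a sequential colimit of rational localizations of overconvergent Tate algebras with $\tilde{B}_n^u=B_n^u$. Since perfectoidization depends only on uniform completion, this does not alter $Y_n^{\dRall}$ or $Z_n$, and functorial transitions in $n$ can be arranged by a cofinal reindexing of the bi-indexed system produced by \Cref{xsh29k}. With this reduction, $\GSpec(\tilde{B}_n)\to Y_n^{\dRall/X}$ is an epimorphism (a relative version of \Cref{CorollaryDaggerSmoothComparison}), and by \Cref{PropdeRhamBerkovich}(2) the Gelfand stack $Z_n$ is presented as the geometric realization of a simplicial Gelfand stack of overconvergent neighborhoods of diagonals in the \v{C}ech nerve of $\GSpec(\tilde{B}_n\otimes_A C)\to\GSpec(C)$. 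Consequently $R\Gamma(Z_n,\mathcal{O})$ is computed as a cosimplicial totalization of overconvergent thickenings, and at each cosimplicial level the termwise identity reduces to the compatibility of overconvergent neighborhoods with sequential filtered colimits provided by \Cref{xsu29s}.

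Finally, to interchange $\varinjlim_n$ with the totalization over $\Delta$, I would invoke the descendability of $\GSpec(\tilde{B}_n)\to Z_n$ from \Cref{thm:prim-descendable-berkovich} (in its relative form), which makes the partial totalizations pro-constant. The hard part will be securing a uniform-in-$n$ bound for the index of descendability, since without such uniformity the filtered colimit over $n$ need not commute with the totalization over $\Delta$. Controlling this bound through the explicit sequential presentation of $\tilde{B}_n$ from \Cref{xsh29k} and the structure of rational localizations of overconvergent Tate algebras is the main technical obstacle; once secured, the interchange becomes legitimate and the desired equivalence follows.
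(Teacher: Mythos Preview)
Your overall strategy is close in spirit to the paper's, but there is a genuine structural gap: the uniform-in-$n$ descendability bound you flag as ``the main technical obstacle'' is indeed not available in this generality, and the paper does \emph{not} try to obtain it. There is no dimension bound on the approximations $\tilde B_n$ coming from \Cref{xsh29k} (the number of variables in the overconvergent Tate algebras grows with $n$), so the index of descendability of $\GSpec(\tilde B_n)\to Z_n$ will typically go to infinity, and the interchange of $\varinjlim_n$ with the totalization over $\Delta$ fails as stated.

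The paper sidesteps this entirely by taking the \v{C}ech nerve on the \emph{$X$ side} rather than on the $Y_n$ side. Concretely: first apply \Cref{xsh29k} also to $A$ (not just to the $B_n$), so that $A$ itself becomes $\dagger$-formally smooth and $X\to X^{\dRall}$ is an epimorphism. Let $X^\bullet\to X^{\dRall}$ be the \v{C}ech nerve of this fixed map. Since the maps $f_n$ are prim (\Cref{LemmLqfd}), $f_{n,*}$ satisfies base change, and it suffices to check the equivalence after pulling back along $X^{[0]}=X\to X^{\dRall}$. There the relative de Rham stack $Y_n^{\dRall/X}\to X$ has pushforward of the unit given by the de Rham complex of $B_n$ over $A$ (via \Cref{cor:cohomology-de-rham-stack-and-de-rham-cohomology}), and de Rham complexes visibly commute with filtered colimits in $B$ and with base change in $A$. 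No interchange of a colimit with an infinite totalization is needed, because the \v{C}ech nerve is independent of $n$.

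In short: instead of descending along test nilperfectoids $C$ and presenting each $Z_n$ by its own simplicial resolution, descend along the single map $X\to X^{\dRall}$ (after making $A$ nice), and reduce to the elementary fact that formation of de Rham complexes is finitary.
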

\begin{proof}
The passage to the de Rham stack is invariant under uniform completion, thus, thanks to \cref{xsh29k} we can assume without loss of generality that $A$ is a countable colimit of rational localizations of affine spaces over $\Q_p$, that $A\to B_0$ is a rational localization of an affine space over $A$, and that for $n\geq 1$ the map $B_n\to B_{n+1}$ is also a rational localization of an affine space.  In particular, all the algebras are $\dagger$-formally smooth, and the maps $Y_n\to Y_n^{\dRall}$ and $X\to X^{\dRall}$ are epimorphisms.

 Let $X^{\bullet}\to X$ be the \v{C}ech nerve of $X\to X^{\dRall}$ and $f_{n}^{\bullet}\colon Y_{n}^{\dRall/X^{\bullet}}\to X^{\bullet}$ the base change of $Y_n^{\dRall}\to X^{\dRall}$ along $X^{\bullet}$ for $n\in \N\cup\{\infty\}$. Thus, to prove the claim it suffices to show the following claims: 
 
 \begin{itemize}

\item[(a)] For all $\alpha\colon [a]\to [b]$ in $\Delta^{\op}$ the natural map 
\[
\alpha^*  f_{\infty,*}^{[b]} 1 \to f_{\infty,*}^{[a]}1
\] 
 is an equivalence in $\ob{D}(X^{[a]})$.
 
 \item[(b)] The map $\varinjlim_{n\in \N} f_{n,*}^{[0]} 1\to f_{\infty,*}^{[0]} 1$ is an equivalence. 
 
 \end{itemize}
 
Indeed, the claim (a) implies that the $\Delta$-section $(f_{\infty,*}^{[a]} 1)_{[a]\in \Delta}$ in $\ob{D}^*(X^{\bullet})$ is cocartesian and so that it gives rise to the object $f_{\infty,*} 1$. In particular, if $g\colon X\to X^{\dR}$, we have base change 
\[
g^*f_{\infty,*} 1 = f_{\infty,*}^{[0]} 1.
\]
Then, as $g$ is an epimorphism, to show that \cref{eq03k1emfd} is an equivalence it suffices to pullback along $g^*$. The maps $f_{n}$ are prim thanks to \cref{LemmLqfd}, and $f_{n,*}$ satisfies base change, in which case we have to prove precisely claim (b).

Now, (a) and (b) follow from the fact that if $f\colon A\to B$ is of the form as in \cref{xsh29k}, and $f^{\dRall/A}\colon (\GSpec(B))^{\dRall/\GSpec(A)}\to \GSpec(A)$, then $f^{\dRall/A}_* 1$ is given by the de Rham complex of $B$ over $A$, cf. \Cref{cor:cohomology-de-rham-stack-and-de-rham-cohomology}: the formation of de Rham complexes commutes with filtered colimits and base change in the base.
\end{proof}

\begin{lemma}
  \label{sec:-able-maps-disc-invariance-de-rham-cohomology}
  Let $f\colon Y\to \Marc(\Q_p)$ be given by $Y=\Marc(\Q_p\langle T\rangle), Y=\mathbb{D}^{\circ}_{\Q_p}$ or $Y=\A^{1}_{\Q_p}$.
  Then $f^{\dR,\ast}\colon \ob{D}(\GSpec(\Q_p))\to \ob{D}(Y^\dR)$ is fully faithful.
  Moreover, the same assertion holds true after any base change of $f$ to a qfd arc-stack. 
\end{lemma}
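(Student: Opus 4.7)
The plan. By \cref{sec:geom-prop-analyt-1-cohomological-smoothness-for-smooth-rigid-spaces}, in each of the three cases the morphism $f^{\dR}\colon Y^\dR\to \GSpec(\Q_p)$ is cohomologically smooth of pure relative dimension one with dualizing sheaf $\omega_{f^\dR}=1_{Y^\dR}[2]$; equivalently, $(f^\dR)^!\simeq (f^\dR)^*[2]$. Combining this with the adjunction $f^\dR_!\dashv (f^\dR)^!$, the functor $N\mapsto f^\dR_!(N)[2]$ is a left adjoint to $(f^\dR)^*$. Therefore the full faithfulness of $(f^\dR)^*$ is equivalent, via the counit of this adjunction, to the assertion that $f^\dR_!(f^{\dR,*}M)[2]\to M$ is an isomorphism for every $M\in \ob{D}(\GSpec(\Q_p))$, which by the projection formula for $f^\dR_!$ (valid in any $6$-functor formalism) reduces to the single identity
\[
f^\dR_!(1_{Y^\dR})=1_{\GSpec(\Q_p)}[-2].
\]

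The next step is to establish this identity. I would deduce it from its Verdier dual
\[
(f^\dR_!(1))^\vee\cong f^\dR_*(\omega_{f^\dR})=f^\dR_*(1)[2],
\]
which in turn reduces to computing the ordinary pushforward $f^\dR_*(1)=R\Gamma(Y^\dR,1)$. In each case, $Y^\dR$ is the analytic de Rham stack of a smooth $\dagger$-rigid space over $\Q_p$: for $Y=\A^{1,\an}_{\Q_p}$ and $Y=\mathbb{D}^\circ_{\Q_p}$ this is immediate since these are already smooth partially proper rigid varieties, while for $Y=\Marc(\Q_p\langle T\rangle)$, \cref{sec:dagg-form-smooth-example-overconvergent-cohomology-of-rigid-disc}(1) identifies $Y^\dR$ with $\mathbb{D}^{\leq 1,\dR}_{\Q_p}$, the de Rham stack of the overconvergent closed disc. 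Applying \cref{cor:cohomology-de-rham-stack-and-de-rham-cohomology} and the Poincar\'e lemma for the affine line, the open unit disc, and the overconvergent closed disc, respectively, gives $R\Gamma(Y^\dR,1)=\Q_p$ in each case. Dualizing the resulting identity $f^\dR_*(1)[2]=\Q_p[2]$ yields $f^\dR_!(1)=\Q_p[-2]=1[-2]$, as desired.

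For the base-change statement, both the cohomological smoothness of $f^\dR$ and the base-change formula for the lower-shriek are stable under arbitrary base change. Thus, given any morphism $h\colon X\to \Marc(\Q_p)$ from a qfd arc-stack $X$, the base change $f_X^\dR$ still satisfies $(f_X^\dR)^!\simeq (f_X^\dR)^*[2]$, and
\[
(f_X^\dR)_!(1)=h^{\dR,*}\big(f^\dR_!(1)\big)=h^{\dR,*}(1[-2])=1_{X^\dR}[-2].
\]
Replaying the argument of the first paragraph over $X^\dR$ in place of $\GSpec(\Q_p)$ then gives the full faithfulness of $(f_X^\dR)^*$.

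The main obstacle lies in justifying the Verdier-duality identification $(f^\dR_!(1))^\vee\cong f^\dR_*(1)[2]$, for which one needs the dualizability of $f^\dR_!(1)$ in $\ob{D}(\GSpec(\Q_p))$ so that biduality applies. I expect this to follow from the finiteness of (compactly supported) analytic de Rham cohomology of one-dimensional smooth $\dagger$-rigid spaces, which in turn can be extracted either from the known finiteness for $\A^{1,\an}_{\Q_p}$, $\mathbb{D}^\circ_{\Q_p}$ and $\mathbb{D}^{\leq 1}_{\Q_p}$, or directly from a descendable presentation of $Y^\dR$ as in \cref{thm:prim-descendable-berkovich} combined with the properness of the overconvergent neighborhoods of the diagonals.
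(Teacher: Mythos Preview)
Your first sentence already contains the fatal error: you invoke \cref{sec:geom-prop-analyt-1-cohomological-smoothness-for-smooth-rigid-spaces} to assert that $f^{\dR}$ is cohomologically smooth of relative dimension one in all three cases, but that theorem requires $f$ to be smooth in the sense of \cref{DefRigSmoothRigEtale}, i.e.\ locally an open immersion into a relative affine space followed by finite \'etale maps. The closed unit disc $\Marc(\Q_p\langle T\rangle)$ is a \emph{closed} subspace of $\A^{1,\an}_{\Q_p}$, not an open one, and its associated arc-stack is not smooth in that sense. In fact \cref{rmk:dr-stack-disc-torus-not-coh-smooth} warns explicitly that $\mathbb{D}^{\dR}_{\Q_p}$ is \emph{not} cohomologically smooth over $\GSpec(\Q_p)$. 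Your identification $(f^{\dR})^!\simeq (f^{\dR})^*[2]$ therefore fails in exactly this case.

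The failure is not just formal: in the closed-disc case $f^{\dR}$ is cohomologically proper by \cref{LemmLqfd}, so $f^{\dR}_!=f^{\dR}_*$, and you yourself compute $f^{\dR}_*(1)=\Q_p$ in degree $0$. This is incompatible with your claimed identity $f^{\dR}_!(1)=1[-2]$; had both been true, full faithfulness of $(f^{\dR})^*$ would be equivalent to $\Q_p\cong\Q_p[-2]$.

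The paper's argument avoids smoothness entirely. It first reduces the open-disc and affine-line cases to the closed disc by writing them as increasing unions of closed discs (hence $\ob{D}(-)$ as an inverse limit along $*$-pullback). For the closed disc it uses cohomological \emph{properness} rather than smoothness: properness gives base change for $f^{\dR}_*$ and the projection formula, so full faithfulness of $(f^{\dR})^*$ reduces to the unit $1\to f^{\dR}_*(1)$ being an isomorphism, which is the overconvergent Poincar\'e lemma. The base-change assertion then comes for free from the base-change property of $f^{\dR}_*$ in the proper case. Your approach via the left adjoint of $f^{\dR,*}$ could in principle be made to work for the two genuinely smooth cases, but it does not cover the closed disc and, as you note, leaves the dualizability of $f^{\dR}_!(1)$ unresolved.
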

\begin{proof}
  Writing the open unit disc $\mathbb{D}^{\circ}_{\Q_p}$ or the affine line $\A^1_{\Q_p}$ as a union of closed disc (and thus $\ob{D}(-)$ as an inverse limit along $\ast$-pullback), the claim reduces to the case $Y=\Marc(\Q_p\langle T\rangle)$ (also after any base change).
  In this case, we know from  \cref{LemmLqfd} that $Y^{\dR}\to \GSpec(\Q_p)$ is cohomologically proper, so that the formation of $f^\dR_\ast$ commutes with any base change.
  Using the projection formula (again valid by cohomologically properness), we reduce therefore to checking that the natural map $1\to f^{\dR}_{\ast}(1)$ is an isomorphism, which is the classical fact that the overconvergent de Rham cohomology of the closed unit disc is given by $\Q_p$ in degree $0$, and vanishes in higher degrees.
\end{proof}

We note that cohomology of the de Rham stack can be calculated via excision. This can be justified geometrically as we explain now.

\begin{remark}
  \label{sec:cohom-prop-de-excision}
  By \cref{ExamCondensedAnimadeRham} and a qfd version of \cref{ExaTopologicalSpaceCondensedAnima} there exists a natural map $Z^\dR\to (\underline{Z_{\mathrm{cond}}})^\dR\cong Z_{{\mathrm{cond}},\Betti}$ of qfd Gelfand stacks for any qfd arc-stack $Z$ (here $(-)_{\Betti}$ refers to the qfd version of the Betti stack).
  In particular, one can pullback any excision triangle on $Z_{{\mathrm{cond}}}$ to an excision triangle on the de Rham stack.
  More concretely, if $Z=X^\diamond$ for a derived Berkovich space, then $Z_{{\mathrm{cond}}}$ is the condensed set associated with a locally compact Hausdorff space, and one can use open-closed decompositions of the latter. 
\end{remark}

We next turn to a result, \Cref{PropApproxFFStk}, giving a precise meaning to the idea that the analytic de Rham stack of a diamond, and its category of quasi-coherent sheaves, can be computed via an arbitrary  approximation of the space by smooth rigid varieties, as long as one can control the dimensions. To implement it, we need a good control on the map $Y \to Y^{\dR/X}$ for a rigid smooth morphism $Y\to X$. The proof of \cref{sec:geom-prop-analyt-1-cohomological-smoothness-for-smooth-rigid-spaces} shows that the map $Y\to Y^{\dR/X}$ is descendable (as this reduces via \'etale localization to the descendability of $\mathbb{A}^{1,\dR}_{\Q_p}\to \GSpec(\Q_p)$, which was proven in \cref{LemmProetaleShierkdR}). This argument does however not control the index of descendability. We now show that the index is controlled by the relative dimension.

\begin{proposition}
  \label{sec:cohom-smooth-maps-descendability-for-smooth-maps}
  Let $f\colon Y\to X$ be a Berkovich smooth morphism of qfd derived Berkovich spaces over $\Q_p$, which is of relative dimension $\leq d$ for some $d\in \N$. Then the map $Y\to Y^{\dR/X}$ is descendable of index $\leq d+1$.
\end{proposition}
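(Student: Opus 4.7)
The plan is to reduce the assertion to a computation of the descendability index for the universal $\mathbb{G}_a^{\dagger,d}$-torsor $\GSpec(\Q_p) \to B\mathbb{G}_a^{\dagger,d}$, and then to produce a length-$d$ resolution of the trivial representation using the overconvergent Poincar\'e lemma.

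First, since $Y \to Y^{\dR/X}$ is already an epimorphism of Gelfand stacks (by the relative version of \cref{thm:prim-descendable-berkovich}), its descendability can be tested after localizing analytically on $Y$ and $X$. By the very definition of Berkovich smoothness of relative dimension $\leq d$, I may therefore assume that $f$ factors as $Y \xrightarrow{g} \mathbb{A}^d_X \xrightarrow{\pi} X$ with $g$ rigid \'etale. By \cref{sec:appr-gelf-rings-examples-of-dagger-formally-smooth-maps}(2), $g$ is $\dagger$-formally \'etale, which (after evaluating on separable nilperfectoid qfd Gelfand rings as in \cref{PropComparisonTwodeRhamStacks}) implies that the natural map $Y \to Y^{\dR/\mathbb{A}^d_X}$ is an isomorphism; equivalently, the square
\[
\begin{tikzcd}
Y \ar[r,"g"] \ar[d] & \mathbb{A}^d_X \ar[d] \\
Y^{\dR/X} \ar[r] & \mathbb{A}^{d,\dR/X}_X
\end{tikzcd}
\]
is cartesian. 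Since descendability and its index are preserved under base change, it suffices to prove the statement for $f = \pi$.

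Next, by a relative version of \cref{ExamAffineLineDR}, one has $\mathbb{A}^{d,\dR/X}_X \cong \mathbb{A}^d_X / \mathbb{G}_a^{\dagger,d}$ with $\mathbb{G}_a^{\dagger,d}$ acting by translation. Hence $\pi \colon \mathbb{A}^d_X \to \mathbb{A}^{d,\dR/X}_X$ is a trivial $\mathbb{G}_a^{\dagger,d}$-torsor, obtained by base change from the universal torsor $\GSpec(\Q_p) \to B\mathbb{G}_a^{\dagger,d}$ along the classifying map $\mathbb{A}^{d,\dR/X}_X \to B\mathbb{G}_a^{\dagger,d}$. It thus remains to show that the universal morphism $\GSpec(\Q_p) \to B\mathbb{G}_a^{\dagger,d}$ is descendable of index $\leq d+1$.

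To verify this, I will generalize the computation of \cref{LemmProetaleShierkdR}(4) from $d=1$ to arbitrary $d$. Set $R := \mathcal{O}(\mathbb{G}_a^{\dagger,d}) = \Q_p\langle T_1,\dots, T_d\rangle_{\leq 0}$ and consider the algebraic de Rham complex
\[
0 \to \Q_p \to R \xrightarrow{d} \Omega^1_{R/\Q_p} \xrightarrow{d} \cdots \xrightarrow{d} \Omega^d_{R/\Q_p} \to 0,
\]
viewed as a sequence of $\mathbb{G}_a^{\dagger,d}$-comodules, i.e.\ as a sequence in $\ob{D}(B\mathbb{G}_a^{\dagger,d})$. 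By the overconvergent Poincar\'e lemma (already used in the proof of \cref{cor:cohomology-de-rham-stack-and-de-rham-cohomology}, via \cite[Lemma 26]{TammeLazardIso}) this sequence is exact. Each $\Omega^i_{R/\Q_p}\cong R^{\binom{d}{i}}$ is a finite direct sum of copies of the regular representation $R$, hence lies in the essential image of the pushforward from $\GSpec(\Q_p)$. This length-$d$ resolution exhibits the unit $\Q_p \in \ob{D}(B\mathbb{G}_a^{\dagger,d})$ as an iterated extension of $d+1$ objects in the image of $f_\ast$, which by a standard descendability argument \`a la Mathew (along the lines of the combination of \cref{LemAnDescentPerfectoids} with \cite[Proposition 2.7.2]{mann2022p}) yields that $\GSpec(\Q_p) \to B\mathbb{G}_a^{\dagger,d}$ is descendable of index $\leq d+1$, as desired.

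The main obstacle I anticipate is making precise the first reduction, namely the locality of the descendability index in the analytic topology of $Y$ (given that $Y^{\dR/X}$ is not itself affinoid). This should be handled by combining the permanence of descendability under base change along the epimorphism $Y \to Y^{\dR/X}$ with \cref{PropdeRhamBerkovich}(2), which identifies the \v{C}ech nerve of $Y \to Y^{\dR/X}$ with $((Y^{\times_X n+1})^{\dagger_{\Delta Y}})_{[n]\in \Delta^{\op}}$, so that the Berkovich-smooth structure can be tested on \'etale-local models using the local identification of the overconvergent neighborhood of the diagonal with $Y \times_{\Q_p} \mathbb{G}_a^{\dagger,d}$.
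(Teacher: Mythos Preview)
Your core computation---the overconvergent Poincar\'e lemma for $\mathbb{G}_a^{\dagger,d}$ exhibits the unit of $B\mathbb{G}_a^{\dagger,d}$ as a length-$(d+1)$ iterated extension of objects pushed forward from the point---is correct and is the same mechanism the paper uses. The difference is in how this resolution is globalized to $Y^{\dR/X}$.

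The paper constructs the filtration \emph{globally} from the outset, via the filtered analytic de Rham stack (\cref{de-rham-comparison-via-filtered-dR-stack}) applied to $g\colon Y\to Z:=Y^{\dR/X}$. The pushforward of the unit along $Y^{\dR/Z,+}\to Z\times_{\Q_p}\mathbb{A}^{1,\an}/\mathbb{G}_m^{\an}$ is a filtered object whose underlying complex is $1_{Z}$ and whose associated graded is the Hodge cohomology $g_*\Omega^i_{Y/Z}[-i]$; since $g$ becomes, after base change along the descendable $Y\to Z$, an inverse limit of Berkovich-smooth maps of relative dimension $\le d$ along open immersions, these vanish for $i>d$. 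The resulting global length-$(d+1)$ filtration by $g_*\mathcal{O}_Y$-modules yields the bound directly, with no localization on $Y$ required.

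Your approach instead localizes first to obtain an \'etale chart, and here there is a genuine gap that you correctly flag but do not close: descendability of a \emph{fixed} index is not local for strict closed covers. A map $K^{\otimes(d+1)}\to 1$ in $\ob{D}(Y^{\dR/X})$ can restrict to zero on each member of a cover without being zero globally (the nonzero class in $\Ext^1_{\mathbb{P}^1}(\mathcal O,\mathcal O(-2))$ vanishes on both standard affines). Your proposed fix via the \v{C}ech nerve does not escape this, since testing after pullback along $Y\to Y^{\dR/X}$ presupposes exactly the descendability being proved. The repair is to reverse the order of operations: define the relative de Rham complex $g_*\Omega^\bullet_{Y/X}$ globally as a complex in $\ob{D}(Y^{\dR/X})$ (the differentials, though not $\mathcal O_Y$-linear, are $\mathcal O_{Y^{\dR/X}}$-linear) and check that it resolves $1_{Y^{\dR/X}}$ \emph{locally}, which then legitimately reduces to your $\mathbb{G}_a^{\dagger,d}$ computation. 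The filtered de Rham stack is precisely a clean packaging of this.
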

\begin{proof}
  The statement   essentially follows from the Poincar\'e lemma for the global functions of $\mathbb{G}_a^{\dagger}$, cf. \cref{sec:cohom-smooth-maps-descendable-for-complex-manifolds}.
   To make this precise, we will use the filtered analytic de Rham stack introduced, for a morphism $g: W \to Z$ of qfd Gelfand stacks, in \Cref{de-rham-comparison-via-filtered-dR-stack}.
   We consider it in the case $g\colon W:=Y\to Z:=Y^{\dR/X}$. Note that $h:=g^{\dR/Z,+}\colon W^{\dR/Z,+}\to Z\times_{\GSpec(\Q_p)} \mathbb{A}^1_{\Q_p}/\mathbb{G}_m$ is an isomorphism over the open $Z\times_{\GSpec(\Q_p)} \mathbb{G}_m/\mathbb{G}_m\cong Y^{\dR/X}$.
  In particular, we can see that the object $1_{Y^{\dR/X}}$ naturally acquires a filtration (or rather upgrades naturally to an object $\mathcal{E}$ in $Z\times_{\GSpec(\Q_p)}\mathbb{A}^1_{\Q_p}/\mathbb{G}_m$) via the pushforward along $h$ (we note that this object is different from $1_{Z\times_{\GSpec(\Q_p)} \mathbb{A}^1_{\Q_p}/\mathbb{G}_m}$, which would correspond to the trivial filtration on $1_{Y^{\dR/X}}$).
  The prim morphism $g$ is not Berkovich smooth, but an inverse limit of such along open immersions (at least after base change along the descendable map $Y\to Y^{\dR/X}$).
  This is sufficient to conclude that the relative Hodge cohomology of $g$ can be calculated as in the Berkovich smooth case, i.e., via $g_\ast(\Omega^i_{W/Z})$.
  Now, by the yoga of filtrations via sheaves on $\mathbb{A}^1_{\Q_p}/\mathbb{G}_m$, we see that the associated graded on $1_{Z}$ (defined by $\mathcal{E}$) is given by the graded Hodge cohomology of $g$.
  In particular, this filtration is complete and exhaustive, and of length $\leq d+1$.
  Altogether, we can conclude that $g\colon Y\to Y^{\dR/X}$ is descendable of index $\leq d+1$.
  Indeed, as the unit $1_{Y^{\dR/X}}$ has a filtration with at most $d+1$ non-zero gradeds given by objects pushed forward along $g$, one checks that the $d+1$-fold composition of maps in $\ob{D}(Y^{\dR/X})$, whose pullback to $Y$ is zero, vanishes.
  Applying this to the map $\mathrm{fib}(1_{Y^{\dR/X}}\to g_\ast(1_{Y}))\to 1_{Y^{\dR/X}}$ shows the claim.
\end{proof}

Over fields, we can improve \cref{sec:cohom-smooth-maps-descendability-for-smooth-maps} to the non-smooth case by d\'evissage. 

\begin{proposition}\label{DescentRigidSpacedRStak}
  Let $K$ be a non-archimedean field, which is qfd over $\Q_p$, and let $X/K$ be a  $\dagger$-rigid space of Krull dimension $\leq d$.
  Then the map $f\colon X\to X^{\dR/K}$ is prim and descendable of index $\leq \frac{(d+1)(d+2)}{2}$.
  The same holds for the relative big de Rham stack and an arbitrary non-archimedean field $K$.
\end{proposition}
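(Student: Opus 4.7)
The proof will proceed by induction on the Krull dimension $d$ of $X$, combining the smooth case (\cref{sec:cohom-smooth-maps-descendability-for-smooth-maps}) with a recollement-type additivity for descendability applied via excision on the analytic de Rham stack.

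The primness of $f$ is already the prim half of \cref{thm:prim-descendable-berkovich} (and is preserved by base change, which is needed for the big de Rham stack statement over an arbitrary $K$). For the big de Rham stack over a general $K$, one may, analytic-locally on $X$, present $X$ as the base change along $L\to K$ of a $\dagger$-rigid space $X_L$ of the same Krull dimension over a closed subfield $L\subseteq K$ that is separable as a $\Q_p$-Banach space, by taking $L$ to be the closure of the $\Q_p$-subfield generated by the countably many coefficients defining $X$; as descendability and its index pass through base change, this reduces us to the case of a separable $K$. The factorization $X\to X^{\dagger-\red}\to X^{\dRall/K}$ further allows us to assume $X$ is $\dagger$-reduced (the descendability index of the composite bounding that of the second map). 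The base case $d=0$ is then immediate: since $\operatorname{char}(K)=0$, a $\dagger$-reduced $0$-dimensional $X/K$ is finite \'etale and hence rigid smooth, so \cref{sec:cohom-smooth-maps-descendability-for-smooth-maps} gives descendability of index $\leq 0+1=\tfrac{(0+1)(0+2)}{2}$.

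\textbf{Inductive step.} For $d\geq 1$, generic smoothness over the characteristic-zero field $K$ yields a Zariski closed subspace $Z\subseteq X$ of Krull dimension $\leq d-1$ whose open complement $U:=X\setminus Z$ is rigid smooth of dimension $\leq d$ over $K$. Applying $(-)^{\dRall/K}$ (which preserves open immersions) and using Kashiwara's lemma (\cref{PropdeRhamBerkovich}(1)) to identify $Z^{\dRall/K}=(X^{\dagger_Z})^{\dRall/K}$ as the closed complement of $U^{\dRall/K}$, one obtains an open--closed decomposition
\[
j'\colon U^{\dRall/K}\hookrightarrow X^{\dRall/K}\hookleftarrow Z^{\dRall/K}\colon i'.
\]
Since $f$ is prim, base change for $f_\ast$ along $j'$ and $i'$ applies, producing the excision triangle
\[
j'_!(f_U)_\ast 1_U\ \longrightarrow\ f_\ast 1_X\ \longrightarrow\ i'_\ast (f_{X^{\dagger_Z}})_\ast 1_{X^{\dagger_Z}}
\]
in $\ob{D}(X^{\dRall/K})$, where $f_U\colon U\to U^{\dRall/K}$ and $f_{X^{\dagger_Z}}\colon X^{\dagger_Z}\to Z^{\dRall/K}$ are the canonical maps. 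By \cref{sec:cohom-smooth-maps-descendability-for-smooth-maps} (whose proof using the filtered de Rham stack extends verbatim to the big-de-Rham-stack setting over a separable base), $(f_U)_\ast 1_U$ is descendable on $U^{\dRall/K}$ of index $\leq d+1$; the inductive hypothesis applied to $Z$ gives $f_Z\colon Z\to Z^{\dRall/K}$ descendable of index $\leq \tfrac{d(d+1)}{2}$, and since $f_Z$ factors through the closed immersion $Z\hookrightarrow X^{\dagger_Z}\to Z^{\dRall/K}$, the second map $(f_{X^{\dagger_Z}})_\ast 1_{X^{\dagger_Z}}$ is descendable of the same index.

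\textbf{Main obstacle: recollement for descendability.} The key remaining ingredient, which I expect to be the principal technical step, is the following general lemma: for an analytic stack $T$ with open--closed decomposition $j\colon U\hookrightarrow T\hookleftarrow Z\colon i$ and $A\in\ob{CAlg}(\ob{D}(T))$, if $j^\ast A$ is descendable of index $\leq m$ on $U$ and $i^\ast A$ is descendable of index $\leq n$ on $Z$, then $A$ is descendable of index $\leq m+n$ on $T$. This follows from Mathew's characterization of descendability in terms of the thick tensor ideal $\langle A\rangle$ (the index measuring the minimal number of cofibre extensions needed to build $1_T$ from $A$-modules): the projection formulas $A\otimes j_! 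N = j_!(j^\ast A\otimes N)$ and $A\otimes i_\ast M = i_\ast(i^\ast A\otimes M)$ show that $j_!$ and $i_\ast$ carry $j^\ast A$- and $i^\ast A$-modules respectively to $A$-modules, so that $j_! 1_U\in\langle A\rangle_m$ and $i_\ast 1_Z\in\langle A\rangle_n$; the excision triangle $j_! 1_U\to 1_T\to i_\ast 1_Z$ then exhibits $1_T$ as a two-step extension lying in $\langle A\rangle_{m+n}$. Applying this lemma to $A=f_\ast 1_X$ with the pair $(j',i')$ above yields the desired bound $(d+1)+\tfrac{d(d+1)}{2}=\tfrac{(d+1)(d+2)}{2}$, completing the induction.
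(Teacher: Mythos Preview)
Your proposal is correct and follows essentially the same approach as the paper. The paper also reduces to the reduced case, invokes a stratification into smooth strata via generic smoothness (citing \cite[Lemma 1.2.1]{MR1697371} for a full flag $X=Z_0\supset Z_1\supset\cdots\supset Z_d$ with smooth $C_i=Z_i\setminus Z_{i+1}$, rather than your single open--closed split plus induction on $d$; these unfold to the same computation), and then uses exactly your ``recollement for descendability'' lemma---stated there as \cref{LemmaDevisageDescendable} with the same index bound $e+d$---to sum the contributions $(d+1)+d+\cdots+1=\tfrac{(d+1)(d+2)}{2}$.

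One small slip: in your reduction to the $\dagger$-reduced case the arrow goes the other way; one has a closed immersion $X^{\dagger\text{-}\red}\hookrightarrow X$ (not $X\to X^{\dagger\text{-}\red}$), so the factorization is $X^{\dagger\text{-}\red}\to X\to X^{\dRall/K}$. The conclusion you want is still correct: if $(f\circ\iota)_\ast 1$ is descendable of index $\le N$ then so is $f_\ast 1$, since the former is an algebra over the latter and descendability passes down along algebra maps with the same index bound. The paper handles this point tersely, simply assuming $X$ reduced without further comment.
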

\begin{proof}
  We follow the proof of \cite[Theorem 5.4.1]{camargo2024analytic} keeping track the descendability index.
  First, we can assume without loss of generality that $X$ is a reduced $\dagger$-rigid space.
  By \cite[Lemma 1.2.1]{MR1697371} we can  find a filtration of $X$ by   Zariski closed  subspaces $X=Z_0\supset Z_1\supset \cdots \supset Z_d$   with $C_i:=Z_i\backslash Z_{i+1}$  such that the reduced $\dagger$-rigid space structure on $C_i$ is Berkovich smooth over $K$ of dimension $\leq d$.
  In particular, by \Cref{sec:cohom-smooth-maps-descendability-for-smooth-maps} each map   $C_i\to C_i^{\dR}$ is descendable of index $\leq d$.
  If $X^{\dagger_{C_i}}$ is the overconvergent neighbourhood of $C_i$ in $X$, the factorization $C_i\to X^{\dagger_{C_i}}\to C_i^{\dR}$ also shows that $X^{\dagger_{C_i}}\to C_i^{\dR}$ is descendable of index $\leq d+1$. The proposition follows  by induction from \cref{LemmaDevisageDescendable} below using that $(d+1)+d+\ldots +1=\frac{(d+1)(d+2)}{2}$.
\end{proof}

\begin{lemma}\label{LemmaDevisageDescendable}
Let $\mathcal{C}$ be  a stable symmetric monoidal category and let $1\to A$ be an idempotent algebra. Let $j^*\colon \mathcal{C}\to \mathcal{C}(U):=\mathcal{C} /\Mod_A(\mathcal{C})$ and $\iota^*\colon \mathcal{C}\to \mathcal{C}(Z)=\Mod_{A}(\mathcal{C})$ be the pullback functors along the open and closed localizations defined by $A$ respectively.  Let $B\in \ob{CAlg}(\mathcal{C})$ be a commutative algebra such that  $j^* B$ is descendable of index $\leq e$ and $\iota^* B$ is descendable of index $\leq d$. Then $B$ is descendable of index $\leq e+d$.  
\end{lemma}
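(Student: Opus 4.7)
The plan is to translate everything in terms of the fiber $K := \ob{fib}(1 \to B)$ and the coidempotent $C := \ob{fib}(1 \to A)$. Recall that $B$ being descendable of index $\leq n$ is equivalent (see e.g.\ \cite[Proposition 3.20]{mathew2016galois}) to the canonical augmentation $K^{\otimes n} \to 1$ being null. Since $A$ is idempotent one has $C \otimes A = 0$, so $C^{\otimes m} \cong C$ for every $m \geq 1$; moreover, $\mathcal{C}(U)$ is the full subcategory of $C$-local objects, with unit $C$ and the tensor product inherited from $\mathcal{C}$. The goal is to show that $K^{\otimes(d+e)} \to 1$ is null in $\mathcal{C}$.

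I would first unpack the closed hypothesis. Since $\iota^* = A \otimes (-)$ is base change, one has $(\iota^* K)^{\otimes_A d} \cong A \otimes K^{\otimes d}$, and by the adjunction $\ob{Map}_{\ob{Mod}_A}(A \otimes X, A) = \ob{Map}_{\mathcal{C}}(X, A)$ the descendability statement ``$(\iota^*K)^{\otimes_A d} \to A$ null in $\mathcal{C}(Z)$'' is equivalent to the composite $K^{\otimes d} \to 1 \to A$ being null in $\mathcal{C}$. Choosing such a nullhomotopy produces a factorization $K^{\otimes d} \to C \to 1$ of the canonical map through the fiber sequence $C \to 1 \to A$. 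Next, I would unpack the open hypothesis: since $j^*K = K \otimes C$ and $C$ is idempotent, one computes $(j^*K)^{\otimes_{\mathcal{C}(U)} e} \cong K^{\otimes e} \otimes C$, and so the hypothesis becomes that the iterated augmentation $K^{\otimes e} \otimes C \to C$ is null in $\mathcal{C}$ (using that $\mathcal{C}(U) \hookrightarrow \mathcal{C}$ is fully faithful on morphisms).

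To combine the two, I would decompose $K^{\otimes(d+e)} = K^{\otimes e} \otimes K^{\otimes d}$, tensor the closed factorization $K^{\otimes d} \to C \to 1$ on the left with $K^{\otimes e}$, and rewrite the resulting map to $1$ as
\[
K^{\otimes(d+e)} \longrightarrow K^{\otimes e} \otimes C \longrightarrow C \longrightarrow 1,
\]
using commutativity of the obvious square to reorder the augmentations on the $K^{\otimes e}$ and $C$ tensor factors. By the open hypothesis the middle arrow $K^{\otimes e}\otimes C\to C$ is null, hence so is the whole composite. This gives $K^{\otimes(d+e)} \to 1$ null in $\mathcal{C}$, i.e.\ $B$ is descendable of index $\leq d+e$, as required.

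The only point of real care, and the main ``obstacle'', is the book-keeping around the tensor powers in $\mathcal{C}(U)$ and $\mathcal{C}(Z)$ versus those in $\mathcal{C}$: the identifications $(\iota^*K)^{\otimes_A d} \cong A \otimes K^{\otimes d}$ and $(j^*K)^{\otimes_{\mathcal{C}(U)} e} \cong K^{\otimes e} \otimes C$ are where idempotency of $A$ (equivalently $C \otimes A = 0$ and $C^{\otimes m} \cong C$) is crucially used. Once these identifications are in place, the combination is purely formal and the index adds exactly as $d+e$.
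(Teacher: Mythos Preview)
Your proof is correct and is essentially the same argument as the paper's, just phrased in terms of the coidempotent $C = \ob{fib}(1 \to A)$ rather than the functorial notation $j_!$, $\iota_*$; under the identification $j_! 1_U = C$ and $j_! j^* X = X \otimes C$, the two proofs match line for line.
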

\begin{proof}
Let $K=\ob{fib}(1\to B)$ and consider the fiber sequence 
\[
j_! j^* K \to K \to \iota_* \iota^* K. 
\]
By assumption and \cite[Definition 2.6.7]{mann2022p}, we know that 
\[
j^* K^{\otimes e}\to 1_{U}
\]
and
\[
\iota^*K^{\otimes d}\to 1_{Z}
\]
are zero. Consider the commutative diagram  for $f\in \N$
\[
\begin{tikzcd}
j_! j^* K^{\otimes f} \ar[r] \ar[d] & K^{\otimes f} \ar[r] \ar[d] & \iota_* \iota^* K^{\otimes f} \ar[d] \\ 
j_! 1_U  \ar[r] & 1 \ar[r] & \iota_* 1_{Z} 
\end{tikzcd}
\]
Taking $f=d$  the composite $K^{\otimes d}\to 1 \to \iota_* 1_Z$ is homotopic  to $K^{\otimes d}\to  \iota_*\iota^* K^{\otimes d}\to \iota_* 1_Z$ which is zero, and so $K^{\otimes d}\to 1$ factors through $K^{\otimes d}\to j_! 1_{U}$.
Tensoring with $K^{\otimes e}$ we get the composite map 
\[
K^{\otimes d+e}\to j_! K^{\otimes e}\to j_! 1_U \to 1
\]
which is zero, proving that $B$ is $(d+e)$-descendable as wanted. 
\end{proof}

\begin{remark}
  \label{sec:cohom-smooth-maps-descendable-for-complex-manifolds}
  Let $X$ be a complex manifold of dimension $d$. By analytic Riemann--Hilbert (\cite[Theorem II.3.1]{ScholzeRLL}) $X^\dR\cong X(\C)_{\Betti}$ (with the Betti stack taken over $\C_{\mathrm{gas}}$). Now, the Poincar\'e lemma provides an exact complex
  \[
    0\to \C\to \mathcal{O}_X\to \Omega^1_{X/\C}\to \ldots \to \Omega^d_{X/\C}\to 0
  \]
  of sheaves on $X(\C)$, i.e., of quasi-coherent sheaves on $X(\C)_\Betti$, which resolves the unit $\C$ by $\mathcal{O}_X$-modules. As $\mathcal{O}_X=g_\ast 1$ for $g\colon X\to X^\dR$, we can deduce that $g$ is prim and descendable of index $\leq d+1$.
    We note that the proof of \cref{sec:cohom-smooth-maps-descendability-for-smooth-maps} works in this way as well, and the corresponding object to $\mathcal{E}$ would incorporate the naive (or Hodge) filtration on the de Rham complex (seen as an object in $\ob{D}(X(\C)_{\Betti}\times_{\C} \mathbb{A}^{1,\an}_{\C}/\mathbb{G}_{m,\C}^{\mathrm{an}})$).

\end{remark}

\begin{lemma}\label{LemLimitsBerkovichSpaces}
  Let $(X_{n})_{n\in \N}$ be a projective diagram of derived Berkovich spaces such that, locally on a strict cover of $X_0$, all maps are affine.
  Let $X_{\infty}=\varprojlim_{n} X_n$ be the limit in the category of derived Berkovich spaces, then $X_{\infty}=\varprojlim_{n} X_n$ in the kernel category $\ob{K}_{\ob{D},\Q_p}$ of Gelfand stacks over $\Q_{p}$. 
\end{lemma}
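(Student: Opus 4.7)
The approach is to apply \Cref{TechnicalLemma6Functors} after reducing to the affinoid case.

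First, using that the transition maps are affine locally on a strict cover of $X_0$, and that strict covers of derived Berkovich spaces are cohomologically \'etale (cf.\ \cref{LemmProetaleShierkdR}), we reduce by descent to the case where all $X_n = \GSpec(A_n)$ are affinoid with transitions induced by morphisms of separable Gelfand rings. Then $X_\infty = \GSpec(A_\infty)$ with $A_\infty = \varinjlim_n A_n$ computed in separable Gelfand rings (the colimit exists by \cref{xnbsyw}).

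We then verify the hypotheses of \Cref{TechnicalLemma6Functors}. Condition (a) holds because maps of affinoid Gelfand stacks with induced analytic ring structure are prim. Condition (b) (the equivalence $\varinjlim_{m\geq n} f_{m\to n,*}(1) \xrightarrow{\sim} f_{\infty\to n,*}(1)$) reduces to the tautological identification $\varinjlim_{m\geq n} A_m \cong A_\infty$ of $A_n$-modules. For condition (c), we apply \cref{LemBasisTopologyGelfandRings} to obtain a descendable cover $g\colon \GSpec(B) \to X_\infty$ where $B$ is a separable nilperfectoid Gelfand ring (in particular $B^u$ is perfectoid); such $g$ is prim, and being descendable it satisfies universal $\ob{D}^*$- and $\ob{D}^!$-descent.

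The main obstacle lies in verifying the fibre-product condition in (c), namely
\[
\GSpec(B \otimes_{A_\infty} B) = \varprojlim_n \GSpec(B \otimes_{A_n} B)
\]
in $\ob{K}_{\ob{D},\Q_p}$. This is formally an instance of the same kind of statement we are proving, raising a potential circularity. The way to break it is to exploit that, after uniform completion, the relevant rings become perfectoid: $B^u \otimes_{A_\infty^u} B^u$ can be analyzed via arc-descent (\cref{sec:perf-analyt-de-2-arc-covers-are-good}, \cref{PropDescendableCoverFiniteDimPerfectoid}), reducing the kernel-category limit to a direct comparison of presentable $\infty$-categories using that $\ob{D}(-)$ on analytic rings commutes with filtered colimits (\cite[Proposition~4.1.14]{SolidNotes}). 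Concretely, the transition maps $B \otimes_{A_n} B \to B \otimes_{A_m} B$ for $m \geq n$ become closer to the multiplication $B \otimes_{A_\infty} B \to B \otimes_{A_\infty} B$ in the limit, and the resulting stabilization is enough to identify the kernel-category limit with $\GSpec(B \otimes_{A_\infty} B)$.
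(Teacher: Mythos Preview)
Your approach via \cref{TechnicalLemma6Functors} is unnecessarily roundabout, and condition (c) creates exactly the circularity you identify: showing $\GSpec(B\otimes_{A_\infty}B)=\varprojlim_n\GSpec(B\otimes_{A_n}B)$ in the kernel category is again an instance of the very lemma you are proving (for the sequence of rings $B\otimes_{A_n}B$, whose colimit is $B\otimes_{A_\infty}B$). Your proposed resolution does not break this loop. Invoking perfectoid uniform completions and arc-descent is beside the point, since the assertion concerns the full module categories $\ob{D}(-)$, not their arc-descendable or nuclear pieces; and the vague ``stabilization'' claim at the end is not an argument.

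The paper's proof bypasses \cref{TechnicalLemma6Functors} entirely. After reducing to the affinoid case (via \cite[Proposition~4.3.3]{heyer20246functorformalismssmoothrepresentations} and proper base change), it simply unwinds the definition of a limit in $\ob{K}_{\ob{D},\Q_p}$: one must show, for every affinoid $Z=\GSpec(B)$, that
\[
\ob{Fun}_{\ob{K}_{\ob{D},\Q_p}}(Z,X_\infty)\to\varprojlim_n\ob{Fun}_{\ob{K}_{\ob{D},\Q_p}}(Z,X_n)
\]
is an equivalence. Since the transition maps are prim (hence $f_!=f_*$), this amounts to $\ob{D}(B\otimes_{\Q_p}A_\infty)\xrightarrow{\sim}\varprojlim_n\ob{D}_*(B\otimes_{\Q_p}A_n)$, which holds because $A\mapsto\ob{D}^*(A)$ from $\Cat{AnRing}$ to $\Cat{Pr}^L$ commutes with colimits (\cite[Proposition~4.1.14]{SolidNotes}). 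This is the very fact you gesture at in your final paragraph; the point is that it can and should be applied directly, without the detour through a descendable perfectoid cover.
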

Here, $K_{\ob{D},\Q_p}$ denotes the kernel category (\cite[Definition 4.1.3.(b)]{heyer20246functorformalismssmoothrepresentations}) associated with the $6$-functor formalism $\ob{D}(-)$ on Gelfand stacks.
\begin{proof}
All the transition maps $f_{m\to n}\colon X_{m}\to X_n$ are prim by construction, and  we have an identification $f_{m\to n,!}=f_{m\to n,*}$.  By \cite[Proposition 4.3.3]{heyer20246functorformalismssmoothrepresentations} and proper base change, it suffices to show the lemma locally in the analytic topology of $X_n$ (for a fixed $n$), and by taking a strict closed cover assume that $X_n$ and so all the $X_{m}$ for $m\geq n$ are affinoid. Set $X_n=\GSpec(A_n)$ with $n\in \N\cup\{\infty\}$ so that $A_{\infty}=\varinjlim_n A_n$. By \cite[Definition D.4.1]{heyer20246functorformalismssmoothrepresentations}, to show that $X_{\infty}=\varprojlim_n X_n$ in the kernel category, we have to show that for all Gelfand rings $B$ over $\Q_p$ with $Z=\GSpec(B)$ one has that
\[
\ob{Fun}_{\ob{K}_{\ob{D},\Q_p}}(Z, X_{\infty})\to \varprojlim_{n} \ob{Fun}_{\ob{K}_{\ob{D},\Q_p}}(Z, X_{n})
\]
is an equivalence. This amounts to proving that 
\[
\ob{D}(Z\times_{\GSpec(\Q_p)}  X_{\infty}) \to  \varprojlim_{n} \ob{D}_*(Z\times_{\GSpec(\Q_p)}  X_{n})
\]
where the transition maps are given by lower $*$-functors. This translates to the fact that the functor $A\mapsto \ob{D}^*(A)$ from $\Cat{AnRing}\to \Cat{Pr}^L$ commutes with colimits which is true (\cite[Proposition 4.1.14]{SolidNotes}). 
\end{proof}

\begin{proposition}\label{PropApproxFFStk}
  Let $X^{\diamond}_{\infty}=\varprojlim_{n} X_n^{\diamond}$ be a qfd arc-stack over $\Q_p$ obtained as the limit of  partially proper rigid spaces of dimension $\leq d$ for a fixed $d$ and affine transition maps. Then $X_{\infty}\to X_{\infty}^{\dR}$ is a descendable cover and   $X^{\dR}_{\infty}=\varprojlim_{n} X_n^{\dR}$ in the kernel category $\ob{K}_{\ob{D}, \GSpec(\Q_p)}$.
  \end{proposition}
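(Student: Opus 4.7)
The plan is to apply \cref{TechnicalLemma6Functors} to the sequential diagram $(X_n^{\dR})_{n\in\N}$ of qfd Gelfand stacks over $S=\GSpec(\Q_p)$, with $X_\infty^{\dR}$ as the candidate limit and $g\colon X_\infty\to X_\infty^{\dR}$ as the prim map $Y_\infty\to X_\infty$ of hypothesis (c). The derived Berkovich space $X_\infty$ is constructed as the limit of the $X_n$ via a local form of \cref{LemLimitsBerkovichSpaces}, which applies since the transition maps are locally affine; its associated arc-stack recovers the given $X_\infty^\diamond$.

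For hypothesis (a), primness of $f_{m\to n}^{\dR}\colon X_m^{\dR}\to X_n^{\dR}$ would be checked locally on an affinoid cover of $X_n$: affineness of $X_m\to X_n$ forces $X_m$ to be quasi-compact over each chart, which together with partial properness yields a proper map of rigid spaces, and \cref{LemmLqfd} then gives cohomological properness. For hypothesis (b), the colimit formula $\varinjlim_{m\geq n}f_{m\to n,*}1\xrightarrow{\sim}f_{\infty\to n,*}1$ reduces on each affinoid chart to \cref{LemComputationdeRhamLimit}.

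For hypothesis (c), primness of $g$ follows from the local properness of $X_\infty\to X_n$ combined with primness of $\pi_n\colon X_n\to X_n^{\dR}$ from \cref{thm:prim-descendable-berkovich}. Descendability of $g$ -- which also yields the universal $\ob{D}^\ast$- and $\ob{D}^!$-descent required by (c), and is moreover the other statement to be proved -- should follow from \cref{DescentRigidSpacedRStak} applied locally, the uniform dimension bound $d$ ensuring a descendability index $\leq c:=\tfrac{(d+1)(d+2)}{2}$ that propagates through the limit. For the compatibility $X_\infty\times_{X_\infty^{\dR}}X_\infty=\varprojlim_n X_\infty\times_{X_n^{\dR}}X_\infty$ in $\ob{K}_{\ob{D},\Q_p}$, I would invoke \cref{PropQuotientdROverconvergentEquiv}: each side is an overconvergent neighborhood inside $X_\infty\times X_\infty$, of the diagonal $X_\infty^\diamond$ on the left and of $X_\infty\times_{X_n}X_\infty$ on the right; these agree in the limit because $\varprojlim_n(X_\infty\times_{X_n}X_\infty)=X_\infty^\diamond$ as closed subsets, and a second application of \cref{LemLimitsBerkovichSpaces} to the cofiltered diagram of overconvergent neighborhoods upgrades this to an identification in the kernel category. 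Applying \cref{TechnicalLemma6Functors}(1) then yields $X_\infty^{\dR}=\varprojlim X_n^{\dR}$.

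The main obstacle is the descendability of $g$: locally $X_\infty$ looks like $\GSpec$ of a filtered colimit of dagger Tate algebras of bounded dimension, rather than a single dagger rigid space of dimension $\leq d$, so \cref{DescentRigidSpacedRStak} does not apply verbatim. I would address this either by directly generalizing the argument of \cref{DescentRigidSpacedRStak} to cover such filtered colimits (the Zariski stratification used there has length controlled by $d$, which is preserved in the filtered colimit), or, once \cref{TechnicalLemma6Functors} has been used to identify $X_\infty^{\dR}$ with $\varprojlim X_n^{\dR}$ in the kernel category, by propagating the uniform descendability witnesses of each $\pi_n\colon X_n\to X_n^{\dR}$ along the pullbacks $X_n\times_{X_n^{\dR}}X_\infty^{\dR}\to X_\infty^{\dR}$ to a descendability witness for $g$ itself of the same index.
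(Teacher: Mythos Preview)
Your overall plan matches the paper's: apply \cref{TechnicalLemma6Functors} to the tower $(X_n^{\dR})_n$ with $Y_\infty=X_\infty$ furnishing the cover in hypothesis (c). Your treatments of (a), (b), and the kernel-identity portion of (c) via overconvergent neighbourhoods and \cref{LemLimitsBerkovichSpaces} are essentially correct; the paper is terser, simply asserting that (a) is automatic and that (c) ``holds thanks to the map $X_\infty\to X_\infty^{\dR}$''.

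The genuine gap is in your second workaround for the descendability of $g\colon X_\infty\to X_\infty^{\dR}$, which is circular: hypothesis (c) of \cref{TechnicalLemma6Functors} already \emph{requires} universal $\ob{D}^\ast$- and $\ob{D}^!$-descent for $g$, so you cannot invoke the lemma's conclusion to obtain it. The paper breaks this circularity by establishing descendability \emph{before} applying \cref{TechnicalLemma6Functors}, using only the kernel identity $X_\infty=\varprojlim_n X_n$ at the level of \emph{Berkovich spaces} (from \cref{LemLimitsBerkovichSpaces}, which needs no de Rham input). From this one deduces $f_{\infty,\ast}1=\varinjlim_n g_n^\ast f_{n,\ast}1$ in $\ob{D}(X_\infty^{\dR})$, where $g_n\colon X_\infty^{\dR}\to X_n^{\dR}$; each term $g_n^\ast f_{n,\ast}1$ is descendable of index $\leq a:=\tfrac{(d+1)(d+2)}{2}$ (being the $\ast$-pullback of the descendable algebra $f_{n,\ast}1\in\ob{D}(X_n^{\dR})$ from \cref{DescentRigidSpacedRStak}), and the countable filtered colimit is then descendable of index $\leq 2a$ by \cite[Proposition 2.7.2]{mann2022p}. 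This is essentially your second workaround with the critical substitution of the unconditional identity $X_\infty=\varprojlim_n X_n$ for the conditional $X_\infty^{\dR}=\varprojlim_n X_n^{\dR}$; your first workaround (propagating a Zariski stratification through the filtered colimit) could plausibly be made to work but is less direct.
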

\begin{proof}
  Let $X_{\infty}=\varprojlim_n X_n$ be the limit as Berkovich spaces, then $X_{\infty}\to X_{\infty}^{\dR}$ is prim, and we have to show that this map is descendable.
  Since each $X_n$ is a partially proper rigid space of dimension $\leq d$, we have that $f_n\colon X_n \to X_n^{\dR}$ is descendable of index  $\leq a:=\frac{(d+1)(d+2)}{2}$ thanks to \Cref{DescentRigidSpacedRStak}.
  Let $g_n\colon X_{\infty}^{\dR}\to X_n^{\dR}$, then  $g_n^{*} f_{n,*} 1 \in \ob{D}(X_{\infty}^{\dR})$ is descendable of index $\leq a$, and by taking colimits \cite{mann2022p} implies that $\varinjlim_{n} g_{n}^* f_{n,*}1$ is descendable of index $\leq 2a$.
  On the other hand, \cref{LemLimitsBerkovichSpaces} shows that $X_{\infty}=\varprojlim_n X_n$ in the kernel category over $\GSpec(\Q_p)$, hence, if  we denote $f_{\infty}\colon X_{\infty}\to X_{\infty}^{\dR}$, one deduces that $f_{\infty,* }1 = \varinjlim_{n} g_n^* f_{n,*} 1$, proving that $f_{\infty}$ is a descendable cover as wanted. 

Next, for the equivalence $X_{\infty}^{\dR}=\varprojlim_{n} X_n^{\dR}$ in the kernel category we apply \cref{TechnicalLemma6Functors}. Condition (a) of the lemma is automatic, condition (c) holds thanks to the map $X_{\infty}\to X_{\infty}^{\dR}$, finally, condition (b) follows from \cref{LemComputationdeRhamLimit} after passing to a strict closed cover of $X_0$.
\end{proof}

As a consequence, we deduce that perfect modules on de Rham stacks have a natural $t$-structure with heart given by vector bundles.

\begin{proposition}\label{PropPerfectBerkovichSpaces}
  Let $X$ be a qfd arc-stack over $\Q_p$, then the category $\ob{Perf}(X^{\dR})$ of perfect modules on the de Rham stack of $X$ has a $t$-structure with the following properties:
  \begin{enumerate}
  \item an object $P\in \ob{Perf}(X^{\dR})$ is connective if for any map $f\colon \GSpec(A)\to X^{\dR}$ from a qfd affinoid Gelfand stack, the pullback of $P$ along $f$ is connective,
  \item an object $P\in \ob{Perf}(X^\dR)$ is coconnective if its dual $P^\vee$ is connective, 
  \item for any morphism $Y\to X$ the functor $f^\ast\colon \ob{Perf}(X^{\dR})\to \ob{Perf}(Y^{\dR})$ is $t$-exact.
  \end{enumerate}
 Furthermore, the heart of  $\ob{Perf}(X^{\dR})$ is the category $\VB(X^{\dR})$ of vector bundles on $X^{\dR}$.
\end{proposition}
\begin{proof}
  By descent, it suffices to show the existence of the $t$-structure   in the case that $X=\Marc(A)$ for a qfd Gelfand $\Q_p$-algebra $A$ (as each morphism from $\GSpec(A)$ to a de Rham stack will factor over the canonical map $\GSpec(A)\to \Marc(A)^\dR$). Indeed, the definition of the connective and coconnective objects of (1) and (2) satisfy $!$-descent by \Cref{LemmaDescentPerfectComplexes}, and thus we only need to prove that they give rise to a $t$-structure in de Rham stacks. 
  
  Moreover, it suffices to show the claim $!$-locally on $A$, i.e., the existence of the $t$-structure as well as the exactness of pullback.
  In particular, we may assume that $X=\mathcal{M}_\arc(A)$ is pro-\'etale over an affine space.
  Indeed, there exists a quasi-pro-\'etale morphism $\Marc(A)\to \A^{d,\diamond}_{\Q_p}$ factoring over some disc $\mathbb{D}^{d,\leq r,\diamond}_{\Q_p}$, and after replacing $\GSpec(A)$ by the pullback of $\GSpec(A)\to \mathbb{D}^{d,\leq r}_{\Q_p}$ along a descendable morphism $Y\to \mathbb{D}^{d,\leq r}_{\Q_p}$ with $Y$ strictly totally disconnected (\cref{LemBasisTopologyGelfandRings}), we can find a presentation of $X=X_\infty$ as in \cref{PropApproxFFStk}  (namely, one uses that a quasi-pro-\'etale morphism over $Y$ is pro-finite \'etale).

  We first claim that if $X=\GSpec(A)$ for a qfd Gelfand ring $A$, then the unit of $X^\dR$ is a compact object in $\ob{D}(X^\dR)$ (this implies that each perfect module, aka dualizable object, on $X^\dR$ is compact).
 This is true for $X^\diamond$ replaced by a closed disc, and hence it suffices to show that for a countable inverse limit $Y_\infty=\varprojlim_n Y_n\to Y$ of qcqs quasi-pro-\'etale maps of qfd arc-stacks, the pushforward $f_\ast\colon Y_\infty^\dR\to Y^\dR$ preserves colimits.
 By \cref{sec:furth-results-analyt-1-surjections-on-de-rham-stack} we may assume that $Y$ is qfd and strictly totally disconnected.
 Then $Y_\infty\to Y$ is a countable inverse limit of finite \'etale maps (using that it is an inverse limit of qcqs maps), which then implies that for any qfd affinoid Gelfand ring $B$ with a map $\GSpec(B)\to Y^\dR$ the fiber product $Y_\infty^\dR\times_{Y^\dR} \GSpec(B)$ is represented by a countable pro-finite \'etale $B$-algebra $B_\infty$.
 In particular, the $\ast$-pushforward along $\GSpec(B_{\infty})\to \GSpec(B)$ preserves colimits.
 As it also commutes with base change, the claim follows.

 Assume now that $X=\mathcal{M}_\arc(A)$ is pro-\'etale over some fixed $\A^n_{\Q_p}$.
  We can then write $X^\dR$ as a filtered limit of $X_i^\dR$ with $X_i\to \GSpec(\Q_p)$ affinoid and Berkovich-\'etale over $\A^n_{\Q_p}$ (i.e., as compositions of rational localizations and finite \'etale maps).
  In particular, we note that $X_i$ and all terms in the \v{C}ech nerve for $X_i\to X_i^\dR$ are static and affinoid, and the same holds for the limit $\widetilde{X}$ of the $X_i$ in Gelfand stacks (note that $\widetilde{X}^\diamond=X$, and so $\widetilde{X}^\dR=X^\dR$).
  We claim that this implies that $\Perf(X^\dR)$ is the filtered colimit in $\Cat{Cat}_\infty$ along $\ast$-pullbacks of the categories $\Perf(X_i^\dR)$ (here $\Perf(-)\subseteq \ob{D}(-)$ denotes the subcategory of perfect complexes).
  Fully faithfulness of $\varinjlim_i \Perf(X^\dR_i)\to \Perf(X^\dR)$ follows from the compactness of perfect complex in $\ob{D}(-)$.
  For essential surjectivity, let $P\in \Perf(X^\dR)$, and assume that $P$ is of amplitude $[a,b]$.
  We have that $\Perf^{[a,b]}(X^\dR)$ is the limit over $\Delta$ of $\Perf^{[a,b]}({\widetilde{X}}_\bullet)$, where $\widetilde{X}_\bullet$ is the \v{C}ech complex for the covering $\widetilde{X}\to X^\dR$ (we note that $\widetilde{X}$ is $\dagger$-formally smooth).
  Because we bounded the amplitude and all terms in $\widetilde{X}_\bullet$ are static affinoid Gelfand rings, the resulting limit only depends on the limit over $\Delta_{\leq n}$ for some $n\geq 0$ (because the limit is over categories which are uniformly truncated).
  The same reasoning applies with $\widetilde{X}$ replaced by some $X_i$.
  Now filtered colimits in $\Cat{Cat}_\infty$ commute with finite limits, and over affinoid Gelfand stacks perfect complexes can be approximated: if $B=\varinjlim_iB_i$ is a filtered colimit of Gelfand rings $B_i$, then by \cite[Corollary A.5.9]{heyer20246functorformalismssmoothrepresentations} $\ob{D}(B)^\omega=\varinjlim_i D(B_i)^\omega$ in $\Cat{Cat}_\infty$, where $(-)^\omega$ denotes the compact objects (here we use that $\ob{D}(B_i)$ is compactly generated for each $i$).
  As perfect complexes are dualizable and compact, this implies $\Perf(B)\cong \varinjlim_i \Perf(B_i)$ in $\Cat{Cat}_\infty$.

  Altogether, we get that we can find $i$ and some perfect complex $P_i$ on $X_i^\dR$ of amplitude $[a,b]$ whose pullback to $X^\dR$ is $P$.

  Now for $X_i$ a partially proper smooth rigid space over $\Q_p$, it is classical that an algebraic $D$-module on $X_i$, which is coherent as an $\mathcal{O}_{X_i}$-module, is a vector bundle.\footnote{Here, we are implicitly using that, for $X$ a partially proper rigid space over $\Q_p$, there is a natural fully faithful embedding of $\ob{D}(X^\dR)$ into the category of quasi-coherent sheaves on the algebraic de Rham stack of $X$, cf. \cite[Proposition 5.2.11]{camargo2024analytic}, which preserves perfect modules.} Applying this to the top cohomology of a perfect module, and induct, shows that for any perfect module $P$ on $X_i^\dR$ its pullback $Q$ to $X_i$ has all cohomology objects given by vector bundles.
  This implies that the truncations of $Q$ descend to $X_i^\dR$, and hence define the desired $t$-structure with the desired properties, i.e., description of (co)connective objects (we note that the pullback of perfect modules whose cohomology objects are vector bundles preserves the truncations).
\end{proof}

The following proposition is useful to construct more $!$-able maps for the de Rham stack.

\begin{proposition}\label{CorDescendableMapsqfla}
Let $X$ be a qcqs arc-stack over $\Q_p$ with a quasi-pro-\'etale map $X\to \A^{d,\diamond}_{\Q_p}$.   The following hold:

\begin{enumerate}

\item There is a quasi-pro-\'etale cover $\Marc(A)\to X$ with $A$ a  strictly totally disconnected nilperfectoid ring such that:

\begin{itemize}

\item The morphism $\GSpec(A)\to \Marc(A)^{\dR}$ is prim and descendable.

\item  The composite map $\GSpec(A)\to \Marc(A)^{\dR}\to X^{\dR}$ is prim and descendable.

\end{itemize}

\item Let $f\colon Y\to X$ be a  $!$-able map of  qfd arc-stacks which is an arc-cover, then $f$ satisfies universal $\ob{D}^*$ and $\ob{D}^!$-descent, for the 6-functor formalism on de Rham stacks. If in addition $Y=\Marc(B)$ with $B$ a qfd separable Gelfand ring, then $f^{\dR}_* 1$ is descendable.

\end{enumerate}

\end{proposition}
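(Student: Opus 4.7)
My plan is to prove (1) by an explicit construction of the cover, and then deduce (2) using standard $6$-functor formalism arguments.

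For the construction of the cover in (1), since $X$ is qcqs, the qpet structural map $X\to\mathbb{A}^{d,\diamond}_{\Q_p}$ factors through an overconvergent closed disc $\mathbb{D}^{d,\leq r,\dagger,\diamond}_{\Q_p}$ (a partially proper smooth rigid variety of dimension $d$) for some $r>0$. I take a descendable arc-cover $\Marc(C)\to\mathbb{D}^{d,\leq r,\dagger,\diamond}_{\Q_p}$ by a separable strictly totally disconnected perfectoid $C$, obtained from the standard perfectoid cover $\widetilde{\mathbb{D}}^{d,\leq r}$ (adjoining $p$-power roots of the coordinates) and refining via \cref{sec:light-arc-stacks-1-cover-by-separable-totally-disconnected}. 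Set $\Marc(A):=\Marc(C)\times_{\mathbb{D}^{d,\leq r,\dagger,\diamond}_{\Q_p}} X$. Since qpet pullback preserves separability, strict total disconnectedness, and perfectoidness, $A$ is a separable qfd strictly totally disconnected perfectoid ring (hence nilperfectoid), and $\Marc(A)\to X$ is the desired qpet arc-cover.

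For the first bullet of (1), I invoke \cref{PropApproxFFStk}. The qpet map $\Marc(A)\to\mathbb{A}^{d,\diamond}_{\Q_p}$ can be presented as a cofiltered inverse limit combining the Frobenius tower on $\mathbb{A}^{d,\diamond}_{\Q_p}$ with finite \'etale trivializations of the qpet structure; this exhibits $\Marc(A)$ as $\varprojlim_n X_n^\diamond$ with each $X_n$ a smooth partially proper rigid variety of dimension $\leq d$ and affine transition maps. \Cref{PropApproxFFStk} then gives that $X_\infty=\GSpec(A)\to\Marc(A)^\dR=X_\infty^\dR$ is prim and descendable. For the second bullet, I argue that the analogous presentation can be inherited by $X$: consider the inverse system $Y_n:=X\times_{\mathbb{A}^{d,\diamond}_{\Q_p}}\mathbb{A}^{d,(n),\diamond}_{\Q_p}$ obtained by pulling back $X$ along the Frobenius tower. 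By qpet descent from $X$, each $Y_n$ is representable by a derived Berkovich space of dimension $\leq d$, and their inverse limit in Berkovich spaces coincides with $\GSpec(A)$. Applying (the proof of) \cref{PropApproxFFStk} to this system, using crucially that the descendability index in \cref{sec:cohom-smooth-maps-descendability-for-smooth-maps} is bounded by $d+1$ uniformly in $n$, we obtain that $\GSpec(A)\to X^\dR$ is prim and descendable.

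For part (2), given a $!$-able arc-cover $f\colon Y\to X$ of qfd arc-stacks, choose a prim descendable cover $\GSpec(A)\to X^\dR$ as in (1). The pullback of $f^\dR$ along this cover corresponds at the diamond level to an arc-cover with source near-perfectoid; applying the strategy from the proof of \cref{sec:furth-results-analyt-1-surjections-on-de-rham-stack} (via a further descendable cover by a strictly totally disconnected perfectoid), we obtain a $!$-cover of Gelfand stacks in the target, whence universal $\ob{D}^*$ and $\ob{D}^!$-descent for $f^\dR$ follows by combining $!$-descent with prim-descent along the cover of $X^\dR$. When $Y=\Marc(B)$, part (1) applied to $Y$ yields $\GSpec(B')\to\GSpec(B)\to Y^\dR$ with prim descendable composite, and concatenating with prim descendability of $\GSpec(B')\to X^\dR$ shows by transitivity of Mathew-descendability that $f^\dR_*1$ is descendable in $\ob{D}(X^\dR)$.

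The main obstacle is the second bullet of (1): directly applying \cref{PropApproxFFStk} requires $X$ to be a derived Berkovich space, whereas here $X$ is only a qfd arc-stack. Overcoming this requires a mild generalization of \cref{PropApproxFFStk} to the arc-stack setting, where the key input is the uniform bound on the descendability index by $d+1$ from \cref{sec:cohom-smooth-maps-descendability-for-smooth-maps}; I expect this to go through cleanly because the dimension is uniformly bounded along the approximating tower and the technical kernel-category arguments of \cref{TechnicalLemma6Functors} continue to apply.
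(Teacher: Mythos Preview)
Your approach has a genuine gap in the second bullet of (1), which you correctly identify as the main obstacle but do not overcome. The tower $Y_n := X\times_{\mathbb{A}^{d,\diamond}_{\Q_p}}\mathbb{A}^{d,(n),\diamond}_{\Q_p}$ consists of arc-stacks, not derived Berkovich spaces: there is no ``qpet descent'' that converts an arbitrary qcqs arc-stack into a Berkovich space, and pulling back along the $p$-power-roots tower does nothing to change this. The proposed ``mild generalization'' of \cref{PropApproxFFStk} to arc-stacks would therefore amount to a fundamentally new argument, not a routine extension. A related (smaller) issue already affects your first bullet: \cref{PropApproxFFStk} yields descendability of $X_\infty\to X_\infty^\dR$ where $X_\infty$ is the \emph{decompleted} Berkovich-space limit, not $\GSpec(A)$ for your perfectoid $A$; your identification $X_\infty=\GSpec(A)$ is incorrect, and bridging from the decompleted to the completed object is non-trivial.

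The paper's proof of (1) takes a different route that avoids any Berkovich-space presentation of $X$. One works first entirely on the disc: construct a Berkovich pro-\'etale map $\GSpec(B)\to \DD^{d,\leq 1}_{\Q_p}$ with $\Marc(B)$ strictly totally disconnected (via \cref{xhs82j}), so that $\GSpec(B)\to \Marc(B)^\dR$ is descendable by base change from $\DD^{d,\leq 1}_{\Q_p}\to(\overline{\DD}^{d,\diamond})^\dR$. Since $(-)^\dR$ preserves fiber products, the descendability of $\Marc(B)^\dR\to (\overline{\DD}^{d,\diamond})^\dR$---established via the commutative square relating $\GSpec(B)$, $\DD^{d,\leq 1}_{\Q_p}$, and their de Rham stacks---pulls back along $X^\dR\to (\overline{\DD}^{d,\diamond})^\dR$ to give $\Marc(B')^\dR\to X^\dR$ descendable, where $B'$ is obtained by lifting the pro-finite-\'etale structure of $X\times_{\overline{\DD}^{d,\diamond}}\Marc(B)\to \Marc(B)$ to the Gelfand ring $B$ via \cref{x290sm}. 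Only at the very end does one pass to a nilperfectoid $A$ via \cref{LemBasisTopologyGelfandRings}(1). For part (2) the paper reduces via (1) to $X=\Marc(A)$ strictly totally disconnected nilperfectoid, then crucially invokes \cref{xsh29k} and \cref{xvsa9w}---the latter needs precisely the strictly-totally-disconnected-nilperfectoid hypothesis to produce sections of $A\to B_n$ and hence descendability; your sketch for (2) omits this mechanism, and your concluding claim that $\GSpec(B')\to X^\dR$ is prim descendable is not justified (part (1) applied to $Y$ only gives $\GSpec(B')\to Y^\dR$).
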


\begin{proof}
Let us prove (1).
 By definition, there is a quasi-pro-\'etale map $X\to \A^{d,\diam}_{\Q_p}$.
 Since $X$ is qcqs,  after rescaling we can assume that it factors through the closed disc $\overline{\DD}^{d, \diam}_{\Q_p}$.
 Since $\overline{\DD}^{d, \diam}_{\Q_p}$ is finite dimensional,   using \Cref{xhs82j} we can find a Berkovich pro-\'etale descendable morphism
 $$
 \GSpec(B) \to \DD^{d, \leq 1}_{\Q_p}
 $$
with $\Marc(B)$  totally disconnected. Since this morphism is Berkovich pro-\'etale, it is $\dagger$-formally \'etale and thus, since the map $\DD^{d,\leq 1}_{\Q_p}\to \overline{\DD}^{d,\dR}_{\Q_p}$ is a descendable cover thanks to \Cref{thm:prim-descendable-berkovich}, we deduce that $\GSpec(B) \to \Marc(B)^\dR$ is also a descendable cover by base change. Replacing $B$ by a further ind-finite \'etale algebra, we may even assume that $B$ is strictly totally disconnected, see \cref{LemBasisTopologyGelfandRings}. Thus, since $\Marc(B)$ is strictly totally disconnected, the fiber product $X\times_{\overline{\DD}^{d, \diamond}} \Marc(B)$ is pro-\'etale over $\Marc(B)$, and so it can be written as a countable inverse limit of $\Marc(B_i)$, with $B \to B_i$ finite \'etale (using  \cite[Lemma 4.5]{scholze2024berkovichmotives} and the fact that finite \'etale $B^{u}$-algebras lift uniquely to $B$, by \Cref{x290sm}). Define 
 $$
 B' = \varinjlim_i B_i.
 $$
 Since $B'$ is ind-finite \'etale over $B$, the morphism $\GSpec(B') \to \GSpec(B')^\dR$ is again a descendable cover, proving the first item. Moreover, we claim that $\Marc(B)^\dR \to (\overline{\DD}^{d, \diam}_{\Q_p})^\dR$ is descendable. By pullback, this will imply that $\Marc(B')^\dR \to X^\dR$ is descendable, and so by composition the map $\GSpec (B') \to \Marc(B')^{\dR}\to X^{\dR}$ will also be descendable, proving the second item as well. To prove the descendability of $\Marc(B)^\dR \to (\overline{\DD}^{d, \diam}_{\Q_p})^\dR$, since we have a commutative diagram 
  \[
\begin{tikzcd}
\GSpec(B)  \ar[r] \ar[d] & \DD^{d,\leq 1}_{\Q_p} \ar[d] \\ 
\Marc(B)^{\dR} \ar[r] & \ (\overline{\DD}^{d, \diam}_{\Q_p})^\dR
 \end{tikzcd}
\]
 and since $\DD^{d,\leq 1}_{\Q_p}\to (\overline{\DD}^{d, \diam}_{\Q_p})^\dR$ is descendable, it is enough to see that $\GSpec(B) \to  \DD^{d,\leq 1}_{\Q_p}$ is descendable, but this follows from its construction.
 
 Finally, we apply \Cref{LemBasisTopologyGelfandRings} (1).  This produces a descendable cover $B' \to A$, which is an isomorphism on uniform completions and satisfies $(A)^{\dagger-\mathrm{red}}=A^{u}= B^{\prime,u}$.   Since $B' \to A$ is descendable and an isomorphism on uniform completions, we see that the morphisms $\GSpec(A) \to \Marc(A)^\dR=\Marc(B')^{\dR}$ and $\Marc(A)^\dR\to X^\dR$ are descendable.    Finally, \Cref{lemma:berkovich-spaceprofinite-residue-fields-closed-implies-perfectoid} implies that $A$ is nilperfectoid.

Now we prove (2).
  Since $f$ is an arc-cover, there is an arc-cover $Y^{\prime}\to X$ from a qfd strictly totally disconnected perfectoid space and a factorization $Y^{\prime}\to Y\to X$.
 Thus, we can assume without loss of generality that $X$ is itself qfd strictly totally disconnected.
 By \cref{LemmLqfd} $f^{\dR}$ is proper, so it suffices to show that $f^{\dR}_* 1$ is descendable in $X^{\dR}$. Using part (1), we can assume without loss of generality that $X=\Marc(A)$ is strictly totally disconnected nilperfectoid ring.

We are then reduced to proving the claim whenever $Y\to \Marc(A)$ is an arc-cover of qfd strictly totally disconnected perfectoid spaces, with $A$ a nilperfectoid ring with $\GSpec(A)\to \Marc(A)^{\dR}$ a descendable cover.
 By \cref{xsh29k},  we can assume that $Y=\Marc(B)$ with $A\to B$ a map of Gelfand rings that can be written as a sequential colimit of rational localizations of affine spaces over $A$. Since $Y \to \Marc(A)$ is qfd, we can uniformly fix the dimension of the affine spaces using a factorization $Y\to \A^{d,\diamond}_{\Marc(A)}\to \Marc(A)$, so that the map $\GSpec(B)\to \Marc(B)^{\dR/ \GSpec(A)}$ remains descendable by looking at the Hodge filtration of the de Rham complex, like in \Cref{sec:cohom-smooth-maps-descendability-for-smooth-maps}.
 As $\GSpec(A)\to \Marc(A)^{\dR}$ is descendable, the composite  $\GSpec(B)\to \Marc(B)^{\dR/ \GSpec(A)}\to \Marc(B)^{\dR}$ is also descendable.
 In total, we have a diagram 
\[
\begin{tikzcd}
\GSpec(B) \ar[r] \ar[d] & \GSpec(A) \ar[d] \\
\Marc(B)^{\dR} \ar[r] & \Marc(A)^{\dR}
 \end{tikzcd}
\]
where the vertical maps are descendable, $A$ is strictly totally disconnected nilperfectoid, and $A\to B$ can be written as in \cref{xsh29k}.
 Thanks to  \cref{xvsa9w} the map $A\to B$ is descendable, and then so is  $\Marc(B)^{\dR} \to \Marc(A)^{\dR}$ as wanted. 
\end{proof}

\begin{remark}
\label{rk:refined-not-std-but-berk-pro-et}
The proof of \Cref{CorDescendableMapsqfla} (1) shows the following: let  $X$ be a qcqs arc-stack over $\Q_p$ with quasi-pro-\'etale map $X\to \overline{\DD}^{d,\diamond}_{\Q_p}$. Then there is a  Berkovich pro-\'etale map $\GSpec(B) \to \DD^{d,\leq 1}_{\Q_p}$ with $B$ strictly totally disconnected (denoted $B'$ in the proof of  \cref{CorDescendableMapsqfla}),  and a factorization of arc-stacks 
\[
\begin{tikzcd}
\Marc(B) \ar[d] \ar[rd] & \\
X \ar[r]& \overline{\DD}^{d,\diamond}_{\Q_p}
\end{tikzcd}
\]
where the map $\Marc(B)\to X$ is pro-\'etale. In particular, $B$ is $\dagger$-formally smooth (cf. \cref{sec:appr-gelf-rings-examples-of-dagger-formally-smooth-maps}), and the map  $\GSpec(B)\to \Marc(B)^{\dR}$ is prim and descendable (by the proof of \cref{CorDescendableMapsqfla} (1)), and then so is the map $\GSpec(B)\to X^{\dR}$. If $X=\Marc(A)$  is affinoid,  writing $\Marc(B)=\varprojlim_i \Marc(A_i)$ as a   countable limit of \'etale maps of arc-stacks, with $A\to A_i$ its corresponding Berkovich \'etale map, we have a fiber product
\[
\begin{tikzcd}
\GSpec(\varinjlim_{i} A_i) \ar[r] \ar[d] & \GSpec(A) \ar[d] \\
\Marc(B)^{\dR} \ar[r] & \Marc(A)^{\dR}.
\end{tikzcd}
\]
Finally, for general $X$, thanks to \cref{PropQuotientdROverconvergentEquiv}, the \v{C}ech nerve of the map $\GSpec(B)\to X^{\dR}$ is given by the overconvergent neighbourhood of the closed immersion $\Marc(B)^{\bullet/X}\to  \Marc(B^{\bullet})$ in $\GSpec (B^{\bullet})$, where $\GSpec(B^{\bullet})$ is the \v{C}ech nerve of $\GSpec B$ over $\Q_p$, and $\Marc(B)^{\bullet/X}$ is the \v{C}ech nerve of $\Marc(B)\to X$.
Since $\Marc(B)\to X$ is pro-\'etale and $\Marc(B)$ is totally disconnected, the simplicial space $\Marc(B)^{\bullet/X}$ consists of strictly totally disconnected affinoid perfectoids with pro-\'etale transition maps.
 Hence, we  have that 
 \[
 \GSpec(B)^{\bullet/X^{\dR}}= \GSpec (B_{\bullet}^{\dagger/X})
 \]
 where $B_{\bullet}^{\dagger/X}$ is a cosimplicial ring of strictly totally disconnected Gelfand rings with Berkovich ind-\'etale transition maps  (even ind-finite \'etale), which are Berkovich pro-\'etale over the disc $\overline{\DD}^d_{\Q_p}$. We conclude that 
 \begin{equation}\label{eqdeRhamGeoRealizationBasicNucelar}
X^{\dR}=|\GSpec (B_{\bullet}^{\dagger/X})|. 
 \end{equation}
\end{remark}

\subsection{de Rham stacks and nuclearity}\label{subsection:DeRham-and-nuclearity}

We explain how to define a notion of (locally) nuclear quasi-coherent sheaves on de Rham stacks, and some of their stability properties.
Besides its independent interest, the results established here will be used in the proof of arc-hyperdescent for the analytic de Rham stack in the next subsection (\cref{Subsection:HyperdescentdR}).
But since this section is rather technical, we point out that little of it is used in the rest of the paper (only in the proofs of \Cref{TheoHyperdescentdR} and \Cref{sec:fully-faithf-psiast-1-case-for-f})
. We will make use of the basic results on nuclear modules recalled in \Cref{appendix:solid-functional-analysis}.

The following lemma gives some permanence properties of nuclear algebras in Gelfand rings. 

\begin{lemma}\label{LemmaNuclearGelfand}
Let $ \Cat{GelfRing}^{\ob{bnuc}}\subset \Cat{GelfRing}^{\ob{nuc}}\subset \Cat{GelfRing}$ be the full subcategory of basic nuclear and nuclear Gelfand $\Q_p$-algebras respectively. The following hold:

\begin{enumerate}

\item $\Cat{GelfRing}^{\ob{nuc}}$ is stable under colimits in $\Cat{GelfRing}$.  Similarly, $\Cat{GelfRing}^{\ob{bnuc}}$ is stable under countable colimits in $\Cat{GelfRing}$.

\item Let $r\in \R_{\ge 0}$.
  Then the algebra of overconvergent functions $\Q_p\langle  T \rangle_{\leq r}$ of radius $r$ lies in $\Cat{GelfRing}^{\ob{bnuc}}$.

\item $\Cat{GelfRing}^{\ob{nuc}}$ and $\Cat{GelfRing}^{\ob{bnuc}}$ are stable under rational localizations.

\end{enumerate}

\end{lemma}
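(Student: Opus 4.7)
The plan is to handle the three items in the following order, starting with (2) since it provides the building block used in (3), and also serves as a warm-up for the subtleties of uniform completion.

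First, for part (2), I would establish basic nuclearity of $\Q_p\langle T\rangle_{\leq r}$ by exploiting the presentation of \cref{x02kd76}(2). After base change to an infinitely ramified algebraic extension $K/\Q_p$, one has $K\otimes_{\Q_p}\Q_p\langle T\rangle_{\leq r} = \varinjlim_{|b|>r} K_{\solid}\langle \N[bT]\rangle$, which can be realized as a countable sequential colimit of classical Banach $K$-algebras along compact (indeed nuclear) inclusions — the transition map from the radius-$|b|$ Tate algebra to the radius-$|b'|$ one with $r<|b'|<|b|$ factors through a Banach inclusion of $\Z_p$-lattices which is compact modulo $p$. Each such Banach $K$-algebra is basic nuclear (being countably presented as a $p$-completion of a free $\Z_p$-module on a countable set), and countable sequential colimits of basic nuclears remain basic nuclear. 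To descend from $K$ to $\Q_p$, I would use that $\Q_{p,\solid}\to K_{\solid}$ admits a splitting as solid $\Q_p$-modules (Hahn–Banach, as in the proof of \cref{xhs82j}), so that our module is a direct summand of a basic nuclear one, hence basic nuclear.

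Next, for part (3), given a rational subspace $Z\subset \mathcal{M}(A)$ with presentation $Z=\mathcal{M}(A)(\tfrac{f_1,\ldots,f_n}{g})$, I use the formula of \cref{PropUniversalPropertyRational},
\[
A_Z = A\otimes_{\Q_p}\Q_p\langle T_1,\ldots,T_n\rangle_{\leq 1}/^{\mathbb L}(gT_i-f_i).
\]
By (2), $\Q_p\langle T_1,\ldots,T_n\rangle_{\leq 1}$ is basic nuclear, and the tensor product of an $A$-module (which, if $A$ is nuclear, is nuclear as a solid $\Q_p$-module) with a basic nuclear solid module is again (basic) nuclear. The Koszul quotient by finitely many elements is a finite iterated cofiber of a map between nuclear objects, and nuclearity is stable under finite colimits; basic nuclearity likewise, since the Koszul complex is finite. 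Note that $A_Z$ is already uniform as a Gelfand ring (it is idempotent over $A$ and its uniform completion agrees with $A_Z$ up to $\dagger$-reduction by \cref{LemmaIndependenceRationalLocalizations}), so no extra uniform completion argument is needed here.

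Finally, for part (1), by \cref{xnbsyw} and \cref{xhs892k} the colimit of a diagram $(A_i)$ of Gelfand rings is computed as $(\varinjlim_i A_i)^u$, where the inner colimit is in bounded $\Q_p$-algebras and equals the solid colimit. Since nuclearity (resp.\ basic nuclearity) of solid $\Q_p$-modules is preserved under arbitrary (resp.\ countable) colimits, the inner colimit remains in the relevant class. The main obstacle — and the only genuinely delicate step in the entire lemma — is to show that uniform completion $B\mapsto B^u=(B^{\leq 1})^{\wedge_p}[1/p]$ preserves (basic) nuclearity. For this I would use that $B^u$ sits in the exact sequence
\[
0\to B^{\leq 1}\otimes_\Z \Z_{p,\solid}\to B\oplus (B^{\leq 1})^{\wedge_p}\to B^u\to 0,
\]
so that $B^u$ is obtained from $B$ and a solid $p$-completion of a subobject by finite limit/colimit operations. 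Derived $p$-completion preserves nuclearity of bounded-below $\Z_{p,\solid}$-modules (it is the limit $\varprojlim_n -\otimes^{\mathbb L}\Z/p^n$, and each term is nuclear, with the inverse limit nuclear since the transition fibers are basic nuclear and $\Z_{p,\solid}$ is itself basic nuclear). Passing from a basic nuclear to its $p$-inverted version $[1/p]$ is a sequential colimit of multiplication by $p$, which preserves basic nuclearity. Assembling these ingredients gives the desired stability, completing the proof.
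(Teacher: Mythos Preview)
Your main error is in part (1): you have misidentified how colimits in $\Cat{GelfRing}$ are computed. By \cref{xnbsyw}, Gelfand rings are stable under colimits in bounded $\Q_p$-algebras, and by \cref{lem-bounded-subrings-provides-right-adjoint} bounded $\Q_p$-algebras are stable under colimits in solid $\Q_p$-algebras. Hence the colimit of a diagram of Gelfand rings is simply the solid colimit --- no uniform completion is applied. You have confused Gelfand rings (those $A$ with $A^u$ Banach) with \emph{uniform} rings (those with $A=A^u$); \cref{xhs892k} concerns the latter. Once this is corrected, part (1) is immediate from \cref{basicnuclemma}: filtered colimits of nuclear modules are nuclear, and pushouts $A\otimes_C B$ are nuclear via the bar resolution since tensor products of nuclear objects are nuclear.

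Your attempted argument that uniform completion preserves nuclearity is therefore unnecessary, but it is also flawed: the exact sequence you write down is not correct (and it is unclear what you intend), and more seriously there is no reason for the subobject $B^{\leq 1}\subset B$ to be nuclear when $B$ is --- nuclearity is not stable under passage to subobjects.

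For part (2), the paper simply observes that $\Q_p\langle T\rangle_{\leq r}=\varinjlim_{r'>r}\Q_p\langle T\rangle_{\leq r'}$ (with the terms Banach spaces and the transition maps trace class) exhibits it directly as a light DNF space over $\Q_p$, so no base change to $K$ and descent is needed. Your approach could be made to work but is more circuitous. Your part (3) is essentially the paper's argument, though your remark that $A_Z$ is ``already uniform'' is false in general --- fortunately it is also irrelevant, since no uniform completion enters anywhere.
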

\begin{proof}
  Part (1) follows from the stability of nuclear objects under colimits and under tensor products, \cref{basicnuclemma}.
  Part (2) follows from the observation that $\Q_p\langle T\rangle_{\leq r}$ is a light DNF space (see \cref{defDNF}, \cref{LemmaDNF}).
  Part (3) follows directly from the previous two items.
\end{proof}

Using  the permanence properties of (basic) nuclear Gelfand algebras of \cref{LemmaNuclearGelfand}, the descent of \cref{LemmaDescentNuclear} and the general definition \cref{xhs92mk}, we can define a suitable category of \textit{nuclear Gelfand stacks} together with suitable full subcategories of \textit{locally nuclear modules}.

\begin{definition}\label{DefinitionNuclearGelfandStks}
Let $\Cat{GelfRing}^{\ob{nuc}}_{\omega_1} \subset \Cat{GelfRing}_{\omega_1}$  be the full subcategory of separable nuclear  Gelfand rings. We define the category of \textit{nuclear Gelfand stacks} to be
$$\Cat{GelfStk}^{\ob{nuc}}:= \Cat{AnStk}((\Cat{GelfRing}^{\ob{nuc}}_{\omega_1})^{\op}).
$$
 Similarly, we let
 $$\Cat{GelfStk}^{\ob{qfd}, \ob{nuc}}=\Cat{AnStk}((\Cat{GelfRing}^{\ob{qfd}, \ob{nuc}})^{\op})
 $$ denote the category of \textit{qfd nuclear Gelfand stacks} (here $\Cat{GelfRing}^{\ob{qfd},\ob{nuc}}$ denotes the category of nuclear Gelfand rings, which are additionally qfd).
 Given $X\in \Cat{GelfStk}^{\ob{nuc}}$, we let $\Cat{Nuc}^{\ob{loc}}(X)\subset \ob{D}(X)$ be the full subcategory of modules  whose pullback along all maps $\GSpec(A)\to X$ with $A\in \Cat{GelfRing}^{\ob{nuc}}_{\omega_1}$ is a nuclear $A$-module (resp.\ for $X$ a qfd nuclear Gelfand stack).
\end{definition}

\begin{remark}\label{Permanence of nuclearity}
Let  $X\in \Cat{GelfStk}^{\ob{nuc}}$. Thanks to the descent results of \cref{LemmaDescentNuclear}, the category $\Cat{Nuc}^{\ob{loc}}(X)$ is a sheaf on nuclear Gelfand stacks, and if $X=\GSpec(A)$ is affinoid nuclear then $\Cat{Nuc}(X)=\Cat{Nuc}(A)$.  On the other hand, thanks to the stability of nuclear modules along pullback maps (by definition) the full subcategories $\Cat{Nuc}^{\ob{loc}}(X)\subset \ob{D}(X)$  are stable under pullbacks. 
\end{remark}

Thanks to \cref{Permanence of nuclearity} we know that $*$-pullback maps along nuclear Gelfand stacks preserve locally nuclear objects.
It is natural to ask whether $!$-pullbacks or $*$/$!$-pushforwards preserve locally nuclear objects.\footnote{We note that, similarly to \cref{sec:totally-disc-stacks-qcoh-on-td-stacks}, we can talk about $!$-ability etc.\ of nuclear Gelfand stacks.}
We can prove some stability results in that direction, allowing one to change the source and the target. 

\begin{lemma}\label{LemmaPreservationLocallyNuclear}
Let $f\colon Y\to X$ be a morphism of nuclear Gelfand stacks. 

\begin{enumerate}

\item   Let $g\colon Y'\to Y$ be an epimorphism with \v{C}ech nerve $Y^{\prime,\bullet}$.
  Suppose that $X$ is corepresented by a nuclear Gelfand ring and  that $*$-pushforward along $Y^{\prime,n}\to X$ for each $[n]\in \Delta$ preserves locally nuclear objects.
  Then  $f_*$ preserves locally nuclear objects.

\item Suppose that $f$ is $!$-able and that there is an epimorphism $X^{\prime}\to X$ of nuclear Gelfand stacks with pullback $f^{\prime}\colon Y^{\prime}=Y\times_{X} X^{\prime}\to  X^{\prime}$ such that $f^{\prime}_!$ preserves locally nuclear objects.
  Then $f_!$ preserves locally nuclear objects. 

\item Let $g\colon Y'\to Y$ be a $\ob{D}^!$-cover with \v{C}ech nerve $Y^{\prime,\bullet}$.
  Suppose that locally nuclear sheaves satisfy $\ob{D}^!$-descent along $g$ (in particular, they are stable under upper $!$ and lower $!$-functors for the maps $Y^{\prime,\bullet}\to Y$ and $Y^{\prime,n}\to Y^{\prime, m}$ for $\alpha\colon [m]\to [n]$ in $\Delta$).
  Then the following hold:

\begin{itemize}
\item[(a)] The functor $f^!\colon \ob{D}(X)\to \ob{D}(Y)$ preserves locally nuclear objects if and only if the functor $(f\circ g)^!\colon \ob{D}(X)\to \ob{D}(Y^{\prime})$ preserves locally nuclear objects. 

\item[(b)] The functor $f_!\colon \ob{D}(Y)\to \ob{D}(X)$ preserves locally nuclear objects if and only if $(f\circ g)_!\colon \ob{D}(Y^{\prime})\to \ob{D}(X)$ preserves locally nuclear objects. 

\end{itemize} 

\item Suppose that $X=\GSpec(A)$ is nuclear affinoid.
  Suppose we are given a morphism $g\colon Y^{\prime}\to Y$  satisfying universal $\ob{D}^*$- and  $\ob{D}^!$-descent, with \v{C}ech nerve $Y^{\prime,\bullet}$, and such that  $Y^{\prime,\bullet}=\GSpec (B_{\bullet})$ is nuclear affinoid.
  Then the following hold:

\begin{enumerate}

\item[(a)]  $f_*$ preserves locally nuclear objects.

\item[(b)] If the rings $B_{\bullet}$ are basic nuclear $\Q_p$-algebras, then locally nuclear objects satisfy $!$-descent along $g$, and $g_*$ preserves locally nuclear objects. If in addition $A$ is basic nuclear,   both $f_!$ and $f^!$ preserve locally nuclear objects.  

\end{enumerate}

\end{enumerate}
\end{lemma}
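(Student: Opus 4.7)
The proof will proceed by combining the descent hypotheses in each part with the permanence properties of (locally) nuclear modules recorded in \cref{appendix:solid-functional-analysis}. Throughout I will use that $\ast$-pullback preserves local nuclearity by the very definition of $\Cat{Nuc}^{\ob{loc}}(-)$, and that on an affinoid nuclear Gelfand stack $\GSpec(A)$ locally nuclear modules are just nuclear $A$-modules in the sense of the appendix.

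For part (1), I would apply $\ast$-descent along the epimorphism $g\colon Y'\to Y$ to present
$$f_\ast M \simeq \lim_{[n]\in\Delta}\,(f\circ g_n)_\ast(g_n^\ast M),$$
where $g_n\colon Y'^{n}\to Y$ is the augmentation. Each pullback $g_n^\ast M$ is locally nuclear on $Y'^{n}$, and by hypothesis each $(f\circ g_n)_\ast(g_n^\ast M)$ is locally nuclear on $X$; the remaining point is stability of nuclear $A$-modules under the resulting cosimplicial totalization, which is exactly the type of input provided by the appendix. For part (2), local nuclearity of $f_! M$ is tested after $\ast$-pullback along every map $\GSpec(A_0)\to X$ with $A_0$ a separable nuclear Gelfand ring; refining such a test map through the epimorphism $X'\to X$ and using proper base change identifies the pullback with $f'_!$ applied to the pullback of $M$ to $Y\times_X \GSpec(A_0)$, whose local nuclearity is given by assumption.

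Part (3) is purely formal manipulation of adjoints. In (a), one direction follows from the hypothesis that upper $!$ along $Y'^{n}\to Y$ preserves local nuclearity (composed with $f^!$), while the converse uses the assumed $\ob{D}^!$-descent of local nuclearity along $g$ applied to the terms $(f\circ g_n)^! M = g_n^! f^! M$. Part (b) is the dual argument: lower $!$ along the maps in the Čech nerve of $g$ preserves local nuclearity by hypothesis, and $\ob{D}^!$-descent lets one recover preservation by $f_!$ from preservation by $(f\circ g)_!$.

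Part (4) is the technical heart. For (a), universal $\ob{D}^\ast$-descent along $g$ gives
$$f_\ast M \simeq \lim_{[n]\in\Delta}\,(f\circ g_n)_\ast(g_n^\ast M),$$
where now each $f\circ g_n\colon\GSpec(B_n)\to \GSpec(A)$ is a morphism of nuclear affinoids; each term is then the restriction of scalars of a nuclear $B_n$-module along a map of nuclear Gelfand rings, hence nuclear over $A$ by the appendix, and one concludes by the stability of nuclear $A$-modules under this totalization. For (b), the basic nuclear assumption on the $B_n$ upgrades the previous analysis: restriction of scalars between basic nuclear rings interacts well enough with $!$-functors that local nuclearity satisfies $\ob{D}^!$-descent along $g$ and $g_\ast$ preserves local nuclearity. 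Assuming in addition $A$ basic nuclear, the claims for $f_!$ and $f^!$ then reduce via part (3) to the corresponding statements for $f\circ g_0\colon\GSpec(B_0)\to\GSpec(A)$, which is a morphism of basic nuclear affinoids for which the $!$-functors are concretely described via the underlying rings.

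The main obstacle I anticipate lies in part (4): one must verify that the specific cosimplicial totalizations and $!$-descent patterns appearing here fall within the scope of the stability theorems of \cref{appendix:solid-functional-analysis} for nuclear, and especially for basic nuclear, solid modules. Everything else is a rather formal combination of descent and base change in the six-functor formalism.
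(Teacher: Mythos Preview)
Your proposal for parts (1)--(3) and (4)(a) matches the paper's proof essentially verbatim: the same descent formulas, the same reduction to stability of nuclear modules under countable totalizations (\cref{LemmaCOuntableLimitNuclear}) and restriction of scalars (\cref{LemmaPermanence}), and the same formal manipulations of $!$-adjoints.

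Your outline of (4)(b), however, glosses over the genuinely non-formal step. To invoke part (3) for $f_!$ and $f^!$, you must first verify that locally nuclear sheaves satisfy $\ob{D}^!$-descent along $g\colon Y'\to Y$, and in particular that $g^!$ sends locally nuclear objects on $Y$ to nuclear objects on $Y'$. The difficulty is that $Y$ is \emph{not} assumed affinoid, so $g^!$ is not directly of the form $\iHom_A(B,-)$ to which \cref{LemmaNuclearUppershierk} applies. The paper resolves this by a base change trick: using the $\ast$-descent presentation $M\simeq\ob{Tot}(g^{\bullet}_\ast M_\bullet)$ with $M_\bullet=(g^\bullet)^\ast M$, and the pullback square expressing $Y'^{\bullet+1}$ as $Y'\times_Y Y'^{\bullet}$, one computes
\[
g^! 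M \;\simeq\; \ob{Tot}\bigl(g'^{,\bullet}_\ast\,\alpha_0^{\bullet,!}\, M_\bullet\bigr),
\]
where now every piece lives over the basic nuclear affinoids $B_n$: the upper $!$ along $\alpha_0^n\colon Y'^{n+1}\to Y'^{n}$ is handled by \cref{LemmaNuclearUppershierk}, the lower $\ast$ by \cref{LemmaPermanence}, and the totalization by \cref{LemmaCOuntableLimitNuclear}. You correctly flagged (4) as the obstacle, but this base change computation is the missing idea; without it your reduction ``via part (3)'' cannot begin, since its hypothesis about $!$-descent of local nuclearity along $g$ is exactly what remains to be shown.
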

\begin{proof}
(1) By construction, locally nuclear sheaves are stable under $*$-pullbacks of nuclear Gelfand stacks. Since $f$ is an epimorphism, we have that
\[
\ob{D}(Y)=\ob{Tot}(\ob{D}^{*}(Y^{\bullet})).
\]
Let $g^{\bullet}\colon Y^{\prime, \bullet}\to Y$. Then, we have an equivalence of functors $\ob{D}(Y)\to \ob{D}(X)$
\[
f_*(-)\xrightarrow{\sim} \ob{Tot}( (f\circ g^{\bullet})_*  (g^{\bullet})^*(-) ). 
\]
The preservation of locally nuclear objects by $f_*$  then follows from the preservation of locally nuclear objects of $(f\circ g^{\bullet})_* $ and \cref{LemmaCOuntableLimitNuclear}.

(2)   By construction,  locally nuclear sheaves  satisfy descent along epimorphisms of nuclear Gelfand stacks. Hence, an object $M\in\ob{D}(X)$ is locally nuclear if and only if its pullback to $\ob{D}(X')$ is so.
  The statement follows from proper base change.

(3)  By assumption, the  equivalence of categories
\[
\ob{D}(Y)=\ob{Tot}(\ob{D}^!(Y^{\prime,\bullet}))
\]
restricts to an equivalence on locally nuclear objects. Moreover, an object $M\in \ob{D}(Y)$ is locally nuclear if and only if its upper $!$-pullback in $\ob{D}(Y')$ is locally nuclear. This proves part (3.a) of the statement. For part (3.b),  let $g^{\bullet}\colon Y^{\prime,\bullet}\to Y$,  it suffices to note that there is a natural equivalence of functors $\ob{D}(Y)\to \ob{D}(X)$
\[
\varinjlim_{[n]\in \Delta^{\op}} (f\circ g^{\bullet})_! g^{\bullet, !} \xrightarrow{\sim } f_!,
\]
and that locally nuclear objects are stable under colimits. 

(4) Part (4.a) follows from Part (1) and \cref{LemmaPermanence}. For part (4.b), we want to see that  the natural equivalence
\begin{equation}\label{eqo02o3mqeldqw}
\ob{D}(Y)\xrightarrow{\sim} \ob{Tot}(\ob{D}^!(Y^{\prime,\bullet}))
\end{equation}
along upper $!$-morphisms preserve locally nuclear modules. Once we have proven this, (4.b) will follow from (3), \cref{LemmaPermanence} for lower-$!$ and \cref{LemmaNuclearUppershierk} for upper-$!$.  By \cref{LemmaNuclearUppershierk} and since the terms $Y^{\prime,\bullet}$ are affinoids represented by basic nuclear algebras, the right hand side term of \cref{eqo02o3mqeldqw} leaves stable the categories of nuclear modules. Hence, it suffices to show that an object  $M\in \ob{D}(Y)$ is locally nuclear if and only if $g^!M\in \ob{D}(Y^{\prime})$  is  nuclear. Since $g$ satisfies universal $*$-descent, we know that $M$ is nuclear if and only if $g^*M$ is nuclear, and by applying (2) and (1), we deduce that $g_*$ preserves locally nuclear objects (note that $g_!=g_*$ because $g$ is represented in affinoid Gelfand rings).
This implies by $!$-descent that $M$ is locally nuclear if $g^!M$ is  nuclear. 

Now, consider the pullback diagram 
\[
\begin{tikzcd}
Y^{\prime,\bullet+1} \ar[d,"g^{\prime,\bullet}"] \ar[r,"\alpha_0^{\bullet}"] & Y^{\prime,\bullet} \ar[d, "g^{\bullet}"]  \\ 
Y^{\prime} \ar[r,"g"] & Y
\end{tikzcd}
\]
where $\alpha_0^{n}\colon [n]\to [n+1]$ is the inclusion in the last $n$ terms.   Let $M$ be a locally nuclear module over $Y$ with associated  cocartesian section $(M_n)_{n}\in \ob{D}(Y^{\prime,\bullet})$ (and upper $\ast$-functors).
Then by proper base change (for upper $!$ and lower $*$ functors) we have that
\[
g^!M= g^!( \ob{Tot}( g^{\bullet}_{\ast}M_{\bullet} )) =  \ob{Tot}(g^{\prime,\bullet}_* \alpha^{\bullet,!}_{0} M_{\bullet}). 
\]
If $M$ is locally nuclear, we deduce that $g^!M$ is nuclear by a combination of  \cref{LemmaNuclearUppershierk} for $\alpha_0^{\bullet,!}$, \cref{LemmaPermanence} for $g^{\prime,\bullet}_*$ and \cref{LemmaCOuntableLimitNuclear} for the totalization. 
\end{proof}

For future reference we need an analogue of \cref{LemmaPreservationLocallyNuclear} for $\omega_1$-compact objects.

\begin{lemma}\label{LemmaStabilityOmega1Compact} 
Let $f\colon Y\to X$ be a  morphism of Gelfand stacks.  The following hold: 

\begin{enumerate}

\item Suppose that $f$ is an epimorphism and that its \v{C}ech nerve $Y^{\bullet}\to X$ is corepresented by basic nuclear Gelfand $\Q_p$-algebras. Then $\omega_1$-compact objects are stable under $f^*$, $f_*$ and satisfy $\ob{D}^*$-descent along $f$.

\item Suppose that there is an epimorphism $g\colon X'\to X$ as in (1). Let $f'\colon Y'\to X'$ be the base change of $f$ along $g$. Suppose that $f$ is $!$-able,  then $f_{!}$ preserves $\omega_1$-compact objects if and only of $f'_{!}$ does. 

\item Suppose that there is an epimorphism $Y'\to Y$ as in (1) and let $f'\colon Y'\to X$. Then $f^*$ preserves $\omega_1$-compact objects if and only if $f^{\prime,*}$ does so.

\item Suppose that $X$ is the spectrum of a  basic nuclear Gelfand $\Q_p$-algebra and that there is an epimorphism $g\colon Y'\to Y$ as in (1). Then $f^*$ and $f_*$ preserve $\omega_1$-compact objects.

\end{enumerate}

\end{lemma}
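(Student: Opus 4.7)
The plan is to adapt the proof of Lemma \ref{LemmaPreservationLocallyNuclear} to $\omega_1$-compact objects, replacing the permanence properties of nuclear modules by their counterparts for $\omega_1$-compact modules as provided by the appendix on solid functional analysis. The essential inputs are: $\omega_1$-compact modules over a basic nuclear Gelfand $\Q_p$-algebra are stable under base change along morphisms of such algebras, under countable colimits and countable totalizations, and satisfy $\ob{D}^*$-descent along an epimorphism whose \v{C}ech nerve is corepresented by basic nuclear algebras.

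For part (1), the $\ob{D}^*$-descent of $\omega_1$-compactness along $f$ follows immediately from the assumption on the \v{C}ech nerve, exactly as in Lemma \ref{LemmaPreservationLocallyNuclear}(1). Stability of $\omega_1$-compactness under $f^*$ is inherited from the basic-nuclear case. For $f_*$, one computes $f_{*} \simeq \ob{Tot}((f^{\bullet})_{*} \circ (f^{\bullet})^{*})$ along the \v{C}ech nerve and invokes stability of $\omega_1$-compact modules under countable totalizations over basic nuclear Gelfand rings.

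For part (2), the proper base-change identity $g^{*} f_{!} \simeq f'_{!} g'^{,*}$, where $g'\colon Y\times_{X} X'\to Y$ is the pullback of $g$, combined with the fact that both $g^{*}$ and $g'^{,*}$ detect $\omega_1$-compactness by the descent in part (1), yields the equivalence. Part (3) is similar: writing $f'=f\circ g$, the implication $f^{*}$ preserves $\Rightarrow f'^{,*}$ preserves is immediate from $f'^{,*}=g^{*}f^{*}$ and part (1). For the converse, if $M\in \ob{D}(X)^{\omega_1}$, then for each $[n]\in \Delta$ the pullback of $f^{*}M$ to $Y^{\prime,n/Y}$ factors through a pullback of $f'^{,*}M$ along a morphism of basic nuclear spectra, hence is $\omega_1$-compact; the descent statement of part (1) then forces $f^{*}M$ to be $\omega_1$-compact.

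For part (4), the composite $f\circ g\colon Y'\to X$ is a morphism between affinoid Gelfand stacks corepresented by basic nuclear $\Q_p$-algebras, and for such morphisms both $(-)^{*}$ and $(-)_{*}$ preserve $\omega_1$-compactness by the appendix. By part (3) this gives that $f^{*}$ preserves $\omega_1$-compactness. For $f_{*}$, apply the \v{C}ech-nerve presentation $f_{*}\simeq \ob{Tot}((f\circ g^{\bullet})_{*}(g^{\bullet})^{*})$: each term preserves $\omega_1$-compactness by the basic-nuclear case, and the totalization does so since $X$ is basic nuclear. The main obstacle is extracting from \cref{appendix:solid-functional-analysis} the precise descent and countable-totalization statements for $\omega_1$-compact modules, which parallel but are technically distinct from the nuclear case; once these are in place, all four parts follow formally by proper base change and \v{C}ech descent, exactly as in the nuclear analogue.
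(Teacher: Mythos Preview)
Your treatment of part (1) has a genuine gap. The hypothesis only says that the \v{C}ech nerve $Y^{\bullet}$ consists of basic nuclear affinoids; there is no assumption that $X$ itself is the spectrum of a basic nuclear (or even any) Gelfand ring. Two of your steps implicitly use such an assumption:

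\begin{itemize}
\item For $f^*$, the claim that preservation of $\omega_1$-compactness is ``inherited from the basic-nuclear case'' has no meaning here, since the source $\ob{D}(X)$ is not a module category over a basic nuclear ring. The paper instead argues that $f$ is prim (being represented in affinoids), so $f_*$ preserves colimits, and hence its left adjoint $f^*$ preserves $\omega_1$-compact objects.
\item For $f_*$, the displayed formula $f_*\simeq \ob{Tot}((f^{\bullet})_*\circ (f^{\bullet})^*)$ is not a formula for $f_*$: as written it is a cosimplicial endofunctor of $\ob{D}(X)$ whose totalization is the identity (by $*$-descent), not $f_*$. Even granting some intended correction, the appeal to ``stability of $\omega_1$-compact modules under countable totalizations over basic nuclear Gelfand rings'' would require the totalization to take place over a basic nuclear base, which is only available in (4), not in (1).
\end{itemize}

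The paper's key trick for $f_*$ in (1) is to avoid $\ob{D}(X)$ altogether: having shown that $f^*$ \emph{reflects} $\omega_1$-compactness (via the Hom-totalization formula coming from descent), it suffices to show that $f^*f_*$ preserves $\omega_1$-compact objects. By proper base change, $f^*f_*\cong \mathrm{pr}_{2,*}\mathrm{pr}_1^*$ with $\mathrm{pr}_i\colon Y\times_X Y\to Y$ maps between basic nuclear affines, where \cref{LemmaPermanenceOmega1Compact} applies directly. This base-change reduction is the missing idea in your argument.

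Parts (2)--(4) of your proposal are essentially aligned with the paper's proof, though your argument for the converse direction of (3) is more elaborate than needed: the paper simply invokes the $\ob{D}^*$-descent of $\omega_1$-compactness along $Y'\to Y$ from (1).
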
 
\begin{proof}

(1) Since $f$ is an epimorphism, we have that
\[
\ob{D}(X)=\ob{Tot}(\ob{D}^*(Y^{\bullet}))
\]along $*$-pullback maps. In particular, we have for $M,N\in \ob{D}(X)$ that 
\[
\Hom_{X}(N,M)= \ob{Tot} (\Hom_{Y^{\bullet}}( N_{\bullet}, M_{\bullet}))
\]
where $N_{\bullet},M_{\bullet}\in \ob{D}(Y^{\bullet})$ are the corresponding pullbacks of $N$ and $M$ respectively.
This implies that if $N\in \ob{D}(X)$ is such that $f^*N$ is $\omega_1$-compact, then so is $N$.
Conversely, since $f$ is prim (it is represented in affinoid stacks),   $f_*$ preserves colimits, and if $M\in \ob{D}(X)$ is $\omega_1$-compact then so is $f^*M\in \ob{D}(Y)$. From this one  deduces $*$-descent for $\omega_1$-compact objects along $Y\to X$.
It is left to show that $f_*$ preserves $\omega_1$-compact objects.
By the previous discussion, it suffices to show that $f^*f_*$ preserves $\omega_1$-compact objects, by proper base change along the pullback $\ob{pr}_i\colon Y\times_XY\to Y$ the functor $f^*f_*$ is naturally equivalent to $\ob{pr}_{2,*}\ob{pr}_1^*$, is is a composite of a   pullback  an forgetful functor for basic nuclear rings.
The preservation of $\omega_1$-compact modules follows from \cref{LemmaPermanenceOmega1Compact}.

(2) This follows from the $\ob{D}^*$-descent of $\omega_1$-compact modules along $g$ of part (1), and by proper base change for $f_!$.

(3) This follows from  the $\ob{D}^*$-descent of $\omega_1$-compact objects along $Y'\to Y$ of part (1).

(4)  By (1), the functor $g^*$ reflects $\omega_1$-compact objects.
 Hence, $f^*$ preserves $\omega_1$-compact objects if and only if $(f\circ g)^*$ does so, but this is a base change along basic nuclear algebras and the claim follows from \cref{LemmaPermanenceOmega1Compact}.
  On the other hand, for proving that $f_*$ preserves $\omega_1$-compact objects,  let $g^{\bullet}\colon Y^{\prime, \bullet}\to Y$ be the \v{C}ech nerve of $g$. By assumption the terms $Y^{\prime,\bullet}$ are corepresented by basic nuclear Gelfand $\Q_p$-algebras.
   On the other hand, we have an equivalence of functors $\ob{D}(Y)\to \ob{D}(X)$ given by 
\[
f_*=\ob{Tot}( (f\circ g^{\bullet})_* g^{\bullet,*}(-)).
\]
By (1) we know that $g^{\bullet,*}$ preserve $\omega_1$-compact objects, and by \cref{LemmaPermanenceOmega1Compact} so does the forgetful functor $(f\circ g^{\bullet})_*$.
 Since $\omega_1$-compact objects are stable under countable limits, we deduce that $f_*$ preserves $\omega_1$-compact objects as wanted. 
\end{proof}

We apply these general considerations to analytic de Rham stacks. There are evident pullback morphisms of $\infty$-topoi
\begin{equation}\label{eq01klmqaswdq}
\begin{tikzcd}
\Cat{GelfStk}^{\ob{qfd},\ob{nuc}}  \ar[r, "f^*"] \ar[d] & \Cat{GelfStk}^{\ob{nuc}} \ar[d] \\ 
\Cat{GelfStk}^{\ob{qfd}}  \ar[r] \ar[d, "(-)^{\diamond}"] & \Cat{GelfStk} \ar[d,"(-)^{\diamond}"] \\
\Cat{ArcStk}^{\ob{qfd}} \ar[r] & \Cat{ArcStk}.
\end{tikzcd}
\end{equation}
Thanks to the proof of \cref{LemBasisTopologyGelfandRings}, and the fact that Banach $\Q_p$-modules are nuclear (\cref{ExampleBanach}), the category of nuclear Gelfand rings admits a basis by nuclear nilperfectoid rings, and the category of qfd nuclear Gelfand rings admit a basis by nuclear nilperfectoid  strictly totally disconnected rings.
Hence, we can define a de Rham stack $(-)^{\dR,\ob{nuc}}\colon \Cat{ArcStk}^{\qfd}\to \Cat{GelfStk}^{\ob{qfd}, \ob{nuc}}$ as the right adjoint of the perfectoidization $(-)^{\diamond, \ob{nuc}}\colon \Cat{GelfStk}^{\ob{qfd},\ob{nuc}}\to \Cat{ArcStk}^{\ob{qfd}}$.
Concretely, the functor $(-)^{\dR,\ob{nuc}}$ sends an arc-stack $X$ to the functor sending $A$ a separable qfd nilperfectoid strictly totally disconnected nuclear $\Q_p$-algebra to the anima $X(A^{\dagger-\ob{red}})=X(A^u)$. We have proven the following:
\begin{proposition}\label{PropDeRhamNuclear}
Let $f_*\colon \Cat{GelfStk}^{\ob{qfd}}\to \Cat{GelfStk}^{\ob{qfd},\ob{nuc}}$ be the geometric morphism of $\infty$-topoi arising from the inclusion of qfd nuclear Gelfand rings in qfd Gelfand rings. We have a natural commutative diagram
\[
\begin{tikzcd}
\Cat{ArcStk}^{\ob{qfd}} \ar[r, "(-)^{\dR}"] \ar[rd, "(-)^{\ob{dR}, \ob{nuc}}"'] & \Cat{GelfStk}^{\ob{qfd}} \ar[d,"f_*"] \\
 &   \Cat{GelfStk}^{\ob{qfd}, \ob{nuc}}
\end{tikzcd}
\]
Moreover, the functor $(-)^{\ob{dR},\ob{nuc}}$  sends epimorphisms of qfd arc-stacks to epimorphisms of qfd nuclear Gelfand stacks.  
\end{proposition}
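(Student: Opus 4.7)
My plan is to split the proof into the two claims and, for each, reduce to a computation on the basis of nilperfectoid strictly totally disconnected nuclear rings for the $!$-topology.

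For the commutative diagram, I would first recall that the geometric morphism $f_*\colon \Cat{GelfStk}^{\ob{qfd}}\to \Cat{GelfStk}^{\ob{qfd},\ob{nuc}}$ is concretely given by restriction along the inclusion $\iota\colon \Cat{GelfRing}^{\ob{qfd},\ob{nuc}}_{\omega_1}\hookrightarrow \Cat{GelfRing}^{\ob{qfd}}_{\omega_1}$, so $(f_*Y)(A)=Y(\GSpec(A))$ for any $A\in \Cat{GelfRing}^{\ob{qfd},\ob{nuc}}_{\omega_1}$. By the universal property of $(-)^{\dR,\ob{nuc}}$ as right adjoint to $(-)^{\diamond,\ob{nuc}}$ (which is itself the unique colimit-preserving functor sending $\GSpec(A)$ to $\Marc(A)$ on nuclear representables), we obtain a canonical natural transformation $f_{*}(-)^{\dR}\Rightarrow (-)^{\dR,\ob{nuc}}$ from the adjoint pair $((-)^{\diamond,\ob{nuc}},(-)^{\dR,\ob{nuc}})$ and the factorization of $(-)^{\diamond,\ob{nuc}}$ through the inclusion followed by $(-)^\diamond$. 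To check this natural transformation is an isomorphism, it suffices to evaluate both sides on separable nuclear nilperfectoid strictly totally disconnected Gelfand rings $A$ (which form a basis of the $!$-topology, see below): both functors give $X(\Marc(A))=X(A^u)$ by \cref{prop:formula-functor-of-points-drall}, combined with the functor of points description of $(-)^{\dR,\ob{nuc}}$.

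For the epimorphism preservation, I would follow the strategy of \cref{sec:furth-results-analyt-1-surjections-on-de-rham-stack}, checking at each step that the covers produced may be taken to remain in the nuclear subcategory. Concretely, given a map $\GSpec(A)\to X^{\dR,\ob{nuc}}$ with $A$ separable qfd nuclear Gelfand, I would first reduce, via the nuclear analogue of \cref{LemBasisTopologyGelfandRings}, to the case where $A$ is separable nuclear nilperfectoid strictly totally disconnected. Inspecting that proof, all the algebras built are finite pushouts, filtered colimits, or base changes of overconvergent Tate algebras $\Q_{p}\langle T_1,\ldots, T_n\rangle_{\leq 1}$, of the ring $\Q_p^{\ob{cyc}}\langle T_n^{1/p^\infty}\colon n\in \N\rangle$, and of ind-finite \'etale extensions; all are basic nuclear by \cref{LemmaNuclearGelfand}(2) and stability of (basic) nuclearity under countable colimits and pushouts of Gelfand rings by \cref{LemmaNuclearGelfand}(1). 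After this reduction, one assumes $X=\Marc(A)^{\dR,\ob{nuc}}$, and since $Y\to X$ is an arc-cover one replaces $Y$ by the spectrum of a separable qfd perfectoid $A^u$-algebra. One then applies the nuclear version of \cref{corollary-smooth-cover}: using \cref{xsh29k}, one writes the cover as $\varinjlim_n B_n$ with each $B_n$ a rational localization of $A\langle T_1,\ldots, T_n\rangle_{\leq 1}$, which remain (basic) nuclear by \cref{LemmaNuclearGelfand}(2)-(3). Finally, descendability follows from \cref{xvsa9w} as in \textit{loc.\ cit.}, producing the desired $!$-cover $\GSpec(A')\to \GSpec(A)$ in qfd nuclear Gelfand rings together with the lift to $Y^{\dR,\ob{nuc}}$.

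The main obstacle is bookkeeping the preservation of nuclearity through the constructions of \cref{LemBasisTopologyGelfandRings} and \cref{corollary-smooth-cover}, but, as outlined above, the stability properties recorded in \cref{LemmaNuclearGelfand} suffice: every explicit algebra appearing in those proofs lies in the class generated under countable colimits, pushouts, and rational localizations by overconvergent Tate algebras (and $\Q_p^{\ob{cyc}}$-variants), hence is nuclear. With this verified, the argument of \cref{sec:furth-results-analyt-1-surjections-on-de-rham-stack} transports verbatim, yielding the claimed epimorphism in $\Cat{GelfStk}^{\ob{qfd},\ob{nuc}}$.
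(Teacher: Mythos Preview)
Your proposal is correct and follows essentially the same approach as the paper. For the commutative triangle, the paper simply notes that it follows by passing to right adjoints in the left column of the diagram $(\Cat{GelfStk}^{\ob{qfd},\ob{nuc}}\to \Cat{GelfStk}^{\ob{qfd}}\to \Cat{ArcStk}^{\ob{qfd}})$: since $(-)^{\diamond,\ob{nuc}}=(-)^\diamond\circ f^*$, the right adjoints satisfy $(-)^{\dR,\ob{nuc}}\cong f_*\circ (-)^\dR$ automatically, so your verification on a basis is correct but unnecessary. For the epimorphism claim, the paper likewise says that the proof of \cref{sec:furth-results-analyt-1-surjections-on-de-rham-stack} goes through once one observes that all constructions there preserve nuclearity, which is precisely the bookkeeping you spell out via \cref{LemmaNuclearGelfand}.
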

\begin{proof}
The first statement follows by passing to right adjoints in the left column of \cref{eq01klmqaswdq}. The fact that $(-)^{\ob{dR},\ob{nuc}}$ sends epimorphisms to epimorphisms follows from the fact that $X^{\dR, \rm nuc}$ sends $A$ a separable qfd nilperfectoid strictly totally disconnected nuclear $\Q_p$-algebra to the anima $X(A^{\dagger-\ob{red}})=X(A^u)$, and the same proof of arc-descent of \cref{sec:furth-results-analyt-1-surjections-on-de-rham-stack} after noticing that all the constructions there leave stable the category of nuclear Gelfand rings. 
\end{proof}

Later in \cref{CorodeRhamNuclearSame} we will prove that the de Rham and nuclear de Rham stacks are essentially the same, in particular that they have the same category of quasi-coherent sheaves.
In preparation to that statement, let us prove this property for affinoid rings. 

\begin{lemma}\label{Lemqqj3qw}
Keep the notation of \cref{PropDeRhamNuclear}. Let $A$ be a qfd Gelfand ring and let $X=\Marc(A)$ be its associated arc-stack. Then the natural map of qfd Gelfand stacks 
\[
f^*X^{\dR,\ob{nuc}}\to f^*f_* X^{\dR} \to X^{\dR}
\] 
is an equivalence of Gelfand stacks.
\end{lemma}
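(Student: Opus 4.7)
The plan is to check the statement on a basis of separable nuclear qfd Gelfand rings. First, since the inclusion $\Cat{GelfRing}^{\ob{qfd},\ob{nuc}}_{\omega_1}\subseteq \Cat{GelfRing}^{\ob{qfd}}_{\omega_1}$ is fully faithful and preserves pushouts and $!$-equivalences by \cref{LemmaNuclearGelfand}(1), the functor $f^\ast\colon \Cat{GelfStk}^{\ob{qfd},\ob{nuc}}\to \Cat{GelfStk}^{\ob{qfd}}$ is fully faithful, so the unit $\mathrm{id}\to f_\ast f^\ast$ is an equivalence. Using \cref{PropDeRhamNuclear} we have $f_\ast X^{\dR}=X^{\dR,\ob{nuc}}$, and the counit $f^\ast f_\ast X^{\dR}\to X^{\dR}$ therefore restricts on $\Cat{GelfRing}^{\ob{qfd},\ob{nuc}}_{\omega_1}$ to the tautological equivalence $X^{\dR,\ob{nuc}}\xrightarrow{\sim} X^{\dR,\ob{nuc}}$.

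Both $f^\ast X^{\dR,\ob{nuc}}$ and $X^{\dR}$ are qfd Gelfand stacks, hence $!$-hypersheaves on $(\Cat{GelfRing}^{\ob{qfd}}_{\omega_1})^{\op}$. Granting that separable nuclear qfd Gelfand rings form a basis of the $!$-topology, a morphism of $!$-hypersheaves that is an equivalence after restriction to such a basis is automatically an equivalence, which concludes the proof. To verify the basis claim, given a separable qfd Gelfand ring $B$ I would refine the construction of \cref{LemBasisTopologyGelfandRings}: in part (1) one arranges a countable surjection $\varinjlim_n \Q_p\langle T_1,\dots,T_n\rangle_{\leq 1}\to B^u$ producing a descendable cover $B\to B_1$ where $B_1$ can be presented as a filtered colimit of rational localizations of overconvergent Tate algebras over $\Q_p$, which are basic nuclear by \cref{LemmaNuclearGelfand}(2); the subsequent base change by $\Q_p^{\ob{cyc}}\langle T_n^{1/p^\infty}\rangle$ used in part (2) of \emph{loc.~cit.}\ is a further tensor with a basic nuclear algebra. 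By stability of nuclearity under filtered colimits, pushouts and rational localizations (\cref{LemmaNuclearGelfand}), the resulting ring $B'$ is nuclear, and the \v{C}ech nerve of $B\to B'$ consists of nuclear qfd Gelfand rings by \cref{LemmaNuclearGelfand}(1) again.

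The main obstacle is controlling nuclearity through the base changes in the construction, so that one obtains a $!$-cover by a nuclear ring rather than merely by a ring whose uniform completion is Banach. This is the only new ingredient required beyond what is already in \cref{LemBasisTopologyGelfandRings}, and it is secured by tracking the construction carefully and invoking \cref{LemmaNuclearGelfand} at each step.
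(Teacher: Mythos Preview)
Your approach has a genuine gap. The claim that $f^*$ is fully faithful does not follow merely from the inclusion preserving pushouts and $!$-equivalences; that data only yields a geometric morphism of topoi, cf.\ \cref{sec:totally-disc-stacks-morphism-of-topoi}, where the analogous functor $(-)_{\An}$ is explicitly noted to be ``probably not fully faithful''. Fully faithfulness of $f^*$ is in fact essentially equivalent to your basis claim, so the two cannot be used to justify one another.

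More seriously, your justification of the basis claim misreads \cref{LemBasisTopologyGelfandRings}. The cover produced in part~(1) there has the form $B\to B_1=R^u\otimes_R B$ with $R=\varinjlim_n \Q_p\langle T_1,\dots,T_n\rangle$; this $B_1$ is a base change of $B$, not ``a filtered colimit of rational localizations of overconvergent Tate algebras over $\Q_p$'' as you write. If $B$ is not nuclear there is no reason for $B_1$ to be nuclear, and the further base change in part~(2) does not help. The paper only uses these constructions to show that \emph{nuclear} Gelfand rings admit a basis by nuclear nilperfectoid rings (the paragraph before \cref{PropDeRhamNuclear}), which is a strictly weaker statement than the one you need.

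The paper's argument sidesteps both issues by changing the \emph{input ring} rather than covering it. Since $X^{\dR}$ depends only on the uniform completion of $A$, one may invoke \cref{xsh29k} (with base $\Q_p$) to replace $A$ by $\widetilde A=\varinjlim_n A_n$, a filtered colimit of rational localizations of affine spaces over $\Q_p$, satisfying $\widetilde A^u\cong A^u$. This $\widetilde A$ is nuclear by \cref{LemmaNuclearGelfand} and $\dagger$-formally smooth, so by \cref{CorollaryDaggerSmoothComparison} and \cref{PropdeRhamBerkovich}(2) both $X^{\dR}$ and $X^{\dR,\ob{nuc}}$ are geometric realizations of the overconvergent \v{C}ech nerve of $\GSpec(\widetilde A)\to\GSpec(\Q_p)$, whose terms are all nuclear affinoids. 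The conclusion then follows because $f^*$ preserves colimits and is the identity on nuclear affinoids---no comparison of topoi is required.
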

\begin{proof}
  Thanks to \cref{xsh29k} and the independence of de Rham stacks under uniform completions, we can assume without loss of generality that $A=\varinjlim_n A_n$ is a filtered colimit of rational localizations of affine spaces. In that case $A$ is $\dagger$-formally smooth as in \cref{sec:appr-gelf-rings-dagger-formally-smooth}, and by \cref{CorollaryDaggerSmoothComparison} and \cref{PropdeRhamBerkovich} (2) we can write $X^{\dR}$  (resp.\ $X^{\dR,\ob{nuc}}$) as the geometric realizations of the overconvergent diagonals of the \v{C}ech nerve of $\GSpec(A)\to \GSpec(\Q_p)$ in qfd (resp.\ qfd nuclear) Gelfand stacks.
  The lemma follows since $f^*$ commutes with colimits, in particular with geometric realizations, and since it is the identity in nuclear affinoid Gelfand stacks.
\end{proof}

Finally, we prove stability properties of locally nuclear and $\omega_1$-compact objects on de Rham stacks under the six functors, \Cref{PropDescentNuclearomega1CompactdR}. First, we establish the following useful criterion.  

\begin{lemma}\label{LemmaDescentArcProet}
Let $X$ be a qcqs arc-stack over $\Q_p$ with a quasi-pro-\'etale map $X\to \overline{\mathbb{D}}^{d,\diamond}_{\Q_p}$. Let $B$ be a strictly totally disconnected qfd basic nuclear $\Q_p$-algebra endowed with a $!$-cover $h\colon \GSpec B\to X^{\dR}$ (cf. \cref{rk:refined-not-std-but-berk-pro-et}). The following hold: 

\begin{enumerate}

\item The \v{C}ech nerve $Y^{\bullet}$ of $h\colon Y=\GSpec B\to X^{\dR}$ consists of affinoid Gelfand stacks corepresented by basic nuclear $\Q_p$-algebras. 

\item $\omega_1$-compact objects satisfy $\ob{D}^*$-descent along $h$. Moreover, $h_*$ preserves $\omega_1$-compact objects, equivalently, $h^!$ commutes with $\omega_1$-filtered colimits. 

\item Locally nuclear objects satisfy $\ob{D}^!$-descent along $h$, and $h_*$ preserves locally nuclear objects. 

\item Locally nuclear  objects on $X^{\dR}$ are stable under countable limits. 

\end{enumerate}
\end{lemma}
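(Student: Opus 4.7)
The argument relies essentially on the explicit description of the \v{C}ech nerve of $h$ provided by \cref{rk:refined-not-std-but-berk-pro-et}, together with the stability properties of basic nuclear Gelfand rings established in \cref{LemmaNuclearGelfand}.

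For part (1), the plan is to invoke \cref{rk:refined-not-std-but-berk-pro-et} to identify $Y^{\bullet}$ with $\GSpec(B_{\bullet}^{\dagger/X})$, where each $B_n^{\dagger/X}$ is obtained as the overconvergent neighborhood of the strictly totally disconnected subspace $\Marc(B)^{n/X} \subset \GSpec(B^{\otimes n})$. Basic nuclearity of $B$ itself is a consequence of its construction as a Berkovich pro-\'etale cover of the overconvergent polydisc $\DD^{d,\leq 1}_{\Q_p}$, which is basic nuclear by \cref{LemmaNuclearGelfand}(2): rational localizations and countable filtered colimits preserve basic nuclearity by \cref{LemmaNuclearGelfand}(1) and (3), and finite \'etale extensions are finitely generated projective as modules, so they preserve it as well. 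Basic nuclearity then transfers to the tensor powers $B^{\otimes n}$ via colimit stability, and to the overconvergent neighborhoods $B^{\otimes n}\langle f_i\rangle_{\leq 0}$ via their description as pushouts against countably-indexed overconvergent algebras $\Q_p\langle T_i\rangle_{\leq 0}$, which are themselves basic nuclear as countable colimits of DNF algebras.

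For part (2), we will apply \cref{LemmaStabilityOmega1Compact}(1) to $f = h$; the hypothesis on the \v{C}ech nerve is verified by part (1), yielding $\ob{D}^{\ast}$-descent of $\omega_1$-compact objects along $h$ and preservation of $\omega_1$-compact objects by $h_{\ast}$. The \emph{equivalently} clause is then immediate: since $h$ is prim as part of being a $!$-cover, $h_{\ast}$ admits a right adjoint $h^!$, and the adjunction formally translates preservation of $\omega_1$-compact objects by $h_{\ast}$ into commutation of $h^!$ with $\omega_1$-filtered colimits. For part (3), we apply \cref{LemmaPreservationLocallyNuclear}(4)(b) with $X = \GSpec(\Q_p)$ (nuclear affinoid), $Y = X^{\dR}$, and $g = h$; the map $h$ satisfies universal $\ob{D}^{\ast}$- and $\ob{D}^!$-descent because it is a $!$-cover, and its \v{C}ech nerve consists of basic nuclear affinoid Gelfand rings by part (1), so the conclusion is exactly the $\ob{D}^!$-descent of locally nuclear objects along $h$ together with the preservation of locally nuclear objects by $h_{\ast}$.

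For part (4), the plan is to use the descent from part (3) to identify $\ob{D}(X^{\dR})$ with $\ob{Tot}(\ob{D}^!(Y^{\bullet}))$ in such a way that locally nuclear objects correspond to cocartesian sections whose components are nuclear (here using that locally nuclear agrees with nuclear on each basic nuclear affinoid $Y^n$). Countable limits in $\ob{D}(X^{\dR})$ will then be computed componentwise in this description, and the required nuclearity of termwise limits will follow from \cref{LemmaCOuntableLimitNuclear}. The main obstacle is part (1): the argument must carefully track the overconvergent structure in the construction of $B_n^{\dagger/X}$, because basic nuclearity cannot be deduced from Berkovich pro-\'etale descent over the compactified polydisc $\overline{\DD}^d_{\Q_p}$ alone, whose global sections are Banach and hence not basic nuclear in this paper's DNF-based sense.
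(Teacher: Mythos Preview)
Your arguments for parts (2)--(4) coincide with the paper's: you apply \cref{LemmaStabilityOmega1Compact}(1), \cref{LemmaPreservationLocallyNuclear}(4)(b), and \cref{LemmaCOuntableLimitNuclear} exactly as the paper does. The only real divergence is in part (1).

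For (1), the paper takes a shorter route than yours. You go through the overconvergent-neighbourhood description inside $B^{\otimes n}$ and then try to show basic nuclearity by expressing $B_n^{\dagger/X}$ as $B^{\otimes n}\langle f_i\rangle_{\leq 0}$. That last step has a gap: the presentation $\langle f_i\rangle_{\leq 0}$ is only available when the subspace $\Marc(B)^{n/X}\subset \Marc(B^{\otimes n})$ is (locally) Zariski closed, and you do not verify this. The paper avoids the issue entirely. It notes that the transition maps $Y^{n,\diamond}\to Y^{\diamond}$ are pro-\'etale between strictly totally disconnected perfectoid spaces, hence are countable inverse limits of finite \'etale maps; pulling back along $Y=\GSpec(B)\to Y^{\dR}$, one gets that $Y^n\to Y$ is a countable inverse limit of finite \'etale maps of affinoid Gelfand stacks. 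This is already stated in \cref{rk:refined-not-std-but-berk-pro-et} (the transition maps of $B_\bullet^{\dagger/X}$ are ``Berkovich ind-\'etale, even ind-finite \'etale''), which you cite but do not exploit. Since $B$ is basic nuclear and finite \'etale extensions are finite projective modules, basic nuclearity of each $Y^n$ follows at once from countable colimit stability.

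Your closing concern about Banach algebras not being basic nuclear is correct and relevant, but it is precisely what the ind-finite-\'etale argument sidesteps: one never needs to pass through $\overline{\DD}^d_{\Q_p}$ or through $B^{\otimes n}$ at all.
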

\begin{proof}
The existence of the ring $B$ is discussed in \cref{rk:refined-not-std-but-berk-pro-et}.  We proceed to prove the statements (1) to (4).

(1) By \cref{PropQuotientdROverconvergentEquiv} the \v{C}ech nerve $Y^{\bullet}$ of $h$ is given by the overconvergent neighbourhoods of $\Marc(B)^{\bullet/X^{\diamond}} \subset \Marc(B^{\bullet})$ in $\GSpec(B^{\bullet})$, where $\Marc(B)^{\bullet/X^{\diamond}}$ is the \v{C}ech nerve of $\Marc(B)\to X^{\diamond}$, and $\GSpec(B^{\bullet})$ is the \v{C}ech nerve of $\GSpec B\to \GSpec(\Q_p)$.
  In particular, for all morphisms $\alpha\colon [n]\to [m]$ in $\Delta$, the associated map $\alpha\colon Y^{m}\to Y^{n}$ is quasi-pro-\'etale at the level of arc-stacks, and since $Y^{\diamond}$ is strictly totally disconnected, it is actually pro-\'etale.
  In particular,  for any map $\alpha\colon [0]\to [n]$, the map $\alpha\colon Y^{n,\diamond}\to Y^{\diamond}$ can be written as a countable limit of finite \'etale maps, and the pullback $Y^{n}=Y\times_{Y^{\dR}} Y^{n,\dR}$ is a countable limit of finite \'etale maps over $Y$, proving the claim.  

(2) This follows by \cref{LemmaStabilityOmega1Compact} (1) and part (1) above.

(3) This follows from \cref{LemmaPreservationLocallyNuclear} (4.b) and part (1) above.

(4) The stability of locally nuclear objects under countable limits follows from part (3), the commutation of $h^!$ with limits, and the stability under countable limits of nuclear modules of \cref{LemmaCOuntableLimitNuclear}.  
\end{proof}

Before addressing the general case of \Cref{PropDescentNuclearomega1CompactdR}, we prove the qcqs case.

\begin{lemma}\label{LemmaBasicNuclear}
Let $f\colon Y\to X$ be a morphism of qcqs arc-stacks over $\Q_p$ admitting quasi-pro-\'etale maps to finite dimensional affine spaces over $\Q_p$.  Let $f^{\dR}\colon Y^\dR\to X^{\dR}$ be its associated map of de Rham stacks. 
\begin{enumerate}

\item The functor $f^{\dR, !}$ preserves locally nuclear objects and commutes with $\omega_1$-filtered colimits.  

\item  The functor $f^{\dR}_!=f^{\dR}_*$ preserves locally nuclear objects and $\omega_1$-compact objects. 

\item The functor $f^{\dR,*}$ preserves $\omega_1$-compact objects. 

\end{enumerate}  
  
  In particular, if $f$ is an arc-cover, $f^{\dR}$ satisfies $\ob{D}^!$-descent for locally nuclear objects, and $\ob{D}^*$-descent for $\omega_1$-compact objects. 
 
\end{lemma}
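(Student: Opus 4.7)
The plan is to apply \cref{rk:refined-not-std-but-berk-pro-et} in order to reduce each assertion to stability statements for morphisms between affinoid Gelfand stacks corepresented by basic nuclear Gelfand rings, where the permanence results from \cref{appendix:solid-functional-analysis} apply directly. More precisely, applying \cref{rk:refined-not-std-but-berk-pro-et} to the qcqs qfd arc-stacks $X$ and $Y$, I would produce Berkovich pro-\'etale $!$-covers $g_X\colon\GSpec(B_X)\to X^{\dR}$ and $g_Y\colon\GSpec(B_Y)\to Y^{\dR}$ by affinoid Gelfand stacks corepresented by basic nuclear, strictly totally disconnected, nilperfectoid Gelfand rings $B_X$, $B_Y$. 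By \cref{LemmaDescentArcProet} (1), the \v Cech nerves of $g_X$ and $g_Y$, and indeed any fiber product of their members, are again affinoid and basic nuclear: they arise as overconvergent neighborhoods of closed subspaces in tensor products of basic nuclear Gelfand rings, and this operation preserves basic nuclearity by \cref{LemmaNuclearGelfand}. Moreover, by \cref{LemmaDescentArcProet} (2)-(3), the covers $g_X$ and $g_Y$ satisfy $\ob{D}^!$-descent for locally nuclear objects and $\ob{D}^*$-descent for $\omega_1$-compact objects, and both $g_{X,*}$ and $g_{Y,*}$ preserve these two classes.

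For part (2), I would first note that $f^{\dR}_!=f^{\dR}_*$ since $f^{\dR}$ is cohomologically proper by \cref{LemmLqfd}. Then I would apply \cref{LemmaPreservationLocallyNuclear} (2) and \cref{LemmaStabilityOmega1Compact} (2) to the cover $g_X$ to reduce the claim to its base change over $\GSpec(B_X)$; next, applying \cref{LemmaPreservationLocallyNuclear} (3.b) and \cref{LemmaStabilityOmega1Compact} (1), (4) to a compatible basic nuclear cover of $Y^{\dR}\times_{X^{\dR}}\GSpec(B_X)$, one is reduced to a morphism of affinoid Gelfand stacks corepresented by basic nuclear Gelfand rings, where \cref{LemmaPermanence} and \cref{LemmaPermanenceOmega1Compact} close the argument. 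For part (3), using \cref{LemmaStabilityOmega1Compact} (3) applied to $g_Y$, and then $\ob{D}^*$-descent of $\omega_1$-compact objects along $g_X$, I would reduce $f^{\dR,*}$ to upper-$*$ along morphisms between affinoid basic nuclear Gelfand stacks, where \cref{LemmaPermanenceOmega1Compact} applies. For part (1), the parallel argument would use \cref{LemmaPreservationLocallyNuclear} (3.a) applied to $g_Y$ together with $\ob{D}^!$-descent of locally nuclear objects along $g_X$ and proper base change, reducing to upper-$!$ functors between affinoid basic nuclear Gelfand stacks, where \cref{LemmaNuclearUppershierk} gives the preservation; the commutation of $f^{\dR,!}$ with $\omega_1$-filtered colimits is then the adjoint form of the $\omega_1$-compact preservation proved in part (2).

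Finally, for the ``in particular'' assertion, when $f$ is an arc-cover \cref{CorDescendableMapsqfla} (2) ensures that $f^{\dR}$ is a $!$-cover of Gelfand stacks, so that $\ob{D}^!$- and $\ob{D}^*$-descent for quasi-coherent sheaves along $f^{\dR}$ hold by the $6$-functor formalism on Gelfand stacks; combined with parts (1)-(3), applied to each term of the \v Cech nerve of $f$ (which still satisfies the hypotheses of the lemma, being built from fiber products of qcqs qfd arc-stacks), these descents restrict to locally nuclear, respectively $\omega_1$-compact, objects. The main technical obstacle I foresee is verifying that the various fiber products $\GSpec(B_Y)\times_{X^{\dR}}\GSpec(B_X)$ and the higher degeneracies that intervene after base change are themselves affinoid and corepresented by basic nuclear Gelfand rings; this requires unraveling them via \cref{PropQuotientdROverconvergentEquiv} as overconvergent neighborhoods of closed subspaces inside tensor products of basic nuclear rings, and appealing to \cref{LemmaNuclearGelfand} for the stability of basic nuclearity under these operations.
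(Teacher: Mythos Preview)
Your overall strategy—reduce via the basic-nuclear covers of \cref{rk:refined-not-std-but-berk-pro-et} to morphisms between affinoid Gelfand stacks corepresented by basic nuclear rings, then invoke the permanence lemmata from the appendix—is exactly the paper's strategy. However, there is a genuine difference in how you and the paper set up the covers, and this difference is precisely what makes your acknowledged ``technical obstacle'' disappear in the paper's proof.

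You choose the covers $\GSpec(B_X)\to X^{\dR}$ and $\GSpec(B_Y)\to Y^{\dR}$ \emph{independently}. This forces you to analyze fiber products such as $\GSpec(B_Y)\times_{X^{\dR}}\GSpec(B_X)$, and more seriously, to relate $g_Y^!\circ f^{\dR,!}$ to something computable. Your appeal to ``proper base change'' in part (1) is unclear: there is no direct base-change identity that expresses $g_Y^! f^{\dR,!}M$ in terms of $g_X^!M$ without first identifying the fiber product as affinoid basic nuclear and understanding how the functors interact over it.

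The paper instead constructs the cover of $Y$ \emph{relative} to the chosen cover of $X$. After fixing $\GSpec(A)\to X^{\dR}$, it forms $Y':=Y\times_X\Marc(A)$, notes that $Y'$ is quasi-pro-\'etale over a disc $\overline{\mathbb{D}}^{e,\diamond}_A$ over $A$, and then applies the construction of \cref{rk:refined-not-std-but-berk-pro-et} \emph{over $A$} to obtain a basic nuclear $A$-algebra $B$ with $\GSpec(B)\to Y^{\dR}\times_{X^{\dR}}\GSpec(A)$ a $!$-cover. The payoff is that the composite
\[
\GSpec(B)\xrightarrow{g}\GSpec(A)\times_{X^{\dR}}Y^{\dR}\xrightarrow{\pr_2}Y^{\dR}\xrightarrow{f^{\dR}}X^{\dR}
\]
equals $\GSpec(B)\to\GSpec(A)\xrightarrow{h}X^{\dR}$, so that
\[
g^!\circ\pr_2^!\circ f^{\dR,!}=\iHom_A(B,h^!(-)).
\]
Now $h^!$ preserves nuclear objects by \cref{LemmaDescentArcProet} and $\iHom_A(B,-)$ preserves them by \cref{LemmaNuclearUppershierk}, closing part (1) immediately. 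The same factorization handles part (3): the composite pullback $\GSpec(B)\to Y^{\dR}\to X^{\dR}$ is base change along $A\to B$ composed with $h^*$, both of which preserve $\omega_1$-compacts. Part (2) then reduces to the forgetful functor $\ob{D}(B)\to\ob{D}(A)$ between basic nuclear rings.

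In short, the missing idea in your proposal is the relative construction of $B$ as a basic nuclear $A$-algebra; with independent covers your ``main technical obstacle'' is not just a verification but the actual heart of the matter, and resolving it amounts to redoing the paper's construction anyway.
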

\begin{proof}
By assumption on $Y$ and $X$,  we can find a commutative diagram  of arc-stacks
\[
\begin{tikzcd}
Y^{\diamond} \ar[r]\ar[d] & \overline{\mathbb{D}}^{d+e,\diamond}_{\Q_p} \ar[d]\\ 
X^{\diamond} \ar[r] & \overline{\mathbb{D}}^{d,\diamond}_{\Q_p}
\end{tikzcd}
\]
where the horizontal maps are quasi-pro-\'etale, and the right vertical map is the projection map onto the first $d$-components. 
 By \cref{LemmaDescentArcProet} there is a strictly totally disconnected algebra $A$, Berkovich pro-\'etale over $\mathbb{D}^{d,\leq 1}_{\Q_p}$, and a pro-\'etale cover of arc-stacks $\Marc(A)\to X^{\diamond}$ over $\overline{\mathbb{D}}^{d,\diamond}_{\Q_p}$, such that $\GSpec(A)\to X^\dR$ is a $!$-cover. Taking the base change $Y^{\prime,\diamond}=Y^{\diamond}\times_{X^{\diamond}} \Marc(A)$, we have a quasi-pro-\'etale map 
 \[
 Y^{\prime,\diamond}\to \overline{\mathbb{D}}^{e,\diamond}_{A}. 
 \]
 Following the same construction of \cref{rk:refined-not-std-but-berk-pro-et} relative to $\overline{\mathbb{D}}^{e,\diamond}_{A}$,   we can find a strictly totally disconnected Berkovich pro-\'etale algebra $B$ over $\DD^{d+e,\leq 1}_{A}$, and a pro-\'etale cover $\Marc(B)\to Y^{\prime, \diamond}$ of arc-stacks over $\overline{\DD}^{e,\diamond}_{A}$ such that $\GSpec(B)\to Y^{\prime, \dR}$ is a $!$-cover.
   In total, we get the diagram 
\[
\begin{tikzcd}
\GSpec(B) \ar[r,"g"] & \GSpec(A)\times_{X^{\dR}}  Y^{\dR} \ar[r,"\pr_2"] \ar[d,"\pr_1"]  &   Y^{\dR}  \ar[d,"f^{\dR}"] \\ 
 &  \GSpec(A) \ar[r,"h"] & X^{\dR}  
\end{tikzcd}
\]
where $g$ and $h$ are prim and descendable, and such that $B$ and $A$ are basic nuclear $\Q_p$-algebras.  We now prove the statements of the lemma.

(1) By \cref{LemmaPreservationLocallyNuclear} (3.a) and \cref{LemmaDescentArcProet} (1), it suffices to show that basic nuclear objects are preserved under the functor
\[
g^! \circ (\pr_2)^! \circ f^!\colon \ob{D}(X^{\dR})\to \ob{D}(B).
\]
This functor is is naturally equivalent to $\Hom_A(B, h^!(-))$. Let $M\in \ob{D}(X^{\dR})$ be locally nuclear, by  \cref{LemmaDescentArcProet} we know that $h^!M$ is a nuclear $A$-module. Then, as $B$ and $A$ are basic nuclear, we see that  $\Hom_A(B,h^!M)$  is a nuclear $B$-module thanks to \cref{LemmaNuclearUppershierk}. The commutation with $\omega_1$-filtered colimits will follow from the preservation of $\omega_1$-compact objects in (2).

(2) To prove that $f^{\dR}_!=f^{\dR}_*$ preserves locally nuclear and $\omega_1$-compact objects, by \cref{LemmaStabilityOmega1Compact} (2) (for $\omega_1$-compact) and \cref{LemmaPreservationLocallyNuclear} (2) (for locally nuclear), it suffices that locally nuclear objects are preserved under the map
\[
\pr_{1,*}\colon \GSpec(A)\times_{X^{\dR}}  Y^{\dR}\to \GSpec(A). 
\]
Then, using \cref{LemmaStabilityOmega1Compact} (1) (for $\omega_1$-compact) and \cref{LemmaPreservationLocallyNuclear} (1) (for locally nuclear), it suffices to see that the forgetful functor $\ob{D}(B)\to \ob{D}(A)$ preserves $\omega_1$-compact and locally nuclear objects. This follows from \cref{LemmaPermanenceOmega1Compact} and \cref{LemmaPermanence} respectively as both $A$ and $B$ are basic nuclear.

(3) By \cref{LemmaDescentArcProet} (2) it suffices to show that the pullback along $\GSpec(B)\to Y^{\dR}\to X^{\dR}$ preserves $\omega_1$-compact objects. But this also factors as $\GSpec(B)\to \GSpec(A)\to X^{\dR}$ and by the same lemma the pullback along $\GSpec(A)\to X^{\dR}$ preserves $\omega_1$-compact objects. Thus, it suffices to see that the base change along $A\to B$ preserves $\omega_1$-compact modules, but this is clear since both $A$ and $B$ are basic nuclear $\Q_p$-algebras.
\end{proof}

\begin{remark}
Note that in general $f_!^{\dR/X}$ does not preserve compact objects without further assumptions on $f$ (indeed, combined with the preservation in nuclearity of \Cref{LemmaBasicNuclear}(2), that would imply preservation of dualizable objects, which is not necessarily true). 
\end{remark}

In the following we say that a (derived) Berkovich space $X$ is \textit{lqfd} if the structure morphism $X\to \GSpec(\Q_p)$ is lqfd in the sense of \cref{DefLQFDim}.

\begin{lemma}\label{LemmaNuclearCompactLocalTopology}
  Let $X$ be a lqfd Berkovich space over $\Q_p$ such that $|X|$ admits a countable basis of open subsets.
  The following hold:
\begin{enumerate}
\item Let $\iota\colon Z\to X$ be a closed immersion with $Z$  and $\iota^{\dR}\colon Z^{\dR}\to X^{\dR}$ the morphism at the level of de Rham stacks. 

\begin{enumerate}

\item  $\iota^{\dR,*}$ preserves locally nuclear objects and $\omega_1$-compact objects. 

\item $\iota_*^{\dR}$ preserves locally nuclear objects and $\omega_1$-compact objects.

\item $\iota^{\dR,!}$ preserves locally nuclear objects and commutes with $\omega_1$-filtered colimits.

\end{enumerate}

\item Let $j\colon U\to X$ be an open immersion and $j^{\dR}\colon U^{\dR}\to X^{\dR}$ be the associated morphism of de Rham stacks. 

\begin{enumerate}

\item   $j^{\dR,*}$ preserves locally nuclear  objects and $\omega_1$-compact objects. 

\item  $j_*^{\dR}$ preserves locally nuclear objects and commutes with $\omega_1$-filtered colimits.

\item  $j_{!}^{\dR}$ reserves locally nuclear  objects and $\omega_1$-compact objects. 

\end{enumerate}

\item Write  $X=\bigcup_n Z_n$ as  a countable union of closed subspaces with inclusions $\iota_n\colon Z_n\to X_n$. Let $M\in \ob{D}(X^{\dR})$.

\begin{enumerate}

\item   $M$ is $\omega_1$-compact if and only if $\iota_{n,*}^\dR\iota^{\dR,*}_nM$ is $\omega_1$-compact for all $n\in \N$. 

\item $M$ is locally nuclear if and only if $\iota_{n,*}^{\dR}\iota^{\dR,!}_n M$ is locally nuclear for all $n\in \N$, if and only if $\iota_*^{\dR}\iota^{\dR,*}M$ is locally nuclear for all $n\in \N$.

\end{enumerate}

\item Write  $X=\bigcup_n U_n$ as  a countable union of open subspaces with inclusions $j_n\colon U_n\to X_n$. Let $M\in \ob{D}(X^{\dR})$.

\begin{enumerate}

\item   $M$ is $\omega_1$-compact if and only if $j_{n,!}^{\dR}j^{\dR,*}_nM$ is $\omega_1$-compact for all $n\in \N$. 

\item $M$ is locally nuclear if and only if $j_{n,!}^{\dR}j^{\dR,*}_n M$ is locally nuclear for all $n$, if and only if $j_{n,*}^{\dR}j_n^{\dR,*}M$ is locally nuclear for all $n\in \N$.

\end{enumerate}

\end{enumerate}
\end{lemma}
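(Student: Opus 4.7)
My plan is to proceed in three stages: first handle the case where $X$ is qcqs using \cref{LemmaBasicNuclear}, then prove (3) and (4) via excision, and finally bootstrap (4) to extend (1) and (2) to the general lqfd case.

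In the qcqs case, both closed subspaces $Z\subseteq X$ and open subspaces $U\subseteq X$ are again qcqs lqfd, so \cref{LemmaBasicNuclear} applies to the maps $\iota\colon Z\to X$ and $j\colon U\to X$. For the closed immersion, $\iota^{\dR}$ is cohomologically proper by \cref{LemmLqfd} (so $\iota^{\dR}_*=\iota^{\dR}_!$), and (1.a)--(1.c) follow directly from items (3), (2) and (1) of \cref{LemmaBasicNuclear}, together with \cref{Permanence of nuclearity} for stability of locally nuclear under $*$-pullback. For the open immersion, $j^{\dR}$ is cohomologically \'etale by \cref{LemmProetaleShierkdR}, so $j^{\dR,!}\simeq j^{\dR,*}$, and (2.a), (2.c) follow from \cref{LemmaBasicNuclear}. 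The remaining (2.b) is deduced from the excision triangle $\iota^{\dR}_*\iota^{\dR,!}\to\id\to j^{\dR}_* j^{\dR,*}$ applied to $N=j^{\dR}_! M$: using $j^{\dR,*}j^{\dR}_!=\id$, one identifies $j^{\dR}_* M$ with the cofiber of $\iota^{\dR}_*\iota^{\dR,!}j^{\dR}_! M\to j^{\dR}_!M$, which is locally nuclear by (1.b), (1.c) and (2.c); commutation of $j^{\dR}_*$ with $\omega_1$-filtered colimits is equivalent via $j^{\dR,*}\dashv j^{\dR}_*$ to $j^{\dR,*}$ preserving $\omega_1$-compact objects, which is (2.a).

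The statements (3) and (4) rest on the excision triangles $j_{n,!}j_n^*M\to M\to \iota_{n,*}\iota_n^*M$ and $\iota_{n,*}\iota_n^!M\to M\to j_{n,*}j_n^*M$. After replacing the given cover by an increasing one (and handling the finitely many extra terms coming from $\bigcup_{m\le n}$ via finite Mayer--Vietoris on qcqs intersections, which falls under Step~1), one obtains $M\simeq \varinjlim_n j^{\dR}_{n,!}j_n^{\dR,*}M$ for an increasing countable open cover, $M\simeq \varinjlim_n \iota^{\dR}_{n,*}\iota_n^{\dR,!}M$ for an increasing countable closed cover, and $M\simeq \varprojlim_n \iota^{\dR}_{n,*}\iota_n^{\dR,*}M$ for the same (this last using $\varprojlim_n j^{\dR}_{n,!}j_n^{\dR,*}M=0$, which is checked via the natural map $X^{\dR}\to |X|_{\Betti}$ where a stalk argument applies). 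A countable colimit of $\omega_1$-compact (resp.\ locally nuclear) objects remains such, and stability of locally nuclear objects under countable limits is \cref{LemmaDescentArcProet}(4); these yield (4) and the $\iota_n^!$-formulation of (3.b) directly. For (3.a), one uses the adjunctions $\iota_n^*\dashv\iota_{n,*}\dashv\iota_n^!$ and (1.a), (1.c) to observe that $\iota_{n,*}\iota_n^{\dR,*}M$ is $\omega_1$-compact iff $\iota_n^{\dR,*}M$ is; then one invokes joint conservativity of $\{\iota_n^{\dR,*}\}_n$ combined with (4.a) applied to an open refinement of the closed cover by qcqs pieces coming from the countable basis on $|X|$.

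For the general lqfd case of (1) and (2), one writes $X=\bigcup_n V_n$ with each $V_n$ qcqs lqfd (possible by the countable basis assumption), reduces to Step~1 applied to $V_n\cap Z$ (resp.\ $V_n\cap U$), and patches using (4). The main obstacle is (3.a): since a countable derived limit of $\omega_1$-compact objects is generally not $\omega_1$-compact, the formula $M\simeq \varprojlim_n \iota^{\dR}_{n,*}\iota_n^{\dR,*}M$ alone is insufficient. The argument must instead exploit conservativity of the family of pullbacks, rephrase $\omega_1$-compactness on the pieces via the adjoint-chain and (1.a), (1.c), and appeal to (4.a) applied to a suitable associated open cover by qcqs subsets, which refines the closed cover sufficiently to detect $\omega_1$-compactness of $M$ on the qcqs level where \cref{LemmaBasicNuclear} is available.
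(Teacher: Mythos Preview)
Your Stage~1 contains a genuine gap: the claim that open subspaces $U\subseteq X$ of a qcqs Berkovich space are again qcqs is false---an open subset of a compact Hausdorff space need not be compact. Consequently \cref{LemmaBasicNuclear}, whose hypothesis requires both source and target to be qcqs, does not apply to $j\colon U\to X$, and your deduction of (2.a) and (2.c) in the qcqs case fails. Since your argument for (2.b) uses (2.c), and your Stage~2 and Stage~3 bootstrapping both rely on (2.a)--(2.c), the gap propagates throughout.

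The paper takes a different route. It proves (1) directly for general lqfd $X$ by choosing a countable strict closed cover $X=\bigcup_i C_i$ by affinoids and using proper base change to reduce to the affinoid case, where \cref{LemmaBasicNuclear} applies. For (2.c), after similarly reducing to $X$ qfd affinoid, the paper passes to a $!$-cover $\GSpec B\to X^{\dR}$ with $B$ basic nuclear and strictly totally disconnected (as in \cref{LemmaDescentArcProet}); the pullback of the closed complement $Z=X\setminus U$ is then $\GSpec B_Z$ with $B\to B_Z$ ind-finite-\'etale, and the open complement $V=\GSpec B\setminus \GSpec B_Z$ becomes a countable union of \emph{clopen} affinoids $\GSpec B_n$ (because $\mathcal{M}(B)$ is light profinite). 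On clopens, extension by zero coincides with pushforward, and the claim reduces to \cref{LemmaPermanenceOmega1Compact} and \cref{LemmaPermanence}. This passage to a totally disconnected cover---turning an arbitrary open into a countable union of clopens---is precisely the mechanism missing from your proposal. Once (1) and (2) are established in this way, parts (3) and (4) follow from $\ob{D}^*$- and $\ob{D}^!$-descent along the strict covers, without the detour through (4.a) that you anticipate for (3.a).
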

\begin{proof}
(1) We prove the items in order.
\begin{enumerate}
 \item[(a)] It is clear that $\iota^{*,\dR}$ preserves locally nuclear objects. The fact that it preserves $\omega_1$-compact objects follows from the fact that its right adjoint $\iota_{*}^{\dR}$ commutes with colimits.

 \item[(b)] Consider a strict cover $X=\bigcup_{i\in I} C_i$ by closed affinoid subspaces of $X$ with inclusions $f_i\colon C_i\to X$, and $I$ a countable set. To see that  $\iota_{*}^{\dR}$ preserves nuclear objects,  it suffices to see that $f_i^{\dR,*} \iota_{*}^{\dR} $ preserves nuclear objects for all $i\in I$. By proper base change this reduces to the case when $X$ is itself affinoid and $Z\subset X$ is a closed subspace, and then follows from \cref{LemmaBasicNuclear} (2).  (Note that this part does not use the countability assumption on $I$.) To see that $\iota_{*}^{\dR}$ preserves $\omega_1$-compact objects,  since $I$ is countable, part (1.a) shows that an object $M\in \ob{D}(X^{\dR})$ is $\omega_1$-compact provided that $M|_{C_i}$ is $\omega_1$-compact for all $C_i$. Thus, by proper base change again, we can assume without loss of generality that $X$ is affinoid in which case it follows from  \cref{LemmaBasicNuclear} (2).
  
\item[(c)]  The fact that $\iota^{\dR,!}$ commutes with $\omega_1$-filtered colimits follows from the fact that its left adjoint $\iota^{\dR}_*$ preserves $\omega_1$-compact objects, cf.\ (1.b). To see that $\iota^{\dR,!}$ preserves locally nuclear modules, consider a strict cover of $X=\bigcup_i C_i$ as in part (1.b). By parts (1.a) and (1)b), $\iota^{\dR,!}$ preserves locally nuclear objects if and only if $\iota_*^{\dR}\iota^{\dR,!}$ preserves locally nuclear objects: indeed, one implication is a direct consequence of (1.b) and for the converse implication, note that
\[
\iota^{\dR,!}=\iota^{\dR,*}\iota_*^{\dR} \iota^{\dR,!}
\]
preserves locally nuclear objects if $\iota_*^{\dR} \iota^{\dR,!}$ does, by (1.a).  By $\ob{D}^*$-descent of locally nuclear objects, it suffices to see that the pullback  $f_{i}^{\dR,*}  \iota_{*}^{\dR}\iota^{\dR,!} $ preserves locally nuclear objects  for all $i\in I$.  Since the cover is strict, there is some strict inclusion
$$
h\colon C_i \subset V\subset   Y
$$
with $g\colon Y\subset X$ a closed affinoid subspace, and $j: V\subset X$ an open subspace. By proper  base change along $V\subset X$,  we have an equivalence of functors $\ob{D}(X^{\dR})\to \ob{D}(C_i^{\dR})$:
\begin{equation}\label{eqpmapwmqow}
\begin{aligned}
f_i^{\dR,*} \iota_{*}^{\dR} \iota^{\dR,!}
    &= h^{\dR,*} j^{\dR,*} \iota_{*}^{\dR} \iota^{\dR,!} \\
    &= h^{\dR,*} \iota_{Z\cap V\subset V,!}^{\dR} j_{Z\cap V\subset Z}^{\dR,*} \iota^{\dR,!} \\
    &= h^{\dR,*} \iota_{Z\cap V\subset V,!}^{\dR} \iota_{Z\cap V\subset V}^{\dR,!} j^{\dR,*}.
\end{aligned}
\end{equation}
where $\iota_{K\subset S}\colon K\to S$ is the inclusion along a closed map,  and $j_{W\subset S} \colon  W \to S$ is an inclusion along an open map. Let $\iota_{Y}\colon Y\to X$ be the inclusion, note that after applying the functors of the right term of \cref{eqpmapwmqow} to $ \id_{X^{\dR}}\to \iota_{Y,*}^{\dR}\iota_{Y}^{\dR,*} $ we get an equivalence. Hence, without loss of generality, we can assume that $Y=X$ and we are reduce to prove the statement when $Y$ is qfd affinoid. This follows from \cref{LemmaBasicNuclear}(1).

 \end{enumerate}

(2) Let $Z = X\backslash U$ be the closed complement of $U$ with inclusion $\iota\colon Z\to X$.

\begin{enumerate}

\item[(a)] It is clear that the pullback $j^{\dR,*}$ preserves locally nuclear  objects. To see that it preserves $\omega_1$-compact objects, it equivalent to show that its right adjoint  commutes with $\omega_1$-filtered colimits, this follows from (2.b) below.

\item[(b)] We have a fiber sequence of functors in $\ob{D}(X^{\dR})$
\[
\iota_{*}^{\dR} \iota^{\dR,!} \to \id_{X_{\dR}}\to j_*^{\dR} j^{\dR,*}.
\]
Since $\iota^{\dR,!}$ commutes with $\omega_1$-filtered colimits by (1.b), and since $\iota_*^{\dR}$  and $j^{\dR,*}$ preserve colimits, we deduce that $j_{*}^{\dR}$ preserves $\omega_1$-filtered colimits objects. Similarly, since both $\iota^{\dR,!}$ and $\iota_*^{\dR}$ preserve locally nuclear objects by (1), then so does $j_*^{\dR}j^{*,\dR}$. It is left to see that any locally nuclear object on $U^{\dR}$ arises as a pullback of a locally nuclear object on $X^{\dR}$. This will follow if $j_{!}^{\dR}$ preserves locally nuclear objects which will be proved in (2.c) below.

\item[(c)] Let $X=\bigcup_{i \in I}C_i$  be a countable strict cover of $X$ by rational subspaces. To see that $j_!^\dR$ preserves locally nuclear objects it suffices to do it after pullback along $C_i\to X$. Similarly, as $I$ is countable, to see that $j_!^\dR$ preserves $\omega_1$-compact objects it suffices to do it after pullback along $C_i\subset X$ (using (1.a)). Hence, by proper base change we can assume without loss of generality that $X$ is qfd affinoid. Let $\GSpec B \to X^{\dR}$ be a $!$-cover as in \cref{LemmaDescentArcProet}, as $B$ is basic nuclear and strictly totally disconnected, the pullback of $Z$ gives rise to a pullback diagram
\[
\begin{tikzcd}
\GSpec B_Z \ar[r] \ar[d] & Z^{\dR} \ar[d] \\ 
\GSpec B \ar[r]  & X^{\dR}
\end{tikzcd}
\]
where $B\to B_Z$ is an ind-finite \'etale map since $Z\to X^{\dR}$ is quasi-pro-\'etale. In particular, $B_Z$ is also basic nuclear.  Let $V= \GSpec B \backslash \GSpec B_Z$ be the open complement  in $\GSpec B$ and let $f\colon V\to \GSpec B$ be the inclusion. 
 To see that $j_!$ preserves locally nuclear objects (resp.\ $\omega_1$-compact objects), it suffices to base change along $\GSpec B\to X^{\dR}$, and therefore it suffices to see that $f_!$ preserves locally nuclear objects (resp.\ $\omega_1$-compact objects). We can write $V=\bigcup_n \GSpec B_n$ as a countable union of clopen  affinoid subspaces of $\GSpec B$ (as $\mathcal{M}(B)$ is light profinite any open subset is a countable union of clopen subspaces). Hence, for $M\in \ob{D}(V)$, we have that 
\[
f_!M =\varinjlim_{n}  f_{n,*} j^*M_n
\]
where $f_n\colon \GSpec B_n\to \GSpec B$ is the clopen immersion, As the algebras $B_n$ are basic nuclear, \cref{LemmaPermanenceOmega1Compact} and \cref{LemmaPermanence} implies that $f_!$ preserves $\omega_1$-compact objects and locally nuclear objects as wanted. 
\end{enumerate}

(3) Part (3.a) follows from the preservation of $\omega_1$-compact objects under $\iota^{*,\dR}$ and $\iota_{*,\dR}$ under closed immersions $\iota\colon Z\to X$, and $\ob{D}^*$-descent for strict closed covers  of quasi-coherent sheaves on de Rham stacks. Similarly, part (3.b) follows from the preservation of locally nuclear objects along the maps $\iota^!$, $\iota^*$ and $\iota_*$, and the $\ob{D}^*$ and $\ob{D}^!$-descent for strict closed covers of quasi-coherent sheaves on de Rham stacks.

(4) This follows  from the same argument as for (3).
\end{proof}

\begin{proposition}\label{PropDescentNuclearomega1CompactdR}
Let $f\colon Y\to X$ be a morphism of locally qfd  Berkovich spaces such that $Y$ and $X$ can be written as countable unions of qcqs Berkovich spaces. Let $f^{\dR}\colon Y^{\dR}\to X^{\dR}$ be the associated morphism of de Rham stacks. Then the following hold:
\begin{enumerate}
\item The functors $f^{\dR,!}$, $f_!^{\dR}$, $f_*^{\dR}$ and $f^{\dR,*}$ preserve locally nuclear objects. In particular, if $f$ is an arc-cover then  locally nuclear objects satisfy $\ob{D}^*$ and $\ob{D}^!$-descent.

\item The functors $f^{\dR,*}$ and $f_!^{\dR}$  preserve $\omega_1$-compact objects. In particular, if $f$ is proper $\omega_1$-compact objects satisfy $\ob{D}^*$-descent.

\end{enumerate}

\end{proposition}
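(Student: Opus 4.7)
The plan is to reduce to the qcqs finite-dimensional situation covered by \cref{LemmaBasicNuclear}, using the local-topology stability results of \cref{LemmaNuclearCompactLocalTopology}. Throughout, proper base change applies to $f_*^{\dR}$ and $f_!^{\dR}$ since $f^{\dR}$ is $!$-able by \cref{LemmLqfd}, so each of the four functors $f^{\dR,*}, f^{\dR,!}, f_*^{\dR}, f_!^{\dR}$ commutes with the open and closed localizations up to the expected base change formulas. Hence once the local case is established, the two reductions are essentially formal bookkeeping.

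First, reduce to the case where both $Y$ and $X$ are qcqs. Write $X=\bigcup_n X_n$ and $Y=\bigcup_m Y_m$ as countable unions of qcqs open subspaces. After intersecting each $Y_m$ with $f^{-1}(X_n)$ and writing the result as a countable union of qcqs opens, we obtain an open cover $Y=\bigcup_{n,m}Y_{n,m}$ such that $f$ restricts to a morphism $f_{n,m}\colon Y_{n,m}\to X_n$ of qcqs Berkovich spaces. By \cref{LemmaNuclearCompactLocalTopology}(4), local nuclearity and $\omega_1$-compactness are detected against a countable open cover via the $j_!^{\dR}j^{\dR,*}$ (resp.\ $j_*^{\dR}j^{\dR,*}$) construction, while \cref{LemmaNuclearCompactLocalTopology}(2) ensures that the functors $j^{\dR,*}, j_*^{\dR}, j_!^{\dR}$ attached to each open immersion preserve both classes (within the limits stated: $j_*^{\dR}$ preserves local nuclearity but only commutes with $\omega_1$-filtered colimits). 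Combined with proper base change, this reduces both parts to the analogous preservation statements for each $f_{n,m}$.

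Second, in the qcqs case, use the lqfd assumption on $Y$ and $X$ individually. Since $Y$ and $X$ are qcqs and locally qfd, their defining strict closed covers may be refined to \emph{finite} strict closed covers $Y=\bigcup_{i=1}^{a} C_i^Y$ and $X=\bigcup_{j=1}^{b} C_j^X$, where each $C_i^Y$ (resp.\ $C_j^X$) is qcqs and admits a quasi-pro-\'etale map to a finite-dimensional affine space over $\Q_p$. For each pair $(i,j)$, the intersection $D_{i,j}:=C_i^Y\cap f^{-1}(C_j^X)$ is closed in $C_i^Y$, hence still qcqs and quasi-pro-\'etale over a finite-dimensional affine space; the induced morphism $D_{i,j}\to C_j^X$ is therefore a morphism of qcqs arc-stacks as in the hypothesis of \cref{LemmaBasicNuclear}. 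Applying \cref{LemmaNuclearCompactLocalTopology}(3) and (1) together with proper base change reduces each preservation claim to the corresponding one for the morphisms $D_{i,j}\to C_j^X$, where \cref{LemmaBasicNuclear} directly gives the conclusion.

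The final assertions of (1) and (2) on descent follow formally. Indeed, if $f$ is an arc-cover then it is a $!$-cover by \cref{CorDescendableMapsqfla}(2) (refining by a suitable pro-\'etale cover from a strictly totally disconnected space if necessary), so $\ob{D}^*$- and $\ob{D}^!$-descent hold for $\ob{D}$ along the \v{C}ech nerve of $f^{\dR}$, and the preservation of local nuclearity along the face maps established above gives the descent of locally nuclear objects; if moreover $f$ is proper then $f^{\dR}$ is cohomologically proper by \cref{LemmLqfd}, and the analogous argument with the preservation of $\omega_1$-compactness under $f^{\dR,*}$ and $f_!^{\dR}=f_*^{\dR}$ gives $\ob{D}^*$-descent for $\omega_1$-compact objects. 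The main difficulty is keeping track, at each step, of which functor preserves which class: the $f_*^{\dR}$ case and the $f^{\dR,!}$ case for local nuclearity are the most delicate, since open pushforwards only interact with $\omega_1$-filtered colimits, forcing one to use the full strength of the $\ob{D}^!$-descent in \cref{LemmaNuclearCompactLocalTopology}(1.c) and (2.b).
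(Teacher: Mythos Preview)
Your approach is correct and follows the same overall strategy as the paper---localize via \cref{LemmaNuclearCompactLocalTopology} and then invoke \cref{LemmaBasicNuclear}---but the bookkeeping is organized differently. The paper treats the four functors separately (localizing both source and target directly to qfd affinoids for $f^{\dR,*}$ and $f^{\dR,!}$; making $X$ qfd affinoid first and then, for $f_!^{\dR}$, writing $Y$ as a union of opens inside closed qfd affinoids so as to use $j_!$ followed by an affinoid pushforward, while for $f_*^{\dR}$ combining a closed cover of $Y$ with the countable-limit stability of \cref{LemmaDescentArcProet}(4)), whereas you perform a uniform two-stage reduction. Two points in your write-up deserve more care. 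First, the assertion ``$D_{i,j}$ is closed in $C_i^Y$, hence \ldots\ quasi-pro-\'etale over a finite-dimensional affine space'' is true but not immediate: it relies on the fact that a closed immersion of qcqs arc-stacks is itself quasi-pro-\'etale (after base change to a totally disconnected perfectoid it becomes a closed immersion of totally disconnected perfectoids, hence an inverse limit of clopen inclusions and thus pro-\'etale), so that the composite $D_{i,j}\hookrightarrow C_i^Y\to\A^{e_i}$ remains quasi-pro-\'etale. The paper sidesteps this by reducing directly to qfd affinoid pieces. Second, your open-cover reduction on the source for $f_*^{\dR}$ implicitly writes $(f|)_*^{\dR}N$ as a countable limit of pushforwards from the $Y_{n,m}$; to conclude that this limit is locally nuclear you need the countable-limit stability of locally nuclear objects on the (qcqs) target, i.e.\ \cref{LemmaDescentArcProet}(4), which you do not invoke but the paper does.
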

\begin{proof}
Thanks to \cref{LemmaNuclearCompactLocalTopology}, locally nuclear  objects on de Rham stacks satisfy $\ob{D}^*$ and $\ob{D}^!$-descent on closed and open covers of $Y$ and $X$ respectively.  Similarly, $\omega_1$-compact objects satisfy $\ob{D}^*$-descent for closed covers and $\ob{D}^!$-descent for open covers. Thus,  for the functors $f^{!,\dR}$ and $f^{*,\dR}$, we can localize both $Y$ and $X$ in the analytic topology and assume that  they are qfd affinoid, in which case the statement follows from \cref{LemmaBasicNuclear}. For the functors $f_!^{\dR}$ and $f_*^{\dR}$ and locally nuclear objects, we can use either $\ob{D}^*$ or $\ob{D}^!$-descent on the target, and assume that $X$ is qfd affinoid. For $f_{!}^{\dR}$ and $\omega_1$-compact objects, we can use $\ob{D}^*$-descent and assume that $X$ is affinoid as well. Thus, without loss of generality let us assume that $X$ is affinoid. Next,   for the functor $f_{!}^{\dR}$, by writing  $Y=\bigcup_n Y_n$ as a strict union of  closed affinoid subspaces with interiors $\mathring{Y}_n$ still covering $Y_n$, by $\ob{D}^*$-descent on $Y$, and the fact that $\omega_1$-compact objects and locally nuclear  objects are stable under countable colimits, it suffices to consider the case $Y=\mathring{Y}_n$ for some $n$. Hence, we can assume that $Y\subset  \overline{Y}\to X$ is an open subspace in a qfd affinoid  $\overline{Y}$ mapping to $X$. By \cref{LemmaNuclearCompactLocalTopology} (2.c), the extension by zero along $Y\subset \overline{Y}$ preserves locally nuclear objects and $\omega_1$-compact objects. It follows from \cref{LemmaBasicNuclear} that $f_!^{\dR}$ does so.  Finally, we prove that $f_*^{\dR}$ preserves locally nuclear objects.  Since $Y=\bigcup_n Y_n$ and locally nuclear objects on $Y$ satisfy $\ob{D}^*$-descent (\cref{LemmaNuclearCompactLocalTopology} (3)), and  locally nuclear objects on $X^{\dR}$ are stable under countable limits (cf. \cref{LemmaDescentArcProet} (4)), it suffices to consider the case where $Y=Y_n$, i.e. when $Y$ is qfd affinoid. In this situation the proposition follows from \cref{LemmaBasicNuclear}.
\end{proof}

\subsection{Arc-hyperdescent for the de Rham stack}\label{Subsection:HyperdescentdR}
In this subsection, we finally prove the promised commutation of the formation of the de Rham stack with colimits, \Cref{TheoMaindeRham2}.  The key statement is the arc-hyperdescent of the formation of the analytic de Rham stack.

\begin{theorem}\label{TheoHyperdescentdR}
Let $X_{\bullet}\to X$ be an arc-hypercover of qfd affinoid perfectoid rings over $\Q_p$. Then the augmented simplicial diagram $X_{\bullet}^{\dR}\to X^{\dR}$ satisfies universal $\ob{D}^*$ and $\ob{D}^!$-descent in qfd Gelfand stacks. More precisely,
\begin{enumerate}
\item for all qfd Gelfand stacks $Y$ and maps $Y\to X^{\dR}$, the  pullback diagram $X^{\dR/Y}_{\bullet}\to Y$ satisfies $\ob{D}^*$-descent, that is, the natural map
\[
\ob{D}(Y)\to \ob{Tot}(\ob{D}(X_{\bullet}^{\dR/Y}))
\]
is an equivalence, where the transition maps are given by $*$-pullback maps,

\item for all $Y\in \Cat{GelfStk}^{\qfd}$ and maps $Y\to X^{\dR}$, the pullback diagram $X^{\dR/Y}_{\bullet}\to Y$ satisfies  $\ob{D}^!$-descent, that is, the natural map 
\[
\ob{D}(Y) \to  \ob{Tot}(\ob{D}^!(X_{\bullet}^{\dR/Y}))
\]
is an equivalence, where transition maps are given by $!$-pullback maps. 

\end{enumerate} 
\end{theorem}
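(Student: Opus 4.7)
The plan is to reduce arc-hyperdescent to the \v{C}ech descent statement of \cref{CorDescendableMapsqfla}(2), then bridge the gap via the uniform cohomological-dimension bounds afforded by the qfd hypothesis in combination with the nuclear/$\omega_1$-compact framework of \cref{subsection:DeRham-and-nuclearity}.

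I begin with reductions. Since $\ob{D}^\ast$- and $\ob{D}^!$-descent are equivalent along $!$-able maps (\cref{RemarkDStarDescentAnStk}), and every $X_n^{\dR/Y}\to Y$ is $!$-able by \cref{LemmLqfd}, it suffices to prove the $\ob{D}^\ast$-descent statement. By base change, this reduces to the case $Y = X^\dR$. Using \cref{CorDescendableMapsqfla}(1) together with the construction of \cref{rk:refined-not-std-but-berk-pro-et}, I can produce a prim descendable cover $\GSpec(A)\to X^\dR$ with $A$ a strictly totally disconnected qfd nilperfectoid Gelfand ring that is moreover basic nuclear; descending along this cover, I am reduced to verifying arc-hyperdescent after a further base change to $\GSpec(A)$.

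Second, I bring in the \v{C}ech descent. The augmentation $X_0\to X$ is an arc-cover of qfd perfectoid rings and, after possibly refining $X_0$, can be assumed to be a $!$-able arc-cover. Applying \cref{CorDescendableMapsqfla}(2) to $X_0\to X$ yields the \v{C}ech descent $\ob{D}(X^\dR)\simeq \ob{Tot}(\ob{D}^\ast(Z_\bullet^\dR))$, where $Z_\bullet=X_0^{\bullet/X}$. By the definition of an arc-hypercover, the natural refinement $X_\bullet \to Z_\bullet$ is level-wise an arc-cover of qfd perfectoid rings, so the same \v{C}ech result applies to each comparison $X_n\to Z_n$ (and, inductively, to the higher coskeletal comparison maps). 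A standard diagonal-cofinality argument on the resulting bisimplicial object then reduces the theorem to showing that the pro-system of partial totalizations of the associated cosimplicial object stabilizes pro-constantly after pullback to $\GSpec(A)$.

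The main obstacle is verifying this pro-constancy, which is a uniform cohomological-dimension statement. Here the qfd hypothesis is indispensable, in stark contrast with the general Gelfand situation of \cref{sec:-covers-perfectoids-1-counter-example-to-cover-by-strictly-totally-disconnected}: it guarantees, via \cref{rk:refined-not-std-but-berk-pro-et}, that each $X_n^{\dR}\times_{X^\dR}\GSpec(A)$ admits a $!$-cover by the spectrum of a basic nuclear Gelfand ring of uniformly bounded dimension. The stability and descent properties of $\omega_1$-compact and locally nuclear objects provided by \cref{LemmaBasicNuclear}, \cref{LemmaDescentArcProet}, and \cref{PropDescentNuclearomega1CompactdR} then make the partial totalizations $\omega_1$-compact, whence the pro-system stabilizes and the theorem follows. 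The bulk of the technical work lies in tracking, level by level in the hypercover, that each fiber product stays within the qfd/basic-nuclear world where these tools apply.
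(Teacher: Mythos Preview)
Your outline identifies the right themes — reduce to pro-constancy of partial totalizations over a suitable basic nuclear cover, and expect uniform cohomological-dimension bounds to do the work — but the execution in the final paragraph has a genuine gap. The assertion that the nuclear/$\omega_1$-compact machinery ``make[s] the partial totalizations $\omega_1$-compact, whence the pro-system stabilizes'' is a non-sequitur: $\omega_1$-compactness by itself gives no control on the connecting maps. The paper's argument, after reducing to $Y=\GSpec(A)$ with $A$ a countable filtered colimit of Berkovich-\'etale algebras over a fixed $\mathbb{A}^d_{\Q_p}$ (via \cref{rk:refined-not-std-but-berk-pro-et}), requires three further ingredients you do not mention:
\begin{enumerate}
\item[(i)] the relative de Rham cohomologies $\Gamma(X_\bullet^{\dR/Y},\mathcal{O})$ are \emph{coconnective} $A$-modules (\cref{LemmaCoconnectivedeRhamCoho}), so the cofiber $Q_k$ of $A\to\ob{Tot}_{\leq k}(\Gamma(X_\bullet^{\dR/Y},\mathcal{O}))$ lives in cohomological degrees $\geq k-1$;
\item[(ii)] the full totalization already equals $A$ (\cref{LemmaDescentdRCoho}), which itself uses (i) and a coskeletal comparison;
\item[(iii)] the explicit Ext-vanishing bound of \cref{LemmaExtEtales}: for static $\omega_1$-compact $A$-modules $N$ and arbitrary static $M$, one has $\underline{\Ext}^i_A(N,M)=0$ for $i>d+2$.
\end{enumerate}
The $\omega_1$-compactness supplied by \cref{LemmaBasicNuclear} and \cref{LemmaDescentArcProet} feeds into (iii), but only in combination with that Ext bound does it force $\Hom(Q_k,A[1])=0$ for $k$ large, hence pro-constancy. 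Point (iii) is precisely where the qfd hypothesis does its work: it uses that $A$ is Berkovich-\'etale over a \emph{fixed-dimensional} disc, not merely that it is basic nuclear. Your proposal gestures at ``uniformly bounded dimension'' but never cashes this out as an Ext bound, which is the heart of the proof.

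A secondary issue: your ``standard diagonal-cofinality argument on the resulting bisimplicial object'' is not how the paper bridges \v{C}ech to hyper, and as written it is too vague to evaluate. Instead the paper invokes \cite[Proposition A.3.21]{mann2022p}: once the functors $A\mapsto\ob{D}^\ast(\Marc(A)^{\dR}\times_{X^\dR}Y)$ and $A\mapsto\ob{D}^!(\cdots)$ are known to be arc-sheaves (from \cref{sec:furth-results-analyt-1-surjections-on-de-rham-stack} and \cref{CorDescendableMapsqfla}), hyperdescent reduces to \emph{full faithfulness} on hypercovers, which unwinds directly to the pro-constancy statement above.
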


A key ingredient in the proof of \cref{TheoHyperdescentdR} is the arc-hyperdescent of de Rham cohomology, that we will prove in \cref{LemmaDescentdRCoho}.

\begin{lemma}\label{LemmaCoconnectivedeRhamCoho}
  Let $Y\to X$ be a morphism of affinoid qfd Gelfand stacks and let $U\subset Y$ be an open substack.
  Write $Y=\GSpec (B)$ and $X=\GSpec (A)$.
  Let $f\colon U\to X$ be the restricted morphism and $f^{\dR/X}\colon U^{\dR/X}\to X$ the induced morphism on the relative Rham stack.
  Finally, suppose that $A$ is static and that any rational localization of affine spaces $\mathbb{A}^d_{A}$ remains static (e.g., $A$ could be a filtered colimit of qfd sous-perfectoid rings).
  Then $f_*^{\dR/X}  1$ and $f_!^{\dR/X} 1$ are coconnective.
\end{lemma}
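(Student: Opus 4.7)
The plan is to use the presentation of the relative de Rham stack via geometric realizations of $\dagger$-formally smooth covers, combined with the staticness hypothesis on $A$, in two stages.

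First I would treat $f_*^{\dR/X} 1$. Choose a countable strict affinoid open cover $\{V_i\}$ of $U$ (available by separability of $B$, so that $|U|$ has a countable basis). By $*$-descent along this cover, $R\Gamma(U^{\dR/X},1)$ is the totalization over the \v{C}ech diagram $V_I^{\dR/X}$, and totalizations preserve coconnectivity, so it suffices to handle each affinoid piece $V_I = \GSpec(C)$ separately. Apply \cref{xsh29k} to choose $B' = \varinjlim_n B_n$ with each $B_n$ a rational localization of an affine space over $A$, and with $B' \to C$ an isomorphism on uniform completions; since the formation of the relative de Rham stack is invariant under uniform completion, $V_I^{\dR/X} = (\GSpec B')^{\dR/X}$. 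As $B'$ is $\dagger$-formally smooth over $A$ (\cref{sec:appr-gelf-rings-examples-of-dagger-formally-smooth-maps}), \cref{CorollaryDaggerSmoothComparison} gives an epimorphism $\GSpec B' \twoheadrightarrow (\GSpec B')^{\dR/X}$ with \v{C}ech nerve $\GSpec(B'^{\otimes_A \bullet+1})^{\dagger_{\Delta}}$ by \cref{PropdeRhamBerkovich}(2). The tensor powers $B'^{\otimes_A n+1}$ are again filtered colimits of rational localizations of affine spaces over $A$, and the overconvergent-neighborhood construction on such is an idempotent $\dagger$-formally \'etale extension of the same type; the hypothesis on $A$ then ensures that every term of the \v{C}ech nerve is affinoid with static structure ring. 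Consequently $f_*^{\dR/X} 1$ is a totalization of a cosimplicial diagram of static $A$-modules, hence coconnective.

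For $f_!^{\dR/X} 1$ I would factor $f = g \circ j$ with $j: U \hookrightarrow Y$ open and $g: Y \to X$ the structure morphism. Since $j^{\dR}$ is cohomologically \'etale (\cref{sec:geom-prop-analyt-1-cohomological-smoothness-for-smooth-rigid-spaces}), its base change $j^{\dR/X}: U^{\dR/X} \hookrightarrow Y^{\dR/X}$ is again an open immersion. Setting $Z := Y \setminus U$ and using the relative Kashiwara identification $Z^{\dR/Y} = Y^{\dagger_Z}$ from \cref{PropdeRhamBerkovich}(1), the excision fiber sequence on $Y^{\dR/X}$ reads
\[
j_!^{\dR/X} 1 \to 1_{Y^{\dR/X}} \to i_*^{\dR/X} 1_{Y^{\dagger_Z/X}}.
\]
I would then refine $U = Y \setminus Z$ using a \v{C}ech cover by rational subspaces cut out by finitely many defining functions of $Z$, so that on each piece the relative de Rham stack becomes that of an overconvergent rational localization (for which the first part of the argument already applies, yielding coconnectivity of the global sections). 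The aim is to assemble these pieces so that $f_!^{\dR/X} 1$ is expressed as a totalization or fiber of coconnective terms obtained via the first part.

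The main obstacle is precisely the passage from $g_*^{\dR/X}$ to $g_!^{\dR/X}$: while $g_*^{\dR/X}$ of both $1_{Y^{\dR/X}}$ and $i_*^{\dR/X}1_{Y^{\dagger_Z/X}}$ is coconnective by the first part, the map $g: Y \to X$ is an affinoid map and not cohomologically proper, so $g_!^{\dR/X} \ne g_*^{\dR/X}$. My expectation is that this obstacle is overcome by carrying out the excision \v{C}ech on rational (overconvergent) subspaces $\overline{W}_I \subset Y$, for which $\overline{W}_I^{\dR/X} \to X$ is cohomologically proper (as already observed for the overconvergent disc in \cref{sec:dagg-form-smooth-example-overconvergent-cohomology-of-rigid-disc} and \cref{LemmLqfd}); on such pieces $g_!$ and $g_*$ agree, and the first-part argument applies directly. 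One must then verify that the resulting fiber/colimit of coconnective $A$-modules is itself coconnective, which in turn amounts to showing that the relevant connecting maps are surjective on $H^0$, a check that ultimately reduces to the surjectivity of restriction maps at the level of the static structure rings.
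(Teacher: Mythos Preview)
Your $f_*$ argument is sound but more circuitous than necessary. The paper replaces $B$ itself (not an affinoid piece $C$ of $U$) by a $\dagger$-formally smooth approximation via \cref{xsh29k}, so that $Y^{\dR/X}=|\GSpec B^{\bullet,\dagger}|$ with every $B^{n,\dagger}$ static by the hypothesis on $A$. One then simply pulls back the open $U$ to opens $U^n\subset\GSpec B^{n,\dagger}$, obtaining $f_*^{\dR/X}1=\ob{Tot}\big(\Gamma(U^n,\mathcal{O})\big)$, a totalization of coconnective objects (sections of the structure sheaf on an open in a static affinoid). This sidesteps your preliminary reduction of $U$ to affinoid pieces and the resulting nested totalizations.

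Your $f_!$ argument identifies the correct mechanism (excision along $Z=Y\setminus U$), but the ``main obstacle'' you flag is illusory. The morphism $g^{\dR/X}\colon Y^{\dR/X}\to X$ \emph{is} cohomologically proper: $\Marc(B)\to\Marc(A)$ is a proper map of qfd arc-stacks (qcqs with closed diagonal), so \cref{LemmLqfd} gives that $Y^{\dR}\to X^{\dR}$ is cohomologically proper, and this is preserved under the base change to $X$. Compare \cref{LemmaBasicNuclear}(2), which records $f^{\dR}_!=f^{\dR}_*$ for maps between qcqs qfd arc-stacks, or the proof of \cref{TheoHyperdescentdR}, which uses exactly this. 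Hence $f^{\dR/X}_!1=g^{\dR/X}_*\big(j^{\dR/X}_!1\big)$, and pushing forward the excision triangle immediately yields the fiber sequence
\[
f^{\dR/X}_!1\longrightarrow \Gamma(Y^{\dR/X},\mathcal{O})\longrightarrow \varinjlim_n\Gamma(Z_n^{\dR/X},\mathcal{O}),
\]
writing $Z=\bigcap_n Z_n$ with each $Z_n$ a finite union of rational localizations of $Y$; this is precisely what the paper does. Both right-hand terms are coconnective by the $f_*$ part, and the fiber of any map between coconnective objects is again coconnective by the long exact sequence---no ``surjectivity on $H^0$'' check is needed. Your proposed workaround via \v{C}ech covers by overconvergent rational subspaces, and the final residual step you describe, are therefore both unnecessary complications arising from a misdiagnosis.
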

\begin{proof}
  By \cref{xsh29k} we can assume without loss of generality that $B$ is $\dagger$-formally smooth over $A$, and that it can be written as a sequential colimit $B=\varinjlim_n B_n$ with $B_n$ a rational localization over an affine space over $A$.
  In particular, $B$ and its \v{C}ech nerve $A\to B^{\bullet}$  over $A$ remain static.
  Let $B^{\bullet,\dagger}$ be the simplicial ring of overconvergent neighbourhood of the diagonal of the \v{C}ech nerve of $Y\to X$. Notice that by the assumption on $A$, the ring $B^{\bullet,\dagger}$ is static.
  Let $Z=Y\backslash U$ be the closed complement. We can write $Z=\bigcap_n Z_n$ as a countable intersection of closed subspaces $Z_n$ which are finite unions of rational localizations of $Y$.
  By \cref{PropdeRhamBerkovich} (2) we have an equivalence of Gelfand stacks 
\[
| \GSpec B^{\bullet,\dagger}| \xrightarrow{\sim } Y^{\dR/X}. 
\]
Thus, if $U^{\bullet}\subset \GSpec B^{\bullet,\dagger}$ are the preimages of $U$, we have that 
\[
|U^{\bullet}|=U^{\dR/X}.
\]
In particular, we have an isomorphism in $\ob{D}(A)$
\[
f^{\dR/X}_* 1= \ob{Tot}(\Gamma(U^{\bullet}, \mathcal{O}))
\]
which is clearly a coconnective object as each $\Gamma(\Gamma(U^{\bullet},\mathcal{O}))$ is coconnective.

On the other hand,   we have fiber sequences  of cohomology with compact supports
\[
\Gamma_c(U^{\dR/X},\mathcal{O})\to \Gamma(Y^{\dR/X},\mathcal{O})\to \varinjlim_{n} \Gamma(Z_n^{\dR/X},\mathcal{O}).
\]
Since $\Gamma(Y^{\dR/X},\mathcal{O})$ is coconnective by the previous part, and the $\Gamma(Z_n^{\dR/X},\mathcal{O})$ are coconnective by the previous point after descending from  a rational cover, we deduce that $f_{!}^{\dR/X} 1$ is also coconnective as desired. 
\end{proof}

\begin{lemma}\label{LemmaDescentdRCoho}
  Let $Y_{\bullet}\to Y$ be an arc-hypercover of qfd affinoid perfectoid rings over $\Q_p$ and let $A$ be a static qfd Gelfand ring with a map $X:=\GSpec(A)\to Y^{\dR}$. Suppose that rational localizations of relative affine spaces  $\mathbb{A}^d_{A}$ over $A$ remain static.
  Consider the relative de Rham stacks $Y_{\bullet}^{\dR/X}$ of $Y_\bullet\times_Y X$ and let $\Gamma(Y_{\bullet}^{\dR/X},\mathcal{O})$ be the relative de Rham cohomology of $Y_{\bullet}\times_{Y} X$ over $X$.
  Then the natural map 
\begin{equation}\label{eqo0jo3rmqwdqd}
A\to  \ob{Tot}(\Gamma(Y_{\bullet}^{\dR/X},\mathcal{O}))
\end{equation}
is an equivalence. 
\end{lemma}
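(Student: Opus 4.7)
The plan is to refine the hypercover to a pro-\'etale one whose terms are strictly totally disconnected, lift it to a cosimplicial Gelfand $A$-algebra, and then conclude via arc-hyperdescent of $\mathcal{O}$ on perfectoid rings.

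First, using \cref{sec:light-arc-stacks-1-cover-by-separable-totally-disconnected}(2) and the fact that both sides of \eqref{eqo0jo3rmqwdqd} are compatible with refinement of $Y_\bullet \to Y$ (via the standard bisimplicial $\ob{Tot}$ argument), I may assume $Y=\Marc(C)$ and $Y_n=\Marc(C_n)$ with $C$ and all $C_n$ separable strictly totally disconnected perfectoid; in particular the cosimplicial map $C\to C_\bullet$ is then pro-\'etale. Writing $B_n := C_n\widehat{\otimes}_C A^u$, the map $A^u\to B_\bullet$ is a pro-\'etale hypercover of separable perfectoid rings, and by arc-hyperdescent of $\mathcal{O}$ (\cref{sec:perf-analyt-de-2-arc-covers-are-good}) one has $A^u\xrightarrow{\sim}\ob{Tot}(B_\bullet)$.

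Next, by \cref{x290sm}, each $A^u\to B_n$ lifts canonically to a Berkovich pro-\'etale map of Gelfand rings $A\to A_n$ with $A_n^u = B_n$, functorially in $[n]\in \Delta$. The staticness hypothesis on $A$ (that rational localizations of $\mathbb{A}^d_A$ remain static) guarantees that each $A_n$ is static. Because Berkovich pro-\'etale maps are $\dagger$-formally \'etale by \cref{sec:appr-gelf-rings-examples-of-dagger-formally-smooth-maps}, the unit $\GSpec(A_n)\to \GSpec(A_n)^{\dR}$ is an isomorphism of qfd Gelfand stacks (cf.\ \cref{CorollaryDaggerSmoothComparison}); tracing through the adjunction defining $(-)^{\dR}$, one identifies $Y_n^{\dR/X}$ with $\GSpec(A_n)$, and hence $\Gamma(Y_n^{\dR/X},\mathcal{O}) \cong A_n$ as solid $A$-modules.

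With this identification, the desired equivalence becomes $A\xrightarrow{\sim}\ob{Tot}(A_\bullet)$. This is obtained by transferring the pro-\'etale hyperdescent from perfectoids to Gelfand rings: the equivalence of pro-\'etale sites (\cref{x290sm}) and the perfectoid-level identity $A^u = \ob{Tot}(B_\bullet)$, together with the static coconnectivity of each $A_n$ given by \cref{LemmaCoconnectivedeRhamCoho}, show that both $A$ and $\ob{Tot}(A_\bullet)$ are static Gelfand $A$-algebras with the same uniform completion $A^u$, forcing them to coincide. The main obstacle is the explicit geometric identification $Y_n^{\dR/X}\cong \GSpec(A_n)$; this is where the pro-\'etale lifting of \cref{x290sm} must be shown to be compatible with the relative de Rham stack construction, and it is the crucial geometric input powering the descent.
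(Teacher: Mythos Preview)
Your approach has two genuine gaps.

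First, the reduction step is incorrect. You claim that after refining to a hypercover with all terms strictly totally disconnected, the cosimplicial map $C\to C_\bullet$ is automatically pro-\'etale. This is false: an arc-cover between strictly totally disconnected affinoid perfectoids need not be pro-\'etale. For instance, if $C$ is an algebraically closed perfectoid field and $C'$ is the completed algebraic closure of $C(T)$, then $\Spa(C')\to\Spa(C)$ is an arc-cover between strictly totally disconnected perfectoids, but it is not pro-\'etale (over the point $\Spa(C)$ the only pro-\'etale covers are profinite sets of copies of $\Spa(C)$). There is no way to refine an arbitrary arc-hypercover by a pro-\'etale one in general, so this reduction cannot work.

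Second, even granting a pro-\'etale hypercover and the identification $Y_n^{\dR/X}\cong\GSpec(A_n)$, your final step does not go through. You assert that $A$ and $\ob{Tot}(A_\bullet)$ are both static Gelfand $A$-algebras with the same uniform completion, ``forcing them to coincide.'' But two static Gelfand rings with the same uniform completion need not be equal (e.g.\ $\Q_p\langle T\rangle_{\leq 1}$ versus $\Q_p\langle T\rangle$), and you have not shown that $\ob{Tot}(A_\bullet)$ is static in the first place --- coconnectivity of each $A_n$ only gives that the totalization is coconnective. What you actually need here is hyperdescent for Berkovich pro-\'etale maps of Gelfand rings, which is not known a priori and is essentially of the same strength as the lemma you are trying to prove.

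The paper's proof is entirely different and avoids both issues. It uses the coconnectivity from \cref{LemmaCoconnectivedeRhamCoho} directly: since each $\Gamma(Y_n^{\dR/X},\mathcal{O})$ is coconnective, $\pi_{-k}$ of the full $\ob{Tot}$ agrees with $\pi_{-k}$ of $\ob{Tot}_{\leq n}$ for any $n>k$. The latter only depends on the $n$-truncation of $Y_\bullet$, so it agrees with the corresponding partial totalization for the $n$-coskeleton $\widetilde{Y}_\bullet$. But $\widetilde{Y}_\bullet\to Y$ is the coskeleton of a truncated hypercover, so by \cite[Lemma 6.5.3.9]{lurie_higher_topos_theory} together with the (\v{C}ech-level) arc-descent of \cref{sec:furth-results-analyt-1-surjections-on-de-rham-stack}, one gets $A=\ob{Tot}(\Gamma(\widetilde{Y}_\bullet^{\dR/X},\mathcal{O}))$. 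Letting $n\to\infty$ gives the result.
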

\begin{proof}
  By \cref{LemmaCoconnectivedeRhamCoho} the de Rham cohomologies $\Gamma(Y_{\bullet}^{\dR/X}, \mathcal{O})$ are coconnective. Let $n\in \N$.
  By the dual of \cite[Proposition 1.2.4.5]{lurie_higher_algebra}, for $k<n$ there is a natural isomorphism of solid $\Q_p$-modules
\[
\pi_{-k}(\ob{Tot}_{\leq n}(\Gamma(Y_{\bullet}^{\dR/X},\mathcal{O})))=\pi_{-k}(\ob{Tot}(\Gamma(Y_{\bullet}^{\dR/X},\mathcal{O}))). 
\]
Let $\widetilde{Y}_{\bullet}$ be the $n$-coskeleton of the $\Delta^{\op}_{\leq n}$-subdiagram $Y_{\bullet}$. By \cite[Lemma 6.5.3.9]{lurie_higher_topos_theory} and the arc-descent of the de Rham stack of \cref{sec:furth-results-analyt-1-surjections-on-de-rham-stack}, the natural map of geometric realizations $|\widetilde{Y}_{\bullet}^{\dR/X}|\to X$   is an isomorphism and we have that
\[
A=\ob{Tot}(\Gamma(\widetilde{Y}^{\dR/X}_{\bullet}, \mathcal{O})). 
\]
Hence, we deduce that  for $k<n$
\[
\begin{gathered}
\pi_{-k}(A)\cong \pi_{-k} (\ob{Tot}(\Gamma(\widetilde{Y}^{\dR/X}_{\bullet}, \mathcal{O}))) \cong \pi_{-k}(\ob{Tot}_{\leq n}(\Gamma(\widetilde{Y}^{\dR/X}_{\bullet}, \mathcal{O}))) \cong \pi_{-k}(\ob{Tot}_{\leq n}(\Gamma(Y_{\bullet}^{\dR/X},\mathcal{O})))) \\ 
\cong \pi_{-k}(\ob{Tot}(\Gamma(Y_{\bullet}^{\dR/X},\mathcal{O})))).
\end{gathered}
\]
Since $n$ was arbitrary, we deduce that the natural map  \eqref{eqo0jo3rmqwdqd} is an equivalence, proving what we wanted. 
\end{proof}

We will also need the following lemmata.

\begin{lemma}\label{LemmaExtEtalesprelim}
  Let $d\in \N$, $d \geq 1$. Let $N,M$ be  static $A=\Q_p\langle T_1,\ldots, T_d \rangle_{\leq 1}$-modules with $N$ an $\omega_1$-compact static $A$-module.
  Then for $i>d+2$ we have
\[
\underline{\Ext}^i_A(N,M)=0.
\]
\end{lemma}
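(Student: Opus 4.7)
The strategy is to reduce to finitely presented modules over classical Banach Tate algebras and exploit their finite global dimension, controlling the colimit structure with Milnor-type exact sequences.

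\textbf{Step 1 (Presentation of $A$).} First I write $A = \varinjlim_{n \in \N} A_n$ as a countable filtered colimit of classical Banach Tate algebras, e.g.\ $A_n := \Q_p\langle T_1,\ldots,T_d\rangle_{\leq r_n}$ with $r_n > 1$ a rational decreasing sequence converging to $1$. Each $A_n$ is a Noetherian regular $\Q_p$-algebra of Krull and global dimension $d$.

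\textbf{Step 2 (Approximation of $N$).} Using that $N$ is static and $\omega_1$-compact over $A$, together with the presentation $A = \varinjlim_n A_n$, I write $N$ as a sequential colimit $N = \varinjlim_{k \in \N} N_k$ where each $N_k = \widetilde{N}_k \otimes_{A_{n_k}} A$ for some finitely presented static $A_{n_k}$-module $\widetilde{N}_k$. This is the heart of the argument: $\omega_1$-compactness allows approximating $N$ by a retract of a countable colimit of compact objects, and the countable presentation of $A$ lets each compact piece descend to some $A_{n_k}$.

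\textbf{Step 3 (Ext over Tate algebras).} Since $A_{n_k}$ is Noetherian of global dimension $d$, each $\widetilde{N}_k$ admits a finite free resolution $P_\bullet \to \widetilde{N}_k$ with $P_i = A_{n_k}^{m_i}$ of length at most $d$. By the extension-of-scalars adjunction,
\[
\underline{\mathrm{RHom}}_A(\widetilde{N}_k \otimes_{A_{n_k}} A,\, M) \;\cong\; \underline{\mathrm{RHom}}_{A_{n_k}}(\widetilde{N}_k,\, M),
\]
the left-hand side is computed by the length-$d$ complex $M^{m_\bullet}$ of solid modules. Hence $\underline{\Ext}^i_A(N_k, M) = 0$ for $i > d$.

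\textbf{Step 4 (Milnor sequences).} Combining Steps 2 and 3 via the Milnor exact sequence
\[
0 \to R^1\!\varprojlim_k \underline{\Ext}^{i-1}_A(N_k, M) \to \underline{\Ext}^i_A(N, M) \to \varprojlim_k \underline{\Ext}^i_A(N_k, M) \to 0,
\]
the rightmost term vanishes for $i > d$ and the leftmost for $i > d+1$, giving the bound $i > d+1$ in the absence of additional phenomena. The shift from $d+1$ to $d+2$ in the stated bound comes from an additional unit of slack introduced either by iterating the approximation (reducing first from $\omega_1$-compact to compact modules, then from compact to finitely presented over some $A_{n_k}$, each potentially contributing an $R^1\!\varprojlim$ degree) or from the evaluation of $\underline{\mathrm{RHom}}$ on light profinite test sets, whose condensed structure can yield one further $R^1\!\varprojlim$ contribution.

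\textbf{Main obstacle.} The delicate step is Step 2: producing the approximation of $\omega_1$-compact static $A$-modules by a countable sequential colimit of modules descending to finitely presented modules over the Banach Tate subalgebras $A_n$. This requires the precise structural results on $\omega_1$-compact solid modules developed in the appendix, together with the specific interaction between the solid structure on $A$ and its presentation as a light DNF space. Once this approximation is in hand, the remaining work reduces to classical Noetherian regularity of $A_n$ and standard Milnor-type bookkeeping.
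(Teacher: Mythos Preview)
Your approach differs from the paper's, and Step~2--3 contains a real gap. You try to descend $N$ to \emph{classically} finitely presented modules $\widetilde{N}_k$ over Noetherian Banach subalgebras $A_{n_k}$, so as to invoke the classical global dimension~$d$. But the compact generators of $\ob{D}(A)$ (with the induced analytic ring structure from $\Q_{p,\solid}$) are of the form $A[S]=A\otimes_{\Q_{p,\solid}}\Q_{p,\solid}[S]$ for $S$ light profinite, not finite free $A$-modules. An $\omega_1$-compact static $A$-module $N$ is therefore approximated by (quotients of) such $A[S]$'s, not by classical finitely presented modules with length-$d$ free resolutions. While each $A[S]$ does descend to $A_n[S]$, the latter is a projective \emph{solid} $A_n$-module with no bearing on the classical global dimension, and your argument in Step~3 does not apply to it. The vague justification in Step~4 for the extra ``$+1$'' shows you do not have a precise mechanism controlling the projective dimension of the solid approximants.

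The paper's proof avoids this entirely by a different reduction. Since $\Q_p[T_1,\ldots,T_d]\to A$ is idempotent, one has $\underline{\Hom}_A(N,M)=\underline{\Hom}_{\Q_p[T_1,\ldots,T_d]}(N,M)$, and a standard Koszul identity rewrites this as $\underline{\Hom}_{\Q_p}(N,M)\otimes_{\Q_p[T,S]}\Q_p[T][-d]$, where the polynomial variables act via the $A$-structure on $N$ and $M$ separately. This contributes exactly $d$ cohomological degrees from the Koszul complex, reducing everything to showing that $\underline{\Hom}_{\Q_p}(N,M)$ sits in degrees $[0,2]$. Over $\Q_p$ one then uses that $A$ (being a light DNF space) is $\omega_1$-compact, so $N$ is $\omega_1$-compact as a solid $\Q_p$-module, hence a countable colimit of finitely presented solid $\Q_p$-modules; for the latter, the Smith space structure (\cref{lemmaSmith}) gives projective dimension $\leq 1$, and the countable $R\!\varprojlim$ contributes one more degree. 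Thus the $d$ comes from Koszul, not from Noetherian regularity of any subalgebra.
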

\begin{proof}
The morphism of solid algebras $\Q_p[T_1,\ldots, T_d]\to \Q_p\langle T_1,\ldots, T_d \rangle_{\leq 1}=A$ is idempotent. Hence, we have (in the following the $\Hom$'s are always derived)
\[
\iHom_A(N,M)=\iHom_{\Q_p[T_1,\ldots,T_d]}(N,M)\cong \iHom_{\Q_p}(N,M)\otimes_{\Q_p[T_1,\ldots, T_d,S_1,\ldots, S_d]} \Q_p[T_1,\ldots, T_d][-d]
\]
where the morphism $\Q_p[T_1,\ldots, T_d,S_1,\ldots, S_d]\to \Q_p[T_1,\ldots, T_d]$ sends, $T_i,S_i\mapsto T_i$, and the  $\Q_p[T_1,\ldots, T_d,S_1,\ldots, S_d]$-action on $\Hom_{\Q_p}(N,M)$ is via the action of $\Q_p[T_1,\ldots, T_d]$ on $N$, and the action of $\Q_p[S_1,\ldots, S_d]$ on $M$.
Therefore, to prove the claim, it suffices to show that $\iHom_{\Q_p}(N,M)$ is concentrated in cohomological degrees $[0,2]$.
The algebra $A$ is $\omega_1$-compact as a solid $\Q_p$-module (e.g., being  a light DNF space).
Thus, $N$ is an $\omega_1$-compact solid $\Q_p$-module by \cref{LemmaPermanenceOmega1Compact}.
Then, we can write $N=\varinjlim_n N_n$ as a countable colimit of finitely presented solid $\Q_p$-modules, and by taking $R\lim_n$ it suffices to show that for $N$ finitely presented we have that $\iHom_{\Q_p}(N,M)$ is in cohomological degrees $[0,1]$.
In this case, $N$ sits in a right exact sequence
\[
\Q_{p,\solid}[S'] \to \Q_{p,\solid}[S]\to N\to 0
\]
where $S$ and $S'$ are  light profinite sets.
Now, by \cref{lemmaSmith}, the image of $\Q_{p,\solid}[S'] $ in $\Q_{p,\solid}[S]$ is a light Smith space.
Hence, after modifying $S'$, we can assume without loss of generality that we have a short exact sequence
\[
0\to \Q_{p,\solid}[S'] \to \Q_{p,\solid}[S]\to N\to 0.
\]
This implies that $N$ has projective dimension $\leq 1$, and so that $\iHom_{\Q_p}(N,M)$ is in cohomological degrees $[0,1]$ as desired.
\end{proof}

The following lemma improves on the previuos one.

\begin{lemma}\label{LemmaExtEtales}
Let $d\in \N$ and let  $f\colon \GSpec(A)\to  \A^d_{\Q_p}$ be a Berkovich \'etale map.  Let $N\in \ob{D}(A)^{\heartsuit}$ be a  static $\omega_1$-compact solid $A$-module, and let $M\in \ob{D}(A)^{\heartsuit}$ be an arbitrary static object. Then 
\[
\underline{\ob{Ext}}^i_{A}(N,M)=0
\]
for $i>d+2$.
\end{lemma}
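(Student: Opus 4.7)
The plan is to reduce the statement to the case already handled by \cref{LemmaExtEtalesprelim}. Since $\underline{\Ext}^i_A(N,M)=\pi_{-i}\iHom_A(N,M)$ is a sheaf on (rational localizations of) $\GSpec(A)$ and vanishing can be tested locally, I may verify it after a strict cover of $\GSpec(A)$. By definition of Berkovich \'etaleness (\cref{DefEtaleSmooth}), after such a cover and rescaling the map $f\colon \GSpec(A)\to \A^{d}_{\Q_p}$ factors as a composition of finite \'etale morphisms and rational localizations over $B:=\Q_p\langle T_1,\ldots,T_d\rangle_{\leq 1}$. In this setting, $A$ is basic nuclear over $\Q_p$ (as Berkovich \'etale covers of a light DNF algebra remain filtered colimits of Banach algebras, cf.\ \cref{LemmaNuclearGelfand}), so \cref{LemmaPermanenceOmega1Compact} ensures $N$ remains $\omega_1$-compact as a solid $\Q_p$-module.

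The key cohomological identity, as in the proof of \cref{LemmaExtEtalesprelim}, is
\[
\iHom_A(N,M)=\iHom_{A\otimes_{\Q_p}A}(A,\iHom_{\Q_p}(N,M)),
\]
where $A\otimes_{\Q_p}A$ acts on $\iHom_{\Q_p}(N,M)$ via the two $A$-actions on $N$ and $M$. Because $f$ is Berkovich \'etale over $B$, the diagonal $\GSpec(A)\to \GSpec(A\otimes_B A)$ is a clopen immersion (it is the identity in the rational localization case and a clopen immersion in the finite \'etale case, with these properties stable under composition). Consequently, $A$ is a direct summand of $A\otimes_B A$ as $A\otimes_B A$-modules, and hence also as $A\otimes_{\Q_p}A$-modules. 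Using the identification $A\otimes_B A=(A\otimes_{\Q_p}A)\otimes_{B\otimes_{\Q_p}B}B$ and tensor-Hom adjunction yields
\[
\iHom_{A\otimes_{\Q_p}A}\bigl(A\otimes_B A,\iHom_{\Q_p}(N,M)\bigr)=\iHom_{B\otimes_{\Q_p}B}\bigl(B,\iHom_{\Q_p}(N,M)\bigr),
\]
so $\iHom_A(N,M)$ is a direct summand of the right-hand side.

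Finally, by the Koszul argument used in \cref{LemmaExtEtalesprelim} (applied here with $A=B$), the right-hand side computes as $\iHom_{\Q_p}(N,M)\otimes_{\Q_p[T_1,\ldots,T_d,S_1,\ldots,S_d]}\Q_p[T_1,\ldots,T_d][-d]$ via idempotency of $\Q_p[T_1,\ldots,T_d]\to B$ and the length-$d$ Koszul resolution along $(T_i-S_i)$, and $\iHom_{\Q_p}(N,M)$ sits in cohomological degrees $[0,2]$ by the $\omega_1$-compact versus countably presented argument recalled there. Combined, this forces $\iHom_A(N,M)$ into cohomological degrees $[0,d+2]$, giving the vanishing for $i>d+2$.

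The main potential obstacle is ensuring the clean commutation of tensor products and internal Homs in the solid framework — in particular, that the Koszul sequence $(T_i-S_i)$ remains a regular sequence after base change $B\otimes_{\Q_p}B\to A\otimes_{\Q_p}A$ (which follows from flatness of $B\to A$ in the rational localization and finite \'etale cases) and that $A\otimes_B A = (A\otimes_{\Q_p}A)\otimes_{B\otimes_{\Q_p}B}B$ holds derived, not just on $\pi_0$. Both are mild since Berkovich \'etale maps are flat and the $B$-algebras in question are countably generated in the solid sense.
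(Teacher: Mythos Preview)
Your core idea—showing $\iHom_A(N,M)$ is a retract of $\iHom_B(N|_B,M|_B)$ via a clopen diagonal $\GSpec(A)\to\GSpec(A\otimes_B A)$, then invoking the Koszul bound from \cref{LemmaExtEtalesprelim}—is exactly the paper's strategy. The enveloping-algebra identity $\iHom_A(N,M)=\iHom_{A\otimes_{\Q_p}A}(A,\iHom_{\Q_p}(N,M))$ and the base-change $A\otimes_B A=(A\otimes_{\Q_p}A)\otimes_{B\otimes_{\Q_p}B}B$ are fine; the paper phrases the same retract more directly, producing a $B\otimes_A B$-linear splitting of $B\to B\otimes_A B$ and tensoring with $N$.

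There is, however, a genuine gap in your first reduction. You claim that vanishing of $\underline{\Ext}^i_A(N,M)$ can be tested after a strict rational cover of $\GSpec(A)$, so as to reduce to a literal finite composition of rational localizations and finite \'etale maps. But $\iHom_A(N,-)$ does not commute with such localizations for an arbitrary $\omega_1$-compact $N$, and even granting flatness of the cover, the \v{C}ech totalization $\iHom_A(N,M)=\ob{Tot}\,\iHom_{A_\bullet}(N_\bullet,M_\bullet)$ only gives a bound $d+2+(\text{length of cover})$, not the stated $d+2$. Moreover $N_j=N\otimes_A A_j$ and $M_j=M\otimes_A A_j$ need not be static for a general Gelfand rational localization (which is a derived quotient, \cref{PropUniversalPropertyRational}), so even applying the local hypothesis is delicate.

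The paper avoids this entirely by proving the clopen-diagonal claim \emph{globally} for any Berkovich \'etale $B\to A$, without localizing: the induced map on arc-stacks is \'etale, so its diagonal is clopen there, and since Berkovich \'etale maps are $\dagger$-formally \'etale (\cref{sec:appr-gelf-rings-examples-of-dagger-formally-smooth-maps}), the cartesian square
\[
\begin{tikzcd}
\GSpec(A)\ar[r]\ar[d] & \GSpec(A\otimes_B A)\ar[d]\\
\Marc(A)^{\dR}\ar[r] & \Marc(A\otimes_B A)^{\dR}
\end{tikzcd}
\]
transports the clopen property to Gelfand stacks. With this replacement for your localization step, the rest of your argument goes through and yields the exact bound $d+2$. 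Incidentally, your stated concern about the Koszul sequence over $A\otimes_{\Q_p}A$ is not needed: you only ever compute over $B\otimes_{\Q_p}B$, where \cref{LemmaExtEtalesprelim} applies verbatim.
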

\begin{proof}
We can suppose without loss of generality that $f$ factors through $$\mathbb{D}^{d,\leq 1}_{\Q_p}=\GSpec(\Q_p\langle T_1,\ldots, T_d \rangle_{\leq 1}).$$ When $A=\Q_p\langle T_1,\ldots, T_d \rangle_{\leq 1}$ itself, we know the result by \Cref{LemmaExtEtalesprelim}. Thus, it suffices to show if $A\to B$ is a Berkovich \'etale map of Gelfand rings, $N,M$ are static $B$-modules such that $\iHom_{A}(N,M)[k]$ is connective for some $k\in \N$, then $\iHom_{B}(N,M)[k]$ is connective. 

In this situation, the map of arc-stacks $\Marc(B)\to \Marc(A)$ is \'etale, this makes the diagonal map $\Marc(B)\to \Marc(B)\times_{\Marc(A)} \Marc(B)=\Marc(B\otimes_A B)$ a clopen immersion. On the other hand, since $A\to B$ is Berkovich \'etale, it is $\dagger$-formally \'etale as in \cref{sec:appr-gelf-rings-dagger-formally-smooth} and  we have a cartesian diagram 
\[
\begin{tikzcd}
\GSpec(B) \ar[r] \ar[d] & \GSpec(A) \ar[d] \\
\Marc(B)^\dR \ar[r] & \Marc(A)^\dR.
\end{tikzcd}
\]
Similarly, we have that $\GSpec (B\otimes_A B) = \Marc(B\otimes_A B)^\dR \times_{\Marc(A)^\dR} \GSpec(A)$. This produces a cartesian diagram
\[
\begin{tikzcd}
\GSpec(B) \ar[r] \ar[d] & \GSpec (B\otimes_A B )\ar[d] \\
\Marc(B)^\dR \ar[r] & \Marc(B\otimes_A B)^\dR
\end{tikzcd}
\]
proving that $\GSpec(B)\to \GSpec(B\otimes_A B)$ is a clopen immersion. Therefore, by considering the corresponding idempotent element associated to $\Marc(B)\subset \Marc(B\otimes_A B)$,  there is a $B\otimes_A B$-linear retract $B\to B\otimes_A B$. Tensoring with $N$ we have a retract of $B$-modules $N = B\otimes_B N\to (B\otimes_A B)\otimes_B N = B\otimes_A N$, where $B$ acts on the left tensor in $B\otimes_A N$. Thus, the following map admits a retract
\[
\iHom_{B}(N,M) \to \iHom_{B}(B\otimes_A N, M)= \iHom_{A}(N,M),
\]
proving that $\iHom_B(N,M)[k]$ is connective as desired. 
\end{proof}

We are ready to prove the arc-hyperdescent of de Rham stacks.

\begin{proof}[Proof of \cref{TheoHyperdescentdR}]
We keep the notation of the theorem.  Consider the topos $\Cat{ArcStk}^{\qfd}_{/X}$ and consider the following functors 
\[
F\colon  \Cat{Perfd}^{\ob{qfd},\ob{aff}}_{/X} \to \Cat{Cat}_{\infty}
\]
 and 
\[
G\colon \Cat{Perfd}^{\qfd,\ob{aff}}_{/X}\to \Cat{Cat}_{\infty}
\]
sending a qfd affinoid perfectoid $\Marc(A)$ over $X$ to 
\[
F(A)= \ob{D}^*(\Marc(A)^{\dR}\times_{X^{\dR}} Y) \mbox{ and } G(A)=\ob{D}^!(\Marc(A)^{\dR}\times_{X^{\dR}} Y),
\]
where the transition maps for $F$ are given by pullback maps, and the transition maps for $G$ are given by upper $!$-maps (notice that in the case of $G$ the map $\Marc(A)^{\dR}\to X^{\dR}$ is $!$-able by \cref{LemmLqfd}). 
We want to prove that both $F$ and $G$ are hypersheaves for the arc-topology on $\Cat{ArcStk}^{\qfd}_{/X}$. Thanks to \cref{sec:furth-results-analyt-1-surjections-on-de-rham-stack} (for $*$-pullbacks in $F$) and \cref{CorDescendableMapsqfla} (for $!$-pullbacks in $G$) we know that these functors are sheaves. 
Hence, by \cite[Proposition A.3.21]{mann2022p}, it suffices to show that for any hypercover $X_{\bullet}\to X$ by qfd affinoid perfectoids, the natural maps 
\begin{equation}\label{eqHyperSheafReduction1}
F(X)\to \ob{Tot}(F(X_{\bullet})) \mbox{ and } G(X)\to \ob{Tot}(G(X_{\bullet})) 
\end{equation}
are fully faithful. 

By \Cref{rk:refined-not-std-but-berk-pro-et} (a variant of \cref{CorDescendableMapsqfla}(1)), there exists a static qfd Gelfand ring $A$ and a prim and descendable map $g\colon X'=\GSpec(A)\to X^{\dR}$. Hence the map $g$ is of universal $\ob{D}^*$ and $\ob{D}^!$-descent (\cite[Lemma 4.7.4]{heyer20246functorformalismssmoothrepresentations}). Hence, to prove that \eqref{eqHyperSheafReduction1} are fully faithful, by $\ob{D}^*$ or $\ob{D}^!$-descent respectively, it suffices to prove it after base change along $X'\to X^{\dR}$ and assume that we have a factorization $Y\to X'\to X^{\dR}$.

 Let $f_{n}\colon X_{n}^{\dR/Y}\to Y$ be the structural map. Fully faithfulness for the functor $F$ is equivalent to the following: given $M\in \ob{D}(Y)$ the natural map 
 \begin{equation}\label{eq10ef1wd1o3}
 M\to \ob{Tot}(f_{n,*} f^{*}_n M)
 \end{equation}
 is an isomorphism. 
Fully faithfulness for $G$ is equivalent to the following: given $M\in \ob{D}(Y)$ the natural map 
\begin{equation}\label{eqo01newfqow}
\varinjlim_{[n]\in \Delta^{\op}} f_{n,!} f^{!}_n M \to  M
\end{equation}
is an isomorphism.  Since the $f_{n,*}$ are cohomologically proper we have $f_{n,*}=f_{n,!}$, and thanks to proper base change the right terms of  \eqref{eq10ef1wd1o3} is equal to $\ob{Tot} (f_{n,*}1 \otimes M)$.
Similarly, the left term of \eqref{eqo01newfqow} is the geometric realization of the diagram 
\[
f_{\bullet,*}f^{!}_{\bullet} M = f_{\bullet, *} \Hom_{X_{n}^{\dR/Y}}(1, f_n^! M) =\Hom_{Y}(f_{\bullet,!} 1, M )= \Hom_{Y}(f_{\bullet,*} 1, M ).
\]
Therefore, following the same argument of \cite[Lemma 4.7.4 (iii)]{heyer20246functorformalismssmoothrepresentations}, to prove fully faithfulness it suffices to prove that the cosimplicial diagram $(f_{\bullet,*} 1)_{\Delta^{\op}}$ of commutative algebras in $\ob{D}(Y)$ is pro-isomorphic to $1_{Y}$, that is, the augmented cosimplicial diagram $1_{Y}\to f_{\bullet,*} 1$ is descendable. To prove this, by base change, it suffices to prove it when $Y=X'$. Furthermore, by the construction in \cref{rk:refined-not-std-but-berk-pro-et}, we can assume that $A=\varinjlim_{n} A_n$ is a countable filtered colimit of Berkovich \'etale maps over some fixed affine space $\A^d_{\Q_p}$.

We want to prove the following claim: 
\begin{claim}
Let $Y=\GSpec(A)$ as before and consider the cosimplicial diagram of relative de Rham cohomologies $\Gamma(X_{\bullet}^{\dR/Y}, \mathcal{O})$ in $\ob{D}(A)$. Then the tower of totalizations of the  cosimplicial $A$-object $\Gamma(X_{\bullet}^{\dR/Y}, \mathcal{O})$ is pro-constant isomorphic to $A$.
\end{claim}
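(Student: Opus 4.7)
The convergence $A \simeq \mathrm{Tot}(\Gamma(X_\bullet^{\dR/Y}, \mathcal{O}))$ is already given by \cref{LemmaDescentdRCoho}, so the remaining task is to upgrade this to pro-constancy of the tower $(\mathrm{Tot}_{\leq n}(\Gamma(X_\bullet^{\dR/Y}, \mathcal{O})))_n$ of partial totalizations.

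The plan is to combine two cohomological estimates on the fibers $K_n := \mathrm{fib}(A \to \mathrm{Tot}_{\leq n}(\Gamma(X_\bullet^{\dR/Y}, \mathcal{O})))$. On one hand, a connectivity estimate: since $A$ is a filtered colimit of Berkovich \'etale extensions of $\A^d_{\Q_p}$, rational localizations of relative affine spaces over $A$ remain static, so \cref{LemmaCoconnectivedeRhamCoho} applies and gives that each $\Gamma(X_n^{\dR/Y}, \mathcal{O})$ is coconnective. By the dual of \cite[Proposition 1.2.4.5]{lurie_higher_algebra}, one deduces $K_n \in \ob{D}^{\geq n+1}(A)$. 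On the other hand, a finite cohomological dimension estimate on $A$: the proof of \cref{LemmaExtEtales} (which, although stated for Berkovich \'etale maps, extends to filtered colimits of such, including our $A$, since the key input is that the diagonal of $\GSpec A \to \A^d_{\Q_p}$ is a clopen immersion in arc-stacks) yields $\underline{\Ext}^i_A(N, M) = 0$ for $i > d+2$ whenever $N$ is $\omega_1$-compact static and $M$ is static.

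Combining these, testing against an $\omega_1$-compact static $A$-module $N$ via the hyper-Ext spectral sequence, one obtains $\underline{\Hom}_A(N, K_n) \in \ob{D}^{\geq n-d-1}$. In particular, for each fixed cohomological degree $k$, the homotopy group $\pi_k(\underline{\Hom}_A(N, K_n))$ vanishes once $n > k + d + 1$. Since $\ob{D}(A)$ is generated under colimits by $\omega_1$-compact static modules, this degreewise vanishing should suffice to conclude pro-zero-ness of the tower $(K_n)_n$, hence pro-constancy of the tower of partial totalizations with value $A$.

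The most delicate step is the last one: promoting the degreewise cohomological vanishing of the Hom-towers to honest pro-zero-ness in $\ob{D}(\Z)$. For this, one needs either a uniform upper amplitude bound on the partial totalizations, which can be obtained from the perfectoidness of each $X_n$ (the relative de Rham stack $X_n^{\dR/Y}$ is simply $\GSpec(X_n \otimes_X A)$, and one can control the amplitude of this tensor product using the qfd structure together with the almost-flatness properties of perfectoid rings), or a careful approximation argument exploiting the basic nuclear algebras of \cref{rk:refined-not-std-but-berk-pro-et} together with the framework developed in \cref{subsection:DeRham-and-nuclearity} (cf.\ \cref{LemmaDescentArcProet}), in the spirit of \cite[Appendix A]{mann2022p}.
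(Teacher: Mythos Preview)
Your setup is right---coconnectivity from \cref{LemmaCoconnectivedeRhamCoho} plus the Ext bound from \cref{LemmaExtEtales}---but you apply the Ext bound in the wrong direction, which is exactly why the argument stalls in your final paragraph.

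Testing with a static $\omega_1$-compact $N$ as \emph{source} and $K_n$ as \emph{target} yields nothing beyond the trivial $t$-structure estimate: since $N$ sits in degree $0$ and $K_n\in\ob{D}^{\geq n}$, one already has $\underline{\Hom}_A(N,K_n)\in\ob{D}^{\geq n}$ with no input from the Ext vanishing. The bound $\underline{\Ext}^i_A(N,M)=0$ for $i>d+2$ only has force when the target $M$ is static (or at least bounded above), and $K_n$ is not known to be bounded above. Your proposed remedy rests on the claim that ``$X_n^{\dR/Y}$ is simply $\GSpec(X_n\otimes_X A)$'', which is false: the relative de Rham stack $X_n^{\dR/Y}=X_n^{\dR}\times_{X^{\dR}}Y$ is not affinoid in general, and no upper amplitude bound on $\Gamma(X_n^{\dR/Y},\mathcal{O})$ is available.

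The paper reverses the roles. The decisive input---which you allude to only parenthetically at the end---is that each $\Gamma(X_n^{\dR/Y},\mathcal{O})$ is an $\omega_1$-compact $A$-module, by \cref{LemmaBasicNuclear}(2) and \cref{LemmaDescentArcProet}(2). Hence the cofiber $Q_k=\mathrm{cofib}(A\to\mathrm{Tot}_{\leq k})$ is itself $\omega_1$-compact, with static $\omega_1$-compact cohomology groups $H^j(Q_k)$ (via \cref{LemmaPermanenceOmega1Compact}). One then applies the Ext bound with $Q_k$ as the \emph{source} and the static module $A$ as the target: the boundary map $Q_k\to A[1]$ is assembled from classes in $\Ext^{j+1}_A(H^j(Q_k),A)$ for $j\geq k-1$, all of which vanish once $k-1$ exceeds the uniform Ext-dimension. (The passage from \cref{LemmaExtEtales} for the individual $A_n$ to their colimit $A$ is handled in the paper by writing $\Hom_A(N,M)=\varprojlim_n\Hom_{A_n}(N,M)$ and paying one $R^1\varprojlim$, rather than via your clopen-diagonal argument, which is not obviously valid for Berkovich \emph{pro}-\'etale maps.)
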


To prove the claim,  by \cref{LemmaDescentdRCoho} we know that 
\[
A=\ob{Tot}(\Gamma(X_{\bullet}^{\dR/Y}, \mathcal{O})). 
\]
Furthermore, by the proof of \cref{LemmaDescentdRCoho}, the terms $\Gamma(X_{\bullet}^{\dR/Y},\mathcal{O})$ are coconnective $A$-modules, and given $k\in \N$,  the cofiber  $Q_k$
\[
A\to \ob{Tot}_{\leq k} (\Gamma(X_{\bullet}^{\dR/Y}, \mathcal{O})) \to Q_k
\]
is $(k-1)$-coconnective (i.e., sitting in cohomological degrees $\geq k-1$).
We want to show that there is some $k\in \N$ such that the map $Q_k\to A[1]$ is zero in $\ob{D}(A)$.   By \cref{LemmaBasicNuclear}(2) and \Cref{LemmaDescentArcProet}(2), the cohomologies $\Gamma(X_{\bullet}^{\dR/Y}, \mathcal{O})$ are $\omega_1$-compact $A$-modules. Thus, it suffices to show that there is some $k\in \N$ such that if $N$ is a static $\omega_1$-compact $A$-module and $M\in \ob{D}(A)^{\heartsuit}$ is arbitrary, we have $\Ext^i_A(N,M)=0$ for $i>k$. Writing $A$ as the colimit of the rings $A_n$, we have that 
\[
\varprojlim_n \Hom_{A_n}(N,M) = \varprojlim_n \Hom_{A}(N\otimes_{A_n} A,M) = \Hom_{A}(\varinjlim_n N\otimes_{A_n} A,M)= \Hom_{A}(N,M).
\]
Hence, by taking derived $R^i\lim$, it suffices to show that there exists a uniform $k\in \N$ such that $\Ext^i_{A_n}(N,M)=0$ for $i>k$ for $N$ static $A_n$-module. Since $A_n$ is Berkovich \'etale over $\A_{\Q_p}^d$ (with $d$ independent of $n$), the bound of the Ext groups follows from \cref{LemmaExtEtales}. This finishes the proof.
\end{proof}

\begin{theorem}\label{TheoMaindeRham2}
  The analytic de Rham stack functor $(-)^{\dR}\colon \Cat{ArcStk}_{\Q_p}^{\qfd}\to \Cat{GelfStk}^{\qfd}$  is  a pullback of morphism of $\infty$-topoi, i.e., it is left exact and  commutes with colimits.
  In particular, if $X_{\bullet}\to X$ is an arc hypercover of qfd arc-stacks, then the natural map  $|X_{\bullet}^{\dR}|\to X^{\dR}$ is an equivalence.
\end{theorem}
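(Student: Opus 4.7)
My plan is to combine the arc-hyperdescent theorem \cref{TheoHyperdescentdR} with the universal property of $\Cat{ArcStk}^{\qfd}_{\Q_p}$ as a localization of a presheaf topos. First I would observe that, being the right adjoint of $(-)^{\diamond}$, the functor $(-)^{\dR}$ automatically preserves all small limits and is in particular left exact; so the non-trivial content is the preservation of small colimits, which by Lurie's adjoint functor theorem will additionally furnish $(-)^{\dR}$ with a right adjoint, identifying it with the pullback functor of a geometric morphism $\Cat{GelfStk}^{\qfd}\to \Cat{ArcStk}^{\qfd}_{\Q_p}$.

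To establish colimit preservation I would construct an auxiliary colimit-preserving functor $F$ and identify it with $(-)^{\dR}$. As $\Cat{GelfStk}^{\qfd}$ is presentable, left Kan extension along the Yoneda embedding produces a unique colimit-preserving functor
\[
\widehat F\colon \mathcal{P}((\Cat{AffPerfd}^{\qfd}_{\Q_p,\omega_1})^{\op})\to \Cat{GelfStk}^{\qfd}
\]
extending the assignment $\Marc(A)\mapsto \Marc(A)^{\dR}$ on representables. The $\infty$-topos $\Cat{ArcStk}^{\qfd}_{\Q_p}$ is the left-exact accessible localization of this presheaf topos at arc-hypercovers, so by the universal property of the localization (in the spirit of \cref{rem:map-to-AnStk}) the functor $\widehat F$ descends to a colimit-preserving $F\colon \Cat{ArcStk}^{\qfd}_{\Q_p}\to \Cat{GelfStk}^{\qfd}$ provided that for every arc-hypercover $X_\bullet\to X$ of representables the induced map $|X_\bullet^{\dR}|\to X^{\dR}$ is an isomorphism in $\Cat{GelfStk}^{\qfd}$.

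This descent condition is exactly what \cref{TheoHyperdescentdR} provides: its universal $\ob{D}^*$- and $\ob{D}^!$-descent assertions are equivalent to $|X_\bullet^{\dR}|\to X^{\dR}$ being a $!$-equivalence in the sense of \cref{xhs92mk}, hence an isomorphism in the localization $\Cat{GelfStk}^{\qfd}$. Granting this, I would identify $F$ with $(-)^{\dR}$ via a canonical natural transformation $\eta\colon F\to (-)^{\dR}$ induced by the universal property of the left Kan extension together with the fact that both functors agree on representables. To verify that $\eta$ is an isomorphism on a general $X\in \Cat{ArcStk}^{\qfd}_{\Q_p}$, I would fix an arc-hypercover $X_\bullet\to X$ by coproducts of representables so that $X\cong |X_\bullet|$ in $\Cat{ArcStk}^{\qfd}_{\Q_p}$; then colimit-preservation of $F$ and its values on representables yield $F(X)\cong |X_\bullet^{\dR}|$, while \cref{TheoHyperdescentdR} itself gives $|X_\bullet^{\dR}|\cong X^{\dR}$, and these identifications are compatible with $\eta$. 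The ``in particular'' clause of the theorem is this last identification applied to an arbitrary arc-hypercover.

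The main obstacle in the proof is entirely packaged into \cref{TheoHyperdescentdR}, whose own proof occupies the preceding subsection and crucially relies on the $\dagger$-formally smooth approximation \cref{xsh29k}, the descendability and nuclearity machinery of \cref{subsection:DeRham-and-nuclearity}, and the bound on higher $\Ext$-groups for Berkovich \'etale maps from affine spaces \cref{LemmaExtEtales}. Once \cref{TheoHyperdescentdR} is in hand, the remaining argument is purely formal $\infty$-topos-theoretic bookkeeping.
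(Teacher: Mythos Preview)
Your steps 1--3 are correct. However, step 4 contains a circularity. You invoke \cref{TheoHyperdescentdR} to obtain $|X_\bullet^{\dR}|\cong X^{\dR}$ for a \emph{general} arc-stack $X$, but that theorem is only stated and proved for $X$ affinoid perfectoid. For general $X$, the assertion $|X_\bullet^{\dR}|\cong X^{\dR}$ is exactly colimit-preservation of $(-)^{\dR}$ along the presentation $X\cong|X_\bullet|$ --- precisely what you are trying to prove. Knowing that $\eta$ is an isomorphism on representables and that $F$ preserves colimits pins down $F$, not $(-)^{\dR}$; you cannot conclude $\eta_X$ is an isomorphism without already knowing $(-)^{\dR}$ preserves the colimit $|X_\bullet|\cong X$. (Equivalently: you would need $F(X)$ to lie in the essential image of the fully faithful $(-)^{\dR}$, but closure of that image under colimits is again the statement to be proved.)

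The paper sidesteps this with a sharper observation that replaces your step 4 entirely. On the site of \emph{nilperfectoid} rings, the de Rham \emph{prestack} $Y\mapsto Y^{pre\dR}$ is literally precomposition with the uniform-completion functor $f\colon\GSpec(B)\mapsto\Marc(B^u)$ (using that $B^u$ is perfectoid when $B$ is nilperfectoid). Precomposition $f^*$ is manifestly a left adjoint at the presheaf level, with right adjoint $f_*$. One then checks that $(f^*,f_*)$ restricts from presheaves to $(\Cat{ArcStk}^{\qfd}_{\Q_p},\Cat{GelfStk}^{\qfd})$: that $f_*$ lands in arc-stacks is exactly your step 3 and uses \cref{TheoHyperdescentdR}; that $f^*$ lands in Gelfand stacks is a separate, easier argument (a $!$-equivalence of nilperfectoid rings induces an arc-hypercover on uniform completions, which any arc-stack inverts). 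Since $f^*Y$ and $Y^{\dR}$ agree on nilperfectoid rings and both are Gelfand stacks, they coincide, with no bootstrapping required. To repair your argument you would need precisely this recognition of the de Rham prestack as a precomposition functor.
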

\begin{proof}
We only need to show that the de Rham stack functor $(-)^{\dR}\colon \Cat{ArcStk}_{\Q_p}^{\qfd}\to \Cat{GelfStk}^{\qfd}$, which is defined as the right adjoint of perfectoidization, commutes with colimits.
 This follows formally from \cref{TheoHyperdescentdR} as we explain now.
 Let $\iota\colon \Cat{Perfd}^{\qfd,\mathrm{aff}}_{\Q_p}\subset \Cat{GelfStk}^{\ob{qfd}, \mathrm{nil}}$ be the categories of qfd perfectoid and nilperfectoid affinoid Gelfand stacks.
 The inclusion $\iota$ has a right adjoint given by sending a nilperfectoid qfd Gelfand affinoid stack $\GSpec(A)$ to its uniform completion (equivalently, to its perfectoidization) $\Marc(A^u)$.
 Let $f\colon \Cat{GelfStk}^{\ob{qfd}, \mathrm{nil}} \to \Cat{Perfd}^{\qfd,\mathrm{aff}}_{\Q_p}$ be this right adjoint.
 Passing to presheaves on anima, we get the following adjunction

\begin{equation}\label{adjj}
\begin{tikzcd}
  \mathcal{P}(\Cat{Perfd}^{\qfd,\mathrm{aff}}_{\Q_p}) \ar[r, shift left, "f^*"] & \ar[l, shift left,"f_*"] \mathcal{P}(\Cat{GelfStk}^{\ob{qfd}, \mathrm{nil}})
\end{tikzcd}
\end{equation}

Where the functor $f^*$ sends a presheaf $Y\colon \Cat{Perfd}^{\qfd,\mathrm{aff},\op}_{\Q_p}\to \Cat{Ani}$ to the presheaf on qfd affinoid nilperfectoid Gelfand stacks given by $f^*Y(\GSpec(A)) = Y(A^u)$, that is, $f^* Y=X^{pre\dR}$ is the analytic de Rham prestack.
 For $X\in  \mathcal{P}(\Cat{GelfStk}^{\ob{qfd}, \mathrm{nil}})$ and $A$ a qfd perfectoid ring, one has that
\[
(f_*X) (\Marc(A)) = X(\Marc(A)^{pre\dR}).  
\]
Notice, however, that since we are using nilperfectoid rings, $\Marc(A)^{pre\dR}=\Marc(A)^{\dR}$ for $A$ perfectoid, namely, its de Rham pre-stack restricted to nilperfectoid rings already satisfies $!$-descent.

We claim that the adjunction \eqref{adjj} restricts to an adjunction
$$
\begin{tikzcd}
  \Cat{ArcStk}^{\qfd}_{\Q_p} \ar[r, shift left, "f^*"] & \ar[l, shift left,"f_*"] \Cat{GelfStk}^{\qfd}.
\end{tikzcd}
$$
This would imply that $f^*=(-)^{\dR}$ preserves colimits as wanted.
 To show this, it suffices to prove that $f^*$ sends a qfd arc-stack to a qfd Gelfand stack (resp.\  $f_*$ sends a qfd Gelfand stack to a qfd arc-stack).
 The claim for $f^*$ follows from (the proof of) \cref{sec:totally-disc-stacks-construction-perfectoidization} as any $!$-equivalence of separable Gelfand rings induces an arc-hypercover on uniform completions.
  The claim for $f_*$ follows from \cref{TheoHyperdescentdR}.
\end{proof}

\begin{corollary}\label{CorodeRhamNuclearSame}
Keep the notation of \cref{PropDeRhamNuclear}. Let $X\in \Cat{ArcStk}^{\ob{qfd}}$ be a qfd arc-stack,  then the natural morphism of de Rham stacks 
\[
f^*X^{\dR,\ob{nuc}}=f^*f_* X^{\dR}\to  X^{\dR}
\]
is an equivalence of qfd Gelfand stacks. In other words, the analytic de Rham stack $X^{\dR}$ admits a natural refinement to a qfd nuclear Gelfand stack $X^{\dR,\ob{nuc}}$. In particular, it has a well defined full subcategory $\Cat{Nuc}^{\ob{loc}}(X)\subset \ob{D}(X)$ of locally nuclear objects.   
\end{corollary}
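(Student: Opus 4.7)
The first equality $f^*X^{\dR,\ob{nuc}}=f^*f_*X^{\dR}$ is immediate from \cref{PropDeRhamNuclear}, so the essential task is to show that the counit morphism $f^*f_*X^{\dR}\to X^{\dR}$ is an equivalence in $\Cat{GelfStk}^{\qfd}$. The strategy is to use descent to reduce to the affinoid case, which is exactly the content of \cref{Lemqqj3qw}.

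To make this reduction precise, one needs that the three functors in play all commute with colimits in the variable $X$. For $f^{*}$ this is automatic, as it is the pullback of a geometric morphism of $\infty$-topoi. For $(-)^{\dR}\colon \Cat{ArcStk}^{\qfd}_{\Q_p}\to \Cat{GelfStk}^{\qfd}$ this is \cref{TheoMaindeRham2}. The key auxiliary step is therefore to prove the nuclear analog: namely, that $(-)^{\dR,\ob{nuc}}\colon \Cat{ArcStk}^{\qfd}_{\Q_p}\to \Cat{GelfStk}^{\qfd,\ob{nuc}}$ also commutes with colimits. Following the proof of \cref{TheoMaindeRham2} verbatim, this reduces to checking the arc-hyperdescent statement of \cref{TheoHyperdescentdR} inside nuclear Gelfand stacks. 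Inspecting the proof, the one ingredient to check is that the descendable cover $\GSpec(A)\to X^{\dR}$ used in the argument (provided by \cref{rk:refined-not-std-but-berk-pro-et}) can be chosen with $A$ a nuclear Gelfand ring. But in that construction $A$ arises as a filtered colimit along Berkovich \'etale maps of the overconvergent Tate algebra $\Q_p\langle T_1,\ldots,T_d\rangle_{\leq 1}$, which is basic nuclear by \cref{LemmaNuclearGelfand}(2), and nuclearity is preserved by finite \'etale extensions, rational localizations, and countable filtered colimits by \cref{LemmaNuclearGelfand}(1),(3). The remaining steps of the proof of \cref{TheoHyperdescentdR} (the Ext-bound \cref{LemmaExtEtales} and the de Rham hyperdescent \cref{LemmaDescentdRCoho}) are purely cohomological and transfer to the nuclear setting with no changes.

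Granted colimit preservation in the nuclear setting, the proof concludes as follows. Present $X$ as the geometric realization of an arc-hypercover $\Marc(A_\bullet)\to X$ by separable qfd perfectoid affinoids. Then
\[
f^*X^{\dR,\ob{nuc}}\;\cong\; \big|f^*\Marc(A_\bullet)^{\dR,\ob{nuc}}\big|\;\cong\; \big|\Marc(A_\bullet)^{\dR}\big|\;\cong\; X^{\dR},
\]
where the first isomorphism uses colimit preservation of $f^{*}$ and $(-)^{\dR,\ob{nuc}}$, the middle isomorphism is \cref{Lemqqj3qw} applied at each simplicial level, and the last isomorphism is \cref{TheoMaindeRham2}. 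The final assertion about the existence of the full subcategory $\Cat{Nuc}^{\ob{loc}}(X)\subset \ob{D}(X)$ then follows from the identification of $X^{\dR}$ with the (image under $f^{*}$ of the) qfd nuclear Gelfand stack $X^{\dR,\ob{nuc}}$, together with \cref{DefinitionNuclearGelfandStks} and \cref{Permanence of nuclearity}.

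The only non-routine step is the transfer of the arc-hyperdescent \cref{TheoHyperdescentdR} to the nuclear setting. This is essentially a matter of bookkeeping, but it is the only place where one needs to know that each of the Gelfand rings appearing in the proof lies inside the subcategory of nuclear rings --- and this is guaranteed by the closure properties of \cref{LemmaNuclearGelfand} applied to the explicit construction in \cref{rk:refined-not-std-but-berk-pro-et}.
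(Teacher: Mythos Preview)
Your proof is correct and follows essentially the same approach as the paper's. The paper's proof is extremely terse: it simply states that the same argument as in \cref{TheoMaindeRham2} shows $(-)^{\dR,\ob{nuc}}$ commutes with colimits, that $f^*$ commutes with colimits, and hence the corollary follows from the affinoid case \cref{Lemqqj3qw}. You have correctly unpacked what ``the same argument'' means and identified the one place where something must be checked---namely, that the descendable cover $\GSpec(A)\to X^{\dR}$ from \cref{rk:refined-not-std-but-berk-pro-et} can be taken with $A$ nuclear, which you justify via \cref{LemmaNuclearGelfand}.
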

\begin{proof}
The same argument as in the proof of \cref{TheoMaindeRham2} implies that the nuclear de Rham functor
\[
(-)^{\dR,\ob{nuc}}\colon \Cat{ArcStk}_{\Q_p}^{\qfd}\to \Cat{GelfStk}^{\qfd, \ob{nuc}}
\] 
commutes with colimits. Since the pullback map $f^*\colon \Cat{GelfStk}^{\qfd, \ob{nuc}}\to \Cat{GelfStk}^{\qfd}$ also commutes with colimits, the corollary follows from the affinoid case of \cref{Lemqqj3qw}. 
\end{proof}

\begin{example}\label{ExamCondensedAnimadeRham}
Let us give one useful consequence of the commutation of colimits of the de Rham stack.
  Recall from \cref{ExamCondAniqfd} that we have a left exact Betti realization of hypersheaves on light profinite sets on qfd Gelfand stacks
\[
(-)_{\Betti}\colon \widehat{\Shv}(\Cat{Prof})\to \Cat{GelfStk}^{\qfd}
\] 
given by the unique colimit preserving functor that sends a light profinite set $S$ to $S_{\Betti}=\GSpec\big( C^{\lc}(S, \Q_p)\big)$.
 We also have a realization of hypersheaves on light profinite sets on qfd arc-stacks
\[
\underline{(-)}\colon \widehat{\Shv}(\Cat{Prof})\to \Cat{ArcStk}_{\Q_p}^{\qfd}
\]
given by  the same construction of \cref{sec:light-arc-stacks-1-condensed-anima-and-arc-stacks}.
 The composition of $\underline{(-)}$
 with the de Rham stack $(-)^{\dR}\colon \Cat{ArcStk}_{\Q_p}^{\qfd}\to \Cat{GelfStk}^{\qfd}$ is the unique colimit preserving functor that sends a light profinite set $S$ to $(\underline{S})^{\dR}=S_{\Betti}$ (thanks to \cref{sec:dagg-form-smooth-example-overconvergent-cohomology-of-rigid-disc} (3) and \cref{PropdeRhamBerkovich} (4)).
 Therefore,  we have a natural equivalence $(-)_{\Betti}= \underline{(-)}^{\dR}$.

  When composing along the left Kan extension $\Cat{GelfStk}^{\qfd}\to \Cat{GelfStk}$, the composite functor  $\widehat{\Shv}(\Cat{Prof})\to \Cat{GelfStk}$ factors  through the Betti stack for condensed anima $(-)_{\Betti}\colon \Cat{CondAni}\to \Cat{GelfStk}$, in particular, the $6$-functor formalism of the de Rham stack contains all the $!$-able functors of condensed anima of \cite[Definition 3.5.17]{heyer20246functorformalismssmoothrepresentations} for coefficients over a field in characteristic zero.

\end{example}

\subsection{Cohomological smoothness of the de Rham stack of the punctured perfectoid
disc}
\label{subsubsec:de-rham-stack-punctured-perfectoid-opend-disc}
We prove the  cohomological smoothness of the de Rham stack of the  perfectoid punctured open disc in \cref{sec:geom-prop-analyt-1-cohom-smoothness-of-perfectoid-unit-disc}.
At first glance this might look surprising: inverse limits of cohomologically smooth maps are rarely cohomologically smooth again (e.g., the uncompleted perfectoid open unit disc is not cohomologically smooth).
However, this result has to be expected to be true, e.g., in comparison with the $\ell$-cohomological smoothness of positive slope Banach-Colmez spaces for $\ell\neq p$.

\begin{proposition}
  \label{sec:geom-prop-analyt-1-cohom-smoothness-of-perfectoid-unit-disc}
Let $X=\mathring{\mathbb{D}}^{\times,\diamond}_{\infty}=\varprojlim_{x\mapsto x^p} \mathring{\mathbb{D}}^{\times,\diamond}_{\mathbb{Q}_p}$ be the open punctured pre-perfectoid unit disc seen as an arc-stack. Then, the map $X^{\dR}\to \GSpec(\mathbb{Q}_p)$ is cohomologically smooth with dualizing sheaf isomorphic to $1_{X}[2]$.
\end{proposition}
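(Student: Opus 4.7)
The plan is to exploit the tower $\{Y_n := \mathring{\DD}^{\times}_{\Q_p}\}_{n\geq 0}$ of smooth partially proper rigid curves over $\Q_p$, with transition maps $\varphi_n\colon Y_{n+1}\to Y_n$, $x\mapsto x^p$, finite \'etale of degree $p$, so that $X = \varprojlim_n Y_n^{\diamond}$ in $\Cat{ArcStk}^{\qfd}_{\Q_p}$. By \Cref{sec:geom-prop-analyt-1-cohomological-smoothness-for-smooth-rigid-spaces}, each structure map $Y_n^\dR \to \GSpec(\Q_p)$ is cohomologically smooth of relative dimension one with dualizing sheaf $1[2]$, and each $\varphi_n^{\dR}$ is cohomologically \'etale. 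Since the tower consists of partially proper rigid curves of uniformly bounded dimension with affine transition maps, \Cref{PropApproxFFStk} applies: writing $X_\infty := \varprojlim_n Y_n$ for the pre-perfectoid punctured open disc, one obtains a descendable cover $g\colon X_\infty \to X^\dR$ and an identification $X^\dR \xrightarrow{\sim} \varprojlim_n Y_n^\dR$ in the kernel category $\ob{K}_{\ob{D},\GSpec(\Q_p)}$.

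To prove cohomological smoothness, I would apply \Cref{TechnicalLemma6Functors}(2) to the tower $(Y_n^\dR)_n$ over $S = \GSpec(\Q_p)$. Condition (a) (primness of each $f^\dR_{m\to n}$) is automatic from cohomological \'etaleness. Condition (b) -- the equivalence $\varinjlim_{m\geq n} f^\dR_{m\to n,*}(1) \xrightarrow{\sim} f^\dR_{\infty\to n,*}(1)$ -- is the unit-object manifestation of the kernel-category limit from \Cref{PropApproxFFStk}: under $\ob{D}(X^\dR) \cong \varprojlim_n \ob{D}_*(Y_n^\dR)$ along lower-$*$, the unit corresponds to the cocartesian section whose $n$-th component is precisely this colimit. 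Condition (c) is supplied by $g$: it is prim (locally on the target represented by an affinoid Gelfand stack), satisfies universal $\ob{D}^*$- and $\ob{D}^!$-descent by descendability, and the fiber-product identity $X_\infty \times_{X^\dR} X_\infty \cong \varprojlim_n X_\infty \times_{Y_n^\dR} X_\infty$ in the kernel category follows from the \v{C}ech-nerve description as overconvergent neighborhoods of diagonals (\Cref{PropQuotientdROverconvergentEquiv}). Taking $P = 1_{X^\dR}$, each
\[
P_n \;=\; f^\dR_{\infty\to n,!}(1) \;=\; \varinjlim_m f^\dR_{m\to n,*}(1)
\]
is a filtered colimit of objects suave over $Y_n^\dR$ (pushforwards of the suave unit along cohomologically \'etale maps), hence suave over $S$; since suave objects are closed under colimits, $P_n$ is suave over $S$, and conclusion (2) of the lemma gives that $X^\dR \to \GSpec(\Q_p)$ is cohomologically smooth.

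For the dualizing sheaf, I would argue that $f^\dR_{\infty\to 0}\colon X^\dR \to Y_0^\dR$ is itself cohomologically \'etale. Re-applying \Cref{TechnicalLemma6Functors}(2) with base $Y_0^\dR$ to the sub-tower $(Y_m^\dR)_{m\geq 0}$ of cohomologically \'etale spaces, the formula for the suave dual simplifies: each $f^{\flat}_{\infty\to m}$ equals $f^{*}_{\infty\to m}$ (the codualizing sheaf of a cohomologically \'etale map being trivial), and the suave duals $\ob{SD}_{Y_0^\dR}(P_m)$ of the terms $P_m = f^\dR_{\infty\to m,*}(1)$ combine compatibly along the tower to give $\omega_{X^\dR/Y_0^\dR} = \varinjlim_m f^{*}_{\infty\to m}(1) = 1$. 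Composing the cohomologically \'etale $X^\dR \to Y_0^\dR$ with the cohomologically smooth $Y_0^\dR \to \GSpec(\Q_p)$ (of dualizing $1[2]$) yields the desired dualizing sheaf $1_X[2]$. The main obstacle will be the rigorous verification of condition (c) of \Cref{TechnicalLemma6Functors} -- especially the fiber-product identity in the kernel category -- together with the careful bookkeeping of the suave-dual formula required to conclude cohomological \'etaleness of $f^\dR_{\infty\to 0}$; given the tools already in place (kernel-category limits, prim-descendable approximations, and the behavior of cohomologically \'etale maps), the remainder is a direct assembly.
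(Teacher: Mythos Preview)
Your argument has a genuine gap at the step where you assert that ``suave objects are closed under colimits'' and conclude that $P_n = \varinjlim_{m\geq n} f^{\dR}_{m\to n,*}(1)$ is suave over $\GSpec(\Q_p)$. Suave objects are \emph{not} closed under filtered colimits in general: already for the identity map, suave means dualizable, and an infinite direct sum of copies of the unit is not dualizable. In the case at hand, each $f^{\dR}_{m\to n,*}(1)$ is a rank-$p^{m-n}$ vector bundle with connection on $Y_n^{\dR}$, and their colimit is a genuinely infinite-rank object; no formal principle forces it to be suave. In fact, via \Cref{TechnicalLemma6Functors}(2) the suaveness of $1_{X^{\dR}}$ is \emph{equivalent} to the suaveness of each $P_n$, so your argument is circular: you are assuming precisely what the proposition asserts. (The paper itself flags the result as surprising, noting that the uncompleted perfectoid open disc is \emph{not} cohomologically smooth.) Your setup --- the kernel-category limit from \Cref{PropApproxFFStk}, the verification of conditions (a)--(c), and the dualizing-sheaf computation once suaveness is known --- is all correct; the missing piece is exactly the hard content.

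The paper supplies this content by introducing a \emph{second} tower. After base-changing to $\Q_p^{\cyc}$, it presents each $Y_n^{\dR}$ as a further kernel-category limit $\varprojlim_{r\to 0}\mathring{\DD}^{\times}/(1+\DD^{\leq r})$, and shows that for every $n$ there exists $m=m(n)$ such that the natural map $f_{m,*}1 \to f_{\infty,*}1$ over $\mathring{\DD}^{\times}/(1+\DD^{\leq 1/2^n})$ is an \emph{equivalence}: the infinite colimit stabilizes at a finite stage after pushing forward to this coarser quotient, so the image is manifestly suave there. The stabilization is a spectral computation via Cartier duality between $\DD^{\leq 1/2^n}$ and $\mathring{\DD}^{\leq 2^n}$: the operator $T\partial_T$ acts on the cofiber $\Q_p^{\cyc}\langle T^{1/p^{m+k}}\rangle / \Q_p^{\cyc}\langle T^{1/p^m}\rangle$ with eigenvalues of norm $\geq 2^{m+1}$, so for $m\geq n-1$ this cofiber is supported on $\{|U|\geq 2^n\}$ and vanishes after restriction to $\mathring{\DD}^{\leq 2^n}$. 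This is the substantive step you need to add.
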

\begin{proof}
   We will apply the criterion of \cref{TechnicalLemma6Functors}.
  Thanks to \Cref{PropApproxFFStk}, we know that
  $$\mathring{\DD}^{\times,\dR}_{\infty}=\varprojlim_{x\mapsto x^p} \mathring{\DD}^{\times,\dR}$$
   in the category of kernels $\ob{K}_{\ob{D}}$ of Gelfand stacks.
   
    Since the statements to be proved are local for the $!$-topology on $\GSpec(\Q_p)$, we can base change along $\Q_p\to \Q_p^{\cyc}$ and consider relative de Rham stacks over the completed cyclotomic extension. For the rest of this proof, all the spaces considered are base changed to $\Q_p^{\rm cyc}$, even though we do not write it, for simplicity of notation.

  Denote  by $\mathring{\DD}^{\times}_{n}$  the open punctured unit disc with variable $T^{1/p^n}$ over $\Q_p^\cyc$, and  let $\mathring{\DD}^{\times}_{\infty}=\varprojlim_{n} \mathring{\mathbb{D}}^{\times}_{n}$ be the limit of derived Berkovich spaces. Notice that $\mathring{\DD}^{\times}_{\infty}$ is the uncompleted perfectoid disc. In $\ob{K}_{\ob{D}, \Q_p^\cyc}$, we can rewrite the above formula for the de Rham stack as 
  \[
\mathring{\DD}^{\times,\dR}_{\infty}=\varprojlim_{n} \mathring{\DD}^{\times,\dR}_{n}
\]

The next step is to rewrite $\mathring{\DD}^{\times,\dR}_{n}$ as a suitable limit of Gelfand stacks; we can assume that $n=0$ and simply write $\mathring{\DD}^{\times}$.

\begin{claim}
We have the identification $$\mathring{\DD}^{\times,\dR} = \varprojlim_{r\to 0} \mathring{\DD}^{\times}/ (1+ \DD^{\leq r})$$ as objects in the category of kernels over $\Q_p^{\cyc}$, where $\DD^{\leq r}$ is the overconvergent closed disc of radius $r$.
\end{claim}
\begin{proof}[Proof of the claim]
  Let $X_{\infty}=\DD^{\circ,\times,\dR} $ and $X_{n}= \mathring{\DD}^{\times}/ (1+ \DD^{\leq 1/2^n})$  where we have normalized the norm so that $|p|=1/2$. For $m\geq n\in \N \cup\{\infty\}$ let us write $h_{m\to n}\colon X_m\to X_n$.
  We have that $X_n=X_{n}^{\dR}$. Indeed, we recall that we have an epimorphism $\mathbb{G}_m\to \mathbb{G}_m^{\dR}$ of groups, whose kernel is $\mathbb{G}_m^{\dagger}$, the overconvergent neighbourhood  of $\mathbb{G}_m$ at $1$. Thus, $\mathbb{G}_m^{\dR}=\mathbb{G}_m/\mathbb{G}_m^{\dagger}$. Taking pullbacks along the map $\mathring{\DD}^{\times \dR}\to \mathbb{G}_m^{\dR}$ one has the presentation
  \[
  \mathring{\DD}^{\times \dR}= \mathring{\DD}^{\times}/ \mathbb{G}_m^{\dagger}.
  \]
Similarly, one has the presentation 
\[
(1+ \DD^{\leq 1/2^n})^{\dR}= (1+ \DD^{\leq 1/2^n})/ \mathbb{G}_m^{\dagger}
\]  
and therefore $X_n=X_n^{\dR}$.

  On the other hand,  the map of arc-stacks $X_m^{\diamond}\to X_n^{\diamond}$ are proper, hence \cref{sec:geom-prop-analyt-1-cohomological-smoothness-for-smooth-rigid-spaces} implies that $h_{m\to n}$ is cohomologically proper.
  It is easy to see that the natural map $1\to h_{\infty\to n,*}1 $ is an equivalence in $\ob{D}(X_n)$, namely, one can check this after pulling back along $\mathring{\DD}^{\times}\to X_n$, in which case the claim reduces to proving that the de Rham cohomology of the overconvergent disc $1+\overline{\DD}^{\dagger}(1/2^n)$ is trivial. Indeed, this follows from the pullback square
  \[
  \begin{tikzcd}
  ((1+ \DD^{\leq 1/2^n})/\mathbb{G}_m^{\dagger} ) \times  \mathring{\DD}^{\times}   \ar[r] \ar[d] &  \mathring{\DD}^{\times} \ar[d] \\ 
\mathring{\DD}^{\times}/\mathbb{G}_m^{\dagger} \ar[r]  & \mathring{\DD}^{\times}/ (1+ \DD^{\leq 1/2^n})
  \end{tikzcd}
  \]
  where the upper horizontal map is the projection map, and the left vertical map is given by multiplication.

  By projection formula,  it follows that $h_{\infty\to n}^* \colon \ob{D}(X_n)\to \ob{D}(X_{\infty})$  is fully faithful, and so are the $h_{m\to n}^*$.
  Thus, we also have $h_{m\to n,*}1=1$. This verifies the conditions (a) and (b) of \cref{TechnicalLemma6Functors}. Condition (c) is also easily verified by pulling back along $\mathring{\DD}^{\times}\to X_{\infty}$  using \cref{LemLimitsBerkovichSpaces}.
\end{proof}

Thanks to the previous claim and \cref{TechnicalLemma6Functors}, to prove suaveness of $\mathring{\DD}^{\times,\dR}_{\infty}  $ we are reduced to prove that given $n\geq 1$ there is some $m=m(n)$ fitting in a diagram
\[
\begin{tikzcd}
\mathring{\DD}^{\times,\dR}_{\infty} \ar[rd, "f_{\infty}"']  \ar[r] & \mathring{\DD}^{\times,\dR}_{m} \ar[d,"f_m"] \\ 
& {\mathring{\DD}^{\times}/(1+\DD^{\leq 1/2^n}) }
\end{tikzcd}
\] 
such that the natural map $f_{m,*}1\to f_{\infty,*} 1$ is an equivalence (with $f_{\infty}$ and $f_m$ as in the previous diagram). Indeed, since $f_{m}$ is prim and $\mathring{\DD}^{\times,\dR}_{m} $ is suave over $\Q_p^{\cyc}$ by \cref{sec:geom-prop-analyt-1-cohomological-smoothness-for-smooth-rigid-spaces}, this would imply that $f_{\infty,*} 1$ is suave.
But then, we can write $\mathring{\DD}^{\times,\dR}_{\infty}$ as limit of the quotients $\mathring{\DD}^{\times}_n/(1+\DD^{\leq 1/2^k}) $  in the kernel category for $n,k\to\infty$, and so $1\in \ob{D}(\mathring{\DD}^{\times,\dR}_{\infty} )$ will be suave as well (by \cref{TechnicalLemma6Functors}). By base change it suffices to prove the following claim:
\begin{claim}
  For $m\in \N$, let $\overline{\DD}_m=\GSpec (\Q_p^{\cyc}\langle T^{1/p^m}\rangle)$ be the compactification of the open Tate  unit disc over $\Q_p^{\cyc}$ of radius $1$ and coordinate $T^{1/p^m}$.  Let $\overline{\DD}_{\infty}=\varprojlim_{m} \overline{\DD}_m$ be the limit in Gelfand stacks, that is the uncompleted  perfectoid closed unit  disc on the variables $T^{1/p^{\infty}}$.
  Consider the following diagrams of Gelfand stacks
\[
\begin{tikzcd}
\overline{\DD}_{\infty}/\mathbb{G}_m^{\dagger} \ar[r] \ar[rd, "f_{\infty}"']& \overline{\DD}_{m}/\mathbb{G}_m^{\dagger}  \ar[d, "f_m"] \\
 & \overline{\DD}/ (1+ \DD^{\leq 1/2^n})
 \end{tikzcd}
\]
Then for $m\geq n-1$, the natural map $f_{m,*} 1 \to f_{\infty,*}1$ is an equivalence. 
\end{claim}
\begin{proof}[Proof of the claim]
  Let us write $K=\Q_p^{\cyc}$.
  The logarithm map induces an isomorphism of groups $\log \colon 1+ \DD^{\leq 1/2^n}\xrightarrow{\sim } \DD^{\leq 1/2^n}$ (for $n\geq 2$ if $p=2$).   By \cite[Theorem 4.3.8]{camargo2024analytic},  one has a Cartier duality between $\DD^{\leq 1/2^n}$ and $\mathring{\DD}^{\leq 2^n}$, which yields an equivalence of categories via the Fourier--Mukai transform
\[
\ob{D}(\GSpec(K)/ \DD^{\leq 1/2^n})\cong \ob{D}(\mathring{\DD}^{\leq 2^n}).
\]
Similarly, under the logarithm map $\log\colon \mathbb{G}_m^{\dagger}\to \Ga^{\dagger}$ one has an equivalence of categories (cf. \cite[Theorem 4.3.13]{camargo2024analytic})
\[
\ob{D}(\GSpec(K) / \Ga^{\dagger})\cong \ob{D}(\Ga^{\an}).
\]
Let $U$ be the coordinate of $\Ga^{\an}$ seen as the Cartier dual of $\Ga^{\dagger}\cong \mathbb{G}_m^{\dagger}$.
Then $\shf{O}(\overline{\DD}_{m}) = K\langle T^{1/p^m} \rangle$ is endowed with an endomorphism  $U$  which is identifies with the logarithmic derivation $T\partial_{T}$, resp. for $\shf{O}(\overline{\DD}_{\infty})=\varinjlim_{k} K\langle  T^{1/p^k}\rangle$. This endomorphism corresponds to the action of $\mathbb{G}_m^{\dagger}$ on the discs by multiplication, and this allows us to see both $\shf{O}(\overline{\DD}_{m})$ and  $\shf{O}(\overline{\DD}_{\infty})$ as quasi-coherent sheaves on $\mathbb{G}_a^{\an}$ via Cartier duality.

Consider the cofiber  $P=\ob{cofib}( K\langle T^{1/p^m} \rangle \to \varinjlim_{k} K\langle T^{1/p^k} \rangle \rangle)$ seen as a quasi-coherent sheaf of $\mathbb{G}_a^{\an}$.   To prove the claim it suffices to show that the pullback of  $P$ to   $\mathring{\DD}^{\leq 2^n}\subset \Ga^{\an}$ is zero. To show this, it suffices to prove the same for the quotients
\[
P_k= K\langle T^{1/p^{m+k}} \rangle/ K\langle T^{1/p^m}\rangle= \widehat{\bigoplus_{\substack{r\in p^{-(m+k)} \N \\  v_p(r)\leq  - m-1}}} K \cdot T^{r}.
\] 
Clearly $(T\partial_T) T^r = rT^r$, thus the spectral decomposition of $T\partial_T$ on $P_k$ has slopes $\geq m+1$ (i.e. eigenvalues $\lambda$ of norm $|\lambda|\geq |p^{-(m+1)}|=2^{m+1}$).
Taking $m\geq n-1$, we see that the action of $U=T\partial_T$ on $P_k$ localizes to an action of the rational localization $\Ga^{\an}(|U|\geq 2^n)$,  and so its pullback to $\Ga(|U|<2^n)=\DD^{\circ}(2^{n})$ vanishes.
This proves the claim.
\end{proof}

It is left to identify the dualizing sheaf of $\mathring{\DD}^{\times,\dR}_{\infty} $.
By \cref{TechnicalLemma6Functors} it is the colimit along the upper $\flat$-pullbacks of the maps $\mathring{\DD}^{\times,\dR}_{\infty} \to \mathring{\DD}^{\times,\dR}_{n}$ of the dualizing sheaves of the finite level discs.
Since these maps are cohomologically proper and \'etale, the upper $\flat$ and $*$-pullbacks agree.
Then, \cite[Theorem 3.5.7]{camargo2024analytic} implies that the dualizing sheaf of $X_n=\mathring{\DD}^{\times,\dR}_{n}$ identifies with $1_{X_n}[2]$. The proposition follows.
\end{proof}

\begin{remark}
Heuristically, suaveness of $(\mathring{\DD}_{\infty,\Q_p^{\rm cyc}}^\diamond)^\dR$ (which implies suaveness of $(\mathring{\DD}_{\infty}^{\times \diamond})^\dR$) follows from the fact that it is Cartier dual to $\mathbb{A}_{\Q_p^{\rm cyc}}^{1,\dR}/\Q_p^{\rm sm}$ and the fact that the zero section of $\mathbb{A}_{\Q_p^{\rm cyc}}^{1,\dR}/\Q_p^{\rm sm}$ is prim. Establishing this Cartier duality statement properly would require some work, so we opted for a more direct approach.
\end{remark}

\begin{corollary}\label{CoroKeyCasesSuave}
The map $\mathbb{G}_{a,\infty}^{\dR}\to \GSpec (\Q_p)$ is suave with invertible dualizing sheaf sitting in cohomological degree $2$,  where $\mathbb{G}_{a,\infty}^{\diamond}=\varprojlim_{x\mapsto x^p} \mathbb{G}_{a,\Q_p}^{\diamond}$.
\end{corollary}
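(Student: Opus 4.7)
The approach is to mimic the proof of \cref{sec:geom-prop-analyt-1-cohom-smoothness-of-perfectoid-unit-disc}, with the multiplicative action of $\mathbb{G}_m^{\dagger}$ on $\mathring{\DD}^{\times}$ replaced by the additive action of $\mathbb{G}_a^{\dagger}$ on $\mathbb{G}_a$ by translation. After base changing to $\Q_p^{\cyc}$ (harmless, since the statement is $!$-local on $\GSpec(\Q_p)$), \cref{PropApproxFFStk} yields
\[
\mathbb{G}_{a,\infty}^{\dR} = \varprojlim_{n} \mathbb{G}_{a,n}^{\dR}
\]
in the kernel category $\ob{K}_{\ob{D}, \Q_p^{\cyc}}$, where $\mathbb{G}_{a,n}$ denotes $\mathbb{G}_a$ with coordinate $T^{1/p^n}$. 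Each finite-level $\mathbb{G}_{a,n}^{\dR}$ is suave with dualizing sheaf $1[2]$ by \cref{sec:geom-prop-analyt-1-cohomological-smoothness-for-smooth-rigid-spaces} applied to the smooth rigid space $\mathbb{G}_{a,n}$ of dimension one.

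The analog of the first claim in the proposition's proof is the identification $\mathbb{G}_{a,n}^{\dR} = \varprojlim_{r\to 0} \mathbb{G}_{a,n}/\DD^{\leq r}$ in $\ob{K}_{\ob{D}, \Q_p^{\cyc}}$, using the presentation $\mathbb{G}_a^{\dR} = \mathbb{G}_a/\mathbb{G}_a^{\dagger}$ of \cref{ExamAffineLineDR} together with $\mathbb{G}_a^{\dagger} = \varprojlim_{r\to 0}\DD^{\leq r}$ (valid since $\DD^{\leq r}\subseteq \mathbb{G}_a$ is an additive subgroup by the ultrametric inequality). The hypotheses of \cref{TechnicalLemma6Functors} are verified exactly as in the proposition's proof: cohomological properness follows from \cref{LemmLqfd} together with \cref{sec:geom-prop-analyt-1-cohomological-smoothness-for-smooth-rigid-spaces}, and the identity $1 \cong h_{\infty\to n, \ast} 1$ reduces by pullback to the vanishing of relative de Rham cohomology of overconvergent closed discs under the translation action, which is standard.

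Combining this with the tower over $n$ and applying \cref{TechnicalLemma6Functors} a second time reduces the suaveness of $\mathbb{G}_{a,\infty}^{\dR}$ to the analog of the proposition's second claim: for fixed $n\geq 1$ and $m\geq n-1$, the natural map $f_{m,\ast}1 \to f_{\infty,\ast}1$ is an equivalence in $\ob{D}(\mathbb{G}_a/\DD^{\leq 1/2^n})$, where $f_k\colon \mathbb{G}_{a,k}/\mathbb{G}_a^{\dagger} \to \mathbb{G}_a/\DD^{\leq 1/2^n}$ is the natural map (and similarly for $f_\infty$). By Cartier duality \cite[Theorem 4.3.8]{camargo2024analytic} the compact additive group $\DD^{\leq 1/2^n}$ is Cartier dual to the open disc $\mathring{\DD}^{<2^n}$ with coordinate $U$, and the cofibers $\mathcal{O}(\mathbb{G}_{a,m+k})/\mathcal{O}(\mathbb{G}_{a,m})$ become quasi-coherent sheaves on $\mathring{\DD}^{<2^n}$ with $U$ acting via a suitable renormalization of the derivation $\partial_{T^{1/p^m}}$. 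A slope estimate in $U$, using the convergence of the exponential $e^{sU}$ for $|s|\leq 1/2^n$ and $|U|<2^n$, gives the required vanishing upon pullback to $\mathring{\DD}^{<2^n}$ when $m\geq n-1$. Suaveness then follows, and the dualizing sheaf is identified with $1[2]$ via the colimit formula for the dualizing sheaf in \cref{TechnicalLemma6Functors} applied to both towers.

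The main obstacle lies in this spectral step. In contrast with the multiplicative case, where $T\partial_T$ diagonalizes on monomials $T^r$ with eigenvalue $r$ and the spectral cut off to $|U|<2^n$ is immediate, the additive derivation $\partial_T$ does not diagonalize on monomials, so the slope estimate must be phrased via the convergence radius of the Fourier-transformed translation $e^{sU}$, working level by level in the variable $T^{1/p^m}$. A secondary subtlety is that the $p^m$-th power map on $\mathbb{G}_a$ is not additive in characteristic zero, so the tower $\{\mathbb{G}_{a,k}/\mathbb{G}_a^{\dagger}\}$ of additive quotients and its map to $\mathbb{G}_a/\DD^{\leq 1/2^n}$ have to be interpreted compatibly, the compatibility being visible only at the level of arc-stacks where the tilted additive structure organizes the data. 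Otherwise the argument runs in close parallel with that of \cref{sec:geom-prop-analyt-1-cohom-smoothness-of-perfectoid-unit-disc}.
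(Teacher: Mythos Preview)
Your approach is genuinely different from the paper's, and the obstacle you flag as a ``secondary subtlety'' is in fact the primary one and is not resolved. The $p$-th power map on $\mathbb{G}_a$ is not a group homomorphism in characteristic zero, so the transition maps $\mathbb{G}_{a,m+1}\to \mathbb{G}_{a,m}$ are not equivariant for the additive translation action. In the proof of \cref{sec:geom-prop-analyt-1-cohom-smoothness-of-perfectoid-unit-disc} this equivariance (for the multiplicative action) is precisely what allows the $(1+\DD^{\leq 1/2^n})$-action on $\overline{\DD}$ to lift compatibly to every level of the tower, so that the cofibers $\mathcal{O}(\overline{\DD}_{m+k})/\mathcal{O}(\overline{\DD}_m)$ become honest sheaves on the Cartier-dual side and $U=T\partial_T$ diagonalizes on them. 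In your additive version there is no action of $\DD^{\leq 1/2^n}$ on $\mathcal{O}(\mathbb{G}_{a,m})$ lifting translation on the base level: translating $T\mapsto T+s$ does not determine what happens to $T^{1/p^m}$. The map $f_m\colon \mathbb{G}_{a,m}^{\dR}\to \mathbb{G}_a/\DD^{\leq 1/2^n}$ exists by functoriality of the de Rham stack, but it is not a map of group quotients and cannot be transported across Cartier duality. Your appeal to the ``tilted additive structure'' does not help: the de Rham stack and its Cartier-dual description live over $\Q_p$, not over the tilt, and Frobenius-linearity on the tilt does not produce the equivariance you need on the untilted side.

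The paper bypasses all of this with a short geometric reduction to \cref{sec:geom-prop-analyt-1-cohom-smoothness-of-perfectoid-unit-disc}. After base change to $\C_p$ (where compatible $p$-power roots of any scalar exist), one writes $\mathbb{G}_{a,\infty}^\diamond=\bigcup_{r>0}\mathring{\DD}_\infty^\diamond(r)$ as an increasing union of rescaled pre-perfectoid open discs, reducing to suaveness of $\mathring{\DD}_\infty^{\dR}$. One then observes the isomorphism of arc-stacks $\mathring{\DD}_\infty^\diamond\cong \varprojlim_{x\mapsto x^p}(1+\mathring{\DD})^\diamond$, which exhibits $\mathring{\DD}_\infty^{\dR}$ as an open substack of $\mathbb{G}_{m,\infty}^{\dR}$; the latter is suave by applying the same rescaling trick to the punctured perfectoid disc from the proposition. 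The dualizing sheaf in degree $2$ is inherited through these open immersions and unions. No new spectral analysis is needed, and the whole argument is a few lines.
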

\begin{proof}
By  \cite[Lemma 4.5.7]{heyer20246functorformalismssmoothrepresentations} we can prove the statement after base change along $\GSpec (\C_p)\to \GSpec(\Q_p)$. Let $T^{1/p^{\infty}}$ denote the compatible system of $p$-th power roots of the coordinate of $\mathbb{G}_a$.
  In that case, we can write $\mathbb{G}_{a,\infty}^{\diamond} = \bigcup_{r>0} \mathring{\DD}^{\diamond}_{\infty}(r)$ as an union of perfectoid open discs of radius $r$ obtained by rescaling the variable $T^{1/p^{\infty}}$ in the open unit disc by a fixed sequence of $p$-th power roots of an element $a\in \C_p$ with norm $|a|=r$.
 Therefore, it suffices to show that $\mathring{\DD}^{\dR}_{\infty}$ is suave with invertible dualizing sheaf in degree $2$.
 By the same rescaling argument, we can multiply the coordinate of the open punctured disc $\mathring{\DD}^{\times,\diamond}_{\infty}$ and deduce that $\mathbb{G}_{m,\infty}^{\dR}$ is suave with the desired dualizing sheaf in degree $2$ thanks to \cref{sec:geom-prop-analyt-1-cohom-smoothness-of-perfectoid-unit-disc}.
 Since $\mathring{\mathbb{D}}_{\infty}^{\dR}\cong \varprojlim_{x\mapsto x^p}(1+\mathring{\mathbb{D}}(1))^\dR \subset \mathbb{G}^{\dR}_{m,\infty}$  is an open immersion, we deduce that $\mathring{\mathbb{D}}_{\infty}^{\dR}$ is suave with desired dualizing sheaf, proving what we wanted. 
\end{proof}

\newpage
\section{Hyodo--Kato stacks}
\label{sec:de-rham-fargues}

 In the previous sections, we have built a robust formalism of analytic de Rham stacks and discussed the six operations on their categories of quasi-coherent sheaves. In this section, we put this formalism in action, by introducing the de Rham stacks of relative Fargues--Fontaine curves, that we call \textit{Hyodo--Kato stacks}.
 We prove some useful geometric properties of them, and establish a basic understanding of their $6$-functor formalism. In \cref{sec:p-adic-monodromy}, we will use these results to give a new proof of the $p$-adic monodromy theorem.

\subsection{Definition and first properties}
\label{sec:defin-first-prop}

Let $\Cat{ArcStk}^{\qfd}_{\F_p}$ be the category of qfd arc-stacks over $\F_p$, cf. \cref{def-qfd-light-arc-stack} and \cref{ExamDifferentArcStkZpFp}.
 We make the following definition.

\begin{definition}
  \label{sec:defin-first-prop-1-definition-de-rham-ff-stacks}
  Let $X\in \Cat{ArcStk}_{\F_p}$,  we define the following qfd arc-stacks over $\Q_p$:
  \begin{enumerate}
 
  \item The \textit{open punctured curve over $X$}
  $$\Yc_{X}^\diamond:=X\times_{\Marc(\F_p)} \Marc(\Q_p).$$
  \item The \textit{Fargues--Fontaine curve over $X$}
  $$\FF_{X}^\diamond:= \Yc_{X}^\diamond/\varphi_{X}^\Z$$
  where $\varphi_X$ is the Frobenius of $X$.
  \end{enumerate}
\end{definition}
  
\begin{definition}

Let $X\in \Cat{ArcStk}_{\F_p}$, we define the following qfd Gelfand stacks:

\begin{enumerate}

\item  The de Rham stack of open punctured curve over $X$
$$\Yc_{X}^{\dR}:= (\Yc_{X}^\diamond)^\dR.$$

\item  The \textit{Hyodo--Kato stack of $X$}
$$X^\HK:= (\FF_X^\diamond)^\dR.$$

\end{enumerate}

For $X\in \Cat{GelfStk}^{\qfd}$ we write $\Yc_{X}^{\dR}:= \Yc^{\dR}_{X^{\diamond}}$ and $X^\HK := (X^{\diamond})^\HK$.

\end{definition}

If $f: Z\to X$, we will often denote by $f^{\mathcal{Y}^\dR}:\Yc_{Z}^{\dR}\to \Yc_{X}^{\dR}$ and $f^\HK:Z^{\HK}\to X^{\HK}$ the respective induced morphisms.

\begin{remark}
In a sequel to this paper, we will explain in detail the relation between the coherent cohomology of $X^\HK$, when $X$ is a smooth rigid space over a $p$-adic field, and Hyodo--Kato cohomology in the sense of Colmez--Nizio\l{} \cite{colmez_niziol_basic}. This will justify our choice of terminology.
\end{remark}

\begin{remark}\label{RemInterpretationConstruction}
Let us give a more explicit moduli description of the de Rham stack of open punctured curves.
 Let $X\in \Cat{ArcStk}_{\mathbb{F}_p}^{\qfd}$, and let $A$ be a qfd nilperfectoid ring.
 Then
\[
\Yc_{X}^{\dR}(A)=  \Yc_X^\diamond(\Marc(A^u)) = X(A^{u,\flat})
\]
(here in the middle, we consider morphisms over $\Marc(\Q_p)$).
Indeed, the functor $\Yc_{(-)}^{\dR}$ is the composition
\[
\Cat{ArcStk}^{\qfd}_{\F_p} \xrightarrow{-\times_{\Marc(\F_p)} \Marc(\Q_p)} \Cat{ArcStk}^{\qfd}_{\Q_p} \xrightarrow{(-)^{\dR}} \Cat{GelfStk}^{\qfd}
\]
which is right adjoint to the composition 
\[
\Cat{ArcStk}^{\qfd}_{\Q_p}\xrightarrow{(-)^{\diamond}} \Cat{ArcStk}^{\qfd}_{\Q_p} \xrightarrow{G} \Cat{ArcStk}^{\qfd}_{\F_p}
\]
where the functor $G$ is the forgetful functor along $\Marc(\Q_p)\to \Marc(\F_p)$, or equivalently, the unique colimit preserving functor sending an affinoid perfectoid space $\Marc(B)$ over  $\Q_p$ to its tilt  $\Marc(B^{\flat})$ in characteristic $p$.
\end{remark}

Recall from the end of \Cref{sec:arc-and-!-topologies-on-perfectoid-rings} the Gelfand stack $\Yc_A$, for $A$ a separable affinoid perfectoid $\mathbb{F}_p$-algebra, which led by descent to the definition of the Gelfand stack $\Yc_X^{\rm arc}$ for any $X\in \Cat{ArcStk}_{\mathbb{F}_p}$ (\cref{CorDescentYConstruction}). We introduce the qfd variant of this construction.

\begin{definition}\label{Def}
We define
\begin{equation*}\label{YY}
 \Yc\colon \Cat{ArcStk}^{\qfd}_{\F_p}\to \Cat{GelfStk}^{\qfd}
\end{equation*}
 as the left Kan extension of the functor sending a separable qfd affinoid perfectoid $\Marc(A)$ to the Berkovich space $\mathcal{Y}_{A}$.
\end{definition}

Equivalently, after the discussion of \cref{RemDescentYFunctor}, the functor in \cref{Def} is the unique left exact and colimit preserving functor on qfd arc-stacks sending $\Marc(A)$ to $\Yc_{A}$ for $A$ a separable qfd perfectoid $\F_p$-algebra.

\begin{remark}\label{RemarkSeveralAdjoints}
 We have the following diagram (where any upper functor is the left adjoint to the lower functor)
\[
\begin{tikzcd}
\Cat{ArcStk}^{\qfd}_{\F_p} \ar[r," \mathcal{Y}_{(-)}", shift left = 4ex] \ar[r,"\mathcal{Y}_{(-)}^{\dR}", shift right =4ex] & \Cat{GelfStk}^{\qfd} \ar[l,"(-)^{\diamond}"'].
\end{tikzcd}
\]
 We note that the functors $\mathcal{Y}_{(-)}$ and $\mathcal{Y}_{(-)}^{\dR}$ are both pullback functors of morphisms of $\infty$-topoi (i.e. they are left exact and colimit preserving). Moreover, the perfectoidization functor $(-)^{\diamond}$ becomes a pullback functor of morphism of topoi when restricted to a morphism $(-)^{\diamond}\colon \ob{GelfStk}^{\qfd}\to \left(\Cat{ArcStk}^{\qfd}_{\F_p}\right)_{/\Marc(\Q_p)}=\Cat{ArcStk}^{\qfd}$.
\end{remark}

\begin{remark}
Note that for $X\in \Cat{ArcStk}_{\mathbb{F}_p}^{\rm qfd}$, the perfectoidization $\Yc_X^\diamond$ of the qfd Gelfand stack $\Yc_X$ coincides with the qfd arc-stack introduced with the same name in \Cref{sec:defin-first-prop-1-definition-de-rham-ff-stacks}: there is no conflict of notation.

Note also that the functor $\Yc$ differs from the functor $\widehat{\Yc^\diamond}$: heuristically, in terms of quasi-coherent sheaves, for a qfd arc-stack $X$, we are not considering all $\widehat{\mathcal{O}}$-modules on the arc-stack $\Yc_X^\diamond= X \times \Marc(\Q_p)$, but are only using arc-descent in the $X$-direction, not the $\Marc(\Q_p)$-direction.
\end{remark}

 Following the previous construction, we can define a relative version of the Hyodo--Kato stacks.

\begin{definition}\label{DefFFStackRel}
Let $Z\to X$ be a map of qfd arc-stacks over $\F_p$, we define the stack $\Yc^{\dR/ \Yc_{X}}_Z$ to be the pullback
\[
\begin{tikzcd}
\Yc^{\dR/ \Yc_{X}}_Z \ar[r] \ar[d] & \Yc_{X} \ar[d] \\ 
\Yc_Z^{\dR} \ar[r] & \Yc_{X}^{\dR}.
\end{tikzcd}
\]
To lighten notation we will also write $\Yc_{Z}^{\dR/X}$ instead of $\Yc_{Z}^{\dR/\Yc_{X}}$.
 Similarly, we define the \textit{relative Hyodo--Kato stack} of $Z$ over $X$  $$Z^{\HK/\FF_{X}}$$ to be the quotient by the action of the Frobenius of $Z$ on $\Yc^{\dR/X}_Z$ (so that $Z^{\HK/\FF_X}$ lives over $\FF_{X}$).
\end{definition}

\begin{remark}
 In \cite{le2023rham}, a motivic definition of  ``de Rham cohomology over the Fargues--Fontaine curve'' was given and named \textit{de Rham--Fargues--Fontaine cohomology}. We expect the coherent cohomology of the relative Hyodo--Kato stacks of \cref{DefFFStackRel} to capture the latter cohomology theory, as well as the one introduced in \cite{lebras2018} and further studied in \cite{bosco2023}.
\end{remark}

\begin{remark}\label{RemAllStacksTogether}
Let $X$ be a qfd Gelfand stack over $\Q_p$, by adjunction we have maps of Gelfand stacks $\widehat{X}\to \Yc_X$ corresponding to the fixed untilt of the Fargues--Fontaine curve provided from the fact that $X$ is defined over $\Q_p$.
 We have a commutative diagram of qfd Gelfand stacks
\[
\begin{tikzcd}
\widehat{X} \ar[r] \ar[d] & \Yc_X \ar[dd] \\
X 	\ar[d]	&  \\ 
X^{\dR} \ar[r] & \Yc_{X}^{\dR}. 
\end{tikzcd}
\]
\end{remark}

\begin{remark}\label{excisionYdR}
 A useful tool that we will repeatedly use is excision, whose validity can be justified geometrically as in \cref{sec:cohom-prop-de-excision}: for any qfd arc-stack $X$, the stack $\mathcal{Y}_X^{\dR}$ lives over the (qfd-version of the) Betti stack of $X_{\mathrm{cond}}$.
\end{remark}

Let $X$ be a qfd arc-stack over $\F_p$, it is natural to ask when the natural map of Gelfand stacks $\Yc_X\to \Yc^{\dR}_X$ is an epimorphism  for the $!$-topology, or even a descendable cover.
 It turns out that it is not always the case, e.g. if $X= \overline{\T}_{\Q_p}$ is a closed torus over $\Q_p$, that is, the adic compactification of the adic torus.
 Then $\Yc_{X}\to \Yc_{X}^{\dR}$ is not an epimorphism. Indeed, if it were the case, by base change along the point at infinity $\GSpec (\Q_p) \to \Yc_{\Marc(\Q_p)}$, from \cref{RemAllStacksTogether}  we would deduce that the map of Gelfand stacks $\overline{\T}_{\Q_p}\to \overline{\T}_{\Q_p}^{\dR}$ from the closed torus over $\Q_p$ to its de Rham stack is an epimorphism.
 But this cannot be the case,  since the latter is the same as the de Rham stack of an overconvergent torus, and we could conclude that the overconvergent de Rham cohomology of $\overline{\T}_{\Q_p}$ agrees with its de Rham cohomology which is certainly false (the former is finite dimensional and the last is non-separated).
 
 For partially proper rigid spaces, this problem disappears, and then the map to the de Rham stack is indeed prim and descendable for a large variety of examples in $p$-adic geometry.

 \begin{proposition}\label{PropepiFF}
Let $X$ be a partially proper rigid space over a non-archimedean field $K/\Q_p$ of finite $\ob{dim.trg}$.
 Let $W\to X$ be a countable inverse limit of finite \'etale covers.  Then, the map of qfd Gelfand stacks
\begin{equation}\label{eqjwkqwejqeq}
\Yc_{W^{\diamond}}\to \Yc_{W^{\diamond}}^{\dR/\Marc(K)}
\end{equation}
is an epimorphism of Gelfand stacks,  prim and (locally in the analytic topology) descendable.
 In particular, it satisfies universal $\ob{D}^*$- and $\ob{D}^!$-descent. If in addition $K$ is pro-\'etale over $\Q_p$, then the same holds for the map of qfd Gelfand stacks
 \[
 \Yc_{W^{\diamond}}\to \Yc_{W^{\diamond}}^{\dR}.
 \]
\end{proposition}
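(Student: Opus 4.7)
The plan is to adapt the proof of Theorem \ref{thm:prim-descendable-berkovich}, which handles the analogous statement for $X\to X^{\dR/K}$ with $X$ a $\dagger$-rigid space over $K$, to the relative Fargues--Fontaine setting.

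First, I would reduce to the case $W = X$. Writing $W = \varprojlim_i W_i$ with each $W_i\to X$ finite \'etale (hence Berkovich \'etale, and in particular $\dagger$-formally \'etale by Lemma \ref{sec:appr-gelf-rings-examples-of-dagger-formally-smooth-maps}), left-exactness of $\Yc\colon \Cat{ArcStk}_{\F_p}^{\qfd}\to \Cat{GelfStk}^{\qfd}$ (Remark \ref{RemarkSeveralAdjoints}) together with the fact that $\dagger$-formally \'etale maps produce cartesian squares for the de Rham stack construction, yields a cartesian square at each finite level
\[
\begin{tikzcd}
\Yc_{W_i^\diamond} \ar[r] \ar[d] & \Yc_{W_i^\diamond}^{\dR/\Marc(K)} \ar[d] \\
\Yc_{X^\diamond} \ar[r] & \Yc_{X^\diamond}^{\dR/\Marc(K)}.
\end{tikzcd}
\]
By Theorem \ref{TheoMaindeRham2} the de Rham stack commutes with countable limits, so the cartesianness persists in the limit. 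Primness, epimorphism, and analytic-local descendability being stable under base change, the desired statement for $W$ follows from the one for $X$.

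For $W = X$ partially proper, I would then reduce to the case where $X = U$ is a $\dagger$-affinoid, using an admissible covering of $X$ by $\dagger$-affinoid opens and the analytic locality of primness and descendability. For such $U$ the approach mirrors that of Theorem 5.4.1 of \cite{camargo2024analytic}: write $U$ as a quotient of an overconvergent Tate algebra, apply Kashiwara's lemma (Propositions \ref{PropdeRhamBerkovich}(1) and \ref{PropQuotientdROverconvergentEquiv}) to reduce to the case where $U$ is $\dagger$-formally smooth over $K$, and use Proposition \ref{PropdeRhamBerkovich}(2) to identify the \v{C}ech nerve of $\Yc_{U^\diamond}\to \Yc_{U^\diamond}^{\dR/\Marc(K)}$ with the overconvergent neighbourhoods of the diagonal of the \v{C}ech nerve of $\Yc_{U^\diamond}\to \Yc_{\Marc(K)}$. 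The hard part is to verify descendability in the $\dagger$-formally smooth case: the strategy is to invoke the Poincar\'e lemma for the overconvergent polydisc (following \cite[Lemma 26]{TammeLazardIso}, as in the proof of \Cref{cor:cohomology-de-rham-stack-and-de-rham-cohomology}) relatively over $\Yc_{\Marc(K)}$, combined with the Hodge filtration argument of Proposition \ref{sec:cohom-smooth-maps-descendability-for-smooth-maps}, to bound the index of descendability by the relative dimension.

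For the final assertion, when $K$ is pro-\'etale over $\Q_p$, I would factor $\Yc_{W^\diamond}\to \Yc_{W^\diamond}^{\dR}$ as
\[
\Yc_{W^\diamond}\to \Yc_{W^\diamond}^{\dR/\Marc(K)}\to \Yc_{W^\diamond}^{\dR}.
\]
The first arrow is prim and analytic-locally descendable by the relative statement just proved. The second arrow is the base change of $\Yc_{\Marc(K)}\to \Yc_{\Marc(K)}^{\dR}$ along the structure map $\Yc_{W^\diamond}^{\dR}\to \Yc_{\Marc(K)}^{\dR}$. Since $K$ is pro-\'etale over $\Q_p$ of finite dim.trg, $\Yc_{\Marc(K)}$ acquires the structure of a partially proper $\dagger$-rigid space over $\Q_p$ of finite dim.trg, so Theorem \ref{thm:prim-descendable-berkovich} applied to $\Yc_{\Marc(K)}/\Q_p$ gives primness and analytic-local descendability for this second arrow. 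Composition then yields the statement.
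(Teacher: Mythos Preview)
Your proposal has two genuine gaps, both stemming from treating $\Yc_{(-)}$ as if it produced derived Berkovich spaces on non-perfectoid input.

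\textbf{The main body.} After reducing to $W=X$ you try to run the Kashiwara/Poincar\'e/Hodge-filtration machine of \cref{sec:cohom-smooth-maps-descendability-for-smooth-maps} on $\Yc_{U^\diamond}\to \Yc_{U^\diamond}^{\dR/\Marc(K)}$ for a $\dagger$-affinoid $U/K$. But $\Yc_{U^\diamond}$ is \emph{not} a derived Berkovich space when $U$ is not perfectoid: the functor $\Yc$ is defined by left Kan extension from affinoid perfectoids (\cref{Def}), and on a non-perfectoid $U$ it only produces a Gelfand stack computed by arc-descent. Propositions \ref{PropdeRhamBerkovich}, \ref{PropQuotientdROverconvergentEquiv}, \ref{sec:cohom-smooth-maps-descendability-for-smooth-maps} all require Berkovich-space input, so they do not apply as stated. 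The paper's proof confronts this head-on: after making $K$ perfectoid, it passes to the perfectoid cover $\widetilde{X}=X\times_{\mathring{\DD}^d_K}(1+\mathring{\DD}^d_{K,\infty})$ so that $\Yc_{\widetilde{X}^\diamond}$ genuinely \emph{is} a Berkovich space, and then exploits the product decomposition $\Yc_{\mathring{\DD}^{d,\diamond}_{K,\infty}}\cong \Yc_{K^\flat}\times_{\GSpec(\Q_p)}\mathring{\DD}^{d,u}_{\Q_p,\infty}$ to reduce descendability to the single case $\mathring{\DD}^u_{\Q_p,\infty}\to \mathring{\DD}^{\dR}_{\Q_p,\infty}$, handled via \cref{PropApproxFFStk} and an explicit bounded-index argument. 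Your ``Poincar\'e lemma relatively over $\Yc_{\Marc(K)}$'' is not a substitute for this, because $\Yc_U\to \Yc_K$ is not Berkovich smooth of bounded relative dimension in any sense the paper makes available.

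\textbf{The final assertion.} You claim that for $K$ pro-\'etale over $\Q_p$, $\Yc_{\Marc(K)}$ is a partially proper $\dagger$-rigid space over $\Q_p$, so that \cref{thm:prim-descendable-berkovich} applies to $\Yc_{\Marc(K)}\to \Yc_{\Marc(K)}^{\dR}$. This is false: $\Yc_K$ is locally $\GSpec(\mathbb{B}_{[r,s]}(K))$, and these period rings are not topologically of finite type over $\Q_p$ (for $K=\Q_p^{\cyc}$, $\Yc_K$ is the pre-perfectoid punctured disc by \cref{explicit-description-drff-stack-of-qp}, which is not a rigid space). \cref{thm:prim-descendable-berkovich} is specific to $\dagger$-rigid spaces and does not apply. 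The paper instead reduces (by arc-descent) to $K=\C_p$, uses uniform descendability bounds from \cref{LemAnDescentPerfectoids} over finite extensions of $\Q_p^{\Kum}$, a cartesian square over $B\Gal_{\Q_p^{\Kum}}^{\sm}$, and finally the explicit description of $\Yc^{\dR}_{\Q_p^{\Kum}}$ from \cref{explicit-description-drff-stack-of-qp}(3).

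Your initial reduction to $W=X$ via a cartesian-square-in-the-limit argument is a nice simplification the paper does not make (it carries $W$ throughout, controlling indices via \cref{LemAnDescentPerfectoids}); it looks plausible but the key square $\Yc_{W_i}=\Yc_{W_i}^{\dR/\Marc(K)}\times_{\Yc_X^{\dR/\Marc(K)}}\Yc_X$ would itself need justification, since again $\Yc_X$ is not a priori a Berkovich space.
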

\begin{proof}
First, note that since $K$ has finite $\ob{dim.trg}$, it is pro-\'etale over a finitely generated non-archimedean field (whose underlying $\Q_p$-Banach space is separable), and so it is  quasi-finite dimensional in the sense of \cref{def-qfd-light-arc-stack}.
 Furthermore, by base change along $K\to K^{\cyc}$ and \cref{CorDescendableMapsqfla}, we can assume without loss of generality that $K$ is perfectoid.

 The statement is local in the analytic topology of $X$.
 Hence, we can assume without loss of generality that $X$ is a Zariski closed subspace of $\mathring{\DD}^n_K$ and that $W=\varprojlim_{n}  X_n$ is a countable limit along finite \'etale covers of $X$.
 We first verify primness. By \cite[Lemma 4.5.7]{heyer20246functorformalismssmoothrepresentations}, we can pullback along the perfectoid open disc $\mathring{\DD}^{\diam}_{K, \infty}\to \mathring{\DD}^{\diam}_K$, and show that the claim holds for the induced maps of arc-stacks $\widetilde{W}=\varprojlim_n \widetilde{X}_n\to \widetilde{X}$.
 In that case, both $\widetilde{W}$ and $\widetilde{X}$ are perfectoid spaces.
  Moreover, $\Yc_{\widetilde{W}^{\diam}}$ is represented by a Berkovich space and the self fiber product of the map $ \Yc_{\widetilde{W}^{\diam}}\to \Yc_{\widetilde{W}^{\diamond}}^{\dR/K^{\flat}}$ is the overconvergent neighbourhood of the diagonal in $\Yc_{\widetilde{W}^{\diam}}\times_{\Yc_K} \Yc_{\widetilde{W}^{\diam}}$ which is represented in Berkovich spaces, see \cref{PropdeRhamBerkovich}.
 This shows that \cref{eqjwkqwejqeq} is  prim.

  It is left to prove that it is descendable. We can consider a stratification of $X$ by locally Zariski closed subspaces where the reduction of each component is smooth over $K$. More precisely, we can write $X=\bigcup_n \overline{X}_n =\bigcup_n X_n$ as a countable union of open Stein subspaces $X_n$ along strict inclusions, such that their closure $\overline{X}_n$ is an affinoid subspace of $X$. The affinoid subspaces $\overline{X}_n$ admit such a stratification being qcqs, and up to changing $X$ by $X_n$, we can assume that $X$ has such a stratification as well.  By an excision argument, cf. \cite[Lemma 5.4.4]{camargo2024analytic} (and see \cref{excisionYdR} for a justification of the excision property), we can assume without loss of generality that $X$ itself is smooth.

Now, as $X$ is smooth, the immersion $X\to \mathring{\DD}_{K}$ is a local regular immersion, and locally we can write $X$ as a pullback 
\[
\begin{tikzcd}
X \ar[d] \ar[r] & \mathring{\DD}_{K}^{m+d} \ar[d] \\ 
\GSpec(K) \ar[r,"0"] &  \mathring{\DD}_K^m.
\end{tikzcd}
\]
In particular, we have an \'etale map of partially proper rigid spaces $X\to \mathring{\DD}_{K}^{d}$ that, after taking a further open analytic cover, will factor as a composite of finite \'etale maps and open immersions.
 Let us now write $W_{\infty}^{\diam}$ and  $X_{n,\infty}^{\diam}$ for the pullback of $W^{\diam}$ and $X_n^{\diam}$  along the perfectoid open disc $1+\mathring{\DD}_{K,\infty}^{\diam}\to 1+\mathring{\DD}_{K}^{\diam}$ obtained by extracting $p$-th power roots of $1-T$, where $T$ is the variable of $\mathring{\DD}_{K}^{\diam}$. It suffices to prove the result for $W_\infty$ in place of $W$, by \Cref{CorDescendableMapsqfla}.
   We have a diagram with pullback squares
\[
\begin{tikzcd}
\Yc_{W^{\diamond}_{\infty}} \ar[r] \ar[rd] & \varprojlim_{n} \Yc_{X^{\diam}_{n,\infty}} \ar[r] \ar[d] & \Yc_{X^{\diam}_{n,\infty}} \ar[r] \ar[d] &  {\Yc_{\mathring{\DD}_{K,\infty}^{d,\diam}}} \ar[d] \\
& {\Yc_{W_{\infty}^{\diam}}^{\dR/K^{\flat}}} \ar[r] & {\Yc_{X^{\diam}_{n,\infty}}^{\dR/K^{\flat}}} \ar[r] &  {\Yc_{\mathring{\DD}_{K,\infty}^{d,\diam}}^{\dR/K^{\flat}}}
\end{tikzcd}
\] 
where the limit is as Berkovich spaces (i.e., a colimit at the level of structural algebras).
 The map $\Yc_{W^{\diamond}_{\infty}}\to \Yc_{X_{n,\infty}^{\diamond}}$  is descendable of index $\leq 16$ by \cref{LemAnDescentPerfectoids}.
 Hence, the map $\Yc_{W^{\diamond}_{\infty}}\to \varprojlim_{n} \Yc_{X^{\diam}_{n,\infty}}$ is descendable by \cite[Proposition 2.7.2]{mann2022p}.
 Therefore, by applying \textit{loc.cit.\ }again and taking pullbacks from the disc to $X$, it suffices to show that the map $\Yc_{\mathring{\DD}_{K,\infty}^{d,\diam}} \to \Yc_{\mathring{\DD}_{K,\infty}^{d,\diam}}^{\dR/K^{\flat}}$ is descendable.
 But  $\Yc_{\mathring{\DD}_{K,\infty}^{d,\diam}} = \Yc_{K^{\flat}}\times_{\GSpec(\Q_p)} \mathring{\DD}^{d,u}_{\Q_p,\infty}$, and one is reduced to show that 
 \[
 \mathring{\DD}^u_{\Q_p,\infty}\to \mathring{\DD}^{\dR}_{\Q_p,\infty}.
 \]
 is descendable (where the upper script $u$ means the uniform completion).
 For that, we write $ \mathring{\DD}_{\Q_p,\infty}= \varprojlim_n  \mathring{\DD}_{\Q_p,n}$ as limit of open discs centered at $0$ and variable $(T-1)^{1/p^n}$.
 By \cref{PropApproxFFStk}, the map $\mathring{\DD}_{\Q_p,\infty}\to \mathring{\DD}^{\dR}_{\Q_p,\infty}$ is descendable, and it suffices to see that the map $h\colon  \mathring{\DD}_{\Q_p,\infty}^u\to \varprojlim_n \mathring{\DD}_{\Q_p,n}$ is also descendable.
 But each map $\mathring{\DD}_{\Q_p,\infty}^u\to  \mathring{\DD}_{\Q_p,n}$ is descendable of index $\leq 1$ as the underlying sheaves of rings have a section as modules, and it follows from \cite[Proposition 2.7.2]{mann2022p} that the map $h$ is descendable of index $\leq 2$, finishing the proof of the first part of the proposition.

 It is left to prove that, if $K$ is pro-\'etale over $\Q_p$, then the map $\Yc_{W^{\diamond}}\to \Yc^{\dR}_{W^{\diamond}}$ is prim and (locally in the analytic topology) descendable. By the previous step and base change, it suffices to prove this for the map $\Yc_{K}\to \Yc^{\dR}_K$. By arc-descent, it suffices to prove this for $K=\C_p$ as both $\Yc_{\C_p}\to \Yc_{K}$ and $\Yc_{\C_p}^{\dR}\to \Yc_{K}^{\dR}$ are descendable (which is easy to deduce from the presentation $\Yc_{K}=\Yc_{\C_p}/\underline{\Gal_K}$ and $\Yc_{K}^{\dR}=\Yc^{\dR}_{\C_p}/\Gal^{\sm}_{K}$).  For any finite extension $L/\Q_p^{\Kum}$, with $\Q_p^{\Kum}=\Q_p(p^{1/p^{\infty}})^u$ the Kummer extension, the map 
 \[
 \Yc_{\C_p}\to \Yc_{L}
 \]
 is descendable with descendable index independent of $L$ (\Cref{LemAnDescentPerfectoids}), and hence $\Yc_{\C_p}\to \varprojlim_{L/\Q_p^{\Kum}} \Yc_{L}$ (where $L$ runs over finite extensions of $\Q_p^{\Kum}$ is descendable. On the other hand, we have a  cartesian square 
 \[
 \begin{tikzcd}
 \varprojlim_{L/\Q_p^{\Kum}} \Yc_L \ar[r]  \ar[d] & \Yc_{\Q_p^{\Kum}}   \ar[d] \\ 
 \GSpec(\Q_p)  \ar[r] & B (\Gal_{\Q_p^{\Kum}}^{\sm})
 \end{tikzcd}
 \]
 and similarly for de Rham stacks. Since $\varprojlim_{L/\Q_p^{\Kum,u}}\Yc_{L}^{\dR}=\Yc_{\C_p}$,    it suffices to show that $\Yc_{\Q_p^{\Kum,u}}\to \Yc^{\dR}_{\Q_p^{\Kum,u}}$ is descendable. 
 This follows from \Cref{explicit-description-drff-stack-of-qp}(3) and the fact that the pre-perfectoid disc $\mathring{\mathbb{D}}_{\Q_p,\infty}^u$ is descendable over the decompleted pre-perfectoid disc $\mathring{\mathbb{D}}_{\Q_p,\infty}=\varprojlim_{n} \mathring{\mathbb{D}}_{\Q_p,n}$, since the latter is a countable colimit of discs $\mathring{\mathbb{D}}_{\Q_p,n}$ with variable $T^{1/p^n}$, and that $f\colon \mathring{\mathbb{D}}_{\Q_p,\infty}^u\to \mathring{\mathbb{D}}_{\Q_p,n}$ has index of descendability $\leq 1$ since $f_* 1$ has a section as a module.
\end{proof}

\subsection{Examples of Hyodo--Kato stacks and their cohomology; Chern classes}\label{sss:ExamplesdRFFStacks}

In this subsection, we provide examples of Hyodo--Kato stacks and their quasi-coherent cohomology, referred to as \textit{Hyodo–Kato cohomology}. We hope this will give the reader an idea of how to work with such objects. As an application, we show that one has a theory of first Chern classes for Hyodo--Kato cohomology.
 These examples will also be relevant in the proof of the $p$-adic monodromy theorem.
 To start with, we discuss the example of $\Marc(\Q_p)$.

 \begin{lemma}\label{explicit-description-drff-stack-of-qp}\

\begin{enumerate}

\item Let $K=\F_p((\pi^{1/p^{\infty}}))$.
 Then there is a natural Frobenius equivariant equivalence of Gelfand stacks 
\[
\Yc_{K}\cong\mathring{\DD}^{\times,u}_{\Q_p,\infty}\cong (\varprojlim_{X\mapsto X^p} \mathring{\DD}^{\times}_{\Q_p})^u
\]
where the right hand side is the Berkovich space given by the pre-perfectoid open and punctured disc over $\Q_p$ on the variable $X=[\pi]$, and $\varphi(X)=X^p$ (here the superscript $u$ refers to the uniform completion of the Berkovich space).
   In particular, we have a Frobenius equivariant isomorphism of de Rham stacks
\[
\Yc_K^{\dR}\cong \varprojlim_{X\mapsto X^p} \mathring{\DD}^{\times,\dR}_{\Q_p}.
\]

\item  Let $\Q_p^{\cyc}$ denote the cyclotomic extension of $\Q_p$. Let $\epsilon=(\zeta_{p^n})_n$ be a compatible sequence of $p$-th power roots of $1$.
  Then, the isomorphism $\Q_p^{\cyc, \flat} \cong \F_p((\epsilon^{1/p^{\infty}}-1))$ of perfectoid fields induces a  Frobenius and $\Z_p^{\times,\sm}$-equivariant isomorphism of de Rham stacks
\[
\Yc_{\mathbb{Q}_p^{\cyc,\flat}}^{\dR}\cong (\varprojlim_{q\mapsto q^p} (1+ \mathring{\mathbb{D}}_{\mathbb{Q}_p}^{\dR})) \backslash \{1\},
\]
where $1+ \mathring{\mathbb{D}}_{\mathbb{Q}_p}$ is the rigid generic fiber of the formal spectrum of the ring underlying the $q$-crystalline prism, i.e., $\mathbb{Z}_p\llbracket q-1\rrbracket$ with $\varphi(q)=q^p$, and the transition maps are giving by rising to the $p$-th power.
Under this identification we have $q=[\epsilon]$.
  The $\mathbb{Z}_p^{\times }$-equivariant action on the LHS term is via the identification   $a\in \mathbb{Z}_p^{\times}\cong \ob{Gal}(\mathbb{Q}_p^{\cyc}/\mathbb{Q}_p)$, the action   on the RHS is given by the formula $\sigma\cdot q= q^{\chi^{\cyc}(\sigma)}$ for $\sigma\in \Z_p^{\times}$.
 In particular,
\[
\Yc^{\dR}_{\Marc(\Q_p)}\cong  \big( (\varprojlim_{p} (1+\mathring{\mathbb{D}}_{\mathbb{Q}_p} )^{\dR}) \backslash \{1\} \big) / \mathbb{Z}_p^{\times, \sm}
\]
and 
\[
 \Marc(\Q_p)^\HK \cong  \big( (\varprojlim_{p} (1+\mathring{\mathbb{D}}_{\mathbb{Q}_p} )^{\dR}) \backslash \{1\} \big) / \mathbb{Q}_p^{\times, \sm}.
\]
\item Let $E/\mathbb{Q}_p$ be a finite extension of $\mathbb{Q}_p$.
  For $\varpi$ a uniformizer of $E$ let $E^{\Kum}=E(\varpi^{1/p^{\infty}})^u$ be the uniform completion of the Kummer extension.
  Let $E_0\subset E$ be the maximal unramified extension of $E$ with residue field $\F_q$.
  Let $\varpi^{\flat}=(\varpi^{1/p^n})_n\in E^{\Kum,\flat}$.
  Then, $E^{\Kum,\flat}\cong \F_q((\varpi^{\flat,1/p^{\infty}}))$ and we have a natural Frobenius equivariant isomorphism
\[
\Yc_{E^{\Kum, \flat}}^{\dR} \cong \varprojlim_{X\mapsto X^p} \mathring{\mathbb{D}}_{E_{0}}^{\times,\dR},
\]
where the right hand side is the de Rham stack of the perfection of the punctured rigid generic fiber of the formal spectrum of the ring underlying the Breuil-Kisin prism, i.e., $\shf{O}_{E_0}\llbracket X\rrbracket$ with $\varphi(X)= X^{p}$.
Under this equivalence $X=[\varpi^{\flat}]$.

\end{enumerate}
\end{lemma}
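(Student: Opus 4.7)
All three parts follow from the classical description of $\Yc_K$ for $K=\F_p((\pi^{1/p^\infty}))$ as a perfectoid open punctured disc in the Teichm\"uller variable $[\pi]$, combined with the fact that the analytic de Rham stack functor $(-)^\dR\colon\Cat{ArcStk}^\qfd_{\Q_p}\to\Cat{GelfStk}^\qfd$, as a right adjoint to perfectoidization, preserves limits.

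For (1), I would identify $\Yc_K\cong\mathring{\DD}^{\times,u}_{\Q_p,\infty}$ as Gelfand stacks via the construction recalled at the end of \Cref{sec:arc-and-!-topologies-on-perfectoid-rings}: the covering affinoids $\GSpec\mathbb{B}_{[r,s]}(K)$ of $\Yc_K$ are precisely the Berkovich closed annuli in the coordinate $X=[\pi]$, and are perfectoid since they contain $[\pi^{1/p^n}]$ for all $n$. Frobenius-equivariance is immediate from $W(\varphi)([\pi^{1/p^n}])=[\pi^{1/p^{n-1}}]=[\pi^{1/p^n}]^p$, matching $X\mapsto X^p$. At the level of arc-stacks this yields $\Yc_K^\diamond\cong\varprojlim_{X\mapsto X^p}(\mathring{\DD}^\times_{\Q_p})^\diamond$, and applying $(-)^\dR$ (or equivalently invoking \Cref{PropApproxFFStk}, which in addition gives control in the kernel category) yields the stated formula for $\Yc_K^\dR$.

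For (2), I would specialize (1) to $K=\Q_p^{\cyc,\flat}$. The key moduli translation is that $\varprojlim_{q\mapsto q^p}((1+\mathring{\DD}_{\Q_p})\setminus\{1\})^\diamond$ canonically identifies with $\Yc_{\Q_p^{\cyc,\flat}}^\diamond$: on a perfectoid $\Q_p$-algebra $R$, a compatible system $(q_n)_n$ with $q_{n+1}^p=q_n$, each $q_n-1$ topologically nilpotent, and $q_0\neq 1$ tilts to the invertible topologically nilpotent element $\epsilon=(q_0,q_1,\ldots)\in \mathcal{O}_{R^\flat}$, so that $q_n=[\epsilon^{1/p^n}]$ on the untilt side. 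The Galois-equivariance is then manifest: $\sigma\in \Gal(\Q_p^\cyc/\Q_p)\cong\Z_p^\times$ acts by $\epsilon\mapsto\epsilon^{\chi^\cyc(\sigma)}$, hence $q\mapsto q^{\chi^\cyc(\sigma)}$ via binomial expansion convergent in the $(q-1)$-adic topology. Applying the limit-preserving functor $(-)^\dR$ yields the displayed formula for $\Yc_{\Q_p^{\cyc,\flat}}^\dR$. To descend to $\Marc(\Q_p)$, I would use the presentation $\Marc(\Q_p)\cong\Marc(\Q_p^{\cyc,\flat})/\underline{\Z_p^\times}$ as $\F_p$-arc-stacks (arising from the Galois cover $\Q_p^\cyc/\Q_p$) and invoke \Cref{TheoMaindeRham2}, which states that $(-)^\dR$ commutes with colimits; the quotient by $\underline{\Z_p^\times}$ becomes $\Z_p^{\times,\sm}$ on the Gelfand side. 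The Hyodo--Kato formula follows by further quotienting by Frobenius $\varphi^\Z$ and noting $\Q_p^\times\cong\Z_p^\times\times\varphi^\Z$.

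For (3), the argument parallels (2): $E^{\Kum,\flat}=\F_q((\varpi^{\flat,1/p^\infty}))$, and the only new ingredient is that $\A_\inf(\mathcal{O}_{E^{\Kum,\flat}})=W(\mathcal{O}_{E^{\Kum,\flat}})$ naturally contains $W(\F_q)=\mathcal{O}_{E_0}$, so the perfectoid open punctured disc in the coordinate $X=[\varpi^\flat]$ acquires a canonical $E_0$-structure, realizing the Breuil--Kisin prism $\mathcal{O}_{E_0}\llbracket X\rrbracket$ with rigid generic fiber $\mathring{\DD}_{E_0}$. The principal subtlety throughout is the moduli identification in (2): the Teichm\"uller coordinate $[\epsilon-1]$ that arises most naturally from (1) and the $q$-coordinate $q=[\epsilon]$ are genuinely different functions on $\A_\inf(\mathcal{O}_{\Q_p^{\cyc,\flat}})$, and their reconciliation proceeds through the intrinsic description of both parametrizations as the moduli of invertible topologically nilpotent elements of the tilt, rather than any direct change-of-variable formula.
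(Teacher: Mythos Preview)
Your proof is correct and follows essentially the same line as the paper's: identify $\A_{\inf}(K)$ explicitly (as $\Z_p\llbracket X^{1/p^\infty}\rrbracket$, respectively $\mathcal{O}_{E_0}\llbracket X^{1/p^\infty}\rrbracket$), read off $\Yc_K$ as a pre-perfectoid punctured open disc in the Teichm\"uller coordinate, and then pass to de Rham stacks using that $(-)^\dR$ preserves limits (as a right adjoint) and colimits (via \Cref{TheoMaindeRham2}) to handle the quotients by $\underline{\Z_p^\times}$ and $\varphi^\Z$.

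The one organizational difference worth noting is in part (2): the paper first treats the \emph{integral} level $\Z_p^{\cyc,\flat}$, obtaining $\Yc^{\diamond}_{\Z_p^{\cyc,\flat}}\cong(\varprojlim_{x\mapsto x^p}(1+\mathring{\DD}_{\Q_p}))^\diamond$ directly from the identification $\Marc(\Z_p^{\cyc})\times_{\Marc(\F_p)}\Marc(\Q_p)$, and then removes the locus $q=1$ to pass to $\Q_p^{\cyc,\flat}$. This cleanly sidesteps the coordinate issue you flag as the ``principal subtlety'' (reconciling $[\epsilon-1]$ with $q=[\epsilon]$), since in the paper's route the coordinate $q=[\epsilon]$ arises intrinsically from the moduli description $\Marc(\Z_p^\cyc)\times\Marc(\Q_p)$ rather than by specializing the variable from part (1). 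Your resolution via the moduli of invertible topologically nilpotent elements of the tilt is equally valid, just slightly less direct.
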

\begin{proof}

  For part (1), observe that $\A_{\inf}(K)\cong \Z_p\llbracket X^{1/p^{\infty}}\rrbracket$ with $X=[\pi]$ the Teichm\"uller lift of $\pi$, where on the latter ring the Frobenius acts as $\varphi(X)=X^p$.
  Then, under this isomorphism, $\Yc_K$ identifies Frobenius equivariantly with a pre-perfectoid punctured open unit disc in the variable $X$ as in the statement.
  The statement about de Rham stacks follows from the fact the the formation of de Rham stacks commutes with limits.

  For part (2), we observe that the tilt of $\mathbb{Z}_p^{\cyc}$ identifies with the ring $\mathbb{F}_p\llbracket \epsilon^{1/p^{\infty}}-1\rrbracket$ where $\epsilon= (\zeta_{p^{n}})_n$ is a compactible sequence of $p$-th power roots of unit.
  We then have an isomorphism as arc-stacks over $\Marc (\mathbb{Q}_p)$
\[
\Yc^{\diamond}_{\mathbb{Z}_p^{\cyc, \flat}}= \Marc (\mathbb{Z}_p^{\cyc})\times_{\Marc(\overline{\F}_p)} \Marc( \mathbb{Q}_p)  \cong (\varprojlim_{x\mapsto x^p} 1+\mathring{\mathbb{D}}_{\mathbb{Q}_p})^{\diamond}.
\]
Under this identification, $q=[\epsilon]$ and the action of Frobenius is determined by $\varphi(q)=q^p$.
The action of $\Z_p^{\times}=\Gal(\mathbb{Q}_p^{\cyc}/\mathbb{Q}_p)$  is determined by
\[
\sigma(q)=q^{\chi^{\cyc}(\sigma)}.
\]
Then, by pulling back to $\Marc(\mathbb{Q}_p^{\cyc,\flat})$, we obtain
\[
\Yc^{\diam}_{\mathbb{Q}_p^{\cyc,\flat}} \cong ( \varprojlim_{x\mapsto x^p}  1+\mathring{\mathbb{D}}_{\mathbb{Q}_p})^{\diamond} \backslash \{1\}.
\]
By passing to de Rham stacks we obtain the first assertion.
Then, for the last assertion of part (2), it suffices to observe that $\Yc_{\Marc(\Q_p)}^{\diam} = \Yc_{\mathbb{Q}_p^{\cyc,\flat}}^{\diamond} /\underline{ \mathbb{Z}_p^{\times}}$ where $\mathbb{Z}_p^{\times}$ is identified with the Galois group of $\mathbb{Q}_p^{\cyc}$ via the cyclotomic character, and that $\Marc(\Q_p)^{\HK}=\Yc_{\Q_p^{\cyc,\flat}}/\Q_p^{\times,\mathrm{sm}}$ where in addition $p^{\Z}$ is identified with the action of Frobenius. We note that quotients pass to the de Rham stacks as its formation commutes with colimits (\cref{TheoMaindeRham2}), and the de Rham stack of a truncated condensed anima is its Betti stack (\cref{ExamCondensedAnimadeRham}).

For part (3), we note that the tilt of $\shf{O}_{E^{\Kum}}$ identifies with $\mathbb{F}_q\llbracket\varpi^{\flat,1/p^{\infty}}\rrbracket$.
By taking Witt vectors, one finds  that
\[
\A_{\inf}(E^{\Kum})\cong\shf{O}_{E_0}\llbracket X^{1/p^{\infty}}\rrbracket
\]
is the perfection of the Breuil--Kisin prism.  Passing to the locus where $|Xp|\neq 0$ we see that $\Yc_{E^{\Kum,\flat}}$ is a perfectoid punctured open unit disc over $E_{0}$. Thus, as arc-stacks we have that
\[
\Yc_{E^{\Kum,\flat}}^{\diamond}\cong \varprojlim_{x\mapsto x^p} \mathring{\mathbb{D}}^{\times, \diamond}_{E_0}.
\]
We deduce the desired statements by passing to de Rham stacks.
\end{proof}

A property we want to prove is the disc-invariance of Hyodo--Kato cohomology. For this, we first need invariance of usual (overconvergent) de Rham cohomology for the pre-perfectoid disc, cf.\ \cref{sec:-able-maps-disc-invariance-de-rham-cohomology}.

\begin{lemma}
  \label{sec:key-computations-de-1-disc-invariance-dr}
    Let $f\colon Y\to \Marc(\Q_p)$ be given by $Y=\Marc(\Q_p\langle T^{1/p^\infty}\rangle), Y=\varprojlim_{x\mapsto x^p}\mathbb{D}^{\circ}_{\Q_p}$ or $Y=\varprojlim_{x\mapsto x^p}\A^{1}_{\Q_p}$.
  Then, $f^{\dR,\ast}\colon \ob{D}(\GSpec(\Q_p))\to \ob{D}(Y^\dR)$ is fully faithful.
  Moreover, the same assertion holds true after any base change of $f$ to a qfd arc-stack. 
\end{lemma}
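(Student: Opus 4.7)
My plan is to reduce all three cases to $Y=\Marc(\Q_p\langle T^{1/p^\infty}\rangle)$ and carry out the computation there. At the level of arc-stacks, both the pre-perfectoid open disc and the pre-perfectoid affine line are filtered unions of pre-perfectoid closed discs of varying radii; since $(-)^{\dR}$ preserves colimits by \cref{TheoMaindeRham2}, the induced de Rham stacks are filtered colimits along open immersions, which $\ob{D}^{*}$ turns into limits along $*$-pullback. Fully faithfulness for the union thus reduces, at the level of $\Hom$-anima, to fully faithfulness for each pre-perfectoid closed disc (the resulting cofiltered limit of copies of $\Hom_{\GSpec(\Q_p)}(M,N)$ with identity transitions being itself $\Hom_{\GSpec(\Q_p)}(M,N)$); and this in turn reduces to the unit case after a suitable base change to a ramified extension of $\Q_p$ to rescale radii and descent back to $\Q_p$.

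Next, for $Y=\Marc(\Q_p\langle T^{1/p^\infty}\rangle)$, I would use cohomological properness to reduce to the identification $1 \xrightarrow{\sim} f^\dR_*1$. The stack $Y$ is qfd---quasi-pro-\'etale over $\overline{\mathbb{D}}^\diamond_{\Q_p}\subset \A^{1,\diamond}_{\Q_p}$ via the Frobenius tower---and proper, so by \cref{LemmLqfd} the morphism $f^\dR$ is cohomologically proper. Hence $f^\dR_*$ satisfies the projection formula and base change, which together imply that fully faithfulness of $f^{\dR,*}$ is equivalent to $1 \to f^\dR_*1$ being an isomorphism in $\ob{D}(\GSpec(\Q_p))$; the same identification pulled back along any $Z\to\GSpec(\Q_p)$ in qfd arc-stacks then yields fully faithfulness after base change.

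To compute $f^\dR_*1$, I would write $\Q_p\langle T^{1/p^\infty}\rangle=\varinjlim_n B_n$ with $B_n:=\Q_p\langle T^{1/p^n}\rangle$, each a separable Banach (hence Gelfand) $\Q_p$-algebra isomorphic to $\Q_p\langle U\rangle$ via the change of variable $U=T^{1/p^n}$. By \cref{LemComputationdeRhamLimit} applied with $A=\Q_p$, one obtains an equivalence $\varinjlim_n f^{\dRall}_{n,*}1 \xrightarrow{\sim} f^{\dRall}_{\infty,*}1$ in $\ob{D}(\GSpec(\Q_p))$, where $f_n\colon\GSpec(B_n)\to\GSpec(\Q_p)$ and $f_\infty=f$. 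Each $f^{\dRall}_{n,*}1=\Q_p$ is concentrated in degree $0$ by \cref{sec:-able-maps-disc-invariance-de-rham-cohomology}, and the transition maps, induced on cohomology by the substitution $T^{1/p^n}\mapsto(T^{1/p^{n+1}})^p$, are $\Q_p$-linear and unit-preserving, hence equal the identity on $\Q_p$. The colimit is therefore $\Q_p$; and since $Y$ is a derived Berkovich space, \cref{PropdeRhamBerkovich}(4) transfers this big-de-Rham-stack computation to the qfd setting.

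The principal technical obstacle lies in the first reduction step, specifically in descending an isomorphism of pre-perfectoid closed discs of arbitrary radii from a ramified extension of $\Q_p$ back to $\Q_p$. The cohomological computation itself is immediate once one identifies the Frobenius action on $R\Gamma(\overline{\mathbb{D}}^\dR_{\Q_p},1)=\Q_p$ as the identity, and the bookkeeping between the big and qfd de Rham stacks is handled uniformly by \cref{PropdeRhamBerkovich}(4).
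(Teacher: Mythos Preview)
Your proof is correct and follows essentially the same approach as the paper's: reduce to the closed pre-perfectoid disc, establish cohomological properness of $f^\dR$ (you via \cref{LemmLqfd}, the paper via a descendability argument for $\GSpec(\varinjlim_n\Q_p\langle T^{1/p^n}\rangle_{\leq 1})\to Y^\dR$ in the style of \cref{PropApproxFFStk}), reduce to $1\to f^\dR_*1$ by the projection formula, and compute via \cref{LemComputationdeRhamLimit}. Your route to properness is in fact slightly more direct.

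Your stated ``principal technical obstacle'' is not one: the same \cref{LemmLqfd} argument yields cohomological properness for a pre-perfectoid closed disc $Y_r$ of \emph{any} radius (it is still proper and quasi-pro-\'etale over a closed disc in $\A^{1,\diamond}_{\Q_p}$), so fully faithfulness for $Y_r$ is equivalent to $1\to f_{r,*}^\dR 1$ being an isomorphism in $\ob{D}(\GSpec(\Q_p))$. Since $f_{r,*}^\dR$ commutes with base change and pullback along the descendable cover $\GSpec(\Q_p^{\cyc})\to\GSpec(\Q_p)$ is conservative, this can be checked after base change to $\Q_p^{\cyc}$, where rescaling identifies $Y_r$ with the unit disc. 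You are not descending an isomorphism of spaces, only the isomorphism $1\simeq f_{r,*}^\dR 1$, which is automatic.
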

\begin{proof}
  As in \cref{sec:-able-maps-disc-invariance-de-rham-cohomology}, we can reduce to the case that $Y=\Marc(\Q_p\langle T^{1/p^\infty}\rangle)$ by writing the pre-perfectoid open disc, or the pre-perfectoid affine line as a union of $Y$'s (also after base change).
  Set $Y_n=\GSpec(\Q_p\langle T^{1/p^n}\rangle_{\leq 1})$ for $n\geq 1$, $\widetilde{Y}=\varprojlim_{n}Y_n=\GSpec(\varinjlim_{n} \Q_p\langle T^{1/p^n}\rangle_{\leq 1})$ (so that $\widetilde{Y}^\diamond=Y$). Because for $n\geq 1$ the map $Y_n\to Y_n^\dR\cong Y_n/\mathbb{G}_a^\dagger$ is descendable of index $\leq 2$, we can use the argument of \cref{PropApproxFFStk} to deduce that $\widetilde{Y}\to \widetilde{Y}^\dR$ is descendable. From here, we can deduce that that $f^\dR\colon \widetilde{Y}^\dR\cong Y^\dR\to \GSpec(\Q_p)$ is cohomologically proper. In particular, the formation of $f^\dR_{\ast}$ commutes with base change. As in \cref{sec:-able-maps-disc-invariance-de-rham-cohomology}, it suffices therefore to check that the natural map $1\to f^\dR_{\ast}(1)$ is an isomorphism. Using \cref{LemComputationdeRhamLimit}, this follows as in \cref{sec:-able-maps-disc-invariance-de-rham-cohomology} from the classical computation of overconvergent de Rham cohomology of a closed disc.
\end{proof}

In the following, all schemes in characteristic $p$ are implicitly regarded as qfd arc-stacks over $\F_p$.

\begin{lemma}
  \label{lemDiscInvariancedRFF}\
  \begin{enumerate}
   \item Let $f\colon X=\Marc(\F_p[T])\to \Marc(\F_p)$. Then, the functor $f^{\HK,\ast}\colon \ob{D}(\Marc(\F_p)^{\HK}) \to \ob{D}(X^{\HK})$ is fully faithful. The same holds true after any base change of $f$.
 \item Let $Z=\Marc(\F_p((\pi)))$, and $g\colon W\to Z$ for $W=\Marc(\F_p((\pi))\langle t\rangle)$, $W=\mathbb{D}^{\circ}_{\F_p((\pi))}$ or $W=\mathbb{A}^{1}_{\F_p((\pi))}$. Then, $g^{\HK,\ast}\colon \ob{D}(Z^\HK)\to \ob{D}(W^{\HK})$ is fully faithful. The same holds true after any base change of $g$.
  \end{enumerate}
 Similar assertions hold for $\mathcal{Y}^{\dR}_{(-)}$.
\end{lemma}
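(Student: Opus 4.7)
The plan is to reduce the Hyodo--Kato statements to their $\mathcal{Y}^\dR$-counterparts (the ``similar assertions'' at the end of the lemma), and then deduce the latter from \cref{sec:key-computations-de-1-disc-invariance-dr}. For the first reduction, \cref{TheoMaindeRham2} and \cref{ExamCondensedAnimadeRham} give, functorially in $X\in\Cat{ArcStk}^\qfd_{\F_p}$, an identification $X^\HK\cong \mathcal{Y}^\dR_X/\Z^{\sm}$, where $\Z^{\sm}=(\underline{\Z})^\dR$ acts by Frobenius; hence for any $\F_p$-morphism $f$, the map $f^\HK$ is the $\Z^{\sm}$-quotient of the $\Z^{\sm}$-equivariant $f^{\mathcal{Y}^\dR}$. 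Now whenever $h\colon A\to B$ is a $G$-equivariant morphism of qfd Gelfand stacks such that $h^\ast$ is fully faithful, the induced $\bar h\colon A/G\to B/G$ has fully faithful pullback as well: writing $M,N\in\ob{D}(B/G)$ as underlying $M_0,N_0\in\ob{D}(B)$ with $G$-equivariance, the comparison
\[
\Hom_{B/G}(M,N)=\Hom_B(M_0,N_0)^{hG}\longrightarrow\Hom_A(h^\ast M_0,h^\ast N_0)^{hG}=\Hom_{A/G}(\bar h^\ast M,\bar h^\ast N)
\]
is obtained by applying $(-)^{hG}$ to an equivalence. Since this argument is insensitive to base change, it suffices to prove the $\mathcal{Y}^\dR$-versions of (1) and (2), together with their ``after any base change'' refinements.

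The core computation is the identification of $\mathcal{Y}^\diamond$ of the spaces in question via the tilting formula. By \cref{RemarkSeveralAdjoints}, the left adjoint of $\mathcal{Y}^\diamond$ is the tilting functor $\Marc(B)\mapsto \Marc(B^\flat)$, so for any separable perfectoid $\Q_p$-algebra $B$ one has $\mathcal{Y}^\diamond_X(\Marc(B))=X(\Marc(B^\flat))$. Applied to $X=\Marc(\F_p[T])$ in (1), this identifies $\mathcal{Y}^\diamond_X$ with the pre-perfectoid affine line $\varprojlim_{x\mapsto x^p}\A^1_{\Q_p}$, via $B^\flat=\varprojlim_p B$. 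In (2), with $Z=\Marc(\F_p((\pi)))$ and $W$ one of the three specified Berkovich spaces over $\F_p((\pi))$, the tilting formula combined with the left exactness of $\mathcal{Y}^\diamond$ yields
\[
\mathcal{Y}^\diamond_W\;\cong\;\mathcal{Y}^\diamond_Z\times_{\Marc(\Q_p)} Y_W,
\]
where $Y_W$ is $\Marc(\Q_p\langle T^{1/p^\infty}\rangle)$, $\varprojlim_{x\mapsto x^p}\mathbb{D}^\circ_{\Q_p}$, or $\varprojlim_{x\mapsto x^p}\A^1_{\Q_p}$, corresponding respectively to the three choices $W=\Marc(\F_p((\pi))\langle t\rangle)$, $\mathbb{D}^\circ_{\F_p((\pi))}$, or $\A^1_{\F_p((\pi))}$, via $B^{\flat,\circ}=\varprojlim_p B^\circ$, $B^{\flat,\circ\circ}=\varprojlim_p B^{\circ\circ}$, and $B^\flat=\varprojlim_p B$.

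Since $(-)^\dR$ is left exact (\cref{TheoMaindeRham2}), the above becomes $\mathcal{Y}^\dR_W\cong \mathcal{Y}^\dR_Z\times_{\Marc(\Q_p)^\dR}Y_W^\dR$, exhibiting $g^{\mathcal{Y}^\dR}$ as the base change of $Y_W^\dR\to\Marc(\Q_p)^\dR$ along $\mathcal{Y}^\dR_Z\to\Marc(\Q_p)^\dR$. Because $\mathcal{Y}^\diamond_Z$ is a qfd arc-stack over $\Marc(\Q_p)$ (being the image of $Z\in\Cat{ArcStk}^\qfd_{\F_p}$ under $\mathcal{Y}^\diamond\colon\Cat{ArcStk}^\qfd_{\F_p}\to\Cat{ArcStk}^\qfd_{\Q_p}$), the fully faithfulness of $g^{\mathcal{Y}^\dR,\ast}$ is precisely the content of \cref{sec:key-computations-de-1-disc-invariance-dr} applied after base change to the qfd arc-stack $\mathcal{Y}^\diamond_Z$; case (1) is the specialization $Z=\Marc(\F_p)$. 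Replacing $Z$ (resp.\ $\Marc(\F_p)$) by an arbitrary qfd arc-stack receiving the morphism handles the ``after any base change'' refinement identically. The substantive step is the tilting identification of $\mathcal{Y}^\diamond_X$ with a concrete pre-perfectoid object over $\Marc(\Q_p)$; once that is in hand, the Hyodo--Kato disc-invariance reduces formally to the de Rham one already proved.
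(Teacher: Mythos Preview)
Your proof is correct and follows essentially the same approach as the paper: reduce from $(-)^\HK$ to $\mathcal{Y}^\dR_{(-)}$ by passing to Frobenius-equivariant objects, identify $\mathcal{Y}^\diamond$ of the spaces in question with pre-perfectoid discs/affine lines over $\Q_p$ via tilting, and apply \cref{sec:key-computations-de-1-disc-invariance-dr}. The paper's proof is more terse (it simply says ``we can calculate that $\mathcal{Y}_X^\dR\cong (\varprojlim_{x\mapsto x^p}\mathbb{A}^{1}_{\Q_p})^\dR$'' and ``one can argue similarly'' for part (2)), whereas you spell out the tilting computation and the quotient-by-$\Z^{\sm}$ argument explicitly, but the substance is the same.
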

\begin{proof}
The case for $(-)^\HK$ follows from the case for $\mathcal{Y}^\dR_{(-)}$ by passing to Frobenius-equivariant objects. By \cref{RemDescentYFunctor} we know $\mathcal{Y}_{\Marc(\F_p)}^\dR\cong \GSpec(\Q_p)$. For part (1), we can calculate that $\mathcal{Y}_X^\dR\cong (\varprojlim_{x\mapsto x^p}\mathbb{A}^{1}_{\Q_p})^\dR$. Hence, the statement reduces to \cref{sec:key-computations-de-1-disc-invariance-dr}. One can argue similarly to prove part (2).
\end{proof}

Before continuing with other computations of Hyodo--Kato cohomology, we make the following convenient definition.

\begin{definition}
 For $n\in \Z$, we denote by $\mathcal{O}(n)$ the vector bundle on $\Marc(\F_p)^{\HK}$ corresponding to the isocrystal $(\Q_p,p^{-n}\varphi)$ under the isomorphism $\Marc(\F_p)^\HK \cong \GSpec(\Q_p)/\varphi^\Z$ (\cref{RemDescentYFunctor}).
\end{definition}

\begin{lemma}
  \label{sec:key-computations-de-1-computation-for-gm}
  Let $f\colon \mathbb{G}_{m, \F_p}^{\diamond}=\Marc(\F_p[T,T^{-1}])\to \Marc(\F_p)$. Then, $f^{\HK}_{\ast}\mathcal{O}\cong \mathcal{O}[0]\oplus \mathcal{O}(-1)[-1]$. A similar assertion holds for $f_*^{\mathcal{Y}^{\dR}}$.
\end{lemma}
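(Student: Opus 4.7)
\textit{Plan.} The plan is to prove the statement for $f_*^{\mathcal{Y}^\dR}$ first and then deduce the Hyodo--Kato version by Frobenius descent along $\Marc(\F_p)^\HK = \GSpec(\Q_p)/\varphi^\Z$ (\cref{RemDescentYFunctor}). The key geometric input is an identification of $\mathcal{Y}_{\mathbb{G}_{m,\F_p}}^\dR$ with the de Rham stack of the pre-perfectoid analytic multiplicative group over $\Q_p$. Indeed, for any qfd perfectoid $\Q_p$-algebra $B$ one has
\[
\mathcal{Y}_{\mathbb{G}_{m,\F_p}}^\diamond(\Marc(B)) = \mathbb{G}_{m,\F_p}^\diamond(\Marc(B^\flat)) = (B^\flat)^\times \cong \varprojlim_{x\mapsto x^p} B^\times = (\mathbb{G}_{m,\mathrm{perf},\Q_p}^\an)^\diamond(\Marc(B)),
\]
the last isomorphism using that the sharp map identifies units of $B^\flat$ with compatible systems of $p$-power roots of units of $B$. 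Hence $\mathcal{Y}_{\mathbb{G}_{m,\F_p}}^\dR \cong (\mathbb{G}_{m,\mathrm{perf},\Q_p}^\an)^\dR$ where $\mathbb{G}_{m,\mathrm{perf},\Q_p}^\an := \varprojlim_{x\mapsto x^p} \mathbb{G}_{m,\Q_p}^\an$ is the limit in Gelfand stacks (as in \cref{sec:dagg-form-smooth-example-perfected-gm}); under this identification, $f^{\mathcal{Y}^\dR}$ is the structural morphism and the Frobenius on $\mathcal{Y}_{\mathbb{G}_{m,\F_p}}^\dR$ corresponds to the shift $T \mapsto T^p$.

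Second, I would compute $f_*^{\mathcal{Y}^\dR}\mathcal{O}$ together with its Frobenius structure. By \cref{PropApproxFFStk} applied to this presentation (partially proper rigid spaces of dimension $1$ with affine transition maps), $(\mathbb{G}_{m,\mathrm{perf},\Q_p}^\an)^\dR = \varprojlim_n (\mathbb{G}_{m,\Q_p}^\an)^\dR$ in the kernel category, so the pushforward of the unit is the colimit
\[
R\Gamma\bigl((\mathbb{G}_{m,\mathrm{perf},\Q_p}^\an)^\dR, \mathcal{O}\bigr) \cong \varinjlim_n R\Gamma\bigl((\mathbb{G}_{m,\Q_p}^\an)^\dR, \mathcal{O}\bigr) \cong \varinjlim_n R\Gamma_\dR(\mathbb{G}_{m,\Q_p}^\an),
\]
where the second isomorphism is \cref{cor:cohomology-de-rham-stack-and-de-rham-cohomology} and the transition maps are pullbacks by $x\mapsto x^p$. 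Using $R\Gamma_\dR(\mathbb{G}_{m,\Q_p}^\an) = \Q_p \oplus \Q_p\cdot\tfrac{dT}{T}[-1]$ and that the $p$-th power pullback sends $\tfrac{dT}{T}\mapsto p\,\tfrac{dT}{T}$, the colimit is $\Q_p[0]\oplus \Q_p[-1]$. Since the Frobenius acts on the cohomology via the same $p$-th power pullback, it acts trivially on $H^0$ and by multiplication by $p$ on $H^1$; under the convention $\mathcal{O}(n) = (\Q_p, p^{-n}\varphi)$ this identifies $H^0$ with $\mathcal{O}$ and $H^1$ with $\mathcal{O}(-1)$.

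Finally, I would upgrade this to a splitting in the derived category. The decomposition $f_*^{\mathcal{Y}^\dR}\mathcal{O} \cong \mathcal{O}[0] \oplus \mathcal{O}(-1)[-1]$ as a Frobenius-equivariant complex follows from the vanishing $\Ext^2_{\mathrm{Isoc}(\Q_p)}(\mathcal{O}(-1),\mathcal{O}) = 0$: viewing isocrystals as quasi-coherent sheaves on $\GSpec(\Q_p)/\varphi^\Z$, their Ext groups are computed as $\Z$-group cohomology, which vanishes in degrees $\geq 2$ because $\Z$ has cohomological dimension $1$. Passing this Frobenius-equivariant decomposition to the HK stack via $\Marc(\F_p)^\HK = \GSpec(\Q_p)/\varphi^\Z$ gives the desired formula $f_*^\HK\mathcal{O} \cong \mathcal{O}[0] \oplus \mathcal{O}(-1)[-1]$. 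The main point to be careful about is tracking the Frobenius action throughout: the identification of Step 1 must respect Frobenius structures, and the convention for $\mathcal{O}(n)$ must be applied correctly, since a sign or inversion error would replace $\mathcal{O}(-1)$ by $\mathcal{O}(1)$.
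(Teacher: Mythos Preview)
Your approach is correct and essentially the same as the paper's: both identify $\mathcal{Y}_{\mathbb{G}_{m,\F_p}}^\dR$ with the de Rham stack of the pre-perfectoid multiplicative group over $\Q_p$ and then compute the pushforward as a colimit of de Rham cohomologies of $\mathbb{G}_{m,\Q_p}^{\an}$ along $x\mapsto x^p$, with Frobenius acting by $p$ on the class $d\log([T^\flat])$. The only differences are that the paper invokes \cref{LemComputationdeRhamLimit} directly (after reducing to overconvergent annuli, since $\mathbb{G}_m$ is not affinoid) rather than the packaged \cref{PropApproxFFStk}, and the paper leaves the derived splitting implicit whereas you spell it out via $\Ext^2$-vanishing on $\GSpec(\Q_p)/\varphi^\Z$; your explicit treatment of the splitting is a nice addition.
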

\begin{proof}
  We can identify $X\times_{\Marc(\F_p)}\Marc(\Q_p)\cong \mathbb{G}_{m,\infty,\Q_p}^{\diamond}:=\varprojlim_{x\mapsto x^p} \mathbb{G}_{m,\Q_p}^{\diamond}$. More precisely, the coordinate on $\mathbb{G}_{m,\Q_p}^{\diamond}$ is given by $[T^\flat]:=(T,T^{1/p},T^{1/p^{2}},\ldots)$. We note that the de Rham cohomology of $\mathbb{G}_{m,\Q_p}$ identifies with the de Rham cohomology of any overconvergent annulus in it. Writing $\mathbb{G}_{m,\Q_p}$ as an increasing union of overconvergent annuli (and hence the cohomology as an essentially constant inverse limit), we can use \cref{LemComputationdeRhamLimit} to see that $f^\HK_{\ast}(1)$ can be calculated as the Frobenius-equivariant object given by the inverse limit of $f_{n,\ast}^\dR(1)$ for $f_{n}\colon \mathbb{G}_{m,\Q_p}\to \GSpec(\Q_p)$. As the de Rham cohomology of $\mathbb{G}_{m,\Q_p}$ is given by $\Q_p[0]\oplus \Q_p[-1]$ with the class in degree $1$ generated by $d\log([T^\flat]):=\frac{1}{[T^\flat]}d[T^\flat]$, on which Frobenius acts by multiplication by $p$, we get the result.
\end{proof}

\begin{remark}\label{sec:key-computations-de-1-invariance-over-q-p}\
\begin{enumerate}
 \item We note that \Cref{lemDiscInvariancedRFF} yields invariance of Hyodo--Kato cohomology with respect to the pre-perfectoid disc or pre-perfectoid affine line.
  However, invariance with respect to the morphisms $X\to \Marc(\Q_p)$, with $X=\GSpec(\Q_p\langle T\rangle)^\diamond$, $X=\mathbb{D}^{\circ, \diamond}_{\Q_p}$ or $X=\mathbb{A}^{1,\diamond}_{\Q_p}$, is true as well.
  Indeed, as in \cref{lemDiscInvariancedRFF} or \cref{sec:-able-maps-disc-invariance-de-rham-cohomology} one reduces to the case that $X=\GSpec(\Q_p\langle T\rangle)^\diamond$ and the statement that the higher pushforwards for $\mathcal{Y}_X^\dR\to \mathcal{Y}_{\Marc(\Q_p)}^\dR$ vanish (and are the unit in degree $0$).
  Using excision, this reduces to the calculation of the pushforward for $X$ replaced by $X\setminus \{0\}$, and then to the calculation of the pushforward for $\mathcal{Y}^\dR_{\Marc(\Q_p\langle T,T^{-1}\rangle)}\to \mathcal{Y}_{\Marc(\Q_p)}^\dR$.
  Now, $\mathcal{Y}^\dR_{\Marc(\Q_p^{\cyc}\langle T,T^{-1}\rangle)}\cong \mathcal{Y}^\dR_{\Marc(\Q_p^\cyc\langle T^{\pm 1/p^\infty}\rangle)}/\Z_p(1)^{\mathrm{sm}}$,
  $$
 \Yc^{\diamond}_{\Marc(\Q_p^\cyc\langle T^{\pm 1/p^\infty}\rangle)} = \Yc_{\Q_p^{\cyc, \flat}}^{\diamond}\times_{\Marc(\Q_p)} \Marc(\Q_p\langle T^{\pm 1/p^\infty}\rangle)
 $$
  and thus, by \cref{sec:key-computations-de-1-computation-for-gm}, the pushforward of $1$ along $\mathcal{Y}^\dR_{\Marc(\Q_p^\cyc\langle T^{\pm 1/p^\infty}\rangle)}\to \mathcal{Y}_{\Marc(\Q_p^\cyc)}^{\dR}$ is generated in homological degree $-1$ by the class $\omega:=\frac{1}{[T^\flat]}d[T^\flat]$ with $T^\flat=(T, T^{1/p},\ldots)$.
  Now, one can observe that $\Z_p(1)^\sm$ acts trivially on $\omega$ because $g[T^\flat]=\varepsilon^g[T^\flat]$ for $\varepsilon=(1,\zeta_p,\ldots )$ and $g\in \Z_p\cong \Z_p(1)$. Descending from $\Q_p^\cyc$ to $\Q_p$, we obtain the desired statement.
  \item Invariance of Hyodo--Kato cohomology with respect to the morphism $f\colon\A^{1,\diamond}_{\Z_p}\to \Marc(\Z_p)$ is true as well, as it follows by excision along
  $$\Marc(\Q_p)\subset \Marc(\Z_p) \supset \Marc(\F_p).$$
  Indeed, by \cref{sec:defin-first-prop-3-smoothness-primness-of-analytic-de-rham-stacks} the morphism $f^{\HK}$ is cohomologically smooth, thus $f^{\HK, *}$ commutes with base change, and one can argue by excision, \cref{excisionYdR}, as claimed.
  Over $\Marc(\F_p)$ one uses \cref{lemDiscInvariancedRFF} and over $\Marc(\Q_p)$ the previous discussion. As consequence, an analog of \cref{sec:key-computations-de-1-computation-for-gm} also holds for the arc-stack associated to the qfd arc-stack $\mathbb{G}_{m, \Z_p}^{\diamond}$ over $\Marc(\Z_p)$.

\end{enumerate}
\end{remark}

\begin{remark}\label{HKgoodred}
We note that the Hyodo--Kato cohomology over $\Z_p$ can be easily compared to the one over $\Q_p$. Namely, let $X$ be a qfd arc-stack over $\Z_p$, and denote $X_{\Q_p}= X\times_{\Marc(\Z_p)}\Marc(\Q_p)$. Then, the following diagram is cartesian
 $$
  \begin{tikzcd}\label{comparison}
  X_{\Q_p}^{\HK} \ar[r, "g^\prime"] \ar[d, "f_{\Q_p}^{\HK}"'] & X^{\HK}\ar[d, "f^{\HK}"] \\
  \Q_p^{\HK}   \ar[r, "g"] & \Z_p^{\HK}
  \end{tikzcd}
  $$
  (as $Y\mapsto Y^{\HK}$ commutes with fiber products), and the natural morphism
 $$(f_{\Q_p}^{\HK})_*g^{\prime *}\to g^*(f^{\HK})_*$$
 is an isomorphism of functors, since $g$ is suave (in fact, $g$ is an open immersion as $\Marc(\Q_p)\subset \Marc(\Z_p)$ is an open immersion, cf. \cref{excisionYdR}). In particular, pulling back $g$ to $\GSpec(\Q_p)$, we see that $(f^\HK)_*\mathcal{O}$ and $(f_{\Q_p}^{\HK})_*\mathcal{O}$ have isomorphic underlying solid $\Q_p$-modules.
\end{remark}

Next, we check that Hyodo--Kato cohomology has a theory of first Chern classes. This will be used in particular to identify the dualizing sheaf in \cref{sec:defin-first-prop-3-smoothness-primness-of-analytic-de-rham-stacks}.

\begin{definition}[Tate twist]\label{Tatetwist}
 We call \textit{Tate twist} on $\Marc(\F_p)^{\HK}$ the complex $\mathcal{O}\langle 1 \rangle:= \mathcal{O}(1)[2]$. Given a qfd arc stack $X$ over $\F_p$, we will also call Tate twist the pullback of the latter complex along $X^\HK\to \Marc(\F_p)^\HK$ (and we will abuse notation denoting it the same way).
\end{definition}

\begin{lemma}\label{LemComputationP1dRFF}
  The 6-functor formalism of Hyodo--Kato stacks on qfd arc-stacks over $\Z_p$ has a strong theory of first Chern classes $c_1^{\HK}$, in the sense of \cite[Definition 5.2.8]{zavyalov2023poincaredualityabstract6functor}.
  In particular, for any integer $d\ge 1$, and for any qfd arc-stack $X$ over $\Z_p$, denoting $f\colon\mathbb{P}_X^{d}\to X$ the natural projection of arc-stacks, we have that the morphism
\begin{equation}\label{projbundleformula}
 \sum_{k=0}^d c_1^k\langle d-k \rangle: \bigoplus_{k=0}^d \mathcal{O}\langle d-k \rangle \longrightarrow f_{*}^\HK \mathcal{O}\langle d\rangle
\end{equation}
is an isomorphism; here, $c_1$ denotes the first Chern class of the universal line bundle. A similar assertion holds for $\mathcal{Y}_{(-)}^{\dR}$.

\end{lemma}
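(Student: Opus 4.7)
The plan is to adapt the construction of the geometric first Chern class outlined in item (3) of \cref{sec:-able-maps-first-chern-classes-de-rham-cohomology} to the Hyodo--Kato setting. Specifically, I would construct a morphism of animated abelian group objects over $\Marc(\Z_p)^\HK$,
\[
  c_1^\HK\colon \mathbb{G}_{m,\Z_p}^{\diamond,\HK}\longrightarrow \mathbb{V}_{\Marc(\Z_p)^\HK}(\mathcal{O}\langle 1\rangle)[-1],
\]
and define the first Chern class of a line bundle $L$ on a qfd arc-stack $X$ over $\Z_p$ as the pullback of $Bc_1^\HK$ along the classifying map $X^\HK\to (B\mathbb{G}_{m,\Z_p}^\diamond)^\HK\cong B\mathbb{G}_{m,\Z_p}^{\diamond,\HK}$, the last isomorphism coming from the fact that $(-)^\HK$ commutes with colimits (\cref{TheoMaindeRham2}).

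To build $c_1^\HK$, I would first work over $\Marc(\F_p)$: by \cref{sec:key-computations-de-1-computation-for-gm}, the pushforward $f_*^\HK \mathcal{O}(1)$ for $f\colon \mathbb{G}_{m,\F_p}^\diamond\to \Marc(\F_p)$ decomposes canonically as $\mathcal{O}(1)\oplus \mathcal{O}[-1]$, and by adjunction the second summand yields a canonical class $\omega\in H^1(\mathbb{G}_{m,\F_p}^{\diamond,\HK},\mathcal{O}(1))$, playing the role of the Hyodo--Kato $d\log$. I would then check primitivity of $\omega$ (i.e., that it promotes to a morphism of animated abelian groups and not merely of sheaves) by a direct computation on $\mathbb{G}_m^2$ mirroring $d\log(xy)=d\log(x)+d\log(y)$, which can be reduced to the de Rham analog sketched in part (2) of \cref{sec:-able-maps-first-chern-classes-de-rham-cohomology} by pulling back along the natural map from the de Rham stack of the pre-perfectoid multiplicative group. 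To extend the construction from $\Marc(\F_p)$ to $\Marc(\Z_p)$, I would combine it with the analogous construction over $\Marc(\Q_p)$, available from the de Rham theory together with the discussion of \cref{sec:key-computations-de-1-invariance-over-q-p}, by means of the excision triangle on $\Marc(\Z_p)^\HK$ associated to the decomposition $\Marc(\Z_p)=\Marc(\Q_p)\cup \Marc(\F_p)$ (\cref{excisionYdR}).

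Once $c_1^\HK$ is available, the axioms of a strong theory of Chern classes in the sense of \cite[Definition 5.2.8]{zavyalov2023poincaredualityabstract6functor} (multiplicativity and compatibility with base change) follow formally from the group-theoretic nature of the construction, and the main content is the projective bundle formula \eqref{projbundleformula}, proved by induction on $d$. The base case $d=1$ reduces, via the open cover $\mathbb{P}^1=\mathbb{A}^1\cup \mathbb{A}^1$ with intersection $\mathbb{G}_m$, excision (\cref{excisionYdR}), disc-invariance (\cref{lemDiscInvariancedRFF} together with \cref{sec:key-computations-de-1-invariance-over-q-p}), and \cref{sec:key-computations-de-1-computation-for-gm}, to a direct Mayer--Vietoris computation showing that $f_*^\HK \mathcal{O}\langle 1\rangle\cong \mathcal{O}\langle 1\rangle\oplus \mathcal{O}$, where the second summand is generated by $c_1(\mathcal{O}(-1))$ by construction of $c_1^\HK$. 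The inductive step uses the stratification $\mathbb{P}^d=\mathbb{A}^d\sqcup \mathbb{P}^{d-1}$: the associated excision fiber sequence controls $f_*^\HK$ on $\mathbb{P}^d$ by the pushforwards on $\mathbb{A}^d$ (trivial by disc-invariance) and on $\mathbb{P}^{d-1}$ (handled by induction), and multiplicativity of $c_1$ matches up the summands. The main obstacle I anticipate is the primitivity verification for the Hyodo--Kato $d\log$, but this should be tractable by bootstrapping from the known de Rham situation.
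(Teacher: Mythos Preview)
Your overall strategy is sound and parallels the paper's, but the two proofs differ in both main steps.

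For the construction of $c_1^\HK$, the paper takes a more direct geometric route that sidesteps the primitivity verification you flag as an obstacle. Rather than producing a cohomology class and then checking it is primitive, the paper works from the start with group objects: it uses the Kummer-type sequence $0\to \Z_p(1)^\diamond\to \mathbb{G}_{m,\Z_p}^{\perf,\diamond}\to \mathbb{G}_{m,\Z_p}^\diamond\to 0$, passes to $\mathcal{Y}^\dR_{(-)}$, and builds $c_1^\HK$ from the boundary map together with an explicit logarithm $\widetilde{\log}\colon 1+\mathring{\mathbb{D}}^{\perf}\to \mathbb{G}_a(1)$. This is automatically a morphism of animated abelian group objects, and it is constructed uniformly over $\Z_p$, so no separate gluing via excision from $\F_p$ and $\Q_p$ is needed. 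Your cohomological approach should also work, but upgrading a class in $H^1$ to a map of animated abelian groups requires more than the single identity $d\log(xy)=d\log(x)+d\log(y)$; the paper's construction avoids this bookkeeping entirely.

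For the projective bundle formula, the paper instead computes the Hyodo--Kato cohomology of $B\mathbb{G}_{m,\Z_p}$ via the approximation $\mathbb{P}^n\to B\mathbb{G}_m$ with highly connected fiber $\mathbb{A}^{n+1}\setminus\{0\}$, and then identifies the generator with $c_1$ by reducing (via \cref{HKgoodred}) to the de Rham case over $\Q_p$. The paper does note in a footnote that one can reduce to $d=1$ by excision and \cite[Theorem 5.5.7]{zavyalov2023poincaredualityabstract6functor}, which is essentially your inductive route. One small correction to your inductive step: in the excision sequence for $\mathbb{P}^d=\mathbb{A}^d\sqcup \mathbb{P}^{d-1}$, the contribution of $\mathbb{A}^d$ is its \emph{compactly supported} cohomology $\mathcal{O}\langle -d\rangle$, not the ordinary cohomology (which is what disc-invariance computes); this is what supplies the top summand $\mathcal{O}$ in $f_*^\HK\mathcal{O}\langle d\rangle$.
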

\begin{proof}
 First, we need to construct a weak theory of first Chern classes, \cite[Definition 5.2.4]{zavyalov2023poincaredualityabstract6functor}. In our case, this amounts to constructing a natural transformation
 \begin{equation}\label{weakchern}
  \Gamma(-,\mathbb{G}_{m,\Z_p}^{\diamond}[1])\to \Gamma((-)^\HK,\mathcal{O}\langle 1 \rangle)
 \end{equation}
 of arc-sheaves with values in $\ob{D}(\Z)$; here, $\mathbb{G}_{m,\Z_p}^{\diamond}$ denotes the arc-stack over $\Marc(\Z_p)$ associated to $\mathbb{G}_{m, \Z_p}$. By \cref{TheoMaindeRham2}, the right hand side satisfies arc-descent, so that it is sufficient to construct a natural transformation
\[
\tau_{\leq 0}\Gamma(-,\mathbb{G}_{m,\Z_p}^\diamond[1])\to \tau_{\leq 0}\Gamma((-)^\HK,\mathcal{O}\langle 1 \rangle)
\]
of sheaves of animated abelian groups (namely, the arc-sheafification of $\mathbb{G}_{m,\Z_p}^\diamond=\tau_{\leq 0}\Gamma(-,\mathbb{G}_{m,\Z_p}^\diamond[1])$ as animated abelian group valued presheaf yields $\Gamma(-,\mathbb{G}_{m,\Z_p}^{\diamond}[1])$).
For this, we can follow the general discussion in \cref{sec:-able-maps-first-chern-classes-de-rham-cohomology}(3) with $\mathcal{S}=\Z_p^\HK$, $\mathcal{R}=\A^{1,\HK}_{\Z_p}$, and Tate twist $\mathcal{O}\langle 1\rangle$ on $\Z_p^\HK$.
Thus, we have to construct a morphism
\begin{equation}\label{c1HK}
 c_1^\HK\colon \mathbb{G}_{m,\Z_p}^\HK\to B\Ga(1)
\end{equation}
of animated abelian group Gelfand stacks  over $\Z_p^\HK$.
We let
\[
\mathbb{G}_{m,\Z_p}^\perf:=\varprojlim\limits_{x\mapsto x^p} \mathbb{G}_{m,\Z_p}.
\]
We have a short exact sequence $0\to \Z_p(1)^\diamond\to \mathbb{G}_{m,\Z_p}^{\perf,\diamond}\to \mathbb{G}_{m,\Z_p}^\diamond \to 0$ of (animated) abelian group arc-stacks over $\Marc(\Z_p)$, which yields a ($\varphi$-equivariant) short exact sequence
\[
0\to \mathcal{Y}_{\Z_p(1)^\diamond}^\dR\to \mathcal{Y}_{\mathbb{G}_{m,\Z_p}^{\perf,\diamond}}^\dR\to \mathcal{Y}_{\mathbb{G}_{m,\Z_p}^\diamond}^\dR\to 0
\]
of abelian group Gelfand stacks over $\mathcal{Y}_{\Marc(\Z_p)}^\dR$.
We have a natural epimorphism
\[
\mathbb{G}_{m,\mathcal{Y}_{\Marc(\Z_p)}^\dR}^{\perf}\to \mathcal{Y}_{\mathbb{G}_{m,\Z_p}^{\perf,\diamond}}^\dR
\]
of abelian group Gelfand stacks over $\mathcal{Y}_{\Marc(\Z_p)}^\dR$, which induces a short exact sequence
\begin{equation}
  \label{eq:4}
0\to \widetilde{\Z_p(1)} \to \mathbb{G}_{m,\mathcal{Y}_{\Marc(\Z_p)}^\dR}^{\perf}\to \mathcal{Y}_{\mathbb{G}_{m,\Z_p}^{\perf,\diamond}}^\dR\to 0
\end{equation}
of abelian group Gelfand stacks over $\mathcal{Y}_{\Marc(\Z_p)}^\dR$.
Here,
\[
\widetilde{\Z_p(1)}:={\mathcal{Y}_{\Z_p(1)^\diamond}^\dR}_{\times_{\mathcal{Y}_{\mathbb{G}_{m,\Z_p}^{\perf,\diamond}}^\dR}}\mathbb{G}_{m,\mathcal{Y}_{\Marc(\Z_p)}^\dR}^{\perf}.
\]
We note that the image of the natural map
\[
  \widetilde{\Z_p(1)}\to \mathbb{G}_{m,\mathcal{Y}_{\Marc(\Z_p)}^\dR}^{\perf}
\]
lands in $1+\mathring{\mathbb{D}}^\perf_{\mathcal{Y}_{\Marc(\Z_p)}^\dR}:=\varprojlim\limits_{x\mapsto x^p}(1+\mathring{\mathbb{D}}_{\mathcal{Y}_{\Marc(\Z_p)}^\dR})$ (this boils down to the fact that if $(1,x_1,\ldots)$ is a $p$-power compatible system, then its Teichm\"uller lift lies in $1+\mathring{\mathbb{D}}$).
Using the $\varphi$-equivariant map of abelian groups  $$\widetilde{\log}\colon (1+\mathring{\mathbb{D}}^\perf_{\mathcal{Y}_{\Marc(\Z_p)}^\dR})\to \mathbb{G}_{a,\mathcal{Y}_{\Marc(\Z_p)}^\dR}(1),\ (y_0,y_1,\ldots)\mapsto \log(y_0)$$ we can define $c_1^\HK$ in \eqref{c1HK}, as the morphism induced by the ($\varphi$-equivariant) composition
\begin{equation}\label{eqGeometricChernClass}
\mathcal{Y}_{\mathbb{G}_{m,\Z_p}^{\perf,\diamond}}^\dR \to B\widetilde{\Z_p(1)}\to B\mathbb{G}_{a,\mathcal{Y}_{\Marc(\Z_p)}^\dR}(1),
\end{equation}
where the first map classifies the extension \cref{eq:4}.

Next, we want to check that the morphism \cref{projbundleformula} is an isomorphism.\footnote{We note that by excision (\cref{excisionYdR}) and \cite[Theorem 5.5.7]{zavyalov2023poincaredualityabstract6functor}, we could even reduce to the case $d=1$. However, the proof below is not significantly harder for a general $d\ge 1$.} For this, after a base change, it suffices to show the following two assertions:

 \begin{enumerate}[(a)]
  \item Denote $h\colon B\mathbb{G}_{m, \Z_p}^\diamond\to \Marc(\Z_p)$ the natural morphism. We have
 \begin{equation}\label{HKBG}
  h^\HK_\ast\mathcal{O}=\bigoplus_{k\ge 0} \mathcal{O}(-k)c_1^{k}
 \end{equation}
where $c_1=d\log([T^\flat])\in H^1((\mathbb{G}_{m,\Z_p})^\HK,\mathcal{O})\cong H^2((B\mathbb{G}_{m,\Z_p})^\HK,\mathcal{O})$ sits in homological degree $-2$ (we note that this class is independent of the choice of a coordinate on $\mathbb{G}_{m,\Z_p}$). Moreover, under this identification, the morphism $\mathbb{P}_{\Z_p}^d \to B\mathbb{G}_{m, \Z_p}$, classifying the universal line bundle on $\mathbb{P}^d_{\Z_p}$, induces an isomorphism between the Hyodo--Kato cohomology of $\mathbb{P}_{\Z_p}^d$ and the direct summand  $\bigoplus_{k=0}^d \mathcal{O}(-k)c_1^{k}$ of \eqref{HKBG}.

 \item The map given by applying \cref{weakchern} to $\mathbb{P}_{\Z_p}^d$, and taking $H^0$, sends the universal line bundle on $\mathbb{P}_{\Z_p}^d$ to the class $c_1=d\log([T^\flat]) \in H^2((\mathbb{P}_{\Z_p}^d)^\HK, \mathcal{O})$.

\end{enumerate}

 Point (a) follows from the computation of the Hyodo--Kato cohomology of $\mathbb{G}_{m, \Z_p}$, and $\A_{\Z_p}^1$-invariance, \cref{sec:key-computations-de-1-invariance-over-q-p}(2). Indeed, the Hyodo--Kato cohomology of  $B\mathbb{G}_{m, \Z_p}$ can be calculated via approximation by the morphisms $\mathbb{P}^{n}_{\Z_p}\to B\mathbb{G}_{m, \Z_p}$, classifying the universal line bundle, with $(n-1)$-connected fiber $\mathbb{A}^{n+1}_{\Z_p}\setminus \{0\}$ (cf. the general argument for calculating the de Rham cohomology of $B \mathrm{GL}_n$, \cite[Th\'eorème 1.4]{Gros1990}; see also \cite[Corollary 8.2]{scholze2024berkovichmotives} for a related calculation).

  For point (b), we first note that, thanks to \cref{HKgoodred}, it can be deduced from the analogous assertion over $\Q_p$. Then, we note that the construction of $c_1^{\HK}$ is compatible with the construction of $c_1^{\dR}$ in \cref{sec:-able-maps-first-chern-classes-de-rham-cohomology}(1). Therefore, point (b) follows from the discussion in  \cref{sec:-able-maps-first-chern-classes-de-rham-cohomology}(2) on the analogous identification in the de Rham case.
\end{proof}

\begin{remark}
 \label{sec:key-computations-de-1-compactly-supported-cohomology}
 In \cref{sec:key-computations-de-1-invariance-over-q-p}(2), \cref{LemComputationP1dRFF} we calculated the Hyodo--Kato cohomology of $\mathbb{G}_{m,\Z_p}$ and $\mathbb{P}^1_{\Z_p}$, but this implies the result for \emph{compactly supported} Hyodo--Kato cohomology by passing to duals.

 Indeed, by excision and \cref{sec:defin-first-prop-3-smoothness-primness-of-analytic-de-rham-stacks} (or just \cref{sec:geom-prop-analyt-1-cohom-smoothness-of-perfectoid-unit-disc}), we can conclude a priori that the compactly supported Hyodo--Kato cohomology is a perfect module (cf. \cref{CorFinitenessdRFFcoho}), and hence dual to the Hyodo--Kato cohomology.
 %We note that this property is non-formal in the sense that it is not implied by general properties of $6$-functor formalisms.
\end{remark}

\begin{remark}\label{RemarkComputationBG}
Let us sketch a different way to compute the Hyodo--Kato cohomology of $B\mathbb{G}_m$ over $\Z_p$, relying on abstract properties of the category of kernels and $6$-functor formalisms. An advantage of the following approach is that it can be generalized to compute the Hyodo--Kato cohomology of stacks of the form $B\mathbb{G}$ for $\mathbb{G}$ a reductive group.\footnote{Moreover, the same argument should generalize to any cohomology theory factoring through Berkovich motives \cite{scholze2024berkovichmotives}, working in the symmetric monoidal $(\infty, 2)$-category of kernels of Berkovich motives, studied in ongoing work of Aoki. We recall, however, also \cite[Corollary 8.2]{scholze2024berkovichmotives}.} Let $g\colon \mathbb{G}_m^{\HK}\to \Z_p^{\HK}$,  $h\colon B\mathbb{G}_m^{\HK}\to \Z_p^{\HK}$, and consider the zero section $e\colon \mathbb{Z}^{\HK}\to B\mathbb{G}_m^{\HK}$. The map $e$ is suave as its fibers are isomorphic to $\mathbb{G}_m^{\HK}$, in particular the functor $e^*$ is monadic (as it preserves limits, colimits and it is conservative). Then, we have that
\[
\ob{D}(B\mathbb{G}_m^{\HK}) = \Mod_{e^*e_{\natural}} (\ob{D}(\Z_p^{\HK}))
\] 
where $e_{\natural}$ is the left adjoint of $e^*$. As both the functors $e_{\natural}$ and $e^*$ are $\ob{D}(\Z_p^{\HK})$-linear (as they arise from and adjunction in the kernel category over $\Z_p^{\HK}$), we actually have that 
\[
\ob{D}(B\mathbb{G}_m^{\HK}) = \Mod_{e^*e_{\natural} 1}  (\ob{D}(\Z_p^{\HK}))
\]
where $e^*e_{\natural} 1\in \ob{Alg}(\ob{D}(\Z_p^{\HK}))$ is the algebra arising from the monad valued at $1$.  Using abstract nonsense in the kernel category, one can show that $e^*e_{\natural} 1$ is precisely the algebra arising from the homology of $\mathbb{G}_m^{\HK}$, that is the algebra $g_{\natural} 1$ with product given by convolution\footnote{One can even show that $g_{\natural} 1$ has a natural structure of animated ring object  over $\ob{D}(\Z_p^{\HK})$ using the animated group structure of $\mathbb{G}_m$.} with respect to the multiplicative structure of $\mathbb{G}_m^{\HK}$; for instance, the underlying sheaf can be computed by proper base change along the cartesian diagram
\[
\begin{tikzcd}
\mathbb{G}_m^{\HK} \ar[r] \ar[d] & \Z_p^{\HK} \ar[d] \\ 
\Z_p^{\HK} \ar[r] &  B\mathbb{G}_m^{\HK}.
\end{tikzcd}
\]
Now, the underlying sheaf of $g_{\natural} 1$ is given by $\mathcal{O}\oplus \mathcal{O}(1)[1]$ (where the class of $\mathcal{O}(1)[1]$ is constructed precisely using the map \cref{eqGeometricChernClass} above); this follows from $g_{\natural}1= g_!g^!1$, \cref{sec:key-computations-de-1-compactly-supported-cohomology}, and an explicit computation of the dualizing sheaf in this case. One can also identify $g_{\natural} 1$ with the square zero extension\footnote{For example, by noticing that $\Gamma(\Z_p^{\HK}, \mathcal{O}(i))=0$ for $i\neq 0$ so that there are no endomorphisms or extensions of vector bundles of different slope.} of $\mathcal{O}$ by $\mathcal{O}(1)[1]$.  Now, we have a natural identification of the cohomology
\[
h^{\HK}_*\mathcal{O} = \End_{\mathcal{O}(1)[1]\oplus \mathcal{O}}(\mathcal{O})
\] 
as objects over $\ob{D}(\Z_p^{\HK})$. Moreover, by Koszul duality, one computes that
\[
\End_{\mathcal{O}(1)[1]\oplus \mathcal{O}}(\mathcal{O})= \oplus_{n\geq 0} \mathcal{O}(-n)[-2n]
\]
and, more precisely, one recovers \eqref{HKBG}.
\end{remark}

 \subsection{Cohomological properties of Hyodo--Kato stacks}\label{ss:CohoPropertiesdRFF}

In this section, we apply the key suaveness property of the open punctured perfectoid disc of \cref{sec:geom-prop-analyt-1-cohom-smoothness-of-perfectoid-unit-disc} to deduce that rigid smooth morphisms of arc-stacks give rise to cohomologically smooth morphisms of Hyodo--Kato stacks.

Recall the notions of \'etale and smooth morphisms of qfd arc-stacks from \Cref{DefRigSmoothRigEtale}.

\begin{theorem}
  \label{sec:defin-first-prop-3-smoothness-primness-of-analytic-de-rham-stacks}
  Let $f:Y\to X$ be a map of qfd arc-stacks over $\Z_p$ and let $f^{\HK}\colon Y^{\HK}\to X^{\HK}$ be its associated morphism of Hyodo--Kato stacks.
  \begin{enumerate}
   \item Suppose that, locally in the arc-topology of $X$, $f$ is lqfd (see \cref{DefLQFDim}), then $f^{\HK}$ is $!$-able.
 Furthermore, if $f$ is proper then $f^{\HK}$ is cohomologically proper.
\item Suppose that $f$ is smooth of pure relative dimension $d$, then $f^{\HK}$ is cohomologically smooth.
  More precisely, it is suave and its dualizing sheaf is given by the $d$-th tensor power of the Tate twist (\cref{Tatetwist}).
 Furthermore, if $f$ is \'etale, then it is cohomologically \'etale. 
  \end{enumerate}
  Similar assertions hold for $f^{\mathcal{Y}^\dR}\colon \mathcal{Y}_Y^{\dR}\to \mathcal{Y}_X^{\dR}$.
\end{theorem}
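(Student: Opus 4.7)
The strategy is to first establish the assertions for the morphism $f^{\mathcal{Y}^{\dR}}\colon\mathcal{Y}_Y^{\dR}\to\mathcal{Y}_X^{\dR}$ of open punctured curves, and then transfer them to the Hyodo--Kato stacks via the Frobenius quotient. For the transfer step, I would use that by \cref{TheoMaindeRham2} the analytic de Rham functor commutes with colimits, which gives $X^{\HK}=\mathcal{Y}_X^{\dR}/\varphi_X^{\Z}$, and the quotient morphism $\mathcal{Y}_X^{\dR}\to X^{\HK}$ is a $\underline{\Z}^{\sm}$-torsor, hence cohomologically \'etale thanks to \cref{sec:arc-descent-analytic-classifying-stacks} (applied to the locally light profinite group $\Z$, viewed as $G^{\sm}$). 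Consequently, $!$-ability, cohomological properness, cohomological smoothness and cohomological \'etale-ness all descend from $f^{\mathcal{Y}^{\dR}}$ to $f^{\HK}$, and the underlying sheaf (forgetting Frobenius) of the dualizing complex for $f^{\HK}$ is the dualizing complex of $f^{\mathcal{Y}^{\dR}}$.

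For the $\mathcal{Y}^{\dR}$-variant, the key observation is that the functor $\mathcal{Y}^{\diamond}\colon \Cat{ArcStk}_{\F_p}^{\qfd}\to \Cat{ArcStk}_{\Q_p}^{\qfd}$ is just base change along $\Marc(\Q_p)\to \Marc(\F_p)$, so it preserves the properties of being arc-locally lqfd, proper, smooth, or \'etale (all of which are stable under base change, as can be checked from \cref{DefLQFDim} and \cref{DefRigSmoothRigEtale}). Part (1) for $f^{\mathcal{Y}^{\dR}}$ then follows directly from \cref{LemmLqfd}, while the cohomological smoothness (and \'etale-ness) claim in part (2) for $f^{\mathcal{Y}^{\dR}}$ follows from \cref{sec:geom-prop-analyt-1-cohomological-smoothness-for-smooth-rigid-spaces}, with dualizing sheaf given by $1[2d]$. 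Combined with the previous paragraph, this already settles part (1) for $f^{\HK}$ and gives the cohomological smoothness in part (2); only the identification of the dualizing sheaf with the Tate twist $\mathcal{O}\langle 1\rangle^{\otimes d}=\mathcal{O}(d)[2d]$ remains.

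For the Frobenius-twist in the dualizing complex, I would invoke the abstract Poincar\'e duality machine of \cite[Theorem 1.2.12]{zavyalov2023poincaredualityabstract6functor}, using the following three ingredients already established in this paper. First, ball invariance of Hyodo--Kato cohomology: this is \cref{lemDiscInvariancedRFF} in characteristic $p$, and \cref{sec:key-computations-de-1-invariance-over-q-p}(2) shows that the analogous invariance holds over $\Z_p$ (using excision along $\Marc(\Q_p)\subset \Marc(\Z_p)\supset \Marc(\F_p)$). Second, excision, which follows from the fact that $\mathcal{Y}_X^{\dR}$ lives over the Betti stack of $X_{\mathrm{cond}}$, cf.\ \cref{excisionYdR}. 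Third, a strong theory of first Chern classes with a projective bundle formula, provided by \cref{LemComputationP1dRFF}. Zavyalov's theorem then identifies the dualizing sheaf as the $d$-th tensor power of the Tate twist, as desired.

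The main obstacle is the last step: establishing enough structure on Hyodo--Kato cohomology to invoke the abstract Poincar\'e duality and pin down the Frobenius action on the dualizing sheaf. The cohomological smoothness itself is a fairly direct consequence of the (non-trivial) suaveness of the de Rham stack of the pre-perfectoid open punctured disc, \cref{sec:geom-prop-analyt-1-cohom-smoothness-of-perfectoid-unit-disc}. In contrast, identifying the twist requires assembling ball-invariance, excision, and first Chern classes---the latter being the subtlest input, as its construction in \cref{LemComputationP1dRFF} relies on the logarithm map on the pre-perfectoid disc, together with an explicit computation of the Hyodo--Kato cohomology of $\mathbb{G}_m$ (\cref{sec:key-computations-de-1-computation-for-gm}) to normalize the Chern class.
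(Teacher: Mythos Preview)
There is a genuine gap in your smoothness argument for part~(2). You claim that the base-change functor $\mathcal{Y}^{\diamond}=(-)\times_{\Marc(\F_p)}\Marc(\Q_p)$ preserves smoothness in the sense of \cref{DefRigSmoothRigEtale}, so that \cref{sec:geom-prop-analyt-1-cohomological-smoothness-for-smooth-rigid-spaces} applies directly to $\mathcal{Y}_Y^{\diamond}\to\mathcal{Y}_X^{\diamond}$. This is false. The definition of smooth involves a local factorization through $\mathbb{A}^d_X=\mathbb{A}^{d,\diamond}_{\Z_p}\times_{\Marc(\Z_p)}X$, and base-changing this along $\mathcal{Y}_X^{\diamond}\to X$ uses the structure map $\mathcal{Y}_X^{\diamond}\to X\to\Marc(\Z_p)$. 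But ``smooth over $\Q_p$'' refers to the \emph{other} structure map $\mathcal{Y}_X^{\diamond}\to\Marc(\Q_p)\to\Marc(\Z_p)$. The two resulting relative affine lines differ: for $X=\Marc(\F_p)$ one computes $\mathcal{Y}_{\mathbb{A}^1_{\F_p}}^{\diamond}\cong\varprojlim_{x\mapsto x^p}\mathbb{A}^{1,\diamond}_{\Q_p}$, the pre-perfectoid affine line (cf.\ the proof of \cref{lemDiscInvariancedRFF}), which is \emph{not} smooth over $\Q_p$ in the sense of \cref{DefRigSmoothRigEtale}. Your part~(1) argument does survive, since lqfd only asks for quasi-pro-\'etale over an affine space, and the pre-perfectoid affine line is quasi-pro-\'etale over the ordinary one (\cref{ExamDifferentArcStkZpFp}); likewise the dualizing-sheaf identification via Zavyalov is the paper's route.

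This is precisely why the paper needs the non-formal input of \cref{sec:geom-prop-analyt-1-cohom-smoothness-of-perfectoid-unit-disc}: it furnishes cohomological smoothness for something that is \emph{not} rigid-analytically smooth. The paper's argument reduces to showing $\mathcal{Y}_{\mathbb{G}_{m,\Z_p^\cyc}}^{\dR}\to\mathcal{Y}_{\Z_p^\cyc}^{\dR}$ is suave, exploits the product decomposition $\mathcal{Y}_{\mathbb{G}_{m}^{\mathrm{perf}}}^{\diamond}\cong\mathcal{Y}^{\diamond}\times_{\Marc(\Q_p)}\mathbb{G}_{m,\Q_p}^{\mathrm{perf},\diamond}$ (valid because perfection commutes with tilting), applies \cref{CoroKeyCasesSuave}, and then descends from $\mathbb{G}_m^{\mathrm{perf}}$ to $\mathbb{G}_m$ via a $\Z_p(1)$-quotient together with an excision argument along $\Marc(\Q_p^\cyc)\subset\Marc(\Z_p^\cyc)\supset\Marc(\F_p)$. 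Your closing paragraph names the perfectoid disc as the key input, but your plan does not explain the bridge from that result to the Hyodo--Kato map; invoking \cref{sec:geom-prop-analyt-1-cohomological-smoothness-for-smooth-rigid-spaces} alone does not supply it.
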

\begin{proof}
We prove the statements for $f^{\mathcal{Y}^\dR}$, as the ones for $f^{\HK}$ follow from the latter (except for the calculation of the dualizing complex).

The $!$-ability is clear thanks to \Cref{LemShierkLocalTarget}, \cref{LemmProetaleShierkdR} and \cref{sec:geom-prop-analyt-1-cohomological-smoothness-for-smooth-rigid-spaces}, and properness follows from \cref{LemmLqfd}.
   Any \'etale map of qfd arc-stacks has open diagonal, namely, this can be checked locally in the arc-topology of $X$ and in the analytic topology of $Y$, where it reduces to the claim on perfectoid spaces which is obvious.
 Therefore, by definition of cohomologically \'etale \cite[Definition 4.6.1]{heyer20246functorformalismssmoothrepresentations}, we only need to show that smooth morphisms are cohomologically smooth.
 This property can be checked locally in the arc-topology of $X$ thanks to \cref{TheoMaindeRham2} and \cite[Lemma 4.5.7]{heyer20246functorformalismssmoothrepresentations}, and locally in the analytic topology on $Y$ thanks to \cref{sec:geom-prop-analyt-1-cohomological-smoothness-for-smooth-rigid-spaces}, \cite[Lemma 4.5.8 (i)]{heyer20246functorformalismssmoothrepresentations}.
  Therefore, we can assume without loss of generality that $X$ is qfd affinoid perfectoid over $\Z_p$ and $Y\to X$ is smooth.
  Moreover, applying \cref{sec:geom-prop-analyt-1-cohomological-smoothness-for-smooth-rigid-spaces}, \cite[Lemma 4.5.8 (i)]{heyer20246functorformalismssmoothrepresentations} again it suffices to show that $\mathcal{Y}_{\mathbb{A}^1_{X}}^{\dR}\to \mathcal{Y}_X^{\dR}$ is suave.
  By taking a base change and a further analytic cover we can also just reduce to the case of  $X=\Marc(\Z_p)$ and $Y=\mathbb{G}_{m,\Z_p}$, by further arc-descent even to the case $X=\Marc(\Z_p^{\cyc})$.

 Let $X=\Marc(\Z_p^{\cyc})$ and  $\mathbb{G}^{\ob{perf}}_{m,X}=\varprojlim_{x\mapsto x^p} \mathbb{G}_{m,X}$ be the perfection of the multiplicative group. One has an equivalence of arc-stacks over $\Q_p$
 \[
 \Yc^{\diamond}_{\mathbb{G}^{\ob{perf}}_{m,X}} = \Yc_X^{\diamond}\times_{\Marc(\Q_p)} \mathbb{G}^{\ob{perf},\diamond}_{m,\Q_p}
 \]
 where, if $T^{\flat}$ is the coordinate of $\mathbb{G}_{m,X}^{\perf}$, then the coordinate of the multiplicative group of the right term is the Teichm\"uller $[T^{\flat}]$.
 By \cref{CoroKeyCasesSuave} we deduce that $\Yc^{\dR}_{\mathbb{G}_{m,X}^{\perf}}\to \Yc^{\dR}_X$ is suave.
 By construction, the kernel of the map of groups $\mathbb{G}_{m,X}^{\perf,\diamond}\to \mathbb{G}_{m,X}^{\diamond}$   is the arc-stack given by the Tate module $T_p\mu_{p^{\infty},X}$ of the arc-stack of $p$-th powers roots of unit over $X$.
 As $X$ contains $p$-th power roots of unit, after fixing a compatible system $(\zeta_{p^{k}})_k$, we have a map of groups $\underline{\Z_p}\times X\to T_p \mu_{p^{\infty}}^{\diamond}$.
 Hence, we have a natural map $g\colon \mathbb{G}_{m,X}^{\perf,\diamond}/ \underline{\Z_p}\to \mathbb{G}_{m,X}^{\diamond}$.
 Passing to $\mathcal{Y}_{(-)}^{\dR}$ we get a map
 \begin{equation}\label{eq91j3ormqwdr}
 g^{\mathcal{Y}^{\dR}}\colon \Yc^{\dR}_{\mathbb{G}^{\ob{perf}}_{m,X}} / \Z_p^{\sm}\to  \Yc^{\dR}_{\mathbb{G}_{m,X}}.
 \end{equation}
 The left term is suave over $\Yc^{\dR}_X$ by the previous discussion and the fact that classifying stacks of Betti stacks of locally profinite groups are suave in characteristic zero, and the map $g^{\mathcal{Y}^{\dR}}$ is also prim.
 Therefore, to prove that $ \Yc^{\dR}_{\mathbb{G}_{m,X}}$ is suave over $\Yc_{X}^{\dR}$ it suffices to show that the natural map $1\to g^{\mathcal{Y}^{\dR}}_* 1$ is an equivalence.
 Indeed, by \cite[Lemma 4.5.16 (i)]{heyer20246functorformalismssmoothrepresentations}, $g^{\mathcal{Y}^{\dR}}_* 1$ is suave over $\Yc_{X}^{\dR}$.

To prove the claim we can argue via excision along $$j\colon \Marc(\Q_p^{\cyc})\subset \Marc(\Z_p^{\cyc}) \supset \Marc(\F_p) \colon \iota.$$
 The pullback along $j$ of $g$ is already an isomorphism, so there is nothing to show.
 It is left to prove the claim after pulling back along $\iota$, in that case the action of $\Z_p$ in $\mathbb{G}_{m}^{\perf,\diamond}$ is trivial, but the map $\mathbb{G}_{m,\F_p}^{\perf,\diamond}\to \mathbb{G}_m^{\diamond}$ is an isomorphism being the perfectoidization of a scheme in characteristic $p$.
 Since taking smooth $\Z_p$-group cohomology is exact  in characteristic zero, we deduce that the pullback along $\iota$ of \cref{eq91j3ormqwdr} is an isomorphism, finishing the proof of suaveness.

Finally, the computation of the dualizing sheaf follows from (the proof of) \cite[Theorem 5.7.7]{zavyalov2023poincaredualityabstract6functor}, recalling that the 6-functor formalism of the de Rham stacks of relative open punctured curves $\mathcal{Y}_{(-)}^{\dR}$ satisfies excision (\cref{excisionYdR}) and has a (strong) theory of first Chern classes (\cref{LemComputationP1dRFF}).
 This finishes the proof of the theorem. 
\end{proof}

\begin{corollary}\label{CorFinitenessdRFFcoho}
Let $f:Y\to X$ be a  proper smooth morphism of Berkovich spaces over $\Q_p$.
 Then, $$f^{\HK}_*\colon \ob{D}(Y^{\HK})\to \ob{D}(X^{\HK})$$ sends perfect modules to perfect modules. Similarly, $f_\ast^{\mathcal{Y}^{\dR}}\colon \ob{D}(\mathcal{Y}_Y^{\dR}) \to \ob{D}(\mathcal{Y}_{X}^{\dR})$ sends perfect modules to perfect modules.
\end{corollary}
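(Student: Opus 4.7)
The argument is a formal consequence of \Cref{sec:defin-first-prop-3-smoothness-primness-of-analytic-de-rham-stacks} together with abstract duality in the 6-functor formalism. I will treat $f^{\HK}$; the case of $f^{\mathcal{Y}^{\dR}}$ is identical. Localizing analytically on $X$ I may assume $f$ has pure relative dimension $d$, so that by \Cref{sec:defin-first-prop-3-smoothness-primness-of-analytic-de-rham-stacks}(1)--(2) the morphism $f^{\HK}\colon Y^{\HK}\to X^{\HK}$ is both cohomologically proper and cohomologically smooth, with invertible dualizing sheaf $\omega:=\omega_{f^{\HK}}\cong \mathcal{O}\langle d\rangle^{\otimes d}$. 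In particular, $f^{\HK}_{*}=f^{\HK}_{!}$ and $f^{\HK,!}\cong f^{\HK,*}(-)\otimes \omega$.

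The next step is the standard 6-functor fact that $f^{\HK}_{*}$ then preserves dualizable objects. For $M\in\ob{D}(Y^{\HK})$ dualizable with dual $M^{\vee}$, and an arbitrary $N\in\ob{D}(X^{\HK})$, the chain of identifications
\begin{align*}
\iHom(f^{\HK}_{*}M,N) &= f^{\HK}_{*}\iHom(M,f^{\HK,!}N)= f^{\HK}_{*}(M^{\vee}\otimes f^{\HK,*}N\otimes \omega) \\
&= f^{\HK}_{*}(M^{\vee}\otimes \omega)\otimes N,
\end{align*}
obtained from the $(f^{\HK}_{!},f^{\HK,!})$-adjunction, the dualizability of $M$, and the projection formula (valid because $f^{\HK}_{*}=f^{\HK}_{!}$), exhibits $f^{\HK}_{*}(M^{\vee}\otimes \omega)$ as the dual of $f^{\HK}_{*}M$.

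It remains to upgrade \emph{dualizable} to \emph{perfect}. On a qfd Gelfand stack, an object is perfect precisely when it is dualizable \emph{and} of uniformly bounded cohomological amplitude on separable Gelfand affinoids: dualizability implies local perfectness by the Fredholm property of separable Gelfand rings (\Cref{xh28sj}), and this is assembled globally into a perfect module of some fixed amplitude via \Cref{LemmaDescentPerfectComplexes} and \Cref{PropPerfectBerkovichSpaces}. Hence I only need to observe that $f^{\HK}_{*}$ preserves this amplitude condition, with a bound controlled by $d$; this follows from cohomological properness of $f^{\HK}$ together with the fact that its geometric fibers are Hyodo--Kato stacks of smooth proper rigid varieties of relative dimension $d$, whose cohomology is concentrated in degrees $[0,2d]$ (compare \Cref{LemComputationP1dRFF}). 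This amplitude bound is the only step that is not entirely formal; everything else is a direct application of the 6-functor machinery built up so far.
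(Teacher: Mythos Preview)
Your first two steps are essentially the paper's proof, just unpacked: the paper invokes \Cref{sec:defin-first-prop-3-smoothness-primness-of-analytic-de-rham-stacks} for suaveness and \Cref{LemmLqfd} for cohomological properness, then cites the abstract suave/prim machinery of Heyer--Mann (\cite[Lemma 4.5.16 (ii), Corollary 4.5.18]{heyer20246functorformalismssmoothrepresentations}) to conclude that $f^{\HK}_*$ preserves dualizables. Your explicit $\iHom$ computation is the standard unwinding of that machinery in the cohomologically-smooth-and-proper case; it is correct (modulo the typo $\mathcal{O}\langle d\rangle^{\otimes d}$, which should read $\mathcal{O}\langle 1\rangle^{\otimes d}\cong \mathcal{O}\langle d\rangle$, though you never actually use the identification).

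Your third step, however, is both unnecessary and the weakest link. In this paper's framework, ``perfect'' on a Gelfand stack simply \emph{means} dualizable: once you know an object is dualizable, its pullback to every separable Gelfand affinoid is dualizable, hence perfect by the Fredholm property (\Cref{xh28sj}); this is exactly what the paper's proof does when it invokes \Cref{LemmaDescentPerfectComplexes}. No uniform amplitude bound is needed for the statement as written. Your proposed amplitude argument is also incomplete: you assert that the Hyodo--Kato cohomology of a general smooth proper rigid variety of dimension $d$ is concentrated in degrees $[0,2d]$, citing only \Cref{LemComputationP1dRFF}, which treats projective space. That bound is plausible but is not established in the paper for general $Y$, so had it been necessary this would be a genuine gap. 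As things stand you should simply delete the third paragraph and conclude after step two.
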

 \begin{proof}
We prove the case of $f_\ast^{\HK}$, the argument for $f_\ast^{\mathcal{Y}^{\dR}}$ being identical.
 By \cref{sec:defin-first-prop-3-smoothness-primness-of-analytic-de-rham-stacks} the map $f^{\HK}$ is suave.
 We observe that $\FF_{Y}^{\diamond}\to \FF_{X}^{\diamond}$ is proper, as $f$ is proper. Then, \cref{LemmLqfd} implies that $f^{\HK}$ is cohomologically proper.
 In particular, $f^{\HK}_*$ sends $f^{\HK}$-suave objects to suave objects in $\ob{D}(X^{\HK})$ (\cite[Lemma 4.5.16 (ii)]{heyer20246functorformalismssmoothrepresentations}), and since any dualizable object in $\ob{D}(Y^{\HK})$ is $f^{\HK}$-suave (\cite[Corollary 4.5.18 (i)]{heyer20246functorformalismssmoothrepresentations}), one deduces that $f^{\HK}_*$ sends dualizable objects to dualizable objects, applying this time \cite[Corollary 4.5.18 (ii)]{heyer20246functorformalismssmoothrepresentations} to the identity.
 Then, by \cref{LemmaDescentPerfectComplexes} one deduces that dualizable objects in $\ob{D}(X^{\HK})$ are perfect modules, proving what we wanted.
 \end{proof}

 In future work, we will study an extension of the finiteness result \cref{CorFinitenessdRFFcoho} above for $f^{\HK}_*$, in the case $f:Y\to \GSpec(\Q_p)$ is the adic compactification of a qcqs rigid analytic variety over $\Q_p$.
\newpage
\section{The $p$-adic monodromy theorem}
\label{sec:p-adic-monodromy}

 The main goal of this section is to show that the category of vector bundles on $\Q_p^{\HK}$ is equivalent to Fontaine's category of $(\varphi, N, \Gal_{\Q_p})$-modules. Along the way, we give a new proof of the (local) $p$-adic monodromy theorem of Andr\'e, Kedlaya, and Mebkhout, \cite{Andre_monodromie}, \cite{Kedlaya_monodromie}, \cite{Mebkhout_monodromie}. Our proof is ``geometric'' in flavour, and it does not make reference to the classical theory of $p$-adic differential equations, but rather to the theory of arc-sheaves, Banach--Colmez spaces, and properties of the Hyodo--Kato stacks.
 \medskip

 For this whole section, we fix an algebraic closure $\overline{\Q}_p$ of $\Q_p$, and its completion $\C_p$.
 Consequently, we let $\overline{\F}_p$ be the residue field of $\mathcal{O}_{\C_p}$, and $\Q_p^\un\subseteq \overline{\Q}_p$ the maximal unramified extension of $\Q_p$.
 Given a subextension $L\subseteq \overline{\Q}_p$, we let $\Gal_L:=\Gal(\overline{\Q}_p/L)$ be its Galois group.

 \subsection{Statement of the $p$-adic monodromy theorem}
 \label{sec:p-adic-monodromy-2-the-p-adic-monodromy-theorem}
 In a simplified form, our version of the $p$-adic monodromy theorem reads as follows:

\emph{For any subextension $L\subseteq \overline{\Q}_p$, the category of vector bundles on $L^\HK$ is naturally equivalent to the category of $(\varphi, N,\mathrm{Gal}_L)$-modules over $\Q_p^\un$.  }

Here, we write $L^\HK:=\Marc(L)^\HK$, and note that it only depends on the uniform completion of $L$.

In this subsection, we will outline the contents of the next subsections, and how they will combine to a proof of (a slightly strengthened form of) the $p$-adic monodromy theorem.

We will first construct a suitable functor from $(\varphi, N,\mathrm{Gal}_L)$-modules over $\Q_p^\un$ to vector bundles on $L^\HK$ in \Cref{sec:construction-p-adic-construction-of-the-functor}. We note that it is easy to define a functor from $(\varphi,\Gal_L)$-modules over $\Q_p^\un$ to vector bundles on $L^\HK$.
By Galois descent (\cref{TheoMaindeRham2}), we have $L^\HK\cong \C_p^\HK/\Gal_L^{\rm sm}$, and for $\C_p$ (or equivalently $\overline{\Q}_p$) we can use the ($\Gal_{\Q_p}^{\sm}$-equivariant) morphism
\[
  \C_p^\HK \to \overline{\F}_p^\HK \cong \GSpec(\Q_p^\un)/\varphi^\Z
\]
(using an easy variant of \cref{RemDescentYFunctor}).
Clearly, vector bundles on $\GSpec(\Q_p^\un)/\varphi^\Z$ are equivalent to $\varphi$-modules over $\Q_p^\un$, so that pullback along the previous morphism yields the desired functor.
Hence, the main task in \cref{sec:construction-p-adic-construction-of-the-functor} will be to incorporate the monodromy operator $N$, and we will do this by constructing a $\Gal_{\Q_p}^{\sm}$-equivariant morphism (later $\Psi$ will be denoted by $\Psi_{\overline{\Q}_p,\varpi}$)
\[
  \Psi\colon \C_p^\HK \to B_{\overline{\F}_p^{\HK}}\mathbb{V}(-1) 
\]
with $B_{\overline{\F}_p^{\HK}}\mathbb{V}(-1) $ the classifying stack over $\overline{\F}_p^{\HK}$ of the (pullback of the) geometric vector bundle $\mathbb{V}(-1)$ associated with the simple isocrystal $D_{-1}=(\Q_p,p\varphi)$ of slope $-1$ (\cref{sec:construction-p-adic-definition-kottwitz-gerbe}).
In fact, vector bundles on $B_{\overline{\F}_p^{\HK}}\mathbb{V}(-1) $ are naturally equivalent to $(\varphi,N)$-modules over $\Q_p^\un$ (\cref{sec:construction-p-adic-description-of-vector-bundles-on-kottwitz-stack}).

We are now in a position to state precisely our version of the $p$-adic monodromy theorem.

\begin{theorem}[{$p$-adic monodromy theorem}]\label{localmonodromy}
  The pullback along $\Psi\colon \C_p^\HK\to B_{\overline{\F}_p^{\HK}}\mathbb{V}(-1)  $ defines a $t$-exact equivalence
  \[
    \Psi^\ast\colon \Perf(B_{\overline{\F}_p^{\HK}}\mathbb{V}(-1))\overset{\sim}{\longrightarrow} \Perf(\C_p^\HK).
  \]
  In particular, for any $L\subseteq \overline{\Q}_p$ the category of vector bundles on $L^\HK$ is naturally equivalent to the category of $(\varphi,N,\Gal_L)$-modules over $\Q_p^\un$.
 \end{theorem}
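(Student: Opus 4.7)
The plan has two main components: first, to establish the master equivalence $\Psi^*$ for $\C_p$; and second, to deduce the description of $\Perf(L^{\HK})$ for arbitrary $L \subseteq \overline{\Q}_p$ by Galois descent. For the reduction step, since $L^{\HK}$ depends only on the completion of $L$, arc-hyperdescent of the analytic de Rham stack (\cref{TheoMaindeRham2}) applied to the Galois cover $\Marc(\C_p) \to \Marc(L)$ yields $L^{\HK} \cong \C_p^{\HK}/\Gal_L^{\sm}$. Because $\Psi$ is $\Gal_{\Q_p}^{\sm}$-equivariant, the equivalence descends to produce $\Perf(L^{\HK}) \simeq \Perf(B_{\overline{\F}_p^{\HK}}\mathbb{V}(-1)/\Gal_L^{\sm})$, whose heart identifies with the category of $(\varphi, N, \Gal_L)$-modules over $\Q_p^{\un}$ via the Cartier-duality interpretation of vector bundles on $B_{\overline{\F}_p^{\HK}}\mathbb{V}(-1)$ as $(\varphi, N)$-modules already explained in the paper.

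For the core equivalence $\Psi^*\colon \Perf(B_{\overline{\F}_p^{\HK}}\mathbb{V}(-1)) \xrightarrow{\sim} \Perf(\C_p^{\HK})$, I would first establish fully faithfulness via the projection formula. By its very construction, $\Psi$ exhibits $\C_p^{\HK}$ as a $\mathbb{V}(-1)$-torsor over $B_{\overline{\F}_p^{\HK}}\mathbb{V}(-1)$, classified by the universal (nonsplit) extension of $\mathcal{O}$ by $\mathcal{O}(-1)$ (the extension arising from Tate's elliptic curve, cf.\ \cref{intro-rmk-after-main-thm1}). Computing $\Psi_*\mathcal{O}_{\C_p^{\HK}}$ via the torsor structure, Cartier duality, and the cohomological smoothness results of \cref{sec:defin-first-prop-3-smoothness-primness-of-analytic-de-rham-stacks} should identify it with $\mathcal{O}_{B_{\overline{\F}_p^{\HK}}\mathbb{V}(-1)}$, giving fully faithfulness. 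The $t$-exactness then follows from the cohomological smoothness of $\Psi$ (as a torsor under a cohomologically smooth group) together with the explicit description of $t$-structures on perfect modules from \cref{PropPerfectBerkovichSpaces}; combined with fully faithfulness this reduces the remaining task to essential surjectivity on the hearts, i.e., on vector bundles.

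The hard part will be essential surjectivity, which is where the actual content of the $p$-adic monodromy theorem enters. Given a vector bundle $V$ on $\C_p^{\HK}$, pullback along $\C_p^{\HK} \to \overline{\F}_p^{\HK} \cong \GSpec(\Q_p^{\un})/\varphi^{\Z}$ produces a $\varphi$-module $(D,\varphi)$, and the task is to equip it canonically with a monodromy operator $N$ such that $V$ is recovered from $(D, \varphi, N)$ via $\Psi^*$. The strategy is to use the fibration structure of $\C_p^{\HK}$ over $\overline{\F}_p^{\HK}$, whose geometry is controlled by Banach--Colmez spaces $\mathbb{V}(n)$, together with excision (\cref{excisionYdR}) to analyze $V$ through its restrictions to successive strata. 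The key geometric inputs --- replacing the classical analytic theory of $p$-adic differential equations --- are vanishing and finiteness statements for cohomology of Banach--Colmez spaces (ultimately rooted in the suaveness of the perfectoid open punctured disc, \cref{sec:geom-prop-analyt-1-cohom-smoothness-of-perfectoid-unit-disc}, and its consequences in \cref{sec:defin-first-prop-3-smoothness-primness-of-analytic-de-rham-stacks}), which force all extension classes encountered to originate from $(\varphi, N)$-data.

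An induction on the Harder--Narasimhan filtration of the associated vector bundle on the Fargues--Fontaine curve, combined with these cohomological vanishings, should then construct the descent datum, identifying the essential image of $\Psi^*$ with all of $\Perf(\C_p^{\HK})$. One technical point to watch out for is that the inductive step must be performed in a $\Gal_{\Q_p}^{\sm}$-equivariant manner so that the descent to $L^{\HK}$ in the reduction step goes through cleanly; this should follow automatically from the canonical nature of all constructions (and from the fact that $\Psi$ is already $\Gal_{\Q_p}^{\sm}$-equivariant by construction).
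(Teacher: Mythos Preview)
Your proposal contains a fundamental geometric error in the fully faithfulness step. You assert that $\Psi$ exhibits $\C_p^{\HK}$ as a $\mathbb{V}(-1)$-torsor over $B_{\overline{\F}_p^{\HK}}\mathbb{V}(-1)$, and hence that $\Psi$ is cohomologically smooth. This is false, and the paper explicitly says so: $\Psi_{L,\varpi}$ is \emph{neither prim nor suave}. The base change of $\Psi$ along $\overline{\F}_p^{\HK}\to B_{\overline{\F}_p^{\HK}}\mathbb{V}(-1)$ is a $\mathbb{V}(-1)$-torsor over $\C_p^{\HK}$ (via one projection), but the resulting map down to $\overline{\F}_p^{\HK}$ has enormous fibers --- it is essentially $\C_p^{\HK}$ together with an extra $\mathbb{V}(-1)$-direction. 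In particular $\Psi_*$ does not commute with base change, so the computation $\Psi_*\mathcal{O}\cong\mathcal{O}$ that you rely on cannot be carried out this way. The paper instead makes a long chain of reductions (\cref{sec:fully-faithf-psiast-1-reduction-to-smaller-extensions}, \cref{sec:fully-faithf-psiast-1-descent}, \cref{sec:fully-faithf-psiast-1-getting-rid-of-galois-group}) to replace $\Psi$ by maps $\Phi_{L'}$ for $L'$ finite over $\Q_p^{\un}\Q_p^{\Kum}$; for such $L'$ the tilt is a Laurent series field and $\mathcal{Y}_{L'}^{\dR}$ is a perfectoid punctured open disc, so that \cref{sec:geom-prop-analyt-1-cohom-smoothness-of-perfectoid-unit-disc} yields suaveness of $\Phi_{L'}$, after which an explicit cohomological calculation (\cref{nasty}) gives $\Phi_{L',\natural}1\cong 1$.

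Your essential surjectivity sketch also misses the main difficulties. Reducing to the semistable case via the Harder--Narasimhan filtration is correct in spirit, but to descend the filtration one must present $\C_p^{\HK}$ as a quotient of $\FF_{\C_p}$ by an overconvergent equivalence relation and argue via approximation by Fargues--Fontaine curves $\FF_U$ for neighborhoods $U$ of the diagonal. In the semistable case the descent datum lives in a locally profinite automorphism group, and one needs connectedness of those neighborhoods $U$ --- which is \emph{not} directly available over $\Q_p^{\un}$ (and in fact fails, as the paper explains via nontrivial slope-zero $\varphi$-modules). The paper therefore first proves a version over $\breve{\Q}_p$ (\cref{PropTsuzukiBreve}), where a delicate Artin--Schreier connectedness lemma (\cref{sec:tsuzukis-theorem-2-connectedness-for-artin-schreier-equations}) over $\overline{\F}_p$ suffices, and only then descends back to $\Q_p^{\un}$. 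None of these mechanisms appear in your outline; the invocation of ``cohomological vanishings'' for Banach--Colmez spaces is not how the argument actually runs.
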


In particular, we see that each vector bundle on $\C_p^\HK$ is a successive extension of vector bundles, which are pulled back from $\overline{\F}_p^{\HK}\cong\GSpec(\Q_p^\un)/\varphi^\Z$.
 
We can generalize this property into the notion of \textit{unipotence}.

 \begin{definition}\label{unipotence}
Let $\F$ be a pro-finite \'etale extension of $\F_p$ and let $\Q_{p,\F}$ be the maximal algebraic unramified extension of $\Q_p$ with residue field $\F$.  Given $X\in \Cat{ArcStk}_{\F}^\qfd$, we say that a vector bundle on $X^\HK$ is \textit{unipotent} (with respect to $\mathbb{F}$) if it is a successive extension of vector bundles isomorphic to the pullback of a vector bundle on $\GSpec(\Q_{p,\F})/\varphi^{\Z}$ along the map
 $$X^\HK\to  \F^{\HK}=(\GSpec(\Q_{p,\F})/\varphi^{\Z}).$$

Let $X\in \Cat{ArcStk}$. We say that a vector bundle on $X^\HK$ is \emph{quasi-unipotent} if there exists a finite \'etale covering $Y\to X$ such that after pullback to $Y^{\HK}$ it becomes unipotent with respect to a finite extension $\mathbb{F}$ of $\mathbb{F}_p$. 
 \end{definition}

 For example, \cref{localmonodromy} implies that any vector bundle on $\Q_p^{\HK}$ is quasi-unipotent (see \cref{atclassicalpoints} for a more general statement).

\begin{example}
  \label{sec:local-p-adic-notation-for-o-lambda}
  Let $\lambda = r/s$ be a rational number with $s>0$ and $r,s$ coprime integers.
  We let $D_{\lambda}$ be the simple $\varphi$-module over $\Q_p$ of slope $\lambda$, i.e., the $\varphi$-module of dimension $s$ over $\Q_p$ and Frobenius map
\[
\varphi=\left(\begin{matrix}
0 & 1 & \cdots & 0 \\
\vdots & \vdots & \ddots & \vdots \\
 0 & 0 & \cdots  & 1 \\
p^{r} & 0 &  \cdots & 0
\end{matrix} \right).
\]
We can pullback the simple isocrystal $D_{-\lambda}$ (seen as a vector bundle on $\GSpec(\Q_p)/\varphi^\Z$) to a unipotent vector bundle $\mathcal{O}(\lambda)$ on $X^{\HK}$.
In particular, we note that unipotence in \cref{unipotence} does \emph{not} mean that there exists a finite filtration by the \emph{unit} on $X^{\HK}$.
For later relevance, we note that we will usually abuse notation and denote by $\mathcal{O}(\lambda)$ as well the pullback to $\FF_X$ (if $X$ is, say, perfectoid).
Similarly, we will denote by $D_\lambda$ as well the pullback of $D_\lambda$ to $\GSpec(\Q_p^\un)/\varphi^\Z$.
 \end{example}

\begin{remark}\label{classicalmonodromy}
 Before sketching our proof of \cref{localmonodromy} further, we note that from it (or rather, from its variant replacing $\C_p$ by $\Q_p^{\cyc}$, that we will also show below) one can deduce the classical statement of the local $p$-adic monodromy theorem for $p$-adic differential equations over the  Robba ring, asserting that any $(\varphi, \partial)$-module over the Robba ring is quasi-unipotent (\cite[Theorem 1.1]{Kedlaya_monodromie}). Indeed, it is well-known that the latter can be reformulated as saying that the category of $(\varphi,\partial)$-modules over the (cyclotomic) Robba ring is equivalent to the category of $(\varphi,N,\Gal_{\Q_p^{\rm cyc}})$-modules (see e.g.\cite[\S 0.2.3]{colmez2003conjectures}\footnote{A small typo in \textit{loc.cit.}: a Frobenius is missing one side, when stating the equivalence.}). Moreover, we will show in \cref{lemma:relation-p-adic-differential-equations} below that the category of vector bundles on $\Q_p^{\cyc, \HK}$ is equivalent to the category of  $(\varphi,\partial)$-modules over the  Robba ring.\footnote{Alternatively, one can check that the notion of quasi-unipotence in \cref{unipotence} matches the notion of quasi-unipotence in \cite{Kedlaya_monodromie}, under the equivalence in \cref{lemma:relation-p-adic-differential-equations}.}

\end{remark}

We recall that in Kedlaya's proof of the $p$-adic monodromy theorem, one needs two main ingredients.
The first ingredient is the fact that $(\varphi,\partial)$-modules over the Robba ring admit slope decompositions with semi-stable graded pieces (\cite[Theorem 1.2]{Kedlaya_monodromie}).
The second ingredient is the theorem of Tsuzuki \cite[Theorem 3.4.10]{zbMATH01199189} implying that semi-stable $(\varphi, \partial)$-modules over the Robba ring are quasi-unipotent.
Our proof for essential surjectivity of $\Psi^\ast$ in \cref{localmonodromy}, i.e., \cref{sec:tsuzukis-theorem-1-essential-surjectivity}, also makes use of an analogue of the first ingredient, in the form of the classification of vector bundles on the Fargues--Fontaine curve (of which Kedlaya's slope theory is an antecedent).
But we completely reprove Tsuzuki's theorem by geometric means, avoiding any deeper theory of $p$-adic differential equations.

Our proof of the fully faithfulness of $\Psi^\ast$ in \cref{sec:fully-faithf-psiast} uses all the geometric theory of analytic de Rham stacks that we have developed before, and applies it to variants of the morphism $\Psi\colon \C_p^\HK\to B_{\overline{\F}_p^{\HK}}\mathbb{V}(-1)$.
More precisely, using rather formal approximation and descent arguments, we reduce fully faithfulness of $\Psi^\ast$ (on perfect modules) to fully faithfulness of $\Psi^\ast_L$, where
\[
  \Psi_L\colon L^\HK\to B\mathbb{V}(-1)\times_{\GSpec(\Q_p)/\varphi^{\Z}}\GSpec(\Q_p^{\un})/(\varphi^{\Z}\times \Gal_L^{\sm})
\]
is a variant of $\Psi$ for a finite extension $L$ of $\Q_p(p^{1/p^\infty})\Q_p^\un$ of $\Q_p$.  After a further twist, we can make the geometry of $L^\HK$, which is a quotient by Frobenius of a punctured pre-perfectoid open unit disc, and $\Psi_L$ reasonably explicit, so that we can check fully faithfulness of $\Psi_L^\ast$ by an explicit calculation (\cref{sec:fully-faithf-psiast-1-case-for-f}).
A critical ingredient is here the suaveness result in \cref{sec:geom-prop-analyt-1-cohom-smoothness-of-perfectoid-unit-disc}.

\subsection{Relation with the Robba ring}\label{Sec:RelationWithRobba} 
 
We briefly  deviate from the proof of our version of the $p$-adic monodromy theorem in order to compare our theory with the more classical theory over the Robba ring (as anticipated in \cref{classicalmonodromy}). The reader interested in the proof of \cref{localmonodromy} can jump directly to next subsection.
Moreover, we note that the main result of this subsection, \cref{lemma:relation-p-adic-differential-equations}, is essentially a well-known spreading out/approximation argument (whose details are unfortunately a bit tedious to present).
 
\begin{definition}\label{DefRobbaRing}
  Let $\Z_p\llbracket X \rrbracket$ be the power series ring over $\Z_p$.
  The rigid generic fiber of $\Spf (\Z_p\llbracket X \rrbracket)$ is the punctured open unit disc $\mathring{\mathbb{D}}_{\Q_p}$ (seen, e.g., as a qfd Gelfand stack).
  For $0<r<s<1$ consider the affinoid overconvergent annulus 
\[
\overline{\mathbb{D}}_{[r,s]}\subset \mathring{\mathbb{D}}_{\Q_p}
\]
of closed internal radius $r$ and closed external radius $s$. We also consider the locally closed overconvergent annulus
\[
\overline{\mathbb{D}}_{[r,1)}\subset \mathring{\mathbb{D}}_{\Q_p}
\]
of closed internal radius $r$ and open external radius $1$.  The \textit{Robba ring} is defined as   the  ring of \textit{overconvergent functions in the boundary of $\mathring{\mathbb{D}}_{\Q_p}$}:
\begin{equation}\label{eqRobbaRingPresentation}
\mathcal{R}=\varinjlim_{r\to 1} \mathcal{O}(\overline{\mathbb{D}}_{[r,1)})=\varinjlim_{r\to 1} \varprojlim_{r<s<1} \mathcal{O}(\overline{\mathbb{D}}_{[r,s]}). 
\end{equation}
\end{definition}

In classical $p$-adic Hodge theory, one is interested in $\varphi$-modules over the Robba ring $\mathcal{R}$. There are, however, different choices of $\varphi$, serving different purposes. To package all of them together, we make the following definition:

\begin{definition}\label{DefDeltaStructureRobba}
We define a \textit{$\delta$-structure} on the Robba ring $\mathcal{R}$ to be a $\delta$-structure on $\Z_p\llbracket X \rrbracket$, or  equivalently, a lift of Frobenius $\varphi$ on $\Z_p\llbracket X \rrbracket$. We call the pair $(\mathcal{R}, \varphi)$ a \textit{Robba ring with $\delta$-structure}.
\end{definition}

\begin{example}\label{ExRobbaRings}
There are two main important examples in $p$-adic Hodge theory:
\begin{enumerate}

\item The \textit{cyclotomic Robba ring} with Frobenius $\varphi(X)=(1+X)^p-1$, that is, $\varphi$ is the multiplication by $p$ on the multiplicative formal group law on $\Z_p \llbracket X \rrbracket $.

\item The \textit{Kummer Robba ring} with Frobenius $\varphi(X)=X^p$.

\end{enumerate}
\end{example}

The following lemma takes care of the dynamical properties of such a Frobenius lift. 

\begin{lemma}\label{LemmaDynamicsFrobenius}
Let $p^{-1}=|p| <r<s<1$,  and let $\varphi$ be a $\delta$-structure on the Robba ring. The following hold: 
\begin{enumerate}

\item $\varphi$ restricts to an endomorphism of $\mathring{\mathbb{D}}_{\Q_p}$. 

\item The morphism $\varphi$ has the following dynamical properties  on annuli:
\[
\varphi\colon \overline{\mathbb{D}}_{[r^{1/p},s^{1/p}]}\to \overline{\mathbb{D}}_{[r,s]}
\]
and 
\[
\varphi\colon  \overline{\mathbb{D}}_{[r^{1/p}, 1)}\to  \overline{\mathbb{D}}_{[r, 1)}.
\]
\item The morphism $\varphi$ restricts to an endomorphism of the Robba ring $\mathcal{R}$. 
\end{enumerate}
\end{lemma}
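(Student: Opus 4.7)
The plan is to unwind the definition of a $\delta$-structure and then run the standard Newton-polygon style estimate that underlies any discussion of Frobenius lifts on rigid annuli. The key observation is that a lift of Frobenius $\varphi$ on $\Z_p\llbracket X\rrbracket$ amounts to the choice of a power series $h\in \Z_p\llbracket X\rrbracket$ with
\[
\varphi(X)=X^p+p\,h(X),
\]
since $\varphi$ is already a ring endomorphism of $\Z_p\llbracket X\rrbracket$ and the condition $\varphi(f)\equiv f^p\pmod p$ reduces to imposing it on the generator $X$.

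For part (1), I would check that for any point $x$ of $\mathring{\mathbb D}_{\Q_p}$, i.e.\ any $x\in \overline{\Q}_p$ (or more generally any point valued in a nonarchimedean extension) with $|x|<1$, the element $h(x)$ converges and satisfies $|h(x)|\le 1$, so
\[
|\varphi(X)(x)|=|x^p+ph(x)|\le \max\{|x|^p,p^{-1}\}<1.
\]
In particular, $\varphi(X)\in \mathcal O(\mathring{\mathbb D}_{\Q_p})$ is topologically nilpotent, so the ring map $\Z_p\llbracket X\rrbracket\to \Z_p\llbracket X\rrbracket,\ X\mapsto \varphi(X)$, extends (continuously) to $\mathcal O(\mathring{\mathbb D}_{\Q_p})\to \mathcal O(\mathring{\mathbb D}_{\Q_p})$, giving the desired endomorphism of $\mathring{\mathbb D}_{\Q_p}$.

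For part (2), I would exploit the hypothesis $p^{-1}<r<s<1$. For a point $x$ with $r^{1/p}\le |x|\le s^{1/p}$ one has $|x|^p\in[r,s]$ and $|ph(x)|\le p^{-1}<r\le |x|^p$, so by the strict ultrametric triangle inequality
\[
|\varphi(X)(x)|=|x^p+ph(x)|=|x|^p\in [r,s].
\]
The same estimate, evaluated on points of the overconvergent annulus $\overline{\mathbb D}_{[r^{1/p},1)}$ (which is covered by the $\overline{\mathbb D}_{[r^{1/p},s^{1/p}]}$ as $s\to 1$), gives that $\varphi$ restricts to a map $\overline{\mathbb D}_{[r^{1/p},1)}\to \overline{\mathbb D}_{[r,1)}$. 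Translating this back into function algebras, the map $X\mapsto \varphi(X)$ promotes to morphisms of Gelfand rings $\mathcal O(\overline{\mathbb D}_{[r,s]})\to \mathcal O(\overline{\mathbb D}_{[r^{1/p},s^{1/p}]})$ and $\mathcal O(\overline{\mathbb D}_{[r,1)})\to \mathcal O(\overline{\mathbb D}_{[r^{1/p},1)})$.

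For part (3), I would simply pass to the colimit $r\to 1$ in the presentation \eqref{eqRobbaRingPresentation}: since $r^{1/p}\to 1$ as $r\to 1$, the compatible system of maps from part (2) assembles to an endomorphism of $\mathcal R$. There is no real obstacle here; the only thing to be mildly careful about is the hypothesis $p^{-1}<r$, which is precisely what is needed to ensure that $|ph(x)|$ does not interfere with the leading term $|x|^p$—this is why the dynamical statement is formulated for annuli close enough to the boundary of the disc, which is harmless for the Robba ring since only the behavior as $r\to 1$ matters.
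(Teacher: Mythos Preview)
Your proof is correct and follows essentially the same approach as the paper: both write $\varphi(X)=X^p+p\,h(X)$ with $h\in\Z_p\llbracket X\rrbracket$, use the strict ultrametric inequality (valid precisely because $|p|<r\le |x|^p$) to get $|\varphi(X)(x)|=|x|^p$ on the relevant annuli, and then pass to the colimit $r\to 1$ for the Robba ring.
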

\begin{proof}
  The Frobenius lift $\varphi$ is determined by a power series with $\Z_p$-coefficients $\varphi(X)$ such that $\varphi\equiv X^p \mod p\Z_p\llbracket X \rrbracket$. In particular, if $|X|<1$ then $|\varphi(X)|<1$ and so $\varphi$ restricts to an endomorphism of $\mathring{\mathbb{D}}_{\Q_p}$ proving (1).
  To prove (2), notice that if $p^{-1}=|p|<r $  and $|X|\geq r^{1/p}$ then $|\varphi(X)|\geq r$ thanks to the ultrametric property and  $|\varphi(X)-X^p|\leq |p|$ (in fact, $|\varphi(X)|=|X|^p$, which shows that $\varphi^{-1}(\overline{\mathbb{D}}_{[r,s]})=\overline{\mathbb{D}}_{[r^{1/p},s^{1/p}]}$).
  Similarly, $|X|\leq s^{1/p}$ will imply $|\varphi(X)|\leq s$. Finally, part (3) follows from (2) and the presentation of the Robba ring in \cref{eqRobbaRingPresentation}.
\end{proof}

Before giving examples of $\delta$-structure on Robba rings, let us discuss the relation with decompletions of the punctured pre-perfectoid open disc over $\Q_p$.

\begin{lemma}
Let $\varphi$ be a $\delta$-structure on the Robba ring, and consider the Berkovich space 
\[
\mathring{\mathbb{D}}^{\varphi-\rm perf}_{\Q_p}=\varprojlim_{\varphi} \mathring{\mathbb{D}}_{\Q_p}.
\]
The following hold:
\begin{enumerate}
\item The perfectoidization of   $\mathring{\mathbb{D}}^{\varphi-\rm perf}_{\Q_p}$ is naturally isomorphic to the pre-perfectoid open unit disc  $\mathring{\mathbb{D}}^{\perf, \diamond}_{\Q_p}$ with a variable $T$ satisfying $T\equiv X\mod p$, that is the locus $|T|<1$ of the arc-stack $\Marc(\Z_p\llbracket T^{1/p^{\infty}}\rrbracket )$. Under this identification, the Frobenius in $\mathring{\mathbb{D}}^{\varphi-\rm perf}_{\Q_p}$ is identified with the Frobenius $\varphi(T)=T^p$.
\item The underlying topological space $|\mathring{\mathbb{D}}^{\varphi-\rm perf}_{\Q_p}|=|\mathring{\mathbb{D}}^{\perf, \diamond}_{\Q_p}|$ has a unique $\varphi$-fixed point $0$. 
\item For $p^{-1}=|p|<r$ we have an equality of rational  subspaces of $|\mathring{\mathbb{D}}^{\varphi-\rm perf}_{\Q_p}|$:
\[
\{|T|\geq r\}=\{|X|\geq r\} \mbox{ and } \{|T|\leq r\}=\{|X|\leq r\}. 
\]

\end{enumerate}
\end{lemma}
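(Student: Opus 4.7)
The main work lies in (1); once (1) is established, (2) and (3) follow by explicit topological computations on the pre-perfectoid disc. The strategy for (1) is to use Fontaine's tilting correspondence: the key input is that the $\delta$-structure $\varphi$ on $\Z_p\llbracket X\rrbracket$ reduces modulo $p$ to the absolute Frobenius on $\F_p\llbracket X\rrbracket$ (because $\varphi(X)\equiv X^p\pmod p$), so the tilt of any sensible uniform completion of the inverse limit $\varprojlim_\varphi\Z_p\llbracket X\rrbracket$ should be the perfection $\F_p\llbracket X^{1/p^\infty}\rrbracket$, whose untilt is the pre-perfectoid disc.

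For (1), I plan to compute the perfectoidization on affinoid building blocks and then glue. Concretely, for $p^{-1}<r<s<1$ fix a rational $r^{1/p^{\infty}}$-compatible system of radii; part (2) of \cref{LemmaDynamicsFrobenius} shows that $\varphi$ restricts to $\overline{\DD}_{[r^{1/p},s^{1/p}]}\to\overline{\DD}_{[r,s]}$, giving an inverse system of affinoids whose algebras form a filtered colimit of Banach algebras with uniform completion $B_{[r,s]}$. Reducing mod $p$ and using that $\varphi$ on $\shf{O}(\overline{\DD}_{[r,s]})^{\leq 1}/p$ coincides with absolute Frobenius in the variable $X$ (this is again $\varphi(X)\equiv X^p\pmod p$), I identify $B_{[r,s]}^{\leq 1}/p$ with the corresponding annulus algebra in characteristic $p$ over the variable $\overline{X}^{1/p^\infty}$, hence with the mod-$p$ reduction of the corresponding pre-perfectoid annulus $\overline{\DD}_{[r,s]}^{\perf,\diamond}$ in variable $T$. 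By \cref{lemma:berkovich-spaceprofinite-residue-fields-closed-implies-perfectoid} (or directly by Fontaine--Scholze tilting) $B_{[r,s]}$ is perfectoid, and untilting produces a canonical element $T\in B_{[r,s]}^{\leq 1}$, with chosen $p$-power roots, such that $T\equiv X\pmod p$; compatibility of these elements as $r,s$ vary yields an identification of the perfectoidization with $\mathring{\mathbb{D}}^{\perf,\diamond}_{\Q_p}$ on the locus $|T|<1$. Gluing using the analytic topology on the open disc, together with the fact that perfectoidization commutes with the relevant analytic covers, promotes this to the claimed isomorphism of arc-stacks. That the Frobenius on the source corresponds to $T\mapsto T^p$ is built into the tilting correspondence: the Teichm\"uller-type lift $T$ of $\overline{X}\in\F_p\llbracket X^{1/p^\infty}\rrbracket$ automatically satisfies $\varphi(T)=T^p$ because its image in the tilt is fixed by the tilt of $\varphi$.

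Given (1), parts (2) and (3) are formal. For (2), a $\varphi$-fixed point $x$ in $|\mathring{\mathbb{D}}^{\perf,\diamond}_{\Q_p}|$ satisfies $|T|_x=|T^p|_x=|T|_x^p$, forcing $|T|_x\in\{0,1\}$; since we are on the open unit disc, $|T|_x<1$, hence $|T|_x=0$, and the origin is the unique such point. For (3), write $T=X+pY$ for some $Y\in\varprojlim_\varphi\Z_p\llbracket X\rrbracket^u$ with $|Y|_x\leq 1$ everywhere; then $|T-X|_x\leq p^{-1}<r$, so the ultrametric inequality gives $|T|_x=|X|_x$ whenever $\max(|T|_x,|X|_x)\geq r$, from which both equalities of rational subspaces follow. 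The only step requiring real care is the perfectoid identification in (1); everything else is a direct ultrametric computation, so the main obstacle is organizing the gluing in the annulus-by-annulus construction and verifying that the resulting $T$ and its chosen $p$-power roots are genuinely independent of the choice of overconvergent annulus in a compatible way.
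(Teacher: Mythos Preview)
Your approach to (1) is correct in outline but takes a considerably more laborious route than the paper. The paper works entirely at the integral formal level: it forms the $p$-adic completion $A$ of $\varinjlim_\varphi \Z_p\llbracket X\rrbracket$, notes that this is a $p$-torsion-free $p$-complete ring on which the lift $\varphi$ is bijective (hence a perfect $\delta$-ring), and invokes the basic structure theorem that any such ring is canonically the Witt vectors of its special fiber. Since $A/p \cong \varinjlim_{\mathrm{Frob}} \F_p\llbracket X\rrbracket \cong \F_p\llbracket X^{1/p^\infty}\rrbracket$, one reads off $A \cong (\varinjlim_n \Z_p\llbracket T^{1/p^n}\rrbracket)^{\wedge_p}$ with $T=[X]$ the Teichm\"uller lift. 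Passing to arc-stacks, inverting $p$, and restricting to the locus $\{|X|<1\}=\{|T|<1\}$ gives (1) in one stroke, with no annulus-by-annulus analysis or gluing.

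Your analytic route via uniform completions of annuli and tilting should work, but it forces you to verify compatibility of the constructed element $T$ and its $p$-power roots across overlaps --- precisely the ``organizing the gluing'' issue you flag at the end as the main obstacle. The integral approach sidesteps this entirely: the single ring $A$ already packages all the annuli, and $T=[X]$ is defined once and for all. Note also that your appeal to \cref{lemma:berkovich-spaceprofinite-residue-fields-closed-implies-perfectoid} is misplaced, since that lemma requires the Berkovich spectrum to be profinite, which fails for an annulus; your parenthetical ``or directly by Fontaine--Scholze tilting'' is the correct repair, though again the integral argument renders this step unnecessary. Your treatments of (2) and (3) agree with the paper's.
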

\begin{proof}
Let $A$ be the completion of $\varinjlim_{\varphi }\Z_p \llbracket X \rrbracket $ for the $p$-adic topology, it is a perfect $p$-complete $\delta$-ring whose special fiber is isomorphic to $\varinjlim_n \F_p \llbracket X^{1/p^{\infty}} \rrbracket $. Thus, taking $T=[X]$ to be the Teichm\"uller  lift of $X\in \F_p\llbracket X \rrbracket $, we have that $A$ is the $p$-completion of the colimit $\varinjlim_n  \Z_p \llbracket T^{1/p^n}\rrbracket $. Passing to arc-stacks, inverting $p$,  and  taking the locus $\{|X|<1\}=\{|T|<1\}$ we get point (1).   For point (2), by construction the Frobenius $\varphi$ acts on the variable $T$ by $\varphi(T)=T^p$, therefore $|\varphi(T)|_x=|T|_x^p$  for all $x\in |\mathring{\mathbb{D}}^{\perf, \diamond}_{\Q_p}|$. Thus, $\varphi$ acts totally discontinuously on the locus $\{|T|\neq 0\}$, and its unique fixed point is $T=0$. Finally, part (3) follows from the fact that $T\equiv X \mod p$ in $A$.
\end{proof}

\begin{definition}\label{DefEquivariantModuleRobbaRing}
Let $(\mathcal{R}, \varphi)$ be a Robba ring with $\delta$-structure. We define the following notions of $\varphi$-equivariant vector bundles:
\begin{enumerate}

\item Let $p^{-1}=|p|<r<1$, a $\varphi$-equivariant  vector bundle on $\overline{\mathbb{D}}_{[r,1)}$ is a vector bundle on the coequalizer qfd Gelfand stack 
\[
\ob{coeq}(\overline{\mathbb{D}}_{[r^{1/p},1)} \underset{ \varphi}{\overset{\iota_1}{\rightrightarrows} } \overline{\mathbb{D}}_{[r,1)})
\]
where $\iota_1$ is the inclusion $\overline{\mathbb{D}}_{[r^{1/p},1)}\subset \overline{\mathbb{D}}_{[r,1)}$, and $\varphi$ is the Frobenius $\varphi \colon \overline{\mathbb{D}}_{[r^{1/p},1)}\to \overline{\mathbb{D}}_{[r,1)}$. We let $\ob{VB}(\overline{\mathbb{D}}_{[r,1)})^{\varphi}$ denote the category of $\varphi$-equivariant vector bundles on $\overline{\mathbb{D}}_{[r,1)}$.

\item Let $p^{-1}=|p|<r<1$, a $\varphi$-equivariant  vector bundle on $\overline{\mathbb{D}}_{[r,r^{1/p}]}$ is a vector bundle on the coequalizer stack
\[
\ob{coeq}(\overline{\mathbb{D}}_{[r^{1/p},r^{1/p}]} \underset{\iota_2 \circ \varphi}{\overset{\iota_1}{\rightrightarrows} } \overline{\mathbb{D}}_{[r,r^{1/p}]})
\]
where $\iota_1$ is the inclusion $\overline{\mathbb{D}}_{[r^{1/p},r^{1/p}]}\subset \overline{\mathbb{D}}_{[r,r^{1/p}]}$, $\varphi$ is the Frobenius $\varphi\colon \overline{\mathbb{D}}_{[r^{1/p},r^{1/p}]}\to \overline{\mathbb{D}}_{[r,r]}$ and $\iota_2$ is the inclusion $\overline{\mathbb{D}}_{[r,r]}\subset \overline{\mathbb{D}}_{[r,r^{1/p}]}$. We let $\ob{VB}(\overline{\mathbb{D}}_{[r,r^{1/p}]})^{\varphi}$ denote the category of $\varphi$-equivariant vector bundles on $\overline{\mathbb{D}}_{[r,r^{1/p}]}$. 

\item  A $\varphi$-equivariant vector bundle on $\mathcal{R}$ is a finite projective $\mathcal{R}$-module $M$ together with a $\varphi$-semilinear Frobenius endomorphism  $F\colon M\to M$ such that its linearization
\[
 M\otimes_{\mathcal{R},\varphi} \mathcal{R}\to  M
\]
is an isomorphism. We let $\ob{VB}(\mathcal{R})^{\varphi}$ denote the category of $\varphi$-equivariant vector bundles over $\mathcal{R}$. 
\end{enumerate}
\end{definition}

\begin{remark}\label{RemPreciseDefinitionPhiModules}
Let us mention concretely what $\varphi$-equivariant vector bundles are in the cases (1) and (2) of  \Cref{DefEquivariantModuleRobbaRing}. For (2), a $\varphi$-equivariant vector bundle on $\overline{\mathbb{D}}_{[r,1)}$ is the datum of a vector bundle $V$ on $\overline{\mathbb{D}}_{[r,1)}$, together with an isomorphism 
\[
\alpha\colon \varphi^* V \cong V_{\overline{\mathbb{D}}_{[r^{1/p},1)}}
\]
of vector bundles on $\overline{\mathbb{D}}_{[r^{1/p},1)}$.  Similarly, a $\varphi$-equivariant vector bundle on $\overline{\mathbb{D}}_{[r,r^{1/p}]}$ is the datum of a vector bundle $V$ on $\overline{\mathbb{D}}_{[r,r^{1/p}]}$ together with an isomorphism 
\[
\alpha\colon \varphi^* V|_{\overline{\mathbb{D}}_{[r^{1/p},r^{1/p}]}} \cong V|_{\overline{\mathbb{D}}_{[r^{1/p},r^{1/p}]}}
\]  
of vector bundles on $\overline{\mathbb{D}}_{[r^{1/p},r^{1/p}]}$. 
\end{remark}

\begin{proposition}\label{PropRelationPhiModules}
Let $(\mathcal{R},\varphi)$ be a Robba ring with $\delta$-structure. There are natural equivalences of  categories of $\varphi$-equivariant vector bundles   
\[
\ob{VB}\bigg(\big(\mathring{\mathbb{D}}^{\varphi-\rm perf}_{\Q_p}\backslash 0 \big)/\varphi^\Z\bigg) \cong \varinjlim_{n,\varphi}\ob{VB}(\overline{\mathbb{D}}_{[r^{1/p^n},r^{1/p^{n+1}}]})^{\varphi}  \cong \varinjlim_{n,\varphi} \ob{VB}(\overline{\mathbb{D}}_{[r^{1/p^n},1)})^{\varphi}  \cong \ob{VB}(\mathcal{R}^{\rm perf} )^{\varphi} \cong   \ob{VB}(\mathcal{R})^{\varphi},
\]
where $\mathcal{R}^{\rm perf}$ is the perfection of the Robba ring, and $\ob{VB}(\mathcal{R}^{\rm perf} )^{\varphi}$ is the category of $\varphi$-equivariant vector bundles on $\mathcal{R}^{\rm perf}$, and the transition maps in the colimits are given by pullbacks along the Frobenius $\varphi$.  
\end{proposition}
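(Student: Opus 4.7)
The plan is to prove the chain of equivalences from left to right, moving from vector bundles on geometric stacks to $\varphi$-modules over various forms of the Robba ring. For the first equivalence, I would use that by construction $\mathring{\mathbb{D}}^{\varphi-\rm perf}_{\Q_p}\backslash 0 = \varprojlim_\varphi (\mathring{\mathbb{D}}_{\Q_p}\backslash 0)$ is an inverse limit of qcqs Berkovich spaces with affine transition maps, hence a limit in the kernel category of Gelfand stacks over $\Q_p$ by \cref{LemLimitsBerkovichSpaces}. Together with \cref{PropPerfectBerkovichSpaces}, this identifies $\ob{VB}((\mathring{\mathbb{D}}^{\varphi-\rm perf}_{\Q_p}\backslash 0)/\varphi^\Z)$ with compatible systems, along the Frobenius tower, of $\varphi$-equivariant vector bundles at each level. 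At the $n$-th level, \cref{LemmaDynamicsFrobenius}(2) shows that $\overline{\mathbb{D}}_{[r^{1/p^n},r^{1/p^{n+1}}]}$ is a fundamental domain for the $\varphi$-action, its $\varphi^\Z$-translates covering the full punctured disc, and the gluing of adjacent translates along their common boundary circles encodes precisely the coequalizer presentation of $\varphi$-equivariant vector bundles of \cref{DefEquivariantModuleRobbaRing}(2). Analytic descent along this cover yields the first equivalence.

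For the second equivalence, between $\varphi$-equivariant vector bundles on the closed annulus $\overline{\mathbb{D}}_{[r^{1/p^n},r^{1/p^{n+1}}]}$ and on the half-open annulus $\overline{\mathbb{D}}_{[r^{1/p^n},1)}$, the restriction functor is evident; for its inverse, given a $\varphi$-equivariant vector bundle $(V,\alpha)$ on the small annulus, I would iteratively extend $V$ outward by using $\alpha$ as gluing datum between $V$ and its Frobenius transport along the common boundary circle $|X|=r^{1/p^{n+1}}$, producing a $\varphi$-equivariant vector bundle on $\overline{\mathbb{D}}_{[r^{1/p^n},r^{1/p^{n+2}}]}$, and then iterating to exhaust $\overline{\mathbb{D}}_{[r^{1/p^n},1)} = \bigcup_k \overline{\mathbb{D}}_{[r^{1/p^n}, r^{1/p^{n+k}}]}$. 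For the third equivalence, I would globalize using \cref{LemmaDescentPerfectComplexes}: the category $\ob{VB}(\overline{\mathbb{D}}_{[r,1)})$ identifies with finite projective modules over the global sections ring $\mathcal{O}(\overline{\mathbb{D}}_{[r,1)})$, and the filtered colimit along $\varphi$ of such rings is precisely $\mathcal{R}^{\rm perf}:=\varinjlim_{\varphi}\mathcal{R}$. Finite presentability of vector bundles is preserved under such filtered colimits of rings (with the $\varphi$-structure enforcing descent of the finite presentation data to a finite level), yielding the equivalence with $\ob{VB}(\mathcal{R}^{\rm perf})^\varphi$.

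The main obstacle will be the final decompletion $\ob{VB}(\mathcal{R}^{\rm perf})^\varphi \cong \ob{VB}(\mathcal{R})^\varphi$. Given a $\varphi$-equivariant finite projective $\mathcal{R}^{\rm perf}$-module $(M,\varphi_M)$, finite presentability implies that $M$ descends to a finite projective $\mathcal{R}$-module $M_n$ at some finite stage of the colimit $\mathcal{R}^{\rm perf} = \varinjlim_n (\mathcal{R}, \varphi^n)$, and after enlarging $n$ the Frobenius isomorphism $\varphi_M\colon \varphi^* M\cong M$ descends to an isomorphism at level $n$; under the canonical identification of the $n$-th copy of $\mathcal{R}$ with $\mathcal{R}$ itself, this produces a $\varphi$-module over $\mathcal{R}$. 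The inverse functor is base change along $\mathcal{R}\to \mathcal{R}^{\rm perf}$. The delicate point is to verify that this assignment is well-defined up to canonical equivalence (independent of the choices of $n$) and yields mutually inverse equivalences. In essence, this amounts to a form of descent along the ``Frobenius covering'' $\mathcal{R}\to \mathcal{R}^{\rm perf}$, where $\varphi$ becomes invertible in the upper ring but not in the lower one; the $\varphi$-equivariant structure on $M$ provides canonical identifications between the possible descents to finite stages via powers of $\varphi_M$, guaranteeing both well-definedness and essential surjectivity. The fully faithfulness reduces to the observation that morphisms of finite projective $\mathcal{R}^{\rm perf}$-modules also descend to finite level, and that $\varphi$-equivariance kills the ambiguity in the choice of level.
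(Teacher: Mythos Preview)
Your overall strategy matches the paper's: a step-by-step chain of equivalences. Steps 2 and 3 (annulus to half-open annulus; then to global sections) are essentially the paper's Steps 3 and 4, though for Step 4 the paper invokes a result of Kedlaya--Liu (that vector bundles on a Stein space coincide with finite projective modules over its global sections), not \cref{LemmaDescentPerfectComplexes}, which is about descent of perfect complexes and does not by itself give the passage to global sections. Your first step is organised slightly differently from the paper: the paper first applies a spreading-out lemma (\cref{LemmSpreadOut}, which is a topological statement about the circle $(0,1)/\{x\mapsto x^p\}$) to reduce to $\varphi$-equivariant bundles on the \emph{perfect} annulus $\overline{\mathbb{D}}^{\varphi\text{-perf}}_{[r,r^{1/p}]}$, and only then uses that this is a filtered colimit of finite-level annuli; your route via \cref{LemLimitsBerkovichSpaces} and the fundamental-domain picture at each finite level is a legitimate variant, though the reference to \cref{PropPerfectBerkovichSpaces} (which concerns $t$-structures on perfect modules over de Rham stacks) is misplaced.

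The one place where you and the paper genuinely diverge is the final step $\ob{VB}(\mathcal{R}^{\rm perf})^\varphi \cong \ob{VB}(\mathcal{R})^\varphi$, which you flag as the ``main obstacle''. In the paper this is one line: since $\mathcal{R}^{\rm perf}=\varinjlim_\varphi \mathcal{R}$ and equalizers commute with filtered colimits, one has $\ob{VB}(\mathcal{R}^{\rm perf})^\varphi = \varinjlim_\varphi \ob{VB}(\mathcal{R})^\varphi$; but the transition functor $\varphi^*\colon \ob{VB}(\mathcal{R})^\varphi \to \ob{VB}(\mathcal{R})^\varphi$ is naturally isomorphic to the identity (the $\varphi$-structure $\alpha\colon \varphi^* M \cong M$ itself furnishes the natural isomorphism), so the colimit is constant. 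Your descent-to-finite-level argument is not wrong, but it reproves this observation the hard way and obscures why there is no obstacle at all.
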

 \begin{proof}
We construct the equivalences from left to right. Let $p^{-1}=|p|<r<1$, and consider the decompleted perfect annulus $\overline{\mathbb{D}}^{\varphi-\rm perf}_{[r,r^{1/p}]}= \varprojlim_{n,\varphi}\overline{\mathbb{D}}_{[r^{1/p^n}, r^{1/p^{n+1}}]} $ given by the loci of $\mathring{\mathbb{D}}^{\varphi-\rm perf}_{\Q_p}$ where $r\leq |X|\leq r^{1/p}$ (and where the transition maps in the limit are given by Frobenius).  Define a $\varphi$-equivariant vector bundle on $\overline{\mathbb{D}}^{\varphi-\rm perf}_{[r,r^{1/p}]}$ as in \Cref{DefEquivariantModuleRobbaRing} (2), that is, a vector bundle in the stacky coequalizer 
\[
\ob{coeq}(\overline{\mathbb{D}}^{\varphi-\rm perf}_{[r^{1/p},r^{1/p}]} \underset{\iota_2 \circ \varphi}{\overset{\iota_1}{\rightrightarrows} }\overline{\mathbb{D}}^{\varphi-\rm perf}_{[r,r^{1/p}]}).
\]
Let $\ob{VB}(\overline{\mathbb{D}}^{\varphi-\rm perf}_{[r,r^{1/p}]})^{\varphi}$ denote the category of $\varphi$-equivariant vector bundles on $\overline{\mathbb{D}}^{\varphi-\rm perf}_{[r,r^{1/p}]}$.

\item[Step 1.] We construct first a natural equivalence 
\[
\ob{VB}\bigg(\big(\mathring{\mathbb{D}}^{\varphi-\rm perf}_{\Q_p}\backslash 0 \big)/\varphi^\Z\bigg) \cong  \ob{VB}(\overline{\mathbb{D}}^{\varphi-\rm perf}_{[r,r^{1/p}]})^{\varphi}. 
\]
As one might expect, this is nothing but a spreading out argument. Indeed, the two compositions
\[
\overline{\mathbb{D}}^{\varphi-\rm perf}_{[r^{1/p},r^{1/p}]} \underset{\iota_2 \circ \varphi}{\overset{\iota_1}{\rightrightarrows} }\overline{\mathbb{D}}^{\varphi-\rm perf}_{[r,r^{1/p}]}\to  \big(\mathring{\mathbb{D}}^{\varphi-\rm perf}_{\Q_p}\backslash 0 \big)/\varphi^\Z
\]
are identified, and so we have a morphism of stacks 
\[
\ob{coeq}(\overline{\mathbb{D}}^{\varphi-\rm perf}_{[r^{1/p},r^{1/p}]} \underset{\iota_2 \circ \varphi}{\overset{\iota_1}{\rightrightarrows} }\overline{\mathbb{D}}^{\varphi-\rm perf}_{[r,r^{1/p}]})\to  \big(\mathring{\mathbb{D}}^{\varphi-\rm perf}_{\Q_p}\backslash 0 \big)/\varphi^\Z. 
\]
Taking pullbacks this produces a symmetric monoidal functor 
\begin{equation}\label{eq0j13e1omwdp1}
\ob{VB}(\big(\mathring{\mathbb{D}}^{\varphi-\rm perf}_{\Q_p}\backslash 0 \big)/\varphi^\Z)\to \ob{VB}(\overline{\mathbb{D}}^{\varphi-\rm perf}_{[r,r^{1/p}]})^{\varphi}. 
\end{equation}
Concretely, given a vector bundle $V$ on $\big(\mathring{\mathbb{D}}^{\varphi-\rm perf}_{\Q_p}\backslash 0 \big)/\varphi^\Z$, its restriction to $\mathring{\mathbb{D}}^{\varphi-\rm perf}_{\Q_p}\backslash 0 $ is a $\varphi$-equivariant vector bundle, i.e., a vector bundle $V$ together with an isomorphism $\varphi^* V \cong V$. This restricts to the datum of an object in $\ob{VB}(\overline{\mathbb{D}}^{\varphi-\rm perf}_{[r,r^{1/p}]})^{\varphi}$ by \Cref{RemPreciseDefinitionPhiModules}. The equivalence of \eqref{eq0j13e1omwdp1} follows from \Cref{LemmSpreadOut}.

\item[Step 2.] By definition, we have an equivalence of categories 
\[
\ob{VB}(\overline{\mathbb{D}}^{\varphi-\rm perf}_{[r,r^{1/p}]})^{\varphi}=\ob{eq}\big( \ob{VB}(\overline{\mathbb{D}}^{\varphi-\rm perf}_{[r,r^{1/p}]})  \underset{(\iota_2 \circ \varphi)^*}{\overset{\iota_1^*}{\rightrightarrows} }  \overline{\mathbb{D}}^{\varphi-\rm perf}_{[r^{1/p},r^{1/p}]}\big).
\]
Since the annulus $\overline{\mathbb{D}}^{\varphi-\rm perf}_{[r,s]}$ is affinoid and equal to $\varprojlim_{n,\varphi} \overline{\mathbb{D}}_{[r^{1/p^n},s^{1/p^n}]}$, we have
\[
\ob{VB}(\overline{\mathbb{D}}^{\varphi-\rm perf}_{[r,s]})=\varinjlim_{n,\varphi^*} \ob{VB}(\overline{\mathbb{D}}_{[r^{1/p^n},s^{1/p^n}]})
\]
as categories (we note that on rings the filtered colimits is \emph{uncompleted}).
Thus, as equalizers commute with filtered colimits, we find that
\[
\ob{VB}(\overline{\mathbb{D}}^{\varphi-\rm perf}_{[r,r^{1/p}]})^{\varphi}=\varinjlim_{n,\varphi^*} \ob{VB}(\overline{\mathbb{D}}_{[r^{1/p^n},r^{1/p^{n+1}}]})^{\varphi}. 
\]
\item[Step 3.] We claim that the pullback along the map of coequalizers
\[
\ob{coeq}(\overline{\mathbb{D}}_{[r^{1/p},r^{1/p}]} \rightrightarrows \overline{\mathbb{D}}_{[r,r^{1/p}]} ) \to  \ob{coeq}(\overline{\mathbb{D}}_{[r^{1/p},1)} \rightrightarrows \overline{\mathbb{D}}_{[r,1)} ) 
\]
gives rise to an equivalence of vector bundles
\[
\ob{VB}(\overline{\mathbb{D}}_{[r^{1/p^n},r^{1/p^{n+1}}]})^{\varphi}\cong \ob{VB}(\overline{\mathbb{D}}_{[r^{1/p^n},1)})^{\varphi}.
\] 
Indeed, an inverse of the pullback is constructed by taking a $\varphi$-equivariant vector bundle $V$ on $\overline{\mathbb{D}}_{[r,r^{1/p}]}$ and send it to the $\varphi$-equivariant vector bundle $\widetilde{V}$ on $\overline{\mathbb{D}}_{[r,1)}$ constructed as follows: 

\begin{itemize}

\item The restriction of $\widetilde{V}$ to $\overline{\mathbb{D}}_{[r^{1/p^n}, r^{1/p^{n+1}}]}$ is $\varphi^{n,*} V$ where $\varphi^n\colon\overline{\mathbb{D}}_{[r^{1/p^n}, r^{1/p^{n+1}}]} \to \overline{\mathbb{D}}_{[r, r^{1/p]}}$, and the descent datum along the intersections $\overline{\mathbb{D}}_{[r^{1/p^n}, r^{1/p^{n+1}}]}\cap \overline{\mathbb{D}}_{[r^{1/p^{n+1}}, r^{1/p^{n+2}}]}=\overline{\mathbb{D}}_{[r^{1/p^{n+1}}, r^{1/p^{n+1}}]}$ is given by the $\varphi^{n,*}$-pullback of the isomorphism $\varphi^* V \cong V$ on $\overline{\mathbb{D}}_{[r^{1/p}, r^{1/p}]}$.

\item The $\varphi$-structure on $\widetilde{V}$ is obtained via the isomorphisms 
\[
\varphi^* \widetilde{V}|_{[r^{1/p^{n+1}}, r^{1/p^{n+2}}]} = \varphi^* (\varphi^{n,*} V )=\varphi^{n+1,*} V \cong \widetilde{V}|_{[r^{1/p^{n+1}}, r^{1/p^{n+2}}]}.
\] 
\end{itemize}
Passing to the colimit along Frobenius pullbacks we get 
\[
\varinjlim_{n,\varphi^*}\ob{VB}(\overline{\mathbb{D}}_{[r^{1/p^n},r^{1/p^{n+1}}]})^{\varphi}  \cong \varinjlim_{n,\varphi^*} \ob{VB}(\overline{\mathbb{D}}_{[r^{1/p^n},1)})^{\varphi}. 
\]

\item[Step 4.] Let $0<r'<r<1$, the space $\mathbb{D}_{(r', 1)}$ is Stein and by \cite[Corollary 2.6.8 and Proposition 2.6.13]{KedlayaLiuRelativeII} there is a $\otimes$-equivalence of categories of vector bundles 
\[
\ob{VB}(\mathbb{D}_{(r', 1)})\cong \ob{VB}(\mathcal{O}(\mathbb{D}_{(r', 1)})) 
\]
obtained by passing to global sections and localizing on the Stein  space.  Taking colimits as $r'\to r$, we get an equivalence of categories of vector bundles
\[
\ob{VB}(\overline{\mathbb{D}}_{[r, 1)}) \cong \ob{VB}(\mathcal{O}(\overline{\mathbb{D}}_{[r, 1)})). 
\]
Passing to Frobenius equivariant objects, and taking limits along Frobenius, we get isomorphisms 
\[
\varinjlim_{n,\varphi^*} \ob{VB}( \overline{\mathbb{D}}_{[r^{1/p^n}, 1)})^{\varphi} \cong  \ob{VB}( \varinjlim_{\varphi} \mathcal{O}(\overline{\mathbb{D}}_{[r^{1/p^n}, 1)}))^{\varphi}.
\]
Taking colimits as $r\to 1$, and noticing that the restriction maps on the left term are isomorphisms for $r<r'<1$ by \cref{LemmSpreadOut}, we deduce that 
\[
\varinjlim_{n,\varphi^*} \ob{VB}( \overline{\mathbb{D}}_{[r^{1/p^n}, 1)})^{\varphi} \cong \ob{VB}(\varinjlim_{r\to 1} \varinjlim_{\varphi} \mathcal{O}(\overline{\mathbb{D}}_{[r^{1/p^n}, 1)}))^{\varphi} = \ob{VB}(\mathcal{R}^{\rm perf})^{\varphi}. 
\]

\item[Step 5.] Finally, we have a base change functor $\ob{VB}(\mathcal{R})^{\varphi}\to \ob{VB}(\mathcal{R}^{\rm perf})^{\varphi}$ of $\varphi$-equivariant modules. We claim that it is an equivalence. Indeed, since $\mathcal{R}^{\rm perf}=\varinjlim_{\varphi} \mathcal{R}$ we have an equivalence of $\varphi$-equivariant vector bundles
\[
\ob{VB}(\mathcal{R}^{\perf})^{\varphi}=\varinjlim_{\varphi} \ob{VB}(\mathcal{R})^{\varphi},
\]
where transition maps are given by pullback along Frobenius. But by definition of $\varphi$-equivariant module,  the pullback map $\varphi^* \colon \ob{VB}(\mathcal{R})^{\varphi} \to \ob{VB}(\mathcal{R})^{\varphi} $ is an equivalence of categories. The claim follows. 
 \end{proof}
 
 The following spreading out argument was used in the proof of \Cref{PropRelationPhiModules}.

 \begin{lemma}[Spreading out argument]\label{LemmSpreadOut}
 Let $X$ be a Gelfand  stack with an endomorphism $\varphi\colon X\to X$. Suppose that $X$ admits a ``radius'' map $r\colon X\to (0,1)_{\Betti}$ such that $\varphi$ is equivariant with rising to the $p$, that is, such that we have a commutative diagram of analytic stacks 
 \[
 \begin{tikzcd}
 X  \ar[r, "\varphi"] \ar[d,"r"]& X \ar[d,"r"] \\ 
 (0,1)_{\Betti}  \ar[r,"x\mapsto x^{p}"]& (0,1)_{\Betti}.
 \end{tikzcd}
 \]
For $I\subset (0,1)$ an interval let us write $X_{I}=X\times_{(0,1)_{\Betti}} I_{\Betti}$. Suppose that $\varphi$ is an isomorphism on $X$. Then the natural maps of stacks 
\begin{equation}\label{eq023koqw23r}
\ob{coeq}(X_{[r^{1/p},r^{1/p}]} \underset{\iota_2 \circ \varphi}{\overset{\iota_1}{\rightrightarrows}} X_{[r,r^{1/p}]} )\to \ob{coeq}(X_{[r^{1/p},1)}\underset{\varphi}{\overset{j}{\rightrightarrows}}X_{[r,1)})\to X/\varphi^\Z
\end{equation}
are equivalences.

 \end{lemma}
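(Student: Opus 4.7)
My plan is to reduce the statement to a computation purely in condensed sets via the Betti realization, using universality of colimits in the $\infty$-topos $\Cat{GelfStk}$. First, I will observe that the given radius map is $\varphi$-equivariant and that, since $\varphi$ acts freely on $(0,1)_{\Betti}$ (as $x^p\neq x$ for $x\in (0,1)$), it must also act freely on $X$. In particular, both $X\to X/\varphi^{\mathbb{Z}}$ and $(0,1)_{\Betti}\to (0,1)_{\Betti}/\varphi^{\mathbb{Z}}$ will be $\underline{\mathbb{Z}}$-torsors, and a direct check will show that the induced square
\[
\begin{tikzcd}
X \arrow[r,"r"] \arrow[d] & (0,1)_{\Betti} \arrow[d] \\
X/\varphi^{\mathbb{Z}} \arrow[r] & (0,1)_{\Betti}/\varphi^{\mathbb{Z}}
\end{tikzcd}
\]
is cartesian. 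Pulling back to any interval $[a,b]\subseteq (0,1)$ will then yield $X_{[a,b]}=X/\varphi^{\mathbb{Z}}\times_{(0,1)_{\Betti}/\varphi^{\mathbb{Z}}}[a,b]_{\Betti}$. Once the statement is proven for $X=(0,1)_{\Betti}$, the universality of colimits applied to the morphism $X/\varphi^{\mathbb{Z}}\to (0,1)_{\Betti}/\varphi^{\mathbb{Z}}$ will upgrade it to the statement for general $X$: both coequalizers in \cref{eq023koqw23r} will be the pullbacks of the corresponding coequalizers over $(0,1)_{\Betti}$, and hence will equal the pullback of $(0,1)_{\Betti}/\varphi^{\mathbb{Z}}$, which is $X/\varphi^{\mathbb{Z}}$.

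In the universal case $X=(0,1)_{\Betti}$, since $(-)_{\Betti}\colon\Cat{CondAni}\to\Cat{GelfStk}$ preserves colimits (\cref{sec:totally-disc-stacks-td-betti-stacks}), it suffices to verify both coequalizer identifications in condensed anima. For the first coequalizer, the fundamental closed interval $[r,r^{1/p}]$ surjects onto the circle $(0,1)/\varphi^{\mathbb{Z}}$, and the only identifications it makes in the quotient are the two endpoints via $\varphi(r^{1/p})=r$; this is precisely the gluing produced by the two maps $\iota_1, \iota_2\circ \varphi\colon [r^{1/p},r^{1/p}]\rightrightarrows [r,r^{1/p}]$, so the coequalizer is canonically isomorphic to $(0,1)/\varphi^{\mathbb{Z}}$. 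For the second coequalizer, the half-open interval $[r,1)$ surjects onto $(0,1)/\varphi^{\mathbb{Z}}$, and iterating the relation $y\sim \varphi(y)$ for $y\in [r^{1/p},1)$ together with its inverse (using that $y\in [r,1)$ implies $y^{1/p}\in [r^{1/p},1)$) shows that the generated equivalence relation on $[r,1)$ is exactly the orbit equivalence of $\varphi^{\mathbb{Z}}$; thus this coequalizer is also canonically $(0,1)/\varphi^{\mathbb{Z}}$. The natural map between the two coequalizers is induced by the inclusion of fundamental domains $[r,r^{1/p}]\hookrightarrow [r,1)$ and therefore corresponds to the identity on $(0,1)/\varphi^{\mathbb{Z}}$.

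The main conceptual subtlety will lie in the reduction step: one must verify that both coequalizers genuinely arise as pullbacks along the quotient map $X/\varphi^{\mathbb{Z}}\to (0,1)_{\Betti}/\varphi^{\mathbb{Z}}$, which in turn requires the cartesian square above (and hence the freeness of the $\varphi$-action on both $X$ and $(0,1)_{\Betti}$) together with the defining fact that $X_{[a,b]}$ is the base change of $[a,b]_{\Betti}$ along the radius map. The topological computation in the universal case is then essentially elementary, although a small amount of care is needed when analyzing the generated equivalence relation in the unbounded fundamental-domain case $[r,1)$.
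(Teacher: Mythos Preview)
Your proposal is correct and follows essentially the same strategy as the paper's proof. Both arguments reduce to the universal case $X=(0,1)_{\Betti}$ by pulling back along the induced radius map $X/\varphi^{\Z}\to S^1_{\Betti}:=(0,1)_{\Betti}/\varphi^{\Z}$ and invoking universality of colimits in the $\infty$-topos of Gelfand stacks; the universal case is then a topological identification of $S^1$ as a coequalizer in two ways. You are somewhat more explicit than the paper in spelling out the cartesian square $X\cong X/\varphi^{\Z}\times_{S^1_{\Betti}}(0,1)_{\Betti}$ (deduced from freeness of the $\varphi^{\Z}$-action, which is inherited from $(0,1)$), whereas the paper simply asserts that pulling back the topological coequalizer diagrams along $X/\varphi^{\Z}\to S^1_{\Betti}$ yields the diagrams in the statement.
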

 \begin{proof}
 Set $S^1=(0,1)/\{x\mapsto x^p\}$ be the circle obtained by taking the quotient of $(0,1)$ along the $p$-th power map. Then, we have the following isomorphisms of locally compact Hausdorff spaces
 \[
 S^1=\ob{coeq}(* \underset{r}{\overset{r^{1/p}}{\rightrightarrows}}  [r,r^{1/p}]) = \ob{coeq}( [r^{1/p},1)  \underset{x\mapsto x^p}{\overset{j}{\rightrightarrows}}  [r,1))
 \]
which pass to coequalizers at the level of Betti stacks. Taking pullbacks along the quotient of the radius morphism $r\colon X/\varphi^\Z\to S^1_{\Betti}$, and since colimits are universal in an $\infty$-topoi (i.e., they commute with pullbacks), we deduce the equivalences of stacks \eqref{eq023koqw23r}. 
\end{proof}

 \begin{corollary}\label{lemma:relation-p-adic-differential-equations}
 The category of vector bundles on $\Marc(\F_p((\pi^{1/p^{\infty}})))^{\HK}$ is equivalent to the category of $(\varphi,\partial)$-modules over \text{any} Robba ring with $\delta$-structure  as in \Cref{DefDeltaStructureRobba}.
\end{corollary}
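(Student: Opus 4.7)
The plan is to adapt the proof of \Cref{PropRelationPhiModules} step by step, replacing vector bundles on Berkovich spaces throughout by vector bundles on their analytic de Rham stacks, and using the extension of \Cref{cor:cohomology-de-rham-stack-and-de-rham-cohomology} to coefficients recorded in \Cref{dRcoeff} to upgrade $\varphi$-equivariant vector bundles to $(\varphi,\partial)$-modules.

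The first task is to identify $\Marc(\F_p((\pi^{1/p^{\infty}})))^{\HK}$ geometrically in a way that matches the decompletions appearing in \Cref{PropRelationPhiModules}. By \Cref{explicit-description-drff-stack-of-qp}(1) we have a Frobenius-equivariant isomorphism $\Yc_K\cong \mathring{\mathbb{D}}^{\times,u}_{\Q_p,\infty}$ for $K=\F_p((\pi^{1/p^{\infty}}))$, with $\varphi$ acting as the $p$-th power on the Teichm\"uller coordinate $X=[\pi]$. Passing to the de Rham stack and quotienting by Frobenius gives $\Marc(K)^{\HK}\cong (\mathring{\mathbb{D}}^{\times,\mathrm{perf},\diamond}_{\Q_p}/\varphi^\Z)^{\dR}$. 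Crucially, the de Rham stack depends only on the underlying arc-stack (\Cref{sec:totally-disc-stacks-construction-perfectoidization}), and the lemma preceding \Cref{DefEquivariantModuleRobbaRing} shows that for \emph{any} $\delta$-structure $\varphi$ on $\Z_p\llbracket X\rrbracket$ the perfectoidization of $\mathring{\mathbb{D}}^{\varphi-\mathrm{perf}}_{\Q_p}$ equals $\mathring{\mathbb{D}}^{\mathrm{perf},\diamond}_{\Q_p}$ with Frobenius raising the Teichm\"uller coordinate to the $p$-th power. Therefore
\[
\Marc(K)^{\HK}\cong \bigl(\mathring{\mathbb{D}}^{\times,\varphi-\mathrm{perf}}_{\Q_p}/\varphi^\Z\bigr)^{\dR}
\]
for any chosen $\delta$-structure $\varphi$.

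Next, I would run the four-step argument in the proof of \Cref{PropRelationPhiModules} in parallel on de Rham stacks. For Step 1, the radius map factors through the Betti stack of the underlying locally compact Hausdorff space, which is visible from the de Rham stack; applying a de Rham variant of \Cref{LemmSpreadOut} reduces vector bundles on the source to $\varphi$-equivariant vector bundles on $(\overline{\mathbb{D}}^{\varphi-\mathrm{perf}}_{[r,r^{1/p}]})^{\dR}$. For Steps 2-3, \Cref{PropApproxFFStk} gives the equality $(\overline{\mathbb{D}}^{\varphi-\mathrm{perf}}_{[r,r^{1/p}]})^{\dR}=\varprojlim_{n,\varphi}(\overline{\mathbb{D}}_{[r^{1/p^n},r^{1/p^{n+1}}]})^{\dR}$ in the kernel category, so that vector bundles commute with the limit along Frobenius pullbacks; the same Frobenius-extension trick of Step 3 of \Cref{PropRelationPhiModules} then transfers this to a colimit along Frobenius of $\varphi$-equivariant vector bundles on $(\overline{\mathbb{D}}_{[r^{1/p^n},1)})^{\dR}$. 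For Step 4, each overconvergent annulus $\overline{\mathbb{D}}_{[r,1)}$ is a smooth partially proper $\dagger$-rigid space over $\Q_p$, so by \Cref{dRcoeff} vector bundles on its analytic de Rham stack identify with vector bundles with integrable connection; the Stein localization argument in Step 4 of \Cref{PropRelationPhiModules} then yields $(\varphi,\partial)$-modules over $\mathcal{R}^{\mathrm{perf}}$. Finally, the standard fact that pullback along $\varphi$ is an equivalence on $(\varphi,\partial)$-modules over $\mathcal{R}$ descends to $(\varphi,\partial)$-modules over $\mathcal{R}$ itself.

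The main obstacle, which is where this proof really relies on the foundations developed in the paper, is Step 1, the spreading-out argument in the de Rham setting: one must know that $X\mapsto X^{\dR}$ preserves the coequalizer implicit in \Cref{LemmSpreadOut}. This follows from the commutation of $(-)^{\dR}$ with colimits (\Cref{TheoMaindeRham2}) and base change against the radius morphism to $S^1_{\Betti}$, using that colimits are universal in the $\infty$-topos $\Cat{GelfStk}^{\qfd}$. The remaining steps are essentially routine verifications using, respectively, \Cref{PropApproxFFStk}, \Cref{dRcoeff}, and the argument already present in Steps 3-5 of the proof of \Cref{PropRelationPhiModules}.
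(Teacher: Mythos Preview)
Your proposal is correct and follows the same underlying strategy as the paper, though you spell out considerably more than the paper does. The paper's proof is three lines: it cites \Cref{PropRelationPhiModules} and \Cref{explicit-description-drff-stack-of-qp}, then simply notes that vector bundles on the analytic de Rham stack are vector bundles with flat connection, i.e., $\partial$-modules. In other words, the paper applies \Cref{PropRelationPhiModules} once at the Berkovich level and tacks on the connection at the end, whereas you re-run each step of \Cref{PropRelationPhiModules} at the de Rham level and invoke \Cref{dRcoeff} at the annulus step. Both routes are valid and lead to the same place; yours makes explicit where the connection enters, while the paper's is more economical.

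Two minor imprecisions worth flagging: the overconvergent annulus $\overline{\mathbb{D}}_{[r,1)}$ is not partially proper (closed inner boundary), but this is harmless since \Cref{dRcoeff} only needs Berkovich smooth $\dagger$-rigid; and \Cref{LemmSpreadOut} is already stated for arbitrary Gelfand stacks with a radius map, so it applies directly to de Rham stacks via the map $X^{\dR}\to |X|_{\Betti}$ of \Cref{sec:cohom-prop-de-excision} without needing a separate ``de Rham variant''.
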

\begin{proof}
This follows from \Cref{PropRelationPhiModules} and \Cref{explicit-description-drff-stack-of-qp} (which describes $\Marc(\F_p((\pi^{1/p^{\infty}})))^{\HK}$ as the analytic de Rham stack of $\big(\mathring{\mathbb{D}}^{\varphi-\rm perf}_{\Q_p}\backslash 0 \big)/\varphi^\Z$) noticing that vector bundles on the analytic de Rham stack are identified with vector bundles with flat connection, i.e., $\partial$-modules. 
\end{proof}

 \subsection{Construction of the $p$-adic monodromy functor}
\label{sec:construction-p-adic-construction-of-the-functor}

We begin by defining for each $L\subseteq \overline{\Q}_p$ a Gelfand stack geometrizing Fontaine's category of $(\varphi,N, \ob{Gal}_{L})$-modules over $\Q_p^{\un}$.
We recall from \cref{sec:local-p-adic-notation-for-o-lambda} that for each $\lambda\in \Q_p$ we have the $\varphi$-module $D_\lambda$ over $\Q_p^\un$ of slope $\lambda$, seen as vector bundle on $\GSpec(\Q_p^\un)/\varphi^\Z$.

\begin{definition}
  \label{sec:construction-p-adic-definition-kottwitz-gerbe}
  Let $L\subseteq \overline{\Q}_p$ be an extension of $\Q_p$.
  \begin{enumerate}
  \item Given $\lambda \in \mathbb{Q}$ we let $\mathbb{V}(\lambda)$ be the geometric vector bundle over $\GSpec(\Q_p)/\varphi^\Z$ defined by $D_\lambda$.
    Equivalently, it is the Gelfand stack over $\GSpec(\Q_p)/\varphi^\Z$ given by the relative analytic spectrum of the algebra $\ob{Sym}_{\GSpec(\Q_p)/\varphi^\Z} (D_{-\lambda})$ over $\GSpec(\Q_p)/\varphi^\Z$.

  \item We set
  $$\Font_L:= B\mathbb{V}(-1)\times_{\GSpec(\Q_p)/\varphi^{\Z}}\GSpec(\Q_p^{\un})/(\varphi^{\Z}\times \Gal_L^{\sm})$$
 and call it \textit{Fontaine's gerbe} attached to $L$.
Here, $\Gal_L^\sm$ acts on $\Q_p^\un$ naturally and this action commutes with $\varphi$ (the notation $(-)^\sm$ refers to the discussion after \cref{ExamCondensedAnimadeRham}).
\end{enumerate}
\end{definition}

Clearly, the stacks in \Cref{sec:construction-p-adic-definition-kottwitz-gerbe} admit obvious qfd variants, and we will use the same notation for these qfd versions.
We note that the \textit{Gelfand} stack represented by the symmetric algebra $\ob{Sym}_{\GSpec(\Q_p)/\varphi^\Z} (D_{-1})$ is a relative \textit{analytic} vector bundle, i.e., $!$-locally on the target isomorphic to the projection $\A^{1,\an}_{\Q_p}\to \GSpec(\Q_p)$.

\begin{remark}
  \label{sec:construction-p-adic-description-of-vector-bundles-on-kottwitz-stack}
  We note that the datum of a vector bundle on $\GSpec(\Q_p^\un)/(\varphi^\Z\times \Gal_L^\sm)$ is the same as the datum of a $(\varphi, \ob{Gal}_{L} )$-module over $\mathbb{Q}^{\un}_p$, where $\varphi$ is a $\mathbb{Q}^{\un}_p$-semilinear operator, and the action of $\ob{Gal}_{L}$ is smooth (and commutes with the $\varphi$-action).
  Thus, a vector bundle on Fontaine's gerbe $\Font_L$ is the datum of a Fontaine's $(\varphi, N, \Gal_L)$-module over $\mathbb{Q}_p^{\un}$, where $N$ is a nilpotent   $(\varphi, \ob{Gal}_L)$-equivariant map
 \begin{equation}\label{mdr}
  N\colon V\to V(-1):=V\otimes_{\Q_p}D_1.
 \end{equation}
 This follows, for example, from the Cartier duality for analytic vector bundles over $\Q_p$ as discussed in \cite[Section 4.3.2]{camargo2024analytic}.
 We note that after fixing a basis vector $e$ of $D_{1}$ with $\varphi(e)=pe$, \cref{mdr} is the same datum as an endomorphism $N\colon V\to V$ which is $\Gal_L$-equivariant and such that $N \varphi = p\varphi N$.
\end{remark}

 Now, we want to construct for any $L\subseteq \overline{\Q}_p$ a map of stacks (natural in $L$)
 \begin{equation}\label{monmap}
  \Psi_{L, \varpi}\colon L^\HK \to \Font_L
 \end{equation}
 that will depend on the choice of a pseudo-uniformizer $\varpi\in \Q_p$.
 \cref{localmonodromy} will then be the statement that the pullback functor $\Psi_{L,\varpi}^*$ induces an equivalence on perfect modules.
 We note that $L^\HK\cong \C_p^\HK/\Gal_L^{\rm sm}$, and similarly $\Font_L\cong\Font_{\overline{\Q}_p}/\Gal_L^{\rm sm}$, and hence it suffices to construct a Galois equivariant map $\Psi_{\overline{\Q}_p,\varpi}$.

We first note, as already mentioned, that the natural map $\overline{\F}_p\to \C_p^\flat$ induces a Galois-equivariant map
\[
  \C_p^\HK\to \overline{\F}_p^\HK\cong \GSpec(\Q_p^\un)/\varphi^\Z.
  \]
Thus, in order to construct a map as in \cref{monmap} for $K=\overline{\Q}_p$,  we need to construct a ($\Gal_{\Q_p}^{\sm}$-equivariant) extension
\begin{equation}\label{nonsplit}
0\to \shf{O}\to \mathcal{E}_{\varpi}\to \shf{O}(-1)\to 0
\end{equation}
of vector bundles on $\C_p^\HK$.
Indeed, the stack $\Font_{\overline{\Q}_p}$, as a Gelfand stack over $\GSpec(\Q_p^\un)/\varphi^\Z$, classifies vector bundle extensions $0\to \mathbb{V}(-1)\to \mathcal{E}\to \mathbb{V}(0)\to 0$, or equivalently extensions $0\to \mathbb{V}(0)\to \mathcal{E}^\vee\to \mathbb{V}(1)\to 0$.
Now, we use that $D_1$ pulls back to $\mathcal{O}(-1)$ on $\C_p^\HK$.

Luckily, there exists an interesting extension of $\shf{O}(-1)$ by $\shf{O}$, and this extension is (essentially) unique.

\begin{lemma}\label{choices}
  Let $L\subseteq \overline{\Q}_p$.
  Then
  \[
    H^i(L^\HK,\shf{O}(j))\cong
    \begin{cases}
      \Q_p, & i=j=0 \\
      \Q_p, & i=1, j=1\\
      0, & \text{otherwise.}
    \end{cases}
  \]
\end{lemma}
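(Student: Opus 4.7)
The plan proceeds in three steps. First, reduce to $L = \overline{\Q}_p$ by Galois descent: since $(-)^\HK$ commutes with colimits by \cref{TheoMaindeRham2}, we have $L^\HK \cong \overline{\Q}_p^\HK/\Gal_L^\sm$, so
\[
R\Gamma(L^\HK, \shf{O}(j)) \cong R\Gamma(\Gal_L^\sm, R\Gamma(\overline{\Q}_p^\HK, \shf{O}(j))).
\]
If the inner complex is a finite-dimensional $\Q_p$-vector space in a single degree with trivial smooth Galois action, the conclusion follows for all $L$, since smooth Galois cohomology of a trivial $\Q_p$-representation equals $\Q_p$ in degree $0$ and vanishes in higher degrees.

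Second, compute $R\Gamma(\overline{\Q}_p^\HK, \shf{O}(j))$ using the presentation $\overline{\Q}_p^\HK \cong \Yc^\dR_{\overline{\Q}_p^\flat}/\varphi^\Z$ together with the observation that $\shf{O}(j)$, which on $\overline{\F}_p^\HK \cong \GSpec(\Q_p^\un)/\varphi^\Z$ is the isocrystal $(\Q_p^\un, p^{-j}\varphi)$, pulls back to $\Yc^\dR_{\overline{\Q}_p^\flat}$ as the trivial line bundle with Frobenius scaled by $p^{-j}$. Thus
\[
R\Gamma(\overline{\Q}_p^\HK, \shf{O}(j)) = \mathrm{fib}\bigl(p^{-j}\varphi - 1 \colon R\Gamma(\Yc^\dR_{\overline{\Q}_p^\flat}, \shf{O}) \to R\Gamma(\Yc^\dR_{\overline{\Q}_p^\flat}, \shf{O})\bigr).
\]
By \cref{PropepiFF} applied to $\Marc(\C_p^\flat)$ together with \cref{cor:cohomology-de-rham-stack-and-de-rham-cohomology}, the inner cohomology is identified with the overconvergent de Rham complex $[B \xrightarrow{d} B\,dX]$, where $B := \shf{O}(\Yc_{\overline{\Q}_p^\flat})$ and $X := [\varpi^\flat]$ for a pseudo-uniformizer $\varpi^\flat$ of $\overline{\Q}_p^\flat$.

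Third, compute the cohomology of the total complex
\[
B \xrightarrow{(d,\, p^{-j}\varphi - 1)} B\,dX \oplus B \xrightarrow{(p^{-j}\varphi - 1) - d} B\,dX.
\]
The $H^0$ is $(\Q_p^\un)^{\varphi = p^j}$, equal to $\Q_p$ for $j = 0$ and zero otherwise, as $\varphi$-eigenvalues of $\Q_p^\un$ over $\overline{\Q}_p$ are roots of unity. The vanishing of $H^2$ and the computation of $H^1$ — equal to $\Q_p\cdot[(0, dX/X)]$ when $j = 1$ (using $\varphi(dX/X) = p\,dX/X$) and vanishing otherwise — follow from standard overconvergent $p$-adic Hodge-theoretic inputs: namely surjectivity of $\varphi - p^j$ on $B\,dX$ modulo $dB$ with explicitly small kernel. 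The Galois action on these generators is trivial: $1$ is visibly invariant, and for $\sigma \in \Gal_{\Q_p}$ we have $\sigma(X) = X\cdot[u_\sigma]$ with $u_\sigma \in \shf{O}_{\overline{\Q}_p^\flat}^\times$, so $\sigma(dX/X) = dX/X + d\log[u_\sigma] = dX/X$ since Teichm\"uller lifts of elements of a perfect $\F_p$-algebra have vanishing differential.

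The main obstacle is the structural analysis of $\varphi - p^j$ on the overconvergent ring $B$: establishing surjectivity and identifying the small kernels is the genuine overconvergent analog of the classical fundamental exact sequence on the Fargues--Fontaine curve, with the overconvergence cutting down the infinite-dimensional $B_e^{\varphi = p^j}$ contributions present in the non-overconvergent setting. One can either invoke a descendable cover by a strictly totally disconnected space via \cref{PropDescendableCoverFiniteDimPerfectoid} and analyze $B$ in the more tractable Laurent-series form that results, or proceed by direct explicit manipulations in the coordinate $X$.
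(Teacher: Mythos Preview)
Your overall architecture matches the paper's: compute $R\Gamma(\Yc^\dR,\mathcal{O})$ as a $\varphi$-module, take the fiber of $p^{-j}\varphi-1$, and handle general $L$ by Galois descent. The difference is in where you do the core computation. The paper works first with fields $L$ whose completion has tilt $\F_q((\pi^{1/p^\infty}))$; there \cref{explicit-description-drff-stack-of-qp} identifies $\Yc_L^\dR$ with the de Rham stack of a perfectoid punctured disc over $\Q_{p^f}$, so the cohomology is read off immediately as $\Q_{p^f}$ in degree $0$ and $\Q_{p^f}\,\frac{dx}{x}$ in degree $1$. One then passes to $\C_p$ by a colimit over such $L$ (using norm maps to see the transition maps are isomorphisms), obtaining $\Q_p^{\un}$ and $\Q_p^{\un}\,\frac{dx}{x}$. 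You instead go straight to $\C_p$ and try to compute there.

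This creates a genuine gap. The space $\Yc_{\C_p}$ is not a $\dagger$-rigid space in the sense of \cref{DefDaggerRigid} (its affinoid pieces are not quotients of \emph{finite} Tate algebras), nor is it Berkovich smooth over $\Q_p$ in the sense of \cref{DefEtaleSmooth}: the coordinate $X=[\varpi^\flat]$ does not exhibit it as Berkovich \'etale over a disc. So your invocation of \cref{cor:cohomology-de-rham-stack-and-de-rham-cohomology} is not justified, and the identification of the inner cohomology with a two-term complex $[B\to B\,dX]$ is not established. The facts you need --- that $H^0(\Yc_{\C_p}^\dR,\mathcal{O})=\Q_p^{\un}$ and $H^1=\Q_p^{\un}\cdot dX/X$ --- are exactly what the paper's approximation-from-small-fields argument supplies, and they are not ``standard overconvergent inputs'' one can cite.

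Separately, your Galois-invariance step is incorrect as written. The claim that Teichm\"uller lifts have vanishing differential cannot hold in this setting: $X=[\varpi^\flat]$ is itself a Teichm\"uller lift and $dX$ generates the differentials. What one actually needs is that the \emph{cohomology class} of $d\log[u_\sigma]$ vanishes in $H^1$, which the paper handles either by the observation that $\frac{dx}{x}$ is invariant under scaling $x\mapsto ux$ in the one-dimensional $H^1$ of the punctured disc, or alternatively by appealing to the Galois-equivariant construction of \cref{ConstructionMonodromy}.
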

\begin{proof}
  We first assume that $\widehat{L}$ is perfectoid and $\widehat{L}^\flat\cong \F_q((\pi^{1/p^\infty}))$ for some $q=p^f$.
  Using \Cref{explicit-description-drff-stack-of-qp}(1), we see that
  \[
    \mathcal{Y}_L^\dR\cong (\varprojlim_{x\mapsto x^p}\mathring{\mathbb{D}}^{\times}_{\Q_{p^f}})^\dR,
  \]
  where $\Q_{p^f}$ is the unramified extension of $\Q_p$ of degree $f$.
  In particular, we can calculate
  \[
    H^i(\mathcal{Y}_L^\dR,\mathcal{O})\cong
    \begin{cases}
      \Q_{p^f},& i=0 \\
      \Q_{p^f} \frac{dx}{x},& i=1 \\
      0,& i\geq 2,
    \end{cases}
  \]
  where $x=[\pi]$ is the chosen coordinate of $\mathring{\mathbb{D}}^{\times}_{\Q_{p^f}}$.
  Thus, as $\varphi$-modules over $\Q_{p^f}$ we get $D_0\otimes_{\Q_p}\Q_{p^f}$ in degree $0$ and $D_{1}\otimes_{\Q_p}\Q_{p^f}$ in degree $1$.
  
  From here, it is easy to calculate the cohomology $H^i(L^\HK,\shf{O}(j))$ as the cohomology of the derived $\varphi$-invariants of $R\Gamma(\mathcal{Y}_L^\dR,\mathcal{O})\otimes_{\Q_{p}} D_{-j}$.
  Namely, one can observe that there are no non-trivial Ext-groups between $\varphi$-modules of different slopes.
  The result is
  \[
    H^i(L^\HK,\shf{O}(j))\cong
    \begin{cases}
      \Q_p,& i=j=0 \\
      \Q_p,& i=1, j=1\\
      0,& \textrm{ otherwise }
    \end{cases}
  \]
  as desired.
  The case for a general field $L$ with $\widehat{L}$ perfectoid follows now (using the approximation arguments in \cref{sec:fully-faithf-psiast-1-reduction-to-smaller-extensions}) by passing to the colimit of subfields $L'$ as before (noting that for a finite extension $L'\subseteq L''$ the pullback morphism $H^\ast(\mathcal{Y}^\dR_{L'},\shf{O})\to H^\ast(\mathcal{Y}^\dR_{L''},\shf{O})$ is an isomorphism, e.g., by the existence of norm maps).
  To pass to all $L\subseteq \overline{\Q}_p$, we can argue via descent from $\overline{\Q}_p$.
  We have to see therefore that the non-zero classes in $H^i(\C_p^\HK,\mathcal{O}(j))$ are invariant under $\Gal_{\Q_p}$.
  In degree $0$ this is clear, and in degree $1$ one can verify this by a calculation (namely, the class $\frac{dx}{x}$ is invariant under multiplication).
  Alternatively, \Cref{ConstructionMonodromy} below yields a Galois equivariant section of $H^1(\C_p^\HK,\mathcal{O}(1))$.
  This finishes the proof.
\end{proof}

 We proceed by constructing an explicit non-split  extension as in \cref{nonsplit}.
 For this, we need the following lemma:

\begin{lemma}
  \label{sec:construction-p-adic-map-to-quotient-by-unit-disc}
Let $\varpi\in \mathbb{Q}_p$ be a pseudo-uniformizer. Let us fix a sequence $\varpi^{\flat}=(\varpi^{1/p^n})_n$ of $p$-th roots of $\varpi$ in $\mathbb{C}_p$. Consider the map of qfd arc stacks over $\mathbb{Q}_p$
\[
\Phi_{\varpi^{\flat}}\colon \mathcal{Y}_{\mathbb{C}_p^{\flat}}^{\diamond}\to  \mathring{\mathbb{D}}^{\times,\diamond }
\]
induced by the topologically nilpotent unit $[\varpi^{\flat}]$ on $\mathcal{Y}_{\mathbb{C}_p}$.  Then for all $\sigma\in \ob{Gal}_{\mathbb{Q}_p}$ and $u\in 1+p\Z_p$ the composite  maps
\[
\mathcal{Y}_{\mathbb{C}_p^{\flat}}^{\diamond}\xrightarrow{\Phi_{\sigma((u\varpi)^{\flat})}}  \mathring{\mathbb{D}}^{\times,\diamond } \to \mathring{\mathbb{D}}^{\times,\diamond }/(1+\mathring{\mathbb{D}}^{\diamond })
\]
are equal, and they factor through a map
\[
\Phi_{\varpi} \colon  \mathcal{Y}_{\mathbb{Q}_p^{\diamond}}^{\diamond} \to \mathring{\mathbb{D}}^{\times,\diamond }/(1+\mathring{\mathbb{D}}^{\diamond})
\]
only depending on the class $\varpi\in p\Z_p/(1+p\Z_p)$.  Furthermore, this map is $\varphi$-equivariant where the action on the RHS is given by sending the coordinate $x$ to $x^p$.  Passing to de Rham stacks, we get a  $\varphi$-equivariant map
 \[
 \Phi_{\varpi}^{\dR}\colon \mathcal{Y}_{\mathbb{Q}_p^{\diamond}}^{\dR} \to (\mathring{\mathbb{D}}^{\times }/(1+\mathring{\mathbb{D}}))^{\dR} = \mathring{\mathbb{D}}^{\times }/(1+\mathring{\mathbb{D}}) .
 \]
\end{lemma}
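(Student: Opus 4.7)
The plan is to verify everything pointwise on perfectoid test algebras $B/\Q_p$, regarding a $B$-point of $\mathcal{Y}^\diamond_{\C_p^\flat}$ as a continuous ring map $f\colon \C_p^\flat\to B^\flat$, and using that $\Phi_{\varpi^\flat}(f) = f(\varpi^\flat)^\sharp \in \mathring{\mathbb{D}}^{\times}(B)$ where $\sharp\colon B^\flat\to B$ is Fontaine's multiplicative sharp map. By multiplicativity of both $f$ and $\sharp$,
\[
  \Phi_{\sigma((u\varpi)^\flat)}(f)\cdot \Phi_{\varpi^\flat}(f)^{-1} \;=\; f\!\left(\frac{\sigma((u\varpi)^\flat)}{\varpi^\flat}\right)^{\!\sharp},
\]
so the main task is to show that the ratio $r_{\sigma,u}:=\sigma((u\varpi)^\flat)/\varpi^\flat \in \mathcal{O}_{\C_p^\flat}^\times$ lies in $1+\mathfrak{m}_{\C_p^\flat}$, and that $\sharp$ sends such ratios into $(1+\mathring{\mathbb{D}})(B)$.

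Unwinding definitions, $\sigma(\varpi^{1/p^n}) = \zeta_{p^n}^{a_n(\sigma)}\varpi^{1/p^n}$ and $\sigma(u^{1/p^n}) = \zeta_{p^n}^{b_n(\sigma)}u^{1/p^n}$; the $p$-th-power compatibility produces $a(\sigma),b(\sigma)\in \Z_p$, giving $r_{\sigma,u} = \epsilon^{a(\sigma)+b(\sigma)}\,u^\flat$ with $\epsilon=(\zeta_{p^n})_n$. The element $\epsilon$ is classically in $1+\mathfrak{m}_{\C_p^\flat}$, and for $u^\flat$ I would use that $u\in 1+p\Z_p$ forces $u^{1/p^n}\to 1$ in $\mathcal{O}_{\C_p}$ (via the Newton polygon of $T^{p^n}-u$), so every component of $u^\flat$ reduces to $1\in \overline{\mathbb{F}}_p$. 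Since $\sharp$ realizes the canonical identification of residue fields between $B^\flat$ and $B$, it restricts to a map $1+\mathfrak{m}_{B^\flat}\to 1+\mathfrak{m}_B = (1+\mathring{\mathbb{D}})(B)$, and consequently $f(r_{\sigma,u})^\sharp\in (1+\mathring{\mathbb{D}})(B)$. Therefore $\Phi_{\sigma((u\varpi)^\flat)}$ and $\Phi_{\varpi^\flat}$ coincide after projection to $\mathring{\mathbb{D}}^{\times,\diamond}/(1+\mathring{\mathbb{D}}^\diamond)$.

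The remaining steps are formal. Using $\Marc(\Q_p)\cong \Marc(\C_p)/\Gal_{\Q_p}^{\sm}$ together with the colimit-preservation of the tilt functor $G$ from \cref{RemInterpretationConstruction}, one finds $\mathcal{Y}^\diamond_{\Marc(\Q_p)}\cong \mathcal{Y}^\diamond_{\C_p^\flat}/\Gal_{\Q_p}^{\sm}$; the case $u=1$ of the computation above gives Galois-invariance and hence the descended map $\Phi_\varpi$, while the case $\sigma=\mathrm{id}$ shows that $\Phi_\varpi$ only depends on the class of $\varpi$ in $p\Z_p/(1+p\Z_p)$. For $\varphi$-equivariance, $\varphi_{\C_p^\flat}(\varpi^\flat) = (\varpi^\flat)^p$ combined with multiplicativity of $\sharp$ yields $\Phi_{\varpi^\flat}(\varphi\cdot f)=\Phi_{\varpi^\flat}(f)^p$, matching the stated $\varphi$-action on $\mathring{\mathbb{D}}^{\times}$; and passing to analytic de Rham stacks is purely functorial. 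The main technical point of the argument is the verification that $u^\flat\in 1+\mathfrak{m}_{\C_p^\flat}$ for $u\in 1+p\Z_p$ — i.e.\ the Newton-polygon estimate that $u^{1/p^n}$ tends to $1$ modulo the maximal ideal, despite converging to $1$ only very slowly in norm — together with the residue-field statement for $\sharp$; once these are in place, the rest of the proof is a symbol chase.
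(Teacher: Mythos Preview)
Your core computation matches the paper's: the ratio $\sigma(\varpi^\flat)/\varpi^\flat$ is a power of $\epsilon$, hence lies in $1+\mathfrak{m}_{\C_p^\flat}$, and its image under $\sharp$ (equivalently $\theta\circ[\cdot]$, which is the paper's phrasing) lands in $1+\mathring{\mathbb{D}}$. But there is a gap in the descent step. At the level of arc-stacks one has $\mathcal{Y}^\diamond_{\Marc(\Q_p)} \cong \mathcal{Y}^\diamond_{\C_p^\flat}/\underline{\Gal_{\Q_p}}$, where the quotient is by the \emph{condensed} group $\underline{\Gal_{\Q_p}}$ --- not $\Gal_{\Q_p}^{\sm}$, which only appears after passing to de Rham stacks. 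Descending therefore requires that the two composites $\underline{\Gal_{\Q_p}} \times \mathcal{Y}^\diamond_{\C_p^\flat} \rightrightarrows \mathcal{Y}^\diamond_{\C_p^\flat} \to \mathring{\mathbb{D}}^{\times,\diamond}/(1+\mathring{\mathbb{D}}^\diamond)$ agree, and checking on each slice $\{\sigma\} \times \mathcal{Y}^\diamond_{\C_p^\flat}$ is a priori insufficient. The paper handles this by working with the orbit function $\sigma \mapsto \sigma([\varpi^\flat])$ in $C(\Gal_{\Q_p}, \mathbb{A}_{\inf}(\mathcal{O}_{\C_p}))$ and a congruence modulo $(p,[p^\flat]^{1/(p-1)})$ uniform in $\sigma$. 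Your route can be salvaged: the ratio of the two lifts to $\mathring{\mathbb{D}}^{\times,\diamond}$ is a map to $\mathbb{G}_m^\diamond$, and factoring through the \emph{open} subfunctor $1+\mathring{\mathbb{D}}^\diamond$ may be checked on rank-$1$ points, where the Galois parameter becomes a single $\sigma$. But you should say this rather than invoke $\Gal_{\Q_p}^{\sm}$.

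Two minor points. The Newton polygon is unnecessary: since $u\equiv 1\bmod p$ and $p$-th roots are unique in the perfect field $\overline{\F}_p$, each $u^{1/p^n}$ reduces to $1$ directly, so $u^\flat\in 1+\mathfrak{m}_{\C_p^\flat}$. And you do not address the identification $(\mathring{\mathbb{D}}^\times/(1+\mathring{\mathbb{D}}))^{\dR} = \mathring{\mathbb{D}}^\times/(1+\mathring{\mathbb{D}})$; the paper proves this first, using that $\mathbb{G}_m^\dagger \subset 1+\mathring{\mathbb{D}}$ and $\mathring{\mathbb{D}}^{\times,\dR} = \mathring{\mathbb{D}}^\times/\mathbb{G}_m^\dagger$.
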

\begin{proof}
  Let us first notice that $\mathring{\mathbb{D}}^{\times}/ (1+\mathring{\mathbb{D}})$ is isomorphic to its de Rham stack (cf. \Cref{sec:construction-p-adic-de-rham-stacks-of-finitary-sheaves} for a more general statement).
  Indeed, using \cref{TheoMaindeRham2} this follows from the fact that $\mathbb{G}_m^{\dagger}\subset   1+\mathring{\mathbb{D}}$ and that $\mathring{\mathbb{D}}^{\times,\dR}= \mathring{\mathbb{D}}^{\times}/\mathbb{G}_m^{\dagger}$ as $\mathring{\mathbb{D}}^{\times}$ is an open $\mathbb{G}_m^{\dagger}$-stable subspace of $\mathbb{G}_m^{\an}$.

Consider then the map
\[
 \mathcal{Y}_{\mathbb{C}_p}^{\diamond} \xrightarrow{\Phi_{\varpi^{\flat}}} \mathring{\mathbb{D}}^{\times,\diamond}\to \mathring{\mathbb{D}}^{\times,\diamond}/ (1+\mathring{\mathbb{D}}^{\diamond}).
\]
It is clear that the composite identifies $\Phi_{\varpi^{\flat}}$ and $\Phi_{(u\varpi)^{\flat}}$ for $u\in 1+p\Z_p$.  We want to show that it is $\Gal_{\Q_p}$-equivariant.
For this, consider the action map
\[
m\colon \Gal_{\mathbb{Q}_p}\times \mathcal{Y}^{\diamond}_{\mathbb{C}_p^{\flat}} \to  \mathcal{Y}^{\diamond}_{\mathbb{C}_p^{\flat}}.
\]
We want to see that the maps $[\varpi^{\flat}] \circ m$ and $[\varpi^{\flat}] \circ \pr_{2}$ are equal.
Let $S=\Spa(R)$ be a qfd perfectoid affinoid space over $\mathbb{Q}_p$ with a map $f\colon S\to \Gal_{\mathbb{Q}_p}\times \mathcal{Y}^{\diamond}_{\mathbb{C}_p^{\flat}}$ over $\mathbb{Q}_p$.
This is equivalent to a map of rings
\[
g\colon \mathbb{A}_{\inf}(\mathcal{O}_{\mathbb{C}_p})\to R,
\]
which maps $p$ and $[\varpi^{\flat}]$ to pseudo-uniformizers of $R$, and a map $C(\Gal_{\mathbb{Q}_p}, \mathbb{Q}_p)\to R$.
Together these maps give rise to a map
\[
h\colon  C(\Gal_{\mathbb{Q}_p}, \mathbb{A}_{\inf}(\mathcal{O}_{\mathbb{C}_p}))\to R.
\]
(as the LHS is the tensor product of $C(\Gal_{\mathbb{Q}_p}, \mathbb{Q}_p)$ with $\mathbb{A}_{\inf}(\shf{O}_{\mathbb{C}_p})$ over $\Z_p$).
The composite of  $f$ with $[\varpi^{\flat}] \circ \pr_2$ is induced by the pseudo-uniformizer $g([\varpi^{\flat}])\in R$.
On the other hand, the map $ [\varpi^{\flat}] \circ  m \circ  f$ arises from the pseudo-uniformizer $f(\mathrm{Orb}([\varpi^{\flat}]))$ where $\mathrm{Orb}([\varpi^{\flat}]) \in C(\Gal_{\mathbb{Q}_p}, \mathbb{A}_{\inf}(\shf{O}_{\mathbb{C}_p)})$ is the orbit $\sigma\mapsto \sigma([\varpi^\flat])$ of $[\varpi^{\flat}]$.
We know that for $\sigma\in \Gal_{\mathbb{Q}_p} $
  \[
 \sigma([\varpi^{\flat}])/  [\varpi^{\flat}] \equiv 1 \mod (p, [p^{\flat}]^{ 1/(p-1)})
  \]
  as $\sigma$ acts on $\varpi^{1/p^n}$ by multiplication by a $p^n$-th root of unit (with $p^{\flat}=(p^{1/p^n})_n$).
  Therefore, the quotient
\[
f(\mathrm{Orb}([\varpi]^{\flat}))/g([\varpi]^{\flat})
\]
lands in the locus $1+ (p, g([p^{\flat}]^{1/(p-1)})) R^{\leq 1}\subset 1+R^{<1}$.
This implies that the two maps $[\varpi^{\flat}]\circ m$ and $[\varpi^{\flat}]\circ \pr_2$ from $\Gal_{\mathbb{Q}_p}\times \mathcal{Y}_{\mathbb{C}_p^{\flat}}^{\diamond}$  to $(\mathring{\mathbb{D}}^{\times,\diamond}/ (1+\mathring{\mathbb{D}}^{\diamond}))$ are the same proving what we wanted.
\end{proof}

\begin{remark}
  \label{sec:construction-p-adic-de-rham-stacks-of-finitary-sheaves}
  The de Rham stacks of quotients like $\mathring{\mathbb{D}}^{\times }/(1+\mathring{\mathbb{D}})$ can be analyzed more generally:
  let $X=U/D$ be a quotient with $U\subseteq \A^{n}_{\Q_p}$ open, and $D\subseteq \A^n_{\Q_p}$ an open unit disc acting on $U$ by translations seen as a qfd arc-stack (or more generally a finitary qfd arc-sheaf, i.e., a qfd analog of \cite[Definition 4.22]{scholze2024berkovichmotives}, \cite[Proposition 3.12]{gerth2024hodge}).
  Then $X^\dR\cong U/D$ with now $U, D$ seen as qfd Gelfand stacks.
  Indeed, this follows from \cref{TheoMaindeRham2} and the fact that $U\times_{\GSpec(\Q_p)} D\to U\times_{\GSpec(\Q_p)}U$ is \'etale.
  We note that $X$ is quasi-finitary in the sense of \cref{sec:main-descent-result-1-descent-for-the-big-de-rham-stack}, and hence we can deduce from \cref{sec:main-descent-result-1-descent-for-the-big-de-rham-stack} that the same holds for the big de Rham stack $X^\dRall\cong U/D$ (now $U,D$ are seen as Gelfand stacks).
\end{remark}

We are ready to exhibit the key construction of an extension as in \cref{nonsplit}.

\begin{construction}\label{ConstructionMonodromy}
  Let $x$ be the coordinate of $X':=\mathring{\mathbb{D}}_{\mathbb{Q}_p}^{\times}$, and let $\mathcal{E}'$ be given by the rank $2$-vector bundle $\shf{O}_{X'} \cdot 1 \oplus \shf{O}_{X'} \cdot \ell_x$.\footnote{$\ell_x$ is a formal basis vector, but should be thought of as $\log x$.}
  Then we equip $\mathcal{E}'$ with the following $1+\mathring{\mathbb{D}}$-equivariant structure: given $R$ a  Gelfand ring with map $f\colon \GSpec R\to X'=\mathring{\mathbb{D}}_{\mathbb{Q}_p}^{\times}$, the action of $1+R^{<1}$  on $f^*\mathcal{E}'$ given by the trivial action on the basis element $1$ and by sending $\ell_x$  to
\[
\ell_{rx}:= \ell_x + \log r\cdot 1
\]
for $r\in 1+R^{<1}$, where $\log (r)$ arises from the logarithm homomorphism $\log\colon 1+\mathring{\mathbb{D}}\to \mathring{\mathbb{D}}$. In addition, by defining $\varphi(\ell_x)= \ell_{x^p} = p \ell_x$ we can make $\mathcal{E}$ a $\varphi$-equivariant vector bundle on $X= \mathring{\mathbb{D}}^{\times }/(1+\mathring{\mathbb{D}})$, where $\varphi$ is the operator sending $x$ to $x^p$. 

Note that we have a natural extension of the $\varphi$-equivariant vector bundles on $X$
\[
0\to \shf{O}_X\cdot 1\to \mathcal{E} \to \shf{O}_{X}(-1)\to 0,
\]
where $\shf{O}_X(-1)=\shf{O}_X\cdot \ell_x$, and $\varphi_{\shf{O}_X(-1)}=p\varphi_{\shf{O}_X}$. The bundle $\mathcal{E}'$ is then  endowed with a $1+\mathring{\mathbb{D}}$-equivariant connection obtained by the derivation of $1+\mathring{\mathbb{D}}$ on $\mathcal{E}$. Concretely, this connection  $\nabla$ is trivial on $1$ and sends
\[
\nabla(\ell_x)= \frac{dx}{x}. 
\]

Let $\varpi\in \mathbb{Q}_p$ be a pseudo-uniformizer.
By taking the pullback along the map $\Phi_{\varpi}\colon \mathcal{Y}_{\mathbb{Q}_p^{\diamond}}^{\dR} \to X$ from \cref{sec:construction-p-adic-map-to-quotient-by-unit-disc} we get a $\varphi$-equivariant vector bundle on $\mathcal{Y}_{\mathbb{Q}_p^{\diamond}}^{\dR}$ which descends to an extension
\begin{equation}\label{explicitext}
 0\to \shf{O}\to \mathcal{E}_{\varpi}\to \shf{O}(-1)\to 0
\end{equation}
of vector bundles on $\Q_p^{\HK}$.
As already remarked, this extension is classified by a map of stacks
\[
\Psi_{\Q_p,\varpi}\colon\Q_p^{\HK} \to \Font_{\Q_p}.
\]
Pullback along $\Font_L\to \Font_{\Q_p}$ defines the desired map $\Psi_{L,\varpi}\colon L^\HK\to \Font_{L}$ for any $L\subseteq \overline{\Q}_p$.
\end{construction}

Next, we note that the isomorphism class of the extension of vector bundles \cref{explicitext} can be also realized as the first Hyodo--Kato cohomology of a Tate elliptic curve. More precisely:

\begin{lemma}\label{LemComputationTateCurve}
  Let $\varpi\in \Q_p$ be a pseudo-uniformizer.
  Let $f\colon X_{\varpi}=\mathbb{G}_{m,\Q_p}^\diamond/\varpi^{\Z}\to \Marc(\Q_p)$ be Tate's elliptic curve over $\Q_p$, seen as a qfd arc-stack (here, $\mathbb{G}_{m,\Q_p}=\mathbb{G}_{m,\Q_p}^{\rm an}$).
  We have a natural isomorphism
 \begin{equation*}
  (f^{\HK})_*\mathcal{O}\cong \mathcal{O}\oplus \mathcal{E}_{\varpi}^{-}[-1]\oplus \mathcal{O}(-1)[-2]
 \end{equation*}
 of perfect modules over $\Q_p^{\HK}$,
 where $\mathcal{E}^{-}_{\varpi}$ identifies with the opposite of the  extension \eqref{explicitext}.
 \end{lemma}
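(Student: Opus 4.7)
The plan is to present the Tate curve $X_\varpi$ as a quotient of $\mathbb{G}_{m,\Q_p}^{\diamond}$ by the action of $\varpi^{\Z}$ and compute $(f^\HK)_*\mathcal{O}$ as derived $\varpi$-invariants. Since $(-)^{\HK}$ commutes with colimits (\cref{TheoMaindeRham2}), and using that the Hyodo--Kato stack of the condensed anima $\underline{\varpi^{\Z}}$ is $\varpi^{\Z,\sm}$ (cf.\ \cref{ExamCondensedAnimadeRham}), one gets $X_\varpi^{\HK}\cong \mathbb{G}_{m,\Q_p}^{\HK}/\varpi^{\Z,\sm}$. Setting $h\colon \mathbb{G}_{m,\Q_p}^{\diamond}\to \Marc(\Q_p)$ and using descent along the $\varpi^{\Z,\sm}$-torsor $\mathbb{G}_{m,\Q_p}^{\HK}\to X_\varpi^{\HK}$ (whose pullback of $\mathcal{O}_{X_\varpi^\HK}$ is trivially $\mathcal{O}_{\mathbb{G}_{m,\Q_p}^\HK}$ as a $\varpi^{\Z}$-equivariant sheaf), this yields
\[
(f^\HK)_*\mathcal{O} \cong R\Gamma\!\bigl(\varpi^{\Z},\,(h^\HK)_*\mathcal{O}\bigr),
\]
with $\varpi$-action induced from multiplication on $\mathbb{G}_{m,\Q_p}^{\diamond}$. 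By \cref{sec:key-computations-de-1-computation-for-gm}, the underlying object $(h^{\HK})_*\mathcal{O}$ is $\mathcal{O}\oplus \mathcal{O}(-1)[-1]$, with the degree-one summand generated by $d\log([T^\flat])$.

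The first step is to verify that $\varpi$ acts trivially on cohomology: on $H^0=\mathcal{O}$ this is immediate, and on $H^1=\mathcal{O}(-1)$ it holds because $d\log([T^\flat])$ is translation invariant. Consequently, $\varpi$ acts on the complex $(h^{\HK})_*\mathcal{O}$ as $\mathrm{id}+\alpha$ for some off-diagonal class
\[
\alpha \in \mathrm{Ext}^1(\mathcal{O}(-1),\mathcal{O})\cong H^1(\Q_p^\HK,\mathcal{O}(1))\cong \Q_p,
\]
the last isomorphism coming from \cref{choices}.

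The second, and main, step is to identify $\alpha$ with the class of the extension $\mathcal{E}_\varpi^{-}$. For this, one uses \cref{ConstructionMonodromy}: the extension $\mathcal{E}'=\mathcal{O}\cdot 1\oplus \mathcal{O}\cdot\ell_x$ on $\mathring{\mathbb{D}}_{\Q_p}^{\times}$ satisfies $\ell_{rx}=\ell_x+\log r$. Pulling back via the Teichm\"uller coordinate $[T^\flat]\colon \Yc_{\mathbb{G}_{m,\Q_p}^\diamond}\to \mathring{\mathbb{D}}_{\Q_p}^{\times}$ produces an explicit Frobenius-equivariant model for $(h^\HK)_*\mathcal{O}$ (by a computation analogous to \cref{sec:key-computations-de-1-computation-for-gm} combined with \cref{LemmaDescentdRCoho}). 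Under this model, multiplication by $\varpi$ on $\mathbb{G}_{m,\Q_p}^{\diamond}$ acts by $\ell_T\mapsto \ell_{\varpi T}=\ell_T+\log\varpi$; comparing with the definition of $\Psi_{\Q_p,\varpi}$ in \cref{ConstructionMonodromy} identifies this with $-$(the class of $\mathcal{E}_\varpi$), i.e.\ with the class of $\mathcal{E}_\varpi^{-}$.

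Finally, the fiber sequence $(h^{\HK})_*\mathcal{O}^{h\varpi^{\Z}}\to (h^{\HK})_*\mathcal{O}\xrightarrow{\varpi-1}(h^{\HK})_*\mathcal{O}$ combined with the triviality of $\varpi-1$ on cohomology gives short exact sequences $0\to H^{i-1}(M)\to H^i((f^\HK)_*\mathcal{O})\to H^i(M)\to 0$; the degree-one one is classified by $\alpha$, so $H^0=\mathcal{O}$, $H^1=\mathcal{E}_\varpi^{-}$, $H^2=\mathcal{O}(-1)$. To promote this filtration to a direct sum, one observes that by \cref{choices} all $\mathrm{Ext}^i(\mathcal{O}(a),\mathcal{O}(b))$ with $a,b\in\{0,-1\}$ vanish for $i\geq 2$; applying this to the defining triangle of $\mathcal{E}_\varpi^{-}$ gives the analogous vanishing involving $\mathcal{E}_\varpi^{-}$, so the $k$-invariants of the cohomological filtration vanish and the desired splitting holds.

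The main obstacle is the identification of $\alpha$ with the class of $\mathcal{E}_\varpi^{-}$ in the second step: it requires matching the explicit $\ell_x$-computation on the Tate curve to the universal construction through the map $\Phi_\varpi$ to $\mathring{\mathbb{D}}^\times/(1+\mathring{\mathbb{D}})$, keeping track of the Frobenius-equivariant structure and the sign conventions that produce ``$^-$'' rather than $\mathcal{E}_\varpi$ itself.
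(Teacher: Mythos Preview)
Your outline is sound, and the bookkeeping (the reduction to $\varpi^{\Z}$-invariants of $(h^{\HK})_*\mathcal{O}$, the triviality of the induced action on cohomology, the splitting via Ext-vanishing from \cref{choices}) is correct; your splitting argument is in fact cleaner than the paper's, which instead uses the multiplication-by-$p$ endomorphism of the Tate curve to get an eigendecomposition.  However, the identification of the off-diagonal class $\alpha$---which is the heart of the lemma---has a real gap.

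The map you invoke, $[T^\flat]\colon \Yc_{\mathbb{G}_{m,\Q_p}^\diamond}\to \mathring{\mathbb{D}}_{\Q_p}^{\times}$, does not exist: $T$ is a unit on $\mathbb{G}_m$, not a topologically nilpotent one, so $[T^\flat]$ has no reason to land in the open punctured disc.  Relatedly, the formula $\ell_{\varpi T}=\ell_T+\log\varpi$ is ill-defined: the identity $\ell_{rx}=\ell_x+\log r$ from \cref{ConstructionMonodromy} only holds for $r\in 1+R^{<1}$, and a pseudo-uniformizer $\varpi$ is nowhere near $1$, so $\log\varpi$ simply does not converge.  As written, your argument produces no well-defined class to compare with $\mathcal{E}_\varpi$.

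The paper circumvents both issues by constructing instead a map $\mathcal{Y}_{\mathbb{G}_{m,\Q_p}}^\diamond\to \mathbb{G}_{m,\Q_p}^\diamond/(1+\mathring{\mathbb{D}}^\diamond)$, sending $(A_1^\sharp,A_2^\sharp,a)$ to $\theta_2([a^\flat])$; quotienting by $1+\mathring{\mathbb{D}}$ absorbs the ambiguity in the choice of $p$-power roots.  It then shows that under the resulting map to $\mathbb{G}_m/(1+\mathring{\mathbb{D}})\times\mathcal{Y}_{\Q_p}^\diamond$ the de Rham cohomologies agree, and that multiplication by $\varpi$ on the source becomes $(x,q)\mapsto(qx,q)$ on $\mathbb{G}_m\times\mathring{\mathbb{D}}^\times$ after $q\mapsto[\varpi^\flat]$.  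This reduces everything to an explicit Gauss--Manin computation for the universal Tate curve $\mathcal{E}_q=(\mathbb{G}_m\times\mathring{\mathbb{D}}^\times)/q^{\Z}$ over $\mathring{\mathbb{D}}^\times$, where one finds $\nabla^{\mathrm{GM}}(\tfrac{dx}{x})=-\tfrac{dq}{q}$; the minus sign is what produces $\mathcal{E}_\varpi^{-}$.  A minor point: for $(h^{\HK})_*\mathcal{O}\cong\mathcal{O}\oplus\mathcal{O}(-1)[-1]$ over $\Marc(\Q_p)$ you want \cref{sec:key-computations-de-1-invariance-over-q-p} rather than \cref{sec:key-computations-de-1-computation-for-gm}, which is stated over $\F_p$.
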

 \begin{proof}

   Let $q$ be the coordinate of the open punctured unit disc $Y'=\mathring{\mathbb{D}}^{\times}$.
   The idea of the proof is to compare the Hyodo--Kato cohomology of $X_{\varpi}$ with the de Rham cohomology of the Tate curve $\mathcal{E}_{q}= (\mathbb{G}_m \times_{\Q_p} Y' )/q^{\Z}$ over $Y'$ and explicitly compute this last one.
   To compare these cohomologies, consider the qfd arc stack
\[
\mathcal{Y}^{\diamond}_{\mathbb{G}_{m,\Q_p}}=\mathbb{G}_{m,\Q_p}^{\diamond}\times_{\Marc(\F_p)} \Marc(\Q_p) 
\]
sending a qfd perfectoid ring $A$  over $\F_p$ to a tuple $(A_1^{\sharp}, A_2^{\sharp},a)$ with $A_i^{\sharp}$ untilts of $A$, and $a\in A_1^{\sharp,\times}$ a unit of the first untilt.
We have a morphism of  arc-stacks over $\Marc(\Q_p)$
\[
\alpha\colon \mathcal{Y}^{\diamond}_{\mathbb{G}_{m,\Q_p}}\to \mathbb{G}_{m,\Q_p}^{\diamond}/(1+\mathring{\mathbb{D}}^{\diamond})
\]
whose values at a qfd perfectoid $A$ over $\F_p$ sends a tuple $(A_1^{\sharp},A_2^{\sharp},a)$ to the tuple $(A_2^{\sharp},\theta_2([a^{\flat}]))$, where $a^{\flat} = (a^{1/p^n})_n$ is a compatible sequence of $p$-th power roots of $a$ in $A_1^{\sharp}$ (that we can guarantee locally in the arc-topology of $A$), and $\theta_2\colon \A_{\inf}(A)\to A_2^{\sharp}$ is Fontaine's map with respect to the second untilt $A_2^{\sharp}$ of $A$.
We note that any two choices $a^{\flat}$ and $a^{\flat,'}$ of systems of compatible $p$-th power roots of $a$  satisfy that $\frac{a^{\flat}}{(a')^{\flat}}$ is of norm $<1$ (namely, on geometric points it consists of compatible sequences of $p$-th power roots of $1$), and so the map $\theta_2([a^{\flat}])$ is well defined up to an element of $1+\mathring{\mathbb{D}}(A^\sharp_2)$.
We also see from the definition that $\alpha$ is a morphism of abelian groups.
 
  Thus, we have a morphism of arc-stacks 
\begin{equation}
\label{eqo0qjwoqwdpomwd}
\beta\colon \mathcal{Y}^{\diamond}_{\mathbb{G}_{m,\Q_p}} \to   \mathbb{G}_{m,\Q_p}^{\diamond}/(1+\mathring{\mathbb{D}}^{\diamond}) \times_{\Marc(\Q_p)} \mathcal{Y}_{\Q_p}^{\diamond}.
\end{equation}

\begin{claim}
Let $f\colon \mathcal{Y}^{\diamond}_{\mathbb{G}_m,\Q_p} \to \mathcal{Y}_{\Q_p}^{\diamond}$ and $g\colon \mathbb{G}_{m,\Q_p}^{\diamond}/(1+\mathring{\mathbb{D}}^{\diamond}) \times_{\Marc(\Q_p)} \mathcal{Y}_{\Q_p}^{\diamond}\to \mathcal{Y}_{\Q_p}^{\diamond}$ be the natural morphisms. Then the natural map  of de Rham cohomologies 
\begin{equation}\label{eqwo9joq3noqwdf}
g^{\dR}_*1 \to f^{\dR}_* 1
\end{equation}
is an isomorphism in $\ob{D}(\mathcal{Y}_{\Q_p}^{\dR})$.
\end{claim}
\begin{proof}
We have a correspondence of arc-stacks over $\mathcal{Y}_{\Q_p}^{\diamond}$
\begin{equation}\label{eq0jqo3malwdasd}
\begin{tikzcd}
\mathcal{Y}_{\mathbb{G}_{m,\Q_p}^{\perf}}^{\diamond}\cong   \mathbb{G}^{\perf,\diamond}_{m,\Q_p} \times_{\Marc(\Q_p)} \mathcal{Y}_{\Q_p}^\diamond \ar[r, "\beta'"] \ar[d] &  \mathbb{G}_{m,\Q_p}^{\diamond}/(1+\mathring{\mathbb{D}}^{\diamond}) \times_{\Marc(\Q_p)} \mathcal{Y}_{\Q_p}^{\diamond} \\ 
\mathcal{Y}_{\mathbb{G}_{m,\Q_p}}^{\diamond} & 
\end{tikzcd}
\end{equation}
where $\mathbb{G}_{m,\Q_p}^{\perf,\diamond}$ is the inverse limit of multiplication by $p$ on $\mathbb{G}_{m,\Q_p}$, the vertical map is the natural map arising from $\mathbb{G}_{m,\Q_p}^{\perf,\diamond}\to \mathbb{G}_{m,\Q_p}^{\diamond}$ after applying the functor  $\mathcal{Y}_{(-)}$, and the horizontal map arises from the natural projection map $\mathbb{G}_{m,\Q_p}^{\perf,\diamond}\to \mathbb{G}_{m,\Q_p}^{\diamond}/(1+\mathring{\mathbb{D}}^{\diamond})$ (induced by choosing a $p$-power compatible system of a coordinate on $\mathbb{G}_{m,\Q_p}$).
Thanks to \cref{sec:key-computations-de-1-computation-for-gm} and \cref{sec:key-computations-de-1-invariance-over-q-p}, the horizontal and vertical maps in \eqref{eq0jqo3malwdasd} induce an isomorphism on de Rham cohomology over $\mathcal{Y}_{\Q_p}^{\dR}$, thus proving the claim.
\end{proof}

Fix a pseudo-uniformizer $\varpi\in \Q_p$, let $Y=\mathring{\mathbb{D}}^{\times}/(1+\mathring{\mathbb{D}})$ and consider the map of arc-stacks  $\Phi_{\varpi,\Q_p}\colon \mathcal{Y}_{\Q_p}^{\diamond}\to Y^{\diamond}= \mathring{\mathbb{D}}^{\times,\diamond}/(1+\mathring{\mathbb{D}}^{\diamond})$ from \cref{sec:construction-p-adic-map-to-quotient-by-unit-disc}.

We note that after fixing $\varpi^\flat$ the automorphism $\sigma$ of multiplication by $\varpi$ on $\mathbb{G}_{m,\Q_p}$ lifts to the automorphism $\sigma'$ by multiplication by $\varpi^\flat$ on $\mathbb{G}_{m,\Q_p}^{\mathrm{perf}}$.
Now, the morphism $(\mathrm{Id}\times \Phi_{\varpi,\Q_p})\circ \beta'$ is equivariant for the automorphism on $\mathcal{Y}^\diamond_{\mathbb{G}_{m,\Q_p}^{\mathrm{perf}}}$ induced by $\sigma'$ and the automorphism induced by $(x,q)\mapsto (qx,q)$ on $\mathbb{G}_{m,\Q_p}\times \mathring{\mathbb{D}}^\times$.
Indeed, unraveling the constructions this amounts to the observation that $x$ is mapped to $[T^\flat]$ for a coordinate $T$ on $\mathbb{G}_{m,\Q_p}$ (with a choice of $p^n$-th roots), $q$ is mapped to $[\varpi^\flat]$ and $\sigma'([T^\flat])=[\varpi^\flat][T^\flat]$.

Thus, we have a commutative diagram of morphism of stacks
\begin{equation}\label{eqsojo3qwd}
\begin{tikzcd}
\mathcal{Y}_{\mathbb{G}_{m,\Q_p}/\varpi^{\Z}}^{\diamond} \ar[r] & \mathbb{G}^{\diamond}_{m,\Q_p}/(1+\mathring{\mathbb{D}}^{\diamond})[\varpi^{\flat}]^{\Z} \times_{\Marc(\Q_p)} \mathcal{Y}_{\Q_p}^{\diamond} \ar[r] \ar[d] &\mathbb{G}^{\diamond}_{m,\Q_p}/(1+\mathring{\mathbb{D}}^{\diamond})q^{\Z}\times_{\Marc(\Q_p)} Y^{\diamond} \ar[d]\\ 
  & \mathcal{Y}_{\Q_p}^{\diamond}  \ar[r] &  Y^{\diamond}
\end{tikzcd}
\end{equation}
where the square is a pullback square.
We deduce from the claim \eqref{eqwo9joq3noqwdf} and the previous computation that the relative de Rham cohomology of $\widetilde{g}\colon \mathbb{G}^{\diamond}_{m,\Q_p}/(1+\mathring{\mathbb{D}}^{\diamond})[\varpi^{\flat}]^{\Z}\times_{\Marc(\Q_p)} \mathcal{Y}_{\Q_p}^{\diamond}\to \mathcal{Y}_{\Q_p}^{\diamond}$ is equivalent to the Hyodo--Kato cohomology of the Tate curve, after modding out by the Frobenius.
It is left to compute  $\widetilde{g}^{\dR}_{*} 1$. Thanks to the pullback square of  \eqref{eqsojo3qwd}, it suffices to compute this cohomology over $Y=\mathring{\mathbb{D}}^{\times}/(1+\mathring{\mathbb{D}})$. Furthermore, since the  open disc is contractible for the de Rham cohomology, pullback along  $\mathring{\mathbb{D}}^{\times,\dR}\to \mathring{\mathbb{D}}^{\times}/(1+\mathring{\mathbb{D}})$ is fully faithful, and thus it suffices to compute the relative de Rham  cohomology of $\mathbb{G}^{\diamond}_{m,\Q_p}/(1+\mathring{\mathbb{D}}^{\diamond})q^{\Z}\times_{\Q_p} \mathring{\mathbb{D}}^{\times,\dR}\to \mathring{\mathbb{D}}^{\times,\dR}$.
Using again that the open disc is contractible, this is the same as the de Rham cohomology of the Tate elliptic curve $\mathcal{E}_q:=(\mathbb{G}_{m}\times_{\Q_p} \mathring{\mathbb{D}}^{\times})/q^{\Z} \to  \mathring{\mathbb{D}}^{\times}$. 

We now compute the de Rham cohomology of the Tate elliptic curve over the open  punctured disc, with its connection $\nabla^{\mathrm{GM}}$ (the ``Gauss--Manin connection''). For that, let $x$ denote the coordinate of $\mathbb{G}_m$ and $q$ the coordinate of $\mathring{\mathbb{D}}^{\times}$. Let $A=\mathcal{O}(\mathbb{G}_m\times_{\Q_p} \mathring{\mathbb{D}}^{\times})$, and consider the following notations:
\begin{itemize}
\item  $\nabla_x $ is the logarithmic connection on the variable $x$ sending $f(x,q)$ to $\nabla_x(f) =  x\partial_x(f) (x,q) \frac{dx}{x}$.

\item  $\nabla_q$ is the logarithmic connection on the variable $q$ sending $f(x,q)$ to $\nabla_q(f)=q\partial_q(f)(x,q)\frac{dx}{x}$.

\item $(m^*f)(x,q)= f(qx,q)$ is the pullback along multiplication by the second variable. 

\end{itemize}

Thus, the relative de Rham cohomology of the Tate curve $h\colon \mathcal{E}_q\to \mathring{\mathbb{D}}^{\times}$ is computed via  the cohomology of $q^{\Z}$ acting on the de Rham complex of $\mathbb{G}_m$, that is, via the complex on  $\mathring{\mathbb{D}}^{\times}$
\[
0\to \mathcal{O}(\mathbb{G}_m)\otimes_{\Q_{p,\solid}} \mathcal{O}_{\mathring{\mathbb{D}}^{\times}}   \xrightarrow{(m^*-1, \nabla_x)} \mathcal{O}(\mathbb{G}_m)\otimes_{\Q_{p,\solid}} \mathcal{O}_{\mathring{\mathbb{D}}^{\times}} \oplus \Omega^1_{\mathbb{G}_m/\Q_p}(\mathbb{G}_m)\otimes_{\Q_{p,\solid}} \mathcal{O}_{\mathring{\mathbb{D}}^{\times}} \xrightarrow{(-\nabla_x,m^*-1)} \Omega^1_{\mathbb{G}_m/\Q_p}(\mathbb{G}_m)\otimes_{\Q_{p,\solid}} \mathcal{O}_{\mathring{\mathbb{D}}^{\times}} \to 0.
\]
In order to compute the $D$-module structure of this complex over $\mathring{\mathbb{D}}^{\times}$, it suffices to understand the action of the derivation $\nabla_q=q\partial_q(-) \frac{dq}{q}$ through the Gauss--Manin connection.
For that, we note the following identities of derivations acting on an element $f(x,q)\in \mathcal{O}_{\mathbb{G}_m\times_{\Q_p} \mathring{\mathbb{D}}^{\times}}$:
\begin{itemize}
\item  $\nabla_q \nabla_x= \nabla_x \nabla_q$ (clear as $x,q$ are two independent coordinates).

\item $m^* \nabla_x = \nabla_x m^*$. Indeed, we have that 
\[
m^* \nabla_x(f) = m^* ( x\partial_x(f)(x,q) \frac{dx}{x})= qx \partial_x(f)(qx,q) \frac{dx}{x} =  \nabla_x (f(qx,q))= \nabla_x m^* (f).
\]

\item $\nabla_q m^* = m^* (x\partial_x(-)) \frac{dq}{q} + m^* \nabla_q$. Indeed, we have that 
\[
\begin{aligned}
\nabla_q m^*(f)= q\partial_q( f(qx,q)) \frac{dq}{q} = \big( qx(\partial_x(f)(qx,q))+ q\partial_q(f) (qx,q) \big)\frac{dq}{q}  \\ 
= m^* ((x\partial_x)(f)  \frac{dq}{q}) + m^* \nabla_q(f). 
\end{aligned}
\]

\end{itemize}

The previous equations yield the following commutative diagram of complexes
\begin{equation}\label{eqw0joweddcwsdq}
\adjustbox{scale=0.9,center}{% <<<<<<<<<<<< added <<<<<<<<<<<<<
\begin{tikzcd}[row sep=1.5em, column sep=3.5em]% <<<<<<<<<<<< changed
0 \ar[r]&\mathcal{O}(\mathbb{G}_m)\otimes_{\Q_{p,\solid}} \mathcal{O}_{\mathring{\mathbb{D}}^{\times}}   \ar[r, "{(m^*-1,\nabla_x)}"] \ar[d,"q\partial_q"] &\mathcal{O}(\mathbb{G}_m)\otimes_{\Q_{p,\solid}} \mathcal{O}_{\mathring{\mathbb{D}}^{\times}} \oplus \Omega^1_{\mathbb{G}_m/\Q_p}(\mathbb{G}_m)\otimes_{\Q_{p,\solid}} \mathcal{O}_{\mathring{\mathbb{D}}^{\times}} \ar[r,"{(-\nabla_x,m^*-1)}"] \ar[d,"\mathcal{L}"] &\Omega^1_{\mathbb{G}_m/\Q_p}(\mathbb{G}_m)\otimes_{\Q_{p,\solid}} \mathcal{O}_{\mathring{\mathbb{D}}^{\times}}  \ar[r] \ar[d,"q\partial_q"] & 0 \\ 
0\ar[r] &\mathcal{O}(\mathbb{G}_m)\otimes_{\Q_{p,\solid}} \mathcal{O}_{\mathring{\mathbb{D}}^{\times}}   \ar[r, "{(m^*-1,\nabla_x)}"] &\mathcal{O}(\mathbb{G}_m)\otimes_{\Q_{p,\solid}} \mathcal{O}_{\mathring{\mathbb{D}}^{\times}} \oplus \Omega^1_{\mathbb{G}_m/\Q_p}(\mathbb{G}_m)\otimes_{\Q_{p,\solid}} \mathcal{O}_{\mathring{\mathbb{D}}^{\times}} \ar[r,"{(-\nabla_x,m^*-1)}"] &\Omega^1_{\mathbb{G}_m/\Q_p}(\mathbb{G}_m)\otimes_{\Q_{p,\solid}} \mathcal{O}_{\mathring{\mathbb{D}}^{\times}}  \ar[r] & 0, 
\end{tikzcd}%
}
\end{equation}
where
\[\mathcal{L}\colon \mathcal{O}(\mathbb{G}_m)\otimes_{\Q_{p,\solid}} \mathcal{O}_{\mathring{\mathbb{D}}^{\times}} \oplus \Omega^1_{\mathbb{G}_m/\Q_p}(\mathbb{G}_m)\otimes_{\Q_{p,\solid}} \mathcal{O}_{\mathring{\mathbb{D}}^{\times}}\to \mathcal{O}(\mathbb{G}_m)\otimes_{\Q_{p,\solid}} \mathcal{O}_{\mathring{\mathbb{D}}^{\times}} \oplus \Omega^1_{\mathbb{G}_m/\Q_p}(\mathbb{G}_m)\otimes_{\Q_{p,\solid}} \mathcal{O}_{\mathring{\mathbb{D}}^{\times}}
\]
is the matrix of differential operators $\left(\begin{matrix} q\partial_q & -   m^* \\ 0 & q\partial_q \end{matrix}\right)$ sending $(f(x,q), g(x,q)\frac{dx}{x})$ to the pair 
\[
\mathcal{L}(f(x,q),g(x,q) \frac{dx}{x}):= ( q\partial_q(f) - m^*(g), q\partial_q(g) \frac{dx}{x}). 
\]
The commutativity of the right square is clear, and for the left square we compute for $f(x,q)\in \mathcal{O}(\mathbb{G}_m)\otimes_{\Q_{p,\solid}} \mathcal{O}_{\mathring{\mathbb{D}}^{\times}}$ using the above relations:
\begin{align*}
  \mathcal{L}(m^\ast(f),x\partial_x(f)\frac{dx}{x})= & (q\partial_q(m^\ast f)-m^\ast x\partial_x(f), q\partial_q(x\partial_x(f))\frac{dx}{x})\\
  = & (m^\ast(x\partial_x(f))+m^\ast(q\partial_q(f))-m^\ast x\partial_x(f), q\partial_q(x\partial_x(f))\frac{dx}{x}) \\
  = & (m^\ast(q\partial_q(f), q\partial_q(x\partial_x(f))\frac{dx}{x}).
\end{align*}
This implies the commutativity of the left square.
We note that the calculation shows that $\mathcal{L}$ is the \emph{unique} morphism making both diagrams commute.
In particular, the action of $q\partial_q(-)$ via the Gauss--Manin connection on the relative de Rham cohomology has to be given via $\mathcal{L}$ in cohomological degree $1$.

We make the action on the de Rham classes now explicit.
First, we note that multiplication by $p$ on the de Rham cohomology of the Tate elliptic curve $h\colon \mathcal{E}_q\to \mathring{\mathbb{D}}^{\times}$ is given by $p^i$ on the cohomology group in degree $i$, and hence this action provides a splitting of $h^\dR_\ast(1)$.
We deduce that $h^\dR_* 1 = H^0(h_*^{\dR} 1)\oplus H^1(h_*^{\dR}1)[-1] \oplus H^2(h_*^{ \dR}1)[-2]$.
We know already concrete basis vectors for the $H^i(h_\ast^\dR(1))$, and we can use the complex \eqref{eqw0joweddcwsdq} to calculate the Gauss--Manin connection $\nabla^{\mathrm{GM}}$ on them:
\begin{itemize}
\item $H^0(h_*^{\dR} 1)= \mathcal{O}_{\mathring{\mathbb{D}}^{\times}}\cong 1\otimes \mathcal{O}_{\mathring{\mathbb{D}}^\times}$ with the trivial connection $d\colon \mathcal{O}_{\mathring{\mathbb{D}}^{\times}}\to \Omega^1_{\mathring{\mathbb{D}}^\times},\ g\mapsto dg$ and the (trivial) semilinear action of $\varphi$ sending $1\mapsto 1$.

\item  $H^1(h_*^{\dR}1)= \mathcal{O}_{\mathring{\mathbb{D}}^{\times}}\oplus \mathcal{O}_{\mathring{\mathbb{D}}^{\times}}\frac{dx}{x}$ with connection given by $\nabla^{\mathrm{GM}}(\frac{dx}{x})=-1$ as $\mathcal{L}(0,1\cdot \frac{dx}{x})= (-1,0)$, and the action of Frobenius determined by $\varphi(1)=1$ and $\varphi(\frac{dx}{x})=p\frac{dx}{x}$.
  This produces the opposite of the extension \eqref{explicitext}.
\item $H^2(h_*^{\dR}1) = \mathcal{O}_{\mathring{\mathbb{D}}^{\times}} \frac{dx}{x}$ with trivial connection and action by Frobenius $\varphi(\frac{dx}{x})=p\frac{dx}{x}$. This is the vector bundle with flat connection $\mathcal{O}(-1)$.
\end{itemize}
This finishes the proof of the lemma. 
 \end{proof}

\subsection{Fully faithfulness of $\Psi^\ast_{L,\varpi}$}
\label{sec:fully-faithf-psiast}

In this subsection, we want to show that the functor
\[
  \Psi^\ast_{L,\varpi}\colon \Perf(\Font_L)\to \Perf(L^\HK)
\]
is fully faithful for any $L\subseteq \overline{\Q}_p$.

We note that the morphism $\Psi_{L,\varpi}\colon L^\HK\to \Font_L$ has bad geometric properties (it is neither prim nor suave): its base change to $\GSpec(\Q_p^\un)/\varphi^\Z$ is an analytic vector bundle over the huge space $\C_p^\HK$.
In particular, the pushforward $\Psi_{L,\varpi,\ast}$ on the full category of quasi-coherent sheaves does not commute with base change, making it difficult to verify that $\Psi_{L,\varpi,\ast}\circ \Psi^\ast_{L,\varpi}$ is the identity.
For this reason, we will make several reduction steps.

We start with a general observation (see the proof of \Cref{PropPerfectBerkovichSpaces} for related arguments).

\begin{lemma}
  \label{sec:fully-faithf-psiast-1-perfect-complexes-on-ff-curves-are-compact}
  Let $L\subseteq \overline{\Q}_p$.
  Then each perfect module on $L^\HK$ is a compact object in $\ob{D}(L^\HK)$.
\end{lemma}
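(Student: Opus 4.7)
The plan is first to reduce the statement to the compactness of the unit $\mathcal{O}_{L^{\HK}}$. Every perfect module on $L^{\HK}$ is dualizable, and in a compactly generated stable symmetric monoidal $\infty$-category, a dualizable object $x$ is compact as soon as the unit is: $\Hom(x,-) \cong \Hom(1, x^\vee \otimes -)$ and tensoring with $x^\vee$ preserves all colimits. Hence the task reduces to showing that $R\Gamma(L^{\HK},-)$ commutes with filtered colimits.

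Next, I would reduce to the case $L = \mathbb{C}_p$ via Galois descent. Using $L^{\HK} \cong \mathbb{C}_p^{\HK}/\Gal_L^{\sm}$ (a consequence of \Cref{TheoMaindeRham2}), we have
\[
R\Gamma(L^{\HK},-) \cong R\Gamma(\Gal_L^{\sm}, R\Gamma(\mathbb{C}_p^{\HK},-)),
\]
and smooth cohomology of a locally profinite group with $\mathbb{Q}_p$-coefficients has bounded cohomological dimension, so it preserves filtered colimits. Similarly, from $\mathbb{C}_p^{\HK} = \mathcal{Y}_{\mathbb{C}_p^\flat}^{\dR}/\varphi^{\mathbb{Z},\sm}$ one obtains the fiber sequence
\[
R\Gamma(\mathbb{C}_p^{\HK}, M) \longrightarrow R\Gamma(\mathcal{Y}_{\mathbb{C}_p^\flat}^{\dR}, M) \xrightarrow{\ 1-\varphi\ } R\Gamma(\mathcal{Y}_{\mathbb{C}_p^\flat}^{\dR}, M),
\]
so the problem is further reduced to showing that $R\Gamma(\mathcal{Y}_{\mathbb{C}_p^\flat}^{\dR},-)$ commutes with filtered colimits.

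For this final step, the essential input is \Cref{sec:geom-prop-analyt-1-cohom-smoothness-of-perfectoid-unit-disc}: the structural morphism $\mathcal{Y}_{\mathbb{C}_p^\flat}^{\dR} \to \GSpec(\mathbb{Q}_p)$ is cohomologically smooth with invertible dualizing sheaf $1[2]$, so the pushforward has bounded cohomological amplitude. Writing the Stein space $\mathcal{Y}_{\mathbb{C}_p^\flat}$ as a filtered increasing union $\bigcup_n \mathcal{Y}_{\mathbb{C}_p^\flat,[r_n,s_n]}$ of qfd affinoid annuli, each of whose de Rham stack has compact unit by \Cref{PropPerfectBerkovichSpaces}, one presents $R\Gamma(\mathcal{Y}_{\mathbb{C}_p^\flat}^{\dR}, M)$ as a totalization of a tower of compact-supporting pieces. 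Combined with the bounded amplitude afforded by cohomological smoothness (and in the spirit of \Cref{LemComputationdeRhamLimit} and \Cref{TechnicalLemma6Functors}), the tower is essentially constant on each bounded cohomological range, yielding the desired commutation with filtered colimits.

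The principal obstacle is this final step, because $\mathcal{Y}_{\mathbb{C}_p^\flat}^{\dR} \to \GSpec(\mathbb{Q}_p)$ is not prim (the underlying Berkovich space is Stein, not proper), so one cannot simply invoke primness to conclude that $f_\ast$ preserves colimits. The argument must genuinely combine the uniform cohomological bound provided by cohomological smoothness of relative dimension one with the Stein exhaustion, to control $R^1 \varprojlim$-effects in the tower computing $R\Gamma$ on the whole of $\mathcal{Y}_{\mathbb{C}_p^\flat}^{\dR}$.
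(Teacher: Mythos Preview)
Your fiber-sequence reduction from $\mathbb{C}_p^{\HK}$ to $\mathcal{Y}_{\mathbb{C}_p^\flat}^{\dR}$ is logically valid as a sufficient condition, but that condition is \emph{false}: the unit of $\ob{D}(\mathcal{Y}_{\mathbb{C}_p^\flat}^{\dR})$ is not compact. The underlying Berkovich space $|\mathcal{Y}_{\mathbb{C}_p^\flat}|$ is non-compact, and if $(y_n)$ are classical points leaving every compact subset, then writing $\mathcal{Y}_{\mathbb{C}_p^\flat}^{\dR}$ as an increasing union of open annular pieces $V_k^{\dR}$ (each with compact unit) one finds
\[
R\Gamma\bigl(\mathcal{Y}_{\mathbb{C}_p^\flat}^{\dR},\,\textstyle\bigoplus_n \iota_{y_n,*}1\bigr)\;=\;\varprojlim_k \bigoplus_{y_n\in V_k}R\Gamma(V_k^{\dR},\iota_{y_n,*}1)\;\cong\;\textstyle\prod_n\overline{\Q}_p,
\]
whereas $\bigoplus_n R\Gamma(\mathcal{Y}_{\mathbb{C}_p^\flat}^{\dR},\iota_{y_n,*}1)\cong\bigoplus_n\overline{\Q}_p$. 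Cohomological smoothness only identifies $f^!$ with a twist of $f^*$; it says nothing about $f_*$ preserving colimits (that would need primness). Bounded cohomological amplitude alone does not permit interchanging a countable inverse limit with a filtered colimit, and the references you invoke (\cref{LemComputationdeRhamLimit}, \cref{TechnicalLemma6Functors}) concern very particular towers, not the Stein exhaustion applied to an arbitrary $M$.

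The point you are missing is that the Frobenius quotient is precisely what \emph{creates} compactness. Since $\varphi$ acts on $|\mathcal{Y}_{\mathbb{C}_p^\flat}|$ as a dilation with compact fundamental domain, the spreading-out argument of \cref{LemmSpreadOut} expresses $\mathbb{C}_p^{\HK}$ as a coequalizer of two maps between de Rham stacks of qfd \emph{affinoid} annuli; each of these has compact unit by (the proof of) \cref{PropPerfectBerkovichSpaces}, and compactness of the unit on $\mathbb{C}_p^{\HK}$ follows at once. The paper's proof runs this more generally for $\Marc(A)^{\HK}$ with $A$ any qfd affinoid perfectoid over $\F_p$: one first reduces along quasi-pro-\'etale maps---whose $\ast$-pushforward on Hyodo--Kato stacks commutes with colimits, since over a strictly totally disconnected base they become pro-finite \'etale---to a closed disc in $\A^d_{\F_p((t))}$, and then uses such an explicit fundamental-domain presentation.
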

\begin{proof}
  The statement holds more generally for $\Marc(A)^\HK$ for any affinoid qfd arc-stack $\Marc(A)$ over $\F_p$ (and then also for quotients of such by actions of light profinite groups).
  Indeed, using that quasi-pro-\'etale morphisms become inverse limits of pro-finite \'etale morphisms over a strictly totally disconnected base (and hence their $\ast$-pushfoward commutes with colimits on Hyodo--Kato stacks), one can reduce to the case of a closed disc in a relative affine space over $\F_p((t))$.
  Here, one can use explicit presentations of the Hyodo--Kato stacks to show that the pushforward $\Marc(A)^\HK\to \GSpec(\Q_p)$ preserves direct sums.
  From here, one deduces that $1\in \ob{D}(\Marc(A)^\HK)$ is compact, which implies that every dualizable object, i.e., perfect module, is compact.
\end{proof}

We can use \cref{sec:fully-faithf-psiast-1-perfect-complexes-on-ff-curves-are-compact} to deduce the following.

\begin{lemma}
  \label{sec:fully-faithf-psiast-1-reduction-to-smaller-extensions}
  Let $L\subseteq \overline{\Q}_p$.
  Assume that $\Psi^\ast_{L,\varpi}\colon \Perf(\Font_L)\to \Perf(L^\HK)$ is fully faithful (resp.\ an equivalence).
  Then for any extension $L'/L$ in $\overline{\Q}_p$, the functor
  \[
    \Psi^\ast_{L',\varpi}\colon \Perf(\Font_{L^{\prime}})\to \Perf(L^{\prime,\HK})
  \]
  is fully faithful (resp.\ an equivalence).
\end{lemma}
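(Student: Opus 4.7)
The plan is to reduce to the case of a finite extension $L'/L$ by approximation via compactness of perfect modules, and then to handle that case by finite étale descent.

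The starting geometric input is the pullback square of qfd Gelfand stacks
\[
\begin{tikzcd}
L'^{\HK} \ar[r, "g"] \ar[d, "\Psi_{L',\varpi}"'] & L^{\HK} \ar[d, "\Psi_{L,\varpi}"] \\
\Font_{L'} \ar[r, "h"] & \Font_L,
\end{tikzcd}
\]
coming from the presentations $L^{\HK} \simeq \C_p^{\HK}/\Gal_L^{\sm}$ and $\Font_L \simeq \Font_{\overline{\Q}_p}/\Gal_L^{\sm}$ (and analogously for $L'$); thus $\Psi_{L',\varpi}$ is the base change of $\Psi_{L,\varpi}$ along $h$.

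Suppose first that $L'/L$ is finite Galois with group $H$. Then $g$ and $h$ are $H$-Galois finite \'etale covers, hence in particular cohomologically proper and \'etale by \Cref{sec:defin-first-prop-3-smoothness-primness-of-analytic-de-rham-stacks}. This gives base change $\Psi_{L,\varpi}^{\ast} h_{\ast} = g_{\ast} \Psi_{L',\varpi}^{\ast}$, and finite \'etale ambidexterity yields natural isomorphisms $h^{\ast} h_{\ast} F \simeq \bigoplus_{\sigma \in H} \sigma^{\ast} F$ and similarly for $g$. Applying the assumed fully faithfulness of $\Psi_{L,\varpi}^{\ast}$ to $h_{\ast} F, h_{\ast} G \in \Perf(\Font_L)$ for $F, G \in \Perf(\Font_{L'})$, and unfolding via adjunction, one obtains an isomorphism
\[
\bigoplus_{\sigma \in H} \Hom_{\Font_{L'}}(F, \sigma^{\ast} G) \xrightarrow{\sim} \bigoplus_{\sigma \in H} \Hom_{L'^{\HK}}(\Psi_{L',\varpi}^{\ast} F, \sigma^{\ast} \Psi_{L',\varpi}^{\ast} G)
\]
compatible with the $H$-indexing; the $\sigma = e$ summand yields fully faithfulness for $L'$. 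For essential surjectivity, given $F' \in \Perf(L'^{\HK})$, the hypothesis lifts $g_{\ast} F' \in \Perf(L^{\HK})$ to some $E \in \Perf(\Font_L)$, and pulling back yields $\Psi_{L',\varpi}^{\ast}(h^{\ast} E) \simeq g^{\ast} g_{\ast} F' \simeq \bigoplus_{\sigma \in H} \sigma^{\ast} F'$; splitting the idempotent projecting to the $\sigma = e$ summand---which by the fully faithfulness just established and idempotent completeness lifts to $h^{\ast} E$---produces a preimage of $F'$. The finite non-Galois case is handled identically, using that the identity summand always appears in $h^{\ast} h_{\ast}$ for any finite \'etale $h$, or alternatively by passing to a Galois closure.

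For arbitrary $L'/L$, write $L' = \bigcup L''$ as the filtered union of finite Galois subextensions of $L$ contained in $L'$. Then $L'^{\HK} \simeq \varprojlim L''^{\HK}$ and $\Font_{L'} \simeq \varprojlim \Font_{L''}$ as inverse limits in an appropriate category of kernels, because $\Gal_{L'}^{\sm} = \varprojlim \Gal(L''/L)^{\sm}$ at the level of smooth incarnations of the Galois groups. Since perfect modules are compact objects by \Cref{sec:fully-faithf-psiast-1-perfect-complexes-on-ff-curves-are-compact}, every perfect module and every morphism between perfect modules on $L'^{\HK}$ or $\Font_{L'}$ descends to some finite-level $L''$, reducing the statement for $L'$ to the finite Galois case. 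The main obstacle lies in this approximation step: one must identify $\Perf(\Font_{L'})$ and $\Perf(L'^{\HK})$ rigorously as filtered colimits along $\ast$-pullback of the corresponding finite-level categories, compatibly with $\Psi^{\ast}$, using the inverse limit presentations together with the compactness of perfect modules.
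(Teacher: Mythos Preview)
Your approach is essentially the paper's: reduce to finite $L'/L$ via compactness of perfect modules and an inverse-limit presentation at the level of classifying stacks, then handle the finite case by finite \'etale descent (the paper phrases this last step more tersely as ``passing to module categories for the finite \'etale algebra $h_*\mathcal{O}$'', which packages your direct-sum and idempotent-splitting arguments). One correction: the line ``$\Gal_{L'}^{\sm} = \varprojlim \Gal(L''/L)^{\sm}$'' is wrong as written---the absolute Galois group $\Gal_{L'}$ is not the inverse limit of the finite relative groups $\Gal(L''/L)$. What you actually need, and what the paper establishes using \Cref{TechnicalLemma6Functors}, is the identification $B\Gal_{L'}^{\sm} \cong \varprojlim_{L''} B\Gal_{L''}^{\sm}$ in the category of kernels over $\GSpec(\Q_p)$; this is precisely the ``main obstacle'' you flag, and it is what yields $\Perf(\Font_{L'}) \cong \varinjlim \Perf(\Font_{L''})$ and likewise for $L'^{\HK}$.
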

\begin{proof}
  We note that the morphism $B\Gal_{L'}^\sm\to B\Gal_L^\sm$ of qfd Gelfand stacks over $\GSpec(\Q_p)$ is pro-finite \'etale in the sense that after pullback along any morphism $\GSpec(A)\to B\Gal_L^\sm$ the pullback $\GSpec(A)\times_{B\Gal_L^\sm}B\Gal_{L'}^\sm\cong \GSpec(B)$ is represented by a (countable) filtered colimit of finite \'etale $A$-algebras $A_i$.
  In fact, one can take $A_i$ with $\GSpec(A_i)\cong \GSpec(A)\times_{B\Gal_L^\sm} B\Gal_{L_i}^\sm$ with $L_i\subseteq L$ finite over $K$.
Applying \Cref{TechnicalLemma6Functors} to the morphisms $\GSpec(\Q_p)\to B\Gal_{L_i}^{\sm}$ with $L\subset L_i\subset L'$ finite extensions of $L$,  whose hypothesis are verified thanks to  \cref{LemLimitsBerkovichSpaces}   and after noticing that the pushforward of $1$ along $B\Gal_{L'}^{\sm}\to B\Gal_{L_i}^{\sm}$ is $1$ being smooth cohomology in characteristic zero (using that profinite groups have no cohomology for smooth representations over $\Q_p$),    we can conclude that $B\Gal_{L'}^\sm\cong \varprojlim_{i} B\Gal_{L_i}^\sm$ in the category of kernels.
  In particular, $\ob{D}(B\Gal_{L'}^\sm)\cong \varprojlim_{i}\ob{D}(B\Gal_{L_i}^\sm)$, along $\ast$-pushforward.
  Thus, $\ob{D}(B\Gal_{L'}^\sm)\cong \varinjlim_{i}\ob{D}(B\Gal_{L_i}^\sm)$ via $\ast$-pullback (with filtered colimit taken in $\Cat{Pr}^L$).
  We note that perfect modules on $B\Gal_{L'}^\sm$ are compact, and similarly for $L_i$.
  This compactness and the expression of $\ob{D}(B\Gal_{L'}^\sm)$ as a filtered colimit in $\Cat{Pr}^L$ implies that each perfect module on $\ob{D}(B\Gal_{L'}^\sm)$ is the pullback of a compact object (and then of a perfect module, by descent of perfect modules, \cref{LemmaDescentPerfectComplexes}) on some $B\Gal_{L_i}^\sm$ (we note that the categories here are compactly generated, so that \cite[Corollary A.5.9]{heyer20246functorformalismssmoothrepresentations} applies).
  Almost the same argument holds after base change along $L^\HK\to B\Gal_L^\sm$ or $\Font_L\to B\Gal_L^\sm$.
  Ccompactness of perfect modules and compact generation can easily be checked for $\Font_{L'}$ and the $\Font_{L_i}$, which then implies $\Perf(\Font_{L'})\cong \varinjlim_i\Perf(\Font_{L_i})$.
  For ${L'}^\HK$ we need to argue differently.
  But using \cref{sec:fully-faithf-psiast-1-perfect-complexes-on-ff-curves-are-compact} the same argument as in \Cref{PropPerfectBerkovichSpaces} shows $\Perf(L^{\prime, \HK})\cong \varinjlim_{i} \Perf({L_i}^{\HK})$ as desired.
  % , , $\Perf(B\Gal_{L'}^\sm)\cong \varinjlim_i\Perf(B\Gal_{L_i}^\sm)$ in $\mathrm{Cat}_\infty$.
  % Indeed, , using the following argument for the Hyodo--Kato stacks (note that \cite[Corollary A.5.9]{heyer20246functorformalismssmoothrepresentations} does not apply to $D(L_i^\HK)$ as we don't know that these categories are compactly generated): we may restrict to perfect complexes of amplitude in $[a,b]$ for some $a,b\in \Z$.
  % Pulling back a presentation for $L^\HK$ as a geometric realization of a static affinoid simplical Gelfand stack yields compatible presentations for each $L_i^\HK$.
  % As we bounded the amplitude, the resulting limit depends only on the finite diagram over $\Delta_{\leq n}$ for some $n\geq 0$.
  % But filtered colimits in $\mathrm{Cat}_\infty$ commute with finite limits, and over affinoid Gelfand stacks perfect complexes can be approximated (using that here the categories are compactly generated), which then implies that same for perfect complexes on the Hyodo--Kato stacks.

  Now for $L_i/K$ finite, the fully faithfulness of $\Psi^\ast_{L,\infty}$ (resp.\ equivalence) implies fully faithfulness of $\Psi^\ast_{L'_i,\infty}$ (resp.\ equivalence) by passing to module categories for finite \'etale algebra, and then the general case follows by passage to a colimit.
\end{proof}

Moreover, we can argue by descent.

\begin{lemma}
  \label{sec:fully-faithf-psiast-1-descent}
  Let $L\subseteq L'\subseteq \overline{\Q}_p$.
  Assume that $\Psi^\ast_{L',\varpi}\colon \Perf(\Font_{L'})\to \Perf(L^{\prime,\HK})$ is fully faithful (resp.\ an equivalence).
  Then $\Psi^\ast_{L,\varpi}\colon \Perf(\Font_L)\to \Perf(L^{\HK})$ is fully faithful (resp.\ an equivalence).
\end{lemma}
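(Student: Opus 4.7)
\medskip

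The plan is to argue by Galois descent. The map $\Psi_{L,\varpi}$ is, by construction, the $\Gal_L^\sm$-equivariant descent of the map $\Psi_{\overline{\Q}_p,\varpi}\colon \C_p^\HK\to \Font_{\overline{\Q}_p}$ from \cref{ConstructionMonodromy}, and analogously for $\Psi_{L',\varpi}$; that is, we have pullback squares
\[
\begin{tikzcd}
L'^\HK \ar[r] \ar[d, "\Psi_{L',\varpi}"'] & L^\HK \ar[d, "\Psi_{L,\varpi}"] \\
\Font_{L'} \ar[r] & \Font_L
\end{tikzcd}
\]
whose horizontal maps are induced by $B\Gal_{L'}^\sm\to B\Gal_L^\sm$.

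First, we reduce to the case where $L'/L$ is Galois. Let $\widetilde L\subseteq \overline{\Q}_p$ be the Galois closure of $L'$ over $L$. Applying \cref{sec:fully-faithf-psiast-1-reduction-to-smaller-extensions} to the inclusion $L'\subseteq \widetilde L$, we may replace $L'$ by $\widetilde L$ and thereby assume that $L'/L$ is Galois with profinite Galois group $\Gamma:=\Gal(L'/L)$.

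Second, since the formation of the Hyodo--Kato stack commutes with colimits (\cref{TheoMaindeRham2}) and $L^\HK\cong \C_p^\HK/\Gal_L^\sm$, $L'^\HK\cong \C_p^\HK/\Gal_{L'}^\sm$, we obtain presentations
\[
L^\HK\cong L'^\HK/\Gamma^\sm,\qquad \Font_L\cong \Font_{L'}/\Gamma^\sm,
\]
and a similar presentation identifying $\Psi_{L,\varpi}$ with the $\Gamma^\sm$-quotient of the $\Gamma^\sm$-equivariant morphism $\Psi_{L',\varpi}$. By definition of the quotient stack in the $\infty$-topos of (qfd) Gelfand stacks, pullback along $L'^\HK\to L^\HK$ yields an equivalence $\ob{D}(L^\HK)\xrightarrow{\sim}\ob{D}(L'^\HK)^{h\Gamma^\sm}$, and similarly for $\Font$; combined with descent of perfect modules (\cref{LemmaDescentPerfectComplexes}) this gives
\[
\Perf(L^\HK)\xrightarrow{\sim}\Perf(L'^\HK)^{h\Gamma^\sm},\qquad \Perf(\Font_L)\xrightarrow{\sim}\Perf(\Font_{L'})^{h\Gamma^\sm},
\]
under which $\Psi^\ast_{L,\varpi}$ corresponds to the functor of $h\Gamma^\sm$-fixed points of the $\Gamma^\sm$-equivariant functor $\Psi^\ast_{L',\varpi}$.

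Third, taking $h\Gamma^\sm$-fixed points preserves fully faithfulness, and, combined with the argument that descent data are uniquely determined (a consequence of fully faithfulness), also preserves essential surjectivity. Hence the assumption that $\Psi^\ast_{L',\varpi}$ is fully faithful (resp.\ an equivalence) implies the same for $\Psi^\ast_{L,\varpi}$.

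The only nontrivial point is the identification $\Perf(L^\HK)\simeq \Perf(L'^\HK)^{h\Gamma^\sm}$, and in particular that every $\Gamma^\sm$-equivariant perfect module on $L'^\HK$ descends to a perfect module on $L^\HK$; this should follow formally from the fact that $L^\HK$ is literally defined as the quotient $L'^\HK/\Gamma^\sm$, together with \cref{LemmaDescentPerfectComplexes}, but one needs to be careful with set-theoretic issues arising when $\Gamma$ is an infinite profinite group, which is the main obstacle I foresee; the reduction to finite extensions in \cref{sec:fully-faithf-psiast-1-reduction-to-smaller-extensions} can be invoked to handle this if necessary.
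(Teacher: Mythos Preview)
Your overall strategy---descent along the quotient presentation $L^\HK\cong L'^\HK/\Gamma^\sm$ and $\Font_L\cong\Font_{L'}/\Gamma^\sm$---is the same as the paper's, and your reduction to the Galois case via \cref{sec:fully-faithf-psiast-1-reduction-to-smaller-extensions} is a harmless simplification.  But there is a genuine gap in the third step.

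The assertion ``taking $h\Gamma^\sm$-fixed points preserves fully faithfulness'' is not a formal fact when $\Gamma$ is an infinite profinite group.  Concretely, $\Perf(L^\HK)$ is the totalization of the cosimplicial category $[n]\mapsto \Perf(L'^\HK\times(\Gamma^\sm)^n)$, and similarly for $\Font$; the induced functor on totalizations is fully faithful only if you know that the level-wise functors
\[
\Psi_{L',\varpi}^{\ast}\times\id_{(\Gamma^\sm)^n}\colon \Perf(\Font_{L'}\times(\Gamma^\sm)^n)\to\Perf(L'^\HK\times(\Gamma^\sm)^n)
\]
are fully faithful for every $n$.  For finite $\Gamma$ this is trivial (each term is a finite product of copies of the $[0]$-level), but for infinite profinite $\Gamma$ the term $\Perf(L'^\HK\times(\Gamma^\sm)^n)$ is \emph{not} a product of copies of $\Perf(L'^\HK)$ indexed by $\Gamma^n$, so the implication is not automatic.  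The same issue obstructs the essential surjectivity argument.

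The paper addresses exactly this point: it notes that all maps in the \v{C}ech nerve (of $B\Gal_{L'}^\sm\to B\Gal_L^\sm$, pulled back to either side) are countable pro-finite \'etale, and then observes that the \emph{proof} of \cref{sec:fully-faithf-psiast-1-reduction-to-smaller-extensions} generalizes verbatim to show that fully faithfulness (resp.\ equivalence) of $\Psi^\ast_{L',\varpi}$ on perfect modules is preserved under any countable pro-finite \'etale base change $Z\to B\Gal_{L'}^\sm$.  The mechanism is compactness of perfect modules: one writes $Z$ as an inverse limit of finite \'etale maps and uses that $\Perf$ on the base change is the filtered colimit of the finite stages, where the claim is obvious.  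Your closing remark gestures towards \cref{sec:fully-faithf-psiast-1-reduction-to-smaller-extensions}, but invoking the \emph{statement} of that lemma (which goes from smaller to larger fields) does not help; it is the approximation argument in its \emph{proof} that is needed here, applied not to a tower of fields but to the pro-finite \'etale terms of the \v{C}ech nerve.
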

\begin{proof}
  We note that we get a natural morphism $\Psi^\bullet$ between the \v{C}ech nerves of the morphisms
  \[
    L^{\prime,\HK}\to L^\HK
  \]
  resp.\
  \[
    \Font_{L^\prime}\to \Font_L.
  \]
  In fact, both these \v{C}ech nerves are pulled back from the \v{C}ech nerve of the morphism $B\Gal_{L'}^\sm\to B\Gal_L^\sm$ along the morphisms $L^\HK\to B\Gal_L^\sm$ resp.\ $\Font_L\to B\Gal_L^\sm$.
  In particular, the morphisms in these \v{C}ech nerves are all pro-finite \'etale.

  It suffices to show that for each $i\in \Delta$ the pullback $\Psi^{i,\ast}$ is fully faithful (resp.\ an equivalence) on perfect modules.
  This follows from the proof of \Cref{sec:fully-faithf-psiast-1-reduction-to-smaller-extensions}, which generalizes to show that fully faithfulness (resp.\ equivalence) of $\Psi^\ast_{L',\varpi}$ on perfect modules implies the same after base change along any countable pro-finite \'etale morphism $Z\to B\Gal_{L'}^\sm$, e.g., $Z=B\Gal_{L'}^\sm\times_{B\Gal_L^\sm} \cdots \times_{B\Gal_L^\sm} B\Gal_{L'}^\sm$ with respect to the first projection. 
\end{proof}

However, the geometric properties of $\Psi_{L,\varpi}$ are still bad, so we need a further reduction.

\begin{lemma}
  \label{sec:fully-faithf-psiast-1-getting-rid-of-galois-group}
  Let $L\subseteq L'\subseteq \overline{\Q}_p$.
  If $\Q_p^\un\subseteq L'$ and $L'/L$ is Galois, set \[\Font_{L'/L}:=(B\mathbb{V}(-1)\times_{\GSpec(\Q_p)/\varphi^\Z}\GSpec(\Q_p^\un)/\varphi^\Z)/\Gal(L'/L)^\sm.\]
  Let $\Phi_{L'/L}\colon \Font_{L}\to \Font_{L'/L}$ be the natural morphism, and $\Psi_{L'/L,\varpi}:=\Phi_{L'/L}\circ \Psi_{L,\varpi}$.
  Then, $$\Phi^\ast_{L'/L}\colon \ob{D}(\Font_{L'/L})\to \ob{D}(\Font_L)$$ is fully faithful.

  In particular, $\Psi^\ast_{L,\varpi}$ is fully faithful on perfect modules if and only if for each $L'/L$ Galois with $\Q_p^\un\subseteq L'$ and $L'/L\Q_p^\un$ finite, the morphism $\Psi^\ast_{L'/L,\varpi}$ is fully faithful on perfect modules.
\end{lemma}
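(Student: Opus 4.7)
The statement consists of two parts, the fully faithfulness of $\Phi^{\ast}_{L'/L}$ and the ``in particular'' reduction for perfect modules, and I treat them in order. Throughout, I would lean on the assumption $\Q_p^{\un}\subseteq L'$ which makes $\Gal_{L'}$ act \emph{trivially} on $\mathcal{F}:=B\mathbb{V}(-1)\times_{\GSpec(\Q_p)/\varphi^{\Z}}\GSpec(\Q_p^{\un})/\varphi^{\Z}$. Consequently one has the quotient-stack presentations
\[
 \Font_{L'/L}\cong \mathcal{F}/\Gal(L'/L)^{\sm},\qquad \Font_{L}\cong \bigl(\mathcal{F}\times B\Gal_{L'}^{\sm}\bigr)/\Gal(L'/L)^{\sm},
\]
and the natural map $\Phi_{L'/L}$ is induced by the $\Gal(L'/L)$-equivariant projection $\mathcal{F}\times B\Gal_{L'}^{\sm}\to\mathcal{F}$ (in particular, pulling back along $\mathcal{F}\to\Font_{L'/L}$ exhibits $\Phi_{L'/L}$ as a base change of $B\Gal_{L'}^{\sm}\to\GSpec(\Q_p)$).

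For the fully faithfulness of $\Phi^{\ast}_{L'/L}$, I would argue that it suffices to check $\Phi_{L'/L,\ast}1\cong 1$ together with a projection formula, and to conclude via the unit $N\to\Phi_{L'/L,\ast}\Phi^{\ast}_{L'/L}N=N\otimes\Phi_{L'/L,\ast}1$. Since $\Phi_{L'/L}$ is pulled back along the torsor $\mathcal{F}\to\Font_{L'/L}$ from the structure map $B\Gal_{L'}^{\sm}\to\GSpec(\Q_p)$ of a classifying stack of a locally profinite group, cohomological smoothness of $B\Gal_{L'}^{\sm}$ over $\Q_p$ (cf.\ \Cref{sec:arc-descent-analytic-classifying-stacks}) gives smooth base change and the projection formula, while the computation of continuous cohomology of the profinite group $\Gal_{L'}$ with smooth $\Q_p$-coefficients in the trivial representation gives $\Q_p$ in degree $0$ and nothing else (one may also invoke \cite[Proposition 5.3.10]{heyer20246functorformalismssmoothrepresentations} and unravel definitions). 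Together these yield $\Phi_{L'/L,\ast}1\cong 1$, hence the desired fully faithfulness.

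For the ``in particular'' part (where I adopt the natural reading $\Psi_{L'/L,\varpi}=\Phi_{L'/L}\circ\Psi_{L,\varpi}\colon L^{\HK}\to\Font_{L'/L}$), the forward direction is immediate: if $\Psi^{\ast}_{L,\varpi}$ is fully faithful on perfect modules, then $\Psi^{\ast}_{L'/L,\varpi}=\Psi^{\ast}_{L,\varpi}\circ\Phi^{\ast}_{L'/L}$ is a composition of fully faithful functors. The converse is the real content: given $M,N\in\Perf(\Font_L)$, I would show that they descend, i.e.\ there exist $M',N'\in\Perf(\Font_{L'/L})$ with $M\cong\Phi^{\ast}_{L'/L}M'$ and similarly for $N$, for some allowed $L'$. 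Indeed, by \Cref{sec:construction-p-adic-description-of-vector-bundles-on-kottwitz-stack}, a perfect module on $\Font_L$ is a finite-dimensional $(\varphi,N,\Gal_L)$-module whose smooth $\Gal_L$-action factors through a finite quotient $\Gal_L/H$; choosing $L'$ Galois over $L$ containing $\Q_p^{\un}$, with $L'/L\Q_p^{\un}$ finite and $\Gal_{L'}\subseteq H$, kills the $\Gal_{L'}$-part, so the module descends to $\Font_{L'/L}$ as desired. Combining this with fully faithfulness of $\Phi^{\ast}_{L'/L}$ (part (1)) and the hypothesis on $\Psi^{\ast}_{L'/L,\varpi}$, one computes
\[
  \Hom_{\Font_L}(M,N)\cong\Hom_{\Font_{L'/L}}(M',N')\cong\Hom_{L^{\HK}}\bigl(\Psi^{\ast}_{L'/L,\varpi}M',\Psi^{\ast}_{L'/L,\varpi}N'\bigr)\cong\Hom_{L^{\HK}}\bigl(\Psi^{\ast}_{L,\varpi}M,\Psi^{\ast}_{L,\varpi}N\bigr),
\]
completing the argument.

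The main obstacle is part (1): I expect the bookkeeping with the non-split extension $1\to\Gal_{L'}\to\Gal_L\to\Gal(L'/L)\to 1$ and the outer $\Gal(L'/L)$-action on $B\Gal_{L'}^{\sm}$ requires some care to articulate the suaveness/base change properties precisely, especially since $\Phi_{L'/L}$ is not representable in affinoid Gelfand stacks. Reducing everything to the $\Q_p$-point of $\Font_{L'/L}$ via the torsor $\mathcal{F}\to\Font_{L'/L}$ (and thereby to a classical statement about profinite smooth group cohomology) should be the cleanest route; this matches the framework of the $6$-functor formalism for Gelfand stacks developed earlier in the paper.
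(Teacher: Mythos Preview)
Your reading of the statement is correct (there is a typo): the paper's proof treats $\Phi_{L'/L}$ as a map $\Font_L\to\Font_{L'/L}$, namely the base change of $g\colon B\Gal_L^{\sm}\to B\Gal(L'/L)^{\sm}$ along $\Font_{L'/L}\to B\Gal(L'/L)^{\sm}$. Your approach then matches the paper's, with one imprecision in Part~1. The paper observes that $g$ is \emph{cohomologically proper} with $g_*(1)=1$ (vanishing of higher smooth cohomology of the profinite kernel $\Gal_{L'}$ over $\Q_p$), so primness gives the projection formula for $g_*$ and proper base change transfers this to $\Phi_{L'/L,*}(1)=1$. You instead invoke cohomological \emph{smoothness} of $B\Gal_{L'}^{\sm}\to\GSpec(\Q_p)$, but suaveness yields the projection formula for $f_!$ (or $f_\natural$), not $f_*$; your displayed identity $\Phi_{L'/L,*}\Phi^*_{L'/L}N = N\otimes\Phi_{L'/L,*}1$ needs primness. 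This is not fatal---for profinite $G$ over $\Q_p$ the map $BG^{\sm}\to *$ is both suave and prim (invariants coincide with coinvariants)---but properness is the property that directly justifies what you wrote, and the paper's route through $g$ sidesteps the bookkeeping with the outer $\Gal(L'/L)$-action that you flag as an obstacle.

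For Part~2, your concrete descent argument (a perfect module on $\Font_L$ is a $(\varphi,N,\Gal_L)$-module whose smooth $\Gal_L$-action factors through a finite quotient, hence descends to some $\Font_{L'/L}$) is correct and equivalent to the paper's more abstract formulation: $\Perf(\Font_L)=\varinjlim_{L'}\Perf(\Font_{L'/L})$ in $\Cat{Cat}_\infty$ along fully faithful transition functors, deduced from the kernel-category limit $B\Gal_L^{\sm}=\varprojlim_{L'}B\Gal(L'/L)^{\sm}$.
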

We note that as $\Q_p^\un\subseteq L'$, $\Gal(L'/L)$ acts on $\Q_p^\un$.
\begin{proof}
  Let $g\colon B\Gal_L^\sm\to B\Gal(L'/L)^\sm$ be the natural morphism.
  Then $g$ is cohomologically proper, and the vanishing of higher cohomology of smooth representations of profinite groups on $\Q$-vector spaces implies that $g_\ast(1)=1$.
  In particular, $g^\ast$ is fully faithful and the same holds then using cohomological properness of $g$ for the pullback $\Phi_{L'/L}$ of $g$.

  The final statement follows because $B\Gal_L^\sm$ is the (countable) inverse limit of the $B \Gal(L'/L)^\sm$ in the category of kernels (as follows, for example, by using \cref{TechnicalLemma6Functors} and the same argument in the proof of \cref{sec:fully-faithf-psiast-1-reduction-to-smaller-extensions}),    and as consequence one has that $\Perf(\Font_L)$ is the filtered colimit in $\mathrm{Cat}_\infty$ of $\Perf(\Font_{L'/L})$, along fully faithful transition functors.
\end{proof}

\begin{remark}
  \label{sec:fully-faithf-psiast-2-reductions-summary}
  From the reductions we have made we can now conclude that the following statements are equivalent:
  \begin{enumerate}
   
  \item for all $L\subseteq \overline{\Q}_p$ the functor $\Psi^\ast_{L,\varpi}\colon \Perf(\Font_L)\to \Perf(L^\HK)$ is fully faithful (resp.\ an equivalence),
    \item there exists some $L\subseteq \overline{\Q}_p$ such that the functor $\Psi^\ast_{L,\varpi}\colon \Perf(\Font_L)\to \Perf(L^\HK)$ is fully faithful (resp.\ an equivalence),
  \item there exists some $L\subseteq \overline{\Q}_p$ containing $\Q_p^\un$, such that for each finite extension $L'/L$ the pullback
    \[
     \Phi_{L'}^\ast\colon \Perf(\GSpec(\Q_p^\un)/\varphi^\Z\times_{\GSpec(\Q_p)/\varphi^\Z} B\mathbb{V}(-1))\to \Perf(L^{\prime,\HK})
    \]
    along 
    $$
    \Phi_{L'} \colon L^{\prime,\HK}\to \GSpec(\Q_p^\un)/\varphi^\Z\times_{\GSpec(\Q_p)/\varphi^\Z} B\mathbb{V}(-1)
    $$
    is fully faithful.
  \end{enumerate}
  Indeed, by \Cref{sec:fully-faithf-psiast-1-reduction-to-smaller-extensions} and  \Cref{sec:fully-faithf-psiast-1-descent} the first two statements are equivalent. In fact, it suffices to see that $\Psi_{\overline{\Q}_p,\varpi}^\ast$  or  $\Psi_{\Q_p,\varpi}^*$ induce   equivalences of perfect modules.   To get rid of the $\Gal_L^\sm$-action, fix some $L$ as in the third statement.
  By \cref{sec:fully-faithf-psiast-1-getting-rid-of-galois-group} we need to see that $\Psi_{L'/L,\varpi}^\ast\colon \Perf(\Font_{L'/L})\to \Perf(L^\HK)$ is fully faithful on perfect modules for any $L'/L$ finite and Galois.
  However, this can be checked (by adapting the argument in \cref{sec:fully-faithf-psiast-1-descent}) after pulling back along the morphism $$\GSpec(\Q_p^\un)/\varphi^\Z\times_{\GSpec(\Q_p)/\varphi^\Z} B\mathbb{V}(-1)\to \Font_{L'/L},$$ which yields the map  $\Phi_{L'} \colon L^{\prime,\HK}\to \GSpec(\Q_p^\un)/\varphi^\Z\times_{\GSpec(\Q_p)/\varphi^\Z} B\mathbb{V}(-1)$.
  \end{remark}

We will apply the above considerations to $L:=\Q_p^\un\Q_p^\Kum$, where $\Q_p^\Kum=\Q_p(p^{1/p^\infty})=\bigcup_{n\geq 0}\Q_p(p^{1/p^n})$.
We also fix a finite extension $L'/L$, with associated morphism
\[
  \Phi_{L'}\colon L^{\prime,\HK} \to \GSpec(\Q_p^\un)/\varphi^\Z\times_{\GSpec(\Q_p)/\varphi^\Z} B\mathbb{V}(-1).
\]
The geometry of the map $\Phi_{L'}$ is now much better than that of $\Psi_{L,\varpi}$, and we can prove the following.

\begin{proposition}
  \label{sec:fully-faithf-psiast-1-case-for-f}
  The morphism $\Phi:=\Phi_{L'}\colon L^{\prime,\HK}\to \GSpec(\Q_p^\un)/\varphi^\Z\times_{\GSpec(\Q_p)/\varphi^\Z} B\mathbb{V}(-1)$ is suave.
  Let $\Phi_{\natural}$ be the left adjoint of the pullback functor $\Phi^*$ (on the full category of quasi-coherent sheaves).   Then the natural map
\[
\Phi_{\natural} 1\to  1
\]
is an equivalence.
In particular, the pullback
\[
\Phi^* \colon \ob{D}(\GSpec(\Q_p^\un)/\varphi^\Z\times_{\GSpec(\Q_p)/\varphi^\Z} B\mathbb{V}(-1)) \to \ob{D}(L^{\prime,\HK})
\]
is fully faithful (and hence its restriction to perfect modules, too).
\end{proposition}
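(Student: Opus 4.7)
The plan is to make both $L^{\prime,\HK}$ and the morphism $\Phi$ explicit using \cref{explicit-description-drff-stack-of-qp}(3) and \cref{ConstructionMonodromy}, and then to reduce the two claims (suaveness and $\Phi_\natural 1\cong 1$) to explicit statements on $\GSpec(\Q_p^\un)/\varphi^\Z$ by pulling back along the zero section of $B\mathbb{V}(-1)$.

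First, since $\Q_p^\un\subseteq L'$, the maximal unramified subfield of $L'$ is exactly $\Q_p^\un$, so by \cref{explicit-description-drff-stack-of-qp}(3) we have a Frobenius-equivariant identification $\mathcal{Y}^{\dR}_{L'^\flat}\cong \varprojlim_{x\mapsto x^p}\mathring{\mathbb{D}}^{\times,\dR}_{\Q_p^\un}$ (with coordinate $x=[\varpi^\flat]$ for $\varpi$ a uniformizer of $L'$), and hence $L^{\prime,\HK}=\mathcal{Y}^\dR_{L'^\flat}/\varphi^\Z$. Under this presentation, $p_1\circ\Phi\colon L^{\prime,\HK}\to \GSpec(\Q_p^\un)/\varphi^\Z$ is the structure map, which is cohomologically smooth of relative dimension one by \cref{sec:geom-prop-analyt-1-cohom-smoothness-of-perfectoid-unit-disc} (together with Frobenius-equivariance, using that $\varphi$ acts on $\tfrac{dx}{x}$ by multiplication by $p$), while $p_2\circ\Phi$ classifies the extension $\mathcal{E}_\varpi$ constructed in \cref{ConstructionMonodromy} via pullback along $\Phi_\varpi$ of \cref{sec:construction-p-adic-map-to-quotient-by-unit-disc}.

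To establish suaveness, I would pull $\Phi$ back along the zero section $s\colon \GSpec(\Q_p^\un)/\varphi^\Z\to Y$ of the $B\mathbb{V}(-1)$-factor. This $s$ is a $!$-cover (since $\mathrm{pt}\to B\mathbb{V}(-1)$ is one) and is itself suave, as a $\mathbb{V}(-1)$-torsor. The pullback $\widetilde{\Phi}\colon \widetilde{X}:=L^{\prime,\HK}\times_Y\GSpec(\Q_p^\un)/\varphi^\Z\to \GSpec(\Q_p^\un)/\varphi^\Z$ factors as an $\mathbb{V}(-1)$-torsor $\pi\colon\widetilde{X}\to L^{\prime,\HK}$ (the space of trivializations of $\mathcal{E}_\varpi$, a $p$-adic analogue of Fontaine's log-construction) followed by the suave map $p_1\circ\Phi$. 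Since $\pi$ is suave (as a torsor under the suave stack $\mathbb{V}(-1)$), $\widetilde{\Phi}$ is suave, and by suave descent \cite[Lemma 4.5.7]{heyer20246functorformalismssmoothrepresentations} so is $\Phi$.

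For the cohomology identity $\Phi_\natural 1\cong 1$, one first verifies the base change formula $s^\ast\Phi_\natural 1\cong \widetilde{\Phi}_\natural 1$ from the projection formula for suave maps (using $\omega_{\widetilde{\Phi}}\cong \widetilde{s}^\ast\omega_\Phi$ and suave base change for $\Phi_!$); since $s$ is a $!$-cover, it then suffices to check $\widetilde{\Phi}_\natural 1\cong 1$ on $\GSpec(\Q_p^\un)/\varphi^\Z$. With $\widetilde{\Phi}=g\circ\pi$ where $g\colon L^{\prime,\HK}\to \GSpec(\Q_p^\un)/\varphi^\Z$, one has $\widetilde{\Phi}_\natural 1=g_\natural\pi_\natural 1$, and both factors admit explicit descriptions using the $\mathbb{V}(-1)$-torsor geometry and the cohomology of $\mathcal{Y}^\dR_{L'^\flat}$ over $\GSpec(\Q_p^\un)$ computed in the proof of \cref{choices}. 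The key geometric point is that the non-split extension class of $\mathcal{E}_\varpi$, which generates $H^1(L^{\prime,\HK},\mathcal{O}(1))\cong \Q_p$ by \cref{choices}, is precisely trivialized by $\widetilde{X}\to L^{\prime,\HK}$, so that the only surviving cohomology class on $\widetilde{X}$ is the constant unit. Once $\widetilde{\Phi}_\natural 1\cong 1$ is established, the fully faithfulness of $\Phi^\ast$ is a formal consequence: by the projection formula, $\Phi_\natural\Phi^\ast M\cong(\Phi_\natural 1)\otimes M\cong M$, so the counit of $\Phi_\natural\dashv \Phi^\ast$ is an equivalence.

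The main obstacle is the last step, the rigorous cohomology computation $\widetilde{\Phi}_\natural 1\cong 1$. This requires carefully tracking how the $\mathbb{V}(-1)$-torsor structure of $\widetilde{X}$ interacts with the Frobenius twists on $L^{\prime,\HK}$ (where $D_1$ pulls back to $\mathcal{O}(-1)$), and showing that the contributions of $\pi_\natural$ (involving the relative dualizing sheaf $\omega_\pi$ of the analytic $\mathbb{V}(-1)$-bundle) and $g_\natural$ (involving Verdier duality on $L^{\prime,\HK}$ and the $\varphi$-module structure from \cref{choices}) combine to cancel precisely the non-split class of $\mathcal{E}_\varpi$. This is where the suaveness statement \cref{sec:geom-prop-analyt-1-cohom-smoothness-of-perfectoid-unit-disc} for the perfectoid punctured open disc enters decisively, providing the analytic ``contractibility'' of $\widetilde{X}$ needed to conclude.
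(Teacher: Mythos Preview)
Your overall strategy matches the paper: pull back $\Phi$ along a $!$-cover that trivializes the $B\mathbb{V}(-1)$ factor, obtain a $\mathbb{V}(-1)$-torsor, and use suave base change to reduce both suaveness and $\Phi_\natural 1\cong 1$ to the pullback. Your suaveness argument is essentially the same as the paper's (the paper pulls back one step further, along $\GSpec(\Q_p^\un)\to Y$, but this is cosmetic).

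The genuine gap is the final cohomology computation, which you correctly flag as the main obstacle but do not carry out. Your plan to factor $\widetilde\Phi=g\circ\pi$ and compute $g_\natural\pi_\natural 1$ using the heuristic that ``trivializing the extension kills the $H^1$-class'' is suggestive but not a proof; in fact the paper does \emph{not} proceed this way. Three ingredients are missing from your proposal:
\begin{enumerate}
\item The paper descends the torsor further: after pulling back along $\GSpec(\Q_p^\un)\to Y$ to obtain $\mathcal T$ over $\mathcal Y^\dR_{L'}$, it observes (via \cref{ConstructionMonodromy}) that $\mathcal T$ descends to a torsor $\mathcal P$ over the single punctured disc $X^\dR$ with $X=\mathring{\mathbb D}^\times_{\Q_p}$, reducing everything to $g\colon\mathcal P\to\GSpec(\Q_p)$.
\item Crucially, the paper does not compute $g_\natural 1$ directly. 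It reduces to showing $1\xrightarrow{\sim} g_\ast 1$ instead, via a duality argument: $g_\natural 1=g_!(\omega_g)$ is basic nuclear in $\ob{D}(\Q_{p,\solid})$ by \cref{PropDescentNuclearomega1CompactdR}, one has $g_\ast 1=\underline{\Hom}(g_\natural 1,1)$, and $\underline{\Hom}(-,1)$ is conservative on basic nuclear objects (using the DNF/Fr\'echet duality of the appendix, \cref{PropBasicNuclearQp}). This reduction step is essential and entirely absent from your sketch.
\item Finally, $g_\ast 1$ is identified with $R\Gamma_{\dR}(X,(\mathcal F,\nabla_c))$ for the explicit connection $\nabla_c(\ell_x)=c\,\tfrac{dx}{x}$ on $\mathcal F\cong\mathcal O(X\times_{\Q_p}\mathbb A^{1,\an}_{\Q_p})$ from \cref{ConstructionMonodromy}, and the computation $R\Gamma_{\dR}\cong\Q_p$ is done by an explicit power-series argument (\cref{nasty}): one writes down an antiderivative for $\nabla_c$ and verifies by hand the required growth conditions on its coefficients. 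This is the genuine analytic content and cannot be replaced by the heuristic.
\end{enumerate}
In short, your outline is on the right track, but the heart of the matter is the duality reduction $(g_\natural\Rightarrow g_\ast)$ combined with the concrete de Rham computation of \cref{nasty}; neither is suggested by your proposal.
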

\begin{proof}
Note that since $L'$ is a finite extension over $L=\Q_p^{\un}\Q_p^{\rm Kum}$, we have an isomorphism of perfectoid fields $\widehat{L}^{\prime, \flat}\cong \overline{\F}_p((\pi^{1/p^{\infty}}))$ (namely, it is a finite separable extension of $\widehat{L}^{\flat}=\overline{\F}_p((\varpi^{1/p^{\infty}}))$ with $\varpi=(p^{1/p^{n}})_n$, and any such extension arises from a finite separable extension of $\F_q((\varpi))$ for some $q=p^k$ which is itself isomorphic to $\F_{q'}((\pi))$ for suitable $\pi$ being a local field).

  The suaveness of $\Phi$ can be checked after pullback along $\GSpec(\Q_p^\un)\to \GSpec(\Q_p^\un)/\varphi^\Z\times_{\GSpec(\Q_p)/\varphi^\Z} B\mathbb{V}(-1)$, which is a $\mathbb{V}(-1)$-torsor $\mathcal{T}$ over $\Yc_{L'}^\dR$.
  In particular, the morphism $\mathcal{T}\to \Yc_{L'}^\dR$ is cohomologically smooth.
  Moreover, $\Yc_{L'}^\dR$ identifies with the de Rham stack of a pre-perfectoid punctured open unit disc over $\Q_p^\un$ (\cref{explicit-description-drff-stack-of-qp}), and hence the map to $\GSpec(\Q_p^\un)$ is suave by \cref{sec:geom-prop-analyt-1-cohom-smoothness-of-perfectoid-unit-disc}.
  
  To prove that the natural map $\Phi_{\natural} 1\to 1$ is an equivalence, by suave base change (\cite[Remark 4.5.15(1)]{heyer20246functorformalismssmoothrepresentations}) along the following cartesian diagram
 $$
 \begin{tikzcd}
 \mathcal{T} \ar[r, "\Phi'"]\ar[d] & \GSpec(\Q_p^\un) \ar[d] \\
 \Yc_{L'}^\dR \ar[r, "\Phi"] & \GSpec(\Q_p^\un)/\varphi^\Z\times_{\GSpec(\Q_p)/\varphi}B\mathbb{V}(-1)
\end{tikzcd}
 $$
 it suffices to prove that the natural map $\Phi'_{\natural} 1\to 1$ is an equivalence.

 After picking an isomorphism $\widehat{L'}^\flat\cong \overline{\F}_p((\pi^{1/p^\infty}))$, we can write $\mathcal{Y}^\dR_{L'}\cong Z:=\varprojlim_{x\mapsto x^p}(\mathring{\mathbb{D}}^{\times}_{\Q_p^\un})^\dR$ with chosen coordinate $x=[\pi]$, cf.\ \Cref{explicit-description-drff-stack-of-qp}(1).
 We know that $\mathcal{T}$ has to be a non-trivial torsor over $Z$ (e.g., as its pullback to $\mathcal{Y}_{\C_p}^\dR$ is non-trivial, and we calculated all possible extensions in \cref{choices}).
 We can further note (using the explicit \cref{ConstructionMonodromy}) that $\mathcal{T}$ descents to a $\mathbb{V}(-1)$-torsor $\mathcal{P}$ over $X^\dR$ for $X:=\mathring{\mathbb{D}}^{\times}_{\Q_p}$, and that it suffices to show that the natural map $g_\natural 1\to 1$ is an isomorphism, where $g\colon \mathcal{P}\to \GSpec(\Q_p)$ (indeed, this result will then also hold for $\mathcal{T}$ by passing to the colimit over $x\mapsto x^p$, and after base change to $\Q_p^\un$).

We claim that we can reduce to showing that the natural map $1\to g_*1$ is an isomorphism. Indeed, $g_\natural 1=g_!(\omega_f)$ is a basic nuclear object in $\ob{D}(\Q_{p, \solid})$ by \Cref{PropDescentNuclearomega1CompactdR}(1) and we have $g_{\ast}1= \underline{\ob{Hom}}(g_\natural 1, 1)$, so assuming these properties the claim will follow if we know that the functor $\underline{\ob{Hom}}(-,1)$ is conservative on basic nuclear objects in $\ob{D}(\Q_{p,\solid})$. But any basic nuclear object is represented by a complex of DNF spaces by \cref{PropBasicNuclearQp}.
 We recall here that the derived dual of a DNF is a Fr\'echet space in degree $0$ (using \cite[A.18]{bosco2021p}).
 Then we can use the exact (underived) duality result \cite[Theorem 3.40]{jacinto2021solid} between dual nuclear Fr\'echet and nuclear Fr\'echet spaces  on the terms of the complexes, to see that if the dual complex of Fr\'echet spaces is acyclic, then the original complex of dual nuclear Fr\'echet spaces is acyclic. This proves the claim.

 The fact that $1\to g_\ast(1)$ is an isomorphism follows from \cref{nasty} below. This finishes the proof.
\end{proof}

\begin{lemma}\label{nasty} Consider $X=\mathring{\DD}^{\times}_{\Q_p}$, the punctured open disc over $\Q_p$ with coordinate $x$ and let us write $\nabla$ for its  connection.   Let $h\colon\mathcal{P}\to X^\dR$ be a non-split $\mathbb{V}(-1)$-torsor.
  \begin{enumerate}
  \item For $0< \epsilon <1$, let $X_\epsilon$ be the overconvergent annulus of radius $[\epsilon,1-\epsilon]$.
    There exists some $c\in \Q_p^\times$ such that $h_\ast(1)$ identifies with the following $\mathcal{O}(X)$-algebra $(\mathcal{F},\nabla_c)$ with   flat connection\footnote{We use the terminology ``module with flat connection'' here as a more intuitive for a $\mathbb{G}_a^\dagger$-equivariant sheaf on $X=\mathring{\mathbb{D}}^{\times}$.} satisfying the Leibniz rule\footnote{More precisely, the pullback of $h_\ast(1)$ to $X$ is the quasi-coherent sheaf on $X$ associated with the $\mathcal{O}(X)$-module $\mathcal{F}$.}:
    $$
    \mathcal F:= \lim_{\epsilon \to 0} \{\sum_{n\ge 0}a_n\cdot (\ell_x)^n \text{ with } a_n\in \mathcal O(X_\epsilon)\ |\ \textit{ for all } r>0, \|a_n\|r^n \to 0 \text{ as } n \to \infty\}\cong\mathcal{O}(X\times_{\Q_p}\A^{1,\an}_{\Q_p})
    $$
    and $\nabla_c(\ell_x)=c \frac{dx}{x}$, and $\nabla_{c|\mathcal{O}(X)}=x \partial_x(-) \frac{dx}{x}$.
    \item The natural map $\Q_p\to R\Gamma(\mathcal{P},\mathcal{O})$ is an isomorphism.
  \end{enumerate}
 \end{lemma}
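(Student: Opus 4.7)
My plan is to treat (1) and (2) separately. For (1), I will first classify non-split $\mathbb{V}(-1)$-torsors over $X^{\dR}$. Over $X^{\dR}$, the rank-one $\varphi$-module $D_{-1}$ trivializes, so $\mathbb{V}(-1)$ becomes the analytic affine line $\mathbb{A}^{1,\mathrm{an}}_{X^{\dR}}$, and such torsors are classified by $H^1(X^{\dR},\mathcal{O})$. The same local computation as in \cref{choices}, specialized to $X=\mathring{\mathbb{D}}^{\times}_{\Q_p}$, yields $H^1(X^{\dR},\mathcal{O})=\Q_p\cdot\tfrac{dx}{x}$, so non-split torsors are parametrized by some $c\in\Q_p^{\times}$. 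Using the explicit extension built in \cref{ConstructionMonodromy} with parameter $c$, the torsor $\mathcal{P}_c=\Spec_{X^{\dR}}(h_{\ast}\mathcal{O})$ can be described concretely: Zariski-locally on $X$ it trivializes as $X\times\mathbb{A}^{1,\mathrm{an}}$, so $h_{\ast}\mathcal{O}$ identifies as an $\mathcal{O}(X)$-algebra with $\mathcal{O}(X\times_{\Q_p}\mathbb{A}^{1,\mathrm{an}})$, i.e.\ with the algebra $\mathcal{F}$ described in the statement, and the formula $\nabla_c(\ell_x)=c\tfrac{dx}{x}$ is read off from the construction.

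For (2), by adjunction one has $R\Gamma(\mathcal{P},\mathcal{O})\cong R\Gamma(X^{\dR},h_{\ast}\mathcal{O})\cong R\Gamma_{\dR}(X,\mathcal{F})$, so the claim becomes that the two-term de Rham complex $\mathcal{F}\xrightarrow{\nabla_c}\mathcal{F}\cdot\tfrac{dx}{x}$ has cohomology $\Q_p$ in degree zero and vanishes in degree one. Writing $f=\sum_n a_n(x)\ell_x^n$, one has $\nabla_c(f)=\sum_n[x\partial_xa_n+(n+1)c\,a_{n+1}]\ell_x^n\cdot\tfrac{dx}{x}$. For $H^0$, the key point is that the formal change of variables $(u,v)=(xe^{-\ell_x/c},\ell_x)$ transforms $\nabla_c$ into $c\partial_v$, so solutions of $\nabla_c f=0$ are formally functions of $u$ alone; but since $e^{-\ell_x/c}$ has only a bounded radius of convergence in $\ell_x$ whereas elements of $\mathcal{F}$ are required to be entire in $\ell_x$, only constant $F$ yield elements of $\mathcal{F}$, giving $H^0=\Q_p$.

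For the vanishing of $H^1$, given $g=\sum b_n(x)\ell_x^n\cdot\tfrac{dx}{x}$, the recursion $a_{n+1}=(n+1)^{-1}c^{-1}(b_n-x\partial_xa_n)$ determines $a_{n+1}$ from $a_n$, with the single degree of freedom $a_0\in\mathcal{O}(X)$. Decomposing along the eigenspaces of $x\partial_x$ acting on $\mathcal{O}(X)$ and passing to generating functions $A_k(t):=\sum_n a_{n,k}t^n$ for each weight $k\in\Z$, the recursion becomes the family of first-order linear ODEs $cA_k'(t)+kA_k(t)=B_k(t)$, which can be solved explicitly via the $p$-adic integrating factor $e^{kt/c}$. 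The heart of the argument—and the main obstacle—will be the convergence analysis: one must verify that, for a suitable choice of $a_0$, the assembled series $f=\sum a_n\ell_x^n$ lies in $\mathcal{F}$, i.e.\ satisfies $\|a_n\|_{\epsilon}r^n\to 0$ for every $\epsilon,r>0$. The crucial interplay is between the dampening from the factorial denominators $|n!|^{-1}\sim p^{n/(p-1)}$, the $p$-adic exponentials $e^{kt/c}$ (whose radius is controlled because $|k|\leq 1$ for integer $k$), and the stronger-than-geometric decay $\|b_n\|r^n\to 0$ for every $r>0$ furnished by $g\in\mathcal{F}\cdot\tfrac{dx}{x}$; the hypothesis on $g$ is precisely what allows these three effects to be balanced.
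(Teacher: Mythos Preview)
Your treatment of (1) and of $H^0$ in part (2) matches the paper's: the paper also trivializes the torsor over the Stein space $X$, identifies $h_*\mathcal{O}$ with $\mathcal{O}(X\times_{\Q_p}\mathbb{A}^{1,\mathrm{an}}_{\Q_p})$ carrying the connection read off from the explicit construction, and for $H^0$ shows that flat sections are of the form $\sum_{n} a_{n,0}\,x^n\exp(-n\ell_x/c)$, forcing $a_{n,0}=0$ for $n\neq 0$ since $\exp(-n\ell_x/c)$ is not entire in $\ell_x$. Your change of variables $u=xe^{-\ell_x/c}$ is a clean repackaging of the same computation.

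For $H^1$, however, there is a real gap. The integrating factor $e^{kt/c}$ has radius of convergence $|c/k|\,p^{-1/(p-1)}$; for $|k|=1$ this is the fixed finite number $|c|\,p^{-1/(p-1)}$, so the observation that $|k|\le 1$ does \emph{not} make these exponentials entire. Consequently $A_k(t)=e^{-kt/c}\bigl[a_{0,k}+c^{-1}\int_0^t e^{ks/c}B_k(s)\,ds\bigr]$ is not obviously entire, and for a generic $a_{0,k}$ it is not, since the homogeneous solution $e^{-kt/c}$ is not. You would have to isolate the unique $a_{0,k}$ for which the exponentials cancel and then run a convergence estimate---this is exactly where the content lies, and you have not indicated how to do either. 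Your remark that factorial denominators provide ``dampening'' is also backwards $p$-adically: $|n!|^{-1}\sim p^{\,n/(p-1)}$ \emph{grows}, which is precisely why $p$-adic exponentials have finite radius.

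The paper avoids exponentials altogether. For each weight $n\ne 0$ in $x$ it inverts $n+\partial_{\ell_x}$ (taking $c=1$) via the series $\sum_{j\ge 0}(-1)^j n^{-j-1}\partial_{\ell_x}^j$, giving the explicit preimage
\[
\Phi(a)=\sum_{n\ne 0,\,m\ge 0}\Bigl(\sum_{j\ge 0}(-1)^j\,\frac{j!\binom{m+j}{j}}{n^{j+1}}\,a_{n,m+j}\Bigr)x^n\ell_x^m\;+\;\sum_{m>0}\frac{a_{0,m-1}}{m}\,\ell_x^m.
\]
Membership in $\mathcal{F}$ is then checked by a direct estimate: the integers $j!\binom{m+j}{j}$ have norm $\le 1$, the factor $|n|^{-(j+1)}\le n^{j+1}$ is absorbed by splitting off $(1-\delta)^{n}$ from $\gamma_1^{\,n}$, and the super-geometric decay of $|a_{n,m+j}|\gamma_2^{m+j}$ (valid for every $\gamma_2$) controls the sum over $j$. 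This resolvent formula and its accompanying estimate are the missing ingredients in your $H^1$ argument.
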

 \begin{proof}
   We note that $\mathcal{P}\times_{X^\dR}X$ is a trivial $\mathbb{V}(-1)$-torsor as $X$ is Stein.
   Choosing an isomorphism with $X\times_{\GSpec(\Q_p)}\mathbb{A}^{1,\an}_{\Q_p}$, and denoting by $\ell_x$ the coordinate on $\mathbb{A}^{1,\an}_{\Q_p}$, the first assertion follows by using the explicit construction of a non-split $\mathbb{V}(-1)$-torsor as provided by \cref{ConstructionMonodromy}.

   Given the first statement, the second amounts to showing that 
   \[
     \Q_p\overset{\sim}{\to} R\Gamma_{\dR}(X, (\mathcal F, \nabla_c)),
   \]
   where the right-hand side denotes the de Rham complex of $(\mathcal{F},\nabla_c)$.
   As $X$ and $\mathcal{P}$ are Stein spaces and $X$ is of dimension $1$, the right-hand side is calculated by the complex
   \[
     \mathcal{F}\overset{\nabla_c}{\longrightarrow}\mathcal{F}\cdot dx.
   \]
   Thus, we have to show that $\ob{ker} \nabla_c\cong \Q_p$ and $\ob{coker} \nabla_c = 0$ on underlying sets (indeed, $\mathcal{F}$ is a Fr\'echet space and the exactness of a complex of Fr\'echet spaces can be checked on the underlying sets thanks to the open mapping theorem).
  
   We may assume that $c=1$, by replacing $\ell_x$ by $\frac{1}{c}\ell_x$.
   Thus, we set $\nabla:=\nabla_1$.
   Let $a=\sum_{m\geq 0}a_m\ell_x^m\in \mathcal{F}$ with $a_m\in \mathcal{O}(X)$.
   Then we can write
   \[
    a_m=\sum_{n\in\Z}a_{n,m}x^n 
  \]
  with $a_{n,m}\in \Q_p$, so that
  \[
    a=\sum\limits_{n\in \Z, m\geq 0}a_{n,m}x^n\ell_x^m.
  \]
  We set
  \[
    \Phi(a):=[\sum\limits_{n\in \Z, n\neq 0, m\geq 0}a_{n,m} x^n(\sum\limits_{k=0}^m(-1)^k\frac{k!\binom{m}{k}}{n^{k+1}}\ell_x^{m-k})]+\sum\limits_{m>0} \frac{a_{0,m-1}}{m}\ell_x^m.
  \]
  This expression defines (a priori) only an element in $\prod\limits_{n\in \Z,m\geq 0} \Q_p x^n\ell_x^m$.
  Then we formally can check that
  \[
    x\nabla(\Phi(a))=a dx.
  \]
  Indeed, this can be checked on each summand according to powers of $x$: write $\Phi(a)=\sum_{n\in \Z} c_n $, with $c_n \in x^n \prod_{m\geq 0} \Q_p \ell_x^m$ for each $n$. 
  If $n=0$, then $c_0=\sum\limits_{m>0} \frac{a_{0,m-1}}{m}\ell_x^m$ and thus
    $$x\partial_x(c_0)=\sum\limits_{m>0} a_{0,m-1} \ell_x^{m-1}= \sum\limits_{m\geq 0} a_{0,m} \ell_x^{m}.$$
     If $n\neq 0$, then $c_n=\sum\limits_{m\geq 0}a_{n,m}x^n\sum_{k=0}^m(-1)^k\frac{k!\binom{m}{k}}{n^{k+1}}\ell_x^{m-k}$. 
  We calculate 
  \begin{align*}
    x\partial_x(c_n)=& x\partial_x(\sum\limits_{m\geq 0}a_{n,m}x^n\sum_{k=0}^m(-1)^k\frac{k!\binom{m}{k}}{n^{k+1}}\ell_x^{m-k})\\
    =& \sum\limits_{m\geq 0}a_{n,m}nx^n(\sum_{k=0}^m(-1)^k\frac{k!\binom{m}{k}}{n^{k+1}}\ell_x^{m-k})+a_{n,m}x^n(\sum_{k=0}^{m-1}(-1)^k\frac{k!\binom{m}{k}}{n^{k+1}}(m-k)\ell_x^{m-1-k})\\
    =& \sum\limits_{m\geq 0}a_{n,m}nx^n(\frac{1}{n}\ell_x^m+\sum_{k=1}^m (-1)^k\frac{k!\binom{m}{k}}{n^{k+1}}\ell_x^{m-k})+a_{n,m}x^n(\sum_{k=0}^{m-1}(-1)^k\frac{k!\binom{m}{k}}{n^{k+1}}(m-k)\ell_x^{m-1-k})\\
    =& \sum\limits_{m\geq 0}a_{n,m}(x^n\ell_x^m+x^n\sum\limits_{k=0}^{m-1}((-1)^{k+1}n\frac{(k+1)!\binom{m}{k+1}}{n^{k+2}}+ (-1)^k\frac{k!\binom{m}{k}(m-k)}{n^{k+1}})\ell_{x}^{m-k-1})\\
    =& \sum\limits_{m\geq 0}a_{n,m}x^n\ell_x^m
  \end{align*}
  by using the formula $(k+1)!\binom{m}{k+1}=k!\binom{m}{k}(m-k)$.

  We now show that $\Phi(a)\in \mathcal{F}$.
  If this is done, then we can conclude that $\nabla$ is surjective as desired.
  We note that
  \begin{equation}
    \label{eq:3}
    \Phi(a)=\sum\limits_{n\in \Z \backslash\{0\},\ m\in \N} (\sum_{k\in \N}\frac{k!\binom{m+k}{m}}{n^{k+1}}a_{n,m+k})x^n\ell_x^m+\sum_{m>0}\frac{a_{0,m-1}}{m}\ell_x^{m}.
  \end{equation}
  The condition that a sum $b=\sum\limits_{n\in \Z, m\in \N}b_{n,m}x^n\ell_x^m$ lies in $\mathcal{F}$ is equivalent to the conditions
  \[
   |b_{n,m}|\gamma_1^{|n|}\gamma_2^m\to 0, 
 \]
 for $n,m\to \infty$ and all $\gamma_1<1$, $\gamma_2<\infty$, or $-n,m\to \infty$ and all $\gamma_1<\infty$, $\gamma_2<\infty$.
 
 We now check these conditions for $b:=\Phi(a)$.
 We note that in \cref{eq:3} the second summand converges as desired (because it is the integral of the analytic function $\sum_{m\geq 0}a_{0,m}\ell_x^m$ on $\mathbb{A}^1_{\Q_p}$).
 For the first summand, we note that
 \[
   |b_{n,m}|\gamma^{|n|}_{1}\gamma_2^m\leq \mathrm{sup}_{k\geq 0}\{\frac{|a_{n,m+k}|}{|n|^{k+1}}\gamma_1^{|n|}\gamma_2^m\}=\mathrm{sup}_{k\geq 0}\{|a_{n,m+k}|\gamma_1^{|n|}\gamma_2^{m+k}\frac{1}{|n|^{k+1}\gamma_2^k}\}.
 \]
 The cases $n\to \infty$ and $n\to -\infty$ are similar, so we assume that $n,m\to \infty$ (and thus $\gamma_1<1$).
 We note that $\frac{1}{\gamma_2^k}$ is bounded if $k\to \infty$ and $\gamma_2>1$.
 We write $\gamma_1=\gamma_1^\prime(1-\delta)$ for some $\delta>0$.
 Then $\frac{(1-\delta)^n}{|n|^k}\to 0$ because $|n|^{-k}$ grows like $p^{k\mathrm{log}_p(n)}$.
 Thus, (using that $|a_{n,m}|\gamma_1^n\gamma_2^m$ is bounded for $n,m\to \infty$ because $a\in \mathcal{F}$) we see that $\mathrm{sup}_{k\geq 0}\{|a_{n,m+k}|\gamma_1^n\gamma_2^{m+k}\frac{1}{|n|^{k+1}\gamma_2^k}\}$
 remains bounded as well.
 This implies that $|b_{n,m}|\gamma_1^n\gamma_2^m$ remains bounded for $n,m\to \infty$, and hence goes to zero by making $\gamma_1, \gamma_2$ slightly smaller.

 We now check that $\ker(\nabla_c)=\Q_p$.
 Thus we assume that $a=\sum_{n\in \Z,m\in \N}a_{n,m}x^n\ell_x^m$ satisfies $\nabla(a)=0$.
 We calculate
 \begin{align}
   x\partial_x(a)= & \sum_{n\in \Z,m\geq 1} a_{n,m}n x^n\ell_x^m+a_{n,m}mx^n\ell_x^{m-1} \\
   = & \sum_{n\in \Z,m\geq 0} (na_{n,m}+(m+1)a_{n,m+1})x^{n-1}\ell_x^{m}. 
 \end{align}
 Thus, $a_{n,m+1}=\frac{-n}{m+1}a_{n,m}$ for $n\in \Z, m\in \N$, and hence
 \[
   a_{n,m}=\frac{(-1)^mn^m}{m!}a_{n,0}.
 \]
 We can conclude
 \[
   a=\sum_{n\in \Z}\sum_{m\in \N} a_{n,m}x^n\ell_x^m=\sum_{n\in \Z}a_{n,0}\sum_{m\in \N}\frac{(-n\ell_x)^m}{m!}x^n=\sum_{n\in \Z}a_{n,0}\mathrm{exp}(-n\ell_x)x^n.
 \]
 Because $a\in \mathcal{F}\cong \mathcal{O}(X\times_{\GSpec(\Q_p)}\mathbb{A}^1_{\Q_p})\cong \mathcal{O}(X)\otimes_{\Q_{p,\solid}}\mathcal{O}(\A^1_{\Q_p})$ (using \cite[Corollary A.67.(i)]{bosco2021p}) we can conclude that
 \[
   a_{n,0}\exp(-n\ell_x)
 \]
 has to converge on $\A^{1}_{\Q_p}$ (with coordinate $\ell_x$).
 But this happens for $a_{n,0}\exp(-n\ell_x)$ if and only if $n=0$ or $a_{n,0}=0$.
 This implies that $a=a_{0,0}$ is constant as desired. 
 \end{proof}

\subsection{Tsuzuki's theorem}
\label{sec:tsuzukis-theorem}

With the results of \cref{sec:fully-faithf-psiast} we have shown the fully faithfulness in \cref{localmonodromy}, and in this section we want to prove essential surjectivity. Before entering into details of the theorem we sketch the idea of the proof.

Thanks to \cref{sec:fully-faithf-psiast-2-reductions-summary,} we know that the pullback along $L^{\HK}\to \Font_L$ is fully faithful, so we are only left to show essential surjectivity.
For that, by descent we can easily reduce to the case of $\C_p^{\HK}$, and write the Hyodo--Kato stack as a quotient of $\FF_{\C_p}$ by the overconvergent diagonal $\Delta^{\dagger}_{\C_p}$ of $\FF_{\C_p}\times_{\Q_p^{\un}/\varphi^\Z} \FF_{\C_p}$ (or, equivalently in $\FF_{\C_p}\times_{\Q_p}\FF_{\C_p}$).
This overconvergent diagonal can be written as a limit of Fargues--Fontaine $\FF_{U}$ where $U$ is an open neighbourhood of the diagonal of the fiber product $\Marc(\C_p)\times_{\Marc(\F_q)} \Marc(\C_p)$ for $q=p^k$ and $k\to \infty$.  Thus, using the uniqueness of  Harder--Narasimhan  filtrations on vector bundles on Fargues--Fontaine curves, we can descend the slope filtration from $\FF_{\C_p}$ to $\C_p^{\HK}$.
This reduces the problem of essential surjectivity to the semi-stable case.
Now, for $\mathcal{V}$ a semi-stable vector bundle on $\C_p^{\HK}$, its pullback to $\FF_{\C_p}$ is of the form $\mathcal{O}(\lambda)^{\oplus n}$ for some $\lambda \in \Q$ and $n\in \N$.
But then, the automorphism group of $\mathcal{O}(\lambda)^{\oplus n}$ is a locally light profinite group $A$, so that the descent datum is determined by an object in  the kernel of the morphism of groups $\underline{A}(\Delta^{\dagger}_{\C_p}) \to \underline{A}(\Marc(\C_p)))$.
This morphism is the same as 
\[
\varinjlim_{U} \underline{A}(U) \to A
\]
where $U$ runs through the open neighbourhoods of the diagonals of $\Marc(\C_p)\times_{\Marc(\F_q)} \Marc(\C_p)$ as above.
Then the essential surjectivity would hold if one can prove that there is a cofinal family of those $U$ such that they are connected, namely, in that situation $\underline{A}(U)=A$ and the descent datum will be trivial.

 However, this would prove that any semi-stable vector bundle on $\C_p^{\HK}$ is of the form $\mathcal{O}(\lambda)^{\oplus n}$, and by the fully faithfulness of \cref{sec:fully-faithf-psiast-2-reductions-summary},   we would have that all semi-stable $\varphi$-modules over $\Q_p^{\un}$  are of the form $D_{\lambda}^{\oplus n}$ which is false: $\varphi$-vector bundles on $\Q_p^{\un}$  are the colimit of $\varphi$-vector bundles on finite unramified  extensions $\Q_q$, and a $\varphi$-vector bundle $V$ on $\Q_q$  of slope zero is the same as an \'etale $\Q_p$-local system of $\Spec(\F_q)$. 
  Saying that $V$  becomes trivial on $\Q_p^{\un}$ as a $\varphi$-module is the same as asking that it is trivial after some  finite extension $\Q_q\to \Q_{q'}$, but this would mean that the morphism of groups $\Gal_{\F_q}\to \GL_n(\Q_p)$ has   finite image, and there are clearly examples of such morphisms which have infinite image. 
   Note however, that by Kottwitz classification of $\varphi$-modules on $\breve{\Q}_p$,  any $\varphi$-vector bundle over $\breve{\Q}_p$ of slope $0$ is actually trivial.

    The previous discussion suggests that we need to slightly modify the strategy as follows: we prove the analogue of \cref{localmonodromy} after extending coefficients to $\GSpec(\breve{\Q}_p)\cong \FF_{\overline{\F}_p}$. 
   More precisely, let 
\[
\C_p^{\HK/\FF_{\overline{\F}_p}}= \C_p^{\HK}\times_{\FF_{\overline{\F}_p}^{\dR}} \FF_{\overline{\F}_p}
\]
and consider the morphism of stacks
\[
g\colon \C_{p}^{\HK/\FF_{\overline{\F}_p}}\to B_{\Q_p^{\un}}\mathbb{V}(-1)\times_{\GSpec(\Q_p^\un)/\varphi^\Z}\GSpec(\breve{\Q}_p)/\varphi^{\Z}.
\] 
Applying the same strategy as before, we will able to show that pullback along $g$ gives rise to an equivalence of perfect modules; this time we encounter that $ \C_{p}^{\HK/\FF_{\overline{\F}_p}}$ is the quotient of $\FF_{\C_p}$ by the overconvergent diagonal $\Delta_{\C_p/\breve{\Q}_p}$ in $\FF_{\C_p}\times_{\FF_{\overline{\F}_p}} \FF_{\C_p}$.
  This time, this overconvergent diagonal is written as a colimit of  Fargues--Fontaine curves $\FF_{U}$ with $\Marc(\C_p)\subset U \subset \Marc(\C_p)\times_{\Marc(\overline{\F}_p)}\Marc(\C_p)$.
  We do not know whether we can find such neighbourhoods  to be connected, but instead we prove a weaker connectedness statement that will suffice for proving the claim.
 Finally, we are able to descent back from $\breve{\Q}_p$ to $\mathbb{Q}_p^{\un}$ finishing the proof of \Cref{localmonodromy}.

 In order to finish the proof of \cref{localmonodromy} we need a version over $\breve{\Q}_p$.

\begin{proposition}[$p$-adic monodromy theorem: $\breve{\Q}_p$-version]\label{PropTsuzukiBreve}
  Let $L/\breve{\Q}_p$ be an algebraic extension and set $\breve{\Font}_{L}:=B\mathbb{V}(-1)\times_{\GSpec(\Q_p)/\varphi^{\Z}}\GSpec (\breve{\Q}_p)/(\varphi^{\Z}\times \Gal_{L}^{\sm}).$
  Consider the natural morphism of Gelfand stacks
\[
\Psi_L\colon L^{\HK/ \FF_{\overline{\F}_p}} \to \breve{\Font}_L.
\]
Then, the $*$-pullback defines an equivalence of categories
\[
\Psi_L^*\colon \ob{Perf}(\breve{\Font}_L) \xrightarrow{\sim} \ob{Perf}(L^{\HK/ \FF_{\overline{\F}_p}}).
\]
\end{proposition}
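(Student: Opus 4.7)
The plan is to establish fully faithfulness and essential surjectivity of $\Psi_L^\ast$ separately. For fully faithfulness, the strategy is to transfer the fully faithfulness result \cref{sec:fully-faithf-psiast-1-case-for-f} to the present $\breve{\Q}_p$-setting: base change along $\GSpec(\breve{\Q}_p)\to \GSpec(\Q_p^{\un})/\varphi^{\Z}$ and then apply the Galois descent reductions of \cref{sec:fully-faithf-psiast-1-reduction-to-smaller-extensions} and \cref{sec:fully-faithf-psiast-1-descent}, noting that after choosing an embedding $\overline{\Q}_p\subseteq \widehat{\overline{\breve{\Q}}_p}$ the absolute Galois group $\Gal_L$ of an algebraic extension $L/\breve{\Q}_p$ is a closed subgroup of $\Gal_{\Q_p^{\un}}$, so the same arc-hyperdescent arguments as in \cref{sec:fully-faithf-psiast-2-reductions-summary} apply verbatim once one replaces $\Font_L$ by $\breve{\Font}_L$.

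For essential surjectivity, by the same Galois-equivariant reductions it suffices to treat $L=\C_p$. Using \cref{thm:prim-descendable-berkovich} and \cref{PropepiFF}, one presents $\C_p^{\HK/\FF_{\overline{\F}_p}}$ as the quotient of $\FF_{\C_p}$ by the overconvergent neighbourhood $\Delta^{\dagger}_{\C_p/\breve{\Q}_p}$ of the diagonal in $\FF_{\C_p}\times_{\FF_{\overline{\F}_p}}\FF_{\C_p}$, and this overconvergent diagonal is the filtered colimit of the Fargues--Fontaine curves $\FF_U$ as $U$ runs over open neighbourhoods of the diagonal in $\Marc(\C_p)\times_{\Marc(\overline{\F}_p)}\Marc(\C_p)$. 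A perfect module on $\C_p^{\HK/\FF_{\overline{\F}_p}}$ is therefore the datum of a perfect module $\mathcal{V}$ on $\FF_{\C_p}$ together with a descent datum with respect to this overconvergent groupoid. By the uniqueness of the Harder--Narasimhan filtration on $\FF_{\C_p}$, the HN filtration of $\mathcal{V}$ is automatically preserved by any descent datum, so the essential surjectivity reduces to the case when $\mathcal{V}$ is semi-stable of slope $\lambda$ and rank $n$.

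In the semi-stable case, Kottwitz's classification applied over $\breve{\Q}_p$ shows that $\mathcal{V}\cong \mathcal{O}(\lambda)^{\oplus n}$ is already pulled back from $\FF_{\overline{\F}_p}$; the crucial point here, which fails over $\Q_p^{\un}$, is that every semi-stable $\varphi$-bundle over $\breve{\Q}_p$ is isomorphic to $D_\lambda^{\oplus n}$ (whereas over $\Q_p^{\un}$ one picks up non-trivial unramified Galois twists). Setting $A:=\mathrm{Aut}(\mathcal{O}(\lambda)^{\oplus n})$, which is the group of $\Q_p$-points of a locally profinite algebraic group, the descent datum is encoded by an element in the kernel of the specialization map
\[
\varinjlim_{U} \underline{A}(\FF_U)\longrightarrow A,
\]
and the essential surjectivity reduces to the vanishing of this kernel for a cofinal system of neighbourhoods $U$.

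The main obstacle is exactly this ``weak connectedness'' property of the overconvergent diagonal. Using the internal structure of $A$ as an extension of a reductive group by a unipotent radical, this kernel-vanishing reduces, through the $\varphi$-equivariant exponential, to an Artin--Schreier-type statement about $\varphi$-equivariant sections of $\mathcal{O}_{\FF_U}$ (and of its Tate twists) that specialize to $0$ on the diagonal; this is the content of \cref{sec:tsuzukis-theorem-2-connectedness-for-artin-schreier-equations}, whose proof exploits the explicit description of $\FF_U$ and the slope-vanishing properties of $\varphi$ on rings of overconvergent functions near the diagonal. Once this connectedness is established, the descent datum is forced to be trivial, every semi-stable vector bundle on $\C_p^{\HK/\FF_{\overline{\F}_p}}$ descends to $\FF_{\overline{\F}_p}=\GSpec(\breve{\Q}_p)/\varphi^{\Z}$, and essential surjectivity, and hence the proposition, follows.
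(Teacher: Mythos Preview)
Your overall architecture is correct and matches the paper: fully faithfulness by base-changing \cref{sec:fully-faithf-psiast-1-case-for-f} along $\GSpec(\breve{\Q}_p)/\varphi^{\Z}\to\GSpec(\Q_p^{\un})/\varphi^{\Z}$ combined with the Galois reductions; essential surjectivity via the overconvergent-diagonal presentation of $\C_p^{\HK/\FF_{\overline{\F}_p}}$, descent of the Harder--Narasimhan filtration, and analysis of the descent datum in the semi-stable case. However, the semi-stable step contains a genuine gap and a mischaracterization.

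First, you stay over $\C_p$ throughout, but the paper does \emph{not}: after reducing to semi-stable, it approximates $\mathcal{V}$ by a semi-stable bundle on $L^{\HK/\FF_{\overline{\F}_p}}$ with $L/\Q_p^{\Kum}\breve{\Q}_p$ finite, then uses that the torsor of trivializations of $\mathcal{V}|_{\FF_L}$ is classified by a map $\Marc(L)\to B\underline{\GL_n(\mathbb{H}_\lambda)}$ to pass to a further extension $L'=KF$ with $K/L$ finite tame and $F/L$ pro-$p$, over which $\mathcal{V}|_{\FF_{L'}}\cong\mathcal{O}(\lambda)^{\oplus n}$. The specific shape of $L'$ is essential: since $\widehat{K}^{\flat}\cong\overline{\F}_p((\pi^{1/p^{\infty}}))$, one has $\Marc(L')\times_{\Marc(\overline{\F}_p)}\Marc(K)\cong\mathring{\mathbb{D}}^{\times,\perf,\diamond}_{L'}$, and because $L'/K$ is pro-$p$, the space $\Marc(L')\times_{\Marc(\overline{\F}_p)}\Marc(L')$ is a countable tower of \emph{Artin--Schreier} covers of that punctured perfectoid disc. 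Only after this reduction does the connectedness question become tractable. Over $\C_p$ directly there is no such structure available, and you give no argument for connectedness of neighborhoods of the diagonal in $\Marc(\C_p)\times_{\Marc(\overline{\F}_p)}\Marc(\C_p)$.

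Second, your description of \cref{sec:tsuzukis-theorem-2-connectedness-for-artin-schreier-equations} is incorrect. That lemma has nothing to do with the $\varphi$-equivariant exponential, sections of $\mathcal{O}_{\FF_U}$, Tate twists, or slope-vanishing. It is a purely characteristic-$p$ statement: for an irreducible Artin--Schreier polynomial $X^p-X+a$ over $K=\overline{\F}_p((t))$, the induced $\F_p$-torsor over $\mathring{\mathbb{D}}^{\times}_L$ (for any perfectoid $L$) has connected preimage over every connected open. The proof is a two-line Laurent-series computation showing that a section over $U$ would force the coefficients of $b$ to lie in $\overline{\F}_p$, contradicting irreducibility. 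The descent datum in the paper lies in $\ker(\varinjlim_U\underline{A}(U)\to A)$ for the \emph{arc-stacks} $U$ (not $\FF_U$), and since $A=\GL_n(\mathbb{H}_\lambda)$ is locally profinite, this kernel vanishes once a cofinal family of $U$ is connected; no unipotent-radical or exponential argument enters.
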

\begin{proof}
By the same arguments of \cref{sec:fully-faithf-psiast-1-reduction-to-smaller-extensions} and  \cref{sec:fully-faithf-psiast-1-descent}, the following are equivalent: 

\begin{enumerate}[(i)]

\item There is an algebraic extension $L/\breve{\Q}_p$ such that the functor $\Psi_L^*$ is fully faithful (resp.\ an equivalence) on perfect modules.

\item For all algebraic extension $L/\breve{\Q}_p$ the functor $\Psi^*_L$ is fully faithful (resp.\ an equivalence) on perfect modules.

\end{enumerate}

Therefore, in order to prove fully faithfulness we can work with $L/\Q^{\Kum}\breve{\Q}_p$ a finite extension over the Kummer extension of $\breve{\Q}_p$, in which case the same argument of \cref{sec:fully-faithf-psiast-1-getting-rid-of-galois-group} reduces to proving that the pullback along the map 
\[
L^{\HK/\FF_{\overline{\F}_p}}\to B_{\FF_{\overline{\F}_p}} \mathbb{V}(-1)
\]
is fully faithful.
But then the tilt $\widehat{L}^{\flat}$ is isomorphic to $\overline{\F}_p((\pi^{1/p^{\infty}}))$ and $L^{\HK/\FF_{\overline{\F}_p}}$ is isomorphic to a punctured open unit disc over $\breve{\Q}_p$.
We deduce fully faithfullness  by base changing \cref{sec:fully-faithf-psiast-1-case-for-f} along $\GSpec(\breve{\Q}_p)/\varphi^\Z\to \GSpec(\Q^\un_p)/\varphi^\Z$. 

From the previous we deduce that $\Psi^*_{L}$ is fully faithful for all algebraic extension $L/\breve{\Q}_p$.
We shall prove that the functor is essentially surjective.
For that, we reduce to the case of $\widehat{L}=\C_p$ by Galois descent.  
Thanks to \cref{PropPerfectBerkovichSpaces} we can reduce the essential surjectivity to the case of vector bundles.
Let $\mathcal{V}$ be a vector bundle on $\C_p^{\HK/\FF_{\overline{\F}_p}}$.
 By \cref{PropepiFF} the morphism of Gelfand stacks 
\[
\FF_{\C_p}  \to \C_p^{\HK/\FF_{\overline{\F}_p}} 
\] is an epimorphism. 
Thus, we can write  $\C_p^{\HK/\FF_{\overline{\F}_p}}$ as the quotient of $\FF_{\C_p}$ by its overconvergent diagonal $\Delta^{\dagger}_{\C_p/\breve{\Q}_p}$ over $\FF_{\overline{\F}_p}= (\GSpec \breve{\Q}_p)/\varphi^\Z$. 
Since the functor $X\mapsto \FF_{X}$ preserves fiber products of arc-stacks, we see that  
\[
\Delta^{\dagger}_{\C_p/\breve{\Q}_p}=\varprojlim_{ \Delta \Marc(\C_p) \subset  U\subset \Marc(\C_p)\times_{\Marc(\overline{\F}_p)} \Marc(\C_p)} \FF_{U}.
\]
The pullback of $\mathcal{V}$ to $\FF_{\C_p}$, denoted in the same way to lighten notation, has a Harder--Narasimhan filtration $\Fil^{\bullet} \mathcal{V}$. We want to show that the latter descends to a Harder--Narasimhan filtration on $\FF_{\C_p}^{\HK/\FF_{\overline{\F}_p}}$.
For this, the descent datum along the two projections $\pi_i:\Delta^{\dagger}_{\C_p/\breve{\Q}_p}\to \FF_{\C_p}$, for $i=1, 2$, extends to an isomorphism  on $\FF_{U}$ for some compact neighbourhood of the diagonal $ \Delta\Marc(\C_p)\subset U \subset \Marc(\C_p)\times_{\Marc(\overline{\F}_p)} \Marc(\C_p)$.
The pullbacks $\pi_i^*\mathcal{V}$, for $i=1, 2$, admit global Harder--Narasimhan filtrations (being the restriction of the pullbacks along the two projection maps $\FF_{\Marc(\C_p)\times_{\Marc(\overline{\F}_p)} \Marc(\C_p)}\to \FF_{\C_p}$), and, by uniqueness, we have that $\Fil^{\bullet} \pi_1^* \mathcal{V}= \Fil^{\bullet} \pi_2^* \mathcal{V}$.
Therefore,  $\Fil^{\bullet} \mathcal{V}$ is stable under the descent datum of $\mathcal{V}$ and it descends to a Harder--Narasimhan filtration on $\C_p^{\HK/\FF_{\overline{\F}_p}}$, as desired.

Thus, in order to prove essential surjectivity of the functor $\Psi^*_{\C_p}$, we can assume without loss of generality that $\mathcal{V}$ is a semi-stable vector bundle.
Then, to show that $\mathcal{V}$ lies in the essential image of $\Psi^*_L$, we can prove (via approximation) the statement for a semi-stable vector bundle $\mathcal{V}$ over $L^{\HK/\FF_{\overline{\F}_p}}$ where $L/\Q_p^{\Kum}\breve{\Q}_p$ is a finite extension.
Suppose that at geometric points $\mathcal{V}$ arises from the isocrystal $D_{-\lambda}^{\oplus n}$ for $\lambda\in \Q$ and $n\in \N$.
By  \cref{PropepiFF}  the morphism $\FF_{L}\to L^{\HK/\FF_{\overline{\F}_p}}$ is still an epimorphism and, by passing to the associated pro-\'etale torsor of trivializations of $\mathcal{V}$, the restriction of $\mathcal{V}$ to $\FF_{L}$ gives rise to a morphism of arc-stacks
 \[
 \Marc(L)\to B \underline{\GL_n(\mathbb{H}_{\lambda})}
 \]
 where $\mathbb{H}_{\lambda}$ is the $p$-adic Lie group of automorphisms of $D_{-\lambda}$.
 Therefore, there exists a finite tamely ramified extension $K/L$ and a pro-$p$-extension $L'/K$ with Galois group a compact $p$-adic Lie group such that $\mathcal{V}$  is isomorphic to $\mathcal{O}(\lambda)^{\oplus n}$ over $L'$. Replacing $L$ by $L'$, we are therefore, by descent, reduced to proving the following claim:
 \begin{claim}
   Let $L/\Q_p^{\Kum}\breve{\Q}_p$ be  an algebraic Galois extension whose Galois group is a compact $p$-adic Lie group.
   Let $\mathcal{V}$ be a vector bundle on $L^{\HK/\FF_{\overline{\F}_p}}$ such that the pullback to $\FF_L$ is isomorphic to $\mathcal{O}(\lambda)^{\oplus n}$ for some $\lambda\in \Q$ and $n\in \N$.
   Then $\mathcal{V}$ is isomorphic to $\mathcal{O}(\lambda)^{\oplus n}$ on $L^{\HK/\FF_{\overline{\F}_p}}$.
   In particular, it belongs to the essential image of $\Psi^*_{L}$. 
 \end{claim}
\begin{proof}[Proof of the claim:]
Let $\Delta^{\dagger}_{L/\breve{\Q}_p}$ be the overconvergent diagonal of the morphism $\FF_{L}\to \FF_{\overline{\F}_p}$. It can be written as the limit
\[
\Delta^{\dagger}_{L/\breve{\Q}_p}= \varprojlim_{L\subset U\subset \Marc(L)\times_{\Marc(\overline{\F}_p)} \Marc(L)} \FF_U
\] where $U$ runs over affinoid neighbourhoods of the diagonal. Thus, after fixing an isomorphism between $\mathcal{V}$ and $\mathcal{O}(\lambda)^{\oplus n}$ on $\FF_{L}$, the descent datum of $\mathcal{V}$ is given by an element in
\[
\ker(\underline{A}(\Delta^{\dagger}_{L/\breve{\Q}_p})\to \underline{A}(\Marc(L)) ) = \ker(\varinjlim_{L\subset U\subset \Marc(L)\times_{\Marc(\overline{\F}_p)} \Marc(L)} \underline{A}(U)\to A)
\]
with $A=\GL_n(\mathbb{H}_{\lambda})$.
The claim will follow if there is a cofinal system of neighbourhoods that are connected.
% For that, write $L= K F$ as a compositum of $K$ and $F$  over $\Q_p^{\Kum}\breve{\Q}_p$  with $K$ a finite tamely ramified extension and $F$ a pro-$p$-extension. In that case,
We have an isomorphism of tilts $L^{\flat}\cong \overline{\F}_p((\pi^{1/p^{\infty}}))$ by \cite[2.1.3. Th\'eor\`eme (ii)]{wintenberger1983corps} and \cite[4.3.4. Corollaire]{wintenberger1983corps} (note that these apply because $L/\breve{\Q}_p$ is Galois with Galois group a compact $p$-adic Lie group, and hence strictly APF, \cite[1.2.2. Exemples]{wintenberger1983corps}), and thus we get that
\[
\Marc(L)\times_{\Marc(\overline{\F}_p)} \Marc(L) \cong \mathring{\mathbb{D}}^{\times,\perf,\diamond}_L
\]
is isomorphic to the perfectoid punctured open unit disc over $L$.
Moreover, the diagonal $\Marc(L)$ defines an $L$-rational point of the punctured open unit disc over $L$, and because quasi-compact rigid analytic varieties over $L$ have only finitely many connected components we see that such a cofinal system of connected neighborhoods exists.
% The fact that $L$ is a pro-$p$-extension over $L$ implies that $L/K$ is an iterated extensions of Artin--Schreier extensions by Artin--Schreier theory (applied to the tilts).
% Geometrically, this means that the fiber product $\Marc(L)\times_{\Marc(\mathbb{\F}_p)}\Marc(L)$ is a countable inverse  limit of finite \'etale coverings of  $\mathring{\mathbb{D}}^{\times,\perf,\diamond}_L$, which are pulled back from $K$.
% Tilting and deperfecting over $L^{\flat}$, it suffices to study the finite \'etale covers $Y_{L'}\to Y_K=\mathring{\mathbb{D}}^{\times}_{L^{\flat}}$ of the open punctured  disc  over $L^{\flat}$ for $L/L'/K$ finite intermediate extension.  We note that for such $L'$, each affinoid subspace in $Y_{L'}$ has only finitely connected components (being of finite type over $L$), and hence it suffices to show that for each connected rational affinoid subspace in $Y_{L'}$, its inverse image in $Y_{L''}  \to  Y_{L'}$ stays connected for all $L'\subset L'' \subset L$ finite intermediate extension.   Now, this statement  reduces to \Cref{sec:tsuzukis-theorem-2-connectedness-for-artin-schreier-equations} below.
\end{proof}
 This finishes the proof of \cref{PropTsuzukiBreve}.
\end{proof}

We deduce our interpretation of Tsuzuki's theorem over $\Q_p^{\un}$.

\begin{theorem}
  \label{sec:tsuzukis-theorem-1-essential-surjectivity}
  Let $L\subseteq \overline{\Q}_p$ be an algebraic extension of $\Q_p$.   Then, the functor $$\Psi^\ast_{L,\varpi}\colon \Perf(\Font_L)\to \Perf(L^\HK)$$ is essentially surjective (and thus an equivalence).
\end{theorem}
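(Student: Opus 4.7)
The fully faithfulness of $\Psi^\ast_{L,\varpi}$ has already been established in \cref{sec:fully-faithf-psiast}, so only essential surjectivity remains. Since \cref{sec:fully-faithf-psiast-1-descent} allows us to descend an equivalence statement from a larger field $L^\prime$ to any $L\subseteq L^\prime$, it suffices to prove essential surjectivity in the case $L=\overline{\Q}_p$ (for which $\Gal_L$ is trivial and $\widehat{L}=\C_p$). Moreover, since vector bundles on $\Font_{\overline{\Q}_p}$ correspond to $(\varphi,N)$-modules over $\Q_p^{\un}$, both $\Perf(\C_p^\HK)$ and $\Perf(\Font_{\overline{\Q}_p})$ admit natural $t$-structures with heart the vector bundles (the former by \cref{PropPerfectBerkovichSpaces}); using the $t$-exactness of $\Psi^\ast_{\overline{\Q}_p,\varpi}$, it then suffices to prove essential surjectivity on vector bundles.

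The proof proceeds through the cartesian square of qfd Gelfand stacks
\[
\begin{tikzcd}
\C_p^{\HK/\FF_{\overline{\F}_p}} \ar[r, "\Psi_{\overline{\Q}_p}"] \ar[d, "p"'] & \breve{\Font}_{\overline{\Q}_p} \ar[d, "q"] \\
\C_p^\HK \ar[r, "\Psi_{\overline{\Q}_p,\varpi}"] & \Font_{\overline{\Q}_p}
\end{tikzcd}
\]
whose vertical maps arise from the natural morphism $\GSpec(\breve{\Q}_p) \to \GSpec(\Q_p^{\un})$ induced by the inclusion $\Q_p^\un \hookrightarrow \breve{\Q}_p$. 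Given a vector bundle $\mathcal{V}$ on $\C_p^\HK$, \cref{PropTsuzukiBreve} applied to $L = \overline{\Q}_p$ (and just established) furnishes a vector bundle $\mathcal{W}^\prime$ on $\breve{\Font}_{\overline{\Q}_p}$ with $\Psi^\ast_{\overline{\Q}_p}\mathcal{W}^\prime \cong p^\ast \mathcal{V}$. The key intermediate step is to descend $\mathcal{W}^\prime$ along $q$: unravelling, $q^\ast$ is the base change functor for $(\varphi,N)$-modules from $\Q_p^\un$ to $\breve{\Q}_p$, which is an equivalence by the Dieudonne--Manin classification -- every $\varphi$-module over $\breve{\Q}_p$ is a direct sum of base changes of the simple isocrystals $D_{\lambda_i}$, which are already defined over $\Q_p$ -- together with the fact that the monodromy operator $N\colon V\to V(-1)$ and the Hom-spaces of $(\varphi,N)$-modules are both controlled by $\varphi$-invariants of internal Homs, which form $\Q_p$-vector spaces insensitive to base change. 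We thus obtain $\mathcal{W}$ on $\Font_{\overline{\Q}_p}$ with $q^\ast\mathcal{W} \cong \mathcal{W}^\prime$; setting $\mathcal{V}^{\prime\prime} := \Psi^\ast_{\overline{\Q}_p,\varpi}\mathcal{W}$, the cartesian diagram yields $p^\ast\mathcal{V}^{\prime\prime} \cong \Psi^\ast_{\overline{\Q}_p}q^\ast\mathcal{W} \cong p^\ast\mathcal{V}$.

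It remains to upgrade this isomorphism of pullbacks to an isomorphism $\mathcal{V} \cong \mathcal{V}^{\prime\prime}$ on $\C_p^\HK$ itself. For this we invoke \cref{PropepiFF}: the map $\FF_{\C_p}\to \C_p^\HK$ is prim and descendable (locally in the analytic topology), and it factors through $p$, so the isomorphism $p^\ast\mathcal{V} \cong p^\ast\mathcal{V}^{\prime\prime}$ restricts to an isomorphism on $\FF_{\C_p}$ equipped with the canonical pullback descent data, which descends to the desired isomorphism on $\C_p^\HK$. The main obstacle is the combination of the Dieudonne--Manin/Kottwitz descent from $\breve{\Q}_p$ to $\Q_p^\un$ (with careful tracking of the monodromy $N$ and the extension data for $\mathbb{V}(-1)$) together with this final descent argument along $p$; the remaining steps follow the same formal pattern as in the proof of \cref{PropTsuzukiBreve}.
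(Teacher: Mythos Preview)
Your argument has a genuine gap at the claim that $q^\ast$ is an equivalence. The Dieudonn\'e--Manin classification holds over the \emph{complete} field $\breve{\Q}_p$ but not over the incomplete $\Q_p^{\un}$: a slope-$0$ $\varphi$-module over $\Q_p^{\un}$ arises (after approximation) from a continuous representation $\Gal_{\F_q}\cong\widehat{\Z}\to \GL_n(\Q_p)$, and such representations need not have finite image, so the module need not split as $D_0^{\oplus n}$ over $\Q_p^{\un}$---whereas over $\breve{\Q}_p$ every slope-$0$ $\varphi$-module is trivial. Thus $q^\ast$ is not even fully faithful, and your descent of $\mathcal{W}^\prime$ to $\mathcal{W}$ fails. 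The paper flags exactly this obstruction in the discussion opening \cref{sec:tsuzukis-theorem} and in \cref{sec:tsuzukis-theorem-1-description-of-varphi-n-modules-over-q-p-un}. The failure propagates to your final step as well: since the square is cartesian, $p^\ast\circ\Psi_{\overline{\Q}_p,\varpi}^\ast=\Psi_{\overline{\Q}_p}^\ast\circ q^\ast$; as $\Psi_{\overline{\Q}_p}^\ast$ is an equivalence and $\Psi_{\overline{\Q}_p,\varpi}^\ast$ is fully faithful, the non-fully-faithfulness of $q^\ast$ forces $p^\ast$ to be non-fully-faithful already on vector bundles in the essential image of $\Psi_{\overline{\Q}_p,\varpi}^\ast$. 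Hence an arbitrary isomorphism $p^\ast\mathcal{V}\cong p^\ast\mathcal{V}^{\prime\prime}$ has no reason to respect the descent data along $\FF_{\C_p}\to \C_p^\HK$, and cannot be descended.

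The paper circumvents this by working at $L=\Q_p^{\Kum}\Q_p^{\un}$ rather than $\overline{\Q}_p$: there, for each finite Galois $L^\prime/L$ the map $\Psi_{L^\prime/L,\varpi}\colon L^\HK\to\Font_{L^\prime/L}$ is \emph{suave} (\cref{sec:fully-faithf-psiast-1-case-for-f}), so it admits a left adjoint $\Psi_{L^\prime/L,\varpi,\natural}$. Essential surjectivity becomes the claim that, for some $L^\prime$, the specific morphism $\mathcal{V}\to\Psi_{L^\prime/L,\varpi}^\ast\Psi_{L^\prime/L,\varpi,\natural}\mathcal{V}$ in $\ob{D}(L^\HK)$ is an isomorphism. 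Being a statement about a concrete map (not the abstract existence of an isomorphism), this can legitimately be checked after pullback along the epimorphism $\breve{\Font}_{L^\prime/L}\to\Font_{L^\prime/L}$, where \cref{PropTsuzukiBreve} applies via suave base change. The passage through the finite-level targets $\Font_{L^\prime/L}$ is precisely what bridges $\Q_p^{\un}$ and $\breve{\Q}_p$ without asserting that $q^\ast$ is an equivalence.
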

\begin{proof}
  By \cref{sec:fully-faithf-psiast-1-reduction-to-smaller-extensions} and \cref{sec:fully-faithf-psiast-1-descent} we can prove essential surjectivity for any intermediate algebraic extension $\Q_p\subset L\subset \overline{\Q}_p$. Let us take $L=\Q_p^{\Kum}\Q_p^{\un}$ to be the Kummer extension of $\Q_p$ over  $\Q_p^{\un}$.
  Let $\mathcal{V}$ be a vector bundle on $L^{\HK}$. Let $L'/L$ be a finite Galois extension and consider the morphism
\[
\Psi_{L'/L,\varpi}\colon L^{\HK}\to \Font_{L'/L}
\]
as in \cref{sec:fully-faithf-psiast-1-getting-rid-of-galois-group}.
Thanks to \cref{sec:fully-faithf-psiast-1-case-for-f} the map $\Psi_{L'/L,\varpi}$ is suave and $\Psi_{L'/L,\varpi}^*$ is fully faithful. Thus,  by \cref{sec:fully-faithf-psiast-1-case-for-f}, to prove essential surjectivity of $\Psi^*$ on perfect modules it suffices to show that there is a finite Galois extension $L'/L$ such that the natural map
\[
 \mathcal{V}\to  \Psi_{L'/L,\varpi}^*\Psi_{L'/L,\varpi,\natural} \mathcal{V}
\]
is an equivalence.
This can be checked after base change along the epimorphism of Gelfand stacks $\breve{\Font}_{L'/L} \to\Font_{L'/L}$  obtained by base changing from $\Q_p^{\un}$ to $\breve{\Q}_p$. In this case we have a cartesian diagram
\[
\begin{tikzcd}
L^{\HK/\FF_{\mathbb{\F}_p}} \ar[r,"g"] \ar[d,"\widetilde{f}"] &  \breve{\Font}_{L'/L} \ar[d,"f"]  \\
L^{\HK} \ar[r,"\Psi_{L'/L,\varpi}"] & \Font_{L'/L},
\end{tikzcd}
\]
and  by proper base change one reduces to proving that there exists a finite extension $L'/L$ such that the natural map $\widetilde{f}^* \mathcal{V}\to  g^*g_{\natural}  \widetilde{f}^* \mathcal{V}$ is an isomorphism. This follows from \cref{sec:fully-faithf-psiast-1-getting-rid-of-galois-group} and \cref{PropTsuzukiBreve}.
\end{proof}

\begin{remark}
  \label{sec:tsuzukis-theorem-1-description-of-varphi-n-modules-over-q-p-un}
  As explained at that beginning of this section, the categories of vector bundles on $\GSpec(\breve{\Q}_p)/\varphi^\Z$ and $\GSpec(\Q_p^\un)/\varphi^\Z$ are \emph{not} equivalent.
  Let $G:=\varprojlim_{H\subseteq \Gal_{\F_p} \textrm{ open }}\GSpec(C(H,\Q_p))$ for $C(H,\Q_p)$ the Hopf algebra of continuous functions $H\to \Q_p$.
  Then $G$ is a group object in Gelfand stacks, acting on $\GSpec(\breve{\Q}_p)/\varphi^\Z$ with quotient $\GSpec(\Q_p^\un)/\varphi^\Z$.
\end{remark}

 As a consequence of \cref{localmonodromy}, we obtain the following.

 \begin{corollary}\label{atclassicalpoints}
   Let $X$ be a partially proper rigid space over $\Q_p$.
   Let $\mathcal{V}$ be a vector bundle on $X^\HK$.
   For any classical point $x\in X$, there exists an open neighborhood $U\subset X$ of $x$ and a finite extension $L/\Q_p$ such that the pullback of $\mathcal{V}$ to $U_L^\HK$ is unipotent (with respect to the residue field of $L$).
 \end{corollary}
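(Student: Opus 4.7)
The plan is to deduce the corollary directly from Theorem \ref{intro:localmonodromy}, combined with a spreading-out argument for the nilpotent monodromy operator around a classical point.

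First I would reduce to the fiber at $x$. Since $x$ is a classical point, its residue field $k(x)$ is a finite extension of $\Q_p$, so the restriction along $x^\HK \colon k(x)^\HK \to X^\HK$ produces a vector bundle $\mathcal{V}_x$ on $k(x)^\HK$. By Theorem \ref{intro:localmonodromy} applied to $L_0 := k(x)\subseteq \overline{\Q}_p$, the bundle $\mathcal{V}_x$ is classified by a $(\varphi,N,\Gal_{L_0})$-module $(D,\varphi,N,\rho)$ over $\Q_p^{\un}$. Because $D$ is finite-dimensional and the $\Gal_{L_0}$-action is smooth, all the data is defined over some $\Q_{p,\mathbb{F}}$ for a finite extension $\mathbb{F}/\mathbb{F}_p$, and the image of $\rho$ is a discrete quotient of $\Gal_{L_0}$. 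Enlarging $L_0$ to a finite extension $L \subseteq \overline{\Q}_p$ that contains $\Q_{p,\mathbb{F}}$ and trivializes the image of $\rho$, the pullback $\mathcal{V}_x|_{L^\HK}$ is classified by a $(\varphi,N)$-module defined over $\Q_{p,\mathbb{F}}$. Since $N$ is nilpotent, the filtration by the kernels of the powers of $N$ has graded pieces on which $N$ vanishes; these graded pieces are $\varphi$-modules over $\Q_{p,\mathbb{F}}$, i.e.\ vector bundles on $\GSpec(\Q_{p,\mathbb{F}})/\varphi^{\Z} = \mathbb{F}^{\HK}$. This shows that $\mathcal{V}_x|_{L^\HK}$ is unipotent with respect to $\mathbb{F}$, settling the statement at the point.

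To spread the statement to an open neighborhood, I would produce the monodromy filtration globally on $\mathcal{V}|_{U_L^\HK}$ for $U$ small enough. Working over $L$ (which we may, since the statement is stable under finite base change on $X$), the construction of \cref{ConstructionMonodromy} is geometric, so for any partially proper $X_L$ there is a natural $\mathcal{O}_{X_L^\HK}$-linear \emph{monodromy operator}
\[
  N_{\mathcal{V}}\colon \mathcal{V}|_{X_L^\HK} \longrightarrow \mathcal{V}|_{X_L^\HK}(-1),
\]
whose formation commutes with pullback to any classical point. At $x$ the operator $N_{\mathcal{V}}$ pulls back to the nilpotent $N$ on $D$, and a chosen $n$ annihilates its $n$-th power; by continuity of the coefficients of the characteristic polynomial (equivalently, coherence of $\mathcal{V}$ and vanishing of the finitely many matrix entries of $N_{\mathcal{V}}^n$ at $x$), the vanishing $N_{\mathcal{V}}^n=0$ propagates to some affinoid neighborhood $U$ of $x$ in $X$. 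On $U_L^\HK$ the kernel filtration
\[
  0 \subseteq \ker N_{\mathcal{V}} \subseteq \ker N_{\mathcal{V}}^2 \subseteq \cdots \subseteq \ker N_{\mathcal{V}}^n = \mathcal{V}|_{U_L^\HK}
\]
is then a filtration by vector bundles whose graded pieces are annihilated by $N_{\mathcal{V}}$. Shrinking $U$ further so that the pieces stay vector bundles (using \cref{PropPerfectBerkovichSpaces} and constancy of rank near $x$), each graded piece is a vector bundle on $U_L^\HK$ trivialized by $N$, hence pulled back along $U_L^\HK \to \mathbb{F}^{\HK}$ from a vector bundle on $\GSpec(\Q_{p,\mathbb{F}})/\varphi^\Z$ — this last point uses once more Theorem \ref{intro:localmonodromy} fiberwise to guarantee that the pullback of each graded piece to any classical point of $U_L$ is already pulled back from $\mathbb{F}^{\HK}$, and then the rigidity of vector bundles on $\GSpec(\Q_{p,\mathbb{F}})/\varphi^\Z$ to descend the global bundle itself.

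The main obstacle I expect is the spreading-out step, specifically showing that the graded pieces of the monodromy filtration on a neighborhood really descend to pullbacks from $\mathbb{F}^{\HK}$ rather than being merely bundles killed by $N_{\mathcal{V}}$; this requires rigidity of $\varphi$-modules and a careful use of the equivalence of Theorem \ref{intro:localmonodromy} at each classical point in $U_L$, combined with the fact that the assignment is compatible as $x$ varies. Everything else is formal manipulation of the nilpotent filtration and application of the theorem.
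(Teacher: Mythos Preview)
Your argument has a genuine gap at its core: there is no global monodromy operator $N_{\mathcal V}\colon \mathcal V\to \mathcal V(-1)$ on $X_L^{\HK}$ for an arbitrary vector bundle $\mathcal V$. Construction~\ref{ConstructionMonodromy} produces a map $\Psi\colon \Q_p^{\HK}\to \Font_{\Q_p}$, and by pullback a map $X_L^{\HK}\to \Font_L$; but a monodromy operator on $\mathcal V$ would amount to knowing that $\mathcal V$ lies in the essential image of pullback along this map, which is exactly the nontrivial content of Theorem~\ref{localmonodromy} when the base is a field inside $\overline{\Q}_p$, and is simply not available for a general $X_L$. In other words, you are assuming over a neighborhood precisely the structure whose existence at the point is the theorem. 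The secondary step compounds this: even granting a global $N$, the assertion that a vector bundle on $U_L^{\HK}$ with $N=0$ is pulled back from $\mathbb F^{\HK}$ (i.e.\ is a ``constant'' $\varphi$-module) is not justified, and there is no rigidity principle in the paper that would yield it.

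The paper avoids both problems by not attempting to construct $N$ in families. It instead proves a pure approximation statement: after base change to $\C_p$, one has $\VB(\Marc(\C_p)^{\HK})\cong \varinjlim_{U\ni y}\VB(U^{\HK})$, deduced from compactness of perfect modules on $\HK$-stacks (\cref{sec:fully-faithf-psiast-1-perfect-complexes-on-ff-curves-are-compact}, \cref{PropPerfectBerkovichSpaces}) and the kernel-category limit \cref{LemLimitsBerkovichSpaces}. Since $\mathcal V_y$ is unipotent by Theorem~\ref{localmonodromy}, the finite data of its unipotent filtration (the subbundles, the isomorphisms of graded pieces with $\mathcal O(\lambda_i)$) are all objects and morphisms in the colimit category, hence extend to some $U$. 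A second approximation step, using classicality of $x$, brings this back down from $\C_p$ to a finite extension $L/\Q_p$. So the spreading-out is done at the level of the \emph{filtration} as a diagram in a colimit of categories, not via spreading a nonexistent operator.
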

 \begin{proof}
 First, we show that given $Y$ a partially proper rigid space over $\C_p$, and $\mathcal{V}$ a vector bundle on $Y^{\HK}$, then, for any point $y$ of $Y$ whose completed residue field $\widehat{k(y)}$ is isomorphic to $\C_p$, there exists an open neighborhood $U\subset X$ of $y$ such that the pullback of $\mathcal{V}$ to $U^\HK$ is unipotent (with respect to $\overline{\mathbb{F}}_p$).
 This follows from \cref{localmonodromy} via an approximation argument.
 More precisely, it suffices to show that
 $$\VB(\Marc(\widehat{k(y)})^{\HK})= \varinjlim_{U \ni y}\VB(U^{\HK}).$$
 Moreover, thanks to \cref{PropPerfectBerkovichSpaces}, it suffices to prove a similar statement for perfect modules. Restricting to an affinoid neighborhood of $y$, and passing to a (light) profinite torsor over it, we can reduce to the affinoid perfectoid case (cf. the proof of \cref{sec:fully-faithf-psiast-1-getting-rid-of-galois-group}). Then the claim follows from \cref{LemLimitsBerkovichSpaces}, recalling that the Fargues--Fontaine curve attached to an affinoid perfectoid $\Marc(A)$ over $\Q_p$ is a Berkovich space, and observing that,  considering the map $\Marc(A)^\HK\to \mathcal{M}(A)_{\Betti}$, each closed subset of $|\Marc(A)^{\HK}|$ has a fundamental system of neighborhoods of the form $|U^\HK|$.

 Then, the statement follows from what we have just shown, again via approximation, using the classicality of the point $x\in X$.
 \end{proof}

 \begin{remark}
   \label{sec:tsuzukis-theorem-1-remark-on-general-c}
   Let $C/\Q_p$ be an algebraically closed field of finite transcendence degree $d\geq 1$ (up to completion), so that $C$ is a qfd Banach algebra over $\Q_p$.
   One might wonder if any vector bundle on $C^{\HK}$ is unipotent (with respect to $\overline{\mathbb{F}}_p$), or even if it is possible to classify vector bundles on $C^{\HK}$ by a category of $(\varphi,N_0,\ldots, N_d)$-modules over $\Q_p^\un$.
   This is however too optimistic: each vector bundle on $C^{\HK}$ spreads out to a vector bundle on $X^{\HK}$ for a smooth partially proper rigid analytic variety $X$ over $\Q_p$ with a morphism $x\colon \Spa(C)\to X$.
   As a consequence, a vector bundle on $C^{\HK}$ does \emph{not} necessarily admit a Harder--Narasimhan filtration (e.g., if the Harder--Narasimhan polygon for the pullback to $\FF_X$ jumps at $x$) even though its pullback to $\FF_C$ does.
   Looking at the argument in \cref{PropTsuzukiBreve}, we see that $\FF_C\to C^{\HK}$ is therefore not surjective.
   Still there is the chance that a version of Tsuzuki's theorem can hold for $C$:
   each vector bundle $\mathcal{E}$ on $C^{\HK}$, which spreads out to a \emph{semi-stable} vector bundle $\mathcal{E}'$ on $X^{\HK}$ for a smooth partially proper rigid analytic variety $X$ over $\Q_p$ with a morphism $x\colon \Spa(C)\to X$, is pulled back along the morphism $C^{\HK}\to \GSpec(\Q_p^\un)/\varphi^\Z$.
   % Considering the last part of the proof of \cref{PropTsuzukiBreve}, we can (up to replacing $X$ by an \'etale morphism, and a qcqs $\dagger$-rigid subspace) find a pro-$p$-covering $Y\to X$ with $Y$ qcqs, connected such that the pullback of $\mathcal{E}'$ to $\FF_Y$ is isomorphic to $\mathcal{O}(\lambda)^{\oplus r}$ for some $r\geq 0$ and $\lambda\in\Q$.
   % Following the proof of \cref{PropTsuzukiBreve} it remains to see that the diagonal $\Delta_{Y^\diamond}\subseteq Y^\diamond\times_{\Spa(\F_p)} Y^\diamond$ has a cofinal system of \emph{connected} qcqs open neighborhoods.
 \end{remark}

 \begin{remark}[de Rham implies potentially semi-stable]\label{dR=pst} \
 \begin{enumerate}
  \item As shown by Berger, \cite{Berger_monodromie}, the local $p$-adic monodromy theorem implies that any de Rham $p$-adic Galois represention is automatically potentially log-crystalline (i.e.\ potentially semi-stable).
 This was later reproved by Colmez, \cite{Colmez_deRham}, and Fargues--Fontaine, \cite{Farguesf}, using different methods.
 Rephrasing Berger's proof in terms of Hyodo--Kato stacks, this result can be deduced as follows from \cref{localmonodromy}.
 Using Sen theory, one can show that there is an equivalence of categories

 \begin{equation}\label{VBdR}
  \VB^{\dR}(\FF_{\Q_p})\cong \VB(\Q_p^{\HK,+})
 \end{equation}
 where the LHS denotes the category of \textit{de Rham} $\Gal_{\Q_p}$-equivariant vector bundles on $\FF_{\C_p}$, \cite[Définition 10.4.5]{Farguesf}, and $\Q_p^{\HK,+}$ denotes the stack defined as the pushout of the diagram $$\Q_p^{\HK}\leftarrow \Q_p^{\dR}\to \Q_p^{\dR, +}$$ (here, we write $\Q_p^{\dR, +}$ for the Hodge filtered de Rham stack of $\Q_p$).
 By \cref{localmonodromy} the category $\VB(\Q_p^{\HK})$ is equivalent to the category of $(\varphi, N, \Gal_{\Q_p})$-modules over $\Q_p^{\un}$; in particular, the category $\VB(\Q_p^{\HK,+})$ is equivalent to the category of \textit{filtered}  $(\varphi, N, \Gal_{\Q_p})$-modules over $\Q_p^{\un}$.
 On the other hand, by \cite[Proposition 10.6.7]{Farguesf}, the latter category is equivalent to the category $\VB^{\mathrm{plog}}(\FF_{\Q_p})$ of \textit{potentially log-crystalline} $\Gal_{\Q_p}$-equivariant vector bundles on $\FF_{\C_p}$.
 Thus, putting everything together, we deduce from \cref{VBdR} that we have an equivalence of categories
 \begin{equation}\label{VBdR2}
  \VB^{\dR}(\FF_{\Q_p})\cong \VB^{\mathrm{plog}}(\FF_{\Q_p}),
 \end{equation}
 which is the generalization of Berger's theorem proven by Fargues--Fontaine, \cite[Théorème 10.6.10]{Farguesf}.
 An advantage of this strategy is that it is amenable to generalizations to the relative case.
 In fact, we expect that the equivalence \cref{VBdR} can be extended replacing $\Q_p$ with a partially proper smooth rigid space $X$ over $\Q_p$, and that one can deduce from \cref{atclassicalpoints}, Shimizu's theorem, \cite[Theorem 1.1]{Shimizu_monodromy}.

 \item We recall that it is a major open problem in $p$-adic Hodge theory whether, given a de Rham pro-étale $\Z_p$-local system $\mathbb{L}$ on a smooth rigid space $X$ over $\Q_p$, there exists a finite étale extension $Y\to X$ such that the pullback of $\mathbb{L}$ to $Y$ is semi-stable at each classical point (see e.g.  \cite[Remark 1.4]{LiuZhu_RiemannHilbert}).
 This is known to be true only in very special cases; e.g. it is known for local systems with a single (constant) Hodge--Tate weight, \cite[Theorem 1.6]{Shimizu_constancy}.
 In view of the previous remark, we can translate this question in terms of Hyodo--Kato stacks, as follows.
 Let $X$ be a partially proper rigid space over $\Q_p$.
 Is it true that, given a vector bundle $\mathcal V$ on $X^{\HK}$, there exists a finite étale extension $Y\to X$ such that the pullback of $\mathcal{V}$ to $Y^{\HK}$ is unipotent at each classical point of $Y$?
 \end{enumerate}
 \end{remark}

\clearpage
\appendix
\section{Complements on solid functional analysis}\label{appendix:solid-functional-analysis}
Building on results of Clausen--Scholze, we collect here  various useful statements in solid functional analysis over $\Q_{p, \solid}$, revolving around the notion of nuclearity and $\omega_1$-compactness.

As in the main body of the paper, we work in the light solid setup, and all rings are animated. Given a solid $\Q_{p, \solid}$-algebra $A$, we implicitly endow it with the analytic ring structure induced from $\Q_{p, \solid}$ and denote by $\ob{D}(A)$ the derived $\infty$-category of complete $A$-modules. \medskip

We begin by recalling some terminology.

\begin{definition}\label{defSmith}
 A  solid $\Q_p$-module is called a \textit{Smith space} if it is isomorphic to $\Q_p\otimes_{\Z_p} \prod_I \Z_p$, for some set $I$. We call a Smith space \textit{light}, if $I$ can be chosen to be countable.
\end{definition}

We recall that a solid $\Q_p$-module is a light Smith space if and only if it is isomorphic to $\Q_{p, \solid}[S]$ for some light profinite set $S$.

\begin{lemma}\label{lemmaSmith}
The class of Smith spaces, resp.\ light Smith spaces, is stable under extensions, closed subobjects, and quotient by closed subobjects.
\end{lemma}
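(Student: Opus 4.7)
The plan is to treat extensions by a direct splitting argument, and closed subobjects and closed quotients via Smith--Banach duality; the light case is handled in parallel, since all operations involved will preserve countability.

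For extensions, given a short exact sequence $0 \to A \to B \to C \to 0$ of solid $\Q_p$-modules with $A \cong \Q_p \otimes_{\Z_p} \prod_{I_A} \Z_p$ and $C \cong \Q_p \otimes_{\Z_p} \prod_{I_C} \Z_p$ Smith, the key observation is that $C$ is projective in the abelian category of solid $\Q_p$-modules. Indeed, $\prod_{I_C}\Z_p \cong \Z_{p,\solid}[S_C]^{\vee\vee}$, and one checks that $C$ is a direct summand of $\Z_{p,\solid}[S_C] \otimes_{\Z_p} \Q_p = \Q_{p,\solid}[S_C]$-type objects; more directly, homs out of $\Q_p \otimes_{\Z_p} \prod_{I_C} \Z_p$ into a solid $\Q_p$-module factor through homs out of $\prod_{I_C}\Z_p$, and the latter is compact projective in solid $\Z_p$-modules. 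Consequently the sequence splits and $B \cong A \oplus C \cong \Q_p \otimes_{\Z_p} \prod_{I_A \sqcup I_C} \Z_p$, which is again a Smith space (light if both $A$ and $C$ are).

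For closed subobjects and closed quotients, I would invoke Smith--Banach duality: the functor $M \mapsto M^\vee := \underline{\mathrm{Hom}}_{\Q_{p,\solid}}(M,\Q_p)$ furnishes a contravariant equivalence between the category of (light) Smith $\Q_p$-spaces and the category of (separable) $\Q_p$-Banach spaces, which is exact on short exact sequences internal to these subcategories, cf.\ \cite[Theorem 3.40]{jacinto2021solid} and \cite[A.18]{bosco2021p}. A ``closed subobject'' $M \hookrightarrow N$ of a Smith space $N$ is, by definition, a monomorphism whose cokernel is again a solid module in degree zero; under duality this corresponds to a surjection of Banach spaces $N^\vee \twoheadrightarrow M^\vee$, that is, a Hausdorff quotient of the Banach space $N^\vee$, which is itself a Banach space by the open mapping theorem. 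Dualising back, $M$ is Smith, and $N/M$, dual to the closed Banach subspace $(N/M)^\vee \hookrightarrow N^\vee$, is likewise Smith. The separability condition is preserved by closed subspaces and Hausdorff quotients of Banach spaces, which handles the light case.

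The main obstacle is merely bookkeeping: one must check that the interpretation of ``closed subobject'' agrees on both sides of Smith--Banach duality, in particular that a subobject in the abelian category of solid $\Q_p$-modules whose cokernel is a Smith space dualises precisely to a Hausdorff Banach quotient. Once this dictionary is set up, the stability statements reduce to the classical facts that closed subspaces of Banach spaces and Hausdorff quotients of Banach spaces by closed subspaces are again Banach.
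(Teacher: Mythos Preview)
Your approach is correct and essentially supplies the content behind the paper's proof, which is just a citation to \cite[Proposition 3.9]{jacinto2021solid} and \cite[Lemma A.26]{bosco2021p}. The splitting argument for extensions via projectivity of Smith spaces, and the reduction of the closed subobject and closed quotient cases to classical Banach-space facts via Smith--Banach duality, are exactly the standard arguments found in those references.

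One minor correction: your characterization of ``closed subobject'' as ``a monomorphism whose cokernel is again a solid module in degree zero'' is vacuous, since in any abelian category the cokernel of a monomorphism is automatically concentrated in degree zero. The correct condition is that the quotient $N/M$ is \emph{quasi-separated} (equivalently, $M$ coincides with its closure inside $N$ in the condensed sense). Under duality, this is precisely what guarantees that $N^\vee \to M^\vee$ is surjective rather than merely having dense image, so that the open mapping theorem applies and $M^\vee$ is genuinely a Hausdorff Banach quotient. You rightly flag this dictionary as the bookkeeping step, so the overall structure is sound.
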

\begin{proof}
 See e.g. \cite[Proposition 3.9]{jacinto2021solid}, \cite[Lemma A.26]{bosco2021p}.
\end{proof}

\begin{lemma}\label{LemmaPermanenceOmega1Compact}
Let $A$ be a solid $\Q_p$-algebra whose underlying solid  $\Q_p$-module is $\omega_1$-compact. Let $M\in \ob{D}(A)$, then the following are equivalent:
\begin{enumerate}
\item $M$ is an $\omega_1$-compact $A$-module.

\item $M$ is an $\omega_1$-compact solid $\Q_p$-module.

\item $\pi_k(M)$ is a static $\omega_1$-compact $\pi_0(A)$-module  (in the abelian category of solid $\pi_0(A)$-modules) for all $k\in \Z$.

\end{enumerate}
\end{lemma}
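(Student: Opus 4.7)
The plan is to establish $(1) \Leftrightarrow (2)$ first, purely through the adjunction between base change $-\otimes_{\Q_p}A$ and the forgetful functor, and then deduce $(2) \Leftrightarrow (3)$ by analyzing the cohomology functors with respect to the Postnikov $t$-structure on $\ob{D}(\Q_{p,\solid})$. A subtle point is that to invoke $(1) \Leftrightarrow (2)$ with $\pi_0(A)$ in place of $A$ (needed to pass between $\omega_1$-compactness over $\pi_0(A)$ and over $\Q_p$ for static modules) one first needs that $\pi_0(A)$ itself is $\omega_1$-compact as a solid $\Q_p$-module; this comes for free as a byproduct of $(2)\Rightarrow(3)$ applied to $M=A$.

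For $(1) \Leftrightarrow (2)$, I would consider the forgetful functor $U\colon \ob{D}(A)\to \ob{D}(\Q_{p,\solid})$. Its right adjoint $\underline{\Hom}_{\Q_p}(A,-)$ preserves $\omega_1$-filtered colimits precisely because $A$ is $\omega_1$-compact in $\ob{D}(\Q_{p,\solid})$; hence $U$ preserves $\omega_1$-compact objects. Conversely, if $UM$ is $\omega_1$-compact, so are all $A^{\otimes k}\otimes_{\Q_p} UM$, since $\omega_1$-compact objects form a symmetric monoidal subcategory of $\ob{D}(\Q_{p,\solid})$. Writing $\Map_{\ob{D}(A)}(M,-)$ as the totalization of the cosimplicial object $\Map_{\ob{D}(\Q_{p,\solid})}(A^{\otimes \bullet}\otimes_{\Q_p} UM, U(-))$ coming from the bar resolution then presents it as a countable limit of functors each of which preserves $\omega_1$-filtered colimits, and countable limits of such functors into spectra again preserve $\omega_1$-filtered colimits.

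For $(2) \Rightarrow (3)$, I would exploit that the cohomology functor $\pi_k$ commutes with filtered colimits on $\ob{D}(\Q_{p,\solid})$. An $\omega_1$-compact $M$ is a retract of an $\omega_1$-small colimit of compact generators $\Q_{p,\solid}[S]$ with $S$ light profinite, and applying $\pi_k$ presents $\pi_k(M)$ as a retract of a countable colimit in the heart of static objects of the form $\Q_{p,\solid}[S]$, which are $\omega_1$-compact (in fact compact) there. Applied to $M=A$, this gives that $\pi_0(A)$ is $\omega_1$-compact as a solid $\Q_p$-module, and then $(1) \Leftrightarrow (2)$ with $\pi_0(A)$ in place of $A$, restricted to static modules, identifies $\omega_1$-compactness over $\pi_0(A)$ with that over $\Q_p$; this finishes $(2) \Rightarrow (3)$.

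For $(3) \Rightarrow (2)$, the bounded case is direct by induction: each ``box'' truncation $\tau_{\geq a}\tau_{\leq b}M$ is a finite iterated extension of the $\omega_1$-compact objects $\pi_k(M)[k]$, $a\le k\le b$, hence itself $\omega_1$-compact. To handle unbounded $M$, I would use right-completeness of the $t$-structure to write $\tau_{\leq b}M = \varinjlim_a \tau_{\geq -a}\tau_{\leq b}M$ as a countable filtered colimit of $\omega_1$-compact objects, which is again $\omega_1$-compact since mapping out turns such a colimit into a countable inverse limit in spectra and countable limits commute with $\omega_1$-filtered colimits. Left-completeness of the $t$-structure on $\ob{D}(\Q_{p,\solid})$ then writes $M$ as the countable cofiltered limit of the $\tau_{\leq b}M$, and a Milnor-type fibre sequence together with the fact that the transition maps in the Postnikov tower become isomorphisms on any fixed bounded truncation allows to conclude that $\Map_{\ob{D}(\Q_{p,\solid})}(M,-)$ still preserves $\omega_1$-filtered colimits. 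The main obstacle is precisely this last unbounded step: it depends crucially on the left-completeness of the $t$-structure on $\ob{D}(\Q_{p,\solid})$ and on the strong convergence of Postnikov towers in this $\omega_1$-accessible stable setting.
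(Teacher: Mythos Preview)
Your treatment of $(1)\Leftrightarrow(2)$ agrees with the paper's. For $(2)\Rightarrow(3)$ there is a small imprecision: $\pi_k$ of a compact object of $\ob{D}(\Q_{p,\solid})$ is not literally ``of the form $\Q_{p,\solid}[S]$'' but only a subquotient of a light Smith space, and the paper invokes \cref{lemmaSmith} (closure of light Smith spaces under closed subobjects and quotients) to see that such subquotients remain compact in the heart.

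For $(3)\Rightarrow(2)$ the paper takes a shorter path and never touches the Postnikov limit. It writes $M=\varinjlim_n\tau_{\geq -n}M$ as a countable filtered colimit, reducing to connective $M$, and then for connective $M$ inductively builds a resolution $\cdots\to Q_1\to Q_0\to M$ with each $Q_i$ a countable direct sum of $\Q_{p,\solid}[S]$'s: at each stage one uses that $\pi_0$ of the current cofiber is $\omega_1$-compact in the heart to surject from such a $Q_i$, and (again via \cref{lemmaSmith}) that the kernel remains $\omega_1$-compact so the induction continues. This exhibits $M$ as a countable colimit of $\omega_1$-compact objects in $\ob{D}$.

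Your route has two soft spots. First, the ``bounded case'' silently uses that a static module which is $\omega_1$-compact in the heart is already $\omega_1$-compact in $\ob{D}(\Q_{p,\solid})$; this is true, but its proof is essentially the resolution argument just described, specialised to a single homotopy group, so you have not actually bypassed it. Second, the passage from the $\tau_{\leq b}M$ to $M=\lim_b\tau_{\leq b}M$ via a Milnor fibre sequence is not justified as written: that sequence involves $\prod_b\tau_{\leq b}M$, and a countable product of $\omega_1$-compact objects sitting in unbounded cohomological degrees has no a priori reason to be $\omega_1$-compact; the stabilisation of the tower on each bounded truncation does not by itself control $\Map(M,-)$ on arbitrary targets. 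The step \emph{can} be salvaged, but by a different argument: apply left-completeness to the \emph{target}, writing $\Map(M,N)=\lim_b\Map(\tau_{\leq b}M,\tau_{\leq b}N)$, and then commute the countable limit over $b$ with the $\omega_1$-filtered colimit in $N$. Either way, the paper's direct resolution argument is the cleaner route.
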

\begin{proof}
  Let us first prove that (1) $\Leftrightarrow$ (2). Suppose that $M$ is $\omega_1$-compact as solid $\Q_p$-module, then $M$ is the geometric realization of the $\omega_1$-compact objects $A^{\bullet+1}\otimes_{\Q_{p,\solid}} M$ and so $\omega_1$-compact as $A$-module, since the compact objects in $\ob{D}(\Q_{p,\solid})$ are stable under the solid tensor product.
  Conversely, let $M$ be an $\omega_1$-compact $A$-module, then if $N\in \ob{D}(\Q_{p,\solid})$ we have that 
\[
\iHom_{\Q_p}(M,N)= \iHom_{A}(M,\iHom_{\Q_p}(A,N)).
\]
Hence, as $A$ is $\omega_1$-compact as a solid $\Q_p$-module, the functor $\iHom_{\Q_p}(M,-)$ commutes with $\omega_1$-filtered colimits making $M$ an $\omega_1$-compact solid $\Q_p$-module.

Next, for proving the equivalence with (3), by the previous equivalences (applied to $A$ and $\pi_0(A)$) we can assume without loss of generality that $A=\Q_p$.
Suppose that $M$ is an $\omega_1$-compact solid $\Q_p$-module.
Then, $M=\varinjlim_I P_i$ can be written as a countable filtered colimit of compact objects $\ob{D}(\Q_{p,\solid})$.
Each $P_i$ is represented by a finite complex of light Smith spaces.
In particular, by \cref{lemmaSmith}, this implies that each cohomology group $\pi_k(P_i)$ is a quotient of Smith spaces and so compact as a static solid $\Q_{p}$-module.
This implies that $\pi_k(M)=\varinjlim_i \pi_k(P_i)$ is a countable colimit of compact $\Q_p$-modules and so $\omega_1$-compact as a static solid $\Q_{p,\solid}$-module.
Conversely, let $M\in \ob{D}(\Q_{p,\solid})$ be such that $\pi_k(M)$ is $\omega_1$-compact as a static solid $\Q_p$-module for all $k\in \Z$.
First, by writing $M=\varinjlim_{n} \tau_{\geq -n} M_n$ as a countable colimit of its right truncations, we can assume without loss of generality that $M$ is connective.
In that case, as $\pi_0(M)$ is $\omega_1$-compact, we can find a countably family of light profinite sets $S_{0,j}$ and a map 
\[
Q_0:=\bigoplus_{j} \Q_{p,\solid}[S_j] \to M
\]
surjective on $\pi_0$.
Let $M_1$ be the cofiber of this map, then $M_1$ is $1$-connective and its cohomology groups are still $\omega_1$-compact (this follows because the kernel of $Q_0\to \pi_0(M)$ admits again a surjection by a countable sum of Smith spaces).
By an inductive argument we see that $M$ is represented by a complex of the form 
\[
\cdots \to Q_1\to Q_0\to 0
\]
where each $Q_i$ is a countable direct sum of compact projective solid $\Q_p$-modules. From this presentation, one can see that $M$ is $\omega_1$-compact proving what we wanted.
\end{proof}

We recall the following definition of nuclear modules. 

\begin{definition}\label{DefNuclearModules}
Let $A$ be a solid $\Q_p$-algebra. 
\begin{enumerate}

\item A morphism $N\to M$ in $\ob{D}(A)$ is \textit{trace class} if it lands in the image of the map 
\[
\pi_0(\iHom_A(N,A)\otimes_A M )(*)\to \pi_0 \Hom_A(N,M).  
\]

\item A solid $A$-module $M$ is \textit{nuclear} if for all compact $A$-modules $P$ the natural morphism 
\[
\iHom_A(P,A)\otimes_{A} M\to \iHom_A(P,M)
\]
is an equivalence. We let $\Cat{Nuc}(A)\subset \ob{D}(A)$ be the full subcategory of nuclear $A$-modules.

\item   An object $M\in \ob{D}(A)$ is called \textit{basic nuclear} if it is of the form $M=\varinjlim_{n} M_n$ where each $M_n\in \ob{D}(A)$ is compact and the maps $M_n\to M_{n+1}$ are trace class.  We let $\Cat{Nuc}^{\ob{basic}}(A)\subset \ob{D}(A)$ be the full subcategory of basic nuclear $A$-modules.

\end{enumerate}
\end{definition}

\begin{example}\label{ExampleBanach}
  Banach spaces are nuclear solid $\Q_p$-modules (see e.g. \cite[Corollary A.50]{bosco2021p}). More generally, Fr\'echet spaces over $\Q_p$ are nuclear solid $\Q_p$-modules (\cite[Proposition A.64]{bosco2021p}).
\end{example}

 Let us collect some properties of the category of (basic) nuclear modules.

\begin{lemma}\label{basicnuclemma}\
\begin{enumerate}
 \item The subcategory $\ob{Nuc}(A)\subset \ob{D}(A)$ is stable under colimits, and its $\omega_1$-compact objects are precisely the basic nuclear objects.
  \item The categories  $\Cat{Nuc}^{\ob{basic}}(A)\subset \Cat{Nuc}(A)\subset \ob{D}(A)$ are stable under tensor products.
  \item If $A\to B$ is a morphism of solid $\Q_p$-algebras and $N$ is a nuclear (resp.\ basic nuclear)  $A$-module, then  $B\otimes_A N$  is a nuclear (resp.\ basic nuclear) $B$-module.
\end{enumerate}
\end{lemma}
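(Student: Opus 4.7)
The lemma collects standard stability results for nuclear modules in the solid setting, so my plan is to deduce everything formally from the interplay between compact objects, the tensor product, and the notion of trace-class map, following the yoga developed by Clausen--Scholze (and written up in Andreychev's thesis). For part (1), the stability of $\Cat{Nuc}(A)$ under colimits is immediate from the definition: in the natural map $\iHom_A(P,A)\otimes_A M \to \iHom_A(P,M)$ both sides commute with colimits in $M$ because $P$ is compact, so if the map is an equivalence for each $M_i$ it is an equivalence for $\varinjlim M_i$. The $\omega_1$-compact objects clearly contain the basic nuclear ones, since a basic nuclear object is by definition a countable filtered colimit of compact objects and hence $\omega_1$-compact in $\ob{D}(A)$ and a fortiori in $\Cat{Nuc}(A)$. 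For the converse I would show that every nuclear module $M$ is the filtered colimit of its basic nuclear subobjects; given this, any $\omega_1$-compact nuclear object becomes a retract of a basic nuclear object, and retracts of basic nuclear objects are again basic nuclear by a standard telescoping argument on the trace-class tower.

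For part (2), the key observation is that the tensor product of two trace-class maps is trace-class: this follows directly by combining the two factorizations defining the trace-class condition. Consequently, if $M=\varinjlim M_n$ and $N=\varinjlim N_n$ are basic nuclear presentations, then $M\otimes_A N = \varinjlim(M_n\otimes_A N_n)$ is again basic nuclear. Stability of $\Cat{Nuc}(A)$ under tensor products then follows by writing any nuclear module as a filtered colimit of basic nuclear submodules and invoking the colimit stability from~(1). Part (3) is the most formal step: the functor $B\otimes_A(-)$ sends compact $A$-modules to compact $B$-modules and preserves trace-class maps (being symmetric monoidal and $A$-linear), so it sends basic nuclear to basic nuclear. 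The nuclear case again reduces to the basic nuclear case via filtered colimit presentations.

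The main technical point, used throughout, is the statement that every nuclear module is a filtered colimit of basic nuclear subobjects, and this is the part I expect to require the most care. The plan is to argue that every map from a compact $P$ into a nuclear $M$ factors through a basic nuclear subobject of $M$: the nuclearity condition exhibits such a map as an element of $\pi_0(\iHom_A(P,A)\otimes_A M)(\ast)$, and by approximation of this tensor product by countable sums (using compactness of $P$ and the description of maps out of $P$) one extracts a basic nuclear subobject through which the map factors. Organizing these basic nuclear subobjects into a filtered system with colimit $M$ then gives the desired presentation, after which parts (1)--(3) all follow as outlined.
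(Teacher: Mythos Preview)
Your proposal is correct and follows the standard Clausen--Scholze arguments that the paper simply cites (Scholze's \emph{Lectures on analytic geometry}, Proposition 13.13; Clausen--Scholze's complex analytic notes, Theorem 8.6; Mann's thesis, Proposition 2.3.22). The paper's own proof consists entirely of these citations, so you are essentially unpacking what those references contain.

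One phrasing issue worth tightening: in the key step for (1), speaking of ``basic nuclear subobjects'' and having maps ``factor through'' them is not quite how the argument runs in the $\infty$-categorical setting. The cleaner formulation is this: nuclearity of $M$ says exactly that every map $P\to M$ from a compact object is trace class; writing $M=\varinjlim_i P_i$ as a filtered colimit of compacts in $\ob{D}(A)$, the trace-class witness for each structure map $P_i\to M$ lives in $\pi_0(P_i^\vee\otimes_A P_j)(\ast)$ for some $j\ge i$, so after passing to a cofinal system one may arrange the transition maps themselves to be trace class. This shows $\Cat{Nuc}(A)=\mathrm{Ind}_{\omega_1}(\Cat{Nuc}^{\mathrm{basic}}(A))$, and your retract/telescoping remarks then finish the identification of $\omega_1$-compact objects. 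Your sketch is pointing at this, but the ``countable sums'' and ``subobject'' language obscures it slightly.
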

\begin{proof}
 For part (1) see \cite[Proposition 13.13]{scholze-analytic-spaces}, part (2) is \cite[Theorem  8.6]{clausenscholzecomplex}, and part (3) \cite[Proposition 2.3.22]{mann2022p}.
\end{proof}

\begin{lemma}\label{LemmaPermanence}
Let $A$ be a solid $\Q_p$-algebra which is nuclear as solid $\Q_p$-module and let $M\in \ob{D}(A)$. The following are equivalent: 
\begin{enumerate}

\item $M$ is a nuclear $A$-module.

\item $M$ is nuclear as a $\Q_p$-module.

\item $\pi_i(M)$ is a nuclear $A$-module for all $i\in \Z$.
\end{enumerate}
\end{lemma}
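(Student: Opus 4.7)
The plan is to establish the three equivalences via two independent arguments: (1) $\Leftrightarrow$ (2) via the bar resolution and adjunction, and (2) $\Leftrightarrow$ (3) via a $t$-structure argument, reducing the latter to the case $A=\Q_p$ using the already-established first equivalence.

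For (2) $\Rightarrow$ (1), I would use the bar resolution $M \simeq |A^{\otimes_{\Q_p}(\bullet+1)} \otimes_{\Q_p} M|$, viewed as a geometric realization in $\ob{D}(A)$. Each term equals $A \otimes_{\Q_p} (A^{\otimes_{\Q_p} n} \otimes_{\Q_p} M)$, where the parenthesized factor is nuclear over $\Q_p$ by stability of nuclearity under tensor products (\cref{basicnuclemma}(2)) applied to the assumed nuclearity of $A$ and $M$; tensoring with $A$ then yields a nuclear $A$-module by \cref{basicnuclemma}(3). Closure under colimits (\cref{basicnuclemma}(1)) gives nuclearity of $M$ over $A$. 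For (1) $\Rightarrow$ (2), given a compact $P \in \ob{D}(\Q_{p,\solid})$, the base change $A \otimes_{\Q_p} P$ is a compact $A$-module, and a chain of identifications using adjunction, nuclearity of $M$ over $A$, and nuclearity of $A$ over $\Q_p$ yields
\[
\iHom_{\Q_p}(P, M) \simeq \iHom_A(A \otimes_{\Q_p} P, M) \simeq \iHom_A(A \otimes_{\Q_p} P, A) \otimes_A M \simeq \iHom_{\Q_p}(P, \Q_p) \otimes_{\Q_p} M,
\]
which is the defining condition for nuclearity of $M$ over $\Q_p$.

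For (2) $\Leftrightarrow$ (3), the established (1) $\Leftrightarrow$ (2) applied to each $\pi_i(M) \in \ob{D}(A)^\heartsuit$ shows that the condition ``$\pi_i(M)$ nuclear over $A$'' is equivalent to ``$\pi_i(M)$ nuclear over $\Q_p$'', so I can reduce to $A=\Q_p$. Two key observations drive the argument: shifts preserve nuclearity, and the nuclear subcategory of $\ob{D}(\Q_{p,\solid})$ is closed under fiber sequences (two-out-of-three), because for compact $P$ both $\iHom_{\Q_p}(P, -)$ and $\iHom_{\Q_p}(P, \Q_p) \otimes_{\Q_p} -$ are exact functors of stable $\infty$-categories, so the nuclearity identity passes through the long exact sequence of any cofiber sequence. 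Given these, for bounded $M$ with each $\pi_i(M)$ nuclear, induction on the amplitude yields that $M$ is nuclear; conversely, the fiber sequence $\tau_{\geq i+1} M \to \tau_{\geq i} M \to \pi_i(M)[i]$ shows inductively that each $\pi_i(M)$ is nuclear whenever $M$ is nuclear and bounded. For the unbounded case, I would mimic the strategy used in the proof of \cref{LemmaPermanenceOmega1Compact}: write $M = \varinjlim_n \tau_{\geq -n} M$, using left-completeness of the $t$-structure on $\ob{D}(\Q_{p,\solid})$ to reduce to bounded-below $M$, and then a further filtered-colimit argument to reduce to the bounded case, exploiting the closure of nuclearity under filtered colimits (\cref{basicnuclemma}(1)).

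The main obstacle I anticipate is the unbounded case of (2) $\Leftrightarrow$ (3): nuclearity is not automatically closed under the inverse limits appearing in Postnikov towers, so assembling $M$ from its nuclear homotopy groups requires rewriting the assembly as a filtered colimit of bounded truncations. This is precisely what left-completeness of the $t$-structure on $\ob{D}(\Q_{p,\solid})$ provides, and after this rewriting, closure of nuclear modules under filtered colimits delivers the result. The converse direction requires dually that truncation functors preserve nuclearity on bounded-below objects, which again reduces via the fiber sequence argument to the bounded case once the unbounded case is handled.
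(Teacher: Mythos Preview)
Your argument for (1) $\Leftrightarrow$ (2) is correct.

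The argument for (2) $\Leftrightarrow$ (3) has a gap in both directions. The two-out-of-three property for nuclearity along fiber sequences is valid, but it only lets you deduce one term from the other two. In the direction (2) $\Rightarrow$ (3), with $M$ nuclear and bounded of amplitude $[a,b]$, your fiber sequences $\tau_{\geq i+1} M \to \tau_{\geq i} M \to \pi_i(M)[i]$ never get the induction started: knowing only that $\tau_{\geq a} M = M$ is nuclear (at the bottom) or that $\tau_{\geq b+1} M = 0$ is nuclear (at the top) does not let you peel off a single $\pi_i(M)$. In the direction (3) $\Rightarrow$ (2), the bounded case does work, and the colimit $M = \varinjlim_n \tau_{\geq -n} M$ reduces to the bounded-below case, but the promised ``further filtered-colimit argument'' from bounded-below to bounded does not exist: a connective object is the \emph{inverse} limit of its Postnikov truncations $\tau_{\leq m}$, and nuclearity is not closed under limits in general.

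The missing ingredient is that $C(S,\Q_p) = \iHom_{\Q_p}(\Q_{p,\solid}[S],\Q_p)$ is \emph{flat} as a solid $\Q_p$-module (stated in the paper in the proof of \cref{PropBasicNuclearQp}). Nuclearity can be tested against the compact projective generators $P = \Q_{p,\solid}[S]$ alone, since the class of compact $P$ for which $P^\vee \otimes M \to \iHom(P,M)$ is an isomorphism is closed under shifts, cofibers, and retracts. For such $P$, both $\iHom_{\Q_p}(P,-)$ and $C(S,\Q_p) \otimes_{\Q_p} -$ are $t$-exact, so the comparison map is an isomorphism on $M$ if and only if it is on each $\pi_i(M)$. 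This gives (2) $\Leftrightarrow$ (3) for $A = \Q_p$ directly, without any truncation gymnastics, and then your reduction via (1) $\Leftrightarrow$ (2) handles general $A$. The paper itself only cites an external reference for this lemma, so there is no in-paper argument to compare against.
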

\begin{proof}
 See e.g. the proof of \cite[Theorem A.17(iii), Remark A.18]{bosco2023}.
\end{proof}

\begin{lemma}\label{LemmaCOuntableLimitNuclear}
Let $A$ be a nuclear solid $\Q_p$-algebra. Then $\Cat{Nuc}(A)\subset \ob{D}(A)$ is stable under countable limits.
\end{lemma}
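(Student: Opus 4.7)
The first step is to reduce to the case $A=\Q_p$. Since $A$ is assumed nuclear over $\Q_p$, Lemma~\ref{LemmaPermanence} tells us that an object of $\ob{D}(A)$ is nuclear if and only if its underlying solid $\Q_p$-module is nuclear. Moreover, the forgetful functor $\ob{D}(A)\to \ob{D}(\Q_{p,\solid})$ preserves all limits, so countable limits in $\ob{D}(A)$ of nuclear $A$-modules are nuclear provided the corresponding limits in $\ob{D}(\Q_{p,\solid})$ of nuclear $\Q_p$-modules are nuclear. Thus we may take $A=\Q_p$ for the rest of the argument.

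The second step is a formal reduction to countable products. In the stable $\infty$-category $\ob{D}(\Q_{p,\solid})$, any countable limit is built from countable products together with finite limits (fibers/equalizers). By Lemma~\ref{basicnuclemma}(1), nuclear modules are stable under colimits, hence in particular under finite colimits; since fibers and cofibers coincide up to a shift in the stable setting, nuclear modules are also stable under finite limits. It therefore suffices to show that the countable product $M:=\prod_{n\in\N} M_n$ of nuclear $\Q_p$-modules is nuclear.

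The third step is to verify the nuclearity condition for $M$ by testing against compact objects. For any compact $P\in \ob{D}(\Q_{p,\solid})$ we need
\[
\iHom(P,\Q_p)\otimes_{\Q_p} M\longrightarrow \iHom(P,M)=\prod_n \iHom(P,M_n)=\prod_n \iHom(P,\Q_p)\otimes_{\Q_p} M_n
\]
to be an equivalence, where the first equality uses that $\iHom(P,-)$ commutes with limits and the second uses nuclearity of each $M_n$. Any compact object $P$ is a finite complex of light Smith spaces $\Q_{p,\solid}[S]$, so $\iHom(P,\Q_p)$ is a finite complex of separable Banach algebras $C(S,\Q_p)$. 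Using the closure of the relevant classes under finite (co)fibers, the problem reduces to showing, for each light profinite set $S$, the commutation
\[
C(S,\Q_p)\otimes_{\Q_p}\prod_n M_n\xrightarrow{\sim}\prod_n C(S,\Q_p)\otimes_{\Q_p} M_n.
\]

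The main obstacle is establishing this last commutation: it asks that tensoring with a separable Banach space on $\Q_p$ interchanges with countable products of nuclear modules. The strategy is to write $C(S,\Z_p)=\varprojlim_k C(S,\Z/p^k)$ as a Mittag--Leffler tower of $p$-torsion discrete modules, with each $C(S,\Z/p^k)=\varinjlim_i C(S_i,\Z/p^k)$ a countable filtered colimit of finite free $\Z/p^k$-modules (using a presentation $S=\varprojlim_i S_i$ by finite sets). Then one interchanges, in turn, the inversion of $p$, the tower $\varprojlim_k$, the filtered colimit $\varinjlim_i$, and the countable product $\prod_n$; the interchange with $\varinjlim_i$ uses the nuclearity of each $M_n$ (through the defining compatibility of trace class maps with infinite products of targets), while the interchange with $\varprojlim_k$ uses the Mittag--Leffler vanishing of $R^1\varprojlim$ coming from the surjectivity of the transition maps. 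Once this interaction between countable products, the Fr\'echet presentation of $C(S,\Q_p)$, and the nuclearity of the $M_n$ is verified, the conclusion follows formally from the preceding steps.
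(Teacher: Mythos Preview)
The paper's own proof is simply a reference to \cite[Theorem A.17(i)]{bosco2023}, so there is no in-paper argument to compare against; you are attempting strictly more than the paper does. Your reductions in the first three steps are correct and standard: reducing to $A=\Q_p$ via \cref{LemmaPermanence}, then to countable products, and finally to the single commutation
\[
C(S,\Q_p)\otimes_{\Q_p}\prod_n M_n\;\longrightarrow\;\prod_n C(S,\Q_p)\otimes_{\Q_p} M_n
\]
for light profinite $S$ and nuclear $M_n$. This is exactly the heart of the matter.

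Your fourth step, however, does not work as written. You propose to resolve $C(S,\Q_p)$ through the tower $C(S,\Z_p)=\varprojlim_k C(S,\Z/p^k)$ and then ``interchange $\varprojlim_k$'' with the tensor product and the countable product. But each $M_n$ is a $\Q_p$-module, so $C(S,\Z/p^k)\otimes_{\Z_p}M_n=0$ (the left factor is $p^k$-torsion while $p$ acts invertibly on the right). Hence the tower you form is the zero tower, and no Mittag--Leffler argument can recover $C(S,\Z_p)\otimes_{\Z_p}M_n$ from it. More generally, $-\otimes N$ simply does not commute with this inverse limit; the Mittag--Leffler condition controls $R^1\varprojlim$ of a given tower, not the failure of tensor products to preserve limits. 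The appeal to nuclearity ``through the defining compatibility of trace class maps with infinite products of targets'' is also too vague to carry the remaining interchange with $\varinjlim_i$: once the $\varprojlim_k$ step collapses, there is nothing left for that argument to act on.

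So the gap is real and sits precisely at the point where the actual content of the lemma lies. One way to proceed (which is closer to what the cited reference does) is to work with the trace-class characterization of nuclearity over $\Q_{p,\solid}$ directly and use the concrete structure of $C(S,\Q_p)$ as a Banach space (e.g., as an orthonormalizable module), rather than passing through $p$-torsion approximations that are blind to $\Q_p$-modules.
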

\begin{proof}
See \cite[Theorem A.17(i)]{bosco2023}.
\end{proof}

\begin{lemma}\label{LemmaNuclearUppershierk}
Let $A$ be a  nuclear solid $\Q_p$-algebra and let $M$ be an $\omega_1$-compact  $A$-module. Then the functor
\[
\iHom_{A}(M,-)\colon \ob{D}(A)\to \ob{D}(A)
\]
preserves nuclear modules.
\end{lemma}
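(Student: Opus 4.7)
The plan is to decompose $M$ into compact pieces, dualize, and reduce to an explicit computation of Hom out of a generating compact module. Since $M$ is $\omega_1$-compact, we may write $M = \varinjlim_{n\in\N} P_n$ as a countable filtered colimit of compact $A$-modules. Applying $\iHom_A(-,N)$ turns this into a countable limit
\[
  \iHom_A(M, N) \;\simeq\; \varprojlim_n \iHom_A(P_n, N).
\]
Because $A$ is nuclear, \cref{LemmaCOuntableLimitNuclear} tells us that a countable limit of nuclear $A$-modules is again nuclear, so it suffices to prove that $\iHom_A(P, N)$ is nuclear for each compact $P\in\ob{D}(A)$ and each nuclear $N$.

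For such $P$ and $N$, the very definition of nuclearity of $N$ gives a canonical equivalence
\[
  \iHom_A(P, N) \;\simeq\; \iHom_A(P, A) \otimes_A N.
\]
Since tensor products of nuclear modules are nuclear by \cref{basicnuclemma}(2), we are reduced to showing that the dual $\iHom_A(P, A)$ is itself a nuclear $A$-module. The compact objects of $\ob{D}(A)$ are retracts of finite complexes of free modules of the form $A\otimes_{\Q_p}\Q_{p,\solid}[S]$ for $S$ a light profinite set, and both retracts and finite limits preserve nuclearity (using \cref{basicnuclemma}(1) and \cref{LemmaCOuntableLimitNuclear} respectively), so we may further reduce to the case $P = A\otimes_{\Q_p}\Q_{p,\solid}[S]$.

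In this generating case, tensor-hom adjunction and the fact that $\Q_{p,\solid}[S]$ is a compact solid $\Q_p$-module combine with the nuclearity of $A$ over $\Q_p$ to give
\[
  \iHom_A(A\otimes_{\Q_p}\Q_{p,\solid}[S], A) \;\simeq\; \iHom_{\Q_p}(\Q_{p,\solid}[S], A) \;\simeq\; \iHom_{\Q_p}(\Q_{p,\solid}[S],\Q_p)\otimes_{\Q_p} A,
\]
and the last object is a tensor of the Banach space $C(S,\Q_p)$ with the nuclear $\Q_p$-module $A$, hence nuclear over $\Q_p$ by \cref{basicnuclemma}(2). Since $A$ is a nuclear $\Q_p$-algebra, \cref{LemmaPermanence} upgrades this to nuclearity as an $A$-module, completing the proof.

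The only mild subtlety lies in the reduction step: we need a well-behaved class of compact generators in $\ob{D}(A)$ from which we can bootstrap nuclearity of duals. The light setup supplies precisely such generators $A\otimes_{\Q_p}\Q_{p,\solid}[S]$, and the crucial input that makes the base case work is that nuclearity of $A$ over $\Q_p$ converts the dual of a compact $\Q_p$-module tensored with $A$ back into a nuclear object.
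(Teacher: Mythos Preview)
Your proof is correct and follows essentially the same approach as the paper: write $M$ as a countable colimit of compacts, convert to a countable limit, use nuclearity of $N$ to split off the dual $\iHom_A(P,A)$, and invoke \cref{LemmaCOuntableLimitNuclear}. The paper compresses your steps 4--6 into the single clause ``Since $A$ is a nuclear $\Q_p$-algebra, $\iHom_A(P_i,A)\otimes_A N$ is a nuclear $A$-module''; your explicit reduction to the generators $A\otimes_{\Q_p}\Q_{p,\solid}[S]$ and the computation $\iHom_{\Q_p}(\Q_{p,\solid}[S],A)\simeq C(S,\Q_p)\otimes_{\Q_p} A$ is exactly what that clause is hiding.
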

\begin{proof}
By assumption, we can write $M=\varinjlim_i P_i$ as a countable filtered colimit  of  compact projective $A$-modules. Then for $N\in \ob{Nuc}(A)$ we have that
\[
\iHom_{A}(M,N)=\varprojlim_i \iHom_A(P_i,N)=\varprojlim_i (\iHom_A(P_i,A)\otimes_{A} N),
\]
where  the first equivalence  is clear, and the second  follows from nuclearity of $N$. Since $A$ is a nuclear $\Q_p$-algebra, $\iHom_A(P_i,A)\otimes_{A} N$ is a nuclear $A$-module, and \cref{LemmaCOuntableLimitNuclear} implies that $\iHom_{A}(M,N)$ is nuclear proving what we wanted.
\end{proof}

The most important source of basic nuclear objects is the following class of solid $\Q_p$-modules:

\begin{definition}\label{defDNF}
A solid $\Q_p$-module is called  \textit{dual nuclear Fr\'echet space} if it can written as a countable filtered colimit of Smith spaces along injective trace class transition maps. If the Smith spaces in the system can be chosen to be light, then we say that it is a \textit{light dual nuclear Fr\'chet space}. For short, we will also refer to it as \textit{DNF space} (resp. \textit{light DNF space}).
\end{definition}

\begin{lemma}\label{LemmaDNF}
Let $M$ be a static and quasi-separated solid $\Q_p$-module. The following are equivalent:

\begin{enumerate}

\item  $M$ is basic nuclear.

\item $M$ is a light DNF space.

\end{enumerate}

Moreover, let $f\colon N\to M$ be a morphism of light DNF spaces. Then both $\ob{ker}(f)$ and $\ob{im}(f)$ are light DNF spaces.

\end{lemma}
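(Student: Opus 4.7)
The implication (2)$\Rightarrow$(1) is the easy direction: a light DNF space is by definition a countable filtered colimit of light Smith spaces, which are compact projective objects of $\ob{D}(\Q_{p,\solid})$; since compact objects are dualizable, any map between them automatically satisfies the trace class condition, so the presentation as a light DNF space directly witnesses $M$ as a basic nuclear object.

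For (1)$\Rightarrow$(2), write $M = \varinjlim_n M_n$ with $M_n$ compact in $\ob{D}(\Q_{p,\solid})$. Each $M_n$ is a bounded perfect complex of light Smith spaces, so in particular $\pi_0(M_n)$ is a light Smith space by \cref{lemmaSmith} (a quotient of a light Smith space by the image of the next differential, which is closed). Because filtered colimits are exact and $M$ is static, one has $M = \pi_0(M) = \varinjlim_n \pi_0(M_n)$, which already presents $M$ as a countable filtered colimit of light Smith spaces $P_n := \pi_0(M_n)$. To force the transition maps to be injective I will use quasi-separatedness of $M$: for each $n$, the image $Q_n$ of the map $P_n \to M$ is the quotient of the light Smith space $P_n$ by the kernel of $P_n \to M$; quasi-separatedness of $M$ ensures that this kernel is a closed subobject of $P_n$, so $Q_n$ is again a light Smith space by \cref{lemmaSmith}. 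The $Q_n$'s sit as subobjects of $M$, hence the maps $Q_n \hookrightarrow Q_{n+1}$ are injective, and the surjections $P_n \twoheadrightarrow Q_n$ imply $M = \varinjlim_n Q_n$, giving $M$ the structure of a light DNF space. The main obstacle is this step: verifying that quasi-separatedness precisely captures the closedness of such kernels, so that Smith spaces pass to the quotient — this is where the hypothesis on $M$ is used in an essential way, and it will have to be checked carefully at the level of the underlying condensed abelian groups (e.g.\ via the criterion that $M$ has quasi-compact and quasi-separated diagonal, which forces every map from a quasi-compact condensed set to have closed image).

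For the final assertion, fix $f\colon N \to M$ with $N = \varinjlim_n N_n$ and $M = \varinjlim_m M_m$ presentations as light DNF spaces (with $N_n, M_m$ light Smith, injective transitions). For the image, note that each $N_n$ is compact, so $N_n \to M$ factors through some $M_{m(n)}$; letting $I_n \subseteq M_{m(n)}$ be the image, by \cref{lemmaSmith} $I_n$ is a light Smith space, and the inclusions $I_n \hookrightarrow I_{n+1} \hookrightarrow M$ are monomorphisms, realizing $\mathrm{im}(f) = \varinjlim_n I_n$ as a light DNF space. For the kernel, use that the composite $M_{m(n)} \hookrightarrow M$ is injective (by the defining property of light DNF) to identify $\ker(N_n \to M) = \ker(N_n \to M_{m(n)})$, which is again a light Smith space by \cref{lemmaSmith}; the induced transition maps $\ker(N_n \to M) \hookrightarrow \ker(N_{n+1} \to M)$ are injective because they sit inside the injective transition maps of the $N_n$, so $\ker(f) = \varinjlim_n \ker(N_n \to M)$ is a light DNF space. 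This uses only \cref{lemmaSmith} together with the injectivity of the transition maps built into the definition of a light DNF space.
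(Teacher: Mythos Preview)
Your proof has a genuine gap centered on one misconception: you claim that ``since compact objects are dualizable, any map between them automatically satisfies the trace class condition.'' This is false in $\ob{D}(\Q_{p,\solid})$: the dualizable objects are exactly the perfect $\Q_p$-complexes (finite-dimensional, since $\Q_p$ is Fredholm), whereas light Smith spaces are compact but typically infinite-dimensional and hence not dualizable. In particular, the identity map of an infinite light Smith space is \emph{not} trace class. The implication (2)$\Rightarrow$(1) is still immediate, but only because trace class transitions are already part of the \emph{definition} of a light DNF space --- no argument is needed.

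This misconception propagates into the hard direction (1)$\Rightarrow$(2) and into the ``moreover'': you construct presentations $M=\varinjlim_n Q_n$ (and similarly for $\ker f$, $\ob{im}(f)$) as countable filtered colimits of light Smith spaces with injective transition maps, but you never verify that these transition maps are trace class, which the definition of light DNF requires. This is not automatic: even starting from trace class maps $M_n\to M_{n+1}$ of compact objects, applying $\pi_0$ or restricting to a sub-Smith-space does not obviously preserve the trace class property. The paper's argument for (1)$\Rightarrow$(2) handles this carefully by starting with compact \emph{projectives} $P_n$ with trace class transitions, passing to a subsequence, and using projectivity of the quotient Smith spaces $P_n/Q_n$ to lift and thereby factor each new transition through an original trace class map. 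For the kernel in the ``moreover'', the paper takes a different route entirely: it shows $K$ is $\omega_1$-compact (your argument essentially does this) \emph{and} nuclear (via an auxiliary presentation of $N$ as a colimit of Banach spaces), hence basic nuclear, and then invokes the already-proven equivalence. Your more direct approach can be salvaged, but you would need to exploit splittings --- e.g.\ each $K_{n+1}\hookrightarrow N_{n+1}$ splits because $N_{n+1}/K_{n+1}$ is a Smith space and hence projective --- to factor $K_n\to K_{n+1}$ through the trace class map $N_n\to N_{n+1}$.
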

\begin{proof}
  It is clear that any light DNF space is basic nuclear, static and quasi-separated solid $\Q_p$-module.
  Conversely, suppose that $M$ is a static, quasi-separated and basic nuclear solid $\Q_p$-module.
  We can write $M=\varinjlim_n P_n$ as a filtered colimit of compact projective $\Q_{p,\solid}$-modules along trace class transition maps. Since $M$ is quasi-separated, the kernel $Q_n=\ob{ker}(P_n\to M)$ is closed and, by \cref{lemmaSmith}, it is a compact projective $\Q_{p,\solid}$-module. In particular, there is some $m\gg n$ such that $P_n\to P_m$ factors through $P_n/Q_n $. Hence, after passing through a subsequence and taking quotients, we can assume that the transition maps in the colimit $M=\varinjlim_n P_n$ are injective and of trace class.
  This is precisely a light DNF space.

  For the last claim, let $f\colon N\to M$ be a morphism of light DNF spaces.
  Then $M,N$ are quasi-separated and the kernel $K\subset N$ is a closed subspace.
  By writing $N=\varinjlim_n P_n$ as a filtered colimit of injective trace class maps of compact projective objects, we see that 
\[
K=\varinjlim_{n} (P_n\cap K)
\]
and that $P_n\cap K\subset P_n$ is a compact projective module, by \cref{lemmaSmith}.
Thus, $K$ is $\omega_1$-compact.
Now,  by \cite[Corollary 3.38]{jacinto2021solid}  we can also write $N=\varinjlim_{n} B_n$ as a filtered colimit of Banach spaces along injective trace class maps, hence $K=\varinjlim_{n} B_n \cap K$ is also a filtered colimit of Banach spaces, making it nuclear.
We deduce that $K$ is both nuclear and $\omega_1$-compact, and so it is basic nuclear.
This implies that it is a light DNF space (using the proven equivalence of (1) and (2)) as it is quasi-separated.
Finally, $\ob{im}(f)=N/K$ is a quasi-separated and basic nuclear module, so it is light DNF as well.
\end{proof}

\begin{proposition}\label{PropBasicNuclearQp}
Let $M\in \ob{D}(\Q_{p,\solid})$, the following are equivalent:
\begin{enumerate}

\item $M$ is basic nuclear.

\item  $M$ can be represented by a complex whose terms are all light DNF spaces.

\item For each $i\in \Z$ the static $\Q_{p,\solid}$-module $\pi_i(M)$ is a quotient of light DNF spaces.

\end{enumerate}
\end{proposition}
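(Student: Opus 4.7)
The plan is to prove the cycle $(1)\Rightarrow(3)\Rightarrow(2)\Rightarrow(1)$.

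For $(1)\Rightarrow(3)$, write $M=\varinjlim_n P_n$ with $P_n$ compact (i.e.\ a perfect complex of light Smith spaces) and trace class transition maps. Each $\pi_i(P_n)$ is the cohomology in degree $i$ of a finite complex of Smith spaces; by \cref{lemmaSmith} the kernel $K_n^i:=\ker(P_n^i\to P_n^{i+1})$ is itself a light Smith space, and it surjects onto $\pi_i(P_n)$. Using projectivity of Smith spaces to lift each composite $K_n^i\to \pi_i(P_n)\to \pi_i(P_{n+1})$ through $K_{n+1}^i$, one obtains a compatible system $(K_n^i)_n$ of light Smith spaces whose transition maps inherit trace classness from the maps $P_n\to P_{n+1}$. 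Modding out the (Smith, again by \cref{lemmaSmith}) kernels of the maps into the colimit makes the transitions injective without affecting the colimit, so $\varinjlim_n K_n^i$ is a light DNF surjecting onto $\pi_i(M)=\varinjlim_n \pi_i(P_n)$.

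For $(2)\Rightarrow(1)$, every light DNF is by definition basic nuclear; by \cref{basicnuclemma} the full subcategory $\Cat{Nuc}^{\ob{basic}}(\Q_{p,\solid})\subset \Cat{Nuc}(\Q_{p,\solid})$ of $\omega_1$-compact nuclear objects is closed under finite limits and colimits and under countable filtered colimits, so a complex of light DNF spaces (viewed as an iterated cofiber in the bounded case, and as a countable colimit of bounded truncations in general) yields a basic nuclear object. The implication $(2)\Rightarrow(3)$ also follows directly from \cref{LemmaDNF}: if $M$ is represented by $(N^\bullet, d^\bullet)$ with each $N^i$ light DNF, then $\ker(d^i)\subseteq N^i$ is a light DNF by the final assertion of \cref{LemmaDNF}, and $\pi_i(M)=\ker(d^i)/\mathrm{im}(d^{i-1})$ is a quotient of it; combined with $(1)\Rightarrow(3)$, this gives two paths to $(3)$ and is useful for bookkeeping.

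The main content is $(3)\Rightarrow(2)$, proved by building a resolution inductively. Without loss of generality $M$ is connective. By hypothesis there is a surjection $N^0\twoheadrightarrow \pi_0(M)$ with $N^0=\varinjlim_k P_k$ a light DNF. Since each $P_k$ is projective in $\ob{D}_{\ge 0}(\Q_{p,\solid})$ (being compact projective), one can lift each composite $P_k\to N^0\to \pi_0(M)$ to a map $P_k\to M$; the core task is to assemble these liftings into a single map $\widetilde{N^0}\to M$, where $\widetilde{N^0}$ is $N^0$ up to replacement by a cofinal subsystem of its defining filtered colimit. Setting $K:=\mathrm{fib}(\widetilde{N^0}\to M)[1]$ yields a connective object whose cohomologies $\pi_i(K)$ are again quotients of light DNFs: the long exact sequence identifies $\pi_i(K)$ with $\pi_{i+1}(M)$ for $i\ge 1$ (handled by (3)), while $\pi_0(K)=\ker(\widetilde{N^0}\twoheadrightarrow \pi_0(M))$ is a quotient of a light DNF since this kernel is itself a light DNF subspace by \cref{LemmaDNF}. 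Iterating and splicing the resulting maps produces the desired complex $\cdots\to N^{-2}\to N^{-1}\to N^0$ of light DNFs representing $M$.

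The hard step is the compatibility of the lifts $P_k\to M$: any two lifts of $P_k\to \pi_0(M)$ differ by a map $P_k\to \tau_{\ge 1}M$, so organizing them into a colimit-map requires choosing null-homotopies for all the higher coherences. The key input that makes this manageable is that the transitions $P_k\to P_{k+1}$ are trace class, and trace class maps factor through the dual of a compact projective --- this additional rigidity lets one correct the individual lifts inductively by elements of $\tau_{\ge 1}M$ so as to produce a coherent morphism from the colimit $\widetilde{N^0}$, possibly after passing to a cofinal subsequence of the defining system. Once this coherent lifting is achieved, the inductive step and the closure properties of the class of quotients of light DNFs (stable under extensions by \cref{LemmaDNF}) make the construction self-perpetuating, completing the proof.
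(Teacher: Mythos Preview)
Your ``hard step'' in $(3)\Rightarrow(2)$ is a genuine gap, and nothing you write closes it. You need a map $N^0\to M$ in $\ob{D}(\Q_{p,\solid})$, where $N^0=\varinjlim_k P_k$ and $M$ is an arbitrary connective object. The obstruction to assembling lifts $P_k\to M$ into a map from the colimit lives in a $\lim^1$ of mapping spaces into $\tau_{\ge 1}M$, and trace-classness of the transitions $P_k\to P_{k+1}$ gives no purchase on this: trace class is a condition on maps between compact objects (a factorization through $P_k^\vee\otimes P_{k+1}$), not a condition that controls maps \emph{into} a non-compact target $M$. Your sentence ``this additional rigidity lets one correct the individual lifts inductively'' is an assertion, not an argument. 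The same issue bites your $(1)\Rightarrow(3)$: that the derived map $P_n\to P_{n+1}$ is trace class does not imply the chain-level maps $K_n^i\to K_{n+1}^i$ you build by lifting through projective kernels are trace class.

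The paper's route is both different and simpler. It proves the cycle $(1)\Rightarrow(2)\Rightarrow(3)\Rightarrow(1)$. The implication $(3)\Rightarrow(1)$ uses no resolution at all: by \cref{LemmaDNF} each $\pi_i(M)$, being a quotient of light DNF spaces, is basic nuclear (i.e.\ nuclear and $\omega_1$-compact); then \cref{LemmaPermanence} and \cref{LemmaPermanenceOmega1Compact} say that nuclearity and $\omega_1$-compactness of $M$ can both be tested on homotopy groups, so $M$ is nuclear and $\omega_1$-compact, hence basic nuclear by \cref{basicnuclemma}. This completely bypasses the coherence problem you are wrestling with. The real technical content of the proposition is $(1)\Rightarrow(2)$, which the paper obtains by adapting the argument of \cite[Lemma~8.7, Proposition~8.9]{clausenscholzecomplex}, using that $\iHom_{\Q_p}(\Q_{p,\solid}[S],\Q_p)=C(S,\Q_p)$ is concentrated in degree $0$ and flat; you do not attempt this direction.
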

\begin{proof}
  This is an analog of \cite[Theorem 8.15]{clausenscholzecomplex}.
  The implication (1) $\Rightarrow$ (2) follows from the same proofs of \cite[Lemma 8.7 and Proposition 8.9]{clausenscholzecomplex} since any compact object $P$ of $\ob{D}(\Q_{p,\solid})$ can be represented  as a complex $P^{\bullet}$ of compact projective modules of the form $\Q_{p,\solid}[S]$ with $S$ light profinite; here the key input of \cite[Lemma 8.7]{clausenscholzecomplex} is that $\iHom_{\Q_p}(\Q_{p,\solid}[S], \Q_p)=C(S,\Q_p)$ sits in degree $0$ and is flat.
  The first assertion is clear since $\Q_{p,\solid}[S]$ is projective, and for the second assertion see e.g. \cite[Corollary A.28]{bosco2021p}.

  The implication (2) $\Rightarrow$ (3) is obvious.
  Finally, for the implication (3) $\Rightarrow$ (1), note that  if $\pi_i(M)$ is a quotient of light DNF spaces, then it is basic nuclear by \cref{LemmaDNF}.
  Then combining \cref{LemmaPermanenceOmega1Compact}  (for $\omega_1$-compactness) and \cref{LemmaPermanence}  (for nuclearity) one deduces that $M$ is basic nuclear.
\end{proof}

Nuclear modules satisfy the following descent result. For the notion of $!$-equivalence used here, see \Cref{xhs92mk}.

\begin{lemma}\label{LemmaDescentNuclear}
Let $I$ be a countable $\infty$-category, and let $\varinjlim_{i\in I} \AnSpec(A_i)\to \mathrm{AnSpec}(A)$ be a $!$-equivalence of solid $\Q_p$-algebras such that $A$ and all $A_i$ are nuclear over $\Q_p$. Then the natural functor
\[
\Cat{Nuc}(A)\rightarrow \varprojlim_{i\in I} \Cat{Nuc}(A_{i})
\]
is an equivalence, where the transition maps are given by $*$-pullbacks.

Suppose that in addition that $A$ and all $A_i$ are basic nuclear $\Q_p$-algebras. Then the natural functor
\[
\Cat{Nuc}(A)\rightarrow \varprojlim_{i\in I} \Cat{Nuc}^!(A_{i})
\]
is an equivalence, where the transition maps are given by $!$-pullbacks. 
\end{lemma}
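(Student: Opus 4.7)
The two functors in the statement are well-defined on $\Cat{Nuc}$: for the $*$-pullback case this is \cref{basicnuclemma}(3), while for the $!$-pullback case, in the basic nuclear setting, the $!$-pullback along $A \to A_i$ identifies with $\iHom_A(A_i, -)$, which preserves nuclearity by \cref{LemmaNuclearUppershierk} combined with \cref{LemmaPermanence}. Fully faithfulness then follows immediately from the fully faithful inclusion $\Cat{Nuc}(A) \hookrightarrow \ob{D}(A)$ together with the equivalences
\[
\ob{D}(A) \overset{\sim}{\to} \varprojlim_{i \in I} \ob{D}^*(A_i) \quad \text{and} \quad \ob{D}(A) \overset{\sim}{\to} \varprojlim_{i \in I} \ob{D}^!(A_i)
\]
coming respectively from \cref{RemarkDStarDescentAnStk} and from the very definition of a $!$-equivalence (\cref{xhs92mk}).

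The main point is essential surjectivity. Given a compatible system $(M_i)_{i \in I}$ with each $M_i$ nuclear over $A_i$, let $M \in \ob{D}(A)$ be the object obtained by descent. We must verify that $M$ is nuclear over $A$. Using \cref{LemmaPermanence}, nuclearity over $A$ is equivalent to nuclearity over $\Q_{p,\solid}$ (since $A$ is assumed nuclear), and nuclearity over $\Q_{p,\solid}$ can be tested on the compact projective generators $\Q_{p,\solid}[S]$ with $S$ a light profinite set, since any compact object in $\ob{D}(\Q_{p,\solid})$ is a retract of a finite complex of such generators. Writing $P = \Q_{p,\solid}[S]$, its base change $P_i := A_i \otimes_{\Q_p} P$ is again a compact projective module over $A_i$. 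Since $*$-pullback is symmetric monoidal and colimit-preserving, the natural transformation
\[
\mu_P \colon \iHom_{\Q_p}(P, \Q_p) \otimes_{\Q_p} M \longrightarrow \iHom_{\Q_p}(P, M)
\]
is taken under the descent equivalence $\ob{D}(A) \cong \varprojlim_i \ob{D}^*(A_i)$ to the compatible family $(\mu_{P_i})_{i \in I}$ of the analogous maps over $A_i$. Each $\mu_{P_i}$ is an equivalence by nuclearity of $M_i$ combined with \cref{LemmaPermanence}, so $\mu_P$ is an equivalence by descent, showing that $M$ is nuclear.

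For the second statement, the argument runs in parallel, replacing $*$-descent with the $!$-descent $\ob{D}(A) \cong \varprojlim_i \ob{D}^!(A_i)$. The only delicate point, and the main technical obstacle, is to check that internal $\iHom$ with a compact projective generator commutes with the descent in the appropriate sense; concretely, one must verify that under a $!$-equivalence the natural transformation $\iHom_A(P,-)$ corresponds level-wise to $\iHom_{A_i}(P_i,-)$ when $P$ is a compact projective of the form $\Q_{p,\solid}[S]$. This follows because internal $\iHom$ against a dualisable object is computed by tensoring with its dual, and the descent equivalences are symmetric monoidal (the pullback $(-)_i$ is a left adjoint which preserves compact projectives of the form $A[S]$ and their duals). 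Once this compatibility is in place, the same argument as in the $*$-pullback case yields nuclearity of the descended module, completing the proof.
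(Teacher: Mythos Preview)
Your argument for essential surjectivity has a genuine gap. You want to check that the descended module $M$ is nuclear by verifying, for each compact projective generator $P=\Q_{p,\solid}[S]$, that the map
\[
\mu_P\colon \iHom_{\Q_p}(P,\Q_p)\otimes_{\Q_p} M \longrightarrow \iHom_{\Q_p}(P,M)
\]
is an equivalence, and you claim that under $*$-descent this map is carried to the family $(\mu_{P_i})_{i}$. But this identification requires that the natural map
\[
A_i\otimes_A \iHom_{\Q_p}(P,M)\longrightarrow \iHom_{\Q_p}(P,\,A_i\otimes_A M)
\]
be an isomorphism, and that is exactly what fails: the generator $P=\Q_{p,\solid}[S]$ is compact projective but \emph{not} dualisable in $\ob{D}(\Q_{p,\solid})$ (the dualisable objects there are the perfect complexes, as $\Q_{p,\solid}$ is Fredholm). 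Your justification ``internal $\iHom$ against a dualisable object is computed by tensoring with its dual'' therefore does not apply, and symmetric monoidality plus colimit-preservation of $*$-pullback is not enough to force this base-change isomorphism. The same issue recurs verbatim in your $!$-descent argument.

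The paper sidesteps this entirely. Rather than testing nuclearity of $M$ against compact generators, it uses the explicit reconstruction of $M$ from the descent datum: in the $*$-case $M$ is the (countable) limit of the $M_i$ regarded as $A$-modules via the forgetful functors, and in the $!$-case it is the colimit. Each $M_i$ is nuclear over $A$ by \cref{LemmaPermanence} (nuclearity over $A_i$ $\Rightarrow$ over $\Q_p$ $\Rightarrow$ over $A$), and then one invokes the key closure property \cref{LemmaCOuntableLimitNuclear} (nuclear modules over a nuclear algebra are stable under countable limits) in the $*$-case, and stability under colimits in the $!$-case. This is both shorter and avoids the base-change issue that your approach runs into.
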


\begin{proof}
  Since we have descent for solid quasi-coherent sheaves (as we have a $!$-equivalence), and the nuclear categories are full subcategories of all solid modules (stable under $\ast$-pullback), we have an inclusion $\Cat{Nuc}(A)\subset \varprojlim_i \Cat{Nuc}(A_i)$.
  To see that this inclusion is essentially surjective, let $M_i$ be a cocartesian section in $\Cat{Nuc}(A_i)$.
  By \cref{LemmaPermanence} each $M_i$ is nuclear as $A$-module, and by \cref{LemmaCOuntableLimitNuclear} the limit of all $M_i$ remains nuclear being a countable limit of nuclear modules.

  For the claim about $!$-descent, by assumption all the $A_i$ are basic nuclear, and in particular they are $\omega_1$-compact as $A$-modules.
  By \cref{LemmaNuclearUppershierk} the upper $!$-functors $\iHom_{A}(A_i,-)$ preserve nuclear $A$-modules, and so the natural equivalence (coming from the input being a $!$-equivalence)
\[
\ob{D}(A)\xrightarrow{\sim }\varprojlim_{i\in I} \ob{D}^!(A_i)
\]
restricts to fully faithful functor 
\[
\Cat{Nuc}(A)\subset \varprojlim_{i\in I} \Cat{Nuc}^!(A_{i}).
\]
To see that this morphism is essentially surjective, it suffices to see that if $M_i$ is a cocartesian section of $\varprojlim_{i\in I} \Cat{Nuc}^!(A_i)$ then $\varinjlim_{i\in I^{\op}} M_{i}$ is a nuclear $A$-module. This follows again from \cref{LemmaCOuntableLimitNuclear}.
\end{proof}

\bibliographystyle{alpha}

\bibliography{biblio}

%\addcontentsline{toc}{section}{References}

\end{document}